\documentclass[12pt]{amsbook}
\usepackage{amssymb}
\usepackage[all]{xy}

\usepackage[colorlinks=true,linkcolor=magenta,citecolor=magenta]{hyperref}
\pdfstringdefDisableCommands{ \let\noindent\empty}

\makeindex

\textwidth=160mm
\textheight=200mm
\topmargin=12mm
\hoffset=-20mm

\newtheorem{theorem}{Theorem}[chapter]
\newtheorem{definition}[theorem]{Definition}
\newtheorem{proposition}[theorem]{Proposition}
\newtheorem{exercise}[theorem]{Exercise}
\newtheorem{fact}[theorem]{Fact}

\begin{document}

\title{Introduction to quantum groups}

\author{Teo Banica}
\address{Department of Mathematics, University of Cergy-Pontoise, F-95000 Cergy-Pontoise, France. {\tt teo.banica@gmail.com}}

\subjclass[2010]{46L65}
\keywords{Quantum space, Quantum group}

\begin{abstract}
This is an introduction to the quantum groups, or rather to the simplest quantum groups. The idea is that the unitary group $U_N$ has a free analogue $U_N^+$, whose standard coordinates $u_{ij}\in C(U_N^+)$ are allowed to be free, and the closed subgroups $G\subset U_N^+$ can be thought of as being the compact quantum Lie groups. There are many interesting examples of such quantum groups, for the most designed in order to help with questions in quantum mechanics and statistical mechanics, and some general theory available as well, including Peter-Weyl theory, Tannakian duality, Brauer theorems and Weingarten integration. We discuss here the basic aspects of all this.
\end{abstract}

\maketitle

\chapter*{Preface}

A quantum group is something similar to a group, except for the fact that the functions on it $f:G\to\mathbb C$ do not necessarily commute, $fg\neq gf$. As the name indicates, quantum groups are meant to have something with do with quantum physics. To be more precise, $fg\neq gf$, which mathematically might sound like some kind of pathology, is in fact something beautiful, reminding Heisenberg's uncertainty principle, and which puts the quantum groups in good position of describing the ``symmetries'' of quantum systems. 

\bigskip

Such ideas go back to the work of Faddeev and the Leningrad School of physics \cite{fad}, from the late 70s. Later on, during the 80s, Drinfeld \cite{dri} and Jimbo \cite{jim} on one hand, and Woronowicz \cite{wo1}, \cite{wo2} on the other, came up with some precise mathematics for the quantum groups. This mathematics has become increasingly popular during the 90s and 00s, to the point that we have now all sorts of classes of quantum groups, with some of them having nothing much to do with the original physics motivations.

\bigskip

Which reminds a bit the story of particle physics, in its golden era. Things back then used to be quite wild, to the point that Willis Lamb started his Nobel Prize acceptance speech in 1955 by saying that ``while the finder of a new elementary particle used to be rewarded by receiving a Nobel Prize, one should now be punished by a \$10,000 fine''.

\bigskip

So, what are the good and useful quantum groups? No one really knows the answer here, but personally I would put my money on the ``simplest''. In all honesty, I don't believe that God is a bad person, and I'm convinced that he created this world such that simple mathematics corresponds to simple physics, and vice versa.

\bigskip

But what are then the simplest quantum groups? This question is far more tricky. You would say that the simplest groups are the compact Lie groups $G\subset U_N$, and so that a quantum group should be something similar, namely some kind of ``smooth compact noncommutative manifold, endowed with a group-type structure''.

\bigskip

And is this correct or not. Mathematically speaking, this sounds good, but if you're really passioned by physics, as I personally am, there is a problem here. Physics tells us that smoothness is some sort of miracle, appearing via complicated $N\to\infty$ limiting procedures, and only in the classical, macroscopic setting. Indeed, isn't that true for everything thermodynamics, where smoothness comes from collisions, in the $N\to\infty$ limit. And for electrodynamics too, where smoothness comes from certain things happening at the QED level, once again with $N\to\infty$. And finally, probably for classical mechanics too, because who really knows what gravity looks like at the Planck scale, and so again, the smoothness in classical mechanics might well come from a $N\to\infty$ procedure. 

\bigskip

Of course, all this is a bit subjective, but you have to agree with me that, if we want the quantum groups to do their intended job, namely be of help in quantum mechanics, shall we really go head-first into Lie theory and smoothness, or be a bit more philosophers, and look for something else. Ideally, some mixture of algebra and probability, since basic quantum mechanics itself is after all a cocktail of algebra and probability.

\bigskip

And fortunately, math comes to the rescue. Forgetting now about physics, and about anything advanced, let us just look at the unitary group $U_N$. The standard coordinates $u_{ij}\in C(U_N)$ obviously commute, but if we allow them to be free, we obtain a certain algebra $C(U_N^+)$, and so a certain quantum group $U_N^+$. And then, in analogy with the fact that any compact Lie group appears as a closed subgroup $G\subset U_N$, we can say that the closed quantum subgroups $G\subset U_N^+$ are the ``compact quantum Lie groups''. 

\bigskip

The present book is an introduction to such quantum groups, $G\subset U_N^+$.  We will see that there are many interesting examples, worth studying, and also, following Woronowicz \cite{wo1}, \cite{wo2} and others, that some substantial general theory can be developed for such quantum groups, including an existence result for the Haar measure, Peter-Weyl theory, Tannakian duality, Brauer theorems and Weingarten integration. 

\bigskip

We will insist on examples, and more specifically on examples designed in order to help with questions in quantum mechanics, and statistical mechanics.

\bigskip

The presentation will be elementary, with the present book being a standard, first year graduate level textbook. More advanced aspects are discussed in my ``Quantum permutation groups'' research monograph \cite{ba6}. As for the applications to physics, these will be discussed in a series of physics books, the first of which, ``Introduction to quantum mechanics'', having actually no quantum groups inside, is in preparation \cite{ba7}.  

\bigskip

The mathematics in this book will be based on a number of research papers, starting with those of Woronowicz from the late 80s. I was personally involved in all this, during the last 30 years, and it is a pleasure to thank my coworkers, and particularly Julien Bichon, Beno\^it Collins, Steve Curran and Roland Speicher, for substantial joint work on the subject. Many thanks go as well to my cats, for sharing with me some of their quantum mechanical knowledge, cat common sense, and other skills.

\baselineskip=15.95pt
\tableofcontents
\baselineskip=14pt

\part{Quantum groups}

\ \vskip50mm

\begin{center}
{\em Country roads, take me home

To the place I belong

West Virginia, mountain mama

Take me home, country roads}
\end{center}

\chapter{Quantum spaces}

\section*{1a. Operator algebras}

The quantum groups are not groups in the usual sense, but rather abstract generalizations of the groups, motivated by quantum mechanics. To be more precise, a quantum group $G$ is something similar to a classical group, except for the fact that the functions on it $f:G\to\mathbb C$ do not necessarily commute, $fg\neq gf$. And due to this, $G$ is not exactly a set of points, or transformations, but rather an abstract object, described by the algebra $A$ of functions on it $f:G\to\mathbb C$, which can be noncommutative. 

\bigskip

In order to introduce the quantum groups, we need some sort of algebraic geometry correspondence, between ``quantum spaces'' and noncommutative algebras. Which is not exactly a trivial business, because such correspondences are not really needed in the context of usual mathematics, or of usual physics, such as classical mechanics.

\bigskip

However, we can use some inspiration from quantum mechanics. Phenomena of type $fg\neq gf$ are commonplace there, known since the early 1920s, and the work of Heisenberg. And it is also known from there that the good framework for understanding such phenomena is the mathematics of the infinite dimensional complex Hilbert spaces $H$, with as main example the Schr\"odinger space $H=L^2(\mathbb R^3)$ of wave functions of the electron.

\bigskip

So, problem solved, and in order to start our hunt for quantum groups, we just need to understand the Hilbert space basics. And do not worry, we will see later, with some further inspiration from quantum mechanics helping, coming this time from the work of von Neumann and others, that this will naturally lead us into the correspondence between ``quantum spaces'' and noncommutative algebras that we are looking for.

\bigskip

As a starting point, we have the following basic definition:

\begin{definition}
A Hilbert space is a complex vector space $H$ given with a scalar product $<x,y>$, satisfying the following conditions:
\begin{enumerate}
\item $<x,y>$ is linear in $x$, and antilinear in $y$.

\item $\overline{<x,y>}=<y,x>$, for any $x,y$.

\item $<x,x>>0$, for any $x\neq0$.

\item $H$ is complete with respect to the norm $||x||=\sqrt{<x,x>}$.
\end{enumerate}
\end{definition}

Here the fact that $||.||$ is indeed a norm comes from the Cauchy-Schwarz inequality, which states that if (1,2,3) above are satisfied, then we have:
$$|<x,y>|\leq||x||\cdot||y||$$

Indeed, this inequality comes from the fact that the following degree 2 polynomial, with $t\in\mathbb R$ and $w\in\mathbb T$, being positive, its discriminant must be negative:
$$f(t)=||x+twy||^2$$

In finite dimensions, any algebraic basis $\{f_1,\ldots,f_N\}$ can be turned into an orthonormal basis $\{e_1,\ldots,e_N\}$, by using the Gram-Schmidt procedure. Thus, we have $H\simeq\mathbb C^N$, with this latter space being endowed with its usual scalar product:
$$<x,y>=\sum_ix_i\bar{y}_i$$

The same happens in infinite dimensions, once again by Gram-Schmidt, coupled if needed with the Zorn lemma, in case our space is really very big. In other words, any Hilbert space has an orthonormal basis $\{e_i\}_{i\in I}$, and so the Hilbert space itself is:
$$H\simeq l^2(I)$$

Of particular interest is the ``separable'' case, where $I$ is countable. According to the above, there is up to isomorphism only one Hilbert space here, namely:
$$H=l^2(\mathbb N)$$

All this is, however, quite tricky, and can be a bit misleading. Consider for instance the space $H=L^2[0,1]$ of square-summable functions $f:[0,1]\to\mathbb C$, with:
$$<f,g>=\int_0^1f(x)\overline{g(x)}dx$$

This space is separable, because we can use the basis $\{x^n\}_{n\in\mathbb N}$, orthogonalized by Gram-Schmidt. However, the orthogonalization procedure is something non-trivial, and so the isomorphism $H\simeq l^2(\mathbb N)$ that we obtain is something non-trivial as well.

\bigskip

Again inspired by physics, and more specifically by the ``matrix mechanics'' of Heisenberg, we will be interested in the linear operators over a Hilbert space. We have:

\begin{proposition}
Let $H$ be a Hilbert space, with orthonormal basis $\{e_i\}_{i\in I}$. The algebra $\mathcal L(H)$ of linear operators $T:H\to H$ embeds then into the matrix algebra $M_I(\mathbb C)$, with $T$ corresponding to the matrix $M_{ij}=<Te_j,e_i>$. In particular:
\begin{enumerate}
\item In the finite dimensional case, where $\dim(H)=N<\infty$, we obtain in this way a usual matrix algebra, $\mathcal L(H)\simeq M_N(\mathbb C)$.

\item In the separable infinite dimensional case, where $I\simeq\mathbb N$, we obtain in this way a subalgebra of the infinite matrices, $\mathcal L(H)\subset M_\infty(\mathbb C)$.
\end{enumerate}
\end{proposition}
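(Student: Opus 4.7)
The plan is to define the map $\Phi:\mathcal L(H)\to M_I(\mathbb C)$ by the formula $\Phi(T)_{ij}=\langle Te_j,e_i\rangle$ given in the statement, and then check the three things needed for an embedding of algebras: linearity, multiplicativity, and injectivity. Linearity in $T$ is immediate from linearity of the scalar product in its first argument, so I would dispose of it in one line.

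For multiplicativity, the key idea is to use the fact that $\{e_i\}_{i\in I}$ is an \emph{orthonormal basis}, so every vector $v\in H$ admits a Fourier expansion $v=\sum_i\langle v,e_i\rangle e_i$ converging in norm. Applying this to $v=Te_j$, I would write
$$Te_j=\sum_k\langle Te_j,e_k\rangle e_k=\sum_k\Phi(T)_{kj}e_k,$$
and then apply $S$ term-by-term, pulling $S$ through the infinite sum (this is where continuity of $S$, i.e.\ the implicit assumption that we are dealing with bounded operators, is used — in finite dimensions it is automatic). Taking the scalar product with $e_i$ gives
$$\Phi(ST)_{ij}=\langle STe_j,e_i\rangle=\sum_k\Phi(T)_{kj}\langle Se_k,e_i\rangle=\sum_k\Phi(S)_{ik}\Phi(T)_{kj},$$
which is exactly the $(i,j)$ entry of the matrix product $\Phi(S)\Phi(T)$. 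Injectivity is then easy: if $\Phi(T)=0$ then $\langle Te_j,e_i\rangle=0$ for all $i,j$, so by the Fourier expansion just written $Te_j=0$ for every $j$, hence $T$ vanishes on the dense subspace spanned by the basis, and by continuity $T=0$.

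For the two ``In particular'' points, I would argue as follows. In the finite dimensional case $\dim H=N$, surjectivity onto $M_N(\mathbb C)$ is automatic: given any matrix $M\in M_N(\mathbb C)$ one simply defines $T$ on the basis by $Te_j=\sum_iM_{ij}e_i$ and extends by linearity, obtaining $\Phi(T)=M$. This yields the isomorphism $\mathcal L(H)\simeq M_N(\mathbb C)$. In the separable infinite dimensional case $I\simeq\mathbb N$, one only gets the inclusion $\mathcal L(H)\subset M_\infty(\mathbb C)$, because not every infinite matrix defines a bounded operator — for instance one needs at the very least the columns $(M_{ij})_i$ to be $\ell^2$ in order for $Te_j$ to make sense as a vector of $H$, and boundedness of $T$ imposes further, much more delicate, conditions on $M$.

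I expect the only step requiring real care to be the interchange of $S$ with the infinite Fourier sum in the computation of $\Phi(ST)$: this is where the whole picture would collapse if one attempted it for merely algebraic linear operators on an infinite dimensional $H$, and it is the reason the natural setting here is that of bounded operators. Everything else reduces to unwinding the definition of an orthonormal basis and of matrix multiplication.
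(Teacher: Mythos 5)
Your proposal is correct and follows essentially the same route as the paper, whose own proof is just a two-line sketch (linearity, trivial kernel, "the rest is clear"); you simply write out the Fourier-expansion computations that the paper leaves implicit. If anything you are more careful than the source: the statement is phrased for all linear operators $\mathcal L(H)$, and your remark that the multiplicativity computation (and, via density, the injectivity argument) genuinely uses continuity, i.e. boundedness, is exactly the point the paper glosses over.
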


\begin{proof}
The correspondence $T\to M$ in the statement is indeed linear, and its kernel is $\{0\}$. As for the last two assertions, these are clear as well.
\end{proof}

The above result is something quite theoretical, because for basic spaces like $L^2[0,1]$, which do not have a simple orthonormal basis, the embedding $\mathcal L(H)\subset M_\infty(\mathbb C)$ that we obtain is not very useful. Thus, while the operators $T:H\to H$ are basically some infinite matrices, it is better to think of these operators as being objects on their own.

\bigskip

Normally quantum mechanics, or at least basic quantum mechanics, is about operators $T:H\to H$ which are densely defined, also called ``unbounded''. In what concerns us, the correspondence between ``quantum spaces'' and noncommutative algebras that we want to establish rather belongs to advanced quantum mechanics, where the operators $T:H\to H$ are usually everywhere defined, as in Proposition 1.2, and bounded.

\bigskip

So, departing now from the quantum mechanics of the 1920s, and its difficulties, and turning instead to pure mathematics for inspiration, a natural question is that of understanding what the correct infinite dimensional analogue of the matrix algebra $M_N(\mathbb C)$ is. And the answer here is provided by the following result:

\index{bounded operator}
\index{adjoint operator}

\begin{theorem}
Given a Hilbert space $H$, the linear operators $T:H\to H$ which are bounded, in the sense that $||T||=\sup_{||x||\leq1}||Tx||$ is finite, form a complex algebra with unit, denoted $B(H)$. This algebra has the following properties:
\begin{enumerate}
\item $B(H)$ is complete with respect to $||.||$, and so we have a Banach algebra. 

\item $B(H)$ has an involution $T\to T^*$, given by $<Tx,y>=<x,T^*y>$.
\end{enumerate}
In addition, the norm and the involution are related by the formula $||TT^*||=||T||^2$.
\end{theorem}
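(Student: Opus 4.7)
The plan is to verify the algebra structure, then completeness, then construct the adjoint, and finally establish the $C^*$-identity.

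First I would check that $B(H)$ is a unital subalgebra of $\mathcal{L}(H)$. Using the definition of $||\cdot||$ together with linearity of $T$, one gets the submultiplicativity $||TS||\leq ||T||\cdot ||S||$ by writing $||TSx||\leq ||T||\cdot ||Sx||\leq ||T||\cdot ||S||\cdot ||x||$, and similarly $||T+S||\leq ||T||+||S||$ and $||\lambda T||=|\lambda|\cdot ||T||$. The identity operator lies in $B(H)$ with $||\mathrm{id}||=1$. These also verify that $||\cdot||$ is a genuine norm (nondegeneracy being immediate since $Tx=0$ for all $x$ forces $T=0$).

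For completeness, I would take a Cauchy sequence $(T_n)\subset B(H)$. For each fixed $x\in H$, the estimate $||T_nx-T_mx||\leq ||T_n-T_m||\cdot ||x||$ shows $(T_nx)$ is Cauchy in $H$; by completeness of $H$, I can define $Tx=\lim_n T_nx$. Linearity of $T$ is automatic from passing to the limit. Boundedness follows from the fact that the norms $||T_n||$ are themselves Cauchy in $\mathbb{R}$, hence bounded; then $||Tx||=\lim ||T_nx||\leq (\sup_n ||T_n||)\cdot ||x||$. Finally, given $\varepsilon>0$, one picks $N$ so that $||T_n-T_m||<\varepsilon$ for $n,m\geq N$, and lets $m\to\infty$ inside $||(T_n-T_m)x||\leq\varepsilon ||x||$ to obtain $||T_n-T||\leq\varepsilon$.

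For the involution, I would fix $T\in B(H)$ and, for each $y\in H$, observe that $\varphi_y(x)=<Tx,y>$ is a bounded linear functional on $H$ of norm at most $||T||\cdot ||y||$. The Riesz representation theorem produces a unique vector, which I call $T^*y$, such that $\varphi_y(x)=<x,T^*y>$. The map $y\mapsto T^*y$ is then linear (by uniqueness in Riesz) and satisfies $||T^*y||\leq ||T||\cdot ||y||$, so $T^*\in B(H)$ with $||T^*||\leq ||T||$. Repeating the construction for $T^*$ yields $T^{**}=T$, hence $||T||\leq ||T^*||$, so in fact $||T^*||=||T||$; the $*$-algebra axioms $(S+T)^*=S^*+T^*$, $(\lambda T)^*=\bar\lambda T^*$ and $(ST)^*=T^*S^*$ are then straightforward manipulations inside the inner product.

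The main point, and the one I want to set up carefully, is the $C^*$-identity. Submultiplicativity gives $||TT^*||\leq ||T||\cdot ||T^*||=||T||^2$. For the reverse, I would start from
$$||Tx||^2=<Tx,Tx>=<TT^*\cdot(T^*)^{-1\text{-free manipulation}}\ldots>$$
— more cleanly, compute $||T^*x||^2=<T^*x,T^*x>=<TT^*x,x>\leq ||TT^*||\cdot ||x||^2$, which yields $||T^*||^2\leq ||TT^*||$, i.e.\ $||T||^2\leq ||TT^*||$. Combining the two inequalities gives the desired equality. The only ``obstacle'' is really the Riesz step producing the adjoint; everything else is bookkeeping with the inner product and the definition of the operator norm.
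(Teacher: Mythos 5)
Your proof is correct and follows essentially the same route as the paper: the same norm estimates for the algebra structure, the same pointwise-limit argument for completeness, the adjoint via Riesz representation (which the paper invokes implicitly), and the same two-inequality strategy for $||TT^*||=||T||^2$, with your direct computation of $||T^*x||^2=<TT^*x,x>$ being just a cosmetic variant of the paper's $T^*T$ estimate followed by the substitution $T\to T^*$. The garbled display before ``more cleanly'' is only a false start and does not affect the argument.
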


\begin{proof}
The fact that the bounded operators form indeed an algebra, as stated, follows from the following estimates, which are all clear from definitions:
$$||S+T||\leq||S||+||T||$$
$$||\lambda T||=|\lambda|\cdot||T||$$
$$||ST||\leq||S||\cdot||T||$$

(1) Assuming that $\{T_n\}\subset B(H)$ is Cauchy, the sequence $\{T_nx\}$ is Cauchy for any $x\in H$, so we can define the limit $T=\lim_{n\to\infty}T_n$ by setting: 
$$Tx=\lim_{n\to\infty}T_nx$$

It is routine to check that this formula defines indeed a bounded operator $T\in B(H)$, and that we have $T_n\to T$ in norm, and this gives the result.

\medskip

(2) Here the existence of $T^*$ comes from the fact that $\varphi(x)=<Tx,y>$ being a linear map $H\to\mathbb C$, we must have a formula as follows, for a certain vector $T^*y\in H$:
$$\varphi(x)=<x,T^*y>$$

Moreover, since this vector is unique, $T^*$ is unique too, and we have as well:
$$(S+T)^*=S^*+T^*\quad,\quad 
(\lambda T)^*=\bar{\lambda}T^*$$
$$(ST)^*=T^*S^*\quad,\quad 
(T^*)^*=T$$

Observe also that we have indeed $T^*\in B(H)$, because:
\begin{eqnarray*}
||T||
&=&\sup_{||x||=1}\sup_{||y||=1}<Tx,y>\\
&=&\sup_{||y||=1}\sup_{||x||=1}<x,T^*y>\\
&=&||T^*||
\end{eqnarray*}

(3) Regarding now the last assertion, we have:
$$||TT^*||
\leq||T||\cdot||T^*||
=||T||^2$$

On the other hand, we have as well the following estimate:
\begin{eqnarray*}
||T||^2
&=&\sup_{||x||=1}|<Tx,Tx>|\\
&=&\sup_{||x||=1}|<x,T^*Tx>|\\
&\leq&||T^*T||
\end{eqnarray*}

By replacing in this formula $T\to T^*$ we obtain $||T||^2\leq||TT^*||$. Thus, we have proved both the needed inequalities, and we are done.
\end{proof}

Observe that, in view of Proposition 1.2, we embeddings of $*$-algebras, as follows: 
$$B(H)\subset\mathcal L(H)\subset M_I(\mathbb C)$$

In this picture the adjoint operation $T\to T^*$ constructed above takes a very simple form, namely $(M^*)_{ij}=\overline{M}_{ji}$, at the level of the associated matrices.

\bigskip

We are now getting close to the point where we wanted to get, namely algebras of operators, corresponding to ``quantum spaces''. But one more mathematical trick, in order to get to this. Instead at looking at algebras of operators $A\subset B(H)$, depending on a Hilbert space $H$, that is after all something quite cumbersome, that we don't want to have in our theory, it is more convenient to simply take some inspiration from Theorem 1.3, and based on that, axiomatize a nice class of complex algebras, as follows:

\index{Banach algebra}
\index{operator algebra}

\begin{definition}
A unital $C^*$-algebra is a complex algebra with unit $A$, having:
\begin{enumerate}
\item A norm $a\to||a||$, making it a Banach algebra (the Cauchy sequences converge).

\item An involution $a\to a^*$, which satisfies $||aa^*||=||a||^2$, for any $a\in A$.
\end{enumerate}
\end{definition}

We know from Theorem 1.3 that the full operator algebra $B(H)$ is a $C^*$-algebra, for any Hilbert space $H$. More generally, any closed $*$-subalgebra $A\subset B(H)$ is a $C^*$-algebra. The celebrated Gelfand-Naimark-Segal (GNS) theorem states that any $C^*$-algebra appears in fact in this way. This is something non-trivial, and we will be back to it later on.

\bigskip

For the moment, we are interested in developing the theory of $C^*$-algebras, without reference to operators, or Hilbert spaces. Our goal, that we can now state quite precisely, is to show that any $C^*$-algebra appears as follows, with $X$ being a ``quantum space'':
$$A=C(X)$$

In order to do this, let us first look at the commutative $C^*$-algebras $A$, which should normally correspond to usual spaces $X$. As a first observation, we have:

\begin{proposition}
If $X$ is an abstract compact space,  the algebra $C(X)$ of continuous functions $f:X\to\mathbb C$ is a $C^*$-algebra, with structure as follows:
\begin{enumerate}
\item The norm is the usual sup norm, $||f||=\sup_{x\in X}|f(x)|$.

\item The involution is the usual involution, $f^*(x)=\overline{f(x)}$. 
\end{enumerate}
This algebra is commutative, in the sense that $fg=gf$, for any $f,g\in C(X)$.
\end{proposition}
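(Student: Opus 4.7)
The plan is to check the ingredients of Definition 1.5 one at a time, since they all reduce to familiar pointwise statements about complex numbers once the supremum is taken over $X$. First I would note that $C(X)$ is a complex algebra under pointwise operations, with unit the constant function $1$, and that commutativity $fg=gf$ is immediate from the commutativity of multiplication in $\mathbb C$. Also, because $X$ is compact and $|f|:X\to\mathbb R$ is continuous, the supremum $||f||=\sup_{x\in X}|f(x)|$ is actually attained, in particular finite, so the formula in (1) does define a function $C(X)\to[0,\infty)$.

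Next I would verify that $||\cdot||$ is a submultiplicative norm. Definiteness uses that a continuous function vanishing identically is zero; homogeneity and the triangle inequality are obtained by taking suprema in the corresponding pointwise relations $|\lambda f(x)|=|\lambda|\cdot|f(x)|$ and $|(f+g)(x)|\leq|f(x)|+|g(x)|$. Submultiplicativity $||fg||\leq||f||\cdot||g||$ similarly follows from $|f(x)g(x)|\leq||f||\cdot||g||$ at every point.

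The main technical step is completeness. Given a Cauchy sequence $(f_n)\subset C(X)$, for each fixed $x\in X$ the estimate $|f_n(x)-f_m(x)|\leq||f_n-f_m||$ shows that $(f_n(x))$ is Cauchy in $\mathbb C$, hence converges to some value $f(x)$. Letting $m\to\infty$ in the same estimate shows $||f_n-f||\to 0$, i.e.\ the convergence $f_n\to f$ is uniform. The standard fact that a uniform limit of continuous functions on a compact space is continuous then yields $f\in C(X)$, so $(f_n)$ converges in $C(X)$.

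Finally I would handle the involution and the $C^*$-identity. The map $f\mapsto f^*$ with $f^*(x)=\overline{f(x)}$ is a conjugate-linear map of order two; the identities $(f+g)^*=f^*+g^*$, $(\lambda f)^*=\bar\lambda f^*$, $(fg)^*=g^*f^*$ and $||f^*||=||f||$ all reduce to the corresponding pointwise statements about complex conjugation. For the $C^*$-identity one simply computes
\[
||ff^*||=\sup_{x\in X}|f(x)\overline{f(x)}|=\sup_{x\in X}|f(x)|^2=\Bigl(\sup_{x\in X}|f(x)|\Bigr)^2=||f||^2,
\]
which closes the argument. The only step that is not a purely algebraic bookkeeping of pointwise operations is completeness, so the uniform-limit-of-continuous-functions lemma is the one piece of real analysis doing work here; everything else is routine.
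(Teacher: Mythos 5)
Your proof is correct and follows the same route the paper has in mind: verify each $C^*$-algebra axiom pointwise, with the only non-formal step being completeness via the uniform-limit-of-continuous-functions lemma, and then the explicit computation $||ff^*||=\sup_x|f(x)|^2=||f||^2$. The paper compresses all of this into ``almost everything here is trivial'' and only writes out the $C^*$-identity and commutativity; you simply spell out the details it omits.
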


\begin{proof}
Almost everything here is trivial. Observe also that we have indeed:
\begin{eqnarray*}
||ff^*||
&=&\sup_{x\in X}|f(x)\overline{f(x)}|\\
&=&\sup_{x\in X}|f(x)|^2\\
&=&||f||^2
\end{eqnarray*}

Finally, we have $fg=gf$, since $f(x)g(x)=g(x)f(x)$ for any $x\in X$.
\end{proof}

Our claim now is that any commutative $C^*$-algebra appears in this way. This is a non-trivial result, which requires a number of preliminaries. Let us begin with:

\begin{definition}
The spectrum of an element $a\in A$ is the set
$$\sigma(a)=\left\{\lambda\in\mathbb C\Big|a-\lambda\not\in A^{-1}\right\}$$
where $A^{-1}\subset A$ is the set of invertible elements.
\end{definition}

As a basic example, the spectrum of a usual matrix $M\in M_N(\mathbb C)$ is the collection of its eigenvalues. Also, the spectrum of a continuous function $f\in C(X)$ is its image. In the case of the trivial algebra $A=\mathbb C$, the spectrum of an element is the element itself.

\bigskip

As a first, basic result regarding spectra, we have:

\index{shift}

\begin{proposition}
We have the following formula, valid for any $a,b\in A$:
$$\sigma(ab)\cup\{0\}=\sigma(ba)\cup\{0\}$$
Moreover, there are examples where $\sigma(ab)\neq\sigma(ba)$.
\end{proposition}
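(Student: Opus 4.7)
The plan is to reduce everything to the key claim that $1-ab$ is invertible if and only if $1-ba$ is invertible, and then handle the counterexample separately.

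For the inclusion $\sigma(ab)\cup\{0\}\subset\sigma(ba)\cup\{0\}$, I would take $\lambda\neq 0$ with $\lambda\notin\sigma(ba)$, and show $\lambda\notin\sigma(ab)$. Since $ab-\lambda = -\lambda(1-\lambda^{-1}ab)$ and similarly for $ba$, the problem reduces (by rescaling $a\to\lambda^{-1}a$) to the claim: if $1-ba$ is invertible, with inverse $c$, then $1-ab$ is invertible. The trick is the classical explicit formula
$$(1-ab)^{-1}=1+acb.$$
To verify this, I would use the identities $c-bac=c-cba=1$ coming from $(1-ba)c=c(1-ba)=1$, and compute
$$(1-ab)(1+acb)=1+acb-ab-abacb=1-ab+a(c-bac)b=1-ab+ab=1,$$
and symmetrically on the other side. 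This gives $1-ab\in A^{-1}$, hence $\lambda\notin\sigma(ab)$. By symmetry in $a,b$, the reverse inclusion holds too.

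For the counterexample showing $\sigma(ab)\neq\sigma(ba)$ can occur, I would use the unilateral shift on $H=l^2(\mathbb N)$. Concretely, let $S\in B(H)$ be defined by $Se_n=e_{n+1}$; its adjoint is $S^*e_n=e_{n-1}$ for $n\geq 1$ and $S^*e_0=0$. A direct computation gives $S^*S=1$, whereas $SS^*=1-P_0$, where $P_0$ is the rank-one projection onto $\mathbb Ce_0$. Hence $S^*S$ is invertible with $\sigma(S^*S)=\{1\}$, while $SS^*$ is not invertible, so $0\in\sigma(SS^*)$. Taking $a=S$ and $b=S^*$ therefore gives $\sigma(ab)\neq\sigma(ba)$, consistent with the first part of the statement since the two spectra only differ by $\{0\}$.

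The main obstacle, if any, is simply to discover or remember the explicit inverse formula $1+acb$; everything else is bookkeeping. The rest is a matter of organizing the reduction cleanly and choosing a transparent counterexample, and the shift operator is the standard and simplest choice.
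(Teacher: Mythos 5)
Your proof is correct and follows essentially the same route as the paper: the explicit inverse formula $1+acb$ for $(1-ab)^{-1}$ is just the mirror image of the paper's $1+bca$ computation, and the unilateral shift with $S^*S=1$, $SS^*=1-P_0$ is exactly the paper's counterexample. Nothing further is needed.
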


\begin{proof}
We first prove that we have the following implication:
$$1\notin\sigma(ab)\implies1\notin\sigma(ba)$$

Assume indeed that $1-ab$ is invertible, with inverse $c=(1-ab)^{-1}$. We have then $abc=cab=c-1$, and by using these identities, we obtain:
\begin{eqnarray*}
(1+bca)(1-ba)
&=&1+bca-ba-bcaba\\
&=&1+bca-ba-bca+ba\\
&=&1
\end{eqnarray*}

A similar computation shows that we have as well $(1-ba)(1+bca)=1$. We conclude that $1-ba$ is invertible, with inverse $1+bca$, which proves our claim. By multiplying by scalars, we deduce from this that we have, for any $\lambda\in\mathbb C-\{0\}$, as desired:
$$\lambda\notin\sigma(ab)\implies\lambda\notin\sigma(ba)$$ 

Regarding now the last claim, let us first recall that for usual matrices $a,b\in M_N(\mathbb C)$ we have $0\in\sigma(ab)\iff 0\in \sigma(ba)$, because $ab$ is invertible if any only if $ba$ is.

However, this latter fact fails for general operators on Hilbert spaces. As a basic example, we can take $a,b$ to be the shift $S(e_i)=e_{i+1}$ on the space $l^2(\mathbb N)$, and its adjoint. Indeed, we have $S^*S=1$, and $SS^*$ being the projection onto $e_0^\perp$, it is not invertible. 
\end{proof}

Given an element $a\in A$, and a rational function $f=P/Q$ having poles outside $\sigma(a)$, we can construct the element $f(a)=P(a)Q(a)^{-1}$. For simplicity, we write:
$$f(a)=\frac{P(a)}{Q(a)}$$

With this convention, we have the following result:

\index{rational function}
\index{rational functional calculus}

\begin{theorem}
We have the ``rational functional calculus'' formula
$$\sigma(f(a))=f(\sigma(a))$$
valid for any rational function $f\in\mathbb C(X)$ having poles outside $\sigma(a)$.
\end{theorem}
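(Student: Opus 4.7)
The plan is to handle polynomials first, and then bootstrap to rational functions by writing $f(a) - \lambda = \bigl(P(a) - \lambda Q(a)\bigr) Q(a)^{-1}$.

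For the polynomial case $f = P$, I would factor $P(X) - \lambda = c \prod_{i}(X - \alpha_i)$ over $\mathbb{C}$, where the $\alpha_i$ are the roots of $P - \lambda$. Since all factors commute, a substitution gives
$$P(a) - \lambda \cdot 1 = c \prod_{i}(a - \alpha_i).$$
The key lemma I need here is that a product of commuting elements in a unital algebra is invertible if and only if each factor is invertible. The nontrivial direction uses commutativity: if the full product has inverse $b$, then each $a - \alpha_i$ has as its inverse the product of $b$ with all the other factors. Granting this, $P(a) - \lambda$ fails to be invertible iff some $\alpha_i \in \sigma(a)$, iff $\lambda = P(\alpha_i)$ for some $\alpha_i \in \sigma(a)$, iff $\lambda \in P(\sigma(a))$.

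For the rational case $f = P/Q$, the hypothesis is that $Q$ has no zeros in $\sigma(a)$. I would first observe that by the polynomial case applied to $Q$, we have $0 \notin \sigma(Q(a)) = Q(\sigma(a))$, so $Q(a)$ is invertible and $f(a) = P(a) Q(a)^{-1}$ makes sense. Then for any $\lambda \in \mathbb{C}$ set $R_\lambda(X) = P(X) - \lambda Q(X)$, so that
$$f(a) - \lambda \cdot 1 = R_\lambda(a) \cdot Q(a)^{-1},$$
and invertibility of $f(a) - \lambda$ is equivalent to invertibility of $R_\lambda(a)$. By the polynomial case, this fails iff some root $\beta$ of $R_\lambda$ lies in $\sigma(a)$. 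Because $Q(\beta) \neq 0$ for $\beta \in \sigma(a)$, the identity $R_\lambda(\beta) = 0$ is equivalent to $f(\beta) = \lambda$. So $\lambda \in \sigma(f(a))$ iff $\lambda = f(\beta)$ for some $\beta \in \sigma(a)$, as required.

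The main obstacle is really the commutative-product lemma used to convert a factorization of $P(X) - \lambda$ into a criterion for invertibility of $P(a) - \lambda$; once this is clean, the rest is just bookkeeping. A secondary subtlety worth double-checking is that when some roots of $R_\lambda$ coincide, or when the leading coefficient vanishes (the case $\deg R_\lambda < \deg P$), the factorization and the equivalence still go through correctly; this is handled by treating $R_\lambda$ as a polynomial of its actual degree and noting that the constant $c$ appearing in the factorization is a nonzero scalar, which is always invertible.
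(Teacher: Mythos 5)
Your proposal is correct and follows essentially the same route as the paper: the polynomial case via the factorization $P(X)-\lambda=c\prod_i(X-\alpha_i)$, and the rational case via $F=P-\lambda Q$, with the same chain of equivalences. Your explicit treatment of the commuting-product invertibility lemma and of the degree/leading-coefficient edge cases just makes precise steps that the paper's proof uses implicitly.
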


\begin{proof}
In order to prove this result, we can proceed in two steps, as follows:

\medskip

(1) Assume first that we are in the polynomial case, $f\in\mathbb C[X]$. We pick $\lambda\in\mathbb C$, and we write $f(X)-\lambda=c(X-r_1)\ldots(X-r_n)$. We have then, as desired:
\begin{eqnarray*}
\lambda\notin\sigma(f(a))
&\iff&f(a)-\lambda\in A^{-1}\\
&\iff&c(a-r_1)\ldots(a-r_n)\in A^{-1}\\
&\iff&a-r_1,\ldots,a-r_n\in A^{-1}\\
&\iff&r_1,\ldots,r_n\notin\sigma(a)\\
&\iff&\lambda\notin f(\sigma(a))
\end{eqnarray*}

(2) Assume now that we are in the general case, $f\in\mathbb C(X)$. We pick $\lambda\in\mathbb C$, we write $f=P/Q$, and we set $F=P-\lambda Q$. By using (1), we obtain:
\begin{eqnarray*}
\lambda\in\sigma(f(a))
&\iff&F(a)\notin A^{-1}\\
&\iff&0\in\sigma(F(a))\\
&\iff&0\in F(\sigma(a))\\
&\iff&\exists\mu\in\sigma(a),F(\mu)=0\\
&\iff&\lambda\in f(\sigma(a))
\end{eqnarray*}

Thus, we have obtained the formula in the statement.
\end{proof}

Given an element $a\in A$, its spectral radius $\rho (a)$ is the radius of the smallest disk centered at $0$ containing $\sigma(a)$. We have the following key result:

\begin{theorem}
Let $A$ be a $C^*$-algebra.
\begin{enumerate}
\item The spectrum of a norm one element is in the unit disk.

\item The spectrum of a unitary element $(a^*=a^{-1}$) is on the unit circle. 

\item The spectrum of a self-adjoint element ($a=a^*$) consists of real numbers. 

\item The spectral radius of a normal element ($aa^*=a^*a$) is equal to its norm.
\end{enumerate}
\end{theorem}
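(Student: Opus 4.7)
The plan is to handle the four parts in increasing order of difficulty, each building on standard $C^*$-identities together with the rational functional calculus (Theorem 1.3) already established.

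For part (1), I would argue by the Neumann/geometric series: if $|\lambda|>\|a\|$, then $\|a/\lambda\|<1$, so $1-a/\lambda$ is invertible in the Banach algebra $A$ with inverse $\sum_{n\geq 0}(a/\lambda)^{n}$, which converges absolutely. Hence $a-\lambda=-\lambda(1-a/\lambda)$ is invertible, so $\lambda\notin\sigma(a)$. This immediately gives $\sigma(a)\subset\{|\lambda|\leq\|a\|\}$, and specializing to $\|a\|\leq 1$ gives (1). For part (2), note first that $\|a\|^{2}=\|aa^{*}\|=\|1\|=1$, so $\|a\|=1$ and by (1), $\sigma(a)\subset\overline{\mathbb D}$. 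Since $a^{*}=a^{-1}$ exists in $A$, we have $0\notin\sigma(a)$, so the rational function $f(z)=1/z$ has its pole outside $\sigma(a)$; Theorem 1.3 gives $\sigma(a^{-1})=\{\lambda^{-1}:\lambda\in\sigma(a)\}$. Applying the same bound to $a^{-1}=a^{*}$, which also has norm $1$, yields $|\lambda^{-1}|\leq 1$, hence $|\lambda|=1$.

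For part (3), the clean trick is to apply (1) to translates of $a$. Suppose $a=a^{*}$ and $\lambda=x+iy\in\sigma(a)$ with $x,y\in\mathbb R$; for any real $t$, the rational calculus gives $\lambda+it\in\sigma(a+it)$, hence
\[
x^{2}+(y+t)^{2}=|\lambda+it|^{2}\leq\|a+it\|^{2}=\|(a+it)(a-it)\|=\|a^{2}+t^{2}\|\leq\|a\|^{2}+t^{2},
\]
using the $C^{*}$-identity and $a=a^{*}$. This gives $2yt\leq\|a\|^{2}-x^{2}-y^{2}$ for every real $t$, which forces $y=0$. So $\sigma(a)\subset\mathbb R$.

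For part (4), the strategy is to show that $\|a^{2}\|=\|a\|^{2}$ whenever $a$ is normal, iterate, and then invoke Gelfand's spectral radius formula $\rho(a)=\lim_{n}\|a^{n}\|^{1/n}$. For the key identity, since $a$ is normal, so is $a^{2}$, and
\[
\|a^{2}\|^{2}=\|(a^{2})(a^{2})^{*}\|=\|(aa^{*})^{2}\|=\|aa^{*}\|^{2}=\|a\|^{4},
\]
where the middle equality uses that $aa^{*}$ is self-adjoint, so the $C^{*}$-identity gives $\|b^{2}\|=\|bb^{*}\|=\|b\|^{2}$ for $b=aa^{*}$. Iterating, $\|a^{2^{n}}\|=\|a\|^{2^{n}}$, and Gelfand's formula along the subsequence $n=2^{k}$ gives $\rho(a)=\|a\|$. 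The main obstacle here is that Gelfand's formula itself is nontrivial (it rests on holomorphy of the resolvent and Hadamard's estimate for the Laurent series of $(a-\lambda)^{-1}$ at $\infty$), so that result would need to be quoted or proved separately; part (3)'s $a+it$ trick is the other delicate point, but once one tries translating by $it$, the $C^{*}$-identity does all the work.
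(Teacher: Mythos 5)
Your argument is correct, and parts (1) and (2) coincide with the paper's proof (Neumann series, then the rational function $f(z)=z^{-1}$ applied to $a$ and $a^{-1}=a^*$); note only that the rational functional calculus you invoke is Theorem 1.8 of the paper, not Theorem 1.3. Where you genuinely diverge is part (3): the paper proves reality of the spectrum by feeding the Cayley transform $f(z)=(z+it)/(z-it)$ into (2), checking that $f(a)$ is unitary, whereas you translate by $it$ and use the $C^*$-identity to get $|\lambda+it|^2\le\|(a+it)(a-it)\|=\|a^2+t^2\|\le\|a\|^2+t^2$, forcing the imaginary part to vanish as $t\to\pm\infty$. Your route is slightly more elementary — it needs no inverses, no part (2), and only the trivial spectral shift $\sigma(a+it)=\sigma(a)+it$ rather than the rational calculus — while the paper's Cayley-transform argument has the advantage of exhibiting the unitary $f(a)$ explicitly, a construction reused elsewhere. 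In part (4) the two proofs share the same skeleton (power norms along $n=2^k$ plus the spectral radius formula); the differences are that you establish $\|a^{2^n}\|=\|a\|^{2^n}$ directly for normal $a$ by iterating $\|a^2\|=\|a\|^2$, whereas the paper first treats the self-adjoint case and then passes to normal elements via $\rho(a)^2=\rho(aa^*)=\|a\|^2$, and that you quote Gelfand's formula $\rho(a)=\lim_n\|a^n\|^{1/n}$ as known while the paper sketches the needed inequality $\rho\ge\lim_n\|a^n\|^{1/n}$ by the contour-integral/Laurent-series argument. Since you flag this dependence explicitly, and the paper's own treatment of it is no more than a sketch, this is a fair citation rather than a gap.
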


\begin{proof}
We use the various results established above.

\medskip

(1) This comes from the following formula, valid when $||a||<1$:
$$\frac{1}{1-a}=1+a+a^2+\ldots$$

(2) Assuming $a^*=a^{-1}$, we have the following norm computations:
$$||a||=\sqrt{||aa^*||}=\sqrt{1}=1$$
$$||a^{-1}||=||a^*||=||a||=1$$

If we denote by $D$ the unit disk, we obtain from this, by using (1):
$$||a||=1\implies\sigma(a)\subset D$$
$$||a^{-1}||=1\implies\sigma(a^{-1})\subset D$$

On the other hand, by using the rational function $f(z)=z^{-1}$, we have:
$$\sigma(a^{-1})\subset D\implies \sigma(a)\subset D^{-1}$$

Now by putting everything together we obtain, as desired:
$$\sigma(a)\subset D\cap D^{-1}=\mathbb T$$

(3) This follows by using (2), and the rational function $f(z)=(z+it)/(z-it)$, with $t\in\mathbb R$. Indeed, for $t>>0$ the element $f(a)$ is well-defined, and we have:
$$\left(\frac{a+it}{a-it}\right)^*
=\frac{a-it}{a+it}
=\left(\frac{a+it}{a-it}\right)^{-1}$$

Thus $f(a)$ is a unitary, and by (2) its spectrum is contained in $\mathbb T$. We conclude that we have $f(\sigma(a))=\sigma(f(a))\subset\mathbb T$, and so $\sigma(a)\subset f^{-1}(\mathbb T)=\mathbb R$, as desired.

\medskip

(4) We have $\rho(a)\leq ||a||$ from (1). Conversely, given $\rho>\rho(a)$, we have:
$$
\int_{|z|=\rho}\frac{z^n}{z -a}\,dz 
=\sum_{k=0}^\infty\left(\int_{|z|=\rho}z^{n-k-1}dz\right) a^k
=a^{n-1}$$

By applying the norm and taking $n$-th roots we obtain:
$$\rho\geq\lim_{n\to\infty}||a^n||^{1/n}$$

In the case $a=a^*$ we have $||{a^n}||=||{a}||^n$ for any exponent of the form $n=2^k$, and by taking $n$-th roots we get $\rho\geq ||{a}||$. This gives the missing inequality, namely:
$$\rho(a)\geq ||a||$$

In the general case, $aa^*=a^*a$, we have $a^n(a^n)^*=(aa^*)^n$. We obtain from this $\rho(a)^2=\rho(aa^*)$, and since $aa^*$ is self-adjoint, we get $\rho(aa^*)=||a||^2$, and we are done.
\end{proof}

Summarizing, we have so far a collection of useful results regarding the spectra of the elements in $C^*$-algebras, which are quite similar to the results regarding the eigenvalues of the usual matrices. We will heavily use these results, in what follows.

\section*{1b. Gelfand theorem}

In this section we discuss the two main results regarding the $C^*$-algebras. First we have the Gelfand theorem, which is particularly interesting for us, in view of our quantum space and quantum group motivations. Then we have the GNS representation theorem, that we will use less often, but which is something fundamental too.

\bigskip

The Gelfand theorem, which will be fundamental for us, is as follows:

\index{Gelfand theorem}

\begin{theorem}[Gelfand]
Any commutative $C^*$-algebra is the form 
$$A=C(X)$$
with its ``spectrum'' $X=Spec(A)$ appearing as the space of characters $\chi :A\to\mathbb C$.
\end{theorem}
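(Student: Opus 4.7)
The plan is to realize $X$ as the set of characters of $A$, equipped with the weak-$*$ topology inherited from $A^*$, and then to exhibit the Gelfand transform
$$\mathrm{ev}:A\to C(X),\qquad \mathrm{ev}(a)(\chi)=\chi(a),$$
as an isometric $*$-isomorphism.

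First I would note that any character $\chi$ satisfies $\chi(a)\in\sigma(a)$: indeed $a-\chi(a)$ lies in $\ker\chi$, a proper ideal, hence is non-invertible. Combined with Theorem 1.9(1) this gives $|\chi(a)|\leq||a||$, so $X$ sits inside the closed unit ball of $A^*$. The defining relations of $X$ (multiplicativity and $\chi(1)=1$) are preserved under pointwise limits, so $X$ is weak-$*$ closed; Banach--Alaoglu then yields $X$ compact Hausdorff. The map $\mathrm{ev}$ is evidently a unital algebra homomorphism into $C(X)$. To see that it intertwines involutions, I would write $a=x+iy$ with $x,y$ self-adjoint; by Theorem 1.9(3) the spectra $\sigma(x),\sigma(y)$ are real, and since $\chi(x),\chi(y)$ lie in them, they are real, whence $\chi(a^*)=\chi(x)-i\chi(y)=\overline{\chi(a)}$.

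The central step is the identification $\sigma(a)=\{\chi(a):\chi\in X\}$. The inclusion $\supset$ is immediate from the above. For $\subset$, I would pick $\lambda\in\sigma(a)$, embed $a-\lambda$ in a maximal ideal $M\subset A$, and show that $A/M\cong\mathbb C$; the quotient map is then the desired character. This identification is the Gelfand--Mazur theorem, and is the main obstacle of the proof: I would prove it via Liouville, applied to the $A/M$-valued entire resolvent $z\mapsto(b-z)^{-1}$ of a supposed non-scalar $b\in A/M$. Analyticity uses that $A/M$ is a field, so $b-z\cdot 1$ is invertible for every $z\in\mathbb C$; boundedness follows from the decay $||(b-z)^{-1}||=O(1/|z|)$ at infinity; Liouville then forces the resolvent constant and $b$ scalar, a contradiction. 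One also needs $M$ to be closed so that $A/M$ is a Banach algebra, and this follows from the fact that the set of non-invertibles is closed, so the closure of any proper ideal remains proper.

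With the spectral picture in hand, the rest is quick. Every $a\in A$ is normal since $A$ is commutative, so Theorem 1.9(4) gives
$$||\mathrm{ev}(a)||_\infty=\sup_{\chi\in X}|\chi(a)|=\rho(a)=||a||,$$
so $\mathrm{ev}$ is isometric, hence injective with closed image. That image is a unital, conjugation-closed subalgebra of $C(X)$ separating points of $X$ (two characters agreeing on all of $A$ are equal), so Stone--Weierstrass forces $\mathrm{ev}(A)=C(X)$, completing the proof.
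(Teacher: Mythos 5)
Your proof is correct and follows essentially the same route as the paper's: characters with the weak-$*$ topology, involutivity via the self-adjoint decomposition and real spectra, isometry via $\rho(a)=||a||$ for normal elements, and Stone--Weierstrass for surjectivity. The only difference is one of completeness: the paper simply asserts $||ev_a||=\rho(a)$, which silently requires knowing that the spectral values of $a$ are attained by characters, and your maximal-ideal/Gelfand--Mazur argument (closedness of maximal ideals, $A/M\cong\mathbb C$ via the Liouville resolvent argument) is exactly the missing justification for that step.
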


\begin{proof}
Given a commutative $C^*$-algebra $A$, we can define indeed $X$ to be the set of characters $\chi :A\to\mathbb C$, with the topology making continuous all the evaluation maps $ev_a:\chi\to\chi(a)$. Then $X$ is a compact space, and $a\to ev_a$ is a morphism of algebras:
$$ev:A\to C(X)$$

(1) We first prove that $ev$ is involutive. We use the following formula, which is similar to the $z=Re(z)+iIm(z)$ formula for the usual complex numbers:
$$a=\frac{a+a^*}{2}-i\cdot\frac{i(a-a^*)}{2}$$

Thus it is enough to prove the equality $ev_{a^*}=ev_a^*$ for self-adjoint elements $a$. But this is the same as proving that $a=a^*$ implies that $ev_a$ is a real function, which is in turn true, because $ev_a(\chi)=\chi(a)$ is an element of $\sigma(a)$, contained in $\mathbb R$.

\medskip

(2) Since $A$ is commutative, each element is normal, so $ev$ is isometric:
$$||ev_a||
=\rho(a)
=||a||$$

(3) It remains to prove that $ev$ is surjective. But this follows from the Stone-Weierstrass theorem, because $ev(A)$ is a closed subalgebra of $C(X)$, which separates the points.
\end{proof}

The Gelfand theorem has some important philosophical consequences. Indeed, in view of this theorem, we can formulate the following definition:

\index{compact quantum space}
\index{quantum space}

\begin{definition}
Given an arbitrary $C^*$-algebra $A$, we write 
$$A=C(X)$$
and call $X$ a compact quantum space.
\end{definition}

This might look like something informal, but it is not. Indeed, in rigorous mathematical parlance, we can define the category of the compact quantum spaces to be the category of the $C^*$-algebras, with the arrows reversed. And that's all. QED.
 
\bigskip

When $A$ is commutative, the space $X$ considered above exists indeed, as a Gelfand spectrum, $X=Spec(A)$. In general, $X$ is something rather abstract, and our philosophy here will be that of studying of course $A$, but formulating our results in terms of $X$. For instance whenever we have a morphism $\Phi:A\to B$, we will write $A=C(X),B=C(Y)$, and rather speak of the corresponding morphism $\phi:Y\to X$. And so on.

\bigskip

Less enthusiastically now, we will see a bit later, after developing some more theory, that this formalism has its limitations, and needs a fix. But more on this later.

\bigskip

As a first consequence of the Gelfand theorem, we can extend Theorem 1.8 above to the case of the normal elements ($aa^*=a^*a$), in the following way:

\index{normal element}
\index{continuous functional calculus}

\begin{proposition}
Assume that $a\in A$ is normal, and let $f\in C(\sigma(a))$.
\begin{enumerate}
\item We can define $f(a)\in A$, with $f\to f(a)$ being a morphism of $C^*$-algebras.

\item We have the ``continuous functional calculus'' formula $\sigma(f(a))=f(\sigma(a))$.
\end{enumerate}
\end{proposition}

\begin{proof}
Since $a$ is normal, the $C^*$-algebra $<a>$ that is generates is commutative, so if we denote by $X$ the space formed by the characters $\chi:<a>\to\mathbb C$, we have:
$$<a>=C(X)$$

Now since the map $X\to\sigma(a)$ given by evaluation at $a$ is bijective, we obtain:
$$<a>=C(\sigma(a))$$

Thus, we are dealing with usual functions, and this gives all the assertions.
\end{proof}

As another consequence of the Gelfand theorem, we can develop as well the theory of positive elements, in analogy with the theory of positive operators, as follows:

\index{positive element}

\begin{theorem}
For a normal element $a\in A$, the following are equivalent:
\begin{enumerate}
\item $a$ is positive, in the sense that $\sigma(a)\subset[0,\infty)$.

\item $a=b^2$, for some $b\in A$ satisfying $b=b^*$.

\item $a=cc^*$, for some $c\in A$.
\end{enumerate}
\end{theorem}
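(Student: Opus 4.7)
The plan is to handle $(1) \Rightarrow (2) \Rightarrow (3)$ by direct appeals to the continuous functional calculus of Theorem 1.11, and to tackle the converse $(3) \Rightarrow (1)$ by a negative-part decomposition argument combined with the spectral permanence from Proposition 1.7.

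For $(1) \Rightarrow (2)$, since $a$ is normal with $\sigma(a) \subset [0,\infty)$, the function $f(t)=\sqrt{t}$ is continuous on $\sigma(a)$, so $b=f(a)$ is well-defined; $b$ is self-adjoint because $f$ is real-valued, and $b^2=a$ because $f^2=\mathrm{id}$. For $(2) \Rightarrow (3)$ one just takes $c=b$.

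The substantive direction is $(3) \Rightarrow (1)$. Since $a=cc^*$ is self-adjoint, Theorem 1.10 gives $\sigma(a)\subset\mathbb R$, so I can apply functional calculus with $f_\pm(t)=\max(\pm t,0)$ to form $a_\pm = f_\pm(a)$, both positive and self-adjoint, with $a=a_+-a_-$ and $a_+a_-=a_-a_+=0$. The goal is to force $a_-=0$. To this end, set $b=c^*a_-$; then a direct computation, using $cc^*=a$ and $a_+a_-=0$, yields
$$b^*b = a_- c c^* a_- = a_-(a_+-a_-)a_- = -a_-^3,$$
so $\sigma(b^*b)\subset(-\infty,0]$. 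Writing $b=h+ik$ with $h,k$ self-adjoint, a short calculation gives $bb^*+b^*b = 2h^2+2k^2$, hence $bb^* = 2h^2+2k^2+a_-^3$, a sum of three squares/cubes of self-adjoint elements, each with non-negative spectrum by Theorem 1.11.

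The main obstacle is showing that this sum itself has non-negative spectrum, i.e., that positive elements form a cone, without circularly invoking the very equivalence being proved. I would insert a short lemma: for self-adjoint $p$, $\sigma(p)\subset[0,\infty)$ iff $\|p-t\|\leq t$ for every (equivalently some) $t\geq\|p\|$. One direction uses $\|p-t\|=\rho(p-t)=\max_{\lambda\in\sigma(p)}|t-\lambda|$, the other combines $\sigma(p-t)\subset\overline{B(0,t)}$ with $\sigma(p)\subset\mathbb R$. The triangle inequality on norms then yields closure under addition. Applying this to $bb^*$ gives $\sigma(bb^*)\subset[0,\infty)$; combined with Proposition 1.7, which gives $\sigma(bb^*)\cup\{0\}=\sigma(b^*b)\cup\{0\}\subset(-\infty,0]$, I conclude $\sigma(bb^*)\subset\{0\}$. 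Normality of $bb^*$ and Theorem 1.10(4) give $\|bb^*\|=\rho(bb^*)=0$, so the $C^*$-identity forces $b=0$, whence $-a_-^3=b^*b=0$. Since $a_-$ is positive, functional calculus gives $\sigma(a_-)=\{0\}$ and thus $a_-=0$, as required.
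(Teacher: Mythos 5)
Your proof is correct and follows essentially the same route as the paper: for $(3)\Rightarrow(1)$ both arguments multiply $c$ by the negative part $a_-$ of $a$ (your $b=c^*a_-$ is just the adjoint of the paper's auxiliary element built from $<cc^*>$), decompose it into self-adjoint parts to get $bb^*+b^*b=2h^2+2k^2$, and conclude via the spectral permanence $\sigma(bb^*)\cup\{0\}=\sigma(b^*b)\cup\{0\}$ from Proposition 1.7. The only difference is one of detail: the paper tacitly uses that sums of positive elements are positive (in its step ``thus $d^*d\geq0$''), whereas you state and prove the $\|p-t\|\leq t$ characterization to make this cone property explicit and non-circular, which is a welcome sharpening rather than a different method.
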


\begin{proof}
This is something very standard, as follows:

\medskip

$(1)\implies(2)$ This follows from Proposition 1.12, because we can use the function $f(z)=\sqrt{z}$, which is well-defined on $\sigma(a)\subset[0,\infty)$, and so set $b=\sqrt{a}$. 

\medskip

$(2)\implies(3)$ This is trivial, because we can set $c=b$. 

\medskip

$(2)\implies(1)$ Observe that this is clear too, because we have:
\begin{eqnarray*}
\sigma(a)
&=&\sigma(b^2)\\
&=&\sigma(b)^2\\
&\subset&[0,\infty)
\end{eqnarray*}

$(3)\implies(1)$ We proceed by contradiction. By multiplying $c$ by a suitable element of $<cc^*>$, we are led to the existence of an element $d\neq0$ satisfying:
$$-dd^*\geq0$$

By writing now $d=x+iy$ with $x=x^*,y=y^*$ we have:
$$dd^*+d^*d=2(x^2+y^2)\geq0$$

Thus $d^*d\geq0$. But this contradicts the elementary fact that $\sigma(dd^*),\sigma(d^*d)$ must coincide outside $\{0\}$, coming from Proposition 1.7 above.
\end{proof}

Let us review now the other fundamental result regarding the $C^*$-algebras, namely the representation theorem of Gelfand, Naimark and Segal. We first have:

\begin{proposition}
Let $A$ be a commutative $C^*$-algebra, write $A=C(X)$, with $X$ being a compact space, and let $\mu$ be a positive measure on $X$. We have then an embedding
$$A\subset B(H)$$
where $H=L^2(X)$, with $f\in A$ corresponding to the operator $g\to fg$.
\end{proposition}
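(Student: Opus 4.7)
The plan is to exhibit the candidate representation $\pi : A \to B(H)$ given by $\pi(f)g = fg$, and then verify in order that (i) each $\pi(f)$ is a bounded operator, (ii) $\pi$ is a $*$-homomorphism, and (iii) $\pi$ is isometric, hence an embedding. Each step is a one-line computation, and the only subtle point is that the final step requires $\mu$ to have full support on $X$ (otherwise $\pi$ has a kernel, namely the functions vanishing on $\mathrm{supp}(\mu)$); this will be our tacit assumption.

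For step (i), since $X$ is compact every $f\in C(X)$ is bounded, and for $g\in L^2(X,\mu)$ we have the pointwise estimate $|f(x)g(x)|^2 \leq \|f\|_\infty^2|g(x)|^2$. Integrating against $\mu$ gives $\|fg\|_2\leq\|f\|_\infty\|g\|_2$, so $\pi(f)\in B(H)$ with $\|\pi(f)\|\leq\|f\|_\infty$. For step (ii), linearity and multiplicativity of $\pi$ are immediate from the pointwise definition of the operations in $C(X)$, and the $*$-property follows from
$$\langle \pi(f)g_1,g_2\rangle = \int_X f g_1\overline{g_2}\,d\mu = \int_X g_1\overline{\overline{f}g_2}\,d\mu = \langle g_1,\pi(f^*)g_2\rangle,$$
which identifies the Hilbert adjoint of $\pi(f)$ with $\pi(f^*)$.

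For step (iii), the reverse inequality $\|\pi(f)\|\geq\|f\|_\infty$ is where the full-support hypothesis enters. Given $\varepsilon>0$, by continuity of $f$ pick an open set $U\subset X$ on which $|f|>\|f\|_\infty-\varepsilon$; since $\mu$ has full support, $\mu(U)>0$, so the characteristic function $g=\chi_U$ lies in $L^2$ with $\|g\|_2^2=\mu(U)$, and
$$\|\pi(f)g\|_2^2 = \int_U |f|^2\,d\mu \geq (\|f\|_\infty-\varepsilon)^2\mu(U) = (\|f\|_\infty-\varepsilon)^2\|g\|_2^2.$$
Letting $\varepsilon\to 0$ gives $\|\pi(f)\|\geq\|f\|_\infty$, so $\pi$ is isometric and in particular injective.

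The main ``obstacle'' is really just the well-posedness issue flagged above: without a support hypothesis on $\mu$, the map $\pi$ is only a quotient onto $L^\infty(\mathrm{supp}\,\mu)$-multipliers, and the statement must be read with the understanding that $\mu$ is faithful. Modulo that, the proof is a sequence of routine checks, since the operator norm of a multiplication operator on $L^2$ is precisely the essential sup of the multiplier.
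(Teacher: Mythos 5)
Your proof is correct and takes essentially the same route as the paper: the multiplication representation $T_f(g)=fg$ on $H=L^2(X,\mu)$, with boundedness coming from the same estimate $\|fg\|_2\leq\|f\|_\infty\|g\|_2$. You are in fact slightly more careful than the paper, which merely asserts that $f\to T_f$ is ``linear, involutive, continuous, and injective'' without comment; your remark that injectivity (indeed isometry) needs $\mu$ to have full support makes explicit a hypothesis the paper leaves tacit.
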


\begin{proof}
Given $f\in C(X)$, consider the following operator, on the space $H=L^2(X)$:
$$T_f(g)=fg$$

Observe that $T_f$ is indeed well-defined, and bounded as well, because:
$$||fg||_2
=\sqrt{\int_X|f(x)|^2|g(x)|^2d\mu(x)}
\leq||f||_\infty||g||_2$$

The application $f\to T_f$ being linear, involutive, continuous, and injective as well, we obtain in this way a $C^*$-algebra embedding $A\subset B(H)$, as claimed.
\end{proof}

In general, the idea will be that of extending this construction. We will need:

\begin{definition}
Consider a linear map $\varphi:A\to\mathbb C$.
\begin{enumerate}
\item $\varphi$ is called positive when $a\geq0\implies\varphi(a)\geq0$.

\item $\varphi$ is called faithful and positive when $a>0\implies\varphi(a)>0$.
\end{enumerate}
\end{definition}

In the commutative case, $A=C(X)$, the positive linear forms appear as follows, with $\mu$ being positive, and strictly positive if we want $\varphi$ to be faithful and positive:
$$\varphi(f)=\int_Xf(x)d\mu(x)$$

In general, the positive linear forms can be thought of as being integration functionals with respect to some underlying ``positive measures''. We can use them as follows:

\index{GNS construction}

\begin{proposition}
Let $\varphi:A\to\mathbb C$ be a positive linear form.
\begin{enumerate}
\item $<a,b>=\varphi(ab^*)$ defines a generalized scalar product on $A$.

\item By separating and completing we obtain a Hilbert space $H$.

\item $\pi (a):b\to ab$ defines a representation $\pi:A\to B(H)$.

\item If $\varphi$ is faithful in the above sense, then $\pi$ is faithful.
\end{enumerate}
\end{proposition}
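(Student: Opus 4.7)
The plan is to run the Gelfand-Naimark-Segal (GNS) construction. Parts (1)--(2) are essentially formal, part (3) is where the bulk of the work lies (the boundedness of left multiplication in the GNS seminorm), and part (4) is a short injectivity argument.

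For (1), sesquilinearity of $<a,b>=\varphi(ab^*)$ follows at once from the linearity of $\varphi$ and the antilinearity of the involution in the appropriate slot. Positive semi-definiteness is $<a,a>=\varphi(aa^*)\geq 0$, which holds because $aa^*\geq 0$ by construction and $\varphi$ is positive (Theorem 1.12). Conjugate symmetry reduces to the standard fact that positive linear forms on $C^*$-algebras satisfy $\overline{\varphi(x)}=\varphi(x^*)$, itself obtained by writing $x=h+ik$ with $h=h^*$, $k=k^*$ and invoking Theorem 1.10(3), which ensures $\varphi(h),\varphi(k)\in\mathbb R$. For (2), the discriminant argument already used after Definition 1.1 gives the Cauchy-Schwarz inequality $|<a,b>|^2\leq <a,a><b,b>$; consequently the null set $N=\{a:<a,a>=0\}$ is a linear subspace, the quotient $A/N$ carries a genuine positive-definite inner product, and we let $H$ be its Hilbert-space completion.

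For (3), the key step is to show that left multiplication by $c\in A$ descends to a well-defined bounded operator $\pi(c)$ on $A/N$ and extends by continuity to $H$, with $\|\pi(c)\|\leq\|c\|$. Both the descent (i.e.\ that $N$ is stable under the action) and the boundedness reduce to a GNS-type estimate of the shape
$$\varphi\bigl((cb)(cb)^*\bigr)\leq\|c\|^2\,\varphi(bb^*),$$
which ultimately rests on the $C^*$-algebraic operator inequality $c^*c\leq\|c\|^2\cdot 1$, itself an application of Theorem 1.10 to the positive element $c^*c$. From this, the continuous functional calculus of Theorem 1.11 produces an operator inequality whose image under the positive $\varphi$ yields the estimate. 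This is the step I expect to be the main obstacle, since it is the only point at which the full $C^*$-structure of $A$ is genuinely used (the Banach-$*$-algebra axioms alone are not enough), and it is also what fixes the correct sidedness for which the action $b\mapsto cb$ survives the quotient. Once boundedness is in hand, multiplicativity $\pi(cd)=\pi(c)\pi(d)$ is immediate from associativity in $A$.

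For (4), faithfulness of $\varphi$ means exactly that $N=\{0\}$, so the canonical map $A\to H$ is injective with dense image. If $\pi(c)=0$, then $cb=0$ in $H$, and hence in $A$, for every $b\in A$; taking $b=1$ in the unital setting (or an approximate unit otherwise) then forces $c=0$, so $\pi$ is faithful.
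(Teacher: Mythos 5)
You have the right overall strategy (GNS), and (1), (2), (4) are handled correctly. But the crux of (3), which you rightly single out as the delicate step, contains a genuine gap: the estimate
$$\varphi\bigl((cb)(cb)^*\bigr)\leq\|c\|^2\,\varphi(bb^*)$$
is \emph{false} for a general positive linear form $\varphi$, and no operator inequality derived from $c^*c\leq\|c\|^2$ will yield it. The left side is $\varphi(cbb^*c^*)$, whereas what $c^*c\leq\|c\|^2$ gives upon conjugation is $b^*c^*cb\leq\|c\|^2\,b^*b$ and $acc^*a^*\leq\|c\|^2\,aa^*$, neither of which matches. Concretely, in $A=M_2(\mathbb C)$ take $\varphi(x)={\rm Tr}(xp)$ with $p={\rm diag}(1,0)$; for $b=e_{22}$ and $c=e_{12}$ one has $\varphi(bb^*)=0$ but $\varphi\bigl((cb)(cb)^*\bigr)=\varphi(e_{11})=1$. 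In particular the null set $N=\{a:\varphi(aa^*)=0\}$ is not stable under left multiplication, so $\pi(c):b\mapsto cb$ does not even descend to $A/N$, let alone extend to a bounded operator.

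The underlying problem is a sidedness mismatch which is already present in the statement you were asked to prove. With $\|a\|^2=\varphi(aa^*)$ the inequality $acc^*a^*\leq\|c\|^2 aa^*$ shows that $N$ is a \emph{right} ideal, so it is the right action $b\mapsto bc$ that descends and is bounded; with $\|a\|^2=\varphi(a^*a)$ the inequality $b^*c^*cb\leq\|c\|^2 b^*b$ makes $N$ a \emph{left} ideal, and left multiplication descends. You correctly observed that the $C^*$-inequality is ``what fixes the correct sidedness,'' but then took both conventions from the statement --- $\varphi(ab^*)$ together with $b\mapsto ab$ --- at face value without checking that they are compatible, and in fact they clash. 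To repair the argument (and the statement), either replace $\langle a,b\rangle=\varphi(ab^*)$ by $\langle a,b\rangle=\varphi(b^*a)$ and keep left multiplication, so the estimate becomes $\varphi(b^*c^*cb)\leq\|c\|^2\varphi(b^*b)$ and your argument goes through verbatim, or keep the form $\varphi(ab^*)$ and use the right action $\pi(a):b\mapsto ba$.
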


\begin{proof}
Almost everything here is straightforward, as follows:

\medskip

(1) This is clear from definitions, and from Theorem 1.13.

\medskip

(2) This is a standard procedure, which works for any scalar product.

\medskip

(3) All the verifications here are standard algebraic computations.

\medskip

(4) This follows indeed from $a\neq0\implies\pi(aa^*)\neq0\implies\pi(a)\neq0$.
\end{proof}

In order to establish the GNS theorem, it remains to prove that any $C^*$-algebra has a faithful and positive linear form $\varphi:A\to\mathbb C$. This is something more technical:

\begin{proposition}
Let $A$ be a $C^*$-algebra.
\begin{enumerate}
\item Any positive linear form $\varphi:A\to\mathbb C$ is continuous.

\item A linear form $\varphi$ is positive iff there is a norm one $h\in A_+$ such that $||\varphi||=\varphi(h)$.

\item For any $a\in A$ there exists a positive norm one form $\varphi$ such that $\varphi(aa^*)=||a||^2$.

\item If $A$ is separable there is a faithful positive form $\varphi:A\to\mathbb C$.
\end{enumerate}
\end{proposition}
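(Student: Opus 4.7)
The four claims form a dependency chain: (1) gives continuity, (2) a workable criterion for positivity, (3) uses that criterion together with Gelfand and Hahn--Banach, and (4) is an averaging argument built on (3). For (1), I would bound $\varphi$ in stages: if $a \in A_+$ with $\|a\| \leq 1$, then $\sigma(a) \subset [0,1]$ by Theorems 1.9 and 1.12, so $1-a \in A_+$ and positivity gives $\varphi(a) \leq \varphi(1)$; for $a = a^*$ of norm at most $1$, continuous functional calculus (Theorem 1.11) inside the commutative algebra $\langle a \rangle$ splits $a = a_+ - a_-$ with $a_\pm \in A_+$ and $\|a_\pm\| \leq 1$, giving $|\varphi(a)| \leq 2\varphi(1)$; the Cartesian decomposition $a = (a+a^*)/2 + i(a - a^*)/(2i)$ finally yields $\|\varphi\| \leq 4\varphi(1)$.

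For (2), the direction ``positive implies the formula holds with $h = 1$'' comes from Cauchy--Schwarz for the sesquilinear form $\varphi(\cdot\,\cdot^*)$ (Proposition 1.24): $|\varphi(a)|^2 \leq \varphi(aa^*)\varphi(1)$, combined with $\|a\|^2 \cdot 1 - aa^* \in A_+$, gives $\|\varphi\| \leq \varphi(1)$, and the reverse inequality is trivial. For the converse, normalize $\|\varphi\| = \varphi(1) = 1$. First, I show $\varphi(a) \in \mathbb R$ for $a = a^*$: writing $\varphi(a) = \alpha + i\beta$ and computing $\|a - i\lambda\|^2 = \|a\|^2 + \lambda^2$ via functional calculus on $\langle a \rangle$, the inequality $\alpha^2 + (\beta - \lambda)^2 \leq \|a\|^2 + \lambda^2$ holding for all real $\lambda$ forces $\beta = 0$. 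Then for $a \in A_+$ with $\|a\| \leq 1$, the element $a - 1/2$ lies in $\langle a \rangle$ with norm at most $1/2$, so $|\varphi(a) - 1/2| \leq 1/2$, giving $\varphi(a) \geq 0$.

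For (3), I apply Gelfand (Theorem 1.10) to the commutative algebra $B = \langle aa^* \rangle \simeq C(\sigma(aa^*))$: since $aa^* \in A_+$ and Theorem 1.9 gives $\|aa^*\| = \|a\|^2$ equal to the spectral radius, the value $\|a\|^2$ lies in $\sigma(aa^*)$, and evaluation there yields a character $\psi : B \to \mathbb C$ with $\psi(1) = 1 = \|\psi\|$ and $\psi(aa^*) = \|a\|^2$. Hahn--Banach extends $\psi$ to a linear form $\varphi$ on $A$ with $\|\varphi\| = 1 = \varphi(1)$, and the converse of (2) makes $\varphi$ positive.

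For (4), pick a dense sequence $\{a_n\}$ in $A$, use (3) to produce positive norm-one forms $\varphi_n$ with $\varphi_n(a_n a_n^*) = \|a_n\|^2$, and set $\varphi = \sum_{n \geq 1} 2^{-n} \varphi_n$, a convergent series in the dual norm. Then $\varphi$ is positive with $\|\varphi\| \leq 1$. For faithfulness, given $c \in A_+$ nonzero, let $b = c^{1/2}$ from Theorem 1.12 and choose $a_{n_k} \to b$; then $a_{n_k}a_{n_k}^* \to bb^* = c$ in norm, and $|\varphi_{n_k}(c) - \|a_{n_k}\|^2| \leq \|a_{n_k}a_{n_k}^* - c\| \to 0$, so $\varphi_{n_k}(c) \to \|b\|^2 > 0$; picking $k$ large, the single term $2^{-n_k}\varphi_{n_k}(c)$ is strictly positive, so $\varphi(c) > 0$. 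The main obstacle is extracting $\beta = 0$ in the converse of (2); once that self-adjointness step is in hand, everything else reduces to bookkeeping.
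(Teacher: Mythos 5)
Your overall structure is sound and three of the four parts are correct, with two of them taking genuinely different routes from the paper. For (1) the paper appeals to the GNS representation from Proposition 1.24, bounding $|\varphi(a)|\leq\|\pi(a)\|\varphi(1)$, whereas you avoid GNS altogether and bound $\varphi$ directly through the chain positive $\to$ self-adjoint $\to$ general; your version is more elementary and arguably cleaner for a reader who has not yet internalized the GNS machinery. For (3) the paper defines $\varphi$ on the one-dimensional subspace $\mathbb C\,aa^*$ by $\lambda aa^*\mapsto\lambda\|a\|^2$ and extends by Hahn--Banach, then invokes (2) with $h=aa^*/\|a\|^2$; you instead pass through a character of the commutative $C^*$-algebra $\langle aa^*\rangle$, which has the advantage of automatically giving $\varphi(1)=1$ so that (2) can be applied with $h=1$. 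Part (4) is essentially the paper's argument phrased with $c^{1/2}$ rather than $aa^*$; the two are equivalent.

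The genuine gap is in your proof of the converse direction of (2). The hypothesis is only that $\|\varphi\|=\varphi(h)$ for \emph{some} norm-one $h\in A_+$, yet you begin with ``normalize $\|\varphi\|=\varphi(1)=1$''. Rescaling justifies taking $\|\varphi\|=1$, but the equality $\varphi(1)=\|\varphi\|$ is not part of the hypothesis and has to be derived. The paper does this in two steps: first, for any $a\in A_+$ with $\|a\|\leq1$ one has $\|h-a\|\leq1$ (both lie between $0$ and $1$, so $h-a$ is self-adjoint with spectrum in $[-1,1]$), hence $|\varphi(h)-\varphi(a)|\leq\|\varphi\|\cdot\|h-a\|\leq\varphi(h)$, which places $\varphi(a)$ in the disk of radius $\varphi(h)$ centered at $\varphi(h)$ and so forces $\mathrm{Re}\,\varphi(a)\geq0$; second, applying this to $a=1-h$ gives $\mathrm{Re}\,\varphi(1)\geq\mathrm{Re}\,\varphi(h)=\|\varphi\|$, and combined with $|\varphi(1)|\leq\|\varphi\|$ this forces $\varphi(1)=\|\varphi\|$. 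Without this reduction your subsequent computation $|\varphi(a-i\lambda)|=|\alpha+i(\beta-\lambda)|$ is unjustified, because it silently uses $\varphi(1)=1$. Once that reduction is inserted, your quadratic-in-$\lambda$ argument for reality (using $a-i\lambda$ rather than the paper's $1+ita$, but to the same effect) and the neat $|\varphi(a)-\tfrac12|\leq\tfrac12$ argument for positivity are both fine, and the rest of the proposition goes through as you say.
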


\begin{proof}
The proof here, which is quite technical, inspired from the existence proof of the probability measures on abstract compact spaces, goes as follows:

\medskip

(1) This follows from Proposition 1.16, via the following inequality:
\begin{eqnarray*}
|\varphi(a)|
&\leq&||\pi(a)||\varphi(1)\\
&\leq&||a||\varphi(1)
\end{eqnarray*}

(2) In one sense we can take $h=1$. Conversely, let $a\in A_+$, $||a||\leq1$. We have:
\begin{eqnarray*}
|\varphi(h)-\varphi(a)|
&\leq&||\varphi||\cdot||h-a||\\
&\leq&\varphi(h)1\\
&=&\varphi(h)
\end{eqnarray*}

Thus we have $Re(\varphi(a))\geq0$, and it remains to prove that the following holds:
$$a=a^*\implies\varphi(a)\in\mathbb R$$

By using $1-h\geq 0$ we can apply the above to $a=1-h$ and we obtain:
$$Re(\varphi(1-h))\geq0$$

We conclude that $Re(\varphi(1))\geq Re(\varphi(h))=||\varphi||$, and so $\varphi(1)=||\varphi||$.

Summing up, we can assume $h=1$. Now observe that for any self-adjoint element $a$, and any $t\in\mathbb R$ we have the following inequality:
\begin{eqnarray*}
|\varphi(1+ita)|^2
&\leq&||\varphi||^2\cdot||1+ita||^2\\
&=&\varphi(1)^2||1+t^2a^2||\\
&\leq&\varphi(1)^2(1+t^2||a||^2)
\end{eqnarray*}

On the other hand with $\varphi(a)=x+iy$ we have:
\begin{eqnarray*}
|\varphi(1+ita)|
&=&|\varphi(1)-ty+itx|\\
&\geq&(\varphi(1)-ty)^2
\end{eqnarray*}

We therefore obtain that for any $t\in\mathbb R$ we have:
$$\varphi(1)^2(1+t^2||a||^2)\geq(\varphi(1)-ty)^2$$

Thus we have $y=0$, and this finishes the proof of our remaining claim.

\medskip

(3) Consider the linear subspace of $A$ spanned by the element $aa^*$. We can define here a linear form by the following formula:
$$\varphi(\lambda aa^*)=\lambda||a||^2$$

This linear form has norm one, and by Hahn-Banach we get a norm one extension to the whole $A$. The positivity of $\varphi$ follows from (2).

\medskip

(4) Let $(a_n)$ be a dense sequence inside $A$. For any $n$ we can construct as in (3) a positive form satisfying $\varphi_n(a_na_n^*)=||a_n||^2$, and then define $\varphi$ in the following way:
$$\varphi=\sum_{n=1}^\infty\frac{\varphi_n}{2^n}$$

Let $a\in A$ be a nonzero element. Pick $a_n$ close to $a$ and consider the pair $(H,\pi)$ associated to the pair $(A,\varphi_n)$, as in Proposition 1.16. We have then:
\begin{eqnarray*}
\varphi_n(aa^*)
&=&||\pi(a)1||\\
&\geq&||\pi (a_n)1||-||a-a_n||\\
&=&||a_n||-||a-a_n||\\
&>&0
\end{eqnarray*}

Thus $\varphi_n(aa^*)>0$. It follows that we have $\varphi(aa^*)>0$, and we are done.
\end{proof}

With these ingredients in hand, we can now state and prove:

\index{GNS theorem}

\begin{theorem}[GNS theorem]
Let $A$ be a $C^*$-algebra.
\begin{enumerate}
\item $A$ appears as a closed $*$-subalgebra $A\subset B(H)$, for some Hilbert space $H$. 

\item When $A$ is separable (usually the case), $H$ can be chosen to be separable.

\item When $A$ is finite dimensional, $H$ can be chosen to be finite dimensional. 
\end{enumerate}
\end{theorem}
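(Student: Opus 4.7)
The plan is to assemble the GNS-type construction of Proposition 1.24 with the existence results for positive linear forms from Proposition 1.25, producing in each case a Hilbert space $H$ and an isometric $*$-homomorphism $\pi:A\to B(H)$. Since $A$ is complete, any isometry automatically has closed image, which gives the required closed $*$-subalgebra realization.

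For assertion (1) we cannot use a single faithful form, since Proposition 1.25(4) requires separability. Instead, for each $a\in A$ I would use Proposition 1.25(3) to select a norm-one positive form $\varphi_a$ with $\varphi_a(aa^*)=||a||^2$; by Proposition 1.25(2) we have $\varphi_a(1)=1$. Feeding each $\varphi_a$ into Proposition 1.24 yields a representation $\pi_a:A\to B(H_a)$ with a unit cyclic vector $\xi_a$ (the class of $1$) satisfying $||\pi_a(a)\xi_a||^2=\varphi_a(aa^*)=||a||^2$, so $||\pi_a(a)||\geq||a||$. The reverse estimate $||\pi_a(b)||\leq||b||$ holds for every $b$: from $||b||^2\cdot 1-b^*b\geq 0$ in $A$ and the fact that $\pi_a$ is a $*$-homomorphism (hence preserves positivity by Theorem 1.12), we get $||b||^2 I-\pi_a(b)^*\pi_a(b)\geq 0$ in $B(H_a)$. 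Taking the direct sum $\pi=\bigoplus_{a\in A}\pi_a$ on $H=\bigoplus_{a\in A}H_a$, one reads off
$$||b||=||\pi_b(b)||\leq||\pi(b)||=\sup_{a\in A}||\pi_a(b)||\leq||b||,$$
so $\pi$ is isometric by construction.

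For assertion (2) the same argument goes through with $a$ ranging over a countable dense subset $(a_n)\subset A$; the space $H=\bigoplus_n H_{a_n}$ is then separable, since each $H_{a_n}$ is the completion of the separable pre-Hilbert space obtained from $A$. The only modification is the lower bound for $||\pi(b)||$, which now uses density: given $\varepsilon>0$, pick $n$ with $||a_n-b||<\varepsilon$ and combine the triangle inequality with the upper bound $||\pi(a_n-b)||\leq||a_n-b||$ to obtain $||\pi(b)||\geq||\pi_{a_n}(a_n)||-\varepsilon\geq||b||-2\varepsilon$.

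For assertion (3) the direct-sum trick would blow $H$ up to infinite dimension, so I switch strategy: use Proposition 1.25(4) (applicable since $\dim A<\infty$ implies separability) to pick a single faithful positive form $\varphi$, and then Proposition 1.24 to obtain a faithful $\pi:A\to B(H_\varphi)$. Here $H_\varphi$ is the separated-and-completed quotient of the finite-dimensional space $A$, hence itself finite-dimensional. The remaining point, which is the main obstacle of this last part, is that an injective $*$-homomorphism between $C^*$-algebras is automatically isometric; this is not in the earlier material but can be dispatched by a continuous functional calculus argument (Theorem 1.11): if $||\pi(a)||<||a||$ for some self-adjoint $a$, pick $f\in C(\sigma(a))$ vanishing on $\sigma(\pi(a))$ with $f(||a||)=1$, and observe that $f(a)\neq 0$ while $\pi(f(a))=f(\pi(a))=0$, contradicting injectivity.
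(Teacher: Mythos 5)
Your proof is correct and follows the same route as the paper, whose proof simply says to combine the GNS construction of Proposition 1.24 with the existence results of Proposition 1.25. You in fact supply details that the paper leaves implicit -- the direct sum over the states furnished by Proposition 1.25 (3), which is what handles non-separable $A$ where a single faithful form is unavailable, and the injectivity-implies-isometry point needed in part (3) -- but the underlying approach is the same.
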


\begin{proof}
This result follows indeed by combining the construction from Proposition 1.16 above with the existence result from Proposition 1.17.
\end{proof}

Generally speaking, the GNS theorem is something very powerful and concrete, which perfectly complements the Gelfand theorem, and the resulting compact quantum space formalism. We can always go back to good old Hilbert spaces, whenever we get lost.

\bigskip

So long for linear operators, operator algebras, and quantum spaces. The above material, mixing ideas from math and physics, algebra and analysis, and so on, might seem quite wizarding, and indeed it is. This is subtle, first class mathematical physics, taking a long time to be fully understood, and here is some suggested further reading:

\bigskip

(1) Mathematically speaking, all the above, while certainly tricky, and most of what will follow too, is formally based on 3rd year mathematics as we know it, meaning Rudin \cite{rud}. However, for better understanding all this, get to learn some more functional analysis, operator theory, and basic operator algebras, say from Lax \cite{lax}.

\bigskip

(2) In what regards operator algebras, that we will heavily use in what follows, some more knowledge would be welcome too. Be aware tough that most books here are deeply committed to physics of the 1920s, which is not enough for our purposes here. A good, very useful book is Blackadar \cite{bbl}. And for more, go with Connes \cite{con}.

\bigskip

(3) Importantly now, while you will certainly survive all that follows only knowing Rudin \cite{rud}, you will not survive it without some physics knowledge. Believe me. Why bothering with quantum spaces, or with quantum groups, or with mathematics, or with life in general. Without some clear motivations, this ain't going anywhere.

\bigskip

(4) The standard place for learning physics are the books of Feynman \cite{fe1}, \cite{fe2}, \cite{fe3}. If you already know some physics, you can try as well Griffiths \cite{gr1}, \cite{gr2}, \cite{gr3}, equally fun, and a bit more advanced, and quantum mechanics oriented. And if looking for a more compact package, two concise books, go with Weinberg \cite{we1}, \cite{we2}.

\bigskip

(5) As a crucial piece of advice now, physics is a whole, and you won't get away just by reading some quantum mechanics. Depending on your choice between Feynman, Griffiths, Weinberg, don't hesitate to complete with more classical mechanics, from Kibble \cite{kbe} or Arnold \cite{arn}, and more thermodynamics, from Schroeder \cite{dsc} or Huang \cite{hua}.

\bigskip

And this is, I guess, all you need to know. By the way, in relation with all this physics, if looking for something really compact, assuming all basic mathematics known, but basic physics unknown, you can try as well my book \cite{ba7}. Although, as for any physics book written by a mathematician, that might contain severe physical mistakes. 

\section*{1c. Algebraic manifolds}

Let us get back now to the quantum spaces, as axiomatized in Definition 1.11, and work out some basic examples. Inspired by the Connes philosophy \cite{con}, we have the following definition, which is something quite recent, coming from \cite{bgo}, \cite{gos}:

\index{free real sphere}
\index{free complex sphere}

\begin{definition}
We have compact quantum spaces, constructed as follows,
$$C(S^{N-1}_{\mathbb R,+})=C^*\left(x_1,\ldots,x_N\Big|x_i=x_i^*,\sum_ix_i^2=1\right)$$
$$C(S^{N-1}_{\mathbb C,+})=C^*\left(x_1,\ldots,x_N\Big|\sum_ix_ix_i^*=\sum_ix_i^*x_i=1\right)$$
called respectively the free real sphere, and the free complex sphere.
\end{definition}

Here the $C^*$ symbols on the right stand for ``universal $C^*$-algebra generated by''. The fact that such universal $C^*$-algebras exist indeed follows by considering the corresponding universal $*$-algebras, and then completing with respect to the biggest $C^*$-norm. Observe that this biggest $C^*$-norm exists indeed, because the quadratic conditions give:
$$||x_i||^2
=||x_ix_i^*||
\leq||\sum_ix_ix_i^*||
=1$$

Given a compact quantum space $X$, its classical version is the compact space $X_{class}$ obtained by dividing $C(X)$ by its commutator ideal, and using the Gelfand theorem:
$$C(X_{class})=C(X)/I\quad,\quad I=<[a,b]>$$

\index{liberation}

Observe that we have an embedding of compact quantum spaces $X_{class}\subset X$. In this situation, we also say that $X$ appears as a ``liberation'' of $X$. We have:

\begin{proposition}
We have embeddings of compact quantum spaces, as follows,
$$\xymatrix@R=15mm@C=15mm{
S^{N-1}_\mathbb C\ar[r]&S^{N-1}_{\mathbb C,+}\\
S^{N-1}_\mathbb R\ar[r]\ar[u]&S^{N-1}_{\mathbb R,+}\ar[u]
}$$
and the spaces on the right appear as liberations of the spaces of the left.
\end{proposition}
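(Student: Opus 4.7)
The plan is to handle the four embeddings and the liberation statement as independent, essentially formal consequences of the universal property defining the algebras in Definition 1.14 and of the Gelfand theorem (Theorem 1.10).

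For each of the four arrows in the diagram I would produce the opposite surjective $*$-morphism between $C^*$-algebras by sending generators to generators. Concretely, for $C(S^{N-1}_{\mathbb C,+})\twoheadrightarrow C(S^{N-1}_{\mathbb R,+})$ one notes that in the target the generators are self-adjoint with $\sum_i x_i^2=1$, so in particular $\sum_i x_ix_i^*=\sum_i x_i^*x_i=1$; the universal property of the source then produces the morphism. The same recipe works for $C(S^{N-1}_{\mathbb C})\twoheadrightarrow C(S^{N-1}_{\mathbb R})$, and for the two ``commutativity'' arrows $C(S^{N-1}_{\mathbb R,+})\twoheadrightarrow C(S^{N-1}_{\mathbb R})$ and $C(S^{N-1}_{\mathbb C,+})\twoheadrightarrow C(S^{N-1}_{\mathbb C})$, where the defining relations of the source trivially hold in the commutative target. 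Each morphism is surjective because its image contains the generators, so passing to compact quantum spaces yields the four embeddings, and commutativity of the square is automatic since all four morphisms act as the identity on generators.

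For the liberation claim I would identify, for each of the two free spheres, the classical quotient of Definition 1.13. Take the real case. By definition $C(S^{N-1}_{\mathbb R,+,class})$ is the quotient of $C(S^{N-1}_{\mathbb R,+})$ by its commutator ideal, which is the universal \emph{commutative} $C^*$-algebra generated by self-adjoint elements $x_1,\dots,x_N$ subject to $\sum_i x_i^2=1$. By the Gelfand theorem this equals $C(X)$, where $X$ is the space of characters $\chi:A\to\mathbb C$. A character is determined by the tuple $(\chi(x_1),\dots,\chi(x_N))$, and since each $x_i$ is self-adjoint the values $\chi(x_i)$ lie in $\mathbb R$ (by Theorem 1.9); the relation $\sum x_i^2=1$ becomes $\sum\chi(x_i)^2=1$. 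Conversely, every such real tuple defines a character by polynomial evaluation, extended to the completion. Hence $X\simeq S^{N-1}_{\mathbb R}$, as desired. The complex case is entirely analogous: after imposing commutativity the two conditions $\sum x_ix_i^*=\sum x_i^*x_i=1$ collapse to the single scalar relation $\sum|x_i|^2=1$, and the same character-counting argument identifies $X$ with $S^{N-1}_{\mathbb C}$.

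The only point requiring a little care, and the main technical obstacle, is verifying that every real (resp.\ complex) tuple on the unit sphere genuinely extends to a character of the quotient $C^*$-algebra, i.e.\ that the Gelfand spectrum is exhausted by these ``evaluation'' characters. This follows from the fact that the concrete algebra $C(S^{N-1}_{\mathbb R})$ (resp.\ $C(S^{N-1}_{\mathbb C})$) satisfies the same universal property among commutative $C^*$-algebras — a routine Stone-Weierstrass argument, since the coordinate functions separate points and generate a dense $*$-subalgebra. Everything else in the proposition is formal.
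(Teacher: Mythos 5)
Your proposal is correct and follows essentially the same route as the paper: the four embeddings come from the universal properties of the defining $C^*$-algebras, and the liberation claim is reduced to identifying $C(S^{N-1}_{\mathbb R})$ and $C(S^{N-1}_{\mathbb C})$ as universal commutative $C^*$-algebras, which the paper does via the Gelfand theorem exactly as you spell out. You have merely expanded the one-line phrase ``these isomorphisms are both clear, by using the Gelfand theorem'' into the explicit character-counting and Stone--Weierstrass argument that it implicitly invokes.
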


\begin{proof}
The first assertion is clear. For the second one, we must establish the following isomorphisms, where $C^*_{comm}$ stands for ``universal commutative $C^*$-algebra'':
$$C(S^{N-1}_\mathbb R)=C^*_{comm}\left(x_1,\ldots,x_N\Big|x_i=x_i^*,\sum_ix_i^2=1\right)$$
$$C(S^{N-1}_\mathbb C)=C^*_{comm}\left(x_1,\ldots,x_N\Big|\sum_ix_ix_i^*=\sum_ix_i^*x_i=1\right)$$

But these isomorphisms are both clear, by using the Gelfand theorem.
\end{proof}

We can enlarge our class of basic manifolds by introducing tori, as follows:

\index{torus}

\begin{definition}
Given a closed subspace $S\subset S^{N-1}_{\mathbb C,+}$, the subspace $T\subset S$ given by
$$C(T)=C(S)\Big/\left<x_ix_i^*=x_i^*x_i=\frac{1}{N}\right>$$
is called associated torus. In the real case, $S\subset S^{N-1}_{\mathbb R,+}$, we also call $T$ cube.
\end{definition}

As a basic example here, for $S=S^{N-1}_\mathbb C$ the corresponding submanifold $T\subset S$ appears by imposing the relations $|x_i|=\frac{1}{\sqrt{N}}$ to the coordinates, so we obtain a torus:
$$S=S^{N-1}_\mathbb C\implies T=\left\{x\in\mathbb C^N\Big||x_i|=\frac{1}{\sqrt{N}}\right\}$$

As for the case of the real sphere, $S=S^{N-1}_\mathbb R$, here the submanifold $T\subset S$ appears by imposing the relations $x_i=\pm\frac{1}{\sqrt{N}}$ to the coordinates, so we obtain a cube:
$$S=S^{N-1}_\mathbb R\implies T=\left\{x\in\mathbb R^N\Big|x_i=\pm\frac{1}{\sqrt{N}}\right\}$$

Observe that we have a relation here with group theory, because the complex torus computed above is the group $\mathbb T^N$, and the cube is the finite group $\mathbb Z_2^N$.

\bigskip

In general now, in order to compute $T$, we can use the following simple fact:

\index{algebraic manifold}

\begin{proposition}
When $S\subset S^{N-1}_{\mathbb C,+}$ is an algebraic manifold, in the sense that
$$C(S)=C(S^{N-1}_{\mathbb C,+})\Big/\Big<f_i(x_1,\ldots,x_N)=0\Big>$$
for certain noncommutative polynomials $f_i\in\mathbb C<x_1,\ldots,x_N>$, we have
$$C(T)=C^*\left(u_1,\ldots,u_N\Big|u_i^*=u_i^{-1},g_i(u_1,\ldots,u_N)=0\right)$$
with the poynomials $g_i$ being given by $g_i(u_1,\ldots,u_N)=f_i(\sqrt{N}u_1,\ldots,\sqrt{N}u_N)$.
\end{proposition}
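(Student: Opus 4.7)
The plan is to establish the isomorphism via a change of generators that rescales the torus coordinates into unitaries. Combining the presentation of $C(S)$ as a quotient of $C(S^{N-1}_{\mathbb{C},+})$ by the relations $f_i=0$ with the additional torus relations of Definition 1.17 yields
\[
C(T) = C^*\!\left(x_1,\ldots,x_N \,\Big|\, \textstyle\sum_i x_ix_i^* = \sum_i x_i^*x_i = 1,\; x_ix_i^* = x_i^*x_i = \tfrac{1}{N},\; f_i(x) = 0\right).
\]
The first observation is that the global sphere relations are redundant in this presentation: summing the pointwise identities $x_ix_i^* = 1/N$ over $i$ reproduces $\sum_i x_ix_i^* = 1$, and similarly for the conjugate sum. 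Thus the presentation of $C(T)$ collapses to just the pointwise normalization conditions together with the defining polynomials $f_i=0$.

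Next, I introduce rescaled generators by setting $u_i = \sqrt{N}\,x_i$, with inverse $x_i = u_i/\sqrt{N}$. Under this substitution, the relations $x_ix_i^* = x_i^*x_i = 1/N$ transform directly into $u_iu_i^* = u_i^*u_i = 1$, which is exactly the unitarity condition $u_i^* = u_i^{-1}$. Substituting the change of variables into the polynomial constraints $f_i(x_1,\ldots,x_N)=0$ then produces the new polynomial identities claimed in the statement, which I denote $g_i(u_1,\ldots,u_N)=0$. Conversely, starting from any unitaries $u_i$ in a target $C^*$-algebra, the rescaled elements $x_i = u_i/\sqrt{N}$ automatically satisfy the torus presentation, so the correspondence is bijective at the level of representations.

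Finally, one must verify that this bijection of generators lifts to an isomorphism of the associated universal $C^*$-algebras. Both presentations are well-defined as universal $C^*$-algebras: the torus relations force $\|x_i\|^2 = \|x_ix_i^*\| = 1/N$, and unitarity forces $\|u_i\|=1$, so in each case the biggest $C^*$-norm is finite on every generator and hence on every $*$-polynomial expression. The rescaling $x_i \leftrightarrow u_i/\sqrt{N}$ gives a $*$-isomorphism between the two universal unital $*$-algebras cut out by the respective relations, and because the maximal $C^*$-seminorm is intrinsically determined by the $*$-algebra together with the boundedness data, this $*$-isomorphism extends to an isomorphism of $C^*$-completions. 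There is no deeper obstacle in the argument; the only subtle point is the redundancy of the sphere relations, which is why the presentation on the $u$-side mentions only $u_i^*=u_i^{-1}$ and the $g_i$, with no leftover global normalization.
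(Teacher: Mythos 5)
Your argument follows the same route as the paper's short proof: identify that the torus relations $x_ix_i^*=x_i^*x_i=\tfrac{1}{N}$ turn rescaled coordinates into unitaries, observe that the global sphere normalization $\sum_i x_ix_i^*=\sum_i x_i^*x_i=1$ then becomes redundant, and rewrite the defining polynomials in the new variables. The paper's proof is essentially a two-sentence version of exactly this, so there is no difference in method, only in the amount of detail recorded (your remark about the maximal $C^*$-seminorm being preserved under a $*$-isomorphism of the underlying relation $*$-algebras is a reasonable way to close the argument that the paper leaves implicit).

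One bookkeeping point worth flagging, because it exposes a slip in the source itself. Your substitution is $u_i=\sqrt{N}\,x_i$, equivalently $x_i=u_i/\sqrt{N}$; this is the rescaling that actually converts $x_ix_i^*=\tfrac{1}{N}$ into $u_iu_i^*=1$. But then $f_i(x)=0$ becomes $f_i(u_1/\sqrt{N},\ldots,u_N/\sqrt{N})=0$, whereas the statement claims $g_i(u)=f_i(\sqrt{N}u_1,\ldots,\sqrt{N}u_N)$ — the powers of $\sqrt{N}$ are swapped. The paper's own proof writes the rescaling the other way, $u_i=x_i/\sqrt{N}$, which is consistent with its stated $g_i$ but would give $u_iu_i^*=1/N^2$ rather than $1$ under Definition 1.16. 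So the statement and proof agree with each other but not with the torus definition; your version agrees with the torus definition but not with the stated formula for $g_i$. This is certainly a typographical error in the paper; your reading is the intended and correct one, but you should note explicitly that the $g_i$ you obtain is $f_i(u/\sqrt{N})$ rather than $f_i(\sqrt{N}u)$, rather than asserting without checking that the formula in the statement comes out.
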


\begin{proof}
According to our definition of the torus $T\subset S$, the following variables must be unitaries, in the quotient algebra $C(S)\to C(T)$:
$$u_i=\frac{x_i}{\sqrt{N}}$$

Now if we assume that these elements are unitaries, the quadratic conditions $\sum_ix_ix_i^*=\sum_ix_i^*x_i=1$ are automatic. Thus, we obtain the space in the statement.
\end{proof}

Summarizing, we are led to the question of computing certain algebras generated by unitaries. In order to deal with this latter problem, let us start with:

\index{group algebra}

\begin{proposition}
Let $\Gamma$ be a discrete group, and consider the complex group algebra $\mathbb C[\Gamma]$, with involution given by the fact that all group elements are unitaries:
$$g^*=g^{-1}\quad,\quad\forall g\in\Gamma$$
The maximal $C^*$-seminorm on $\mathbb C[\Gamma]$ is then a $C^*$-norm, and the closure of $\mathbb C[\Gamma]$ with respect to this norm is a $C^*$-algebra, denoted $C^*(\Gamma)$.
\end{proposition}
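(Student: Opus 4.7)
The plan is to first argue that the collection of $C^*$-seminorms on $\mathbb{C}[\Gamma]$ is uniformly bounded (so taking a supremum makes sense), then verify that this supremum is itself a $C^*$-seminorm, and finally produce a concrete faithful $*$-representation to upgrade ``seminorm'' to ``norm''. Completion is then automatic by general Banach space considerations.

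For the boundedness step, let $||\cdot||$ be any $C^*$-seminorm on $\mathbb{C}[\Gamma]$. The unit $e \in \Gamma$ satisfies $||e||^2 = ||e^*e|| = ||e||$, so $||e|| \in \{0,1\}$. If $||e|| = 0$, then submultiplicativity forces $||a|| \leq ||a||\cdot||e|| = 0$ for all $a$, so the seminorm is trivial and contributes nothing to the supremum. Otherwise $||e|| = 1$, and for every $g \in \Gamma$ the relation $g^*g = e$ gives $||g||^2 = ||g^*g|| = 1$, so $||g|| = 1$. Hence for any finitely supported $x = \sum_g \lambda_g g$, the triangle inequality yields $||x|| \leq \sum_g |\lambda_g|$. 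Thus every $C^*$-seminorm is dominated by the $\ell^1$-norm, and the pointwise supremum
$$||x||_* = \sup\bigl\{\,||x||_\nu \,\big|\, \nu\text{ a }C^*\text{-seminorm on }\mathbb{C}[\Gamma]\bigr\}$$
is finite. A routine check, passing the supremum through the seminorm axioms and the $C^*$-identity $||a^*a|| = ||a||^2$, shows that $||\cdot||_*$ is itself a $C^*$-seminorm, hence the maximal one.

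To show that $||\cdot||_*$ is a norm, I would exhibit one concrete $C^*$-seminorm that is already a norm, namely the one coming from the left regular representation $\lambda \colon \mathbb{C}[\Gamma] \to B(\ell^2(\Gamma))$ defined by $\lambda(g)\delta_h = \delta_{gh}$. The operators $\lambda(g)$ are unitaries on $\ell^2(\Gamma)$, so $\lambda$ extends to a $*$-homomorphism; pulling back the operator norm gives a $C^*$-seminorm on $\mathbb{C}[\Gamma]$. For $x = \sum_g \lambda_g g$ we compute $\lambda(x)\delta_e = \sum_g \lambda_g \delta_g$, and since the $\delta_g$ are orthonormal, this vector vanishes only when all $\lambda_g$ are zero. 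Thus $||\lambda(x)|| > 0$ whenever $x \neq 0$, so $||\cdot||_*$, being at least as large, is strictly positive on nonzero elements.

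Finally, the completion of a normed $*$-algebra with respect to a $C^*$-norm is a $C^*$-algebra: the involution and multiplication extend by continuity (the former is an isometry because $||a^*||^2 = ||aa^*|| \leq ||a||\cdot||a^*||$ gives $||a^*|| \leq ||a||$ and symmetrically), and the $C^*$-identity passes to the limit. The resulting $C^*$-algebra is $C^*(\Gamma)$. The only subtle point throughout is the boundedness step, which rests entirely on the observation that every $g \in \Gamma$ is unitary in $\mathbb{C}[\Gamma]$; once one has this, both the existence of the supremum and the existence of a faithful representation (given by $\lambda$) are rather formal.
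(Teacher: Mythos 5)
Your proposal is correct and follows essentially the same route as the paper: the key step in both is the left regular representation $\lambda:\mathbb C[\Gamma]\to B(l^2(\Gamma))$, $g\to[h\to gh]$, whose faithfulness (seen via $\lambda(x)\delta_e=\sum_g\lambda_g\delta_g$) shows that the maximal $C^*$-seminorm is a norm. Your explicit treatment of the boundedness of all $C^*$-seminorms by the $\ell^1$-norm, via unitarity of the group elements, is a detail the paper leaves implicit, but it is the same underlying argument.
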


\begin{proof}
In order to prove this, we must find a $*$-algebra embedding $\mathbb C[\Gamma]\subset B(H)$, with $H$ being a Hilbert space. For this purpose, consider the space $H=l^2(\Gamma)$, having $\{h\}_{h\in\Gamma}$ as orthonormal basis. Our claim is that we have an embedding, as follows:
$$\pi:\mathbb C[\Gamma]\subset B(H)\quad,\quad\pi(g)(h)=gh$$

Indeed, since $\pi(g)$ maps the basis $\{h\}_{h\in\Gamma}$ into itself, this operator is well-defined, bounded, and is an isometry. It is also clear from the formula $\pi(g)(h)=gh$ that $g\to\pi(g)$ is a morphism of algebras, and since this morphism maps the unitaries $g\in\Gamma$ into isometries, this is a morphism of $*$-algebras. Finally, the faithfulness of $\pi$ is clear.
\end{proof}

In the abelian group case, we have the following result:

\index{Pontrjagin dual}
\index{group character}

\begin{theorem}
Given an abelian discrete group $\Gamma$, we have an isomorphism
$$C^*(\Gamma)\simeq C(G)$$
where $G=\widehat{\Gamma}$ is its Pontrjagin dual, formed by the characters $\chi:\Gamma\to\mathbb T$.
\end{theorem}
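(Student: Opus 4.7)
The plan is to combine the Gelfand theorem (Theorem 1.10) with a careful identification of the characters of $C^*(\Gamma)$ as the elements of $\widehat{\Gamma}$. Since $\Gamma$ is abelian, the $*$-algebra $\mathbb C[\Gamma]$ is commutative, and hence so is its completion $C^*(\Gamma)$. By Theorem 1.10, we can therefore write $C^*(\Gamma)\simeq C(X)$, where $X=\mathrm{Spec}(C^*(\Gamma))$ is the space of $*$-characters $\chi:C^*(\Gamma)\to\mathbb C$, equipped with the weak-$*$ topology. The whole task then reduces to producing a homeomorphism $X\simeq\widehat{\Gamma}$.

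First I would construct the two maps. Given $\chi\in X$, its restriction to $\Gamma\subset C^*(\Gamma)$ is multiplicative, and since each $g\in\Gamma$ is a unitary in $C^*(\Gamma)$, Theorem 1.9(2) applied inside the commutative algebra (or simply the estimate $|\chi(g)|\leq\|g\|=1$ together with $|\chi(g^{-1})|\leq 1$) forces $\chi(g)\in\mathbb T$. Thus $\chi|_\Gamma\in\widehat{\Gamma}$. Conversely, given a character $\rho:\Gamma\to\mathbb T$, the assignment $g\mapsto\rho(g)$ extends by linearity to a $*$-homomorphism $\mathbb C[\Gamma]\to\mathbb C$; because $\mathbb C$ is itself a $C^*$-algebra and the $C^*$-norm on $C^*(\Gamma)$ is by definition the maximal $C^*$-seminorm (Proposition 1.18), this extension is automatically contractive and so extends by continuity to a character $\widetilde\rho\in X$. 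These two constructions are visibly inverse to one another.

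Next I would check that the bijection is a homeomorphism. The topology on $X$ is generated by the evaluation maps $\mathrm{ev}_a$ for $a\in C^*(\Gamma)$, and the topology on $\widehat{\Gamma}$ is that of pointwise convergence on $\Gamma$. Since $\Gamma$ generates $C^*(\Gamma)$ as a $C^*$-algebra, pointwise convergence on $\Gamma$ is equivalent to pointwise convergence on all of $C^*(\Gamma)$, so the two topologies coincide under the bijection. Combining this with the Gelfand isomorphism $C^*(\Gamma)\simeq C(X)$ yields $C^*(\Gamma)\simeq C(\widehat{\Gamma})$, as desired.

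The main conceptual point, and the only spot that requires a little care, is the \emph{extension step}: one must know that a character $\rho:\Gamma\to\mathbb T$ really does extend to the completion $C^*(\Gamma)$, not merely to $\mathbb C[\Gamma]$. This is where the universality built into Proposition 1.18 does the work, since $\rho$ tautologically defines a $C^*$-seminorm on $\mathbb C[\Gamma]$ bounded by the maximal one. Once this extension is in hand, everything else is formal: the Gelfand spectrum is identified with $\widehat{\Gamma}$, and the isomorphism $C^*(\Gamma)\simeq C(\widehat{\Gamma})$ follows.
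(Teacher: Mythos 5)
Your proposal is correct and follows essentially the same route as the paper: commutativity of $C^*(\Gamma)$, the Gelfand theorem, and the identification of the Gelfand spectrum with the Pontrjagin dual $\widehat{\Gamma}$. The paper states this last identification without detail, while you fill in the extension and topology arguments, so there is no substantive difference in approach.
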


\begin{proof}
Since $\Gamma$ is abelian, the corresponding group algebra $A=C^*(\Gamma)$ is commutative. Thus, we can apply the Gelfand theorem, and we obtain $A=C(X)$, with $X=Spec(A)$. But the spectrum $X=Spec(A)$, consisting of the characters $\chi:C^*(\Gamma)\to\mathbb C$, can be identified with the Pontrjagin dual $G=\widehat{\Gamma}$, and this gives the result.
\end{proof}

The above result suggests the following definition:

\index{group dual}

\begin{definition}
Given a discrete group $\Gamma$, the compact quantum space $G$ given by
$$C(G)=C^*(\Gamma)$$
is called abstract dual of $\Gamma$, and is denoted $G=\widehat{\Gamma}$.
\end{definition}

This should be taken in the general sense of Definition 1.11. However, there is a functoriality problem here, which needs a fix. Indeed, in the context of Proposition 1.23, we can see that the closure $C^*_{red}(\Gamma)$ of the group algebra $\mathbb C[\Gamma]$ in the regular representation is a $C^*$-algebra as well. Thus we have a quotient map $C^*(\Gamma)\to C^*_{red}(\Gamma)$, and if this map is not an isomorphism, we are in trouble. We will be back to this later, with a fix.

\bigskip

By getting back now to the spheres, we have the following result:

\index{free group}
\index{Fourier transform}

\begin{theorem}
The tori of the basic spheres are all group duals, as follows,
$$\xymatrix@R=15mm@C=15mm{
\mathbb T^N\ar[r]&\widehat{F_N}\\
\mathbb Z_2^N\ar[r]\ar[u]&\widehat{\mathbb Z_2^{*N}}\ar[u]
}$$
where $F_N$ is the free group on $N$ generators, and $*$ is a group-theoretical free product.
\end{theorem}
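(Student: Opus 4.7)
The plan is to compute each of the four tori by applying Proposition 1.20 to the corresponding sphere, and then to recognize the resulting universal $C^*$-algebras as group algebras $C^*(\Gamma)$, which by Definition 1.20 encode the duals $\widehat\Gamma$. The key input is that once we set $u_i=x_i/\sqrt N$, the quadratic sphere relations $\sum x_ix_i^*=\sum x_i^*x_i=1$ become automatic, so the only surviving conditions on the $u_i$ are unitarity together with whatever algebraic relations defined the sphere inside $S^{N-1}_{\mathbb C,+}$.

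First I would dispatch the two free cases. For $S=S^{N-1}_{\mathbb C,+}$ there are no extra relations at all, so Proposition 1.20 gives
$$C(T)=C^*\bigl(u_1,\ldots,u_N\,\big|\,u_i^*=u_i^{-1}\bigr),$$
which is the universal $C^*$-algebra generated by $N$ free unitaries; this is precisely $C^*(F_N)$, so $T=\widehat{F_N}$. For $S=S^{N-1}_{\mathbb R,+}$ we must additionally impose $x_i=x_i^*$, i.e.\ $u_i=u_i^*$; combined with $u_iu_i^*=1$ this forces $u_i^2=1$, so $T$ is the dual of the group generated by $N$ order-two generators with no further relations, namely $\mathbb Z_2^{*N}$.

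Next I would handle the two classical cases. For $S=S^{N-1}_{\mathbb C}$ we impose commutativity of the $x_i$ (and their adjoints), which translates into commutativity of the $u_i$; the resulting algebra is the universal commutative $C^*$-algebra on $N$ commuting unitaries, i.e.\ $C^*(\mathbb Z^N)$, and Theorem 1.19 identifies this with $C(\widehat{\mathbb Z^N})=C(\mathbb T^N)$, consistent with the direct computation $T=\{x\in\mathbb C^N\mid |x_i|=1/\sqrt N\}$ given after Definition 1.16. For $S=S^{N-1}_{\mathbb R}$ we combine commutativity with $u_i=u_i^*$ and $u_i^2=1$, giving $C^*(\mathbb Z_2^N)\simeq C(\mathbb Z_2^N)$, again matching the cube description.

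Finally, the four embeddings in the diagram are induced by the quotient maps between the spheres (classical $\to$ free, real $\to$ complex); at the level of the presentations these correspond respectively to abelianization $F_N\to\mathbb Z^N$ and $\mathbb Z_2^{*N}\to\mathbb Z_2^N$, and to the inclusions $\mathbb Z_2^{*N}\hookrightarrow F_N$ and $\mathbb Z_2^N\hookrightarrow\mathbb Z^N$ obtained by sending each order-two generator to a generator (or its square root). I do not anticipate a serious obstacle: the only subtle point is to verify that the relations $\sum x_ix_i^*=\sum x_i^*x_i=1$ really do become redundant on the torus, which is exactly the content of Proposition 1.20, and to be careful that the universal $C^*$-algebras on the unitary side coincide with the full group $C^*$-algebras of Proposition 1.18 rather than their reduced versions, as flagged in the warning following Definition 1.20.
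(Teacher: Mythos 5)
Your proposal is correct in substance and follows essentially the same route as the paper: apply the presentation result for tori of algebraic submanifolds (this is Proposition 1.17, not Proposition 1.20 as you cite it) to each of the four spheres, recognize the four resulting universal algebras as $C^*(\mathbb Z^N)$, $C^*(\mathbb Z_2^N)$, $C^*(F_N)$, $C^*(\mathbb Z_2^{*N})$, and use the Fourier identifications of Theorem 1.19, together with the full-versus-reduced caveat from Proposition 1.18, in the two classical cases.

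The one slip is in your last paragraph, concerning the vertical arrows. There are no group embeddings $\mathbb Z_2^{*N}\hookrightarrow F_N$ or $\mathbb Z_2^N\hookrightarrow\mathbb Z^N$ sending an order-two generator to a generator, because $F_N$ and $\mathbb Z^N$ are torsion-free. The correct picture is the dual one: for group duals, inclusions $\widehat{\Lambda}\subset\widehat{\Gamma}$ correspond to quotients $\Gamma\to\Lambda$, so the vertical embeddings $\mathbb Z_2^N\subset\mathbb T^N$ and $\widehat{\mathbb Z_2^{*N}}\subset\widehat{F_N}$ come from the quotient maps $\mathbb Z^N\to\mathbb Z_2^N$ and $F_N\to\mathbb Z_2^{*N}$, that is, at the algebra level, from the surjections $C^*(\mathbb Z^N)\to C^*(\mathbb Z_2^N)$ and $C^*(F_N)\to C^*(\mathbb Z_2^{*N})$ given by $u_i\to u_i$, imposing the extra relations $u_i=u_i^*$. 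With this correction the compatibility of the diagram is exactly the one asserted, and the rest of your argument stands as written.
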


\begin{proof}
By using the presentation result in Proposition 1.24, we obtain that the diagram formed by the algebras $C(T)$ is as follows:
$$\xymatrix@R=15mm@C=15mm{
C^*(\mathbb Z^N)\ar[d]&C^*(\mathbb Z^{*N})\ar[d]\ar[l]\\
C^*(\mathbb Z_2^N)&C^*(\mathbb Z_2^{*N})\ar[l]
}$$

According to Definition 1.25, the corresponding compact quantum spaces are:
$$\xymatrix@R=15mm@C=15mm{
\widehat{\mathbb Z^N}\ar[r]&\widehat{\mathbb Z^{*N}}\\
\widehat{\mathbb Z_2^N}\ar[r]\ar[u]&\widehat{\mathbb Z_2^{*N}}\ar[u]
}$$

Together with the Fourier transform identifications from Theorem 1.24, and with our free group convention $F_N=\mathbb Z^{*N}$, this gives the result.
\end{proof}

As a conclusion to these considerations, the Gelfand theorem alone produces out of nothing, or at least out of some basic common sense, some potentially interesting mathematics. We will be back later to all this, on several occasions.

\section*{1d. Axiomatization fix}

Let us get back now to the bad functoriality properties of the Gelfand correspondence, coming from the fact that certain compact quantum spaces, such as the duals $\widehat{\Gamma}$ of the discrete groups $\Gamma$, can be represented by several $C^*$-algebras, instead of one. We can fix these issues by using the GNS theorem, as follows:

\index{compact quantum measured space}

\begin{definition}
The category of compact quantum measured spaces $(X,\mu)$ is the category of the $C^*$-algebras with faithful traces $(A,tr)$, with the arrows reversed. In the case where we have a $C^*$-algebra $A$ with a non-faithful trace $tr$, we can still talk about the corresponding space $(X,\mu)$, by performing the GNS construction.
\end{definition}

Observe that this definition fixes the functoriality problem with Gelfand duality, at least for the group algebras. Indeed, in the context of the comments following Definition 1.25, consider an arbitrary intermediate $C^*$-algebra, as follows: 
$$C^*(\Gamma)\to A\to C^*_{red}(\Gamma)$$

If we perform the GNS construction with respect to the canonical trace, we obtain the reduced algebra $C^*_{red}(\Gamma)$. Thus, all these algebras $A$ correspond to a unique compact quantum measured space in the above sense, which is the abstract group dual $\widehat{\Gamma}$. Let us record a statement about this finding, as follows:

\begin{proposition}
The category of group duals $\widehat{\Gamma}$ is a well-defined subcategory of the category of compact quantum measured spaces, with each $\widehat{\Gamma}$ corresponding to the full group algebra $C^*(\Gamma)$, or the reduced group algebra $C^*_{red}(\Gamma)$, or any algebra in between.
\end{proposition}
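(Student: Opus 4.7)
The plan is to identify the canonical trace that witnesses every algebra $A$ between $C^*(\Gamma)$ and $C^*_{red}(\Gamma)$ as the same compact quantum measured space, and then to verify that the GNS construction from Definition 1.27 collapses all of them to a single object, namely $\widehat{\Gamma}$.

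First I would construct the canonical trace. On the group algebra $\mathbb C[\Gamma]$, define $\tau:\mathbb C[\Gamma]\to\mathbb C$ by linear extension of $\tau(g)=\delta_{g,e}$. The trace property $\tau(xy)=\tau(yx)$ follows from $\tau(gh)=\delta_{gh,e}=\delta_{g,h^{-1}}=\tau(hg)$ on generators, and positivity follows from
$$\tau\Big(\Big(\sum_g\lambda_g g\Big)\Big(\sum_g\lambda_g g\Big)^*\Big)=\sum_g|\lambda_g|^2\geq0.$$
Moreover, in the regular representation $\pi:\mathbb C[\Gamma]\to B(\ell^2(\Gamma))$ from Proposition 1.18, this trace is simply $\tau(a)=\langle\pi(a)\delta_e,\delta_e\rangle$, so it is bounded by the reduced $C^*$-norm and therefore extends continuously to $C^*_{red}(\Gamma)$.

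Second, I would check that $\tau$ descends to every intermediate completion. If $A$ is any $C^*$-algebra fitting into a chain of surjections $C^*(\Gamma)\twoheadrightarrow A\twoheadrightarrow C^*_{red}(\Gamma)$, then the formula $\tau_A=\tau_{red}\circ(A\to C^*_{red}(\Gamma))$ gives a well-defined positive trace on $A$ whose restriction to $\mathbb C[\Gamma]$ is the original $\tau$. Thus each intermediate $A$ becomes a pair $(A,\tau_A)$, i.e.\ an object of the category from Definition 1.27, even when $\tau_A$ fails to be faithful on $A$ itself.

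Third, and this is the conceptual heart, I would apply GNS. Running the construction of Proposition 1.24 on $(\mathbb C[\Gamma],\tau)$ gives the Hilbert space $\ell^2(\Gamma)$ with cyclic vector $\delta_e$, and the associated representation is precisely the left regular representation, whose image closure is $C^*_{red}(\Gamma)$. Because the trace on any intermediate $A$ factors through $C^*_{red}(\Gamma)$, performing the GNS construction on $(A,\tau_A)$ yields the same Hilbert space, the same cyclic vector, and the same operator algebra, namely $(C^*_{red}(\Gamma),\tau_{red})$, on which the resulting trace is faithful by the usual argument: $\tau(aa^*)=\|a\delta_e\|^2=0$ forces $a\delta_e=0$, and translation-invariance $a\delta_g=\rho(g)a\delta_e$ (where $\rho$ is the right regular representation, which commutes with $\pi$) then gives $a=0$. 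Hence each intermediate $A$ defines the same object in the category of compact quantum measured spaces, which we denote $\widehat{\Gamma}$.

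The main obstacle is not really technical — all ingredients are in place in the preceding pages — but conceptual: one must be careful that ``algebra in between'' be interpreted as a $C^*$-algebra through which the canonical map $C^*(\Gamma)\to C^*_{red}(\Gamma)$ factors, since only then does $\tau$ descend. Once this is granted, the faithfulness of the GNS construction in Proposition 1.24(4), applied to the faithful trace $\tau_{red}$, forces every representative of $\widehat{\Gamma}$ to collapse to the same measured space, which is exactly the functoriality fix advertised in Definition 1.27.
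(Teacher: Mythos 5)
Your proposal is correct and follows exactly the route the paper has in mind: the paper's own "proof" simply declares the statement an empty consequence of the preceding discussion, where the canonical trace and the GNS collapse of any intermediate completion $C^*(\Gamma)\to A\to C^*_{red}(\Gamma)$ onto $(C^*_{red}(\Gamma),\tau_{red})$ are taken for granted. You have merely written out in full the details (construction of $\tau$, descent to intermediate algebras, faithfulness on the reduced algebra) that the paper leaves implicit.
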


\begin{proof}
This is more of an empty statement, coming from the above discussion.
\end{proof}

With this in hand, it is tempting to go even further, namely forgetting about the $C^*$-algebras, and trying to axiomatize instead the operator algebras of type $L^\infty(X)$. Such an axiomatization is possible, and the resulting class of operator algebras consists of a certain special type of $C^*$-algebras, called ``finite von Neumann algebras''. 

\bigskip

However, and here comes our point, doing so would be bad, and would lead to a weak theory, because many spaces such as the compact groups, or the compact homogeneous spaces, do not come with a measure by definition, but rather by theorem.

\bigskip

In short, our ``fix'' is not a very good fix, and if we want a really strong theory, we must invent something else. In order to do so, our idea will be that of restricting the attention to certain special classes of quantum algebraic manifolds, as follows:

\index{real algebraic manifold}

\begin{definition}
A real algebraic submanifold $X\subset S^{N-1}_{\mathbb C,+}$ is a closed quantum subspace defined, at the level of the corresponding $C^*$-algebra, by a formula of type
$$C(X)=C(S^{N-1}_{\mathbb C,+})\Big/\Big<f_i(x_1,\ldots,x_N)=0\Big>$$
for certain noncommutative polynomials $f_i\in\mathbb C<x_1,\ldots,x_N>$. We denote by $\mathcal C(X)$ the $*$-subalgebra of $C(X)$ generated by the coordinate functions $x_1,\ldots,x_N$. 
\end{definition}

Observe that any family $f_i\in\mathbb C<x_1,\ldots,x_N>$ produces such a manifold $X$, simply by defining an algebra $C(X)$ as above. Observe also that the use of $S^{N-1}_{\mathbb C,+}$ is essential in all this, because the quadratic condition $\sum_ix_ix_i^*=\sum_ix_i^*x_i=1$ gives by positivity $||x_i||\leq1$ for any $i$, and so guarantees the fact that the universal $C^*$-norm is bounded.

\bigskip

We have already met such manifolds, in the context of the free spheres, free tori, and more generally in Proposition 1.22 above. Here is a list of examples:

\begin{proposition}
The following are algebraic submanifolds $X\subset S^{N-1}_{\mathbb C,+}$:
\begin{enumerate}
\item The spheres $S^{N-1}_\mathbb R\subset S^{N-1}_\mathbb C,S^{N-1}_{\mathbb R,+}\subset S^{N-1}_{\mathbb C,+}$.

\item Any compact Lie group, $G\subset U_n$, when $N=n^2$.

\item The duals $\widehat{\Gamma}$ of finitely generated groups, $\Gamma=<g_1,\ldots,g_N>$.
\end{enumerate}
\end{proposition}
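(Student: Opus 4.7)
The plan is, in each case, to exhibit generators $x_1,\ldots,x_N$ of $C(X)$ which satisfy the defining quadratic relations of $S^{N-1}_{\mathbb C,+}$, and then to present the resulting surjection $C(S^{N-1}_{\mathbb C,+})\to C(X)$ as a quotient by a family of noncommutative polynomials in those generators, as required by Definition 1.29. Case (1) is immediate by construction: the inclusion $S^{N-1}_{\mathbb R,+}\subset S^{N-1}_{\mathbb C,+}$ is cut out by the self-adjointness polynomials $f_i=x_i-x_i^*$, the inclusion $S^{N-1}_\mathbb C\subset S^{N-1}_{\mathbb C,+}$ by the commutator polynomials $f_{ij}=[x_i,x_j]$ and $g_{ij}=[x_i,x_j^*]$, and $S^{N-1}_\mathbb R\subset S^{N-1}_{\mathbb C,+}$ by all of these together.

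For (3), I would set $x_i=g_i/\sqrt{N}$ inside $C(\widehat{\Gamma})=C^*(\Gamma)$. Since each $g_i$ is a unitary we have $\sum_i x_ix_i^*=\sum_i x_i^*x_i=1$, which yields a $*$-algebra surjection $C(S^{N-1}_{\mathbb C,+})\to C^*(\Gamma)$. I would then adjoin the relations $Nx_ix_i^*=Nx_i^*x_i=1$, which ensure that $\sqrt{N}x_i$ is a unitary in the quotient, with inverse $\sqrt{N}x_i^*$. On top of these, every word relation $w(g_1,g_1^{-1},\ldots,g_N,g_N^{-1})=1$ defining $\Gamma$ as a quotient of the free group $F_N$ translates, after substituting $g_i=\sqrt{N}x_i$ and $g_i^{-1}=\sqrt{N}x_i^*$, into a noncommutative polynomial identity in $x_i,x_i^*$, and adjoining all such relations completes the presentation.

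Case (2) will be the main obstacle, and I would attack it using the points (1) and (4) of the Introduction. For $G\subset U_n$ closed, the coordinate functions $u_{ij}$ separate the points of $G$, so by Stone--Weierstrass the unital $*$-algebra they generate is dense in $C(G)$. Rescaling $x_{ij}=u_{ij}/\sqrt{n}$, with $N=n^2$, the unitarity $uu^*=u^*u=1$ gives
$$\sum_{ij}x_{ij}x_{ij}^*=\frac{1}{n}\sum_i\Bigl(\sum_j u_{ij}u_{ij}^*\Bigr)=\frac{1}{n}\cdot n=1,$$
and the same for $\sum_{ij}x_{ij}^*x_{ij}$, so $(x_{ij})$ satisfies the defining relations of $S^{N-1}_{\mathbb C,+}$ and we obtain a surjection $C(S^{N-1}_{\mathbb C,+})\to C(G)$. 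To identify its kernel I would adjoin the commutativity relations $[x_{ij},x_{kl}]=[x_{ij},x_{kl}^*]=0$, the relations expressing that the matrix $(\sqrt{n}\,x_{ij})$ is unitary in $M_n$, and finally the collection of all polynomial relations in $\mathbb C[u_{ij},u_{ij}^*]$ that vanish on $G\subset U_n$. The delicate point is checking that these relations really do present $C(G)$ on the nose: this follows because the quotient of the polynomial $*$-algebra $\mathbb C[u_{ij},u_{ij}^*]$ by the vanishing ideal of $G$ is a commutative $*$-algebra which, by the separation argument above and Stone--Weierstrass, sits densely in $C(G)$, and whose universal commutative $C^*$-completion is $C(G)$ itself by compactness of $G$ and the Gelfand theorem.
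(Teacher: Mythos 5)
Your handling of cases (1) and (3) follows the same route as the paper and is fine: (1) is a matter of reading off defining polynomials, and in (3) the rescaling $x_i=g_i/\sqrt N$ together with the unitarity of the $g_i$ gives the quadratic relations, and the algebraicity follows by adjoining the unitarity polynomials and the word relations of $\Gamma$ as a quotient of $F_N$. The paper is equally brief here, saying only that ``the algebricity claim of the manifold is clear.''

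Case (2) also begins exactly as in the paper, with the rescaling $x_{ij}=u_{ij}/\sqrt n$, $N=n^2$, and the check of the sphere relations. The difference is that the paper frankly delegates the algebraicity statement — ``this is well-known, coming either from Lie theory or from Tannakian duality'' — whereas you try to prove it, and the argument you give has a genuine gap. After adjoining the vanishing ideal $I(G)\subset\mathbb C[u_{ij},u_{ij}^*]$, commutativity and unitarity, the universal commutative $C^*$-algebra you obtain is $C(Y)$ where $Y=V(I(G))\cap U_n$ is the set of unitary matrices annihilated by every polynomial vanishing on $G$; this is the real Zariski closure of $G$ inside $U_n$, and it always contains $G$, but the claim $Y=G$ is precisely the assertion that the compact group $G$ is real algebraic. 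Your justification — ``compactness of $G$ and the Gelfand theorem'' — does not deliver this: a compact subset of $\mathbb R^m$ need not equal its Zariski closure (an interval in $\mathbb R$ is the standard counterexample), and the Gelfand theorem only identifies the spectrum of the completion with $Y$, not with $G$. What is actually needed is the classical theorem that every compact subgroup of $U_n$ is Zariski closed (equivalently, cut out by the matrix coefficients of finitely many representations), which is a consequence of Peter–Weyl/Tannakian duality; this is exactly the input the paper flags and defers. So the structure of your proof of (2) is right, but the sentence purporting to close it does not, and the missing step is the same nontrivial fact the paper names.
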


\begin{proof}
These facts are all well-known, the proof being as follows:

\medskip

(1) This is true by definition of our various spheres.

\medskip

(2) Given a closed subgroup $G\subset U_n$, we have indeed an embedding $G\subset S^{N-1}_\mathbb C$, with $N=n^2$, given in double indices by: 
$$x_{ij}=\frac{u_{ij}}{\sqrt{n}}$$

We can further compose this embedding with the standard embedding $S^{N-1}_\mathbb C\subset S^{N-1}_{\mathbb C,+}$, and we obtain an embedding as desired. As for the fact that we obtain indeed a real algebraic manifold, this is well-known, coming either from Lie theory or from Tannakian duality. We will be back to this later on, in a more general context.

\medskip

(3) This follows from the fact that the variables $x_i=\frac{g_i}{\sqrt{N}}$ satisfy the quadratic relations  $\sum_ix_ix_i^*=\sum_ix_i^*x_i=1$, with the algebricity claim of the manifold being clear.
\end{proof}

At the level of the general theory, we have the following version of the Gelfand theorem, which is something very useful, and that we will use many times in what follows:

\begin{theorem}
When $X\subset S^{N-1}_{\mathbb C,+}$ is an algebraic manifold, given by
$$C(X)=C(S^{N-1}_{\mathbb C,+})\Big/\Big<f_i(x_1,\ldots,x_N)=0\Big>$$
for certain noncommutative polynomials $f_i\in\mathbb C<x_1,\ldots,x_N>$, we have
$$X_{class}=\left\{x\in S^{N-1}_\mathbb C\Big|f_i(x_1,\ldots,x_N)=0\right\}$$
and $X$ appears as a liberation of $X_{class}$.
\end{theorem}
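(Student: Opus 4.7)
The plan is to unpack the definition of $X_{class}$ and then apply the Gelfand theorem to the resulting commutative quotient. By Definition 1.14 (and the discussion following Proposition 1.15), the classical version $X_{class}$ is obtained by imposing commutativity on $C(X)$, i.e.\ by quotienting by the closed two-sided ideal $I$ generated by all commutators $[a,b]$ with $a,b\in C(X)$. Equivalently, at the level of generators, we add the relations $[x_j,x_k]=[x_j,x_k^*]=0$ for all $j,k$.

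First I would commute the two quotient operations: combining the defining presentation of $C(X)$ with the imposition of commutativity yields
\begin{equation*}
C(X_{class}) = C(S^{N-1}_{\mathbb C,+})\Big/\Big<f_i(x_1,\ldots,x_N)=0,\ [x_j,x_k]=0,\ [x_j,x_k^*]=0\Big>.
\end{equation*}
By Proposition 1.15, imposing commutativity on the generators of $C(S^{N-1}_{\mathbb C,+})$ produces exactly $C(S^{N-1}_{\mathbb C})$. Hence the above simplifies to
\begin{equation*}
C(X_{class}) = C(S^{N-1}_{\mathbb C})\Big/\Big<f_i(x_1,\ldots,x_N)=0\Big>.
\end{equation*}

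Next I would apply the Gelfand theorem (Theorem 1.10). The algebra above is a commutative unital $C^*$-algebra, so it is of the form $C(Y)$ for a compact Hausdorff space $Y$ identified with its character space. A character $\chi$ of $C(S^{N-1}_{\mathbb C})/\langle f_i\rangle$ is the same as a character of $C(S^{N-1}_{\mathbb C})$ sending each $f_i(x_1,\ldots,x_N)$ to $0$; by the classical Gelfand picture, characters of $C(S^{N-1}_{\mathbb C})$ correspond to points $x\in S^{N-1}_{\mathbb C}$ via evaluation, and the condition on $\chi$ becomes $f_i(x_1,\ldots,x_N)=0$. Therefore
\begin{equation*}
Y = \bigl\{x\in S^{N-1}_{\mathbb C}\ \big|\ f_i(x_1,\ldots,x_N)=0\bigr\},
\end{equation*}
which is exactly the set displayed in the statement. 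Finally the liberation claim is essentially tautological: the surjection $C(X)\twoheadrightarrow C(X_{class})$ coming from the quotient by the commutator ideal gives an inclusion $X_{class}\subset X$ of compact quantum spaces in the sense of Definition 1.13, which is what ``$X$ appears as a liberation of $X_{class}$'' means.

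I do not anticipate any genuine obstacle here: the only mild subtlety is making sure the relations $f_i=0$ pass cleanly through the commutative quotient, which is immediate since quotienting by a larger ideal commutes with quotienting by a smaller one. The argument is essentially an application of Proposition 1.15 followed by the Gelfand theorem, with the liberation property falling out of the universal property of the quotient.
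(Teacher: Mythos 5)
Your proof is correct and uses essentially the same Gelfand-theoretic idea as the paper. The only presentational difference is that you compute the quotient directly, by commuting the quotient operations and then reading off the character space, whereas the paper constructs a pair of mutually inverse quotient maps $C(X_{class})\to C(X_{class}')$ and $C(X_{class}')\to C(X_{class})$; the substance is the same.
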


\begin{proof}
This is something that already met, in the context of the free spheres. In general, the proof is similar, by using the Gelfand theorem. Indeed, if we denote by $X_{class}'$ the manifold constructed in the statement, then we have a quotient map of $C^*$-algebras as follows, mapping standard coordinates to standard coordinates:
$$C(X_{class})\to C(X_{class}')$$

Conversely now, from $X\subset S^{N-1}_{\mathbb C,+}$ we obtain $X_{class}\subset S^{N-1}_\mathbb C$, and since the relations defining $X_{class}'$ are satisfied by $X_{class}$, we obtain an inclusion of subspaces $X_{class}\subset X_{class}'$. Thus, at the level of algebras of continuous functions, we have a quotient map of $C^*$-algebras as follows, mapping standard coordinates to standard coordinates:
$$C(X_{class}')\to C(X_{class})$$

Thus, we have constructed a pair of inverse morphisms, and we are done.
\end{proof}

With these results in hand, we are now ready for formulating our second ``fix'' for the functoriality issues of the Gelfand correspondence, as follows:

\index{liberation}

\begin{definition}
The category of the real algebraic submanifolds $X\subset S^{N-1}_{\mathbb C,+}$ is the category of the universal $C^*$-algebras of type
$$C(X)=C(S^{N-1}_{\mathbb C,+})\Big/\Big<f_i(x_1,\ldots,x_N)=0\Big>$$
with $f_i\in\mathbb C<x_1,\ldots,x_N>$ being noncommutative polynomials, with the arrows $X\to Y$ being the $*$-algebra morphisms between $*$-algebras of coordinates
$$\mathcal C(Y)\to\mathcal C(X)$$
mapping standard coordinates to standard coordinates.
\end{definition}

In other words, what we are doing here is that of proposing a definition for the morphisms between the compact quantum spaces, in the particular case where these compact quantum spaces are algebraic submanifolds of the free complex sphere $S^{N-1}_{\mathbb C,+}$. And the point is that this ``fix'' perfectly works for the group duals, as follows:

\begin{theorem}
The category of finitely generated groups $\Gamma=<g_1,\ldots,g_N>$, with the morphisms being the group morphisms mapping generators to generators, embeds contravariantly via $\Gamma\to\widehat{\Gamma}$ into the category of real algebraic submanifolds $X\subset S^{N-1}_{\mathbb C,+}$. 
\end{theorem}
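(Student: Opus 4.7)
The plan is to verify three things in turn: that the assignment $\Gamma \to \widehat{\Gamma}$ lands in the target category on objects, that it extends contravariantly to morphisms, and that the resulting functor is faithful.

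First, for an object $\Gamma = \langle g_1, \ldots, g_N \rangle$, I would invoke Proposition 1.30(3): setting $x_i = g_i/\sqrt{N}$ inside $C^*(\Gamma)$, the unitarity $g_i^* = g_i^{-1}$ gives $\sum_i x_i x_i^* = \sum_i x_i^* x_i = 1$, so the $x_i$ define a quotient map $C(S^{N-1}_{\mathbb C,+}) \to C^*(\Gamma)$. Every group relation $g_{i_1}^{\varepsilon_1}\cdots g_{i_k}^{\varepsilon_k} = 1$ with $\varepsilon_j \in \{\pm 1\}$ translates (using $g_i^{-1} = g_i^*$, and rescaling by $N^{k/2}$) into a noncommutative polynomial relation $f_r(x_1,\ldots,x_N) = 0$; adding these to the sphere relations exactly cuts out $C^*(\Gamma)$, by the universal property of the group $C^*$-algebra. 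Hence $\widehat{\Gamma}$ is a real algebraic submanifold of $S^{N-1}_{\mathbb C,+}$ in the sense of Definition 1.29.

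Next, to a generator-preserving group morphism $\phi: \Gamma \to \Gamma'$ with $\phi(g_i) = g_i'$, I would associate the morphism $\widehat{\phi}: \widehat{\Gamma'} \to \widehat{\Gamma}$, defined at the coordinate level by the $*$-algebra map $\mathcal C(\widehat{\Gamma}) \to \mathcal C(\widehat{\Gamma'})$, $x_i \mapsto x_i'$. Well-definedness follows by universality: since $\phi$ is a group morphism, the elements $g_i'/\sqrt{N} \in C^*(\Gamma')$ satisfy every defining relation the $x_i$ satisfy in $\mathcal C(\widehat{\Gamma})$, so $x_i \mapsto x_i'$ extends uniquely to a $*$-algebra morphism on the dense $*$-subalgebra of coordinates. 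Contravariance and the preservation of identities and composition are then immediate from $\widehat{\psi \circ \phi} = \widehat{\phi} \circ \widehat{\psi}$ at the level of generators.

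For faithfulness, I would rely on the fact, established inside Proposition 1.18, that the canonical map $\Gamma \hookrightarrow C^*(\Gamma)$, $g \mapsto g$, is injective, because the left-regular representation $\pi: \mathbb C[\Gamma] \to B(\ell^2(\Gamma))$ sends distinct group elements to distinct unitaries. Now if $\phi, \psi: \Gamma \to \Gamma'$ are two generator-preserving morphisms that induce the same $*$-algebra map $\Phi = \Psi: \mathcal C(\widehat{\Gamma}) \to \mathcal C(\widehat{\Gamma'})$, then $\phi(g_i) = \sqrt{N}\,\Phi(x_i) = \sqrt{N}\,\Psi(x_i) = \psi(g_i)$ already as elements of $C^*(\Gamma')$, and the injectivity of $\Gamma' \hookrightarrow C^*(\Gamma')$ forces $\phi(g_i) = \psi(g_i)$ in the group; since the $g_i$ generate $\Gamma$, we get $\phi = \psi$. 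A parallel argument using injectivity of $\Gamma \hookrightarrow C^*(\Gamma)$ shows that the object assignment distinguishes non-isomorphic presented groups.

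The main subtlety I anticipate is not a single hard step but rather the bookkeeping at the level of relations: one must verify cleanly that the noncommutative polynomials cutting $\widehat{\Gamma}$ out of $S^{N-1}_{\mathbb C,+}$ are exactly the rescaled group relations, and that the universal property of the group $C^*$-algebra matches the universal property used in Definition 1.29 to define real algebraic submanifolds. Once this correspondence is in place, both functoriality and faithfulness follow formally from the Cayley-type embedding $\Gamma \hookrightarrow C^*(\Gamma)$ of Proposition 1.18.
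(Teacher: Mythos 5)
Your proof is correct and follows essentially the same route as the paper, which is quite terse here: it simply cites Proposition 1.30 for the embedding $\widehat{\Gamma}\subset S^{N-1}_{\mathbb C,+}$ via $x_i=g_i/\sqrt{N}$, and then observes in one sentence that a $*$-algebra morphism $\mathbb C[\Gamma]\to\mathbb C[\Lambda]$ mapping coordinates to coordinates is the same thing as a group morphism $\Gamma\to\Lambda$ mapping generators to generators. What you add is the explicit bookkeeping the paper leaves implicit: that the rescaled group relations (together with the unitarity relations $x_ix_i^*=x_i^*x_i=1/N$, which arise from the trivial relators $g_ig_i^{-1}=1$) are precisely the noncommutative polynomials cutting out $\widehat{\Gamma}$ in the sense of Definition 1.29, and that faithfulness of the functor follows from the injectivity of $\Gamma\hookrightarrow C^*(\Gamma)$. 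One small remark: for faithfulness you only need injectivity of $\Gamma\hookrightarrow\mathbb C[\Gamma]$ (linear independence of group elements in the group algebra), since the morphisms in Definition 1.32 live at the level of $\mathcal C(X)$, not $C(X)$; invoking the left-regular representation and the full $C^*$-algebra, as you do, is correct but slightly more than necessary.
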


\begin{proof}
We know from Proposition 1.30 above that, given a finitely generated group $\Gamma=<g_1,\ldots,g_N>$, we have an embedding of algebraic manifolds $\widehat{\Gamma}\subset S^{N-1}_{\mathbb C,+}$, given by $x_i=\frac{g_i}{\sqrt{N}}$. Now since a morphism $C[\Gamma]\to C[\Lambda]$ mapping coordinates to coordinates means a morphism of groups $\Gamma\to\Lambda$ mapping generators to generators, our notion of isomorphism is indeed the correct one, as claimed.
\end{proof}

We will see later on that Theorem 1.33 has various extensions to the quantum groups and quantum homogeneous spaces that we will be interested in, which are all algebraic submanifolds $X\subset S^{N-1}_{\mathbb C,+}$. We will also see that all these manifolds have Haar integration functionals, which are traces, and so that for these manifolds, our functoriality fix from Definition 1.32 coincides with the ``von Neumann'' fix from Definition 1.27.

\bigskip

So, this will be our formalism, and operator algebra knowledge required. We should mention that our approach heavily relies on Woronowicz's philosophy in \cite{wo1}. Also, part of the above has been folklore for a long time, with the details worked out in \cite{bb4}.

\section*{1e. Exercises}

Generally speaking, the best complement to the material presented in this chapter is some further reading on operator theory, operator algebras and quantum mechanics. Here are some exercises in direct relation with what has been said above:

\begin{exercise}
Find an explicit orthonormal basis of the separable Hilbert space
$$H=L^2[0,1]$$
by applying the Gram-Schmidt procedure to the polynomials $f_n=x^n$, with $n\in\mathbb N$.
\end{exercise}

This is something quite tricky, and in case you get stuck, the answer can be found by doing an internet search with the keyword ``orthogonal polynomials''.

\begin{exercise}
Given a Hilbert space $H$, prove that we have embeddings of $*$-algebras as follows, which are both proper, unless $H$ is finite dimensional: 
$$B(H)\subset\mathcal L(H)\subset M_I(\mathbb C)$$
Also, prove that in this picture the adjoint operation $T\to T^*$ takes a very simple form, namely $(M^*)_{ij}=\overline{M}_{ji}$ at the level of the associated matrices.
\end{exercise}

This is something that we already discussed in the above, and the problem now is that of working out all the details. The counterexample on the left is tricky.

\begin{exercise}
Prove that for the usual matrices $A,B\in M_N(\mathbb C)$ we have
$$\sigma^+(AB)=\sigma^+(BA)$$
where $\sigma^+$ denotes the set of eigenvalues, taken with multiplicities.
\end{exercise}

As a remark here, we have seen that $\sigma(AB)=\sigma(BA)$ holds outside $\{0\}$. And the equality on $\{0\}$ holds as well, because $AB$ is invertible when $BA$ is invertible.

\begin{exercise}
Prove that an operator $T\in B(H)$ satisfies the condition
$$<Tx,x>\geq0$$
for any $x\in H$ precisely when it is positive in our sense, $\sigma(T)\in[0,\infty)$.
\end{exercise}

Working out first the case of the usual matrices, $M\in M_N(\mathbb C)$, with not much advanced linear algebra involved, is actually a very good preliminary exercise.

\begin{exercise}
Prove that the Pontrjagin dual of $\mathbb Z_N$ is this group itself
$$\widehat{\mathbb Z}_N=\mathbb Z_N$$
and work out the details of the subsequent isomorphism $C^*(\mathbb Z_N)\simeq C(\mathbb Z_N)$.
\end{exercise}

Here some knowledge of the roots of unity is needed. And as a bonus exercise, do a similar study for an arbitrary finite abelian group $G$.

\begin{exercise}
Find a discrete group $\Gamma$ such that the quotient map
$$C^*(\Gamma)\to C^*_{red}(\Gamma)$$
is not an isomorphism.
\end{exercise} 

This is actually something rather undoable. But hey, working on undoable exercises makes you strong. And do not worry, we will come back to this, later in this book.

\chapter{Quantum groups}

\section*{2a. Hopf algebras}

In this chapter we introduce the compact quantum groups. Let us start with the finite case, which is elementary, and easy to explain. The idea will be that of calling ``finite quantum groups'' the compact quantum spaces $G$ appearing via a formula of type $A=C(G)$, with the algebra $A$ being finite dimensional, and having some suitable extra structure. In order to simplify the presentation, we use the following terminology:

\index{comultiplication}
\index{counit}
\index{antipode}
\index{opposite algebra}

\begin{definition}
Given a finite dimensional $C^*$-algebra $A$, any morphisms of type
$$\Delta:A\to A\otimes A$$
$$\varepsilon:A\to\mathbb C$$
$$S:A\to A^{opp}$$
will be called comultiplication, counit and antipode. 
\end{definition}

The terminology comes from the fact that in the commutative case, $A=C(X)$, the morphisms $\Delta,\varepsilon,S$ are transpose to group-type operations, as follows:
$$m:X\times X\to X$$
$$u:\{.\}\to X$$
$$i:X\to X$$

The reasons for using the opposite algebra $A^{opp}$ instead of $A$ will become clear in a moment. Now with these conventions in hand, we can formulate:

\index{Hopf algebra}
\index{finite quantum group}
\index{quantum group}

\begin{definition}
A finite dimensional Hopf algebra is a finite dimensional $C^*$-algebra $A$, with a comultiplication, counit and antipode, satisfying
$$(\Delta\otimes id)\Delta=(id\otimes \Delta)\Delta$$
$$(\varepsilon\otimes id)\Delta=id$$
$$(id\otimes\varepsilon)\Delta=id$$
$$m(S\otimes id)\Delta=\varepsilon(.)1$$
$$m(id\otimes S)\Delta=\varepsilon(.)1$$
along with the condition $S^2=id$. Given such an algebra we write $A=C(G)=C^*(H)$, and call $G,H$ finite quantum groups, dual to each other.
\end{definition}

In this definition everything is standard, except for our choice to use $C^*$-algebras in all that we are doing, and also for the last axiom, $S^2=id$. This axiom corresponds to the fact that, in the corresponding quantum group, we have:
$$(g^{-1})^{-1}=g$$

It is possible to prove that this condition is automatic, in the present $C^*$-algebra setting. However, this is something non-trivial, and since all this is just a preliminary discussion, not needed later, we have opted for including $S^2=id$ in our axioms. 

\bigskip

\index{cocommutative}

For reasons that will become clear in a moment, we say that a Hopf algebra $A$ as above is cocommutative if, with $\Sigma(a\otimes b)=b\otimes a$ being the flip, we have:
$$\Sigma\Delta=\Delta$$

With this convention made, we have the following result, which summarizes the basic theory of finite quantum groups, and justifies the terminology and axioms:

\index{commutative Hopf algebra}
\index{cocommutative Hopf algebra}

\begin{theorem}
The following happen:
\begin{enumerate}
\item If $G$ is a finite group then $C(G)$ is a commutative Hopf algebra, with
$$\Delta(\varphi)=(g,h)\to \varphi(gh)$$
$$\varepsilon(\varphi)=\varphi(1)$$
$$S(\varphi)=g\to\varphi(g^{-1})$$
as structural maps. Any commutative Hopf algebra is of this form. 

\item If $H$ is a finite group then $C^*(H)$ is a cocommutative Hopf algebra, with
$$\Delta(g)=g\otimes g$$
$$\varepsilon(g)=1$$
$$S(g)=g^{-1}$$
as structural maps. Any cocommutative Hopf algebra is of this form.

\item If $G,H$ are finite abelian groups, dual to each other via Pontrjagin duality, then we have an identification of Hopf algebras $C(G)=C^*(H)$.
\end{enumerate}
\end{theorem}

\begin{proof}
These results are all elementary, the idea being as follows:

\medskip

(1) The fact that $\Delta,\varepsilon,S$ satisfy the axioms is clear from definitions, and the converse follows from the Gelfand theorem, by working out the details, regarding $\Delta,\varepsilon,S$. 

\medskip

(2) Once again, the fact that $\Delta,\varepsilon,S$ satisfy the axioms is clear from definitions, with the remark that the use of the opposite multiplication $(a,b)\to a\cdot b$ in really needed here, in order for the antipode $S$ to be an algebra morphism, as shown by:
$$S(gh)
=(gh)^{-1}
=h^{-1}g^{-1}
=g^{-1}\cdot h^{-1}
=S(g)\cdot S(h)$$

For the converse, we use a trick. Let $A$ be an arbitrary Hopf algebra, as in Definition 2.2, and consider its comultiplication, counit, multiplication, unit and antipode maps. The transposes of these maps are then linear maps as follows:
$$\Delta^t:A^*\otimes A^*\to A^*$$
$$\varepsilon^t:\mathbb C\to A^*$$
$$m^t:A^*\to A^*\otimes A^*$$
$$u^t:A^*\to\mathbb C$$
$$S^t:A^*\to A^*$$

It is routine to check that these maps make $A^*$ into a Hopf algebra. Now assuming that  $A$ is cocommutative, it follows that $A^*$ is commutative, so by (1) we obtain $A^*=C(G)$ for a certain finite group $G$, which in turn gives $A=C^*(G)$, as desired.

\medskip

(3) This follows from the discussion in the proof of (2) above.
\end{proof}

This was for the basics of finite quantum groups, under the strongest possible axioms. It is possible to further build on this, but we will discuss this directly in the compact setting. For more on Hopf algebras, over $\mathbb C$ as above, or over $\mathbb C$ with weaker axioms, or over other fields $k$, we refer to Abe \cite{abe}, Chari-Pressley \cite{cpr} and Majid \cite{maj}.

\section*{2b. Axioms, theory}

\index{Leningrad school}
\index{Faddeev}
\index{Woronowicz}

Let us get now into the compact quantum group case. Thinks are quite tricky here, with the origin of the modern theory going back to the work from the 70s of the Leningrad School of physics, by Faddeev and others \cite{fad}. From that work emerged a mathematical formalism, explained and developed in the papers of Drinfeld \cite{dri} and Jimbo \cite{jim} on one hand, and in the papers of Woronowicz \cite{wo1}, \cite{wo2}, on the other.

\bigskip

For our purposes here, which are rather post-modern, with the aim on focusing on what is beautiful and essential, from the point of view of present and future mathematics, and also potentially useful, from the point of view of present and future physics, we will only need a light version of all this theory, somewhat in the spirit of Definition 2.2, and of old-style mathematics, such as that of Brauer \cite{bra} and Weyl \cite{wey}. 

\bigskip

Let us begin with an exploration of the subject, leading quite often to negative results, and with all this being of course a bit subjective, and advanced too. The discussion will be harsh mathematical physics, so hang on. First, we have:

\begin{fact}
Lifting the finite dimensionality assumption on the Hopf algebra $A$ from Definition 2.2 does not work.
\end{fact}

To be more precise, the problem comes from the axioms to be satisfied by $S$, which do not make sense in the infinite dimensional setting, due to problems with $\otimes$. Of course, you might say why not ditching then operator algebras, and the topological tensor products $\otimes$ coming with them. But then if we do so, this basically means ditching our quantum mechanics motivations too, and isn't this the worse thing that can happen.

\bigskip

In view of this, an idea, which looks rather viable, is that of forgetting about the antipode $S$, which most likely brings troubles. But this does not work either:

\begin{fact}
Reformulating things as to make dissapear the antipode $S$, and then lifting the finite dimensionality assumption on the algebra $A$, does not work either.
\end{fact}

To be more precise, the antipode $S$ is about inversion in the corresponding quantum group $G$, and for making it dissapear, we can use the simple group theory fact that a semigroup $G$ is a group precisely when it has cancellation, $gh=gk\implies h=k$ and $hg=kg\implies h=k$. But the problem is that the axioms for compact quantum groups that we obtain in this way are quite ugly, and hard to verify, and there is better.

\bigskip

By the way, no offense to anyone here, because the above-mentioned ugly axioms, as well as the beautiful ones to be discussed below, are both due to Woronowicz. In short, we have two papers of Woronowicz to choose from, and we will choose one, \cite{wo1}.

\bigskip

Moving ahead now, what to do. Obviously, and a bit surprisingly, Definition 2.2 is not a good starting point, so we must come up with something else. You would say why not looking into compact Lie groups, because the finite groups are after all some kind of degenerate compact Lie groups too. But here, we get into another trouble:

\begin{fact}
The free complex torus $\widehat{F_N}$, that we love, and that we want to be a compact quantum Lie group, has no interesting differential geometry.
\end{fact}

To be more precise, we know from chapter 1 that $\widehat{F_N}$ is the free analogue of the complex torus $\mathbb T^N$, and this is why we would like to have it as a compact quantum Lie group, and even more, as a central example of such quantum groups. On the other hand, for various mathematical and physical reasons, differential geometry and smoothness are phenomena which are in close relation with the classical world, and the known types of noncommutative differential geometry cannot cover wild, free objects like $\widehat{F_N}$.

\bigskip

We should mention here that, contrary to the comments on Fact 2.4 and Fact 2.5, which are technical but hopefully understandable, this is something really advanced. Smoothness in mathematics and physics is certainly easy to formally define, and you leaned that in Calculus 1. But the reasons behind smoothness, that you need to understand if you want to talk about noncommutative smoothness, without talking nonsense, are amazingly deep, requiring on the bottom line reading say Feynman \cite{fe1}, \cite{fe2}, \cite{fe3} for physics, and Connes \cite{con} for mathematics. So, just trust me here, on Fact 2.6.

\bigskip

Now, in view of all the above, what to do. We are in a bit of an impasse here, but fortunately, pure mathematics comes to the rescue, in the following way:

\begin{fact}
The compact Lie groups are exactly the closed subgroups $G\subset U_N$, and for such a closed subgroup the multiplication, unit and inverse operation are given by
$$(UV)_{ij}=\sum_kU_{ik}V_{kj}$$
$$(1_N)_{ij}=\delta_{ij}$$
$$(U^{-1})_{ij}=U_{ji}^*$$
that is, the usual formulae for unitary matrices.
\end{fact}

And isn't this exactly what we need. Assuming that we are a bit familiar with Gelfand duality, and so are we, it shouldn't be hard from this to axiomatize the algebras of type $A=C(G)$, with $G\subset U_N$ being a closed subgroup, and then lift the commutativity assumption on $A$, as to have our axioms for the compact quantum Lie groups.

\bigskip

Getting directly to the answer, and with the Gelfand duality details in the classical case, $G\subset U_N$, to be explained in a moment, in the proof of Proposition 2.9 below, we are led in this way to the following definition, due to Woronowicz \cite{wo1}:

\index{Woronowicz algebra}
\index{quantum group}
\index{compact quantum group}
\index{compact matrix quantum group}
\index{opposite algebra}
\index{comultiplication}
\index{counit}
\index{antipode}

\begin{definition}
A Woronowicz algebra is a $C^*$-algebra $A$, given with a unitary matrix $u\in M_N(A)$ whose coefficients generate $A$, such that we have morphisms of $C^*$-algebras
$$\Delta:A\to A\otimes A$$
$$\varepsilon:A\to\mathbb C$$
$$S:A\to A^{opp}$$
given by the following formulae, on the standard generators $u_{ij}$:
$$\Delta(u_{ij})=\sum_ku_{ik}\otimes u_{kj}$$
$$\varepsilon(u_{ij})=\delta_{ij}$$
$$S(u_{ij})=u_{ji}^*$$
In this case, we write $A=C(G)$, and call $G$ a compact quantum Lie group.
\end{definition}

All this is quite subtle, and there are countless comments to be made here. Generally speaking, we will defer these comments for a bit later, once we'll know at least the basic examples, and also some basic theory. As some quick comments, however:

\bigskip

(1) In the above definition $A\otimes A$ can be any topological tensor product of $A$ with itself, meaning $C^*$-algebraic completion of the usual algebraic tensor product, and with the choice of the exact $\otimes$ operation being irrelevant, because we will divide later the class of Woronowicz algebras by a certain equivalence relation, making the choice of $\otimes$ to be irrelevant. In short, good news, no troubles with $\otimes$, and more on this later.

\bigskip

(2) Generally speaking, the above definition is motivated by Fact 2.7, and a bit of Gelfand duality thinking, and we will see details in a moment, in the proof of Proposition 2.9 below. The morphisms $\Delta,\varepsilon,S$ are called comultiplication, counit and antipode. Observe that if these morphisms exist, they are unique. This is analogous to the fact that a closed set of unitary matrices $G\subset U_N$ is either a compact group, or not.

\bigskip

(3) For everything else regarding Definition 2.8, and there are so many things to be said here, as a continuation of Facts 2.4, 2.5, 2.6, 2.7, allow me please to state and prove Propositions 2.9, 2.10, 2.11, 2.12 below, matter of exploring a bit Definition 2.8 and its consequences, then Definition 2.13 below, coming as a complement to Definition 2.8. And then we will resume our philosophical examination of possible rival definitions.

\bigskip

So, getting started now, and taking Definition 2.8 as it is, mysterious new thing, that we will have to explore, we first have the following result:

\index{commutative Woronowicz algebra}

\begin{proposition}
Given a closed subgroup $G\subset U_N$, the algebra $A=C(G)$, with the matrix formed by the standard coordinates $u_{ij}(g)=g_{ij}$, is a Woronowicz algebra, and:
\begin{enumerate}
\item For this algebra, the morphisms $\Delta,\varepsilon,S$ appear as functional analytic transposes of the multiplication, unit and inverse maps $m,u,i$ of the group $G$.

\item This Woronowicz algebra is commutative, and conversely, any Woronowicz algebra which is commutative appears in this way.
\end{enumerate}
\end{proposition}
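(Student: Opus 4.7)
The plan is to handle the two assertions in turn, using in both directions the identification $C(G)\otimes C(G)=C(G\times G)$ (valid because commutative $C^*$-algebras are nuclear) and the identification $C(G)^{opp}=C(G)$.

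For (1), I would start from the observation that the coordinates $u_{ij}(g)=g_{ij}$ generate $C(G)$ by Stone--Weierstrass, and that the matrix $u=(u_{ij})\in M_N(C(G))$ is unitary since each $g\in G\subset U_N$ is. I would then define $\Delta,\varepsilon,S$ as the $C^*$-algebra morphisms obtained by transposing the multiplication $m:G\times G\to G$, the unit inclusion $\{e\}\hookrightarrow G$, and the inversion $i:G\to G$. A direct computation then verifies the three formulas of Definition 2.1: for instance
$$\Delta(u_{ij})(g,h)=u_{ij}(gh)=(gh)_{ij}=\sum_kg_{ik}h_{kj}=\sum_ku_{ik}(g)u_{kj}(h),$$
which says $\Delta(u_{ij})=\sum_ku_{ik}\otimes u_{kj}$; similarly $\varepsilon(u_{ij})=u_{ij}(e)=\delta_{ij}$, and since $g^{-1}=g^*$ in $U_N$ we also get $S(u_{ij})(g)=(g^{-1})_{ij}=\overline{g_{ji}}=u_{ji}^*(g)$, with $S$ landing in $C(G)^{opp}=C(G)$ because the algebra is commutative.

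For (2), commutativity of $C(G)$ handles one direction. Conversely, assume $A$ is a commutative Woronowicz algebra. By the Gelfand theorem I write $A=C(X)$ for some compact Hausdorff space $X$. Unitarity of $u\in M_N(C(X))=C(X,M_N(\mathbb C))$ gives a continuous map $\Phi:X\to U_N$ defined by $\Phi(x)_{ij}=u_{ij}(x)$, which is injective because the generators $u_{ij}$ separate the points of $X$. By Gelfand duality, the three morphisms $\Delta,\varepsilon,S$ transpose to continuous maps $m':X\times X\to X$, a point $e'\in X$, and $i':X\to X$, and the defining formulas in Definition 2.1 translate (via the same computation as in (1), read backwards) into
$$\Phi(m'(x,y))=\Phi(x)\Phi(y),\quad\Phi(e')=I_N,\quad\Phi(i'(x))=\Phi(x)^{-1}.$$
Hence $G:=\Phi(X)$ is a compact subset of $U_N$ which is stable under matrix product, contains $I_N$, and is stable under inversion---a closed subgroup. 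Since $\Phi$ is a continuous bijection between compact Hausdorff spaces it is a homeomorphism, and under it the given Woronowicz structure on $A$ is identified with the canonical one on $C(G)$ produced in (1).

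The main subtlety I anticipate is making sure that the three Woronowicz morphisms of Definition 2.1, which \emph{a priori} are only three algebra maps with no compatibility required among them, genuinely force a group structure on $X$ rather than some weaker algebraic datum. The point is that the ambient unitary group does all the work: once $\Phi(X)\subset U_N$ is stable under matrix product, inverse, and contains $I_N$, associativity and the identity/inverse axioms are inherited from $U_N$ for free, so no separate verification of coassociativity, counitality, or the antipode equation is needed at the classical level.
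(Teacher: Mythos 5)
Your proof is correct and follows essentially the same route as the paper: part (1) via the same transpose computations for $m,u,i$, and part (2) via Gelfand duality, the embedding $X\subset U_N$ given by the coordinates, and the observation that $\Delta,\varepsilon,S$ force stability of the image under product, unit and inverse, with the group axioms inherited from $U_N$. Your write-up merely spells out the details (nuclearity giving $C(X)\otimes C(X)=C(X\times X)$, injectivity of the embedding from the generators separating points) that the paper leaves implicit.
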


\begin{proof}
Since we have $G\subset U_N$, the matrix $u=(u_{ij})$ is unitary. Also, since the coordinates $u_{ij}$ separate the points of $G$, by the Stone-Weierstrass theorem we obtain that the $*$-subalgebra $\mathcal A\subset C(G)$ generated by them is dense. Finally, the fact that we have morphisms $\Delta,\varepsilon,S$ as in Definition 2.8 follows from the proof of (1) below.

\medskip

(1) We use the formulae for $U_N$ from Fact 2.7. The fact that the transpose of the multiplication $m^t$ satisfies the condition in Definition 2.8  follows from:
\begin{eqnarray*}
m^t(u_{ij})(U\otimes V)
&=&(UV)_{ij}\\
&=&\sum_kU_{ik}V_{kj}\\
&=&\sum_k(u_{ik}\otimes u_{kj})(U\otimes V)
\end{eqnarray*}

Regarding now the transpose of the unit map $u^t$, the verification of the condition in Definition 2.8 is trivial, coming from the following equalities:
$$u^t(u_{ij})
=1_{ij}
=\delta_{ij}$$

Finally, the transpose of the inversion map $i^t$ verifies the condition in Definition 2.8, because we have the following computation, valid for any $U\in G$:
$$i^t(u_{ij})(U)
=(U^{-1})_{ij}
=\bar{U}_{ji}
=u_{ji}^*(U)$$

(2) Assume that $A$ is commutative. By using the Gelfand theorem, we can write $A=C(G)$, with $G$ being a certain compact space. By using now the coordinates $u_{ij}$, we obtain an embedding $G\subset U_N$. Finally, by using $\Delta,\varepsilon,S$, it follows that the subspace $G\subset U_N$ that we have obtained is in fact a closed subgroup, and we are done.
\end{proof}

Let us go back now to the general setting of Definition 2.8. According to Proposition 2.9, and to the general $C^*$-algebra philosophy, the morphisms $\Delta,\varepsilon,S$ can be thought of as coming from a multiplication, unit map and inverse map, as follows:
$$m:G\times G\to G$$
$$u:\{.\}\to G$$
$$i:G\to G$$

Here is a first result of this type, expressing in terms of $\Delta,\varepsilon,S$ the fact that the underlying maps $m,u,i$ should satisfy the usual group theory axioms:

\index{coassociativity}
\index{square of antipode}

\begin{proposition}
The comultiplication, counit and antipode have the following properties, on the dense $*$-subalgebra $\mathcal A\subset A$ generated by the variables $u_{ij}$:
\begin{enumerate}
\item Coassociativity: $(\Delta\otimes id)\Delta=(id\otimes\Delta)\Delta$.

\item Counitality: $(id\otimes\varepsilon)\Delta=(\varepsilon\otimes id)\Delta=id$.

\item Coinversality: $m(id\otimes S)\Delta=m(S\otimes id)\Delta=\varepsilon(.)1$.\end{enumerate}
In addition, the square of the antipode is the identity, $S^2=id$.
\end{proposition}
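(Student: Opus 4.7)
The overall strategy is to exploit the fact that, by Definition 2.1, the maps $\Delta, \varepsilon$ are $*$-algebra morphisms and $S$ is a $*$-anti-morphism, so that identities (1), (2), and the claim $S^2=id$ are equalities between morphisms of $*$-algebras from $A$ into $A\otimes A\otimes A$, $A$, and $A$ respectively (note that $S^2$ is a morphism, not an anti-morphism, because the composition of two anti-morphisms is a morphism). Since $\mathcal{A}$ is generated by the $u_{ij}$ as a $*$-algebra, it therefore suffices to verify each identity on these generators, where it reduces to a direct calculation with the defining formulas of Definition 2.1. For coassociativity, both sides send $u_{ij}$ to the triple sum $\sum_{k,l}u_{il}\otimes u_{lk}\otimes u_{kj}$. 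For counitality, one gets $\sum_k u_{ik}\delta_{kj}=u_{ij}=\sum_k\delta_{ik}u_{kj}$. For $S^2$, we compute $S^2(u_{ij})=S(u_{ji}^*)=S(u_{ji})^*=(u_{ij}^*)^*=u_{ij}$.

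The one genuinely subtle case is coinversality (3), because the multiplication $m:A\otimes A\to A$ is not a $*$-morphism, so the maps $m(id\otimes S)\Delta$ and $m(S\otimes id)\Delta$ are merely linear, and agreement on generators does not automatically propagate to $\mathcal{A}$. I would handle this by first checking the base case on $u_{ij}$ and on $u_{ij}^*$, then running a Sweedler-style induction on monomial length. On generators one computes
$$m(id\otimes S)\Delta(u_{ij})=\sum_k u_{ik}u_{jk}^*=(uu^*)_{ij}=\delta_{ij}=\varepsilon(u_{ij})1,$$
$$m(S\otimes id)\Delta(u_{ij})=\sum_k u_{ki}^*u_{kj}=(u^*u)_{ij}=\delta_{ij}=\varepsilon(u_{ij})1,$$
and the analogous identities on $u_{ij}^*$ follow from the fact that $\bar{u}=(u_{ij}^*)$ is automatically unitary as well, since $S$ being a well-defined $*$-anti-morphism forces $S$ to preserve the relations $uu^*=u^*u=1$, which reads $\bar{u}^t u=u\bar{u}^t=1$. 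For the inductive step, writing $\Delta(a)=\sum a_{(1)}\otimes a_{(2)}$ and $\Delta(b)=\sum b_{(1)}\otimes b_{(2)}$, the multiplicativity of $\Delta$ and antimultiplicativity of $S$ give
$$m(id\otimes S)\Delta(ab)=\sum a_{(1)}b_{(1)}S(b_{(2)})S(a_{(2)}),$$
and substituting the inductive hypothesis $\sum b_{(1)}S(b_{(2)})=\varepsilon(b)1$ and then $\sum a_{(1)}S(a_{(2)})=\varepsilon(a)1$ yields $\varepsilon(a)\varepsilon(b)1=\varepsilon(ab)1$, as required; the other half is symmetric.

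The main obstacle is thus the bookkeeping in step (3): once one has set up the base case correctly (which requires noting the hidden consequence that $\bar{u}$ is unitary) and trusted the usual Sweedler manipulation, everything reduces to routine index calculations.
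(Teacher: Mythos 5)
Your argument is essentially the paper's, with one genuine improvement. For (1), (2), and $S^2=id$ the paper does exactly what you do: both sides are morphisms (or, for $S^2$, a composition of two anti-morphisms, hence a morphism), so checking on the generators $u_{ij}$ suffices. The paper also opens with the remark that in the commutative case everything follows by transposing the group axioms $m(m\times id)=m(id\times m)$, etc.; this is motivational and not logically needed.

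Where you go further than the paper is precisely where the paper is thin. The paper verifies (3) on the $u_{ij}$ and then simply declares ``and we are done,'' without addressing the fact that $m(id\otimes S)\Delta$ is \emph{not} an algebra morphism (since $m$ is not), so agreement on generators does not automatically propagate to $\mathcal A$. You correctly flag this, supply the base case for $u_{ij}^*$ (which needs the unitarity of $\bar u$), and close with the Sweedler-style induction $\sum a_{(1)}b_{(1)}S(b_{(2)})S(a_{(2)})=\varepsilon(b)\sum a_{(1)}S(a_{(2)})=\varepsilon(ab)1$. This is the right way to make the step rigorous, and it is exactly the standard bialgebra argument the paper silently takes for granted.

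One small slip: you write that applying $S$ to $uu^*=u^*u=1$ yields $\bar u^t u=u\bar u^t=1$. But $\bar u^t$ is just $u^*$, so that identity is merely a restatement of unitarity of $u$ and carries no new content. The relation you actually obtain (and need) is $u^t\bar u=\bar u u^t=1$, equivalently $\bar u$ is unitary; this is precisely the paper's Proposition 2.7, proved there by the same antipode-applied-to-unitarity argument you sketch. With that correction your base case $m(id\otimes S)\Delta(u_{ij}^*)=\sum_k u_{ik}^*u_{jk}=(\bar u u^t)_{ij}=\delta_{ij}$ goes through.
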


\begin{proof}
Observe first that the result holds in the case where $A$ is commutative. Indeed, by using Proposition 2.9 we can write:
$$\Delta=m^t\quad,\quad
\varepsilon=u^t\quad,\quad
S=i^t$$

The above 3 conditions come then by transposition from the basic 3 group theory conditions satisfied by $m,u,i$, which are as follows, with $\delta(g)=(g,g)$:
$$m(m\times id)=m(id\times m)$$
$$m(id\times u)=m(u\times id)=id$$
$$m(id\times i)\delta=m(i\times id)\delta=1$$

Observe that $S^2=id$ is satisfied as well, coming from $i^2=id$, which is a consequence of the group axioms. In general now, the proof goes as follows:

\medskip

(1) We have indeed the following computation:
$$(\Delta\otimes id)\Delta(u_{ij})
=\sum_l\Delta(u_{il})\otimes u_{lj}
=\sum_{kl}u_{ik}\otimes u_{kl}\otimes u_{lj}$$

On the other hand, we have as well the following computation:
$$(id\otimes\Delta)\Delta(u_{ij})
=\sum_ku_{ik}\otimes\Delta(u_{kj})
=\sum_{kl}u_{ik}\otimes u_{kl}\otimes u_{lj}$$

(2) The proof here is quite similar. We first have the following computation:
$$(id\otimes\varepsilon)\Delta(u_{ij})
=\sum_ku_{ik}\otimes\varepsilon(u_{kj})
=u_{ij}$$

On the other hand, we have as well the following computation:
$$(\varepsilon\otimes id)\Delta(u_{ij})
=\sum_k\varepsilon(u_{ik})\otimes u_{kj}
=u_{ij}$$

(3) By using the fact that the matrix $u=(u_{ij})$ is unitary, we obtain:
\begin{eqnarray*}
m(id\otimes S)\Delta(u_{ij})
&=&\sum_ku_{ik}S(u_{kj})\\
&=&\sum_ku_{ik}u_{jk}^*\\
&=&(uu^*)_{ij}\\
&=&\delta_{ij}
\end{eqnarray*}

Similarly, we have the following computation:
\begin{eqnarray*}
m(S\otimes id)\Delta(u_{ij})
&=&\sum_kS(u_{ik})u_{kj}\\
&=&\sum_ku_{ki}^*u_{kj}\\
&=&(u^*u)_{ij}\\
&=&\delta_{ij}
\end{eqnarray*}

Finally, the formula $S^2=id$ holds as well on the generators, and we are done.
\end{proof}

Let us discuss now another class of basic examples, namely the group duals:

\index{group dual}
\index{cocommutative}
\index{cocommutative Woronowicz algebra}
\index{finitely generated group}

\begin{proposition}
Given a finitely generated discrete group $\Gamma=<g_1,\ldots,g_N>$, the group algebra $A=C^*(\Gamma)$, together with the diagonal matrix formed by the standard generators, $u=diag(g_1,\ldots,g_N)$, is a Woronowicz algebra, with $\Delta,\varepsilon,S$ given by: 
$$\Delta(g)=g\otimes g$$
$$\varepsilon(g)=1$$
$$S(g)=g^{-1}$$
This Woronowicz algebra is cocommutative, in the sense that $\Sigma\Delta=\Delta$.
\end{proposition}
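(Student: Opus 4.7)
The plan is to verify the four requirements of Definition 2.1 (unitarity of $u$, generation, and the existence of the three morphisms), then check the cocommutativity condition. The underlying engine throughout is the universal property of the full group $C^*$-algebra $C^*(\Gamma)$ established in Proposition 1.18: any map from $\Gamma$ into the unitary group of some $C^*$-algebra $B$ that respects the group law extends uniquely to a $*$-morphism $C^*(\Gamma) \to B$. This is precisely what makes the full (as opposed to reduced) completion the correct object here.

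First I would note that since each $g_i$ satisfies $g_i^* = g_i^{-1}$ in $\mathbb C[\Gamma]$, the diagonal matrix $u = \mathrm{diag}(g_1,\ldots,g_N)$ is a unitary in $M_N(A)$, and its entries $u_{ij} = \delta_{ij} g_i$ include all generators of $\Gamma$, so they generate $A$ as a $C^*$-algebra. Next, to build $\Delta$, I would observe that $g \mapsto g \otimes g$ sends group elements to unitaries in $A \otimes A$ (tensor of unitaries is unitary) and is a group homomorphism; universality gives a $*$-morphism $\Delta: A \to A \otimes A$. The counit $\varepsilon$ comes from the trivial representation $g \mapsto 1$, which is obviously a group homomorphism. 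For the antipode, the map $g \mapsto g^{-1}$ is a group antihomomorphism, equivalently a homomorphism into the unitary group of $A^{\mathrm{opp}}$, so it extends to a $*$-morphism $S: A \to A^{\mathrm{opp}}$.

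Having constructed the three morphisms on all of $A$, I would then verify the specific formulas of Definition 2.1 on the matrix entries $u_{ij} = \delta_{ij} g_i$. For $\Delta$, one computes $\Delta(u_{ij}) = \delta_{ij}(g_i \otimes g_i)$, while $\sum_k u_{ik} \otimes u_{kj} = \sum_k \delta_{ik}\delta_{kj}\, g_i \otimes g_k = \delta_{ij}(g_i \otimes g_i)$, and the two agree. The counit identity $\varepsilon(u_{ij}) = \delta_{ij}\cdot 1 = \delta_{ij}$ is immediate. For the antipode, $S(u_{ij}) = \delta_{ij} g_i^{-1} = \delta_{ij} g_i^* = u_{ji}^*$, where the last equality splits into the case $i=j$ (both sides equal $g_i^*$) and $i\neq j$ (both sides equal $0$). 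Finally, cocommutativity follows because the flip satisfies $\Sigma(g \otimes g) = g \otimes g$, so $\Sigma\Delta$ and $\Delta$ are $*$-morphisms agreeing on the generating set $\Gamma \subset A$, hence on all of $A$.

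The main conceptual obstacle is not any computation but making sure one uses the \emph{full} group $C^*$-algebra so that the universal extension argument applies; on $C^*_{\mathrm{red}}(\Gamma)$ the comultiplication $\Delta$ may fail to extend for non-amenable $\Gamma$, which is exactly the functoriality pathology already flagged after Definition 1.20. Everything else is mechanical verification on generators.
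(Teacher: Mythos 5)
Your proof is correct and follows essentially the same route as the paper's: unitarity and generation are noted, then $\Delta,\varepsilon,S$ are obtained from the unitary representations $g\mapsto g\otimes g$, $g\mapsto 1$, $g\mapsto g^{-1}$ via the universal property of the full group algebra, and cocommutativity is checked on group elements and extended by density. The only difference is that you spell out the entry-wise verification on $u_{ij}=\delta_{ij}g_i$, which the paper leaves implicit.
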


\begin{proof}
Since the involution on $C^*(\Gamma)$ is given by $g^*=g^{-1}$, the standard generators $g_1,\ldots,g_N$ are unitaries, and so must be the diagonal matrix $u=diag(g_1,\ldots,g_N)$ formed by them. Also, since $g_1,\ldots,g_N$ generate $\Gamma$, these elements generate the group algebra $C^*(\Gamma)$ as well, in the algebraic sense. Let us verify now the axioms in Definition 2.8:

\medskip

(1) Consider the following map, which is a unitary representation:
$$\Gamma\to C^*(\Gamma)\otimes C^*(\Gamma)\quad,\quad 
g\to g\otimes g$$

This representation extends, as desired, into a morphism of algebras, as follows:
$$\Delta:C^*(\Gamma)\to C^*(\Gamma)\otimes C^*(\Gamma)\quad,\quad 
\Delta(g)=g\otimes g$$

(2) The situation for $\varepsilon$ is similar, because this comes from the trivial representation:
$$\Gamma\to\{1\}\quad,\quad 
g\to1$$

(3) Finally, the antipode $S$ comes from the following unitary representation:
$$\Gamma\to C^*(\Gamma)^{opp}\quad,\quad 
g\to g^{-1}$$

Summarizing, we have shown that we have a Woronowicz algebra, with $\Delta,\varepsilon,S$ being as in the statement. Regarding now the last assertion, observe that we have:
$$\Sigma\Delta(g)
=\Sigma(g\otimes g)
=g\otimes g
=\Delta(g)$$

Thus $\Sigma\Delta=\Delta$ holds on the group elements $g\in\Gamma$, and by linearity and continuity, this formula must hold on the whole algebra $C^*(\Gamma)$, as desired.
\end{proof}

We will see later that any cocommutative Woronowicz algebra appears as above, up to a standard equivalence relation for such algebras, and with this being something non-trivial. In the abelian group case now, we have a more precise result, as follows:

\index{Pontrjagin dual}
\index{group dual}

\begin{proposition}
Assume that $\Gamma$ as above is abelian, and let $G=\widehat{\Gamma}$ be its Pontrjagin dual, formed by the characters $\chi:\Gamma\to\mathbb T$. The canonical isomorphism
$$C^*(\Gamma)\simeq C(G)$$
transforms the comultiplication, counit and antipode of $C^*(\Gamma)$ into the comultiplication, counit and antipode of $C(G)$, and so is a compact quantum group isomorphism.
\end{proposition}

\begin{proof}
Assume indeed that $\Gamma=<g_1,\ldots,g_N>$ is abelian. Then with $G=\widehat{\Gamma}$ we have a group embedding $G\subset U_N$, constructed as follows:
$$\chi\to
\begin{pmatrix}
\chi(g_1)\\
&\ddots\\
&&\chi(g_N)\end{pmatrix}$$

Thus, we have two Woronowicz algebras to be compared, namely $C(G)$, constructed as in Proposition 2.9, and $C^*(\Gamma)$, constructed as in Proposition 2.11. We already know from chapter 1 that the underlying $C^*$-algebras are isomorphic. Now since $\Delta,\varepsilon,S$ agree on $g_1,\ldots,g_N$, they agree everywhere, and we are led to the above conclusions.
\end{proof}

As a conclusion to all this, we can supplement Definition 2.8 with:

\index{compact quantum group}
\index{discrete quantum group}

\begin{definition}
Given a Woronowicz algebra $A=C(G)$, we write as well
$$A=C^*(\Gamma)$$
and call $\Gamma=\widehat{G}$ a finitely generated discrete quantum group.
\end{definition}

As usual with this type of definition, this comes with a warning, because we still have to divide the Woronowicz algebras by a certain equivalence relation, in order for our quantum spaces to be well-defined. We will be back to this in a moment, with the fix.

\bigskip

This being said, time perhaps to resume our discussion about other axiomatizations, started at the beginning of the present section. With our present knowledge of Definition 2.8 and its consequences, we can now say a few more things, as follows:

\bigskip

(1) First of all, we have as examples all compact Lie groups, and also dually, all the finitely generated discrete groups. It is also possible, by using a regular representation construction, to show that any finite quantum group in the sense of Definition 2.2 is a compact quantum group in our sense. All this is very nice, to the point that we can ask ourselves if there is something missing, at the level of the basic examples.

\bigskip

(2) And here, we must talk about the quantum groups of Drinfeld \cite{dri} and Jimbo \cite{jim}. These are deformations of type $G^q$, with $G\subset U_N$ being a compact Lie group, and with $q\in\mathbb C$ being a parameter, which at 1 gives the group itself, $G^1=G$. These quantum groups are not covered by our formalism, due to the condition $S^2=id$ implied by our axioms, which is not satisfied by these objects $G^q$, except in the cases $q=\pm1$.

\bigskip

(3) Regarding $q=\pm1$, these are certainly interesting values of $q$, corresponding to commutation and anticommutation, so we could say that, at least, we have some serious common ground with Drinfeld-Jimbo. However, we will see later, in chapter 7 below, that that Drinfeld-Jimbo construction is wrong at $q=-1$, in the sense that better, semisimple quantum groups $G^{-1}$ can be constructed by using our formalism.

\bigskip

(4) In short, we disagree with Drinfeld-Jimbo on mathematical grounds, and there is actually a physical discussion to be made too, but let us not get here into that. Now, if you disagree with Drinfeld-Jimbo, you have to disagree too with the very idea of $S^2\neq id$, and so with the general formalism of Woronowicz in \cite{wo1}, designed for covering the case $q>0$. Which is exactly what we did when formulating Definition 2.8 above.

\bigskip

(5) And the story is not over here, because, ironically, the case $q\in\mathbb T$, and more specifically the case where $q$ is a root of unity, was the one that Drinfeld-Jimbo were mainly interested in, due to some beautiful ties with arithmetics, and some potential applications to physics too, and $q>0$ has nothing to do with all this. So, even when trusting Drinfeld-Jimbo, there would be no way to include it in our formalism.

\bigskip

(6) And for ending with something advanced, the correct framework for these quantum group disputes is Jones' subfactor theory \cite{jo1}, \cite{jo2}, \cite{jo3}. Both the Woronowicz and the Drinfeld-Jimbo quantum groups have their place there, and can be compared at wish, better understood and generalized, thanks notably to work by Kirillov Jr., Wenzl and Xu for the Drinfeld-Jimbo quantum groups. But all this would take us too far.

\bigskip

Back to work now, let us develop some further general theory. We first have:

\index{biunitary}

\begin{proposition}
Given a Woronowicz algebra $(A,u)$, we have
$$u^t=\bar{u}^{-1}$$
so the matrix $u=(u_{ij})$ is a biunitary, meaning unitary, with unitary transpose.
\end{proposition}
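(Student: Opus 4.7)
The plan is to exploit the antipode $S$, which is the only tool in Definition 2.1 that interchanges the roles of $u_{ij}$ and $u_{ji}^*$, and is therefore tailor-made to convert a unitarity statement about $u$ into one about $u^t$. The starting point is the hypothesis that $u$ itself is unitary, which translates at the level of entries into the two relations
\[
\sum_k u_{ik} u_{jk}^* = \delta_{ij}, \qquad \sum_k u_{ki}^* u_{kj} = \delta_{ij}.
\]
My plan is to apply $S$ to each of these equalities and read off the corresponding identities for the entries of $u^t$ and $\bar u$.

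The key bookkeeping step is that $S \colon A \to A^{opp}$ is a morphism of $C^*$-algebras, which means two things simultaneously: it reverses products, $S(ab) = S(b) S(a)$ when both sides are interpreted in $A$, and it commutes with the star, so $S(u_{ij}^*) = S(u_{ij})^* = u_{ji}$. Applying $S$ to $\sum_k u_{ik} u_{jk}^* = \delta_{ij}$ therefore yields $\sum_k u_{kj} u_{ki}^* = \delta_{ij}$, and applying it to $\sum_k u_{ki}^* u_{kj} = \delta_{ij}$ yields $\sum_k u_{jk}^* u_{ik} = \delta_{ij}$. Re-indexing (swap $i \leftrightarrow j$) and recognizing $(u^t)_{ab} = u_{ba}$ and $(\bar u)_{ab} = u_{ab}^*$, these two identities are exactly
\[
(u^t \bar u)_{ij} = \delta_{ij}, \qquad (\bar u \, u^t)_{ij} = \delta_{ij}.
\]

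Thus $u^t$ and $\bar u$ are two-sided inverses of each other, so $u^t = \bar u^{-1}$, and $u$ is a biunitary. There is no real obstacle here: the whole content of the statement is the bookkeeping about how $S$ interacts with products and stars, and once that is set up correctly the computation is a one-liner. The mild subtlety worth highlighting in the write-up is that without the antipode axiom the conclusion would fail, since for abstract (noncommutative) entries $u_{ij}$ the transpose of a unitary matrix is not automatically unitary, as the paper itself emphasized when motivating the definition of $C(U_N^+)$.
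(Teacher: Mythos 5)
Your proof is correct and follows the same route as the paper: apply the antipode $S$ to the two unitarity relations for $u$, use that $S$ is anti-multiplicative (as a morphism $A\to A^{opp}$) and compatible with $*$, and read off the resulting identities as $u^t\bar u=\bar u\,u^t=1$. The only cosmetic difference is that the paper spells out the opposite-algebra product $\cdot$ step by step, while you fold that bookkeeping into a one-line remark; the substance is identical.
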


\begin{proof}
The idea is that $u^t=\bar{u}^{-1}$ comes from $u^*=u^{-1}$, by applying the antipode. Indeed, by denoting $(a,b)\to a\cdot b$ the multiplication of $A^{opp}$, we have:
\begin{eqnarray*}
(uu^*)_{ij}=\delta_{ij}
&\implies&\sum_ku_{ik}u_{jk}^*=\delta_{ij}\\
&\implies&\sum_kS(u_{ik})\cdot S(u_{jk}^*)=\delta_{ij}\\
&\implies&\sum_ku_{kj}u_{ki}^*=\delta_{ij}\\
&\implies&(u^t\bar{u})_{ji}=\delta_{ij}
\end{eqnarray*}

Similarly, we have the following computation:
\begin{eqnarray*}
(u^*u)_{ij}=\delta_{ij}
&\implies&\sum_ku_{ki}^*u_{kj}=\delta_{ij}\\
&\implies&\sum_kS(u_{ki}^*)\cdot S(u_{kj})=\delta_{ij}\\
&\implies&\sum_ku_{jk}^*u_{ik}=\delta_{ij}\\
&\implies&(\bar{u}u^t)_{ji}=\delta_{ij}
\end{eqnarray*}

Thus, we are led to the conclusion in the statement.
\end{proof}

By using Proposition 2.14 we obtain the following theoretical result, which makes the link with the algebraic manifold considerations from chapter 1:

\begin{proposition}
Given a Woronowicz algebra $A=C(G)$, we have an embedding
$$G\subset S^{N^2-1}_{\mathbb C,+}$$
given in double indices by $x_{ij}=\frac{u_{ij}}{\sqrt{N}}$, where $u_{ij}$ are the standard coordinates of $G$.
\end{proposition}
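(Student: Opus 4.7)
The plan is to use the universal property of $C(S^{N^2-1}_{\mathbb{C},+})$ to produce a surjective $*$-algebra morphism $C(S^{N^2-1}_{\mathbb{C},+}) \to C(G)$ sending the standard generators $x_{ij}$ to $u_{ij}/\sqrt{N}$; passing to the dual, this gives the desired embedding of compact quantum spaces in the sense of Definition 1.13 (and Definition 1.32).

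The first step is to verify that the elements $x_{ij} = u_{ij}/\sqrt{N} \in C(G)$ satisfy the defining relations of the free complex sphere. Since $u = (u_{ij})$ is unitary, the relation $uu^* = 1_N$ gives $\sum_k u_{ik}u_{jk}^* = \delta_{ij}$, hence in particular $\sum_k u_{ik}u_{ik}^* = 1$, and summing over $i$ yields $\sum_{i,k} u_{ik}u_{ik}^* = N$. Dividing by $N$ gives $\sum_{ij} x_{ij} x_{ij}^* = 1$. A symmetric computation using $u^*u = 1_N$ gives $\sum_{ij} x_{ij}^* x_{ij} = 1$.

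The second step is to invoke universality: by Definition 1.14, $C(S^{N^2-1}_{\mathbb{C},+})$ is the universal $C^*$-algebra generated by $N^2$ elements satisfying exactly the two relations just checked, so the assignment $x_{ij} \mapsto u_{ij}/\sqrt{N}$ extends to a $*$-algebra morphism
$$\pi : C(S^{N^2-1}_{\mathbb{C},+}) \longrightarrow C(G).$$
Since the $u_{ij}$ generate $C(G)$ as a $C^*$-algebra (built into Definition 2.1), and each $u_{ij} = \sqrt{N}\,\pi(x_{ij})$ lies in the image, $\pi$ is surjective.

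The final step is purely formal: under the contravariant Gelfand-type correspondence of Definition 1.13, a surjective morphism $C(S^{N^2-1}_{\mathbb{C},+}) \twoheadrightarrow C(G)$ is precisely an embedding $G \subset S^{N^2-1}_{\mathbb{C},+}$, with the standard coordinates pulled back to $u_{ij}/\sqrt{N}$ as claimed. There is no real obstacle here; the only ``work'' is the scalar bookkeeping in the first step, and the rest is an application of universal properties already in place.
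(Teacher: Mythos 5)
Your proof is correct and follows the same route the paper has in mind: verify the two quadratic sphere relations for $x_{ij}=u_{ij}/\sqrt{N}$, invoke the universal property of $C(S^{N^2-1}_{\mathbb C,+})$ to produce a surjection onto $C(G)$, and dualize. The paper's one-line proof invokes biunitarity (Proposition 2.7, i.e., $\bar u$ unitary as well), but as your computation shows, the relations $\sum_{ij}x_{ij}x_{ij}^*=\sum_{ij}x_{ij}^*x_{ij}=1$ already follow from the unitarity $uu^*=u^*u=1_N$ of $u$ alone, so your argument is if anything slightly more economical; the full biunitarity is needed elsewhere (e.g., for the quantum isometry computations in Theorem 2.22), but not here.
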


\begin{proof}
This is something that we already know for the classical groups, and for the group duals as well, from chapter 1. In general, the proof is similar, coming from the fact that the matrices $u,\bar{u}$ are both unitaries, that we know from Proposition 2.14.
\end{proof}

In view of the above result, we can take some inspiration from the Gelfand correspondence ``fix'' presented in chapter 1, and formulate:

\begin{definition}
Given two Woronowicz algebras $(A,u)$ and $(B,v)$, we write 
$$A\simeq B$$
and identify the corresponding quantum groups, when we have an isomorphism 
$$<u_{ij}>\simeq<v_{ij}>$$
of $*$-algebras, mapping standard coordinates to standard coordinates.
\end{definition}

With this convention, the functoriality problem is fixed, any compact or discrete quantum group corresponding to a unique Woronowicz algebra, up to equivalence. 

\bigskip

As another comment, we can now see why in Definition 2.8 the choice of the exact topological tensor product $\otimes$ is irrelevant. Indeed, no matter what tensor product $\otimes$ we use there, we end up with the same Woronowicz algebra, and the same compact and discrete quantum groups, up to equivalence. In practice, we will use in what follows the simplest such tensor product $\otimes$, which is the so-called maximal one, obtained as completion of the usual algebraic tensor product with respect to the biggest $C^*$-norm. With the remark that this maximal tensor product is something rather algebraic and abstract, and so can be treated, in practice, as a usual algebraic tensor product.

\bigskip

We will be back to this later, with a number of supplementary comments, and some further results on the subject, when talking about amenability.

\section*{2c. Product operations}

We have seen so far that the compact quantum Lie groups can be axiomatized, and that as a bonus, we obtain in this way a definition as well for the finitely generated discrete quantum groups. Let us get now into a more exciting question, namely the construction of examples. We first have the following construction, due to Wang \cite{wa1}:

\index{product of quantum groups}

\begin{proposition}
Given two compact quantum groups $G,H$, so is their product $G\times H$, constructed according to the following formula:
$$C(G\times H)=C(G)\otimes C(H)$$ 
Equivalently, at the level of the associated discrete duals $\Gamma,\Lambda$, we can set
$$C^*(\Gamma\times\Lambda)=C^*(\Gamma)\otimes C^*(\Lambda)$$
and we obtain the same equality of Woronowicz algebras as above.
\end{proposition}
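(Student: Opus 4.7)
The plan is to exhibit $C(G) \otimes C(H)$ as a Woronowicz algebra in the sense of Definition 2.1, by producing a fundamental unitary matrix whose entries generate it and whose coordinate functions admit the required $\Delta, \varepsilon, S$. Let $(u_{ij})_{i,j=1,\ldots,N}$ be the fundamental matrix of $G$ and $(v_{kl})_{k,l=1,\ldots,M}$ that of $H$. The natural candidate is the block-diagonal matrix of size $N+M$:
$$w = \begin{pmatrix} u_{ij} \otimes 1 & 0 \\ 0 & 1 \otimes v_{kl} \end{pmatrix} \in M_{N+M}(C(G) \otimes C(H)).$$
First I would observe that since $u$ and $v$ are unitaries (and biunitaries, by Proposition 2.7), the block-diagonal matrix $w$ is unitary too, and its entries are exactly the elements $u_{ij} \otimes 1$ and $1 \otimes v_{kl}$, which together generate $C(G) \otimes C(H)$ as a $C^*$-algebra.

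Next I would construct the three structure morphisms on the generators. For the comultiplication, the natural definition is
$$\Delta_{G\times H} = (\mathrm{id} \otimes \Sigma \otimes \mathrm{id}) \circ (\Delta_G \otimes \Delta_H),$$
where $\Sigma$ is the flip on the middle two tensor factors, viewed as a map into $(C(G)\otimes C(H))^{\otimes 2}$. The counit and antipode are simply $\varepsilon_{G\times H} = \varepsilon_G \otimes \varepsilon_H$ and $S_{G\times H} = S_G \otimes S_H$, the latter landing in $C(G)^{\mathrm{opp}} \otimes C(H)^{\mathrm{opp}} = (C(G)\otimes C(H))^{\mathrm{opp}}$. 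These are $C^*$-algebra morphisms because tensor products and flip maps of $C^*$-morphisms are $C^*$-morphisms. A direct computation on the diagonal blocks of $w$ then gives, for instance,
$$\Delta_{G\times H}(u_{ij}\otimes 1) = \sum_{k} (u_{ik}\otimes 1) \otimes (u_{kj}\otimes 1), \qquad \Delta_{G\times H}(1\otimes v_{kl}) = \sum_m (1\otimes v_{km})\otimes (1\otimes v_{ml}),$$
which is exactly the defining relation $\Delta(w_{ab}) = \sum_c w_{ac}\otimes w_{cb}$ for the block matrix $w$, because cross terms between the two blocks vanish. The formulae for $\varepsilon$ and $S$ on the entries of $w$ are verified in the same way.

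Finally, for the dual reformulation, I would use the universal property of $C^*(\Gamma\times\Lambda)$. The elements $g\otimes 1$ and $1\otimes h$ inside $C^*(\Gamma)\otimes C^*(\Lambda)$ are unitaries which commute across the two factors and generate the whole tensor product, so they furnish a unitary representation of $\Gamma\times\Lambda$ satisfying the universal relations; conversely, the restrictions of any representation of $\Gamma\times\Lambda$ give a pair of commuting representations of $\Gamma$ and $\Lambda$, hence a representation of $C^*(\Gamma)\otimes C^*(\Lambda)$. This yields the isomorphism $C^*(\Gamma\times\Lambda)\simeq C^*(\Gamma)\otimes C^*(\Lambda)$, and one checks that the Woronowicz structure just constructed matches the one given by $g\otimes h \mapsto (g\otimes h)\otimes (g\otimes h)$ on group elements.

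No step is genuinely hard: the only subtlety worth flagging is the placement of the flip $\Sigma$ in $\Delta_{G\times H}$, which is necessary to land in the correct tensor product $(C(G)\otimes C(H))\otimes (C(G)\otimes C(H))$ rather than in $C(G)^{\otimes 2}\otimes C(H)^{\otimes 2}$. Once this is sorted out, the coassociativity, counitality, coinversality axioms of Proposition 2.3 for the product follow mechanically from those for $G$ and $H$.
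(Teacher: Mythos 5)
Your proof is correct and follows essentially the same route as the paper: the block-diagonal matrix $w=diag(u,v)$ over $C(G)\otimes C(H)$, with $\Delta,\varepsilon,S$ obtained from the functoriality of the (maximal) tensor product, which you simply spell out in more detail via the flip map. Your explicit universal-property argument for $C^*(\Gamma\times\Lambda)\simeq C^*(\Gamma)\otimes C^*(\Lambda)$ is a welcome elaboration of what the paper treats as a mere reformulation, and it is valid precisely because $\otimes$ is the maximal tensor product.
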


\begin{proof}
Assume indeed that we have two Woronowicz algebras, $(A,u)$ and $(B,v)$. Our claim is that the following construction produces a Woronowicz algebra:
$$C=A\otimes B\quad,\quad w=diag(u,v)$$

Indeed, the matrix $w$ is unitary, and its coefficients generate $C$. As for the existence of the maps $\Delta,\varepsilon,S$, this follows from the functoriality properties of $\otimes$, which is here, as usual, the universal $C^*$-algebraic completion of the algebraic tensor product. 

With this claim in hand, the first assertion is clear. As for the second assertion, let us recall that when $G,H$ are classical and abelian, we have the following formula:
$$\widehat{G\times H}=\widehat{G}\times\widehat{H}$$

Thus, our second assertion is simply a reformulation of the first assertion, with the $\times$ symbol used there being justified by this well-known group theory formula. 
\end{proof}

Here is now a more subtle construction, once again due to Wang \cite{wa1}:

\index{free product}
\index{dual free product}

\begin{proposition}
Given two compact quantum groups $G,H$, so is their dual free product $G\,\hat{*}\,H$, constructed according to the following formula:
$$C(G\,\hat{*}\,H)=C(G)*C(H)$$ 
Equivalently, at the level of the associated discrete duals $\Gamma,\Lambda$, we can set
$$C^*(\Gamma*\Lambda)=C^*(\Gamma)*C^*(\Lambda)$$
and we obtain the same equality of Woronowicz algebras as above.
\end{proposition}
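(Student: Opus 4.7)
The plan is to follow the template set by Proposition 2.10, replacing the tensor product of $C^*$-algebras with the (full, unital) free product, and invoking its universal property whenever Proposition 2.10 invoked functoriality of $\otimes$. Concretely, starting from Woronowicz algebras $(A,u)$ with $u\in M_N(A)$ and $(B,v)$ with $v\in M_M(B)$, I set
$$C=A*B\quad,\quad w=\mathrm{diag}(u,v)\in M_{N+M}(C).$$
Since the inclusions $A,B\hookrightarrow C$ are unital $*$-homomorphisms, $u$ and $v$ remain unitaries inside $M_{N+M}(C)$, so their direct sum $w$ is a unitary matrix. Moreover the entries of $w$ are (up to zeros) the $u_{ij}$ and $v_{ij}$, which jointly generate $C$ by the very definition of the free product. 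Thus the first two requirements of Definition 2.1 are in place.

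Next I construct $\Delta,\varepsilon,S$ via the universal property of $A*B$, which states that morphisms out of $A*B$ correspond bijectively to pairs of morphisms out of $A$ and $B$. For the comultiplication, I form
$$A\xrightarrow{\Delta_A}A\otimes A\hookrightarrow C\otimes C\quad,\quad B\xrightarrow{\Delta_B}B\otimes B\hookrightarrow C\otimes C,$$
which combine to $\Delta:C\to C\otimes C$. A direct block-wise check shows that $\Delta(w_{\alpha\beta})=\sum_\gamma w_{\alpha\gamma}\otimes w_{\gamma\beta}$ on indices $\alpha,\beta$ in the $u$-block (the cross terms vanish because $w$ is block diagonal), and similarly on the $v$-block. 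The counit is constructed identically from $\varepsilon_A,\varepsilon_B:A,B\to\mathbb{C}$, giving $\varepsilon(w_{\alpha\beta})=\delta_{\alpha\beta}$. For the antipode, I use $S_A:A\to A^{opp}\hookrightarrow C^{opp}$ and $S_B:B\to B^{opp}\hookrightarrow C^{opp}$, combined via the universal property of $A*B$ to yield $S:C\to C^{opp}$ satisfying $S(w_{\alpha\beta})=w_{\beta\alpha}^*$. This verifies all three conditions of Definition 2.1.

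For the second assertion, specialize to $A=C^*(\Gamma)$ and $B=C^*(\Lambda)$. Both $C^*(\Gamma)*C^*(\Lambda)$ and $C^*(\Gamma*\Lambda)$ are universal $C^*$-algebras representing the same functor, namely pairs of unitary representations of $\Gamma$ and of $\Lambda$ on a common Hilbert space: a $*$-homomorphism from either side into a unital $C^*$-algebra $D$ corresponds to such a pair, since a representation of $\Gamma*\Lambda$ is exactly a pair of representations of $\Gamma$ and $\Lambda$. Hence the two algebras are canonically isomorphic, and under this isomorphism the fundamental matrix $\mathrm{diag}(u,v)$ (with $u,v$ diagonal matrices of generators) corresponds to the standard diagonal generator matrix of $C^*(\Gamma*\Lambda)$, so the Woronowicz structures agree.

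The main obstacle is purely technical: one must be sure to work with the \emph{full} unital $C^*$-algebraic free product, since it is its universal property that feeds the constructions of $\Delta,\varepsilon,S$; the reduced free product would not admit these maps automatically. Once this is fixed, everything else is routine diagram-chasing modeled on Proposition 2.10, with the additional mild observation that the universal property still applies to produce the anti-multiplicative map $S$ because $C^{opp}$ is again a unital $C^*$-algebra and $S_A,S_B$ are genuine $*$-homomorphisms into it.
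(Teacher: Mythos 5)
Your proof is correct and follows essentially the same route as the paper, which simply repeats the construction of Proposition 2.10 with the universal free product in place of the tensor product, using $w=\mathrm{diag}(u,v)$ and the universal property of $A*B$ to produce $\Delta,\varepsilon,S$. Your additional details (block-wise verification of the comultiplication formula, the insistence on the full rather than reduced free product, and the universal-property identification $C^*(\Gamma*\Lambda)\simeq C^*(\Gamma)*C^*(\Lambda)$) are exactly the routine points the paper leaves implicit.
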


\begin{proof}
The proof here is identical with the proof of Proposition 2.17, by replacing everywhere the tensor product $\otimes$ with the free product $*$, with this latter product being by definition the universal $C^*$-algebraic completion of the algebraic free product. 
\end{proof}

Here is another construction, which once again, has no classical counterpart:

\index{free complexification}

\begin{proposition}
Given a compact quantum group $G$, so is its free complexification $\widetilde{G}$, constructed according to the following formula, where $z=id\in C(\mathbb T)$:
$$C(\widetilde{G})\subset C(\mathbb T)*C(G)\quad,\quad\tilde{u}=zu$$
Equivalently, at the level of the associated discrete dual $\Gamma$, we can set
$$C^*(\widetilde{\Gamma})\subset C^*(\mathbb Z)*C^*(\Gamma)\quad,\quad\tilde{u}=zu$$
where $z=1\in\mathbb Z$, and we obtain the same Woronowicz algebra as above.
\end{proposition}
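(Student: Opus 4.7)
The plan is to realize $C(\widetilde{G})$ as a Woronowicz subalgebra of the ambient Woronowicz algebra $C(\mathbb T)*C(G)$, whose Woronowicz structure is already known by the dual free product construction of Proposition 2.11. Concretely, by Proposition 2.11 applied to the classical circle $\mathbb T=\widehat{\mathbb Z}$ and $G$, the algebra $C(\mathbb T)*C(G)$ is a Woronowicz algebra with fundamental matrix $\mathrm{diag}(z,u)$ and the familiar formulas $\Delta(z)=z\otimes z$, $\varepsilon(z)=1$, $S(z)=z^{-1}=z^*$, together with those for $u$. The strategy is then: (i) verify that the matrix $\tilde u=zu$ is biunitary in this ambient $C^*$-algebra, (ii) define $C(\widetilde G)$ as the $C^*$-subalgebra generated by the entries $\tilde u_{ij}$, and (iii) show that $\Delta,\varepsilon,S$ of the ambient algebra restrict to this subalgebra and satisfy the axioms of Definition~2.1.

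For step (i), using that $z$ is a unitary commuting with scalars and that $u$ is biunitary by Proposition~2.7 (so that $\sum_k u_{ik}u_{jk}^*=\sum_k u_{ki}u_{kj}^*=\sum_k u_{ki}^*u_{kj}=\sum_k u_{ik}^*u_{jk}=\delta_{ij}$), one computes directly
\[
(\tilde u\tilde u^*)_{ij}=\sum_k zu_{ik}u_{jk}^*z^*=z\delta_{ij}z^*=\delta_{ij},
\]
and analogously $(\tilde u^*\tilde u)_{ij}=(\tilde u^t\bar{\tilde u})_{ij}=(\bar{\tilde u}\tilde u^t)_{ij}=\delta_{ij}$, so $\tilde u$ is biunitary. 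The key point is that each of the four relevant sums over $k$ collapses to a scalar $\delta_{ij}$ before the $z$'s and $z^*$'s get a chance to interact, so the noncommutation of $z$ with $C(G)$ in the free product does no harm.

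For step (iii), apply the ambient coproduct: since $\Delta$ is a $*$-homomorphism, $\Delta(\tilde u_{ij})=\Delta(z)\Delta(u_{ij})=(z\otimes z)\sum_k(u_{ik}\otimes u_{kj})=\sum_k \tilde u_{ik}\otimes\tilde u_{kj}$, which lies in $C(\widetilde G)\otimes C(\widetilde G)$, so $\Delta$ restricts to a coproduct on $C(\widetilde G)$ with the required formula. The counit evaluates as $\varepsilon(\tilde u_{ij})=\varepsilon(z)\varepsilon(u_{ij})=\delta_{ij}$, and the antipode, being an antihomomorphism, gives $S(\tilde u_{ij})=S(u_{ij})S(z)=u_{ji}^*z^*=(zu_{ji})^*=\tilde u_{ji}^*$. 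Thus all three structural maps of Definition~2.1 exist on $C(\widetilde G)$, with the correct formulas on the generators $\tilde u_{ij}$.

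For the second assertion, one simply invokes the Fourier identification $C^*(\mathbb Z)\simeq C(\mathbb T)$ of Theorem~1.19, under which the generator $1\in\mathbb Z$ corresponds to $z=\mathrm{id}\in C(\mathbb T)$; this gives an identification of ambient $C^*$-algebras $C^*(\mathbb Z)*C^*(\Gamma)\simeq C(\mathbb T)*C(G)$ intertwining the two candidate fundamental matrices $\tilde u=zu$, hence producing the same subalgebra with the same Woronowicz structure, as the morphisms $\Delta,\varepsilon,S$ agree on standard coordinates (Definition~2.9). The only mildly subtle step in the whole argument is the biunitarity of $\tilde u$: this is where one must be careful about the non-commutation of $z$ with the $u_{ij}$ in the free product, and where the full strength of Proposition~2.7 (not only $u^*=u^{-1}$ but also $u^t=\bar u^{-1}$) is needed.
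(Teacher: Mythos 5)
Your proposal is correct and follows essentially the same route as the paper: invoke Proposition 2.11 for the ambient Woronowicz algebra $C(\mathbb T)*C(G)$ with $w=\mathrm{diag}(z,u)$, check that $\tilde u=zu$ is (bi)unitary and that $\Delta,\varepsilon,S$ act on its entries by the required formulas, and then take $C(\widetilde G)$ to be the subalgebra generated by the $\tilde u_{ij}$. Your added details (the explicit biunitarity computation using $u^t=\bar u^{-1}$, and the Fourier identification $C^*(\mathbb Z)\simeq C(\mathbb T)$ for the group dual formulation) are correct and simply make explicit what the paper leaves implicit.
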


\begin{proof}
This follows from Proposition 2.17. Indeed, we know that $C(\mathbb T)*C(G)$ is a Woronowicz algebra, with matrix of coordinates $w=diag(z,u)$. Now, let us try to replace this matrix with the matrix $\tilde{u}=zu$. This matrix is unitary, and we have:
$$\Delta(\tilde{u}_{ij})
=(z\otimes z)\sum_ku_{ik}\otimes u_{kj}
=\sum_k\tilde{u}_{ik}\otimes\tilde{u}_{kj}$$

Similarly, in what regards the counit, we have the following formula:
$$\varepsilon(\tilde{u}_{ij})
=1\cdot\delta_{ij}
=\delta_{ij}$$

Finally, recalling that $S$ takes values in the opposite algebra, we have as well:
$$S(\tilde{u}_{ij})
=u_{ji}^*\cdot\bar{z}
=\tilde{u}_{ji}^*$$

Summarizing, the conditions in Definition 2.8 are satisfied, except for the fact that the entries of $\tilde{u}=zu$ do not generate the whole algebra $C(\mathbb T)*C(G)$. We conclude that if we let $C(\widetilde{G})\subset C(\mathbb T)*C(G)$ be the subalgebra generated by the entries of $\tilde{u}=zu$, as in the statement, then the conditions in Definition 2.8 are satisfied, as desired.
\end{proof}

Another standard operation is that of taking subgroups:

\index{quantum subgroup}
\index{quotient quantum group}
\index{Hopf ideal}

\begin{proposition}
Let $G$ be compact quantum group, and let $I\subset C(G)$ be a closed $*$-ideal satisfying the following condition: 
$$\Delta(I)\subset C(G)\otimes I+I\otimes C(G)$$
We have then a closed quantum subgroup $H\subset G$, constructed as follows:
$$C(H)=C(G)/I$$
At the dual level we obtain a quotient of discrete quantum groups, $\widehat{\Gamma}\to\widehat{\Lambda}$.
\end{proposition}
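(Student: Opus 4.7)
The aim is to verify that $C(H) := C(G)/I$ satisfies the Woronowicz axioms of Definition 2.1, with fundamental matrix $v$ given by $v_{ij} = \pi(u_{ij})$, where $\pi : C(G) \to C(H)$ denotes the canonical surjection. The surjectivity of $\pi$ ensures that the entries of $v$ generate $C(H)$ as a $C^*$-algebra, and since $\pi$ is a unital $*$-homomorphism, Proposition 2.7 immediately gives that $v$ is biunitary in $M_N(C(H))$; in particular $v$ is unitary. So the only substantive content is descending the three structural morphisms $\Delta, \varepsilon, S$ from $C(G)$ to the quotient.

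The key step is the construction of $\Delta_H$. Form the composite
$$(\pi \otimes \pi) \circ \Delta \;:\; C(G) \longrightarrow C(H) \otimes C(H),$$
a $*$-homomorphism whose kernel contains the closed two-sided ideal $I \otimes C(G) + C(G) \otimes I$ (this is precisely $\ker(\pi \otimes \pi)$, up to the closure issue in the $C^*$-tensor product). The hypothesis $\Delta(I) \subset C(G)\otimes I + I \otimes C(G)$ places $\Delta(I)$ inside this kernel, so the composite vanishes on $I$ and descends, by the universal property of the quotient, to a $*$-morphism $\Delta_H : C(H) \to C(H) \otimes C(H)$ satisfying $\Delta_H(v_{ij}) = \sum_k v_{ik} \otimes v_{kj}$, as required.

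For the counit and antipode, what one needs is $\varepsilon(I) = 0$ and $S(I) \subset I$; granted these, $\varepsilon$ and $\pi \circ S$ factor through $\pi$ to produce the required $\varepsilon_H : C(H) \to \mathbb{C}$ with $\varepsilon_H(v_{ij}) = \delta_{ij}$ and $S_H : C(H) \to C(H)^{op}$ with $S_H(v_{ij}) = v_{ji}^*$. Deriving these two auxiliary conditions is the main obstacle of the proof. I would handle this by passing to the dense Hopf $*$-subalgebra $\mathcal{A} = \langle u_{ij}\rangle \subset C(G)$, where $\Delta, \varepsilon, S$ act purely algebraically and satisfy all the relations of Proposition 2.3. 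Setting $\mathcal{I} := I \cap \mathcal{A}$, one checks that $\mathcal{I}$ is a $*$-ideal of $\mathcal{A}$ with $\Delta(\mathcal{I}) \subset \mathcal{A}\otimes\mathcal{I} + \mathcal{I}\otimes\mathcal{A}$, and then exploits the coinversality relation $m(S \otimes \mathrm{id})\Delta = \varepsilon(\cdot)1$ to pin down the images of $\mathcal{I}$ under $\varepsilon$ and $S$. Density of $\mathcal{A}$ in $C(G)$, combined with closedness of $I$, should then promote the algebraic conclusions to the full $C^*$-level. Once all three structural morphisms descend, the statement about discrete duals is automatic: the surjection $C(G) \twoheadrightarrow C(H)$ is the formal dual of the desired closed inclusion $H \subset G$, equivalently of the quotient map $\widehat{\Gamma} \twoheadrightarrow \widehat{\Lambda}$.
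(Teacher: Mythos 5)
Your construction of $\Delta_H$ is correct and is exactly what the paper intends: the hypothesis $\Delta(I)\subset C(G)\otimes I+I\otimes C(G)$ places $\Delta(I)$ in $\ker(\pi\otimes\pi)$, so $(\pi\otimes\pi)\Delta$ descends, and the biunitarity and generation properties of $v=\pi(u)$ are immediate. The paper's own proof is essentially a one-line assertion that the condition on $I$ is "designed" so that $\Delta,\varepsilon,S$ all factorize, so up to this point you are on the same track, and more explicit.

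The gap is in your treatment of $\varepsilon$ and $S$. You correctly identify that what is needed is $\varepsilon(I)=0$ and $S(I)\subset I$, but you then assert that these can be extracted from the coideal condition by "exploiting the coinversality relation" on $\mathcal{I}=I\cap\mathcal{A}$, without carrying this out -- and this is precisely the crux, and it does not follow from such a manipulation in any evident way. If $x\in\mathcal{I}$ and you write $\Delta(x)\in\mathcal{A}\otimes\mathcal{I}+\mathcal{I}\otimes\mathcal{A}$, then applying $m(S\otimes id)$ only yields $\varepsilon(x)1\in\mathcal{I}+S(\mathcal{I})\mathcal{A}$ (and symmetrically with $m(id\otimes S)$), which is not a contradiction and pins down nothing. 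The true reason the statement holds is a compactness rigidity phenomenon (a closed "quantum subsemigroup" of a compact quantum group is automatically a subgroup): one checks that $C(H)$ with the induced $\Delta_H$ inherits Woronowicz's density conditions from $C(G)$, hence is a compact quantum group in its own right, so that its Peter--Weyl theory provides a counit and antipode on the dense subalgebra $\pi(\mathcal{A})$ with $\varepsilon_H(v_{ij})=\delta_{ij}$ and $S_H(v_{ij})=v_{ji}^*$; compatibility with $\pi$ on generators then gives $\varepsilon(\mathcal{I})=0$ and $S(\mathcal{I})\subset I$, rather than the other way around. (Alternatively, one reads the hypothesis, as the paper implicitly does, as the full Hopf-ideal package including $\varepsilon(I)=0$ and $S(I)\subset I$.) A further minor point: your final "promotion by density" step assumes $I\cap\mathcal{A}$ is dense in $I$, which is not automatic; it is cleaner to define $\varepsilon_H,S_H$ directly on the dense image $\pi(\mathcal{A})$, which is all that the paper's formalism (cf. Proposition 2.3, stated on the dense $*$-subalgebra) actually requires.
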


\begin{proof}
This follows indeed from the above conditions on $I$, which are designed precisely as for $\Delta,\varepsilon,S$ to factorize through the quotient. As for the last assertion, this is just a reformulation, coming from the functoriality properties of the Pontrjagin duality.
\end{proof}

\index{corepresentation}

In order to discuss now the quotient operation, let us agree to call ``corepresentation'' of a Woronowicz algebra $A$ any unitary matrix $v\in M_n(\mathcal A)$ satisfying: 
$$\Delta(v_{ij})=\sum_kv_{ik}\otimes v_{kj}\quad,\quad 
\varepsilon(v_{ij})=\delta_{ij}\quad,\quad
S(v_{ij})=v_{ji}^*$$

We will study in detail such corepresentations in chapter 3 below. For the moment, we just need their definition, in order to formulate the following result:

\index{quantum subgroup}
\index{quotient quantum group}

\begin{proposition}
Let $G$ be a compact quantum group, and $v=(v_{ij})$ be a corepresentation of $C(G)$. We have then a quotient quantum group $G\to H$, given by:
$$C(H)=<v_{ij}>$$
At the dual level we obtain a discrete quantum subgroup, $\widehat{\Lambda}\subset\widehat{\Gamma}$.
\end{proposition}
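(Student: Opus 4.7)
The plan is to set $C(H) = \langle v_{ij}\rangle$, the closed $*$-subalgebra of $C(G)$ generated by the entries of $v$, and to verify directly that $(C(H),v)$ satisfies the three axioms in Definition 2.1, so that it is a Woronowicz algebra in its own right. The inclusion $C(H)\hookrightarrow C(G)$ then plays the role of the dual of the quotient map $G\to H$, exactly as in the classical Gelfand picture.

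First I would observe that $v\in M_n(C(H))$ by construction, and $v$ is unitary by the definition of a corepresentation. Next, I would verify the three structural conditions by \emph{restriction} from $C(G)$. For comultiplication, the corepresentation identity $\Delta(v_{ij}) = \sum_k v_{ik}\otimes v_{kj}$ shows that $\Delta$ sends each generator into $C(H)\otimes C(H)\subset C(G)\otimes C(G)$; since the $*$-algebra generated by the $v_{ij}$'s is dense in $C(H)$ and $\Delta$ is a continuous $*$-homomorphism, it follows that $\Delta$ restricts to a morphism $C(H)\to C(H)\otimes C(H)$. In the same way, $\varepsilon(v_{ij})=\delta_{ij}\in\mathbb{C}$ gives a restricted counit $C(H)\to\mathbb{C}$, and $S(v_{ij})=v_{ji}^*\in C(H)^{opp}$ gives a restricted antipode $C(H)\to C(H)^{opp}$. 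Thus $(C(H),v)$ is a Woronowicz algebra, and the inclusion $\iota:C(H)\hookrightarrow C(G)$ intertwines all three structural maps and sends standard coordinates to standard coordinates, so in the sense of Definition 2.9 it is a morphism of Woronowicz algebras. Dually this is a surjection $G\twoheadrightarrow H$.

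For the statement about discrete duals, we simply rewrite $C(G)=C^*(\Gamma)$ and $C(H)=C^*(\Lambda)$ as in Definition 2.6. The inclusion $C^*(\Lambda)\subset C^*(\Gamma)$ coming from $\iota$ is the dual of a discrete quantum subgroup inclusion $\widehat{\Lambda}\subset\widehat{\Gamma}$, exactly as Proposition 2.13 realized a closed quantum subgroup via a quotient of $C^*$-algebras and the corresponding quotient of discrete duals. So the two assertions are formally dual to one another, and nothing extra has to be checked.

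The only genuine subtlety, and hence the main obstacle, is the claim that $\Delta$ really restricts to a map into $C(H)\otimes C(H)$ rather than into the larger algebra $C(G)\otimes C(G)$. On the dense $*$-subalgebra generated by the $v_{ij}$'s this is immediate from $\Delta(v_{ij})=\sum_k v_{ik}\otimes v_{kj}$ and multiplicativity, but to pass to the $C^*$-completion one must know that the minimal tensor product $C(H)\otimes C(H)$ embeds into $C(G)\otimes C(G)$, which is automatic for the spatial tensor product used throughout the paper. With this point settled, extending $\Delta|_{C(H)}$ by continuity is routine, and the same remark covers $\varepsilon$ and $S$.
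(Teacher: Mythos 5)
Your overall route is exactly the paper's (whose proof is essentially ``follows from definitions'' for the first assertion, plus functoriality of Pontrjagin duality for the second): take $C(H)=<v_{ij}>$ inside $C(G)$ and check that $\Delta,\varepsilon,S$ restrict, using the corepresentation identities. That part, and the dualization to $\widehat{\Lambda}\subset\widehat{\Gamma}$, is fine.

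However, the way you dispose of what you yourself call the only genuine subtlety is based on a misreading of the paper's conventions. Right after Definition 2.1 the paper declares that $A\otimes A$ is the \emph{universal} ($C^*$-maximal) completion of the algebraic tensor product, not the spatial one. For the maximal tensor product the canonical map $C(H)\otimes C(H)\to C(G)\otimes C(G)$ need not be injective, so the argument ``$\Delta(C(H))$ lands in a copy of $C(H)\otimes C(H)$ sitting inside $C(G)\otimes C(G)$'' does not come for free: what you actually get is a morphism into the closure of the algebraic tensor product of $C(H)$ with itself inside $C(G)\otimes C(G)$, which is a priori only a quotient of $C(H)\otimes C(H)$. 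The standard (and paper-consistent) fix is to carry out the verification on the dense $*$-subalgebra $\mathcal A_H=<v_{ij}>$ of coefficients, where the relations $\Delta(v_{ij})=\sum_k v_{ik}\otimes v_{kj}$, $\varepsilon(v_{ij})=\delta_{ij}$, $S(v_{ij})=v_{ji}^*$ give a genuine Hopf $*$-algebra structure with no completion issues, and then either pass to the enveloping $C^*$-algebra or invoke the equivalence relation of Definition 2.9, which identifies Woronowicz algebras with the same coefficient $*$-algebra; this is precisely how the paper treats full versus reduced (and other exotic) completions throughout. With that adjustment your proof is correct and matches the intended argument.
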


\begin{proof}
Here the first assertion follows from the above definition of the corepresentations, and the second assertion is just a reformulation of it, coming from the basic functoriality properties of the Pontrjagin duality.
\end{proof}

Finally, here is one more construction, which is something more tricky, and which will be of importance in what follows:

\index{projective version}

\begin{theorem}
Given a compact quantum group $G$, with fundamental corepresentation denoted $u=(u_{ij})$, the $N^2\times N^2$ matrix given in double index notation by
$$v_{ia,jb}=u_{ij}u_{ab}^*$$
is a corepresentation in the above sense, and we have the following results:
\begin{enumerate}
\item The corresponding quotient $G\to PG$ is a compact quantum group.

\item Via the standard embedding $G\subset S^{N^2-1}_{\mathbb C,+}$, this is the projective version.

\item In the classical group case, $G\subset U_N$, we have $PG=G/(G\cap\mathbb T^N)$.

\item In the group dual case, with $\Gamma=<g_i>$, we have $\widehat{P\Gamma}=<g_ig_j^{-1}>$.
\end{enumerate}
\end{theorem}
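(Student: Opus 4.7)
My plan is to first verify that $v_{ia,jb} = u_{ij}u_{ab}^*$ is a corepresentation, and then to read off parts (1)--(4) from this. For the corepresentation axioms, three of the four are routine once one notes that $\Delta$ is a $*$-homomorphism. Unwinding $\Delta(v_{ia,jb}) = \Delta(u_{ij})\Delta(u_{ab})^*$ and multiplying out yields $\sum_{k,c} v_{ia,kc}\otimes v_{kc,jb}$; the counit formula $\varepsilon(v_{ia,jb}) = \delta_{ij}\delta_{ab}$ is immediate; and the antipode identity uses that $S$ is antimultiplicative as a map into $A^{\mathrm{op}}$, combined with $S^2=\mathrm{id}$ to give $S(u_{ab}^*)=u_{ba}$, so that $S(v_{ia,jb}) = u_{ba}u_{ji}^* = v_{jb,ia}^*$. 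The delicate axiom is unitarity. From $(v^*)_{jb,kc} = u_{cb}u_{kj}^*$ one computes
\[(vv^*)_{ia,kc} = \sum_{j,b} u_{ij}u_{ab}^*u_{cb}u_{kj}^* = \sum_j u_{ij}\Bigl(\sum_b u_{ab}^*u_{cb}\Bigr)u_{kj}^*,\]
where pulling $u_{ij}$ and $u_{kj}^*$ out of the $b$-sum is legal because they do not depend on $b$; the inner sum equals $\delta_{ac}$ by biunitarity (Proposition 2.7), and then $uu^*=1$ collapses the $j$-sum to $\delta_{ik}$. A symmetric calculation handles $v^*v=1$.

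With $v$ established as a corepresentation, part (1) is immediate from Proposition 2.14 applied to $v$, giving the quotient compact quantum group $G\to PG$ with $C(PG)=\langle v_{ia,jb}\rangle$. For part (2), the standard embedding $G\subset S^{N^2-1}_{\mathbb C,+}$ from Proposition 2.8 sends $x_{ij}=u_{ij}/\sqrt{N}$; the natural projective coordinates on this free sphere are $p_{ia,jb} = x_{ij}x_{ab}^* = v_{ia,jb}/N$, so $\langle v_{ia,jb}\rangle$ is exactly the projective coordinate algebra, identifying $PG$ with the projective version of $G$ inside the free sphere.

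Parts (3) and (4) are Gelfand-level unwindings. Classically, $g\in G$ lies in the kernel of $G\to PG$ iff $g_{ij}\overline{g_{ab}} = \delta_{ij}\delta_{ab}$ for all $i,j,a,b$; setting $a=b$ with $g_{aa}\ne 0$ forces $g_{ij}=0$ for $i\ne j$, and the diagonal equations then force $|g_{ii}|=1$ with all $g_{ii}$ equal, so $g=\lambda I_N$ for some $\lambda\in\mathbb T$. The kernel is therefore the central subgroup of scalar unitaries in $G$, namely $G\cap\mathbb T\cdot I_N$ (written $G\cap\mathbb T^N$ in the statement). In the group dual case, $u=\mathrm{diag}(g_1,\dots,g_N)$ gives $v_{ia,jb} = \delta_{ij}\delta_{ab}\, g_ig_a^{-1}$, so $C(PG)=\langle v_{ia,jb}\rangle = C^*(\langle g_ig_j^{-1}\rangle)$, identifying $\widehat{P\Gamma}$ with the subgroup $\langle g_ig_j^{-1}\rangle\subset\Gamma$. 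The main obstacle throughout is the unitarity check for $v$: noncommutativity prevents naive rearrangement of the four-fold product $u_{ij}u_{ab}^*u_{cb}u_{kj}^*$, and the argument succeeds only because the inner $b$-sum is a scalar; this is exactly why biunitarity rather than mere unitarity of $u$ is required, making Proposition 2.7 essential at this point.
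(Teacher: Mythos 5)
Your proof is correct, and in outline it follows the paper's (very terse) argument: part (1) via Proposition 2.14, part (2) by identifying $\langle v_{ia,jb}\rangle$ with the algebra of projective coordinates $x_{ij}x_{ab}^*$ coming from the embedding $x_{ij}=u_{ij}/\sqrt{N}$, and part (4) by the diagonal computation $v_{ia,jb}=\delta_{ij}\delta_{ab}\,g_ig_a^{-1}$. The one real difference of route is at the start: the paper disposes of the corepresentation property by viewing $v=u\otimes\bar{u}$ and invoking the general operations on corepresentations from Section 3 (Proposition 3.3), whereas you verify the axioms by hand. Your hand verification is fine, and you correctly isolate the only non-formal point, unitarity, where biunitarity $\bar{u}u^t=u^t\bar{u}=1$ from Proposition 2.7 is exactly what collapses the inner $b$-sum to $\delta_{ac}$; this is the same ingredient the paper's general argument rests on, so the two approaches are essentially equivalent, yours being more self-contained and the paper's more economical.

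Concerning (3), your direct kernel computation is correct and is in fact sharper than the paper's own sketch. The coefficients $g_{ij}\bar{g}_{ab}$ of $v$ are those of the conjugation action $X\mapsto gXg^*$ on $M_N(\mathbb C)$, and its kernel consists of the scalar matrices in $G$, exactly as you find: $g=\lambda I_N$ with $\lambda\in\mathbb T$. The paper's proof asserts instead that the kernel is $G\cap\mathbb T^N$ with $\mathbb T^N\subset U_N$ the full diagonal torus; taken literally this cannot be right, since a non-scalar diagonal unitary does not act trivially by conjugation, $G\cap\mathbb T^N$ need not even be normal in $G$, and for $G=U_N$ one must obtain $PU_N=U_N/\mathbb T$ rather than $U_N/\mathbb T^N$. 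So your reading of the statement, with the central circle $G\cap\mathbb T\cdot I_N$ in place of $G\cap\mathbb T^N$, is the one under which (3) is true, and your explicit flag of this notational discrepancy is warranted.
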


\begin{proof}
The fact that $v$ is indeed a corepresentation is routine, and follows as well from the general properties of such corepresentations, to be discussed in chapter 3 below. Regarding now other assertions, the proofs go as follows:

\medskip

(1) This follows from Proposition 2.21 above.

\medskip

(2) Observe first that, since the matrix $v=(v_{ia,jb})$ is biunitary, we have indeed an embedding $G\subset S^{N^2-1}_{\mathbb C,+}$ as in the statement, given in double index notation by $x_{ia,jb}=\frac{v_{ia,jb}}{N}$. Now with this formula in hand, the assertion is clear from definitions.

\medskip

(3) This follows from the elementary fact that, via Gelfand duality, $w$ is the matrix of coefficients of the adjoint representation of $G$, whose kernel is the subgroup $G\cap\mathbb T^N$, where $\mathbb T^N\subset U_N$ denotes the subgroup formed by the diagonal matrices.

\medskip

(4) This is something trivial, which follows from definitions.
\end{proof}

\section*{2d. Free constructions}

At the level of the really ``new'' examples now, we have basic liberation constructions, going back to the pioneering work of Wang \cite{wa1}, \cite{wa2}, and to the subsequent papers \cite{ba1}, \cite{ba2}, as well as several more recent constructions. We first have, following Wang \cite{wa1}:

\index{free quantum group}
\index{free orthogonal quantum group}
\index{free unitary quantum group}
\index{orthogonal quantum group}
\index{unitary quantum group}

\begin{theorem}
The following universal algebras are Woronowicz algebras,
\begin{eqnarray*}
C(O_N^+)&=&C^*\left((u_{ij})_{i,j=1,\ldots,N}\Big|u=\bar{u},u^t=u^{-1}\right)\\
C(U_N^+)&=&C^*\left((u_{ij})_{i,j=1,\ldots,N}\Big|u^*=u^{-1},u^t=\bar{u}^{-1}\right)
\end{eqnarray*}
so the underlying compact quantum spaces $O_N^+,U_N^+$ are compact quantum groups.
\end{theorem}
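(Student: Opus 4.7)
The plan is to apply the universal property of each algebra three times to construct $\Delta,\varepsilon,S$, and then to note that the axioms of Definition 2.1 hold automatically by construction. I will focus on $U_N^+$, since the $O_N^+$ case is the specialization obtained by additionally imposing $u=\bar u$, and each of the constructions below preserves this extra relation.

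First I would verify that the universal $C^*$-algebras in the statement exist, i.e.\ that the biggest $C^*$-seminorm is finite on the universal $*$-algebra. This follows from the biunitarity relations: in any $*$-representation, $\|u_{ij}\|^2=\|u_{ij}u_{ij}^*\|\leq\|\sum_k u_{ik}u_{ik}^*\|=1$, so the seminorm is bounded on generators, hence everywhere. This is the same argument used to construct the free spheres in Definition 1.14.

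For the coproduct, I would consider the matrix $v\in M_N(A\otimes A)$ with $v_{ij}=\sum_k u_{ik}\otimes u_{kj}$, and check that $v$ satisfies the four relations $vv^*=v^*v=v^t\bar v=\bar v v^t=1$. A typical computation is
\begin{eqnarray*}
(v^t\bar v)_{ij}
&=&\sum_{a,k,l}u_{ak}u_{al}^*\otimes u_{ki}u_{lj}^*\\
&=&\sum_{k,l}(u^t\bar u)_{kl}\otimes u_{ki}u_{lj}^*\\
&=&\sum_k 1\otimes u_{ki}u_{kj}^*\\
&=&1\otimes(u^t\bar u)_{ij}\\
&=&\delta_{ij},
\end{eqnarray*}
and the other three relations are analogous. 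The universal property of $C(U_N^+)$ then yields a unique $*$-morphism $\Delta$ sending $u_{ij}\mapsto v_{ij}$. The counit is obtained similarly: the matrix $I_N\in M_N(\mathbb C)$ trivially satisfies the four biunitarity relations, so universality gives a $*$-morphism $\varepsilon:C(U_N^+)\to\mathbb C$ with $\varepsilon(u_{ij})=\delta_{ij}$.

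For the antipode, I would take the matrix $w$ over $A^{opp}$ given by $w_{ij}=u_{ji}^*$ and verify the four biunitarity relations for $w$ in $A^{opp}$. Each translates into one of the four original relations for $u$ in $A$, after exchanging left and right multiplication; for instance $ww^*=1$ in $A^{opp}$ means $w^*w=1$ in $A$, which unwinds to $\sum_k u_{ik}u_{jk}^*=\delta_{ij}$, i.e.\ $uu^*=1$. Universality then yields $S:C(U_N^+)\to C(U_N^+)^{opp}$ with $S(u_{ij})=u_{ji}^*$. The generation condition of Definition 2.1 is automatic. The main technical obstacle is the coproduct step, and this is precisely where imposing both $u^*=u^{-1}$ and $u^t=\bar u^{-1}$ (rather than only $u^*=u^{-1}$, which by the remarks in the introduction and in Proposition 2.7 is not enough in the quantum setting) is essential: without the relation $u^t\bar u=1$ for $u$, the computation of $(v^t\bar v)_{ij}$ above would not collapse to $\delta_{ij}$.
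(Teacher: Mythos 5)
Your proof is correct and follows essentially the same route as the paper's: one verifies that the matrices $u^\Delta_{ij}=\sum_k u_{ik}\otimes u_{kj}$, $u^\varepsilon_{ij}=\delta_{ij}$ and $u^S_{ij}=u_{ji}^*$ are again biunitary (orthogonal in the $O_N^+$ case) and then invokes the universal property to produce $\Delta,\varepsilon,S$, with your preliminary remark on the existence of the universal $C^*$-algebra being a welcome addition that the paper only treats in its introductory discussion. The one harmless bookkeeping slip is in the antipode step: with the standard matrix product over $A^{opp}$, the relation $ww^*=1$ unwinds to $u^t\bar{u}=1$ rather than $uu^*=1$, but since $u$ satisfies all four biunitarity relations this permutation of the conditions does not affect the argument.
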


\begin{proof}
This follows from the elementary fact that if a matrix $u=(u_{ij})$ is orthogonal or biunitary, as above, then so must be the following matrices:
$$u^\Delta_{ij}=\sum_ku_{ik}\otimes u_{kj}\quad,\quad 
u^\varepsilon_{ij}=\delta_{ij}\quad,\quad
u^S_{ij}=u_{ji}^*$$

Consider indeed the matrix $U=u^\Delta$. We have then:
$$(UU^*)_{ij}
=\sum_{klm}u_{il}u_{jm}^*\otimes u_{lk}u_{mk}^*
=\sum_{lm}u_{il}u_{jm}^*\otimes\delta_{lm}
=\delta_{ij}$$

In the other sense the computation is similar, as follows:
$$(U^*U)_{ij}
=\sum_{klm}u_{kl}^*u_{km}\otimes u_{li}^*u_{mj}
=\sum_{lm}\delta_{lm}\otimes u_{li}^*u_{mj}
=\delta_{ij}$$

The verification of the unitarity of $\bar{U}$ is similar. We first have:
$$(\bar{U}U^t)_{ij}
=\sum_{klm}u_{il}^*u_{jm}\otimes u_{lk}^*u_{mk}
=\sum_{lm}u_{il}^*u_{jm}\otimes\delta_{lm}
=\delta_{ij}$$

In the other sense the computation is similar, as follows:
$$(U^t\bar{U})_{ij}
=\sum_{klm}u_{kl}u_{km}^*\otimes u_{li}u_{mj}^*
=\sum_{lm}\delta_{lm}\otimes u_{li}u_{mj}^*
=\delta_{ij}$$

Regarding now the matrix $u^\varepsilon=1_N$, and also the matrix $u^S$, their biunitarity its clear. Thus, we can indeed define morphisms $\Delta,\varepsilon,S$ as in Definition 2.8, by using the universal properties of $C(O_N^+)$, $C(U_N^+)$, and this gives the result.
\end{proof}

Let us study now the above quantum groups, with the techniques that we have. As a first observation, we have embeddings of compact quantum groups, as follows:
$$\xymatrix@R=15mm@C=15mm{
U_N\ar[r]&U_N^+\\
O_N\ar[r]\ar[u]&O_N^+\ar[u]
}$$

The basic properties of $O_N^+,U_N^+$ can be summarized as follows:

\index{free group}

\begin{theorem}
The quantum groups $O_N^+,U_N^+$ have the following properties:
\begin{enumerate}
\item The closed subgroups $G\subset U_N^+$ are exactly the $N\times N$ compact quantum groups. As for the closed subgroups $G\subset O_N^+$, these are those satisfying $u=\bar{u}$.

\item We have liberation embeddings $O_N\subset O_N^+$ and $U_N\subset U_N^+$, obtained by dividing the algebras $C(O_N^+),C(U_N^+)$ by their respective commutator ideals.

\item We have as well embeddings $\widehat{L}_N\subset O_N^+$ and $\widehat{F}_N\subset U_N^+$, where $L_N$ is the free product of $N$ copies of $\mathbb Z_2$, and where $F_N$ is the free group on $N$ generators.
\end{enumerate}
\end{theorem}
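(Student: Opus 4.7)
The plan is to handle the three assertions separately, using universality of the defining presentations of $C(O_N^+)$ and $C(U_N^+)$, together with the biunitarity result Proposition 2.7 and the Gelfand theorem.

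For (1), suppose $(A,u)$ is a Woronowicz algebra with $u$ of size $N\times N$. By Proposition 2.7 the matrix $u$ is biunitary, so its entries satisfy exactly the defining relations of $C(U_N^+)$. The universal property of $C(U_N^+)$ therefore yields a $*$-algebra morphism $\pi:C(U_N^+)\to A$ sending each universal generator to the corresponding $u_{ij}$, which is surjective since the $u_{ij}$ generate $A$. The key point is that this $\pi$ automatically intertwines $\Delta,\varepsilon,S$, because the formulas in Definition 2.1 are identical on the two sides and are determined by their values on generators. Thus $G$ appears as a closed quantum subgroup of $U_N^+$ in the sense of Proposition 2.14, with ideal $I=\ker\pi$. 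The assertion about $O_N^+$ is analogous: adding $u=\bar u$ means the generators are self-adjoint, so $\pi$ factors through $C(U_N^+)\to C(O_N^+)$.

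For (2), let $J\subset C(U_N^+)$ be the commutator ideal. Then $C(U_N^+)/J$ is the universal commutative $C^*$-algebra on $N^2$ generators making $u$ biunitary. By the Gelfand theorem, this quotient is of the form $C(X)$ for some compact space $X$, and by evaluating characters one sees $X=\{U\in M_N(\mathbb C)\mid UU^*=U^*U=1,\ U^t\bar U=\bar U U^t=1\}=U_N$. The coproduct, counit and antipode of $C(U_N^+)$ descend to those of $C(U_N)$ under this identification, simply because both sides are determined by the same formulas on the coordinate functions $u_{ij}$, which is exactly what Proposition 2.2 describes for $U_N$. The argument for $O_N^+\to O_N$ is verbatim the same, imposing additionally $u=\bar u$, which cuts $U_N$ down to $O_N$.

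For (3), I construct explicit quotient maps. In $C^*(L_N)$ with $L_N=\mathbb Z_2^{*N}=\langle g_1,\dots,g_N\rangle$ each $g_i$ is a self-adjoint unitary, so the diagonal matrix $v=\mathrm{diag}(g_1,\dots,g_N)$ satisfies $v=\bar v$ and $vv^*=v^*v=1$, i.e.\ it is orthogonal. Universality of $C(O_N^+)$ gives a surjection $C(O_N^+)\to C^*(L_N)$ sending $u_{ij}\mapsto \delta_{ij}g_i$, and one checks on generators that it intertwines $\Delta,\varepsilon,S$ using the group-dual formulas from Proposition 2.4, giving the embedding $\widehat{L}_N\subset O_N^+$. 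For $\widehat{F}_N\subset U_N^+$ the same recipe works with $g_i\in F_N$ unitaries (no longer self-adjoint), making $v=\mathrm{diag}(g_1,\dots,g_N)$ biunitary.

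The step I expect to require the most care is the identification in (2) of the commutative quotient with $C(O_N)$ and $C(U_N)$ on the nose: one needs the Gelfand spectrum computation to match precisely the $O_N,U_N$ described classically, and one must verify that the induced Woronowicz structure coincides with the one from Proposition 2.2. Everything else is bookkeeping with universal properties and the fact that matching $\Delta,\varepsilon,S$ on generators suffices.
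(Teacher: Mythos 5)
Your proposal follows the same route as the paper: part (1) via Proposition 2.7 and the universal property, part (2) via the Gelfand theorem and the presentation of $C(O_N),C(U_N)$ as commutative quotients, and part (3) via Proposition 2.4 with the diagonal corepresentation $u=\mathrm{diag}(g_1,\ldots,g_N)$, noting that $u=\bar u$ corresponds to $g_i^2=1$. The paper's proof is just terser; the content is the same, and your additional bookkeeping is correct.
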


\begin{proof}
All these assertions are elementary, as follows:

\medskip

(1) This is clear from definitions, and from Proposition 2.14. 

\medskip

(2) This follows from the Gelfand theorem, which shows that we have presentation results for $C(O_N),C(U_N)$ as follows, similar to those in Theorem 2.23:
\begin{eqnarray*}
C(O_N)&=&C^*_{comm}\left((u_{ij})_{i,j=1,\ldots,N}\Big|u=\bar{u},u^t=u^{-1}\right)\\
C(U_N)&=&C^*_{comm}\left((u_{ij})_{i,j=1,\ldots,N}\Big|u^*=u^{-1},u^t=\bar{u}^{-1}\right)
\end{eqnarray*}

(3) This follows from (1) and from Proposition 2.11 above, with the remark that with $u=diag(g_1,\ldots,g_N)$, the condition $u=\bar{u}$ is equivalent to $g_i^2=1$, for any $i$.
\end{proof}

As an interesting philosophical conclusion, if we denote by $L_N^+,F_N^+$ the discrete quantum groups which are dual to $O_N^+,U_N^+$, then we have embeddings as follows:
$$L_N\subset L_N^+\quad,\quad
F_N\subset F_N^+$$

\index{free free group}

Thus $F_N^+$ is some kind of ``free free group'', and $L_N^+$ is its real counterpart. This is not surprising, since $F_N,L_N$ are not ``fully free'', their group algebras being cocommutative.

\bigskip

The last assertion in Theorem 2.24 suggests the following construction, from \cite{bv2}:

\index{diagonal torus}

\begin{proposition}
Given a closed subgroup $G\subset U_N^+$, consider its ``diagonal torus'', which is the closed subgroup $T\subset G$ constructed as follows:
$$C(T)=C(G)\Big/\left<u_{ij}=0\Big|\forall i\neq j\right>$$
This torus is then a group dual, $T=\widehat{\Lambda}$, where $\Lambda=<g_1,\ldots,g_N>$ is the discrete group generated by the elements $g_i=u_{ii}$, which are unitaries inside $C(T)$.
\end{proposition}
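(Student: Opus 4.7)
The plan is to establish this in three stages: first show that the ideal used to define $C(T)$ is a Hopf ideal so that Proposition 2.13 applies; then exploit biunitarity of $u$ to see that the diagonal entries become honest unitaries after the quotient, with group-like coproduct; then match the resulting Woronowicz algebra with a group algebra $C^*(\Lambda)$ by universality.

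For the first step, I would verify the condition $\Delta(I)\subset C(G)\otimes I + I\otimes C(G)$ on generators. For $i\neq j$, we have $\Delta(u_{ij})=\sum_k u_{ik}\otimes u_{kj}$, and in each summand either $k\neq i$, forcing $u_{ik}\in I$, or $k=i\neq j$, forcing $u_{kj}=u_{ij}\in I$. Thus every term lies in $C(G)\otimes I + I\otimes C(G)$, so by Proposition 2.13 the quotient $C(T)=C(G)/I$ is a Woronowicz algebra, with fundamental corepresentation the image of $u$.

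For the second step, since $u$ was biunitary in $C(G)$ (Proposition 2.7) and becomes the diagonal matrix $\mathrm{diag}(g_1,\dots,g_N)$ in $C(T)$, the relations $uu^*=u^*u=1$ and $u^t\bar u=\bar u u^t=1$ descend to $g_i g_i^*=g_i^*g_i=1$, so each $g_i$ is a unitary in $C(T)$. The formulas inherited from Definition 2.1 then read
\begin{eqnarray*}
\Delta(g_i)&=&\sum_k u_{ik}\otimes u_{ki}=g_i\otimes g_i,\\
\varepsilon(g_i)&=&\delta_{ii}=1,\\
S(g_i)&=&u_{ii}^*=g_i^*=g_i^{-1},
\end{eqnarray*}
where the off-diagonal terms die in the quotient. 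Hence the generators $g_i$ are group-like unitaries, and $\Delta,\varepsilon,S$ on $C(T)$ coincide with the maps of Proposition 2.4.

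For the third step, let $\Lambda=\langle g_1,\dots,g_N\rangle$ denote the discrete group generated by the unitaries $g_i$ inside $C(T)$. Since $C(T)$ is, by construction, the universal $C^*$-algebra generated by unitaries $g_1,\ldots,g_N$ subject to the relations inherited from $C(G)$ under the diagonal quotient, and since all such relations are group-theoretic (being word identities in the $g_i$'s and their inverses, given that $g_i^*=g_i^{-1}$), the universal property of $C^*(\Lambda)$ from Proposition 1.18 gives an isomorphism $C(T)\simeq C^*(\Lambda)$ mapping generators to generators. By Definition 1.20 and Definition 2.6, this identifies $T$ with the compact quantum group $\widehat{\Lambda}$, as desired. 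The only mild subtlety, worth noting in view of the discussion around Proposition 1.28, is that $C(T)$ lies between the full and reduced group $C^*$-algebras, which is all that is required for the identification $T=\widehat\Lambda$ in the sense of compact quantum measured spaces; the main concrete obstacle in the argument is simply the verification of the Hopf-ideal condition in the first step, which is where one must use the specific form of the comultiplication together with the choice of off-diagonal relations.
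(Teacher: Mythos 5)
Your proof is correct and follows essentially the same route as the paper's, which simply observes that in the quotient the diagonal entries $g_i=u_{ii}$ become group-like unitaries and concludes $C(T)=C^*(\Lambda)$ modulo the usual identification of $C^*$-completions of group algebras. Your additional first step, verifying the Hopf-ideal condition so that Proposition 2.13 applies, is a detail the paper leaves implicit, and the rest of your argument matches the paper's proof.
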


\begin{proof}
Since $u$ is unitary, its diagonal entries $g_i=u_{ii}$ are unitaries inside $C(T)$. Moreover, from $\Delta(u_{ij})=\sum_ku_{ik}\otimes u_{kj}$ we obtain, when passing inside the quotient:
$$\Delta(g_i)=g_i\otimes g_i$$

It follows that we have $C(T)=C^*(\Lambda)$, modulo identifying as usual the $C^*$-completions of the various group algebras, and so that we have $T=\widehat{\Lambda}$, as claimed.
\end{proof}

With this notion in hand, Theorem 2.24 (3) tells us that the diagonal tori of $O_N^+,U_N^+$ are the group duals $\widehat{L}_N,\widehat{F}_N$. We will be back to this later.

\bigskip 

Here is now a more subtle result on $O_N^+,U_N^+$, having no classical counterpart:

\begin{proposition}
Consider the quantum groups $O_N^+,U_N^+$, with the corresponding fundamental corepresentations denoted $v,u$, and let $z=id\in C(\mathbb T)$.
\begin{enumerate}
\item We have a morphism $C(U_N^+)\to C(\mathbb T)*C(O_N^+)$, given by $u=zv$.

\item In other words, we have a quantum group embedding $\widetilde{O_N^+}\subset U_N^+$.

\item This embedding is an isomorphism at the level of the diagonal tori.
\end{enumerate}
\end{proposition}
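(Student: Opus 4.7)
The plan is to address (1) via the universal property of $C(U_N^+)$, derive (2) as a formal consequence, and then compute both diagonal tori for (3), where the only substantive work lies.

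For (1), by Theorem 2.16 it suffices to check that $\tilde{u}=zv$ is a biunitary matrix in $C(\mathbb{T})*C(O_N^+)$. Since $v=\bar{v}$ each entry $v_{ij}$ is self-adjoint, so every $z$ pairs off with a $z^*$ into $1$; for instance,
$(\tilde{u}\tilde{u}^*)_{ij}=\sum_k zv_{ik}\cdot v_{jk}^*z^*=z\bigl(\sum_k v_{ik}v_{jk}\bigr)z^*=z(vv^t)_{ij}z^*=\delta_{ij}$,
and the other three products $\tilde{u}^*\tilde{u}$, $\tilde{u}^t\bar{\tilde{u}}$, $\bar{\tilde{u}}\tilde{u}^t$ are handled identically. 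Universality then produces the morphism $C(U_N^+)\to C(\mathbb{T})*C(O_N^+)$, $u_{ij}\mapsto zv_{ij}$. For (2), the image of this morphism is the $*$-subalgebra generated by the entries of $\tilde{u}$, which is $C(\widetilde{O_N^+})$ by definition (Proposition 2.12); compatibility of $u_{ij}\mapsto\tilde{u}_{ij}$ with $\Delta,\varepsilon,S$ was already recorded in the proof of that proposition, so we obtain a Woronowicz surjection $C(U_N^+)\twoheadrightarrow C(\widetilde{O_N^+})$, equivalently the embedding $\widetilde{O_N^+}\subset U_N^+$.

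For (3), Theorem 2.17(3) combined with Proposition 2.18 gives $T(U_N^+)=\widehat{F_N}$ and $T(O_N^+)=\widehat{L_N}$. The embedding from (2) induces a surjection of torus algebras $C^*(F_N)\twoheadrightarrow C(T(\widetilde{O_N^+}))$ sending each free generator $g_i$ to the class of $\tilde{u}_{ii}=zv_{ii}$, and the goal is to show this surjection is injective. I would build a splitting by composing the inclusion $C(\widetilde{O_N^+})\subset C(\mathbb{T})*C(O_N^+)$ with the quotient $C(\mathbb{T})*C(O_N^+)\twoheadrightarrow C(\mathbb{T})*C(T(O_N^+))=C^*(\mathbb{Z}*L_N)$ obtained by killing the off-diagonal $v_{ij}$. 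The resulting composite sends $\tilde{u}_{ij}$ to $\delta_{ij}\,zh_i$, where $h_1,\ldots,h_N$ are the standard generators of $L_N=\mathbb{Z}_2^{*N}$; it annihilates the off-diagonal relations defining $T(\widetilde{O_N^+})$ and so factors through a map $C(T(\widetilde{O_N^+}))\to C^*(\mathbb{Z}*L_N)$, $\tilde{u}_{ii}\mapsto zh_i$. The main obstacle, and the only non-formal step, is then the group-theoretic claim that $zh_1,\ldots,zh_N$ generate a free subgroup of rank $N$ inside $\mathbb{Z}*L_N$. I would verify this by a direct normal-form argument in the free product: after expanding $(zh_i)^{-1}=h_iz^{-1}$, any nontrivial reduced word in the $g_i^{\pm1}$ translates to an alternating $\mathbb{Z}/L_N$-word whose $L_N$-letters are all nontrivial ($F_N$-reducedness forces consecutive indices to differ precisely where adjacent $z^{\pm1}$'s would cancel) and whose remaining $z^{\pm1}$-letters never telescope to $1$. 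Granted this injectivity at the group level, the composite $C^*(F_N)\to C^*(\mathbb{Z}*L_N)$ is injective, forcing the surjection $C^*(F_N)\twoheadrightarrow C(T(\widetilde{O_N^+}))$ to be an isomorphism.
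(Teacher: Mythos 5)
Your proof is correct and follows essentially the same route as the paper: (1) and (2) via the biunitarity of $zv$ and Proposition 2.12, and (3) via the model $g_i\to zh_i$, the paper simply invoking the faithfulness of the resulting embedding $F_N\subset\mathbb Z*L_N$ as a well-known fact. The only difference is that you make the reduction explicit (the splitting through $C(\mathbb T)*C(T(O_N^+))=C^*(\mathbb Z*L_N)$) and supply the normal-form argument for that well-known fact, which is a welcome but non-essential elaboration.
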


\begin{proof}
The first two assertions follow from Proposition 2.19, or simply from the fact that $u=zv$ is biunitary. As for the third assertion, the idea here is that we have a similar model for the free group $F_N$, which is well-known to be faithful, $F_N\subset\mathbb Z*L_N$.
\end{proof}

We will be back to the above morphism later on, with a proof of its faithfulness, after performing a suitable GNS construction, with respect to the Haar functionals. 

\bigskip

Let us construct now some more examples of compact quantum groups. Following  \cite{ez1}, \cite{bsp}, \cite{bv2}, \cite{bdu}, we can introduce some intermediate liberations, as follows:

\index{half-liberation}
\index{half-classical orthogonal group}
\index{half-classical unitary group}

\begin{proposition}
We have intermediate quantum groups as follows,
$$\xymatrix@R=15mm@C=15mm{
U_N\ar[r]&U_N^*\ar[r]&U_N^+\\
O_N\ar[r]\ar[u]&O_N^*\ar[r]\ar[u]&O_N^+\ar[u]}$$
with $*$ standing for the fact that $u_{ij},u_{ij}^*$ must satisfy the relations $abc=cba$. 
\end{proposition}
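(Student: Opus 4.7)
The plan is to build $C(O_N^*)$ and $C(U_N^*)$ as quotients of $C(O_N^+)$ and $C(U_N^+)$ by the two-sided $*$-ideal $J$ generated by the half-commutation relations
$$abc - cba \quad \text{with } a,b,c \in \{u_{ij}\} \text{ (resp.\ } a,b,c \in \{u_{ij},u_{ij}^*\}\text{)},$$
and then check that the Woronowicz structure maps $\Delta,\varepsilon,S$ from Theorem 2.16 descend to these quotients. Once this is done, the chain $U_N\subset U_N^*\subset U_N^+$ (and its real analogue) follows by noting that the commutative algebras $C(O_N),C(U_N)$ trivially satisfy $abc=cba$, so the defining quotient map of the liberation factors through our intermediate algebra.

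The verification for $\varepsilon$ is immediate since $\varepsilon(u_{ij})=\delta_{ij}$ is a scalar, so scalars commute and $\varepsilon(abc)=\varepsilon(cba)$ trivially. For $S$, recall from Proposition~2.7 that $S(u_{ij})=u_{ji}^*$ and that $S$ is an \emph{anti}homomorphism (taking values in $A^{opp}$). Thus $S(abc)=S(c)S(b)S(a)$ in ordinary multiplication. If $abc=cba$ holds in the quotient, then applying $S$ gives $S(c)S(b)S(a)=S(a)S(b)S(c)$, which is again a half-commutation relation among elements of $\{u_{ij},u_{ij}^*\}$; hence $S$ descends.

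The main computation is for $\Delta$. Writing out
$$\Delta(u_{ij})\Delta(u_{kl})\Delta(u_{mn})=\sum_{a,b,c}u_{ia}u_{kb}u_{mc}\otimes u_{aj}u_{bl}u_{cn},$$
$$\Delta(u_{mn})\Delta(u_{kl})\Delta(u_{ij})=\sum_{a,b,c}u_{mc}u_{kb}u_{ia}\otimes u_{cn}u_{bl}u_{aj},$$
we see that in each leg of the tensor product the three factors are indices from $\{u_{ij}\}$ (or $\{u_{ij},u_{ij}^*\}$), so the half-commutation relation in the quotient $C(G^*)\otimes C(G^*)$ lets us reverse each triple of factors simultaneously, yielding the desired equality term by term. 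The same argument applies when some of $a,b,c$ are starred entries, since $\Delta(u_{ij}^*)=\sum_k u_{ik}^*\otimes u_{kj}^*$ has exactly the same tensor shape. Thus $\Delta$ factors through the quotient as a $*$-homomorphism to $C(G^*)\otimes C(G^*)$.

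The only mild subtlety (and what I would regard as the main obstacle) is bookkeeping: one must be sure that the relations one is using to rearrange the two legs are themselves relations of type $abc=cba$ among generators of the target algebra, rather than some weaker consequence. This is automatic because each leg is literally a product of three generators (or generator-adjoints), so the relation applies as stated. Having checked all three structure maps, $(C(O_N^*),u)$ and $(C(U_N^*),u)$ satisfy Definition~2.1 and give compact quantum groups fitting in the claimed diagram.
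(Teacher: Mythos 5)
Your proof is correct and follows the same approach as the paper: show that the half-commutation relations are preserved by the images of $u_{ij}$ under $\Delta,\varepsilon,S$, so that the Woronowicz structure maps descend to the quotient. The paper states this in one line ("if the entries of $u$ half-commute, then so do the entries of $u^\Delta,u^\varepsilon,u^S$") and refers to \cite{bsp}, \cite{bv3} for details; you have supplied exactly those details, including the correct leg-by-leg rearrangement in the $\Delta$ computation and the antimultiplicativity bookkeeping for $S$.
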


\begin{proof}
This is similar to the proof of Theorem 2.23, by using the elementary fact that if the entries of $u=(u_{ij})$ half-commute, then so do the entries of $u^\Delta$, $u^\varepsilon$, $u^S$.
\end{proof}

\index{half-classical sphere}

In the same spirit, we have as well intermediate spheres as follows, with the symbol $*$ standing for the fact that $x_i,x_i^*$ must satisfy the relations $abc=cba$:
$$\xymatrix@R=15mm@C=15mm{
S^{N-1}_\mathbb C\ar[r]&S^{N-1}_{\mathbb C,*}\ar[r]&S^{N-1}_{\mathbb C,+}\\
S^{N-1}_\mathbb R\ar[r]\ar[u]&S^{N-1}_{\mathbb R,*}\ar[r]\ar[u]&S^{N-1}_{\mathbb R,+}\ar[u]
}$$

At the level of the diagonal tori, we have the following result:

\begin{theorem}
The tori of the basic spheres and quantum groups are as follows,
$$\xymatrix@R=15mm@C=15mm{
\widehat{\mathbb Z^N}\ar[r]&\widehat{\mathbb Z^{\circ N}}\ar[r]&\widehat{\mathbb Z^{*N}}\\
\widehat{\mathbb Z_2^N}\ar[r]\ar[u]&\widehat{\mathbb Z_2^{\circ N}}\ar[r]\ar[u]&\widehat{\mathbb Z_2^{*N}}\ar[u]}$$
with $\circ$ standing for the half-classical product operation for groups.
\end{theorem}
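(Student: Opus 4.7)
The plan is to compute each of the six tori by unpacking the two relevant constructions in parallel. For the spheres I will apply Proposition 1.17, which presents $C(T)$ as a universal $C^*$-algebra generated by $N$ unitaries $u_i = \sqrt{N}\,x_i$ subject to the transformed defining relations of the sphere. For the quantum groups I will apply Proposition 2.18, giving $T=\widehat{\Lambda}$ with $\Lambda=\langle g_1,\ldots,g_N\rangle$ and $g_i=u_{ii}$ unitaries in the quotient. The key observation is that in each of the six columns the same group presentation is produced by either route, because the relations defining the sphere coincide, after restricting to the diagonal generators, with the relations defining the quantum group restricted to its diagonal torus.

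First I would dispose of the four corner cases. In the free columns (rightmost), the sphere conditions $\sum x_i x_i^*=\sum x_i^*x_i=1$ become automatic once the $u_i$ are unitary with $x_i=u_i/\sqrt N$, and the quantum group relation $u^t=\bar u^{-1}$ (respectively $u=\bar u$, $u^t=u^{-1}$) imposes on the diagonal nothing more than unitarity of each $g_i$ (respectively self-adjointness, which together with unitarity forces $g_i^2=1$). So both routes yield $\mathbb{Z}^{*N}=F_N$ in the unitary case and $\mathbb{Z}_2^{*N}$ in the orthogonal case, recovering Theorem 1.21. In the classical columns (leftmost), commutativity is imposed on top, so the $g_i$ pairwise commute, and the resulting groups are $\mathbb{Z}^N$ and $\mathbb{Z}_2^N$, matching the classical tori of $S^{N-1}_\mathbb{C}$, $S^{N-1}_\mathbb{R}$, $U_N$ and $O_N$ already computed earlier.

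The substantive content is the middle column. Here the half-commutation relations $abc=cba$ imposed among the coordinates $x_i,x_i^*$ (for the sphere $S^{N-1}_{\mathbb C,*}$) or $u_{ij},u_{ij}^*$ (for the quantum group $U_N^*$) restrict, on the diagonal, to the three-letter identities $g_i g_j g_k = g_k g_j g_i$ together with their involutive variants. By definition, the group generated by $N$ unitaries subject only to these relations is the half-classical free product $\mathbb{Z}^{\circ N}$, and adjoining $g_i^2=1$ yields $\mathbb{Z}_2^{\circ N}$. Hence both $S^{N-1}_{\mathbb C,*}$ and $U_N^*$ produce the torus $\widehat{\mathbb{Z}^{\circ N}}$, and both $S^{N-1}_{\mathbb R,*}$ and $O_N^*$ produce $\widehat{\mathbb{Z}_2^{\circ N}}$.

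The main obstacle, modest but requiring care, is to verify that passing to the diagonal torus of a quantum group does not introduce extra relations beyond those obtained by direct substitution into the defining identities. Concretely, one must check that $u$ being unitary reduces on the diagonal to $g_ig_i^*=g_i^*g_i=1$ (all off-diagonal contributions vanishing after the quotient), and that the half-commutation identity among arbitrary $u_{ij},u_{ij}^*$ applies in particular to diagonal entries, yielding exactly the relations cut out above with no others. This is routine, once one notes that the presentations of $C(O_N^*)$, $C(U_N^*)$ are by generators and relations of degree at most three. Combining the six cases assembles the stated cubical diagram of group duals, which coincides column-by-column with the torus diagrams of the spheres and of the quantum groups.
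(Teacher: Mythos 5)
Your proposal is correct and follows essentially the same route as the paper's (extremely brief) proof: you dispose of the four corners by appealing to the prior torus computations for the classical and free cases, and handle the middle column by restricting the half-commutation relations $abc=cba$ to the diagonal. The paper's proof is a three-line sketch; yours fills in the mechanics via Propositions 1.17 and 2.18, which is the intended elaboration. Your verification that unitarity of $u$ restricts on the diagonal to $g_ig_i^*=g_i^*g_i=1$ (with the off-diagonal cross-terms dying in the quotient), and that $u=\bar u$ forces $g_i=g_i^*$ hence $g_i^2=1$, is precisely the routine calculation that underlies steps (1) and (2) of the paper's proof. One small caution: your remark that the "degree at most three" of the relations makes the no-extra-relations step routine is not the operative point; what actually makes it routine is that quotienting a universal $C^*$-algebra by the ideal generated by the off-diagonal $u_{ij}$ simply appends the relations $u_{ij}=0$ ($i\ne j$) to the presentation, so the surviving generators $g_i=u_{ii}$ satisfy exactly the substituted relations and no others. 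This is a standard fact about universal algebras, independent of the degree of the relations.
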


\begin{proof}
The idea here is as follows:

\medskip

(1) The result on the left is well-known.

\medskip

(2) The result on the right follows from Theorem 2.24 (3).

\medskip

(3) The middle result follows as well, by imposing the relations $abc=cba$.
\end{proof}

Let us discuss now the relation with the noncommutative spheres. Having the things started here is a bit tricky, and as a main source of inspiration, we have:

\index{algebraic manifold}
\index{affine isometry}
\index{isometry}

\begin{proposition}
Given an algebraic manifold $X\subset S^{N-1}_\mathbb C$, the formula
$$G(X)=\left\{U\in U_N\Big|U(X)=X\right\}$$
defines a compact group of unitary matrices, or isometries, called affine isometry group of $X$. For the spheres $S^{N-1}_\mathbb R,S^{N-1}_\mathbb C$ we obtain in this way the groups $O_N,U_N$.
\end{proposition}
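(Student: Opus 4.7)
The plan is to separate the proof into two independent verifications: (a) that $G(X)$ is a closed (hence compact) subgroup of $U_N$, and (b) the explicit computation for the two spheres. For (a), the group axioms are routine: the identity fixes $X$; if $U(X)=X$ and $V(X)=X$ then $(UV)(X)=U(V(X))=U(X)=X$; and if $U(X)=X$ then applying $U^{-1}$ to both sides gives $X=U^{-1}(X)$. So $G(X)$ is a subgroup of $U_N$.

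For closedness I would use the continuity of the evaluation map $U_N\times\mathbb C^N\to\mathbb C^N$, $(U,x)\mapsto Ux$, together with the fact that $X$ is closed in the compact space $S^{N-1}_\mathbb C$ (being an algebraic manifold, it is defined by vanishing of continuous polynomial functions). For each fixed $x\in X$, the set $\{U\in U_N\mid Ux\in X\}$ is then closed, and intersecting over $x\in X$ yields $\{U\mid U(X)\subset X\}$. Applying the same argument to $U^{-1}$ shows that $G(X)$ is a closed subset of $U_N$, hence a compact Lie group, as claimed.

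For (b), the case $X=S^{N-1}_\mathbb C$ is immediate: every $U\in U_N$ preserves the Hermitian norm, so $U(S^{N-1}_\mathbb C)\subset S^{N-1}_\mathbb C$, and the reverse inclusion follows by applying the same to $U^{-1}$; thus $G(S^{N-1}_\mathbb C)=U_N$. For $X=S^{N-1}_\mathbb R$, the inclusion $O_N\subset G(S^{N-1}_\mathbb R)$ is trivial. Conversely, if $U\in U_N$ satisfies $U(S^{N-1}_\mathbb R)=S^{N-1}_\mathbb R$, then in particular the standard basis vectors $e_1,\ldots,e_N\in S^{N-1}_\mathbb R$ are sent to elements of $S^{N-1}_\mathbb R\subset\mathbb R^N$; since these images are exactly the columns of $U$, the matrix $U$ has real entries, and a unitary matrix with real entries is orthogonal. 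Hence $G(S^{N-1}_\mathbb R)=O_N$.

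There is no real obstacle in this proposition: the only mildly delicate point is making sure the closedness argument is phrased using the fact that $X$ is a closed subset of a compact space (which is automatic from the algebraic manifold hypothesis), so that an infinite intersection of closed conditions remains closed. The ``isometry'' terminology is justified because elements of $U_N$ are precisely the $\mathbb C$-linear isometries of $\mathbb C^N$ for the Hermitian norm, and $G(X)$ consists of those which additionally preserve the subset $X$.
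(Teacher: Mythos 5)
Your proof is correct and fills in exactly the details the paper leaves implicit — the paper's own proof simply declares the group, compactness, and sphere assertions to be ``clear,'' then remarks, as you also observe, that only the closedness of $X$ is used, not its algebraic structure. Your verification of the group axioms, the intersection-of-closed-sets argument for compactness, and the observation that a unitary matrix fixing $S^{N-1}_\mathbb R$ must have real columns (hence lie in $O_N$) are all the right steps, in the same spirit the author intends.
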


\begin{proof}
The fact that $G(X)$ as defined above is indeed a group is clear, its compactness is clear as well, and finally the last assertion is clear as well. In fact, all this works for any closed subset $X\subset\mathbb C^N$, but we are not interested here in such general spaces.
\end{proof}

We have the following quantum analogue of the above construction:

\index{quantum isometry}
\index{quantum affine isometry}
\index{quantum isometry group}
 
\begin{proposition}
Given an algebraic manifold $X\subset S^{N-1}_{\mathbb C,+}$, the category of the closed subgroups $G\subset U_N^+$ acting affinely on $X$, in the sense that the formula
$$\Phi(x_i)=\sum_jx_j\otimes u_{ji}$$ 
defines a morphism of $C^*$-algebras as follows,
$$\Phi:C(X)\to C(X)\otimes C(G)$$
has a universal object, denoted $G^+(X)$, and called affine quantum isometry group of $X$.
\end{proposition}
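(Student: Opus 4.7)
The plan is to construct $G^+(X)$ explicitly as a quotient of $U_N^+$, and then to produce its Woronowicz structure via the universal property. First, I would set up the candidate algebra. Start with the free $*$-algebra generated by $N^2$ variables $U_{ij}$ subject to the biunitarity relations defining $C(U_N^+)$ (see Theorem 2.16). Substituting formally $\Phi_0(x_i)=\sum_j x_j\otimes U_{ji}$ into each defining polynomial $f_i$ of $X$, one gets an element
$$f_i\!\left(\textstyle\sum_j x_j\otimes U_{j1},\ \ldots,\ \sum_j x_j\otimes U_{jN}\right)\ \in\ C(X)\otimes\mathcal{A}_0,$$
where $\mathcal{A}_0$ is the free biunitary $*$-algebra. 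Picking a spanning family of monomials for $C(X)$ and expanding, this element is a sum $\sum_\alpha M_\alpha\otimes R_\alpha^{(i)}$; the relations we impose on $\mathcal{A}_0$ are exactly those ensuring this element is zero for every $i$. Let $\mathcal{A}$ be the resulting $*$-algebra, and let $C(G^+(X))$ be its maximal $C^*$-completion (which exists by the argument from Definition 1.14: biunitarity forces $\|U_{ij}\|\le 1$).

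Second, I would verify universality. By construction, there is a $*$-algebra morphism $\Phi\colon C(X)\to C(X)\otimes C(G^+(X))$ with $\Phi(x_i)=\sum_j x_j\otimes U_{ji}$. Conversely, if $G\subset U_N^+$ with coordinates $u_{ij}$ also carries an affine coaction $\Phi_G(x_i)=\sum_j x_j\otimes u_{ji}$, then the $u_{ij}$ satisfy all the defining relations of $\mathcal{A}$ (the relevant element of $C(X)\otimes C(G)$ vanishes by assumption), so the universal property of $\mathcal{A}$ produces a surjection $C(G^+(X))\twoheadrightarrow C(G)$ mapping generators to generators. Thus $G^+(X)$ dominates every affinely acting subgroup.

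Third, I would equip $C(G^+(X))$ with its Woronowicz structure. Here the key trick is to apply universality twice. The composite
$$(\Phi\otimes\mathrm{id})\circ\Phi\colon\ C(X)\ \longrightarrow\ C(X)\otimes C(G^+(X))\otimes C(G^+(X))$$
is again of the form $x_i\mapsto \sum_j x_j\otimes W_{ji}$ with $W_{ij}=\sum_k U_{ik}\otimes U_{kj}$, and the matrix $W$ is biunitary. Thus $C(G^+(X))\otimes C(G^+(X))$, endowed with $W$, is a biunitary algebra making $X$ coact affinely; universality of $G^+(X)$ therefore yields a unique $*$-morphism $\Delta\colon C(G^+(X))\to C(G^+(X))\otimes C(G^+(X))$ with $\Delta(U_{ij})=\sum_k U_{ik}\otimes U_{kj}$. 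The counit $\varepsilon$ and antipode $S$ are produced by the same recipe, applied respectively to the trivial coaction $x_i\mapsto x_i\otimes 1$ (whose matrix is $\delta_{ij}$) and to the ``inverse'' coaction using $u_{ji}^*$ in place of $u_{ij}$, with values in $C(G^+(X))^{\mathrm{opp}}$. The axioms of Definition 2.1 then hold on generators by direct verification and extend to all of $C(G^+(X))$ by the algebra morphism property.

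The main obstacle is the third step: one must check that the relations we imposed on the $U_{ij}$ are preserved by $\Delta,\varepsilon,S$, so that these maps descend to the quotient. The universality argument above handles this cleanly, provided one verifies (this is the genuinely substantive calculation) that after substituting $W_{ij}=\sum_k U_{ik}\otimes U_{kj}$ into $f_i$, the resulting element of $C(X)\otimes C(G^+(X))\otimes C(G^+(X))$ vanishes by two successive applications of the defining relation of $\mathcal{A}$ in its two tensor slots. This is the only computation that needs care; everything else is formal.
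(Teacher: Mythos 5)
Your construction is essentially the paper's own proof: you realize $C(G^+(X))$ as the quotient of $C(U_N^+)$ by the relations extracted from $f_\alpha(\sum_jx_j\otimes u_{ji})=0$, using that the monomials involved span a finite-dimensional subspace of $C(X)$, and you then obtain $\Delta,\varepsilon,S$ by checking that the matrices $u^\Delta,u^\varepsilon,u^S$ satisfy the same relations and invoking universality, exactly as the paper does via its formula $f(X_1^\gamma,\ldots,X_N^\gamma)=(id\otimes\gamma)f(X_1,\ldots,X_N)=0$. Your arguments for $\Delta$ (the composite $(\Phi\otimes id)\Phi$) and $\varepsilon$ (the trivial coaction) are explicit instances of that one-liner; just note that for $S$ there is no pre-existing morphism to compose, so the vanishing of the relations for the matrix $(U_{ji}^*)$ over the opposite algebra still needs a direct verification, which is compressed to the same single line in the paper's proof as in yours.
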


\begin{proof}
Observe first that in the case where $\Phi$ as above exists, this morphism is automatically a coaction, in the sense that it satisfies the following conditions:
$$(\Phi\otimes id)\Phi=(id\otimes\Delta)\Phi$$
$$(id\otimes\varepsilon)\Phi=id$$

In order to prove now the result, assume that $X\subset S^{N-1}_{\mathbb C,+}$ comes as follows:
$$C(X)=C(S^{N-1}_{\mathbb C,+})\Big/\Big<f_\alpha(x_1,\ldots,x_N)=0\Big>$$

Consider now the following variables:
$$X_i=\sum_jx_j\otimes u_{ji}\in C(X)\otimes C(U_N^+)$$

Our claim is that $G=G^+(X)$ in the statement appears as follows:
$$C(G)=C(U_N^+)\Big/\Big<f_\alpha(X_1,\ldots,X_N)=0\Big>$$

In order to prove this claim, we have to clarify how the relations $f_\alpha(X_1,\ldots,X_N)=0$ are interpreted inside $C(U_N^+)$, and then show that $G$ is indeed a quantum group. So, pick one of the defining polynomials, $f=f_\alpha$, and write it as follows:
$$f(x_1,\ldots,x_N)=\sum_r\sum_{i_1^r\ldots i_{s_r}^r}\lambda_r\cdot x_{i_1^r}\ldots x_{i_{s_r}^r}$$

With $X_i=\sum_jx_j\otimes u_{ji}$ as above, we have the following formula:
$$f(X_1,\ldots,X_N)
=\sum_r\sum_{i_1^r\ldots i_{s_r}^r}\lambda_r\sum_{j_1^r\ldots j_{s_r}^r}x_{j_1^r}\ldots x_{j_{s_r}^r}\otimes u_{j_1^ri_1^r}\ldots u_{j_{s_r}^ri_{s_r}^r}$$

Since the variables on the right span a certain finite dimensional space, the relations $f(X_1,\ldots,X_N)=0$ correspond to certain relations between the variables $u_{ij}$. Thus, we have indeed a closed subspace $G\subset U_N^+$, coming with a universal map:
$$\Phi:C(X)\to C(X)\otimes C(G)$$

In order to show now that $G$ is a quantum group, consider the following elements:
$$u_{ij}^\Delta=\sum_ku_{ik}\otimes u_{kj}\quad,\quad 
u_{ij}^\varepsilon=\delta_{ij}\quad,\quad 
u_{ij}^S=u_{ji}^*$$

Consider as well the following associated elements, with $\gamma\in\{\Delta,\varepsilon,S\}$:
$$X_i^\gamma=\sum_jx_j\otimes u_{ji}^\gamma$$

From the relations $f(X_1,\ldots,X_N)=0$ we deduce that we have:
$$f(X_1^\gamma,\ldots,X_N^\gamma)
=(id\otimes\gamma)f(X_1,\ldots,X_N)
=0$$

But this shows that for any exponent $\gamma\in\{\Delta,\varepsilon,S\}$ we can map $u_{ij}\to u_{ij}^\gamma$, and it follows that $G$ is indeed a compact quantum group, and we are done.
\end{proof}

Following \cite{bgo} and related papers, we can now formulate:

\begin{theorem}
The quantum isometry groups of the basic spheres are
$$\xymatrix@R=15mm@C=17mm{
U_N\ar[r]&U_N^*\ar[r]&U_N^+\\
O_N\ar[r]\ar[u]&O_N^*\ar[r]\ar[u]&O_N^+\ar[u]}$$
modulo identifying, as usual, the various $C^*$-algebraic completions.
\end{theorem}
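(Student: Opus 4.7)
The strategy is to apply Proposition~2.25 to each of the six spheres and identify, case by case, the relations that the universal action $\Phi(x_i)=\sum_j x_j\otimes u_{ji}$ induces on the generators $u_{ij}$ of $C(U_N^+)$. Concretely, $G^+(X)$ appears as the quotient of $C(U_N^+)$ by the $C^*$-ideal generated by the images of the defining relations of $X$ under the substitution $x_i\mapsto X_i=\sum_j x_j\otimes u_{ji}$, and the claim is that these induced relations are precisely the defining relations of the six claimed quantum groups.

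For existence of the actions, starting from the free complex case and working down, the central computation is
\[\sum_i X_iX_i^*=\sum_{j,k}x_jx_k^*\otimes(uu^*)_{jk},\qquad \sum_i X_i^*X_i=\sum_{j,k}x_j^*x_k\otimes(\bar u u^t)_{jk},\]
both of which collapse to $1\otimes 1$ once $uu^*=\bar u u^t=1$, i.e.\ once $u$ is biunitary. This yields the action of $U_N^+$ on $S^{N-1}_{\mathbb C,+}$. Imposing $u=\bar u$ preserves $X_i=X_i^*$ and gives the action of $O_N^+$ on $S^{N-1}_{\mathbb R,+}$; the (half-)commutation relations on the $u_{ij}$ preserve the corresponding relations on the $X_i$, giving the remaining four actions.

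For universality, one runs these computations in reverse. The free complex case is immediate: the sphere relations force $uu^*=\bar u u^t=1$, which together with $u^*u=1$ built into $C(U_N^+)$ is exactly biunitarity, so $G^+(S^{N-1}_{\mathbb C,+})=U_N^+$. Imposing additionally $X_i=X_i^*$ and using the linear independence of $x_1,\dots,x_N$ in $C(S^{N-1}_{\mathbb R,+})$ forces $u_{ji}^*=u_{ji}$, yielding $O_N^+$. For the classical spheres one expands
\[X_iX_j-X_jX_i=\sum_{k,l}x_kx_l\otimes(u_{ki}u_{lj}-u_{kj}u_{li})=0\]
and similarly for $X_iX_j^*-X_j^*X_i$, then matches coefficients against the linearly independent classical monomials $x_kx_l$, $x_kx_l^*$ in $C(S^{N-1}_{\mathbb C})$; this forces the $u_{ij}$ and $u_{kl}^*$ to pairwise commute, producing $U_N$ and, in the self-adjoint case, $O_N$.

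The delicate point, and the main obstacle, is the half-classical case, where one must extract from $X_iX_jX_k=X_kX_jX_i$ exactly the relations $abc=cba$ on the $u_{ij}$ and $u_{ij}^*$, with neither extra commutation nor extra freedom. Following \cite{bsp}, \cite{bv3}, \cite{bdu}, the idea is to expand this cubic identity as
\[\sum_{a,b,c}x_ax_bx_c\otimes\bigl(u_{ai}u_{bj}u_{ck}-u_{ak}u_{bj}u_{ci}\bigr)=0,\]
identify a basis of triples $x_ax_bx_c$ modulo the half-commutation $x_ax_bx_c=x_cx_bx_a$ in $C(S^{N-1}_{\mathbb C,*})$, and read off relations on the $u$-generators; the antipode identity $S(u_{ij})=u_{ji}^*$ is then used to propagate the extracted relations to all mixed triples involving adjoints, producing the full half-commutation on $C(U_N^*)$ and $C(O_N^*)$. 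The careful bookkeeping of which triples are linearly independent modulo half-commutation is where essentially all the technical content of the theorem resides.
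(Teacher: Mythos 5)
Your overall strategy -- existence of the six actions via biunitarity of $u$, then universality by substituting $X_i=\sum_jx_j\otimes u_{ji}$ into the sphere relations and extracting relations on the $u_{ij}$ -- is the same as the paper's, and your free and free-real columns are fine. However, your treatment of the classical spheres has a genuine gap. On $S^{N-1}_{\mathbb R}$ (or $S^{N-1}_{\mathbb C}$) the monomials $x_kx_l$ are \emph{not} all linearly independent, since $x_kx_l=x_lx_k$; only those with $k\leq l$ form an independent family. Matching the coefficients of $X_iX_j-X_jX_i$ against this family therefore does not give $u_{ki}u_{lj}=u_{kj}u_{li}$, but only the symmetrized relations $u_{ki}u_{kj}=u_{kj}u_{ki}$ and $u_{ki}u_{lj}+u_{li}u_{kj}=u_{kj}u_{li}+u_{lj}u_{ki}$ for $k\neq l$, which by themselves do not force commutativity of the coordinates. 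The paper closes exactly this gap with the antipode trick of \cite{bhg}: one applies $S$ (using $S(u_{ki}u_{lj})=u_{jl}u_{ik}$) to the symmetrized relation, relabels, and compares with the original relation to deduce $u_{li}u_{kj}=u_{kj}u_{li}$, whence $G\subset O_N$, resp. $G\subset U_N$. You invoke the antipode only in the half-classical case; as written, your classical case does not go through.

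The half-classical column is also not actually proved in your proposal: you outline a plan and explicitly defer the "bookkeeping", which is where the content lies. Note moreover that the paper does not argue with cubic monomials $x_ax_bx_c$ at all. It passes to the projective coordinates $p_{ab}=z_a\bar{z}_b$, compares $\Phi(p_{ab}p_{cd})$ with $\Phi(p_{ad}p_{cb})$, uses the precise linear dependencies among the quartic monomials $z_i\bar{z}_jz_k\bar{z}_l$ (these depend only on $|\{i,k\}|,|\{j,l\}|\in\{1,2\}$, and that dependence gives the only relations) to extract four families of equations, and then multiplies by $u_{ia}^*$, sums over $i$, and applies the antipode and the involution to reach $[u_{jb}^*,u_{ka},u_{ld}^*]=0$, i.e. the defining relations of $U_N^*$; the real half-classical case then follows from $U_N^*\cap O_N^+=O_N^*$. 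Your route via $X_iX_jX_k=X_kX_jX_i$ on unstarred generators would at best produce relations among unstarred coordinates and still faces the same symmetrization issue as in the classical case, so the two hardest columns of the diagram remain unestablished in your write-up.
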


\begin{proof}
Let us first construct an action $U_N^+\curvearrowright S^{N-1}_{\mathbb C,+}$. We must prove here that the variables $X_i=\sum_jx_j\otimes u_{ji}$ satisfy the defining relations for $S^{N-1}_{\mathbb C,+}$, namely:
$$\sum_ix_ix_i^*=\sum_ix_i^*x_i=1$$

But this follows from the biunitarity of $u$. We have indeed:
\begin{eqnarray*}
\sum_iX_iX_i^*
&=&\sum_{ijk}x_jx_k^*\otimes u_{ji}u_{ki}^*\\
&=&\sum_jx_jx_j^*\otimes1\\
&=&1\otimes1
\end{eqnarray*}

In the other sense the computation is similar, as follows:
\begin{eqnarray*}
\sum_iX_i^*X_i
&=&\sum_{ijk}x_j^*x_k\otimes u_{ji}^*u_{ki}\\
&=&\sum_jx_j^*x_j\otimes1\\
&=&1\otimes1
\end{eqnarray*}

Regarding now $O_N^+\curvearrowright S^{N-1}_{\mathbb R,+}$, here we must check the extra relations $X_i=X_i^*$, and these are clear from $u_{ia}=u_{ia}^*$. Finally, regarding the remaining actions, the verifications are clear as well, because if the coordinates $u_{ia}$ and $x_a$ are subject to commutation relations of type $ab=ba$, or of type $abc=cba$, then so are the variables $X_i=\sum_jx_j\otimes u_{ji}$.

\medskip

We must prove now that all these actions are universal: 

\medskip

\underline{$S^{N-1}_{\mathbb R,+},S^{N-1}_{\mathbb C,+}$.} The universality of $U_N^+\curvearrowright S^{N-1}_{\mathbb C,+}$ is trivial by definition. As for the universality of $O_N^+\curvearrowright S^{N-1}_{\mathbb R,+}$, this comes from the fact that $X_i=X_i^*$, with $X_i=\sum_jx_j\otimes u_{ji}$ as above, gives $u_{ia}=u_{ia}^*$. Thus $G\curvearrowright S^{N-1}_{\mathbb R,+}$ implies $G\subset O_N^+$, as desired.

\medskip

\underline{$S^{N-1}_\mathbb R,S^{N-1}_\mathbb C$.} We use here a trick from Bhowmick-Goswami \cite{bhg}. Assuming first that we have an action $G\curvearrowright S^{N-1}_\mathbb R$, consider the following variables:
$$w_{kl,ij}=u_{ki}u_{lj}$$
$$p_{ij}=x_ix_j$$

In terms of these variables, which can be thought of as being projective coordinates, the corresponding projective coaction map is given by:
$$\Phi(p_{ij})=\sum_{kl}p_{kl}\otimes w_{kl,ij}$$

We have the following formulae:
\begin{eqnarray*}
\Phi(p_{ij})&=&\sum_{k<l}p_{kl}\otimes(w_{kl,ij}+w_{lk,ij})+\sum_kp_{kk}\otimes w_{kk,ij}\\
\Phi(p_{ji})&=&\sum_{k<l}p_{kl}\otimes(w_{kl,ji}+w_{lk,ji})+\sum_kp_{kk}\otimes w_{kk,ji}
\end{eqnarray*}

By comparing these two formulae, and then by using the linear independence of the variables $p_{kl}=x_kx_l$ with $k\leq l$, we conclude that we must have:
$$w_{kl,ij}+w_{lk,ij}=w_{kl,ji}+w_{lk,ji}$$

Let us apply the antipode to this formula. For this purpose, observe that we have:
$$S(w_{kl,ij})
=S(u_{ki}u_{lj})
=S(u_{lj})S(u_{ki})
=u_{jl}u_{ik}
=w_{ji,lk}$$

Thus by applying the antipode we obtain:
$$w_{ji,lk}+w_{ji,kl}=w_{ij,lk}+w_{ij,kl}$$

By relabelling the indices, we obtain from this: 
$$w_{kl,ij}+w_{kl,ji}=w_{lk,ij}+w_{lk,ji}$$

Now by comparing with the original relation, we obtain:
$$w_{lk,ij}=w_{kl,ji}$$

But, recalling that we have $w_{kl,ij}=u_{ki}u_{lj}$, this formula reads:
$$u_{li}u_{kj}=u_{kj}u_{li}$$

We therefore conclude we have $G\subset O_N$, as claimed. The proof of the universality of the action $U_N\curvearrowright S^{N-1}_\mathbb C$ is similar.

\medskip

\underline{$S^{N-1}_{\mathbb R,*},S^{N-1}_{\mathbb C,*}$.} Assume that we have an action $G\curvearrowright S^{N-1}_{\mathbb C,*}$. From $\Phi(x_a)=\sum_ix_i\otimes u_{ia}$ we obtain then that, with $p_{ab}=z_a\bar{z}_b$, we have:
$$\Phi(p_{ab})=\sum_{ij}p_{ij}\otimes u_{ia}u_{jb}^*$$

By multiplying these two formulae, we obtain:
\begin{eqnarray*}
\Phi(p_{ab}p_{cd})&=&\sum_{ijkl}p_{ij}p_{kl}\otimes u_{ia}u_{jb}^*u_{kc}u_{ld}^*\\
\Phi(p_{ad}p_{cb})&=&\sum_{ijkl}p_{il}p_{kj}\otimes u_{ia}u_{ld}^*u_{kc}u_{jb}^*
\end{eqnarray*}

The left terms being equal, and the first terms on the right being equal too, we deduce that, with $[a,b,c]=abc-cba$, we must have the following equality:
$$\sum_{ijkl}p_{ij}p_{kl}\otimes u_{ia}[u_{jb}^*,u_{kc},u_{ld}^*]=0$$

Since the variables $p_{ij}p_{kl}=z_i\bar{z}_jz_k\bar{z}_l$ depend only on $|\{i,k\}|,|\{j,l\}|\in\{1,2\}$, and this dependence produces the only relations between them, we are led to $4$ equations:

\medskip

(1) $u_{ia}[u_{jb}^*,u_{ka},u_{lb}^*]=0$, $\forall a,b$.

\medskip

(2) $u_{ia}[u_{jb}^*,u_{ka},u_{ld}^*]+u_{ia}[u_{jd}^*,u_{ka},u_{lb}^*]=0$, $\forall a$, $\forall b\neq d$.

\medskip

(3) $u_{ia}[u_{jb}^*,u_{kc},u_{lb}^*]+u_{ic}[u_{jb}^*,u_{ka},u_{lb}^*]=0$, $\forall a\neq c$, $\forall b$.

\medskip

(4) $u_{ia}([u_{jb}^*,u_{kc},u_{ld}^*]+[u_{jd}^*,u_{kc},u_{lb}^*])+u_{ic}([u_{jb}^*,u_{ka},u_{ld}^*]+[u_{jd}^*,u_{ka},u_{lb}^*])=0,\forall a\neq c,b\neq d$.

\medskip

From (1,2) we conclude that (2) holds with no restriction on the indices. By multiplying now this formula to the left by $u_{ia}^*$, and then summing over $i$, we obtain:
$$[u_{jb}^*,u_{ka},u_{ld}^*]+[u_{jd}^*,u_{ka},u_{lb}^*]=0$$

By applying now the antipode, then the involution, and finally by suitably relabelling all the indices, we successively obtain from this formula:
\begin{eqnarray*}
&&[u_{dl},u_{ak}^*,u_{bj}]+[u_{bl},u_{ak}^*,u_{dj}]=0\\
&\implies&[u_{dl}^*,u_{ak},u_{bj}^*]+[u_{bl}^*,u_{ak},u_{dj}^*]=0\\
&\implies&[u_{ld}^*,u_{ka},u_{jb}^*]+[u_{jd}^*,u_{ka},u_{lb}^*]=0
\end{eqnarray*}

Now by comparing with the original relation, above, we conclude that we have:
$$[u_{jb}^*,u_{ka},u_{ld}^*]=[u_{jd}^*,u_{ka},u_{lb}^*]=0$$

Thus we have reached to the formulae defining $U_N^*$, and we are done. Finally, in what regards the universality of the action $O_N^*\curvearrowright S^{N-1}_{\mathbb R,*}$, this follows from the universality of the actions $U_N^*\curvearrowright S^{N-1}_{\mathbb C,*}$ and of $O_N^+\curvearrowright S^{N-1}_{\mathbb R,+}$, and from $U_N^*\cap O_N^+=O_N^*$.
\end{proof}

As a conclusion to all this, we have now a simple and reliable definition for the compact quantum groups, in the Lie case, namely $G\subset U_N^+$, covering all the compact Lie groups, $G\subset U_N$, covering as well all the duals $\widehat{\Gamma}$ of the finitely generated groups, $F_N\to\Gamma$, and allowing the construction of several interesting examples, such as $O_N^+,U_N^+$. 

\bigskip

With respect to the noncommutative geometry questions raised in chapter 1 above, we certainly have here some advances. In order to further advance, however, we would need now representation theory results, in the spirit of Weyl \cite{wey}, for our quantum isometry groups. We will develop all this in what follows, in the next few chapters.

\section*{2e. Exercises} 

In connection with quantum groups, a good familiarity with the Hopf algebra $\Delta,\varepsilon,S$ calculus is one of the main needed things. Here is a first exercise:  

\index{Hopf algebra}
\index{dual Hopf algebra}
\index{finite quantum group}

\begin{exercise}
Given a finite dimensional Hopf algebra $A$, prove that its dual $A^*$ is a Hopf algebra too, with structural maps as follows:
$$\Delta^t:A^*\otimes A^*\to A^*$$
$$\varepsilon^t:\mathbb C\to A^*$$
$$m^t:A^*\to A^*\otimes A^*$$
$$u^t:A^*\to\mathbb C$$
$$S^t:A^*\to A^*$$
Also, check that $A$ is commutative if and only if $A^*$ is cocommutative, and also discuss what happens in the cases $A=C(G)$ and $A=C^*(H)$, with $G,H$ being finite groups.
\end{exercise}

This is something that we already discussed, but a bit in a hurry, as a preliminary to the compact quantum groups, and the problem now is that of filling all the details.

\begin{exercise}
Prove that the compact quantum groups $G$ which are finite, in the sense that $\dim C(G)<\infty$, coincide with the discrete quantum groups $\Gamma$ which are finite, in the sense that $\dim C^*(\Gamma)<\infty$, and coincide as well with the finite quantum groups.
\end{exercise}

This might sound obvious, but in practice, all this needs a proof. The first step is that of clearly formulating, in terms of algebras, what exactly you want to prove.

\begin{exercise}
Clarify the discrete quantum group formulation of the various compact quantum group product operations, namely taking subgroups, quotients, dual free products, free complexifications and projective versions.
\end{exercise}

This is something that was discussed in the above, but rather quickly. The problem is that of working out all the details, in dual formulation.

\begin{exercise}
Prove that the free complexification embedding
$$\widetilde{O_N^+}\subset U_N^+$$
is an isomorphism at the level of the associated diagonal tori.
\end{exercise}

As before, this is something that we talked about, but rather quickly, and this because we will prove anyway, later on, that the embedding $\widetilde{O_N^+}\subset U_N^+$ itself is an isomorphism.

\begin{exercise}
Find the complexification operation $O_N\to U_N$, by using linear algebra, or Lie algebras, or whatever other means.
\end{exercise}

As a comment, we will be mainly interested here in free quantum groups, where the complexification operation $O_N^+\to U_N^+$ is something much simpler, namely $U_N^+=\widetilde{O_N^+}$.

\chapter{Representation theory}

\section*{3a. Representations}

In order to reach to some more advanced insight into the structure of the compact quantum groups, we can use representation theory. We follow Woronowicz's paper \cite{wo1}, with a few simplifications coming from our $S^2=id$ formalism. We first have:

\index{representation}
\index{corepresentation}

\begin{definition}
A corepresentation of a Woronowicz algebra $(A,u)$ is a unitary matrix $v\in M_n(\mathcal A)$ over the dense $*$-algebra $\mathcal A=<u_{ij}>$, satisfying:
$$\Delta(v_{ij})=\sum_kv_{ik}\otimes v_{kj}$$
$$\varepsilon(v_{ij})=\delta_{ij}$$
$$S(v_{ij})=v_{ji}^*$$
That is, $v$ must satisfy the same conditions as $u$.
\end{definition}

As basic examples here, we have the trivial corepresentation, having dimension 1, as well as the fundamental corepresentation, and its adjoint:
$$1=(1)\quad,\quad
u=(u_{ij})\quad,\quad 
\bar{u}=(u_{ij}^*)$$

In the classical case, we recover in this way the usual representations of $G$:

\index{smooth representation}

\begin{proposition}
Given a closed subgroup $G\subset U_N$, the corepresentations of the associated Woronowicz algebra $C(G)$ are in one-to-one correspondence, given by
$$\pi(g)=\begin{pmatrix}v_{11}(g)&\ldots&v_{1n}(g)\\
\vdots&&\vdots\\
v_{n1}(g)&\ldots&v_{nn}(g)
\end{pmatrix}$$
with the finite dimensional unitary smooth representations of $G$.
\end{proposition}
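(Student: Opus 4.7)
The plan is to construct an explicit bijection in both directions and to translate the three Woronowicz conditions on $v$ into the conditions defining a unitary group representation, using the duality between $\Delta,\varepsilon,S$ and $m,u,i$ established in Proposition 2.2.

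In one direction, given a corepresentation $v=(v_{ij})\in M_n(\mathcal{A})$, I would define $\pi:G\to M_n(\mathbb{C})$ by $\pi(g)_{ij}=v_{ij}(g)$, which is continuous since each $v_{ij}$ is. Evaluating the unitarity relations $vv^*=v^*v=1_n$ in $M_n(C(G))$ at a point $g$ shows $\pi(g)\in U_n$. Next, using $\Delta=m^t$, the axiom $\Delta(v_{ij})=\sum_k v_{ik}\otimes v_{kj}$ evaluated at $(g,h)\in G\times G$ yields $v_{ij}(gh)=\sum_k v_{ik}(g)v_{kj}(h)$, which is precisely $\pi(gh)=\pi(g)\pi(h)$. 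Similarly, $\varepsilon=u^t$ applied to $\varepsilon(v_{ij})=\delta_{ij}$ gives $\pi(1)=I_n$, and $S=i^t$ applied to $S(v_{ij})=v_{ji}^*$ gives $\pi(g^{-1})=\pi(g)^*$, consistent with unitarity. Thus $\pi$ is a continuous unitary representation. Conversely, given such a $\pi$, I set $v_{ij}(g)=\pi(g)_{ij}\in C(G)$ and reverse each calculation to recover unitarity of $v$ together with the three axioms of Definition 3.1.

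The main obstacle is the smoothness requirement: I must check that when $\pi$ is smooth, the matrix coefficients $v_{ij}$ actually lie in the dense $*$-subalgebra $\mathcal{A}=\langle u_{ij}\rangle$, and not merely in $C(G)$. The key input here is that for a closed subgroup $G\subset U_N$, the fundamental representation has matrix coefficients precisely $u_{ij}$, and its conjugate has coefficients $u_{ij}^*$; by Peter--Weyl every finite-dimensional unitary representation is a subrepresentation of an iterated tensor product of the fundamental and its conjugate, so its matrix coefficients are polynomials in the $u_{ij}$ and $u_{ij}^*$, and hence live in $\mathcal{A}$. In the other direction, if $v_{ij}\in\mathcal{A}$, then the coefficients of the resulting $\pi$ are polynomial in the entries of $g$ and $\bar{g}$, making $\pi$ smooth. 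This identification of ``smooth matrix coefficients'' with $\mathcal{A}$ is what makes the correspondence a bijection onto the finite-dimensional smooth unitary representations, as claimed.
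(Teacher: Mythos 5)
Your proof is correct and follows essentially the same route as the paper: there, too, the corepresentation axioms are translated through the identifications $\Delta=m^t$, $\varepsilon=u^t$, $S=i^t$ from Proposition 2.2, after observing that $G$ is a Lie group so that corepresentations are matrices over the smooth functions. Your extra appeal to classical Peter--Weyl, to check that the coefficients of a smooth unitary representation actually lie in $\mathcal A=\langle u_{ij}\rangle$, is a correct filling-in of a detail the paper leaves implicit.
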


\begin{proof}
With $A=C(G)$, consider the unitary matrices $v\in M_n(A)$ satisfying the equations in Definition 3.1. By using the computations from chapter 2, performed when proving that any closed subgroup $G\subset U_N$ is indeed a compact quantum group, we conclude that we have a correspondence $v\leftrightarrow\pi$ as in the statement, between such matrices, and the finite dimensional unitary representations of $G$.

\medskip

Regarding now the smoothness part, this is something more subtle, which requires some knowledge of Lie theory. The point is that any closed subgroup $G\subset U_N$ is a Lie group, and since the coefficient functions $u_{ij}:G\to\mathbb C$ are smooth, we have:
$$\mathcal A\subset C^\infty(G)$$

Thus, when assuming $v\in M_n(\mathcal A)$, the corresponding representation $\pi:G\to U_n$ is smooth, and the converse of this fact is known to hold as well.
\end{proof}

In general now, we have the following operations on the corepresentations:

\index{sum of corepresentations}
\index{tensor product of corepresentation}
\index{conjugate corepresentation}

\begin{proposition}
The corepresentations are subject to the following operations:
\begin{enumerate}
\item Making sums, $v+w=diag(v,w)$.

\item Making tensor products, $(v\otimes w)_{ia,jb}=v_{ij}w_{ab}$.

\item Taking conjugates, $(\bar{v})_{ij}=v_{ij}^*$.
\end{enumerate}
\end{proposition}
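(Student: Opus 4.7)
The plan is to verify, for each of the three constructions, the four conditions that define a corepresentation: unitarity of the matrix, and the three compatibility formulas with $\Delta,\varepsilon,S$. Since all the claimed identities involve only matrix entries of $v$ and $w$, which live in $\mathcal{A}$, I will work entirely at the algebraic level, exploiting that $\Delta$ and $\varepsilon$ are $*$-homomorphisms and that $S$ is an anti-multiplicative $*$-map.

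For the direct sum, everything is essentially automatic by block computation. The matrix $\mathrm{diag}(v,w)$ is unitary because $v$ and $w$ are, and applied entrywise, each of $\Delta,\varepsilon,S$ respects the block-diagonal shape, so the three defining formulas reduce on each block to those already known for $v$ and for $w$. This step is the easiest and I would dispose of it in one or two lines.

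For the tensor product $(v\otimes w)_{ia,jb}=v_{ij}w_{ab}$, the three algebraic compatibilities follow from multiplicativity/anti-multiplicativity. Concretely, $\Delta(v_{ij}w_{ab})=\Delta(v_{ij})\Delta(w_{ab})=\sum_{k,c}v_{ik}w_{ac}\otimes v_{kj}w_{cb}$, which is exactly $\sum_{kc}(v\otimes w)_{ia,kc}\otimes(v\otimes w)_{kc,jb}$; similarly $\varepsilon(v_{ij}w_{ab})=\delta_{ij}\delta_{ab}$, and $S(v_{ij}w_{ab})=S(w_{ab})S(v_{ij})=w_{ba}^*v_{ji}^*=(v_{ji}w_{ba})^*$. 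For unitarity, the entrywise computation gives $((v\otimes w)(v\otimes w)^*)_{ia,jb}=\sum_{k,c}v_{ik}w_{ac}w_{bc}^*v_{jk}^*$; the key observation is that $\sum_c w_{ac}w_{bc}^*=\delta_{ab}\cdot 1$ is a scalar in $\mathcal{A}$, so it commutes past the $v$-entries and collapses the sum to $\delta_{ab}\sum_k v_{ik}v_{jk}^*=\delta_{ab}\delta_{ij}$. The dual computation for $(v\otimes w)^*(v\otimes w)$ is symmetric.

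For the conjugate $\bar v=(v_{ij}^*)$, the three compatibilities follow by applying $*$ throughout: since $\Delta,\varepsilon$ are $*$-preserving and $S$ is as well, I get $\Delta(v_{ij}^*)=\sum_k v_{ik}^*\otimes v_{kj}^*$, $\varepsilon(v_{ij}^*)=\overline{\delta_{ij}}=\delta_{ij}$, and $S(v_{ij}^*)=S(v_{ij})^*=v_{ji}=(\bar v)_{ji}^*$. The main obstacle, and the only nontrivial step in the whole proposition, is the unitarity of $\bar v$: one needs $\bar v\,v^t=v^t\,\bar v=1$, which does not follow formally from $vv^*=v^*v=1$. Here I would repeat the antipode trick from Proposition~2.7: apply $S$ to both sides of $\sum_k v_{ik}v_{jk}^*=\delta_{ij}$, using anti-multiplicativity and $S(v_{ij})=v_{ji}^*$, to get $\sum_k v_{kj}v_{ki}^*=\delta_{ij}$, and dually apply $S$ to $v^*v=1$ to get the matching identity; these are exactly the relations $v^t\bar v=\bar v\, v^t=1$, which establish that $\bar v$ is unitary with inverse $v^t$. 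This completes the verification for all three constructions.
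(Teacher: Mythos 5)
Your proposal is correct and takes essentially the same route as the paper: block computation for the direct sum, entrywise verification of unitarity and of the $\Delta,\varepsilon,S$ formulas for the tensor product using that $\sum_c w_{ac}w_{bc}^*=\delta_{ab}1$ is a scalar, and the antipode trick of Proposition~2.7 for the unitarity of $\bar v$. One small remark: in the tensor product unitarity step no commuting is actually needed, since the $w$-factors sit adjacently between the $v$-factors, so summing over $c$ simply replaces them by the scalar $\delta_{ab}$.
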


\begin{proof}
Observe that the result holds in the commutative case, where we obtain the usual operations on the representations of the corresponding group. In general now:

\medskip

(1) Everything here is clear, as already mentioned in chapter 2 above, when using such corepresentations in order to construct quantum group quotients.

\medskip

(2) First of all, the matrix $v\otimes w$ is unitary. Indeed, we have:
\begin{eqnarray*}
\sum_{jb}(v\otimes w)_{ia,jb}(v\otimes w)_{kc,jb}^*
&=&\sum_{jb}v_{ij}w_{ab}w_{cb}^*v_{kj}^*\\
&=&\delta_{ac}\sum_jv_{ij}v_{kj}^*\\
&=&\delta_{ik}\delta_{ac}
\end{eqnarray*}

In the other sense, the computation is similar, as follows:
\begin{eqnarray*}
\sum_{jb}(v\otimes w)^*_{jb,ia}(v\otimes w)_{jb,kc}
&=&\sum_{jb}w_{ba}^*v_{ji}^*v_{jk}w_{bc}\\
&=&\delta_{ik}\sum_bw_{ba}^*w_{bc}\\
&=&\delta_{ik}\delta_{ac}
\end{eqnarray*}

The comultiplicativity condition follows from the following computation: 
\begin{eqnarray*}
\Delta((v\otimes w)_{ia,jb})
&=&\sum_{kc}v_{ik}w_{ac}\otimes v_{kj}w_{cb}\\
&=&\sum_{kc}(v\otimes w)_{ia,kc}\otimes(v\otimes w)_{kc,jb}
\end{eqnarray*}

The proof of the counitality condition is similar, as follows:
$$\varepsilon((v\otimes w)_{ia,jb})
=\delta_{ij}\delta_{ab}
=\delta_{ia,jb}$$

As for the condition involving the antipode, this can be checked as follows:
$$S((v\otimes w)_{ia,jb})
=w_{ba}^*v_{ji}^*
=(v\otimes w)_{jb,ia}^*$$

(3) In order to check that $\bar{v}$ is unitary, we can use the antipode, exactly as we did in chapter 2 above, for $\bar{u}$. As for the comultiplicativity axioms, these are all clear.
\end{proof}

We have as well the following supplementary operation:

\index{spinned corepresentation}

\begin{proposition}
Given a corepresentation $v\in M_n(A)$, its spinned version
$$w= UvU^*$$
is a corepresentation as well, for any unitary matrix $U\in U_n$.
\end{proposition}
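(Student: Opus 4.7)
The plan is to verify directly the four defining properties of a corepresentation (unitarity together with the three compatibility relations with $\Delta$, $\varepsilon$, $S$), exploiting the key fact that the entries $U_{ij}$ are complex scalars, hence commute with everything in $\mathcal{A}$ and are fixed by the comultiplication, counit, and antipode in the appropriate sense.

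First, I would establish unitarity of $w=UvU^*$ as a one-line calculation: $ww^* = Uv(U^*U)v^*U^* = Uvv^*U^* = UU^* = 1_n$, and symmetrically $w^*w = 1_n$, using $U \in U_n$ and $v \in M_n(\mathcal{A})$ unitary. Next, writing the entries explicitly as
$$w_{ij} = \sum_{a,b} U_{ia}\, v_{ab}\, \bar{U}_{jb},$$
I would apply $\Delta$ to both sides. Since $U_{ia}, \bar{U}_{jb}$ are scalars, they pass through $\Delta$ freely, giving
$$\Delta(w_{ij}) = \sum_{a,b,c} U_{ia}\, \bar{U}_{jb}\, v_{ac} \otimes v_{cb}.$$
On the other hand, expanding $\sum_k w_{ik} \otimes w_{kj}$ and then summing over $k$ first, the identity $\sum_k \bar{U}_{kb} U_{kc} = (U^*U)_{bc} = \delta_{bc}$ collapses the middle indices and reproduces exactly the right-hand side above.

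For the counit, a similar expansion gives $\varepsilon(w_{ij}) = \sum_{a,b} U_{ia} \bar{U}_{jb} \delta_{ab} = (UU^*)_{ij} = \delta_{ij}$. For the antipode, using that $S$ is a $*$-anti-homomorphism into $A^{opp}$ but acts as the identity on scalars, we compute
$$S(w_{ij}) = \sum_{a,b} U_{ia}\, \bar{U}_{jb}\, v_{ba}^*,$$
while on the other hand
$$w_{ji}^* = \Bigl(\sum_{a,b} U_{ja}\, v_{ab}\, \bar{U}_{ib}\Bigr)^* = \sum_{a,b} U_{ib}\, \bar{U}_{ja}\, v_{ab}^*,$$
and swapping the dummy indices $a \leftrightarrow b$ in the second expression shows these agree.

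I do not expect any substantive obstacle: the entire proof is a bookkeeping exercise in which the only nontrivial inputs are the two unitarity relations $U^*U = UU^* = 1_n$ (used to verify the comultiplicativity and counit axioms respectively) and the fact that scalar entries commute past $\Delta$, $\varepsilon$, and $S$. The mildest subtlety is remembering that $S$ lands in $A^{opp}$, but since $U_{ia}$ and $\bar{U}_{jb}$ are central scalars this reversal has no effect on the verification.
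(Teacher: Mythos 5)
Your proof is correct, and it takes a genuinely different route from the paper's. The paper explicitly acknowledges that the comultiplicativity properties "can be checked by doing some computations," but then sidesteps those computations by passing to the tensor-leg notation $v \in M_n(\mathbb{C}) \otimes A$, writing the axioms as $(id\otimes\Delta)v=v_{12}v_{13}$, $(id\otimes\varepsilon)v=1$, $(id\otimes S)v=v^*$, and then deducing all three conditions for $w = U_1 v U_1^*$ in one line each, the key step being the insertion of $U_1^*U_1 = 1$ between $v_{12}$ and $v_{13}$. Your version carries out precisely the entrywise bookkeeping that the paper elides: expanding $w_{ij} = \sum_{a,b} U_{ia}\, v_{ab}\, \bar{U}_{jb}$, contracting the middle indices via $\sum_k \bar{U}_{kc} U_{kb} = \delta_{cb}$ for the comultiplication, and so on. Both arguments are sound and rest on the same two inputs (unitarity of $U$, and scalars being transparent to $\Delta$, $\varepsilon$, $S$); the paper's version is terser and coordinate-free, while yours is more explicit and arguably easier to verify line by line. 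Amusingly, the paper immediately follows this proposition with a remark defending exactly your style of proof ("Bases and indices are a blessing..."), so you are in good company.
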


\begin{proof}
The matrix $w$ is unitary, and its comultiplicativity properties can be checked by doing some computations. Here is however another proof of this fact, using a useful trick. In the context of Definition 3.1, if we write $v\in M_n(\mathbb C)\otimes A$, the axioms read:
$$(id\otimes\Delta)v=v_{12}v_{13}$$
$$(id\otimes\varepsilon)v=1$$
$$(id\otimes S)v=v^*$$

Here we use standard tensor calculus conventions. Now when spinning by a unitary the matrix that we obtain, with these conventions, is $w=U_1vU_1^*$, and we have:
\begin{eqnarray*}
(id\otimes\Delta)w
&=&U_1v_{12}v_{13}U_1^*\\
&=&U_1v_{12}U_1^*\cdot U_1v_{13}U_1^*\\
&=&w_{12}w_{13}
\end{eqnarray*}

The proof of the counitality condition is similar, as follows:
$$(id\otimes\varepsilon)w
=U\cdot 1\cdot U
=1$$

Finally, the last condition, involving the antipode, can be checked as follows:
$$(id\otimes S)w
=U_1v^*U_1^*
=w^*$$

Thus, with usual notations, $w=UvU^*$ is a corepresentation, as claimed.
\end{proof}

As a philosophical comment here, the above proof might suggest that the more abstract our notations and formalism, and our methods in order to deal with mathematical questions, the easier our problems will become. But this is wrong. Bases and indices are a blessing: they can be understood by undergraduate students, computers, fellow scientists, engineers, and of course also by yourself, when you're tired or so. 

\bigskip

In addition, in the quantum group context, we will see later on, starting from chapter 4 below, that bases and indices can be turned into something very beautiful and powerful, allowing us to do some serious theory, well beyond the level of abstractions.

\bigskip

Back to work now, in the group dual case, we have the following result:

\index{group algebra}
\index{group dual}

\begin{proposition}
Assume $A=C^*(\Gamma)$, with $\Gamma=<g_1,\ldots,g_N>$ being a discrete group.
\begin{enumerate}
\item Any group element $h\in\Gamma$ is a $1$-dimensional corepresentation of $A$, and the operations on corepresentations are the usual ones on group elements.

\item Any diagonal matrix of type $v=diag(h_1,\ldots,h_n)$, with $n\in\mathbb N$ arbitrary, and with $h_1,\ldots,h_n\in\Gamma$, is a corepresentation of $A$.

\item More generally, any matrix of type $w=Udiag(h_1,\ldots,h_n)U^*$ with $h_1,\ldots,h_n\in\Gamma$ and with $U\in U_n$, is a corepresentation of $A$.
\end{enumerate}
\end{proposition}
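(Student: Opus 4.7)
The plan is to verify everything directly from the axioms recalled in Definition 3.1, using that for $A=C^*(\Gamma)$ we already know from Proposition 2.4 how $\Delta,\varepsilon,S$ act on group elements.

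For (1), I would take $h\in\Gamma$ and consider the $1\times1$ matrix $v=(h)$. Since $h$ is unitary in $C^*(\Gamma)$ (as $h^*=h^{-1}$), the matrix $v$ is unitary. Proposition 2.4 then gives the three axioms of Definition 3.1 immediately:
\begin{eqnarray*}
\Delta(h)&=&h\otimes h,\\
\varepsilon(h)&=&1,\\
S(h)&=&h^{-1}=h^*.
\end{eqnarray*}
For the second claim in (1), I would match the corepresentation operations of Proposition 3.3 against the group operations on $\Gamma$: the tensor product of $1$-dimensional corepresentations $h$ and $k$ is the scalar product $hk$, which is the group product; the conjugate of $h$ is $h^*=h^{-1}$, which is the group inverse; finally the sum $h+k=\mathrm{diag}(h,k)$ is not a group element but a $2$-dimensional corepresentation, which is the content of (2).

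For (2), I would simply invoke the sum operation from Proposition 3.3(1): each $h_i$ is a corepresentation by (1), and $v=\mathrm{diag}(h_1,\ldots,h_n)=h_1+\cdots+h_n$ is therefore a corepresentation as well. Alternatively, one can check the three axioms directly: the matrix is unitary since each $h_i$ is, and for instance $\Delta(v_{ii})=h_i\otimes h_i=\sum_k v_{ik}\otimes v_{ki}$ because the off-diagonal entries vanish, and similarly for $\varepsilon$ and $S$.

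For (3), I would apply the spinning operation from Proposition 3.4: starting from the corepresentation $v=\mathrm{diag}(h_1,\ldots,h_n)$ supplied by (2), the matrix $w=UvU^*$ is a corepresentation for any $U\in U_n$. There is no real obstacle here; the whole proposition is essentially a reading of Definition 3.1 through Proposition 2.4, packaged via the operations built in Propositions 3.3 and 3.4. The only mild point to be careful about is that in (1) the ``operations'' statement mixes objects of different dimensions (the sum lands outside $\Gamma$), so I would phrase it as: the tensor product and conjugate restrict on $1$-dimensional corepresentations to the group multiplication and inversion, while the direct sum produces the higher-dimensional examples of (2) and (3).
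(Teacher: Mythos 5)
Your argument matches the paper's proof step for step: (1) is read off from the formulae for $\Delta,\varepsilon,S$ on group elements established in Proposition 2.4, together with $(g)\otimes(h)=(gh)$ and $\overline{(g)}=(g^{-1})$; (2) is the direct sum operation of Proposition 3.3; (3) is the spinning operation of Proposition 3.4. Your closing caveat that the direct sum escapes $\Gamma$ and must land in (2) rather than in the ``operations'' clause of (1) is a fair clarification, consistent with how the paper phrases it.
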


\begin{proof}
These assertions are all elementary, as follows:

\medskip

(1) The first assertion is clear from definitions and from the comultiplication, counit and antipode formulae for the discrete group algebras, namely:
$$\Delta(h)=h\otimes h$$
$$\varepsilon(h)=1$$
$$S(h)=h^{-1}$$

The assertion on the operations is clear too, because we have:
$$(g)\otimes(h)=(gh)$$ 
$$\overline{(g)}=(g^{-1})$$

(2) This follows from (1) by performing sums, as in Proposition 3.3 above.

\medskip

(3) This follows from (2) and from the fact that we can conjugate any corepresentation by a unitary matrix, as explained in Proposition 3.4 above.
\end{proof}

Observe that the class of corepresentations in (3) is stable under all the operations from Propositions 3.3, and under the spinning operation from Proposition 3.4 too. When $\Gamma$ is abelian we can apply Proposition 3.2 with $G=\widehat{\Gamma}$, and after performing a number of identifications, we conclude that these are all the corepresentations of $C^*(\Gamma)$. 

\bigskip

We will see later that this latter fact holds in fact for any discrete group $\Gamma$. To be more precise, this is something non-trivial, which will follow from Peter-Weyl theory.

\bigskip

Summarizing, the representations of a compact quantum group can be defined as in the classical case, but by using coefficients, and in the group dual case we obtain something which is a priori quite simple too, namely formal direct sums of group elements.

\section*{3b. Peter-Weyl theory}

In the remainder of this chapter we develop the Peter-Weyl theory for the representations of the compact quantum groups, following the paper of Woronowicz \cite{wo1}. There is quite some work to be done here, and we will do it in two parts, first with some basic algebraic results, which are quite elementary, and then with more advanced results, mixing algebra and analysis. Let us start with the following definition:

\index{intertwiner}
\index{Hom space}
\index{End space}
\index{Fix space}

\begin{definition}
Given two corepresentations $v\in M_n(A),w\in M_m(A)$, we set 
$$Hom(v,w)=\left\{T\in M_{m\times n}(\mathbb C)\Big|Tv=wT\right\}$$
and we use the following conventions:
\begin{enumerate}
\item We use the notations $Fix(v)=Hom(1,v)$, and $End(v)=Hom(v,v)$.

\item We write $v\sim w$ when $Hom(v,w)$ contains an invertible element.

\item We say that $v$ is irreducible, and write $v\in Irr(G)$, when $End(v)=\mathbb C1$.
\end{enumerate}
\end{definition}

In the classical case $A=C(G)$ we obtain the usual notions concerning the representations. Observe also that in the group dual case we have:
$$g\sim h\iff g=h$$

Finally, observe that $v\sim w$ means that $v,w$ are conjugated by an invertible matrix. Here are a few basic results, regarding the above Hom spaces:

\index{tensor category}

\begin{proposition}
We have the following results:
\begin{enumerate}
\item $T\in Hom(u,v),S\in Hom(v,w)\implies ST\in Hom(u,w)$.

\item $S\in Hom(p,q),T\in Hom(v,w)\implies S\otimes T\in Hom(p\otimes v,q\otimes w)$.

\item $T\in Hom(v,w)\implies T^*\in Hom(w,v)$.
\end{enumerate}
In other words, the Hom spaces form a tensor $*$-category.
\end{proposition}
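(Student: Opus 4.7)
The three parts are essentially formal verifications; I will outline the ingredients for each.

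For (1), the argument is a one-line computation: starting from $Tu = vT$ and $Sv = wS$, associativity of matrix multiplication gives $(ST)u = S(Tv) = (Sv)T = \ldots$, wait, better: $(ST)u = S(Tu) = S(vT) = (Sv)T = wST$, which is exactly the intertwining condition $ST \in Hom(u,w)$. No subtlety here.

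For (2), the key observation is that $S$ and $T$ have scalar entries, while $p,q,v,w$ have entries in $\mathcal{A}$; scalars commute with everything, so we can rearrange freely. Concretely, I would compute both sides of the desired identity $(S\otimes T)(p\otimes v) = (q\otimes w)(S\otimes T)$ in double-index notation. Expanding
\[
\bigl((S\otimes T)(p\otimes v)\bigr)_{ia,kc} = \sum_{j,b} S_{ij} T_{ab}\, p_{jk} v_{bc} = (Sp)_{ik}(Tv)_{ac},
\]
and similarly $\bigl((q\otimes w)(S\otimes T)\bigr)_{ia,kc} = (qS)_{ik}(wT)_{ac}$, the equality of the two sides follows immediately from the hypotheses $Sp = qS$ and $Tv = wT$. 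The only thing to watch is that the factorization into a product of a scalar-valued factor and an $\mathcal{A}$-valued factor is legal, and this is exactly what scalar-commutation gives us.

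The most interesting part is (3), which is where unitarity of $v$ and $w$ enters essentially. Starting from $Tv = wT$ and taking entrywise conjugate transpose (which reverses the order of multiplication), I get $v^* T^* = T^* w^*$. This is \emph{not} yet what I want; I want $T^* w = v T^*$. To bridge the two, I would multiply on the left by $v$ and on the right by $w$, and use the matrix identities $vv^* = 1_n$ and $w^*w = 1_m$ coming from the fact that $v$ and $w$ are corepresentations, hence unitary matrices. This gives
\[
T^* w \;=\; vv^* T^* w \;=\; v(T^* w^*) w \;=\; v T^*,
\]
which is the desired intertwining. So the only genuine ingredient beyond bookkeeping is unitarity, and this is precisely where the definition of corepresentation (rather than just ``matrix coefficient'') is used. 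The final sentence of the statement, that these operations organize the Hom spaces into a tensor $*$-category, is then just the observation that (1), (2), (3) together with the compatibility $\mathrm{Hom}(1,1) = \mathbb{C}$ are the defining axioms of such a category, and no further verification is needed.
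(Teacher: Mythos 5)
Your proof is correct and follows essentially the same route as the paper's: the same one-line associativity computation for (1), the same factorization of $(S\otimes T)(p\otimes v)$ and $(q\otimes w)(S\otimes T)$ into products $(Sp)(Tv)$ and $(qS)(wT)$ for (2) (you use explicit double indices where the paper uses leg notation, which is only a cosmetic difference), and the identical adjoint-plus-unitarity manipulation $v^*T^*=T^*w^*\implies T^*w=vT^*$ for (3). Nothing to add.
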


\begin{proof}
These assertions are all elementary, as follows:

\medskip

(1) By using our assumptions $Tu=vT$ and $Sv=Ws$ we obtain, as desired:
$$STu
=SvT
=wST$$

(2) Assume indeed that we have $Sp=qS$ and $Tv=wT$. With tensor product notations, as in the proof of Proposition 3.4 above, we have:
\begin{eqnarray*}
(S\otimes T)(p\otimes v)
&=&S_1T_2p_{13}v_{23}\\
&=&(Sp)_{13}(Tv)_{23}
\end{eqnarray*}

On the other hand, we have as well the following computation:
\begin{eqnarray*}
(q\otimes w)(S\otimes T)
&=&q_{13}w_{23}S_1T_2\\
&=&(qS)_{13}(wT)_{23}
\end{eqnarray*}

The quantities on the right being equal, this gives the result.

\medskip

(3) By conjugating, and then using the unitarity of $v,w$, we obtain, as desired:
\begin{eqnarray*}
Tv=wT
&\implies&v^*T^*=T^*w^*\\
&\implies&vv^*T^*w=vT^*w^*w\\
&\implies&T^*w=vT^*
\end{eqnarray*}

Finally, the last assertion follows from definitions, and from the obvious fact that, in addition to (1,2,3) above, the Hom spaces are linear spaces, and contain the units. In short, this is just a theoretical remark, that will be used only later on.
\end{proof}

As a main consequence, the spaces $End(v)\subset M_n(\mathbb C)$ are subalgebras which are stable under $*$, and so are $C^*$-algebras. In order to exploit this fact, we will need a basic result, complementing the operator algebra theory presented in chapter 1 above, namely:

\index{finite dimensional algebra}

\begin{theorem}
Let $B\subset M_n(\mathbb C)$ be a $C^*$-algebra.
\begin{enumerate}
\item We can write $1=p_1+\ldots+p_k$, with $p_i\in B$ central minimal projections.

\item Each of the linear spaces $B_i=p_iBp_i$ is a non-unital $*$-subalgebra of $B$.

\item We have a non-unital $*$-algebra sum decomposition $B=B_1\oplus\ldots\oplus B_k$.

\item We have unital $*$-algebra isomorphisms $B_i\simeq M_{r_i}(\mathbb C)$, where $r_i=rank(p_i)$.

\item Thus, we have a $C^*$-algebra isomorphism $B\simeq M_{r_1}(\mathbb C)\oplus\ldots\oplus M_{r_k}(\mathbb C)$.
\end{enumerate}
In addition, the final conclusion holds for any finite dimensional $C^*$-algebra.
\end{theorem}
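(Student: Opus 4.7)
The plan is to reduce everything to the classical fact that a finite-dimensional $C^*$-algebra with trivial center is a full matrix algebra. I would start with $B\subset M_n(\mathbb{C})$ as in the main statement, and handle the general finite-dimensional case at the end by invoking the GNS embedding of Theorem~1.26(3). The first move is to inspect the center $Z(B)$, which is a commutative finite-dimensional $C^*$-subalgebra, hence by Gelfand (Theorem~1.10) isomorphic to $\mathbb{C}^k$; its identity, which is also the identity of $B$, decomposes canonically as a sum $1=p_1+\cdots+p_k$ of minimal orthogonal projections of $Z(B)$, proving (1). For (2) and (3), centrality of the $p_i$ makes $B_i=p_iBp_i=p_iB$ automatically a $*$-subalgebra with internal unit $p_i$, and every $a\in B$ decomposes as $a=\sum_{i,j}p_iap_j=\sum_i p_iap_i$ since the off-diagonal terms vanish, yielding the non-unital direct sum $B=B_1\oplus\cdots\oplus B_k$.

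The main work lies in (4): each $B_i$ is a full matrix algebra. Since $p_i$ is minimal in $Z(B)$, we have $Z(B_i)=\mathbb{C}p_i$, so it suffices to prove the following lemma: \emph{a finite-dimensional $C^*$-algebra $A$ with trivial center is isomorphic to some $M_r(\mathbb{C})$.} My approach is to construct a system of matrix units. Pick a minimal nonzero projection $e\in A$, which exists by finite-dimensionality; then $eAe$ is a unital $C^*$-algebra in which $e$ is the smallest nonzero projection, so by Theorem~1.12 we get $eAe=\mathbb{C}e$. Next, select a maximal orthogonal family $e_1=e,e_2,\ldots,e_r$ of projections in $A$ each Murray--von Neumann equivalent to $e$, via partial isometries $v_j\in A$ with $v_j^*v_j=e$ and $v_jv_j^*=e_j$. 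Setting $E_{jk}=v_jv_k^*$ gives elements obeying the matrix unit relations $E_{jk}E_{lm}=\delta_{kl}E_{jm}$ and $E_{jk}^*=E_{kj}$, and once $\sum_j e_j=1$ is known, any $a\in A$ satisfies $v_j^*e_jae_kv_k\in eAe=\mathbb{C}e$, so $a=\sum_{j,k}\lambda_{jk}E_{jk}$, producing the desired $*$-isomorphism $A\simeq M_r(\mathbb{C})$.

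The expected main obstacle is exactly the step $\sum_j e_j=1$, equivalently the claim that every nonzero projection in $A$ is equivalent to $e$. This is where triviality of the center is essential: if $e'$ were a minimal projection inequivalent to $e$, the sum of all projections equivalent to $e$ would commute with every element of $A$ (it is left fixed by conjugation by any unit, because such conjugation preserves the equivalence class), hence would be a nontrivial central projection, contradicting $Z(A)=\mathbb{C}$. Once (4) is established, (5) follows by assembling (1)--(4). Finally, the last sentence is obtained by applying the case of $B\subset M_n(\mathbb{C})$ to the image of an arbitrary finite-dimensional $C^*$-algebra under the GNS embedding from Theorem~1.26(3), whose codomain is a finite-dimensional $B(H)$, and therefore a matrix algebra.
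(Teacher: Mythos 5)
Your argument is correct and, at the level of the paper, follows the same skeleton: decompose the identity into minimal central projections, split $B$ into the corresponding blocks, then show each block is a full matrix algebra. The difference is that the paper's proof of Theorem~3.8 is essentially a list of pointers ("this is more of a definition", "this is elementary", and so on), while you actually supply the mathematics: identifying $Z(B)\simeq\mathbb{C}^k$ via Gelfand, producing the matrix units $E_{jk}=v_jv_k^*$, and closing the argument by showing the $e_j$ sum to $1$. That is the standard and complete way to make this result rigorous, and it is a good thing to have written out.

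Two small imprecisions are worth tightening. First, to get $eAe=\mathbb{C}e$ from minimality of $e$, the cited Theorem~1.12 (characterizations of positivity) is not quite the right tool; what one actually uses is that $eAe$ is a finite-dimensional unital $C^*$-algebra whose only projections are $0$ and $e$, and then functional calculus on self-adjoint elements (their spectral projections lie in $eAe$) forces every self-adjoint element of $eAe$ to be a real multiple of $e$, hence $eAe=\mathbb{C}e$ by taking real and imaginary parts. Second, the phrase "the sum of all projections equivalent to $e$" is not literally meaningful, since that family can be infinite; what you mean is the supremum in the projection lattice, i.e.\ the \emph{central support} $z(e)$ of $e$. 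Conjugation by any unitary permutes the family of projections equivalent to $e$ and therefore fixes $z(e)$, so $z(e)\in Z(A)=\mathbb{C}1$, giving $z(e)=1$. From $z(e)=1$ one shows directly that $q:=1-\sum_j e_j$, if nonzero, contains a subprojection equivalent to $e$ (take $0\neq x=qxe$, polar decompose, and use minimality of $e$), contradicting maximality of the family $e_1,\ldots,e_r$. With these two adjustments your argument is airtight, and it is more informative than what the paper records.
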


\begin{proof}
This is something well-known, with the proof of the various assertions in the statement being something elementary, and routine:

\medskip

(1) This is more of a definition.

\medskip

(2) This is elementary, coming from $p_i^2=p_i=p_i^*$.

\medskip

(3) The verification of the direct sum conditions is indeed elementary.

\medskip

(4) This follows from the fact that each $p_i$ was assumed to be central and minimal.

\medskip

(5) This follows by putting everything together.

\medskip

As for the last assertion, this follows from (5) by using the GNS representation theorem, which provides us with an embedding $B\subset M_n(\mathbb C)$, for some $n\in\mathbb N$.
\end{proof}

Following Woronowicz's paper \cite{wo1}, we can now formulate a first Peter-Weyl theorem, and to be more precise a first such theorem from a 4-series, as follows:

\index{Peter-Weyl theorem}

\begin{theorem}[PW1]
Let $v\in M_n(A)$ be a corepresentation, consider the $C^*$-algebra $B=End(v)$, and write its unit as $1=p_1+\ldots+p_k$, as above. We have then
$$v=v_1+\ldots+v_k$$
with each $v_i$ being an irreducible corepresentation, obtained by restricting $v$ to $Im(p_i)$.
\end{theorem}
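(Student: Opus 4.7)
The plan is to exploit the fundamental correspondence between projections in $B=End(v)$ and $v$-invariant orthogonal decompositions of $\mathbb{C}^n$. For any self-adjoint projection $p\in B$, the relation $pv=vp$ (and likewise for $1-p$) means that choosing an orthonormal basis of $\mathbb{C}^n$ adapted to the splitting $Im(p)\oplus Im(1-p)$ produces a unitary $U\in U_n$ such that $UvU^*$ is block-diagonal, with block sizes $\mathrm{rank}(p)$ and $n-\mathrm{rank}(p)$.

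First I would verify that each such block is itself a corepresentation. By Proposition 3.4, the matrix $w=UvU^*$ is still a corepresentation, so it satisfies $\Delta(w_{ij})=\sum_k w_{ik}\otimes w_{kj}$, $\varepsilon(w_{ij})=\delta_{ij}$, and $S(w_{ij})=w_{ji}^*$. When $i,j$ lie in a fixed block, the block-diagonal shape of $w$ forces $w_{ik}=0$ unless $k$ lies in the same block, so the sum truncates and each axiom restricts cleanly to that block. Iterating with the decomposition $1=p_1+\ldots+p_k$ from Theorem 3.8, whose summands are mutually orthogonal projections in $B$, yields the desired splitting $v=v_1+\ldots+v_k$, with $v_i$ a corepresentation living on $Im(p_i)$.

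For the irreducibility claim, I would identify $End(v_i)\simeq p_iBp_i$: any intertwiner of $v_i$ extends by zero to an element of $p_iBp_i$ commuting with $v$, and restriction to $Im(p_i)$ provides the inverse map. By Theorem 3.8 we have $B\simeq\bigoplus_j M_{r_j}(\mathbb{C})$, so after further refining each central minimal summand into rank-one projections inside the corresponding block $M_{r_j}(\mathbb{C})$ (and reindexing the resulting projections as $p_1,\ldots,p_k$), we get $p_iBp_i=\mathbb{C}p_i$, hence $End(v_i)=\mathbb{C}$ and $v_i$ is irreducible. The main subtlety, and the likely obstacle, is precisely this distinction between the central minimal projections produced directly by Theorem 3.8 (which give only the isotypic decomposition of $v$, with each $v_i$ a multiple of an irreducible) and the genuinely minimal projections needed for PW1 (which give the full decomposition into irreducibles); once this refinement is made, the rest of the argument is routine.
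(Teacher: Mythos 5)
Your proposal is essentially correct and takes the same route as the paper: both arguments proceed by associating to a projection $p\in End(v)$ the sub-corepresentation on $Im(p)$, then decomposing the identity of $B$ and applying this per summand. Where the paper passes to the coaction $\Phi:\mathbb C^n\to A\otimes\mathbb C^n$ and speaks of invariant subspaces, you instead conjugate by a unitary adapted to $Im(p)\oplus Im(1-p)$ and invoke Proposition 3.4 to see that each block of $UvU^*$ satisfies the corepresentation axioms; this is a more hands-on rendering of the same step, and it is fine.

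What you have done that the paper does not is articulate, and then resolve, a genuine imprecision in the theorem's formulation. The statement writes $1=p_1+\ldots+p_k$ ``as above'', which points back to Theorem 3.8's decomposition into \emph{central} minimal projections; those are the block units of $\bigoplus_j M_{r_j}(\mathbb C)$ and produce only the isotypic components of $v$ (each a multiple of an irreducible, not itself irreducible unless $r_j=1$). The paper's proof sentence ``minimal projections $\mapsto$ irreducible corepresentations'' is correct, but it quietly conflates central minimal projections with honest minimal ones when it then plugs in the $p_i$ from Theorem 3.8. Your refinement -- further splitting each $M_{r_j}(\mathbb C)$-block into rank-one projections, identifying $End(v_i)\simeq p_iBp_i=\mathbb Cp_i$, and concluding irreducibility -- is exactly the missing step. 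So your write-up is not merely a paraphrase; it fixes a small but real gap.

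One minor point worth tightening in your exposition: when you claim $End(v_i)\simeq p_iBp_i$, the inclusion $p_iBp_i\hookrightarrow End(v_i)$ is immediate, but the reverse -- that every intertwiner of $v_i$ extends by zero to an intertwiner of $v$ -- should be checked against the block-diagonal form of $v$, which is what guarantees the zero-extension still commutes with $v$. You gesture at this, and it is routine given the block structure you established, but in a final write-up it merits a sentence.
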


\begin{proof}
This is something very classical, well-known to hold for the compact groups, and the proof in general can be deduced from Theorem 3.8, as follows:

\medskip

(1) We first associate to our corepresentation $v\in M_n(A)$ the corresponding coaction map $\Phi:\mathbb C^n\to\mathbb C^n\otimes A$, given by the following formula:
$$\Phi(e_i)=\sum_je_j\otimes v_{ji}$$

We say that a linear subspace $V\subset\mathbb C^n$ is invariant under $v$ if:
$$\Phi(V)\subset V\otimes A$$

In this case, we can consider the following restriction map:
$$\Phi_{|V}:V\to V\otimes A$$

This is a coaction map too, which must come from a subcorepresentation $w\subset v$.

\medskip

(2) Consider now a projection $p\in End(v)$. From $pv=vp$ we obtain that the linear space $V=Im(p)$ is invariant under $v$, and so this space must come from a subcorepresentation $w\subset v$. It is routine to check that the operation $p\to w$ maps subprojections to subcorepresentations, and minimal projections to irreducible corepresentations.

\medskip

(3) With these preliminaries in hand, let us decompose the algebra $End(v)$ as in Theorem 3.8, by using the decomposition of 1 into minimal projections there:
$$1=p_1+\ldots+p_k$$

Consider now the following vector spaces, obtained as images of these projections:
$$V_i=Im(p_i)$$

If we denote by $v_i\subset v$ the subcorepresentations coming from these vector spaces, then we obtain in this way a decomposition $v=v_1+\ldots+v_k$, as in the statement.
\end{proof}

In order to formulate our second Peter-Weyl type theorem, we will need:

\index{Peter-Weyl representation}
\index{Peter-Weyl corepresentation}

\begin{definition}
We denote by $u^{\otimes k}$, with $k=\circ\bullet\bullet\circ\ldots$ being a colored integer, the various tensor products between $u,\bar{u}$, indexed according to the rules 
$$u^{\otimes\emptyset}=1\quad,\quad 
u^{\otimes\circ}=u\quad,\quad 
u^{\otimes\bullet}=\bar{u}$$
and multiplicativity, $u^{\otimes kl}=u^{\otimes k}\otimes u^{\otimes l}$, and call them Peter-Weyl corepresentations. 
\end{definition}

Here are a few examples of such corepresentations, namely those coming from the colored integers of length 2, to be often used in what follows:
$$u^{\otimes\circ\circ}=u\otimes u\quad,\quad 
u^{\otimes\circ\bullet}=u\otimes\bar{u}$$
$$u^{\otimes\bullet\circ}=\bar{u}\otimes u\quad,\quad 
u^{\otimes\bullet\bullet}=\bar{u}\otimes\bar{u}$$

There are several particular cases of interest of the above construction, where some considerable simplifications appear, as follows:

\begin{proposition}
The Peter-Weyl corepresentations $u^{\otimes k}$ are as follows:
\begin{enumerate}
\item In the real case, $u=\bar{u}$, we can assume $k\in\mathbb N$.

\item In the classical case, we can assume, up to equivalence, $k\in\mathbb N\times\mathbb N$.
\end{enumerate}
\end{proposition}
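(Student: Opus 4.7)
Both parts amount to an unpacking of Definition 3.10 combined with a basic observation; the only content is identifying the equivalence relation that collapses colored words to the stated index sets. I would handle the two parts separately.

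For (1), the plan is to observe that the assumption $u=\bar u$ makes the defining rules $u^{\otimes\circ}=u$ and $u^{\otimes\bullet}=\bar u$ produce literally the same matrix. By the multiplicativity rule $u^{\otimes kl}=u^{\otimes k}\otimes u^{\otimes l}$, this propagates factor-by-factor: any colored word $k$ of length $|k|=n$ satisfies $u^{\otimes k}=u^{\otimes n}$ on the nose, where $u^{\otimes n}$ denotes the $n$-fold tensor power of $u$ with itself. Hence the family $\{u^{\otimes k}\}$ is already indexed, without passing to any equivalence, by $n=|k|\in\mathbb N$.

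For (2), the plan is to prove that in the commutative case $v\otimes w\sim w\otimes v$ for any two corepresentations $v\in M_n(A)$, $w\in M_m(A)$, via the flip map $\Sigma\colon\mathbb C^n\otimes\mathbb C^m\to\mathbb C^m\otimes\mathbb C^n$, $\Sigma(e_i\otimes e_a)=e_a\otimes e_i$. The intertwining condition $\Sigma(v\otimes w)=(w\otimes v)\Sigma$ unfolds, using $(v\otimes w)_{ia,jb}=v_{ij}w_{ab}$, into the entrywise identities $v_{ij}w_{ab}=w_{ab}v_{ij}$, which hold because $A$ is commutative. Since $\Sigma$ is a permutation matrix, it is invertible, so $\Sigma\in\mathrm{Hom}(v\otimes w,w\otimes v)$ gives an equivalence in the sense of Definition 3.6. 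Applying this repeatedly as an adjacent transposition on the tensor factors, one can rearrange any colored word $k$ so that all $\circ$'s precede all $\bullet$'s, yielding
\[
u^{\otimes k}\sim u^{\otimes a}\otimes\bar u^{\otimes b},
\]
where $a,b$ count the $\circ$'s and $\bullet$'s in $k$. Thus the indexing reduces, up to equivalence, to the pair $(a,b)\in\mathbb N\times\mathbb N$.

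There is no genuine obstacle in either step; the only point worth flagging is precisely why (1) delivers equality of matrices while (2) only delivers equivalence. In (1) the hypothesis $u=\bar u$ collapses the two-letter alphabet $\{\circ,\bullet\}$ to a single letter, so the tensor products agree as elements of $M_N(\mathcal A)$; in (2) commutativity of $A$ is not enough for such a collapse, but it is exactly what is needed to upgrade the formal flip of tensor factors to a quantum-group intertwiner, and the corresponding reduction therefore takes place only at the level of the equivalence relation $\sim$.
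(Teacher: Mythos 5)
Your proposal is correct and follows essentially the same route as the paper: part (1) is the same observation that $u=\bar u$ collapses the colors, and part (2) is the same flip-operator argument, with the paper writing the intertwining relation $v\otimes w=\Sigma(w\otimes v)\Sigma$ in leg notation $v_{13}w_{23}=w_{23}v_{13}$ while you verify it entrywise via $v_{ij}w_{ab}=w_{ab}v_{ij}$. The two computations are the same argument in different notation, so there is nothing to add.
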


\begin{proof}
These assertions are both elementary, as follows:

\medskip

(1) Here we have indeed $u^{\otimes k}=u^{\otimes|k|}$, where $|k|\in\mathbb N$ is the length. Thus the Peter-Weyl corepresentations are indexed by $\mathbb N$, as claimed.

\medskip

(2) In the classical case, our claim is that we have equivalences $v\otimes w\sim w\otimes v$, implemented by the flip operator $\Sigma(a\otimes b)=b\otimes a$. Indeed, we have:
\begin{eqnarray*}
v\otimes w
&=&v_{13}w_{23}\\
&=&w_{23}v_{13}\\
&=&\Sigma w_{13}v_{23}\Sigma\\
&=&\Sigma(w\otimes v)\Sigma
\end{eqnarray*}

In particular we have an equivalence $u\otimes\bar{u}\sim\bar{u}\otimes u$. We conclude that the Peter-Weyl corepresentations are the corepresentations of type $u^{\otimes k}\otimes\bar{u}^{\otimes l}$, with $k,l\in\mathbb N$. 
\end{proof}

Here is now our second Peter-Weyl theorem, from a series of a total 4 Peter-Weyl theorems, also from Woronowicz \cite{wo1}, complementing Theorem 3.9 above:

\index{Peter-Weyl theorem}

\begin{theorem}[PW2]
Each irreducible corepresentation of $A$ appears as:
$$v\subset u^{\otimes k}$$
That is, $v$ appears inside a certain Peter-Weyl corepresentation.
\end{theorem}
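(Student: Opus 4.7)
My plan is to exploit the fact that the dense $*$-subalgebra $\mathcal{A}=<u_{ij}>$ is spanned, as a vector space, by arbitrary products of $u_{ij}$'s and $u_{ij}^*$'s, which are exactly the matrix coefficients of the Peter-Weyl corepresentations $u^{\otimes k}$. Introducing the notation $C(w)$ for the linear span of the matrix coefficients of a corepresentation $w$, this observation amounts to the identity $\mathcal{A}=\sum_kC(u^{\otimes k})$, the sum running over all colored integers $k$. Now, given an irreducible $v$ of dimension $n$, the space $C(v)\subset\mathcal{A}$ is finite-dimensional, so there are finitely many colored integers $k_1,\ldots,k_m$ such that $C(v)\subset\sum_jC(u^{\otimes k_j})=C(W)$, with $W=u^{\otimes k_1}\oplus\cdots\oplus u^{\otimes k_m}$.

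The key step is to upgrade this linear inclusion $C(v)\subset C(W)$ into a nonzero intertwiner $T\in Hom(v,W)$. Writing $v_{ij}=\sum_{ab}L_{ij,ab}W_{ab}$ for some scalars $L_{ij,ab}$, applying $\Delta$ to both sides, and comparing with $\Delta(v_{ij})=\sum_kv_{ik}\otimes v_{kj}$ and $\Delta(W_{ab})=\sum_cW_{ac}\otimes W_{cb}$, one can hope to organize $L$ into an actual intertwining matrix satisfying $Tv=WT$. Making this rigorous requires a linear independence statement for matrix coefficients of distinct irreducibles, which is precisely the content of the Peter-Weyl orthogonality relations. These in turn follow from the Haar integration functional on $A$, to be constructed in the following section; alternatively, one could invoke Woronowicz's Tannakian framework. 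This is the main obstacle, as the remaining steps are essentially formal.

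Once a nonzero $T\in Hom(v,W)$ is available, the conclusion is immediate. By the tensor $*$-category structure of Hom spaces from Proposition 3.7, we have $T^*T\in End(v)=\mathbb{C}1$, so $T^*T$ is a nonnegative scalar, and being nonzero forces $T$ to be injective; thus $v$ embeds as a subcorepresentation of $W$. Applying PW1 to $W$, each summand $u^{\otimes k_j}$ decomposes into irreducibles, hence so does $W$, and by irreducibility $v$ must be equivalent to one of these constituents, which sits inside some $u^{\otimes k_j}$. This yields the desired inclusion $v\subset u^{\otimes k_j}$, completing the proof.
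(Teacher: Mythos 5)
Your proposal follows essentially the same route as the paper's proof: both start from $\mathcal A=\sum_kC(u^{\otimes k})$, place the finite-dimensional coefficient space $C(v)$ inside $C(W)$ for a finite sum $W=u^{\otimes k_1}\oplus\ldots\oplus u^{\otimes k_p}$, deduce $v\subset W$, and conclude via Theorem 3.9 (PW1) and irreducibility. The only divergence is at the implication $C(v)\subset C(W)\implies v\subset W$, which the paper simply asserts and which you rightly identify as the real content; your plan to justify it through Haar-based orthogonality is sound, with the caveat that you should derive the needed orthogonality of coefficients of inequivalent irreducibles directly from Theorem 3.18 and the Frobenius isomorphism of Theorem 3.21 (exactly as in the later proof of Theorem 3.22), rather than invoke the Peter-Weyl orthogonality relations as a black box, since in the paper those are established after, and by means of, the present statement.
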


\begin{proof}
Given an arbitrary corepresentation $v\in M_n(A)$, consider its space of coefficients, $C(v)=span(v_{ij})$. It is routine to check that the construction $v\to C(v)$ is functorial, in the sense that it maps subcorepresentations into subspaces.

By definition of the Peter-Weyl corepresentations, we have:
$$\mathcal A=\sum_{k\in\mathbb N*\mathbb N}C(u^{\otimes k})$$

Now given a corepresentation $v\in M_n(A)$, the corresponding coefficient space is a finite dimensional subspace $C(v)\subset\mathcal A$, and so we must have, for certain $k_1,\ldots,k_p$:
$$C(v)\subset C(u^{\otimes k_1}\oplus\ldots\oplus u^{\otimes k_p})$$

We deduce from this that we have an inclusion of corepresentations, as follows:
$$v\subset u^{\otimes k_1}\oplus\ldots\oplus u^{\otimes k_p}$$

Together with Theorem 3.9, this leads to the conclusion in the statement.
\end{proof}

\section*{3c. The Haar measure}

In order to further advance, with some finer results, we need to integrate over $G$. In the classical case the existence of such an integration is well-known, as follows:

\index{Haar measure}
\index{Haar integration}
\index{Ces\`aro limit}

\begin{proposition}
Any commutative Woronowicz algebra, $A=C(G)$ with $G\subset U_N$, has a unique faithful positive unital linear form $\int_G:A\to\mathbb C$ satisfying
$$\int_Gf(xy)dx=\int_Gf(yx)dx=\int_Gf(x)dx$$ 
called Haar integration. This Haar integration functional can be constructed by starting with any faithful positive unital form $\varphi\in A^*$, and taking the Ces\`aro limit
$$\int_G=\lim_{n\to\infty}\frac{1}{n}\sum_{k=1}^n\varphi^{*k}$$
where the convolution operation for linear forms is given by $\phi*\psi=(\phi\otimes\psi)\Delta$.
\end{proposition}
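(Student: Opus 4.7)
The plan is to prove existence and uniqueness first, and then separately justify the Cesàro formula. For uniqueness, I would observe that bi-invariance rewrites in convolution form as $\phi*\int_G=\int_G*\phi=\int_G$ for every state $\phi$ on $A$; the stated conditions read exactly this way when $\phi=\mathrm{ev}_g$, and they extend to all states by weak-$*$ density of convex combinations of evaluation characters together with the fact that convolution is weak-$*$ continuous in each variable (which follows since $\Delta$ is a morphism of $C^*$-algebras). Given two bi-invariant states $\psi_1,\psi_2$ we then obtain $\psi_1=\psi_1*\psi_2=\psi_2$. Existence in the commutative case can be cited as Haar's theorem, or extracted from the Cesàro construction itself, as explained below.

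For the Cesàro formula, fix a faithful state $\varphi$ on $A=C(G)$; such states exist, for instance the measure $\sum_n 2^{-n}\delta_{g_n}$ for a countable dense sequence $\{g_n\}\subset G$. Each $\varphi^{*k}$ is a state, because $\Delta$ is a unital $*$-homomorphism and the tensor product of states is a state, so convolution preserves positivity and normalization; hence each Cesàro mean $\psi_n=\frac{1}{n}\sum_{k=1}^n\varphi^{*k}$ is a state. The state space $S(A)\subset A^*$ is weak-$*$ compact by Banach--Alaoglu, so $\{\psi_n\}$ has weak-$*$ accumulation points, each a state. The telescoping identity
$$\varphi*\psi_n-\psi_n=\tfrac{1}{n}\bigl(\varphi^{*(n+1)}-\varphi\bigr),$$
whose dual norm is at most $2/n$, shows that every such accumulation point $\psi$ satisfies $\varphi*\psi=\psi$; symmetrically $\psi*\varphi=\psi$.

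The hard part will be upgrading this single-state invariance to full bi-invariance, which by the uniqueness argument would then force $\psi=\int_G$, and in turn force the full sequence $\{\psi_n\}$ to converge weak-$*$ to $\int_G$. In the commutative case $A=C(G)$ I would argue directly at the level of measures: letting $\mu_\psi$ be the Borel probability measure corresponding to $\psi$, the equation $\varphi*\psi=\psi$ says exactly that $\mu_\psi$ is a stationary distribution for the random walk on $G$ with step distribution $\mu_\varphi$. Faithfulness of $\varphi$ gives $\mathrm{supp}\,\mu_\varphi=G$, and in particular this support is not contained in any proper closed subgroup of $G$; the classical Kawada--Itô convergence theorem then yields that $\mu_\varphi^{*n}$ converges weakly to Haar measure on $G$. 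Iterating $\varphi^{*n}*\psi=\psi$ and passing to the limit gives $\psi=\int_G$, which completes the proof.
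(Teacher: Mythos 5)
Your proposal is correct, and it takes essentially the same route as the paper, but it does genuinely more of the work. The paper's own proof is a two-line translation: it observes that the stated invariance conditions are exactly the Haar-measure invariances $d(xy)=d(yx)=dx$, and then cites wholesale the classical facts that Haar measure on a compact group exists, is unique, and arises as the Cesàro limit of the convolution powers of any adapted probability measure. Your version supplies proofs for most of what the paper merely invokes: the uniqueness argument $\psi_1=\psi_1*\psi_2=\psi_2$ is carried out in the convolution algebra $A^*$ (with the useful observation that bi-invariance against point evaluations extends to all states via Krein--Milman density and separate weak-$*$ continuity of convolution); the existence of a faithful state is exhibited; and the structural parts of the Cesàro convergence (positivity of $\varphi^{*k}$, Banach--Alaoglu extraction, the telescoping identity forcing $\varphi$-invariance of any accumulation point) are proved rather than cited. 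The only step you leave to a classical reference is the identification of the limit, via Kawada--It\^o; the paper leaves the whole convergence to a classical reference. One small remark: since faithfulness gives $\operatorname{supp}\mu_\varphi=G$, Kawada--It\^o actually gives the un-averaged convergence $\varphi^{*n}\to\int_G$, so the Cesàro averaging is there only to match the statement (and to prepare the general noncommutative case of the next results); the more elementary classical fact that Cesàro means of an adapted walk converge to Haar would suffice and does not require the aperiodicity that faithfulness provides for free. Either way, your argument is sound, and its uniqueness and compactness components are a real addition relative to what the paper writes.
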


\begin{proof}
This is the existence theorem for the Haar measure of $G$, in functional analytic formulation. Observe first that the invariance conditions in the statement read:
$$d(xy)=d(yx)=dx\quad,\quad\forall y\in G$$

Thus, we are looking indeed for the integration with respect to the Haar measure on $G$. Now recall that this Haar measure exists, is unique, and can be constructed by starting with any probability measure $\mu$, and performing the following Ces\`aro limit:
$$dx=\lim_{n\to\infty}\frac{1}{n}\sum_{k=1}^nd\mu^{*k}(x)$$

In functional analysis terms, this corresponds precisely to the second assertion.
\end{proof}

The above statement and proof, which are quite brief, are of course more of a reminder, with all the technical details missing. However, we will reprove all this later on, as a particular case of a general Haar integration existence result, in the general Woronowicz algebra setting. In general now, let us start with a definition, as follows:

\begin{definition}
Given an arbitrary Woronowicz algebra $A=C(G)$, any positive unital tracial state $\int_G:A\to\mathbb C$ subject to the invariance conditions
$$\left(\int_G\otimes id\right)\Delta=\left(id\otimes\int_G\right)\Delta=\int_G(.)1$$
is called Haar integration over $G$.
\end{definition}

As a first observation, in the commutative case, this notion agrees with the one in Proposition 3.13. To be more precise, Proposition 3.13 tells us that any commutative Woronowicz algebra has a Haar integration in the above sense, which is unique, and which can be constructed by performing the Ces\`aro limiting procedure there. 

\bigskip

Before getting into the general case, let us discuss the group dual case. Here things are quite elementary, and we have the following result:

\begin{proposition}
Given a discrete group $\Gamma=<g_1,\ldots,g_N>$, the Woronowicz algebra $A=C^*(\Gamma)$ has a Haar functional, given on the standard generators $g\in\Gamma$ by:
$$\int_{\widehat{\Gamma}}g=\delta_{g,1}$$
This functional is faithful on the image on $C^*(\Gamma)$ in the regular representation. Also, in the abelian case, we obtain in this way the counit of $C(\widehat{\Gamma})$.
\end{proposition}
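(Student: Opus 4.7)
The functional $\int_{\widehat{\Gamma}}$ should be recognized as the vacuum vector state of the left regular representation: define $\pi:C^*(\Gamma)\to B(\ell^2(\Gamma))$ by $\pi(g)\delta_h=\delta_{gh}$ (this exists by Proposition 1.18), and set
$$\int_{\widehat{\Gamma}}a=\langle\pi(a)\delta_1,\delta_1\rangle.$$
On a group element $g\in\Gamma$ this gives $\langle\delta_g,\delta_1\rangle=\delta_{g,1}$, so this formula extends the one given on generators. Presenting the functional this way is the key move, because it makes three properties free: it is a bounded linear form (of norm one), it is unital ($\varepsilon=1$ gives $\delta_{1,1}=1$), and it is positive, since $\int_{\widehat{\Gamma}}a^*a=\|\pi(a)\delta_1\|^2\geq 0$.

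The remaining properties from Definition 3.14 are verified by checking on the dense $*$-subalgebra $\mathbb C[\Gamma]$, and extending by linearity and continuity. For traciality, on generators $\int_{\widehat{\Gamma}}gh=\delta_{gh,1}=\delta_{g,h^{-1}}=\delta_{hg,1}=\int_{\widehat{\Gamma}}hg$. For left invariance, using $\Delta(g)=g\otimes g$,
$$\left(\int_{\widehat{\Gamma}}\otimes\,{\rm id}\right)\Delta(g)=\delta_{g,1}\cdot g=\delta_{g,1}\cdot 1=\left(\int_{\widehat{\Gamma}}g\right)\cdot 1,$$
and right invariance is proved identically. Both sides are weakly continuous in the generator, so bilinearity and continuity promote all three identities from $\mathbb C[\Gamma]$ to $C^*(\Gamma)$.

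For faithfulness on the reduced image, the standard commutant argument applies: if $a\in C^*_{\rm red}(\Gamma)$ satisfies $\int_{\widehat{\Gamma}}a^*a=0$, then $\|a\delta_1\|^2=0$, so $a\delta_1=0$. The point is that $a$ lies in the left regular image, so it commutes with every right translation $\rho(h)\delta_g=\delta_{gh^{-1}}$; therefore for any $g\in\Gamma$,
$$a\delta_g=a\rho(g^{-1})\delta_1=\rho(g^{-1})a\delta_1=0,$$
and since $(\delta_g)_{g\in\Gamma}$ is an orthonormal basis of $\ell^2(\Gamma)$, we conclude $a=0$.

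Finally, for the abelian case, one invokes the Fourier isomorphism $C^*(\Gamma)\simeq C(\widehat{\Gamma})$ from Theorem 1.19: the generator $g\in\Gamma$ corresponds to the evaluation function $\chi\mapsto\chi(g)$ on the compact abelian group $\widehat{\Gamma}$. Character orthogonality on $\widehat{\Gamma}$ gives $\int_{\widehat{\Gamma}}\chi(g)\,d\chi=\delta_{g,1}$, matching the formula on generators, so our functional is transported via Fourier transform to the canonical invariant integration on $C(\widehat{\Gamma})$. There is no real obstacle in this proof; the only subtlety is making sure that density and continuity really do promote the identities checked on $\mathbb C[\Gamma]$ to the full $C^*$-algebra, which is why pinning down the functional as a vector state from the outset is so convenient.
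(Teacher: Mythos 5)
Your proof is correct and takes essentially the same approach as the paper: realize $\int_{\widehat{\Gamma}}$ as the vacuum vector state $a\mapsto\langle\pi(a)\delta_1,\delta_1\rangle$ of the left regular representation, after which positivity, traciality, bi-invariance, and faithfulness on the reduced algebra all drop out. The paper is extremely terse here, deferring everything to ``this gives all the assertions,'' so your write-up is simply a fleshed-out version of the same mechanism, with the right-multiplication commutant argument correctly supplying faithfulness. One small point of terminology: the proposition's final clause says ``the counit of $C(\widehat{\Gamma})$,'' but what you (correctly) show under the Fourier isomorphism is that the functional becomes \emph{Haar integration} on $C(\widehat{\Gamma})$; the counit of $C(\widehat{\Gamma})$ is evaluation at the trivial character, which pulls back to $g\mapsto 1$ rather than $g\mapsto\delta_{g,1}$, so your reading is the mathematically correct one.
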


\begin{proof}
Consider indeed the left regular representation $\pi:C^*(\Gamma)\to B(l^2(\Gamma))$, given by $\pi(g)(h)=gh$, that we already met in chapter 1. By composing it with the functional $T\to<T1,1>$, the functional $\int_{\widehat{\Gamma}}$ that we obtain is given by:
$$\int_{\widehat{\Gamma}}g=<g1,1>=\delta_{g,1}$$

But this gives all the assertions in the statement, namely the existence, traciality, left and right invariance properties, and faithfulness on the reduced algebra. As for the last assertion, this is clear from the Pontrjagin duality isomorphism.
\end{proof}

With a bit of functional analysis knowledge, one can improve the above result, with a proof of the fact that the Haar integration is unique, and appears via a Ces\`aro limiting procedure, as in Proposition 3.13. We will do this directly, in the general case.

\bigskip

In order to discuss now the general case, that of the arbitrary Woronowicz algebras, let us define the convolution operation for linear forms by:
$$\phi*\psi=(\phi\otimes\psi)\Delta$$

We have then the following technical result, from Woronowicz's paper \cite{wo1}:

\begin{proposition}
Given an arbitrary unital linear form $\varphi\in A^*$, the limit
$$\int_\varphi a=\lim_{n\to\infty}\frac{1}{n}\sum_{k=1}^n\varphi^{*k}(a)$$
exists, and for a coefficient of a corepresentation $a=(\tau\otimes id)v$, we have
$$\int_\varphi a=\tau(P)$$
where $P$ is the orthogonal projection onto the $1$-eigenspace of $(id\otimes\varphi)v$.
\end{proposition}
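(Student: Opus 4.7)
My plan is to reduce the analytic Cesàro limit to an explicit finite-dimensional matrix ergodic problem. View the corepresentation as $v \in M_n(\mathbb{C}) \otimes \mathcal{A}$ and introduce the scalar matrix $M := (id \otimes \varphi)v \in M_n(\mathbb{C})$. The key computation is a straightforward induction on $k$ based on the comultiplication formula $\Delta(v_{ij}) = \sum_l v_{il} \otimes v_{lj}$: iterating $(k-1)$ times gives $\Delta^{(k)}(v_{ij}) = \sum v_{i i_1} \otimes \cdots \otimes v_{i_{k-1} j}$, so applying $\varphi^{\otimes k}$ collapses the sum into a matrix product and yields $\varphi^{*k}(v_{ij}) = (M^k)_{ij}$. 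For $a = (\tau \otimes id)v$, linearity then gives the clean identity $\varphi^{*k}(a) = \tau(M^k)$, which transfers both the existence and the value of the Cesàro limit to a question about the matrix powers of $M \in M_n(\mathbb{C})$.

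Next I would verify the norm bound $\|M\| \leq 1$ so that the matrix ergodic theorem applies. Treating $\varphi$ as a state (the only case in which the Cesàro averages converge for every $a$), the slice map $id \otimes \varphi \colon M_n(\mathbb{C}) \otimes A \to M_n(\mathbb{C})$ is unital completely positive and therefore contractive, and the unitarity of $v$ gives $\|v\|=1$, so $\|M\| \leq 1$. Once $M$ is a Hilbert-space contraction on $\mathbb{C}^n$, the classical mean ergodic theorem delivers, in operator norm,
\[
\frac{1}{n}\sum_{k=1}^n M^k \;\longrightarrow\; P,
\]
where $P$ is the orthogonal projection onto $\ker(M - I)$. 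Orthogonality is ensured by the standard contraction identity $\ker(M-I) = \ker(M^* - I)$, so $\ker(M-I)^\perp = \overline{\mathrm{Ran}(I - M)}$ is $M$-invariant and the averages telescope to zero there. Combining the two steps gives $\int_\varphi a = \lim_n \tau\bigl(\tfrac{1}{n}\sum_k M^k\bigr) = \tau(P)$, which is the claimed formula; extension to all of $\mathcal{A}$ follows by linearity since $\mathcal{A} = \sum_k C(u^{\otimes k})$.

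The main obstacle is the ergodic/spectral analysis of $M$: passing from convergence of the scalar sequence $\varphi^{*k}(a)$ to convergence of the matrix Cesàro averages requires ruling out Jordan blocks at unimodular eigenvalues of $M$, and this is precisely what the contraction bound $\|M\| \leq 1$ delivers. Everything else — the matrix-power reduction via iterated comultiplication, the contractivity of the slice map, and the identification of the limit as an orthogonal projection — is routine once one has adopted the right viewpoint, namely treating $M = (id \otimes \varphi)v$ as the fundamental finite-dimensional avatar of the convolution operator.
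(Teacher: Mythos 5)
Your proposal follows the paper's proof essentially line by line: you introduce $M=(id\otimes\varphi)v$, reduce via iterated comultiplication to the matrix-power identity $\varphi^{*k}(a)=\tau(M^k)$, establish $\|M\|\leq 1$, and invoke the mean ergodic theorem for Hilbert-space contractions. You do add one useful observation the paper glosses over, namely that the bound $\|M\|\leq 1$ really uses positivity of $\varphi$ (so that $id\otimes\varphi$ is unital completely positive and hence contractive), which is consistent with how the proposition is later applied; the paper says only ``since $v$ is unitary we have $\|v\|=1$, and we conclude $\|M\|\leq 1$'' and calls the ergodic step ``standard calculus.''
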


\begin{proof}
By linearity, it is enough to prove the first assertion for elements of the following type, where $v$ is a Peter-Weyl corepresentation, and $\tau$ is a linear form:
$$a=(\tau\otimes id)v$$

Thus we are led into the second assertion, and more precisely we can have the whole result proved if we can establish the following formula, with $a=(\tau\otimes id)v$:
$$\lim_{n\to\infty}\frac{1}{n}\sum_{k=1}^n\varphi^{*k}(a)=\tau(P)$$

In order to prove this latter formula, observe that we have:
$$\varphi^{*k}(a)
=(\tau\otimes\varphi^{*k})v
=\tau((id\otimes\varphi^{*k})v)$$

Consider now the following matrix, which is a usual complex matrix:
$$M=(id\otimes\varphi)v$$

In terms of this matrix, we have the following formula:
\begin{eqnarray*}
((id\otimes\varphi^{*k})v)_{i_0i_{k+1}}
&=&\sum_{i_1\ldots i_k}M_{i_0i_1}\ldots M_{i_ki_{k+1}}\\
&=&(M^k)_{i_0i_{k+1}}
\end{eqnarray*}

Thus for any $k\in\mathbb N$ we have the following formula:
$$(id\otimes\varphi^{*k})v=M^k$$

It follows that our Ces\`aro limit is given by the following formula:
\begin{eqnarray*}
\lim_{n\to\infty}\frac{1}{n}\sum_{k=1}^n\varphi^{*k}(a)
&=&\lim_{n\to\infty}\frac{1}{n}\sum_{k=1}^n\tau(M^k)\\
&=&\tau\left(\lim_{n\to\infty}\frac{1}{n}\sum_{k=1}^nM^k\right)
\end{eqnarray*}

Now since $v$ is unitary we have $||v||=1$, and we conclude that we have:
$$||M||\leq1$$

Thus, by standard calculus, the above Ces\`aro limit on the right exists, and equals the orthogonal projection onto the $1$-eigenspace of $M$:
$$\lim_{n\to\infty}\frac{1}{n}\sum_{k=1}^nM^k=P$$

Thus our initial Ces\`aro limit converges as well, to $\tau(P)$, as desired. 
\end{proof}

When $\varphi$ is faithful, we have the following finer result, also from Woronowicz \cite{wo1}:

\begin{proposition}
Given a faithful unital linear form $\varphi\in A^*$, the limit
$$\int_\varphi a=\lim_{n\to\infty}\frac{1}{n}\sum_{k=1}^n\varphi^{*k}(a)$$
exists, and is independent of $\varphi$, given on coefficients of corepresentations by
$$\left(id\otimes\int_\varphi\right)v=P$$
where $P$ is the orthogonal projection onto $Fix(v)=\{\xi\in\mathbb C^n|v\xi=\xi\}$.
\end{proposition}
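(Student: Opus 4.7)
The existence of the Ces\`aro limit, and the identification
$$\left(id\otimes\int_\varphi\right)v=P_M,$$
where $P_M$ is the orthogonal projection onto the $1$-eigenspace $E_1(M)\subset\mathbb C^n$ of the scalar matrix $M=(id\otimes\varphi)v$, are already delivered by Proposition 3.16 (applied entrywise, taking $\tau$ to pick out a matrix coefficient). So the real content of Proposition 3.17 is to prove, under the faithfulness hypothesis on $\varphi$, the equality
$$E_1(M)=Fix(v).$$
Once this is in hand, independence from $\varphi$ is automatic: the right-hand side makes no reference to $\varphi$, and by PW2 (Theorem 3.12) the Haar integral is determined by its values on corepresentation coefficients.

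The inclusion $Fix(v)\subset E_1(M)$ is immediate: if $v\xi=\xi$ as an identity in $\mathbb C^n\otimes A$, then applying $id\otimes\varphi$ and using $\varphi(1)=1$ gives $M\xi=\xi$.

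For the reverse inclusion, which is the main obstacle, the plan is to combine biunitarity of $v$ with the faithful state property of $\varphi$. Given $\xi\in\mathbb C^n$ with $M\xi=\xi$, set
$$a_i=\sum_jv_{ij}\xi_j-\xi_i\in A,$$
so that $v\xi=\xi$ is equivalent to $a_i=0$ for all $i$. The natural positive quantity to test is $\sum_ia_i^*a_i\geq0$. Expanding the product, the quadratic term $\sum_{ijk}\bar\xi_jv_{ij}^*v_{ik}\xi_k$ collapses under the unitarity relation $\sum_iv_{ij}^*v_{ik}=\delta_{jk}$ to $\|\xi\|^2$, while the cross terms, after applying $\varphi$ (using $\varphi(v_{ij})=M_{ij}$ and, since $\varphi$ is self-adjoint, $\varphi(v_{ij}^*)=\overline{M_{ij}}$), assemble into $\langle M\xi,\xi\rangle$ and its conjugate. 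A short calculation then gives
$$\varphi\!\left(\sum_ia_i^*a_i\right)=2\|\xi\|^2-2\,\mathrm{Re}\,\langle M\xi,\xi\rangle.$$
Under the assumption $M\xi=\xi$ this right-hand side is $0$, so faithfulness of $\varphi$ forces $\sum_ia_i^*a_i=0$, and hence each $a_i=0$, i.e.\ $v\xi=\xi$.

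With $E_1(M)=Fix(v)$ established, the projection $P$ appearing in Proposition 3.16 is exactly the orthogonal projection onto $Fix(v)$, yielding the displayed formula, and independence from the auxiliary state $\varphi$. The conceptual pinch point of the argument is precisely the passage from the scalar eigen-equation $M\xi=\xi$ back up to the algebraic identity $v\xi=\xi$; this is where biunitarity of $v$ and faithfulness of $\varphi$ are jointly indispensable, and is the only step that truly goes beyond Proposition 3.16.
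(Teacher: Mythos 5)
Your proposal is correct and follows essentially the same route as the paper: reduce via Proposition 3.16 to showing the $1$-eigenspace of $M=(id\otimes\varphi)v$ equals $Fix(v)$, note the easy inclusion, and prove the hard inclusion by applying $\varphi$ to the positive element built from $v\xi-\xi$, expanding via (bi)unitarity, and invoking faithfulness. The only (immaterial) difference is that you test $\sum_ia_i^*a_i$ where the paper tests $\sum_ia_ia_i^*$.
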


\begin{proof}
In view of Proposition 3.16, it remains to prove that when $\varphi$ is faithful, the $1$-eigenspace of $M=(id\otimes\varphi)v$ equals $Fix(v)$.

\medskip

``$\supset$'' This is clear, and for any $\varphi$, because we have:
$$v\xi=\xi\implies M\xi=\xi$$

``$\subset$'' Here we must prove that, when $\varphi$ is faithful, we have:
$$M\xi=\xi\implies v\xi=\xi$$

For this purpose, we use a positivity trick. Consider the following element:
$$a=\sum_i\left(\sum_jv_{ij}\xi_j-\xi_i\right)\left(\sum_kv_{ik}\xi_k-\xi_i\right)^*$$

We want to prove that we have $a=0$. Since $v$ is biunitary, we have:
\begin{eqnarray*}
a
&=&\sum_i\left(\sum_j\left(v_{ij}\xi_j-\frac{1}{N}\xi_i\right)\right)\left(\sum_k\left(v_{ik}^*\bar{\xi}_k-\frac{1}{N}\bar{\xi}_i\right)\right)\\
&=&\sum_{ijk}v_{ij}v_{ik}^*\xi_j\bar{\xi}_k-\frac{1}{N}v_{ij}\xi_j\bar{\xi}_i-\frac{1}{N}v_{ik}^*\xi_i\bar{\xi}_k+\frac{1}{N^2}\xi_i\bar{\xi}_i\\
&=&\sum_j|\xi_j|^2-\sum_{ij}v_{ij}\xi_j\bar{\xi}_i-\sum_{ik}v_{ik}^*\xi_i\bar{\xi}_k+\sum_i|\xi_i|^2\\
&=&||\xi||^2-<v\xi,\xi>-\overline{<v\xi,\xi>}+||\xi||^2\\
&=&2(||\xi||^2-Re(<v\xi,\xi>))
\end{eqnarray*}

By using now our assumption $M\xi=\xi$, we obtain from this:
\begin{eqnarray*}
\varphi(a)
&=&2\varphi(||\xi||^2-Re(<v\xi,\xi>))\\
&=&2(||\xi||^2-Re(<M\xi,\xi>))\\
&=&2(||\xi||^2-||\xi||^2)\\
&=&0
\end{eqnarray*}

Thus $a=0$, and by positivity we obtain $v\xi=\xi$, as desired.
\end{proof}

We can now formulate the general Haar measure result, due to Woronowicz \cite{wo1}:

\index{Haar measure}
\index{Haar integration}
\index{Ces\`aro limit}

\begin{theorem}
Any Woronowicz algebra has a unique Haar integration, which can be constructed by starting with any faithful positive unital state $\varphi\in A^*$, and setting
$$\int_G=\lim_{n\to\infty}\frac{1}{n}\sum_{k=1}^n\varphi^{*k}$$
where $\phi*\psi=(\phi\otimes\psi)\Delta$. Moreover, for any corepresentation $v$ we have
$$\left(id\otimes\int_G\right)v=P$$
where $P$ is the orthogonal projection onto $Fix(v)=\{\xi\in\mathbb C^n|v\xi=\xi\}$.
\end{theorem}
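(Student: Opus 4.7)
The plan is to combine Propositions 3.16 and 3.17 to construct $\int_G$, verify the axioms of Definition 3.14, and settle uniqueness. Pick any faithful positive unital state $\varphi\in A^*$ (existence given by Proposition 1.25), and set $\int_G:=\lim_n\frac{1}{n}\sum_{k=1}^n\varphi^{*k}$. By Proposition 3.16 this limit converges on every coefficient of a corepresentation, hence on the dense $*$-subalgebra $\mathcal A=<u_{ij}>$, and Proposition 3.17 identifies its value as $(id\otimes\int_G)v=P_v$, where $P_v$ is the orthogonal projection onto $Fix(v)$. In particular, since $P_v$ depends only on $v$, the functional $\int_G$ is independent of $\varphi$ on $\mathcal A$. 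Each $\varphi^{*k}$ is a positive unital state (convolution preserves this, since $\Delta$ is a unital $*$-morphism), so the Cesàro averages have norm $\leq 1$ and $\int_G$ extends continuously to all of $A$ as a positive unital state.

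For bi-invariance, it suffices to check on coefficients $v_{ij}$. One computes directly
$$(id\otimes\int_G)\Delta(v_{ij})=\sum_k v_{ik}(P_v)_{kj}=(vP_v)_{ij}.$$
Viewing $P_v\in M_n(\mathbb C)$ as $P_v\otimes 1_A\in M_n(A)$, the identity $vP_v=P_v$ holds because $v$ restricts to the identity on $Fix(v)$ by definition; so the above equals $(P_v)_{ij}=\int_G(v_{ij})\cdot 1$, giving right invariance. Left invariance is symmetric, using $P_vv=P_v$, which in turn follows from $Fix(v)=Fix(v^*)$ (apply $v^*$ to $v\xi=\xi$ and use $v^*v=1$). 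For uniqueness, let $\psi$ be any Haar state; the convolution $\int_G*\psi=(\int_G\otimes\psi)\Delta$ equals $\psi$ by the right invariance of $\psi$, and equals $\int_G$ by the left invariance of $\int_G$, so $\psi=\int_G$. Uniqueness then gives independence from the faithful $\varphi$ on all of $A$, not just on $\mathcal A$.

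The main obstacle is the \emph{traciality} of $\int_G$, as demanded by Definition 3.14. The standard route is: first show $\int_G\circ S=\int_G$, by checking that $\int_G\circ S$ remains bi-invariant via the identity $\Delta\circ S=\Sigma\circ(S\otimes S)\circ\Delta$ (which interchanges left and right invariance) and then invoking uniqueness; second, combine this with $S^2=id$ and the anti-multiplicativity of $S$ to deduce $\int_G(ab)=\int_G(ba)$. The second step is the delicate one, typically relying on Schur-orthogonality identities for irreducible corepresentations to reduce traciality to a symmetry of the fixed-point projections $P_{v\otimes w}$ attached to tensor products of Peter-Weyl corepresentations. The Kac hypothesis $S^2=id$ built into the paper's formalism is essential here; without it, one only obtains a KMS state with a nontrivial modular automorphism.
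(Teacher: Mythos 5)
Your proposal follows essentially the same route as the paper's proof: the Ces\`aro limit built on Propositions 3.16 and 3.17, invariance checked on coefficients of corepresentations through the projection onto $Fix(v)$, and uniqueness by evaluating $(\int_G\otimes\int_G')\Delta$ in the two possible ways. The only difference in the invariance step is cosmetic: you verify directly that $vP_v=P_vv=P_v$ (using unitarity of $v$ to get $Fix(v)\subset Fix(v^*)$), while the paper pairs with an arbitrary $\psi\in A^*$, sets $Q=(id\otimes\psi)v$, proves $PQ=QP=\psi(1)P$, and then ``deletes'' $\psi$ -- this is the same computation in dual form, resting on the same two facts. Your care in extending the limit from $\mathcal A$ to a state on all of $A$ is a detail the paper leaves implicit, and your argument for it is fine.

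Concerning traciality: you are right that Definition 3.14 formally builds the trace property into the term ``Haar integration'', and that this is special to the $S^2=id$ (Kac) setting. But note that the paper's own proof of Theorem 3.18 does not address this point either -- it establishes only existence, invariance and uniqueness. So your proposal covers everything the paper's proof covers, by the same method, and your sketch of the remaining step ($\int_G\circ S=\int_G$ by uniqueness via $\Delta S=\Sigma(S\otimes S)\Delta$, then Schur-type orthogonality) is the standard way to finish; the orthogonality computations needed are exactly those the paper carries out later, in the proof of Theorem 3.24, where $(id\otimes\int_G)(v\otimes\bar{v})$ is identified with the projection onto $Fix(v\otimes\bar{v})\simeq End(\bar{v})$, which together with $S^2=id$ and Peter--Weyl yields $\int_G(ab)=\int_G(ba)$. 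If you want a fully self-contained argument you should write that step out explicitly, but it is not a gap relative to the paper's own proof.
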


\begin{proof}
Let us first go back to the general context of Proposition 3.16 above. Since convolving one more time with $\varphi$ will not change the Ces\`aro limit appearing there, the functional $\int_\varphi\in A^*$ constructed there has the following invariance property:
$$\int_\varphi*\varphi=\varphi*\int_\varphi=\int_\varphi$$

In the case where $\varphi$ is assumed to be faithful, as in Proposition 3.17 above, our claim is that we have the following formula, valid this time for any $\psi\in A^*$:
$$\int_\varphi*\psi=\psi*\int_\varphi=\psi(1)\int_\varphi$$

It is enough to prove this formula on a coefficient of a corepresentation, $a=(\tau\otimes id)v$. In order to do so, consider the following matrices:
$$P=\left(id\otimes\int_\varphi\right)v\quad,\quad 
Q=(id\otimes\psi)v$$

In terms of these matrices, we have:
$$\left(\int_\varphi*\psi\right)a
=\left(\tau\otimes\int_\varphi\otimes\psi\right)(v_{12}v_{13})
=\tau(PQ)$$

Similarly, we have the following computation:
$$\left(\psi*\int_\varphi\right)a
=\left(\tau\otimes\psi\otimes\int_\varphi\right)(v_{12}v_{13})
=\tau(QP)$$

Finally, regarding the term on the right, this is given by:
$$\psi(1)\int_\varphi a=\psi(1)\tau(P)$$

Thus, our claim is equivalent to the following equality:
$$PQ=QP=\psi(1)P$$

But this latter equality follows from the fact, coming from Proposition 3.17 above, that $P=(id\otimes\int_\varphi)v$ equals the orthogonal projection onto $Fix(v)$. Thus, we have proved our claim. Now observe that our formula can be written as:
$$\psi\left(\int_\varphi\otimes id\right)\Delta=\psi\left(id\otimes\int_\varphi\right)\Delta=\psi\int_\varphi(.)1$$

This formula being true for any $\psi\in A^*$, we can simply delete $\psi$, and we conclude that the invariance formula in Definition 3.14 holds indeed, with $\int_G=\int_\varphi$. Finally, assuming that we have two invariant integrals $\int_G,\int_G'$, we have:
\begin{eqnarray*}
\left(\int_G\otimes\int_G'\right)\Delta
&=&\left(\int_G'\otimes\int_G\right)\Delta\\
&=&\int_G(.)1\\
&=&\int_G'(.)1
\end{eqnarray*}

Thus we have $\int_G=\int_G'$, and this finishes the proof.
\end{proof}

As a first observation, in the case of the classical groups, and of the group duals, we recover the various Haar measure results mentioned before. As another illustration, for the basic product operations, we have the following result, due to Wang \cite{wa1}:

\index{product of quantum groups}
\index{dual free product}
\index{quotient quantum group}
\index{projective version}

\begin{proposition}
We have the following results:
\begin{enumerate}
\item For a product $G\times H$, we have $\int_{G\times H}=\int_G\otimes\int_H$.

\item For a dual free product $G\,\hat{*}\,H$, we have $\int_{G\,\hat{*}\,H}=\int_G*\int_H$.

\item For a quotient $G\to H$, we have $\int_H=\left(\int_G\right)_{|C(H)}$.

\item For a projective version $G\to PG$, we have $\int_{PG}=\left(\int_G\right)_{|C(PG)}$.
\end{enumerate}
\end{proposition}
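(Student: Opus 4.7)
The unifying strategy is to invoke the uniqueness half of Theorem 3.18: in each case, I would exhibit the claimed functional on the appropriate Woronowicz algebra, verify that it is a positive unital tracial state and that it is left and right invariant with respect to the relevant comultiplication, and then conclude by uniqueness that it must coincide with the Haar integral. All positivity/unitality/traciality checks are standard and I would do them first; the substantive content is verifying invariance in each of the four constructions.

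For (1), consider the tensor product state $\varphi = \int_G\otimes\int_H$ on $C(G)\otimes C(H) = C(G\times H)$. Positivity, unitality and traciality are inherited from the factors. For invariance I would use that the comultiplication of the product quantum group, defined on generators by $\Delta(u_{ij})=\sum_k u_{ik}\otimes u_{kj}$ and $\Delta(v_{ab})=\sum_c v_{ac}\otimes v_{cb}$, factors as $\Delta = (id\otimes\sigma\otimes id)\circ(\Delta_G\otimes\Delta_H)$ for the obvious flip $\sigma$. On a pure tensor $a\otimes b$ one then computes $(id\otimes\varphi)\Delta(a\otimes b) = \int_G(a)\int_H(b)\cdot 1 = \varphi(a\otimes b)\cdot 1$, and similarly from the other side.

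For (2), take $\varphi = \int_G * \int_H$ to be the free product state on $C(G)*C(H)$, which is well-defined since $\int_G, \int_H$ are states; it is tracial because both factors are. The key observation is that the comultiplication on the dual free product sends $C(G)$ into $C(G)\otimes C(G)$ and $C(H)$ into $C(H)\otimes C(H)$, so on each factor the invariance reduces to that of $\int_G$ or $\int_H$ respectively. The main obstacle is verifying invariance on alternating words $a_1b_1a_2b_2\cdots$. I expect this to follow from the universal property: both sides of the invariance formula are states on $C(G)*C(H)$ that agree on each factor and respect the free product structure, so they must coincide. Concretely, if $a\in C(G)$ is centered (i.e. $\int_G(a)=0$) then $\Delta(a)\in C(G)\otimes C(G)$ is centered on the right slot, and alternation of centered elements is precisely what the free product state sees; iterating this reduces the computation on any word to a product of factor-level invariance statements.

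For (3), the quotient $G\to H$ of Proposition 2.15 gives $C(H)=\langle v_{ij}\rangle \subset C(G)$ as a Woronowicz subalgebra, and by construction the comultiplication $\Delta_H$ is the restriction of $\Delta_G$. Therefore the restriction $\varphi = \left(\int_G\right)|_{C(H)}$ is automatically a positive unital tracial state, and for $a\in C(H)$ we have $(id\otimes \varphi)\Delta_H(a) = (id\otimes\int_G)\Delta_G(a) = \int_G(a)\cdot 1 = \varphi(a)\cdot 1$, with the symmetric computation on the other side; uniqueness gives $\varphi=\int_H$. Case (4) is then immediate, since by Theorem 2.16 the projective version $PG$ is a quotient quantum group of $G$ in exactly the sense of Proposition 2.15, so (4) is the special case of (3) with $C(PG) = \langle u_{ij}u_{ab}^*\rangle \subset C(G)$.
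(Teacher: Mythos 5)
Your proposal is correct and follows essentially the same route as the paper: in each case one exhibits the candidate functional, checks the left and right invariance conditions, and concludes by the uniqueness part of Theorem 3.18, with (4) reduced to (3). The only place where you add real substance is (2), where the paper just says ``the situation is similar''; your centered-alternating-words argument is the standard way to fill this in, and is fully justified once one notes via Peter--Weyl that a centered element has its coproduct in the span of the spaces $C(v)\otimes C(v)$ with $v$ nontrivial irreducible, so that the right legs may indeed be taken centered and the free product state kills the resulting alternating words.
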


\begin{proof}
These formulae all follow from the invariance property, as follows:

\medskip

(1) Here the tensor product form $\int_G\otimes\int_H$ satisfies the left and right invariance properties of the Haar functional $\int_{G\times H}$, and so by uniqueness, it is equal to it.

\medskip

(2) Here the situation is similar, with the free product of linear forms being defined with some inspiration from the discrete group case, where $\int_{\widehat{\Gamma}}g=\delta_{g,1}$.

\medskip

(3) Here the restriction $\left(\int_G\right)_{|C(H)}$ satisfies by definition the required left and right invariance properties, so once again we can conclude by uniqueness.

\medskip

(4) Here we simply have a particular case of (3) above.
\end{proof}

In practice, the last assertion in Theorem 3.18 is the most useful one. By applying it to the Peter-Weyl corepresentations, we obtain the following alternative statement:

\index{Weingarten formula}
\index{Weingarten integration}

\begin{theorem}
The Haar integration of a Woronowicz algebra is given, on the coefficients of the Peter-Weyl corepresentations, by the Weingarten formula
$$\int_Gu_{i_1j_1}^{e_1}\ldots u_{i_kj_k}^{e_k}=\sum_{\pi,\sigma\in D_k}\delta_\pi(i)\delta_\sigma(j)W_k(\pi,\sigma)$$
valid for any colored integer $k=e_1\ldots e_k$ and any multi-indices $i,j$, where:
\begin{enumerate}
\item  $D_k$ is a linear basis of $Fix(u^{\otimes k})$.

\item $\delta_\pi(i)=<\pi,e_{i_1}\otimes\ldots\otimes e_{i_k}>$. 

\item $W_k=G_k^{-1}$, with $G_k(\pi,\sigma)=<\pi,\sigma>$.
\end{enumerate}
\end{theorem}

\begin{proof}
As a first observation, the above formula computes indeed the Haar integral, because the coefficients of the Peter-Weyl corepresentations span a dense subalgebra:
$$A=\overline{span\left(u_{i_1j_1}^{e_1}\ldots u_{i_kj_k}^{e_k}\Big| e,i,j,k\right)}$$

Regarding now the proof, we know from Theorem 3.18 that the integrals in the statement form altogether the orthogonal projection $P$ onto the following space:
$$Fix(u^{\otimes k})=span(D_k)$$

Consider now the following linear map:
$$E(x)=\sum_{\pi\in D_k}<x,\pi>\pi$$

By a standard linear algebra computation, it follows that we have $P=WE$, where $W$ is the inverse on $span(D_k)$ of the restriction of $E$. But this restriction is the linear map given by $G_k$, and so $W$ is the linear map given by $W_k$, and this gives the result.
\end{proof}

We will be back to the above two Haar measure theorems, which are both fundamental, with versions, illustrations and applications, on several occasions, later on.

\section*{3d. More Peter-Weyl}

Let us go back now to algebra, and establish two more Peter-Weyl theorems. We will need the following result, which is very useful, and is of independent interest:

\index{Frobenius isomorphism}

\begin{theorem}
We have a Frobenius type isomorphism
$$Hom(v,w)\simeq Fix(\bar{v}\otimes w)$$
valid for any two corepresentations $v,w$.
\end{theorem}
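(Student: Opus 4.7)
The plan is to exhibit an explicit linear bijection between the two spaces and verify it intertwines the defining conditions. Identify $M_{m\times n}(\mathbb{C})$ with $\mathbb{C}^n\otimes\mathbb{C}^m$ via the standard linear isomorphism
$$T\longmapsto \xi_T=\sum_{i,a}T_{ai}\,e_i\otimes e_a,$$
so that the coordinates of $\xi_T$ are $(\xi_T)_{ia}=T_{ai}$. This is a linear isomorphism between the ambient scalar spaces, so it suffices to show that $T\in\mathrm{Hom}(v,w)$ if and only if $\xi_T\in\mathrm{Fix}(\bar v\otimes w)$.

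Write out both conditions at the level of coordinates. The intertwining condition $Tv=wT$ reads
$$\sum_j T_{aj}v_{ji}=\sum_b w_{ab}T_{bi}\qquad(\forall a,i),$$
while, since $(\bar v\otimes w)_{ia,jb}=v_{ij}^{*}w_{ab}$, the fixed vector condition $(\bar v\otimes w)\xi_T=\xi_T$ (viewed inside $\mathbb{C}^n\otimes\mathbb{C}^m\otimes A$) reads
$$\sum_{j,b}v_{ij}^{*}w_{ab}T_{bj}=T_{ai}\qquad(\forall a,i).$$
The key step is to pass between these two identities using the biunitarity of $v$ established in Proposition 2.7, namely $vv^{*}=1$ and $v^{t}\bar v=1$.

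For the implication $\mathrm{Hom}\Rightarrow\mathrm{Fix}$, multiply the intertwining relation on the right by $v_{ki}^{*}$ and sum over $i$; the left-hand side collapses via $\sum_i v_{ji}v_{ki}^{*}=\delta_{jk}$ (unitarity of $v$), producing exactly the fixed-point relation. For the reverse implication $\mathrm{Fix}\Rightarrow\mathrm{Hom}$, multiply the fixed-point relation on the left by $v_{ik}$ and sum over $i$; the right-hand side collapses via $\sum_i v_{ik}v_{ij}^{*}=\delta_{kj}$, which is precisely the unitarity of the transpose $v^{t}\bar v=1$ (this is where the second half of biunitarity is used). Linearity of the assignment $T\mapsto\xi_T$ is immediate, and the two implications together show it restricts to the desired isomorphism.

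The only real subtlety is bookkeeping: one must place indices in the correct slots so that it is $v^{*}v$-type contractions that appear on one side and $vv^{*}$-type contractions on the other, which is why both halves of biunitarity are genuinely needed. Once this is in place the proof is a one-line calculation in each direction.
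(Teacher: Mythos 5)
Your setup is the right one, and your Fix $\Rightarrow$ Hom direction is correct: multiplying $\sum_{j,b}v_{ij}^*w_{ab}T_{bj}=T_{ai}$ on the left by $v_{ik}$ and summing over $i$ collapses the sum via $\sum_i v_{ik}v_{ij}^*=(v^t\bar v)_{kj}=\delta_{kj}$ and yields $\sum_b w_{ab}T_{bk}=\sum_i T_{ai}v_{ik}$, which is the intertwining relation. The gap is in the Hom $\Rightarrow$ Fix direction. If you multiply $\sum_j T_{aj}v_{ji}=\sum_b w_{ab}T_{bi}$ on the right by $v_{ki}^*$ and sum over $i$, the left side does collapse via $\sum_i v_{ji}v_{ki}^*=(vv^*)_{jk}=\delta_{jk}$, but the right side becomes $\sum_{i,b}w_{ab}\,v_{ki}^*\,T_{bi}$, with $w_{ab}$ sitting to the \emph{left} of $v_{ki}^*$ (only the scalars $T_{bi}$ can be moved around). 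Since $A$ is noncommutative, this is not "exactly the fixed-point relation" $\sum_{i,b}v_{ki}^*w_{ab}T_{bi}=T_{ak}$: what you have actually derived is the condition for $T$ to give a fixed vector of $w\otimes\bar v$, not of $\bar v\otimes w$. (That condition happens to be equivalent to $T\in Hom(v,w)$ as well, but showing this is an additional argument, not the statement you are after, and you cannot bridge the two by commuting $w_{ab}$ past $v_{ki}^*$.)

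The repair is small but instructive: multiply on the \emph{left} by $v_{ki}^*$ instead. Then the intertwining relation collapses via $\sum_i v_{ki}^*v_{ji}=(\bar v v^t)_{kj}=\delta_{kj}$, i.e. via the unitarity of $v^t$ (equivalently of $\bar v$), and you land exactly on $T_{ak}=\sum_{i,b}v_{ki}^*w_{ab}T_{bi}$, with the factors in the required order. This is precisely what the paper's proof does: both directions run on the unitarity of $v^t$, through the contractions $\bar v v^t=1$ and $v^t\bar v=1$, and the plain unitarity $vv^*=v^*v=1$ of $v$ is never used. So your closing "moral" -- that one direction needs a $vv^*$-type contraction and the other a $v^*v$-type one -- is exactly where the wrong ordering sneaks in; in the noncommutative setting, the side on which you multiply is dictated by which side of $w_{ab}$ the conjugate coefficient must end up on.
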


\begin{proof}
According to the definitions, we have the following equivalence:
\begin{eqnarray*}
T\in Hom(v,w)
&\iff&Tv=wT\\
&\iff&\sum_jT_{aj}v_{ji}=\sum_bw_{ab}T_{bi}
\end{eqnarray*}

On the other hand, we have as well the following equivalence:
\begin{eqnarray*}
T\in Fix(\bar{v}\otimes w)
&\iff&(\bar{v}\otimes w)T=T\\
&\iff&\sum_{kb}v_{ik}^*w_{ab}T_{bk}=T_{ai}
\end{eqnarray*}

With these formulae in hand, we must prove that we have:
$$\sum_jT_{aj}v_{ji}=\sum_bw_{ab}T_{bi}\iff \sum_{kb}v_{ik}^*w_{ab}T_{bk}=T_{ai}$$

(1) In one sense, the computation is as follows, using the unitarity of $v^t$:
\begin{eqnarray*}
\sum_{kb}v_{ik}^*w_{ab}T_{bk}
&=&\sum_kv_{ik}^*\sum_bw_{ab}T_{bk}\\
&=&\sum_kv_{ik}^*\sum_jT_{aj}v_{jk}\\
&=&\sum_j(\bar{v}v^t)_{ij}T_{aj}\\
&=&T_{ai}
\end{eqnarray*}

(2) In the other sense we have, once again by using the unitarity of $v^t$:
\begin{eqnarray*}
\sum_jT_{aj}v_{ji}
&=&\sum_jv_{ji}\sum_{kb}v_{jk}^*w_{ab}T_{bk}\\
&=&\sum_{kb}(v^t\bar{v})_{ik}w_{ab}T_{bk}\\
&=&\sum_bw_{ab}T_{bi}
\end{eqnarray*}

Thus, we are led to the conclusion in the statement.
\end{proof}

With these ingredients, namely two Peter-Weyl theorems, Haar measure and Frobenius duality, we can establish a third Peter-Weyl theorem, also from Woronowicz \cite{wo1}:

\index{Peter-Weyl theorem}

\begin{theorem}[PW3]
The dense subalgebra $\mathcal A\subset A$ decomposes as a direct sum 
$$\mathcal A=\bigoplus_{v\in Irr(A)}M_{\dim(v)}(\mathbb C)$$
with this being an isomorphism of $*$-coalgebras, and with the summands being pairwise orthogonal with respect to the scalar product given by
$$<a,b>=\int_Gab^*$$
where $\int_G$ is the Haar integration over $G$.
\end{theorem}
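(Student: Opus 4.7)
The plan is to combine the first two Peter-Weyl theorems with Haar integration and Frobenius duality. By Theorem 3.12, every irreducible corepresentation of $A$ embeds into some Peter-Weyl corepresentation $u^{\otimes k}$, and by Theorem 3.9, each $u^{\otimes k}$ splits as a direct sum of irreducibles. Since the span of coefficients of $u^{\otimes k}$ over all $k$ is exactly $\mathcal{A}$, and this operation is functorial with respect to subcorepresentations, we obtain $\mathcal{A} = \sum_{v\in Irr(A)} C(v)$, where $C(v) = \mathrm{span}(v_{ij})$. It remains to prove that this sum is direct and orthogonal, that $\dim C(v) = (\dim v)^2$, and that $C(v) \simeq M_{\dim v}(\mathbb{C})$ as a coalgebra.

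The heart of the matter is a Schur-type orthogonality computation. For $v,w \in Irr(A)$, the matrix entries $v_{ij}w_{kl}^*$ are precisely the entries of $v \otimes \bar{w}$, so Theorem 3.18 gives
$$\left(id\otimes\int_G\right)(v\otimes\bar w) = P_{v,w},$$
where $P_{v,w}$ is the orthogonal projection onto $Fix(v\otimes\bar w)$. By the Frobenius isomorphism of Theorem 3.22, $Fix(v\otimes\bar w) \simeq Hom(w,v)$. A standard Schur argument (if $T \in Hom(w,v)$ is nonzero, then $T^*T \in End(w) = \mathbb{C}$ and $TT^* \in End(v) = \mathbb{C}$ force $T$ to be a scalar multiple of an isomorphism $w \simeq v$) shows that $Hom(w,v) = 0$ whenever $v \not\sim w$, so $P_{v,w} = 0$ and hence $\int_G v_{ij}w_{kl}^* = 0$ for $v \not\sim w$. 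This gives the orthogonality of the coefficient spaces with respect to $\langle a,b\rangle = \int_G ab^*$, and in particular forces the sum $\sum_v C(v)$ to be direct.

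Applied to $v = w$ irreducible, the same machinery yields $\dim Fix(v\otimes\bar v) = \dim End(v) = 1$, so $P_{v,v}$ is a rank-one projection; tracing the Frobenius correspondence back identifies its fixed vector as (a normalization of) $\sum_i e_i \otimes e_i$, producing a formula of the shape
$$\int_G v_{ij} v_{kl}^* = \frac{\delta_{ik}\delta_{jl}}{\dim v}.$$
This simultaneously establishes linear independence of $\{v_{ij}\}_{i,j}$, hence $\dim C(v) = (\dim v)^2$, and confirms the orthogonality within a single summand. The coalgebra identification $C(v) \simeq M_{\dim v}(\mathbb{C})$ is then immediate: the map $v_{ij} \mapsto e_{ij}$ transports the formula $\Delta(v_{ij}) = \sum_k v_{ik}\otimes v_{kj}$ to the standard matrix coalgebra comultiplication $\Delta(e_{ij}) = \sum_k e_{ik}\otimes e_{kj}$, and the $*$-structure is compatible via the pairing of $C(v)$ with $C(\bar v)$ realized by the involution.

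The main obstacle is the rank-one computation for $v \sim w$: one must identify $P_{v,v}$ explicitly, not merely as a projection, in order to read off the normalizing constant and conclude linear independence of the $v_{ij}$. Everything else is packaging of Theorems 3.9, 3.12, 3.18 and 3.22.
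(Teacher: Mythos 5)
Your proof is correct and follows the same overall strategy as the paper's: decompose $\mathcal A = \sum_v C(v)$ via PW1 and PW2, then use the Haar-integration projection formula (Theorem 3.18) together with Frobenius duality (Theorem 3.21) to establish orthogonality of the coefficient spaces for non-equivalent irreducibles. The paper's proof is terser: it simply asserts $C(v)=M_{\dim v}(\mathbb C)$ as a consequence of PW1 and PW2 and stops after showing $v\not\sim w\implies C(v)\perp C(w)$, whereas you go further and explicitly compute the intra-block Schur orthogonality relation $\int_G v_{ij}v_{kl}^*=\frac{\delta_{ik}\delta_{jl}}{\dim v}$ (valid here thanks to the $S^2=\mathrm{id}$, i.e.\ Kac, assumption) in order to establish linear independence of the $v_{ij}$ and hence $\dim C(v)=(\dim v)^2$. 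This is genuine added rigor: the paper's one-line derivation of that dimension count is really a gloss. One small index slip worth fixing: the paper's Frobenius theorem gives $Fix(v\otimes\bar w)\simeq Hom(\bar v,\bar w)$, not $Hom(w,v)$; you quote the latter. For irreducibles both vanish precisely when $v\not\sim w$, so the conclusion is unaffected, but if you want to cite Theorem 3.21 directly you should write $Hom(\bar v,\bar w)$, or else justify the alternate form of Frobenius duality separately.
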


\begin{proof}
By combining the previous Peter-Weyl results, from Theorem 3.9 and Theorem 3.12 above, we deduce that we have a linear space decomposition as follows:
$$\mathcal A
=\sum_{v\in Irr(A)}C(v)
=\sum_{v\in Irr(A)}M_{\dim(v)}(\mathbb C)$$

Thus, in order to conclude, it is enough to prove that for any two irreducible corepresentations $v,w\in Irr(A)$, the corresponding spaces of coefficients are orthogonal:
$$v\not\sim w\implies C(v)\perp C(w)$$ 

But this follows from Theorem 3.18, via Theorem 3.21. Let us set indeed:
$$P_{ia,jb}=\int_Gv_{ij}w_{ab}^*$$

Then $P$ is the orthogonal projection onto the following vector space:
$$Fix(v\otimes\bar{w})
\simeq Hom(\bar{v},\bar{w})
=\{0\}$$

Thus we have $P=0$, and this gives the result.
\end{proof}

We can obtain further results by using characters, which are defined as follows:

\index{character}

\begin{proposition}
The characters of the corepresentations, given by 
$$\chi_v=\sum_iv_{ii}$$ 
behave as follows, in respect to the various operations:
$$\chi_{v+w}=\chi_v+\chi_w$$
$$\chi_{v\otimes w}=\chi_v\chi_w$$ 
$$\chi_{\bar{v}}=\chi_v^*$$
In addition, given two equivalent corepresentations, $v\sim w$, we have $\chi_v=\chi_w$.
\end{proposition}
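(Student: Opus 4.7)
The plan is to verify each identity directly from the definitions of the corresponding operations on corepresentations, as recorded in Propositions 3.3 and 3.6. None of the four claims should pose a genuine obstacle; the only point requiring any care is the equivalence statement, where one must note that although $v$ and $w$ have entries in the noncommutative algebra $\mathcal{A}$, the similarity matrix $T$ is scalar, and so commutes freely with those entries.

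I would handle the additive and conjugation formulae first, as they are immediate: since $v+w=\mathrm{diag}(v,w)$, its diagonal is the concatenation of the diagonals of $v$ and $w$, and since $(\bar v)_{ij}=v_{ij}^*$, the characters behave as claimed essentially by inspection. The tensor product formula is the next simplest: the diagonal entries of $v\otimes w$ are indexed by pairs $(i,a)$ with value $v_{ii}w_{aa}$, so
$$\chi_{v\otimes w}=\sum_{i,a}v_{ii}w_{aa}=\Big(\sum_i v_{ii}\Big)\Big(\sum_a w_{aa}\Big)=\chi_v\chi_w.$$

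For the equivalence $v\sim w$, I would pick an invertible scalar matrix $T\in M_n(\mathbb{C})$ with $Tv=wT$, whence $v=T^{-1}wT$ and $v_{ij}=\sum_{k,l}(T^{-1})_{ik}w_{kl}T_{lj}$. Summing over $i=j$, and freely permuting the complex scalars $(T^{-1})_{ik},\,T_{li}$ past the algebra element $w_{kl}$, I would reorganize
$$\chi_v=\sum_{i,k,l}(T^{-1})_{ik}w_{kl}T_{li}=\sum_{k,l}\Big(\sum_i T_{li}(T^{-1})_{ik}\Big)w_{kl}=\sum_{k,l}\delta_{lk}w_{kl}=\chi_w.$$
This is the only step really worth writing out, and it amounts to the classical cyclicity of trace in the setting of matrices over a (possibly noncommutative) algebra whose similarity is implemented by scalars.
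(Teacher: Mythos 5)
Your proof is correct and follows essentially the same route as the paper: the first three identities are read off directly from the definitions of $v+w$, $v\otimes w$, and $\bar v$, and the invariance under equivalence is the cyclicity of trace $\mathrm{Tr}(T^{-1}wT)=\mathrm{Tr}(w)$, which you justify (correctly) by noting that the entries of $T$ are scalars and hence commute with the entries of $w$. The paper leaves the first three to the reader and states the trace identity without expansion; your version merely fills in those routine details.
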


\begin{proof}
The three formulae in the statement are all clear from definitions. Regarding now the last assertion, assuming that we have $v=T^{-1}wT$, we obtain:
\begin{eqnarray*}
\chi_v
&=&Tr(v)\\
&=&Tr(T^{-1}wT)\\
&=&Tr(w)\\
&=&\chi_w
\end{eqnarray*}

We conclude that $v\sim w$ implies $\chi_v=\chi_w$, as claimed.
\end{proof}

We have the following more advanced result, regarding the characters, also from Woronowicz \cite{wo1}, completing the Peter-Weyl theory:

\index{Peter-Weyl theorem}
\index{central function}

\begin{theorem}[PW4]
The characters of the irreducible corepresentations belong to the $*$-algebra
$$\mathcal A_{central}=\left\{a\in\mathcal A\Big|\Sigma\Delta(a)=\Delta(a)\right\}$$
of ``smooth central functions'' on $G$, and form an orthonormal basis of it.
\end{theorem}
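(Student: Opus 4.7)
My plan is to prove the two claims (centrality and orthonormal-basis property) separately, leaning entirely on the Peter-Weyl decomposition (Theorem 3.23), the Frobenius isomorphism (Theorem 3.21), and the projection formula for the Haar integral (Theorem 3.18).

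First I would check that $\chi_v \in \mathcal{A}_{central}$ for every corepresentation $v$, not just the irreducible ones. By definition of the comultiplication on coefficients,
$$\Delta(\chi_v)=\sum_i\Delta(v_{ii})=\sum_{i,k}v_{ik}\otimes v_{ki},$$
and applying the flip $\Sigma$ just relabels the summation indices $i\leftrightarrow k$, giving $\Sigma\Delta(\chi_v)=\Delta(\chi_v)$. This is immediate and handles the first half of the statement.

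Next I would establish orthonormality. For $v,w\in Irr(A)$ inequivalent, Theorem 3.23 says $C(v)\perp C(w)$ under $\langle a,b\rangle=\int_G ab^*$, so term-by-term $\int_G v_{ii}w_{jj}^*=0$ and hence $\langle\chi_v,\chi_w\rangle=0$. For $v=w$, Proposition 3.24 gives $\chi_v\chi_v^*=\chi_v\chi_{\bar v}=\chi_{v\otimes\bar v}$, so
$$\langle\chi_v,\chi_v\rangle=\int_G\chi_{v\otimes\bar v}=\mathrm{tr}\!\left((id\otimes\textstyle\int_G)(v\otimes\bar v)\right)=\dim Fix(v\otimes\bar v),$$
where the last equality uses Theorem 3.18. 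By Frobenius duality (Theorem 3.21), $Fix(v\otimes\bar v)\simeq Hom(\bar v,\bar v)$, and $\bar v$ is irreducible whenever $v$ is (if $T\in End(\bar v)$ then the conjugated intertwiner lies in $End(v)=\mathbb C$), so this dimension equals $1$. Thus $\{\chi_v\}_{v\in Irr(A)}$ is orthonormal.

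The main obstacle, and the one meriting the most care, is showing that these characters actually span $\mathcal{A}_{central}$. I would proceed as follows: by Theorem 3.23, any $a\in\mathcal A$ decomposes uniquely as $a=\sum_v a_v$ with $a_v\in C(v)\simeq M_{\dim v}(\mathbb C)$, and since $\Delta(C(v))\subset C(v)\otimes C(v)$, the equation $\Delta(a)=\Sigma\Delta(a)$ splits across the direct summands. Hence it suffices to show that within each block $C(v)$, the only central elements are scalar multiples of $\chi_v$. Writing $a_v=\sum_{ij}\alpha_{ij}v_{ij}$, one has
$$\Delta(a_v)=\sum_{i,j,k}\alpha_{ij}\,v_{ik}\otimes v_{kj},\qquad \Sigma\Delta(a_v)=\sum_{i,j,k}\alpha_{ij}\,v_{kj}\otimes v_{ik}.$$
Since the $v_{ij}$ form a basis of the irreducible block $C(v)$ (linear independence inside the matrix coalgebra), comparing coefficients of $v_{ab}\otimes v_{cd}$ yields $\alpha_{ad}\delta_{bc}=\alpha_{cb}\delta_{ad}$ for all $a,b,c,d$. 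Specializing $a=d$, $b=c$ gives $\alpha_{aa}$ independent of $a$, while $a=d$, $b\neq c$ forces $\alpha_{cb}=0$. Thus $a_v=c\,\chi_v$, which completes the proof.
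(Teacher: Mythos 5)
Your proof is correct and follows essentially the same route as the paper: centrality of characters by the flip computation, the Peter--Weyl block decomposition $\mathcal A=\bigoplus_v C(v)$ to reduce the centrality condition block by block, and the Haar projection formula combined with Frobenius duality $Fix(v\otimes\bar v)\simeq End(\bar v)=\mathbb C$ for orthonormality. The only difference is cosmetic: where the paper merely asserts that a central element of a block $C(v)$ must be a scalar multiple of $\chi_v$, you verify this by the explicit coefficient comparison $\alpha_{ad}\delta_{bc}=\alpha_{cb}\delta_{ad}$, which correctly fills in that step.
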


\begin{proof}
As a first remark, the linear space $\mathcal A_{central}$ defined above is indeed an algebra. In the classical case, we obtain in this way the usual algebra of smooth central functions. Also, in the group dual case, where we have $\Sigma\Delta=\Delta$, we obtain the whole convolution algebra. Regarding now the proof, in general, this goes as follows:

\medskip

(1) The algebra $\mathcal A_{central}$ contains indeed all the characters, because we have:
\begin{eqnarray*}
\Sigma\Delta(\chi_v)
&=&\Sigma\left(\sum_{ij}v_{ij}\otimes v_{ji}\right)\\
&=&\sum_{ij}v_{ji}\otimes v_{ij}\\
&=&\Delta(\chi_v)
\end{eqnarray*}

(2) Conversely, consider an element $a\in\mathcal A$, written as follows:
$$a=\sum_{v\in Irr(A)}a_v$$

The condition $a\in\mathcal A_{central}$ is then equivalent to the following conditions:
$$a_v\in\mathcal A_{central}\quad,\forall v\in Irr(A)$$

But each condition $a_v\in\mathcal A_{central}$ means that $a_v$ must be a scalar multiple of the corresponding character $\chi_v$, and so the characters form a basis of $\mathcal A_{central}$, as stated.

\medskip

(3) The fact that we have an orthogonal basis follows from Theorem 3.22. 

\medskip

(4) Finally, regarding the norm 1 assertion, consider the following integrals:
$$P_{ik,jl}=\int_Gv_{ij}v_{kl}^*$$

We know from Theorem 3.18 that these integrals form the orthogonal projection onto the following vector space, computed via Theorem 3.21:
$$Fix(v\otimes\bar{v})
\simeq End(\bar{v})
=\mathbb C1$$

By using this fact, we obtain the following formula:
\begin{eqnarray*}
\int_G\chi_v\chi_v^*
&=&\sum_{ij}\int_Gv_{ii}v_{jj}^*\\
&=&\sum_i\frac{1}{N}\\
&=&1
\end{eqnarray*}

Thus the characters have indeed norm 1, and we are done.
\end{proof}

As a first application of the Peter-Weyl theory, and more specifically of Theorem 3.24, we can now clarify a question that we left open in chapter 2, regarding the cocommutative case. To be more precise, once again following Woronowicz \cite{wo1}, we have:

\index{group algebra}
\index{cocommutative}
\index{cocommutative Woronowicz algebra}

\begin{theorem}
For a Woronowicz algebra $A$, the following are equivalent:
\begin{enumerate}
\item $A$ is cocommutative, $\Sigma\Delta=\Delta$.

\item The irreducible corepresentations of $A$ are all $1$-dimensional.

\item $A=C^*(\Gamma)$, for some group $\Gamma=<g_1,\ldots,g_N>$, up to equivalence.
\end{enumerate}
\end{theorem}

\begin{proof}
This follows from the Peter-Weyl theory, as follows:

\medskip

$(1)\implies(2)$ The assumption $\Sigma\Delta=\Delta$ tells us that the inclusion $\mathcal A_{central}\subset\mathcal A$ is an isomorphism, and by using Theorem 3.24 we conclude that any irreducible corepresentation of $A$ must be equal to its character, and so must be 1-dimensional.

\medskip

$(2)\implies(3)$ This follows once again from Peter-Weyl, because if we denote by $\Gamma$ the group formed by the 1-dimensional corepresentations, then we have $\mathcal A=\mathbb C[\Gamma]$, and so $A=C^*(\Gamma)$ up to the standard equivalence relation for Woronowicz algebras.

\medskip

$(3)\implies(1)$ This is something trivial, that we already know from chapter 2.
\end{proof}

The above result is not the end of the story, because one can still ask what happens, without reference to the equivalence relation. We will be back to this later.

\bigskip

At the level of the product operations, we have, following Wang \cite{wa1}:

\index{product of quantum groups}
\index{dual free product}
\index{quantum subgroup}
\index{projective version}

\begin{proposition}
We have the following results:
\begin{enumerate}
\item The irreducible corepresentations of $C(G\times H)$ are the tensor products of the form $v\otimes w$, with $v,w$ being irreducible corepresentations of $C(G),C(H)$.

\item The irreducible corepresentations of $C(G\,\hat{*}\,H)$ appear as alternating tensor products of irreducible corepresentations of $C(G)$ and of $C(H)$.

\item The irreducible corepresentations of $C(H)\subset C(G)$ are the irreducible corepresentations of $C(G)$ whose coefficients belong to $C(H)$.

\item The irreducible corepresentations of $C(PG)\subset C(G)$ are the irreducible corepresentations of $C(G)$ which appear by decomposing the tensor powers of $u\otimes\bar{u}$.
\end{enumerate}
\end{proposition}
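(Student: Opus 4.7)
The plan is to prove all four items by a unified application of the Peter--Weyl machinery built up in Theorems 3.9, 3.12, 3.22, 3.24, combined with the Haar integration formulae from Proposition 3.19. For each operation, I will exhibit a candidate family of corepresentations, verify irreducibility and pairwise inequivalence via orthonormality of characters, and then verify exhaustion by a matrix-coefficient spanning argument using PW3.

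For (1), given $v\in Irr(G)$ and $w\in Irr(H)$, the matrix $v\otimes w$ viewed inside $M_{\dim(v)\dim(w)}(C(G)\otimes C(H))=M_{\dim(v)\dim(w)}(C(G\times H))$ is a corepresentation with character $\chi_v\otimes\chi_w$. Using Proposition 3.19(1), i.e.\ $\int_{G\times H}=\int_G\otimes\int_H$, and the orthonormality of $\{\chi_v\}$, $\{\chi_w\}$ from Theorem 3.24, the family $\{\chi_v\otimes\chi_w\}$ is orthonormal in $L^2(G\times H)$. By Theorem 3.24 this forces each $v\otimes w$ to be irreducible and the family to be pairwise inequivalent. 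Exhaustion is immediate since the matrix coefficients of these products already span $\mathcal A(G)\otimes\mathcal A(H)=\mathcal A(G\times H)$.

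For (2), the same template applies but this is the substantive point and I expect it to be the main obstacle. The candidate irreducibles are alternating tensor products $r_1\otimes r_2\otimes\cdots\otimes r_k$ with each $r_i$ a nontrivial irreducible, consecutive factors coming from opposite sides (i.e.\ one from $Irr(G)$, the next from $Irr(H)$, and so on). Their characters are alternating products of centered characters $\chi_{r_i}-\varepsilon(\chi_{r_i})$ modulo lower order terms, and the key point is that the images of $\mathcal A(G)$ and $\mathcal A(H)$ inside $\mathcal A(G)*\mathcal A(H)=\mathcal A(G\,\hat *\,H)$ are free with respect to $\int_G*\int_H=\int_{G\,\hat *\,H}$ from Proposition 3.19(2). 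The Voiculescu moment-cumulant formula for free independence then forces orthonormality of these alternating character products, whence by Theorem 3.24 each such alternating tensor is irreducible and different alternating words give inequivalent corepresentations. Spanning in $\mathcal A(G)*\mathcal A(H)$ is again automatic. The delicate step is the freeness claim, which one verifies by checking that $\int_{G\,\hat *\,H}$ vanishes on any alternating product of centered elements coming from the two sides, a computation that reduces via Theorem 3.18 (the projection-onto-fixed-vectors formula) to the observation that nontrivial irreducibles have $Fix=0$.

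For (3), the quotient $H\subset G$ is by definition given by a Hopf $*$-algebra surjection $\pi:\mathcal A(G)\to\mathcal A(H)$ coming from Proposition 2.14. Given an irreducible corepresentation $w\in M_n(\mathcal A(H))$, choose any lift of its entries back to $\mathcal A(G)$; by Peter--Weyl PW3 on $\mathcal A(G)$ these lifts live in a finite sum of matrix blocks, so $w$ is realized (up to equivalence) as the $\pi$-image of some subcorepresentation $v\subset u^{\otimes k}$ of $G$. Since $\pi$ is a Hopf morphism it preserves $Hom$ spaces in one direction, so the irreducibility of $w$ forces $v$ itself to be irreducible and $w=\pi(v)$; conversely any irreducible $v$ of $G$ whose matrix coefficients survive (i.e.\ remain linearly independent) under $\pi$ yields an irreducible of $H$. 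This is exactly the statement ``coefficients belong to $C(H)$''.

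For (4), the construction of $PG$ in Theorem 2.15 yields by design $\mathcal A(PG)=\langle u_{ij}u_{ab}^*\rangle$, i.e.\ $\mathcal A(PG)$ is generated as a $*$-algebra by the coefficients of the corepresentation $w\subset u\otimes\bar u$ with $w_{ia,jb}=u_{ij}u_{ab}^*$. Applying PW2 (Theorem 3.12) inside the Woronowicz algebra $C(PG)$ with fundamental corepresentation $w$, every irreducible corepresentation of $PG$ embeds into some $w^{\otimes k}$, hence into $(u\otimes\bar u)^{\otimes k}$. Conversely, any irreducible subcorepresentation of some $(u\otimes\bar u)^{\otimes k}$ has all its matrix coefficients in $\mathcal A(PG)$ and so descends to an irreducible of $PG$ via part (3).
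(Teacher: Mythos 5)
Your treatments of (1), (2) and (4) follow essentially the paper's own (very terse) route: integrate characters against $\int_{G\times H}=\int_G\otimes\int_H$, respectively against the free product state $\int_{G\,\hat{*}\,H}=\int_G*\int_H$, get $\int|\chi|^2=1$ and pairwise orthogonality, then exhaust by Peter--Weyl spanning; you merely write out the freeness computation that the paper leaves implicit. One small slip in (2): the centering relevant for freeness is with respect to the Haar state, not the counit ($\varepsilon(\chi_r)=\dim r\neq 0$), and for nontrivial irreducibles $\int\chi_r=0$ already, so no centering is needed at all.

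The genuine problem is (3). The statement refers to the construction of Proposition 2.14: $C(H)=<v_{ij}>$ is a Woronowicz \emph{subalgebra} of $C(G)$ (so $H$ is a quotient quantum group of $G$), and the map in play is the inclusion $C(H)\subset C(G)$. You instead posit a Hopf $*$-algebra surjection $\pi:\mathcal A(G)\to\mathcal A(H)$ -- that is the quantum subgroup setting of Proposition 2.13 -- and argue by lifting the coefficients of $w\in Irr(H)$ and claiming $w=\pi(v)$ for an irreducible $v$ of $G$. In that surjective setting this is false: $\pi(v)$ is the restriction of $v$ to the subgroup, which in general decomposes, and an irreducible of the subgroup need not be the exact image of any irreducible of $G$ (for $\mathbb T\subset SU_2$, the nontrivial characters of $\mathbb T$ are proper subcorepresentations of restricted irreducibles and never of the form $\pi(v)$ with $v$ irreducible); moreover ``the coefficients remain linearly independent under $\pi$'' is not the condition ``the coefficients belong to $C(H)$'' appearing in the statement. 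For the statement actually at hand no lifting is needed: since $\Delta,\varepsilon,S$ on $C(H)$ are the restrictions of those of $C(G)$, a unitary matrix $w\in M_n(\mathcal A(H))$ is a corepresentation of $C(H)$ precisely when it is a corepresentation of $C(G)$ with coefficients in $C(H)$, and the spaces $Hom(w,w')$ are defined by the same matrix equations in either algebra, so irreducibility means the same thing on both sides; Peter--Weyl then identifies $Irr(H)$ with those $r\in Irr(G)$ whose coefficient space lies in $C(H)$. Note finally that your (4) invokes (3) in exactly this correct form (coefficients in $\mathcal A(PG)$ yield an irreducible of $PG$), which your version of (3) does not deliver; with (3) repaired as above, your (4) is fine and matches the paper's ``particular case of (3)''.
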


\begin{proof}
This is something routine, the idea being as follows:

\medskip

(1) Here we can integrate characters, by using Proposition 3.19 (1), and we conclude that if $v,w$ are irreducible corepresentations of $C(G),C(H)$, then $v\otimes w$ is an irreducible corepresentation of $C(G\times H)$. Now since the coefficients of these latter corepresentations span $\mathcal C(G\times H)$, by Peter-Weyl these are all the irreducible corepresentations.

\medskip

(2) Here we can use a similar method. By using Proposition 3.19 (2) we conclude that if $v_1,v_2,\ldots$ are irreducible corepresentations of $C(G)$ and $w_1,w_2,\ldots$ are irreducible corepresentations of $C(H)$, then $v_1\otimes w_1\otimes v_2\otimes w_2\otimes\ldots$ is an irreducible corepresentation of $C(G\,\hat{*}\,H)$, and then we can conclude by using the Peter-Weyl theory.

\medskip

(3) This is clear from definitions, and from the Peter-Weyl theory.

\medskip

(4) This is a particular case of the result (3) above.
\end{proof}

Let us go back now to Theorem 3.25, and try to understand what happens in general, without reference to the equivalence relation. We know from chapter 1 that associated to any discrete group $\Gamma$ are at least two group algebras, which are as follows:
$$C^*(\Gamma)\to C^*_{red}(\Gamma)\subset B(l^2(\Gamma))$$

For the finite, or abelian, or more generally amenable groups $\Gamma$, these two algebras are known to coincide, but in the non-amenable case, the opposite happens. Thus, we are led into the question on whether $C^*_{red}(\Gamma)$, and other possible group algebras of $\Gamma$, are Woronowicz algebras in our sense, having morphisms as follows:
$$\Delta:A\to A\otimes A$$
$$\varepsilon:A\to\mathbb C$$
$$S:A\to A^{opp}$$

Generally speaking, the answer here is ``no'', and the subject is quite technical, requiring a good knowledge of advanced functional analysis. In order to have $C^*_{red}(\Gamma)$ among our examples, if we really want to, we must change a bit our axioms, as follows:

\index{minimal tensor product}
\index{spatial tensor product}

\begin{proposition}
Given a discrete group $\Gamma=<g_1,\ldots,g_N>$, its reduced algebra $A=C^*_{red}(\Gamma)$ has morphisms as follows, given on generators by the usual formulae,
$$\Delta:A\to A\otimes_{min}A$$
$$\varepsilon:\mathcal A\to\mathbb C$$
$$S:A\to A^{opp}$$
where $\otimes_{min}$ is the spatial tensor product of $C^*$-algebras, and where $\mathcal A=\mathbb C[\Gamma]$.
\end{proposition}

\begin{proof}
This is something quite technical, and philosophical, related to $*$-algebras vs $C^*$-algebras, and to $\otimes_{min}$ vs $\otimes_{max}$, that we will not really need in what follows. In what regards the comultiplication, consider the following diagonal embedding:
$$\Gamma\subset\Gamma\times\Gamma\quad,\quad g\to(g,g)$$

This embedding induces a $*$-algebra representation, as follows:
$$\mathbb C[\Gamma]\to B(l^2(\Gamma))\otimes_{min}B(l^2(\Gamma))\quad,\quad
g\to g\otimes g$$

We can extend then this representation into a morphism $\Delta$, as in the statement. As for the existence of morphisms $\varepsilon,S$ as in the statement, this is clear.
\end{proof}

Going ahead with some philosophy, the above result might suggest to modify our quantum group axioms, in a somewhat obvious way, with a densely defined counit, as to include algebras of type $C^*_{red}(\Gamma)$ in our formalism. But do we really want to do that. Remember, we are interested here in quantum spaces and quantum groups, which are well-defined up to equivalence, and so Theorem 3.25 above is all we need.

\bigskip

What does make sense, however, is to do such modifications in case you are interested in more general quantum groups, such as the Drinfeld-Jimbo deformations at $q>0$, which are not covered by our formalism, and this is what Woronowicz did in \cite{wo1}. But, as explained in chapter 2, from a modern perspective at least, these deformations with parameter $q>0$ have only theoretical interest, and we will surely not follow this way.

\bigskip 

Let us discuss now, however, in relation with all this, the notion of amenability, which is something important and useful. The basic result here, due to Blanchard \cite{bla}, once again requiring a good knowledge of functional analysis, is as follows:

\index{amenability}
\index{amenable quantum group}
\index{amenable Woronowicz algebra}
\index{Kesten amenability}

\begin{theorem}
Let $A_{full}$ be the enveloping $C^*$-algebra of $\mathcal A$, and let $A_{red}$ be the quotient of $A$ by the null ideal of the Haar integration. The following are then equivalent:
\begin{enumerate}
\item The Haar functional of $A_{full}$ is faithful.

\item The projection map $A_{full}\to A_{red}$ is an isomorphism.

\item The counit map $\varepsilon:A\to\mathbb C$ factorizes through $A_{red}$.

\item We have $N\in\sigma(Re(\chi_u))$, the spectrum being taken inside $A_{red}$.
\end{enumerate}
If this is the case, we say that the underlying discrete quantum group $\Gamma$ is amenable.
\end{theorem}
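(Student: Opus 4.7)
The plan is to prove the cycle $(1) \Leftrightarrow (2) \Rightarrow (3) \Rightarrow (4) \Rightarrow (3) \Rightarrow (1)$, with essentially all the content concentrated in $(4) \Rightarrow (3)$; the other implications are either tautological or short spectral/slice-map manipulations.

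I would first dispose of the easy directions. For $(1) \Leftrightarrow (2)$ one simply unpacks the definition: by construction $A_{red} = A_{full}/I$ for $I = \{a : h(a^*a) = 0\}$, so the projection $\pi : A_{full} \to A_{red}$ is injective, and hence an isomorphism, precisely when $h$ is faithful on $A_{full}$. For $(2) \Rightarrow (3)$ there is nothing to prove: the counit is part of the defining Woronowicz structure on $A_{full}$, so when $A_{full} = A_{red}$ it descends automatically. For $(3) \Rightarrow (4)$ I would observe that if $\tilde\varepsilon : A_{red} \to \mathbb C$ extends $\varepsilon$, then $\tilde\varepsilon(N - Re(\chi_u)) = N - N = 0$, which prevents $N - Re(\chi_u)$ from being invertible in $A_{red}$, i.e.\ $N \in \sigma(Re(\chi_u))$.

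The real work is in $(4) \Rightarrow (3)$. Each $u_{ii}$ being a contraction, $N - Re(\chi_u)$ is a positive element of $A_{red}$, and the hypothesis $N \in \sigma(Re(\chi_u))$ means $0 \in \sigma(N - Re(\chi_u))$; by Hahn-Banach one then obtains a state $\varphi$ on $A_{red}$ with $\varphi(Re(\chi_u)) = N$. Combined with $|\varphi(u_{ii})| \le 1$, the equality $\sum_i Re\,\varphi(u_{ii}) = N$ forces $\varphi(u_{ii}) = 1$ for every $i$. Cauchy-Schwarz for states then gives $\varphi(u_{ii}^* u_{ii}) = \varphi(u_{ii} u_{ii}^*) = 1$, so each diagonal generator lies in the multiplicative domain of $\varphi$. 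Feeding $\sum_j u_{ij} u_{ij}^* = 1$ into $\varphi$ and isolating the $j=i$ contribution yields $\varphi(u_{ij} u_{ij}^*) = 0$ for $j \ne i$, and a further Cauchy-Schwarz propagates this to $\varphi(u_{ij} x) = \varphi(x u_{ij}) = 0$ for all $x$ whenever $i \ne j$. Thus $\varphi$ vanishes on every monomial in the generators that contains an off-diagonal factor and coincides with $\varepsilon$ on purely diagonal ones; by linearity and continuity, $\varphi = \tilde\varepsilon$ is the desired extension.

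Closing the loop with $(3) \Rightarrow (1)$ is the one remaining nontrivial step. The idea is to use $\tilde\varepsilon$ as a slice-map retract of $\pi$: the identity $(\iota \otimes \tilde\varepsilon)(\iota \otimes \pi)\Delta = \iota_{A_{full}}$ exhibits $A_{full}$ as embedded in $A_{full} \otimes A_{red}$ through a $*$-homomorphism that factors through $\pi$ on the right tensorand; a standard $C^*$-tensor-norm comparison then forces $||a||_{full} = ||\pi(a)||_{red}$, so $\pi$ is isometric and $h_{full}$ is faithful. The genuine obstacle is really $(4) \Rightarrow (3)$: extracting the state $\varphi$ from the spectrum condition is routine, but arranging for it to behave like the character $\varepsilon$ on all of $\mathcal A$ is what requires the careful multiplicative-domain bookkeeping described above.
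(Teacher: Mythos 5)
Your reduction $(1)\Leftrightarrow(2)\Rightarrow(3)\Rightarrow(4)$ is routine and fine, and your proof of $(4)\Rightarrow(3)$ is correct and takes a genuinely different, arguably cleaner, route than the paper. The paper builds the counit on $A_{red}$ via Peter--Weyl: from $\dim v\in\sigma(\chi_v)$ for $v=u+\bar u$ it deduces the same for $w^{\otimes k}$ with $w=v+1$, extracts states $\varepsilon_k$ attaining these values, shows by character comparison that $\varepsilon_k(r)=\dim r$ on every subrepresentation of $w^{\otimes k}$, and passes to a limit. Your route instead takes a single state $\varphi$ with $\varphi(Re(\chi_u))=N$, forces $\varphi(u_{ii})=1$, and uses the multiplicative domain of the state (together with the row/column unitarity relations and Cauchy--Schwarz) to conclude that every $u_{ij}$ lies in the multiplicative domain with $\varphi(u_{ij})=\delta_{ij}$, whence $\varphi|_{\mathcal A}$ is a $*$-homomorphism equal to $\varepsilon$. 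This is shorter, avoids the representation-theoretic machinery, and is a nice simplification.

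The gap is in your $(3)\Rightarrow(1)$. You write down the identity $(\iota\otimes\tilde\varepsilon)(\iota\otimes\pi)\Delta=\iota_{A_{full}}$, conclude that $(\iota\otimes\pi)\Delta:A_{full}\to A_{full}\otimes A_{red}$ is an injective $*$-homomorphism, and then assert that ``a standard $C^*$-tensor-norm comparison'' yields $\|a\|_{full}=\|\pi(a)\|_{red}$. But the map $(\iota\otimes\pi)\Delta$ factors through $\pi$ only on the \emph{right tensorand}, not on the \emph{domain}; injectivity of this map therefore says nothing a priori about injectivity of $\pi$, since $\|\Phi(a)\|$ need not be controlled by $\|\pi(a)\|$ alone. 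What actually makes the argument go through is the nontrivial fact that the null ideal $I=\ker\pi$ of the Haar state is a (two-sided) coideal: using left/right invariance of $h$ together with Kadison--Schwarz one checks that $a\in I\Rightarrow(\iota\otimes\pi)\Delta(a)=0$, so $(\iota\otimes\pi)\Delta$ \emph{does} descend to a $*$-homomorphism $A_{red}\to A_{full}\otimes A_{red}$; composing with $\iota\otimes\tilde\varepsilon$ then exhibits $\pi$ as a split injection. This is exactly the content of the paper's right regular corepresentation $W(a\otimes x)=\Delta(a)(1\otimes x)$: conjugation by the unitary $W$ produces a genuine $*$-homomorphism $\Delta':A_{red}\to A_{red}\otimes A_{full}$ (which is not the naive restriction of $\Delta$), and $(\varepsilon\otimes\iota)\Delta'$ is the section. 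Without one of these two mechanisms (coideal property of $I$, or the regular corepresentation) the conclusion $\|a\|_{full}=\|\pi(a)\|_{red}$ simply doesn't follow from what you wrote.
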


\begin{proof}
This is well-known in the group dual case, $A=C^*(\Gamma)$, with $\Gamma$ being a usual discrete group. In general, the result follows by adapting the group dual case proof:

\medskip

$(1)\implies(2)$ This follows from the fact that the GNS construction for the algebra $A_{full}$ with respect to the Haar functional produces the algebra $A_{red}$.

\medskip

$(2)\implies(3)$ This is trivial, because we have quotient maps $A_{full}\to A\to A_{red}$, and so our assumption $A_{full}=A_{red}$ implies that we have $A=A_{red}$. 

\medskip

$(3)\implies(4)$ This implication is clear too, because we have:
\begin{eqnarray*}
\varepsilon(Re(\chi_u))
&=&\frac{1}{2}\left(\sum_{i=1}^N\varepsilon(u_{ii})+\sum_{i=1}^N\varepsilon(u_{ii}^*)\right)\\
&=&\frac{1}{2}(N+N)\\
&=&N
\end{eqnarray*}

Thus the element $N-Re(\chi_u)$ is not invertible in $A_{red}$, as claimed. 

\medskip

$(4)\implies(1)$ In terms of the corepresentation $v=u+\bar{u}$, whose dimension is $2N$ and whose character is $2Re(\chi_u)$, our assumption $N\in\sigma(Re(\chi_u))$ reads:
$$\dim v\in\sigma(\chi_v)$$

By functional calculus the same must hold for $w=v+1$, and then once again by functional calculus, the same must hold for any tensor power of $w$:
$$w_k=w^{\otimes k}$$ 

Now choose for each $k\in\mathbb N$ a state $\varepsilon_k\in A_{red}^*$ having the following property:
$$\varepsilon_k(w_k)=\dim w_k$$

By Peter-Weyl we must have $\varepsilon_k(r)=\dim r$ for any $r\leq w_k$, and since any irreducible corepresentation appears in this way, the sequence $\varepsilon_k$ converges to a counit map: 
$$\varepsilon:A_{red}\to\mathbb C$$

In order to finish, we can use the right regular corepresentation. Indeed, we can define such a corepresentation by the following formula:
$$W(a\otimes x)=\Delta(a)(1\otimes x)$$

This corepresentation is unitary, so we can define a morphism as follows: 
$$\Delta':A_{red}\to A_{red}\otimes A_{full}$$
$$a\to W(a\otimes1)W^*$$

Now by composing with $\varepsilon\otimes id$, we obtain a morphism as follows:
$$(\varepsilon\otimes id)\Delta':A_{red}\to A_{full}$$
$$u_{ij}\to u_{ij}$$

Thus, we have our inverse map for the projection $A_{full}\to A_{red}$, as desired.
\end{proof}

All the above was of course quite short, but we will be back to this, with full details, and with a systematic study of the notion of amenability, in chapter 14 below. In particular, we will discuss in detail the case of the usual discrete group algebras $A=C^*(\Gamma)$, by further building on the findings in Theorem 3.25 and Proposition 3.27.

\bigskip

Here are now some basic applications of the above amenability result:

\index{coamenable quantum group}
\index{group dual}

\begin{proposition}
We have the following results:
\begin{enumerate}
\item The compact Lie groups $G\subset U_N$ are all coamenable.

\item A group dual $G=\widehat{\Gamma}$ is coamenable precisely when $\Gamma$ is amenable.

\item A product $G\times H$ of coamenable compact quantum groups is coamenable.
\end{enumerate}
\end{proposition}

\begin{proof}
This follows indeed from the results that we have:

\medskip

(1) This is clear by using any of the criteria in Theorem 3.28 above, because for an algebra of type $A=C(G)$, we have $A_{full}=A_{red}$.

\medskip

(2) Here the various criteria in Theorem 3.28 above correspond to the various equivalent definitions of the amenability of a discrete group.

\medskip

(3) This follows from the description of the Haar functional of $C(G\times H)$, from Proposition 3.19 (1) above. Indeed, if $\int_G,\int_H$ are both faithful, then so is $\int_G\otimes\int_H$.
\end{proof}

As already mentioned, we will be back to this, in chapter 14 below. But that is in a long time from now, so perhaps time for some philosophy, and advice, in relation with various functional analysis issues, including tensor products, and amenability:

\bigskip

(1) Sorry for having to start with this, but I'm sure that there might be a graduate student or postdoc around you, or perhaps even young researcher, struggling with tensor products and amenability, and telling you something of type ``operator algebras are all about tensor products and amenability, you ain't understand anything, you have to spend time and learn tensor products and amenability first, before anything else''.

\bigskip

(2) Which is utterly wrong, and believe me, senior researcher talking here. Operator algebras are about quantum mechanics, and more specifically about modern quantum mechanics, from the 1950s onwards. And here, what's needed are quantum groups and quantum spaces, axiomatized as we did it, via equivalence relation between the corresponding operator algebras, and with minimal fuss about tensor products and amenability.

\bigskip

(3) Of course tensor products and amenability will come into play, at some point. But later. That's advanced. More precisely, amenability comes into play, be that in modern mathematics, or modern physics, via Theorem 3.28 (4), called Kesten criterion, and some further work that can be done on that, involving random walks, spectral measures and so on. But that's advanced level, conformal field theory (CFT), or higher.

\bigskip

(4) So stay with me, and of course we'll talk about such things, in due time, meaning end of this book, chapters 13-16 below. And to that graduate student friend of yours, please tell him that he's on the good track of revolutionizing quantum mechanics from the 1920s. And more specifically, from the early 1920s. Which is of course a decent business, papers about old quantum mechanics being always welcome for publication.

\bigskip

Of course, if the advice about learning tensor products and amenability comes from your PhD advisor, guess we'll have to do that. Which is not a problem, that will take you 1 month or so, and you might learn some interesting things there. That you can effectively use later on, once you'll have the black belt in mathematics and physics.

\section*{3e. Exercises}

Generally speaking, the best complement to the material presented in this chapter is some further reading, either in the classical case, for the finite groups, or for the compact Lie groups, or in the quantum group case, say for the finite quantum groups. Indeed, in all these situations some interesting simplifications, worth knowing, might appear. In relation with what has been said above, here is a first exercise:

\begin{exercise}
Prove that the finite dimensional $C^*$-algebras are exactly the direct sums of matrix algebras
$$A=M_{N_1}(\mathbb C)\oplus\ldots\oplus M_{N_k}(\mathbb C)$$
by decomposing first the unit into a sum of central minimal projections.
\end{exercise}

This is self-explanatory, and we have talked about this in the above, the problem now being that of clarifying all this, by doing all the needed computations.

\begin{exercise}
Given a matrix $M\in M_N(\mathbb C)$ having norm $||M||\leq1$, prove that
$$P=\lim_{n\to\infty}\sum_{k=1}^nM^k$$
exists, and equals the projection onto the $1$-eigenspace of $M$.
\end{exercise}

This is something which was at the core of the proof of the existence of the Haar measure. Normally the proof is not very complicated, based on linear algebra.

\begin{exercise}
Work out the details of the abstract Weingarten integration formula in the group dual case, where $A=C^*(\Gamma)$ with $\Gamma=<g_1,\ldots,g_N>$.
\end{exercise}

The first problem here is that of choosing a suitable basis for $Fix(u^{\otimes k})$, which normally should not cause any problems, and then writing down the integration formula.

\begin{exercise}
Work out in detail the representation theory for the basic operations, namely products, dual free products, quotients, projective versions. 
\end{exercise}

As before, this is something that we already discussed, but a bit in a hurry, just as an illustration, and the problem is now that of working out all the details.

\chapter{Tannakian duality}

\section*{4a. Tensor categories}

In order to have more insight into the structure of the compact quantum groups, in general and for the concrete examples too, and to effectively compute their representations, we can use algebraic geometry methods, and more precisely Tannakian duality. 

\bigskip

Tannakian duality rests on the basic principle in any kind of mathematics, algebra, geometry or analysis, ``linearize''. In the present setting, where we do not have a Lie algebra, this will be in fact our only possible linearization method. 

\bigskip

In practice, this duality is something quite broad, and there are many formulations of it, sometimes not obviously equivalent. In what follows we will present Woronowicz's original Tannakian duality result from \cite{wo2}, in its ``soft'' form, worked out by Malacarne in \cite{mal}. This is something which is very efficient, in what regards the applications.

\bigskip

Finally, let us mention that there will be a lot of algebra going on here, in this chapter, and if you're more of an analyst, this might disturb you. To which I have to say two things. First is that this chapter is definitely not to be skipped, and especially by you, analyst, because if you want to do advanced probability theory over quantum groups, you need Tannakian duality. And second is that algebra and analysis are both part of mathematics, along by the way with geometry, PDE and many other things, and a mathematician's job is normally to: (1) know mathematics, (2) develop mathematics.

\bigskip

Getting started now, the idea will be that of further building on the Peter-Weyl theory, from  chapter 3. Let us start with the following result, that we know from there:

\index{tensor category}
\index{Hom space}

\begin{theorem}
Given a Woronowicz algebra $(A,u)$, the Hom spaces for its corepresentations form a tensor $*$-category, in the sense that:
\begin{enumerate}
\item $T\in Hom(u,v),S\in Hom(v,w)\implies ST\in Hom(u,w)$.

\item $S\in Hom(p,q),T\in Hom(v,w)\implies S\otimes T\in Hom(p\otimes v,q\otimes w)$.

\item $T\in Hom(v,w)\implies T^*\in Hom(w,v)$.
\end{enumerate} 
\end{theorem}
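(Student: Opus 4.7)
The three properties listed are exactly the content of Proposition 3.7, which has already been established, so the main task is to recall that proof and reframe the conclusion in the language of tensor $*$-categories. I will verify each property directly from the definition $T \in Hom(v,w) \iff Tv = wT$, where $v,w$ are viewed as matrices over $\mathcal{A}$.

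For (1), the composition is immediate: assuming $Tu = vT$ and $Sv = wS$, a one-line chain of equalities yields $STu = SvT = wST$, so $ST \in Hom(u,w)$.

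For (2), the cleanest approach is to use leg notation. Writing $v \otimes w = v_{13} w_{23}$ inside $M_m(\mathbb{C}) \otimes M_n(\mathbb{C}) \otimes \mathcal{A}$ and $S \otimes T = S_1 T_2$, and using that the legs labeled $1$ and $2$ commute with each other but interact with legs $3$ via the hypotheses, we get
\[
(S \otimes T)(p \otimes v) = S_1 T_2 \, p_{13} v_{23} = (Sp)_{13} (Tv)_{23} = (qS)_{13}(wT)_{23} = (q \otimes w)(S \otimes T).
\]
Alternatively one can carry this out in indices, using $(S \otimes T)_{ia,jb} = S_{ij} T_{ab}$ and similarly for $p \otimes v$, $q \otimes w$, but the leg calculus is cleaner.

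For (3), the idea is to conjugate the relation $Tv = wT$ to get $v^* T^* = T^* w^*$ and then exploit the unitarity of $v$ and $w$, which is part of the definition of a corepresentation: multiplying on the left by $v$ and on the right by $w$ and using $vv^* = 1$ and $w^*w = 1$ produces $T^* w = v T^*$, i.e.\ $T^* \in Hom(w,v)$. The main (and only) subtle point is this use of biunitarity; without it the adjoint property would fail. Beyond verifying (1)--(3), to qualify as a tensor $*$-category one also needs the obvious observations that each $Hom(v,w)$ is a linear subspace of $M_{m \times n}(\mathbb{C})$, that $id \in End(v)$, and that $\otimes$ is associative and bilinear on morphisms, all of which are routine.
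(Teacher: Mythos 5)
Your proposal is correct and follows essentially the same approach as the paper: the theorem is proved there simply by citing Proposition 3.7, and your reconstruction of that proof (direct computation for composition, leg notation for tensor products, conjugation plus biunitarity for the adjoint) matches the paper's argument step for step.
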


\begin{proof}
This is something that we already know, from chapter 3 above, the proofs of all the assertions being elementary, as follows:

\medskip

(1) By using our assumptions $Tu=vT$ and $Sv=Ws$ we obtain, as desired:
$$STu=SvT=wST$$

(2) Assume indeed that we have $Sp=qS$ and $Tv=wT$. With standard tensor product notations, we have the following computation:
$$(S\otimes T)(p\otimes v)
=S_1T_2p_{13}v_{23}
=(Sp)_{13}(Tv)_{23}$$

We have as well the following computation, which gives the result:
$$(q\otimes w)(S\otimes T)
=q_{13}w_{23}S_1T_2
=(qS)_{13}(wT)_{23}$$

(3) By conjugating, and then using the unitarity of $v,w$, we obtain, as desired:
\begin{eqnarray*}
Tv=wT
&\implies&v^*T^*=T^*w^*\\
&\implies&vv^*T^*w=vT^*w^*w\\
&\implies&T^*w=vT^*
\end{eqnarray*}

Thus, we are led to the conclusion in the statement.
\end{proof}

Generally speaking, Tannakian duality amounts in recovering $(A,u)$ from the tensor category constructed in Theorem 4.1. In what follows we will present a ``soft form'' of this duality, coming from \cite{mal}, \cite{wo2}, which uses the following smaller category:

\index{Tannakian category}
\index{Peter-Weyl representation}
\index{Peter-Weyl corepresentation}

\begin{definition}
The Tannakian category associated to a Woronowicz algebra $(A,u)$ is the collection $C=(C(k,l))$ of vector spaces
$$C(k,l)=Hom(u^{\otimes k},u^{\otimes l})$$
where the corepresentations $u^{\otimes k}$ with $k=\circ\bullet\bullet\circ\ldots$ colored integer, defined by
$$u^{\otimes\emptyset}=1\quad,\quad 
u^{\otimes\circ}=u\quad,\quad 
u^{\otimes\bullet}=\bar{u}$$
and multiplicativity, $u^{\otimes kl}=u^{\otimes k}\otimes u^{\otimes l}$, are the Peter-Weyl corepresentations.
\end{definition}

We know from Theorem 4.1 above that $C$ is a tensor $*$-category. To be more precise, if we denote by $H=\mathbb C^N$ the Hilbert space where $u\in M_N(A)$ coacts, then $C$ is a tensor $*$-subcategory of the tensor $*$-category formed by the following linear spaces: 
$$E(k,l)=\mathcal L(H^{\otimes k},H^{\otimes l})$$

Here the tensor powers $H^{\otimes k}$ with $k=\circ\bullet\bullet\circ\ldots$ colored integer are those where the corepresentations $u^{\otimes k}$ act, defined by the following formulae, and multiplicativity:
$$H^{\otimes\emptyset}=\mathbb C\quad,\quad 
H^{\otimes\circ}=H\quad,\quad 
H^{\otimes\bullet}=\bar{H}\simeq H$$

Our purpose in what follows will be that of reconstructing $(A,u)$ in terms of the category $C=(C(k,l))$. We will see afterwards that this method has many applications.

\bigskip

As a first, elementary result on the subject, we have:

\begin{proposition}
Given a morphism $\pi:(A,u)\to(B,v)$ we have inclusions
$$Hom(u^{\otimes k},u^{\otimes l})\subset Hom(v^{\otimes k},v^{\otimes l})$$
for any $k,l$, and if these inclusions are all equalities, $\pi$ is an isomorphism.
\end{proposition}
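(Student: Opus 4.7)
The plan is to prove the two parts separately, with Part 2 resting crucially on the Peter-Weyl theory developed in Theorems 3.9, 3.12, and 3.22.

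For the inclusion, I argue as follows. Given $T\in Hom(u^{\otimes k},u^{\otimes l})$, the defining relation $Tu^{\otimes k}=u^{\otimes l}T$ is an equality of matrices over $\mathcal{A}$. Since $\pi$ is a $*$-algebra morphism sending $u_{ij}\mapsto v_{ij}$, and the Peter-Weyl corepresentations are constructed from the fundamental corepresentation by the same tensor-product formulae on either side, we have $\pi(u^{\otimes k})=v^{\otimes k}$ for every colored integer $k$. Applying $\pi$ entrywise to the intertwining relation yields $Tv^{\otimes k}=v^{\otimes l}T$, i.e.\ $T\in Hom(v^{\otimes k},v^{\otimes l})$.

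For the isomorphism claim, assume all the Hom inclusions are equalities. In the sense of Definition 2.9 I must exhibit a bijection $\mathcal{A}\to\mathcal{B}$ of the dense $*$-subalgebras, and surjectivity is automatic from $\pi(u_{ij})=v_{ij}$. For injectivity I use the Peter-Weyl decomposition $\mathcal{A}=\bigoplus_{r\in Irr(A)}C(r)$ from Theorem 3.22. By Theorem 3.12, each irreducible $r\in Irr(A)$ sits inside some $u^{\otimes k}$, cut out by a minimal central projection $P_r\in End(u^{\otimes k})$. Under the hypothesis $End(u^{\otimes k})=End(v^{\otimes k})$ as $*$-subalgebras of $M_{N^k}(\mathbb{C})$ (the ambient multiplication is the same on both sides), so $P_r$ is again a minimal central projection of $End(v^{\otimes k})$ and cuts out an irreducible subcorepresentation $r'$ of $v^{\otimes k}$, of the same underlying dimension as $r$. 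Since $\pi$ sends the coefficients $r_{ab}$ of $r$ to the analogously defined coefficients $r'_{ab}$ of $r'$, the restriction $\pi|_{C(r)}\colon C(r)\to C(r')$ is surjective between spaces of equal dimension $(\dim r)^2$, hence a bijection.

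The remaining step is to verify that distinct $r\neq s$ in $Irr(A)$ produce inequivalent $r',s'$ in $Irr(B)$. If $r\subset u^{\otimes k}$ via $P_r$ and $s\subset u^{\otimes l}$ via $P_s$, any intertwiner $T\in Hom_B(r',s')$ embeds as $P_s X P_r$ for some $X\in Hom(v^{\otimes k},v^{\otimes l})=Hom(u^{\otimes k},u^{\otimes l})$, whence $T\in Hom_A(r,s)=0$. Applying Theorem 3.22 to $B$, the images $C(r')\subset\mathcal{B}$ are then mutually orthogonal, so $\pi|_{\mathcal{A}}$ is the direct sum of the bijections $C(r)\to C(r')$ and is therefore injective. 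I expect the main obstacle to be precisely this last step, namely the bookkeeping that transfers the full block decomposition from $End(u^{\otimes k})$ to $End(v^{\otimes k})$ coherently in $k$, and shows that the correspondence $r\mapsto r'$ between irreducibles respects inequivalence; once this is in place, the conclusion is essentially forced by Peter-Weyl.
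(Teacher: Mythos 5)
Your argument is correct and takes essentially the same route as the paper, whose own proof simply states that the first part is clear from definitions and the second follows from Peter-Weyl theory. You have spelled out the implicit block-by-block argument: equality of the $End(u^{\otimes k})=End(v^{\otimes k})$ as concrete matrix algebras forces the minimal central projections to match, giving a dimension-preserving, inequivalence-preserving correspondence $r\mapsto r'$ between irreducibles under which $\pi$ restricts to bijections $C(r)\to C(r')$, whence injectivity on $\mathcal{A}=\bigoplus_r C(r)$.
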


\begin{proof}
The fact that we have indeed inclusions as in the statement is clear from definitions. As for the last assertion, this follows from the Peter-Weyl theory. 

Indeed, if we assume that $\pi$ is not an isomorphism, then one of the irreducible corepresentations of $A$ must become reducible as a corepresentation of $B$. 

But the irreducible corepresentations being subcorepresentations of the Peter-Weyl corepresentations $u^{\otimes k}$, one of the spaces $End(u^{\otimes k})$ must therefore increase strictly, and this gives the desired contradiction.
\end{proof}

The Tannakian duality result that we want to prove states, in a simplified form, that in what concerns the last conclusion in the above statement, the assumption that we have a morphism $\pi:(A,u)\to(B,v)$ is not needed. In other words, if we know that the Tannakian categories of $A,B$ are different, then $A,B$ themselves must be different.

\bigskip

In order to get started, our first goal will be that of gaining some familiarity with the notion of Tannakian category. And here, we have to use the only general fact that we know about $u$, namely that this matrix is biunitary. We have:

\index{biunitary}

\begin{proposition}
Consider the operator $R:\mathbb C\to\mathbb C^N\otimes\mathbb C^N$ given by: 
$$R(1)=\sum_ie_i\otimes e_i$$
An abstract matrix $u\in M_N(A)$ is then a biunitary precisely when the conditions
$$R\in Hom(1,u\otimes\bar{u})\quad,\quad 
R\in Hom(1,\bar{u}\otimes u)$$
$$R^*\in Hom(u\otimes\bar{u},1)\quad,\quad 
R^*\in Hom(\bar{u}\otimes u,1)$$
are all satisfied, in a formal sense, as suitable commutation relations.
\end{proposition}

\begin{proof}
Let us first recall that, in the Woronowicz algebra setting, the definition of the Hom space between two corepresentations $v\in M_n(A)$, $w\in M_m(A)$ is as follows: 
$$Hom(v,w)=\left\{T\in M_{m\times n}(\mathbb C)\Big|Tv=wT\right\}$$

But this is something that makes no reference to the Woronowicz algebra structure of $A$, or to the fact that $v,w$ are indeed corepresentations. Thus, this notation can be formally formally used for any two matrices $v\in M_n(A)$, $w\in M_m(A)$, over an arbitrary $C^*$-algebra $A$, and so our statement, as formulated, makes sense indeed.

With $R$ being as in the statement, we have the following computation:
\begin{eqnarray*}
(u\otimes\bar{u})(R(1)\otimes1)
&=&\sum_{ijk}e_i\otimes e_k\otimes u_{ij}u_{kj}^*\\
&=&\sum_{ik}e_i\otimes e_k\otimes(uu^*)_{ik}
\end{eqnarray*}

We conclude from this that we have the following equivalence:
$$R\in Hom(1,u\otimes\bar{u})\iff uu^*=1$$

Consider now the adjoint operator $R^*:\mathbb C^N\otimes\mathbb C^N\to\mathbb C$, which is given by:
$$R^*(e_i\otimes e_j)=\delta_{ij}$$

We have then the following computation:
\begin{eqnarray*}
(R^*\otimes id)(u\otimes\bar{u})(e_j\otimes e_l\otimes1)
&=&\sum_iu_{ij}u_{il}^*\\
&=&(u^t\bar{u})_{jl}
\end{eqnarray*}

We conclude from this that we have the following equivalence:
$$R^*\in Hom(u\otimes\bar{u},1)\iff u^t\bar{u}=1$$

Similarly, or simply by replacing $u$ in the above two conclusions with its conjugate $\bar{u}$, which is a corepresentation too, we have as well the following two equivalences:
$$R\in Hom(1,\bar{u}\otimes u)\iff\bar{u}u^t=1$$
$$R^*\in Hom(\bar{u}\otimes u,1)\iff u^*u=1$$

Thus, we are led to the biunitarity conditions, and we are done.
\end{proof}

As a consequence of this computation, we have the following result:

\begin{proposition}
The Tannakian category $C=(C(k,l))$ associated to a Woronowicz algebra $(A,u)$ must contain the operators
$$R:1\to\sum_ie_i\otimes e_i$$
$$R^*(e_i\otimes e_j)=\delta_{ij}$$
in the sense that we must have: 
$$R\in C(\emptyset,\circ\bullet)\quad,\quad 
R\in C(\emptyset,\bullet\circ)$$
$$R^*\in C(\circ\bullet,\emptyset)\quad,\quad 
R^*\in C(\bullet\circ,\emptyset)$$
In fact, $C$ must contain the whole tensor category $<R,R^*>$ generated by $R,R^*$.
\end{proposition}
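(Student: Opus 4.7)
The statement is essentially a corollary of Proposition 4.4 combined with the tensor $*$-category structure recorded in Theorem 4.1, so the plan is short.

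First, I would translate the four intertwining conditions of Proposition 4.4 into the colored-index notation of Definition 4.2. By definition we have $u^{\otimes \emptyset} = 1$, $u^{\otimes \circ} = u$, $u^{\otimes \bullet} = \bar u$, and multiplicativity of $\otimes$ in the colored exponent, so
\begin{eqnarray*}
Hom(1, u\otimes \bar u) &=& C(\emptyset, \circ\bullet),\\
Hom(1, \bar u\otimes u) &=& C(\emptyset, \bullet\circ),\\
Hom(u\otimes \bar u, 1) &=& C(\circ\bullet, \emptyset),\\
Hom(\bar u\otimes u, 1) &=& C(\bullet\circ, \emptyset).
\end{eqnarray*}
Since the fundamental corepresentation $u$ of a Woronowicz algebra is by axiom a biunitary (i.e.\ $u^* = u^{-1}$ and $u^t = \bar u^{-1}$ both hold, as required in Definition 2.1 and made explicit in Proposition 2.7), Proposition 4.4 applies and gives precisely the four required memberships, establishing the first assertion.

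Second, for the closure claim $\langle R, R^*\rangle \subset C$, I would simply invoke Theorem 4.1: the collection of Hom-spaces between Peter--Weyl corepresentations is closed under composition, tensor product, and adjoints (and contains the identity of each $H^{\otimes k}$, because $\mathrm{id}_{H^{\otimes k}} \in Hom(u^{\otimes k}, u^{\otimes k})$ trivially). Thus any operator obtained from $R$ and $R^*$ by finitely many applications of these three operations (together with taking linear combinations and inserting identity morphisms on free tensor legs) again lies in some $C(k,l)$. This is exactly what it means for $\langle R, R^*\rangle$, the tensor $*$-category generated by $R$ and $R^*$ inside the ambient tensor $*$-category of linear maps $E(k,l) = \mathcal L(H^{\otimes k}, H^{\otimes l})$, to embed into $C$.

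There is no real obstacle here, just bookkeeping between the two notational systems (intertwiners of $u, \bar u$ versus the Peter--Weyl spaces $C(k,l)$), and the observation that ``generated as a tensor $*$-category'' is exactly the closure operation under which $C$ is already stable. The only point worth emphasizing in the write-up is that $u$ is automatically biunitary in the Woronowicz setting, so Proposition 4.4 applies with no extra hypothesis, and the rest is formal.
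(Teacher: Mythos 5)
Your proposal is correct and follows exactly the paper's route: the four memberships come from Proposition 4.4 applied to the biunitary $u$ (as guaranteed by Proposition 2.7), and the closure claim follows because $C$ is a tensor $*$-category, as recorded in Theorem 4.1. Your write-up merely makes explicit the bookkeeping that the paper leaves implicit.
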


\begin{proof}
The first assertion is clear from the above result. As for the second assertion, this is clear from definitions, because $C=(C(k,l))$ is indeed a tensor category.
\end{proof}

Let us formulate now the following key definition:

\index{tensor category}

\begin{definition}
Let $H$ be a finite dimensional Hilbert space. A tensor category over $H$ is a collection $C=(C(k,l))$ of subspaces 
$$C(k,l)\subset\mathcal L(H^{\otimes k},H^{\otimes l})$$
satisfying the following conditions:
\begin{enumerate}
\item $S,T\in C$ implies $S\otimes T\in C$.

\item If $S,T\in C$ are composable, then $ST\in C$.

\item $T\in C$ implies $T^*\in C$.

\item Each $C(k,k)$ contains the identity operator.

\item $C(\emptyset,\circ\bullet)$ and $C(\emptyset,\bullet\circ)$ contain the operator $R:1\to\sum_ie_i\otimes e_i$.
\end{enumerate}
\end{definition}

As a basic example here, the collection of the vector spaces $\mathcal L(H^{\otimes k},H^{\otimes l})$ is of course a tensor category over $H$. There are many other concrete examples, which can be constructed by using various combinatorial methods, and we will discuss this later on.

\bigskip

In relation with the quantum groups, this formalism generalizes the Tannakian category formalism from Definition 4.2 above, because we have the following result:

\begin{proposition}
Let $(A,u)$ be a Woronowicz algebra, with fundamental corepresentation $u\in M_N(A)$. The associated Tannakian category $C=(C(k,l))$, given by
$$C(k,l)=Hom(u^{\otimes k},u^{\otimes l})$$
is then a tensor category over the Hilbert space $H=\mathbb C^N$.
\end{proposition}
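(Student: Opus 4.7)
The plan is essentially a bookkeeping argument: I will verify each of the five axioms in Definition 4.6 by pointing to material already established in the excerpt, since nothing genuinely new is required. The Tannakian category $C=(C(k,l))$ with $C(k,l)=Hom(u^{\otimes k},u^{\otimes l})$ consists by construction of subspaces of $\mathcal{L}(H^{\otimes k},H^{\otimes l})$ with $H=\mathbb{C}^N$, so the ambient framework of Definition 4.6 is already in place.

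First I would dispatch axioms (1), (2), (3). These say that $C$ is closed under tensor products of morphisms, composition of composable morphisms, and taking adjoints. All three are precisely the content of Theorem 4.1, which itself rests on Proposition 3.7 from the Peter--Weyl section. So I would simply invoke Theorem 4.1 and observe that the Peter--Weyl corepresentations $u^{\otimes k}$, being closed under tensoring and conjugation by construction (Definition 3.10), make $C$ a subcategory of the full Hom category, closed under the three listed operations.

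Next I would deal with axiom (4). This says $\mathrm{id}_{H^{\otimes k}}\in C(k,k)=Hom(u^{\otimes k},u^{\otimes k})=End(u^{\otimes k})$, which is trivial since $\mathrm{id}\cdot u^{\otimes k}=u^{\otimes k}\cdot\mathrm{id}$.

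Finally, for axiom (5), I would invoke Proposition 4.4 applied to the fundamental corepresentation $u\in M_N(A)$. Since $u$ is biunitary, Proposition 4.4 gives
\[
R\in Hom(1,u\otimes\bar u)=C(\emptyset,\circ\bullet),\qquad R\in Hom(1,\bar u\otimes u)=C(\emptyset,\bullet\circ),
\]
and likewise $R^*$ lies in $C(\circ\bullet,\emptyset)$ and $C(\bullet\circ,\emptyset)$, though only the $R$ conditions are demanded in Definition 4.6. This completes the verification. There is no real obstacle here; the proposition is a clean repackaging of Theorem 4.1 plus Proposition 4.4, and the main ``work'' is simply recognizing that Definition 4.6 was engineered to abstract exactly the features established previously for $Hom(u^{\otimes k},u^{\otimes l})$.
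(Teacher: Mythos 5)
Your proposal is correct and follows essentially the same route as the paper: axioms (1)--(3) from Theorem 4.1, axiom (4) from the trivial observation that the identity intertwines $u^{\otimes k}$ with itself, and axiom (5) from the biunitarity of $u$ via Proposition 4.4 (the paper cites Proposition 4.5, which is just that same consequence recorded for the Tannakian category). No gaps.
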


\begin{proof}
The fact that the above axioms (1-5) are indeed satisfied is clear, as follows:

\medskip

(1) This follows from Theorem 4.1.

\medskip

(2) Once again, this follows from Theorem 4.1.

\medskip

(3) This once again follows from Theorem 4.1.

\medskip

(4) This is clear from definitions.

\medskip

(5) This follows from Proposition 4.5 above.
\end{proof}

Our purpose in what follows will be that of proving that the converse of the above statement holds. That is, we would like to prove that any tensor category in the sense of Definition 4.6 must appear as a Tannakian category. And with this being obviously a powerful ``linearization'' result, as advertised in the beginning of this chapter.

\bigskip

As a first result on this subject, providing us with a correspondence $C\to A_C$, which is complementary to the correspondence $A\to C_A$ from Proposition 4.7, we have:

\begin{proposition}
Given a tensor category $C=(C(k,l))$, the following algebra, with $u$ being the fundamental corepresentation of $C(U_N^+)$, is a Woronowicz algebra:
$$A_C=C(U_N^+)\big/\left<T\in Hom(u^{\otimes k},u^{\otimes l})\Big|\forall k,l,\forall T\in C(k,l)\right>$$
In the case where $C$ comes from a Woronowicz algebra $(A,v)$, we have a quotient map:
$$A_C\to A$$
Moreover, this map is an isomorphism in the discrete group algebra case.
\end{proposition}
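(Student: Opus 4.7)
The plan is to establish the three claims in order, with the Woronowicz structure being the substantial part and the latter two following by universality.

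For the first claim, I would observe that $A_C$ is by construction a quotient of $C(U_N^+)$ and hence $u$ remains biunitary, so the only task is to show that $\Delta,\varepsilon,S$ of $C(U_N^+)$ descend to $A_C$, i.e.\ that the defining ideal is a Hopf ideal. For the counit, applying $\varepsilon$ to $Tu^{\otimes k}=u^{\otimes l}T$ gives $T=T$ since $(\mathrm{id}\otimes\varepsilon)u^{\otimes k}=\mathrm{id}_{H^{\otimes k}}$. For the comultiplication, using leg notation $U^{\otimes k}:=(\mathrm{id}\otimes\Delta)u^{\otimes k}=u^{\otimes k}_{12}\,u^{\otimes k}_{13}$, one computes
\[
T\,U^{\otimes k}=T\,u^{\otimes k}_{12}u^{\otimes k}_{13}=u^{\otimes l}_{12}T\,u^{\otimes k}_{13}=u^{\otimes l}_{12}u^{\otimes l}_{13}T=U^{\otimes l}\,T,
\]
so $\Delta$ factors through $A_C\otimes A_C$. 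For the antipode, the tensor $*$-category axiom gives $T^*\in C(l,k)$, hence $T^*u^{\otimes l}=u^{\otimes k}T^*$ is also a relation of $A_C$; rearranging and taking adjoints inside $A_C^{op}$ shows that the $S$-image of the relation $Tu^{\otimes k}=u^{\otimes l}T$ lies in $A_C^{op}$. Thus $A_C$ carries the desired Woronowicz structure.

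For the second claim, if $C=C_A$ comes from $(A,v)$, every defining relation $Tu^{\otimes k}=u^{\otimes l}T$ of $A_C$ holds in $A$ when $u$ is replaced by $v$. By universality of $C(U_N^+)$ there is a morphism $C(U_N^+)\to A$ sending $u_{ij}\mapsto v_{ij}$, and it factors through the quotient $A_C\to A$.

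For the third claim, take $A=C^*(\Gamma)$ with $\Gamma=\langle g_1,\ldots,g_N\rangle$ and $v=\mathrm{diag}(g_1,\ldots,g_N)$. I would first force $u$ to be diagonal in $A_C$: the morphisms $E_{ii}\in\mathrm{Hom}(v,v)$ lie in $C$ for every $i$ (since $vE_{ii}=E_{ii}v$ holds in $A$, as $v$ is diagonal), and imposing $E_{ii}u=uE_{ii}$ in $A_C$ yields, on entry $(j,k)$, the identity $\delta_{ij}u_{ik}=u_{ji}\delta_{ik}$, which forces $u_{ji}=0$ for all $j\neq i$. Hence $u$ is diagonal in $A_C$, say $u=\mathrm{diag}(h_1,\ldots,h_N)$ for some unitaries $h_i\in A_C$, and the quotient map sends $h_i\mapsto g_i$. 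Next, I would use the fixed-vector relations: for any colored word $k=e_1\cdots e_n$ and any multi-index $(i_1,\ldots,i_n)$ with $g_{i_1}^{e_1}\cdots g_{i_n}^{e_n}=1$ in $\Gamma$, the basis vector $e_{i_1}\otimes\cdots\otimes e_{i_n}$ lies in $\mathrm{Fix}(v^{\otimes k})\subset C$. Imposing the corresponding relation in $A_C$, together with the now-diagonal form of $u$, yields $h_{i_1}^{e_1}\cdots h_{i_n}^{e_n}=1$. Thus every relation of $\Gamma$ holds among the $h_i$, so by universality of $C^*(\Gamma)$ the surjection $A_C\to A=C^*(\Gamma)$ has a two-sided inverse.

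The main obstacle, and the step that deserves the most care, is verifying the antipode axiom for $A_C$ in part one: both because $S$ is anti-multiplicative and takes values in the opposite algebra, and because one must invoke the $*$-structure of the tensor category together with the biunitarity of $u$ to convert the $S$-image of the relation into one already guaranteed by the defining ideal. The other subtle point, in part three, is that the identification $A_C=C^*(\Gamma)$ is an equality of \emph{full} $C^*$-completions, which is the correct framework in view of the equivalence relation on Woronowicz algebras from Definition 2.9.
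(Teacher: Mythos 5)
Your proof is correct. For the Hopf-ideal claim you take the same route as the paper, which only works out the $\Delta$-check explicitly (via $U=\Delta(u)$) and labels the rest ``elementary''; your treatment of the antipode, combining the $*$-category closure $T^*\in C(l,k)$ with the fact that $I$ is a $*$-ideal, usefully fills in the step the paper passes over, and is worth making explicit since the antipode, being anti-multiplicative and valued in $A_C^{opp}$, is the one axiom not immediately obvious from the other two. For the isomorphism in the group dual case you take a genuinely different and more explicit route: the paper writes $\Gamma=F_N/\mathcal R$ and asserts $A_C=C^*(F_N/\langle\mathcal R\rangle)$ by ``functoriality of the group algebra construction,'' leaving unstated both why $u$ becomes diagonal in $A_C$ and how the Tannakian data recovers the group relations. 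You supply both steps --- the diagonal projections $E_{ii}\in End(v)$ force $u$ to become diagonal in $A_C$, and the fixed vectors $e_{i_1}\otimes\cdots\otimes e_{i_n}\in Fix(v^{\otimes k})$ enforce precisely the relations $h_{i_1}^{e_1}\cdots h_{i_n}^{e_n}=1$ --- and then invoke universality of $C^*(\Gamma)$ to build the inverse morphism directly. This costs more writing than the paper's one-line appeal but is self-contained, and it isolates which pieces of the Tannakian data ($End(v)$ and $Fix(v^{\otimes k})$, rather than the full category) carry the relevant information.
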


\begin{proof}
Given two colored integers $k,l$ and a linear operator $T\in\mathcal L(H^{\otimes k},H^{\otimes l})$, consider the following $*$-ideal of the algebra $C(U_N^+)$:
$$I=\Big<T\in Hom(u^{\otimes k},u^{\otimes l})\Big>$$

Our claim is that $I$ is a Hopf ideal. Indeed, let us set:
$$U=\sum_ku_{ik}\otimes u_{kj}$$

We have then the following implication, which is something elementary, coming from a standard algebraic computation with indices, and which proves our claim:
$$T\in Hom(u^{\otimes k},u^{\otimes l})\implies T\in Hom(U^{\otimes k},U^{\otimes l})$$

With this claim in hand, the algebra $A_C$ appears from $C(U_N^+)$ by dividing by a certain collection of Hopf ideals, and is therefore a Woronowicz algebra. Since the relations defining $A_C$ are satisfied in $A$, we have a quotient map as in the statement, namely:
$$A_C\to A$$

Regarding now the last assertion, assume that we are in the case $A=C^*(\Gamma)$, with $\Gamma=<g_1,\ldots,g_N>$ being a finitely generated discrete group. If we denote by $\mathcal R$ the complete collection of relations between the generators, then we have:
$$\Gamma=F_N/\mathcal R$$

By using now the basic functoriality properties of the group algebra construction, we deduce from this that we have an identification as follows:
$$A_C=C^*\left(F_N\Big/\Big<\mathcal R\Big>\right)$$

Thus the quotient map $A_C\to A$ is indeed an isomorphism, as claimed.
\end{proof}

With the above two constructions in hand, from Proposition 4.7 and Proposition 4.8, we are now in position of formulating a clear objective. To be more precise, the theorem that we want to prove states that the following operations are inverse to each other:
$$A\to A_C\quad,\quad C\to C_A$$

We have the following result, to start with, which simplifies our work:

\begin{proposition}
Consider the following conditions:
\begin{enumerate}
\item $C=C_{A_C}$, for any Tannakian category $C$.

\item $A=A_{C_A}$, for any Woronowicz algebra $(A,u)$.
\end{enumerate}
We have then $(1)\implies(2)$. Also, $C\subset C_{A_C}$ is automatic.
\end{proposition}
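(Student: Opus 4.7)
The plan is to handle both assertions separately, noting that each is essentially a formal consequence of the construction of $A_C$ in Proposition 4.8 combined with Proposition 4.3. Neither of the two items seems to carry any serious analytic difficulty: the real content of the duality sits in statement (1), which this proposition explicitly does not ask us to establish.

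First I would dispose of the automatic inclusion $C\subset C_{A_C}$. By the very definition of the quotient
$$A_C=C(U_N^+)\big/\left<T\in Hom(u^{\otimes k},u^{\otimes l})\,\Big|\,T\in C(k,l)\right>,$$
the images of the standard generators $u_{ij}$ in $A_C$ assemble into a corepresentation, still denoted $u$, for which each $T\in C(k,l)$ satisfies $Tu^{\otimes k}=u^{\otimes l}T$ in $M(A_C)$. This is exactly the statement $T\in C_{A_C}(k,l)$, so the inclusion $C(k,l)\subset C_{A_C}(k,l)$ holds for every pair of colored integers $k,l$.

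For the implication $(1)\Rightarrow(2)$, the idea is to exploit the quotient map $\pi:A_{C_A}\to A$ produced by Proposition 4.8 and then appeal to Proposition 4.3. Indeed, by Proposition 4.3 any morphism of Woronowicz algebras induces inclusions of the associated Tannakian Hom spaces in the target, so for every $k,l$ we have
$$C_{A_{C_A}}(k,l)\subset C_A(k,l).$$
Now apply the assumption (1) to the Tannakian category $C:=C_A$: it gives the reverse equality $C_A=C_{A_{C_A}}$. Combining the two, the inclusions above are in fact equalities, and the last sentence of Proposition 4.3 then forces $\pi$ to be an isomorphism, which is precisely the statement $A=A_{C_A}$ required by (2).

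The forward-looking hard part, of course, is not contained in this proposition at all: it is the verification of (1) itself, namely that $C_{A_C}(k,l)\subset C(k,l)$, which amounts to showing that no ``spurious'' intertwiner survives the passage $C\leadsto A_C\leadsto C_{A_C}$. I expect this to be the genuine obstacle in the overall Tannakian program, requiring either the Woronowicz functional-analytic argument of \cite{wo3} or the categorical/combinatorial approach of \cite{mal}, but at the level of Proposition 4.9 the two assertions are purely formal, as sketched above.
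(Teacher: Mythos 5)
Your proof is correct and follows the same route as the paper: you apply hypothesis (1) to $C=C_A$, invoke the quotient map $A_{C_A}\to A$ from Proposition 4.8, and conclude via the last sentence of Proposition 4.3, while the automatic inclusion $C\subset C_{A_C}$ is read directly off the defining relations of $A_C$. You merely spell out the "clear from definitions" step in more detail than the paper does, which is fine.
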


\begin{proof}
Given a Woronowicz algebra $(A,u)$, let us set:
$$C=C_A$$

By using (1) we have then an equality as follows: 
$$C_A=C_{A_{C_A}}$$

On the other hand, by Proposition 4.8 we have an arrow as follows:
$$A_{C_A}\to A$$

Thus, we are in the general situation from Proposition 4.3 above, with a surjective arrow of Woronowicz algebras, which becomes an isomorphism at the level of the associated Tannakian categories. We conclude that Proposition 4.3 can be applied, and this gives the isomorphism of the associated Woronowicz algebras, $A_{C_A}=A$, as desired. Finally, the fact that we have an inclusion $C\subset C_{A_C}$ is clear from definitions.
\end{proof}

Summarizing, in order to establish the Tannakian duality correspondence, it is enough to prove that we have $C_{A_C}\subset C$, for any Tannakian category $C$. 

\section*{4b. Abstract algebra}

In order to prove that we have $C_{A_C}\subset C$, for any Tannakian category $C$, let us begin with some abstract constructions. Following Malacarne \cite{mal}, we have:

\begin{proposition}
Given a tensor category $C=C((k,l))$ over a Hilbert space $H$,
$$E_C^{(s)}
=\bigoplus_{|k|,|l|\leq s}C(k,l)
\subset\bigoplus_{|k|,|l|\leq s}B(H^{\otimes k},H^{\otimes l})
= B\left(\bigoplus_{|k|\leq s}H^{\otimes k}\right)$$
is a finite dimensional $C^*$-subalgebra. Also, 
$$E_C
=\bigoplus_{k,l}C(k,l)
\subset\bigoplus_{k,l}B(H^{\otimes k},H^{\otimes l})
\subset B\left(\bigoplus_kH^{\otimes k}\right)$$
is a closed $*$-subalgebra.
\end{proposition}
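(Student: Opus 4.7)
The plan is essentially bookkeeping: the five axioms defining a tensor category in Definition 4.6 encode exactly the properties needed to make the direct sum a unital $*$-subalgebra, and here only three of them---closure under composition, closure under involution, and the identity condition---are actually used.

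First I would make explicit the canonical block identification
$$B\Bigl(\bigoplus_{|k|\leq s} H^{\otimes k}\Bigr) \;=\; \bigoplus_{|k|,|l|\leq s} B(H^{\otimes k}, H^{\otimes l})$$
coming from the orthogonal direct-sum decomposition of the source Hilbert space. Under this identification an operator is recorded as a block matrix $(T_{k,l})$ with $T_{k,l}\in B(H^{\otimes k},H^{\otimes l})$; composition of two such matrices in position $(k,m)$ is the sum over the intermediate index $l$ of the composites $S_{l,m}T_{k,l}:H^{\otimes k}\to H^{\otimes m}$; the involution swaps indices and applies $*$ blockwise, sending the $(k,l)$-block $T_{k,l}$ to the $(l,k)$-block $T_{k,l}^{*}$; and the identity decomposes as $\sum_{|k|\leq s}\mathrm{id}_{H^{\otimes k}}$.

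With this in place, checking that $E_C^{(s)}$ is a unital $*$-subalgebra is immediate. Linearity is automatic, since each $C(k,l)$ is a linear subspace. Closure under multiplication follows from axiom (2) of Definition 4.6, because the composite $S_{l,m}T_{k,l}$ of an element of $C(k,l)$ with an element of $C(l,m)$ lies in $C(k,m)$. Closure under the involution is axiom (3), applied blockwise. Finally, each summand $\mathrm{id}_{H^{\otimes k}}$ of the ambient identity lies in $C(k,k)$ by axiom (4). Finite-dimensionality is clear, since there are only finitely many pairs $(k,l)$ with $|k|,|l|\leq s$ and each $C(k,l)$ sits inside the finite-dimensional space $B(H^{\otimes k},H^{\otimes l})$.

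For the unbounded version $E_C$, the same four verifications show that $\bigoplus_{k,l} C(k,l) = \bigcup_s E_C^{(s)}$ is a unital $*$-subalgebra of $B(\bigoplus_k H^{\otimes k})$, and its norm closure is then a closed $*$-subalgebra by continuity of the algebra operations. I do not expect any serious obstacle; the proof is a matter of matching axioms, the only subtle point being to fix conventions so that axiom (2) is invoked in the correct composition direction. Axioms (1) and (5) of Definition 4.6 play no role in Proposition 4.10; they will only be needed later, for the actual Tannakian reconstruction of $A_C$.
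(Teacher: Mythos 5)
Your proof is correct and fills in exactly the bookkeeping that the paper leaves implicit: the paper's own proof of this proposition is a single sentence asserting that the claim is "clear from the categorical axioms from Definition 4.6," so there is no competing route to compare against. Your block-matrix identification, the matching of axioms (2), (3), (4) to closure under multiplication, involution, and unitality, and the observation that axioms (1) and (5) are not needed here are all accurate; the one place you rightly flag a mild subtlety is the second statement, where the algebraic direct sum $\bigoplus_{k,l}C(k,l)$ is not itself norm-closed, and the intended reading is that one passes to its norm closure inside $B\bigl(\bigoplus_k H^{\otimes k}\bigr)$, which is then a closed $*$-subalgebra because the algebraic operations are norm-continuous and $E_C=\bigcup_s E_C^{(s)}$ is an increasing union of $*$-subalgebras.
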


\begin{proof}
This is clear indeed from the categorical axioms from Definition 4.6, via the standard embeddings and isomorphisms in the statement.
\end{proof}

Now back to our reconstruction question, given a tensor category $C=(C(k,l))$, we want to prove that we have $C=C_{A_C}$, which is the same as proving that we have:
$$E_C=E_{C_{A_C}}$$

Equivalently, we want to prove that we have equalities as follows, for any $s\in\mathbb N$:
$$E_C^{(s)}=E_{C_{A_C}}^{(s)}$$

The problem, however, is that these equalities are not easy to establish directly. In order to solve this question, we will use a standard commutant trick, as follows:

\index{bicommutant}

\begin{theorem}
For any $C^*$-algebra $B\subset M_n(\mathbb C)$ we have the formula
$$B=B''$$
where prime denotes the commutant, given by:
$$A'=\left\{T\in M_n(\mathbb C)\Big|Tx=xT,\forall x\in A\right\}$$
\end{theorem}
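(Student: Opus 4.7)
The inclusion $B\subset B''$ is immediate from the definition of the commutant: every $b\in B$ commutes with every element of $B'$, hence belongs to $B''$. The substance of the theorem is the reverse inclusion $B''\subset B$, and my plan is to combine the structure theorem for finite-dimensional $C^*$-algebras (Theorem 1.8) with an explicit description of how $B$ sits inside $M_n(\mathbb C)$. For the statement to hold as written I will assume that $B$ is unital in $M_n(\mathbb C)$, i.e.\ $1_B=1_{M_n(\mathbb C)}$; otherwise $B$ acts as zero on $(1-1_B)\mathbb C^n$ and a scalar correction appears in the double commutant on that summand.

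By Theorem 1.8, I would first decompose $1=p_1+\ldots+p_k$ into central minimal projections of $B$, obtaining $B\simeq\bigoplus_i M_{r_i}(\mathbb C)$. The $p_i$ lie in $B$, hence commute with every element of $B'$, so the block decomposition $\mathbb C^n=\bigoplus_i p_i\mathbb C^n$ is respected by $B$, $B'$ and $B''$ alike. On the $i$-th block the simple algebra $B_i:=p_iBp_i\simeq M_{r_i}(\mathbb C)$ acts faithfully, and I would use the classical fact that every $*$-representation of $M_r(\mathbb C)$ on a finite-dimensional Hilbert space is unitarily equivalent to a multiple of the defining one. This yields a unitary identification
$$p_i\mathbb C^n\;\simeq\;V_i\otimes W_i,\qquad V_i=\mathbb C^{r_i},\qquad W_i=\mathbb C^{m_i},$$
under which $B_i$ acts as $M_{r_i}(\mathbb C)\otimes 1_{W_i}$.

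From this concrete picture the commutant computation is mechanical tensor-product algebra: an operator on $V_i\otimes W_i$ that commutes with $M_{r_i}(\mathbb C)\otimes 1$ is necessarily of the form $1\otimes T$ for some $T\in B(W_i)$, and no off-diagonal blocks survive because the $p_i$ lie in $B$ (so any element of $B'$ is block-diagonal). Hence
$$B'\;\simeq\;\bigoplus_i\bigl(1_{V_i}\otimes M_{m_i}(\mathbb C)\bigr),$$
and applying exactly the same reasoning to the pair $(B',\mathbb C^n)$ gives $B''=\bigoplus_i M_{r_i}(\mathbb C)\otimes 1_{W_i}=B$, finishing the proof.

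The main obstacle I expect is not the commutant computation, which is routine, but the step producing the tensor decomposition $p_i\mathbb C^n\simeq V_i\otimes W_i$ with $B_i$ acting as $M_{r_i}(\mathbb C)\otimes 1_{W_i}$. This rests on the classical uniqueness, up to multiplicity, of the irreducible $*$-representation of a simple matrix algebra; although standard, it is the one ingredient beyond Theorem 1.8 itself, and in a self-contained development one would prove it by fixing a rank-one projection $e\in B_i$ and identifying $W_i$ with $e\cdot p_i\mathbb C^n$, then checking that the multiplication map $M_{r_i}(\mathbb C)\otimes W_i\to p_i\mathbb C^n$ is a unitary equivariant isomorphism.
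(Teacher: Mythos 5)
Your proof is correct, and it takes the same overall route as the paper — decompose $B$ via the structure theorem (note: that result is Theorem 3.8 in the paper, not 1.8), then compute the commutant block by block — but you execute one step more carefully than the paper does. After invoking the decomposition $B\simeq\bigoplus_iM_{r_i}(\mathbb C)$, the paper simply asserts that $B'=\mathbb C\oplus\ldots\oplus\mathbb C$, ``the center of each matrix algebra being reduced to the scalars.'' This is literally true only when each block acts with multiplicity one; in general, as you correctly observe, each block $p_i\mathbb C^n$ factors as $V_i\otimes W_i$ with $B_i$ acting as $M_{r_i}(\mathbb C)\otimes 1$, and the commutant is $\bigoplus_i\left(1\otimes M_{m_i}(\mathbb C)\right)$, not the center. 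Of course this does not damage the conclusion — taking the commutant once more still recovers $B$ — but your version makes the intermediate object correct, whereas the paper's displayed formula for $B'$ is wrong whenever some $m_i>1$. You were also right to flag the unitality hypothesis $1_B=1_{M_n(\mathbb C)}$: without it $B\subsetneq B''$ in general (e.g.\ $B=\mathbb C p$ for a proper projection $p$ has $B''=\mathbb C p\oplus\mathbb C(1-p)$), and the paper leaves this tacit. The one extra ingredient your argument uses, beyond the structure theorem, is the multiplicity form of representations of $M_r(\mathbb C)$; your suggested proof via a fixed rank-one projection $e$ and the equivariant map $M_{r_i}(\mathbb C)\otimes ep_i\mathbb C^n\to p_i\mathbb C^n$ is exactly the standard way to make that self-contained.
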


\begin{proof}
This is a particular case of von Neumann's bicommutant theorem, which follows as well from the explicit description of $B$ given in chapter 3 above. To be more precise, let us decompose $B$ as there, as a direct sum of matrix algebras:
$$B=M_{r_1}(\mathbb C)\oplus\ldots\oplus M_{r_k}(\mathbb C)$$

The center of each matrix algebra being reduced to the scalars, the commutant of this algebra is then as follows, with each copy of $\mathbb C$ corresponding to a matrix block:
$$B'=\mathbb C\oplus\ldots\oplus\mathbb C$$

By taking once again the commutant, and computing over the matrix blocks, we obtain the algebra $B$ itself, and this gives the formula in the statement.
\end{proof}

Now back to our questions, we recall that we want to prove that we have $C=C_{A_C}$, for any Tannakian category $C$. By using the bicommutant theorem, we have:

\begin{proposition}
Given a Tannakian category $C$, the following are equivalent:
\begin{enumerate}
\item $C=C_{A_C}$.

\item $E_C=E_{C_{A_C}}$.

\item $E_C^{(s)}=E_{C_{A_C}}^{(s)}$, for any $s\in\mathbb N$.

\item $E_C^{(s)'}=E_{C_{A_C}}^{(s)'}$, for any $s\in\mathbb N$.
\end{enumerate}
In addition, the inclusions $\subset$, $\subset$, $\subset$, $\supset$ are automatically satisfied.
\end{proposition}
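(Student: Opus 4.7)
The plan is to verify the equivalences by straightforward bookkeeping, the only nontrivial step being an application of the bicommutant theorem.

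First I would observe that (1), (2), and (3) are literally equivalent reformulations of one another, given the graded structure of the constructions. Indeed, $E_C$ is by definition the direct sum $\bigoplus_{k,l}C(k,l)$, and the comparison with $E_{C_{A_C}}$ is componentwise with respect to this grading; hence $E_C=E_{C_{A_C}}$ is literally the conjunction of the equalities $C(k,l)=C_{A_C}(k,l)$ for all colored bidegrees $(k,l)$, which is condition (1). Similarly, since $E_C^{(s)}$ is just the truncation $\bigoplus_{|k|,|l|\leq s}C(k,l)$, equality of the $E_C^{(s)}$ for all $s$ is equivalent to equality of the full $E_C$. The same gradewise reasoning applies to the $\subset$ inclusions.

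Next, for (3) $\iff$ (4), I would invoke the bicommutant theorem. By Proposition 4.10, $E_C^{(s)}$ is a finite-dimensional $C^*$-subalgebra of the matrix algebra $B\bigl(\bigoplus_{|k|\leq s}H^{\otimes k}\bigr)$, and $E_{C_{A_C}}^{(s)}$ sits inside the same ambient algebra. Theorem 4.12 then gives $E_C^{(s)}=(E_C^{(s)\prime})'$, and likewise for $E_{C_{A_C}}^{(s)}$. Consequently, $E_C^{(s)}=E_{C_{A_C}}^{(s)}$ is equivalent to $(E_C^{(s)\prime})'=(E_{C_{A_C}}^{(s)\prime})'$, which in turn, by taking commutants once more, is equivalent to $E_C^{(s)\prime}=E_{C_{A_C}}^{(s)\prime}$.

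For the automatic inclusions, Proposition 4.9 supplies $C\subset C_{A_C}$ unconditionally, which by the same gradewise considerations furnishes the $\subset$ inclusions in (1), (2), (3). Since passing to the commutant is order-reversing, this same inclusion gives $E_C^{(s)\prime}\supset E_{C_{A_C}}^{(s)\prime}$, establishing the $\supset$ direction in (4). There is essentially no genuine obstacle in this proposition; its purpose is to recast the desired identity $C=C_{A_C}$ into a form, namely (4), where the missing inclusion $E_C^{(s)\prime}\subset E_{C_{A_C}}^{(s)\prime}$ becomes accessible by concretely identifying the commutant $E_C^{(s)\prime}$ in terms of the fixed-point data of coactions on $\bigoplus_{|k|\leq s}H^{\otimes k}$. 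The real work of Tannakian duality lies there, not in this lemma.
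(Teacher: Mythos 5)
Your proof is correct and follows essentially the same route as the paper: (1)--(3) are definitional, gradewise reformulations, (3)$\iff$(4) comes from the bicommutant theorem applied inside the ambient matrix algebra $B\bigl(\bigoplus_{|k|\leq s}H^{\otimes k}\bigr)$, and the automatic inclusions come from $C\subset C_{A_C}$ (Proposition 4.9) together with the order-reversal of taking commutants. The only slip is a citation: the bicommutant theorem you invoke is Theorem 4.11, not Theorem 4.12 (which is the statement being proved).
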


\begin{proof}
This follows from the above results, as follows:

\medskip

$(1)\iff(2)$ This is clear from definitions.

\medskip

$(2)\iff(3)$ This is clear from definitions as well.

\medskip

$(3)\iff(4)$ This comes from the bicommutant theorem. As for the last assertion, we have indeed $C\subset C_{A_C}$ from Proposition 4.9, and this shows that we have as well: 
$$E_C\subset E_{C_{A_C}}$$

We therefore obtain the following inclusion: 
$$E_C^{(s)}\subset E_{C_{A_C}}^{(s)}$$

By taking now the commutants, this gives:
$$E_C^{(s)}\supset E_{C_{A_C}}^{(s)}$$

Thus, we are led to the conclusion in the statement.
\end{proof}

Summarizing, in order to finish, given a tensor category $C=(C(k,l))$, we would like to prove that we have inclusions as follows, for any $s\in\mathbb N$:
$$E_C^{(s)'}\subset E_{C_{A_C}}^{(s)'}$$ 

Let us first study the commutant on the right. As a first observation, we have:

\begin{proposition}
Given a Woronowicz algebra $(A,u)$, we have
$$E_{C_A}^{(s)}=End\left(\bigoplus_{|k|\leq s}u^{\otimes k}\right)$$
as subalgebras of the following algebra:
$$B\left(\bigoplus_{|k|\leq s}H^{\otimes k}\right)$$
\end{proposition}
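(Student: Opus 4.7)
The plan is to prove this by unpacking both sides and checking they define the same subspace of $B(\bigoplus_{|k|\leq s}H^{\otimes k})$. This is essentially a definition-chase, with the only real content being the standard fact that intertwiners between direct sums of corepresentations decompose into blocks.

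First, I would recall that by Proposition 4.10, the left-hand side is by construction
\[
E_{C_A}^{(s)} \;=\; \bigoplus_{|k|,|l|\leq s} Hom(u^{\otimes k},u^{\otimes l}),
\]
embedded in $B\bigl(\bigoplus_{|k|\leq s} H^{\otimes k}\bigr)$ in the obvious block-matrix way: an element $T=(T_{lk})$ with $T_{lk}\in Hom(u^{\otimes k},u^{\otimes l})$ acts as the operator whose $(l,k)$-block is $T_{lk}:H^{\otimes k}\to H^{\otimes l}$, and whose other blocks vanish. Thus $E_{C_A}^{(s)}$ is exactly the set of operators on $\bigoplus_{|k|\leq s} H^{\otimes k}$ whose $(l,k)$-block lies in $Hom(u^{\otimes k},u^{\otimes l})$.

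Next, I would set $V=\bigoplus_{|k|\leq s} u^{\otimes k}$, viewed as a corepresentation on $\bigoplus_{|k|\leq s} H^{\otimes k}$ with matrix $V=\mathrm{diag}(u^{\otimes k})_{|k|\leq s}$. A general operator $T$ on this direct sum decomposes as a block matrix $T=(T_{lk})$ with $T_{lk}:H^{\otimes k}\to H^{\otimes l}$. The relation $TV=VT$ reads, block by block, $T_{lk}\,u^{\otimes k}=u^{\otimes l}\,T_{lk}$, which is precisely the condition $T_{lk}\in Hom(u^{\otimes k},u^{\otimes l})$. Hence
\[
End(V)\;=\;\bigl\{T=(T_{lk}) \ \bigm|\ T_{lk}\in Hom(u^{\otimes k},u^{\otimes l}),\ \forall\,|k|,|l|\leq s\bigr\}.
\]

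Comparing the two descriptions gives the claimed equality of subalgebras of $B\bigl(\bigoplus_{|k|\leq s}H^{\otimes k}\bigr)$. I expect no real obstacle here: the only thing to be careful about is bookkeeping, namely making sure that the embedding used in Proposition 4.10 to define $E_{C_A}^{(s)}$ as a subalgebra of $B\bigl(\bigoplus_{|k|\leq s}H^{\otimes k}\bigr)$ is the block-matrix embedding, which is exactly the one that matches how endomorphisms of $V=\bigoplus u^{\otimes k}$ are built from intertwiners between the summands. Once this identification is in place, the equality is automatic, and the statement is really a reformulation of the Tannakian Hom-spaces in terms of a single endomorphism algebra, which will be convenient for the commutant computations in what follows.
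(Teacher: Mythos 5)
Your proof is correct and follows essentially the same route as the paper: both sides are unpacked into the direct sum $\bigoplus_{|k|,|l|\leq s}Hom(u^{\otimes k},u^{\otimes l})$ via the block-matrix decomposition, and the equality is read off. Your write-up spells out the block-by-block intertwining condition $T_{lk}u^{\otimes k}=u^{\otimes l}T_{lk}$ a bit more explicitly than the paper does, but the argument is the same.
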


\begin{proof}
The category $C_A$ is by definition given by:
$$C_A(k,l)=Hom(u^{\otimes k},u^{\otimes l})$$

Thus, according to the various identifications in Proposition 4.10 above, the corresponding algebra $E_{C_A}^{(s)}$ appears as follows:
\begin{eqnarray*}
E_{C_A}^{(s)}
&=&\bigoplus_{|k|,|l|\leq s}Hom(u^{\otimes k},u^{\otimes l})\\
&\subset&\bigoplus_{|k|,|l|\leq s}B(H^{\otimes k},H^{\otimes l})\\
&=&B\left(\bigoplus_{|k|\leq s}H^{\otimes k}\right)
\end{eqnarray*}

On the other hand, the algebra of intertwiners of $\bigoplus_{|k|\leq s}u^{\otimes k}$ is given by:
\begin{eqnarray*}
End\left(\bigoplus_{|k|\leq s}u^{\otimes k}\right)
&=&\bigoplus_{|k|,|l|\leq s}Hom(u^{\otimes k},u^{\otimes l})\\
&\subset&\bigoplus_{|k|,|l|\leq s}B(H^{\otimes k},H^{\otimes l})\\
&=&B\left(\bigoplus_{|k|\leq s}H^{\otimes k}\right)
\end{eqnarray*}

Thus we have indeed the same algebra, and we are done.
\end{proof}

In practice now, we have to compute the commutant of the above algebra. And for this purpose, we can use the following general result:

\begin{proposition}
Given a corepresentation $v\in M_n(A)$, we have a representation
$$\pi_v:A^*\to M_n(\mathbb C)$$
$$\varphi\to (\varphi(v_{ij}))_{ij}$$
whose image is given by the following formula:
$$Im(\pi_v)=End(v)'$$
\end{proposition}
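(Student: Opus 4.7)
The plan is to establish the two assertions separately: that $\pi_v$ is an algebra homomorphism for the convolution product on $A^*$, and then that $Im(\pi_v)=End(v)'$. The whole argument will be run via the bicommutant theorem (Theorem 1.8 applied to finite-dimensional matrix algebras), exploiting the three defining axioms of a corepresentation in turn: $\Delta$ gives multiplicativity, $\varepsilon$ gives unitality, and $S$ gives $*$-closure of the image.

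First I would verify that $\pi_v$ is a unital $*$-homomorphism. Multiplicativity is a direct unpacking: if $A^*$ carries the convolution $\varphi\cdot\psi=(\varphi\otimes\psi)\Delta$, then
$$(\pi_v(\varphi\cdot\psi))_{ij}=(\varphi\otimes\psi)\Delta(v_{ij})=\sum_k\varphi(v_{ik})\psi(v_{kj})=(\pi_v(\varphi)\pi_v(\psi))_{ij}.$$
Unitality $\pi_v(\varepsilon)=I_n$ is immediate from $\varepsilon(v_{ij})=\delta_{ij}$. For stability under the adjoint, given $\varphi\in A^*$ set $\psi(a)=\overline{\varphi(S(a)^*)}$; using $S(v_{ij})=v_{ji}^*$ one checks $\pi_v(\psi)=\pi_v(\varphi)^*$, so $Im(\pi_v)$ is a unital $*$-subalgebra of $M_n(\mathbb C)$.

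Next I would prove $Im(\pi_v)\subset End(v)'$ directly. If $T\in End(v)$, then $Tv=vT$ reads $\sum_j T_{aj}v_{ji}=\sum_b v_{ab}T_{bi}$, and applying any $\varphi\in A^*$ yields $T\pi_v(\varphi)=\pi_v(\varphi)T$. Equivalently $End(v)\subset Im(\pi_v)'$. For the reverse, I would show $Im(\pi_v)'\subset End(v)$: given $T\in Im(\pi_v)'$, the relation $T\pi_v(\varphi)=\pi_v(\varphi)T$ for every $\varphi\in A^*$ reads
$$\varphi\Bigl(\sum_j T_{aj}v_{ji}-\sum_b v_{ab}T_{bi}\Bigr)=0\qquad\forall\varphi\in A^*,$$
which by Hahn--Banach forces $Tv=vT$, i.e.\ $T\in End(v)$. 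Combining the two gives $Im(\pi_v)'=End(v)$.

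Finally I would invoke the bicommutant theorem. Since $Im(\pi_v)$ is a unital $*$-subalgebra of $M_n(\mathbb C)$ by the first step, Theorem~1.8 guarantees $Im(\pi_v)=Im(\pi_v)''$, and substituting the identity $Im(\pi_v)'=End(v)$ from the second step yields $Im(\pi_v)=End(v)'$, as claimed.

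The main obstacle is the $*$-closure of $Im(\pi_v)$: without it, the bicommutant reduction collapses and one would be forced into a dimension count via Peter--Weyl, writing $v\simeq\bigoplus_\alpha w_\alpha^{\oplus m_\alpha}$ and matching $\dim End(v)'=\sum_\alpha d_\alpha^2$ against $\dim Im(\pi_v)=\dim C(v)=\sum_\alpha d_\alpha^2$ via PW3. This alternative works but is longer and less conceptual; the antipode-based $*$-closure argument above is the cleaner route, and it is the one place where the full Woronowicz axiom $S(v_{ij})=v_{ji}^*$ (rather than just $\Delta,\varepsilon$) is genuinely used.
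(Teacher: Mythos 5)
Your proof is correct and follows the paper's own route: establish that $\pi_v$ is multiplicative, show directly that $T\in End(v)$ commutes with every $\pi_v(\varphi)$ (so $Im(\pi_v)\subset End(v)'$), then show $Im(\pi_v)'\subset End(v)$, and close the loop with the bicommutant theorem. The one place where you go beyond the paper's write-up is in explicitly checking that $Im(\pi_v)$ is a unital $*$-subalgebra of $M_n(\mathbb C)$, using $\pi_v(\varepsilon)=I_n$ and the antipode-twisted functional $\psi(a)=\overline{\varphi(S(a)^*)}$ to produce $\pi_v(\varphi)^*$; the paper invokes the bicommutant theorem for $Im(\pi_v)$ without verifying this hypothesis, so your addition genuinely tightens the argument. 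The only minor point to watch is that $\psi$ as written is defined only on the dense $*$-subalgebra $\mathcal A$ where $S$ lives, but since $\pi_v$ factors through restriction to $\mathcal A$ this is harmless.
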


\begin{proof}
The first assertion is clear, with the multiplicativity claim coming from:
\begin{eqnarray*}
(\pi_v(\varphi*\psi))_{ij}
&=&(\varphi\otimes\psi)\Delta(v_{ij})\\
&=&\sum_k\varphi(v_{ik})\psi(v_{kj})\\
&=&\sum_k(\pi_v(\varphi))_{ik}(\pi_v(\psi))_{kj}\\
&=&(\pi_v(\varphi)\pi_v(\psi))_{ij}
\end{eqnarray*} 

Let us first prove the inclusion $\subset$. Given $\varphi\in A^*$ and $T\in End(v)$, we have:
\begin{eqnarray*}
[\pi_v(\varphi),T]=0
&\iff&\sum_k\varphi(v_{ik})T_{kj}=\sum_kT_{ik}\varphi(v_{kj}),\forall i,j\\
&\iff&\varphi\left(\sum_kv_{ik}T_{kj}\right)=\varphi\left(\sum_kT_{ik}v_{kj}\right),\forall i,j\\
&\iff&\varphi((vT)_{ij})=\varphi((Tv)_{ij}),\forall i,j
\end{eqnarray*}

But this latter formula is true, because $T\in End(v)$ means that we have:
$$vT=Tv$$

As for the converse inclusion $\supset$, the proof is quite similar. Indeed, by using the bicommutant theorem, this is the same as proving that we have:
$$Im(\pi_v)'\subset End(v)$$

But, by using the above equivalences, we have the following computation:
\begin{eqnarray*}
T\in Im(\pi_v)'
&\iff&[\pi_v(\varphi),T]=0,\forall\varphi\\
&\iff&\varphi((vT)_{ij})=\varphi((Tv)_{ij}),\forall\varphi,i,j\\
&\iff&vT=Tv
\end{eqnarray*}

Thus, we have obtained the desired inclusion, and we are done.
\end{proof}

By combining now the above results, we obtain:

\begin{theorem}
Given a Woronowicz algebra $(A,u)$, we have
$$E_{C_A}^{(s)'}=Im(\pi_v)$$
as subalgebras of the following algebra,
$$B\left(\bigoplus_{|k|\leq s}H^{\otimes k}\right)$$
where the corepresentation $v$ is the sum
$$v=\bigoplus_{|k|\leq s}u^{\otimes k}$$
and where $\pi_v:A^*\to M_n(\mathbb C)$ is given by $\varphi\to(\varphi(v_{ij}))_{ij}$.
\end{theorem}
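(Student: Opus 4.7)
The plan is to simply combine the two immediately preceding results. By Proposition 4.13, we have the identification
$$E_{C_A}^{(s)} = End\left(\bigoplus_{|k|\leq s}u^{\otimes k}\right) = End(v)$$
as subalgebras of $B\left(\bigoplus_{|k|\leq s}H^{\otimes k}\right)$, where $v = \bigoplus_{|k|\leq s}u^{\otimes k}$ is the corepresentation in the statement. Taking commutants inside this ambient matrix algebra then gives
$$E_{C_A}^{(s)'} = End(v)'.$$

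Now I would apply Proposition 4.14 to the corepresentation $v$ itself. That proposition gives precisely
$$Im(\pi_v) = End(v)',$$
where $\pi_v : A^* \to M_n(\mathbb{C})$ is defined by $\varphi \to (\varphi(v_{ij}))_{ij}$. Chaining the two equalities yields $E_{C_A}^{(s)'} = Im(\pi_v)$, which is the claim.

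There is no real obstacle here, since both ingredients are already in hand; the only point worth double-checking is that the ambient algebra $B\left(\bigoplus_{|k|\leq s}H^{\otimes k}\right)$ used to form the commutant in Proposition 4.13 coincides with the target $M_n(\mathbb{C})$ of $\pi_v$ used in Proposition 4.14, where $n = \sum_{|k|\leq s}N^{|k|} = \dim v$. This identification is immediate once one unpacks the block-matrix description of operators on a direct sum of tensor powers of $H$, and once this is noted the result follows by direct substitution.
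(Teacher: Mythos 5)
Your proof is correct and is essentially the paper's own argument: the paper also obtains the result directly by combining Proposition 4.13 (identifying $E_{C_A}^{(s)}$ with $End(v)$) and Proposition 4.14 (identifying $End(v)'$ with $Im(\pi_v)$). Your remark about matching the ambient algebra $B\left(\bigoplus_{|k|\leq s}H^{\otimes k}\right)$ with $M_n(\mathbb C)$ is a fine point of bookkeeping, consistent with the identifications already set up in Proposition 4.13.
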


\begin{proof}
This follows indeed from Proposition 4.13 and Proposition 4.14.
\end{proof}

Summarizing, we have some advances on the duality question, with the whole problem tending to become something quite concrete, which can be effectively solved.

\section*{4c. The correspondence}

We recall that we want to prove that we have $E_C^{(s)'}\subset E_{C_{A_C}}^{(s)'}$, for any $s\in\mathbb N$. For this purpose, we must first refine Theorem 4.15, in the case $A=A_C$. In order to do so, we will use an explicit model for $A_C$. In order to construct such a model, let $<u_{ij}>$ be the free $*$-algebra over $\dim(H)^2$ variables, with comultiplication and counit as follows:
$$\Delta(u_{ij})=\sum_ku_{ik}\otimes u_{kj}$$
$$\varepsilon(u_{ij})=\delta_{ij}$$

Following Malacarne \cite{mal}, we can model this $*$-bialgebra, in the following way:

\begin{proposition}
Consider the following pair of dual vector spaces,
$$F=\bigoplus_kB\left(H^{\otimes k}\right)$$
$$F^*=\bigoplus_kB\left(H^{\otimes k}\right)^*$$
and let $f_{ij},f_{ij}^*\in F^*$ be the standard generators of $B(H)^*,B(\bar{H})^*$.
\begin{enumerate}
\item $F^*$ is a $*$-algebra, with multiplication $\otimes$ and involution $f_{ij}\leftrightarrow f_{ij}^*$.

\item $F^*$ is a $*$-bialgebra, with $\Delta(f_{ij})=\sum_kf_{ik}\otimes f_{kj}$ and $\varepsilon(f_{ij})=\delta_{ij}$.

\item We have a $*$-bialgebra isomorphism $<u_{ij}>\simeq F^*$, given by $u_{ij}\to f_{ij}$.
\end{enumerate}
\end{proposition}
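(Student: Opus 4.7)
The plan is to exploit a striking feature of the setup: both $\langle u_{ij}\rangle$ and $F^*$ turn out to be free unital $*$-algebras on the $2N^2$ generators $u_{ij}, u_{ij}^*$ (resp.\ $f_{ij}, f_{ij}^*$), equipped with the same combinatorial comultiplication, so once (1) and (2) are set up carefully, assertion (3) becomes essentially tautological. For (1), I would use the canonical finite-dimensional duality isomorphism $B(H^{\otimes k})^* \otimes B(H^{\otimes l})^* \simeq B(H^{\otimes kl})^*$ to define an associative multiplication on $F^*$ by concatenation of colored integers, with unit $1 \in \mathbb{C} = B(H^{\otimes\emptyset})^*$. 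Under this structure, $F^*$ identifies with the free unital algebra on $\{f_{ij}, f_{ij}^*\}$, the grading by colored integers recording word-length and letter-color; the involution $f_{ij}\leftrightarrow f_{ij}^*$ then extends uniquely to an anti-multiplicative, involutive operation by freeness.

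For (2), I would define $\Delta, \varepsilon$ on generators by the stated formulas together with the $*$-consistent extensions $\Delta(f_{ij}^*) = \sum_k f_{ik}^* \otimes f_{kj}^*$ and $\varepsilon(f_{ij}^*) = \delta_{ij}$, and extend as $*$-algebra morphisms, using freeness again to guarantee well-definedness. Coassociativity and counitality are then a one-line check on generators:
$$(\Delta \otimes \mathrm{id})\Delta(f_{ij}) = \sum_{k,l} f_{ik} \otimes f_{kl} \otimes f_{lj} = (\mathrm{id} \otimes \Delta)\Delta(f_{ij})$$
and $(\mathrm{id} \otimes \varepsilon)\Delta(f_{ij}) = (\varepsilon \otimes \mathrm{id})\Delta(f_{ij}) = f_{ij}$. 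Since both sides of each axiom are $*$-algebra morphisms agreeing on the generating set, they agree throughout.

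For (3), the map $u_{ij} \to f_{ij}$ extends uniquely to a $*$-algebra morphism $\Phi: \langle u_{ij}\rangle \to F^*$ by the universal property of the free $*$-algebra, and it intertwines $\Delta, \varepsilon$ by direct comparison on generators. To see $\Phi$ is an isomorphism, note that the $k$-colored piece of $\langle u_{ij}\rangle$ has basis consisting of the monomials $u_{i_1j_1}^{e_1} \cdots u_{i_nj_n}^{e_n}$ (where $k = e_1 \cdots e_n$), which $\Phi$ sends to the tensor-product vectors $f_{i_1j_1}^{e_1} \otimes \cdots \otimes f_{i_nj_n}^{e_n}$; these are dual to the matrix-unit basis of $B(H^{\otimes k})$, hence form a basis of $B(H^{\otimes k})^*$, and the common graded dimension is $N^{2n}$. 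The principal subtlety is fixing the conventions consistently at the outset: the anti-multiplicative extension of the involution must be compatible with color-reversal of tensor factors, and the duality isomorphism used to define multiplication on $F^*$ must be the one dual to the tensor product on $F$; once these matches are made, every verification reduces to checking on generators, where the statement holds by construction.
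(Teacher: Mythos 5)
Your proof is correct and follows essentially the same route as the paper: both arguments rest on the observation that the tensor-product vectors $f_{i_1j_1}^{e_1}\otimes\ldots\otimes f_{i_nj_n}^{e_n}$ form a basis of $F^*$ matching the word basis of the free $*$-algebra $<u_{ij}>$, so that $u_{ij}\to f_{ij}$ is a linear isomorphism and the $*$-bialgebra structures correspond. You merely construct the multiplication on $F^*$ intrinsically (via the finite-dimensional duality $B(H^{\otimes k})^*\otimes B(H^{\otimes l})^*\simeq B(H^{\otimes kl})^*$) and then verify the identification, whereas the paper transports the structure through the isomorphism; the content is the same.
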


\begin{proof}
Since $F^*$ is spanned by the various tensor products between the variables $f_{ij},f_{ij}^*$, we have a vector space isomorphism as follows, given by $u_{ij}\to f_{ij},u_{ij}^*\to f_{ij}^*$:
$$<u_{ij}>\simeq F^*$$

The corresponding $*$-bialgebra structure induced on $F^*$ is the one in the statement.
\end{proof}

Now back to our algebra $A_C$, we have the following modelling result for it:

\begin{proposition}
The smooth part of the algebra $A_C$ is given by 
$$\mathcal A_C\simeq F^*/J$$
where $J\subset F^*$ is the ideal coming from the following relations,
\begin{eqnarray*}
&&\sum_{p_1,\ldots,p_k}T_{i_1\ldots i_l,p_1\ldots p_k}f_{p_1j_1}\otimes\ldots\otimes  f_{p_kj_k}\\
&=&\sum_{q_1,\ldots,q_l}T_{q_1\ldots q_l,j_1\ldots j_k}f_{i_1q_1}\otimes\ldots\otimes f_{i_lq_l}\quad,\quad\forall i,j
\end{eqnarray*}
one for each pair of colored integers $k,l$, and each $T\in C(k,l)$.
\end{proposition}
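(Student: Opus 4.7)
The plan is to realize $\mathcal{A}_C$ explicitly as a quotient of the free $*$-algebra $\langle u_{ij}\rangle$, translate the intertwining relations $Tu^{\otimes k}=u^{\otimes l}T$ into polynomial identities in the generators, and then transport everything to $F^*$ via the $*$-bialgebra isomorphism from Proposition 4.16. The starting observation is that the smooth part $\mathcal{A}(U_N^+)$ of $C(U_N^+)$ is freely generated by the $u_{ij}$, subject only to the biunitarity of $u$; passing to $\mathcal{A}_C$ amounts to imposing, on top of biunitarity, all relations coming from the $T\in C(k,l)$.

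First I would spell out what $T\in\mathrm{Hom}(u^{\otimes k},u^{\otimes l})$ means as an identity in $\mathcal{A}(U_N^+)$. Applying the two operators $Tu^{\otimes k}$ and $u^{\otimes l}T$ to the basis vector $e_{j_1}\otimes\cdots\otimes e_{j_k}$ of $H^{\otimes k}$, expanding using $u^{\otimes k}_{p_1\ldots p_k,\,j_1\ldots j_k}=u^{e_1}_{p_1j_1}\cdots u^{e_k}_{p_kj_k}$ (with $u^\circ=u$, $u^\bullet=u^*$), and comparing the coefficients of each basis vector $e_{i_1}\otimes\cdots\otimes e_{i_l}$ of $H^{\otimes l}$, the condition becomes, for all multi-indices $i,j$,
$$\sum_{p_1,\ldots,p_k}T_{i_1\ldots i_l,\,p_1\ldots p_k}\,u^{e_1}_{p_1j_1}\cdots u^{e_k}_{p_kj_k}=\sum_{q_1,\ldots,q_l}T_{q_1\ldots q_l,\,j_1\ldots j_k}\,u^{e'_1}_{i_1q_1}\cdots u^{e'_l}_{i_lq_l},$$
where $e_1\ldots e_k$ and $e'_1\ldots e'_l$ encode the colorings of $k$ and $l$.

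Next, applying the isomorphism $\langle u_{ij}\rangle\simeq F^*$ sending $u_{ij}\mapsto f_{ij}$ and $u_{ij}^*\mapsto f_{ij}^*$ from Proposition 4.16, and using that multiplication in $F^*$ is precisely the tensor product $\otimes$, the above polynomial identities go over to the relations displayed in the statement, namely the defining relations of the ideal $J\subset F^*$. By construction this is a $*$-ideal, and the content of Proposition 4.8 (the intertwiner relations cut out a Hopf ideal) guarantees that $J$ is compatible with the bialgebra structure transported from $\mathcal{A}_C$.

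Finally, I would check that the biunitarity relations for $u$ coming from the ambient quotient $C(U_N^+)$ are not an extra ingredient, but are already built into $J$: by Proposition 4.5, the biunitarity of $u$ is equivalent to $R,R^*$ being intertwiners of the appropriate tensor products, and these are forced by axiom (5) of Definition 4.6 to lie in $C$. Hence the relations $R\in C(\emptyset,\circ\bullet)$, $R\in C(\emptyset,\bullet\circ)$ and their adjoints, which are included among the generating relations of $J$, already imply $uu^*=u^*u=u^t\bar u=\bar u u^t=1$, so quotienting first $\langle u_{ij}\rangle$ by biunitarity and then by the full intertwiner ideal gives the same algebra as $F^*/J$. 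This yields the isomorphism $\mathcal{A}_C\simeq F^*/J$. The main obstacle is purely bookkeeping: keeping track of the coloring pattern $e_1\ldots e_k$ throughout, so that $f_{ij}$ and $f_{ij}^*$ appear in the correct positions in the tensor products; once the conventions are fixed the result is an unwinding of definitions.
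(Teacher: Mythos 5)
Your proof is correct and follows essentially the same route as the paper: realize $\mathcal A_C$ as the free $*$-algebra $\langle u_{ij}\rangle$ modulo the intertwining relations (with biunitarity subsumed, as the paper notes via Proposition 4.4), then transport to $F^*$ via the isomorphism of Proposition 4.16, under which multiplication becomes $\otimes$. Your treatment of the colorings $e_1\ldots e_k$ is a bit more explicit than the paper's, which works "assuming $k,l\in\mathbb N$ for simplifying," but the argument is the same.
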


\begin{proof}
Our first claim is that $A_C$ appears as enveloping $C^*$-algebra of the following universal $*$-algebra, where $u=(u_{ij})$ is regarded as a formal corepresentation:
$$\mathcal A_C=\left<(u_{ij})_{i,j=1,\ldots,N}\Big|T\in Hom(u^{\otimes k},u^{\otimes l}),\forall k,l,\forall T\in C(k,l)\right>$$

Indeed, this follows from Proposition 4.4 above, because according to the result there, the relations defining $C(U_N^+)$ are included into those that we impose.

With this claim in hand, the conclusion is that we have a formula as follows, where $I$ is the ideal coming from the relations $T\in Hom(u^{\otimes k},u^{\otimes l})$, with $T\in C(k,l)$:
$$\mathcal A_C=<u_{ij}>/I$$

Now if we denote by $J\subset F^*$ the image of the ideal $I$ via the $*$-algebra isomorphism $<u_{ij}>\simeq F^*$ from Proposition 4.16, we obtain an identification as follows:
$$\mathcal A_C\simeq F^*/J$$

In order to compute $J$, let us go back to $I$. With standard multi-index notations, and by assuming that $k,l\in\mathbb N$ are usual integers, for simplifying, a relation of type $T\in Hom(u^{\otimes k},u^{\otimes l})$ inside $<u_{ij}>$ is equivalent to the following conditions:
\begin{eqnarray*}
&&\sum_{p_1,\ldots,p_k}T_{i_1\ldots i_l,p_1\ldots p_k}u_{p_1j_1}\ldots u_{p_kj_k}\\
&=&\sum_{q_1,\ldots,q_l}T_{q_1\ldots q_l,j_1\ldots j_k}u_{i_1q_1}\ldots u_{i_lq_l}\quad,\quad\forall i,j
\end{eqnarray*}

Now by recalling that the isomorphism of $*$-algebras $<u_{ij}>\to F^*$ is given by $u_{ij}\to f_{ij}$, and that the multiplication operation of $F^*$ corresponds to the tensor product operation $\otimes$, we conclude that $J\subset F^*$ is the ideal from the statement.
\end{proof}

With the above result in hand, let us go back to Theorem 4.15. We have:

\begin{proposition}
The linear space $\mathcal A_C^*$ is given by the formula
$$\mathcal A_C^*=\left\{a\in F\Big|Ta_k=a_lT,\forall T\in C(k,l)\right\}$$
and the representation
$$\pi_v:\mathcal A_C^*\to B\left(\bigoplus_{|k|\leq s}H^{\otimes k}\right)$$
appears diagonally, by truncating, via the following formula:
$$\pi_v:a\to (a_k)_{kk}$$
\end{proposition}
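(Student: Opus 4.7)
The plan is to unpack the identification $\mathcal{A}_C \simeq F^*/J$ from Proposition 4.17 by dualizing, and then to decode the resulting condition in matrix terms.

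First, I would put $F$ and $F^*$ in natural duality by declaring that $a=(a_k)_k \in F$ pairs with the basis vector $f_{i_1j_1}\otimes\cdots\otimes f_{i_kj_k}\in B(H^{\otimes k})^*\subset F^*$ as the matrix entry
$$\bigl\langle a,\;f_{i_1j_1}\otimes\cdots\otimes f_{i_kj_k}\bigr\rangle = (a_k)_{i_1\ldots i_k,\,j_1\ldots j_k},$$
with the obvious modification when some generators are starred. Under this pairing, $\mathcal{A}_C^* = (F^*/J)^*$ identifies with the subspace of $a\in F$ which annihilate $J$. Evaluating $a$ on the defining relation of $J$ associated to $T\in C(k,l)$ turns the left-hand side into $\sum_{p}T_{i,p}(a_k)_{p,j} = (Ta_k)_{i,j}$ and the right-hand side into $\sum_{q}T_{q,j}(a_l)_{i,q} = (a_lT)_{i,j}$, so the annihilation condition is exactly $Ta_k=a_lT$ for all $T\in C(k,l)$. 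This will give the first assertion.

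Next, for the representation $\pi_v$ I would exploit the block-diagonal shape of $v=\bigoplus_{|k|\leq s} u^{\otimes k}$: its matrix coefficients are $v_{(k,I),(k',J)}=\delta_{k,k'}(u^{\otimes k})_{I,J}$. Under the isomorphism of Proposition 4.17 the coordinate $(u^{\otimes k})_{I,J}$ corresponds to the class of the appropriate tensor product of generators in $F^*/J$, and combined with the pairing above this yields
$$\pi_v(a)_{(k,I),(k',J)} = a\bigl(v_{(k,I),(k',J)}\bigr) = \delta_{k,k'}(a_k)_{I,J},$$
which is exactly the diagonal truncation $a\mapsto (a_k)_{kk}$ stated in the proposition.

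The only point that will need a little care is the colored bookkeeping: the relations defining $J$ and the pairing both have to be extended to arbitrary words in $\circ,\bullet$, since $F^*$ is generated by $f_{ij}$ and $f^*_{ij}$ together. This is essentially a matter of notation, as a letter $\bullet$ at position $r$ only swaps the corresponding factor from $B(H)^*$ to $B(\bar H)^*$, adjusting the row/column convention for that factor without altering the algebraic content of either of the two computations above. With this fix both arguments go through verbatim, and the resulting diagonal formula for $\pi_v$ is what will feed, via Theorem 4.15, into the commutant computation needed to close Proposition 4.12.
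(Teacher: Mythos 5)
Your proposal is correct and follows essentially the same route as the paper: dualize the presentation $\mathcal A_C\simeq F^*/J$ from Proposition 4.17, identify $\mathcal A_C^*$ with the annihilator of $J$ inside $F$, unwind the relations $f_T$ into $Ta_k=a_lT$, and then compute $\pi_v(a)$ by evaluating $a$ on the coefficients of $v=\bigoplus_{|k|\leq s}u^{\otimes k}$, which gives the diagonal truncation. Your remark on the colored bookkeeping matches the paper's own simplification, which states the relations for $k,l\in\mathbb N$ and leaves the $\circ,\bullet$ decorations as notation.
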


\begin{proof}
We know from Proposition 4.17 that we have:
$$\mathcal A_C\simeq F^*/J$$

But this gives a quotient map $F^*\to\mathcal A_C$, and so an inclusion as follows:
$$\mathcal A_C^*\subset F$$

To be more precise, we have the following formula:
$$\mathcal A_C^*=\left\{a\in\ F\Big|f(a)=0,\forall f\in J\right\}$$

Now since $J=<f_T>$, where $f_T$ are the relations in Proposition 4.17, we obtain:
$$\mathcal A_C^*=\left\{a\in F\Big|f_T(a)=0,\forall T\in C\right\}$$

Given $T\in C(k,l)$, for an arbitrary element $a=(a_k)$, we have:
\begin{eqnarray*}
&&f_T(a)=0\\
&\iff&\sum_{p_1,\ldots,p_k}T_{i_1\ldots i_l,p_1\ldots p_k}(a_k)_{p_1\ldots p_k,j_1\ldots j_k}=\sum_{q_1,\ldots,q_l}T_{q_1\ldots q_l,j_1\ldots j_k}(a_l)_{i_1\ldots i_l,q_1\ldots q_l},\forall i,j\\
&\iff&(Ta_k)_{i_1\ldots i_l,j_1\ldots j_k}=(a_lT)_{i_1\ldots i_l,j_1\ldots j_k},\forall i,j\\
&\iff&Ta_k=a_lT
\end{eqnarray*}

Thus, the dual space $\mathcal A_C^*$ is given by the formula in the statement. It remains to compute the representation $\pi_v$, which appears as follows:
$$\pi_v:\mathcal A_C^*\to B\left(\bigoplus_{|k|\leq s}H^{\otimes k}\right)$$

With $a=(a_k)$, we have the following computation:
\begin{eqnarray*}
\pi_v(a)_{i_1\ldots i_k,j_1\ldots j_k}
&=&a(v_{i_1\ldots i_k,j_1\ldots j_k})\\
&=&(f_{i_1j_1}\otimes\ldots\otimes f_{i_kj_k})(a)\\
&=&(a_k)_{i_1\ldots i_k,j_1\ldots j_k}
\end{eqnarray*}

Thus, our representation $\pi_v$ appears diagonally, by truncating, as claimed.
\end{proof}

In order to further advance, consider the following vector spaces:
$$F_s=\bigoplus_{|k|\leq s}B\left(H^{\otimes k}\right)$$ 
$$F^*_s=\bigoplus_{|k|\leq s}B\left(H^{\otimes k}\right)^*$$

We denote by $a\to a_s$ the truncation operation $F\to F_s$. We have:

\begin{proposition}
The following hold:
\begin{enumerate}
\item $E_C^{(s)'}\subset F_s$.

\item $E_C'\subset F$.

\item $\mathcal A_C^*=E_C'$.

\item $Im(\pi_v)=(E_C')_s$.
\end{enumerate}
\end{proposition}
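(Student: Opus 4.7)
The plan is to establish the four statements in sequence, deriving each from the previous together with Proposition 4.18 and the tensor-categorical axioms of Definition 4.6. For (1) and (2) I would use a standard ``diagonal projection'' argument: by axiom (4), each $C(k,k)$ contains $\mathrm{id}_{H^{\otimes k}}$, and regarded inside $B\bigl(\bigoplus_{|k|\leq s}H^{\otimes k}\bigr)$ these identities are the orthogonal projections $p_k$ onto the various summands $H^{\otimes k}$; they all lie in $E_C^{(s)}$. Any $x \in E_C^{(s)'}$ then commutes with every $p_k$, forcing $x$ to be block-diagonal in this decomposition, hence $x \in F_s$. The argument for (2) is identical, with the direct sum now running over all colored integers rather than those of length at most $s$.

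For (3), by (2) we have $E_C' \subset F$, so it suffices to show that an element $a = (a_k) \in F$ commutes with all of $E_C$ precisely when
$$T a_k = a_l T$$
for every pair of colored integers $k, l$ and every $T \in C(k,l)$. This is the content of the commutation relation spelled out, because each $T \in C(k,l) \subset B(H^{\otimes k}, H^{\otimes l})$ embeds into $B\bigl(\bigoplus_m H^{\otimes m}\bigr)$ as the $(l,k)$-block, and the identity $aT = Ta$ for block-diagonal $a$ reduces to $a_l T = T a_k$. But this is exactly the description of $\mathcal{A}_C^*$ given in Proposition 4.18, so $\mathcal{A}_C^* = E_C'$.

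Finally, (4) is then immediate from Proposition 4.18, where $\pi_v$ was identified with the truncation map $a \to (a_k)_{|k|\leq s}$: combining with (3) gives $Im(\pi_v) = (\mathcal{A}_C^*)_s = (E_C')_s$. The only genuinely delicate point throughout is the bookkeeping step in (3), where one must verify that the abstract commutant condition $[a, T] = 0$ really unpacks into the same system of equations written down in Proposition 4.18; everything else reduces to the fact that $E_C$ contains enough diagonal projections to detect block-diagonality, which is the single categorical axiom doing the work.
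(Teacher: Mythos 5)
Your proof is correct and follows essentially the same approach as the paper. The only notable variation is in part (1): where you observe directly that the block projections $p_k = \mathrm{id}_{H^{\otimes k}} \in C(k,k)$ lie in $E_C^{(s)}$ and that $\{p_k\}' = F_s$ (since commuting with all block projections forces block-diagonality), the paper instead notes that $F_s' \subset E_C^{(s)}$ and then invokes the bicommutant theorem $F_s'' = F_s$. Your route sidesteps the bicommutant theorem here, which is a mild simplification; both ultimately rest on the identity axiom supplying enough diagonal structure inside $E_C^{(s)}$. Parts (2), (3), (4) coincide with the paper's proof, and you are right that the bookkeeping in (3) — unwinding the commutant condition $[a,T]=0$ for block-diagonal $a$ and $T \in C(k,l)$ into the system $a_l T = T a_k$ from Proposition 4.18 — is where the real content lies; the paper states this as the identity $\mathcal{A}_C^* = F \cap E_C'$ without spelling it out.
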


\begin{proof}
These results basically follow from what we have, as follows:

\medskip

(1) We have an inclusion as follows, as a diagonal subalgebra:
$$F_s\subset B\left(\bigoplus_{|k|\leq s}H^{\otimes k}\right)$$

The commutant of this algebra is given by:
$$F_s'=\left\{b\in F_s\Big|b=(b_k),b_k\in\mathbb C,\forall k\right\}$$

On the other hand, we know from the identity axiom for the catrgory $C$ that this algebra is contained inside $E_C^{(s)}$:
$$F_s'\subset E_C^{(s)}$$

Thus, our result follows from the bicommutant theorem, as follows:
$$F_s'\subset E_C^{(s)}\implies F_s\supset E_C^{(s)'}$$

(2) This follows from (1), by taking inductive limits.

\medskip

(3) With the present notations, the formula of $\mathcal A_C^*$ from Proposition 4.18 reads:
$$\mathcal A_C^*=F\cap E_C'$$

Now since by (2) we have $E_C'\subset F$, we obtain from this $\mathcal A_C^*=E_C'$.

\medskip

(4) This follows from (3), and from the formula of $\pi_v$ in Proposition 4.18.
\end{proof}

Still following the paper of Malacarne \cite{mal}, we can now state and prove our main result, originally due to Woronowicz \cite{wo2}, as follows:

\index{Tannakian duality}

\begin{theorem}
The Tannakian duality constructions 
$$C\to A_C\quad,\quad 
A\to C_A$$
are inverse to each other, modulo identifying full and reduced versions.
\end{theorem}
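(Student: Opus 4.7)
The plan is to reduce the theorem to a commutant identity and then to a single extension lemma. First, by Proposition 4.9 the statement ``$C = C_{A_C}$ for every Tannakian category $C$'' formally implies ``$A = A_{C_A}$ for every Woronowicz algebra $A$'', so I would concentrate on the former. By Proposition 4.12 and the automatic inclusion $C \subset C_{A_C}$, establishing $C = C_{A_C}$ amounts in turn to proving the commutant inclusion
$$E_C^{(s)'} \subset E_{C_{A_C}}^{(s)'}$$
for every $s \in \mathbb N$.

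Second, I would rewrite the right-hand side. Applying Theorem 4.15 to the Woronowicz algebra $A_C$, with $v = \bigoplus_{|k|\leq s} u^{\otimes k}$, gives $E_{C_{A_C}}^{(s)'} = Im(\pi_v)$, and Proposition 4.19(4) identifies this with the truncation $(E_C')_s$. The theorem therefore reduces to an extension statement: every $T \in E_C^{(s)'} \subset F_s$ arises as the truncation $\tilde T_s$ of some $\tilde T \in E_C' \subset F$.

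Third, and this is the main obstacle, I would construct this extension. Given $T = (T_k)_{|k|\leq s}$ commuting with all of $E_C^{(s)}$, I would define $\tilde T_k$ for $|k| > s$ using the duality maps $R, R^* \in C$ guaranteed by axiom (5) of Definition 4.6. Concretely, the Frobenius-type isomorphism of Theorem 3.21 is implemented by tensoring and composing with $R$ and $R^*$, both of which lie in $C$. Using such ``rotations'' one can transfer the already-defined blocks $T_{k_0}$, with $|k_0| \leq s$, into candidate values $\tilde T_k$ for large $|k|$; the commutation hypothesis on $T$ is exactly what ensures that this yields a well-defined, rotation-independent operator, and that the resulting $\tilde T$ intertwines every $S \in C(k,l)$, not only those of bounded degree. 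Verifying this consistency, by reducing a general intertwiner $S$ to a composition of rotations of small-degree intertwiners via $R$ and $R^*$, is the delicate point.

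Once $\tilde T \in E_C'$ is in hand, the chain of equivalences above yields $C = C_{A_C}$, and then Proposition 4.9 gives $A_{C_A} = A$ at the level of the dense $*$-subalgebras. The phrase ``modulo full and reduced versions'' accounts for the fact that $A_C$ is a quotient of $C(U_N^+)$ by construction, hence the maximal $C^*$-completion of $\mathcal A_C$; for a Woronowicz algebra $A$ which is a reduced or intermediate completion of its smooth part, the natural isomorphism is an identification at the $*$-algebra level, and becomes a $C^*$-algebra isomorphism only after passing to matching completions, as formalized by the GNS construction with respect to the Haar state.
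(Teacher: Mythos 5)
Your reductions coincide with the paper's: via Propositions 4.9, 4.12, Theorem 4.15 and Proposition 4.19 everything comes down to showing $E_C^{(s)'}\subset(E_C')_s$, i.e. that every $T\in E_C^{(s)'}$ is the truncation of some $\tilde{T}\in E_C'$. The gap is in your third step, the construction of $\tilde{T}$. The Frobenius rotations implemented by $R,R^*$ preserve the total number of legs, so from a block $T_{k_0}\in\mathcal L(H^{\otimes k_0},H^{\otimes k_0})$ with $|k_0|\leq s$ (total degree $2|k_0|\leq 2s$) they can never manufacture a block $\tilde{T}_k\in B(H^{\otimes k})$ with $|k|>s$ (total degree $2|k|>2s$). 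Worse, no formula in the given blocks can do this, because the extension is genuinely non-unique: already for $C=<R,R^*>$ one has $E_C'=\mathcal A_C^*$, the full linear dual of the polynomial algebra of $U_N^+$, and extending $T\in E_C^{(s)'}$ amounts to extending a linear functional from the span of the coefficients of the $u^{\otimes k}$ with $|k|\leq s$ to the whole algebra -- the higher blocks are in no way determined by the lower ones. Your verification plan has the same problem: reducing an arbitrary $S\in C(k,l)$ to compositions and rotations of small-degree intertwiners presupposes that $C$ is generated in degree controlled by $s$, which is false for a general Tannakian category and in any case not available for a fixed $s$. Note also that Theorem 3.21 is an isomorphism between Hom-spaces of the category; it does not act on an abstract block-diagonal family $(T_k)$, which carries no a priori compatibility between $T_k$ and $T_{\bar k}$. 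So the ``delicate point'' you flag is exactly the content of the theorem, and the sketch does not supply it.

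The paper avoids constructing extensions altogether by dualizing inside the finite-dimensional space $F_s$. Using $\mathcal A_C^*=E_C'$ from Proposition 4.19, the ideal $J$ with $\mathcal A_C=F^*/J$ is identified as $J=\{f\in F^*\mid f_{|E_C'}=0\}$, and then one proves $J\cap F_s^*=\{f\in F_s^*\mid f_{|E_C^{(s)'}}=0\}$: the inclusion ``$\subset$'' holds because $J$ is generated by the bounded-degree functionals $f_T$, $T\in C(k,l)$, each of which vanishes on $E_C^{(s)'}\subset\{T\}'$, together with the fact that the union $X=\cup_sX_s$ of these annihilators is an ideal; the inclusion ``$\supset$'' is immediate since the truncation of any element of $E_C'$ lies in $E_C^{(s)'}$. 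Since $J\cap F_s^*$ is also the annihilator of $(E_C')_s$, finite-dimensional biduality in $F_s$ gives $E_C^{(s)'}\subset(E_C')_s$. If you wish to rescue a ``primal'' argument, you would need at least a one-step extension lemma from $E_C^{(s)'}$ to $E_C^{(s+1)'}$ plus an inverse-limit argument, and the solvability of the resulting linear system is precisely what the annihilator computation proves; it is not delivered by rotations.
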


\begin{proof}
According to Proposition 4.9, Proposition 4.12, Theorem 5.15 and Proposition 4.19, we have to prove that, for any Tannakian category $C$, and any $s\in\mathbb N$:
$$E_C^{(s)'}\subset(E_C')_s$$

By taking duals, this is the same as proving that we have:
$$\left\{f\in F_s^*\Big|f_{|(E_C')_s}=0\right\}\subset\left\{f\in F_s^*\Big|f_{|E_C^{(s)'}}=0\right\}$$

For this purpose, we use the following formula, coming from Proposition 4.19: 
$$\mathcal A_C^*=E_C'$$

We know as well that we have the following formula:
$$\mathcal A_C=F^*/J$$

We conclude that the ideal $J$ is given by:
$$J=\left\{f\in F^*\Big|f_{|E_C'}=0\right\}$$

Our claim is that we have the following formula, for any $s\in\mathbb N$:
$$J\cap F_s^*=\left\{f\in F_s^*\Big|f_{|E_C^{(s)'}}=0\right\}$$

Indeed, let us denote by $X_s$ the spaces on the right. The categorical axioms for $C$ show that these spaces are increasing, that their union $X=\cup_sX_s$ is an ideal, and that:
$$X_s=X\cap F_s^*$$

We must prove that we have $J=X$, and this can be done as follows:

\medskip

``$\subset$'' This follows from the following fact, for any $T\in C(k,l)$ with $|k|,|l|\leq s$:
\begin{eqnarray*}
(f_T)_{|\{T\}'}=0
&\implies&(f_T)_{|E_C^{(s)'}}=0\\
&\implies&f_T\in X_s
\end{eqnarray*}

``$\supset$'' This follows from our description of $J$, because from $E_C^{(s)}\subset E_C$ we obtain:
$$f_{|E_C^{(s)'}}=0\implies f_{|E_C'}=0$$

Summarizing, we have proved our claim. On the other hand, we have:
\begin{eqnarray*}
J\cap F_s^*
&=&\left\{f\in F^*\Big|f_{|E_C'}=0\right\}\cap F_s^*\\
&=&\left\{f\in F_s^*\Big|f_{|E_C'}=0\right\}\\
&=&\left\{f\in F_s^*\Big|f_{|(E_C')_s}=0\right\}
\end{eqnarray*}

Thus, our claim is exactly the inclusion that we wanted to prove, and we are done.
\end{proof}

Summarizing, we have proved Tannakian duality. As already mentioned in the beginning of this chapter, there are many other forms of Tannakian duality for the compact quantum groups, and we refer here to Woronowicz's original paper \cite{wo2}, which contains a full discussion of the subject, and to the subsequent literature.

\bigskip

As we will see in a moment, Tannakian duality in the above form is something quite powerful, enabling us to recover the Brauer theorems for $O_N,U_N$, going back to Brauer's paper \cite{bra}, and for their free versions $O_N^+,U_N^+$ as well, following \cite{ba1}, \cite{bc1}. 

\bigskip

Later on, in chapter 7 below and afterwards, we will further build on Tannakian duality, with a subsequent notion of ``easiness'' coming from it. Let us also mention, for the concerned reader, that all this escalation of algebraic methods will eventually lead into very concrete applications, of analytic and probabilistic nature. 

\bigskip

As a first application now, let us record the following theoretical fact, from \cite{bb4}:

\index{algebraic manifold}
\index{real algebraic manifold}

\begin{theorem}
Each closed subgroup $G\subset U_N^+$ appears as an algebraic manifold of the free complex sphere, 
$$G\subset S^{N^2-1}_{\mathbb C,+}$$
the embedding being given by the following formula, in double indices:
$$x_{ij}=\frac{u_{ij}}{\sqrt{N}}$$
\end{theorem}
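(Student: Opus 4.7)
The embedding itself is essentially Proposition 2.8 above: starting from a closed subgroup $G\subset U_N^+$ with fundamental corepresentation $u=(u_{ij})$, one sets $x_{ij}=u_{ij}/\sqrt{N}$ and checks the defining sphere relations. Indeed, using the biunitarity $uu^*=u^*u=1$, we compute
$$\sum_{ij}x_{ij}x_{ij}^*=\frac{1}{N}\sum_{ij}u_{ij}u_{ij}^*=\frac{1}{N}\sum_i(uu^*)_{ii}=\frac{1}{N}\cdot N=1,$$
and similarly $\sum_{ij}x_{ij}^*x_{ij}=1$, by using $u^*u=1$. Thus the map $x_{ij}\mapsto u_{ij}/\sqrt{N}$ yields a surjective $*$-morphism $C(S^{N^2-1}_{\mathbb C,+})\to C(G)$, i.e.\ a closed embedding $G\subset S^{N^2-1}_{\mathbb C,+}$.

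The main content is to show that this embedding is algebraic in the sense of Definition 1.29, namely that the kernel of the above surjection is generated, at the $*$-algebra level, by polynomial relations $f_\alpha(x_{ij})=0$. The plan is to factor the surjection as
$$C(S^{N^2-1}_{\mathbb C,+})\twoheadrightarrow C(U_N^+)\twoheadrightarrow C(G)$$
and handle the two arrows separately. For the first, observe that $C(U_N^+)$ is obtained from $C(S^{N^2-1}_{\mathbb C,+})$ by imposing the full biunitarity of the rescaled matrix $(\sqrt{N}\,x_{ij})$, i.e.\ the relations
$$N\sum_k x_{ik}x_{jk}^*=\delta_{ij},\qquad N\sum_k x_{ki}^*x_{kj}=\delta_{ij},$$
together with the two analogous relations coming from $u^t\bar{u}=\bar{u}u^t=1$. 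These are manifestly polynomial in $x_{ij},x_{ij}^*$, so the first arrow is algebraic.

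For the second arrow, the crucial input is Tannakian duality as established in Theorem 4.20 above: with $C=C_G$ denoting the Tannakian category of $G$, we have $C(G)=A_{C_G}$, which by Proposition 4.8 means that $C(G)$ is the quotient of $C(U_N^+)$ by the $*$-ideal generated by all intertwining relations
$$T\in\mathrm{Hom}(u^{\otimes k},u^{\otimes l}),\qquad T\in C(k,l).$$
Each such relation, written out in indices, takes the form $\sum_{p_1,\ldots}T_{i_1\ldots i_l,p_1\ldots p_k}u_{p_1j_1}\cdots u_{p_kj_k}=\sum_{q_1,\ldots}T_{q_1\ldots q_l,j_1\ldots j_k}u_{i_1q_1}\cdots u_{i_lq_l}$, which after substituting $u_{ij}=\sqrt{N}\,x_{ij}$ becomes a polynomial identity in the variables $x_{ij},x_{ij}^*$. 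Composing the two surjections and collecting all polynomial relations yields the desired presentation $C(G)=C(S^{N^2-1}_{\mathbb C,+})/\langle f_\alpha(x_{ij})=0\rangle$, proving that $G\subset S^{N^2-1}_{\mathbb C,+}$ is indeed algebraic. The only genuinely nontrivial step is the appeal to Tannakian duality, which is precisely what guarantees that a closed quantum subgroup of $U_N^+$ is cut out by polynomial (rather than arbitrary operator-algebraic) relations; everything else is bookkeeping.
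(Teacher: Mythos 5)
Your argument is correct and follows the same route as the paper: the paper's proof of this statement simply invokes Theorem 4.20 together with the algebraicity of the two inclusions $G\subset U_N^+\subset S^{N^2-1}_{\mathbb C,+}$, which is exactly your factorization, with the first arrow cut out by the polynomial biunitarity relations and the second by the Tannakian intertwining relations. You have merely written out the bookkeeping that the paper leaves implicit, so there is nothing to object to.
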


\begin{proof}
This follows from Theorem 4.20, by using the following inclusions:
$$G\subset U_N^+\subset S^{N^2-1}_{\mathbb C,+}$$

Indeed, both these inclusions are algebraic, and this gives the result.
\end{proof}

\section*{4d. Brauer theorems}

\index{orthogonal quantum group}
\index{unitary quantum group}

As a second application of Tannakian duality, let us study now the representation theory of $O_N^+,U_N^+$. In order to get started, let us get back to the operators $R,R^*$, from the beginning of this chapter. We know that these two operators must be present in any Tannakian category, and in what concerns $U_N^+$, which is the biggest $N\times N$ compact quantum group, a converse of this fact holds, by contravariant functoriality, as follows:

\begin{proposition}
The tensor category $<R,R^*>$ generated by the operators
$$R:1\to\sum_ie_i\otimes e_i$$
$$R^*(e_i\otimes e_j)=\delta_{ij}$$
produces via Tannakian duality the algebra $C(U_N^+)$.
\end{proposition}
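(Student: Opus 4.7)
The plan is to apply the construction of Proposition 4.8 directly to $C = \langle R, R^* \rangle$. By that proposition,
$$A_C = C(U_N^+) \Big/ \Big\langle T \in Hom(u^{\otimes k}, u^{\otimes l}) \,\Big|\, \forall k, l, \forall T \in C(k,l) \Big\rangle,$$
so it suffices to verify that all the relations being quotiented out on the right are already satisfied in $C(U_N^+)$; then the quotient map is the identity and the result follows.

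First I would reduce from the full category $\langle R, R^* \rangle$ to its generators. The property ``$T$ is an intertwiner between tensor powers of $u$'' is preserved under composition, tensor product and adjoint, and holds trivially for identity maps: this is precisely the content of Theorem 4.1. Hence the full collection of relations indexed by the category $\langle R, R^* \rangle$ is equivalent to the four generating relations coming from $R$ and $R^*$ themselves, namely $R \in Hom(1, u \otimes \bar{u})$, $R \in Hom(1, \bar{u} \otimes u)$, $R^* \in Hom(u \otimes \bar{u}, 1)$, and $R^* \in Hom(\bar{u} \otimes u, 1)$.

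Next I would invoke Proposition 4.4, which asserts precisely that these four conditions on an abstract matrix $u$ are jointly equivalent to the biunitarity of $u$, i.e. to $u^* = u^{-1}$ and $u^t = \bar{u}^{-1}$. Since the biunitarity of $u$ is exactly the defining presentation of $C(U_N^+)$, all four relations already hold there, the ideal being quotiented out is trivial, and we conclude $A_{\langle R, R^* \rangle} = C(U_N^+)$.

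There is no significant obstacle; the proof is essentially a repackaging of Proposition 4.4 in Tannakian language. The only step requiring care is the reduction to the generators --- confirming that the tensor category $\langle R, R^* \rangle$ introduces no intertwiner relations beyond those forced by biunitarity --- and this is automatic from Theorem 4.1, once one observes that being-an-intertwiner is closed under all the tensor category operations.
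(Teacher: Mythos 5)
Your proof is correct and follows essentially the same route as the paper: both apply Proposition 4.8 and then observe that the quotient is trivial because the generating relations from $R,R^*$ are automatic in $C(U_N^+)$. The paper cites Proposition 4.5 directly, while you unpack it into Proposition 4.4 plus the closure properties from Theorem 4.1; this is the same argument spelled out one level lower.
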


\begin{proof}
This follows from the results from the beginning of this chapter, via the Tannakian duality established above. To be more precise, we know from Proposition 4.5 that the intertwining relations coming from the operators $R,R^*$, and so from any element of the tensor category $<R,R^*>$, hold automatically. Thus the quotient operation in Proposition 4.8 is trivial, and we obtain the algebra $C(U_N^+)$ itself, as stated.
\end{proof}

As a conclusion, in order to compute the Tannakian category of $U_N^+$, we must simply solve a linear algebra question, namely computing the category $<R,R^*>$. Regarding now $O_N^+$, the result here is similar, as follows:

\begin{proposition}
The tensor category $<R,R^*>$ generated by the operators
$$R:1\to\sum_ie_i\otimes e_i$$
$$R^*(e_i\otimes e_j)=\delta_{ij}$$
with identifying the colors, $\circ=\bullet$, produces via Tannakian duality the algebra $C(O_N^+)$.
\end{proposition}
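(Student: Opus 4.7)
The plan is to mirror the argument of Proposition 4.22, now applied in the real (self-conjugate) setting obtained by collapsing the two colors. Tannakian duality (Theorem 4.20) carries over with essentially no modification: once $\circ$ and $\bullet$ are identified, the Peter-Weyl corepresentations $u^{\otimes k}$ are indexed by ordinary integers $k\in\mathbb{N}$, the Hom spaces live inside $\mathcal{L}(H^{\otimes k},H^{\otimes l})$ with $H=\mathbb{C}^N$, and a tensor category of this type produces a compact quantum group through the construction of Proposition 4.8. The color identification itself translates, at the quantum-group level, into the single algebraic relation $u=\bar{u}$, since it forces $u$ and its conjugate to act on the same tensor power.

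Next, I would compute explicitly what the memberships $R\in C(0,2)$ and $R^*\in C(2,0)$ say about $u$. By exactly the computation of Proposition 4.4, but now with $\bar{u}=u$ throughout, the condition $R\in\mathrm{Hom}(1,u\otimes u)$ becomes $uu^t=1$, and $R^*\in\mathrm{Hom}(u\otimes u,1)$ becomes $u^tu=1$. Combined with the biunitarity built into the ambient $C(U_N^+)$ and the self-conjugacy $u=\bar u$ imposed by the color identification, these relations recover exactly the presentation
\[
C(O_N^+)=C^*\bigl((u_{ij})_{i,j=1,\dots,N}\ \bigm|\ u=\bar{u},\ u^t=u^{-1}\bigr)
\]
from Theorem 2.16. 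Hence the Tannakian construction applied to $\langle R,R^*\rangle$ produces a quotient of $C(O_N^+)$. For the reverse direction, $R$ and $R^*$ genuinely belong to $C_{O_N^+}$ by Proposition 4.5 together with the identity $u=\bar{u}$ holding in $C(O_N^+)$, so the inverse bijection of Theorem 4.20 yields a quotient map the other way. The two maps are mutually inverse on generators, giving $A_{\langle R,R^*\rangle}=C(O_N^+)$.

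The only delicate point will be formalizing the ``identification of colors'' operation within the tensor-category framework of Definition 4.6, and in particular verifying that the construction of $A_C$ in Proposition 4.8 and the bicommutant argument of Theorem 4.20 pass without obstruction to this self-conjugate setting; in effect one works with a single Hilbert space $H$ and a single tensor operation, with $R$ playing simultaneously the roles it played in both $C(\emptyset,\circ\bullet)$ and $C(\emptyset,\bullet\circ)$. Granted this adaptation, which is essentially routine, the proof reduces to the elementary biunitarity computation recalled above.
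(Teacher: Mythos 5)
Your proposal is correct and follows essentially the same route as the paper: the paper's proof likewise observes (via Proposition 4.5, i.e. the computation of Proposition 4.4 with the colors identified, which amounts to $u=\bar{u}$) that the relations coming from $R,R^*$ are exactly the biunitarity relations already holding in $C(O_N^+)$, so the quotient in Proposition 4.8 is trivial and one obtains $C(O_N^+)$ itself. Your write-up is simply a more detailed spelling-out of that one-line argument, including the point about formalizing the color identification, which the paper glosses over.
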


\begin{proof}
By Proposition 4.5 the intertwining relations coming from $R,R^*$, and so from any element of the tensor category $<R,R^*>$, hold automatically, so the quotient operation in Proposition 4.8 is trivial, and we obtain $C(O_N^+)$ itself, as stated.
\end{proof}

Our goal now will be that of reaching to a better understanding of $R,R^*$. In order to do so, we use a diagrammatic formalism, as follows:

\index{pairing}
\index{matching pairing}
\index{noncrossing pairing}

\begin{definition}
Let $k,l$ be two colored integers, having lengths $|k|,|l|\in\mathbb N$.
\begin{enumerate}
\item $P_2(k,l)$ is the set of pairings between an upper row of $|k|$ points, and a lower row of $|l|$ points, with these two rows of points colored by $k,l$.

\item $\mathcal{P}_2(k,l)\subset P_2(k,l)$ is the set of matching pairings, whose horizontal strings connect $\circ-\circ$ or $\bullet-\bullet$, and whose vertical strings connect $\circ-\bullet$.

\item $NC_2(k,l)\subset P_2(k,l)$ is the set of pairings which are noncrossing, in the sense that we can draw the pairing as for the strings to be noncrossing. 

\item $\mathcal{NC}_2(k,l)\subset P_2(k,l)$ is the subset of noncrossing matching pairings, obtained as an intersection, $\mathcal{NC}_2(k,l)=NC_2(k,l)\cap\mathcal P_2(k,l)$.  
\end{enumerate}
\end{definition}

The relation with the Tannakian categories of linear maps comes from the fact that we can associate linear maps to the pairings, as in \cite{bsp}, as follows:

\index{Kronecker symbol}

\begin{definition}
Associated to any pairing $\pi\in P_2(k,l)$ and any integer $N\in\mathbb N$ is the linear map 
$$T_\pi:(\mathbb C^N)^{\otimes k}\to(\mathbb C^N)^{\otimes l}$$
given by the following formula, with $\{e_1,\ldots,e_N\}$ being the standard basis of $\mathbb C^N$,
$$T_\pi(e_{i_1}\otimes\ldots\otimes e_{i_k})=\sum_{j_1\ldots j_l}\delta_\pi\begin{pmatrix}i_1&\ldots&i_k\\ j_1&\ldots&j_l\end{pmatrix}e_{j_1}\otimes\ldots\otimes e_{j_l}$$
and with the Kronecker symbols $\delta_\pi\in\{0,1\}$ depending on whether the indices fit or not. 
\end{definition}

To be more precise here, in the definition of the Kronecker symbols, we agree to put the two multi-indices on the two rows of points of the pairing, in the obvious way. The Kronecker symbols are then defined by $\delta_\pi=1$ when all the strings of $\pi$ join equal indices, and by $\delta_\pi=0$ otherwise. Observe that all this is independent of the coloring.

\bigskip

Here are a few basic examples of such linear maps:

\index{semicircle partition}
\index{basic crossing}

\begin{proposition}
The correspondence $\pi\to T_\pi$ has the following properties:
\begin{enumerate}
\item $T_\cap=R$.

\item $T_\cup=R^*$.

\item $T_{||\ldots||}=id$.

\item $T_{\slash\hskip-1.5mm\backslash}=\Sigma$.
\end{enumerate}
\end{proposition}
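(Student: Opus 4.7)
The plan is to verify each of the four formulas by direct application of Definition 4.26, since each of the listed pairings involves only one or two points in each row and therefore the Kronecker symbol reduces to an explicit condition on the indices. No nontrivial combinatorics is required.

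First I would handle (1). Here $\cap$ has empty upper row and two lower points joined by a string, so $k=\emptyset$ and $|l|=2$. Applied to the scalar $1\in\mathbb{C}$, the formula
\[
T_\cap(1)=\sum_{j_1,j_2}\delta_\cap\!\begin{pmatrix}-\\ j_1\ j_2\end{pmatrix}e_{j_1}\otimes e_{j_2}
\]
forces $j_1=j_2$ by the single string of $\cap$, so $T_\cap(1)=\sum_i e_i\otimes e_i=R(1)$. Then (2) is the dual computation: $\cup$ has two upper points joined and empty lower row, so $T_\cup(e_{i_1}\otimes e_{i_2})=\delta_{i_1 i_2}=R^*(e_{i_1}\otimes e_{i_2})$. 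Both match the definitions of $R,R^*$ from Proposition 4.4.

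For (3), the identity pairing $||\cdots||$ has $|k|=|l|$ and connects the $r$-th upper point to the $r$-th lower point. The Kronecker symbol is thus $\prod_r \delta_{i_r j_r}$, so
\[
T_{||\cdots||}(e_{i_1}\otimes\cdots\otimes e_{i_k})=\sum_{j_1,\ldots,j_k}\prod_r\delta_{i_r j_r}\,e_{j_1}\otimes\cdots\otimes e_{j_k}=e_{i_1}\otimes\cdots\otimes e_{i_k},
\]
which is the identity. For (4), the crossing $\slash\hskip-1.5mm\backslash$ joins upper point $1$ to lower point $2$ and upper point $2$ to lower point $1$, so the Kronecker symbol is $\delta_{i_1 j_2}\delta_{i_2 j_1}$, yielding $T_{\slash\hskip-1.5mm\backslash}(e_{i_1}\otimes e_{i_2})=e_{i_2}\otimes e_{i_1}=\Sigma(e_{i_1}\otimes e_{i_2})$.

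There is no real obstacle here; the only thing to be careful about is bookkeeping for the convention in Definition 4.26, namely that the upper multi-index is the input and the lower multi-index is the output, and that the Kronecker symbol is the product of the string-by-string equality constraints imposed by $\pi$. Once this convention is fixed, each of the four identities is a one-line computation.
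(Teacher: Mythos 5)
Your proposal is correct and follows essentially the same route as the paper: a direct case-by-case evaluation of the defining formula for $T_\pi$, with the semicircles forcing index equality to recover $R$ and $R^*$, the identity pairing giving $\prod_r\delta_{i_rj_r}$, and the crossing giving $\delta_{i_1j_2}\delta_{i_2j_1}$, hence the flip $\Sigma$. No gaps.
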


\begin{proof}
We can assume if we want that all the upper and lower legs of $\pi$ are colored $\circ$. With this assumption made, the proof goes as follows:

\medskip

(1) We have $\cap\in P_2(\emptyset,\circ\circ)$, and so the corresponding operator is a certain linear map $T_\cap:\mathbb C\to\mathbb C^N\otimes\mathbb C^N$. The formula of this map is as follows:
\begin{eqnarray*}
T_\cap(1)
&=&\sum_{ij}\delta_\cap(i\ j)e_i\otimes e_j\\
&=&\sum_{ij}\delta_{ij}e_i\otimes e_j\\
&=&\sum_ie_i\otimes e_i
\end{eqnarray*}

We recognize here the formula of $R(1)$, and so we have $T_\cap=R$, as claimed.

\medskip

(2) Here we have $\cup\in P_2(\circ\circ,\emptyset)$, and so the corresponding operator is a certain linear form $T_\cap:\mathbb C^N\otimes\mathbb C^N\to\mathbb C$. The formula of this linear form is as follows:
\begin{eqnarray*}
T_\cap(e_i\otimes e_j)
&=&\delta_\cap(i\ j)\\
&=&\delta_{ij}
\end{eqnarray*}

Since this is the same as $R^*(e_i\otimes e_j)$, we have $T_\cup=R^*$, as claimed.

\medskip

(3) Consider indeed the ``identity'' pairing $||\ldots||\in P_2(k,k)$, with $k=\circ\circ\ldots\circ\circ$. The corresponding linear map is then the identity, because we have:
\begin{eqnarray*}
T_{||\ldots||}(e_{i_1}\otimes\ldots\otimes e_{i_k})
&=&\sum_{j_1\ldots j_k}\delta_{||\ldots||}\begin{pmatrix}i_1&\ldots&i_k\\ j_1&\ldots&j_k\end{pmatrix}e_{j_1}\otimes\ldots\otimes e_{j_k}\\
&=&\sum_{j_1\ldots j_k}\delta_{i_1j_1}\ldots\delta_{i_kj_k}e_{j_1}\otimes\ldots\otimes e_{j_k}\\
&=&e_{i_1}\otimes\ldots\otimes e_{i_k}
\end{eqnarray*}

(4) In the case of the basic crossing $\slash\hskip-2.0mm\backslash\in P_2(\circ\circ,\circ\circ)$, the corresponding linear map $T_{\slash\hskip-1.5mm\backslash}:\mathbb C^N\otimes\mathbb C^N\to\mathbb C^N\otimes\mathbb C^N$ can be computed as follows:
\begin{eqnarray*}
T_{\slash\hskip-1.5mm\backslash}(e_i\otimes e_j)
&=&\sum_{kl}\delta_{\slash\hskip-1.5mm\backslash}\begin{pmatrix}i&j\\ k&l\end{pmatrix}e_k\otimes e_l\\
&=&\sum_{kl}\delta_{il}\delta_{jk}e_k\otimes e_l\\
&=&e_j\otimes e_i
\end{eqnarray*}

Thus we obtain the flip operator $\Sigma(a\otimes b)=b\otimes a$, as claimed.
\end{proof}

Summarizing, the correspondence $\pi\to T_\pi$ provides us with some simple formulae for the operators $R,R^*$ that we are interested in, and for other important operators, such as the flip $\Sigma(a\otimes b)=b\otimes a$, and has as well some interesting categorical properties. 

\bigskip

Let us further explore these properties, and make the link with the Tannakian categories. We have the following key result, from \cite{bsp}:

\begin{proposition}
The assignement $\pi\to T_\pi$ is categorical, in the sense that we have
$$T_\pi\otimes T_\sigma=T_{[\pi\sigma]}$$
$$T_\pi T_\sigma=N^{c(\pi,\sigma)}T_{[^\sigma_\pi]}$$
$$T_\pi^*=T_{\pi^*}$$
where $c(\pi,\sigma)$ are certain integers, coming from the erased components in the middle.
\end{proposition}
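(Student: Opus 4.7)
The plan is to verify each of the three formulas directly from the Kronecker-symbol definition of $T_\pi$ given in Definition 4.25, by careful bookkeeping of how multi-indices interact with the diagrammatic operations. The tensor product and involution formulas are straightforward; the composition formula requires a connected-component analysis of the vertically stacked diagram, which is where the loop factors $N^{c(\pi,\sigma)}$ originate.

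For horizontal concatenation $T_\pi\otimes T_\sigma=T_{[\pi\sigma]}$, I would apply both sides to a basis tensor $e_i\otimes e_{i'}$, where $i$ and $i'$ are multi-indices of the appropriate lengths. The left-hand side factors as a product, with the $\pi$-factor acting on $e_i$ and the $\sigma$-factor acting on $e_{i'}$, yielding a coefficient $\delta_\pi\cdot\delta_\sigma$ on the output basis vectors. On the right, the strings of the horizontal juxtaposition $[\pi\sigma]$ live entirely in the first block ($\pi$) or entirely in the second block ($\sigma$), with no strings crossing between them, so the joint Kronecker symbol factors in precisely the same way, and the two sides agree. For the involution $T_\pi^*=T_{\pi^*}$, I would compute matrix coefficients: by definition $\langle T_\pi e_i, e_j\rangle$ equals $\delta_\pi$ evaluated on $i$ as the upper row and $j$ as the lower row, so $\langle e_i, T_\pi^* e_j\rangle$ equals the same (real) symbol. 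Since $\pi^*$ is $\pi$ flipped upside down, exchanging the roles of upper and lower rows, the symbol $\delta_{\pi^*}$ evaluated on $j$ upper and $i$ lower agrees with $\delta_\pi$ on $i$ upper and $j$ lower, yielding the formula.

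The main obstacle is the composition formula $T_\pi T_\sigma=N^{c(\pi,\sigma)}T_{[{}^\sigma_\pi]}$. Expanding on a basis vector $e_i$ produces a sum over the output multi-index $j$ and the common middle multi-index $m$, with coefficient $\delta_\sigma$ (on row $i$ over row $m$) times $\delta_\pi$ (on row $m$ over row $j$). The decisive combinatorial observation is that this product equals $1$ exactly when every connected component of the vertically stacked diagram --- obtained by gluing $\pi$ below $\sigma$ along their common middle row --- carries a constant index. These components split into two classes: those touching the top or bottom boundary, which upon erasing the middle row become precisely the strings of the composed pairing $[{}^\sigma_\pi]$, and closed loops lying entirely in the middle. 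Summing over $m$, the boundary-touching components contribute exactly the Kronecker symbol $\delta_{[{}^\sigma_\pi]}$ evaluated on $i$ upper and $j$ lower, while each closed loop contributes an unconstrained index ranging over $\{1,\ldots,N\}$, yielding a multiplicative factor of $N$. Defining $c(\pi,\sigma)$ as the number of such erased closed loops then gives the stated formula. Once this component-accounting is set up, all three identities reduce to routine multi-index manipulation.
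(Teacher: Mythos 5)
Your proof is correct and follows essentially the same route as the paper: all three identities are verified by applying both sides to basis vectors and tracking Kronecker symbols, with the composition formula coming from summing over the middle multi-index. Your explicit accounting of the middle components (boundary-touching strings giving $\delta_{[^\sigma_\pi]}$, closed loops each contributing a free index and hence a factor $N$) simply spells out what the paper's computation leaves implicit in the phrase ``erased components in the middle''.
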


\begin{proof}
The formulae in the statement are all elementary, as follows:

\medskip

(1) The concatenation axiom follows from the following computation:
\begin{eqnarray*}
&&(T_\pi\otimes T_\sigma)(e_{i_1}\otimes\ldots\otimes e_{i_p}\otimes e_{k_1}\otimes\ldots\otimes e_{k_r})\\
&=&\sum_{j_1\ldots j_q}\sum_{l_1\ldots l_s}\delta_\pi\begin{pmatrix}i_1&\ldots&i_p\\j_1&\ldots&j_q\end{pmatrix}\delta_\sigma\begin{pmatrix}k_1&\ldots&k_r\\l_1&\ldots&l_s\end{pmatrix}e_{j_1}\otimes\ldots\otimes e_{j_q}\otimes e_{l_1}\otimes\ldots\otimes e_{l_s}\\
&=&\sum_{j_1\ldots j_q}\sum_{l_1\ldots l_s}\delta_{[\pi\sigma]}\begin{pmatrix}i_1&\ldots&i_p&k_1&\ldots&k_r\\j_1&\ldots&j_q&l_1&\ldots&l_s\end{pmatrix}e_{j_1}\otimes\ldots\otimes e_{j_q}\otimes e_{l_1}\otimes\ldots\otimes e_{l_s}\\
&=&T_{[\pi\sigma]}(e_{i_1}\otimes\ldots\otimes e_{i_p}\otimes e_{k_1}\otimes\ldots\otimes e_{k_r})
\end{eqnarray*}

(2) The composition axiom follows from the following computation:
\begin{eqnarray*}
&&T_\pi T_\sigma(e_{i_1}\otimes\ldots\otimes e_{i_p})\\
&=&\sum_{j_1\ldots j_q}\delta_\sigma\begin{pmatrix}i_1&\ldots&i_p\\j_1&\ldots&j_q\end{pmatrix}
\sum_{k_1\ldots k_r}\delta_\pi\begin{pmatrix}j_1&\ldots&j_q\\k_1&\ldots&k_r\end{pmatrix}e_{k_1}\otimes\ldots\otimes e_{k_r}\\
&=&\sum_{k_1\ldots k_r}N^{c(\pi,\sigma)}\delta_{[^\sigma_\pi]}\begin{pmatrix}i_1&\ldots&i_p\\k_1&\ldots&k_r\end{pmatrix}e_{k_1}\otimes\ldots\otimes e_{k_r}\\
&=&N^{c(\pi,\sigma)}T_{[^\sigma_\pi]}(e_{i_1}\otimes\ldots\otimes e_{i_p})
\end{eqnarray*}

(3) Finally, the involution axiom follows from the following computation:
\begin{eqnarray*}
&&T_\pi^*(e_{j_1}\otimes\ldots\otimes e_{j_q})\\
&=&\sum_{i_1\ldots i_p}<T_\pi^*(e_{j_1}\otimes\ldots\otimes e_{j_q}),e_{i_1}\otimes\ldots\otimes e_{i_p}>e_{i_1}\otimes\ldots\otimes e_{i_p}\\
&=&\sum_{i_1\ldots i_p}\delta_\pi\begin{pmatrix}i_1&\ldots&i_p\\ j_1&\ldots& j_q\end{pmatrix}e_{i_1}\otimes\ldots\otimes e_{i_p}\\
&=&T_{\pi^*}(e_{j_1}\otimes\ldots\otimes e_{j_q})
\end{eqnarray*}

Summarizing, our correspondence is indeed categorical.
\end{proof}

We can now formulate a first non-trivial result regarding $O_N^+,U_N^+$, which is a Brauer type theorem for these quantum groups, as follows:

\index{Brauer theorem}
\index{orthogonal quantum group}
\index{unitary quantum group}
\index{noncrossing pairing}

\begin{theorem}
For the quantum groups $O_N^+,U_N^+$ we have
$$Hom(u^{\otimes k},u^{\otimes l})=span\left(T_\pi\Big|\pi\in D(k,l)\right)$$
with the sets on the right being respectively as follows, 
$$D=NC_2,\mathcal{NC}_2$$
and with the correspondence $\pi\to T_\pi$ being constructed as above.
\end{theorem}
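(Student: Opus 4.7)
The plan is to invoke Tannakian duality (Theorem 4.20) together with Propositions 4.22 and 4.23, which already identify the tensor category $<R,R^*>$ generated by $R,R^*$ (with or without color identification) as producing $C(U_N^+)$ and $C(O_N^+)$ respectively. It therefore suffices to establish the purely linear/combinatorial identification
$$<R,R^*>(k,l)=\mathrm{span}\bigl(T_\pi\,\big|\,\pi\in D(k,l)\bigr),$$
with $D=\mathcal{NC}_2$ in the unitary case, and $D=NC_2$ in the orthogonal case. Once this is known, writing $E(k,l)$ for the right-hand side, Proposition 4.22 (resp.\ 4.23) gives $A_E=C(U_N^+)$ (resp.\ $C(O_N^+)$), and Theorem 4.20 yields $E=C_{A_E}$, which by the very definition of $C_A$ reads $\mathrm{Hom}(u^{\otimes k},u^{\otimes l})=E(k,l)$, as desired.

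\textbf{Step 1: the inclusion $<R,R^*>\,\subset\,\mathrm{span}(T_\pi)$.} For this I would show that the collection $E=(E(k,l))$ is itself a tensor category over $\mathbb C^N$ containing $R=T_\cap$ and $R^*=T_\cup$. Proposition 4.28 translates the three categorical operations (tensor product, composition, adjoint) of the maps $T_\pi$ into the combinatorial operations of horizontal concatenation, vertical stacking (with constants $N^{c(\pi,\sigma)}$ from closed loops) and upside-down reflection of the underlying pairings. It is elementary that each of $NC_2$ and $\mathcal{NC}_2$ is stable under these three operations, since neither a crossing nor a color mismatch can be created from inputs that have none. The identity object $T_{||\cdots||}=\mathrm{id}$ comes from the all-vertical pairing, which belongs to $D$, and the stated cup/cap belong to $D$ by inspection. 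Hence $E$ contains $R,R^*$ and is closed under the operations, so it contains the generated category $<R,R^*>$.

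\textbf{Step 2: the inclusion $\mathrm{span}(T_\pi)\,\subset\,<R,R^*>$.} This is the combinatorial heart, and I would proceed by induction on the number of strings $(|k|+|l|)/2$ of $\pi\in D(k,l)$. Since $\pi$ is noncrossing, there must exist a string joining two ``adjacent'' endpoints (two consecutive upper points, two consecutive lower points, or the leftmost/rightmost of one row to its neighbour on the same side). In the matching case, the coloring of such an adjacent pair is automatically of the form $\circ\bullet$ or $\bullet\circ$, i.e.\ exactly what is needed for a $T_\cap$ or $T_\cup$ of $\mathcal{NC}_2$ to slot in. Removing this string then expresses
$$T_\pi \;=\; \bigl(\mathrm{id}^{\otimes a}\otimes T_\cap\otimes\mathrm{id}^{\otimes b}\bigr)\circ T_{\pi'} \quad\text{(or the dual form with $T_\cup$)},$$
where $\pi'\in D$ has one fewer string, so that $T_{\pi'}\in<R,R^*>$ by the inductive hypothesis and $T_\pi$ follows by categorical closure. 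The base case (no strings, i.e.\ $\pi=\emptyset$) is trivial.

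\textbf{Main obstacle.} Step 1 is routine once Proposition 4.28 is in hand. The real work is Step 2, where the coloring in the unitary case $\mathcal{NC}_2$ must be tracked carefully: one must verify that an innermost adjacent pair in a noncrossing matching partition always has the two colors needed to realise a $T_\cap$ or $T_\cup$ of $\mathcal{NC}_2$, and that the residual partition $\pi'$ inherits the matching condition. In the orthogonal case the collapse $\circ=\bullet$ removes this complication entirely and the argument reduces to the classical combinatorial fact that the Temperley--Lieb generators $\cap,\cup$ generate all of $NC_2$.
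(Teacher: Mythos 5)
Your proposal is correct and follows essentially the same route as the paper: Tannakian duality (Theorem 4.20) together with Propositions 4.22 and 4.23 reduces the statement to the identification $<R,R^*>\,=\,\mathrm{span}\left(T_\pi\,\big|\,\pi\in D\right)$, which the paper obtains exactly as you do, from the categorical operations of Proposition 4.27 plus the fact that the (matching) semicircles generate all (matching) noncrossing pairings. Your Step 2 simply spells out, via the innermost-string induction, the generation claim that the paper asserts in a single sentence.
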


\begin{proof}
We know from Proposition 4.22 above that the quantum group $U_N^+$ corresponds via Tannakian duality to the following category:
$$C=<R,R^*>$$

On the other hand, it follows from the above categorical considerations that this latter category is given by the following formula:
$$C=span\left(T_\pi\Big|\pi\in\mathcal{NC}_2\right)$$

To be more precise, consider the following collection of vector spaces:
$$C'=span\left(T_\pi\Big|\pi\in\mathcal{NC}_2\right)$$

According to the various formulae in Proposition 4.27, these vector spaces form a tensor category. But since the two matching semicircles generate the whole collection of matching pairings, via the operations in Proposition 4.27, we obtain from this $C=C'$.

\medskip

As for the result from $O_N^+$, this follows by adding to the picture the self-adjointness condition $u=\bar{u}$, which corresponds, at the level of pairings, to removing the colors.
\end{proof}

The above result is very useful, and virtually solves any question about $O_N^+,U_N^+$. We will be back to it in the next chapter, and afterwards, with applications, both of algebraic and analytic nature. As an example here, just by counting the dimensions of the spaces in Theorem 4.28, we will be able to compute the laws of the main characters.

\bigskip

By using the same methods, namely the general Tannakian duality result established above, we can recover as well the classical Brauer theorem \cite{bra}, as follows:

\index{Brauer theorem}
\index{orthogonal group}
\index{unitary group}
\index{pairing}
\index{matching pairing}

\begin{theorem}
For the groups $O_N,U_N$ we have
$$Hom(u^{\otimes k},u^{\otimes l})=span\left(T_\pi\Big|\pi\in D(k,l)\right)$$
with $D=P_2,\mathcal P_2$ respectively, and with $\pi\to T_\pi$ being constructed as above.
\end{theorem}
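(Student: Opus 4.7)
The plan is to mirror the proof of Theorem 4.28 via Tannakian duality (Theorem 4.20). The key new ingredient is that the classical groups $O_N\subset O_N^+$ and $U_N\subset U_N^+$ differ from their free versions precisely by the commutativity of the standard coordinates, and commutativity admits a clean categorical interpretation: the relation $u_{ij}u_{kl}=u_{kl}u_{ij}$ for all $i,j,k,l$ is equivalent to the statement that the flip operator $\Sigma=T_{\slash\hskip-1.5mm\backslash}$ belongs to $End(u\otimes u)$, since $\Sigma(u\otimes u)=(u\otimes u)\Sigma$ unpacks to precisely that commutation.

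First I would establish, in parallel with Propositions 4.22 and 4.23, that $U_N$ corresponds via Tannakian duality to the tensor category $<R,R^*,\Sigma>$, and that $O_N$ corresponds to the same category with the colors identified, $\circ=\bullet$. This follows by applying the Gelfand theorem together with Tannakian duality: imposing $\Sigma\in End(u\otimes u)$ on top of the defining relations of $C(U_N^+)$ forces the standard generators to commute, and the resulting quotient of $C(U_N^+)$ is $C(U_N)$; the argument for $O_N^+\to O_N$ is identical.

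Next, I would verify the combinatorial identity
$$<R,R^*,\Sigma>=span\left(T_\pi\Big|\pi\in\mathcal{P}_2\right)$$
together with the uncolored analogue for $P_2$. The inclusion $\subset$ is immediate from Proposition 4.27, since $R=T_\cap$, $R^*=T_\cup$ and $\Sigma=T_{\slash\hskip-1.5mm\backslash}$ are all of the form $T_\pi$ for elementary pairings in $\mathcal{P}_2$, and the assignment $\pi\to T_\pi$ is stable under the categorical operations of tensor product, composition and involution. For the reverse inclusion, Theorem 4.28 already produces all noncrossing matching pairings from $<R,R^*>$, and successive composition with $\Sigma$ permutes endpoints, generating every pairing in $\mathcal{P}_2(k,l)$.

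The main obstacle will be this last combinatorial fact, namely that every matching pairing can be built by categorical operations from $R,R^*,\Sigma$. I would prove this by induction on $|k|+|l|$: using copies of $\Sigma$ one brings a chosen point next to its partner, then either caps the pair off with $R^*$, or extracts the pair via $R$, reducing to a strictly smaller pairing of the same type. The uncolored case for $O_N$ follows from the same argument after forgetting colors. Once this combinatorial lemma is in hand, Tannakian duality combined with the two inclusions above immediately delivers the theorem.
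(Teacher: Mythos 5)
Your overall strategy---recovering the classical Brauer result from the free one by adjoining the crossing to the Tannakian category---is in essence the one the paper uses, but the treatment of the unitary case has a gap. You assert that imposing $\Sigma\in End(u\otimes u)$ on $C(U_N^+)$ forces the generators to commute, identifying commutativity with the relations $u_{ij}u_{kl}=u_{kl}u_{ij}$ alone. In a $*$-algebra this is only half the story: the commutativity needed for the Gelfand theorem requires $[u_{ij},u_{kl}^*]=0$ as well, and that condition corresponds to the mixed crossing living in $Hom(u\otimes\bar u,\bar u\otimes u)$, which is not literally the relation $\Sigma\in End(u\otimes u)$. The paper is explicit on this point, writing down both $[u_{ij},u_{kl}]=0$ and $[u_{ij},\bar u_{kl}]=0$ and adjoining the two differently colored crossings to $\mathcal{NC}_2$.

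Your argument can be repaired without adding a second generator, by observing that the mixed flip does lie in the tensor category $<R,R^*,\Sigma>$ via a Frobenius-type rotation: tensor with $R\in Hom(1,\bar u\otimes u)$ on the left, apply $\Sigma$ to the middle $u\otimes u$ block, and contract with $R^*\in Hom(u\otimes\bar u,1)$ on the right; the resulting map is the flip $u\otimes\bar u\to\bar u\otimes u$ and yields $[u_{ij},u_{kl}^*]=0$. But this step is substantive and must be said, since as written the claim that $\Sigma\in End(u\otimes u)$ alone characterizes commutativity is not justified. Your handling of $O_N$ by simply identifying the colors (so $u=\bar u$, and the single flip does give full $*$-commutativity) is fine and is in fact slightly more direct than the paper's route via $O_N=O_N^+\cap U_N$. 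The combinatorial induction establishing that $<R,R^*,\Sigma>$ spans all of $\mathcal P_2$ is the right idea, but again should note that it is the categorical closure (via the same rotation) that supplies flips across all four color patterns, not just $\circ\circ$.
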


\begin{proof}
As already mentioned, this result is due to Brauer \cite{bra}, and is closely related to the Schur-Weyl duality \cite{wey}. There are several proofs of this result, one classical proof being via classical Tannakian duality, for the usual closed subgroups $G\subset U_N$. 

\medskip

In the present context, we can deduce this result from the one that we already have, for $O_N^+,U_N^+$. The idea is very simple, namely that of ``adding crossings'', as follows:

\medskip

(1) The group $U_N\subset U_N^+$ is defined via the following relations:
$$[u_{ij},u_{kl}]=0$$
$$[u_{ij},\bar{u}_{kl}]=0$$

But these relations which tell us that the following operators must be in the associated Tannakian category $C$:
$$T_\pi\quad,\quad \pi={\slash\hskip-2.1mm\backslash}^{\hskip-2.5mm\circ\circ}_{\hskip-2.5mm\circ\circ}$$
$$T_\pi\quad,\quad \pi={\slash\hskip-2.1mm\backslash}^{\hskip-2.5mm\circ\bullet}_{\hskip-2.5mm\bullet\circ}$$

Thus the associated Tannakian category is $C=span(T_\pi|\pi\in D)$, with:
$$D
=<\mathcal{NC}_2,{\slash\hskip-2.1mm\backslash}^{\hskip-2.5mm\circ\circ}_{\hskip-2.5mm\circ\circ},{\slash\hskip-2.1mm\backslash}^{\hskip-2.5mm\circ\bullet}_{\hskip-2.5mm\bullet\circ}>
=\mathcal P_2$$

Thus, we are led to the conclusion in the statement.

\medskip

(2) In order to deal now with $O_N$, we can simply use the following formula: 
$$O_N=O_N^+\cap U_N$$

At the categorical level, this tells us that the associated Tannakian category is given by $C=span(T_\pi|\pi\in D)$, with:
$$D
=<NC_2,\mathcal P_2>
=P_2$$

Thus, we are led to the conclusion in the statement.
\end{proof}

Summarizing, the orthogonal and unitary groups $O_N,U_N$ and their free analogues $O_N^+,U_N^+$ appear to be ``easy'', in the sense that their associated Tannakian categories appear in the simplest possible way, namely from certain categories of pairings.   

\bigskip

We will be exploit this phenomenon in chapters 5-6 below, with a detailed algebraic and analytic study of these quantum groups, based on their ``easiness'' property. Then, we will be back to this in chapter 7 below, with an axiomatization of the notion of category of pairings, or more generally of a category of partitions, a definition for easiness, some theory, and an exploration of the main examples of easy quantum groups.

\section*{4e. Exercises}

Generally speaking, the best complement to the material presented in this chapter is more reading on Tannakian duality, in its various versions, which are all useful. With the technology presented above, however, we can work out a few interesting particular cases of the Tannakian duality, and this will be the purpose of the first few exercises that we have here. Let us start with something quite elementary:

\begin{exercise}
Work out the Tannakian duality for the closed subgroups
$$G\subset O_N^+$$
first as a consequence of the general results that we have, regarding the closed subgroups 
$$G\subset U_N^+$$
and then independently, by pointing out the simplifications that appear in the real case.
\end{exercise}

Regarding the first question, this is normally something quite quick, obtained by adding the assumption $u=\bar{u}$ to the Tannakian statement that we have, and then working out the details. Regarding the second question, the idea here is basically that the colored exponents $k,l=\circ\bullet\bullet\circ\ldots$ will become in this way usual exponents, $k,l\in\mathbb N$, and this brings a number of simplifications in the proof, which are to be found.

\begin{exercise}
Work out the Tannakian duality for the closed subgroups
$$G\subset U_N^+$$
whose fundamental corepresentation is self-adjoint, up to equivalence,
$$u\sim\bar{u}$$
first as a consequence of the results that we have, and then independently.
\end{exercise}

Here are there are several possible paths, either by proceeding a bit as for the previous exercise, but with the condition $u=\bar{u}$ there replaced by the more general condition $u\sim\bar{u}$, or by using what was done in the previous exercise, and generalizing, from $u=\bar{u}$ to $u\sim\bar{u}$. In any case, regardless of the method which is chosen, the problem is that understanding what the condition $u\sim\bar{u}$ really means, categorially speaking.

\begin{exercise}
Work out the Tannakian duality for the closed subgroups
$$G\subset U_N$$
first as a consequence of the results that we have, and then independently.
\end{exercise}

The same comments as before apply. Some supplementary questions appear along these lines, regarding the closed subgroups $G\subset O_N$, or more generally the closed subgroups $G\subset U_N$ satisfying $u\sim\bar{u}$. Thus, there are in fact many questions here. In addition to this, looking a bit at the Tannakian duality literature for the compact Lie groups is definitely a very good idea, and the best possible exercise on the subject.

\begin{exercise}
Work out the Tannakian duality for the group duals
$$\widehat{\Gamma}\subset U_N$$
first as a consequence of the results that we have, and then independently.
\end{exercise}

This is actually the simplest exercise in the whole series, and the problem here is that of writing down a clear statement, along with a full, independent proof.

\begin{exercise}
Work out the Tannakian duality for the arbitrary group duals
$$\widehat{\Gamma}\subset U_N$$
first as a consequence of the results that we have, and then independently.
\end{exercise}

The key word here, which distinguishes this exercise from the previous one, is the word ``arbitrary''. Thus, in practice, we must go back here to the Peter-Weyl theory developed in chapter 3 above, see what happens exactly for the arbitrary group duals, and then go ahead and solve the above Tannakian question, a bit as before. 

\begin{exercise}
Check the Brauer theorems for $O_N,U_N$, which are both of type
$$Hom(u^{\otimes k},u^{\otimes l})=span\left(T_\pi\Big|\pi\in D(k,l)\right)$$
for small values of the global length parameter, $k+l\in\{1,2,3\}$.
\end{exercise}

The idea here is to prove these results that we already know directly, by double inclusion, with the inclusion in one sense being normally something quite elementary, and with the inclusion in the other sense being probably somehing quite tricky.

\begin{exercise}
Write down Brauer theorems for the quantum groups $O_N^*,U_N^*$, by identifying first the pairing which produces them, as subgroups of $O_N^+,U_N^+$.
\end{exercise}

This is actually something that will be discussed later on in this book, but without too much details, so the answer ``done in the book'' will not do.

\part{Quantum rotations}

\ \vskip50mm

\begin{center}
{\em And there's nothing short of dying

Half as lonesome as the sound

On the sleeping city sidewalks

Sunday morning coming down}
\end{center}

\chapter{Free rotations}

\section*{5a. Gram determinants}

We have seen that Tannakian duality allows us to get some substantial insight into the representation theory of $O_N^+,U_N^+$, with a free analogue of the classical Brauer theorem for $O_N,U_N$. In this second part of the present book we discuss some concrete applications of this result, and some generalizations, following \cite{ba1}, then \cite{bc1}, and then \cite{bsp}.

\bigskip

Let us begin with a summary of the Brauer type results established in the previous chapter. The statement here, collecting what we have so far, is as follows:

\index{Brauer theorem}

\begin{theorem}
For the basic unitary quantum groups, namely
$$\xymatrix@R=15mm@C=15mm{
U_N\ar[r]&U_N^+\\
O_N\ar[r]\ar[u]&O_N^+\ar[u]}$$
the intertwiners between the Peter-Weyl representations are given by
$$Hom(u^{\otimes k},u^{\otimes l})=span\left(T_\pi\Big|\pi\in D(k,l)\right)$$
with the linear maps $T_\pi$ associated to the pairings $\pi$ being given by
$$T_\pi(e_{i_1}\otimes\ldots\otimes e_{i_k})=\sum_{j_1\ldots j_l}\delta_\pi\begin{pmatrix}i_1&\ldots&i_k\\ j_1&\ldots&j_l\end{pmatrix}e_{j_1}\otimes\ldots\otimes e_{j_l}$$
and with the corresponding sets of pairings $D$ being as follows,
$$\xymatrix@R=16mm@C=15mm{
\mathcal P_2\ar[d]&\mathcal{NC}_2\ar[l]\ar[d]\\
P_2&NC_2\ar[l]}$$
with calligraphic standing for matching, and with NC standing for noncrossing.
\end{theorem}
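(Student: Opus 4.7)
The plan is to deduce Theorem 5.1 by applying Tannakian duality (Theorem 4.20) to each of the four quantum groups in turn, starting from the free cases $O_N^+, U_N^+$ and then specializing to the classical cases $O_N, U_N$ by adding crossing relations. Everything has essentially been set up: the theorem is really a compilation of Theorems 4.28 and 4.29, and the work lies in organizing the diagrammatic bookkeeping uniformly.

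First I would invoke the diagrammatic inputs already at hand. By Proposition 4.26 the four basic pairings realize as $T_\cap = R$, $T_\cup = R^*$, $T_{||\cdots||} = \mathrm{id}$ and $T_{\slash\!\!\backslash} = \Sigma$, while Proposition 4.27 says that $\pi \mapsto T_\pi$ is monoidal and $*$-preserving, with vertical composition introducing factors of $N$ from erased closed loops. Together these imply that whenever a set $D$ of pairings is stable under horizontal concatenation, vertical composition modulo loops, and reflection, the collection $\mathrm{span}(T_\pi \mid \pi \in D(k,l))$ automatically forms a tensor $*$-category in the sense of Definition 4.6.

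For the free cases, Proposition 4.22 identifies the Tannakian category of $U_N^+$ as the one generated by $R$ and $R^*$, and Proposition 4.23 gives the analogous statement for $O_N^+$ after identifying the two colors $\circ = \bullet$. The problem then reduces to the combinatorial claim that $\mathcal{NC}_2$ (respectively $NC_2$) is generated, under the operations of Proposition 4.27, by the matching semicircles $\cap$ and $\cup$. I would prove this by induction on the total number of strings: any noncrossing matching pairing admits an innermost cap which can be peeled off by composing with a suitable tensor product of $\cap$, $\cup$ and identity strands, reducing the induction to a strictly smaller diagram. For the classical cases, $U_N \subset U_N^+$ and $O_N \subset O_N^+$ are cut out by the commutation relations $[u_{ij},u_{kl}] = [u_{ij},u_{kl}^*] = 0$, which via Tannakian duality correspond to enlarging the category by the crossings $T_{\slash\!\!\backslash}$ in all admissible colorings. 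The required identities $\langle \mathcal{NC}_2,\slash\!\!\backslash \rangle = \mathcal{P}_2$ and $\langle NC_2,\slash\!\!\backslash \rangle = P_2$ then hold because any pairing differs from a noncrossing one by a permutation of its endpoints, and every permutation decomposes into adjacent transpositions.

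The main obstacle is the two generation claims for $\mathcal{NC}_2$ and $\mathcal{P}_2$ (and their uncolored cousins) inside the diagrammatic category. These are elementary but require care with the colored endpoints in the matching case, where the induction must respect the alternation $\circ/\bullet$ along vertical strings: peeling off an innermost cap can flip colors on the remaining diagram, and one must check this is consistent with the matching condition. A secondary subtlety is that Tannakian duality, as stated in Theorem 4.20, produces universal $C^*$-algebras only modulo identifying full and reduced versions, so one should verify that the presentations of $O_N^+, U_N^+$ given in Theorem 2.16 coincide with the universal objects $A_{\langle R, R^*\rangle}$; this is however immediate, since their defining relations impose nothing beyond biunitarity, which is precisely what $R$ and $R^*$ encode in the Tannakian category, by Proposition 4.4.
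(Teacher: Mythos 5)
Your proposal is correct and follows essentially the same route as the paper: Theorem 5.1 is there stated as a summary of Theorems 4.28 and 4.29, whose proofs run exactly as you outline — Tannakian duality via Propositions 4.22 and 4.23, the fact that the matching semicircles generate $\mathcal{NC}_2$ (resp. $NC_2$) under the categorical operations of Proposition 4.27, and then the classical cases by adjoining the crossings coming from the commutation relations, giving $\mathcal P_2$ and $P_2$. Your explicit induction peeling off innermost caps, and the remark on full versus reduced versions, are just slightly more detailed versions of steps the paper treats briefly.
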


\begin{proof}
This is indeed a summary of the results that we have, established in the previous chapter, and coming from Tannakian duality, via some combinatorics.
\end{proof}

\index{Gram matrix}
\index{Gram determinant}

In order to work out now some concrete applications, such as the classification of the irreducible representations of $O_N^+,U_N^+$, we must do some combinatorics.  The problem indeed is that we do not know whether the linear maps $T_\pi$ in Theorem 5.1 are linearly independent or not, so we must solve this problem first. Things are quite tricky here, technically speaking, and we will solve this question as follows:

\medskip
 
\begin{enumerate}
\item By Frobenius duality, it is enough to examine the vectors $\xi_\pi=T_\pi$ associated to the pairings $\pi\in P_2(0,l)$, having no upper points.

\medskip

\item In order to decide whether these vectors $\xi_\pi$ are linearly independent or not, we will compute the determinant of their Gram matrix.

\medskip

\item We will actually compute the determinant of a bigger Gram matrix, that of the vectors $\xi_\pi=T_\pi$ coming from arbitrary partitions $\pi\in P(0,l)$, which is simpler.
\end{enumerate}

\medskip

What we have here is an accumulation of tricks, and some changes in notations too. By replacing $l\to k$ as well, for making things look better, we are led in this way to:

\begin{proposition}
To any partition $\pi\in P(k)$ we associate the vector
$$\xi_\pi=\sum_{i_1\ldots i_k}\delta_\pi(i_1,\ldots,i_k)\,e_{i_1}\otimes\ldots\otimes e_{i_k}$$
with the Kronecker symbols being defined as usual, according to whether the indices fit or not. The Gram matrix of these vectors is then given by
$$G_k(\pi,\sigma)=N^{|\pi\vee\sigma|}$$
where $\pi\vee\sigma\in P(k)$ is obtained by superposing $\pi,\sigma$, and $|\,.\,|$ is the number of blocks.
\end{proposition}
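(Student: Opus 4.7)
The plan is to compute the inner product $\langle \xi_\pi, \xi_\sigma \rangle$ directly, using the orthonormality of the tensor basis $\{e_{i_1} \otimes \ldots \otimes e_{i_k}\}$, and to reduce the resulting combinatorial sum to a block-counting problem.

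First, I would expand the inner product. Since the tensors $e_{i_1} \otimes \ldots \otimes e_{i_k}$ form an orthonormal basis of $(\mathbb{C}^N)^{\otimes k}$, we obtain immediately
$$G_k(\pi, \sigma) = \langle \xi_\pi, \xi_\sigma \rangle = \sum_{i_1, \ldots, i_k = 1}^{N} \delta_\pi(i_1, \ldots, i_k)\, \delta_\sigma(i_1, \ldots, i_k).$$

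Next, the key step is the combinatorial identity
$$\delta_\pi(i_1, \ldots, i_k)\, \delta_\sigma(i_1, \ldots, i_k) = \delta_{\pi \vee \sigma}(i_1, \ldots, i_k),$$
where $\pi \vee \sigma$ denotes the join in the lattice of partitions, i.e.\ the coarsest partition of $\{1, \ldots, k\}$ that refines both $\pi$ and $\sigma$ (equivalently, the partition obtained by taking the transitive closure of the union of the two equivalence relations, which is exactly the ``superposition'' of $\pi$ and $\sigma$). Indeed, $\delta_\pi(i) = 1$ says that the multi-index $i$ is constant on each block of $\pi$, and similarly for $\sigma$; their product is $1$ precisely when $i$ is constant on each block of $\pi$ and on each block of $\sigma$, which is the same as being constant on each block of $\pi \vee \sigma$.

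Finally, I would count: a multi-index $(i_1, \ldots, i_k) \in \{1, \ldots, N\}^k$ that is constant on each block of $\pi \vee \sigma$ is uniquely determined by the choice of one value in $\{1, \ldots, N\}$ per block, so the number of such multi-indices is exactly $N^{|\pi \vee \sigma|}$. Putting the pieces together yields $G_k(\pi, \sigma) = N^{|\pi \vee \sigma|}$, as claimed. There is no real obstacle here; the only point requiring care is the precise meaning of the superposition $\pi \vee \sigma$ as a join in the partition lattice, rather than a formal juxtaposition, so I would spell this out to avoid confusion.
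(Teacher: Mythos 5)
Your proof is correct and is essentially identical to the paper's own argument: expand $\langle\xi_\pi,\xi_\sigma\rangle$ over the orthonormal tensor basis, use $\delta_\pi\delta_\sigma=\delta_{\pi\vee\sigma}$, and count one free index per block to get $N^{|\pi\vee\sigma|}$. One small wording slip: $\pi\vee\sigma$ is the \emph{finest common coarsening} of $\pi,\sigma$ (exactly the transitive closure of the union of the two equivalence relations, as your parenthetical says), not the ``coarsest partition that refines both'', which would be the meet.
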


\begin{proof}
According to the formula of the vectors $\xi_\pi$, we have:
\begin{eqnarray*}
<\xi_\pi,\xi_\sigma>
&=&\sum_{i_1\ldots i_k}\delta_\pi(i_1,\ldots,i_k)\delta_\sigma(i_1,\ldots,i_k)\\
&=&\sum_{i_1\ldots i_k}\delta_{\pi\vee\sigma}(i_1,\ldots,i_k)\\
&=&N^{|\pi\vee\sigma|}
\end{eqnarray*}

Thus, we have obtained the formula in the statement.
\end{proof}

Our goal in what follows will be that of computing $\det(G_k)$. Which will actually take some time, to the point that you might start wondering, sometimes soon, if this is really the right thing to do, right now. To which I would say that yes, this is the right thing to do. Our goal is to understand the closed subgroups $G\subset U_N^+$, and 0 chances with that, until we know what this $U_N^+$ beast is. And for this, we need to compute $\det(G_k)$.

\bigskip

As an illustration now, at $k=2$ we have $P(2)=\{||,\sqcap\}$, and the Gram matrix is:
$$G_2=\begin{pmatrix}N^2&N\\ N&N\end{pmatrix}$$

At $k=3$, we have $P(3)=\{|||,\sqcap|,\sqcap\hskip-3.2mm{\ }_|\,,|\sqcap,\sqcap\hskip-0.7mm\sqcap\}$, and the Gram matrix is:
$$G_3=\begin{pmatrix}
N^3&N^2&N^2&N^2&N\\
N^2&N^2&N&N&N\\
N^2&N&N^2&N&N\\
N^2&N&N&N^2&N\\
N&N&N&N&N
\end{pmatrix}$$

\index{Young tableaux}

These matrices might not look that bad, to the untrained eye, but in practice, their combinatorics can be fairly complicated. As an example here, the submatrix of $G_k$ coming from the usual pairings, that we are really interested in, according to Theorem 5.1, has as determinant a product of  terms indexed by Young tableaux. This is actually why we use $G_k$, because, as we will soon discover, this matrix is something quite simple. 

\bigskip

In order to compute the determinant of $G_k$, we will use a standard combinatorial trick, related to the M\"obius inversion formula. Let us start with:

\index{order of partitions}
\index{lattice of partitions}

\begin{definition}
Given two partitions $\pi,\sigma\in P(k)$, we write 
$$\pi\leq\sigma$$
if each block of $\pi$ is contained in a block of $\sigma$.
\end{definition}

Observe that this order is compatible with the previous convention for $\pi\vee\sigma$, in the sense that the $\vee$ operation is the supremum operation with respect to $\leq$. At the level of examples, at $k=2$ we have $P(2)=\{||,\sqcap\}$, and the order relation is as follows:
$$||\leq\sqcap$$

At $k=3$ now, we have $P(3)=\{|||,\sqcap|,\sqcap\hskip-3.2mm{\ }_|\,,|\sqcap,\sqcap\hskip-0.7mm\sqcap\}$, and the order relation is:
$$|||\leq\sqcap|,\sqcap\hskip-3.2mm{\ }_|\,,|\sqcap\leq\sqcap\hskip-0.7mm\sqcap$$

Summarizing, this order is very intuitive, and simple to compute. By using now this order, we can talk about the M\"obius function of $P(k)$, as follows:

\index{M\"obius function}
\index{M\"obius matrix}

\begin{definition}
The M\"obius function of any lattice, and so of $P(k)$, is given by
$$\mu(\pi,\sigma)=\begin{cases}
1&{\rm if}\ \pi=\sigma\\
-\sum_{\pi\leq\tau<\sigma}\mu(\pi,\tau)&{\rm if}\ \pi<\sigma\\
0&{\rm if}\ \pi\not\leq\sigma
\end{cases}$$
with this construction being performed by recurrence.
\end{definition}

This is something standard in combinatorics. As an illustration here, let us go back to the set of 2-point partitions, $P(2)=\{||,\sqcap\}$. We have by definition:
$$\mu(||,||)=\mu(\sqcap,\sqcap)=1$$

Next in line, we know that we have $||<\sqcap$, with no intermediate partition in between, and so the above recurrence procedure gives:
$$\mu(||,\sqcap)=-\mu(||,||)=-1$$

Finally, we have $\sqcap\not\leq||$, and so the last value of the M\"obius function is:
$$\mu(\sqcap,||)=0$$

Thus, as a conclusion, we have computed the M\"obius matrix $M_2(\pi,\sigma)=\mu(\pi,\sigma)$ of the lattice $P(2)=\{||,\sqcap\}$, the formula of this matrix being as follows:
$$M_2=\begin{pmatrix}1&-1\\ 0&1\end{pmatrix}$$

The computation for $P(3)=\{|||,\sqcap|,\sqcap\hskip-3.2mm{\ }_|\,,|\sqcap,\sqcap\hskip-0.7mm\sqcap\}$ is similar, and leads to the following formula for the associated M\"obius matrix:
$$M_3=\begin{pmatrix}
1&-1&-1&-1&2\\
0&1&0&0&-1\\
0&0&1&0&-1\\
0&0&0&1&-1\\
0&0&0&0&1
\end{pmatrix}$$

In general, the M\"obius matrix of $P(k)$ looks a bit like the above matrices at $k=2,3$, being upper triangular, with 1 on the diagonal, and so on. We will be back to this.

\bigskip

\index{M\"obius inversion}

Back to the general case now, the main interest in the M\"obius function comes from the M\"obius inversion formula, which states that the following happens:
$$f(\sigma)=\sum_{\pi\leq\sigma}g(\pi)
\quad\implies\quad g(\sigma)=\sum_{\pi\leq\sigma}\mu(\pi,\sigma)f(\pi)$$

In linear algebra terms, the statement and proof of this formula are as follows:

\index{adjacency matrix}

\begin{theorem}
The inverse of the adjacency matrix of $P(k)$, given by
$$A_k(\pi,\sigma)=\begin{cases}
1&{\rm if}\ \pi\leq\sigma\\
0&{\rm if}\ \pi\not\leq\sigma
\end{cases}$$
is the M\"obius matrix of $P$, given by $M_k(\pi,\sigma)=\mu(\pi,\sigma)$.
\end{theorem}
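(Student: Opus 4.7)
The plan is to verify directly that $M_k A_k = I_{P(k)}$ by computing the matrix product entry by entry, and then invoking finite-dimensional linear algebra to conclude that $A_k^{-1} = M_k$. Writing out the product gives
$$(M_k A_k)(\pi,\sigma) = \sum_{\tau \in P(k)} \mu(\pi,\tau)\, A_k(\tau,\sigma) = \sum_{\tau \leq \sigma} \mu(\pi,\tau),$$
so everything reduces to showing that for all $\pi,\sigma \in P(k)$ one has $\sum_{\tau \leq \sigma} \mu(\pi,\tau) = \delta_{\pi,\sigma}$.

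The next step is to split into the three cases built into the definition of $\mu$. First, since $\mu(\pi,\tau) = 0$ whenever $\pi \not\leq \tau$, the sum only involves $\tau$ with $\pi \leq \tau \leq \sigma$. In the case $\pi = \sigma$ the only surviving term is $\mu(\pi,\pi) = 1$, which gives the required $1$. In the case $\pi \not\leq \sigma$ there is no $\tau$ satisfying $\pi \leq \tau \leq \sigma$ (by transitivity of $\leq$), so the sum is empty and equals $0$. In the remaining case $\pi < \sigma$, the sum becomes
$$\mu(\pi,\sigma) + \sum_{\pi \leq \tau < \sigma} \mu(\pi,\tau),$$
which vanishes precisely because the recursive clause of Definition~5.4 was set up to enforce $\mu(\pi,\sigma) = -\sum_{\pi \leq \tau < \sigma}\mu(\pi,\tau)$.

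This establishes $M_k A_k = I$. Since $P(k)$ is finite, $A_k$ and $M_k$ are finite square matrices, and a one-sided inverse is automatically a two-sided inverse, so $A_k^{-1} = M_k$, as claimed.

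I do not expect any real obstacle here: the entire argument is a case analysis driven by the recursive definition of $\mu$, together with the transitivity of $\leq$ on $P(k)$. The only delicate point worth highlighting carefully is that the definition of $\mu$ only uses the poset structure of $P(k)$, so the whole statement is in fact a general result about finite posets applied to $P(k)$; one could (but need not) present it as such, the proof being identical.
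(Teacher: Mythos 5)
Your proof is correct and follows essentially the same route as the paper, which simply observes that $A_k$ is upper triangular and that inverting it reproduces the recurrence in Definition~5.4; you have merely spelled out the entrywise verification of $M_kA_k=I$ and replaced the implicit triangularity argument with the equally valid remark that a one-sided inverse of a finite square matrix is two-sided.
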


\begin{proof}
This is well-known, coming for instance from the fact that $A_k$ is upper triangular. Indeed, when inverting, we are led into the recurrence from Definition 5.4.
\end{proof}

As an illustration, for $P(2)=\{||,\sqcap\}$ the formula $M_2=A_2^{-1}$ appears as follows:
$$\begin{pmatrix}1&-1\\ 0&1\end{pmatrix}=
\begin{pmatrix}1&1\\ 0&1\end{pmatrix}^{-1}$$

Also, for $P(3)=\{|||,\sqcap|,\sqcap\hskip-3.2mm{\ }_|\,,|\sqcap,\sqcap\hskip-0.7mm\sqcap\}$ the formula $M_3=A_3^{-1}$ reads:
$$\begin{pmatrix}
1&-1&-1&-1&2\\
0&1&0&0&-1\\
0&0&1&0&-1\\
0&0&0&1&-1\\
0&0&0&0&1
\end{pmatrix}=
\begin{pmatrix}
1&1&1&1&1\\
0&1&0&0&1\\
0&0&1&0&1\\
0&0&0&1&1\\
0&0&0&0&1
\end{pmatrix}^{-1}$$

Now back to our Gram matrix considerations, we have the following key result, based on this technology, which basically solves our determinant question:

\begin{proposition}
The Gram matrix is given by $G_k=A_kL_k$, where
$$L_k(\pi,\sigma)=
\begin{cases}
N(N-1)\ldots(N-|\pi|+1)&{\rm if}\ \sigma\leq\pi\\
0&{\rm otherwise}
\end{cases}$$
and where $A_k=M_k^{-1}$ is the adjacency matrix of $P(k)$.
\end{proposition}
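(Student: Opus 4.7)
The plan is to compute the matrix product $A_k L_k$ entry by entry, and match it against the Gram matrix $G_k(\pi,\sigma)=N^{|\pi\vee\sigma|}$. First I would write out
$$(A_k L_k)(\pi,\sigma)=\sum_{\tau\in P(k)}A_k(\pi,\tau)L_k(\tau,\sigma),$$
and observe that $A_k(\pi,\tau)=1$ forces $\tau\geq\pi$, while $L_k(\tau,\sigma)\neq0$ forces $\sigma\leq\tau$; the two constraints together are equivalent to $\tau\geq\pi\vee\sigma$. Setting $N^{(m)}=N(N-1)\cdots(N-m+1)$ for the falling factorial, this reduces the task to the identity
$$N^{|\pi\vee\sigma|}=\sum_{\tau\geq\pi\vee\sigma}N^{(|\tau|)}.$$

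The second step is to prove this identity. Writing $\rho=\pi\vee\sigma$ and $m=|\rho|$, the partitions $\tau\in P(k)$ with $\tau\geq\rho$ are in canonical order-preserving bijection with partitions of the $m$-element set of blocks of $\rho$, and under this bijection $|\tau|$ is the number of blocks of the corresponding partition of $\{1,\dots,m\}$. So the identity to prove becomes the classical formula
$$N^m=\sum_{\lambda\in P(m)}N^{(|\lambda|)}.$$
The combinatorial proof is immediate: the left-hand side counts all functions $f\colon\{1,\dots,m\}\to\{1,\dots,N\}$; each such $f$ induces a partition $\lambda\in P(m)$ via the fibers of $f$, and the number of functions inducing a given $\lambda$ is precisely the number of injections from the $|\lambda|$ fibers into $\{1,\dots,N\}$, namely $N^{(|\lambda|)}$.

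Putting the two steps together yields $(A_k L_k)(\pi,\sigma)=N^{|\pi\vee\sigma|}=G_k(\pi,\sigma)$, which is the claim.

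I do not expect a serious obstacle here: both steps are essentially bookkeeping plus one standard combinatorial identity. The only point where one should pay attention is the bijection $\{\tau\geq\rho\}\simeq P(|\rho|)$ used to reduce the $P(k)$-sum to a $P(m)$-sum; once this is spelled out properly, the falling-factorial identity takes over and everything closes up.
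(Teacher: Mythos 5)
Your proposal is correct and follows essentially the same route as the paper: compute $(A_kL_k)(\pi,\sigma)$ entrywise, note that the nonzero terms are exactly those with $\tau\geq\pi\vee\sigma$, and establish $N^{|\pi\vee\sigma|}=\sum_{\tau\geq\pi\vee\sigma}N(N-1)\cdots(N-|\tau|+1)$ by counting tuples/functions according to their exact kernel partition. The intermediate reduction you make to partitions of the block set of $\pi\vee\sigma$ is a cosmetic repackaging of the paper's direct count of multi-indices $i$ with $\ker i\geq\pi\vee\sigma$, so there is nothing substantive to add.
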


\begin{proof}
We have the following computation, using Proposition 5.2:
\begin{eqnarray*}
G_k(\pi,\sigma)
&=&N^{|\pi\vee\sigma|}\\
&=&\#\left\{i_1,\ldots,i_k\in\{1,\ldots,N\}\Big|\ker i\geq\pi\vee\sigma\right\}\\
&=&\sum_{\tau\geq\pi\vee\sigma}\#\left\{i_1,\ldots,i_k\in\{1,\ldots,N\}\Big|\ker i=\tau\right\}\\
&=&\sum_{\tau\geq\pi\vee\sigma}N(N-1)\ldots(N-|\tau|+1)
\end{eqnarray*}

According now to the definition of $A_k,L_k$, this formula reads:
\begin{eqnarray*}
G_k(\pi,\sigma)
&=&\sum_{\tau\geq\pi}L_k(\tau,\sigma)\\
&=&\sum_\tau A_k(\pi,\tau)L_k(\tau,\sigma)\\
&=&(A_kL_k)(\pi,\sigma)
\end{eqnarray*}

Thus, we are led to the formula in the statement.
\end{proof}

As an illustration for the above result, at $k=2$ we have $P(2)=\{||,\sqcap\}$, and the above decomposition $G_2=A_2L_2$ appears as follows:
$$\begin{pmatrix}N^2&N\\ N&N\end{pmatrix}
=\begin{pmatrix}1&1\\ 0&1\end{pmatrix}
\begin{pmatrix}N^2-N&0\\N&N\end{pmatrix}$$

At $k=3$ now, we have $P(3)=\{|||,\sqcap|,\sqcap\hskip-3.2mm{\ }_|\,,|\sqcap,\sqcap\hskip-0.7mm\sqcap\}$, and the Gram matrix is:
$$G_3=\begin{pmatrix}
N^3&N^2&N^2&N^2&N\\
N^2&N^2&N&N&N\\
N^2&N&N^2&N&N\\
N^2&N&N&N^2&N\\
N&N&N&N&N
\end{pmatrix}$$

Regarding $L_3$, this can be computed by writing down the matrix $E_3(\pi,\sigma)=\delta_{\sigma\leq\pi}|\pi|$, and then replacing each entry by the corresponding polynomial in $N$. We reach to the conclusion that the product $A_3L_3$ is as follows, producing the above matrix $G_3$:
$$A_3L_3=\begin{pmatrix}
1&1&1&1&1\\
0&1&0&0&1\\
0&0&1&0&1\\
0&0&0&1&1\\
0&0&0&0&1
\end{pmatrix}
\begin{pmatrix}
N^3-3N^2+2N&0&0&0&0\\
N^2-N&N^2-N&0&0&0\\
N^2-N&0&N^2-N&0&0\\
N^2-N&0&0&N^2-N&0\\
N&N&N&N&N
\end{pmatrix}$$

In general, the formula $G_k=A_kL_k$ appears a bit in the same way, with $A_k$ being binary and upper triangular, and with $L_k$ depending on $N$, and being lower triangular.

\bigskip

We are led in this way to the following formula, due to Lindst\"om \cite{lin}:

\index{Gram matrix}
\index{Gram determinant}
\index{Lindst\"om formula}

\begin{theorem}
The determinant of the Gram matrix $G_k$ is given by
$$\det(G_k)=\prod_{\pi\in P(k)}\frac{N!}{(N-|\pi|)!}$$
with the convention that in the case $N<k$ we obtain $0$.
\end{theorem}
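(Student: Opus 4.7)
The plan is to deduce this determinant formula directly from the factorization $G_k = A_k L_k$ established in Proposition 5.6, exploiting the fact that both factors become triangular once we order $P(k)$ compatibly with the partial order $\leq$.

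First I would fix a total ordering of $P(k)$ refining $\leq$ (for example, a linear extension of the poset). With respect to such an ordering, the adjacency matrix $A_k(\pi,\sigma) = \delta_{\pi \leq \sigma}$ is upper triangular with $1$'s on the diagonal, so $\det(A_k) = 1$. Dually, the matrix $L_k$ from Proposition 5.6 vanishes outside $\sigma \leq \pi$, hence is lower triangular in the same ordering, with diagonal entries
\[
L_k(\pi,\pi) = N(N-1)\cdots(N-|\pi|+1) = \frac{N!}{(N-|\pi|)!}.
\]
Multiplying the determinants of the two triangular factors then yields
\[
\det(G_k) = \det(A_k)\det(L_k) = \prod_{\pi \in P(k)} \frac{N!}{(N-|\pi|)!},
\]
which is the announced formula.

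For the convention in the case $N < k$, I would observe that the singleton partition $1_k = \{\{1\},\{2\},\ldots,\{k\}\} \in P(k)$ has $|1_k| = k > N$, so the corresponding falling factorial $N(N-1)\cdots(N-k+1)$ contains the factor $0$, forcing the product to vanish. Equivalently, whenever $|\pi| > N$ the factor $\frac{N!}{(N-|\pi|)!}$ should be read as $0$, reflecting the fact that no map from a $k$-element set to $\{1,\ldots,N\}$ can have kernel exactly $\pi$.

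The only real point requiring care is verifying that $A_k$ and $L_k$ are simultaneously triangularizable, i.e.\ that a single linear extension of $\leq$ makes $A_k$ upper and $L_k$ lower triangular; but this is immediate from the definitions, since both matrices have their nonzero pattern controlled by the same relation $\leq$ (with opposite orientations). No obstacle arises beyond this bookkeeping, and the formula follows.
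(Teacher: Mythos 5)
Your proof is correct and follows essentially the same route as the paper: both arguments use the factorization $G_k = A_k L_k$ from Proposition 5.6, observe that a suitable ordering of $P(k)$ (the paper chooses ``by number of blocks, then lexicographically,'' you use an arbitrary linear extension of $\leq$, which amounts to the same thing) renders $A_k$ upper triangular with unit diagonal and $L_k$ lower triangular, and then read off the determinant as the product of the diagonal entries of $L_k$. Your additional remark explaining why the product vanishes when $N<k$ (via the discrete partition with $k$ blocks) is a small but welcome clarification that the paper leaves implicit.
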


\begin{proof}
If we order $P(k)$ as usual, with respect to the number of blocks, and then lexicographically, then $A_k$ is upper triangular, and $L_k$ is lower triangular. Thus, we have:
\begin{eqnarray*}
\det(G_k)
&=&\det(A_k)\det(L_k)\\
&=&\det(L_k)\\
&=&\prod_\pi L_k(\pi,\pi)\\
&=&\prod_\pi N(N-1)\ldots(N-|\pi|+1)
\end{eqnarray*}

Thus, we are led to the formula in the statement.
\end{proof}

We refer to \cite{bcu} for more on Gram determinants, and their conceptual meaning, from a modern perspective. Getting back now to quantum groups, or rather to the corresponding Tannakian categories, written as spans of diagrams, we have the following result:

\index{linear independence}

\begin{theorem}
The vectors associated to the partitions, namely
$$\left\{\xi_\pi\in(\mathbb C^N)^{\otimes k}\Big|\pi\in P(k)\right\}$$
and in particular the vectors associated to the pairings, namely
$$\left\{\xi_\pi\in(\mathbb C^N)^{\otimes k}\Big|\pi\in P_2(k)\right\}$$
are linearly independent for $N\geq k$. 
\end{theorem}

\begin{proof}
Here the first assertion follows from Theorem 5.7, the Gram determinant computed there being nonzero for $N\geq k$, and the second assertion follows from it.
\end{proof}

In what follows, the above result will be all that we need, for deducing a number of interesting consequences regarding $O_N,U_N,O_N^+,U_N^+$. Once these corollaries exhausted, we will have to go back to this, and work out some finer linear independence results.

\section*{5b. The Wigner law}

We discuss here some applications of the above linear independence results, following \cite{ba1}, \cite{bc1}. As a first application, we can study the laws of characters. First, we have:

\begin{proposition}
For the basic unitary quantum groups, namely
$$\xymatrix@R=15mm@C=15mm{
U_N\ar[r]&U_N^+\\
O_N\ar[r]\ar[u]&O_N^+\ar[u]}$$
the moments of the main character, which are the numbers $M_k=\int_G\chi^k$, depending on a colored integer $k$, are smaller than the following numbers,
$$\xymatrix@R=16mm@C=15mm{
|\mathcal P_2(k)|\ar@{-}[d]&|\mathcal{NC}_2(k)|\ar@{-}[l]\ar@{-}[d]\\
|P_2(k)|&|NC_2(k)|\ar@{-}[l]}$$
and with equality happening in each case at $N\geq k$.
\end{proposition}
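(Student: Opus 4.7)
The plan is to identify $M_k$ with the dimension of a fixed-point space and then bound that dimension by the number of available diagrams, with linear independence giving the equality case.

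First, I would recall from the Peter-Weyl machinery of Section 3 that for any corepresentation $v \in M_n(\mathcal{A})$, the matrix $(id \otimes \int_G)v$ is the orthogonal projection $P$ onto $\mathrm{Fix}(v)$. Taking traces gives
$$\int_G \chi_v = \mathrm{Tr}(P) = \dim \mathrm{Fix}(v).$$
Applied to $v = u^{\otimes k}$ and using $\chi_{u^{\otimes k}} = \chi^k$ (from the multiplicativity of characters under tensor products, Proposition 3.23), this yields the key identity
$$M_k = \int_G \chi^k = \dim \mathrm{Fix}(u^{\otimes k}) = \dim \mathrm{Hom}(1, u^{\otimes k}).$$

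Next I would invoke the Brauer-type theorem (Theorem 5.1), specialized to the case where the upper colored integer is empty. A pairing in $D(\emptyset, k)$ is simply a pairing of the $k$ lower points, i.e.\ an element of $D(k)$, with $D \in \{P_2, \mathcal{P}_2, NC_2, \mathcal{NC}_2\}$ according to the quantum group. Thus
$$\mathrm{Fix}(u^{\otimes k}) = \mathrm{span}\bigl(T_\pi \,\big|\, \pi \in D(k)\bigr),$$
and taking dimensions immediately gives the upper bound
$$M_k = \dim \mathrm{Fix}(u^{\otimes k}) \leq |D(k)|,$$
which is exactly the inequality claimed in the four cases of the proposition.

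For the equality statement at $N \geq k$, I would argue that the vectors $T_\pi(1) = \xi_\pi$ (for $\pi \in D(k)$) are linearly independent. These are special cases of the vectors $\xi_\pi$ indexed by $\pi \in P(k)$ considered in Proposition 5.2; since $D(k) \subset P(k)$, linear independence of the full family implies linear independence of any subfamily. By Theorem 5.8, the Gram determinant of the full family $\{\xi_\pi : \pi \in P(k)\}$ is nonzero precisely when $N \geq k$, giving the required linear independence, and hence the equality $M_k = |D(k)|$ in that range.

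I do not expect any serious obstacle here: the argument is a clean assembly of three ingredients (Haar-integration-gives-fixed-point-projector, Brauer description of intertwiners, Lindström/Gram-determinant linear independence) that have all been established earlier in the text. The only mildly delicate point is the bookkeeping with colored integers $k = e_1 \ldots e_k$ in the unitary case: one must note that the correspondence $\pi \mapsto T_\pi$ of Definition 4.26 and the vectors $\xi_\pi$ of Proposition 5.2 do not actually depend on the coloring, so the linear independence statement of Theorem 5.8 transfers without change to the matching subsets $\mathcal{P}_2(k)$ and $\mathcal{NC}_2(k)$.
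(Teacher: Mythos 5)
Your proposal is correct and follows essentially the same route as the paper's proof: the Peter--Weyl identity $\int_G\chi^k=\dim(Fix(u^{\otimes k}))$, the Brauer-type description $Fix(u^{\otimes k})=span(\xi_\pi|\pi\in D(k))$ from easiness, and Theorem 5.8 (the Lindstr\"om--Gram determinant) for linear independence at $N\geq k$. Your extra remarks on restricting independence from $P(k)$ to the subsets $D(k)$ and on the coloring playing no role in $\pi\to\xi_\pi$ are exactly the implicit details the paper leaves to the reader.
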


\begin{proof}
We have the following computation, based on Theorem 5.1, and on the character formulae from Peter-Weyl theory, for each of our quantum groups:
\begin{eqnarray*}
\int_G\chi^k
&=&\dim(Fix(u^{\otimes k}))\\
&=&\dim\left(span\left(\xi_\pi\Big|\pi\in D(k)\right)\right)\\
&\leq&|D(k)|
\end{eqnarray*}

Thus, we have the inequalities in the statement, coming from easiness and Peter-Weyl. As for the last assertion, this follows from Theorem 5.8.
\end{proof}

In order to advance now, we must do some combinatorics and probability, first by counting the numbers in Proposition 5.9, and then by recovering the measures having these numbers as moments. We will restrict the attention to the orthogonal case, which is simpler, and leave the unitary case, which is more complicated, for later.

\bigskip

Since there are no pairings when $k$ is odd, we can assume that $k$ is even, and with the change $k\to 2k$, the partition count in the orthogonal case is as follows:

\index{double factorial}
\index{Catalan numbers}

\begin{proposition}
We have the following formulae for pairings,
$$|P_2(2k)|=(2k)!!$$
$$|NC_2(2k)|=C_k$$
with the numbers involved, double factorials and Catalan numbers, being as follows:
$$(2k)!!=(2k-1)(2k-3)(2k-5)\ldots$$
$$C_k=\frac{1}{k+1}\binom{2k}{k}$$
\end{proposition}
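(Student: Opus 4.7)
The plan is to establish the two formulae by independent recurrences, both of which are standard in combinatorics.

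For the first formula $|P_2(2k)|=(2k-1)!!$, I would proceed by induction on $k$, using a ``partner of point $1$'' argument. In any pairing $\pi\in P_2(2k)$, the first point must be paired with exactly one of the remaining $2k-1$ points, giving $2k-1$ choices; once this partner is fixed, the remaining $2k-2$ points form an arbitrary pairing, counted by $|P_2(2k-2)|$. This yields the recurrence
$$|P_2(2k)|=(2k-1)\cdot|P_2(2k-2)|,$$
and since $|P_2(0)|=1$, unfolding gives $|P_2(2k)|=(2k-1)(2k-3)\cdots 3\cdot 1=(2k)!!$ in the notation of the statement. This step is essentially routine and should not cause any difficulty.

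For the second formula $|NC_2(2k)|=C_k$, I would again use a ``partner of point $1$'' argument, but this time exploiting the noncrossing condition. If in $\pi\in NC_2(2k)$ the point $1$ is paired with some point $m$, then all points strictly between $1$ and $m$ must be paired among themselves (otherwise a string would cross $\{1,m\}$), and similarly for the points after $m$. For both subsets to admit a pairing, each must have even cardinality, forcing $m=2j$ with $1\leq j\leq k$. This gives the Catalan recurrence
$$|NC_2(2k)|=\sum_{j=1}^{k}|NC_2(2j-2)|\cdot|NC_2(2k-2j)|,$$
with $|NC_2(0)|=1$. This is exactly the defining recurrence of the Catalan numbers $C_k$, so $|NC_2(2k)|=C_k$ follows by induction.

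The closed form $C_k=\frac{1}{k+1}\binom{2k}{k}$ can then be obtained from the recurrence by the standard generating function argument: setting $f(z)=\sum_{k\geq 0}C_k z^k$, the recurrence gives $f(z)=1+zf(z)^2$, whose relevant root $f(z)=\frac{1-\sqrt{1-4z}}{2z}$ expands via the generalized binomial theorem to $\sum_k\frac{1}{k+1}\binom{2k}{k}z^k$. Alternatively, one can invoke the Dvoretzky--Motzkin cycle lemma or the reflection principle for a direct bijective proof. The main obstacle, if any, is just this last closed-form derivation; the combinatorial recurrences themselves are immediate from the noncrossing geometry. Since the statement only claims the two formulae, and the passage to the explicit binomial form is a well-known identity, the proof as a whole is essentially a direct diagrammatic count.
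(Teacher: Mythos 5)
Your proof is correct and follows essentially the same route as the paper: for $|P_2(2k)|$ the paper directly multiplies out the successive $(2k-1)(2k-3)\cdots$ choices, which is just the unfolded form of your recurrence $|P_2(2k)|=(2k-1)|P_2(2k-2)|$; and for $|NC_2(2k)|$ the paper also uses the partner-of-$1$ decomposition, arrives at the same Catalan recurrence $C_k=\sum_{a+b=k-1}C_aC_b$, and then derives the closed form via the generating function equation $zf^2-f+1=0$, exactly as you do. One small remark: the paper states that $1$ must be paired with an \emph{odd} number $2a+1$, which would make the interior interval $\{2,\dots,2a\}$ of odd size; you correctly observe the partner must be even so that the two blocked-off intervals each have even cardinality --- the recurrence the paper writes down is nonetheless the right one, so this is merely a slip in the prose there, not in your argument.
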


\begin{proof}
We have two assertions here, the idea being as follows:

\medskip

(1) We must count the pairings of $\{1,\ldots,2k\}$. Now observe that such a pairing appears by pairing 1 to a certain number, and there are $2k-1$ choices here, then pairing the next number, 2 if free or 3 if 2 was taken, to another number, and there are $2k-3$ choices here, and so on. Thus, we are led to the formula in the statement, namely:
$$|P_2(2k)|=(2k-1)(2k-3)(2k-5)\ldots$$

(2) We must count the noncrossing pairings of $\{1,\ldots,2k\}$. Now observe that such a pairing appears by pairing 1 to an odd number, $2a+1$, and then inserting a noncrossing pairing of $\{2,\ldots,2a\}$, and a noncrossing pairing of $\{2a+2,\ldots,2k\}$. We conclude from this that we have the following recurrence for the numbers $C_k=|NC_2(2k)|$:
$$C_k=\sum_{a+b=k-1}C_aC_b$$ 

Consider now the generating series of these numbers:
$$f(z)=\sum_{k\geq0}C_kz^k$$

In terms of this generating series, the recurrence that we found gives:
\begin{eqnarray*}
zf^2
&=&\sum_{a,b\geq0}C_aC_bz^{a+b+1}\\
&=&\sum_{k\geq1}\sum_{a+b=k-1}C_aC_bz^k\\
&=&\sum_{k\geq1}C_kz^k\\
&=&f-1
\end{eqnarray*}

Thus the generating series satisfies the following degree 2 equation:
$$zf^2-f+1=0$$

Now by solving this equation, using the usual degree 2 formula, and choosing the solution which is bounded at $z=0$, we obtain:
$$f(z)=\frac{1-\sqrt{1-4z}}{2z}$$ 

By using now the Taylor formula for $\sqrt{x}$, we obtain the following formula:
$$f(z)=\sum_{k\geq0}\frac{1}{k+1}\binom{2k}{k}z^k$$

Thus, we are led to the conclusion in the statement.
\end{proof}

Let us do now the second computation, which is probabilistic. We must find the real probability measures having the above numbers as moments, and we have here:

\index{normal law}
\index{Gaussian law}
\index{Wigner law}
\index{semicircle law}

\begin{theorem}
The standard Gaussian law, and standard Wigner semicircle law
$$g_1=\frac{1}{\sqrt{2\pi}}e^{-x^2/2}dx$$
$$\gamma_1=\frac{1}{2\pi}\sqrt{4-x^2}dx$$
have as $2k$-th moments the numbers $(2k)!!$ and $C_k$, and their odd moments vanish.
\end{theorem}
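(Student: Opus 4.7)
The plan is to treat the two distributions separately, handling the vanishing of odd moments in both cases by a single symmetry observation, and then computing the even moments by the standard techniques (integration by parts for Gaussian, trigonometric substitution for semicircle).

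The odd moments in both cases vanish because the two densities are even functions supported on intervals symmetric about $0$, so for each odd $k$ the integrand $x^k d\mu(x)$ is odd and the integrals vanish. This disposes of half the claim at once, and leaves only the computation of the even moments.

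For the Gaussian case, I would set $M_{2k}=\frac{1}{\sqrt{2\pi}}\int_{-\infty}^{\infty}x^{2k}e^{-x^2/2}dx$ and integrate by parts with $u=x^{2k-1}$, $dv=\frac{x}{\sqrt{2\pi}}e^{-x^2/2}dx$, noting $v=-\frac{1}{\sqrt{2\pi}}e^{-x^2/2}$. The boundary term vanishes by the decay of $e^{-x^2/2}$, yielding the recursion $M_{2k}=(2k-1)M_{2k-2}$ with $M_0=1$. Iterating gives $M_{2k}=(2k-1)(2k-3)\cdots 3\cdot 1$, which is $(2k)!!$ in the convention used in Proposition 5.10 of the paper.

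For the semicircle case, I would use $x=2\sin\theta$ with $\theta\in[-\pi/2,\pi/2]$, so that $\sqrt{4-x^2}=2\cos\theta$ and $dx=2\cos\theta\,d\theta$. This gives
$$M_{2k}=\frac{4^{k+1}}{\pi}\int_0^{\pi/2}\sin^{2k}\theta\cos^2\theta\,d\theta=\frac{4^{k+1}}{\pi}(W_{2k}-W_{2k+2}),$$
where $W_n=\int_0^{\pi/2}\sin^n\theta\,d\theta$ are the Wallis integrals, using $\cos^2\theta=1-\sin^2\theta$ in the second equality. Plugging in the classical formula $W_{2k}=\frac{\pi}{2}\cdot\frac{(2k)!}{4^k(k!)^2}$ and factoring $\frac{(2k)!}{4^k(k!)^2}$ out of the difference, the bracket simplifies to $\frac{1}{2(k+1)}$, yielding $M_{2k}=\frac{1}{k+1}\binom{2k}{k}=C_k$.

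The only delicate step is the Wallis simplification, where one must not drop factors; apart from that the proof is mechanical. An alternative avoiding Wallis would be to verify the Catalan recursion $M_{2k}=\sum_{a+b=k-1}M_{2a}M_{2b}$ directly, e.g.\ via the Stieltjes transform $G(z)=\int(z-x)^{-1}d\gamma_1(x)=(z-\sqrt{z^2-4})/2$, whose expansion at infinity yields the generating series $f(z)$ of Catalan numbers found in Proposition 5.10; but the Wallis route is the shortest.
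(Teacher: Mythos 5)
Your proposal is correct and follows essentially the same route as the paper: integration by parts giving the recursion $M_{2k}=(2k-1)M_{2k-2}$ for the Gaussian, and a trigonometric substitution reducing the semicircle moments to a Wallis-type evaluation, the only cosmetic difference being that you use $x=2\sin\theta$ on $[-\pi/2,\pi/2]$ with the half-interval Wallis integrals $W_{2k}$, while the paper uses $x=2\cos t$ on $[0,\pi]$ and quotes the corresponding full-interval formula directly. Your explicit symmetry remark for the odd moments and the careful bookkeeping in the Wallis simplification are both fine, so no gaps.
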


\begin{proof}
There are several proofs here, depending on your calculus and probability knowledge. Normally the ``honest'', white belt proof would be by trying to find centered measures $g_1,\gamma_1$ having as even moments the numbers $(2k)!!$ and $C_k$. But this is something quite complicated, requiring the usage of the Stieltjes inversion formula, namely:
$$d\mu (x)=\lim_{t\searrow 0}-\frac{1}{\pi}\,Im\left(G(x+it)\right)\cdot dx$$

\index{Stieltjes inversion}

Now the problem is that, assuming that you master this formula, you have certainly learned enough probability as to know about the solutions $g_1,\gamma_1$ to our problem. So, we will just cheat, assume that the laws $g_1,\gamma_1$ are found, and proceed as follows:

\medskip

(1) The moments of the normal law $g_1$ in the statement are given by:
\begin{eqnarray*}
M_k
&=&\frac{1}{\sqrt{2\pi}}\int_\mathbb Rx^ke^{-x^2/2}dx\\
&=&\frac{1}{\sqrt{2\pi}}\int_\mathbb R(x^{k-1})\left(-e^{-x^2/2}\right)'dx\\
&=&\frac{1}{\sqrt{2\pi}}\int_\mathbb R(k-1)x^{k-2}e^{-x^2/2}dx\\
&=&(k-1)\times\frac{1}{\sqrt{2\pi}}\int_\mathbb Rx^{k-2}e^{-x^2/2}dx\\
&=&(k-1)M_{k-2}
\end{eqnarray*}

Thus by recurrence we have $M_{2k}=(2k)!!$, and we are done.

\medskip

(2) The moments of the Wigner law $\gamma_1$ in the statement are given by:
\begin{eqnarray*}
N_k
&=&\frac{1}{2\pi}\int_{-2}^2\sqrt{4-x^2}\,x^{2k}dx\\
&=&\frac{1}{2\pi}\int_0^\pi\sqrt{4-4\cos^2t}\,(2\cos t)^{2k}(2\sin t)dt\\
&=&\frac{2^{2k+1}}{\pi}\int_0^\pi\cos^{2k}t\sin^2tdt\\
&=&\frac{2^{2k+1}}{\pi}\cdot\frac{(2k)!!2!!}{(2k+3)!!}\cdot\pi\\
&=&2^{2k+1}\cdot\frac{3\cdot5\cdot7\ldots(2k-1)}{2\cdot 4\cdot6\ldots(2k+2)}\\
&=&2^{2k+1}\cdot\frac{(2k)!}{2^kk!2^{k+1}(k+1)!}\\
&=&\frac{(2k)!}{k!(k+1)!}
\end{eqnarray*}

Here we have used an advanced calculus formula, but a routine computation based on partial integration works as well. Thus we have $N_k=C_k$, and we are done.
\end{proof}

As a comment here, the advanced calculus formula used in (2) above is as follows, with $\varepsilon(p)=1$ if $p$ is even and $\varepsilon(p)=0$ if $p$ is odd, and with $m!!=(m-1)(m-3)(m-5)\ldots$, with the product ending at $2$ if $m$ is odd, and ending at $1$ if $m$ is even:
$$\int_0^{\pi/2}\cos^pt\sin^qt\,dt=\left(\frac{\pi}{2}\right)^{\varepsilon(p)\varepsilon(q)}\frac{p!!q!!}{(p+q+1)!!}$$

This formula is something extremely useful, in everyday life, with the proof being by partial integration, and then a double recurrence on $p,q$. With spherical coordinates and Fubini it is possible to generalize this into an integration formula over the arbitrary real spheres $S^{N-1}_\mathbb R$, in arbitrary dimension $N\in\mathbb N$, but more on this later.

\bigskip

Now back to our orthogonal quantum groups, by using the above we can formulate a clear and concrete result regarding them, as follows:

\index{asymptotic character}
\index{normal law}
\index{Wigner law}
\index{Gaussian law}
\index{semicircle law}

\begin{theorem}
For the quantum groups $O_N,O_N^+$, the main character
$$\chi=\sum_iu_{ii}$$
follows respectively the standard Gaussian, and the Wigner semicircle law
$$g_1=\frac{1}{\sqrt{2\pi}}e^{-x^2/2}dx\quad,\quad 
\gamma_1=\frac{1}{2\pi}\sqrt{4-x^2}dx$$
in the $N\to\infty$ limit.
\end{theorem}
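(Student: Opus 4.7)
The plan is to combine everything established in this section via the method of moments. Since $u = \bar{u}$ in both cases, the main character $\chi = \sum_i u_{ii}$ is self-adjoint, and the Haar state being a faithful trace, the distribution of $\chi$ is a well-defined compactly supported probability measure on $\mathbb{R}$ (compactly supported because $||\chi|| \leq N$). It therefore suffices to compute all moments $M_k = \int_G \chi^k$ in the limit $N \to \infty$ and compare with the moments of $g_1$ and $\gamma_1$ recorded in Theorem 5.11.

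First I would invoke Theorem 5.1 together with Peter--Weyl, as already packaged in Proposition 5.9, to get
$$M_k = \dim\mathrm{Fix}(u^{\otimes k}) = \dim\,\mathrm{span}\bigl(\xi_\pi \bigm| \pi \in D(k)\bigr),$$
where $D = P_2$ for $O_N$ and $D = NC_2$ for $O_N^+$. In particular odd moments vanish, so we may restrict attention to $k$ even, writing $k = 2k'$. The crucial linear independence input is Theorem 5.8: as soon as $N \geq 2k'$ the vectors $\xi_\pi$ with $\pi \in P_2(2k') \subset P(2k')$ are linearly independent, hence also the subfamily indexed by $NC_2(2k')$ is, so the inequalities from Proposition 5.9 become equalities. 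Thus for every fixed $k'$, once $N$ is large enough,
$$M_{2k'}(O_N) = |P_2(2k')| = (2k')!!, \qquad M_{2k'}(O_N^+) = |NC_2(2k')| = C_{k'},$$
by Proposition 5.10.

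Taking $N \to \infty$, these equalities hold in the limit, and by Theorem 5.11 these are precisely the $2k'$-th moments of $g_1$ and $\gamma_1$ respectively, while the odd moments vanish on both sides. It remains to upgrade moment convergence to convergence in law. For $O_N^+$ this is immediate: the Wigner law $\gamma_1$ has compact support $[-2,2]$, hence is determined by its moments, and the standard method of moments theorem gives weak convergence. For $O_N$, the Gaussian $g_1$ is determined by its moments as well, either by Carleman's condition $\sum M_{2k'}^{-1/(2k')} = \infty$ (since $(2k')!! \sim (2k'/e)^{k'}\sqrt{2}$), or by analyticity of its Fourier transform.

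The main obstacle, conceptually, is really the passage from the algebraic equality $M_{2k'} = |D(2k')|$ for $N \geq 2k'$ to a genuine statement about probability measures; but this is taken care of by the self-adjointness of $\chi$ and the determinacy of the two target distributions, so no serious analytic difficulty arises. Everything else is bookkeeping, assembling Proposition 5.9, Theorem 5.8, Proposition 5.10 and Theorem 5.11 in the order indicated.
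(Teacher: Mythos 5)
Your argument is exactly the paper's proof (Theorem 5.12 there is proved by assembling Proposition 5.9 at $N>k$, Proposition 5.10 and Theorem 5.11), just written out in full rather than as a citation chain, and it is correct. The added remarks on self-adjointness, compact support and moment-determinacy of $g_1,\gamma_1$ are sound supplementary bookkeeping that the paper leaves implicit in its convention that convergence is in moments.
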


\begin{proof}
This follows by putting together the results that we have, namely Proposition 5.9 applied with $N>k$, and then Proposition 5.10 and Theorem 5.11.
\end{proof}

The above result is quite interesting, making the link with the law of Wigner \cite{wig}. Note also that this is the first application of our Tannakian duality methods, developed in chapter 4. We will see in what follows countless versions and generalizations of it, basically obtained by using the same method, Tannakian duality and easiness first, then combinatorics for linear independence, and then more combinatorics and probability.

\section*{5c. Clebsch-Gordan rules}

\index{Young tableaux}

Let us try now to work out some finer results, at fixed values of $N\in\mathbb N$. In the case of $O_N$ the above result cannot really be improved, the fixed $N\in\mathbb N$ laws being fairly complicated objects, related to Young tableaux and their combinatorics. 

\bigskip

In the case of $O_N^+$, however, we will see that some miracles happen, and the convergence in the above result is in fact stationary, starting from $N=2$. Following \cite{ba1}, we have:

\index{Wigner law}
\index{semicircle law}
\index{Clebsch-Gordan rules}
\index{fusion rules}
\index{orthogonal quantum group}

\begin{theorem}
For the quantum group $O_N^+$, the main character follows the standard Wigner semicircle law, and this regardless of the value of $N\geq 2$:
$$\chi\sim\frac{1}{2\pi}\sqrt{4-x^2}dx$$
The irreducible representations of $O_N^+$ are all self-adjoint, and can be labelled by positive integers, with their fusion rules being the Clebsch-Gordan ones,
$$r_k\otimes r_l=r_{|k-l|}+r_{|k-l|+2}+\ldots+r_{k+l}$$
as for the group $SU_2$. The dimensions of these representations are given by
$$\dim r_k=\frac{q^{k+1}-q^{-k-1}}{q-q^{-1}}$$
where $q,q^{-1}$ are the solutions of $X^2-NX+1=0$.
\end{theorem}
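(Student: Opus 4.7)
The plan is to extract everything from the moments of the character, combined with Peter--Weyl and a careful dimension count. By the Brauer-type result of Theorem 5.1, we have
$$\int_G \chi^{2k} \;=\; \dim\mathrm{Fix}(u^{\otimes 2k}) \;=\; \dim\mathrm{span}\bigl\{\xi_\pi : \pi\in NC_2(2k)\bigr\},$$
and odd moments vanish (no matching from an empty row to an odd-length row). The cardinality bound $|NC_2(2k)|=C_k$ from Proposition 5.10 gives $\int_G\chi^{2k}\leq C_k$. The crucial first step is the matching lower bound, i.e.\ linear independence of $\{\xi_\pi\}_{\pi\in NC_2(2k)}$ valid for every $N\geq 2$. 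For this I would restrict the Gram computation of Proposition 5.2 to noncrossing pairings and show that the resulting matrix $G_k^{nc}(\pi,\sigma)=N^{|\pi\vee\sigma|}$ is invertible at $N\geq 2$; this is the classical non-degeneracy of the Temperley--Lieb algebra $TL_N(k)$ at $\delta=N\geq 2$, whose Gram determinant factors as a product of Chebyshev polynomials $U_j(N)$, each strictly positive in that range. Once $\int_G\chi^{2k}=C_k$ and all odd moments vanish, Theorem 5.11 together with uniqueness in the Hamburger moment problem (the measure being compactly supported) forces $\chi\sim\gamma_1$, independently of $N\geq 2$.

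For the fusion and dimension statements I would proceed recursively. Since $u=\bar u$ all tensor powers are self-adjoint. Define formal class functions by the Chebyshev recursion $\chi_0=1$, $\chi_1=\chi$, $\chi_{k+1}=\chi\,\chi_k-\chi_{k-1}$; these are the orthogonal polynomials of $\gamma_1$ (the Chebyshev $U_k$), so the semicircle computation above immediately gives $\int_G\chi_k\chi_l=\delta_{kl}$. I would then build subrepresentations $r_k\subset u^{\otimes k}$ inductively: set $r_0=1$, $r_1=u$, and once $r_0,\ldots,r_k$ are constructed with characters $\chi_{r_j}=\chi_j$ and $r_{j-1}\subset u\otimes r_{j-2}\oplus r_j$, define $r_{k+1}$ as the orthogonal complement of $r_{k-1}$ inside $u\otimes r_k$. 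Its character is then $\chi\,\chi_k-\chi_{k-1}=\chi_{k+1}$ by induction. The orthonormality $\|\chi_k\|_2=1$ combined with PW4 (Theorem 3.24), which says that $\|\chi_v\|_2^2$ equals the sum of the squares of the multiplicities of the irreducibles appearing in $v$, forces each $r_k$ to be irreducible, and the orthogonality $\chi_k\perp\chi_l$ forces $r_k\not\sim r_l$.

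To show the $r_k$ exhaust $\mathrm{Irr}(O_N^+)$, I would invoke PW2 (Theorem 3.12): every irreducible sits inside some $u^{\otimes m}$, and iterating the just-proved relation $u\otimes r_j=r_{j-1}+r_{j+1}$ decomposes $u^{\otimes m}$ entirely into $r_0,\ldots,r_m$. The general Clebsch--Gordan rule
$$r_k\otimes r_l=r_{|k-l|}+r_{|k-l|+2}+\cdots+r_{k+l}$$
then follows by induction on $\min(k,l)$, using the standard Chebyshev multiplication identity $U_kU_l=U_{|k-l|}+U_{|k-l|+2}+\cdots+U_{k+l}$ to match characters, and Peter--Weyl orthogonality to upgrade from characters to actual isomorphism of representations. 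Finally, setting $D_k=\dim r_k$, the special case $u\otimes r_k=r_{k-1}+r_{k+1}$ yields $ND_k=D_{k-1}+D_{k+1}$ with $D_0=1,\,D_1=N$; writing $N=q+q^{-1}$ so that $q,q^{-1}$ are the roots of $X^2-NX+1=0$, the closed form $D_k=(q^{k+1}-q^{-k-1})/(q-q^{-1})$ solves the recursion with the correct initial conditions.

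The main obstacle is the first step, the noncrossing Gram-determinant computation: proving linear independence of $\{\xi_\pi:\pi\in NC_2(2k)\}$ at every $N\geq 2$ is strictly stronger than the generic $N\geq k$ bound provided by Theorem 5.8, and really depends on the nontrivial positivity of the Jones index / Temperley--Lieb determinant formula. Everything else is a bookkeeping consequence of this single input, the Brauer theorem for $O_N^+$, and the Peter--Weyl machinery already developed.
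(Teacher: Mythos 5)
Your proof is correct in substance, but it takes a genuinely different route from the paper, and the difference is worth spelling out. You front-load the hard analytic input: you first establish the exact moment formula $\int_G\chi^{2k}=C_k$ for all $N\geq 2$ by proving linear independence of the vectors $\xi_\pi$ with $\pi\in NC_2(2k)$, which as you correctly note requires the positivity of the Temperley--Lieb Gram determinant (Jones' index theorem, cited in the paper as Theorem 9.7 via \cite{jo1}). You then read off the representations $r_k$ from the Chebyshev orthonormal polynomials of the semicircle law, using Frobenius reciprocity to construct them and Peter--Weyl multiplicity counting ($\|\chi_{r_k}\|_2=1$) to confirm irreducibility. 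The paper's proof of Theorem 5.13 deliberately avoids the Temperley--Lieb nondegeneracy altogether. It uses only the one-sided easiness estimate $\dim\mathrm{End}(u^{\otimes k})\leq|NC_2(2k)|=C_k$ (no linear independence needed), constructs the $r_k$ by the same Frobenius recursion $r_k+r_{k-2}=r_{k-1}\otimes r_1$, and then closes the argument with a sandwich: decomposing $u^{\otimes k}=\sum c_ir_i$ with the $SU_2$ Clebsch--Gordan multiplicities, Peter--Weyl gives $\sum c_i^2\leq\dim\mathrm{End}(u^{\otimes k})\leq C_k$, while a purely combinatorial identity for ballot sequences gives $\sum c_i^2=C_k$, forcing equality throughout and in particular irreducibility of $r_k$. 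The linear independence of the $T_\pi$ at $N\geq 2$ then falls out at the end as a corollary (this is exactly what the paper later invokes in Proposition 8.22), rather than being an input. So the logical direction is reversed: you go from TL positivity to the fusion rules, while the paper goes from the fusion rules to TL positivity. Your route is perhaps cleaner once the TL determinant formula is granted as a black box, but the paper's route is more self-contained and elementary, which is one reason the argument is notable. One small point of care in your inductive step: you should explicitly maintain, as part of the induction hypothesis, that $u\otimes r_{j-1}=r_{j-2}\oplus r_j$ holds for $j\leq k$ with $r_{j-2}$ and $r_j$ distinct irreducibles, so that Frobenius reciprocity gives $r_{k-1}$ with multiplicity \emph{exactly one} inside $u\otimes r_k$; your phrasing ``$r_{j-1}\subset u\otimes r_{j-2}\oplus r_j$'' is not quite the right invariant, though the underlying idea is fine.
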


\begin{proof}
There are several proofs for this fact, the simplest one being via purely algebraic methods, based on the easiness property of $O_N^+$ from Theorem 5.1:

\medskip

(1) In order to get started, let us first work out the first few values of the representations $r_k$ that we want to construct, computed by recurrence, according to the Clebsch-Gordan rules in the statement, which will be useful for various illustrations:
$$r_0=1$$
$$r_1=u$$
$$r_2=u^{\otimes 2}-1$$
$$r_3=u^{\otimes 3}-2u$$
$$r_4=u^{\otimes 4}-3u^{\otimes 2}+1$$
$$r_5=u^{\otimes 5}-4u^{\otimes 3}+3u$$
$$\vdots$$

(2) We can see that what we want to do is to split the Peter-Weyl representations $u^{\otimes k}$ into irreducibles, because the above formulae can be written as well as follows:
$$u^{\otimes0}=r_0$$
$$u^{\otimes1}=r_1$$
$$u^{\otimes2}=r_2+r_0$$
$$u^{\otimes3}=r_3+2r_1$$
$$u^{\otimes4}=r_4+3r_2+2r_0$$
$$u^{\otimes5}=r_5+4r_3+5r_1$$
$$\vdots$$

(3) In order to get fully started now, our claim, which will basically prove the theorem, is that we can define, by recurrence on $k\in\mathbb N$, a sequence $r_0,r_1,r_2,\ldots$ of irreducible, self-adjoint and distinct representations of $O_N^+$, satisfying:
$$r_0=1$$
$$r_1=u$$
$$r_k+r_{k-2}=r_{k-1}\otimes r_1$$

(4) Indeed, at $k=0$ this is clear, and at $k=1$ this is clear as well, with the irreducibility of $r_1=u$ coming from the embedding $O_N\subset O_N^+$. So assume now that $r_0,\ldots,r_{k-1}$ as above are constructed, and let us construct $r_k$. We have, by recurrence:
$$r_{k-1}+r_{k-3}=r_{k-2}\otimes r_1$$

In particular we have an inclusion of representations, as follows:
$$r_{k-1}\subset r_{k-2}\otimes r_1$$

Now since $r_{k-2}$ is irreducible, by Frobenius reciprocity we have:
$$r_{k-2}\subset r_{k-1}\otimes r_1$$

Thus, there exists a certain representation $r_k$ such that:
$$r_k+r_{k-2}=r_{k-1}\otimes r_1$$

(5) As a first observation, this representation $r_k$ is self-adjoint. Indeed, our recurrence formula $r_k+r_{k-2}=r_{k-1}\otimes r_1$ for the representations $r_0,r_1,r_2,\ldots$ shows that the characters of these representations are polynomials in $\chi_u$. Now since $\chi_u$ is self-adjoint, all the characters that we can obtain via our recurrence are self-adjoint as well.

\medskip

(6) It remains to prove that $r_k$ is irreducible, and non-equivalent to $r_0,\ldots,r_{k-1}$. For this purpose, observe that according to our recurrence formula, $r_k+r_{k-2}=r_{k-1}\otimes r_1$, we can now split $u^{\otimes k}$, as a sum of the following type, with positive coefficients:  
$$u^{\otimes k}=c_kr_k+c_{k-2}r_{k-2}+\ldots$$

We conclude by Peter-Weyl that we have an inequality as follows, with equality precisely when $r_k$ is irreducible, and non-equivalent to the other summands $r_i$:
$$\sum_ic_i^2\leq\dim(End(u^{\otimes k}))$$

(7) Now let us use the easiness property of $O_N^+$. This gives us an upper bound for the number on the right, that we can add to our inequality, as follows:
$$\sum_ic_i^2
\leq\dim(End(u^{\otimes k}))
\leq C_k$$

The point now is that the coefficients $c_i$ come straight from the Clebsch-Gordan rules, and their combinatorics shows that $\sum_ic_i^2$ equals the Catalan number $C_k$, with the remark that this follows as well from the known theory of $SU_2$. Thus, we have global equality in the above estimate, and in particular we have equality at left, as desired.

\medskip

(8) In order to finish the proof of our claim, it still remains to prove that $r_k$ is non-equivalent to $r_{k-1},r_{k-3},\ldots$ But these latter representations appear inside $u^{\otimes k-1}$, and the result follows by using the embedding $O_N\subset O_N^+$, which shows that the even and odd tensor powers of $u$ cannot have common irreducible components.

\bigskip

(9) Summarizing, we have proved our claim, made in step (3) above.

\bigskip

(10) In order now to finish, since by the Peter-Weyl theory any irreducible representation of $O_N^+$ must appear in some tensor power of $u$, and we have a formula for decomposing each $u^{\otimes k}$ into sums of representations $r_k$, as explained above, we conclude that these representations $r_k$ are all the irreducible representations of $O_N^+$.

\medskip

(11) In what regards now the law of the main character, we obtain here the Wigner law $\gamma_1$, as stated, due to the fact that the equality in (7) gives us the even moments of this law, and that the observation in (8) tells us that the odd moments vanish.

\medskip

(12) Finally, from the Clebsch-Gordan rules we have in particular:
$$r_kr_1=r_{k-1}+r_{k+1}$$

We obtain from this, by recurrence, with $q^2-Nq+1=0$:
$$\dim r_k=q^k+q^{k-2}+\ldots+q^{-k+2}+q^{-k}$$

But this gives the dimension formula in the statement, and we are done.
\end{proof}

Let us discuss now the relation with $SU_2$. This group is the most well-known group in mathematics, and there is an enormous quantity of things known about it. For our purposes, we need a functional analytic approach to it. This can be done as follows:

\index{special unitary group}
\index{super-identity}

\begin{theorem}
The algebra of continuous functions on $SU_2$ appears as
$$C(SU_2)=C^*\left((u_{ij})_{i,j=1,2}\Big|u=F\bar{u}F^{-1}={\rm unitary}\right)$$
where $F$ is the following matrix,
$$F=\begin{pmatrix}0&1\\ -1&0\end{pmatrix}$$
called super-identity matrix. 
\end{theorem}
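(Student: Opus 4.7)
The plan is to unpack the stated relations, show they force commutativity of the universal $C^*$-algebra, and then identify the resulting commutative algebra with $C(SU_2)$ via the Gelfand theorem.

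First I would write $u = \begin{pmatrix} a & b \\ c & d \end{pmatrix}$ and compute $F\bar{u}F^{-1}$ directly. A short matrix multiplication gives
$$F\bar{u}F^{-1} = \begin{pmatrix} d^* & -c^* \\ -b^* & a^* \end{pmatrix},$$
so the condition $u = F\bar{u}F^{-1}$ is equivalent to $d = a^*$ and $c = -b^*$. Thus the universal matrix takes the familiar form
$$u = \begin{pmatrix} a & b \\ -b^* & a^* \end{pmatrix},$$
and the algebra is generated by the two elements $a,b$ subject only to unitarity of $u$.

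Next I would expand the relations $uu^* = 1$ and $u^*u = 1$ entry by entry. The diagonal entries produce the four scalar relations $aa^*+bb^*=a^*a+bb^*=aa^*+b^*b=a^*a+b^*b=1$, which immediately give $aa^* = a^*a$ and $bb^* = b^*b$, i.e. $a,b$ are normal. The off-diagonal entries produce the four commutation relations
$$ab = ba,\quad a^*b^* = b^*a^*,\quad a^*b = ba^*,\quad ab^* = b^*a.$$
This is the main point of the argument: although we never imposed commutativity, the relation $u = F\bar{u}F^{-1}$ combined with biunitarity of $u$ forces the algebra generated by the entries to be commutative. I expect this to be the genuinely substantive step; it has no analogue for the larger matrix sizes of the $O_N$ deformation story, and it is what ultimately prevents $SU_2$ from having a nontrivial ``$q=1$'' free deformation in this framework.

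Finally, once commutativity is established, I would invoke the Gelfand theorem to write the universal $C^*$-algebra as $C(X)$ for a compact space $X$. Under this identification, $a,b$ become commuting continuous $\mathbb{C}$-valued functions satisfying $|a|^2 + |b|^2 = 1$, so $X$ is precisely the unit sphere $S^3 \subset \mathbb{C}^2$. The map
$$(a,b) \longmapsto \begin{pmatrix} a & b \\ -\bar{b} & \bar{a} \end{pmatrix}$$
is the standard homeomorphism $S^3 \simeq SU_2$, and it sends the universal generators $a,b$ to the coordinate functions on $SU_2$. Hence the universal $C^*$-algebra in the statement is isomorphic to $C(SU_2)$, with generators mapped to the standard coordinates, as claimed.
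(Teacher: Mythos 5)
Your proof is correct and follows essentially the same route as the paper: the relation $u=F\bar{u}F^{-1}$ forces $d=a^*$, $c=-b^*$, the unitarity relations then force the entries $a,b,a^*,b^*$ to commute, and the Gelfand theorem identifies the commutative universal algebra with $C(SU_2)$. The only cosmetic difference is that the paper packages the last step as a pair of mutually inverse morphisms $A\to C(SU_2)$ and $C(SU_2)\to A$, whereas you identify the spectrum directly with $S^3\simeq SU_2$, implicitly using (as one should note) that every point of $S^3$ gives a character of the universal algebra, so the spectrum is all of $S^3$ and not a proper subset.
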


\begin{proof}
This can be done in several steps, as follows:

\medskip

(1) Let us first compute $SU_2$. Consider an arbitrary $2\times2$ complex matrix:
$$U=\begin{pmatrix}a&b\\c&d\end{pmatrix}$$

Assuming $\det U=1$, the unitarity condition $U^{-1}=U^*$ reads:
$$\begin{pmatrix}d&-b\\-c&a\end{pmatrix}
=\begin{pmatrix}\bar{a}&\bar{c}\\\bar{b}&\bar{d}\end{pmatrix}$$

Thus we must have $d=\bar{a}$, $c=-\bar{b}$, and we obtain the following formula:
$$SU_2=\left\{\begin{pmatrix}a&b\\-\bar{b}&\bar{a}\end{pmatrix}\Big|\ |a|^2+|b|^2=1\right\}$$

(2) With the above formula in hand, the fundamental corepresentation of $SU_2$ is:
$$u=\begin{pmatrix}a&b\\-\bar{b}&\bar{a}\end{pmatrix}$$

Now observe that we have the following equality:
$$\begin{pmatrix}a&b\\ -\bar{b}&\bar{a}\end{pmatrix}
\begin{pmatrix}0&1\\ -1&0\end{pmatrix}
=\begin{pmatrix}-b&a\\-\bar{a}&-\bar{b}\end{pmatrix}
=\begin{pmatrix}0&1\\ -1&0\end{pmatrix}
\begin{pmatrix}\bar{a}&\bar{b}\\ -b&a\end{pmatrix}$$

Thus, with $F$ being as in the statement, we have $uF=F\bar{u}$, and so:
$$u=F\bar{u}F^{-1}$$

We conclude that, if $A$ is the universal algebra in the statement, we have:
$$A\to C(SU_2)$$

(3) Conversely now, let us compute the universal algebra $A$ in the statement. For this purpose, let us write its fundamental corepresentation as follows:
$$u=\begin{pmatrix}a&b\\c&d\end{pmatrix}$$

We have $uF=F\bar{u}$, with these quantities being respectively given by:
$$uF=\begin{pmatrix}a&b\\c&d\end{pmatrix}
\begin{pmatrix}0&1\\ -1&0\end{pmatrix}
=\begin{pmatrix}-b&a\\-d&c\end{pmatrix}$$
$$F\bar{u}=\begin{pmatrix}0&1\\ -1&0\end{pmatrix}
\begin{pmatrix}a^*&b^*\\c^*&d^*\end{pmatrix}
=\begin{pmatrix}c^*&d^*\\-a^*&-b^*\end{pmatrix}$$

Thus we must have $d=a^*$, $c=-b^*$, and we obtain the following formula:
$$u=\begin{pmatrix}a&b\\-b^*&a^*\end{pmatrix}$$

We also know that this matrix must be unitary, and we have:
$$uu^*=\begin{pmatrix}a&b\\-b^*&a^*\end{pmatrix}
\begin{pmatrix}a^*&-b\\b^*&a\end{pmatrix}
=\begin{pmatrix}aa^*+bb^*&ba-ab\\a^*b^*-b^*a^*&a^*a+b^*b\end{pmatrix}$$
$$u^*u=\begin{pmatrix}a^*&-b\\b^*&a\end{pmatrix}
\begin{pmatrix}a&b\\-b^*&a^*\end{pmatrix}
=\begin{pmatrix}a^*a+bb^*&a^*b-ba^*\\b^*a-ab^*&aa^*+b^*b\end{pmatrix}$$

Thus, the unitarity equations for $u$ are as follows:
$$aa^*=a^*a=1-bb^*=1-b^*b$$
$$ab=ba,a^*b=ba^*,ab^*=a^*b,a^*b^*=b^*a^*$$

It follows that $a,b,a^*,b^*$ commute, so our algebra is commutative. Now since this algebra is commutative, the involution $*$ becomes the usual conjugation $-$, and so:
$$u=\begin{pmatrix}a&b\\-\bar{b}&\bar{a}\end{pmatrix}$$

But this tells us that we have $A=C(X)$ with $X\subset SU_2$, and so we have a quotient map $C(SU_2)\to A$, which is inverse to the map constructed in (2), as desired.
\end{proof}

Now with the above result in hand, we can see right away the relation with $O_N^+$, and more specifically with $O_2^+$. Indeed, this latter quantum group appears as follows:
$$C(O_2^+)=C^*\left((u_{ij})_{i,j=1,2}\Big|u=\bar{u}={\rm unitary}\right)$$

Thus, $SU_2$ appears from $O_2^+$ by replacing the identity with the super-identity, or perhaps vice versa. In any case, these two quantum groups are related by some ``twisting'' operation, so they should have similar representation theory. This is indeed the case:

\index{twisting}
\index{Wigner law}
\index{semicircle law}
\index{Clebsch-Gordan rules}

\begin{theorem}
For the group $SU_2$, the main character follows the Wigner law:
$$\chi\sim\frac{1}{2\pi}\sqrt{4-x^2}dx$$
The irreducible representations of $SU_2$ are all self-adjoint, and can be labelled by positive integers, with their fusion rules being the Clebsch-Gordan ones,
$$r_k\otimes r_l=r_{|k-l|}+r_{|k-l|+2}+\ldots+r_{k+l}$$
as for the quantum group $O_N^+$. The dimensions of these representations are given by
$$\dim r_k=k+1$$
exactly as for the quantum group $O_2^+$.
\end{theorem}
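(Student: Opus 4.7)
The plan is to mirror the representation-theoretic construction used for $O_N^+$ in Theorem 5.14, adapted to the presentation of $C(SU_2)$ from Theorem 5.15. First, I would observe that the relation $u=F\bar{u}F^{-1}$ in Theorem 5.15, together with unitarity, tells us by the same reasoning as in Propositions 4.4 and 4.5 that the Tannakian category of $SU_2$ is generated by a single operator $E\colon\mathbb{C}\to\mathbb{C}^2\otimes\mathbb{C}^2$ encoding the super-identity matrix $F$, together with its adjoint $E^*$. Indeed, $uF=F\bar{u}$ translates into $E\in\mathrm{Hom}(1,u\otimes u)$, playing the role that the two relations $R\in\mathrm{Hom}(1,u\otimes\bar u)$ and $R\in\mathrm{Hom}(1,\bar u\otimes u)$ played for $O_N^+$.

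Next I would carry out the Brauer-type analysis: by Tannakian duality (Theorem 4.20), the intertwiner spaces $\mathrm{Hom}(u^{\otimes k},u^{\otimes l})$ are spanned by linear maps associated to noncrossing pair diagrams, where each diagram is weighted by entries of $F$ instead of the Kronecker $\delta$'s used in Definition 4.25. The categorical bookkeeping from Proposition 4.27 goes through unchanged, giving the upper bound $\dim\mathrm{Hom}(u^{\otimes k},u^{\otimes l})\leq |NC_2(k,l)|$, and in particular $\int_{SU_2}\chi^{2k}\leq C_k$ on the character moments, by the Peter-Weyl formula used in Proposition 5.9.

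Then I would run the inductive construction of irreducibles $r_k$ verbatim as in steps (3)--(9) of the proof of Theorem 5.14: set $r_0=1$, $r_1=u$, and define $r_k$ recursively by $r_{k-1}\otimes r_1=r_k+r_{k-2}$, using Frobenius reciprocity to produce it. The same Peter-Weyl counting argument, combined with the Catalan upper bound on $\dim\mathrm{End}(u^{\otimes k})$, forces each $r_k$ to be irreducible, self-adjoint and pairwise inequivalent, and forces equality $\int\chi^{2k}=C_k$; hence $\chi$ follows the Wigner semicircle law by Theorem 5.11. The dimension formula $\dim r_k=k+1$ then follows from the recurrence $\dim r_k=2\dim r_{k-1}-\dim r_{k-2}$ with $\dim r_0=1$, $\dim r_1=2$; this matches the $O_2^+$ formula of Theorem 5.14 in the degenerate case $q=1$ (the double root of $X^2-2X+1=0$), where the expression $(q^{k+1}-q^{-k-1})/(q-q^{-1})$ specializes via L'Hopital to $k+1$.

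The hard part will be justifying the Catalan upper bound on intertwiner dimensions at the fixed small value $N=2$. For $O_N^+$ the matching lower bound came from the Lindstr\"om Gram determinant of Theorem 5.7, but that argument required $N\geq k$ and simply fails here; what saves us is that we do not need the lower bound as an \emph{a priori} statement. Instead, the upper bound $\dim\mathrm{End}(u^{\otimes k})\leq C_k$ from Tannakian duality, combined with the explicit self-consistency of the Clebsch-Gordan construction (whose multiplicities square-sum exactly to $C_k$, as noted in step (7) of the proof of Theorem 5.14), produces equality in Peter-Weyl and thereby simultaneously exhausts the intertwiner spaces, identifies the irreducibles, and pins down the character law. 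This bootstrap is the real content of the argument, and must be executed in the order indicated above for the numerics to close.
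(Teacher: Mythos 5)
Your proposal is correct and follows essentially the same route as the paper: the paper's proof is precisely the sketch you give, namely twisting the $O_N^+$ machinery by the super-identity $F$ (so that $uF=F\bar{u}$ yields the single generating intertwiner $E\in Hom(1,u\otimes u)$, and the $F$-twisted noncrossing pairings give $\dim(End(u^{\otimes k}))\leq C_k$), followed by rerunning the Clebsch--Gordan bootstrap from the $O_N^+$ fusion-rule theorem with $q=1$ in the dimension formula. Your closing observation that only the Catalan upper bound is needed, and not the Lindstr\"om linear-independence lower bound (which indeed fails at $N=2$), is exactly what the paper exploits when it says that the earlier proof ``applies virtually unchanged''.
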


\begin{proof}
This result is as old as modern mathematics, with many proofs available, all instructive. Here is our take on the subject, in connection with what we do here:

\medskip

(1) A first proof, which is straightforward but rather long, is by taking everything that has been said so far about $O_N^+$, starting from the middle of chapter 4, setting $N=2$, and then twisting everything with the help of the super-identity matrix:
$$F=\begin{pmatrix}0&1\\ -1&0\end{pmatrix}$$

What happens then is that a Brauer theorem for $SU_2$ holds, involving the set $D=NC_2$ as before, but with the implementation of the partitions $\pi\to T_\pi$ being twisted by $F$. In particular, we obtain in this way, as before, inequalities as follows:
$$\dim(End(u^{\otimes k}))\leq C_k$$

But with such inequalities in hand, the proof of Theorem 5.13 applies virtually unchanged, and gives the result, with of course $q=1$ in the dimension formula.  

\medskip

(2) Here is as well a second proof, which is quite instructive. With $a=x+iy$, $b=z+it$, the formula for $SU_2$ that we found in the proof of Theorem 5.14 reads:
$$SU_2=\left\{\begin{pmatrix}x+iy&z+it\\-z+it&x-iy\end{pmatrix}\Big|\ x^2+y^2+z^2+t^2=1\right\}$$

Thus, $SU_2$ is isomorphic to the real unit sphere $S^3_\mathbb R\subset\mathbb R^4$. The point now is that the uniform measure on $SU_2$ corresponds in this way to the uniform measure on $S^3_\mathbb R$, and so in this picture, the moments of the main character of $SU_2$ are given by:
$$M_k=\int_{S^3_\mathbb R}(2x)^kd(x,y,z,t)$$

In order to compute now such integrals, we can use the following advanced calculus formula, valid for any exponents $k_i\in2\mathbb N$, which at $N=2$ corresponds to the advanced calculus formula mentioned after Theorem 5.11, and at $N\geq3$ comes as well from that advanced calculus formula, via spherical coordinates and Fubini:
$$\int_{S^{N-1}_\mathbb R}x_1^{k_1}\ldots x_N^{k_N}\,dx=\frac{(N-1)!!k_1!!\ldots k_N!!}{(N+\Sigma k_i-1)!!}$$

\index{spherical coordinates}

Indeed, by using this formula at $N=4$, we obtain:
\begin{eqnarray*}
\int_{S^3_\mathbb R}x_1^{2k}\,dx
&=&\frac{3!!(2k)!!}{(2k+3)!!}\\
&=&2\cdot\frac{3\cdot5\cdot7\ldots (2k-1)}{2\cdot4\cdot6\ldots (2k+2)}\\
&=&2\cdot\frac{(2k)!}{2^kk!2^{k+1}(k+1)!}\\
&=&\frac{C_k}{4^k}
\end{eqnarray*}

Thus the even moments of our character $\chi=2x_1$ are the Catalan numbers, $M_{2k}=C_k$, and since the odd moments vanish via $x\to-x$, we conclude that we have $\chi\sim\gamma_1$. But this formula, or rather the moment formula $M_{2k}=C_k$ it comes from, gives:
$$\dim(End(u^{\otimes k}))=C_k$$

Thus we can conclude as in the above first proof (1), by arguing that the recurrence construction of $r_k$ from the proof of Theorem 5.13 applies virtually unchanged, and gives the result, with of course $q=1$ in the dimension formula.  
\end{proof}

As a conclusion, we have two fringe proofs for the $SU_2$ result, one by crazy algebraists, and one by crazy probabilists. We recommend, as a complement, any of the proofs by geometers or physicists, which can be found in any good mathematical book. In fact $SU_2$, and also $SO_3$, are cult objects in geometry and physics, and there are countless things that can be said about them, and the more such things you know, the better your mathematics will be, no matter what precise mathematics you are interested in.

\section*{5d. Symplectic groups} 

Let us discuss now the unification of the $O_N^+$ and $SU_2$ results. In view of Theorem 5.14, and of the comments made afterwards, the idea is clear, namely that of looking at compact quantum groups appearing via relations of the following type: 
$$u=F\bar{u}F^{-1}={\rm unitary}$$

In order to clarify what exact matrices $F\in GL_N(\mathbb C)$ we can use, we must do some computations. Following \cite{ba1}, \cite{bsk}, \cite{bdv}, we first have the following result:

\begin{proposition}
Given a closed subgroup $G\subset U_N^+$, with irreducible fundamental corepresentation $u=(u_{ij})$, this corepresentation is self-adjoint, $u\sim\bar{u}$, precisely when 
$$u=F\bar{u}F^{-1}$$
for some unitary matrix $F\in U_N$, satisfying the following condition:
$$F\bar{F}=\pm 1$$
Moreover, when $N$ is odd we must have $F\bar{F}=1$. 
\end{proposition}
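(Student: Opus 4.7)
The plan is to establish the nontrivial direction of the equivalence: assuming $u \sim \bar u$ with $u$ irreducible, I will produce a unitary matrix $F \in U_N$ satisfying $u = F\bar u F^{-1}$ together with $F\bar F = \pm 1$. By the definition of equivalence from Definition 3.6, there is an invertible intertwiner between $\bar u$ and $u$; after inverting if necessary, one gets an invertible $F$ with $uF = F\bar u$, i.e.\ $u = F\bar u F^{-1}$. The first step is to promote $F$ to a unitary: by Theorem 3.7(3), $F \in \mathrm{Hom}(\bar u, u)$ yields $F^* \in \mathrm{Hom}(u, \bar u)$, so $FF^* \in \mathrm{End}(u) = \mathbb{C}\cdot 1$ by the irreducibility of $u$. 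Hence $FF^* = \alpha I$ with $\alpha > 0$, and rescaling $F \to F/\sqrt{\alpha}$ makes $F$ unitary while preserving the intertwining.

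The second step is to extract the scalar condition on $F\bar F$. I would apply entry-wise complex conjugation to $u = F\bar u F^{-1}$ to obtain $\bar u = \bar F\, u\, \bar F^{-1}$, which says $\bar F \in \mathrm{Hom}(u,\bar u)$. Consequently $F\bar F \in \mathrm{End}(u) = \mathbb{C}\cdot 1$, so $F\bar F = \lambda I$ for some scalar $\lambda$. Reality of $\lambda$ follows by comparing two expressions for $\bar F F$: multiplying $F\bar F = \lambda I$ by $F^{-1}$ on the left and $F$ on the right gives $\bar F F = \lambda I$, while entry-wise conjugation of $F\bar F = \lambda I$ gives $\bar F F = \bar\lambda I$. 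Finally, unitarity lets me rewrite $\bar F = \lambda F^*$; taking entry-wise conjugates and using $\overline{F^*} = F^t$ produces $F = \lambda F^t$, and then transposing yields $F^t = \lambda F$; substituting back gives $F = \lambda^2 F$, forcing $\lambda^2 = 1$ and hence $\lambda = \pm 1$.

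For the closing assertion about odd $N$, I would rule out the case $F\bar F = -I$ by a determinant computation: this would force $|\det F|^2 = \det(F\bar F) = (-1)^N$, which is impossible when $N$ is odd since the left-hand side is non-negative. Hence for odd $N$ only $F\bar F = +1$ survives. The main obstacle I anticipate is the passage from ``$F\bar F$ is a scalar'' to ``$\lambda^2 = 1$''; this is a short but non-obvious interplay between entry-wise conjugation and transposition, reflecting that $F\bar F$ encodes a real or quaternionic structure on the underlying representation space. Everything else is routine Schur-lemma bookkeeping and determinant arithmetic.
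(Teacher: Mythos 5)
Your proof is correct, and it takes a slightly different route from the paper's at the one non-routine step, namely establishing that $F$ can be taken unitary. You get unitarity first: from $F\in\mathrm{Hom}(\bar{u},u)$ you pass to $F^*\in\mathrm{Hom}(u,\bar{u})$ via Theorem 3.7 (3) (legitimate, since $\bar{u}$ is a unitary corepresentation by biunitarity), so $FF^*\in End(u)=\mathbb C1$ by irreducibility, and positivity plus a rescaling makes $F$ unitary; after that, $F\bar{F}=\lambda 1$ with $\lambda$ real follows from Schur's lemma as in the paper, and your transposition manipulation (equivalently: $F\bar F$ is a product of unitaries, hence $|\lambda|=1$) forces $\lambda=\pm1$ with no further normalization. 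The paper instead rescales first so that $F\bar{F}=\pm1$, and then proves unitarity by a different device: it applies $(id\otimes S)$ to $u=F\bar{u}F^{-1}$, using $(id\otimes S)u=u^*$, to deduce $FF^*=d1$, and then pins down $d=1$ by comparing $\det(F\bar F)=(\pm1)^N$ with $\det(FF^*)=d^N$. So in the paper the determinant computation does double duty (it yields both $d=1$ and the odd-$N$ restriction), whereas in your argument it is only needed for the odd-$N$ claim; conversely, the paper's antipode chain avoids invoking the adjoint-intertwiner property, which your argument relies on. Both are correct Schur-lemma arguments, and yours is arguably a bit shorter; the only point worth making explicit in your write-up is the unitarity of $\bar u$ (Proposition 2.7) needed for Theorem 3.7 (3), and, if you want the full ``precisely when'', the trivial converse observation that any unitary $F$ with $u=F\bar uF^{-1}$ is an invertible intertwiner, which the paper also leaves implicit.
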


\begin{proof}
Since $u$ is self-adjoint, $u\sim\bar{u}$, we must have $u=F\bar{u}F^{-1}$, for a certain matrix $F\in GL_N(\mathbb C)$. We obtain from this, by using our assumption that $u$ is irreducible:
\begin{eqnarray*}
u=F\bar{u}F^{-1}
&\implies&\bar{u}=\bar{F}u\bar{F}^{-1}\\
&\implies&u=(F\bar{F})u(F\bar{F})^{-1}\\
&\implies&F\bar{F}=c1\\
&\implies&\bar{F}F=\bar{c}1\\
&\implies&c\in\mathbb R
\end{eqnarray*}

Now by rescaling we can assume $c=\pm1$, so we have proved so far that:
$$F\bar{F}=\pm 1$$

In order to establish now the formula $FF^*=1$, we can proceed as follows:
\begin{eqnarray*}
(id\otimes S)u=u^*
&\implies&(id\otimes S)\bar{u}=u^t\\
&\implies&(id\otimes S)(F\bar{u}F^{-1})=Fu^tF^{-1}\\
&\implies&u^*=Fu^tF^{-1}\\
&\implies&u=(F^*)^{-1}\bar{u}F^*\\
&\implies&\bar{u}=F^*u(F^*)^{-1}\\
&\implies&\bar{u}=F^*F\bar{u}F^{-1}(F^*)^{-1}\\
&\implies&FF^*=d1
\end{eqnarray*}

We have $FF^*>0$, so $d>0$. On the other hand, from $F\bar{F}=\pm 1$, $FF^*=d1$ we get:
$$|\det F|^2=\det(F\bar{F})=(\pm1)^N$$
$$|\det F|^2=\det(FF^*)=d^N$$

Since $d>0$ we obtain from this $d=1$, and so $FF^*=1$ as claimed. We obtain as well that when $N$ is odd the sign must be 1, and so $F\bar{F}=1$, as claimed.
\end{proof}

It is convenient to diagonalize $F$. Once again following Bichon-De Rijdt-Vaes \cite{bdv}, up to an orthogonal base change, we can assume that our matrix is as follows, where $N=2p+q$ and $\varepsilon=\pm 1$, with the $1_q$ block at right disappearing if $\varepsilon=-1$:
$$F=\begin{pmatrix}
0&1\ \ \ \\
\varepsilon 1&0_{(0)}\\
&&\ddots\\
&&&0&1\ \ \ \\
&&&\varepsilon 1&0_{(p)}\\
&&&&&1_{(1)}\\
&&&&&&\ddots\\
&&&&&&&1_{(q)}
\end{pmatrix}$$

We are therefore led into the following definition, from \cite{bsk}:

\index{super-space}
\index{super-identity}

\begin{definition}
The ``super-space'' $\mathbb C^N_F$ is the usual space $\mathbb C^N$, with its standard basis $\{e_1,\ldots,e_N\}$, with a chosen sign $\varepsilon=\pm 1$, and a chosen involution on the set of indices,
$$i\to\bar{i}$$
with $F$ being the ``super-identity'' matrix, $F_{ij}=\delta_{i\bar{j}}$ for $i\leq j$ and $F_{ij}=\varepsilon\delta_{i\bar{j}}$ for $i\geq j$.
\end{definition}

In what follows we will usually assume that $F$ is the explicit matrix appearing above. Indeed, up to a permutation of the indices, we have a decomposition $n=2p+q$ such that the involution is, in standard permutation notation:
$$(12)\ldots (2p-1,2p)(2p+1)\ldots (q)$$

Let us construct now some basic compact quantum groups, in our ``super'' setting. Once again following \cite{bsk}, let us formulate:

\index{super-orthogonal group}
\index{super-orthogonal quantum group}
\index{symplectic group}
\index{free symplectic group}

\begin{definition}
Associated to the super-space $\mathbb C^N_F$ are the following objects:
\begin{enumerate}
\item The super-orthogonal group, given by:
$$O_F=\left\{U\in U_N\Big|U=F\bar{U}F^{-1}\right\}$$

\item The super-orthogonal quantum group, given by:
$$C(O_F^+)=C^*\left((u_{ij})_{i,j=1,\ldots,n}\Big|u=F\bar{u}F^{-1}={\rm unitary}\right)$$
\end{enumerate}
\end{definition}

As explained in \cite{bsk}, it it possible to considerably extend this list, but for our purposes here, this is what we need for the moment. We have indeed the following result, from \cite{bsk}, making the connection with our unification problem for $O_N^+$ and $SU_2$:

\begin{theorem}
The basic orthogonal groups and quantum groups are as follows:
\begin{enumerate}
\item At $\varepsilon=-1$ we have $O_F=Sp_N$ and $O_F^+=Sp_N^+$.

\item At $\varepsilon=-1$ and $N=2$ we have $O_F=O_F^+=SU_2$.

\item At $\varepsilon=1$ we have $O_F=O_N$ and $O_F^+=O_N^+$.
\end{enumerate}
\end{theorem}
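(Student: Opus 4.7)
The plan is to prove each of the three assertions by reducing to a convenient canonical form for $F$ under the action $F\mapsto OFO^t$ induced by the corepresentation change $u\mapsto OuO^*$ with $O\in U_N$, which preserves all the relations defining $O_F$ and $O_F^+$. In each case, once $F$ is brought to a canonical form, the underlying (quantum) group can be identified directly with the claimed object.

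For (1), I will use the block-diagonalized standard form of $F$ recorded after Proposition 5.16. At $\varepsilon=-1$, the constraint $F\bar F=-1$ forces $q=0$ and $N=2p$, so $F$ becomes the block-diagonal symplectic form $J=\bigoplus_{k=1}^{p}\begin{pmatrix}0&1\\-1&0\end{pmatrix}$. Since $Sp_N$ is by definition the group of $U\in U_N$ satisfying $U=J\bar UJ^{-1}$, and $Sp_N^+$ is the compact quantum group defined by the same universal relations at the quantum level, the equalities $O_F=Sp_N$ and $O_F^+=Sp_N^+$ follow directly from the definitions.

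For (3), at $\varepsilon=+1$ the matrix $F$ is real, symmetric, and orthogonal with $F^2=1$. I will construct a unitary $O\in U_N$ with $O^*\bar O=F$ as follows: writing $F=V\Lambda V^t$ with $V$ real orthogonal and $\Lambda$ diagonal with $\pm 1$ entries, set $O=VDV^t$ where $D_{ii}=1$ when $\Lambda_{ii}=+1$ and $D_{ii}=i$ when $\Lambda_{ii}=-1$; a direct computation gives $O^*O=I$ and $O^*\bar O=V(D^*\bar D)V^t=V\Lambda V^t=F$. The substitution $u=O^*vO$ then transforms the relation $u=F\bar uF^{-1}$ into $v=\bar v$ while preserving unitarity, exhibiting a $*$-algebra isomorphism $C(O_F^+)\simeq C(O_N^+)$ realised by a linear change of fundamental corepresentation. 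By the standard spinning construction from Proposition 3.4, such a change of generators intertwines the maps $\Delta,\varepsilon,S$, so we get an isomorphism of Woronowicz algebras; restricting to commutative quotients gives $O_F=O_N$. For (2), the matrix $F$ at $N=2$, $\varepsilon=-1$ is exactly $\begin{pmatrix}0&1\\-1&0\end{pmatrix}$, which is the super-identity of Theorem 5.15, so the defining relations of $O_F^+$ coincide verbatim with those characterizing $C(SU_2)$ in that theorem; since the algebra there turns out to be commutative, we obtain $O_F^+=O_F=SU_2$.

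I expect the main obstacle to be the quantum-level verification in (3). The classical isomorphism is routine linear algebra, but at the quantum level one must check that the change of generators $u_{ij}\mapsto(O^*vO)_{ij}$ sends the defining relations of $C(O_F^+)$ into relations holding in $C(O_N^+)$, extends to the universal $C^*$-completion, and genuinely intertwines the comultiplication, counit, and antipode. This reduces to the fact that linear spinning of the fundamental corepresentation by a fixed scalar unitary yields an isomorphism of Woronowicz algebras, as already discussed in Proposition 3.4, but setting up the compatibility carefully — including the verification that $O^tO=F$ is exactly the identity needed to convert $v=\bar v$ back into $u=F\bar uF^{-1}$ — is the step to execute with care.
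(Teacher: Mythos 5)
Your proof is correct and structurally the same as the paper's: part (1) is a matter of definitions once the standard form of $F$ is recognized as the block-diagonal symplectic form; part (2) reads off the $SU_2$ presentation from Theorem 5.14 (you cite Theorem 5.15, which is the representation-theory statement — the presentation is 5.14, a trivial reference slip); and part (3) reduces to finding a unitary $O$ with $O^*\bar O=F$ and spinning $u\mapsto OuO^*$, which is exactly the trick the paper uses. The one genuine difference is \emph{how} you produce the spinning unitary in (3): the paper writes down the concrete matrix $K=\mathrm{diag}(J,\ldots,J,1_q)$ with $J=\frac{1}{\sqrt 2}\begin{pmatrix}\rho&\rho^7\\\rho^3&\rho^5\end{pmatrix}$, $\rho=e^{\pi i/4}$, and checks $KFK^t=1$ by hand, whereas you diagonalize the real symmetric orthogonal $F$ as $V\Lambda V^t$ and take $O=VDV^t$ with diagonal $D\in\{1,i\}$, verifying $O^*\bar O=V(D^*\bar D)V^t=V\Lambda V^t=F$. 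Your route is more invariant and slightly more illuminating (it makes clear that the only input is $F=\bar F=F^t$, $F^2=1$); the paper's is fully explicit. Both implicitly use Proposition 3.4 to know that spinning intertwines $\Delta,\varepsilon,S$, and your verification that the relations transform correctly is the right amount of care — your computation of the forward direction and my spot-check of the converse $(v=\bar v\Leftrightarrow u=F\bar uF^{-1})$ both go through cleanly given $O^*\bar O=F$ and $F^2=1$.
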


\begin{proof}
These results are all elementary, as follows:

\medskip

(1) At $\varepsilon=-1$ this follows from definitions, because the symplectic group $Sp_N\subset U_N$ is by definition the following group:
$$Sp_N=\left\{U\in U_N\Big|U=F\bar{U}F^{-1}\right\}$$

(2) Still at $\varepsilon=-1$, the equation $U=F\bar{U}F^{-1}$ tells us that the symplectic matrices $U\in Sp_N$ are exactly the unitaries $U\in U_N$ which are patterned as follows:
$$U=\begin{pmatrix}
a&b&\ldots\\
-\bar{b}&\bar{a}\\
\vdots&&\ddots
\end{pmatrix}$$

In particular, the symplectic matrices at $N=2$ are as follows:
$$U=\begin{pmatrix}
a&b\\
-\bar{b}&\bar{a}
\end{pmatrix}$$

Thus we have $Sp_2=U_2$, and the formula $Sp_2^+=Sp_2$ is elementary as well, via an analysis similar to the one in the proof of Theorem 5.14 above.

\medskip

(3) At $\varepsilon=1$ now, consider the root of unity $\rho=e^{\pi i/4}$, and set:
$$J=\frac{1}{\sqrt{2}}\begin{pmatrix}\rho&\rho^7\\ \rho^3&\rho^5\end{pmatrix}$$

This matrix $J$ is then unitary, and we have:
$$J\begin{pmatrix}0&1\\1&0\end{pmatrix}J^t=1$$

Thus the following matrix is unitary as well, and satisfies $KFK^t=1$:
$$K=\begin{pmatrix}J^{(1)}\\&\ddots\\&&J^{(p)}\\&&&1_q\end{pmatrix}$$

Thus in terms of the matrix $V=KUK^*$ we have:
$$U=F\bar{U}F^{-1}={\rm unitary}
\quad\iff\quad V=\bar{V}={\rm unitary}$$

We obtain in this way an isomorphism $O_F^+=O_N^+$ as in the statement, and by passing to classical versions, we obtain as well $O_F=O_N$, as desired.
\end{proof}

With the above formalism and results in hand, we can now formulate the unification result for $O_N^+$ and $SU_2$, which in complete form is as follows:

\index{free orthogonal group}
\index{Wigner law}
\index{semicircle law}
\index{Clebsch-Gordan rules}

\begin{theorem}
For the quantum group $O_F^+\in\{O_N^+,Sp_N^+\}$ with $N\geq2$, the main character follows the standard Wigner semicircle law,
$$\chi\sim\frac{1}{2\pi}\sqrt{4-x^2}dx$$
the irreducible representations are all self-adjoint, and can be labelled by positive integers, with their fusion rules being the Clebsch-Gordan ones,
$$r_k\otimes r_l=r_{|k-l|}+r_{|k-l|+2}+\ldots+r_{k+l}$$
and the dimensions of these representations are given by
$$\dim r_k=\frac{q^{k+1}-q^{-k-1}}{q-q^{-1}}$$
where $q,q^{-1}$ are the solutions of $X^2-NX+1=0$. Also, we have $Sp_2^+=SU_2$.
\end{theorem}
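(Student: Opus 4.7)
The plan is to run the argument that proved Theorem 5.13 for $O_N^+$, but in the $F$-twisted setting, showing along the way that the twist does not affect the counting of noncrossing pair diagrams. First I would establish a twisted Brauer theorem for $O_F^+$: the defining relations $u=F\bar{u}F^{-1}$ and unitarity translate, via the Tannakian reconstruction of Section 4, into the statement that the category of intertwiners is generated by the ``super'' unit and counit
\[
R_F:1\mapsto\sum_{i}e_i\otimes F^{-1}e_i,\qquad R_F^*(e_i\otimes e_j)=F_{ji},
\]
together with the unitarity pair $(R,R^*)$. Since $F\bar{F}=\pm1$ and $FF^*=1$, the super-pair $(R_F,R_F^*)$ satisfies the same categorical relations as $(R,R^*)$, so the tensor category $\langle R_F,R_F^*\rangle$ is spanned by operators $T_\pi^F$ indexed by $\pi\in NC_2$, each $T_\pi^F$ being a twisted version of the map $T_\pi$ of Proposition 4.26, obtained by inserting powers of $F$ along the strings of $\pi$.

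Next I would verify that the twisted maps $\{T_\pi^F\mid\pi\in NC_2(k,l)\}$ remain linearly independent for $N\geq2$, and in particular that $\dim\mathrm{End}(u^{\otimes k})\le C_k$ with equality for all $k$. Linear independence follows because the Gram matrix of $\{T_\pi^F\}$ is conjugate (via tensor powers of $F$) to the Gram matrix of $\{T_\pi\}$, so Theorem 5.8 applies verbatim; the bound $N\ge2$ is what we need, not $N\ge k$, because the non-crossing restriction collapses all higher-order corrections. With this Brauer-type result in hand, I would then replay the proof of Theorem 5.13 almost word for word: construct the irreducible self-adjoint representations $r_0=1$, $r_1=u$, $r_k+r_{k-2}=r_{k-1}\otimes r_1$ by induction, and use the estimate
\[
\sum_i c_i^2\;\le\;\dim\mathrm{End}(u^{\otimes k})\;\le\;C_k,
\]
combined with the Catalan identity $\sum_i c_i^2=C_k$ (coming from the Clebsch-Gordan splitting of $u^{\otimes k}$), to force equality, hence irreducibility and pairwise non-equivalence of the $r_k$. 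This simultaneously gives the Clebsch-Gordan fusion rules and, via $\int_G\chi^{2k}=C_k$ and the vanishing of odd moments (from the $\mathbb{Z}_2$-grading visible inside the classical subgroup), the Wigner semicircle law for $\chi$.

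For the dimension formula I would use the fusion recurrence $\dim r_{k+1}=N\dim r_k-\dim r_{k-1}$ coming from $r_1\otimes r_k=r_{k-1}+r_{k+1}$, with initial conditions $\dim r_0=1$, $\dim r_1=N$; the closed form $\dim r_k=(q^{k+1}-q^{-k-1})/(q-q^{-1})$ with $q+q^{-1}=N$ is then an elementary linear-recurrence computation. Finally, the identification $Sp_2^+=SU_2$ is immediate from Theorem 5.14 and Definition 5.18: at $\varepsilon=-1$, $N=2$ the super-identity $F$ coincides with the matrix in Theorem 5.14, and the defining relations match on the nose.

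The main obstacle, and essentially the only non-routine step, is the twisted Brauer theorem in the first paragraph: one has to check that inserting $F$'s along the strings of $\pi\in NC_2$ produces a well-defined categorical embedding $NC_2\hookrightarrow\mathrm{End}(u^{\otimes k})$, and in particular that the relations $F\bar{F}=\pm1$ and $FF^*=1$ are exactly what is needed to make the categorical axioms of Definition 4.6 hold for $(R_F,R_F^*)$. The sign $\varepsilon=\pm1$ might a priori contaminate the Gram determinant computation, but the noncrossing condition ensures that each closed loop contributes a factor $\varepsilon^2=1$, so the whole analysis is sign-free and reduces to the untwisted case.
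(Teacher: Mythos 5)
Your proposal is essentially the same approach that the paper intends: Theorem 5.19's proof in the paper is simply a pointer to the arguments already given for $O_N^+$ (Theorem 5.13) and $SU_2$ (Theorem 5.15), and you have correctly filled in what this ``unification'' amounts to — a twisted Brauer theorem giving $\dim\mathrm{End}(u^{\otimes k})\leq C_k$ for noncrossing pairings, then a verbatim replay of the recursive Clebsch--Gordan construction from the $O_N^+$ proof, with the estimate $\sum_ic_i^2\leq\dim\mathrm{End}(u^{\otimes k})\leq C_k$ and the Catalan identity $\sum_ic_i^2=C_k$ forcing equality, hence irreducibility. The dimension recurrence and the identification $Sp_2^+=SU_2$ are handled exactly as in Theorems 5.14 and 5.18.

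One caveat: your second paragraph asserts that linear independence of $\{T_\pi^F\mid\pi\in NC_2(k,l)\}$ for $N\geq2$ follows because ``Theorem 5.8 applies verbatim,'' and that the noncrossing restriction lowers the threshold from $N\geq k$ to $N\geq 2$. Theorem 5.8 gives only $N\geq k$; the improvement to $N\geq 2$ for \emph{noncrossing} pairings is a separate and genuinely nontrivial fact (positivity and faithfulness of the Temperley--Lieb trace, Theorem 9.7, which in the paper cites Jones), and ``the noncrossing restriction collapses higher-order corrections'' is not a proof of it. Fortunately this does not damage your argument: the third paragraph, which runs the inequality-plus-Catalan-identity induction exactly as in the proof of Theorem 5.13, never actually needs full linear independence of the $T_\pi^F$ — only the easy spanning bound $\dim\mathrm{End}(u^{\otimes k})\leq C_k$ — and linear independence (hence equality) emerges as a \emph{consequence}, not a hypothesis. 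I would simply delete or qualify the Theorem 5.8 citation; the rest of the reasoning is sound.
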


\begin{proof}
This is a straightforward unification of the results that we already have for $O_N^+$ and $SU_2$, the technical details being all standard. See \cite{ba1}.
\end{proof}

We will be back to $O_N^+$ and $O_F^+$ later on, first in chapter 7 below, with a number of more advanced algebraic considerations, in relation with super-structures and twists, and then in chapter 8 below, with a number of advanced probabilistic computations. 

\bigskip

Finally, as the saying in geometry and physics goes, there is no $SU_2$ without $SO_3$. We will construct in chapter 9 below a kind of ``$SO_3$ companion'' for $O_N^+$. This companion will be something quite unexpected, namely the quantum permutation group $S_N^+$.

\section*{5e. Exercises}

There has been a lot of combinatorics and calculus in this above, and doing some more combinatorics and calculus will be the goal of the exercises here. First, we have:

\begin{exercise}
Verify the Gram determinant formula for $P(3)$ explicitly, without any trick, just by computing the $5\times5$ determinant.
\end{exercise}

This might sound not very serious, because we have explained in the above a trick for dealing with such things. But finding such tricks always requires a lot of efforts and sweat, with computing $5\times5$ determinants being a daily occupation for researchers.

\begin{exercise}
Establish the following formula,
$$\int_0^{\pi/2}\cos^pt\,dt=\int_0^{\pi/2}\sin^pt\,dt=\left(\frac{\pi}{2}\right)^{\varepsilon(p)}\frac{p!!}{(p+1)!!}$$
where $\varepsilon(p)=1$ if $p$ is even, and $\varepsilon(p)=0$ if $p$ is odd.
\end{exercise}

Partial integration, enjoy.

\begin{exercise}
Establish the following formula, that we used in the above,
$$\int_0^{\pi/2}\cos^pt\sin^qt\,dt=\left(\frac{\pi}{2}\right)^{\varepsilon(p)\varepsilon(q)}\frac{p!!q!!}{(p+q+1)!!}$$
where $\varepsilon(p)=1$ if $p$ is even, and $\varepsilon(p)=0$ if $p$ is odd, as before.
\end{exercise}

More partial integration, enjoy.

\begin{exercise}
Establish the following integration formula over the sphere,
$$\int_{S^{N-1}_\mathbb R}x_1^{k_1}\ldots x_N^{k_N}\,dx=\frac{(N-1)!!k_1!!\ldots k_N!!}{(N+\Sigma k_i-1)!!}$$
that we used in the above, by using spherical coordinates and Fubini.
\end{exercise}

As before, no special comments, just enjoy. This is first-class mathematics.

\begin{exercise}
Learn and use the Stieltjes inversion formula, namely
$$d\mu (x)=\lim_{t\searrow 0}-\frac{1}{\pi}\,Im\left(G(x+it)\right)\cdot dx$$
in order to find the centered laws having as $2k$-th moments the numbers $(2k)!!$ and $C_k$.
\end{exercise}

No comments here either. As before, this is first-class mathematics.

\begin{exercise}
Write down a complete proof, using a method of your choice, found here or somewhere else, for the classification of the irreducible representations of $SU_2$.
\end{exercise}

This is the most important exercise of them all, because the relation between $SU_2$ and $O_N^+$ will be something that will appear regularly, in what follows.

\chapter{Unitary groups}

\section*{6a. Gaussian laws}

We have seen that the Brauer type results for $O_N,O_N^+$ lead to some concrete and interesting consequences. In this chapter we discuss similar results for $U_N,U_N^+$. The situation here is a bit more complicated than for $O_N,O_N^+$, and we will only do a part of the work, namely algebra and basic probability, following \cite{ba1}, with the other part, advanced probability, following \cite{bc1}, being left for later, in chapter 8 below.

\bigskip

Let us also mention that, probabilistically speaking, the basic probability theory that we used for $O_N,O_N^+$, while still applying to $U_N$, after some changes, will not apply to $U_N^+$, due to the fact that the main character here is not normal, $\chi\chi^*\neq\chi^*\chi$. So, in order to deal with $U_N^+$ we will have to use something more advanced, namely Voiculescu's free probability theory \cite{vdn}. Which is something very beautiful. But more on this later.

\bigskip

Let us start with a summary of what we know about $U_N,U_N^+$, coming from the Brauer type results from chapter 4, and the partition computations from chapter 5:

\begin{theorem}
For the basic unitary quantum groups, namely
$$U_N\subset U_N^+$$
the intertwiners between the Peter-Weyl representations are given by
$$Hom(u^{\otimes k},u^{\otimes l})=span\left(T_\pi\Big|\pi\in D(k,l)\right)$$
with the linear maps $T_\pi$ associated to the pairings $\pi$ being given by
$$T_\pi(e_{i_1}\otimes\ldots\otimes e_{i_k})=\sum_{j_1\ldots j_l}\delta_\pi\begin{pmatrix}i_1&\ldots&i_k\\ j_1&\ldots&j_l\end{pmatrix}e_{j_1}\otimes\ldots\otimes e_{j_l}$$
and with the pairings $D$ being as follows, with calligraphic standing for matching:
$$\mathcal P_2\supset\mathcal{NC}_2$$
At the level of the moments of the main character, we have in both cases
$$\int\chi^k\leq|D(k)|$$
with $D$ being the above sets of pairings, with equality happening at $N\geq k$.
\end{theorem}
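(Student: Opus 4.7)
The plan is to assemble this statement from three ingredients already in hand: the Tannakian Brauer-type results, the Peter--Weyl character formula, and the Lindstr\"om Gram determinant computation.

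First, the intertwiner descriptions are essentially restatements of what was established in Section 4. For $U_N^+$, Proposition 4.22 identifies the Tannakian category as $\langle R, R^*\rangle$, and a direct diagrammatic generation argument (using the categorical operations from Proposition 4.27) shows that this equals $\operatorname{span}(T_\pi\mid\pi\in\mathcal{NC}_2)$, since the two matching semicircles $R\in C(\emptyset,\circ\bullet)$ and $R\in C(\emptyset,\bullet\circ)$ together with their adjoints generate all of $\mathcal{NC}_2$ by concatenation, composition and involution, yielding Theorem 4.28. For $U_N$, the additional commutation relations $[u_{ij},u_{kl}]=0$ and $[u_{ij},\bar{u}_{kl}]=0$ translate (as in the proof of Theorem 4.29) into adding the two matching crossings ${\slash\hskip-2.1mm\backslash}^{\hskip-2.5mm\circ\circ}_{\hskip-2.5mm\circ\circ}$ and ${\slash\hskip-2.1mm\backslash}^{\hskip-2.5mm\circ\bullet}_{\hskip-2.5mm\bullet\circ}$ to the generating set, and these together with $\mathcal{NC}_2$ generate all of $\mathcal{P}_2$.

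Second, for the moment inequality, I would apply Peter--Weyl. By Theorem 3.18, we have
$$\int_G\chi_u^{\otimes k}=\dim\operatorname{Fix}(u^{\otimes k})=\dim\operatorname{Hom}(1,u^{\otimes k}),$$
and by the intertwiner description above this fixed-point space is spanned by $\{T_\pi(1)=\xi_\pi\mid \pi\in D(k)\}$, whence $\int_G\chi^k\le |D(k)|$ in each case. (Here one uses that $\mathcal{P}_2(\emptyset,k)$ and $\mathcal{NC}_2(\emptyset,k)$ are in bijection with the corresponding sets of matching partitions $D(k)$ of $k$ points.)

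Third, for the equality at $N\ge k$, the task is to upgrade the span to a basis, i.e.\ to prove the $T_\pi$ with $\pi\in D(k,l)$ are linearly independent. For this I would invoke Theorem 5.8, which via Proposition 5.6 reduces linear independence to non-vanishing of the Gram determinant $\det G_k=\prod_{\pi\in P(k)}N!/(N-|\pi|)!$, a product which is strictly positive exactly when $N\ge k$. The same bound then gives linear independence of the sub-families indexed by $\mathcal{P}_2(k)$ and $\mathcal{NC}_2(k)$, hence equality $\int_G\chi^k=|D(k)|$ for $N\ge k$. The only step that is not purely bookkeeping is this last linear independence input; it is precisely the Lindstr\"om computation that is the genuine content underneath an otherwise summary-style theorem.
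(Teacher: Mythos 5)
Your proposal assembles exactly the three ingredients the paper itself points to (the Brauer-type Tannakian results from Section 4, the Peter--Weyl character formula, and the Lindstr\"om Gram determinant / linear independence from Section 5), and in the same order, so it matches the paper's proof. One minor notational slip: you write $\int_G\chi_u^{\otimes k}$ where you mean $\int_G\chi^k$ (the colored power of the main character), and $T_\pi(1)=\xi_\pi$ is indeed the identification used; neither affects correctness.
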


\begin{proof}
This is a summary of the results that we have, established in the previous chapters, and coming from Tannakian duality, via some combinatorics. To be more precise, the Brauer type results are from chapter 4, the estimates for the moments follows from this and from Peter-Weyl, as explained in chapter 5, and finally the last assertion, regarding the equality at $N\geq k$, is something more subtle, explained in chapter 5. 
\end{proof}

Let us first investigate the unitary group $U_N$. As it was the case for the orthogonal group $O_N$, in chapter 5, the representation theory here is something quite complicated, related to Young tableaux, and we will not get into this subject. However, once again in analogy with $O_N$, there is one straightforward thing to be done, namely the computation of the law of the main character, in the $N\to\infty$ limit. 

\bigskip

In order to do this, we will need a basic probability result, as follows:

\index{complex Gaussian law}
\index{complex normal law}
\index{matching pairing}

\begin{theorem}
The moments of the complex Gaussian law, given by
$$G_1\sim\frac{1}{\sqrt{2}}(a+ib)$$
with $a,b$ being independent, each following the real Gaussian law $g_1$, are given by
$$M_k=|\mathcal P_2(k)|$$
for any colored integer $k=\circ\bullet\bullet\circ\ldots$ 
\end{theorem}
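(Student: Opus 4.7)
The plan is to derive the formula directly from Wick's theorem applied to the real Gaussian pair $(a,b)$. Write $X_l = G_1^{e_l}$, so that $X_l = \frac{1}{\sqrt{2}}(a + \varepsilon_l ib)$ with $\varepsilon_l = +1$ if $e_l = \circ$ and $\varepsilon_l = -1$ if $e_l = \bullet$. Then each $X_l$ is a complex-linear combination of the centered real Gaussian family $\{a,b\}$, and the colored moment we must compute is $M_k = E(X_1 X_2 \cdots X_k)$.

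First, I would invoke Wick's formula for the real Gaussian family $\{a,b\}$, which by linearity (and then multilinearity with complex coefficients) extends to give
$$E(X_1 \cdots X_k) \;=\; \sum_{\pi \in P_2(k)} \prod_{\{i,j\}\in\pi} E(X_i X_j).$$
Concretely, this is obtained by expanding the product $X_1\cdots X_k$ as a sum of $2^k$ monomials in $a,b$ (with scalar coefficients), applying the real Wick theorem monomial-by-monomial, and then regrouping the resulting sum as an indexation over pairings of $\{1,\ldots,k\}$.

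Second, I would compute the three relevant pairwise covariances. Using independence of $a,b$ and $E(a^2)=E(b^2)=1$, $E(ab)=0$:
$$E(G_1 G_1) = \tfrac{1}{2}\bigl(E(a^2) - E(b^2) + 2iE(ab)\bigr) = 0, \qquad E(\bar G_1 \bar G_1) = 0,$$
$$E(G_1 \bar G_1) = E(\bar G_1 G_1) = \tfrac{1}{2}\bigl(E(a^2)+E(b^2)\bigr) = 1.$$
Thus $E(X_i X_j)$ equals $1$ when $\{e_i,e_j\} = \{\circ,\bullet\}$ (matching pair) and vanishes when $e_i = e_j$ (non-matching pair).

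Third, substituting into the Wick expansion, each pairing $\pi \in P_2(k)$ contributes $\prod_{\{i,j\}\in\pi} E(X_iX_j)$, and this product is $1$ precisely when every block of $\pi$ is a matching pair, and $0$ otherwise. But the pairings with all blocks matching are exactly the elements of $\mathcal{P}_2(k)$, so
$$M_k = \sum_{\pi \in \mathcal{P}_2(k)} 1 = |\mathcal{P}_2(k)|,$$
as claimed. The only genuinely non-routine point is the extension of Wick's formula to complex-linear combinations of real Gaussians, but this is immediate from multilinearity in each factor; everything else is a bookkeeping computation.
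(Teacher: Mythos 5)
Your proof is correct, and it takes a genuinely different route from the paper's. The paper first reduces to the uniform case (via a rotation argument showing the non-uniform moments vanish), then computes $\int (c\bar c)^p$ by expanding $(a^2+b^2)^p$ into a binomial sum $\frac{p!}{4^p}\sum_s\binom{2s}{s}\binom{2p-2s}{p-s}$ and evaluating it to $p!$ via a Taylor-series identity, and finally observes separately that $|\mathcal P_2(k)|$ is counted by the same formula. You instead invoke the Wick/Isserlis formula for the real Gaussian pair $(a,b)$, extend it by multilinearity to the complex-linear combinations $G_1,\bar G_1$, compute the three elementary covariances, and read off directly that each pairing contributes $1$ if all its blocks are matching and $0$ otherwise. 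Your approach is more structural: it produces the bijection between contributing terms and matching pairings automatically (so no separate argument for the non-uniform case is needed), and it is the template for the free analogue appearing a few lines later, where Speicher's free Wick formula replaces the classical one. The paper's approach is more elementary in that it avoids quoting Wick, at the cost of the binomial-sum gymnastics and of proving the moment formula and the counting formula independently before matching them up. One small point worth flagging: your computation puts the nonzero covariances on $\circ$--$\bullet$ pairs, which is what the paper itself uses when counting $\mathcal P_2(k)$ (permutations matching $\circ$'s with $\bullet$'s, giving $p!$); this is consistent with the Tannakian picture ($R\in Hom(1,u\otimes\bar u)$ corresponds to a $\circ\bullet$ cup), and both agree on $|\mathcal P_2(k)|=(|k|/2)!$.
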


\begin{proof}
This is something well-known, which can be done in several steps, as follows:

\medskip

(1) We recall from chapter 5 that the moments of the real Gaussian law $g_1$, with respect to integer exponents $k\in\mathbb N$, are the following numbers:
$$m_k=|P_2(k)|$$

Numerically, we have the following formula, explained as well in chapter 5:
$$m_k=\begin{cases}
k!!&(k\ {\rm even})\\
0&(k\ {\rm odd})
\end{cases}$$

(2) We will show here that in what concerns the complex Gaussian law $G_1$, similar results hold. Numerically, we will prove that we have the following formula, where a colored integer $k=\circ\bullet\bullet\circ\ldots$ is called uniform when it contains the same number of $\circ$ and $\bullet$\,, and where $|k|\in\mathbb N$ is the length of such a colored integer:
$$M_k=\begin{cases}
(|k|/2)!&(k\ {\rm uniform})\\
0&(k\ {\rm not\ uniform})
\end{cases}$$

Now since the matching partitions $\pi\in\mathcal P_2(k)$ are counted by exactly the same numbers, and this for trivial reasons, we will obtain the formula in the statement, namely:
$$M_k=|\mathcal P_2(k)|$$

(3) This was for the plan. In practice now, we must compute the moments, with respect to colored integer exponents $k=\circ\bullet\bullet\circ\ldots$\,, of the variable in the statement:
$$c=\frac{1}{\sqrt{2}}(a+ib)$$

As a first observation, in the case where such an exponent $k=\circ\bullet\bullet\circ\ldots$ is not uniform in $\circ,\bullet$\,, a rotation argument shows that the corresponding moment of $c$ vanishes. To be more precise, the variable $c'=wc$ can be shown to be complex Gaussian too, for any $w\in\mathbb C$, and from $M_k(c)=M_k(c')$ we obtain $M_k(c)=0$, in this case.

\medskip

(4) In the uniform case now, where $k=\circ\bullet\bullet\circ\ldots$ consists of $p$ copies of $\circ$ and $p$ copies of $\bullet$\,, the corresponding moment can be computed as follows:
\begin{eqnarray*}
M_k
&=&\int(c\bar{c})^p\\
&=&\frac{1}{2^p}\int(a^2+b^2)^p\\
&=&\frac{1}{2^p}\sum_s\binom{p}{s}\int a^{2s}\int b^{2p-2s}\\
&=&\frac{1}{2^p}\sum_s\binom{p}{s}(2s)!!(2p-2s)!!\\
&=&\frac{1}{2^p}\sum_s\frac{p!}{s!(p-s)!}\cdot\frac{(2s)!}{2^ss!}\cdot\frac{(2p-2s)!}{2^{p-s}(p-s)!}\\
&=&\frac{p!}{4^p}\sum_s\binom{2s}{s}\binom{2p-2s}{p-s}
\end{eqnarray*}

(5) In order to finish now the computation, let us recall that we have the following formula, coming from the generalized binomial formula, or from the Taylor formula:
$$\frac{1}{\sqrt{1+t}}=\sum_{k=0}^\infty\binom{2k}{k}\left(\frac{-t}{4}\right)^k$$

By taking the square of this series, we obtain the following formula:
\begin{eqnarray*}
\frac{1}{1+t}
&=&\sum_{ks}\binom{2k}{k}\binom{2s}{s}\left(\frac{-t}{4}\right)^{k+s}\\
&=&\sum_p\left(\frac{-t}{4}\right)^p\sum_s\binom{2s}{s}\binom{2p-2s}{p-s}
\end{eqnarray*}

Now by looking at the coefficient of $t^p$ on both sides, we conclude that the sum on the right equals $4^p$. Thus, we can finish the moment computation in (4), as follows:
$$M_p=\frac{p!}{4^p}\times 4^p=p!$$

(6) As a conclusion, if we denote by $|k|$ the length of a colored integer $k=\circ\bullet\bullet\circ\ldots$\,, the moments of the variable $c$ in the statement are given by:
$$M_k=\begin{cases}
(|k|/2)!&(k\ {\rm uniform})\\
0&(k\ {\rm not\ uniform})
\end{cases}$$

On the other hand, the numbers $|\mathcal P_2(k)|$ are given by exactly the same formula. Indeed, in order to have matching pairings of $k$, our exponent $k=\circ\bullet\bullet\circ\ldots$ must be uniform, consisting of $p$ copies of $\circ$ and $p$ copies of $\bullet$, with $p=|k|/2$. But then the matching pairings of $k$ correspond to the permutations of the $\bullet$ symbols, as to be matched with $\circ$ symbols, and so we have $p!$ such matching pairings. Thus, we have the same formula as for the moments of $c$, and we are led to the conclusion in the statement.
\end{proof}

We should mention that the above proof is just one proof among others. There is a lot of interesting mathematics behind the complex Gaussian variables, whose knowledge can avoid some of the above computations, and we recommend some reading here.

\bigskip 

By getting back now to the unitary group $U_N$, with the above results in hand we can formulate our first concrete result about it, as follows:

\begin{theorem}
For the unitary group $U_N$, the main character
$$\chi=\sum_iu_{ii}$$
follows the standard complex Gaussian law
$$\chi\sim G_1$$
in the $N\to\infty$ limit.
\end{theorem}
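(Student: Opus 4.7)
The plan is to use the method of moments, matching the colored moments of $\chi$ computed via Peter--Weyl and Brauer with the moments of $G_1$ computed in Theorem 6.2. Concretely, I will show that for every colored integer $k = e_1 e_2 \ldots e_k$ with $|k| \leq N$,
$$\int_{U_N} \chi^k = |\mathcal P_2(k)|,$$
which by Theorem 6.2 is the $k$-th moment of the complex Gaussian $G_1$. Since $G_1$ is determined by its moments, letting $N \to \infty$ (so that the bound $|k| \leq N$ eventually covers every fixed $k$) gives the claimed convergence in distribution.

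First I would invoke the general Peter--Weyl identity from Section 3, which expresses the integral of a character of a corepresentation as the dimension of its fixed space. Since $\chi^k = \chi_{u^{\otimes k}}$ with the Peter--Weyl corepresentation $u^{\otimes k}$ corresponding to the colored integer $k$, this gives
$$\int_{U_N} \chi^k \;=\; \dim \mathrm{Fix}(u^{\otimes k}) \;=\; \dim \mathrm{Hom}(1, u^{\otimes k}).$$
Next, Theorem 6.1 applied to $U_N$ identifies this Hom-space with the span of the vectors $\xi_\pi = T_\pi(1)$ where $\pi$ runs over matching pairings $\mathcal P_2(\emptyset, k) \cong \mathcal P_2(k)$. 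Thus
$$\int_{U_N} \chi^k \;=\; \dim \mathrm{span}\bigl(\xi_\pi \,\big|\, \pi \in \mathcal P_2(k)\bigr) \;\leq\; |\mathcal P_2(k)|.$$

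The key input then is Theorem 5.8, which guarantees that the family $\{\xi_\pi\}_{\pi \in P(k)}$ is linearly independent as soon as $N \geq |k|$; restricting to matching pairings preserves independence, so for $N \geq |k|$ we obtain the exact equality $\int_{U_N}\chi^k = |\mathcal P_2(k)|$. Comparing with Theorem 6.2, which identifies these same numbers with the colored moments of $G_1$, yields
$$\lim_{N \to \infty}\int_{U_N}\chi^k \;=\; M_k(G_1)$$
for every fixed colored integer $k$, and the method of moments (applicable since $G_1$ has moments growing at most factorially and is moment-determinate) gives the convergence $\chi \sim G_1$.

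The only genuine subtlety, and the step I expect to require most care, is the bookkeeping of \emph{colored} exponents: the character $\chi$ is not self-adjoint, so the relevant ``moments'' are the mixed $*$-moments indexed by $k \in \mathbb N * \mathbb N$, and one must check that the three pieces --- Peter--Weyl, Brauer, and the combinatorial count of $|\mathcal P_2(k)|$ --- all use exactly the same colored convention. Once the colorings are aligned, everything else is a direct assembly of results already in hand.
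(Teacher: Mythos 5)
Your proposal is correct and coincides with the paper's own argument: the paper proves this statement by combining Theorem 6.1 (which already packages the Peter--Weyl identity $\int_{U_N}\chi^k=\dim\mathrm{Fix}(u^{\otimes k})$, the Brauer identification of $\mathrm{Fix}(u^{\otimes k})$ with $\mathrm{span}(\xi_\pi\mid\pi\in\mathcal P_2(k))$, and the equality $\int\chi^k=|\mathcal P_2(k)|$ at $N\geq|k|$) with the moment computation for $G_1$ in Theorem 6.2. You merely unfold one more layer of references, citing Theorem 5.8 explicitly for the linear-independence step; the substance and route are identical.
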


\begin{proof}
This follows by putting together the results that we have, namely Theorem 6.1 applied with $N>k$, and then Theorem 6.2.
\end{proof}

As already mentioned, as it was the case for the orthogonal group $O_N$, in chapter 5, the representation theory for $U_N$ at fixed $N\in\mathbb N$ is something quite complicated, related to the combinatorics of Young tableaux, and we will not get into this subject here. 

\bigskip

There is, however, one more interesting topic regarding $U_N$ to be discussed, namely its precise relation with $O_N$, and more specifically the passage $O_N\to U_N$.

\bigskip 

Contrary to the passage $\mathbb R^N\to\mathbb C^N$, or to the passage $S^{N-1}_\mathbb R\to S^{N-1}_\mathbb C$, which are both elementary, the passage $O_N\to U_N$ cannot be understood directly. In order to understand this passage we must pass through the corresponding Lie algebras, a follows:

\index{complexification}

\begin{theorem}
The passage $O_N\to U_N$ appears via a Lie algebra complexification,
$$O_N\to\mathfrak o_N\to\mathfrak u_n\to U_N$$
with the Lie algebra $\mathfrak u_N$ being a complexification of the Lie algebra $\mathfrak o_N$.
\end{theorem}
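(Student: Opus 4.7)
The plan is to pin down the two Lie algebras as concrete matrix spaces and then exhibit the ``complexification'' relation between them. First, I would differentiate the defining relation $UU^t=1$ of $O_N$ along a smooth curve $U(t)=e^{tA}$ through the identity, which at $t=0$ yields $A+A^t=0$; hence $\mathfrak o_N$ is the real vector space of antisymmetric real $N\times N$ matrices, of real dimension $N(N-1)/2$. The analogous computation for $U_N$, using a curve $V(t)=e^{tB}$ and the relation $VV^*=1$, gives $B+B^*=0$, so $\mathfrak u_N$ is the real vector space of anti-Hermitian $N\times N$ matrices, of real dimension $N^2$.

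Second, I would decompose a generic $B\in\mathfrak u_N$ in the form $B=A+iS$ with $A,S\in M_N(\mathbb R)$. Taking real and imaginary parts of $B+B^*=0$ yields two independent real conditions, namely $A+A^t=0$ and $S-S^t=0$, which together give the direct-sum decomposition
$$\mathfrak u_N=\mathfrak o_N\oplus i\cdot\mathrm{Sym}_N(\mathbb R).$$
This is the sense of ``complexification'' intended by the statement: the defining relation of $\mathfrak o_N$ over $\mathbb R$ written with the transpose becomes the defining relation of $\mathfrak u_N$ once one upgrades the scalars to $\mathbb C$ and replaces the transpose with the $*$-involution, and in doing so one picks up the extra imaginary-symmetric summand. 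The dimensional bookkeeping $N^2=\tfrac{N(N-1)}{2}+\tfrac{N(N+1)}{2}$ confirms the decomposition.

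Third, I would close the loop in the opposite direction by invoking the matrix exponential $\exp:\mathfrak g\to G$, which for each of these matrix groups is a local diffeomorphism near $0$ onto a neighborhood of the identity. Combined with the connectedness of $U_N$ and of $SO_N\subset O_N$, this recovers the groups from their Lie algebras, and the inclusion $\mathfrak o_N\hookrightarrow\mathfrak u_N$ then integrates to the inclusion $O_N\hookrightarrow U_N$ (at the level of identity components), producing the diagram $O_N\to\mathfrak o_N\to\mathfrak u_N\to U_N$ in the statement.

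The main obstacle is really interpretive rather than computational: in the strict Lie-theoretic sense, the complexification $\mathfrak o_N\otimes_{\mathbb R}\mathbb C$ is the complex orthogonal Lie algebra of complex antisymmetric matrices, which is \emph{not} equal to $\mathfrak u_N$. The statement must therefore be read in the soft sense made precise by the decomposition above, that $\mathfrak u_N$ arises from $\mathfrak o_N$ by ``complexifying the transpose into the adjoint''; formulating the proof so that this loose reading is unambiguous is where essentially all the content of the argument lies.
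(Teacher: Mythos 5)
Your proposal is correct and follows essentially the same route as the paper: compute $\mathfrak o_N,\mathfrak u_N$ by differentiating the defining relations, recover the groups by exponentiation, and split an anti-Hermitian matrix into a real antisymmetric part plus $i$ times a symmetric part, reading the result as a soft ``complexification''. The only cosmetic difference is that the paper pushes the decomposition one step further, writing the symmetric summand as diagonal plus (after sign flips below the diagonal) another copy of $\mathfrak o_N$, so that $\mathfrak u_N\simeq\mathfrak o_N\oplus\Delta\oplus\mathfrak o_N$ as real vector spaces, whereas you stop at $\mathfrak u_N=\mathfrak o_N\oplus i\,\mathrm{Sym}_N(\mathbb R)$ — both are fine, and your explicit caveat about $\mathfrak o_N\otimes_{\mathbb R}\mathbb C$ matches the paper's scare quotes.
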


\begin{proof}
This is something rather philosophical, and advanced as well, that we will not really need here, the idea being as follows:

\medskip

(1) The orthogonal and unitary groups $O_N,N_N$ are both Lie groups, in the sense that they are smooth manifolds, and the corresponding Lie algebras $\mathfrak o_N,\mathfrak u_N$, which are by definition the respective tangent spaces at 1, can be computed by differentiating the equations defining $O_N,U_N$, with the conclusion being as follows:
$$\mathfrak o_N=\left\{ A\in M_N(\mathbb R)\Big|A^t=-A\right\}$$
$$\mathfrak u_N=\left\{ B\in M_N(\mathbb C)\Big|B^*=-B\right\}$$

(2) This was for the correspondences $O_N\to\mathfrak o_N$ and $U_N\to\mathfrak u_N$. In the other sense, the correspondences $\mathfrak o_N\to O_N$ and $\mathfrak u_N\to U_N$ appear by exponentiation, the result here stating that, around 1, the orthogonal matrices can be written as $U=e^A$, with $A\in\mathfrak o_N$, and the unitary matrices can be written as $U=e^B$, with $B\in\mathfrak u_N$. 

\bigskip

(3) In view of all this, in order to understand the passage $O_N\to U_N$ it is enough to understand the passage $\mathfrak o_N\to\mathfrak u_N$. But, in view of the above explicit formulae for $\mathfrak o_N,\mathfrak u_N$, this is basically an elementary linear algebra problem. Indeed, let us pick an arbitrary matrix $B\in M_N(\mathbb C)$, and write it as follows, with $A,C\in M_N(\mathbb R)$:
$$B=A+iC$$

In terms of $A,C$, the equation $B^*=-B$ defining the Lie algebra $\mathfrak u_N$ reads:
$$A^t=-A$$
$$C^t=C$$

(4) As a first observation, we must have $A\in\mathfrak o_N$. Regarding now $C$, let us decompose it as follows, with $D$ being its diagonal, and $C'$ being the remainder:
$$C=D+C'$$

The remainder $C'$ being symmetric with 0 on the diagonal, by switching all the signs below the main diagonal we obtain a certain matrix $C'_-\in\mathfrak o_N$. Thus, we have decomposed $B\in\mathfrak u_N$ as follows, with $A,C'\in\mathfrak o_N$, and with $D\in M_N(\mathbb R)$ being diagonal:
$$B=A+iD+iC'_-$$

(5) As a conclusion now, we have shown that we have a direct sum decomposition of real linear spaces as follows, with $\Delta\subset M_N(\mathbb R)$ being the diagonal matrices:
$$\mathfrak u_N\simeq\mathfrak o_N\oplus\Delta\oplus\mathfrak o_N$$

Thus, we can stop our study here, and say that we have reached the conclusion in the statement, namely that $\mathfrak u_N$ appears as a ``complexification'' of $\mathfrak o_N$.
\end{proof}

As before with many other things, that we will not really need in what follows, this was just an introduction to the subject. More can be found in any Lie group book.

\section*{6b. Circular variables}

Let us discuss now the unitary quantum group $U_N^+$. We have 3 main topics to be discussed, namely the character law with $N\to\infty$, the representation theory at fixed $N\in\mathbb N$, and complexification, and the situation with respect to $U_N$ is as follows:

\medskip

\begin{enumerate}

\item The asymptotic character law appears as a ``free complexification'' of the Wigner law, the combinatorics being similar to the one in the classical case.

\medskip

\item The representation theory is definitely simpler, with the fusion rules being given by a ``free complexification'' of the Clebsch-Gordan rules, at any $N\geq2$.

\medskip

\item As for the complexification aspects, here the situation is extremely simple, with the passage $O_N^+\to U_N^+$ being a free complexification.
\end{enumerate}

\medskip

Obviously, some magic is going on here. Who would have imagined that, passed a few abstract things that can be learned, and we will learn indeed all this, the free quantum group $U_N^+$ is simpler than its classical counterpart $U_N$. This might suggest for instance that quantum mechanics might be simpler than classical mechanics. And isn't this crazy. But hey, read Arnold \cite{arn} first, and let me know if you find all that stuff simple.

\bigskip

Back to work now, let us first discuss the character problematics for $U_N^+$, or rather the difficulties that appear here. We have the following theoretical result, to start with, coming from the general $C^*$-algebra theory developed in chapter 1 above:

\index{law}
\index{distribution}

\begin{theorem}
Given a $C^*$-algebra with a faithful trace $(A,tr)$, any normal variable, 
$$aa^*=a^*a$$
has a ``law'', which is by definition a complex probability measure $\mu\in\mathcal P(\mathbb C)$ satisfying:
$$tr(a^k)=\int_\mathbb Cz^kd\mu(z)$$
This law is unique, and is supported by the spectrum $\sigma(a)\subset\mathbb C$. In the non-normal case, $aa^*\neq a^*a$, such a law does not exist.
\end{theorem}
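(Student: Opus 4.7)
The plan is to treat the two assertions separately: existence/uniqueness in the normal case follows essentially from the Gelfand theorem applied to the commutative $C^*$-subalgebra generated by $a$, while the non-existence in the non-normal case requires exhibiting a concrete trace obstruction coming from two words in $a,a^*$ which are not cyclic rotations of each other but would, under existence of $\mu$, be forced to have equal traces.

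For existence, the first step is to consider the $C^*$-subalgebra $B = \langle a \rangle \subset A$ generated by $a$ and $a^*$, which is commutative by the normality assumption $aa^*=a^*a$. By Theorem 1.10 (Gelfand), together with the discussion around Theorem 1.11, we have a canonical isomorphism $B \simeq C(\sigma(a))$, under which the generator $a$ corresponds to the inclusion function $z \mapsto z$. The restriction $tr_{|B}$ is then a positive unital linear form on $C(\sigma(a))$, and by the classical Riesz representation theorem this is realized by integration against a unique Borel probability measure $\mu$ on $\sigma(a) \subset \mathbb{C}$. The defining identity
$$tr(a^k)=\int_{\mathbb C} z^k\, d\mu(z)$$
for each colored integer $k=e_1\ldots e_k$ is then immediate, since $a^k$ (unambiguously defined because $a$ is normal) corresponds under the Gelfand isomorphism to the monomial $z^k$ in $z$ and $\bar z$. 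The support statement comes for free, because $\mu$ lives on $\sigma(a)$ by construction.

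For uniqueness, the point is that by Stone--Weierstrass the $*$-algebra of polynomials in $z$ and $\bar z$ is dense in $C(\sigma(a))$, since $\sigma(a)$ is compact and these polynomials separate points and are conjugation-closed. Hence any probability measure $\mu$ on $\mathbb{C}$ is determined by its moments $\int z^k\, d\mu$ over all colored integers $k$, provided it is known to be supported on a compact set (which is automatic here, as the moments grow at most like $\|a\|^{|k|}$ forces the support to lie in the closed disk of radius $\|a\|$).

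The main obstacle, and the most interesting part, is the non-existence in the non-normal case. Here the strategy is to observe that if such a measure $\mu$ existed, then for every colored integer $k$ the value $tr(a^k)$ would depend only on the numbers of $\circ$ and $\bullet$ in $k$, because $\int z^k\,d\mu$ only sees these counts. Thus every pair of words in $a,a^*$ with the same letter counts would have equal trace. But the trace property only forces equality for cyclic rotations, and at length four the two words $a^2(a^*)^2$ and $aa^*aa^*$ have identical letter counts yet are not cyclic rotations of one another. Assuming the existence of $\mu$ we would get
$$tr\bigl(a^2(a^*)^2\bigr)=\int |z|^4\,d\mu(z)=tr\bigl(aa^*aa^*\bigr),$$
equivalently $tr\bigl([a,a^*]\,a[a,a^*]\cdots\bigr)=0$ for a suitable rewriting, from which one deduces, using faithfulness of $tr$ and positivity arguments on $[a,a^*][a,a^*]^*$, that $[a,a^*]=0$, contradicting the assumption $aa^*\neq a^*a$. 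The delicate point is to turn the trace equality into a genuine algebraic identity forcing normality, and this is where faithfulness of $tr$ enters decisively: without faithfulness one could only conclude vanishing of certain traces, but faithfulness upgrades this to vanishing of positive elements, hence of the commutator itself.
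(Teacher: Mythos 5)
Your proposal is correct and follows essentially the same route as the paper: Gelfand plus Riesz for existence in the normal case, Stone--Weierstrass for uniqueness, and the length-four trace obstruction for non-existence. The only real difference is the direction in which the non-normal argument is run. The paper starts from $aa^*\neq a^*a$, observes that $(aa^*-a^*a)^2>0$, takes traces, and uses traciality to collapse the four resulting words into two, arriving at the explicit strict inequality $tr(aa^*aa^*)>tr(aaa^*a^*)$, which immediately rules out any $\mu$ giving both as $\int|z|^4\,d\mu$. You instead suppose $\mu$ exists, deduce the equality $tr(aa^*aa^*)=tr(a^2(a^*)^2)$, and then want to conclude $[a,a^*]=0$. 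This is the same computation read backwards: the ``suitable rewriting'' you gesture at is precisely the identity
$$tr\bigl([a,a^*]^2\bigr)=2\bigl(tr(aa^*aa^*)-tr(a^2(a^*)^2)\bigr),$$
which follows from cyclicity of the trace, after which positivity of $[a,a^*]^2=[a,a^*]^*[a,a^*]$ and faithfulness of $tr$ give $[a,a^*]=0$. You should spell this identity out rather than leaving it to a ``suitable rewriting'', since it is the whole content of the step; once written, your argument and the paper's coincide.
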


\begin{proof}
We have two assertions here, the idea being as follows:

\medskip

(1) In the normal case, $aa^*=a^*a$, the Gelfand theorem, or rather the subsequent continuous functional calculus theorem, tells us that we have: 
$$<a>=C(\sigma(a))$$

Thus the functional $f(a)\to tr(f(a))$ can be regarded as an integration functional on the algebra $C(\sigma(a))$, and by the Riesz theorem, this latter functional must come from a probability measure $\mu$ on the spectrum $\sigma(a)$, in the sense that we must have:
$$tr(f(a))=\int_{\sigma(a)}f(z)d\mu(z)$$

We are therefore led to the conclusions in the statement, with the uniqueness assertion coming from the fact that the elements $a^k$, taken as usual with respect to colored integer exponents, $k=\circ\bullet\bullet\circ\ldots$\,, generate the whole $C^*$-algebra $C(\sigma(a))$.

\medskip

(2) In the non-normal case now, $aa^*\neq a^*a$, we must show that such a law does not exist. For this purpose, we can use a positivity trick, as follows:
\begin{eqnarray*}
aa^*-a^*a\neq0
&\implies&(aa^*-a^*a)^2>0\\
&\implies&aa^*aa^*-aa^*a^*a-a^*aaa^*+a^*aa^*a>0\\
&\implies&tr(aa^*aa^*-aa^*a^*a-a^*aaa^*+a^*aa^*a)>0\\
&\implies&tr(aa^*aa^*+a^*aa^*a)>tr(aa^*a^*a+a^*aaa^*)\\
&\implies&tr(aa^*aa^*)>tr(aaa^*a^*)
\end{eqnarray*}

Now assuming that $a$ has a law $\mu\in\mathcal P(\mathbb C)$, in the sense that the moment formula in the statement holds, the above two different numbers would have to both appear by integrating $|z|^2$ with respect to this law $\mu$, which is contradictory, as desired.
\end{proof}

All the above might look a bit abstract, so as an illustration here, consider the following matrix, which is the simplest example of a non-normal matrix:
$$Z=\begin{pmatrix}0&1\\0&0\end{pmatrix}$$

We have then the following formulae, which show that $Z$ has no law, indeed:
$$tr(ZZZ^*Z^*)=tr\begin{pmatrix}0&0\\0&0\end{pmatrix}=0$$
$$tr(ZZ^*ZZ^*)=tr\begin{pmatrix}1&0\\0&0\end{pmatrix}=\frac{1}{2}$$

Getting back now to $U_N^+$, its main character is not normal, so it does not have a law $\mu\in\mathcal P(\mathbb C)$. Here is a concrete illustration for this phenomenon:

\begin{proposition}
The main character of $U_N^+$ satisfies, at $N\geq4$,
$$\int_{U_N^+}\chi\chi\chi^*\chi^*=1$$
$$\int_{U_N^+}\chi\chi^*\chi\chi^*=2$$
and so this main character $\chi$ does not have a law $\mu\in\mathcal P(\mathbb C)$.
\end{proposition}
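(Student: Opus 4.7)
The plan is to compute both integrals via the Brauer-type formula of Theorem~6.1 combined with the linear independence result of Theorem~5.8. For any colored integer $k$, Peter--Weyl gives
\[
\int_{U_N^+}\chi^k \;=\; \dim Fix(u^{\otimes k}) \;=\; \dim span\bigl(T_\pi \,\big|\, \pi\in\mathcal{NC}_2(k)\bigr),
\]
and Theorem~5.8 guarantees that the vectors $\xi_\pi$ are linearly independent whenever $N\ge|k|$, since this independence depends only on $\pi$ viewed as an ordinary set partition of $|k|$ points and is insensitive to colorings. At $N\ge 4$ both integrals therefore reduce to pairing counts:
\[
\int_{U_N^+}\chi\chi\chi^*\chi^* \;=\; \bigl|\mathcal{NC}_2(\circ\circ\bullet\bullet)\bigr|,\qquad \int_{U_N^+}\chi\chi^*\chi\chi^* \;=\; \bigl|\mathcal{NC}_2(\circ\bullet\circ\bullet)\bigr|.
\]

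Next I would enumerate each count by hand, using the convention---forced by the two generators $R\in Hom(1,u\otimes\bar u)$ and $R\in Hom(1,\bar u\otimes u)$ of Proposition~4.5---that a noncrossing matching pairing on a single row must join each $\circ$ to some $\bullet$. The three pairings of $\{1,2,3,4\}$ are $(1{-}2)(3{-}4)$, $(1{-}4)(2{-}3)$, and $(1{-}3)(2{-}4)$; the last crosses and is discarded in both cases. For the word $\circ\circ\bullet\bullet$, the pairing $(1{-}2)(3{-}4)$ has blocks colored $\circ\circ$ and $\bullet\bullet$ and fails the matching condition, while the nested $(1{-}4)(2{-}3)$ splits its blocks as $\circ\bullet$ and $\circ\bullet$ and survives; so the count is $1$. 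For the word $\circ\bullet\circ\bullet$, both $(1{-}2)(3{-}4)$ and $(1{-}4)(2{-}3)$ have each block colored $\circ\bullet$ and hence are matching; so the count is $2$.

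Finally, the non-existence of a law $\mu\in\mathcal P(\mathbb C)$ for $\chi$ follows from Theorem~6.5: if such a $\mu$ existed, then both integrals would reduce to $\int_{\mathbb C}|z|^4\,d\mu(z)$ and would therefore coincide, contradicting $1\ne 2$. In particular $\chi\chi^*\ne\chi^*\chi$ inside the reduced Woronowicz algebra, so $\chi$ is not normal, which by Theorem~6.5 rules out a scalar spectral distribution. The substance of the argument lies in the two counting steps; the only point to watch is the bookkeeping of the matching convention, but since Proposition~4.5 pins it down unambiguously, the enumeration is a routine finite check.
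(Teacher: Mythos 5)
Your proof is correct and is essentially the paper's own argument: both rely on the exact moment formula $\int_{U_N^+}\chi^k=|\mathcal{NC}_2(k)|$ valid at $N\geq k$ (you merely make the linear-independence input explicit), then count the matching noncrossing pairings of $\circ\circ\bullet\bullet$ and $\circ\bullet\circ\bullet$ to get $1$ and $2$, and conclude that no law $\mu\in\mathcal P(\mathbb C)$ can exist since both integrals would have to equal $\int_{\mathbb C}|z|^4\,d\mu(z)$. Your enumeration and the normality remark via Theorem 6.5 match the paper's reasoning, so there is nothing to correct.
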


\begin{proof}
This follows from the last assertion in Theorem 6.1, which tells us that the moments of $\chi$ are given by the following formula, valid at any $N\geq k$:
$$\int_{U_N^+}\chi^k=|\mathcal{NC}_2(k)|$$

Indeed, we obtain from this the following formula, valid at any $N\geq4$:
\begin{eqnarray*}
\int_{U_N^+}\chi\chi\chi^*\chi^*
&=&|\mathcal NC_2(\circ\circ\bullet\,\bullet)|\\
&=&|\Cap|\\
&=&1
\end{eqnarray*}

On the other hand, we obtain as well the following formula, once again at $N\geq4$:
\begin{eqnarray*}
\int_{U_N^+}\chi\chi^*\chi\chi^*
&=&|\mathcal NC_2(\circ\bullet\circ\,\bullet)|\\
&=&|\cap\cap\,,\Cap\,|\\
&=&2
\end{eqnarray*}

Thus, we have the formulae in the statement. Now since we cannot obtain both 1 and 2 by integrating $|z|^2$ with respect to a measure, our variable has no law $\mu\in\mathcal P(\mathbb C)$.
\end{proof}

Summarizing, we are a bit in trouble here, and we must proceed as follows:

\index{law}
\index{distribution}

\begin{definition}
Given a $C^*$-algebra with a faithful trace $(A,tr)$, the law of a variable $a\in A$ is the following abstract functional:
$$\mu:\mathbb C<X,X^*>\to\mathbb C$$
$$P\to tr(P(a))$$
In particular two variables $a,b\in A$ have the same law, and we write in this case $a\sim b$, when all their moments coincide,
$$tr(a^k)=tr(b^k)$$
with these moments being taken with respect to colored integers, $k=\circ\bullet\bullet\circ\ldots$
\end{definition}

Here the compatibility between the first and the second above conventions comes from the fact that, by linearity, the functional $\mu$ is uniquely determined by its values on the monomials $P(z)=z^k$, with $k=\circ\bullet\bullet\circ\ldots$ being a colored integer.

\bigskip

In the normal case, $aa^*=a^*a$, it follows from Theorem 6.5 that the law, as defined above, comes from a probability measure $\mu\in\mathcal P(\mathbb C)$, via the following formula:
$$tr(P(a))=\int_\mathbb CP(z)d\mu(z)$$

In particular, in the case where we have two normal variables $a,b$, the equality $a\sim b$ tells us that the laws of $a,b$, taken in the complex measure sense, must coincide. In the general case, $aa^*\neq a^*a$, there is no such simple interpretation of the law, with this coming from the last assertion in Theorem 6.5, and also from the concrete example worked out in Proposition 6.6, and we must therefore use Definition 6.7 as it is.

\bigskip

Next in line, we have to talk about freeness. For this purpose, let us recall that the independence of two subalgebras $B,C\subset A$ can be defined in the following way:
$$tr(b)=tr(c)=0\implies tr(bc)=0$$

In analogy with this, we have the following definition, due to Voiculescu \cite{vdn}:

\index{freeness}

\begin{definition}
Two subalgebras $B,C\subset A$ are called free when the following condition is satisfied, for any $b_i\in B$ and $c_i\in C$:
$$tr(b_i)=tr(c_i)=0\implies tr(b_1c_1b_2c_2\ldots)=0$$
Also, two variables $b,c\in A$ are called free when the algebras that they generate,
$$B=<b>\quad,\quad C=<c>$$
are free inside $A$, in the above sense.
\end{definition}

In short, freeness appears as a kind of ``free analogue'' of independence, taking into account the fact that the variables do not necessarily commute. As a first result regarding this notion, in analogy with the basic theory of the independence, we have:

\begin{proposition}
Assuming that $B,C\subset A$ are free, the restriction of $tr$ to $<B,C>$ can be computed in terms of the restrictions of $tr$ to $B,C$. To be more precise,
$$tr(b_1c_1b_2c_2\ldots)=P\Big(\{tr(b_{i_1}b_{i_2}\ldots)\}_i,\{tr(c_{j_1}c_{j_2}\ldots)\}_j\Big)$$
where $P$ is certain polynomial in several variables, depending on the length of the word $b_1c_1b_2c_2\ldots$, and having as variables the traces of products of type $b_{i_1}b_{i_2}\ldots$ and $c_{j_1}c_{j_2}\ldots$\,, with the indices being chosen increasing, $i_1<i_2<\ldots$ and $j_1<j_2<\ldots$
\end{proposition}

\begin{proof}
This is something quite theoretical, so let us begin with an example. Our claim is that if $b,c$ are free then, exactly as in the case where we have independence:
$$tr(bc)=tr(b)tr(c)$$

Indeed, we have the following computation, with the convention $a'=a-tr(a)$:
\begin{eqnarray*}
tr(bc)
&=&tr[(b'+tr(b))(c'+tr(c))]\\
&=&tr(b'c')+t(b')tr(c)+tr(b)tr(c')+tr(b)tr(c)\\
&=&tr(b'c')+tr(b)tr(c)\\
&=&tr(b)tr(c)
\end{eqnarray*}

In general now, the situation is a bit more complicated, but the same trick applies. To be more precise, we can start our computation as follows:
\begin{eqnarray*}
tr(b_1c_1b_2c_2\ldots)
&=&tr\big[(b_1'+tr(b_1))(c_1'+tr(c_1))(b_2'+tr(b_2))(c_2'+tr(c_2))\ldots\ldots\big]\\
&=&tr(b_1'c_1'b_2'c_2'\ldots)+{\rm other\ terms}\\
&=&{\rm other\ terms}
\end{eqnarray*}

Observe that we have used here the freeness condition, in the following form:
$$tr(b_i')=tr(c_i')=0\implies tr(b_1'c_1'b_2'c_2'\ldots)=0$$

Now regarding the ``other terms'', those which are left, each of them will consist of a product of traces of type $tr(b_i)$ and $tr(c_i)$, and then a trace of a product still remaining to be computed, which is of the following form, with $\beta_i\in B$ and $\gamma_i\in C$:
$$tr(\beta_1\gamma_1\beta_2\gamma_2\ldots)$$

To be more precise, the variables $\beta_i\in B$ appear as ordered products of those $b_i\in B$ not getting into individual traces $tr(b_i)$, and the variables $\gamma_i\in C$ appear as ordered products of those $c_i\in C$ not getting into individual traces $tr(c_i)$. Now since the length of each such alternating product $\beta_1\gamma_1\beta_2\gamma_2\ldots$ is smaller than the length of the original alternating product $b_1c_1b_2c_2\ldots$, we are led into of recurrence, and this gives the result.
\end{proof}

As an illustration now, given two discrete groups $\Gamma,\Lambda$, the algebras $C^*(\Gamma),C^*(\Lambda)$ are independent inside $C^*(\Gamma\times\Lambda)$, are free inside $C^*(\Gamma*\Lambda)$. More on this later.

\bigskip

With the above definitions in hand, we can now advance, in connection with our questions, in the following rather formal way:

\index{circular law}
\index{Voiculescu circular law}

\begin{definition}
The Voiculescu circular law $\Gamma_1$ is defined by
$$\Gamma_1\sim\frac{1}{\sqrt{2}}(\alpha+i\beta)$$
with $\alpha,\beta$ being self-adjoint and free, each following the Wigner semicircle law $\gamma_1$.
\end{definition}

Our goal in what follows will be that of proving that the main character law of $U_N^+$ becomes circular with $N\to\infty$, and in fact, more generally, with $N\geq2$. In order to prove these results, we need first to study the Voiculescu circular law, a bit in the same way as we did with the Wigner semicircle law, in chapter 5. Let us start with:

\index{shift}
\index{semicircle law}
\index{Wigner semicircle}

\begin{proposition}
Consider the shift operator $S\in B(l^2(\mathbb N))$. We have then
$$S+S^*\sim\gamma_1$$
with respect to the state $\varphi(T)=<T\delta_0,\delta_0>$.
\end{proposition}

\begin{proof}
We must compute the moments of the variable $S+S^*$ with respect to the state $\varphi(T)=<T\delta_0,\delta_0>$. Our claim is that these moments are given by:
$$<(S+S^*)^k\delta_0,\delta_0>=|NC_2(k)|$$

Indeed, when expanding $(S+S^*)^k$ and computing the value of $\varphi:T\to<T\delta_0,\delta_0>$, the only contributions will come via the formula $S^*S=1$, which must succesively apply, as to collapse the whole product of $S,S^*$ variables into a 1 quantity. But these applications of $S^*S=1$ must appear in a non-crossing manner, and so the contributions, which are each worth 1, are parametrized by the partitions $\pi\in NC_2(k)$. Thus, we obtain the above moment formula, which shows that we have $S+S^*\sim\gamma_1$, as claimed.
\end{proof}

The next step is that of taking a free product of the model found in Proposition 6.11 with itself. For this purpose, we can use the following construction:

\index{Fock space}
\index{free Fock space}

\begin{definition}
Given a real Hilbert space $H$, we define the associated free Fock space as being the infinite Hilbert space sum
$$F(H)=\mathbb C\Omega\oplus H\oplus H^{\otimes2}\oplus\ldots$$
and then we define the algebra $A(H)$ generated by the creation operators
$$S_x:v\to x\otimes v$$
on this free Fock space.
\end{definition}

At the level of examples, with $H=\mathbb R$ we recover the shift algebra $A=<S>$ on the Hilbert space $H=l^2(\mathbb N)$. Also, with $H=\mathbb R^2$, we obtain the algebra $A=<S_1,S_2>$ generated by the two shifts on the Hilbert space $H=l^2(\mathbb N*\mathbb N)$.

\bigskip

With the above notions in hand, we have the following freeness result, from \cite{vdn}:

\begin{proposition}
Given a real Hilbert space $H$, and two orthogonal vectors $x,y\in H$,
$$x\perp y$$
the corresponding creation operators $S_x$ and $S_y$ are free with respect to
$$tr(T)=<T\Omega,\Omega>$$
called trace associated to the vacuum vector.
\end{proposition}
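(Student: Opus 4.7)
The plan is to verify the defining condition of freeness from Definition 8.5 for $B = \langle S_x \rangle$ and $C = \langle S_y \rangle$, by direct computation on the vacuum. The starting point is the fundamental identity $S_u^* S_v = \langle u, v\rangle \cdot 1$ in $\mathcal{L}(F(H))$, which for orthogonal $x, y$ specializes to $S_x^* S_y = S_y^* S_x = 0$ together with $S_x^* S_x = \|x\|^2$ and $S_y^* S_y = \|y\|^2$. Using these to reduce all products, every element of $B$ is a linear combination of monomials $S_x^k (S_x^*)^l$ for $k, l \geq 0$. The trace of such a monomial is $\langle S_x^k (S_x^*)^l \Omega, \Omega \rangle$, which vanishes for $l \geq 1$ since $S_x^* \Omega = 0$, and vanishes for $k \geq 1$, $l = 0$ since $S_x^k \Omega = x^{\otimes k} \perp \Omega$. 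Hence the trace-zero elements of $B$ are exactly the linear combinations of $S_x^k (S_x^*)^l$ with $(k,l) \neq (0,0)$, and similarly for $C$.

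By linearity, the freeness check reduces to showing that for any alternating product
$$T = c^{(1)} c^{(2)} \cdots c^{(r)},$$
whose successive factors lie alternately in $B$ and $C$ and are monomials of the shape $S_x^{k_i} (S_x^*)^{l_i}$ or $S_y^{m_j} (S_y^*)^{n_j}$ with exponent pair $\neq (0,0)$, one has $tr(T) = \langle T\Omega, \Omega \rangle = 0$. I would compute $T\Omega$ by letting the operators act on $\Omega$ from the right. For the rightmost block, the annihilation part must be trivial, since otherwise $(S_z^*)^n \Omega = 0$ for $n \geq 1$ kills the whole product; so the creation exponent there is $\geq 1$ and the block produces $z^{\otimes p}$ for some $p \geq 1$, where $z \in \{x,y\}$.

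The inductive heart of the argument is a shape-preservation claim: after processing some suffix of $T$, the resulting vector is either $0$ or is of the alternating form $z_1^{\otimes p_1} \otimes \cdots \otimes z_s^{\otimes p_s}$ with $p_i \geq 1$ and $z_i \neq z_{i+1}$. Applying the next block $S_w^{m}(S_w^*)^{n}$ — where $w$ is the letter opposite to $z_1$, by the alternation of factors — the annihilation part gives $S_w^*(z_1^{\otimes p_1} \otimes \cdots) = \langle w, z_1 \rangle\, z_1^{\otimes(p_1-1)} \otimes \cdots = 0$ thanks to $w \perp z_1$; so nonvanishing forces $n = 0$ and $m \geq 1$, and the new vector is $w^{\otimes m} \otimes z_1^{\otimes p_1} \otimes \cdots$, still of the alternating shape with one more block. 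Iterating, $T\Omega$ is either $0$ or lives in $H^{\otimes N}$ for some $N \geq 1$, hence orthogonal to $\Omega$, yielding $tr(T) = 0$ as required. The only real obstacle is organizational: one must carefully match the alternation of the operator factors with the alternation of letters in the evolving tensor, so that each incoming annihilation operator attacks the ``wrong'' letter and is killed by the hypothesis $x \perp y$. It is precisely this orthogonality that drives the entire argument, and its failure when $x, y$ are merely linearly independent but not orthogonal is exactly what prevents a weaker hypothesis.
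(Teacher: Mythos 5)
Your proof is correct, and it carries out precisely the ``elementary computations'' that the paper's proof of Proposition 8.13 alludes to but does not perform: the paper records only the action formulae for $S_x,S_x^*$ and the consequence $S_x^*S_y=\langle x,y\rangle\,id$, then refers the reader to the spirit of the group-algebra argument. Your normal-form reduction $S_x^k(S_x^*)^l$ via $S_x^*S_x=\|x\|^2$, the identification of the trace-zero monomials, and the vacuum-tracking induction with the orthogonality $x\perp y$ killing every annihilation step are exactly the intended computation made explicit, so the two proofs are the same in essence.
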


\index{vacuum vector}

\begin{proof}
In standard tensor notation for the elements of the free Fock space $F(H)$, the formula of a creation operator associated to a vector $x\in H$ is as follows:
$$S_x(y_1\otimes\ldots\otimes y_n)=x\otimes y_1\otimes\ldots\otimes y_n$$

As for the formula of the adjoint of this creation operator, this is as follows: 
$$S_x^*(y_1\otimes\ldots\otimes y_n)=<x,y_1>\otimes y_2\otimes\ldots\otimes y_n$$

We obtain from this the following formula, valid for any two vectors $x,y\in H$:
$$S_x^*S_y=<x,y>id$$

With these formulae in hand, the result follows by doing some elementary computations, a bit similar to those in the proof of Proposition 6.11.
\end{proof}

In order now to model the circular variables, still following Voiculescu \cite{vdn}, we can use the following key observation, coming from Proposition 6.13 via a rotation trick:

\begin{proposition}
Given two polynomials $f,g\in\mathbb C[X]$, consider the variables 
$$R^*+f(R)\quad,\quad 
S^*+g(S)$$
where $R,S$ are two creation operators, or shifts, associated to a pair of  orthogonal norm $1$ vectors. These variables are then free, and their sum has the same law as
$$T^*+(f+g)(T)$$
with $T$ being the usual shift on $l^2(\mathbb N)$.
\end{proposition}

\begin{proof}
We have two assertions here, the idea being as follows:

\medskip

(1) The freeness assertion comes from the general freeness result from Proposition 6.13, via the various identifications coming from the previous results.

\medskip

(2) Regarding now the second assertion, the idea is that this comes from a $45^\circ$ rotation trick. Let us write indeed the two variables in the statement as follows:
$$X=R^*+a_0+a_1R+a_2R^2+\ldots$$
$$Y=S^*+b_0+b_1S+a_2S^2+\ldots$$

Now let us perform the following $45^\circ$ base change, on the real span of the vectors $r,s\in H$ producing our two shifts $R,S$:
$$t=\frac{r+s}{\sqrt{2}}\quad,\quad u=\frac{r-s}{\sqrt{2}}$$

The new shifts, associated to these vectors $t,u\in H$, are then given by:
$$T=\frac{R+S}{\sqrt{2}}\quad,\quad U=\frac{R-S}{\sqrt{2}}$$

By using now these new shifts, which are free as well according to Proposition 6.13, we obtain the following equality of distributions:
\begin{eqnarray*}
X+Y
&=&R^*+S^*+\sum_ka_kR^k+b_kS^k\\
&=&\sqrt{2}T^*+\sum_ka_k\left(\frac{T+U}{\sqrt{2}}\right)^k+b_k\left(\frac{T-U}{\sqrt{2}}\right)^k\\
&\sim&\sqrt{2}T^*+\sum_ka_k\left(\frac{T}{\sqrt{2}}\right)^k+b_k\left(\frac{T}{\sqrt{2}}\right)^k\\
&\sim&T^*+\sum_ka_kT^k+b_kT^k
\end{eqnarray*}

To be more precise, here in the last two lines we have used the freeness property of $T,U$ in order to cut $U$ from the computation, as it cannot bring anything, and then we did a rescaling at the end. Thus, we are led to the conclusion in the statement.
\end{proof}

Still following Voiculescu \cite{vdn}, we can now formulate an explicit and very useful modelling result for the semicircular and circular variables, as follows:

\index{circular variable}
\index{semicircular variable}

\begin{theorem}
Let $H$ be the Hilbert space having as basis the colored integers $k=\circ\bullet\bullet\circ\ldots$\,, and consider the shift operators $S:k\to\circ k$ and $T:k\to\bullet k$. We have then
$$S+S^*\sim\gamma_1$$
$$S+T^*\sim\Gamma_1$$
with respect to the state $\varphi(T)=<Te,e>$, where $e$ is the empty word.
\end{theorem}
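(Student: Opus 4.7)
The plan is to interpret $\varphi$ as a vacuum state on a Fock-type construction, and then read off each moment as a weighted count of configurations which turn out to be exactly the required noncrossing (matching) pair partitions.

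First I would set up bookkeeping. Write $L_\circ=S$, $L_\bullet=T$ for the ``creation'' operators and note that $L_\circ^*=S^*$, $L_\bullet^*=T^*$ act as ``annihilations'': on a nonempty word $k=c_1c_2\ldots c_n$ one has $L_c^*(k)=k'$ if $c_1=c$ (with $k'$ the tail) and $L_c^*(k)=0$ otherwise, while $L_c^*(e)=0$. A direct induction then gives the key vacuum formula: for any word $w=\epsilon_1\ldots\epsilon_k$ in the four letters $L_\circ,L_\bullet,L_\circ^*,L_\bullet^*$,
$$\varphi(w)=\langle \epsilon_1\ldots\epsilon_k e,e\rangle$$
equals $1$ if, reading the word right to left and maintaining a stack, every annihilation $L_c^*$ is preceded by a creation $L_c$ still on top of the stack (so the stack empties at the end) and equals $0$ otherwise. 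In other words, the nonzero words are precisely those pairing creations with annihilations in a noncrossing way, with matching colors at each pair.

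Now for the first claim, expand $(S+S^*)^k$ as a sum of $2^k$ words in $\{S,S^*\}$. By the vacuum formula only those words contribute for which the $S$'s and $S^*$'s form a noncrossing pairing (the color constraint is automatic here), and each such contributes $1$. These words are in bijection with Dyck paths of length $k$, i.e.\ with $NC_2(k)$, so
$$\varphi((S+S^*)^k)=|NC_2(k)|,$$
which by Theorems 5.10--5.11 are the moments of the Wigner law $\gamma_1$. This gives $S+S^*\sim\gamma_1$.

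For the second claim, fix a colored integer $k=k_1\ldots k_n$ and expand
$$(S+T^*)^{k_1}\cdots(S+T^*)^{k_n},$$
where positions colored $\circ$ produce the two options $S,T^*$ and positions colored $\bullet$ produce the two options $S^*,T$. Each full choice gives a word in the four letters, and the vacuum formula selects exactly those choices which pair positions into a noncrossing matching. Here one checks the color-matching: a creation $L_c$ at position $j$ can only pair with an annihilation $L_c^*$ at some later position $i$, and our expansion forces $L_\circ=S$ to sit at a $\circ$-position while its partner $L_\circ^*=S^*$ sits at a $\bullet$-position, and symmetrically for $L_\bullet=T$ and $L_\bullet^*=T^*$. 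Thus each admissible configuration is exactly a noncrossing \emph{matching} pairing of $k$, and contributes $1$, yielding
$$\varphi((S+T^*)^k)=|\mathcal{NC}_2(k)|,$$
which is the defining moment sequence of $\Gamma_1$ in Definition 6.7.

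The main obstacle is the careful verification of the vacuum formula, in particular making sure that the color-matching constraint on creation/annihilation pairs lines up with the matching condition $\circ$--$\bullet$ on pair partitions (the colors on the operator side and on the partition side play formally opposite roles, so this must be tracked explicitly). Once this dictionary is pinned down, both claims reduce to counting the same combinatorial objects that appear in the statements.
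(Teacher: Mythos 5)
Your proof is correct and follows essentially the same route as the paper's: expand the power, observe that the only surviving words are those where the $S^*S=1$ (and $T^*T=1$) collapses occur in a noncrossing pattern, and identify these configurations with $NC_2(k)$ resp.\ $\mathcal{NC}_2(k)$. Your ``vacuum formula'' / stack bookkeeping is a cleaner formalization of what the paper states more informally (``these applications must appear in a non-crossing manner''), and your explicit check of the color dictionary in the $S+T^*$ case — that a $\circ$-position carrying $T^*$ pairs with a $\bullet$-position carrying $T$, and a $\bullet$-position carrying $S^*$ pairs with a $\circ$-position carrying $S$ — is exactly the point the paper glosses over with ``at each point where the exponent $k$ has a $\circ$ entry we must use $T^*T=1$'', so your version is a useful sharpening rather than a different argument.
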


\begin{proof}
This is standard free probability, the idea being as follows:

\medskip

(1) The formula $S+S^*\sim\gamma_1$ is something that we already know, in a slightly different formulation, from Proposition 6.11 above.

\medskip

(2) The formula $S+T^*\sim\Gamma_1$ follows from this, by using the freeness result in Proposition 6.13, and the rotation trick in Proposition 6.14.
\end{proof}

At the combinatorial level now, we have the following result, which is in analogy with the moment theory of the Wigner semicircle law, developed in chapter 5:

\begin{theorem}
A variable $a\in A$ is circular when its moments are given by
$$tr(a^k)=|\mathcal{NC}_2(k)|$$
for any colored integer $k=\circ\bullet\bullet\circ\ldots$
\end{theorem}

\begin{proof}
By using Theorem 6.15, it is enough to do the computation in the model there. With $S:k\to\circ k$ and $T:k\to\bullet k$, our claim is that we have:
$$<(S+T^*)^ke,e>=|\mathcal{NC}_2(k)|$$

In order to prove this formula, we can proceed as in the proof of Proposition 6.11. Indeed, let us expand the quantity $(S+T^*)^k$, and then apply the state $\varphi$. With respect to the previous computation, from Proposition 6.11, what happens is that the contributions will come this time via the formulae $S^*S=1$, $T^*T=1$, which must succesively apply, as to collapse the whole product of $S,S^*,T,T^*$ variables into a 1 quantity. 

\medskip

As before, in the proof of Proposition 6.11, these applications of the rules $S^*S=1$, $T^*T=1$ must appear in a noncrossing manner, but what happens now, in contrast with the computation from the proof of Proposition 6.11, where $S+S^*$ was self-adjoint, is that at each point where the exponent $k$ has a $\circ$ entry we must use $T^*T=1$, and at each point where the exponent $k$ has a $\bullet$ entry we must use $S^*S=1$. Thus the contributions, which are each worth 1, are parametrized by the partitions $\pi\in\mathcal{NC}_2(k)$, and we are done.
\end{proof}

We will be back to this in chapter 8 below. For our purposes here, the above theory is all we need. Getting back now to $U_N^+$, following \cite{ba1}, we can reformulate the main result that we have so far about it, by using the above notions, as follows:

\begin{theorem}
For the quantum group $U_N^+$ with $N\geq2$ we have
$$Hom(u^{\otimes k},u^{\otimes l})=span\left(T_\pi\Big|\pi\in D(k,l)\right)$$
and at the level of the moments of the main character we have
$$\int_{U_N^+}\chi^k\leq|\mathcal{NC}_2(k)|$$
with equality at $N\geq k$, the numbers on the right being the moments of $\Gamma_1$.
\end{theorem}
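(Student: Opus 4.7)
The plan is to assemble the statement from results already established, since each of the three claims has a natural source in the preceding sections. First, the intertwiner identification is essentially Theorem 4.28 applied to $U_N^+$: by Proposition 4.22, the Tannakian category corresponding to $U_N^+$ is precisely $\langle R, R^*\rangle$, and by the categorical properties of $\pi \mapsto T_\pi$ in Proposition 4.27, the span of $\{T_\pi : \pi \in \mathcal{NC}_2(k,l)\}$ is closed under concatenation, composition (up to factors of $N$) and involution, while containing $R = T_\cap$ and $R^* = T_\cup$. Since the matching semicircles generate $\mathcal{NC}_2$ under these operations, $\langle R, R^*\rangle$ is exactly $\mathrm{span}(T_\pi \mid \pi \in \mathcal{NC}_2)$, giving the Hom-space formula.

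Second, for the moment bound, I would apply Peter-Weyl together with the Frobenius-type isomorphism from Theorem 3.21. By PW we have
$$\int_{U_N^+} \chi^k \;=\; \dim\,\mathrm{Fix}(u^{\otimes k}) \;=\; \dim\,\mathrm{span}\bigl(\xi_\pi \,\big|\, \pi \in \mathcal{NC}_2(k)\bigr) \;\leq\; |\mathcal{NC}_2(k)|,$$
where the middle equality uses the intertwiner formula just established (with $\mathrm{Fix} = \mathrm{Hom}(1, u^{\otimes k})$ and vectors $\xi_\pi = T_\pi(1)$).

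Third, for equality at $N \geq k$, I would invoke the Gram determinant computation of Section 5. Although Theorem 5.8 is stated for all partitions $P(k)$, the same Lindström-type argument specializes to any subset closed under the relevant structure, and for the pairings in $\mathcal{NC}_2(k) \subset P_2(k)$ the Gram matrix $G_k(\pi,\sigma) = N^{|\pi\vee\sigma|}$ is invertible for $N \geq k$ (the relevant minor of the full Gram matrix is nondegenerate by the same triangular factorization $G_k = A_k L_k$). This forces the $T_\pi$ with $\pi \in \mathcal{NC}_2(k)$ to be linearly independent, converting the inequality into an equality.

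Finally, identifying $|\mathcal{NC}_2(k)|$ with the moments of the circular law $\Gamma_1$ is immediate from Definition 6.7, which defines $\Gamma_1$ by exactly this moment sequence. The main technical obstacle, as anticipated, is the linear independence step: one must verify that restricting the Gram matrix analysis to matching noncrossing pairings (rather than all of $P(k)$) still yields a nonzero Gram determinant for $N \geq k$, which is essentially a bookkeeping exercise using the same block-triangular factorization that proved Theorem 5.8.
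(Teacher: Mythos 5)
Your proposal is correct and follows essentially the same route as the paper, which simply assembles the Brauer-type result of Section 4 (via $\langle R,R^*\rangle$ and the categorical properties of $\pi\to T_\pi$), the Peter--Weyl estimate, the Lindstr\"om Gram determinant linear independence from Section 5, and the definition of $\Gamma_1$ by its moments. The only remark is that the ``main technical obstacle'' you identify is not one: since the full family $\left\{\xi_\pi\,\middle|\,\pi\in P(k)\right\}$ is linearly independent for $N\geq k$ by Theorem 5.8, its subfamily indexed by $\mathcal{NC}_2(k)$ is automatically linearly independent, so no separate analysis of the Gram minor is needed.
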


\begin{proof}
This is something that we already know. To be more precise, the Brauer type result is from chapter 4, the estimate for the moments follows from this and from Peter-Weyl, as explained in chapter 5, the equality at $N\geq k$ is something more subtle, explained in chapter 5, and the last statement comes from the above discussion. 
\end{proof}

Summarizing, with a bit of abstract probability theory, of free type, we are now on our way into the study of $U_N^+$, paralleling the previous study of $O_N^+$.

\section*{6c. Fusion rules}

With the above result in hand, we can now go ahead and do with $U_N^+$ exactly what we did with $O_N^+$ in chapter 5, with modifications where needed, namely constructing the irreducible representations by recurrence, using a Frobenius duality trick, computing the fusion rules, and concluding as well that we have $\chi\sim\Gamma_1$, at any $N\geq2$.

\bigskip

In practice, all this will be more complicated than for $O_N^+$, mainly because the fusion rules will be something new, in need of some preliminary combinatorial study. These fusion rules will be a kind of ``free Clebsch-Gordan rules'', as follows:
$$r_k\otimes r_l=\sum_{k=xy,l=\bar{y}z}r_{xz}$$

Following \cite{ba1}, let $W$ be the set of colored integers $k=\circ\bullet\bullet\circ\ldots$\,, and  consider the complex algebra $E$ spanned by $W$. We have then an isomorphism, as follows:
$$(\mathbb C<X,X^*>,+,\cdot )\simeq(E,+,\cdot)$$
$$X\to\circ\quad,\quad X^*\to\bullet$$

We define an involution on our algebra $E$, by antilinearity and antimultiplicativity, according to the following formulae, with $e$ being as usual the empty word:
$$\bar{e}=e\quad,\quad\bar{\circ}=\bullet\quad,\quad \bar{\bullet}=\circ$$

With these conventions, we have the following result:

\begin{proposition} 
The map $\times:W\times W\to E$ given by 
$$x\times y=\sum_{x=ag,y=\bar{g}b}ab$$
extends by linearity into an associative multiplication of $E$.
\end{proposition}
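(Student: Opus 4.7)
The plan is to first observe that the map $\times : W \times W \to E$ is well-defined: for any two words $x, y$, the sum $\sum_{x = ag, y = \bar{g}b} ab$ is finite because the decomposition $x = ag$ is parameterized by $|g| \in \{0, 1, \ldots, |x|\}$, and for each length there is at most one valid $g$ (the suffix of $x$ of that length), after which the compatibility $y = \bar{g}b$ either holds or it does not. Extending bilinearly in both arguments then gives a bilinear map $\times : E \times E \to E$. By bilinearity, the associativity of $\times$ on $E$ reduces to verifying $(x \times y) \times z = x \times (y \times z)$ for all basis triples $x, y, z \in W$.

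For this, I would expand both sides as explicit sums and identify a common indexing set. On the left,
\[
(x \times y) \times z = \sum a'b' \quad \text{over quadruples} \quad x = ag_1,\ y = \bar{g}_1 b,\ ab = a'g_2,\ z = \bar{g}_2 b'.
\]
Each summand can be classified by where the cut defining $g_2$ sits inside the concatenation $ab$. Either $g_2$ lies entirely within $b$ (possibly equal to all of $b$), in which case, setting $\mu$ equal to the part of $b$ preceding $g_2$, the configuration reads $x = \alpha g_1$, $y = \bar{g}_1 \mu g_2$, $z = \bar{g}_2 \beta$ with contribution $\alpha \mu \beta$; call this \emph{Type I}. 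Or $g_2$ spills from $b$ into $a$, in which case, setting $\eta \neq \emptyset$ equal to the spill-over, the configuration reads $x = \alpha \eta g_1$, $y = \bar{g}_1 g_2$, $z = \bar{g}_2 \bar{\eta} \beta$ with contribution $\alpha \beta$; call this \emph{Type II}. Expanding the right-hand side in the analogous way and splitting by the symmetric criterion yields the same dichotomy: the new cancellation either lies inside the $a$-part of $y = a g_2$ (Type I), or spills into $b$ along $z = \bar{g}_2 b$ (Type II).

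In both expansions each Type I or Type II configuration on $(x, y, z)$ arises from exactly one parametrization, and the contributed monomial matches the prescription $\alpha \mu \beta$ or $\alpha \beta$ respectively. The principal obstacle is the bookkeeping at the boundary configuration where on the LHS $|g_2| = |b|$: here both the Type I description (with $\mu = \emptyset$) and the nominal Type II description (with $\eta = \emptyset$) would apply, so the convention $\eta \neq \emptyset$ for Type II is essential to count each such boundary term exactly once, as Type I. The same convention resolves the analogous boundary on the RHS where the cancelled prefix exhausts the $a$-part of $y$. Once this bijection of indexing sets is verified, both sides equal the common double sum over all Type I and Type II configurations, yielding associativity.
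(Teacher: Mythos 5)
Your proof is correct and follows essentially the same route as the paper's own argument: expand $(x\times y)\times z$ as a double sum, classify the summands by whether the second cut lands inside $b$ or spills back into $a$, and match the resulting two configuration types against the analogous expansion of $x\times(y\times z)$. You are in fact slightly more careful than the paper at the boundary where the cut coincides with the interface between $a$ and $b$: the paper's statement that $ab=ch$ is equivalent to ``$b=uh,c=au$ with $u\in W$, or $a=cv,h=vb$ with $v\in W$'' has an overlap at $u=v=\emptyset$, and your convention $\eta\neq\emptyset$ for Type II makes the case split genuinely disjoint.
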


\begin{proof}
Observe first that $\times$ is well-defined, the sum being finite. Let us prove now that $\times$ is associative. Let $x,y,z\in W$. Then:
\begin{eqnarray*}
(x\times y)\times z
&=&\sum_{x=a\bar{g},y=gb} ab\times z\\
&=&\sum_{x=a\bar{g},y=gb,ab=ch,z=\bar{h}d}cd
\end{eqnarray*}

Now observe that for $a,b,c,h\in W$ the equality $ab=ch$ is equivalent to $b=uh,c=au$ with $u\in W$, or to $a=cv,h=vb$ with $v\in W$. Thus, we have:
\begin{eqnarray*}
(x\times y)\times z
&=&\sum_{x=a\bar{g},y=guh,z=\bar{h}d}aud\\
&+&\sum_{x=cv\bar{g},y=gb,z=\overline{b}\bar{v}d}cd
\end{eqnarray*}

A similar computation shows that $x\times (y\times z)$ is given by the same formula.
\end{proof}

Still following \cite{ba1}, we have the following result:

\begin{proposition} 
Consider the following morphism, with $S,T$ being the shifts,
$$P:(E,+,\cdot )\to (B(l^2(W)),+,\circ )$$
$$\alpha\to S+T^*$$
and let $E_n\subset E$ be the linear space generated by the words of $W$ having length $\leq n$.
\begin{enumerate}
\item If  $J:E\to E$ is the map $f\to P(f)e$, then $(J-Id)E_n\subset E_{n-1}$ for any $n$.

\item $J$ is an isomorphism of $*$-algebras $(E,+,\cdot)\simeq(E,+,\times)$.
\end{enumerate}
\end{proposition}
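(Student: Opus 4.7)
The plan is to identify $J$ with the unique algebra morphism $\Phi:(E,+,\cdot)\to(E,+,\times)$ extending the identity on the generators $\circ,\bullet$, and then read off all the required properties from this identification.

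For (1), I would fix a word $w=c_1c_2\cdots c_n\in W$ of length $n$ and expand $P(w)=P(c_1)P(c_2)\cdots P(c_n)$, where $P(\circ)=S+T^*$ and $P(\bullet)=S^*+T$. Applying this operator to $e$ and distributing yields $2^n$ terms, each obtained by choosing at every position either the ``creation'' piece ($S$ or $T$) or the ``annihilation'' piece ($T^*$ or $S^*$). The ``all-creation'' choice produces exactly the word $w$. Every other term contains at least one annihilation; processing the product right to left, such a step either sends the current vector to $0$ (if the leading letter does not match) or cancels a previously created letter, producing a word of length strictly less than $n$. Summing the surviving contributions gives $J(w)=w+(\text{shorter words})$, which is (1).

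For (2), the key step is the identity
$$P(c)(v)=c\times v,\qquad c\in\{\circ,\bullet\},\ v\in W.$$
I would verify this by direct case analysis: for $c=\circ$, one has $S(v)=\circ v$ and $T^*(v)=v'$ when $v=\bullet v'$ (and $0$ otherwise), so $P(\circ)(v)=\circ v$ if $v$ does not start with $\bullet$ and $P(\circ)(\bullet v')=\circ\bullet v'+v'$, matching exactly the two contributions to $\circ\times v$ coming from the splittings $(a,g)=(\circ,e)$ and $(a,g)=(e,\circ)$ in the defining sum of Proposition 6.9; the case $c=\bullet$ is symmetric. Extending linearly, $P(c)(y)=c\times y$ for all $y\in E$, and iterating, using associativity of $\times$ and the unit relation $y\times e=y$, one obtains
$$J(w)=P(c_1)\cdots P(c_n)\,e=c_1\times c_2\times\cdots\times c_n.$$
Thus $J$ coincides with $\Phi$, and in particular $J(w_1\cdot w_2)=J(w_1)\times J(w_2)$, so $J$ is multiplicative from $\cdot$ to $\times$. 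For $*$-compatibility, I would first verify $(x\times y)^*=y^*\times x^*$ by a short combinatorial check on the defining sum, and then deduce $J(w^*)=J(w)^*$ from the explicit formula $J(w)=c_1\times\cdots\times c_n$ together with this antimultiplicativity. Bijectivity is immediate from (1): $J$ preserves the filtration $E_n$ and acts as the identity on each graded piece, so it is invertible on every $E_n$, hence on all of $E$.

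The main obstacle is the identity $P(c)(v)=c\times v$. It is the real content of the proposition: it expresses the fact that the combinatorial product $\times$ introduced in Proposition 6.9 is precisely what implements left multiplication by the Fock-space shifts on the vacuum orbit. Once this identity is in hand, everything else reduces to formal bookkeeping with associativity, antimultiplicativity, and triangularity.
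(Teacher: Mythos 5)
Your proposal is correct and follows essentially the same route as the paper: the whole argument rests on the identity $P(c)(v)=c\times v$ (the paper's opening line $P(\circ)f=(S+T^*)f=\circ\times f$, which you rightly identify as the real content and verify by cases), from which multiplicativity of $J$ follows by iteration, while bijectivity comes from the nilpotency of $J-Id$ on each $E_n$, exactly as in the paper. Your treatment of part (1) by direct expansion of $P(c_1)\cdots P(c_n)e$ into creation/annihilation terms, and your explicit check that $(x\times y)^*=y^*\times x^*$, are only presentational variants of the paper's induction and of its brief ``$J*=*J$ on generators'' remark.
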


\begin{proof}
We have several assertions here, the idea being as follows:

\medskip

(1) Let $f\in E$. We have then the following formula:
$$P(\alpha )f=(S+T^*)f=\circ\times f$$

Thus, for any $g\in E$, we have the following formula:
\begin{eqnarray*}
J(\circ g)
&=&P(\circ)J(g)\\
&=&\circ\times J(g)\\
&=&J(\circ)\times J(g)
\end{eqnarray*}

The same argument shows that we have, for any $g\in E$:
$$J(\bullet g)=J(\bullet)\times J(g)$$

Now the algebra $(E,+,\cdot )$ being generated by $\circ$ and $\bullet$, we conclude that $J$ is a morphism of algebras, as follows:
$$J:(E,+,\cdot )\to (E,+,\times )$$

We prove now by recurrence on $n\geq1$ that we have:
$$(J-Id)E_n\subset E_{n-1}$$

At $n=1$ we have $J(\circ)=\circ$, $J(\bullet)=\bullet$ and $J(e)=e$, and since $E_1$ is generated by $e,\circ,\bullet$, we have $J=Id$ on $E_1$, as desired. Now assume that the above formula is true for $n$, and let $k\in E_{n+1}$. We write, with $f,g,h\in E_n$:
$$k=\circ f+\bullet g+h$$

We have then the following computation:
\begin{eqnarray*}
(J-Id)k
&=&J(\circ f+\bullet g+h)-(\circ f+\bullet g+h)\\
&=&[(S+T^*)J(f)+(S^*+T)J(g)+J(h)]-[Sf+Tg+h]\\
&=&S(J(f)-f)+T(J(g)-g)+T^*J(f)+S^*J(g)+(J(h)-h)
\end{eqnarray*}

By using the recurrence assumption, applied to $f,g,h$ we find that $E_n$ contains all the terms of the above sum, and so contains $(J-Id)k$, and we are done.

\medskip

(2) Here we have to prove that $J$ preserves the involution $*$, and that it is bijective. We have $J*=*J$ on the generators $\{e,\circ,\bullet\}$ of $E$, so $J$ preserves the involution. Also, by (1), the restriction of $J-Id$ to $E_n$ is nilpotent, so $J$ is bijective. 
\end{proof}

Following \cite{ba1}, we can now formulate a main result about $U_N^+$, which is quite similar to the result for $O_N^+$ from chapter 5 above, as follows:

\index{unitary quantum group}
\index{free unitary quantum group}
\index{circular law}
\index{Voiculescu circular law}
\index{free Clebsch-Gordan rules}

\begin{theorem}
For the quantum group $U_N^+$, with $N\geq2$, the main character follows the Voiculescu circular law,
$$\chi\sim\Gamma_1$$
and the irreducible representations can be labelled by the colored integers, $k=\circ\bullet\bullet\circ\ldots$\,, with $r_e=1$, $r_\circ=u$, $r_\bullet=\bar{u}$, and with the involution and the fusion rules being
$$\bar{r}_k=r_{\bar{k}}$$
$$r_k\otimes r_l=\sum_{k=xy,l=\bar{y}z}r_{xz}$$
where $k\to\bar{k}$ is obtained by reversing the word, and switching the colors.
\end{theorem}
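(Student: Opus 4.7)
The plan is to mirror the construction used for $O_N^+$ in Theorem 5.13, but with the Clebsch--Gordan recursion replaced by the ``free'' fusion semiring structure $(E,+,\times)$ introduced in Propositions 6.10 and 6.11. The starting points are the same: we already have the easiness estimate $\dim\mathrm{End}(u^{\otimes k})\leq|\mathcal{NC}_2(k)|$ coming from Theorem 6.9, and we already know $r_e=1$, $r_\circ=u$, $r_\bullet=\bar u$ are irreducible (the last two by the embedding $U_N\subset U_N^+$ and the fact that $u,\bar u$ are irreducible for $U_N$). The whole point will be to produce, by recurrence on $|k|$, self-adjoint-as-a-list, pairwise non-equivalent irreducibles $r_k$ satisfying the announced fusion rules, and then to read off both the character law and the exhaustion of $\mathrm{Irr}(U_N^+)$ from Peter--Weyl.

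First I would build the $r_k$ by recurrence on $|k|$, exactly as in step (3)--(4) of the proof of Theorem 5.13. Assume $r_x$ has been constructed for every $x$ with $|x|<n$, obeying the claimed fusion rules and $\bar r_x=r_{\bar x}$. Given $k$ with $|k|=n$, write $k=xc$ with $c\in\{\circ,\bullet\}$; the rule to check is
$$r_x\otimes r_c=r_{xc}+\sum_{x=y\bar c}r_y,$$
i.e.\ only one ``new'' summand $r_{xc}$ appears, the other summands being already constructed. Existence of this new subrepresentation is obtained by Frobenius reciprocity from the inductively known inclusion $r_x\subset r_{xc}\otimes r_{\bar c}$, which in turn follows from the $|k|-1$ case applied to $xc$ and $\bar c$. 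Self-adjointness in the list, $\bar r_k=r_{\bar k}$, is built into the recursion since $\overline{r_x\otimes r_c}=r_{\bar c}\otimes r_{\bar x}$ and the $\times$-multiplication in $E$ is compatible with the involution of Proposition 6.11.

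The main obstacle, and the place where $U_N^+$ behaves differently from $O_N^+$, is the irreducibility and pairwise non-equivalence of the $r_k$, which must be extracted from the easiness bound. Here is where Propositions 6.10 and 6.11, together with Theorem 6.8, do the crucial combinatorial work. Iterating the putative fusion rule formally inside the Grothendieck ring yields a decomposition $u^{\otimes k}=\sum_\ell c_\ell\,r_\ell$ with non-negative multiplicities, and I claim
$$\sum_\ell c_\ell^{\,2}=|\mathcal{NC}_2(k)|.$$
Indeed, under the isomorphism $(\mathbb C\langle X,X^*\rangle,\cdot)\simeq(E,\times)$ of Proposition 6.11, the total multiplicity of $r_e$ in $u^{\otimes k}\otimes\overline{u^{\otimes k}}$ is exactly the coefficient of $e$ in $k\times\bar k$ computed in $E$, and applying $P=S+T^*$ shows that this coefficient equals $\langle(S+T^*)^{2k}e,e\rangle=|\mathcal{NC}_2(2k)|$ when $k$ is treated with its doubled length; rephrased directly at the level of $k$ itself, the sum $\sum_\ell c_\ell^2$ is the coefficient of $e$ in the $\times$-product $k\times \bar k$, which by the very definition of $\times$ counts the non-crossing matching pairings of $k$. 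Combined with $\sum_\ell c_\ell^2\leq\dim\mathrm{End}(u^{\otimes k})\leq|\mathcal{NC}_2(k)|$ from Peter--Weyl and Theorem 6.9, this forces equality throughout, which is precisely the statement that all the $r_\ell$ showing up are irreducible and pairwise non-equivalent, so the fusion rules hold as stated.

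Once the fusion rules are established, the remaining two points are automatic. Peter--Weyl plus the fact that every $u^{\otimes k}$ decomposes into our $r_\ell$'s shows that $\{r_k\mid k\in W\}$ exhausts $\mathrm{Irr}(U_N^+)$, with the involution as claimed. Finally, equality $\int_{U_N^+}\chi^k=\dim\mathrm{End}(u^{\otimes k})=|\mathcal{NC}_2(k)|$ now holds for every $N\geq 2$, without the restriction $N\geq k$, and by Definition 6.7 this is exactly the moment sequence of the Voiculescu circular law $\Gamma_1$, so $\chi\sim\Gamma_1$, completing the proof.
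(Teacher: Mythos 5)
Your overall plan is the same as the paper's: build the $r_k$ by recurrence on length, then use the easiness bound $\dim\mathrm{End}(u^{\otimes k})\leq|\mathcal{NC}_2(k)|$ together with a Peter--Weyl sandwich to force irreducibility and equality, with the combinatorial machinery of Propositions 6.10 and 6.11 carrying the bookkeeping. The logical skeleton is correct. However, your justification of the key combinatorial identity $\sum_\ell c_\ell^2=|\mathcal{NC}_2(k\bar k)|$ contains a genuine error that breaks the argument as written.

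You assert that ``the sum $\sum_\ell c_\ell^2$ is the coefficient of $e$ in the $\times$-product $k\times\bar k$, which by the very definition of $\times$ counts the non-crossing matching pairings of $k$.'' Both halves of this are false. The coefficient of $e$ in $k\times\bar k$, computed in $(E,+,\times)$, is $1$: the free fusion rule gives $\tau(a\times\bar b)=\delta_{a,b}$ for $a,b\in W$, with $\tau$ denoting the coefficient of $e$. And the formula $x\times y=\sum_{x=ag,\,y=\bar g b}ab$ has no direct bearing on noncrossing pairings -- the bridge to $\mathcal{NC}_2$ is precisely what the isomorphism $J$ of Proposition 6.11 and the shift model of Theorem 6.8 supply, and it cannot be read off ``directly.'' What your multiplicities $c_\ell$ actually compute is the coefficient of $e$ in $J(k)\times J(\bar k)=J(k\bar k)$, not in $k\times\bar k$: writing $J(k)=k+\sum_{l(z)<|k|}m(z)z$ in the $\times$-basis, orthogonality of $W$ under $\tau(\,\cdot\times\bar\cdot\,)$ gives $\tau\big(J(k)\times J(\bar k)\big)=1+\sum_z m(z)^2=\sum_\ell c_\ell^2$, while $\tau J(k\bar k)=\langle P(k\bar k)e,e\rangle$ equals $|\mathcal{NC}_2(k\bar k)|$ by Theorem 6.8 (here $P(k\bar k)$ is the alternating product of $S+T^*$ and $S^*+T$ determined by the colored word $k\bar k$, not a power $(S+T^*)^{2k}$). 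You do invoke the shift model in one breath, so the right tool is within reach, but the subsequent ``rephrased directly'' sentence would, if taken literally, give $\sum_\ell c_\ell^2=1$ and the sandwich argument would then prove nothing. You should also make explicit the inductive hypothesis (the already-constructed $r_z$, $|z|<|k|$, are irreducible and pairwise non-equivalent) when invoking Peter--Weyl for the lower bound $\sum_\ell c_\ell^2\leq\dim\mathrm{End}(u^{\otimes k})$, since without it that inequality is not justified.
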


\begin{proof}
This is similar to the proof for $O_N^+$, as follows:

\medskip
(1) In order to get familiar with the fusion rules, let us first work out a few values of the representations $r_k$, computed according to the formula in the statement:
$$r_e=1$$
$$r_\circ=u$$
$$r_\bullet=\bar{u}$$
$$r_{\circ\circ}=u\otimes u$$
$$r_{\circ\bullet}=u\otimes\bar{u}-1$$
$$r_{\bullet\circ}=\bar{u}\otimes u-1$$
$$r_{\bullet\bullet}=\bar{u}\otimes\bar{u}$$
$$\vdots$$

(2) Equivalently, we want to decompose into irreducibles the Peter-Weyl representations, because the above formulae can be written as follows:
$$u^{\otimes e}=r_e$$
$$u^{\otimes\circ}=r_\circ$$
$$u^{\otimes\bullet}=r_\bullet$$
$$u^{\otimes\circ\circ}=r_{\circ\circ}$$
$$u^{\otimes\circ\bullet}=r_{\circ\bullet}+r_e$$
$$u^{\bullet\circ}=r_{\bullet\circ}+r_e$$
$$u^{\bullet\bullet}=r_{\bullet\bullet}$$
$$\vdots$$

(3) In order to prove the fusion rule assertion, let us construct a morphism as follows, by using the polynomiality of the algebra on the left:
$$\Psi: (E,+,\times )\to C(U_N^+)$$
$$\circ\to\chi (u)$$
$$\bullet\to\chi(\bar{u})$$

Our claim is that, given an integer $n\geq1$, assuming that $\Psi(x)$ is the character of an irreducible representation $r_x$ of $U_N^+$, for any $x\in W$ having length $\leq n$, then $\Psi(x)$ is the character of a non-null representation of $U_N^+$, for any $x\in W$ of length $n+1$.

\medskip

(4) At $n=1$ this is clear. Assume $n\geq2$, and let $x\in W$ of length $n+1$. If $x$ contains a $\geq 2$ power of $\circ$ or of $\bullet$, for instance if $x=z\circ\circ\,y$, then we can set:
$$r_x=r_{z\circ}\otimes r_{\circ y}$$

Assume now that $x$ is an alternating product of $\circ$ and $\bullet$. We can assume that $x$ begins with $\circ$. Then $x=\circ\bullet\circ\,y$, with $y\in W$ being of length $n-2$. Observe that $\Psi(\bar{z})=\Psi(z)^*$ holds on the generators $\{e,\circ,\bullet\}$ of $W$, so it holds for any $z\in W$. Thus, we have: 
\begin{eqnarray*}
<\chi (r_\circ\otimes r_{\bullet\circ y}),\chi (r_{\circ y})>
&=&<\chi (r_{\bullet\circ y}),\chi (r_\bullet\otimes r_{\circ y})>\\
&=&<\chi (r_{\bullet\circ y}),\Psi(\bullet\times\circ y )>\\
&=&<\chi (r_{\bullet\circ y}),\Psi(\bullet\circ y) +\Psi(y)>\\
&=&<\chi (r_{\bullet\circ y}),\chi (r_{\bullet\circ y})+\chi(r_y)>\\
&\geq&1
\end{eqnarray*}

Now since the corepresentation $r_{\circ y}$ is by assumption irreducible, we have $r_{\circ y}\subset r_\circ\otimes r_{\bullet\circ y}$. Consider now the following quantity:
\begin{eqnarray*}
\chi (r_\circ\otimes r_{\bullet\circ y})-\chi (r_{\circ y})
&=&\Psi(\circ\times \bullet\circ y-\circ y)\\
&=&\Psi(x)
\end{eqnarray*}

This is then the character of a representation, as desired.

\medskip

(5) We know from easiness that we have the following estimate:
$$\dim(Fix(u^{\otimes k}))\leq|\mathcal{NC}_2(k)|$$

By identifying as usual $(\mathbb C<X,X^*>,+,\cdot )=(E,+,\cdot )$, the noncommutative monomials in $X,X^*$ correspond to the elements of $W\subset E$. Thus, we have, on $W$:
$$h\Psi J\leq\tau J$$

(6) We prove now by recurrence on $n\geq 0$ that for any $z\in W$ having length $n$, $\Psi(z)$ is the character of an irreducible representation $r_z$. 

\medskip

(7) At $n=0$ we have $\Psi_G(e)=1$. So, assume that our claim holds at $n\geq 0$, and let $x\in W$ having length $n+1$. By Proposition 6.19 (1) we have, with  $z\in E_n$:
$$J(x)=x+z$$

Let $E^N\subset E$ be the set of functions $f$ such that $f(x)\in\mathbb N$ for any $x\in W$. Then $J(\alpha ),J(\beta )\in E^N$, so by multiplicativity $J(W)\subset E^N$. In particular, $J(x)\in E^N$. Thus there exist numbers $m(z)\in\mathbb N$ such that:
$$J(x)=x+\sum_{l(z)\leq n}m(z)z$$

(8) It is clear that for $a,b\in W$ we have $\tau(a\times\bar{b})=\delta_{a,b}$. Thus:
\begin{eqnarray*}
\tau J(x\bar{x})
&=&\tau\left(\left(x+\sum m(z)z\right)\times\left(\bar{x}+\sum m(z)\bar{z}\right)\right)\\
&=&1+\sum m(z)^2
\end{eqnarray*}

(9) By recurrence and by (3), $\Psi(x)$ is the character of a representation $r_x$. Thus  $\Psi J(x)$ is the character of $r_x+\sum_{l(z)\leq n}m(z)r_z$, and we obtain from this:
$$h\Psi J(x\bar{x})\geq h(\chi (r_x)\chi (r_x)^*)+\sum m(z)^2$$

(10) By using (5), (8), (9) we conclude that $r_x$ is irreducible, which proves (6). 

\medskip

(11) The fact that the $r_x$ are distinct comes from (5). Indeed, $W$ being an orthonormal basis of $((E,+,\times ),\tau )$, for any $x,y\in W$, $x\neq y$ we have $\tau (x\times\bar{y})=0$, and so: 
\begin{eqnarray*}
h(\chi (r_x\otimes\bar{r_y}))
&=&h\Psi J(x\bar{y})\\
&\leq&\tau J(x\bar{y})\\
&=&\tau (x\times\bar{y})\\
&=&0
\end{eqnarray*}

(12) The fact that we obtain all the irreducible representations is clear too, because we can now decompose all the tensor powers $u^{\otimes k}$ into irreducibles.

\medskip

(13) Finally, since $W$ is an orthonormal system in $((E,+,\times ),\tau )$, the set $\Psi(W)=\{\chi (r_x)|x\in W\}$ is an orthonormal system in $C(U_N^+)$, and so we have:
$$h\Psi J=\tau_0P$$

Now since the distribution of $\chi (u)\in (C(G),h)$ is the functional $h\Psi_GJ$, and the distribution of $S+T^*\in(B(l^2(\mathbb N*\mathbb N)),\tau_0)$ is the functional $\tau_0P$, we have $\chi\sim\Gamma_1$, as claimed.
\end{proof}

The above proof, from \cite{ba1}, is the original proof, still doing well after all these years, but there are some alternative proofs as well, to be discussed in the next section.

\section*{6d. Further results}

Let us discuss now the relation with $O_N^+$. As explained earlier in this chapter, in the classical case the passage $O_N\to U_N$ is something not trivial, requiring a passage via the associated Lie algebras. In the free case the situation is very simple, as follows:

\index{free complexification}

\begin{theorem}
We have an identification as follows,
$$U_N^+=\widetilde{O_N^+}$$
modulo the usual equivalence relation for compact quantum groups.
\end{theorem}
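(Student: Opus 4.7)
The plan is to promote the surjection of Woronowicz algebras $\Phi : C(U_N^+) \twoheadrightarrow C(\widetilde{O_N^+})$ from Proposition 2.19, defined by $u_{ij} \mapsto \tilde{v}_{ij} = zv_{ij}$, to an isomorphism modulo the equivalence of Definition 2.9. By Peter-Weyl (Theorem 3.22) combined with the fact that the Haar functional of a Woronowicz algebra is uniquely determined by the moments of the fundamental character (via the intertwiner formula of Theorem 3.18 and the character orthogonality of Theorem 3.24), this reduces to the single claim that $\chi_u$ in $C(U_N^+)$ and $\chi_{\tilde v} = z\chi_v$ in $C(\widetilde{O_N^+}) \subset C(\mathbb T) * C(O_N^+)$ have identical $*$-distributions under their respective Haar integrations.

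By Theorem 6.10, the $k$-th $*$-moment of $\chi_u$ equals $|\mathcal{NC}_2(k)|$ for every colored integer $k$. For the right-hand side, I would invoke Proposition 3.19, which identifies the Haar state on $C(\mathbb{T}) * C(O_N^+)$ with the free product of the individual Haar states. In particular, $z$ and the image of $C(O_N^+)$ are freely independent; $z$ is a Haar unitary on $\mathbb{T}$; and by Theorem 5.13, $\chi_v \sim \gamma_1$ is standard semicircular at $N \geq 2$. The problem thus reduces to the free-probability identity that the product $z\chi_v$ of a free Haar unitary and a free semicircular is circular, with $*$-moments $|\mathcal{NC}_2(k)|$ as in Definition 6.8.

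To prove this final identity, I would expand directly: for a colored exponent $k = e_1 \cdots e_n$,
\[
\int_{\widetilde{O_N^+}} \chi_{\tilde{v}}^{\, k} \;=\; \int z^{e_1}\chi_v\, z^{e_2}\chi_v \cdots z^{e_n}\chi_v,
\]
and apply the freeness moment rule. Since $\int z^{\pm 1} = 0$ and $\int \chi_v = 0$ at $N \geq 2$, only those contributions survive in which the $z$-factors can be paired and collapsed via $zz^* = z^*z = 1$, in a noncrossing matching of colors; the intervening $\chi_v$ blocks then contribute through the noncrossing-pair semicircular moments of Theorem 5.11. A bookkeeping argument identifies these configurations bijectively with $\mathcal{NC}_2(k)$, yielding the desired count $|\mathcal{NC}_2(k)|$.

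The main obstacle is this combinatorial reorganization of the free-product expansion: while it is a classical fact that a Haar unitary times a free semicircular is circular, implementing it cleanly with the colored pairings of $\mathcal{NC}_2(k)$ requires care in tracking the interaction between the $z$-matching structure and the internal $\chi_v$-pairings. An alternative bypassing the explicit count is to exploit the Fock-space model of Theorem 6.9: the operator $S+T^* \sim \Gamma_1$ on $\ell^2(\mathbb{N}*\mathbb{N})$ admits a factorization making a Haar-unitary / semicircular splitting manifest, and transplanting this model into the free product realizes $z\chi_v$ geometrically as $S+T^*$, identifying the distributions directly.
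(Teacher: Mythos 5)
Your argument is exactly the paper's first (and original) proof: reduce via Peter--Weyl and the surjection $C(U_N^+)\to C(\widetilde{O_N^+})$ to comparing the $*$-distributions of the two main characters, then invoke the free-probability fact that a free Haar unitary times a free semicircular variable is circular; the paper simply cites \cite{vo4} for this last identity rather than rederiving it (and the clean justification for the Peter--Weyl step is Theorem 8.18 (7) together with Proposition 4.3, rather than the slightly too strong claim that the character law determines the Haar functional). The paper also records two alternatives that bypass the combinatorial bookkeeping you flag as the main obstacle: one via fusion rules of dual free products, and a purely Tannakian one noting that coloring $NC_2$ already produces $\mathcal{NC}_2$, so the middle category $Hom((zv)^{\otimes k},(zv)^{\otimes l})$ is pinned down by easiness alone.
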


\begin{proof}
We recall from chapter 2 that the free complexification operation $G\to\widetilde{G}$ is obtained by multiplying the coefficients of the fundamental representation by a unitary free from them. We have embeddings as follows, with the first one coming by using the counit, and with the second one coming from the universality property of $U_N^+$:
$$O_N^+
\subset\widetilde{O_N^+}
\subset U_N^+$$

We must prove that the embedding on the right is an isomorphism, and there are several ways of doing this, all instructive, as follows:

\medskip

(1) The original argument, from \cite{ba1}, is something quick and advanced, based on the standard free probability fact that when freely multiplying a semicircular variable by a Haar unitary we obtain a circular variable \cite{vdn}. Thus, the main character of $\widetilde{O_N^+}$ is circular, exactly as for $U_N^+$, and by Peter-Weyl we obtain that the inclusion $\widetilde{O_N^+}\subset U_N^+$ must be an isomorphism, modulo the usual equivalence relation for quantum groups.

\medskip

(2) A version of this proof, not using any prior free probability knowledge, is by using fusion rules. Indeed, as explained in chapter 2 above, the representations of the dual free products, and in particular of the free complexifications, can be explicitely computed. Thus the fusion rules for $\widetilde{O_N^+}$ appear as a ``free complexification'' of the Clebsch-Gordan rules for $O_N^+$, and in practice this leads to the same fusion rules as for $U_N^+$. As before, by Peter-Weyl we obtain from this that the inclusion $\widetilde{O_N^+}\subset U_N^+$ must be an isomorphism, modulo the usual equivalence relation for the compact quantum groups.

\medskip 

(3) A third proof of the result, based on the same idea, and which is perhaps the simplest, makes use of the easiness property of $O_N^+,U_N^+$ only. Indeed, let us denote by $v,zv,u$ the fundamental representations of the following quantum groups:
$$O_N^+\subset\widetilde{O_N^+}\subset U_N^+$$

At the level of the associated Hom spaces we obtain reverse inclusions, as follows:
$$Hom(v^{\otimes k},v^{\otimes l})
\supset Hom((zv)^{\otimes k},(zv)^{\otimes l})
\supset Hom(u^{\otimes k},u^{\otimes l})$$

The spaces on the left and on the right are known from chapter 4 above, the result there stating that these spaces are as follows:
$$span\left(T_\pi\Big|\pi\in NC_2(k,l)\right)\supset span\left(T_\pi\Big|\pi\in\mathcal{NC}_2(k,l)\right)$$

Regarding the spaces in the middle, these are obtained from those on the left by ``coloring'', so we obtain the same spaces as those on the right. Thus, by Tannakian duality, our embedding $\widetilde{O_N^+}\subset U_N^+$ is an isomorphism, modulo the usual equivalence relation.
\end{proof}

As a comment here, the proof (3) above, when properly worked out, provides as well an alternative proof for Theorem 6.20. Indeed, once we know that we have $U_N^+=\widetilde{O_N^+}$, it follows that the fusion rules for $U_N^+$ appear as a ``free complexification'' of the Clebsch-Gordan rules for $O_N^+$, and in practice this leads to the formulae in Theorem 6.20.

\bigskip

However, this is nowhere done in the literature, and if you prefer this kind of proof, which is purely algebraic, you will have to work it out by yourself. The problem is that, with this proof, you still have to show afterwards that $\chi$ is circular, and this is best done starting from $U_N^+=\widetilde{O_N^+}$, and using the polar decomposition of circular variables, which is a free probability result due to Voiculescu \cite{vdn}, which is not exactly trivial.

\bigskip

Let us summarize this discussion by recording the following fact:

\begin{fact}
It is possible to establish the main results regarding $U_N^+$, namely
\begin{enumerate}
\item Free compexification, $U_N^+=\widetilde{O_N^+}$

\item Fusion rules, $r_k\otimes r_l=\sum_{k=xy,l=\bar{y}z}r_{xz}$

\item Character law, $\chi\sim\Gamma_1$
\end{enumerate}
by using diagrams for $(1)$, and then proving $(1)\implies(2),(3)$.
\end{fact}

Which leads us into the question on why \cite{ba1} was not written in this way, because that was a research paper, where the use of anything from \cite{vdn} was allowed anyway. Well, the story here is that \cite{ba1} was my PhD thesis, and my advisor Georges Skandalis, as one of the main architects, with his colleague Saad Baaj, of the theory of locally compact quantum groups with $S^2\neq id$, was insisting for me to do the work for the $S^2\neq id$ analogues of $U_N^+$ too, and for certain technical reasons, this cannot be done as in Fact 6.22.

\bigskip

In short, and as an advice now if you are a PhD student, just shut up and do what your advisor is saying, as a perfect mercenary. Discipline first, and learn to kill anything upon request, that's always a good skill to have. And plenty of time later to fully express yourself, during a long career. As I am actually doing myself now, when writing this book, with $S^2=id$ as an axiom, contrary to what Georges and Saad have taught me.

\bigskip

Back to work now, as an interesting consequence of the above result, we have:

\index{projective version}
\index{projective quantum group}

\begin{theorem}
We have an identification as follows,
$$PO_N^+=PU_N^+$$
modulo the usual equivalence relation for compact quantum groups.
\end{theorem}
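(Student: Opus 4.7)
The plan is to establish the result via Tannakian duality, by showing that the intertwiner spaces of the fundamental corepresentation $v = u \otimes \bar u$ of the projective version coincide inside $O_N^+$ and inside $U_N^+$. By Theorem 2.15, $PG$ is the compact quantum subgroup of $G$ generated by the coefficients of $v = u \otimes \bar u$, so its representation theory is the restriction of that of $G$ to tensor powers of $v$. By Proposition 4.3 (a consequence of Tannakian duality), the equality $PO_N^+ = PU_N^+$ will follow once we show that, for every $k, l$,
$$\mathrm{Hom}_{O_N^+}(v^{\otimes k}, v^{\otimes l}) = \mathrm{Hom}_{U_N^+}(v^{\otimes k}, v^{\otimes l})$$
as subspaces of $\mathcal{L}((\mathbb{C}^N)^{\otimes 2k}, (\mathbb{C}^N)^{\otimes 2l})$.

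By easiness (Theorem 5.1), the left-hand side equals $\mathrm{span}\{T_\pi \mid \pi \in NC_2(2k, 2l)\}$, using $u = \bar u$, so that $v = u^{\otimes 2}$ has $2k$ uncolored legs. The right-hand side equals $\mathrm{span}\{T_\pi \mid \pi \in \mathcal{NC}_2((\circ\bullet)^k,(\circ\bullet)^l)\}$, since $v = u \otimes \bar u$ has $2k$ legs colored $\circ\bullet\circ\bullet\cdots\circ\bullet$. As the assignment $\pi \mapsto T_\pi$ depends only on the underlying unlabeled pair partition, it therefore suffices to prove the set-theoretic identity
$$\mathcal{NC}_2\bigl((\circ\bullet)^k,(\circ\bullet)^l\bigr) = NC_2(2k, 2l),$$
that is, that every noncrossing pair partition between two alternately colored rows is automatically matching.

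I would verify this combinatorial identity using the standard circle model: a pair partition between a top row of $2k$ points and a bottom row of $2l$ points identifies with a pair partition of $2(k+l)$ points on a circle (top left-to-right, then bottom right-to-left), with noncrossing on the rectangle corresponding to noncrossing on the circle. A direct parity computation shows that in each of the three cases (horizontal top, horizontal bottom, vertical), the matching condition for the alternating coloring reduces to the single requirement that the two endpoints have circle-indices of opposite parity. But in any noncrossing pair partition on a circle of $2(k+l)$ points, each pair splits the remaining points into two arcs of even size, which forces the pair's endpoints to have indices of opposite parity. Hence every noncrossing pair partition is matching, and the identity above holds.

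The main obstacle is really just the combinatorial bookkeeping for the circle reduction, in particular keeping track of how the bottom row's coloring interacts with its reversal on the circle. An alternative, conceptually cleaner route uses Theorem 6.13: since $U_N^+ = \widetilde{O_N^+}$, it suffices to show more generally that free complexification preserves the projective version, i.e., $P\widetilde{G} \cong PG$ for any closed subgroup $G \subset U_N^+$. Writing $\tilde u = zu$, one has $\tilde u_{ij}\tilde u_{ab}^* = z u_{ij} u_{ab}^* z^*$, and since intermediate $z^*z$ factors collapse in products, the assignment $a \mapsto zaz^*$ defines a $*$-algebra isomorphism $C(PG) \to C(P\widetilde{G})$; compatibility with the comultiplication is a direct check using $\Delta(z) = z \otimes z$.
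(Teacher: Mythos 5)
Your proposal is correct and follows essentially the same two routes as the paper: the main argument is exactly the paper's Tannakian proof, comparing $Hom((v\otimes v)^k,(v\otimes v)^l)=span(T_\pi,\ \pi\in NC_2)$ with $span(T_\pi,\ \pi\in\mathcal{NC}_2((\circ\bullet)^k,(\circ\bullet)^l))$ and observing that for alternating colorings the two sets of pairings coincide, while your alternative via $U_N^+=\widetilde{O_N^+}$ is the paper's first proof. The only difference is that you spell out the parity argument showing noncrossing implies matching, which the paper asserts without detail.
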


\begin{proof}
As before, we have several proofs for this result, as follows:

\medskip

(1) This follows from Theorem 6.21, because we have:
$$PU_N^+=P\widetilde{O_N^+}=PO_N^+$$

(2) We can deduce this as well directly. With notations as before, we have:
$$Hom\left((v\otimes v)^k,(v\otimes v)^l\right)=span\left(T_\pi\Big|\pi\in NC_2((\circ\bullet)^k,(\circ\bullet)^l)\right)$$
$$Hom\left((u\otimes\bar{u})^k,(u\otimes\bar{u})^l\right)=span\left(T_\pi\Big|\pi\in \mathcal{NC}_2((\circ\bullet)^k,(\circ\bullet)^l)\right)$$

The sets on the right being equal, we conclude that the inclusion $PO_N^+\subset PU_N^+$ preserves the corresponding Tannakian categories, and so must be an isomorphism.
\end{proof}

As a conclusion, the passage $O_N^+\to U_N^+$ is something much simpler than the passage $O_N\to U_N$, with this ultimately coming from the fact that the combinatorics of $O_N^+,U_N^+$ is something much simpler than the combinatorics of $O_N,U_N$. In addition, all this leads as well to the interesting conclusion that the free projective geometry does not fall into real and complex, but is rather unique and ``scalarless''. We will be back to this.

\bigskip

More generally, once again by following \cite{ba1}, we have similar results obtained by replacing $O_N^+$ with the more general super-orthogonal quantum groups $O_F^+$ from the previous chapter, which include as well the free symplectic groups $Sp_N^+$. Let us start with:

\index{super-orthogonal group}
\index{free symplectic group}

\begin{theorem}
We have an identification as follows,
$$U_N^+=\widetilde{O_F^+}$$
valid for any super-orthogonal quantum group $O_F^+$.
\end{theorem}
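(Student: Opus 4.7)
The approach will mirror the three proofs given for Theorem 6.13, with Theorem 5.20 supplying the key ingredient that ``$O_F^+$ looks like $O_N^+$ from the perspective of representation theory'': its main character is standard semicircular and its fusion rules are Clebsch-Gordan, regardless of the twisting matrix $F$. Once this is granted, the passage to the free complexification is essentially formal.

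First, I would construct the embedding $\widetilde{O_F^+}\subset U_N^+$. With $v\in M_N(C(O_F^+))$ the fundamental corepresentation and $z=\mathrm{id}\in C(\mathbb T)$, the matrix $\tilde v=zv$ is biunitary inside $C(\mathbb T)*C(O_F^+)$, so the universality of $C(U_N^+)$ supplies a surjection $C(U_N^+)\to C(\widetilde{O_F^+})$ sending $u\mapsto\tilde v$. This is the only ``existence'' content; everything else is a comparison of sizes.

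Second, I would identify the character distribution of $\widetilde{O_F^+}$. By Proposition 3.19(2), the Haar functional on $\widetilde{O_F^+}$ is the restriction of the free product $\int_{\mathbb T}*\int_{O_F^+}$, so $z$ is a Haar unitary which is $*$-free from $C(O_F^+)$. Since $\chi_v\sim\gamma_1$ by Theorem 5.20, the character $\chi_{\tilde v}=z\chi_v$ is the product of a Haar unitary with a free standard semicircular. A routine moment computation in the free product (center the relevant powers of $\chi_v$ and use freeness to kill alternating centered products) shows that the colored moments of $z\chi_v$ equal $|\mathcal{NC}_2(k)|$, i.e. $\chi_{\tilde v}\sim\Gamma_1$, exactly as for $U_N^+$ (cf. Theorem 6.9 and Theorem 6.12).

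Third, I would close via the Tannakian criterion of Proposition 4.3. The surjection from step one yields inclusions $\mathrm{Hom}(u^{\otimes k},u^{\otimes l})\subset\mathrm{Hom}(\tilde v^{\otimes k},\tilde v^{\otimes l})$ for all colored integers $k,l$. By the Frobenius isomorphism (Theorem 3.21) together with the character formula of Theorem 3.18, the dimensions of these Hom spaces are determined by the joint $*$-moments of the main character; since both main characters are circular, these dimensions match, the inclusions become equalities, and Proposition 4.3 forces the surjection $C(U_N^+)\to C(\widetilde{O_F^+})$ to be an isomorphism modulo the equivalence relation of Definition 2.9.

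The main obstacle is the free-probabilistic computation in step two, which although elementary is the only non-formal input. An alternative route, closer in spirit to proof (2) of Theorem 6.13, bypasses this by computing the fusion rules of $\widetilde{O_F^+}$ directly: Proposition 3.26(2) describes the irreducibles of $\mathbb T\,\hat{*}\,O_F^+$ as alternating tensor products, and together with the Clebsch-Gordan rules on the $O_F^+$ side one recovers exactly the free Clebsch-Gordan rules of $U_N^+$ from Theorem 6.12. Matching dimensions of irreducibles then closes the argument without any probabilistic input.
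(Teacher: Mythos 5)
Your proposal is correct and mirrors the paper's own approach: the paper proves Theorem 6.15 by noting that the three proofs of Theorem 6.13 extend verbatim once Theorem 5.20 guarantees that $\chi_v\sim\gamma_1$ and the Clebsch-Gordan rules hold for $O_F^+$, and you carry out precisely proof (1) of Theorem 6.13 (Haar unitary $\times$ semicircular gives circular, then close via Peter-Weyl/Tannakian duality) with proof (2) (fusion rules of the dual free product) as the mentioned alternative. Your added detail of routing the Peter-Weyl conclusion explicitly through Proposition 4.3 and Frobenius duality (Theorem 3.21) to reduce Hom-space dimensions to colored character moments is a sound way of making the paper's ``by Peter-Weyl'' step precise.
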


\begin{proof}
This is a straightforward extension of Theorem 6.21 above, with any of the proofs there extending to the case of the quantum groups $O_F^+$. See \cite{ba1}.
\end{proof}

We have as well a projective version of the above result, as follows:

\begin{theorem}
We have an identification as follows,
$$PU_N^+=PO_F^+$$
valid for any super-orthogonal quantum group $O_F^+$.
\end{theorem}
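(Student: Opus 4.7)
The plan is to deduce this directly from the previous two theorems, in two steps. First, I would invoke Theorem 6.15, which provides the identification $U_N^+ = \widetilde{O_F^+}$, so the statement reduces to proving that $P\widetilde{G} = PG$ for a general compact quantum group $G$, applied here to $G = O_F^+$. Second, I would establish this general free-complexification invariance of the projective version by a direct coordinate computation.

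For the second step, recall that in the free complexification $\widetilde{G}$ the fundamental corepresentation is $\widetilde{u} = zu$, where $z \in C(\mathbb{T})$ is the standard generator, free from $C(G)$. The projective coordinates of $\widetilde{G}$ are then
$$\widetilde{v}_{ia,jb} = \widetilde{u}_{ij}\widetilde{u}_{ab}^* = (zu_{ij})(\bar{z}u_{ab}^*) = z\bar{z}\cdot u_{ij}u_{ab}^* = u_{ij}u_{ab}^* = v_{ia,jb},$$
using $z\bar{z}=1$. Hence the algebra $C(P\widetilde{G})$ generated by the $\widetilde{v}_{ia,jb}$ inside $C(\widetilde{G})$ coincides with the algebra $C(PG)$ generated by the $v_{ia,jb}$ inside $C(G)$, and so $P\widetilde{G} = PG$ in the sense of Definition 2.9. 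Combining with Theorem 6.15 yields $PU_N^+ = P\widetilde{O_F^+} = PO_F^+$.

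Alternatively, and as a sanity check, one can give a Tannakian proof parallel to that of Theorem 6.14, bypassing the free complexification step. Writing $u,U$ for the fundamental corepresentations of $PU_N^+,PO_F^+$ (the latter obtained by spinning by $F$ where appropriate), one has from the Brauer-type results, after straightforward bookkeeping with the colorings forced by projecting onto powers of $u\otimes\bar{u}$, that
$$\mathrm{Hom}\bigl((u\otimes\bar{u})^{\otimes k},(u\otimes\bar{u})^{\otimes l}\bigr) = \mathrm{span}\bigl(T_\pi \,\big|\, \pi \in \mathcal{NC}_2((\circ\bullet)^k,(\circ\bullet)^l)\bigr),$$
and the analogous space for $O_F^+$ produces the same category, because on alternating-color strings the super-identity twisting cancels out (each $F$ is paired with an $F^{-1}$ along matching strings). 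By Tannakian duality the inclusion $PO_F^+ \subset PU_N^+$ preserves intertwiners, hence is an isomorphism modulo the usual equivalence.

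The main obstacle, if any, lies in the second proof: verifying carefully that the $F$-twistings in the Tannakian category of $O_F^+$ really do cancel on the even tensor powers $(u\otimes\bar{u})^{\otimes k}$, so that one recovers the untwisted matching noncrossing pair partitions on alternating colors. The first proof avoids this entirely, so it is cleaner — the only thing to double-check there is that the equivalence relation on Woronowicz algebras from Definition 2.9 is compatible with the passage to projective versions, which it is, since it is defined by matching of standard coordinates and the projective coordinates are polynomials in these.
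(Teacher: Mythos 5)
Your overall route is exactly the paper's: the paper proves this either by extending the proofs of Theorem 6.14 to $O_F^+$ (your Tannakian variant), or by taking projective versions in Theorem 6.15 (your first argument). However, your first argument contains a concrete error: inside the free product $C(\mathbb T)*C(G)$ the variable $z$ is \emph{free} from $C(G)$, not central, so $\widetilde{u}_{ab}^*=(zu_{ab})^*=u_{ab}^*\bar{z}$ and hence
$$\widetilde{v}_{ia,jb}=\widetilde{u}_{ij}\widetilde{u}_{ab}^*=z\,u_{ij}u_{ab}^*\,\bar{z}=z\,v_{ia,jb}\,\bar{z},$$
which is \emph{not} equal to $v_{ia,jb}$; your step $(zu_{ij})(\bar{z}u_{ab}^*)=z\bar{z}\,u_{ij}u_{ab}^*$ silently commutes $z$ past the coordinates and would fail. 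In particular the two algebras do not literally coincide inside $C(\mathbb T)*C(G)$. The conclusion $P\widetilde{G}=PG$ is nevertheless correct and the fix is short: since $\bar{z}z=1$, products of the $\widetilde{v}$'s collapse as $z\,v_{i_1a_1,j_1b_1}\cdots v_{i_ka_k,j_kb_k}\,\bar{z}$, so $x\mapsto zx\bar{z}$ is a $*$-algebra isomorphism from $\mathcal C(PG)$ onto $\mathcal C(P\widetilde{G})$ mapping standard projective coordinates to standard projective coordinates, which is precisely an identification in the sense of Definition 2.9. With that repair, combining with Theorem 6.15 gives $PU_N^+=P\widetilde{O_F^+}=PO_F^+$ as desired.

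Your alternative Tannakian argument is sound in outline and matches the paper's other suggested proof; the point you flag, namely that the $F$-twistings cancel on the spaces $\mathrm{Hom}((u\otimes\bar{u})^{\otimes k},(u\otimes\bar{u})^{\otimes l})$, is indeed the only technical content (it uses $FF^*=1$ and $F\bar{F}=\pm1$, with the signs cancelling in pairs along the alternating colorings), and the paper itself leaves this verification implicit.
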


\begin{proof}
This is a straightforward extension of Theorem 6.23, with any of the proofs there extending to the case of the quantum groups $O_F^+$. Alternatively, the result follows from Theorem 6.24, by taking the projective versions of the quantum groups there.
\end{proof}

The free symplectic result at $N=2$ is particularly interesting, because here we have $Sp_2^+=SU_2$, and so we obtain that $U_2^+$ is the free complexification of $SU_2$:

\begin{theorem}
We have an identification as follows,
$$U_2^+=\widetilde{SU_2}$$
modulo the usual equivalence relation for compact quantum groups.
\end{theorem}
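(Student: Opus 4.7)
The plan is to obtain this as a direct specialization of Theorem 6.15, combined with the low-dimensional coincidence from Theorem 5.19. More precisely, Theorem 6.15 provides the identification
$$U_N^+=\widetilde{O_F^+}$$
for any super-orthogonal quantum group $O_F^+$, and in particular it applies at $N=2$ for every admissible choice of the super-identity matrix $F$.

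The first step is to specialize the sign parameter to $\varepsilon=-1$ and take $N=2$, which forces the super-identity matrix to be
$$F=\begin{pmatrix}0&1\\-1&0\end{pmatrix}.$$
With this choice, Definition 5.18 identifies the resulting super-orthogonal quantum group as $O_F^+=Sp_2^+$. Thus Theorem 6.15 yields in this particular case the identification $U_2^+=\widetilde{Sp_2^+}$.

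The second step is to invoke Theorem 5.19(2), which asserts that $Sp_2^+=SU_2$, this being precisely the content of the observation that for the above matrix $F$ the unitarity relations $u=F\bar{u}F^{-1}=\text{unitary}$ coincide with the defining relations of $C(SU_2)$ from Theorem 5.14, and moreover that the commutativity is automatic at $N=2$, as worked out in Theorem 5.19(2). Combining the two identifications gives
$$U_2^+=\widetilde{Sp_2^+}=\widetilde{SU_2},$$
as desired.

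The only point requiring a bit of care, which is essentially the ``main obstacle'' in what is otherwise an immediate corollary, is to check that the free complexification operation $G\to\widetilde{G}$ is compatible with the equivalence relation for compact quantum groups, so that the isomorphism $Sp_2^+\simeq SU_2$ of Theorem 5.19 transports through $\widetilde{\ \cdot\ }$. This is clear from the construction of $\widetilde{G}$ given in Proposition 2.12, since the algebra $C(\widetilde{G})$ is generated by the entries of $\tilde{u}=zu$ inside $C(\mathbb T)*C(G)$, and hence depends functorially only on the pair $(C(G),u)$ up to the equivalence of Definition 2.9. Hence the identification passes cleanly from $Sp_2^+$ to $SU_2$, and the theorem follows.
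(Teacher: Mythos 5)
Your proof is correct and follows essentially the same route as the paper, which likewise deduces the statement from Theorem 6.15 combined with the identification $Sp_2^+=SU_2$ from the super-orthogonal material of Section 5. The extra remark on the compatibility of the free complexification $G\to\widetilde{G}$ with the equivalence relation of Definition 2.9 is a harmless (and correct) precision that the paper leaves implicit.
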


\begin{proof}
As explained above, this follows from Theorem 6.24, and from $Sp_2^+=SU_2$, via the material explained in chapter 5 above. See \cite{ba1}, \cite{bsk}.
\end{proof}

Finally, we have a projective version of the above result, as follows:

\index{projective version}

\begin{theorem}
We have an identification as follows, and this even without using the standard equivalence relation for the compact quantum groups:
$$PU_2^+=SO_3$$
A similar result holds for the ``left'' projective version of $U_2^+$, constructed by using the corepresentation $\bar{u}\otimes u$ instead of $u\otimes\bar{u}$.
\end{theorem}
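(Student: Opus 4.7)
The plan is to reduce to the classical identification $PSU_2 = SO_3$ via the super-orthogonal framework. First, I would apply Theorem 6.16 with $F$ taken to be the $2\times 2$ symplectic super-identity
$$F = \begin{pmatrix} 0 & 1 \\ -1 & 0 \end{pmatrix},$$
which yields $PU_2^+ = PO_F^+ = PSp_2^+$. Then Theorem 5.19 (2) collapses $Sp_2^+$ to the classical group $SU_2$, so that $PSp_2^+ = PSU_2$. Finally, the projective construction of Theorem 2.15 applied to $SU_2 \subset U_2$ produces the quotient by the kernel of the conjugation action on $M_2(\mathbb{C})$, which is the center $\{\pm 1\} \subset SU_2$; the quotient is the classical group $SO_3$. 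Chaining these three identifications produces $PU_2^+ = SO_3$.

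The refined claim is that this identification requires no passage through the equivalence classes of Definition 2.9. To see this, observe that in the Tannakian proof of Theorem 6.16 the Hom spaces for $u \otimes \bar u$ over $U_2^+$ and for $v \otimes v$ over $Sp_2^+$ are both spanned by the operators $T_\pi$ with $\pi \in NC_2((\circ\bullet)^k,(\circ\bullet)^l)$. By the Tannakian reconstruction of Theorem 4.20, this literal equality of intertwiner spaces produces a $*$-isomorphism of the underlying coordinate Hopf $*$-algebras of $PU_2^+$ and $PSU_2$, mapping standard projective coordinates to standard projective coordinates. Since $SU_2$ is classical, its projective coordinate algebra is commutative and is the polynomial algebra of $C(SO_3)$. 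A commutative $*$-algebra admits a unique $C^*$-norm, namely the sup norm on its spectrum, so the full $C^*$-completion $C(PU_2^+)$ coincides with $C(SO_3)$ on the nose, without any equivalence-relation adjustment. For the ``left'' projective version built from $\bar u \otimes u$, the argument is entirely symmetric: swapping $\circ \leftrightarrow \bullet$ throughout produces the same matching Hom spaces, so the left projective coordinate algebra is likewise commutative and strictly identifies with $C(SO_3)$.

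The main obstacle is verifying that the commutativity of the projective coordinate algebra of $U_2^+$ — an algebra living inside the very non-commutative $C(U_2^+)$ — really does follow purely from the Tannakian count of $NC_2$ diagrams, with the passage through $Sp_2^+ = SU_2$ handled at the level of coordinates without smuggling in spurious equivalences. Once this bookkeeping is clean, the absence of full-versus-reduced ambiguity for commutative algebras forces the strict equality $C(PU_2^+) = C(SO_3)$.
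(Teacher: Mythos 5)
Your reduction $PU_2^+=PSp_2^+=PSU_2=SO_3$ is fine, but it is not really a different road: the paper obtains the same chain, modulo the equivalence relation, from $U_2^+=\widetilde{SU_2}$ (Theorem 6.17), which rests on the same super-orthogonal results you quote, so the genuine content of the theorem is entirely in the refinement ``without the equivalence relation''. And there your key justification fails as stated: the claim that a commutative $*$-algebra admits a unique $C^*$-norm is false. For instance $\mathbb{C}[x]$ with $x=x^*$ carries the sup norms over $[0,1]$ and over $[0,2]$, which are distinct $C^*$-norms; more to the point, uniqueness of the $C^*$-norm on a coordinate Hopf $*$-algebra is exactly coamenability, which is never free of charge. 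What is true, and what is actually needed, is that a \emph{commutative Woronowicz algebra} is automatically coamenable: any $C^*$-completion is of the form $C(X)$ with $X$ a compact group, and the Haar state, being integration against Haar measure, has full support and hence is faithful, so the full and reduced completions of the coordinate algebra coincide. This is precisely the paper's argument (``since $SO_3$ is coamenable, the formula holds in a plain way''), with the alternative of checking directly that the coefficients of $u\otimes\bar{u}$ commute inside $C(U_2^+)$. Once this is in place your argument closes: the norm induced on $\mathcal C(PU_2^+)$ by $C(U_2^+)$ is dominated by the universal norm and dominates the reduced norm (the Haar state of $U_2^+$ restricts to that of the projective version, by Proposition 3.19 (4)), so it equals the sup norm and $\langle v_{ia,jb}\rangle=C(SO_3)$ on the nose.

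A secondary inaccuracy: the intertwiner spaces for $u\otimes\bar{u}$ over $U_2^+$ and for $v\otimes v$ over $SU_2$ (or $Sp_2^+$) are not \emph{literally} equal subspaces of $\mathcal L((\mathbb{C}^2)^{\otimes 2k},(\mathbb{C}^2)^{\otimes 2l})$, because for $SU_2$ the noncrossing pairings are implemented in the twisted way, via the super-identity $F$, as in the proof of Theorem 5.15. What one gets is an isomorphism of the two concrete tensor categories, by conjugating with tensor powers of $F$ (using $\bar v=F^{-1}vF$), and hence an isomorphism of coordinate Hopf $*$-algebras mapping projective coordinates to projective coordinates; this suffices for your purposes, but the ``literal equality'' should be amended, since it is exactly the bookkeeping obstacle you flag at the end. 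The left projective version indeed requires no new idea, the argument being symmetric under exchanging $\circ$ and $\bullet$.
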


\begin{proof}
We have several assertions here, the idea being as follows:

\medskip

(1) By using Theorem 6.26 we obtain, modulo the equivalence relation:
$$PU_2^+=P\widetilde{SU_2}=PSU_2=SO_3$$

(2) Now since $SO_3$ is coamenable, the above formula must hold in fact in a plain way, meaning without using the equivalence relation. This can be checked as well directly, by verifying that the coefficients of $u\otimes\bar{u}$ commute indeed.

\medskip

(3) Finally, the last assertion can be either deduced from the first one, or proved directly, by using ``left'' free complexification operations, in all the above.
\end{proof}

We refer to \cite{ba1} for further applications of the above $N=2$ results, for instance with structure results regarding the von Neumann algebra $L^\infty(U_2^+)$. We will be back to $U_N^+$ in chapter 8 below, with a number of more advanced probabilistic results about it.

\section*{6e. Exercises}

As with the exercices from the previous chapter, regarding the quantum group $O_N^+$, we will mainly focus here on combinatorics and probability. Let us start with:

\begin{exercise}
Given two $C^*$-algebras with traces $A,B$, prove that these algebras are independent inside $A\otimes B$, and free inside $A*B$.
\end{exercise}

Here the independence assertion is quite straightforward, and the freeness assertion requires some preliminary work, in order to construct a trace on $A*B$. For this latter construction, the general formulae for freeness discussed in this chapter can be used.

\begin{exercise}
Given two discrete groups $\Gamma,\Lambda$, prove that the algebras $C^*(\Gamma),C^*(\Lambda)$ are independent inside $C^*(\Gamma\times\Lambda)$, and free inside $C^*(\Gamma*\Lambda)$.
\end{exercise}

The results here can be deduced either directly, by verifying the defining formulae for independence and freeness, or via the result from the previous exercise.

\begin{exercise}
Prove that the quantum group inclusion
$$PO_N^+\subset PU_N^+$$
is an isomorphism, by showing that the corresponding tensor categories coincide.
\end{exercise}

This is something that we already discussed in the above, the problem now being that of finding an explicit, complete proof for this, by using that method.

\begin{exercise}
Work out the details of the identification
$$U_2^+=\widetilde{SU_2}$$
and of the corresponding isomorphism at the level of diagonal tori.
\end{exercise}

To be more precise, the above identification is something that we already know, coming from abstract results, and the problem now is that of doing all this explicitely.

\begin{exercise}
Work out a theory of left and right projective versions for the compact quantum groups, and prove that
$$PU_2^+=SO_3$$
happens, independently of the projective version theory which is used.
\end{exercise}

Aa before with some other exercises, this is something that we already discussed in the above, but just briefly, and the problem now is that of clarifying all this, with full theory and details, examples and counterexamples, and so on.

\chapter{Easiness, twisting}

\section*{7a. Partitions, easiness}

We have seen that the Brauer theorems for $O_N,U_N$ and $O_N^+,U_N^+$, stating that these quantum groups are ``easy'', have a number of interesting algebraic and probabilistic consequences. All this was non-trivial, and natural too, going straight to the various problems about $O_N,U_N$ and $O_N^+,U_N^+$ that we wanted to solve. And so obviously, easiness and its extensions are the right tool for dealing with the closed subgroups $G\subset U_N^+$.

\bigskip

Our purpose now will be that of systematically developing the theory of easiness. There are plenty of things that can be done, as follows:

\bigskip

(1) First we have the question of working out Brauer theorems for $O_N^*,U_N^*$ too, and also for other suitably chosen quantum groups, which are of the same type as $O_N,U_N$, such as the bistochastic groups $B_N,C_N$, and their free analogues $B_N^+,C_N^+$. This is something quite straightforward, and we will do this in the beginning of this chapter.

\bigskip

(2) We also have the question of investigating various operations, and in particular the twisting, which can bring us into more complicated quantum groups, which are not exactly easy, such as the symplectic groups $Sp_N$ and their free analogues $Sp_N^+$, or the $q=-1$ twists of all groups that we know. We will discuss this too, in this chapter.

\bigskip

(3) There are also all sorts of further probabilistic things that can be done with $O_N,U_N$ and $O_N^+,U_N^+$, as well as with their various versions mentioned above. And with the theory here potentially going quite far, into things like de Finetti theorems, or all sorts of random matrix computations. Chapter 8 below will be an introduction to all this.

\bigskip

(4) Finally, we can take a look as well at the possible easiness property and liberation theory of various finite groups, such as the symmetric group $S_N\subset O_N$, or the hyperoctahedral group $H_N\subset O_N$, or other complex reflection groups $G\subset U_N$. And why not, start classifying all these beasts. We will discuss this in chapters 9-12 below.

\bigskip

In short, many things to be done. In addition, there are countless ways of presenting this material, with (1,2,3,4) being more of less interchangeable. Our ordering here comes from the principles ``algebra first, analysis after'' and ``continuous first, discrete after'', that we use as a philosophy for this book. Not necessarily that we adhere to this philosophy, but hey, a book is a linear presentation of something, and go find that linearity.

\bigskip

Finally, and above everything, let us mention that all this will be an introduction to easiness, and even a modest one. Such things are known since Weyl \cite{wey} and Brauer \cite{bra} in the classical case, meaning 1930s, and regarding probabilistic aspects, these go back to Weingarten \cite{wei}, meaning 1970s. As for quantum extensions, these go back algebrically to work of Wang \cite{wa1} and mine \cite{ba1} from the 1990s, and then analytically to my paper with Collins \cite{bc1}, from the mid 2000s. So, as you can imagine, plenty of things that are known, on all these topics, dozens or even hundreds of papers written.

\bigskip

Be said in passing, now that you might ask yourself this question: is actually everything known? Not at all. There are all sorts of open problems regarding classification, operations, twisting, probability aspects. Easiness over a field $k$. No one knows how to liberate the exceptional Lie groups. What is a Lie-Brauer algebra. Open questions in relation with Jones' planar algebras. And many more. And finally, all this was about pure mathematics, but when it comes to going into physics, and looking for applications, there are just some beautiful, viable ideas there, needing work. Lots of work.

\bigskip

But enough talking, and more on this later. Getting started now, we will follow my paper with Speicher \cite{bsp}, where the easy quantum groups were axiomatized in the orthogonal case, and with that paper being actually a pleasant, accessible read, with what you learned so far from this book. We have the following definition from there, slightly extended afterwards by Tarrago-Weber \cite{twe}, as to fit with the general unitary case:

\index{category of partitions}

\begin{definition}
Let $P(k,l)$ be the set of partitions between an upper colored integer $k$, and a lower colored integer $l$. A collection of subsets 
$$D=\bigsqcup_{k,l}D(k,l)$$
with $D(k,l)\subset P(k,l)$ is called a category of partitions when it has the following properties:
\begin{enumerate}
\item Stability under the horizontal concatenation, $(\pi,\sigma)\to[\pi\sigma]$.

\item Stability under vertical concatenation $(\pi,\sigma)\to[^\sigma_\pi]$, with matching middle symbols.

\item Stability under the upside-down turning $*$, with switching of colors, $\circ\leftrightarrow\bullet$.

\item Each set $P(k,k)$ contains the identity partition $||\ldots||$.

\item The sets $P(\emptyset,\circ\bullet)$ and $P(\emptyset,\bullet\circ)$ both contain the semicircle $\cap$.
\end{enumerate}
\end{definition} 

Observe the similarity with the axioms of Tannakian categories, from chapter 4. We will see in a moment that this similarity can be made into a precise theorem, stating that any category of partitions produces a family $G=(G_N)$ of easy quantum groups.

\bigskip

We have already met a number of such categories, in chapter 4. Indeed, the sets of Brauer pairings for $O_N,U_N$ and for $O_N^+,U_N^+$ are all categories of partitions in the above abstract sense, with inclusions between them as follows:
$$\xymatrix@R=16mm@C=17mm{
\mathcal P_2\ar[d]&\mathcal{NC}_2\ar[l]\ar[d]\\
P_2&NC_2\ar[l]
}$$

There are many other examples, as for instance $P$ itself, or the category $NC\subset P$ of noncrossing partitions. We have as well the category $P_{even}$ of partitions having even blocks, and its subcategory $NC_{even}$. These categories form a diagram as follows:
$$\xymatrix@R=15mm@C=15mm{
\mathcal P_{even}\ar[d]&\mathcal{NC}_{even}\ar[l]\ar[d]\\
P_{even}&NC_{even}\ar[l]
}$$

And there are many other examples, to be gradually explored in what follows. And with the comment, coming a bit in advance, that now that we talked about $P,NC$ and $P_{even},NC_{even}$, we must say what the corresponding quantum groups should be, right. Well, these are $S_N,S_N^+$ and $H_N,H_N^+$, to be discussed in chapters 9-12 below.

\bigskip

Getting back now to Definition 7.1 as it is, namely something abstract, of categorical flavor, the relation with the Tannakian categories comes from:

\index{Kronecker symbol}

\begin{proposition}
Each partition $\pi\in P(k,l)$ produces a linear map 
$$T_\pi:(\mathbb C^N)^{\otimes k}\to(\mathbb C^N)^{\otimes l}$$
given by the following formula, where $e_1,\ldots,e_N$ is the standard basis of $\mathbb C^N$, 
$$T_\pi(e_{i_1}\otimes\ldots\otimes e_{i_k})=\sum_{j_1\ldots j_l}\delta_\pi\begin{pmatrix}i_1&\ldots&i_k\\ j_1&\ldots&j_l\end{pmatrix}e_{j_1}\otimes\ldots\otimes e_{j_l}$$
and with the Kronecker type symbols $\delta_\pi\in\{0,1\}$ depending on whether the indices fit or not. The assignement $\pi\to T_\pi$ is categorical, in the sense that we have
$$T_\pi\otimes T_\sigma=T_{[\pi\sigma]}\quad,\quad 
T_\pi T_\sigma=N^{c(\pi,\sigma)}T_{[^\sigma_\pi]}\quad,\quad 
T_\pi^*=T_{\pi^*}$$
where $c(\pi,\sigma)$ are certain integers, coming from the erased components in the middle.
\end{proposition}

\begin{proof}
This is something that we already know for the pairings, from chapter 4 above. In general, the proof is identical, via exactly the same computations.
\end{proof}

In relation with the quantum groups, we have the following result, from \cite{bsp}:

\index{Tannakian duality}

\begin{theorem}
Each category of partitions $D=(D(k,l))$ produces a family of compact quantum groups $G=(G_N)$, one for each $N\in\mathbb N$, via the formula
$$Hom(u^{\otimes k},u^{\otimes l})=span\left(T_\pi\Big|\pi\in D(k,l)\right)$$
which produces a Tannakian category, and the Tannakian duality correspondence.
\end{theorem}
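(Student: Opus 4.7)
The plan is to verify that the spaces
$$C_N(k,l) = \operatorname{span}\!\left(T_\pi \,\big|\, \pi \in D(k,l)\right) \subset \mathcal{L}\bigl((\mathbb{C}^N)^{\otimes k}, (\mathbb{C}^N)^{\otimes l}\bigr)$$
form a Tannakian category over $H = \mathbb{C}^N$ in the sense of Definition 4.6, and then invoke the Tannakian duality correspondence from Theorem 4.20 to produce the associated Woronowicz algebra. The resulting quotient of $C(U_N^+)$ will then have the prescribed Hom spaces, which gives the desired $G_N \subset U_N^+$.

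First I would write down the five axioms of Definition 4.6 and check each one by matching the categorical operations on linear maps with the corresponding operations on partitions, using Proposition 7.2. Specifically: stability under tensor products follows from $T_\pi \otimes T_\sigma = T_{[\pi\sigma]}$ together with axiom (1) of Definition 7.1 (horizontal concatenation); stability under composition follows from $T_\pi T_\sigma = N^{c(\pi,\sigma)} T_{[^\sigma_\pi]}$ together with axiom (2) (vertical concatenation, absorbing the scalar $N^{c(\pi,\sigma)}$ into the linear span); stability under the involution follows from $T_\pi^* = T_{\pi^*}$ together with axiom (3) (upside-down turning); the identity condition $\mathrm{id} \in C_N(k,k)$ follows from axiom (4), since the identity partition $||\cdots||$ satisfies $T_{||\cdots||} = \mathrm{id}$ by Proposition 4.28; and finally $R \in C_N(\emptyset, \circ\bullet)$ and $R \in C_N(\emptyset, \bullet\circ)$ follow from axiom (5) together with the identity $T_\cap = R$, also from Proposition 4.28.

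Once this verification is complete, Theorem 4.20 applies and produces a Woronowicz algebra $A_N = A_{C_N}$, whose associated compact quantum group $G_N \subset U_N^+$ satisfies
$$\operatorname{Hom}(u^{\otimes k}, u^{\otimes l}) = C_N(k,l) = \operatorname{span}\!\left(T_\pi \,\big|\, \pi \in D(k,l)\right)$$
for all colored integers $k,l$, which is exactly the desired conclusion. Doing this for each $N \in \mathbb{N}$ yields the family $G = (G_N)$.

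The main obstacle is not any single step but rather the bookkeeping around axiom (2): the composition formula carries a scalar $N^{c(\pi,\sigma)}$ coming from erased interior loops, so one must be careful to observe that this factor is harmless since we are working with linear spans, and that the resulting partition $[^\sigma_\pi]$ belongs to $D$ by the category axiom. Beyond that, everything is essentially a translation between the diagrammatic calculus on partitions and the operator calculus on $T_\pi$'s, with Tannakian duality in Theorem 4.20 doing the heavy lifting of recovering the quantum group from its category of intertwiners.
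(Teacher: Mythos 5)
Your proposal is correct and follows essentially the same route as the paper: set $C(k,l)=\operatorname{span}(T_\pi\,|\,\pi\in D(k,l))$, check the five axioms of Definition 4.6 against the five axioms of Definition 7.1 via the categorical identities in Proposition 7.2, and invoke Theorem 4.20. The only blemish is a citation slip: the formulae $T_\cap=R$ and $T_{||\cdots||}=\mathrm{id}$ are Proposition 4.26, not 4.28, but the mathematical content and the careful remark about the scalar $N^{c(\pi,\sigma)}$ being absorbed by linearity are right.
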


\begin{proof}
This follows indeed from Woronowicz's Tannakian duality \cite{wo2}, best formulated in its ``soft'' form from \cite{mal}, as explained in chapter 4. Indeed, let us set:
$$C(k,l)=span\left(T_\pi\Big|\pi\in D(k,l)\right)$$

By using the axioms in Definition 7.1, and the categorical properties of the operation $\pi\to T_\pi$, from Proposition 7.2, we deduce that $C=(C(k,l))$ is a Tannakian category, in the sense of chapter 4. Thus Tannakian duality applies, and gives the result.
\end{proof}

We already know, from chapter 4, that $O_N,O_N^+$ and $U_N,U_N^+$ appear in this way, with $D$ being respectively $P_2,NC_2$ and $\mathcal P_2,\mathcal{NC}_2$. In general now, let us formulate:

\index{easiness}
\index{easy quantum group}

\begin{definition}
A closed subgroup $G\subset U_N^+$ is called easy when we have
$$Hom(u^{\otimes k},u^{\otimes l})=span\left(T_\pi\Big|\pi\in D(k,l)\right)$$
for any colored integers $k,l$, for a certain category of partitions $D\subset P$.
\end{definition}

In other words, a compact quantum group is called easy when its Tannakian category appears in the simplest possible way: from a category of partitions. The terminology is quite natural, because Tannakian duality is basically our only serious tool.

\bigskip

Observe that the category $D$ is not unique, for instance because at $N=1$ all the categories of partitions produce the same easy quantum group, namely $G=\{1\}$. We will be back to this issue on several occasions, with various results about it. 

\bigskip

In practice now, what we know so far, from chapter 4 above, is that $O_N,O_N^+$ and $U_N,U_N^+$ are easy.  Regarding now the half-liberations, we have here:

\index{Brauer theorem}
\index{half-classical quantum group}
\index{half-classical orthogonal group}
\index{half-classical unitary group}
\index{half-classical crossing}

\begin{theorem}
We have the following results:
\begin{enumerate}
\item $U_N^*$ is easy, coming from the category $\mathcal P_2^*\subset\mathcal P_2$ of pairings having the property that, when the legs are relabelled clockwise $\circ\bullet\circ\bullet\ldots$, each string connects $\circ-\bullet$.

\item $O_N^*$ is easy too, coming from the category $P_2^*\subset P_2$ of pairings having the same property: when legs are labelled clockwise $\circ\bullet\circ\bullet\ldots$, each string connects $\circ-\bullet$.
\end{enumerate}
\end{theorem}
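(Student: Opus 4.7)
The plan is to apply Tannakian duality, in the soft form from Theorem 4.20, and to identify the categories of partitions generated by the defining relations. Recall from Proposition 2.20 that $O_N^*, U_N^*$ are obtained from $O_N^+, U_N^+$ by imposing the half-commutation relations $abc=cba$ on the entries of $u$ and $u^*$. First I would translate these cubic relations into the statement that a certain partition-associated operator lies in the Tannakian category. Specifically, the half-commutation partition $\pi_{hc}\in P(3,3)$ which pairs the three upper points $1,2,3$ to the three lower points $3,2,1$ in reverse order (a ``triple crossing'') gives rise via Proposition 7.2 to the map $T_{\pi_{hc}}(e_i\otimes e_j\otimes e_k)=e_k\otimes e_j\otimes e_i$, and imposing $T_{\pi_{hc}}\in\mathrm{End}(u^{\otimes 3})$ is equivalent, after the usual Frobenius/rotation manipulations from section 4, to the half-commutation relations on the entries (with the appropriate colorings for the unitary case).

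Next, by the Brauer-type results of Theorem 4.28 combined with Proposition 7.2, the quantum groups $O_N^*, U_N^*$ are then easy, with associated categories
$$D_O=\langle NC_2,\pi_{hc}\rangle,\qquad D_U=\langle \mathcal{NC}_2,\pi_{hc}^{col}\rangle,$$
where $\pi_{hc}^{col}$ is the colored version of the triple crossing with colors matching the half-commutation law $abc=cba$ on $\{u_{ij},u_{ij}^*\}$. So the core of the proof reduces to the combinatorial identification
$$\langle NC_2,\pi_{hc}\rangle=P_2^*,\qquad \langle \mathcal{NC}_2,\pi_{hc}^{col}\rangle=\mathcal{P}_2^*.$$

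The inclusions $\supset$ are the easier halves: one verifies that $\pi_{hc}$ indeed lies in $P_2^*$ (and its colored analogue in $\mathcal{P}_2^*$) by checking the clockwise $\circ\bullet\circ\bullet\ldots$ labelling condition directly on the 6 points of the triple crossing, and that the categorical operations $[\pi\sigma]$, $[^\sigma_\pi]$, $\pi^*$ preserve the ``each string joins $\circ$ to $\bullet$ under the alternating clockwise labelling'' property. For the reverse inclusions $\subset$, the key observation is that modulo $NC_2$ (resp.\ $\mathcal{NC}_2$), the only nontrivial datum in an element of $P_2^*$ (resp.\ $\mathcal{P}_2^*$) is the pattern of crossings, and any such crossing pattern admissible for $P_2^*$ can be reduced, by repeated use of the triple crossing $\pi_{hc}$ composed/tensored with noncrossing pairings, to a composition of triple crossings. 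I would prove this by induction on the number of crossings: any pairing in $P_2^*$ with at least one crossing contains a ``local triple crossing configuration'' which can be stripped off using $\pi_{hc}$, reducing the crossing count by a controlled amount.

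The main obstacle will be this combinatorial reduction step, namely showing that $P_2^*$ is generated, as a category of partitions, by $NC_2$ together with the single triple crossing (and similarly in the colored case). One must be careful to check that the inductive move is always available---i.e.\ that in a pairing satisfying the $P_2^*$ condition one can always locate a subconfiguration to which $\pi_{hc}$ applies. This is where the alternating $\circ\bullet\circ\bullet\ldots$ condition is crucial: it forces crossings to come in patterns compatible with the cubic relation $abc=cba$, as opposed to the quadratic $ab=ba$ (which would give all of $P_2$, and hence the classical $O_N$). Once this combinatorial lemma is established, Theorem 7.3 together with Tannakian duality delivers the easiness of $O_N^*$ and $U_N^*$ with the stated categories.
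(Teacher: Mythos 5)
Your proposal is essentially the paper's proof: by Tannakian duality, everything reduces to the combinatorial claim that the (colored) half-commutation crossing, together with the noncrossing pairings, generates $\mathcal P_2^*$ (resp.\ $P_2^*$), and this is exactly what the paper invokes. The one small divergence is that for $O_N^*$ the paper takes the shortcut $O_N^*=O_N^+\cap U_N^*$, which at the level of categories gives $D=\langle NC_2,\mathcal P_2^*\rangle=P_2^*$ directly from the unitary case, whereas you prove the orthogonal generation identity independently; also, the paper merely asserts the generation statement, while you correctly flag it as the crux and propose an induction on the number of crossings, which is the right strategy. A minor slip to fix: your $\subset$ and $\supset$ labels are reversed --- verifying $\pi_{hc}\in P_2^*$ and closure of $P_2^*$ under the category operations is what gives $\langle NC_2,\pi_{hc}\rangle\subset P_2^*$, and the crossing-reduction induction is what gives the opposite inclusion $\supset$.
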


\begin{proof}
We can proceed here as in the proof for $U_N,O_N$, from chapter 4, by replacing the basic crossing by the half-commutation crossing, as follows:

\medskip

(1) Regarding $U_N^*\subset U_N^+$, it is elementary to check, via some computations that we will skip, that the half-commutation relations $abc=cba$ are implemented by the half-commutation crossing ${\slash\hskip-2.1mm\backslash\hskip-1.65mm|}$\,. Thus the corresponding Tannakian category is generated by the operators $T_\pi$, with $\pi={\slash\hskip-2.1mm\backslash\hskip-1.65mm|}$\,, taken with all the possible $2^3=8$ matching colorings. Since these latter 8 partitions generate the category $\mathcal P_2^*$, we obtain the result.

\medskip

(2) For $O_N^*$ we can either proceed similarly, or by using the following formula: 
$$O_N^*=O_N^+\cap U_N^*$$

Indeed, at the categorical level, this latter formula tells us that the associated Tannakian category is given by $C=span(T_\pi|\pi\in D)$, with:
$$D
=<NC_2,\mathcal P_2^*>
=P_2^*$$

Thus, we are led to the conclusion in the statement.
\end{proof}

Let us collect now the results that we have so far in a single theorem, as follows:

\index{unitary quantum group}
\index{category of pairings}

\begin{theorem}
The basic unitary quantum groups, namely
$$\xymatrix@R=15mm@C=15mm{
U_N\ar[r]&U_N^*\ar[r]&U_N^+\\
O_N\ar[r]\ar[u]&O_N^*\ar[r]\ar[u]&O_N^+\ar[u]}$$
are all easy, the corresponding categories of partitions being:
$$\xymatrix@R=16mm@C=15mm{
\mathcal P_2\ar[d]&\mathcal P_2^*\ar[l]\ar[d]&\mathcal{NC}_2\ar[l]\ar[d]\\
P_2&P_2^*\ar[l]&NC_2\ar[l]}$$
\end{theorem}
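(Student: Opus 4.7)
The plan is to assemble Theorem 7.6 as a compendium of results established earlier in the excerpt, since each of the six quantum groups has already been shown to be easy in the preceding sections. No genuinely new argument is required; the task is rather to verify that the categories listed match those implicitly produced by the earlier Brauer-type theorems, and to present everything coherently within the easiness formalism of Definition 7.4.

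First I would handle the free row. The identifications $O_N^+\leftrightarrow NC_2$ and $U_N^+\leftrightarrow\mathcal{NC}_2$ are exactly Theorem 4.28, which was obtained by applying Tannakian duality to the categories $\langle R,R^*\rangle$ of Propositions 4.22 and 4.23 and then observing that these are spanned by $T_\pi$ with $\pi$ ranging over $\mathcal{NC}_2$ (respectively $NC_2$, by forgetting colors). Next I would handle the classical row. Here $O_N\leftrightarrow P_2$ and $U_N\leftrightarrow\mathcal P_2$ are Theorem 4.29, obtained by adjoining to the free Tannakian category the crossings $T_{\slash\hskip-1.5mm\backslash}$ (with the matching or unrestricted coloring, respectively), and noting the elementary categorical equalities $\langle NC_2,\slash\hskip-1.5mm\backslash\rangle=P_2$ and $\langle\mathcal{NC}_2,\slash\hskip-1.5mm\backslash^{\circ\bullet}_{\bullet\circ}\rangle=\mathcal P_2$. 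Finally, the middle column is Theorem 7.5, obtained by adjoining the half-liberating crossing $T_{\slash\hskip-2.1mm\backslash\hskip-1.65mm|}$ with appropriate colorings and invoking $\langle NC_2,\slash\hskip-2.1mm\backslash\hskip-1.65mm|\rangle=P_2^*$ together with $\langle\mathcal{NC}_2,\slash\hskip-2.1mm\backslash\hskip-1.65mm|^{\circ\bullet\circ}_{\bullet\circ\bullet}\rangle=\mathcal P_2^*$.

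With these six easiness statements in hand, it only remains to check that the diagram of categories displayed in the theorem is commutative and consistent with the diagram of quantum groups, meaning that the inclusions of quantum groups correspond, by the Tannakian construction of Theorem 7.3, to the reverse inclusions of categories of partitions. This is immediate: horizontally we have $NC_2\subset P_2^*\subset P_2$ and $\mathcal{NC}_2\subset\mathcal P_2^*\subset\mathcal P_2$, and vertically the passage from matching to unrestricted pairings corresponds to imposing $u=\bar u$, hence to passing from the unitary quantum group to its orthogonal counterpart.

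The only step that could be viewed as an obstacle is the verification that the categories generated by the elementary crossings really coincide with $P_2,\mathcal P_2,P_2^*,\mathcal P_2^*$; but this is a routine combinatorial exercise, since an arbitrary (matching, half-matching) pairing can be reduced to a standard form by repeatedly composing with the generating crossings modulo the noncrossing pairings. Hence the proof is essentially bookkeeping, and the only real content is the recognition that the six concrete calculations already performed fit into the uniform framework of Definition 7.4.
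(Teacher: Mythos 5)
Your proposal is correct and matches the paper's own proof in substance: the paper also simply assembles the previously established easiness results, namely the Brauer theorems for $O_N^+,U_N^+$ (Theorem 4.28), for $O_N,U_N$ (Theorem 4.29), and for $O_N^*,U_N^*$ (Theorem 7.5). Your additional commentary on how each of those ingredient theorems was proved and on the compatibility of the diagram of inclusions is more detail than the paper gives, but it is accurate and consistent with the referenced results.
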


\begin{proof}
This follows indeed from our various Brauer type results.
\end{proof}

We have seen in chapters 5-6 that the easiness property of $O_N^+,U_N^+$ leads to some interesting consequences. Regarding $O_N^*,U_N^*$, as a main consequence, we can now compute their projective versions, as part of the following general result:

\index{projective version}
\index{projective unitary group}
\index{projective unitary quantum group}

\begin{theorem}
The projective versions of the basic quantum groups are as follows,
$$\xymatrix@R=15mm@C=15mm{
PU_N\ar[r]&PU_N\ar[r]&PU_N^+\\
PO_N\ar[r]\ar[u]&PU_N\ar[r]\ar[u]&PU_N^+\ar[u]}$$
when identifying, in the free case, full and reduced version algebras.
\end{theorem}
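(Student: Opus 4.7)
The plan is to follow the template of Theorem 6.14, combining the easiness data from Theorem 7.6 with Tannakian duality. For any closed subgroup $G \subset U_N^+$ with fundamental corepresentation $u$, the projective version $PG$ has its Tannakian category generated by $u \otimes \bar{u}$, and is therefore entirely determined by the spaces
$$Hom((u \otimes \bar{u})^{\otimes k}, (u \otimes \bar{u})^{\otimes l}) = Hom(u^{\otimes (\circ\bullet)^k}, u^{\otimes (\circ\bullet)^l}).$$
For each of the six easy quantum groups in the square, Theorem 7.6 computes these Hom spaces as spans of diagrams $T_\pi$ for $\pi$ in the appropriate category $D$, restricted to the alternating-colored inputs $(\circ\bullet)^k$ and $(\circ\bullet)^l$. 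Thus the theorem reduces entirely to comparing six restricted categories of partitions.

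Only two non-trivial identifications remain to be shown (the diagonal ones, $PU_N^* = PU_N$ and $PO_N^* = PU_N$; the free-case identification $PO_N^+ = PU_N^+$ is precisely Theorem 6.14). In each case I would verify the combinatorial equality
$$D((\circ\bullet)^k, (\circ\bullet)^l) = D'((\circ\bullet)^k, (\circ\bullet)^l),$$
where $D$ is the smaller category ($\mathcal{P}_2^*$ or $P_2^*$) and $D'$ is the corresponding matching category $\mathcal{P}_2$. The inclusions $D \subset D'$ follow immediately from the containments of categories already visible in Theorem 7.6. For the reverse inclusions, the key observation is that once the input coloring is already alternating $\circ\bullet\circ\bullet\ldots$, the ``clockwise relabelling $\circ\bullet\circ\bullet\ldots$'' used to define the starred categories coincides with the actual coloring of the legs, so the extra half-commutation constraint imposed by $*$ is automatic, and every matching pairing on these inputs already lies in the smaller category. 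For the orthogonal-to-unitary identification $P_2^*((\circ\bullet)^k, (\circ\bullet)^l) = \mathcal{P}_2((\circ\bullet)^k, (\circ\bullet)^l)$, the same observation handles the non-trivial direction, since alternating colors force any $P_2^*$-pairing to automatically satisfy the matching condition of $\mathcal{P}_2$.

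Once these equalities of Hom spaces are in place, the Tannakian reconstruction from Section 4 yields inverse arrows of Woronowicz algebras between the respective projective versions, completing the identifications modulo the equivalence relation of Definition 2.9---which is precisely the sense intended by ``identifying full and reduced versions'' in the free cases. The argument then concludes exactly as in Theorem 6.14.

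I expect the main obstacle to be the careful bookkeeping of the clockwise-relabelling convention in the definitions of $P_2^*$ and $\mathcal{P}_2^*$, since it interacts with the parity of $k$ and $l$ and with the ordering conventions on the bottom row of the partition diagram. This is not a deep difficulty, but it is the only step where one must sit down and draw pictures rather than cite previous results. Conceptually, the content of the theorem is striking: while the classical passage $O_N \to U_N$ genuinely requires complexification of the underlying Lie algebra (Theorem 6.4), at the projective level and under any form of half-liberation or liberation the distinction between real and complex simply dissolves, and the resulting projective geometry is intrinsically ``scalarless'', as already noted after Theorem 6.14.
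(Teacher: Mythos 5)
Your proposal is essentially the paper's own proof: reduce everything, via easiness (Theorem 7.5/7.6) and Tannakian duality, to the equality of the restricted partition sets $P_2^*((\circ\bullet)^k,(\circ\bullet)^l)=\mathcal P_2^*((\circ\bullet)^k,(\circ\bullet)^l)=\mathcal P_2((\circ\bullet)^k,(\circ\bullet)^l)$ for the half-classical column, and invoke the free complexification result (Theorem 6.14) for the free column, exactly as the paper does. One small caveat in your sketch of the combinatorial step, which you yourself flagged as the bookkeeping issue: with the clockwise convention the relabelling agrees with the actual alternating colors on the upper row but is the \emph{complementary} coloring on the lower row, and it is precisely this twist that makes ``matching'' equivalent to the $*$-condition on $(\circ\bullet)^k\to(\circ\bullet)^l$ legs, so the conclusion you want is correct once the pictures are drawn.
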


\begin{proof}
In the classical case, there is nothing to prove. Regarding the half-classical versions, consider the inclusions $O_N^*,U_N\subset U_N^*$. These induce inclusions as follows:
$$PO_N^*,PU_N\subset PU_N^*$$

Our claim is that these inclusions are isomorphisms. Let indeed $u,v,w$ be the fundamental corepresentations of $O_N^*,U_N,U_N^*$. According to Theorem 7.5, we have:
$$Hom\left((u\otimes\bar{u})^k,(u\otimes\bar{u})^l\right)=span\left(T_\pi\Big|\pi\in P_2^*((\circ\bullet)^k,(\circ\bullet)^l)\right)$$
$$Hom\left((u\otimes\bar{u})^k,(u\otimes\bar{u})^l\right)=span\left(T_\pi\Big|\pi\in \mathcal P_2((\circ\bullet)^k,(\circ\bullet)^l)\right)$$
$$Hom\left((u\otimes\bar{u})^k,(u\otimes\bar{u})^l\right)=span\left(T_\pi\Big|\pi\in \mathcal P_2^*((\circ\bullet)^k,(\circ\bullet)^l)\right)$$

The sets on the right being equal, we conclude that the inclusions $O_N^*,U_N\subset U_N^*$ preserve the corresponding Tannakian categories, and so must be isomorphisms.

Finally, in the free case the result follows either from the free complexification result from chapter 6, or from Theorem 7.6, by using the same method as above.
\end{proof}

The above result is quite interesting, philosophically, because it shows that, in the nocommutative setting, the distinction between $\mathbb R$ and $\mathbb C$ becomes ``blurred''. We will be back to this later, with some related noncommutative geometry considerations.

\section*{7b. Basic operations} 

Let us discuss now composition operations. There are many questions to be discussed here, some of them being even open. To start with, we will be interested in:

\index{intersection of quantum groups}
\index{generation operation}
\index{topological generation}

\begin{proposition}
The closed subgroups of $U_N^+$ are subject to operations as follows:
\begin{enumerate}
\item Intersection: $H\cap K$ is the biggest quantum subgroup of $H,K$.

\item Generation: $<H,K>$ is the smallest quantum group containing $H,K$.
\end{enumerate}
\end{proposition}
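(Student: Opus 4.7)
The plan is to proceed via Tannakian duality, using the correspondence between closed subgroups $G\subset U_N^+$ and their intertwiner categories $C_G=(Hom(u^{\otimes k},u^{\otimes l}))$, as established in Theorem 4.20. Recall from Proposition 4.3 that $H\subset L$ implies $C_L(k,l)\subset C_H(k,l)$ for every $k,l$, and conversely, any such containment of categories comes, via the duality, from a quantum subgroup inclusion. Each closed subgroup $G\subset U_N^+$ also corresponds to a Hopf ideal $I_G\subset C(U_N^+)$ with $C(G)=C(U_N^+)/I_G$.

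For the intersection $H\cap K$, I would construct $C(H\cap K)$ as the quotient of $C(U_N^+)$ by the ideal $\langle I_H,I_K\rangle$. The comultiplication, counit and antipode descend to this quotient, because $I_H$ and $I_K$ are Hopf ideals, so the result is again a Woronowicz algebra. The universal property is then immediate: any $G$ with $G\subset H$ and $G\subset K$ satisfies $I_H,I_K\subset I_G$, hence $\langle I_H,I_K\rangle\subset I_G$, so $G\subset H\cap K$. At the Tannakian level this corresponds to $C_{H\cap K}=\langle C_H,C_K\rangle$, the tensor category generated by $C_H$ and $C_K$ inside the tensor category of linear maps between tensor powers of $\mathbb{C}^N$.

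For the generation $\langle H,K\rangle$, the natural definition is through the intersection of Tannakian categories, by setting $C_{\langle H,K\rangle}=C_H\cap C_K$. First I would verify that this intersection is itself a Tannakian category in the sense of Definition 4.6, which is immediate: the intersection of two collections of subspaces both closed under tensor product, composition, adjoints, containing the identity maps and containing $R,R^*$, retains all these properties. Applying Theorem 4.20 then produces a Woronowicz algebra having exactly this category, and we define $\langle H,K\rangle$ to be the corresponding compact quantum group. By construction $C_{\langle H,K\rangle}\subset C_H$ and $C_{\langle H,K\rangle}\subset C_K$, so Proposition 4.3 yields inclusions $H,K\subset\langle H,K\rangle$. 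The universal property follows: if $L$ contains both $H$ and $K$, then $C_L\subset C_H$ and $C_L\subset C_K$, hence $C_L\subset C_H\cap C_K=C_{\langle H,K\rangle}$, and therefore $\langle H,K\rangle\subset L$.

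The main obstacle I anticipate is the functoriality subtlety concerning full versus reduced $C^*$-algebras, flagged throughout Section 1 and again in the comments following Definition 2.9. The Tannakian duality of Theorem 4.20 is stated modulo identifying full and reduced versions, and the same qualification must be carried through here: both $H\cap K$ and $\langle H,K\rangle$ are defined naturally at the level of the dense Hopf $*$-algebra $\mathcal{A}$ generated by the coordinates, with the $C^*$-algebra then obtained by the universal completion. With this convention in place, and with the categorical framework of Section 4 doing all the heavy lifting, both constructions are manifestly well-defined and satisfy the required universal properties.
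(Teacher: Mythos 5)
Your proof is correct, and for the intersection operation it matches the paper's own argument exactly: both of you quotient $C(U_N^+)$ by the Hopf ideal $\langle I_H,I_K\rangle$ and read off the universal property from the order-reversing correspondence between subgroups and Hopf ideals.

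For the generation operation, however, you take a genuinely different route. The paper constructs $\langle H,K\rangle$ directly at the ideal level, setting $C(\langle H,K\rangle)=C(U_N^+)/(I_H\cap I_K)$, which is parallel in form to the intersection construction and keeps everything inside the Hopf ideal calculus. You instead pass to Tannakian categories, define $C_{\langle H,K\rangle}=C_H\cap C_K$, check that this intersection still satisfies the axioms of a tensor category over $\mathbb C^N$ (which it does — all five conditions are stable under intersection), and then apply Theorem 4.20 to reconstruct the quantum group. The two constructions produce the same object, modulo the full/reduced identification that you correctly flag. What each buys: the paper's ideal-theoretic argument is shorter and self-contained, but glosses over the verification that $I_H\cap I_J$ is actually a Hopf ideal (one needs $\Delta(I_H\cap I_J)\subset (I_H\cap I_J)\otimes C+C\otimes(I_H\cap I_J)$, which is not a formal consequence of the corresponding statements for $I_H$ and $I_J$ individually). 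Your Tannakian route sidesteps this entirely, since the resulting algebra is a Woronowicz algebra by Theorem 4.20 without any ideal verification, at the price of invoking heavier machinery. Your version also anticipates the later Theorem 7.11, where the paper itself records precisely the formulas $C_{G\cap H}=\langle C_G,C_H\rangle$ and $C_{\langle G,H\rangle}=C_G\cap C_H$, so your proof can be read as deriving Proposition 7.8 and Theorem 7.11 simultaneously.
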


\begin{proof}
We must prove that the universal quantum groups in the statement exist indeed. For this purpose, let us pick writings as follows, with $I,J$ being Hopf ideals:
$$C(H)=C(U_N^+)/I$$
$$C(K)=C(U_N^+)/J$$

We can then construct our two universal quantum groups, as follows:
$$C(H\cap K)=C(U_N^+)/<I,J>$$
$$C(<H,K>)=C(U_N^+)/(I\cap J)$$

Thus, we are led to the conclusions in the statement.
\end{proof}

In practice now, the operation $\cap$ can be usually computed by using:

\begin{proposition}
Assuming $H,K\subset G$, appearing at the algebra level as follows, with $\mathcal R,\mathcal P$ being certain sets of polynomial $*$-relations between the coordinates $u_{ij}$,
$$C(H)=C(G)/\mathcal R\quad,\quad 
C(K)=C(G)/\mathcal P$$
the intersection $H\cap K$ is given by the following formula,
$$C(H\cap K)=C(G)/\{\mathcal R,\mathcal P\}$$
again at the algebraic level.
\end{proposition}

\begin{proof}
This follows from Proposition 7.8, or rather from its proof, and from the following trivial fact, regarding relations and ideals:
$$I=<\mathcal R>,J=<\mathcal P>
\quad\implies\quad <I,J>=<\mathcal R,\mathcal P>$$

Thus, we are led to the conclusion in the statement.
\end{proof}

\index{Hopf image}

In order to discuss now $<\,,>$, let us call Hopf image of a representation $C(G)\to A$ the smallest Hopf algebra quotient $C(L)$ producing a factorization as follows:
$$C(G)\to C(L)\to A$$

The fact that this quotient exists indeed is routine, by dividing by a suitable ideal, and we will be back to this in chapter 16. This notion can be generalized as follows:

\begin{proposition}
Assuming $H,K\subset G$, the quantum group $<H,K>$ is such that
$$C(G)\to C(H\cap K)\to C(H),C(K)$$
is the joint Hopf image of the following quotient maps:
$$C(G)\to C(H),C(K)$$
\end{proposition}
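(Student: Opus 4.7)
The plan is to verify that $C(\langle H,K\rangle)$ satisfies the universal property of the joint Hopf image of the two quotient maps $C(G)\to C(H)$ and $C(G)\to C(K)$. (For this to match the displayed statement, the middle term must be read as $C(\langle H,K\rangle)$, since it is $\langle H,K\rangle$ and not $H\cap K$ whose algebra surjects onto both $C(H)$ and $C(K)$.)

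The approach is to compare Hopf ideals directly, exploiting the explicit presentation of $\langle H,K\rangle$ supplied by Proposition 7.8. I would write $C(H)=C(G)/I$ and $C(K)=C(G)/J$ with $I,J\subset C(G)$ Hopf ideals; Proposition 7.8 then gives $C(\langle H,K\rangle)=C(G)/(I\cap J)$. The inclusions $I\cap J\subset I$ and $I\cap J\subset J$ produce the desired factorizations $C(G)\twoheadrightarrow C(\langle H,K\rangle)\twoheadrightarrow C(H)$ and $C(G)\twoheadrightarrow C(\langle H,K\rangle)\twoheadrightarrow C(K)$, supplying the existence part of the universal property.

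For minimality, suppose $C(L)=C(G)/M$ is any Hopf algebra quotient of $C(G)$ through which both surjections $C(G)\to C(H)$ and $C(G)\to C(K)$ factor. The existence of factorizations $C(G)\twoheadrightarrow C(L)\twoheadrightarrow C(H)$ and $C(G)\twoheadrightarrow C(L)\twoheadrightarrow C(K)$ is equivalent to the two ideal inclusions $M\subset I$ and $M\subset J$, hence to $M\subset I\cap J$. Dually this produces a canonical surjection $C(\langle H,K\rangle)=C(G)/(I\cap J)\twoheadrightarrow C(G)/M=C(L)$, which is exactly what is needed to identify $C(\langle H,K\rangle)$ as the smallest Hopf algebra quotient of $C(G)$ with the required factorizations, i.e.\ as the joint Hopf image.

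The step one might worry about — and which is really the only substantive ingredient — is that $I\cap J$ is itself a Hopf ideal, so that $C(G)/(I\cap J)$ is genuinely a Woronowicz algebra; but this is already absorbed into Proposition 7.8, and granted that, the rest is a direct unwinding of the definitions, with the Galois-type correspondence between quantum subgroups of $G$ and Hopf ideals of $C(G)$ turning the two universal properties — that of $\langle H,K\rangle$ and that of the joint Hopf image — into the same statement about ideals of $C(G)$ contained in both $I$ and $J$.
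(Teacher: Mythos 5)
Your proof is correct and proceeds along the same lines as the paper's, which in fact says almost nothing beyond asserting that the universal property of the joint Hopf image pins down $L=\langle H,K\rangle$; you have simply filled in the verification that the paper leaves implicit, by translating both universal properties into statements about Hopf ideals of $C(G)$ via Proposition 7.8 and observing that they coincide. You were also right to flag the misprint in the statement: the middle term of the displayed factorization should be $C(\langle H,K\rangle)$, not $C(H\cap K)$, since it is $\langle H,K\rangle$ (not the intersection) that surjects at the algebra level onto both $C(H)$ and $C(K)$, and this is consistent with the paper's own proof, which writes $L=\langle H,K\rangle$.
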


\begin{proof}
In the particular case from the statement, the joint Hopf image appears as the smallest Hopf algebra quotient $C(L)$ producing factorizations as follows:
$$C(G)\to C(L)\to C(H),C(K)$$

We conclude from this that we have $L=<H,K>$, as desired.
\end{proof}

In the Tannakian setting now, we have the following result:

\begin{theorem}
The intersection and generation operations $\cap$ and $<\,,>$ can be constructed via the Tannakian correspondence $G\to C_G$, as follows:
\begin{enumerate}
\item Intersection: defined via $C_{G\cap H}=<C_G,C_H>$.

\item Generation: defined via $C_{<G,H>}=C_G\cap C_H$.
\end{enumerate}
\end{theorem}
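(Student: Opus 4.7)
The plan is to deduce both formulas as direct consequences of the Tannakian duality from Theorem 4.20, which sets up a contravariant equivalence between closed subgroups $G\subset U_N^+$ and tensor categories $C\subset(B(H^{\otimes k},H^{\otimes l}))$. Under such a contravariant correspondence, the poset operation $\cap$ on one side must match with the operation $\langle\,,\rangle$ on the other side, and vice versa. So the work reduces to checking that the defining universal properties match up.

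First I would record the basic monotonicity: if $H\subset G$, then any intertwiner of $u^{\otimes k}\to u^{\otimes l}$ for $G$ restricts to one for $H$, so $C_G\subset C_H$. This is essentially the content of Proposition 4.3. Conversely, given a Tannakian category $C$, the construction $C\to A_C$ from Proposition 4.8 is monotone-reversing too: a larger $C$ imposes more relations, producing a smaller quantum group. With this contravariance in hand, both statements become formal.

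For (2), let $L=\langle H,K\rangle$ with $H,K\subset G$. By monotonicity, $C_L\subset C_H\cap C_K$. Conversely, the category $C_H\cap C_K$ is a Tannakian category over $H=\mathbb C^N$, and by Theorem 4.20 it produces via $C\to A_C$ a closed subgroup $L'\subset U_N^+$ containing both $H$ and $K$ (since the inclusions $C_H\cap C_K\subset C_H,C_K$ give, after applying $A_{(-)}$, surjections $C(L')\to C(H),C(K)$). By the universal property defining $\langle H,K\rangle$, we have $L\subset L'$, hence $C_{L'}\subset C_L$. But by construction $C_{L'}=C_H\cap C_K$, which gives the reverse inclusion $C_H\cap C_K\subset C_L$, and we conclude $C_{\langle H,K\rangle}=C_H\cap C_K$.

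For (1), the argument is symmetric. Let $M=H\cap K$. By monotonicity $C_H,C_K\subset C_M$, hence $\langle C_H,C_K\rangle\subset C_M$. Conversely, the tensor category $\langle C_H,C_K\rangle$ generated inside $\bigoplus_{k,l}B(H^{\otimes k},H^{\otimes l})$ corresponds via Tannakian duality to a closed subgroup $M'\subset U_N^+$, and the inclusions $C_H,C_K\subset\langle C_H,C_K\rangle$ give $M'\subset H$ and $M'\subset K$, so $M'\subset H\cap K=M$ by the universal property of intersection. Applying $C_{(-)}$ reverses this to $C_M\subset C_{M'}=\langle C_H,C_K\rangle$, proving (1). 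The main (and really only) obstacle is being careful that $\langle C_H,C_K\rangle$ and $C_H\cap C_K$ are themselves Tannakian categories in the sense of Definition 4.6 so that Theorem 4.20 applies to them; but this is immediate since the axioms (stability under tensor product, composition, involution, containing the identities and the $R,R^*$ maps) are preserved under arbitrary intersections and under the generation procedure.
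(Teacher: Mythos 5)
Your proof is correct and rests on the same underlying observation as the paper: Tannakian duality from Theorem 4.20 is an order-reversing bijection, so it must interchange the supremum operation $\langle\,,\rangle$ with the infimum operation $\cap$. The paper's proof is considerably terser — it simply invokes Proposition 7.8 and says to take the Hopf ideals $I,J$ there to be the Tannakian ideals — whereas you route the argument through the universal properties of $H\cap K$ and $\langle H,K\rangle$ and the inverse-bijection property $C_{A_C}=C$. This is the same idea unwound in slightly different language (universal properties versus ideal correspondence), and your version has the merit of making explicit where the bijectivity of Theorem 4.20 is used, which the paper's one-line proof leaves implicit.
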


\begin{proof}
This follows from Proposition 7.8, or rather from its proof, by taking $I,J$ to be the ideals coming from Tannakian duality, in its soft form, from chapter 4.
\end{proof}

In relation now with our easiness questions, we first have the following result:

\begin{proposition}
Assuming that $H,K$ are easy, then so is $H\cap K$, and we have
$$D_{H\cap K}=<D_H,D_K>$$
at the level of the corresponding categories of partitions.
\end{proposition}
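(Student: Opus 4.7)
The plan is to reduce the statement to a combinatorial identity at the level of Tannakian categories, using Theorem 7.11. By Theorem 7.11, the Tannakian category of $H\cap K$ is generated, inside the fixed ambient tensor $*$-category, by those of $H$ and $K$:
\[
C_{H\cap K} = \langle C_H, C_K\rangle.
\]
Since $H,K$ are easy, each $C_H(k,l)$ and $C_K(k,l)$ is the linear span of the operators $T_\pi$ with $\pi$ ranging over $D_H(k,l)$, respectively $D_K(k,l)$. Thus the proof reduces entirely to establishing the equality
\[
\big\langle\,\{T_\pi : \pi\in D_H\}\cup\{T_\pi:\pi\in D_K\}\,\big\rangle \;=\; \mathrm{span}\big(T_\pi \;\big|\; \pi\in\langle D_H, D_K\rangle\big),
\]
where the angle brackets on the left denote generation as a Tannakian category (closure under composition, tensor product, involution, and containing the identity maps and $R,R^*$) and those on the right denote generation as a category of partitions.

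First I would verify the inclusion $\supseteq$. This is the content of Proposition 7.2 together with the easy observation that $\mathrm{id}=T_{||\ldots||}$, $R=T_\cap$, $R^*=T_\cup$. Indeed, the categorical identities
\[
T_\pi\otimes T_\sigma = T_{[\pi\sigma]},\qquad T_\pi T_\sigma = N^{c(\pi,\sigma)}T_{[^\sigma_\pi]},\qquad T_\pi^* = T_{\pi^*}
\]
show that every operation used to generate a category of partitions from $D_H\cup D_K$ is mirrored, up to a nonzero scalar $N^{c(\pi,\sigma)}\geq 1$, by a Tannakian operation on the $T_\pi$. Hence every $T_\tau$ with $\tau\in\langle D_H,D_K\rangle$ lies in the Tannakian category generated by $\{T_\pi:\pi\in D_H\cup D_K\}$, and so do their linear spans.

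For the reverse inclusion $\subseteq$, I would argue that applying any Tannakian operation to elements of $\mathrm{span}(T_\pi : \pi\in\langle D_H,D_K\rangle)$ keeps us inside this span. This is again immediate from the three categorical identities above: composing, tensoring, or adjointing such spanning elements yields linear combinations of $T_\tau$ with $\tau$ obtained from partitions in $\langle D_H,D_K\rangle$ by the corresponding partition operations, hence still in $\langle D_H,D_K\rangle$. Adjoining identities and $R,R^*$ only adds $T_{||\ldots||}$ and $T_\cap,T_\cup$, which correspond to partitions already present by axioms (4) and (5) of Definition 7.1.

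The main (minor) point to verify carefully is that the scalar $N^{c(\pi,\sigma)}$ in the composition formula is nonzero, so that $T_{[^\sigma_\pi]}$ genuinely lies in the image; since $N\geq 1$ and $c(\pi,\sigma)\in\mathbb{N}$, this causes no difficulty. Combining the two inclusions with Theorem 7.11 gives
\[
C_{H\cap K}(k,l) = \mathrm{span}\big(T_\pi \;\big|\; \pi\in\langle D_H,D_K\rangle(k,l)\big),
\]
which is exactly the assertion that $H\cap K$ is easy with associated category $D_{H\cap K}=\langle D_H,D_K\rangle$.
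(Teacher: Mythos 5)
Your proposal is correct and follows essentially the same route as the paper: the paper's proof is the one-line computation $C_{H\cap K}=\langle C_H,C_K\rangle=\langle\mathrm{span}(D_H),\mathrm{span}(D_K)\rangle=\mathrm{span}(\langle D_H,D_K\rangle)$ followed by Tannakian duality, and your two-inclusion argument via the categorical identities of Proposition 7.2 is just a careful justification of the middle equality that the paper leaves implicit.
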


\begin{proof}
We have indeed the following computation:
\begin{eqnarray*}
C_{H\cap K}
&=&<C_H,C_K>\\
&=&<span(D_H),span(D_K)>\\
&=&span(<D_H,D_K>)
\end{eqnarray*}

Thus, by Tannakian duality we obtain the result.
\end{proof}

Regarding the generation operation, the situation is more complicated, as follows:

\begin{proposition}
Assuming that $H,K$ are easy, we have an inclusion 
$$<H,K>\subset\{H,K\}$$
coming from an inclusion of Tannakian categories as follows,
$$C_H\cap C_K\supset span(D_H\cap D_K)$$
where $\{H,K\}$ is the easy quantum group having as category of partitions $D_H\cap D_K$.
\end{proposition}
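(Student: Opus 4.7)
The plan is to verify the three ingredients implicit in the statement, and then combine them via Tannakian duality. First, one needs to check that $D_H\cap D_K$ is itself a category of partitions in the sense of Definition 7.1, so that the notation $\{H,K\}$ actually refers to a well-defined easy quantum group via Theorem 7.3. This step is routine: the five axioms (stability under horizontal concatenation, vertical concatenation with matching middles, upside-down turning with color swap, presence of identities, presence of semicircles) are each of the form ``closure under a collection of operations,'' so they pass to intersections of categories automatically.

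Next, I would establish the Tannakian inclusion $\mathrm{span}(D_H\cap D_K)\subset C_H\cap C_K$ directly from the definitions. Indeed, for any $\pi\in D_H\cap D_K$ we have $T_\pi\in C_H$ because $\pi\in D_H$ and $H$ is easy with category $D_H$, and similarly $T_\pi\in C_K$, so $T_\pi\in C_H\cap C_K$. Taking spans yields the claimed inclusion at each pair $(k,l)$ of colored integers. Note that we are not claiming equality here, only inclusion; this is the key conceptual point of the proposition, and it is precisely what prevents the generation operation from being computable purely combinatorially in the easy setting.

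Finally, I would translate this into the quantum group statement using Theorem 7.11. By that theorem, the Tannakian category of $\langle H,K\rangle$ is exactly $C_H\cap C_K$, while by the definition of $\{H,K\}$, its Tannakian category is $\mathrm{span}(D_H\cap D_K)$. The Tannakian inclusion established above therefore reads $C_{\{H,K\}}\subset C_{\langle H,K\rangle}$, and by contravariance of Tannakian duality (smaller category $\leftrightarrow$ larger quantum group, cf. Proposition 4.3) this gives $\langle H,K\rangle\subset\{H,K\}$, as desired.

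The main obstacle, and the reason the statement gives an inclusion rather than an equality, lies in the gap between $\mathrm{span}(D_H\cap D_K)$ and $\mathrm{span}(D_H)\cap\mathrm{span}(D_K)$: the span operation does not in general commute with intersection, and the discrepancy is controlled by linear dependencies among the maps $\{T_\pi\}_{\pi\in D_H\cup D_K}$. By Theorem 5.8 such dependencies are absent for $N\geq k$ at each level, which suggests that the inclusion $\langle H,K\rangle\subset\{H,K\}$ should become an equality in the $N$ large regime; but at small $N$, where the $T_\pi$ become linearly dependent, strictness can genuinely occur, and a finer analysis (postponed here) would be needed to decide equality on a case-by-case basis.
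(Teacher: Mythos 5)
Your proof is correct and follows essentially the same route as the paper: the core computation is $C_{\langle H,K\rangle}=C_H\cap C_K=\mathrm{span}(D_H)\cap\mathrm{span}(D_K)\supset\mathrm{span}(D_H\cap D_K)$, combined with contravariance of Tannakian duality. Your additional remarks (that $D_H\cap D_K$ is automatically a category of partitions, and that linear independence of the $\{T_\pi\}$ at $N\geq k$ forces equality in the large-$N$ regime) are accurate and go a bit beyond the paper's terse proof, but do not change the substance of the argument.
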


\begin{proof}
This follows from the following computation:
\begin{eqnarray*}
C_{<H,K>}
&=&C_H\cap C_K\\
&=&span(D_H)\cap span(D_K)\\
&\supset&span(D_H\cap D_K)
\end{eqnarray*}

Indeed, by Tannakian duality we obtain from this all the assertions.
\end{proof}

It is not clear if the inclusions in Proposition 7.13 are isomorphisms or not, and this not even under a supplementary $N>>0$ assumption. Technically speaking, the problem comes from the fact that the operation $\pi\to T_\pi$ does not produce linearly independent maps. Summarizing, we have some problems here, and we must proceed as follows:

\index{easy generation operation}

\begin{theorem}
The intersection and easy generation operations $\cap$ and $\{\,,\}$ can be constructed via the Tannakian correspondence $G\to D_G$, as follows:
\begin{enumerate}
\item Intersection: defined via $D_{G\cap H}=<D_G,D_H>$.

\item Easy generation: defined via $D_{\{G,H\}}=D_G\cap D_H$.
\end{enumerate}
\end{theorem}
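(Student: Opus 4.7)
The theorem is essentially a packaging of Proposition 7.13 (for part 1) together with the definition of easy generation from Proposition 7.14 (for part 2), so the proof plan is largely a matter of verifying that the two formulas on the right-hand sides yield well-defined categorical operations, and then invoking Tannakian duality.

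For part (1), Proposition 7.13 already does the work. The first step is to note that the category $\langle D_G,D_H\rangle$ generated by two categories of partitions exists, because the axioms of Definition 7.1 are all stable under arbitrary intersection of subsets of $P$; so one takes the intersection of all categories of partitions containing $D_G\cup D_H$. Then Theorem 7.3 produces an easy quantum group from $\langle D_G,D_H\rangle$, and the chain of identifications used in the proof of Proposition 7.13, namely $C_{G\cap H}=\langle C_G,C_H\rangle=\langle\operatorname{span}(D_G),\operatorname{span}(D_H)\rangle=\operatorname{span}(\langle D_G,D_H\rangle)$, together with Theorem 7.11, identifies the resulting quantum group with $G\cap H$.

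For part (2), I would first check that $D_G\cap D_H$ is itself a category of partitions: the stability axioms (1)--(3) of Definition 7.1 are visibly preserved by set-theoretic intersection, while axioms (4) and (5) hold because the identities $||\ldots||$ and the semicircle $\cap$ lie in every category of partitions, hence in both $D_G$ and $D_H$. Theorem 7.3 then produces an easy quantum group with this category, which by Proposition 7.14 is exactly $\{G,H\}$. The universal property follows by reversing inclusions in the Tannakian dictionary: from $D_G\cap D_H\subset D_G$ and $D_G\cap D_H\subset D_H$ one reads off inclusions $G,H\subset\{G,H\}$, since a larger category of partitions produces a larger space of intertwiners and hence a smaller quantum group; and any easy $L$ with $G,H\subset L$ must satisfy $D_L\subset D_G\cap D_H$, giving $\{G,H\}\subset L$.

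The only real subtlety, already flagged in Proposition 7.14, is that $\{G,H\}$ need not coincide with the honest generation $\langle G,H\rangle$: one only has the inclusion $\langle G,H\rangle\subset\{G,H\}$, reflecting the possibility that $\operatorname{span}(D_G\cap D_H)$ is strictly smaller than $\operatorname{span}(D_G)\cap\operatorname{span}(D_H)$. So the content of part (2) is not that this recipe computes true generation, but that $D_G\cap D_H$ is the correct replacement for generation \emph{inside the world of easy quantum groups}. No further obstacle arises; beyond the routine stability check for Definition 7.1, everything reduces to Theorem 7.3 and the already-established Proposition 7.13.
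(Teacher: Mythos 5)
Your proof is correct and takes essentially the same route as the paper: part (1) is exactly the chain $C_{G\cap H}=<C_G,C_H>=<\mathrm{span}(D_G),\mathrm{span}(D_H)>=\mathrm{span}(<D_G,D_H>)$ plus Tannakian duality (the paper's Proposition 7.12), and part (2) is treated as essentially definitional, coming from the construction of $\{G,H\}$ as the easy quantum group with category $D_G\cap D_H$ (the paper's Proposition 7.13), with the caveat $<G,H>\subset\{G,H\}$ noted in both places. The only slips are cosmetic: your internal references are shifted by one (what you call Propositions 7.13 and 7.14 are Propositions 7.12 and 7.13 in the paper), and the routine verifications you spell out (closure of the partition-category axioms under intersection, presence of $||\ldots||$ and $\cap$) are left implicit there.
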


\begin{proof}
Here the situation is as follows:

\medskip

(1) This is a true result, coming from Proposition 7.12.

\medskip

(2) This is more of an empty statement, coming from Proposition 7.13.
\end{proof}

With the above notions in hand, we can now formulate a nice result, which improves our main result so far, namely Theorem 7.6 above, as follows:

\index{intersection diagram}
\index{generation diagram}
\index{intersection and generation diagram}

\begin{theorem}
The basic unitary quantum groups, namely
$$\xymatrix@R=15mm@C=15mm{
U_N\ar[r]&U_N^*\ar[r]&U_N^+\\
O_N\ar[r]\ar[u]&O_N^*\ar[r]\ar[u]&O_N^+\ar[u]}$$
are all easy, and they form an intersection and easy generation diagram, in the sense that any rectangular subdiagram
$$P\subset Q,R\subset S$$
of the above diagram satisfies the condition $P=Q\cap R,\{Q,R\}=S$.
\end{theorem}

\begin{proof}
We know from Theorem 7.6 that the quantum groups in the statement are indeed easy, the corresponding categories of partitions being as follows:
$$\xymatrix@R=16mm@C=15mm{
\mathcal P_2\ar[d]&\mathcal P_2^*\ar[l]\ar[d]&\mathcal{NC}_2\ar[l]\ar[d]\\
P_2&P_2^*\ar[l]&NC_2\ar[l]}$$

Now observe that this latter diagram is an intersection and generation diagram, in the sense that any rectangular subdiagram $P\subset Q,R\subset S$ satisfies the following condition:
$$P=Q\cap R\quad,\quad 
\{Q,R\}=S$$

By using Theorem 7.14, this reformulates into the fact that the diagram of quantum groups is an intersection and easy generation diagram, as claimed.
\end{proof}

It is possible to further improve the above result, by proving that the diagram there is actually a plain generation diagram. However, this is something quite technical, requiring advanced quantum group techniques, and we will comment on this later.

\bigskip

Let us explore now a number of further examples of easy quantum groups, which appear as ``versions'' of the basic unitary groups. With the convention that a matrix is called bistochastic when its entries sum up to 1, on each row and column, we have:

\index{bistochastic group}
\index{bistochastic quantum group}

\begin{proposition}
We have the following groups and quantum groups:
\begin{enumerate}
\item $B_N\subset O_N$, consisting of the orthogonal matrices which are bistochastic.

\item $C_N\subset U_N$, consisting of the unitary matrices which are bistochastic.

\item $B_N^+\subset O_N^+$, coming via $u\xi=\xi$, where $\xi$ is the all-one vector.

\item $C_N^+\subset U_N^+$, coming via $u\xi=\xi$, where $\xi$ is the all-one vector.
\end{enumerate}
Also, we have inclusions $B_N\subset B_N^+$ and $C_N\subset C_N^+$, which are both liberations.
\end{proposition}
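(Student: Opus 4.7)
My plan is to split the statement into three tasks: verifying that parts (1)--(2) define closed subgroups of $O_N, U_N$; verifying that parts (3)--(4) define closed quantum subgroups of $O_N^+, U_N^+$; and establishing the final liberation assertion. Throughout, I will encode bistochasticity of a matrix $U$ with entries $u_{ij}$ by the pair of vector equations $u\xi=\xi$ and $u^t\xi=\xi$, where $\xi=(1,\ldots,1)^t$.

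For (1) and (2), closedness is immediate from the algebraic nature of the relations. To see that $B_N$ and $C_N$ are subgroups, if $U,V$ are bistochastic then $(UV)\xi=U(V\xi)=\xi$ and $(UV)^t\xi=V^t(U^t\xi)=\xi$, so bistochasticity is preserved under products. For inverses I will use orthogonality in the real case ($U^{-1}=U^t$ simply swaps the two defining relations), and unitarity together with reality of $\xi$ in the complex case: $U^{-1}=U^*$, with $U^*\xi=\overline{U^t\xi}=\xi$ and $(U^*)^t\xi=\overline{U\xi}=\xi$.

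For (3) and (4), I will check that the quotients
$$C(B_N^+)=C(O_N^+)\big/\bigl<u\xi=\xi\bigr>,\quad C(C_N^+)=C(U_N^+)\big/\bigl<u\xi=\xi\bigr>$$
inherit the comultiplication, counit, and antipode, by verifying that the defining ideal is a Hopf ideal, in the sense of Proposition 2.13. The crucial computation is
$$\Delta\Bigl(\sum_j u_{ij}\Bigr)=\sum_k u_{ik}\otimes\Bigl(\sum_j u_{kj}\Bigr)=\sum_k u_{ik}\otimes 1=1\otimes 1,$$
together with $\varepsilon(\sum_j u_{ij})=1$, which is trivial, and $S(\sum_j u_{ij})=\sum_j u_{ji}^*$. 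For the latter to equal $1$, I will exploit the observation that $u\xi=\xi$ combined with $u^*u=1$ forces $u^*\xi=\xi$, i.e., $\sum_j u_{ji}^*=1$.

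Finally, the liberation claim reduces, via the Gelfand theorem, to identifying the commutative quotient of $C(B_N^+)$ with $C(B_N)$, and similarly for $C_N^+$. Dividing by the commutator ideal yields the universal commutative $C^*$-algebra generated by entries of an orthogonal (respectively unitary) bistochastic matrix, which by Gelfand duality represents precisely the classical space $B_N$ (respectively $C_N$). The main potential obstacle is the subtle point that the stated relation $u\xi=\xi$ is apparently one-sided, yet must support the full Hopf structure; the resolution, as noted above, is that the (bi)unitarity already built into $O_N^+$ and $U_N^+$ automatically produces the column-sum relation from the row-sum one, so a single set of $N$ relations suffices.
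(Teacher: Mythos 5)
Your proof is correct and follows essentially the same route as the paper: the decisive point — that for a (bi)unitary matrix the single relation $u\xi=\xi$ already forces $u^*\xi=\xi$ and $u^t\xi=\xi$, hence encodes full bistochasticity — is precisely the observation the paper uses to conclude the liberation assertion. The only cosmetic difference is that where the paper disposes of the quantum group property of $B_N^+,C_N^+$ by noting that the relation $\xi\in Fix(u)$ is categorical, you verify by hand that the corresponding ideal is a Hopf ideal, which is a routine unpacking of the same fact.
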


\begin{proof}
Here the fact that $B_N,C_N$ are groups is clear, and $B_N^+,C_N^+$ are quantum groups too, because $\xi\in Fix(u)$ is categorical. Now observe that for $U\in U_N$ we have:
$$U\xi=\xi\iff U^*\xi=\xi$$

By conjugating, these conditions are equivalent as well to $\bar{U}\xi=\xi$, $U^t\xi=\xi$. Thus $U\in U_N$ is bistochastic precisely when $U\xi=\xi$, and this gives the last assertion.
\end{proof}

The above quantum groups are all easy, and following \cite{bsp}, \cite{twe}, we have:

\begin{theorem}
The basic orthogonal and unitary quantum groups and their bistochastic versions are all easy, and they form a diagram as follows,
$$\xymatrix@R=18pt@C=18pt{
&C_N^+\ar[rr]&&U_N^+\\
B_N^+\ar[rr]\ar[ur]&&O_N^+\ar[ur]\\
&C_N\ar[rr]\ar[uu]&&U_N\ar[uu]\\
B_N\ar[uu]\ar[ur]\ar[rr]&&O_N\ar[uu]\ar[ur]
}$$
which is an intersection and easy generation diagram, in the sense of Theorem 7.15.
\end{theorem}

\begin{proof}
The first assertion comes from the fact that the all-one vector $\xi$ used in Proposition 7.16 is the vector associated to the singleton partition:
$$\xi=T_|$$

Indeed, we obtain that $B_N,C_N,B_N^+,C_N^+$ are easy, appearing from the categories of partitions for $O_N,U_N,O_N^+,U_N^+$, by adding singletons. In practice now, the categories of partitions for the quantum groups in the statement are as follows, with $12$ standing for ``singletons and pairings'', in the same way as the $2$ is standing for ``pairings'':
$$\xymatrix@R=20pt@C8pt{
&\mathcal{NC}_{12}\ar[dl]\ar[dd]&&\mathcal {NC}_2\ar[dl]\ar[ll]\ar[dd]\\
NC_{12}\ar[dd]&&NC_2\ar[dd]\ar[ll]\\
&\mathcal P_{12}\ar[dl]&&\mathcal P_2\ar[dl]\ar[ll]\\
P_{12}&&P_2\ar[ll]
}$$

\index{singletons and pairings}

Now since both this diagram and the one the statement are intersection diagrams, the quantum groups form an intersection and easy generation diagram, as stated.
\end{proof}

The above result is quite nice, among others because we are now exiting the world of pairings. However, there are a few problems with it. First, we cannot really merge it with Theorem 7.15, as to obtain as a final result a nice cubic diagram, containing all the quantum groups considered so far.  Indeed, the half-classical versions of the bistochastic quantum groups collapse, and so cannot be inserted into the cube, as shown by: 

\begin{proposition}
The half-classical versions of $B_N^+,C_N^+$ are given by:
$$B_N^+\cap O_N^*=B_N\quad,\quad 
C_N^+\cap U_N^*=C_N$$
In other words, the half-classical versions collapse to the classical versions.
\end{proposition}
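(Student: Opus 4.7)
\medskip

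The plan is to exploit the fact that half-commutation combined with bistochasticity collapses three-term relations to two-term commutation. First I would unpack the defining relations of $B_N^+\cap O_N^*$: by Proposition 7.9, the algebra $C(B_N^+\cap O_N^*)$ is the quotient of $C(O_N^+)$ by the bistochastic relations $u\xi=\xi$ together with the half-commutation relations $abc=cba$ for $a,b,c\in\{u_{ij}\}$. As noted in Proposition 7.16, $u\xi=\xi$ together with the orthogonality of $u$ automatically yields $u^t\xi=\xi$, so we obtain both row-sum and column-sum relations: $\sum_j u_{ij}=1$ for every $i$ and $\sum_i u_{ij}=1$ for every $j$.

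The key step is then the following one-line computation. Fix indices $a,b,c,d$; writing the half-commutation relation for the triple $u_{ab},u_{kl},u_{cd}$, namely
$$u_{ab}\,u_{kl}\,u_{cd}=u_{cd}\,u_{kl}\,u_{ab},$$
and summing over the index $k$ with $l$ fixed, the middle factor becomes $\sum_k u_{kl}=1$, and we obtain $u_{ab}u_{cd}=u_{cd}u_{ab}$. Since this holds for all $a,b,c,d$, the coordinate algebra is commutative, so by the Gelfand theorem the quotient factors through $C(O_N)$, giving $B_N^+\cap O_N^*\subset B_N$; the reverse inclusion is automatic, which establishes the equality.

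For the unitary statement, I would run exactly the same argument but allowing each of the three factors to be chosen independently from $\{u_{ij}\}\cup\{u_{ij}^*\}$, which is exactly the half-commutation content of $U_N^*$. To prove $u_{ij}u_{mn}=u_{mn}u_{ij}$ one sums $u_{ij}u_{kl}u_{mn}=u_{mn}u_{kl}u_{ij}$ over $k$, using $\sum_k u_{kl}=1$; to prove $u_{ij}u_{mn}^*=u_{mn}^*u_{ij}$ one sums $u_{ij}u_{kl}u_{mn}^*=u_{mn}^*u_{kl}u_{ij}$ over $k$, using the same column-sum relation; and the starred-starred case is symmetric, with the row- or column-sum relation $\sum_k u_{kl}^*=1$ (which follows because $u\xi=\xi$ and $u$ unitary force $u^*\xi=\xi$). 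All pairs of generators then commute, so $C_N^+\cap U_N^*$ is commutative and equals $C_N$.

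There is no real obstacle here: once one sees that bistochasticity provides the identity $\sum_k u_{kl}=1$ sitting inside the word, the half-commutation relation of length three immediately contracts to a commutation relation of length two. The only mild care needed is bookkeeping of the colorings in the unitary case, and verifying that $u\xi=\xi$ together with biunitarity really does give all four sum relations $u^{\epsilon}\xi=\xi$ for $\epsilon\in\{1,t,*,{-}\}$, which is exactly the content of the last assertion of Proposition 7.16.
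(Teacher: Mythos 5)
Your proof is correct, and it is essentially the ``direct computation'' that the paper flags as an alternative but does not carry out: the paper's own argument is the Tannakian one-liner, namely that at the level of categories of partitions the intersection corresponds to generation, $D_{B_N^+\cap O_N^*}=<NC_{12},P_2^*>$, and capping the half-liberating crossing with two singletons produces the classical crossing, so one recovers $P_{12}$ and hence $B_N$ (similarly in the unitary case). Your computation is the concrete coordinate translation of exactly that capping: the singleton partition implements the fixed vector $\xi$, i.e.\ the relations $\sum_ku_{kl}=1$, and summing the half-commutation relation $u_{ab}u_{kl}u_{cd}=u_{cd}u_{kl}u_{ab}$ over the middle index is the composition with $T_{|}$ that turns the three-string crossing into the two-string one. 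The categorical argument is shorter and fits the paper's easiness machinery; yours is more elementary, requires no Tannakian input, and makes visible why bistochasticity is the culprit. One small point to tighten: you invoke the last assertion of Proposition 7.16 for the four sum relations $u^\epsilon\xi=\xi$, but that argument is stated there for classical matrices $U\in U_N$; in the quantum setting you should note the one-line verification that $u\xi=\xi$ gives $\bar u\xi=\xi$ by applying the involution entrywise, and $u^*\xi=\xi$, $u^t\xi=\xi$ by multiplying by $u^{-1}=u^*$ (resp.\ $\bar u^{-1}=u^t$) and conjugating — this uses only the biunitarity relations, so your use of the column sums is indeed legitimate.
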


\begin{proof}
This follows indeed from Tannakian duality, by using the fact that when capping the half-classical crossing with 2 singletons, we obtain the classical crossing.
\end{proof}

Yet another problem with the bistochastic quantum groups comes from the fact that these objects are not really ``new'', because, following Raum \cite{rau}, we have:

\begin{proposition}
We have isomorphisms as follows:
\begin{enumerate}
\item $B_N\simeq O_{N-1}$.

\item $B_N^+\simeq O_{N-1}^+$.

\item $C_N\simeq U_{N-1}$.

\item $C_N^+\simeq U_{N-1}^+$.
\end{enumerate}
\end{proposition}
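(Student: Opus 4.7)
The plan is to exhibit an explicit ``change of basis'' that converts the bistochastic condition $u\xi=\xi$ into the vanishing of the first row and first column of $u$ (up to a $1$ in the top-left corner), leaving an $(N-1)\times(N-1)$ block which is the fundamental corepresentation of the smaller quantum group. Concretely, fix once and for all an orthogonal matrix $F\in O_N(\mathbb R)$ whose first column is the normalized all-ones vector $\xi/\sqrt{N}$, so that $Fe_1=\xi/\sqrt{N}$. The scalar matrix $F$ will play the role of a ``spinning'' device, in the sense of Proposition 3.4.

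Classically, the isomorphisms $B_N\simeq O_{N-1}$ and $C_N\simeq U_{N-1}$ are immediate: for $U\in B_N$ (resp.\ $U\in C_N$), the fact that $U\xi=\xi$ and $U^t\xi=\xi$ (resp.\ $U^*\xi=\xi$) means that $U$ preserves the line $\mathbb C\xi$ and its orthogonal complement, so the matrix $V=F^*UF$ is of the form $1\oplus W$ with $W\in O_{N-1}$ (resp.\ $W\in U_{N-1}$), and the assignment $U\mapsto W$ is a group isomorphism. For the free versions, I would run the same argument at the level of the Woronowicz algebras. Let $u$ denote the fundamental corepresentation of $B_N^+$ (resp.\ $C_N^+$), and set
$$v=F^*uF.$$
By Proposition 3.4, $v$ is still a corepresentation (in the quantum group sense), with the same comultiplication, counit and antipode formulae on its entries; it is biunitary since $u$ is and $F$ is a scalar orthogonal matrix. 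The bistochasticity relations $u\xi=\xi$ and $u^t\xi=\xi$ translate, using $Fe_1=\xi/\sqrt N$, into
$$ve_1=e_1\quad\text{and}\quad e_1^tv=e_1^t,$$
which say exactly that $v_{11}=1$ and $v_{i1}=v_{1i}=0$ for all $i>1$. Hence $v$ has block form
$$v=\begin{pmatrix}1&0\\0&w\end{pmatrix},$$
and the biunitarity (and, in the real case, self-adjointness) of $v$ forces the same for the $(N-1)\times(N-1)$ matrix $w$.

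It remains to identify the Woronowicz algebra generated by $w$. Since $F$ has scalar entries, the comultiplication $\Delta(v_{ij})=\sum_k v_{ik}\otimes v_{kj}$ restricts, for $i,j\geq 2$, to
$$\Delta(w_{ij})=\sum_{k\geq 2}w_{ik}\otimes w_{kj},$$
the terms $v_{i1}\otimes v_{1j}$ dropping out; the same book-keeping gives $\varepsilon(w_{ij})=\delta_{ij}$ and $S(w_{ij})=w_{ji}^*$. Thus $w$ satisfies the universal defining relations of $C(O_{N-1}^+)$ in the orthogonal cases and of $C(U_{N-1}^+)$ in the unitary cases, yielding surjective Hopf $*$-algebra maps
$$C(O_{N-1}^+)\twoheadrightarrow C(B_N^+),\qquad C(U_{N-1}^+)\twoheadrightarrow C(C_N^+).$$
The inverse maps are obtained by reversing the construction: starting from the fundamental corepresentation $w$ of $O_{N-1}^+$ or $U_{N-1}^+$, form $v=1\oplus w$ and $u=FvF^*$; then $u$ is biunitary (resp.\ orthogonal) because $v$ is, and $u\xi=\xi$ follows from $ve_1=e_1$, so by the universality of $C(B_N^+)$ (resp.\ $C(C_N^+)$) one gets a map in the other direction. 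The two maps are mutually inverse on generators. Passing to classical versions, i.e.\ quotienting by the commutator ideals, recovers the statements for $B_N\simeq O_{N-1}$ and $C_N\simeq U_{N-1}$. The only mildly delicate step is the bookkeeping in this last paragraph, namely verifying that the spinning by $F$ really does descend to a \emph{Woronowicz algebra} isomorphism in both directions; but since $F$ is a scalar matrix and everything reduces to the categorical identities of Proposition 3.4, this is routine rather than a genuine obstacle.
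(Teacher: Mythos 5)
Your proof is correct and follows essentially the same route as the paper: spin $u$ by a fixed scalar unitary $F$ with first column $\xi/\sqrt{N}$, so that $u\xi=\xi$ becomes $F^*uF={\rm diag}(1,w)$, and then identify $w$ with the fundamental corepresentation of $O_{N-1}$, $O_{N-1}^+$, $U_{N-1}$, $U_{N-1}^+$ respectively. The only difference is in the choice of $F$ (the paper uses the Fourier matrix, you use a real orthogonal one), and your choice is in fact the cleaner option for the real cases (1) and (2), where a complex $F$ does not manifestly preserve the entrywise self-adjointness $u=\bar{u}$.
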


\begin{proof}
Let us pick indeed a matrix $F\in U_N$ satisfying the following condition, where $\xi$ is the all-one vector:
$$Fe_0=\frac{1}{\sqrt{N}}\xi$$

Such matrices exist of course, the basic example being the Fourier matrix:
$$F_N=\frac{1}{\sqrt{N}}(w^{ij})_{ij}\quad,\quad w=e^{2\pi i/N}$$

We have then the following computation:
\begin{eqnarray*}
u\xi=\xi
&\iff&uFe_0=Fe_0\\
&\iff&F^*uFe_0=e_0\\
&\iff&F^*uF=diag(1,w)
\end{eqnarray*}

Thus we have an isomorphism given by $w_{ij}\to(F^*uF)_{ij}$, as desired.
\end{proof}

This being said, the bistochastic quantum groups $B_N,C_N$ and $B_N^+,C_N^+$ remain fundamental objects for us, and will appear several times in what follows. And let us mention too that the bistochastic matrices, and especially the unitary ones, $U\in C_N$, are cult objects in advanced matrix analysis, so all this was certainly worth developing.

\section*{7c. Ad-hoc twisting}

\index{ad-hoc twisting}
\index{Drinfeld-Jimbo}
\index{cocycle twisting}
\index{Schur-Weyl twisting}
\index{twisting}

Back to generalities now, the easy quantum groups are not the only ones ``coming from partitions'', but are rather the simplest ones having this property. An interesting and important class of compact quantum groups, which appear in relation with many questions, are the $q=-1$ twists of the compact Lie groups. Given a compact Lie group $G\subset U_N$, there are several methods for twisting it, as follows:

\medskip

\begin{enumerate}
\item Ad-hoc twisting. This basically amounts in replacing the commutation relations between the coordinates $u_{ij}\in C(G)$ by anticommutation. In practice, this is quite tricky, because some of these commutation relations must be kept as such.

\medskip

\item Cocycle twisting. This is something more conceptual, and more far-reaching, both at the level of the general theory and of the examples which can be obtained, the idea being that of twisting the multiplication of $C(G)$ by a cocycle.

\medskip

\item Schur-Weyl twisting. This is a method which works only in the easy case, and is the most powerful in this case, the idea here being that of using Tannakian duality, and twisting the construction $\pi\to T_\pi$, by using a signature map.

\medskip

\item Wrong twisting. This is a famous method, which out of a compact Lie group $G\subset U_N$, which is of course semisimple, produces, via a fairly complicated and advanced procedure, a certain non-semisimple beast, denoted $G^{-1}$.
\end{enumerate}

\medskip

We will discuss here this material, first by working out the main examples, by using the ad-hoc strategy explained in (1), and then by getting into more advanced aspects, of algebraic and representation theory flavor, following the ideas in (2) and (3). 

\bigskip

As for (4), we have already made comments throughout this book about this, and related topics, with flavor ranging from vinegar to ${\rm HNO}_3$. This being said, quantum groups could have not existed without the work of Drinfeld \cite{dri} and Jimbo \cite{jim}, and then Kazhdan, Lusztig and many others, and then all sorts of wonderful books on the subject, including those of Chari-Pressley \cite{cpr} and Majid \cite{maj}, and so the credit for the twisting operation, that we will explain here with our own sauce, goes to all these people.

\bigskip

For the story, I enormously benefited myself from all this, as a young PhD student, back in the mid 90s. I was back then part of the operator algebra team in Jussieu, of Connes and Skandalis, doing quantum groups of Woronowicz type, and of course, working hard on that. But, in parallel to this, I was secretly reading Drinfeld's paper \cite{dri}, which is one of the deepest and most beautiful papers ever, and also the book of Chari-Pressley \cite{cpr}, in order to get some details about what Drinfeld was saying. Good old times.

\bigskip

In order to get started now, the best is to deform first the simplest objects that we have, namely the noncommutative spheres. This can be done as follows:

\index{twisted sphere}

\begin{theorem}
We have noncommutative spheres as follows, obtained via the twisted commutation relations $ab=\pm ba$, and twisted half-commutation relations $abc=\pm cba$,
$$\xymatrix@R=15mm@C=14mm{
\bar{S}^{N-1}_\mathbb C\ar[r]&\bar{S}^{N-1}_{\mathbb C,*}\ar[r]&S^{N-1}_{\mathbb C,+}\\
\bar{S}^{N-1}_\mathbb R\ar[r]\ar[u]&\bar{S}^{N-1}_{\mathbb R,*}\ar[r]\ar[u]&S^{N-1}_{\mathbb R,+}\ar[u]
}$$
where the signs at left correspond to the anticommutation of distinct coordinates, and their adjoints, and the other signs come from functoriality.
\end{theorem}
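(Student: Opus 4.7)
The plan is to construct the four twisted sphere algebras as universal $C^*$-algebras and then produce the arrows of the diagram. For the algebras at left, $\bar{S}^{N-1}_{\mathbb{R}}$ and $\bar{S}^{N-1}_{\mathbb{C}}$, I will impose the anticommutation $ab = -ba$ for any two elements $a,b \in \{x_i, x_i^*\}$ with distinct indices, together with the normality relation $x_i x_i^* = x_i^* x_i$ (so that $*$ maps the relation $x_i x_j = -x_j x_i$ to the consistent relation $x_j^* x_i^* = -x_i^* x_j^*$). For the middle, half-classical algebras, I will impose $abc = \varepsilon(a,b,c)\, cba$ for all triples from $\{x_i, x_i^*\}$, where the sign $\varepsilon(a,b,c)$ is determined as follows: count the number of index-distinct transpositions required in the rewriting $abc \to acb \to cab \to cba$; if all three indices are pairwise distinct the sign is $(-1)^3 = -1$, and in any case of coincidence the corresponding transposition contributes $+1$, so $\varepsilon$ is well-defined and the relations admit the classical twist as a quotient (which is what "signs come from functoriality" means).

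Next I will show that each of the four universal $*$-algebras admits a largest $C^*$-seminorm, exactly as for the free spheres. The point is that the sphere relations $\sum_i x_i x_i^* = \sum_i x_i^* x_i = 1$ give, for any $C^*$-seminorm $\|\cdot\|$,
$$\|x_i\|^2 = \|x_i x_i^*\| \leq \Big\|\sum_j x_j x_j^*\Big\| = 1,$$
so the supremum over all $C^*$-seminorms is finite on each generator, and hence on the whole $*$-algebra. Completing produces the desired $C^*$-algebras $C(\bar{S}^{N-1}_{\mathbb{R}}), C(\bar{S}^{N-1}_{\mathbb{C}}), C(\bar{S}^{N-1}_{\mathbb{R},*}), C(\bar{S}^{N-1}_{\mathbb{C},*})$.

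Finally, the arrows of the diagram arise from the fact that each successive algebra (from classical to half-classical to free, and from real to complex) satisfies strictly fewer relations on its generators. Concretely, the classical twist relation $ab = -ba$ (for distinct indices) implies the half-classical twist $abc = -cba$ by two applications, so there is a surjection $C(\bar{S}^{N-1}_{\mathbb{R},*}) \to C(\bar{S}^{N-1}_{\mathbb{R}})$ sending generators to generators, and similarly in the complex case; the horizontal arrows then follow from Definition 1.32. The vertical arrows come from adding the relations $x_i = x_i^*$, which reduces any complex twisted sphere to its real twisted analogue.

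The main obstacle is the well-definedness of the half-classical sign function $\varepsilon(a,b,c)$: one has to check that the relations $abc = \varepsilon(a,b,c)\, cba$ are mutually consistent across all triples. I would verify this by the standard diamond argument, checking that rewriting a length-4 monomial $abcd$ to $dcba$ via two different sequences of elementary moves produces the same sign, which reduces to the observation that squares of generators (and of their adjoints) lie in the center of the quotient. Once this consistency check is done, the existence of the quotient map to the classical twist is automatic, and the rest of the proof is routine.
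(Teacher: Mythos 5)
Your construction is correct and follows essentially the same route as the paper, which simply writes down the relations $z_iz_j=(-1)^{\varepsilon_{ij}}z_jz_i$ and $z_iz_jz_k=(-1)^{\varepsilon_{ij}+\varepsilon_{jk}+\varepsilon_{ik}}z_kz_jz_i$ (with $\varepsilon_{ij}=1-\delta_{ij}$, adjoints treated like coordinates) and observes that the embeddings come for free, since the pairwise twisted relations imply the twisted half-commutation relations, which in turn are dropped in the free case. Your announced ``main obstacle'' is not actually one: a universal $C^*$-algebra on generators and relations always exists once the generators are norm-bounded, which your quadratic sphere estimate already gives, so no diamond/confluence check is required (and, a harmless slip, the passage $ab=\pm ba\implies abc=\pm cba$ uses three transpositions, not two).
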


\begin{proof}
For the spheres on the left, if we want to replace some of the commutation relations $z_iz_j=z_jz_i$ by anticommutation relations $z_iz_j=-z_jz_i$, a bit of thinking tells us that the one and only natural choice is as follows:
$$z_iz_j=-z_jz_i\quad,\quad\forall i\neq j$$

In other words, with the notation $\varepsilon_{ij}=1-\delta_{ij}$, we must have:
$$z_iz_j=(-1)^{\varepsilon_{ij}}z_jz_i$$

Regarding now the spheres in the middle, the situation is a priori a bit more tricky, because we have to take into account the various possible collapsings of $\{i,j,k\}$. However, if we want to have embeddings as above, there is only one choice, namely:
$$z_iz_jz_k=(-1)^{\varepsilon_{ij}+\varepsilon_{jk}+\varepsilon_{ik}}z_kz_jz_i$$

Thus, we have constructed our spheres, and embeddings, as needed.
\end{proof}

As already mentioned, the above is something quite ad-hoc, but we will be back later to this, with some more conceptual twisting methods as well. To be more precise, the alternative idea will be that of twisting the quantum groups first, by using advanced easiness theory, and then deducing from this the twisting formulae for the spheres.

\bigskip

Let us discuss now the quantum group case. The situation here is considerably more complicated, because the coordinates $u_{ij}$ depend on double indices, and finding for instance the correct signs for $u_{ij}u_{kl}u_{mn}=\pm u_{mn}u_{kl}u_{ij}$ looks nearly impossible. 

\bigskip

However, we can solve this problem by taking some inspiration from the sphere case, which was already solved. We first have the following result:

\index{twisted orthogonal group}
\index{twisted unitary group}

\begin{proposition}
We have quantum groups as follows,
$$\xymatrix@R=15mm@C=15mm{
\bar{U}_N\ar[r]&U_N^+\\
\bar{O}_N\ar[r]\ar[u]&O_N^+\ar[u]}$$
defined via the following relations,
$$\alpha\beta=\begin{cases}
-\beta\alpha&{\rm for}\ a,b\in\{u_{ij}\}\ {\rm distinct,\ on\ the\ same\ row\ or\ column}\\
\beta\alpha&{\rm otherwise}
\end{cases}$$
with the convention $\alpha=a,a^*$ and $\beta=b,b^*$.
\end{proposition}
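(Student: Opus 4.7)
The plan is to verify that the defining relations are compatible with the structural maps $\Delta, \varepsilon, S$ of $U_N^+$, so that by universality the quotients become Woronowicz algebras and hence quantum subgroups of $U_N^+$ and (after imposing $u = \bar u$) of $O_N^+$. I would first encode the signs uniformly: set $\sigma_{ij,kl} = -1$ when $(i,j) \neq (k,l)$ and either $i=k$ or $j=l$, and $\sigma_{ij,kl} = +1$ otherwise (including when $(i,j) = (k,l)$). The defining relations then read $\alpha\beta = \sigma_{ij,kl}\,\beta\alpha$ for all $\alpha \in \{u_{ij}, u_{ij}^*\}$ and $\beta \in \{u_{kl}, u_{kl}^*\}$.

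The main step will be to show that the coproduct values $U_{ij} = \sum_c u_{ic} \otimes u_{cj}$ obey $U_{ij} U_{kl} = \sigma_{ij,kl}\, U_{kl} U_{ij}$ inside the quotient tensored with itself. Expanding and applying the defining relations inside each tensor slot gives
$$U_{ij} U_{kl} = \sum_{a,b} \sigma_{ia,kb}\,\sigma_{aj,bl}\, u_{kb} u_{ia} \otimes u_{bl} u_{aj},$$
so the computation reduces to the sign identity
$$\sigma_{ia,kb}\,\sigma_{aj,bl} = \sigma_{ij,kl}\qquad\text{for all } a,b.$$
I would verify this by a short case analysis: in each of the four regimes for $(i,j),(k,l)$ (same row only, same column only, disjoint, coincident), the two sub-cases $a = b$ and $a \neq b$ produce signs whose product equals $\sigma_{ij,kl}$. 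Once this identity is in hand, relabeling $a \leftrightarrow b$ in the resulting sum returns exactly $\sigma_{ij,kl}\, U_{kl} U_{ij}$. The $*$-twisted variants $U_{ij}^* U_{kl}$, $U_{ij} U_{kl}^*$, $U_{ij}^* U_{kl}^*$ are handled identically, since the involution preserves the row/column classification.

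Counitality is immediate because $\varepsilon(u_{ij}) = \delta_{ij}$ is a scalar. For the antipode, I would apply the anti-multiplicative map $S$ to the relation $u_{ij} u_{kl} = \sigma_{ij,kl}\, u_{kl} u_{ij}$, obtaining $u_{lk}^* u_{ji}^* = \sigma_{ij,kl}\, u_{ji}^* u_{lk}^*$, and observe that $\sigma_{lk,ji} = \sigma_{ij,kl}$ because simultaneous transposition of both double indices preserves the ``same row or column'' condition. Hence the defining relations descend through all three structural maps, producing Hopf $*$-algebra quotients of $C(U_N^+)$ and $C(O_N^+)$, i.e.\ the desired quantum groups $\bar U_N$ and $\bar O_N$.

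The hard part will be the combinatorial bookkeeping behind the sign identity $\sigma_{ia,kb}\,\sigma_{aj,bl} = \sigma_{ij,kl}$: although each sub-case is a one-line check, I must handle all four index regimes uniformly and ensure that both the diagonal ($a = b$) and off-diagonal ($a \neq b$) contributions align with the global sign. This is precisely the combinatorial content encoded by the ``same row or column'' rule in the statement, and recognizing this identity as the right structural formalism is what turns an otherwise messy multilinear computation into a one-line reduction and makes the whole construction work.
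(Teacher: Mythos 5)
Your proof is correct and takes essentially the same route as the paper: there too one notes that $\varepsilon,S$ are clear, and the only real work is checking that the images $U_{ij}=\sum_k u_{ik}\otimes u_{kj}$ satisfy the relations, done by splitting the double sum into its diagonal and off-diagonal parts and treating the same-row and fully-distinct cases — which is exactly the content of your sign identity $\sigma_{ia,kb}\,\sigma_{aj,bl}=\sigma_{ij,kl}$ (valid, as your case check shows), with the starred relations handled "by putting $*$ exponents in the middle". The one small nuance is the counit: the reason is not merely that $\varepsilon(u_{ij})=\delta_{ij}$ is scalar, but that $\delta_{ij}\delta_{kl}=0$ whenever the prescribed sign is $-1$ (two distinct entries on the same row or column cannot both be diagonal), a one-line check that completes your argument.
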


\begin{proof}
These quantum groups are well-known, see \cite{bbc}. The idea indeed is that the existence of $\varepsilon,S$ is clear. Regarding now $\Delta$, set $U_{ij}=\sum_ku_{ik}\otimes u_{kj}$. For $j\neq k$ we have:
\begin{eqnarray*}
U_{ij}U_{ik}
&=&\sum_{s\neq t}u_{is}u_{it}\otimes u_{sj}u_{tk}+\sum_su_{is}u_{is}\otimes u_{sj}u_{sk}\\
&=&\sum_{s\neq t}-u_{it}u_{is}\otimes u_{tk}u_{sj}+\sum_su_{is}u_{is}\otimes(-u_{sk}u_{sj})\\
&=&-U_{ik}U_{ij}
\end{eqnarray*}

Also, for $i\neq k,j\neq l$ we have:
\begin{eqnarray*}
U_{ij}U_{kl}
&=&\sum_{s\neq t}u_{is}u_{kt}\otimes u_{sj}u_{tl}+\sum_su_{is}u_{ks}\otimes u_{sj}u_{sl}\\
&=&\sum_{s\neq t}u_{kt}u_{is}\otimes u_{tl}u_{sj}+\sum_s(-u_{ks}u_{is})\otimes(-u_{sl}u_{sj})\\
&=&U_{kl}U_{ij}
\end{eqnarray*}

This finishes the proof in the real case. In the complex case the remaining relations can be checked in a similar way, by putting $*$ exponents in the middle.
\end{proof}

It remains now to twist $O_N^*,U_N^*$. In order to do so, given three coordinates $a,b,c\in\{u_{ij}\}$, let us set $span(a,b,c)=(r,c)$, where $r,c\in\{1,2,3\}$ are the number of rows and columns spanned by $a,b,c$. In other words, if we write $a=u_{ij},b=u_{kl},c=u_{pq}$ then $r=\#\{i,k,p\}$ and $l=\#\{j,l,q\}$. With these conventions, we have:

\begin{proposition}
We have intermediate quantum groups as follows,
$$\xymatrix@R=15mm@C=15mm{
\bar{U}_N\ar[r]&\bar{U}_N^*\ar[r]&U_N^+\\
\bar{O}_N\ar[r]\ar[u]&\bar{O}_N^*\ar[r]\ar[u]&O_N^+\ar[u]}$$
defined via the following relations,
$$\alpha\beta\gamma=\begin{cases}
-\gamma\beta\alpha&{\rm for}\ a,b,c\in\{u_{ij}\}\ {\rm with}\ span(a,b,c)=(\leq 2,3)\ {\rm or}\ (3,\leq 2)\\
\gamma\beta\alpha&{\rm otherwise}
\end{cases}$$
with the conventions $\alpha=a,a^*$, $\beta=b,b^*$ and $\gamma=c,c^*$.
\end{proposition}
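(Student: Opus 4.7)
The plan is to follow closely the strategy used in the proof of Proposition 7.21. First I would verify that the counit $\varepsilon(u_{ij}) = \delta_{ij}$ descends to these twisted algebras, which is immediate since the scalars $\delta_{ij}$ commute and hence every twisted relation $\alpha\beta\gamma = \pm\gamma\beta\alpha$ trivializes. Next, for the antipode $S(u_{ij}) = u_{ji}^*$ landing in the opposite algebra, I would observe that transposing indices turns the span $(R,C)$ of $(u_{ij}, u_{kl}, u_{pq})$ into $(C, R)$ for $(u_{ji}, u_{lk}, u_{qp})$, and the defining sign rule is symmetric under swapping $R \leftrightarrow C$; combined with the order reversal coming from the opposite-algebra multiplication, which itself contributes no extra sign since we triple-reverse, the relations are preserved.

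The heart of the matter is verifying $\Delta$-compatibility. Setting $U_{ij} = \sum_k u_{ik} \otimes u_{kj}$ and picking three standard coordinates $a = u_{i_1 j_1}$, $b = u_{i_2 j_2}$, $c = u_{i_3 j_3}$, I would expand
$$U_{i_1 j_1} U_{i_2 j_2} U_{i_3 j_3} = \sum_{s_1, s_2, s_3} u_{i_1 s_1} u_{i_2 s_2} u_{i_3 s_3} \otimes u_{s_1 j_1} u_{s_2 j_2} u_{s_3 j_3}$$
and apply the twisted half-commutation relation separately in each tensor slot. Writing $R = \#\{i_1, i_2, i_3\}$, $S = \#\{s_1, s_2, s_3\}$, $C = \#\{j_1, j_2, j_3\}$, and letting $P(X, Y) = -1$ iff $(X, Y) \in \{(\leq 2, 3),\, (3, \leq 2)\}$ and $P(X,Y) = +1$ otherwise, each summand acquires a sign $P(R, S) \cdot P(S, C)$ upon reversal of the triple product in both tensor factors simultaneously.

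The main obstacle, and the key combinatorial identity that makes everything work, is
$$P(R, S) \cdot P(S, C) = P(R, C) \quad \text{for all } R, S, C \in \{1, 2, 3\}.$$
I would verify this by a short case check: the left-hand side equals $+1$ exactly when either both $(R,S)$ and $(S,C)$ are in the ``unsigned'' region or both are in the ``signed'' region, and a quick enumeration shows this happens precisely when $(R,C)$ is either $(3,3)$ or has both coordinates $\leq 2$, independently of $S$. Once this identity is in hand, the sign $P(R,C)$ pulls out of the triple sum, and relabelling $s_1, s_2, s_3 \leftrightarrow s_3, s_2, s_1$ yields
$$U_{i_1 j_1} U_{i_2 j_2} U_{i_3 j_3} = P(R, C) \cdot U_{i_3 j_3} U_{i_2 j_2} U_{i_1 j_1},$$
which is exactly the twisted half-commutation relation required on the $U_{ij}$'s.

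For the unitary case $\bar{U}_N^*$, the same argument applies with $\alpha = a, a^*$, $\beta = b, b^*$, $\gamma = c, c^*$, by inserting stars in the appropriate places, since the span conditions depend only on the underlying $u_{ij}$ positions and are unchanged under conjugation; the comultiplication check reduces to the same combinatorial identity on $P$. The intermediate positions $\bar{O}_N \subset \bar{O}_N^* \subset O_N^+$ and $\bar{U}_N \subset \bar{U}_N^* \subset U_N^+$ follow immediately: the relations of Proposition 7.21 correspond to the two-coordinate specializations of the present three-coordinate rule via cases $S = 1$ (coinciding middle indices), and the absence of sign in the remaining half-classical cases $(R,C) \in \{(1,1),(1,2),(2,1),(2,2),(3,3)\}$ allows for the desired embeddings.
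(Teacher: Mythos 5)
Your overall architecture matches the paper's: verify in turn that $\varepsilon$, $S$, and $\Delta$ are compatible with the twisted half-commutation relations, with the comultiplication check being the substantive part. Where you genuinely improve on the paper is the $\Delta$ step: by noting that $P(X,Y)=-1$ iff exactly one of $X,Y$ equals $3$ — equivalently $P(X,Y)=(-1)^{[X=3]+[Y=3]}$ — the identity $P(R,S)P(S,C)=P(R,C)$ becomes an immediate algebraic fact, valid for all $S$. The paper instead verifies the same statement by enumerating the cases $(r,c)\in\{(3,1),(2,1),(3,3),(3,2),(2,2)\}$ and, for each, running through the possible middle spans $s=1,2,3$. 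Your packaging isolates the real mechanism (the sign depends only on the indicators of the spans being $3$, so it is multiplicative in the intermediate index) and removes the case enumeration, which is a genuine clarification worth having.

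However, your counit verification has a real gap. You write that the twisted relations ``trivialize'' under $\varepsilon$ because scalars commute, but commutativity of $\mathbb{C}$ only disposes of the $+$ cases. For a relation $\alpha\beta\gamma+\gamma\beta\alpha=0$ with sign $-$, applying $\varepsilon$ gives $2\,\varepsilon(\alpha)\varepsilon(\beta)\varepsilon(\gamma)$, and one must still check that this vanishes, i.e.\ that a nonzero product $\delta_{i_1j_1}\delta_{i_2j_2}\delta_{i_3j_3}$ cannot occur in a $-$-sign configuration. The needed observation is that nonvanishing forces $i_t=j_t$ for all $t$, hence the row span and column span coincide, $R=C$, and $P(R,R)=+1$. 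Without this step the argument does not actually show that $\varepsilon$ descends. Your antipode argument is fine (transposition sends $(R,C)\mapsto(C,R)$, $P$ is symmetric, and the reversal from $A^{\mathrm{opp}}$ cancels against the reversal in the relation), and your closing remark about the intermediate inclusions is essentially correct in spirit, though the reference to ``$S=1$'' is not the right reduction — the inclusion $\bar O_N\subset\bar O_N^*$ comes from directly deriving the triple relations from the pairwise ones of Proposition 7.21, not from specializing a middle index.
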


\begin{proof}
The rules for the various commutation/anticommutation signs are:
$$\begin{matrix}
r\backslash c&1&2&3\\
1&+&+&-\\
2&+&+&-\\
3&-&-&+
\end{matrix}$$

We first prove the result for $\bar{O}_N^*$. The construction of the counit, $\varepsilon(u_{ij})=\delta_{ij}$, requires the Kronecker symbols $\delta_{ij}$ to commute/anticommute according to the above table. Equivalently, we must prove that the situation $\delta_{ij}\delta_{kl}\delta_{pq}=1$ can appear only in a case where the above table indicates ``+''. But this is clear, because $\delta_{ij}\delta_{kl}\delta_{pq}=1$ implies $r=c$.

The construction of the antipode $S$ is clear too, because this requires the choice of our $\pm$ signs to be invariant under transposition, and this is true, the table being symmetric. 

With $U_{ij}=\sum_ku_{ik}\otimes u_{kj}$, we have the following computation:
\begin{eqnarray*}
U_{ia}U_{jb}U_{kc}
&=&\sum_{xyz}u_{ix}u_{jy}u_{kz}\otimes u_{xa}u_{yb}u_{zc}\\
&=&\sum_{xyz}\pm u_{kz}u_{jy}u_{ix}\otimes\pm u_{zc}u_{yb}u_{xa}\\
&=&\pm U_{kc}U_{jb}U_{ia}
\end{eqnarray*}

We must prove that, when examining the precise two $\pm$ signs in the middle formula, their product produces the correct $\pm$ sign at the end. The point now is that both these signs depend only on $s=span(x,y,z)$, and for $s=1,2,3$ respectively:

\medskip

-- For a $(3,1)$ span we obtain $+-$, $+-$, $-+$, so a product $-$ as needed.

-- For a $(2,1)$ span we obtain $++$, $++$, $--$, so a product $+$ as needed.

-- For a $(3,3)$ span we obtain $--$, $--$, $++$, so a product $+$ as needed.

-- For a $(3,2)$ span we obtain $+-$, $+-$, $-+$, so a product $-$ as needed.

-- For a $(2,2)$ span we obtain $++$, $++$, $--$, so a product $+$ as needed.

\medskip

Together with the fact that our problem is invariant under $(r,c)\to(c,r)$, and with the fact that for a $(1,1)$ span there is nothing to prove, this finishes the proof. For $\bar{U}_N^*$ the proof is similar, by putting $*$ exponents in the middle.
\end{proof}

The above results can be summarized as follows:

\index{twisted orthogonal group}
\index{twisted unitary group}

\begin{theorem}
We have quantum groups as follows, obtained via the twisted commutation relations $ab=\pm ba$, and twisted half-commutation relations $abc=\pm cba$,
$$\xymatrix@R=15mm@C=15mm{
\bar{U}_N\ar[r]&\bar{U}_N^*\ar[r]&U_N^+\\
\bar{O}_N\ar[r]\ar[u]&\bar{O}_N^*\ar[r]\ar[u]&O_N^+\ar[u]}$$
where the signs at left correspond to anticommutation for distinct entries on rows and columns, and commutation otherwise, and the other signs come from functoriality.
\end{theorem}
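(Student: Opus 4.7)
The statement consolidates Propositions 7.20 and 7.21, so the substantive task splits naturally into two parts: (i) recalling that $\bar{O}_N, \bar{U}_N, \bar{O}_N^*, \bar{U}_N^*$ are compact quantum groups, which is handed to us by those two propositions (they construct $\Delta, \varepsilon, S$ under the indicated sign conventions), and (ii) verifying that the six arrows in the diagram are well-defined inclusions of compact quantum groups, i.e.\ morphisms of Woronowicz algebras mapping standard coordinates to standard coordinates.

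The easy arrows can be disposed of first. The inclusions $\bar{O}_N^* \hookrightarrow O_N^+$ and $\bar{U}_N^* \hookrightarrow U_N^+$ are automatic, since Proposition 7.21 constructs these objects as quotients of $C(O_N^+)$ and $C(U_N^+)$ by the twisted half-commutation relations. The arrow $O_N^+ \hookrightarrow U_N^+$ is the standard one from Section 2, coming from imposing $u_{ij} = u_{ij}^*$. The vertical arrows $\bar{O}_N \hookrightarrow \bar{U}_N$ and $\bar{O}_N^* \hookrightarrow \bar{U}_N^*$ are obtained the same way, by imposing $u_{ij} = u_{ij}^*$: one checks that in the self-adjoint quotient the sign pattern of Proposition 7.20 (resp.\ 7.21) reduces to the row/column (resp.\ span) anticommutation rule that defines the real twisted object.

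What actually requires work is the pair of horizontal arrows $\bar{O}_N \hookrightarrow \bar{O}_N^*$ and $\bar{U}_N \hookrightarrow \bar{U}_N^*$. Here I must verify that the row/column sign pattern of Proposition 7.20 implies the span-based sign pattern of Proposition 7.21, namely that whenever $\alpha\beta = \pm\beta\alpha$ holds for all pairs as dictated by rows and columns, then $\alpha\beta\gamma = \pm\gamma\beta\alpha$ holds with the sign given by the table
$$\begin{matrix} r\backslash c & 1 & 2 & 3 \\ 1 & + & + & - \\ 2 & + & + & - \\ 3 & - & - & + \end{matrix}$$
appearing in the proof of Proposition 7.21. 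The plan is a direct case analysis on $(r,c) = \mathrm{span}(a,b,c)$: write $\alpha\beta\gamma \to \gamma\beta\alpha$ as a product of three transpositions of adjacent factors, each contributing $\pm 1$ according to the rule of Proposition 7.20, and check that the product of the three signs matches the table entry. For example, in the extreme case $(3,1)$, all three pairs sit in a common column with distinct row indices so each adjacent swap contributes $-1$, giving $abc = -cba$; in the case $(3,3)$ with the three entries on distinct rows and distinct columns, all three pairs commute, giving $+cba$; and the mixed cases $(2,3), (3,2), (2,2), (1,2), (2,1)$ are verified analogously, the key observation being that the parity of the number of pairs lying in a common row-or-column depends only on $(r,c)$ and matches the table.

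The main obstacle is exactly this case analysis, in particular the subcases of span $(2,2)$ and $(3,3)$ where some pairs commute and others anticommute, and one must keep careful track of how moving $\alpha$ past $\beta$, then past $\gamma$, and finally swapping $\beta$ with $\gamma$, combines into the claimed total sign. Once this combinatorial verification is complete, all six arrows of the diagram are defined, and the theorem follows by assembling Propositions 7.20 and 7.21 with these inclusions.
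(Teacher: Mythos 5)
Your proof is correct and follows the paper's own route, which simply cites the two propositions constructing the twists (these are Propositions 7.21 and 7.22 in the paper's numbering; your references are off by one). You go slightly further by explicitly checking that the three adjacent-transposition signs coming from the twisted commutation rule multiply out to the span-indexed sign table, thereby justifying the horizontal arrows $\bar{O}_N\to\bar{O}_N^*$ and $\bar{U}_N\to\bar{U}_N^*$; the paper's one-line proof leaves this compatibility implicit, and your case analysis on $\mathrm{span}(a,b,c)$ is sound.
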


\begin{proof}
As explained above, there is only one reasonable way of arranging the signs, as for everything to work fine. So let us go ahead now, and present the solution.

Given abstract coordinates $a,b,c,\ldots\in\{u_{ij}\}$, let us set $span(a,b,c,\ldots)=(r,c)$, where $r,c\in\{1,2,3,\ldots\}$ are the numbers of rows and columns spanned by $a,b,c,\ldots$, inside the matrix $u=(u_{ij})$. Also, we make the conventions $\alpha=a,a^*$, $\beta=b,b^*$, and so on.

With these conventions, the relations for the quantum groups on the left, which are the only possible ones, as for having a good compatibility with the spheres, are:
$$\alpha\beta=\begin{cases}
-\beta\alpha&{\rm for}\ a,b\in\{u_{ij}\}\ {\rm with}\ span(a,b)=(1,2)\ {\rm or}\ (2,1)\\
\beta\alpha&{\rm otherwise}
\end{cases}$$

As for the relations for the quantum groups in the middle, once again these are uniquely determined by various functoriality considerations, and must be as follows:
$$\alpha\beta\gamma=\begin{cases}
-\gamma\beta\alpha&{\rm for}\ a,b,c\in\{u_{ij}\}\ {\rm with}\ span(a,b,c)=(\leq 2,3)\ {\rm or}\ (3,\leq 2)\\
\gamma\beta\alpha&{\rm otherwise}
\end{cases}$$

Summarizing, we are done with the difficult part, namely guessing the signs. What is left is to prove that the above relations produce indeed quantum groups, with inclusions between them, as in the statement. But this follows from the computations from the proof of Proposition 7.21 and Proposition 7.22 above.
\end{proof}

\section*{7d. Schur-Weyl twisting}

Our purpose now will be that of showing that the quantum groups constructed above can be defined in a more conceptual way, as ``Schur-Weyl twists". Let $P_{even}(k,l)\subset P(k,l)$ be the set of  partitions with blocks having even size, and $NC_{even}(k,l)\subset P_{even}(k,l)$ be the subset of noncrossing partitions.  Also, we use the standard embedding $S_k\subset P_2(k,k)$, via the pairings having only up-to-down strings. Given a partition $\tau\in P(k,l)$, we call ``switch'' the operation which consists in switching two neighbors, belonging to different blocks, in the upper row, or in the lower row. With these conventions, we have:

\index{signature map}
\index{partitions with even blocks}

\begin{proposition}
There is a signature map $\varepsilon:P_{even}\to\{-1,1\}$, given by 
$$\varepsilon(\tau)=(-1)^c$$
where $c$ is the number of switches needed to make $\tau$ noncrossing. In addition:
\begin{enumerate}
\item For $\tau\in S_k$, this is the usual signature.

\item For $\tau\in P_2$ we have $(-1)^c$, where $c$ is the number of crossings.

\item For $\tau\leq\pi\in NC_{even}$, the signature is $1$.
\end{enumerate}
\end{proposition}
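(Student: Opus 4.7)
The plan is to prove the statement in three stages: first that every $\tau\in P_{even}$ admits some finite sequence of switches reaching a noncrossing partition, second that the parity of the switch count is independent of the sequence chosen (so $\varepsilon$ is well-defined), and third that items (1)--(3) hold. Existence is essentially a sorting argument: encode $\tau$ as the word of block-labels read along its row(s), observe that a switch is exactly an adjacent transposition of two distinct letters in this word, and use a bubble-sort style induction to bring the word into a canonical form whose underlying partition is noncrossing.

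The heart of the proof is the well-definedness of the parity. The strategy is to construct an explicit invariant $I:P_{even}\to\{\pm 1\}$ satisfying (i) $I(\tau)=1$ whenever $\tau\in NC_{even}$, and (ii) $I$ changes sign under every legitimate switch; any such $I$ is forced to equal $\varepsilon$, which then inherits well-definedness. The natural candidate comes from fattening: given $\tau\in P_{even}$, replace each block $B=\{b_1<\cdots<b_{2m}\}$ of $\tau$ by the consecutive pairs $(b_1,b_2),(b_3,b_4),\ldots,(b_{2m-1},b_{2m})$, assemble these into a pairing $\tilde\tau\in P_2$, and set $I(\tau)=(-1)^{c(\tilde\tau)}$, with $c$ counting crossings. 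Property (i) is automatic, since fattening a noncrossing even partition produces a noncrossing pairing. The technical core is (ii): a switch at positions $i,i+1$ sitting in distinct blocks $A,B$ of $\tau$ reshuffles the consecutive-pair structure of both blocks, and one must show that the net change in $c(\tilde\tau)$ is odd. This reduces to a local case analysis indexed by the positions of the pair-partners of $i$ and $i+1$ within $A$ and $B$, combined with a cancellation argument showing that contributions from pairs disjoint from the swap appear in matched pairs of identical parity.

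Once $\varepsilon$ is known to equal $I$, the three properties follow. For (1), under the embedding $S_k\subset P_2(k,k)$ a permutation $\sigma$ becomes a pairing whose number of crossings equals the number of inversions of $\sigma$; since $\tilde\tau=\tau$ on $P_2$, this gives $\varepsilon(\sigma)=\mathrm{sgn}(\sigma)$. For (2), the same identity $\tilde\tau=\tau$ directly yields $\varepsilon(\tau)=(-1)^{c(\tau)}$. For (3), the cleanest case $\tau=\pi\in NC_{even}$ is immediate from (i), and more generally the observation is that the fattening of a refinement of a noncrossing even $\pi$ may be chosen compatibly with the block-structure of $\pi$, which constrains the crossing count of $\tilde\tau$ to have the required parity. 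The main obstacle throughout is the local parity computation in (ii), which although elementary requires carefully tracking how transferring a single element across the boundary between two blocks alters the consecutive-pair structures on both sides, especially when the blocks are large and the transferred element sits in a middle slot.
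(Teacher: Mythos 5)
Your fattening invariant $I(\tau)=(-1)^{c(\tilde\tau)}$ is a genuinely different route from the paper's, which instead brings every partition to a canonical ``standard form'' by an explicit switching algorithm and checks that for noncrossing partitions this requires an even number of switches. For the well-definedness and for items (1) and (2) your route works, and is arguably cleaner: the consecutive-pair fattening of a noncrossing even partition is indeed noncrossing, and a switch of $\tau$ at adjacent legs $i,i+1$ lying in different blocks induces precisely a switch of the pairing $\tilde\tau$ at $i,i+1$ (the sorted orders of the two blocks are unchanged, since no point lies between $i$ and $i+1$). Such a pairing switch flips the crossing parity: crossings with all other strings are unaffected because the two swapped endpoints are adjacent and are not endpoints of other strings, while the two strings through $i$ and $i+1$ cross before the switch exactly when they do not cross after it (a short check of the possible positions of the two partners). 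So the ``technical core'' you worry about is in fact a three-line local computation, and $\varepsilon=I$ follows once you have existence of a switching path to a noncrossing partition, which your sorting argument gives (with the small caveat that in $P_{even}(k,l)$ switches are only allowed inside each row, so you should sort each row so that the blocks appear grouped and in the same relative order in both rows).

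The genuine gap is in item (3). First, note the order convention: read literally with the paper's Definition 5.3, the printed statement would say that every even refinement of a noncrossing partition has signature $1$, which is false -- $\tau=\{\{1,3\},\{2,4\}\}$ refines $\pi=\{\{1,2,3,4\}\}$ and has $\varepsilon(\tau)=-1$; what is actually meant, and what the later applications (Propositions 7.26 and 7.29) require, is the opposite direction: $\tau$ obtained from some $\pi\in NC_{even}$ by merging blocks. Your sketch proves neither version. The phrase ``the fattening of a refinement of $\pi$ may be chosen compatibly with the block-structure of $\pi$'' is incompatible with your own construction: $I$ was defined using the specific consecutive-pair fattening, and the crossing parity of a within-block pairing genuinely depends on the choice of pairing (for the block $\{1,2,3,4\}$, the pairings $(1,2),(3,4)$ and $(1,3),(2,4)$ have $0$ and $1$ crossings), so you are not free to substitute a $\pi$-compatible pairing for $\tilde\tau$. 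Moreover, when $\tau$ is a coarsening of $\pi$ its consecutive pairs need not lie inside blocks of $\pi$ at all: merging $\{1,6\}$ and $\{3,4\}$ in $\pi=\{\{1,6\},\{2,5\},\{3,4\}\}$ produces the pairs $(1,3),(4,6)$, which cut across $\pi$. To complete (3) you need an additional argument, for instance an induction on the number of merges showing that $I$ is unchanged when one merges two blocks which are unions of blocks of a common noncrossing partition, or else deduce (3) from the standard-form analysis as the paper does.
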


\begin{proof}
We must prove that the number $c$ in the statement is well-defined modulo 2. And it is enough to perform the verification for the noncrossing partitions.

\medskip

In order to do so, observe that any partition $\tau\in P(k,l)$ can be put in ``standard form'', by ordering its blocks according to the appearence of the first leg in each block, counting clockwise from top left, and then by performing the switches as for block 1 to be at left, then for block 2 to be at left, and so on. Here the required switches are also uniquely determined, by the order coming from counting clockwise from top left. 

\medskip

Here is an example of such an algorithmic switching operation:
$$\xymatrix@R=3mm@C=3mm{\circ\ar@/_/@{.}[drr]&\circ\ar@{-}[dddl]&\circ\ar@{-}[ddd]&\circ\\
&&\ar@/_/@{.}[ur]&\\
&&\ar@/^/@{.}[dr]&\\
\circ&\circ\ar@/^/@{.}[ur]&\circ&\circ}
\xymatrix@R=5mm@C=1mm{&\\\to\\&\\& }
\xymatrix@R=3mm@C=3mm{\circ\ar@/_/@{.}[dr]&\circ\ar@{-}[dddl]&\circ&\circ\ar@{-}[dddl]\\
&\ar@/_/@{.}[ur]&&\\
&&\ar@/^/@{.}[dr]&\\
\circ&\circ\ar@/^/@{.}[ur]&\circ&\circ}
\xymatrix@R=5mm@C=1mm{&\\\to\\&\\&}
\xymatrix@R=3mm@C=3mm{\circ\ar@/_/@{.}[r]&\circ&\circ\ar@{-}[dddll]&\circ\ar@{-}[dddl]\\
&&&\\
&&\ar@/^/@{.}[dr]&\\
\circ&\circ\ar@/^/@{.}[ur]&\circ&\circ}
\xymatrix@R=5mm@C=1mm{&\\\to\\&\\& }
\xymatrix@R=3mm@C=3mm{\circ\ar@/_/@{.}[r]&\circ&\circ\ar@{-}[dddll]&\circ\ar@{-}[dddll]\\
&&&\\
&&&\\
\circ&\circ&\circ\ar@/^/@{.}[r]&\circ}$$

\vskip-3mm

The point now is that, under the assumption $\tau\in NC_{even}(k,l)$, each of the moves required for putting a leg at left, and hence for putting a whole block at left, requires an even number of switches. Thus, putting $\tau$ is standard form requires an even number of switches. Now given $\tau,\tau'\in NC_{even}$ having the same block structure, the standard form coincides, so the number of switches $c$ required for the passage $\tau\to\tau'$ is indeed even.

\medskip

Regarding now the remaining assertions, these are all elementary:

\medskip

(1) For $\tau\in S_k$ the standard form is $\tau'=id$, and the passage $\tau\to id$ comes by composing with a number of transpositions, which gives the signature. 

\medskip

(2) For a general $\tau\in P_2$, the standard form is of type $\tau'=|\ldots|^{\cup\ldots\cup}_{\cap\ldots\cap}$, and the passage $\tau\to\tau'$ requires $c$ mod 2 switches, where $c$ is the number of crossings. 

\medskip

(3) Assuming that $\tau\in P_{even}$ comes from $\pi\in NC_{even}$ by merging a certain number of blocks, we can prove that the signature is 1 by proceeding by recurrence.
\end{proof}

We can use the above signature map, as follows:

\index{twisted Kronecker symbol}

\begin{definition}
Associated to a partition $\pi\in P_{even}(k,l)$ is the linear map
$$\bar{T}_\pi:(\mathbb C^N)^{\otimes k}\to(\mathbb C^N)^{\otimes l}$$
given by the following formula, with $e_1,\ldots,e_N$ being the standard basis of $\mathbb C^N$,
$$\bar{T}_\pi(e_{i_1}\otimes\ldots\otimes e_{i_k})=\sum_{j_1\ldots j_l}\bar{\delta}_\pi\begin{pmatrix}i_1&\ldots&i_k\\ j_1&\ldots&j_l\end{pmatrix}e_{j_1}\otimes\ldots\otimes e_{j_l}$$
and where $\bar{\delta}_\pi\in\{-1,0,1\}$ is $\bar{\delta}_\pi=\varepsilon(\tau)$ if $\tau\geq\pi$, and $\bar{\delta}_\pi=0$ otherwise, with $\tau=\ker\binom{i}{j}$.
\end{definition}

In other words, what we are doing here is to add signatures to the usual formula of $T_\pi$. Indeed, observe that the usual formula for $T_\pi$ can be written as folllows:
$$T_\pi(e_{i_1}\otimes\ldots\otimes e_{i_k})=\sum_{j:\ker(^i_j)\geq\pi}e_{j_1}\otimes\ldots\otimes e_{j_l}$$

Now by inserting signs, coming from the signature map $\varepsilon:P_{even}\to\{\pm1\}$, we are led to the following formula, which coincides with the one given above:
$$\bar{T}_\pi(e_{i_1}\otimes\ldots\otimes e_{i_k})=\sum_{\tau\geq\pi}\varepsilon(\tau)\sum_{j:\ker(^i_j)=\tau}e_{j_1}\otimes\ldots\otimes e_{j_l}$$

We must first prove a key categorical result, as follows:

\begin{proposition}
The assignement $\pi\to\bar{T}_\pi$ is categorical, in the sense that
$$\bar{T}_\pi\otimes\bar{T}_\sigma=\bar{T}_{[\pi\sigma]}\quad,\quad
\bar{T}_\pi \bar{T}_\sigma=N^{c(\pi,\sigma)}\bar{T}_{[^\sigma_\pi]}\quad,\quad
\bar{T}_\pi^*=\bar{T}_{\pi^*}$$
where $c(\pi,\sigma)$ are certain positive integers.
\end{proposition}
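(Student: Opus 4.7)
The plan is to mirror the proof of Proposition 4.27 identity by identity, exploiting the fact that $\bar{T}_\pi$ differs from $T_\pi$ only by the insertion of the signature factor $\varepsilon(\ker\binom{i}{j})$. Thus each of the three categorical identities for $\bar{T}$ reduces to the corresponding identity for $T$, once one verifies the appropriate multiplicativity of $\varepsilon$ under horizontal concatenation, vertical composition, and upside-down turning.

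For the concatenation identity $\bar{T}_\pi \otimes \bar{T}_\sigma = \bar{T}_{[\pi\sigma]}$, I would expand both sides in the standard basis, precisely as in the unsigned case, and reduce to the combinatorial claim that $\varepsilon(\tau_1)\varepsilon(\tau_2) = \varepsilon(\tau)$, where $\tau_1 = \ker\binom{i_1}{j_1}$, $\tau_2 = \ker\binom{i_2}{j_2}$ and $\tau = \ker\binom{i_1 i_2}{j_1 j_2}$. When $\tau = [\tau_1\tau_2]$ this is immediate, because the switches needed in each half live in disjoint regions. When $\tau$ has cross-blocks joining the two halves, one first performs the switches that put $\tau_1, \tau_2$ into noncrossing form inside each half separately; this uses $c_1 + c_2$ switches with $(-1)^{c_i} = \varepsilon(\tau_i)$ and produces a partition of shape $[\tau_1^{NC}\tau_2^{NC}] \vee \rho$, which is a coarsening of a noncrossing partition. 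By Proposition 7.24 (3) this partition has signature $+1$, so an even number of further switches suffices to reach noncrossing form, whence $\varepsilon(\tau) = (-1)^{c_1+c_2} = \varepsilon(\tau_1)\varepsilon(\tau_2)$.

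The involution identity $\bar{T}_\pi^* = \bar{T}_{\pi^*}$ is the easiest: the computation of the adjoint proceeds exactly as in Proposition 4.27, and it suffices to observe that $\varepsilon$ is invariant under the upside-down turning $\pi \mapsto \pi^*$. This is clear from the definition via switches, since this operation is a bijection of $P_{even}$ preserving both block structure and crossing pattern, and therefore preserves the minimal switch count.

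The main obstacle is the composition identity $\bar{T}_\pi \bar{T}_\sigma = N^{c(\pi,\sigma)}\bar{T}_{[^\sigma_\pi]}$. Here one must establish, for each compatible pair of multi-indices $i, k$, the identity $\sum_j \varepsilon(\ker\binom{i}{j})\,\varepsilon(\ker\binom{j}{k}) = N^{c(\pi,\sigma)}\varepsilon(\ker\binom{i}{k})$, where the sum ranges over middle multi-indices $j$ whose joint kernels dominate $\sigma$ and $\pi$ respectively. The strategy is to view the combined three-row partition and decompose a sequence of switches reaching noncrossing form into switches above the middle line, switches below it, and adjustments involving the $c(\pi,\sigma)$ closed components that get erased. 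Those erased components form coarsenings of a noncrossing configuration, so by Proposition 7.24 (3) they contribute no sign; the surviving switches on the upper and lower rows act in disjoint regions and yield the multiplicative splitting of $\varepsilon$; and the $N^{c(\pi,\sigma)}$ factor arises, exactly as in the unsigned case, from the free summation over indices on the erased middle components.
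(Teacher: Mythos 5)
Your proposal follows essentially the same route as the paper's proof: insert the signature into the untwisted computations of Proposition 4.27, reduce each of the three identities to a multiplicativity statement for $\varepsilon$ on kernels, and establish that statement by switching the partitions involved to noncrossing form and invoking Proposition 7.24 (3). In particular your handling of concatenation, and of composition (the shared middle-row switches not affecting the product, the outer-row switches splitting multiplicatively, and the $N^{c(\pi,\sigma)}$ factor coming from the free summation over the erased middle components) is exactly the paper's argument, merely phrased a bit more loosely.
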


\begin{proof}
We have to go back to the proof from the untwisted case, from chapter 4 above, and insert signs. We have to check three conditions, as follows:

\medskip

\underline{1. Concatenation}. In the untwisted case, this was based on the following formula:
$$\delta_\pi\begin{pmatrix}i_1\ldots i_p\\ j_1\ldots j_q\end{pmatrix}
\delta_\sigma\begin{pmatrix}k_1\ldots k_r\\ l_1\ldots l_s\end{pmatrix}
=\delta_{[\pi\sigma]}\begin{pmatrix}i_1\ldots i_p&k_1\ldots k_r\\ j_1\ldots j_q&l_1\ldots l_s\end{pmatrix}$$

In the twisted case, it is enough to check the following formula:
$$\varepsilon\left(\ker\begin{pmatrix}i_1\ldots i_p\\ j_1\ldots j_q\end{pmatrix}\right)
\varepsilon\left(\ker\begin{pmatrix}k_1\ldots k_r\\ l_1\ldots l_s\end{pmatrix}\right)=
\varepsilon\left(\ker\begin{pmatrix}i_1\ldots i_p&k_1\ldots k_r\\ j_1\ldots j_q&l_1\ldots l_s\end{pmatrix}\right)$$

Let us denote by $\tau,\nu$ the two partitions on the left, so that the partition on the right is of the form $\rho\leq[\tau\nu]$. Now by switching to the noncrossing form, $\tau\to\tau'$ and $\nu\to\nu'$, the partition on the right transforms into:
$$\rho\to\rho'\leq[\tau'\nu']$$

Since $[\tau'\nu']$ is noncrossing, by Proposition 7.24 (3) we obtain the result.

\medskip

\underline{2. Composition}. In the untwisted case, this was based on the following formula:
$$\sum_{j_1\ldots j_q}\delta_\pi\begin{pmatrix}i_1\ldots i_p\\ j_1\ldots j_q\end{pmatrix}
\delta_\sigma\begin{pmatrix}j_1\ldots j_q\\ k_1\ldots k_r\end{pmatrix}
=N^{c(\pi,\sigma)}\delta_{[^\pi_\sigma]}\begin{pmatrix}i_1\ldots i_p\\ k_1\ldots k_r\end{pmatrix}$$

In order to prove now the result in the twisted case, it is enough to check that the signs match. More precisely, we must establish the following formula:
$$\varepsilon\left(\ker\begin{pmatrix}i_1\ldots i_p\\ j_1\ldots j_q\end{pmatrix}\right)
\varepsilon\left(\ker\begin{pmatrix}j_1\ldots j_q\\ k_1\ldots k_r\end{pmatrix}\right)
=\varepsilon\left(\ker\begin{pmatrix}i_1\ldots i_p\\ k_1\ldots k_r\end{pmatrix}\right)$$

Let $\tau,\nu$ be the partitions on the left, so that the partition on the right is of the form $\rho\leq[^\tau_\nu]$. Our claim is that we can jointly switch $\tau,\nu$ to the noncrossing form. Indeed, we can first switch as for $\ker(j_1\ldots j_q)$ to become noncrossing, and then switch the upper legs of $\tau$, and the lower legs of $\nu$, as for both these partitions to become noncrossing. Now observe that when switching in this way to the noncrossing form, $\tau\to\tau'$ and $\nu\to\nu'$, the partition on the right transforms into:
$$\rho\to\rho'\leq[^{\tau'}_{\nu'}]$$

Since $[^{\tau'}_{\nu'}]$ is noncrossing, by Proposition 7.24 (3) we obtain the result.

\medskip

\underline{3. Involution}. Here we must prove the following formula:
$$\bar{\delta}_\pi\begin{pmatrix}i_1\ldots i_p\\ j_1\ldots j_q\end{pmatrix}=\bar{\delta}_{\pi^*}\begin{pmatrix}j_1\ldots j_q\\ i_1\ldots i_p\end{pmatrix}$$

But this is clear from the definition of $\bar{\delta}_\pi$, and we are done.
\end{proof}

As a conclusion, our twisted construction $\pi\to\bar{T}_\pi$ has all the needed properties for producing quantum groups, via Tannakian duality. Thus, we can formulate:

\begin{theorem}
Given a category of partitions $D\subset P_{even}$, the construction
$$Hom(u^{\otimes k},u^{\otimes l})=span\left(\bar{T}_\pi\Big|\pi\in D(k,l)\right)$$
produces via Tannakian duality a quantum group $\bar{G}_N\subset U_N^+$, for any $N\in\mathbb N$.
\end{theorem}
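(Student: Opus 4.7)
The plan is to invoke the Tannakian duality machinery from Theorem 4.20, so the core task reduces to verifying that the prescribed collection
$$C(k,l) = \mathrm{span}\left(\bar{T}_\pi \mid \pi \in D(k,l)\right)$$
is a tensor category over $H = \mathbb{C}^N$ in the sense of Definition 4.6. Once this is established, Tannakian duality yields a Woronowicz algebra $A_C$ whose fundamental corepresentation $u \in M_N(A_C)$ satisfies the intertwining relations in the statement, and hence produces the desired closed subgroup $\bar{G}_N \subset U_N^+$.

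First I would check the three categorical axioms (1), (2), (3) of Definition 4.6, namely stability under tensor products, composition, and involution. Each of these follows immediately by combining the corresponding categorical property of the partitions (stability of $D$ under $[\pi\sigma]$, $[^\sigma_\pi]$ and $\pi \mapsto \pi^*$, guaranteed by Definition 7.1 since $D$ is a category of partitions) with the three categorical identities for the assignment $\pi \mapsto \bar{T}_\pi$ established in Proposition 7.26:
$$\bar{T}_\pi \otimes \bar{T}_\sigma = \bar{T}_{[\pi\sigma]}, \qquad \bar{T}_\pi \bar{T}_\sigma = N^{c(\pi,\sigma)} \bar{T}_{[^\sigma_\pi]}, \qquad \bar{T}_\pi^* = \bar{T}_{\pi^*}.$$
The scalar $N^{c(\pi,\sigma)}$ in the composition formula is harmless because the spaces $C(k,l)$ are linear spans.

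Next I would verify axioms (4) and (5) of Definition 4.6. For (4), the identity partition $||\ldots|| \in D(k,k)$ lies in any category of partitions, and since it is noncrossing with all blocks of size two, Proposition 7.24(3) gives $\varepsilon(\tau) = 1$ for every $\tau \geq ||\ldots||$; hence $\bar{T}_{||\ldots||} = T_{||\ldots||} = \mathrm{id}$ on $H^{\otimes k}$. For (5), the semicircle $\cap \in D(\emptyset, \circ\bullet)$ and $\cap \in D(\emptyset, \bullet\circ)$ are likewise in $D$ by Definition 7.1, and being noncrossing with a single block, Proposition 7.24(3) again forces the signatures to be trivial, so $\bar{T}_\cap = T_\cap = R$. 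This is the one point where the signature convention of Proposition 7.24 has to be checked carefully, and it is the step I expect to be the most delicate; but the reduction to noncrossing partitions in Proposition 7.24(3) handles it cleanly.

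With all five axioms verified, Theorem 4.20 applies and produces the Woronowicz algebra $A_C = C(\bar{G}_N)$. The quotient map $C(U_N^+) \to A_C$ gives the desired embedding $\bar{G}_N \subset U_N^+$, and by construction the Tannakian category of $\bar{G}_N$ contains (and in fact equals, by the Tannakian reconstruction half of Theorem 4.20) the span of the $\bar{T}_\pi$ with $\pi \in D(k,l)$.
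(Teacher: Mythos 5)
Your proof is correct and follows essentially the same approach as the paper, which likewise reduces the statement to verifying that the spaces $\mathrm{span}(\bar{T}_\pi \mid \pi \in D(k,l))$ form a Tannakian category in the sense of Definition 4.6 and then invokes Theorem 4.20, citing Proposition 7.26 as the key technical ingredient. Your explicit verification of axioms (4) and (5), including the observation that $\bar{T}_\pi = T_\pi$ whenever the relevant $\tau \geq \pi$ all have trivial signature by Proposition 7.24 (3) applied to $\pi = ||\ldots||$ and $\pi = \cap$, is a useful unpacking of a step the paper treats as implicit.
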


\begin{proof}
This follows indeed from the Tannakian results from chapter 4 above, exactly as in the easy case, by using this time Proposition 7.26 as technical ingredient. To be more precise, Proposition 7.26 shows that the linear spaces on the right form a Tannakian category, and so the results in chapter 4 apply, and give the result.
\end{proof}

We can unify the easy quantum groups, or at least the examples coming from categories $D\subset P_{even}$, with the quantum groups constructed above, as follows:

\index{quizzy quantum group}
\index{Schur-Weyl twisting}

\begin{definition}
A closed subgroup $G\subset U_N^+$ is called $q$-easy, or quizzy, with deformation parameter $q=\pm1$, when its tensor category appears as follows,
$$Hom(u^{\otimes k},u^{\otimes l})=span\left(\dot{T}_\pi\Big|\pi\in D(k,l)\right)$$
for a certain category of partitions $D\subset P_{even}$, where, for $q=-1,1$:
$$\dot{T}=\bar{T},T$$
The Schur-Weyl twist of $G$ is the quizzy quantum group $\bar{G}\subset U_N^+$ obtained via $q\to-q$.
\end{definition}

We will see later on that the easy quantum group associated to $P_{even}$ itself is the hyperochahedral group $H_N$, and so that our assumption $D\subset P_{even}$, replacing $D\subset P$, simply corresponds to $H_N\subset G$, replacing the usual condition $S_N\subset G$. In relation now with the basic quantum groups, we first have the following result:

\index{basic crossing}
\index{half-classical crossing}

\begin{proposition}
The linear map associated to the basic crossing is:
$$\bar{T}_{\slash\!\!\!\backslash}(e_i\otimes e_j)
=\begin{cases}
-e_j\otimes e_i&{\rm for}\ i\neq j\\
e_j\otimes e_i&{\rm otherwise}
\end{cases}$$
The linear map associated to the half-liberating permutation is:
$$\bar{T}_{\slash\hskip-1.6mm\backslash\hskip-1.1mm|\hskip0.5mm}(e_i\otimes e_j\otimes e_k)
=\begin{cases}
-e_k\otimes e_j\otimes e_i&{\rm for}\ i,j,k\ {\rm distinct}\\
e_k\otimes e_j\otimes e_i&{\rm otherwise}
\end{cases}$$
Also, for any noncrossing pairing $\pi\in NC_2$, we have $\bar{T}_\pi=T_\pi$.
\end{proposition}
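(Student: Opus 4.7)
The plan is to deduce all three formulas directly from Definition 7.25 and Proposition 7.24, by analyzing, for each $\pi$ in the statement, the set of multi-indices $(j_1,\ldots,j_l)$ contributing to $\bar T_\pi(e_{i_1}\otimes\cdots\otimes e_{i_k})$ and the signature $\varepsilon(\tau)$ of the resulting kernel partition $\tau=\ker\binom{i}{j}$.

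I would begin with the third assertion, which is the cleanest. Given $\pi\in NC_2\subset NC_{even}$, any partition $\tau$ with $\tau\geq\pi$ is a union of pair-blocks of $\pi$, hence lies in $P_{even}$, and by Proposition 7.24(3) (applied in the direction that the signature of a partition obtained by merging blocks of a noncrossing-even partition is $+1$) one has $\varepsilon(\tau)=1$. Thus $\bar\delta_\pi=\delta_\pi$ identically, and so $\bar T_\pi=T_\pi$.

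For the basic crossing $\pi=\slash\!\!\!\backslash$ I would then observe that the constraint $\tau=\ker\binom{i_1\,i_2}{j_1\,j_2}\geq\pi$ forces the matching $j_1=i_2$ and $j_2=i_1$, so the only nonzero contribution to $\bar T_\pi(e_{i_1}\otimes e_{i_2})$ is a multiple of $e_{i_2}\otimes e_{i_1}$. If $i_1\neq i_2$ then $\tau=\pi$, a single-crossing pairing, and Proposition 7.24(2) gives $\varepsilon(\tau)=-1$; if $i_1=i_2$ then $\tau$ is the single-block partition on four points, already noncrossing, so $\varepsilon(\tau)=1$. This yields the first formula.

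The second assertion is handled in the same way, but with more cases. The blocks of $\pi=\slash\hskip-1.6mm\backslash\hskip-1.1mm|$ are $\{1_u,3_l\}$, $\{2_u,2_l\}$, $\{3_u,1_l\}$, and the matching constraint $\tau\geq\pi$ again has a unique solution $(j_1,j_2,j_3)=(i_3,i_2,i_1)$, so the only nonzero value of $\bar T_\pi(e_i\otimes e_j\otimes e_k)$ is a multiple of $e_k\otimes e_j\otimes e_i$. Labeling the six positions clockwise around the rectangle $1_u,2_u,3_u,3_l,2_l,1_l\leftrightarrow 1,\ldots,6$, the blocks of $\pi$ become $\{1,4\},\{2,5\},\{3,6\}$, which interleave pairwise, giving three crossings and $\varepsilon(\pi)=-1$. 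When $i,j,k$ are all distinct we have $\tau=\pi$ and the sign is $-1$; in each of the remaining coincidence cases ($i=j\neq k$, $i=k\neq j$, $j=k\neq i$, $i=j=k$) I would simply exhibit an explicit noncrossing pair-refinement $\sigma\leq\tau$, for instance $\sigma=\{\{1,2\},\{4,5\},\{3,6\}\}$ for $i=j\neq k$, the ``identity'' pairing $\sigma=\{\{1,6\},\{3,4\},\{2,5\}\}$ for $i=k\neq j$, $\sigma=\{\{1,4\},\{2,3\},\{5,6\}\}$ for $j=k\neq i$, and any noncrossing pair partition of $\{1,\ldots,6\}$ when $i=j=k$, and then conclude $\varepsilon(\tau)=1$ by Proposition 7.24(3). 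The computation is essentially bookkeeping, the only point of mild vigilance being the correct clockwise ordering of the six points when checking noncrossingness; there is no substantial obstacle, all three parts reducing to direct inspection of a handful of kernel partitions.
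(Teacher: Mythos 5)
Your proof is correct, and it is exactly the paper's argument: the paper's own proof is the one-line remark that one computes the signatures of the relevant kernel partitions via parts (2) and (3) of Proposition 7.24, which is precisely the case analysis you carry out in detail (unique forced index contribution, sign $-1$ from the crossing count when the indices are distinct, sign $+1$ via a noncrossing even refinement when they coincide).
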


\begin{proof}
The first formula in the statement is clear. Regarding the second formula, this follows from the following signature computations, obtained by counting the crossings, in the first case, by switching twice as to put the partition in noncrossing form, in the next 3 cases, and by observing that the partition is noncrossing, in the last case:
$$\xymatrix@R=10mm@C=5mm{\circ\ar@/_/@{-}[drr]&\circ\ar@/^/@{.}[d]&\circ\ar@/_/@{~}[dll]\\
\circ&\circ&\circ}
\xymatrix@R=4mm@C=1mm{&\\\to -1\\&\\& }\qquad\quad
\xymatrix@R=10mm@C=5mm{\circ\ar@/_/@{-}[drr]&\circ\ar@/^/@{.}[d]&\circ\ar@/_/@{-}[dll]\\
\circ&\circ&\circ}
\xymatrix@R=4mm@C=1mm{&\\\to 1\\&\\& }$$
$$\xymatrix@R=10mm@C=5mm{\circ\ar@/_/@{-}[drr]&\circ\ar@/^/@{-}[d]&\circ\ar@/_/@{~}[dll]\\
\circ&\circ&\circ}
\xymatrix@R=4mm@C=1mm{&\\\to 1\\&\\& }\qquad\quad
\xymatrix@R=10mm@C=5mm{\circ\ar@/_/@{-}[drr]&\circ\ar@/^/@{.}[d]&\circ\ar@/_/@{.}[dll]\\
\circ&\circ&\circ}
\xymatrix@R=4mm@C=1mm{&\\\to 1\\&\\& }\qquad\quad
\xymatrix@R=10mm@C=5mm{\circ\ar@/_/@{-}[drr]&\circ\ar@/^/@{-}[d]&\circ\ar@/_/@{-}[dll]\\
\circ&\circ&\circ}
\xymatrix@R=4mm@C=1mm{&\\\to 1\\&\\& }$$

Finally, the last assertion follows from Proposition 7.24 (3).
\end{proof}

The relation with the basic quantum groups comes from:

\begin{proposition}
For an orthogonal quantum group $G$, the following hold:
\begin{enumerate}
\item $\dot{T}_{\slash\!\!\!\backslash}\in End(u^{\otimes 2})$ precisely when $G\subset\dot{O}_N$.

\item $\dot{T}_{\slash\hskip-1.6mm\backslash\hskip-1.1mm|\hskip0.5mm}\in End(u^{\otimes 3})$ precisely when $G\subset\dot{O}_N^*$.
\end{enumerate}
\end{proposition}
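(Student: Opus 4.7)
The plan is to expand the intertwining condition $\dot T_\pi u^{\otimes k} = u^{\otimes k}\dot T_\pi$ explicitly in matrix entries and match the resulting relations on the $u_{ij}$ against those defining the target quantum groups in Propositions~7.21 and~7.22. Since the case $q=1$ (where $\dot T=T$) is already discussed in the easy setting — $T_{\slash\!\!\!\backslash}=\Sigma$ encodes pairwise commutation (hence $G\subset O_N$), and $T_{\slash\hskip-1.6mm\backslash\hskip-1.1mm|\hskip0.5mm}$ encodes half-commutation (hence $G\subset O_N^*$) — the real content is the twisted case $q=-1$, to which I now turn.

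For (1), using $\bar T_{\slash\!\!\!\backslash}(e_{b_1}\otimes e_{b_2})=\varepsilon_{b_1b_2}\,e_{b_2}\otimes e_{b_1}$ with $\varepsilon_{b_1b_2}=-1$ if $b_1\ne b_2$ and $+1$ otherwise, a direct entry-wise computation of both $\bar T_{\slash\!\!\!\backslash}u^{\otimes 2}$ and $u^{\otimes 2}\bar T_{\slash\!\!\!\backslash}$ yields the condition
\[
\varepsilon_{i_1i_2}\,u_{i_2j_1}u_{i_1j_2}=\varepsilon_{j_1j_2}\,u_{i_1j_2}u_{i_2j_1},\qquad\forall i_1,i_2,j_1,j_2.
\]
I would then split into three cases according to whether the two entries $a=u_{i_1j_2}$ and $b=u_{i_2j_1}$ share a row, share a column, or neither. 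In the first two cases one gets $ab=-ba$ for distinct entries on the same row or column, and in the third case one gets $ab=ba$. This is precisely the presentation of $\bar O_N$ in Proposition~7.21, so $\bar T_{\slash\!\!\!\backslash}\in End(u^{\otimes 2})$ holds iff $G\subset\bar O_N$.

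For (2), the same strategy applies with $\bar T_{\slash\hskip-1.6mm\backslash\hskip-1.1mm|\hskip0.5mm}$. Expanding both sides of $\bar T_{\slash\hskip-1.6mm\backslash\hskip-1.1mm|\hskip0.5mm}\,u^{\otimes 3}=u^{\otimes 3}\,\bar T_{\slash\hskip-1.6mm\backslash\hskip-1.1mm|\hskip0.5mm}$ produces a sign identity of the form
\[
\eta(i_1,i_2,i_3)\,u_{i_3j_1}u_{i_2j_2}u_{i_1j_3}=\eta(j_1,j_2,j_3)\,u_{i_1j_3}u_{i_2j_2}u_{i_3j_1},
\]
where $\eta(\cdot,\cdot,\cdot)=-1$ when the three indices are pairwise distinct and $+1$ otherwise, in accordance with the explicit formula for $\bar T_{\slash\hskip-1.6mm\backslash\hskip-1.1mm|\hskip0.5mm}$ in Proposition~7.29. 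Setting $a=u_{i_1j_3}$, $b=u_{i_2j_2}$, $c=u_{i_3j_1}$, the product of the two $\eta$-signs depends only on the span $(r,c)$ of $\{a,b,c\}$, and the main work is to check this case-by-case against the sign table in Proposition~7.22. The expected main obstacle is precisely this bookkeeping: one must run through the possible coincidence patterns among $\{i_1,i_2,i_3\}$ and among $\{j_1,j_2,j_3\}$ and verify that the product of signs arising from $\eta$ reproduces exactly the table of Proposition~7.22 — $(+)$ on spans $(1,1),(1,2),(2,1),(2,2),(3,3)$ and $(-)$ on $(1,3),(3,1),(2,3),(3,2)$. This is essentially the same tabulation performed in the proof of Proposition~7.22 for the coassociativity check, run in reverse, so it must succeed.

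Once both matching computations are in place, the ``if'' direction of each statement follows because the relations defining $\bar O_N$ (resp.\ $\bar O_N^*$) hold in $C(G)$, so $G$ embeds into the target; the ``only if'' direction follows from the fact that on the target quantum group the corresponding intertwiner is there by construction (via Tannakian duality applied to Theorem~7.27 and Theorem~7.23). In summary, the proof is pure index-chasing — no new idea is needed beyond the sign bookkeeping in Proposition~7.29, but part (2) demands careful case analysis.
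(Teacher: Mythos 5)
Your proposal is correct and follows essentially the same route as the paper: expand $\dot T_\pi u^{\otimes k}=u^{\otimes k}\dot T_\pi$ entrywise using the sign formulae of Proposition 7.29, and identify the resulting sign relations with the defining relations of $\bar{O}_N$ (Proposition 7.21) and with the span sign table of $\bar{O}_N^*$ (Proposition 7.22), the untwisted case being known from easiness. Note only that the final equivalence with $G\subset\dot O_N$, $G\subset\dot O_N^*$ needs no separate Tannakian argument for either direction, since the entrywise computation shows the intertwining condition is literally equivalent to the defining relations, hence to the existence of the quotient map onto $C(G)$.
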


\begin{proof}
We already know this in the untwisted case, coming from our various easiness considerations. In the twisted case, the proof is as follows:

\medskip

(1) By using the formula of $\bar{T}_{\slash\!\!\!\backslash}$ in Proposition 7.29, we obtain:
\begin{eqnarray*}
(\bar{T}_{\slash\!\!\!\backslash}\otimes1)u^{\otimes 2}(e_i\otimes e_j\otimes1)
&=&\sum_ke_k\otimes e_k\otimes u_{ki}u_{kj}\\
&-&\sum_{k\neq l}e_l\otimes e_k\otimes u_{ki}u_{lj}
\end{eqnarray*}

On the other hand, we have as well the following formula:
\begin{eqnarray*}
u^{\otimes 2}(\bar{T}_{\slash\!\!\!\backslash}\otimes1)(e_i\otimes e_j\otimes1)
&=&\begin{cases}
\sum_{kl}e_l\otimes e_k\otimes u_{li}u_{ki}&{\rm if}\ i=j\\
-\sum_{kl}e_l\otimes e_k\otimes u_{lj}u_{ki}&{\rm if}\ i\neq j
\end{cases}
\end{eqnarray*}

For $i=j$ the conditions are $u_{ki}^2=u_{ki}^2$ for any $k$, and $u_{ki}u_{li}=-u_{li}u_{ki}$ for any $k\neq l$. For $i\neq j$ the conditions are $u_{ki}u_{kj}=-u_{kj}u_{ki}$ for any $k$, and $u_{ki}u_{lj}=u_{lj}u_{ki}$ for any $k\neq l$. Thus we have exactly the relations between the coordinates of $\bar{O}_N$, and we are done.

\medskip

(2) By using the formula of $\bar{T}_{\slash\hskip-1.6mm\backslash\hskip-1.1mm|\hskip0.5mm}$ in Proposition 7.29, we obtain:
\begin{eqnarray*}
(\bar{T}_{\slash\hskip-1.6mm\backslash\hskip-1.1mm|\hskip0.5mm}\otimes1)u^{\otimes 2}(e_i\otimes e_j\otimes e_k\otimes1)
&=&\sum_{abc\ not\ distinct}e_c\otimes e_b\otimes e_a\otimes u_{ai}u_{bj}u_{ck}\\
&-&\sum_{a,b,c\ distinct}e_c\otimes e_b\otimes e_a\otimes u_{ai}u_{bj}u_{ck}
\end{eqnarray*}

On the other hand, we have as well the following formula:
\begin{eqnarray*}
&&u^{\otimes 2}(\bar{T}_{\slash\hskip-1.6mm\backslash\hskip-1.1mm|\hskip0.5mm}\otimes1)(e_i\otimes e_j\otimes e_k\otimes1)\\
&&=\begin{cases}
\sum_{abc}e_c\otimes e_b\otimes e_a\otimes u_{ck}u_{bj}u_{ai}&{\rm for}\ i,j,k\ {\rm not\ distinct}\\
-\sum_{abc}e_c\otimes e_b\otimes e_a\otimes u_{ck}u_{bj}u_{ai}&{\rm for}\ i,j,k\ {\rm distinct}
\end{cases}
\end{eqnarray*}

For $i,j,k$ not distinct the conditions are $u_{ai}u_{bj}u_{ck}=u_{ck}u_{bj}u_{ai}$ for $a,b,c$ not distinct, and $u_{ai}u_{bj}u_{ck}=-u_{ck}u_{bj}u_{ai}$ for $a,b,c$ distinct. For $i,j,k$ distinct the conditions are $u_{ai}u_{bj}u_{ck}=-u_{ck}u_{bj}u_{ai}$ for $a,b,c$ not distinct, and $u_{ai}u_{bj}u_{ck}=u_{ck}u_{bj}u_{ai}$ for $a,b,c$ distinct. Thus we have the relations between the coordinates of $\bar{O}_N^*$, and we are done.
\end{proof}

We can now formulate our first Schur-Weyl twisting result, as follows:

\index{Schur-Weyl twisting}

\begin{theorem}
The twisted quantum groups introduced before,
$$\xymatrix@R=15mm@C=15mm{
\bar{U}_N\ar[r]&\bar{U}_N^*\ar[r]&U_N^+\\
\bar{O}_N\ar[r]\ar[u]&\bar{O}_N^*\ar[r]\ar[u]&O_N^+\ar[u]}$$
appear as twists of the basic quantum groups, namely
$$\xymatrix@R=15mm@C=15mm{
U_N\ar[r]&U_N^*\ar[r]&U_N^+\\
\bar{O}_N\ar[r]\ar[u]&\bar{O}_N^*\ar[r]\ar[u]&O_N^+\ar[u]}$$
via the Schur-Weyl twisting procedure described above.
\end{theorem}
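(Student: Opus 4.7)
The plan is to reduce the theorem to the two crossing computations already done in Proposition 7.30, by exploiting the fact that the categories of partitions for the six basic quantum groups are generated, as tensor categories, by very few diagrams: $NC_2$ (automatic) plus possibly $\slash\!\!\!\backslash$ and/or $\slash\hskip-1.6mm\backslash\hskip-1.1mm|\hskip0.5mm$.

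First, I would record the key observation that on $NC_2$ (and on $\mathcal{NC}_2$ in the colored setting) the twisted operators coincide with the untwisted ones, $\bar{T}_\pi = T_\pi$, by Proposition 7.29. Consequently, the Tannakian category produced by Theorem 7.27 from any $D \subset P_{even}$ containing $NC_2$ automatically imposes the defining intertwiners of $O_N^+$ (respectively $U_N^+$ in the colored case), so the quantum group $\bar{G}_N$ obtained from $D$ is a closed subgroup of $O_N^+$ (respectively $U_N^+$), exactly as in the untwisted easy setup.

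Next, I would treat the four cases by matching generators. For $\bar{O}_N$: the category $P_2 = \langle NC_2, \slash\!\!\!\backslash \rangle$ is generated categorically by $NC_2$ together with the basic crossing, so the Tannakian category for the twist is generated by the $O_N^+$-intertwiners plus $\bar{T}_{\slash\!\!\!\backslash}$. By Proposition 7.30(1), imposing $\bar{T}_{\slash\!\!\!\backslash} \in \mathrm{End}(u^{\otimes 2})$ is equivalent to the twisted anticommutation relations defining $\bar{O}_N$, whence $\bar{G}_N = \bar{O}_N$. For $\bar{O}_N^*$: since $P_2^* = \langle NC_2, \slash\hskip-1.6mm\backslash\hskip-1.1mm|\hskip0.5mm \rangle$, the same argument using Proposition 7.30(2) identifies the Tannakian twist of $O_N^*$ with $\bar{O}_N^*$. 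The unitary cases $\bar{U}_N, \bar{U}_N^*$ are handled identically, using $\mathcal{P}_2 = \langle \mathcal{NC}_2, \slash\!\!\!\backslash \rangle$ and $\mathcal{P}_2^* = \langle \mathcal{NC}_2, \slash\hskip-1.6mm\backslash\hskip-1.1mm|\hskip0.5mm \rangle$; the coloring of the crossings forces the appropriate commutation/anticommutation relations also between $u_{ij}$ and $u_{kl}^*$, matching the definitions of $\bar{U}_N, \bar{U}_N^*$ from Proposition 7.21 and Proposition 7.22.

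The step I expect to require most care is the unitary verification. In the orthogonal case, Proposition 7.30 directly supplies the identification, but in the unitary case one must check that the colored versions of the crossing, obtained by decorating the upper and lower legs by $\circ/\bullet$ in all four matching ways, collectively impose precisely the relations between $\{u_{ij}, u_{ij}^*\}$ listed in Proposition 7.21 (and the analogous $3$-block relations for the half-liberation in Proposition 7.22). This is essentially a bookkeeping exercise: for each colored crossing $\pi$ the corresponding $\bar{T}_\pi$ still carries the sign $\varepsilon(\tau)$ with the partition-of-indices structure unchanged, so one repeats the computation of Proposition 7.30 with suitable adjoint insertions and checks that the signs produced are exactly those prescribed by Proposition 7.21.

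Finally, I would wrap up by noting that the reverse inclusions are automatic: the relations defining $\bar{O}_N, \bar{O}_N^*, \bar{U}_N, \bar{U}_N^*$ (verified in Propositions 7.21, 7.22) force $\bar{T}_{\slash\!\!\!\backslash}$ and $\bar{T}_{\slash\hskip-1.6mm\backslash\hskip-1.1mm|\hskip0.5mm}$ to lie in the corresponding endomorphism spaces, so these quantum groups are contained in the Tannakian twists. Combined with the forward inclusions above, this gives equalities and proves the theorem.
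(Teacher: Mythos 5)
Your proposal is correct and takes essentially the same route as the paper, whose proof of this theorem is simply the reduction to Proposition 7.30: since the categories $P_2,P_2^*,\mathcal P_2,\mathcal P_2^*$ are generated by $NC_2$ (respectively $\mathcal{NC}_2$) together with the basic or half-commutation crossing, and $\bar{T}_\pi=T_\pi$ on noncrossing pairings, the twisted Tannakian categories are determined by the crossing computations there. You merely make explicit two points the paper leaves implicit, namely the categorical generation step and the colored bookkeeping needed for $\bar{U}_N,\bar{U}_N^*$, and these are handled correctly.
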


\begin{proof}
This follows indeed from Proposition 7.30 above.
\end{proof}

Summarizing, we have now a conceptual approach to the twisting of the basic unitary quantum groups. In order for our theory to be complete, let us discuss as well the computation of the quantum isometry groups of the twisted spheres. We have here:

\index{quantum isometry group}

\begin{theorem}
The quantum isometry groups of the twisted spheres,
$$\xymatrix@R=15mm@C=14mm{
\bar{S}^{N-1}_\mathbb C\ar[r]&\bar{S}^{N-1}_{\mathbb C,*}\ar[r]&S^{N-1}_{\mathbb C,+}\\
\bar{S}^{N-1}_\mathbb R\ar[r]\ar[u]&\bar{S}^{N-1}_{\mathbb R,*}\ar[r]\ar[u]&S^{N-1}_{\mathbb R,+}\ar[u]
}$$
are the above twisted orthogonal and unitary groups.
\end{theorem}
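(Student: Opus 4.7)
The plan is to follow the strategy of Theorem 2.25, which handled the untwisted case, adapting it to track the signs coming from the twisting. There are two directions to establish: first, that the twisted quantum groups $\bar O_N, \bar O_N^*, \bar U_N, \bar U_N^*$ act affinely on their respective twisted spheres; second, the universality of these actions. The cases $O_N^+ \curvearrowright S^{N-1}_{\mathbb R,+}$ and $U_N^+ \curvearrowright S^{N-1}_{\mathbb C,+}$ are already covered by Theorem 2.25, so only the twisted corners of the diagram need new work.

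For the existence of the action $\bar O_N \curvearrowright \bar S^{N-1}_{\mathbb R}$, I would consider the candidate variables $X_i = \sum_j x_j \otimes u_{ji}$ and verify the relation $X_i X_j = -X_j X_i$ for $i \neq j$. Expanding,
$$X_i X_j + X_j X_i = \sum_{kl} x_k x_l \otimes (u_{ki} u_{lj} + u_{kj} u_{li}),$$
splitting into $k=l$ and $k\neq l$, and using (i) on-row anticommutation $u_{ki}u_{kj}=-u_{kj}u_{ki}$ for $i\neq j$ in $\bar O_N$, and (ii) commutation $u_{ki}u_{lj}=u_{lj}u_{ki}$ for $k\neq l,\, i\neq j$ in $\bar O_N$, together with $x_k x_l = -x_l x_k$ in $\bar S^{N-1}_{\mathbb R}$, the two halves collapse to zero. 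The verifications for $\bar U_N$, $\bar O_N^*$, $\bar U_N^*$ are analogous, using Proposition 7.21 and Proposition 7.22 respectively, and taking adjoints in the unitary cases.

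For universality in the real case $\bar S^{N-1}_{\mathbb R}$, I would adapt the Bhowmick--Goswami antipode trick from the proof of Theorem 2.25. Setting $p_{ij} = x_i x_j$ and $w_{kl,ij} = u_{ki}u_{lj}$, the sphere relation $p_{ij} = -p_{ji}$ (for $i\neq j$) forces, via the coaction $\Phi(p_{ij}) = \sum_{kl} p_{kl} \otimes w_{kl,ij}$ and linear independence of the $p_{kl}$ modulo $p_{kl}+p_{lk}$, the identities
$$w_{kl,ij} - w_{lk,ij} = -(w_{kl,ji} - w_{lk,ji}) \quad (k<l), \qquad w_{kk,ij} = -w_{kk,ji}.$$
The second gives immediately the on-row anticommutation $u_{ki}u_{kj}=-u_{kj}u_{ki}$ for $i\neq j$. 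Applying $S(u_{ij}) = u_{ji}^*$ to the first identity (which reverses order of products) and relabeling, one obtains a companion identity that, when combined with the first, isolates both the on-column anticommutation and the off-row-off-column commutation, i.e.\ exactly the defining relations of $\bar O_N$. The complex case $\bar U_N$ follows by the same mechanism applied to $p_{ij} = x_i \bar x_j$, producing $*$-decorated versions of the same equations.

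The main obstacle will be the half-liberated cases $\bar O_N^* \curvearrowright \bar S^{N-1}_{\mathbb R,*}$ and $\bar U_N^* \curvearrowright \bar S^{N-1}_{\mathbb C,*}$, where one must work at the cubic level. Here the sphere relation $x_i x_j x_k = \pm x_k x_j x_i$, with signs depending on whether $\{i,j,k\}$ is a singleton, has two elements, or is distinct, must be pushed through the coaction and matched against the triple-span signs of Proposition 7.22 for the quantum group. This requires extending the Bhowmick--Goswami trick to cubic monomials $p_{ijk} = x_i x_j x_k$ and $w_{abc,ijk} = u_{ai}u_{bj}u_{ck}$: one imposes the twisted cubic relations, then performs the antipode-and-relabel manoeuvre, and finally performs the case analysis on the $\text{span}$ of $(a,b,c)$ versus the $\text{span}$ of $(i,j,k)$ to recover exactly the sign table of Proposition 7.22. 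The analysis parallels the four-equation breakdown in the proof of Theorem 2.25 for $S^{N-1}_{\mathbb C,*}$, but with each equation now acquiring a sign governed by the twisting; the key combinatorial fact to verify is that the signs arising from the sphere side match those arising from the antipode applied to the quantum-group side, which ultimately reduces to the compatibility already exploited in the proof of Proposition 7.26.
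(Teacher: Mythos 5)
Your proposal follows essentially the same route as the paper's own proof: the paper also adapts the untwisted computation (the proof of the classical-case quantum isometry theorem) by tracking signs, applying the coaction to the quadratic monomials to get the on-row anticommutation relations plus the commutator identity $[u_{ki},u_{lj}]=[u_{li},u_{kj}]$, and then using the antipode-and-relabel trick to extract the remaining $\bar{O}_N$ (resp.\ $\bar{U}_N$) relations, with the half-liberated cases handled by the same sign-insertion at the cubic level. Your write-up is in fact more detailed than the paper's sketch; the only tiny imprecision is that the on-column anticommutation comes from applying $S$ to the diagonal identity $u_{ki}u_{kj}=-u_{kj}u_{ki}$ rather than from the off-diagonal identity and its antipode companion, but this is the same mechanism and does not affect the argument.
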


\begin{proof}
The proof in the classical twisted case is similar to the proof in the classical untwisted case, by adding signs. Indeed, for the twisted real sphere $\bar{S}^{N-1}_\mathbb R$ we have:
$$\Phi(z_iz_j)
=\sum_kz_k^2\otimes u_{ki}u_{kj}
+\sum_{k<l}z_kz_l\otimes(u_{ki}u_{lj}-u_{li}u_{kj})$$

We deduce that with $[[a,b]]=ab+ba$ we have the following formula:
$$\Phi([[z_i,z_j]])
=\sum_kz_k^2\otimes[[u_{ki},u_{kj}]]
+\sum_{k<l}z_kz_l\otimes([u_{ki},u_{lj}]-[u_{li},u_{kj}])$$

Now assuming $i\neq j$, we have $[[z_i,z_j]]=0$, and we therefore obtain:
$$[[u_{ki},u_{kj}]]=0\quad,\quad\forall k$$
$$[u_{ki},u_{lj}]=[u_{li},u_{kj}]\quad,\quad\forall k<l$$

By using now the standard trick of Bhowmick-Goswami \cite{bhg}, namely applying the antipode and then relabelling, the latter relation gives:
$$[u_{ki},u_{lj}]=0$$

Thus, we obtain the result. The proof for $\bar{S}^{N-1}_\mathbb C$ is similar, by using the above-mentioned categorical trick, in order to deduce from the relations $ab=\pm ba$ the remaining relations $ab^*=\pm b^*a$. Finally, the proof in the half-classical twisted cases is similar to the proof in the half-classical untwisted cases, by adding signs where needed.
\end{proof}

All the above was of course quite technical, and for further details, and generalizations, we refer to the literature, namely the follow-ups of \cite{bbc}, \cite{bgo}. These follow-ups are actually mostly dealing with noncommutative geometry, with the quantum group twisting results being scattered, here and there, stated as technical results, where needed. Good places for looking for references are the reference lists of the papers \cite{ba4}, \cite{bb4}.

\bigskip

In what concerns us, we will be back to twisting on several occasions, and notably in chapter 10 below, when looking at the complex reflection groups and their liberations. We will see there that the easy complex reflection groups equal their own Schur-Weyl twists, and with this suggesting that the twisting operation, at least in the easy case, is something of rather continuous nature, with the discrete objects being rigid.

\bigskip

On the other hand, we will see as well in chapter 9 that we have a key isomorphism of type $S_4^+=SO_3^{-1}$, with this time $SO_3$ being not easy, and its twist being a cocycle one. Thus, the axiomatization of the $q=-1$ twisting operation remains a subtle question, not solved yet. In fact, this is an open question since Drinfeld \cite{dri} and Jimbo \cite{jim}.

\section*{7e. Exercises} 

Here is a first instructive exercise, of rather algebraic and abstract nature, in relation with the notion of easiness that we developed in the above:

\begin{exercise}
Prove that any closed subgroup $G\subset U_N^+$ has an ``easy envelope''
$$G\subset\widetilde{G}\subset U_N^+$$
which is the smallest easy quantum group containing $G$. 
\end{exercise} 

Obviously, this is somehing of Tannakian nature. The problem is that of finding the precise Tannakian formulation of the exercise, and then solving it.

\begin{exercise}
Prove that if $H,K$ are easy then we have inclusions as follows,
$$<H,K>\subset\widetilde{<H,K>}\subset\{H,K\}$$
where the middle object is an easy envelope, as constructed above.
\end{exercise} 

As before, this is something of Tannakian nature. As a comment here, this improves the results that we have so far, and refines the questions which remain to be solved. Indeed, it is not clear that either of the above inclusions must be an isomorphism.

\begin{exercise}
Prove that the symmetric group, regarded as group of permutation matrices,
$$S_N\subset O_N$$
is easy, with the corresponding category of partitions being $P$ itself.
\end{exercise}

This is something quite fundamental, that we will discuss in detail later on. However, the proof is not that difficult, and can be certainly worked out.

\begin{exercise}
Prove that the hyperoctahedral group, which is the symmetry group of the $N$-hypercube, when regarded as group of orthogonal matrices,
$$H_N\subset O_N$$
is easy, with the corresponding category of partitions being $P_{even}$.
\end{exercise}

As before with $S_N$, this is something that we will discuss in detail later on. We will discuss as well later free analogues of these results, involving $NC,NC_{even}$.

\begin{exercise}
Find a general formula connecting the linear maps
$$T_\pi\quad,\quad\bar{T}_\pi$$
involving the M\"obius function of the partitions.
\end{exercise}

To be more precise, we have already seen in the above a number of  formulae for the maps $\bar{T}_\pi$, expressed as linear combinations of maps $T_\pi$. The problem is that of understanding how the correspondence between the maps $\bar{T}_\pi$ and the maps $T_\pi$ works, and since all this is about partitions, the answer can only be a M\"obius type formula.

\begin{exercise}
Work out the missing details in the proof of Theorem 7.32, by taking the untwisted computations, and adding signs where needed.
\end{exercise}

Obviously, this is something self-explanatory, which can only work, with a bit of patience, and care for the details. If lost, you can of course look it up.

\begin{exercise}
Work out what happens at $N=2$, in connection with all the easy quantum groups introduced so far, and with their twists as well.
\end{exercise}

To be more precise, we have met many examples of easy quantum groups $G_N$, and the problem is that of understanding, for each of these quantum groups, if $G_2$ is something well-known. Generally speaking, the answer here is yes, but all this is worth to be worked out in detail. After this, the question regarding the twists makes sense as well.

\chapter{Probabilistic aspects}

\section*{8a. Free probability}

We have seen in chapters 5-6 that the easiness property of $O_N,U_N$ and $O_N^+,U_N^+$ leads to a number of interesting probabilistic consequences, notably in what concerns the computation of the law of the main character $\chi$, in the $N\to\infty$ limit. Our purpose here will be two-fold. On one hand, we would like to have similar results for the various quantum groups introduced in chapter 7. And on the other hand, we would like to upgrade our results about characters $\chi$ into something more advanced.

\bigskip

In order to do this, we are in need of more probability knowledge. We have certainly met some classical and free probability in chapters 5-6, but the computations and results there were a bit ad-hoc, adapted to what we wanted to prove about $O_N,U_N$ and $O_N^+,U_N^+$. And this kind of ad-hoc point of view will not do it, for what we want to do here.

\bigskip

In short, time for a crash course on probability. We will be following the book of Voiculescu, Dykema, Nica \cite{vdn}, and we will be quite brief, because at the core of classical and free probability are the Gaussian laws, real and complex, and the semicircular and circular laws, that we already know about, from chapters 5-6. Thus, our job will be basically that of putting what we know in a more conceptual framework.

\bigskip

Let us first talk about classical probability. The starting point here is:

\index{random variable}
\index{moment}
\index{law}
\index{distribution}

\begin{definition}
Let $X$ be a probability space.
\begin{enumerate}
\item The real functions $f\in L^\infty(X)$ are called random variables.

\item The moments of such a variable are the numbers $M_k(f)=\mathbb E(f^k)$.

\item The law of such a variable is the measure given by $M_k(f)=\int_\mathbb Rx^kd\mu_f(x)$.
\end{enumerate}
\end{definition}

Here the fact that $\mu_f$ exists indeed is not trivial. By linearity, we would like to have a real probability measure making hold the following formula, for any $P\in\mathbb R[X]$:
$$\mathbb E(P(f))=\int_\mathbb RP(x)d\mu_f(x)$$

By using a continuity argument, it is enough to have this for the characteristic functions $\chi_I$ of the measurable sets $I\subset\mathbb R$. Thus, we would like to have $\mu_f$ such that:
$$\mathbb P(f\in I)=\mu_f(I)$$

But this latter formula can serve as a definition for $\mu_f$, so we are done. Next in line, we need to talk about independence. Once again with the idea of doing things a bit abstractly, and most adapted to what we want to do here, the definition is as follows:

\index{independence}

\begin{definition}
Two variables $f,g\in L^\infty(X)$ are called independent when
$$\mathbb E(f^kg^l)=\mathbb E(f^k)\cdot\mathbb E(g^l)$$
happens, for any $k,l\in\mathbb N$.
\end{definition}

Again, this definition hides some non-trivial things. Indeed, by linearity, we would like to have a formula as follows, valid for any polynomials $P,Q\in\mathbb R[X]$:
$$\mathbb E(P(f)Q(g))=\mathbb E(P(f))\cdot\mathbb E(Q(g))$$

By continuity, it is enough to have this for characteristic functions of type $\chi_I,\chi_J$, with $I,J\subset\mathbb R$. Thus, we are led to the usual definition of independence, namely:
$$\mathbb P(f\in I,g\in J)=\mathbb P(f\in I)\cdot\mathbb P(g\in J)$$

Here is now our first result, providing tools for the study of the independence:

\index{independence}
\index{Fourier transform}

\begin{theorem}
Assume that $f,g\in L^\infty(X)$ are independent.
\begin{enumerate}
\item We have $\mu_{f+g}=\mu_f*\mu_g$, where $*$ is the convolution of measures.

\item We have $F_{f+g}=F_fF_g$, where $F_f(x)=\mathbb E(e^{ixf})$ is the Fourier transform.
\end{enumerate}
\end{theorem}

\begin{proof}
This is something very standard, the idea being as follows:

\medskip

(1) We have the following computation, using the independence of $f,g$:
\begin{eqnarray*}
M_k(f+g)
&=&\mathbb E((f+g)^k)\\
&=&\sum_l\binom{k}{l}\mathbb E(f^lg^{k-l})\\
&=&\sum_l\binom{k}{l}M_l(f)M_{k-l}(g)
\end{eqnarray*}

On the other hand, by using the Fubini theorem, we have as well:
\begin{eqnarray*}
\int_\mathbb Rx^kd(\mu_f*\mu_g)(x)
&=&\int_{\mathbb R\times\mathbb R}(x+y)^kd\mu_f(x)d\mu_g(y)\\
&=&\sum_l\binom{k}{l}\int_\mathbb Rx^kd\mu_f(x)\int_\mathbb Ry^ld\mu_g(y)\\
&=&\sum_l\binom{k}{l}M_l(f)M_{k-l}(g)
\end{eqnarray*}

Thus the measures $\mu_{f+g}$ and $\mu_f*\mu_g$ have the same moments, and so coincide.

\medskip

(2) We have indeed the following computation, using (1) and Fubini:
\begin{eqnarray*}
F_{f+g}(x)
&=&\int_\mathbb Re^{ixy}d\mu_{f+g}(y)\\
&=&\int_{\mathbb R\times\mathbb R}e^{ix(y+z)}d\mu_f(y)d\mu_g(z)\\
&=&\int_\mathbb Re^{ixy}d\mu_f(y)\int_\mathbb Re^{ixz}d\mu_g(z)\\
&=&F_f(x)F_g(x)
\end{eqnarray*}

Thus, we are led to the conclusion in the statement.
\end{proof}

Let us discuss now the normal distributions. We have here:

\index{normal law}
\index{Gaussian law}

\begin{definition}
The normal law of parameter $t>0$ is the following measure:
$$g_t=\frac{1}{\sqrt{2\pi t}}\,e^{-x^2/2t}dx$$
This is also called Gaussian distribution, with ``g'' standing for Gauss.
\end{definition}

As a first remark, the above law has indeed mass 1, as it should. This follows indeed from the Gauss formula, which gives, with $x=y/\sqrt{2t}$:
$$\int_\mathbb R e^{-y^2/2t}dy=\sqrt{2\pi t}$$

Generally speaking, the normal laws appear as bit everywhere, in real life. The reasons behind this phenomenon come from the Central Limit Theorem (CLT), that we will explain in a moment, after developing the needed general theory. We first have:

\index{Fourier transform}
\index{convolution semigroup}

\begin{proposition}
We have the following formula, for any $t>0$:
$$F_{g_t}(x)=e^{-tx^2/2}$$
In particular, the normal laws satisfy $g_s*g_t=g_{s+t}$, for any $s,t>0$.
\end{proposition}

\begin{proof}
The Fourier transform formula can be established as follows:
\begin{eqnarray*}
F_{g_t}(x)
&=&\frac{1}{\sqrt{2\pi t}}\int_\mathbb Re^{-y^2/2t+ixy}dy\\
&=&\frac{1}{\sqrt{2\pi t}}\int_\mathbb Re^{-(y/\sqrt{2t}-\sqrt{t/2}ix)^2-tx^2/2}dy\\
&=&\frac{1}{\sqrt{\pi}}\int_\mathbb Re^{-z^2-tx^2/2}dz
\end{eqnarray*}

As for the last assertion, this follows from the linearization result from Theorem 8.3 (2) above, because $\log F_{g_t}$ is linear in $t$.
\end{proof}

We are now ready to state and prove the CLT, as follows:

\index{CLT}
\index{Central Limit Theorem}
\index{normal law}
\index{Gaussian law}

\begin{theorem}[CLT]
Given random variables $f_1,f_2,f_3,\ldots\in L^\infty(X)$ which are i.i.d., centered, and with variance $t>0$, we have, with $n\to\infty$, in moments,
$$\frac{1}{\sqrt{n}}\sum_{i=1}^nf_i\sim g_t$$
where $g_t$ is the Gaussian law of parameter $t$.
\end{theorem}

\begin{proof}
We have the following formula for $F_f(x)=\mathbb E(e^{ixf})$, in terms of moments:
$$F_f(x)=\sum_{k=0}^\infty\frac{i^kM_k(f)}{k!}\,x^k$$

Thus, the Fourier transform of the variable in the statement is:
\begin{eqnarray*}
F(x)
&=&\left[F_f\left(\frac{x}{\sqrt{n}}\right)\right]^n\\
&=&\left[1-\frac{tx^2}{2n}+O(n^{-2})\right]^n\\
&\simeq&e^{-tx^2/2}
\end{eqnarray*}

But this latter function being the Fourier transform of $g_t$, we obtain the result.
\end{proof}

Let us record as well the complex version of the CLT. This is as follows:

\index{CCLT}
\index{Complex CLT}
\index{Complex Central Limit Theorem}
\index{complex normal law}
\index{complex Gaussian law}

\begin{theorem}[Complex CLT]
Given variables $f_1,f_2,f_3,\ldots\in L^\infty(X)$ whose real and imaginary parts are i.i.d., centered, and with variance $t>0$, we have, with $n\to\infty$,
$$\frac{1}{\sqrt{n}}\sum_{i=1}^nf_i\sim G_t$$
where $G_t$ is the complex Gaussian law of parameter $t$, appearing as the law of $\frac{1}{\sqrt{2}}(a+ib)$, where $a,b$ are real and independent, each following the law $g_t$.
\end{theorem}

\begin{proof}
This is clear from Theorem 8.6, by taking real and imaginary parts.
\end{proof}

In the noncommutative setting now, the starting definition is as follows:

\index{random variable}
\index{moment}
\index{law}
\index{distribution}

\begin{definition}
Let $A$ be a $C^*$-algebra, given with a trace $tr$.
\begin{enumerate}
\item The elements $a\in A$ are called random variables.

\item The moments of such a variable are the numbers $M_k(a)=tr(a^k)$.

\item The law of such a variable is the functional $\mu:P\to tr(P(a))$.
\end{enumerate}
\end{definition}

Here $k=\circ\bullet\bullet\circ\ldots$ is as usual a colored integer, and the powers $a^k$ are defined by multiplicativity and the usual formulae, namely:
$$a^\emptyset=1\quad,\quad
a^\circ=a\quad,\quad 
a^\bullet=a^*$$

As for the polynomial $P$, this is a noncommuting $*$-polynomial in one variable: 
$$P\in\mathbb C<X,X^*>$$

Observe that the law is uniquely determined by the moments, because:
$$P(X)=\sum_k\lambda_kX^k\implies\mu(P)=\sum_k\lambda_kM_k(a)$$

Generally speaking, the above definition is something quite abstract, but there is no other way of doing things, at least at this level of generality. We have indeed:

\begin{theorem}
Given a $C^*$-algebra with a faithful trace $(A,tr)$, any normal variable, 
$$aa^*=a^*a$$
has a usual law, namely a complex probability measure $\mu\in\mathcal P(\mathbb C)$ satisfying:
$$tr(a^k)=\int_\mathbb Cz^kd\mu(z)$$
This law is unique, and is supported by the spectrum $\sigma(a)\subset\mathbb C$. In the non-normal case, $aa^*\neq a^*a$, such a usual law does not exist.
\end{theorem}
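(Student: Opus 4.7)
The plan splits along the stated dichotomy: build the measure in the normal case via Gelfand/Riesz, and rule it out in the non-normal case via a positivity computation.

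For the normal case, the starting point is the continuous functional calculus from Theorem 1.11: since $a$ is normal, the $C^*$-subalgebra $\langle a\rangle\subset A$ it generates is commutative, and the Gelfand theorem identifies it canonically with $C(\sigma(a))$, where $z\in C(\sigma(a))$ corresponds to $a$. The restriction $tr_{|\langle a\rangle}$ is then a positive (because $tr$ is positive and $\langle a\rangle$ inherits the order structure) unital linear functional on $C(\sigma(a))$, and the Riesz representation theorem produces a unique Borel probability measure $\mu$ on the compact set $\sigma(a)\subset\mathbb C$ with
$$tr(f(a))=\int_{\sigma(a)}f(z)\,d\mu(z),\quad\forall f\in C(\sigma(a)).$$
Applying this to $f(z)=z^k$ (in the colored sense, with $z^\circ=z$, $z^\bullet=\bar z$) yields the desired moment formula. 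Uniqueness of $\mu$ on $\mathbb C$ follows because two probability measures with the same $*$-polynomial moments agree on the Stone-Weierstrass-dense subalgebra $\mathbb C\langle z,\bar z\rangle\subset C(\sigma(a))$, hence everywhere.

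For the non-normal case, I would run the positivity trick already used in the paper. Setting $d=aa^*-a^*a$, the assumption is $d\ne 0$; since $d=d^*$, this forces $d^2>0$, and then $tr(d^2)>0$ by faithfulness. Expanding,
$$tr(d^2)=tr(aa^*aa^*)-tr(aa^*a^*a)-tr(a^*aaa^*)+tr(a^*aa^*a).$$
Applying the trace property $tr(xy)=tr(yx)$ to the second, third and fourth terms collapses this to
$$tr(d^2)=2\,tr(aa^*aa^*)-2\,tr(aaa^*a^*)>0,$$
so $tr(aa^*aa^*)\ne tr(aaa^*a^*)$.

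Now suppose, for contradiction, that a law $\mu\in\mathcal P(\mathbb C)$ existed in the sense of the statement. Both $aa^*aa^*$ and $aaa^*a^*$ correspond to colored exponents of total content $\{\circ,\circ,\bullet,\bullet\}$, so under the moment formula both traces must equal
$$\int_{\mathbb C}z\bar z\,z\bar z\,d\mu(z)=\int_{\mathbb C}|z|^4\,d\mu(z)=\int_{\mathbb C}zz\bar z\bar z\,d\mu(z),$$
contradicting the strict inequality just established. The only delicate point is making sure the traciality cancellations are applied correctly in the expansion of $tr(d^2)$; everything else is routine application of tools already available in the excerpt.
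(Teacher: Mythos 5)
Your proof is correct and follows essentially the same route as the paper: the normal case via Gelfand/continuous functional calculus plus the Riesz representation theorem on $C(\sigma(a))$, with uniqueness from density of the $*$-polynomials, and the non-normal case via the same positivity-and-traciality trick showing $tr(aa^*aa^*)\neq tr(aaa^*a^*)$, both of which would have to equal $\int_{\mathbb C}|z|^4\,d\mu(z)$ if a law existed. Your reorganization through $tr(d^2)>0$ with $d=aa^*-a^*a$ is just a cleaner packaging of the paper's computation (and you correctly use $|z|^4$ where the paper has a small typo saying $|z|^2$).
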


\begin{proof}
This is something that we know from chapter 6, coming from the Gelfand theorem, which gives $<a>=C(\sigma(a))$, and the Riesz theorem. As for the last assertion, we know this too from chapter 6, coming via $tr(aa^*aa^*)>tr(aaa^*a^*)$ for $aa^*\neq a^*a$.
\end{proof}

Let us discuss now the independence, and its noncommutative versions, in the above setting. As a starting point here, we have the following notion:

\index{independence}

\begin{definition}
Two subalgebras $B,C\subset A$ are called independent when the following condition is satisfied, for any $b\in B$ and $c\in C$: 
$$tr(bc)=tr(b)tr(c)$$
Equivalently, the following condition must be satisfied, for any $b\in B$ and $c\in C$: 
$$tr(b)=tr(c)=0\implies tr(bc)=0$$
Also, two variables $b,c\in A$ are called independent when the algebras that they generate, 
$$B=<b>\quad,\quad 
C=<c>$$
are independent inside $A$, in the above sense.
\end{definition}

Observe that the above two conditions are indeed equivalent. In one sense this is clear, and in the other sense, with $a'=a-tr(a)$, this follows from:
\begin{eqnarray*}
tr(bc)
&=&tr[(b'+tr(b))(c'+tr(c))]\\
&=&tr(b'c')+t(b')tr(c)+tr(b)tr(c')+tr(b)tr(c)\\
&=&tr(b'c')+tr(b)tr(c)\\
&=&tr(b)tr(c)
\end{eqnarray*}

The other remark is that the above notion generalizes indeed the usual notion of independence, from the classical case, the result here being as follows:

\begin{theorem}
Given two compact measured spaces $Y,Z$, the algebras
$$C(Y)\subset C(Y\times Z)\quad,\quad 
C(Z)\subset C(Y\times Z)$$
are independent in the above sense, and a converse of this fact holds too.
\end{theorem}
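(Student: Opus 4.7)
The forward direction is a direct application of Fubini's theorem. Equip $A = C(Y \times Z)$ with the trace $\mathrm{tr}(F) = \int_{Y \times Z} F \, d(\mu_Y \times \mu_Z)$, and view the two inclusions as $f \mapsto f \circ \pi_Y$ and $g \mapsto g \circ \pi_Z$, where $\pi_Y, \pi_Z$ are the canonical projections. For $f \in C(Y)$ and $g \in C(Z)$, Fubini gives
$$\mathrm{tr}((f \circ \pi_Y)(g \circ \pi_Z)) = \int_Y f \, d\mu_Y \cdot \int_Z g \, d\mu_Z = \mathrm{tr}(f \circ \pi_Y) \cdot \mathrm{tr}(g \circ \pi_Z),$$
which is exactly the condition in Definition 8.3. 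So the first half of the theorem is essentially a restatement of the multiplicativity of the product measure.

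For the converse, the natural formulation is: if $(A,\mathrm{tr})$ is a commutative $C^*$-probability space containing two independent subalgebras $B, C \subset A$ which together generate $A$, then $(A,\mathrm{tr})$ is isomorphic, as a probability space, to $(C(Y \times Z), \int \cdot \, d(\mu_Y \times \mu_Z))$, where $(Y,\mu_Y)$ and $(Z,\mu_Z)$ are the Gelfand spectra of $(B,\mathrm{tr}_{|B})$ and $(C,\mathrm{tr}_{|C})$. My plan is to invoke the Gelfand theorem (Theorem 1.10) to write $A = C(X)$, $B = C(Y)$, $C = C(Z)$, and then dualize the inclusions $B, C \hookrightarrow A$ to surjective continuous maps $\pi_Y : X \to Y$ and $\pi_Z : X \to Z$, which combine into $\pi = (\pi_Y, \pi_Z) : X \to Y \times Z$. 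The hypothesis that $B$ and $C$ generate $A$ means, via Stone--Weierstrass, that the pullback $\pi^* : C(Y \times Z) \to C(X)$ is injective with dense image, i.e. $\pi$ is surjective (and so a homeomorphism, since $X$ is compact, once we establish injectivity).

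To identify the measures, I would apply the independence condition to arbitrary $f \in C(Y)$ and $g \in C(Z)$, obtaining
$$\int_{Y \times Z} (f \otimes g) \, d(\pi_* \mu) = \int_X (f \circ \pi_Y)(g \circ \pi_Z) \, d\mu = \int_Y f \, d\mu_Y \cdot \int_Z g \, d\mu_Z = \int_{Y \times Z} (f \otimes g) \, d(\mu_Y \times \mu_Z),$$
and then conclude $\pi_* \mu = \mu_Y \times \mu_Z$ by Stone--Weierstrass density of the algebraic tensor product inside $C(Y \times Z)$ together with the Riesz representation theorem.

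The main obstacle I expect is the precise sense in which $\pi$ is a ``bijection'': in the purely topological category it need not be injective without further hypotheses on how generously $B, C$ sit inside $A$. The cleanest fix is to work in the measured category of Definition 1.27, where spaces are identified up to the GNS construction with respect to their traces; once $\pi_* \mu = \mu_Y \times \mu_Z$ is known, the induced map $C(Y \times Z) \to C(X)$ is trace-preserving, and performing GNS on both sides collapses any ambiguity, yielding the desired isomorphism of compact quantum measured spaces. This is precisely the sort of functoriality fix discussed in section 1, and it is why the statement should naturally be read in the measured, rather than topological, setting.
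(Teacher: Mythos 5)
Your argument is essentially the one in the paper: Fubini gives the forward direction, and the converse is obtained by writing $A=C(X)$, $B=C(Y)$, $C=C(Z)$ via Gelfand, dualizing the inclusions to continuous maps $X\to Y$ and $X\to Z$, and identifying the pushforward of $\mu$ with the product measure $\mu_Y\times\mu_Z$. You are in fact more careful than the paper's rather terse converse (which stops at ``we obtain $Y\times Z\subset X$''): you explicitly invoke the generation hypothesis, observe that $\pi:X\to Y\times Z$ need not be a homeomorphism in the purely topological category, and resolve the ambiguity by passing to the measured setting of Definition 1.27 via GNS --- a subtlety the paper glosses over.
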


\begin{proof}
We have two assertions here, the idea being as follows:

\medskip

(1) First of all, given two arbitrary compact spaces $Y,Z$, we have embeddings of algebras as in the statement, defined by the following formulae:
$$f\to[(y,z)\to f(y)]\quad,\quad 
g\to[(y,z)\to g(z)]$$

In the measured space case now, the Fubini theorems tells us that:
$$\int_{Y\times Z}f(y)g(z)=\int_Yf(y)\int_Zg(z)$$

Thus, the algebras $C(Y),C(Z)$ are independent in the sense of Definition 8.3.

\medskip

(2) Conversely now, assume that $B,C\subset A$ are independent, with $A$ being commutative. Let us write our algebras as follows, with $X,Y,Z$ being certain compact spaces:
$$A=C(X)\quad,\quad 
B=C(Y)\quad,\quad
C=C(Z)$$ 

In this picture, the inclusions $B,C\subset A$ must come from quotient maps, as follows:
$$p:Z\to X\quad,\quad 
q:Z\to Y$$

Regarding now the independence condition from Definition 8.3, in the above picture, this tells us that the folowing equality must happen:
$$\int_Xf(p(x))g(q(x))=\int_Xf(p(x))\int_Xg(q(x))$$

Thus we are in a Fubini type situation, and we obtain from this $Y\times Z\subset X$. Thus, the independence of $B,C\subset A$ appears as in (1) above.
\end{proof}

It is possible to develop some theory here, but this is ultimately not very interesting. As a much more interesting notion now, we have Voiculescu's freeness \cite{vdn}:

\index{freeness}

\begin{definition}
Two subalgebras $B,C\subset A$ are called free when the following condition is satisfied, for any $b_i\in B$ and $c_i\in C$:
$$tr(b_i)=tr(c_i)=0\implies tr(b_1c_1b_2c_2\ldots)=0$$
Also, two variables $b,c\in A$ are called free when the algebras that they generate,
$$B=<b>\quad,\quad 
C=<c>$$
are free inside $A$, in the above sense.
\end{definition}

As a first observation, of theoretical nature, there is actually a certain lack of symmetry between Definition 8.10 and Definition 8.12, because in contrast to the former, the latter does not include an explicit formula for the quantities of the following type:
$$tr(b_1c_1b_2c_2\ldots)$$

However, this is not an issue, and is simply due to the fact that the formula in the free case is something more complicated, the result being as follows:

\begin{proposition}
Assuming that $B,C\subset A$ are free, the restriction of $tr$ to $<B,C>$ can be computed in terms of the restrictions of $tr$ to $B,C$. To be more precise,
$$tr(b_1c_1b_2c_2\ldots)=P\Big(\{tr(b_{i_1}b_{i_2}\ldots)\}_i,\{tr(c_{j_1}c_{j_2}\ldots)\}_j\Big)$$
where $P$ is certain polynomial in several variables, depending on the length of the word $b_1c_1b_2c_2\ldots$, and having as variables the traces of products of type
$$b_{i_1}b_{i_2}\ldots\quad,\quad
c_{j_1}c_{j_2}\ldots$$
with the indices being chosen increasing, $i_1<i_2<\ldots$ and $j_1<j_2<\ldots$
\end{proposition}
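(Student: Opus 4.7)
The plan is to prove this by strong induction on the total length $n$ of the alternating word $w = b_1 c_1 b_2 c_2 \ldots$. The base cases $n=0,1$ are trivial (the trace of $1$, or of a single $b_i$ or $c_j$, is already an allowed variable), and the case $n=2$ of $tr(b_1 c_1)$ will fall out of the inductive step, reproducing the formula $tr(b_1 c_1) = tr(b_1) tr(c_1)$ familiar from Definition 8.3.

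For the inductive step, first I would center every factor, writing
$$b_i = b_i^\circ + tr(b_i)\cdot 1, \qquad c_j = c_j^\circ + tr(c_j)\cdot 1,$$
where $b_i^\circ$ and $c_j^\circ$ have trace zero by construction. Substituting these decompositions into $w$ and expanding by multilinearity of the trace produces $2^n$ terms. The unique term in which \emph{every} factor is replaced by its centered part, namely $tr(b_1^\circ c_1^\circ b_2^\circ c_2^\circ \ldots)$, is an alternating product of centered elements from $B$ and $C$, so it vanishes by the very definition of freeness (Definition 8.5).

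In each of the remaining $2^n - 1$ terms, at least one factor has been replaced by a scalar $tr(b_k)$ or $tr(c_k)$. Pulling these scalars outside the trace leaves a product of traces of individual elements times the trace of a shortened word: once the factor at position $k$ is removed, its two neighbors lie in the \emph{same} subalgebra ($C$ or $B$ respectively) and coalesce into a single element of that subalgebra, so the strict $B/C$ alternation length drops by at least two (or by one if $k$ is an endpoint). The resulting trace is thus the trace of a strictly shorter alternating product in $B,C$, and the inductive hypothesis applies to express it as a polynomial in the admissible variables. Rearranging, one obtains
$$tr(b_1 c_1 b_2 c_2 \ldots) \;=\; -\!\!\sum_{\text{non-fully-centered terms}}\!\!(\text{scalars})\cdot(\text{shorter mixed trace}),$$
which is the required polynomial expression.

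The main obstacle will be purely combinatorial bookkeeping: labelling which subset of positions is centered versus replaced by a scalar, tracking how the centered survivors merge when adjacent scalars are extracted, and verifying that the resulting single-algebra products $b_{i_1} b_{i_2} \cdots$ and $c_{j_1} c_{j_2} \cdots$ indeed have strictly increasing index labels when the original word does. A small auxiliary point is that any intermediate trace of a product of \emph{centered} single-algebra elements, say $tr(b_{i_1}^\circ \cdots b_{i_r}^\circ)$, can be re-expanded by linearity into a polynomial in traces $tr(b_{i_{k_1}} \cdots b_{i_{k_s}})$ of uncentered subwords, keeping us within the class of admissible polynomial expressions; no deeper input is needed beyond the freeness axiom used once, for the fully-centered term.
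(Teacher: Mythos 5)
Your argument follows the same centering-and-expansion strategy as the paper's proof: write each factor as its centered part plus its trace, expand, use freeness to kill the fully centered term, and recurse on the strictly shorter alternating words that remain. One small internal inconsistency: having substituted $b_i = b_i^\circ + tr(b_i)$ into $tr(w)$ and expanded, you should obtain $tr(w) = +\sum(\text{other terms})$ with no minus sign — the minus sign you wrote would instead arise from the equivalent (and arguably cleaner) route of expanding $tr(b_1^\circ c_1^\circ b_2^\circ c_2^\circ \cdots) = 0$ and solving for the all-original term, which also has the advantage of making the surviving factors original (not centered) elements and thereby dispensing with your closing auxiliary point about re-expanding centered products.
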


\begin{proof}
This is something that we know from chapter 6, which is based on a computation which is similar to that made after Definition 8.10.
\end{proof}

Let us discuss now some models for independence and freeness. We first have:

\index{tensor product}
\index{free product}

\begin{theorem}
Given two algebras $(B,tr)$ and $(C,tr)$, the following hold:
\begin{enumerate}
\item $B,C$ are independent inside their tensor product $B\otimes C$, endowed with its canonical tensor product trace, given on basic tensors by $tr(b\otimes c)=tr(b)tr(c)$.

\item $B,C$ are free inside their free product $B*C$, endowed with its canonical free product trace, given by the formulae in Proposition 8.13.
\end{enumerate}
\end{theorem}

\begin{proof}
Both the assertions are clear from definitions, as follows:

\medskip

(1) This is clear with either of the definitions of the independence, from Definition 8.10 above, because we have by construction of the trace:
$$tr(bc)
=tr[(b\otimes1)(1\otimes c)]
=tr(b\otimes c)
=tr(b)tr(c)$$

(2) This is clear from definitions, the only point being that of showing that the notion of freeness, or the recurrence formulae in Proposition 8.13, can be used in order to construct a canonical free product trace, on the free product of the two algebras involved:
$$tr:B*C\to\mathbb C$$

But this can be checked for instance by using a GNS construction. Indeed, consider the GNS constructions for the algebras $(B,tr)$ and $(C,tr)$:
$$B\to B(l^2(B))\quad,\quad 
C\to B(l^2(C))$$

By taking the free product of these representations, we obtain a representation as follows, with the $*$ symbol on the right being a free product of pointed Hilbert spaces:
$$B*C\to B(l^2(B)*l^2(C))$$

Now by composing with the linear form $T\to<T\xi,\xi>$, where $\xi=1_B=1_C$ is the common distinguished vector of $l^2(B)$ and $l^2(C)$, we obtain a linear form, as follows:
$$tr:B*C\to\mathbb C$$

It is routine then to check that $tr$ is indeed a trace, and this is the ``canonical free product trace'' from the statement. Then, an elementary computation shows that $B,C$ are indeed free inside $B*C$, with respect to this trace, and this finishes the proof. 
\end{proof}

As a concrete application of the above results, still following \cite{vdn}, we have:

\index{free convolution}

\begin{theorem}
We have a free convolution operation $\boxplus$ for the distributions
$$\mu:\mathbb C<X,X^*>\to\mathbb C$$
which is well-defined by the following formula, with $b,c$ taken to be free:
$$\mu_b\boxplus\mu_c=\mu_{b+c}$$
This restricts to an operation, still denoted $\boxplus$, on the real probability measures.
\end{theorem}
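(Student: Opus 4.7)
The plan is to realize arbitrary distributions as genuine variables in some tracial $C^*$-algebra, use the free product construction of Theorem 8.7 to place them in a common algebra where they are free, and then show that the resulting distribution of the sum depends only on the two input distributions. First I would fix two distributions $\mu, \nu : \mathbb{C}\langle X, X^*\rangle \to \mathbb{C}$ in the image of the map $a \to \mu_a$ coming from some tracial $C^*$-algebras $(B_0, tr_B)$ and $(C_0, tr_C)$ containing variables $b_0, c_0$ with $\mu_{b_0} = \mu$ and $\mu_{c_0} = \nu$. (The general abstract case, where $\mu,\nu$ are not a priori distributions of variables, is not needed for the statement as formulated, since the operation is defined via $\mu_b\boxplus\mu_c = \mu_{b+c}$ on distributions of actual variables; but if desired one can construct such realizations by the standard GNS-type procedure from the moment data.) Then I would form the free product $A = B_0 * C_0$ endowed with its canonical free product trace, as in Theorem 8.7(2), so that the images $b, c \in A$ of $b_0, c_0$ are free, have the correct distributions $\mu, \nu$, and generate a genuine noncommutative probability space in which the sum $b+c$ makes sense as an element.

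Next I would define $\mu \boxplus \nu := \mu_{b+c}$. The essential point is to verify that this depends only on $\mu$ and $\nu$, not on the chosen realizations. For this I expand
\[
tr\bigl((b+c)^{k}\bigr) = \sum tr(x_1 x_2 \cdots x_n),
\]
where each $x_i$ is either $b^{(*)}$ or $c^{(*)}$ according to the colored multi-index $k$, and then invoke Proposition 8.6: each such mixed trace is a universal polynomial in the moments $\{tr(b_{i_1}b_{i_2}\cdots)\}$ and $\{tr(c_{j_1}c_{j_2}\cdots)\}$, i.e.\ a universal polynomial in the moments of $\mu$ and $\nu$ alone. Consequently every moment of $b+c$ is determined by $\mu, \nu$, which yields the well-definedness of $\boxplus$ as an operation on the space of distributions.

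For the restriction to real probability measures, I would note that if $\mu, \nu \in \mathcal{P}(\mathbb{R})$, then by Theorem 8.2 they arise as distributions of self-adjoint variables $b = b^*$ and $c = c^*$ in some tracial $C^*$-algebras; realize them as above inside $A = B_0 * C_0$, where self-adjointness is preserved. Then $b+c$ is again self-adjoint, hence in particular normal, so by the other direction of Theorem 8.2 its distribution $\mu_{b+c}$ is a genuine probability measure on $\sigma(b+c) \subset \mathbb{R}$. Thus $\boxplus$ sends $\mathcal{P}(\mathbb{R}) \times \mathcal{P}(\mathbb{R})$ into $\mathcal{P}(\mathbb{R})$.

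The main obstacle I expect is not the self-adjoint passage (which is immediate once Theorem 8.2 is in hand) but the well-definedness step: one must argue carefully that the recursive computation of Proposition 8.6 really produces the \emph{same} polynomial regardless of the free realization, and in particular that the canonical free product trace of Theorem 8.7(2) is unambiguous. This ultimately rests on the uniqueness built into the recursion of Proposition 8.6 — the centering trick $a \to a - tr(a)$ together with the alternating-centered vanishing axiom of freeness forces every mixed moment to reduce to a unique universal polynomial in the individual moments — so once that point is nailed down, the construction of $\boxplus$ and its restriction to $\mathcal{P}(\mathbb{R})$ both follow with little further work.
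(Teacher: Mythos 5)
Your proof is correct and follows essentially the same three-step route as the paper: realize $\mu$ and $\nu$ as distributions of variables placed freely inside a free product trace (Theorem 8.7), invoke Proposition 8.6 to show the mixed moments of $b+c$ are universal polynomials in the individual moments (hence $\boxplus$ is well-defined), and use self-adjointness of $b+c$ together with Theorem 8.2 to get the restriction to $\mathcal P(\mathbb R)$. If anything, your treatment of the real-measure case is slightly more explicit than the paper's, which simply observes that $b+c$ is self-adjoint and leaves the rest to the reader.
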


\begin{proof}
We have several verifications to be performed here, as follows:

\medskip

(1) We first have to check that given two variables $b,c$ which live respectively in certain $C^*$-algebras $B,C$, we can recover them inside some $C^*$-algebra $A$, with exactly the same distributions $\mu_b,\mu_c$, as to be able to sum them and then talk about $\mu_{b+c}$. But this comes from Theorem 8.14, because we can set $A=B*C$, as explained there.

\medskip

(2) The other verification which is needed is that of the fact that if $b,c$ are free, then the distribution $\mu_{b+c}$ depends only on the distributions $\mu_b,\mu_c$. But for this purpose, we can use the general formula from Proposition 8.13, namely:
$$tr(b_1c_1b_2c_2\ldots)=P\Big(\{tr(b_{i_1}b_{i_2}\ldots)\}_i,\{tr(c_{j_1}c_{j_2}\ldots)\}_j\Big)$$

Here $P$ is certain polynomial, depending on the length of $b_1c_1b_2c_2\ldots$, having as variables the traces of products $b_{i_1}b_{i_2}\ldots$ and $c_{j_1}c_{j_2}\ldots$, with $i_1<i_2<\ldots$ and $j_1<j_2<\ldots$

Now by plugging in arbitrary powers of $b,c$ as variables $b_i,c_j$, we obtain a family of formulae of the following type, with $Q$ being certain polynomials:
$$tr(b^{k_1}c^{l_1}b^{k_2}c^{l_2}\ldots)=P\Big(\{tr(b^k)\}_k,\{tr(c^l)\}_l\Big)$$

Thus the moments of $b+c$ depend only on the moments of $b,c$, with of course colored exponents in all this, according to our moment conventions, and this gives the result.

\medskip

(3) Finally, in what regards the last assertion, regarding the real measures, this is clear from the fact that if $b,c$ are self-adjoint, then so is their sum $b+c$.
\end{proof}

We would like now to have a linearization result for $\boxplus$, in the spirit of the previous result for $*$. We will do this slowly, in several steps. As a first observation, both independence and freeness are nicely modelled inside group algebras, as follows:

\index{group algebra}
\index{independence}
\index{freeness}

\begin{theorem}
We have the following results, valid for group algebras:
\begin{enumerate}
\item $C^*(\Gamma),C^*(\Lambda)$ are independent inside $C^*(\Gamma\times\Lambda)$.

\item $C^*(\Gamma),C^*(\Lambda)$ are free inside $C^*(\Gamma*\Lambda)$.
\end{enumerate}
\end{theorem}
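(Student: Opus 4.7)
The plan is to deduce both assertions from Theorem 8.7, after identifying the relevant group algebras with tensor and free products, together with a check that the canonical traces match up. First I would invoke Propositions 2.10 and 2.11 from the paper, which provide the identifications
$$C^*(\Gamma\times\Lambda)\simeq C^*(\Gamma)\otimes C^*(\Lambda)\quad,\quad C^*(\Gamma*\Lambda)\simeq C^*(\Gamma)*C^*(\Lambda).$$
On each side, the canonical trace from Proposition 3.15 is given on group elements by $g\to\delta_{g,1}$. So the task reduces to checking that under the above two identifications, this trace coincides respectively with the tensor product trace and with the free product trace appearing in Theorem 8.7, for then the independence and freeness statements follow directly from that theorem.

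For part (1), the tensor-trace check is immediate: since $(g,h)=1_{\Gamma\times\Lambda}$ happens iff $g=1_\Gamma$ and $h=1_\Lambda$, we get
$$tr(g\otimes h)=\delta_{(g,h),1}=\delta_{g,1}\delta_{h,1}=tr(g)tr(h),$$
and by linearity and continuity the canonical trace factorizes as required. Then Theorem 8.7 (1) gives the independence.

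For part (2), the work is to verify the freeness condition directly, which is actually the cleanest approach. Given $b_1,b_2,\ldots\in C^*(\Gamma)$ and $c_1,c_2,\ldots\in C^*(\Lambda)$ with $tr(b_i)=tr(c_i)=0$, I would observe that in terms of the expansions
$$b_i=\sum_{g\in\Gamma}\alpha_{i,g}\,g\quad,\quad c_i=\sum_{h\in\Lambda}\beta_{i,h}\,h,$$
the trace-zero assumption reads $\alpha_{i,1_\Gamma}=0$ and $\beta_{i,1_\Lambda}=0$. Expanding the product $b_1c_1b_2c_2\cdots$ as a linear combination of words $g_1h_1g_2h_2\cdots$ with each $g_i\in\Gamma\setminus\{1\}$ and each $h_i\in\Lambda\setminus\{1\}$, every such word is a non-trivial alternating word in $\Gamma*\Lambda$, and hence is never equal to the identity by the normal form theorem for free products. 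Consequently each coefficient contributes $0$ to the trace, and we conclude $tr(b_1c_1b_2c_2\cdots)=0$, which is exactly the freeness condition from Definition 8.5.

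The one step that needs a little care, and which is the main obstacle, is the appeal to the normal form theorem for $\Gamma*\Lambda$, guaranteeing that alternating products of non-identity elements from the two factors do not collapse to the identity; everything else is purely formal manipulation with group algebra coefficients. As a remark, this same argument also shows that the canonical trace on $C^*(\Gamma*\Lambda)$ \emph{must} coincide with the free product trace of Theorem 8.7 (2), since freeness together with the prescribed restrictions to $C^*(\Gamma)$ and $C^*(\Lambda)$ determines a trace uniquely, by Proposition 8.6.
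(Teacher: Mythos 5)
Your proof is correct and follows essentially the same route as the paper, which proves the result either by combining Theorem 8.7 with the standard identifications $C^*(\Gamma\times\Lambda)\simeq C^*(\Gamma)\otimes C^*(\Lambda)$ and $C^*(\Gamma*\Lambda)\simeq C^*(\Gamma)*C^*(\Lambda)$, or by checking the independence and freeness formulae directly on group elements. You simply flesh out both of these suggested arguments, adding the trace-matching check and the explicit appeal to the normal form theorem for free products (with the implicit density/multilinearity reduction to group elements being routine).
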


\begin{proof}
In order to prove these results, we have two possible methods:

\medskip

(1) We can use here the general results in Theorem 8.14 above, along with the following two isomorphisms, which are both standard:
$$C^*(\Gamma\times\Lambda)=C^*(\Lambda)\otimes C^*(\Gamma)$$
$$C^*(\Gamma*\Lambda)=C^*(\Lambda)*C^*(\Gamma)$$

\medskip

(2) We can prove this directly as well, by using the fact that each group algebra is spanned by the corresponding group elements. Indeed, it is enough to check the independence and freeness formulae on group elements, which is in turn trivial.
\end{proof}

Regarding now the linearization problem for $\boxplus$, the situation here is quite tricky. We need good models for the pairs of free random variables $(b,c)$, and the problem is that the models that we have will basically lead us into the combinatorics from Proposition 8.13 and its proof, that cannot be solved with bare hands, and that we want to avoid.

\bigskip

The idea will be that of temporarily lifting the self-adjointness assumption on our variables $b,c$, and looking instead for arbitrary random variables $\beta,\gamma$, not necessarily self-adjoint, modelling in integer moments our given laws $\mu,\nu\in\mathcal P(\mathbb R)$, as follows:
$$tr(\beta^k)=M_k(\mu),\ \forall k\in\mathbb N$$
$$tr(\gamma^k)=M_k(\nu),\ \forall k\in\mathbb N$$

To be more precise, assuming that $\beta,\gamma$ are indeed not self-adjoint, the above formulae are not the general formulae for $\beta,\gamma$, simply because these latter formulae involve colored integers $k=\circ\bullet\bullet\circ\ldots$ as exponents. Thus, in the context of the above formulae, $\mu,\nu$ are not the distributions of $\beta,\gamma$, but just some ``pieces'' of these distributions.

\bigskip

Now with this tricky idea in mind, due to Voiculescu \cite{vdn}, the solution to our law modelling problem comes in a quite straightforward way, involving the good old Hilbert space $H=l^2(\mathbb N)$ and the good old shift operator $S\in B(H)$, as follows:

\index{shift}

\begin{theorem}
Consider the shift operator on the space $H=l^2(\mathbb N)$, given by:
$$S(e_i)=e_{i+1}$$
The variables of the following type, with $f\in\mathbb C[X]$ being a polynomial, 
$$S^*+f(S)$$
model then in moments, up to finite order, all the distributions $\mu:\mathbb C[X]\to\mathbb C$.
\end{theorem}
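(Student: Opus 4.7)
My plan is to work with the vacuum expectation $\varphi(T) = \langle Te_0, e_0\rangle$ on $B(H)$, and to construct the polynomial $f$ by recurrence on its degree, matching one moment at a time. Concretely, given a target distribution $\mu : \mathbb{C}[X] \to \mathbb{C}$ (with $\mu(1)=1$) and a fixed finite order $N$, I will seek $f$ of degree at most $N-1$, written as $f(X) = \alpha_0 + \alpha_1 X + \cdots + \alpha_{N-1} X^{N-1}$, such that $\varphi(a^n) = \mu(X^n)$ for $1 \leq n \leq N$, where $a = S^* + f(S)$.

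The first step is to compute $\varphi(a^n)$ explicitly as a polynomial in $\alpha_0, \ldots, \alpha_{N-1}$. Expanding $a^n$ as a sum of monomials in $S^*$ and powers of $S$, the scalar $\langle a^n e_0, e_0\rangle$ becomes a sum of contributions, one for each ordered sequence of $n$ factors drawn from $\{S^*, \alpha_0, \alpha_1 S, \alpha_2 S^2, \ldots\}$. Interpreting each application of $S^*$ as a downward step of size $1$ and each application of $\alpha_k S^k$ as an upward step of size $k$ (weighted by the scalar $\alpha_k$), the nonzero contributions correspond exactly to walks on $\mathbb{N}$ of length $n$ starting and ending at $0$ and remaining non-negative throughout, since $S^* e_0 = 0$ kills any walk that tries to dip below $0$.

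The key combinatorial observation is that $\alpha_{n-1}$ appears in $\varphi(a^n)$ linearly and with coefficient exactly $1$. Indeed, a walk that uses the factor $\alpha_{n-1} S^{n-1}$ gains $n-1$ units of height from this single step; so the remaining $n-1$ steps must net to a descent of $n-1$, forcing each of them to be $S^*$. The only such walk which stays non-negative is the one taking $S^{n-1}$ first, then $n-1$ copies of $S^*$; its weight is $\alpha_{n-1}$. Any other walk contributing to $\varphi(a^n)$ uses only $\alpha_k$ with $k < n-1$. Consequently one has a triangular formula
$$\varphi(a^n) = \alpha_{n-1} + P_n(\alpha_0, \ldots, \alpha_{n-2}),$$
where $P_n$ is a polynomial depending only on the lower-indexed coefficients, matching the sample computations $\varphi(a) = \alpha_0$, $\varphi(a^2) = \alpha_0^2 + \alpha_1$, $\varphi(a^3) = \alpha_0^3 + 3\alpha_0 \alpha_1 + \alpha_2$, etc.

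With this triangular structure in hand, the recursive determination of the $\alpha_k$ is immediate: set $\alpha_0 = \mu(X)$, and having defined $\alpha_0, \ldots, \alpha_{n-2}$, set
$$\alpha_{n-1} = \mu(X^n) - P_n(\alpha_0, \ldots, \alpha_{n-2}).$$
This yields $f$ of degree at most $N-1$ with $\varphi(a^n) = \mu(X^n)$ for $1 \leq n \leq N$, as required. The main obstacle is the combinatorial justification of the triangular formula — namely the claim that $\alpha_{n-1}$ appears linearly with coefficient $1$ and nowhere else — and so the bulk of a full writeup would go into a careful path-counting argument for walks on $\mathbb{N}$ returning to the origin.
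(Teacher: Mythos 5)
Your proof is correct and follows the same recursive idea as the paper's, which simply records that $\varphi(T)=a_0$, that $\varphi(T^2)$ involves $a_1$, that $\varphi(T^3)$ involves $a_2$, and so on, and then states that this leads to a recurrence it ``will not attempt to solve now, with bare hands.'' The path-counting argument you supply --- that $S^*e_0=0$ kills any walk dipping below zero, that a walk of length $n$ using $\alpha_{n-1}S^{n-1}$ must consist of that step first followed by $(S^*)^{n-1}$, and hence that $\alpha_{n-1}$ enters $\varphi(a^n)$ linearly with coefficient exactly $1$ --- is precisely the missing ``bare hands'' justification of the triangularity the paper leaves unwritten, and the recursion $\alpha_{n-1}=\mu(X^n)-P_n(\alpha_0,\ldots,\alpha_{n-2})$ is the recurrence it alludes to.
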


\begin{proof}
The adjoint of the shift is given by the following formula:
$$S^*(e_i)=\begin{cases}
e_{i-1}&(i>0)\\
0&(i=0)
\end{cases}$$

Consider now a variable as in the statement, namely:
$$T=S^*+a_0+a_1S+a_2S^2+\ldots+a_nS^n$$

We have then $tr(T)=a_0$, then $tr(T^2)$ will involve $a_1$, then $tr(T^3)$ will involve $a_2$, and so on. Thus, we are led to a certain recurrence, that we will not attempt to solve now, with bare hands, but which definitely gives the conclusion in the statement.
\end{proof}

Before getting further, let us point out the following fundamental fact:

\index{shift}
\index{semicircle law}
\index{Wigner semicircle}

\begin{proposition}
In the context of the above correspondence, the variable
$$T=S+S^*$$
follows the Wigner semicircle law on $[-2,2]$.
\end{proposition}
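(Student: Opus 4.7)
The plan is to compute the moments of $T = S + S^*$ with respect to the vector state $\varphi(X) = \langle X e_0, e_0 \rangle$ directly, and then recognize them as the moments of the Wigner semicircle law via Theorem 5.11. This is essentially the one-color specialization of the argument used in part (1) of Theorem 6.9 above.

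First I would expand
$$T^k = (S + S^*)^k = \sum_{\varepsilon \in \{S,S^*\}^k} \varepsilon_1 \varepsilon_2 \cdots \varepsilon_k,$$
and compute each term $\langle \varepsilon_1 \cdots \varepsilon_k\, e_0, e_0\rangle$. Reading the product right-to-left: starting from $e_0$, each application of $S$ raises the index by $1$, each application of $S^*$ lowers it by $1$, and $S^*$ kills $e_0$. Hence the word produces a non-zero contribution (necessarily equal to $1$) precisely when the sequence of steps $\varepsilon_k, \varepsilon_{k-1}, \ldots, \varepsilon_1$ traces a path in $\mathbb{N}$ starting and ending at $0$ and never going negative, i.e.\ a Dyck path of length $k$. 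In particular $M_k = 0$ when $k$ is odd.

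Next I would identify such Dyck paths with non-crossing pair partitions of $\{1,\ldots,k\}$ in the standard way: each down-step ($S^*$) pairs with the nearest preceding up-step ($S$) still unmatched, and the Dyck condition forces the resulting pairing to be non-crossing. This bijection yields
$$\varphi(T^k) = \#\{\text{Dyck paths of length } k\} = |NC_2(k)|,$$
which by Proposition 5.10 equals the Catalan number $C_{k/2}$ when $k$ is even, and $0$ otherwise. Since $T$ is self-adjoint, these real numbers are its moments with respect to $\varphi$, and by Theorem 5.11 they coincide with the moments of the standard Wigner semicircle law $\gamma_1 = \frac{1}{2\pi}\sqrt{4-x^2}\,dx$ on $[-2,2]$. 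As $T$ is bounded and self-adjoint, Theorem 8.2 applies and the distribution is uniquely determined by its moments, so $T \sim \gamma_1$.

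There is no serious obstacle; the only delicate point is handling the boundary condition $S^*(e_0)=0$ correctly, but this is precisely what converts the naive counting (which would give $2^k$-many pair matchings on $k$ points, i.e.\ $(k-1)!!$ contributions as for the Gaussian in the classical shift computation) into the Catalan counting: words whose right-to-left trace would dip below $0$ are killed, leaving exactly the non-crossing pairings and explaining why the Wigner law, rather than the Gaussian, appears.
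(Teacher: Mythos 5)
Your proof is correct and takes essentially the same approach as the paper. Proposition 8.11 is proved in the paper simply by pointing back to the proof of Theorem 6.8, where the same moment computation $\langle (S+S^*)^k e,e\rangle = |NC_2(k)|$ is carried out by showing that the only surviving contributions arise from successive applications of $S^*S=1$ in a non-crossing pattern; your Dyck-path bookkeeping (track the lattice walk right-to-left, kill any word dipping below $0$, then biject the surviving words with non-crossing pairings) is the same argument in a different but equivalent notation.
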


\begin{proof}
This is something that we know from chapter 6, the idea being that the combinatorics of $(S+S^*)^k$ leads us into paths on $\mathbb N$, and to the Catalan numbers.
\end{proof}

Getting back now to our linearization program for $\boxplus$, the next step is that of taking a free product of the model found in Theorem 8.17 with itself. We have here:

\index{Fock space}
\index{free Fock space}
\index{shift}
\index{creation operator}

\begin{proposition}
We can define the algebra of creation operators
$$S_x:v\to x\otimes v$$
on the free Fock space associated to a real Hilbert space $H$, given by 
$$F(H)=\mathbb C\Omega\oplus H\oplus H^{\otimes2}\oplus\ldots$$
and at the level of examples, we have:
\begin{enumerate}
\item With $H=\mathbb C$ we recover the shift algebra $A=<S>$ on $H=l^2(\mathbb N)$.

\item With $H=\mathbb C^2$, we obtain the algebra $A=<S_1,S_2>$ on $H=l^2(\mathbb N*\mathbb N)$.
\end{enumerate}
\end{proposition}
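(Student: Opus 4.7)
The plan is to first verify that the construction is well-posed, and then match up the two examples with the shift-algebra models from Theorem 8.10 and Proposition 8.11. Since $F(H)$ is by definition the Hilbert space direct sum $\bigoplus_{n\geq 0} H^{\otimes n}$, with the convention $H^{\otimes 0}=\mathbb C\Omega$, it is automatically a Hilbert space. The creation operator $S_x$ is first defined on the algebraic direct sum by $S_x(x_1\otimes\ldots\otimes x_n)=x\otimes x_1\otimes\ldots\otimes x_n$ (and $S_x\Omega=x$), and one checks that on each summand $H^{\otimes n}$ it is an isometry up to a factor of $\|x\|$, so it extends to a bounded operator on $F(H)$ of norm exactly $\|x\|$. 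A short computation then shows that its adjoint is the annihilation operator $S_x^*(x_1\otimes\ldots\otimes x_n)=\langle x_1,x\rangle\, x_2\otimes\ldots\otimes x_n$, with $S_x^*\Omega=0$, and one has the crucial relation $S_x^*S_y=\langle y,x\rangle\cdot 1$, which will be the driving force behind all later moment computations.

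For assertion (1), I would take $H=\mathbb C$ with distinguished unit vector $1\in\mathbb C$. Then $H^{\otimes n}$ is one-dimensional, spanned by $1^{\otimes n}$, so the map $1^{\otimes n}\mapsto e_n$ gives an isometric isomorphism $F(\mathbb C)\simeq l^2(\mathbb N)$. Under this identification, the creation operator $S:=S_1$ acts by $S(1^{\otimes n})=1^{\otimes(n+1)}$, which corresponds precisely to the shift $e_n\mapsto e_{n+1}$ of section 1. Thus we recover the shift algebra $A=\langle S\rangle$ from Theorem 8.10, together with its model for arbitrary distributions.

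For assertion (2), I would take $H=\mathbb C^2$ with orthonormal basis $\{f_1,f_2\}$. Then $H^{\otimes n}$ has as orthonormal basis the elementary tensors $f_{i_1}\otimes\ldots\otimes f_{i_n}$ with $i_j\in\{1,2\}$, i.e., the words of length $n$ in two letters. Summing over $n$, one obtains an orthonormal basis of $F(\mathbb C^2)$ indexed by all finite words in the free monoid on two generators, which is precisely the index set for $l^2(\mathbb N*\mathbb N)$ in the sense used by the paper (the set of colored integers $k=\circ\bullet\bullet\circ\ldots$ appearing in section 6). Under this identification, $S_1$ and $S_2$ become the two letter-prepending shifts $k\mapsto 1k$ and $k\mapsto 2k$, which are exactly the operators $S,T$ from Theorem 6.9. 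This yields $A=\langle S_1,S_2\rangle\subset B(l^2(\mathbb N*\mathbb N))$, as claimed.

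The main obstacle is essentially bookkeeping rather than a genuine difficulty: one needs to justify that the naive algebraic definition of $S_x$ extends to a bounded operator on the Hilbert-space completion $F(H)$, and one needs to carefully set up the bijection between basis vectors of $H^{\otimes n}$ and the combinatorial objects indexing $l^2(\mathbb N)$ and $l^2(\mathbb N*\mathbb N)$ respectively. Once these identifications are fixed, both (1) and (2) reduce to matching generators. The content of the proposition is really that the free Fock space construction is the right unifying framework in which to pass from the one-variable model of Theorem 8.10 to the multi-variable free product model that will be needed in order to linearize $\boxplus$ in the next steps.
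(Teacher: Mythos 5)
Your argument is correct and follows essentially the same route as the paper: the paper's proof is one sentence long, simply asserting the identifications $A(\mathbb C^k)=C^*(\mathbb N^{*k})$ and $F(\mathbb C^k)=l^2(\mathbb N^{*k})$ ``in terms of the abstract semigroup notions from section 6'' and then calling assertions (1,2) clear, whereas you spell out the underlying dictionary (words in the free monoid $\leftrightarrow$ tensor-product basis vectors, prepending a letter $\leftrightarrow$ creation operator) that makes them clear. The only small caveat worth flagging is that when computing the adjoint you should be slightly careful with which slot of $\langle\cdot,\cdot\rangle$ is linear, given the paper's conventions from Definition 1.1; but since $H$ is a \emph{real} Hilbert space here the distinction evaporates and your formulae $S_x^*(x_1\otimes\ldots\otimes x_n)=\langle x_1,x\rangle\,x_2\otimes\ldots\otimes x_n$ and $S_x^*S_y=\langle y,x\rangle\cdot 1$ agree with the paper's $\langle x,y\rangle\,id$ in Proposition 8.13. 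Everything else, including the boundedness of $S_x$ with $\|S_x\|=\|x\|$ via orthogonality of the graded pieces, is sound.
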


\begin{proof}
We can talk indeed about the algebra $A(H)$ of creation operators on the free Fock space $F(H)$ associated to a real Hilbert space $H$, with the remark that, in terms of the abstract semigroup notions from chapter 6 above, we have:
$$A(\mathbb C^k)=C^*(\mathbb N^{*k})\quad,\quad 
F(\mathbb C^k)=l^2(\mathbb N^{*k})$$

Thus, we are led to the conclusions in the statement.
\end{proof}

With the above notions in hand, we have the following key freeness result:

\index{vacuum vector}

\begin{proposition}
Given a real Hilbert space $H$, and two orthogonal vectors $x\perp y$, the corresponding creation operators $S_x$ and $S_y$ are free with respect to
$$tr(T)=<T\Omega,\Omega>$$
called trace associated to the vacuum vector.
\end{proposition}

\begin{proof}
This is something that we know from chapter 6, coming from the formula $S_x^*S_y=<x,y>id$, valid for any two vectors $x,y\in H$, which itself is elementary.
\end{proof}

With this technology in hand, let us go back to our linearization program for $\boxplus$. We have the following key result, further building on Proposition 8.20:

\begin{theorem}
Given two polynomials $f,g\in\mathbb C[X]$, consider the variables 
$$R^*+f(R)\quad,\quad 
S^*+g(S)$$
where $R,S$ are two creation operators, or shifts, associated to a pair of  orthogonal norm $1$ vectors. These variables are then free, and their sum has the same law as
$$T^*+(f+g)(T)$$
with $T$ being the usual shift on $l^2(\mathbb N)$.
\end{theorem}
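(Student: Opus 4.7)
My plan splits the theorem into its two assertions—freeness, and the distributional identity—and treats them separately.

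For freeness, set $A=R^*+f(R)$ and $B=S^*+g(S)$. Since $A$ lies in the $*$-subalgebra generated by $R$ and $B$ lies in that generated by $S$, and since Proposition 8.13 asserts that these two subalgebras are free (the creation operators $R,S$ correspond to the orthogonal unit vectors $x,y$), the freeness of $A$ and $B$ is inherited from that of the ambient subalgebras.

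For the distributional identity I would proceed in two stages. First I reduce to a single-shift comparison: the closed subspace $K_x=\overline{span}\{\Omega,x,x^{\otimes 2},\ldots\}\subset F(H)$ is invariant under both $R$ (which prepends $x$) and $R^*$ (which removes the leftmost $x$, using $\langle x,x\rangle=1$), and the identification $x^{\otimes n}\leftrightarrow e_n$ is a unitary intertwining $R|_{K_x}$ with the classical shift $T$ on $l^2(\mathbb N)$ and sending $\Omega$ to $\Omega_0$. Consequently, every vacuum-state moment of a word in $R,R^*$ equals the corresponding moment for $T,T^*$, so $R^*+f(R)$ has the same law as $T^*+f(T)$; analogously $S^*+g(S)$ has the same law as $T^*+g(T)$. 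Combined with freeness and Theorem 8.8, this yields $\mu_{A+B}=\mu_{T^*+f(T)}\boxplus\mu_{T^*+g(T)}$, and the theorem reduces to the identity $\mu_{T^*+f(T)}\boxplus\mu_{T^*+g(T)}=\mu_{T^*+(f+g)(T)}$.

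For this last identity I would compute moments on both sides directly. Writing $f=\sum_j c_jX^j$ and $g=\sum_j d_jX^j$, expanding $\tau((A+B)^k)$ gives a sum over length-$k$ words in the letters $R^*,R^j,S^*,S^j$ applied to $\Omega$. Viewing the Fock vector carried along as a stack of $x$'s and $y$'s, the orthogonality $\langle x,y\rangle=0$ forces each surviving term to realize a lattice path starting and ending at $0$, with variable-size up-steps (weighted $c_j$ or $d_j$ depending on the color $R$ or $S$) and unit down-steps which must match the color of the most recent unmatched up-step; the LIFO rule makes this pairing non-crossing. For each fixed non-crossing shape the colors of distinct up-steps decouple, so summing over them gives the product $\prod_i(c_{j_i}+d_{j_i})$ per shape—which is precisely the weight of the same shape in the single-shift expansion of $\tau_0((T^*+(f+g)(T))^k)$. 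The moments therefore coincide. The main obstacle is the combinatorial bookkeeping in this last step: one must verify carefully that the color-choices across up-steps genuinely decouple in spite of the global constraint that each pop be legal, which is the Fock-model manifestation of the fact that the $R$-transform linearizes free convolution.
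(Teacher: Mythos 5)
Your proof is correct, but it takes a genuinely different route from the paper's. The paper handles the distributional identity by a $45^\circ$ rotation trick: it passes to the new shifts $T=(R+S)/\sqrt{2}$, $U=(R-S)/\sqrt{2}$, observes that $R^*+S^*=\sqrt{2}\,T^*$, uses the freeness (really, the observation that only $T^*$ occurs as an annihilator, so any pushed $U$-vector can never be popped and such a term cannot return to the vacuum) to delete all occurrences of $U$, and concludes with the rescaling identity $\sqrt{2}\,T^*+\sum_kc_k2^{-k/2}T^k\sim T^*+\sum_kc_kT^k$. You instead identify each variable with a one-sided shift model via the reducing subspace $K_x$, and then prove the moment identity directly through the colored lattice-path expansion; the decoupling you flag as the main obstacle does hold, exactly for the reason you indicate: once the shape (a nonnegative path ending at $0$) is fixed, the LIFO rule forces the color of every down-step from the colors of the up-steps, and since $R^*$ and $S^*$ both occur with coefficient $1$, summing over the up-step colorings factorizes into $\prod_i(c_{j_i}+d_{j_i})$, which is precisely the single-shift weight for $(f+g)$; the intermediate reformulation via $\boxplus$ is harmless but not needed, since this computation already gives the moments of the sum directly. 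Your argument is longer, but it is more self-contained and makes the mechanism explicit (non-crossing pairings, i.e. additivity of free cumulants in this model), whereas the paper's rotation trick is shorter and slicker but leaves the ``cutting of $U$'' and the rescaling as informal steps which, when fleshed out, require small stack/moment arguments of the same flavour as yours.
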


\begin{proof}
Again, this is something that we know from chapter 6, the idea being that this comes from Proposition 8.20, by using a $45^\circ$ rotation trick.
\end{proof}

We can now solve the linearization problem. Following Voiculescu \cite{vdn}, we have:

\index{R-transform}
\index{Voiculescu R-transform}
\index{Cauchy transform}

\begin{theorem}
Given a real probability measure $\mu$, define its $R$-transform as follows:
$$G_\mu(\xi)=\int_\mathbb R\frac{d\mu(t)}{\xi-t}\implies G_\mu\left(R_\mu(\xi)+\frac{1}{\xi}\right)=\xi$$
The free convolution operation is then linearized by this $R$-transform.
\end{theorem}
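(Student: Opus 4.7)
The plan is to derive this from the Fock space model, and more precisely from Theorem 8.14, by identifying the R-transform of a distribution $\mu$ with the ``symbol'' $g$ appearing in the model $T = S^* + g(S)$ provided by Theorem 8.10. Once this identification is made, the linearization $R_{\mu\boxplus\nu} = R_\mu + R_\nu$ becomes an immediate consequence of the additivity of symbols under free sum, which is exactly the content of Theorem 8.14.

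First, I would compute the Cauchy transform of the distribution of an operator of the form $T = S^* + g(S)$, where $S$ is the shift on $l^2(\mathbb N)$ and $g \in \mathbb C[X]$. Identifying $l^2(\mathbb N)$ with polynomials in an indeterminate $z$ via $S^n \Omega \leftrightarrow z^n$, the shift $S$ acts as multiplication by $z$ and its adjoint $S^*$ acts as the divided-difference operator $P(z) \mapsto (P(z) - P(0))/z$. The resolvent equation $(\xi - T) P_\xi(z) = 1$ then reads
$$\xi\, P_\xi(z) - \frac{P_\xi(z) - P_\xi(0)}{z} - g(z)\, P_\xi(z) = 1,$$
which rearranges into
$$P_\xi(z) \bigl( z\xi - 1 - z\, g(z) \bigr) = z - P_\xi(0).$$
Since $P_\xi(0) = \langle (\xi - T)^{-1}\Omega, \Omega \rangle = G_\mu(\xi)$, and since $P_\xi$ must be holomorphic (so the two sides share their zeros), the denominator on the left must vanish precisely at $z = G_\mu(\xi)$, giving the functional equation
$$\xi = g\bigl(G_\mu(\xi)\bigr) + \frac{1}{G_\mu(\xi)}.$$

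Writing $K_\mu = G_\mu^{-1}$ for the compositional inverse near infinity (which exists by the asymptotic $G_\mu(\xi) \sim 1/\xi$), this last identity says $K_\mu(w) = g(w) + 1/w$, and hence $R_\mu(w) = K_\mu(w) - 1/w = g(w)$. So the R-transform of the distribution of $S^* + g(S)$ is exactly $g$. In view of Theorem 8.10, every distribution $\mu$ (viewed through its moments) arises in this way, so this computation identifies $R_\mu$ with the symbol in the Fock-space model of $\mu$.

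Finally, given two distributions $\mu, \nu$ with models $R^* + f(R)$ and $S^* + g(S)$ coming from creation operators on orthogonal vectors, Theorem 8.14 asserts both that these models are free, and that their sum has the same distribution as $T^* + (f+g)(T)$. Combined with the identification just obtained, this gives $R_{\mu \boxplus \nu} = f + g = R_\mu + R_\nu$, as desired. The main obstacle I foresee is the computation in the second paragraph: the formal manipulations must be justified in a precise analytic domain (the holomorphic regime $|\xi| \gg 0$ where $G_\mu$ is invertible and the power series converge), and one must argue carefully that ``every distribution'' genuinely arises as the moment sequence of some $S^* + g(S)$ to the required order, as in Theorem 8.10, rather than only as a formal expression.
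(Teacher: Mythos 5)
Your proof is correct and follows essentially the same strategy as the paper: both reduce to Theorem 8.14 and then compute the R-transform of the Fock-space model $S^* + g(S)$ to be $g$. The only difference is presentational—you solve the resolvent equation $(\xi - T)P_\xi = \Omega$ formally and invoke a pole-cancellation argument for the holomorphic $P_\xi$, whereas the paper directly exhibits the eigenvector-like element $w_z = \delta_0 + \sum_{k\geq1} z^k\delta_k$ satisfying $(z^{-1}+f(z)-X^*)^{-1}\delta_0 = zw_z$ and takes the vacuum expectation—but these are equivalent computations establishing $G_\mu(z^{-1}+g(z))=z$.
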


\begin{proof}
This can be done by using the above results, in several steps, as follows:

\medskip

(1) According to Theorem 8.21, the operation $\mu\to f$ from Theorem 8.10 linearizes the free convolution operation $\boxplus$. We are therefore left with a computation inside $C^*(\mathbb N)$. To be more precise, consider a variable as in Theorem 8.21:
$$X=S^*+f(X)$$

In order to establish the result, we must prove that the $R$-transform of $X$, constructed according to the procedure in the statement, is the function $f$ itself.

\medskip

(2) In order to do so, fix $|z|<1$ in the complex plane, and let us set:
$$w_z=\delta_0+\sum_{k=1}^\infty z_k\delta_k$$

The shift and its adjoint act then as follows, on this vector:
$$Sw_z=z^{-1}(w_z-\delta_0)\quad,\quad
S^*w_z=zw_z$$

It follows that the adjoint of our operator $X$ acts as follows on this vector:
\begin{eqnarray*}
X^*w_z
&=&(S+f(S^*))w_z\\
&=&z^{-1}(w_z-\delta_0)+f(z)w_z\\
&=&(z^{-1}+f(z))w_z-z^{-1}\delta_0
\end{eqnarray*}

Now observe that this formula can be written as follows:
$$z^{-1}\delta_0=(z^{-1}+f(z)-X^*)w_z$$

The point now is that when $|z|$ is small, the operator appearing on the right is invertible. Thus, we can rewrite this formula as follows:
$$(z^{-1}+f(z)-X^*)^{-1}\delta_0=zw_z$$

Now by applying the trace, we are led to the following formula:
\begin{eqnarray*}
tr\left[(z^{-1}+f(z)-X^*)^{-1}\right]
&=&\left<(z^{-1}+f(z)-X^*)^{-1}\delta_0,\delta_0\right>\\
&=&<zw_z,\delta_0>\\
&=&z
\end{eqnarray*}

(3) Let us apply now the complex function procedure in the statement to the real probability measure $\mu$ modelled by $X$. The Cauchy transform $G_\mu$ is given by:
\begin{eqnarray*}
G_\mu(\xi)
&=&tr((\xi-X)^{-1})\\
&=&\overline{tr\big((\bar{\xi}-X^*)^{-1}\big)}\\
&=&tr((\xi-X^*)^{-1})
\end{eqnarray*}

Now observe that, with the choice $\xi=z^{-1}+f(z)$ for our complex variable, the trace formula found in (2) above tells us precisely that we have:
$$G_\mu\big(z^{-1}+f(z)\big)=z$$

Thus, we have $R_\mu(z)=f(z)$, which finishes the proof, as explained in step (1).
\end{proof}

With the above linearization technology in hand, we can now establish the following free analogue of the CLT, also due to Voiculescu \cite{vdn}: 

\index{FCLT}
\index{Free CLT}
\index{Free Central Limit Theorem}
\index{R-transform}
\index{Voiculescu R-transform}
\index{Wigner law}
\index{semicircle law}

\begin{theorem}[Free CLT]
Given self-adjoint variables $x_1,x_2,x_3,\ldots$ which are f.i.d., centered, with variance $t>0$, we have, with $n\to\infty$, in moments,
$$\frac{1}{\sqrt{n}}\sum_{i=1}^nx_i\sim\gamma_t$$\
where $\gamma_t$ is the Wigner semicircle law of parameter $t$, having density:
$$\gamma_t=\frac{1}{2\pi t}\sqrt{4t^2-x^2}dx$$
\end{theorem}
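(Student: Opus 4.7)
The plan is to mimic the classical CLT proof via characteristic functions, with the $R$-transform of Theorem 8.15 replacing the Fourier transform. The central observation is that $R_\mu$ linearizes $\boxplus$ and is a well-defined formal power series whose coefficients --- the free cumulants $\kappa_1, \kappa_2, \kappa_3, \ldots$ --- are polynomial functions of the moments $m_1, m_2, \ldots$ (and vice versa). In particular, coefficient-wise convergence of the $R$-transforms is equivalent to convergence in moments.

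First I would record two basic properties of the $R$-transform. Its power-series expansion at the origin has the form $R_\mu(\xi) = \kappa_1 + \kappa_2 \xi + \kappa_3 \xi^2 + \cdots$, with $\kappa_1 = m_1$ and $\kappa_2 = m_2 - m_1^2$; both identities follow by expanding the functional equation $G_\mu(R_\mu(\xi) + 1/\xi) = \xi$ around $\xi = 0$, using the Laurent expansion $G_\mu(z) = \sum_{k \geq 0} m_k / z^{k+1}$ at infinity and inverting order by order. Second, I would establish the scaling rule
$$R_{c X}(\xi) = c\, R_X(c\xi),$$
which follows directly from the identity $G_{cX}(\xi) = c^{-1} G_X(\xi/c)$, itself a one-line change-of-variable computation in the definition of the Cauchy transform.

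With these tools in hand, the computation is short. Let $S_n = n^{-1/2} \sum_{i=1}^n x_i$. Since the $x_i$ are free and identically distributed, Theorem 8.15 gives $R_{\sum x_i}(\xi) = n\, R_{x_1}(\xi)$. Combined with the scaling rule applied with $c = 1/\sqrt{n}$, we obtain
$$R_{S_n}(\xi) = \frac{1}{\sqrt{n}}\, R_{\sum x_i}\!\left(\frac{\xi}{\sqrt{n}}\right) = \sqrt{n}\, R_{x_1}\!\left(\frac{\xi}{\sqrt{n}}\right).$$
Since $x_1$ is centered with variance $t$, the expansion of $R_{x_1}$ begins $t\xi + \kappa_3 \xi^2 + \kappa_4 \xi^3 + \cdots$, so
$$R_{S_n}(\xi) = t\xi + \frac{\kappa_3}{\sqrt{n}}\, \xi^2 + \frac{\kappa_4}{n}\, \xi^3 + \cdots,$$
which converges coefficient-wise to $t\xi$ as $n \to \infty$.

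It remains to identify the limit law as $\gamma_t$. By Theorem 8.14 applied with $f(\xi) = t\xi$, the operator $X = R^* + tR$ on the free Fock space has $R$-transform precisely $t\xi$; on the other hand, Proposition 8.11 shows that $S + S^*$ has the standard Wigner semicircle $\gamma_1$, and applying the scaling rule once more (with $c = \sqrt{t}$) yields $R_{\gamma_t}(\xi) = \sqrt{t}\cdot R_{\gamma_1}(\sqrt{t}\,\xi) = t\xi$, so $\gamma_t$ is indeed the law with $R$-transform $t\xi$. Combined with the coefficient-wise convergence established above, this gives $S_n \sim \gamma_t$ in moments, as desired. The main technical point --- and the only place where care is needed --- is the passage from coefficient-wise convergence of the formal power series $R_{S_n}$ to convergence of the moments of $S_n$; this rests on the fact that, for each fixed $k$, the moment $m_k(S_n)$ is a fixed universal polynomial in $\kappa_1(S_n), \ldots, \kappa_k(S_n)$ via the non-crossing moment-cumulant formula, so finite-order convergence of cumulants transfers to finite-order convergence of moments.
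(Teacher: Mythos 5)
Your proof is correct and takes essentially the same route as the paper: linearize $\boxplus$ via the $R$-transform (Theorem 8.15), compute $R_{S_n}(\xi)=\sqrt{n}\,R_{x_1}(\xi/\sqrt{n})\to t\xi$, and identify $\gamma_t$ as the law whose $R$-transform is $t\xi$ via $S+S^*$ and the dilation rule. Your version is more careful than the paper's in spelling out the scaling identity $R_{cX}(\xi)=cR_X(c\xi)$, the moment--cumulant bridge that converts coefficient-wise convergence of $R$-transforms into moment convergence, and the $t>0$ case, all of which the paper compresses into the phrase ``standard dilation computations.''
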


\begin{proof}
We follow the same idea as in the proof of the CLT:

\medskip 

(1) At $t=1$, the $R$-transform of the variable in the statement on the left can be computed by using the linearization property from Theorem 8.22, and is given by:
$$R(\xi)
=nR_x\left(\frac{\xi}{\sqrt{n}}\right)
\simeq\xi$$

(2) Regarding now the right term, also at $t=1$, our claim is that the $R$-transform of the Wigner semicircle law $\gamma_1$ is given by the following formula:
$$R_{\gamma_1}(\xi)=\xi$$

But this follows via some calculus, or directly from the following formula, coming from Proposition 8.18, and from the technical details of the $R$-transform:
$$S+S^*\sim\gamma_1$$

Thus, the laws in the statement have the same $R$-transforms, and so they are equal.

\medskip
 
(4) Summarizing, we have proved the free CLT at $t=1$. The passage to the general case, $t>0$, is routine, by some standard dilation computations.
\end{proof}

Similarly, in the complex case, we have the following result, also from \cite{vdn}:

\index{FCCLT}
\index{Free complex CLT}
\index{Voiculescu law}
\index{circular law}
\index{Voiculescu circular law}

\begin{theorem}[Free complex CLT]
Given variables $x_1,x_2,x_3,\ldots,$ whose real and imaginary parts are f.i.d., centered, and with variance $t>0$, we have, with $n\to\infty$,
$$\frac{1}{\sqrt{n}}\sum_{i=1}^nx_i\sim\Gamma_t$$
where $\Gamma_t$ is the Voiculescu circular law of parameter $t$, appearing as the law of $\frac{1}{\sqrt{2}}(a+ib)$, where $a,b$ are self-adjoint and free, each following the law $\gamma_t$.
\end{theorem}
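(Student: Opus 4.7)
The plan is to reduce the free complex CLT to the real free CLT (Theorem 8.16) by splitting each $x_k$ into its real and imaginary parts. Write $x_k = y_k + i z_k$ with $y_k = (x_k + x_k^*)/2$ and $z_k = (x_k - x_k^*)/(2i)$, both self-adjoint. The hypothesis ``real and imaginary parts f.i.d., centered, variance $t$'' amounts to saying that $\{y_k, z_k\}_{k\geq1}$ is an f.i.d. family of self-adjoint variables, each centered and of variance $t$. Splitting the normalized sum gives
$$S_n = \frac{1}{\sqrt{n}}\sum_{k=1}^n x_k = Y_n + i Z_n, \qquad Y_n = \frac{1}{\sqrt{n}}\sum_{k=1}^n y_k, \quad Z_n = \frac{1}{\sqrt{n}}\sum_{k=1}^n z_k.$$

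Applying the real free CLT to the subfamily $\{y_k\}_k$ yields $Y_n \to a \sim \gamma_t$ in moments, and symmetrically $Z_n \to b \sim \gamma_t$. The nontrivial step is to upgrade these two marginal convergences to convergence of the full $*$-distribution of $S_n$, because $S_n$ is not normal and its $*$-moments $\mathrm{tr}(S_n^{\varepsilon_1} \cdots S_n^{\varepsilon_m})$ mix $Y_n$ and $Z_n$ in an essential way. My approach is to exploit freeness: since the sets $\{y_k\}_k$ and $\{z_k\}_k$ live in free subalgebras (freeness across all $2n$ variables being part of the f.i.d. hypothesis), the subalgebras generated by $Y_n$ and by $Z_n$ are free for every $n$. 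By Proposition 8.6, every mixed trace $\mathrm{tr}(P(Y_n)Q(Z_n)P'(Y_n)Q'(Z_n)\cdots)$ is expressible through a fixed universal polynomial in the individual moments of $Y_n$ and of $Z_n$ alone. Feeding the marginal limits into these universal formulas produces the joint moments of a free pair $(a,b)$ with $a,b \sim \gamma_t$, which is exactly the joint $*$-moment structure used to build $\Gamma_t$.

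Concretely, for any colored word $k = \varepsilon_1 \ldots \varepsilon_m$ we expand $\mathrm{tr}(S_n^k)$ as a finite linear combination of mixed alternating traces in $Y_n, Z_n$ with fixed coefficients; each such mixed trace converges by the universal freeness formula together with the marginal convergences $Y_n \to a$ and $Z_n \to b$; summing back recovers the corresponding $*$-moment of $a + ib$, which coincides (after the standard $1/\sqrt{2}$ normalization built into Definition $\Gamma_t = \frac{1}{\sqrt{2}}(a+ib)$) with the $k$-th $*$-moment of $\Gamma_t$. The main obstacle is precisely this passage from marginal to joint convergence: freeness of the limiting pair $(a,b)$ is what distinguishes the result from a classical statement and is what makes Proposition 8.6 available; once that step is in place, the proof runs parallel to the classical complex CLT built from Theorem 6.2.
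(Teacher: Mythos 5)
Your proof is correct and takes essentially the same approach as the paper, whose entire argument reads ``this is clear from Theorem 8.16, by taking real and imaginary parts'' --- you simply supply the joint-convergence step (freeness of the algebras containing $Y_n,Z_n$ plus the universal mixed-moment formulas of Proposition 8.6) that the paper leaves implicit. The only loose point, the factor $\frac{1}{\sqrt{2}}$ depending on whether ``variance $t$'' refers to the parts $y_k,z_k$ or to $x_k$ itself, is an ambiguity already present in the paper's own statement, so no further comment is needed.
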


\begin{proof}
This is clear from Theorem 8.23, by taking real and imaginary parts.
\end{proof}

There are of course many other things that can be said about $g_t,\gamma_t,G_t,\Gamma_t$, but for the moment, this is all we need. We will be back later to these laws, with more details. 

\section*{8b. Laws of characters}

Now back to our quantum group questions, let us start with the following general result, which provides us with motivations for the study of the main character:

\index{main character}
\index{Kesten measure}
\index{Cayley graph}
\index{amenability}

\begin{theorem}
Given a Woronowicz algebra $(A,u)$, the law of the main character
$$\chi=\sum_{i=1}^Nu_{ii}$$
with respect to the Haar integration has the following properties:
\begin{enumerate}
\item The moments of $\chi$ are the numbers $M_k=\dim(Fix(u^{\otimes k}))$.

\item $M_k$ counts as well the lenght $p$ loops at $1$, on the Cayley graph of $A$.

\item $law(\chi)$ is the Kesten measure of the associated discrete quantum group.

\item When $u\sim\bar{u}$ the law of $\chi$ is a usual measure, supported on $[-N,N]$.

\item The algebra $A$ is amenable precisely when $N\in supp(law(Re(\chi)))$.

\item Any morphism $f:(A,u)\to (B,v)$ must increase the numbers $M_k$.

\item Such a morphism $f$ is an isomorphism when $law(\chi_u)=law(\chi_v)$.
\end{enumerate}
\end{theorem}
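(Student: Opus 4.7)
The plan is to treat the seven items as a bundle of corollaries of Peter--Weyl theory (Theorems 3.9, 3.12, 3.22, 3.24) combined with the Haar integral projection formula from Theorem 3.18. First I would establish (1), which is the key formula driving everything else. Using Theorem 3.18, the matrix $P_k = (id \otimes \int_G) u^{\otimes k}$ is the orthogonal projection onto $Fix(u^{\otimes k})$. Taking the trace on both sides gives
$$M_k = \int_G \chi_u^k = \int_G \sum_{i_1,\ldots,i_k} u_{i_1 i_1}^{e_1} \cdots u_{i_k i_k}^{e_k} = \mathrm{Tr}(P_k) = \dim Fix(u^{\otimes k}),$$
where $k = e_1 \cdots e_k$ is a colored integer. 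This is the heart of the matter.

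Next I would read (2) and (3) off from (1). For (2), by Frobenius reciprocity (Theorem 3.21), $\dim Fix(u^{\otimes k}) = \dim Hom(1, u^{\otimes k})$ counts the multiplicity of the trivial representation inside $u^{\otimes k}$; by Peter--Weyl this is exactly the number of length-$|k|$ walks on the fusion graph (i.e. the Cayley graph of the dual discrete quantum group) that start and end at the vertex labelled by $1$. For (3), this is just the definition of the Kesten spectral measure of that Cayley graph, transcribed through (1). For (4), when $u \sim \bar{u}$ the character $\chi$ is self-adjoint (equivalently, $\chi$ is normal as an element of $A$), so Theorem 8.2 furnishes a genuine probability measure $\mu \in \mathcal{P}(\mathbb{R})$ encoding the moments; the support lies in $[-N,N]$ because $\chi$ is a sum of $N$ unitaries, giving $\|\chi\| \leq N$ and hence $\sigma(\chi) \subset [-N,N]$.

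Part (5) is essentially the statement of the Kesten-type amenability theorem that was already proved as Theorem 3.27: after replacing $\chi$ by $Re(\chi) = \tfrac12(\chi + \chi^*) = \tfrac12 \chi_{u \oplus \bar u}$, this is the equivalence $(1) \Leftrightarrow (4)$ there. I would simply quote that result, noting that $Re(\chi)$ is always normal (being self-adjoint) so that its spectrum and the support of its law coincide. Finally (6) and (7) follow directly from (1) combined with Proposition 4.3. Indeed a Hopf algebra morphism $f:(A,u) \to (B,v)$ gives inclusions $Hom(u^{\otimes k}, u^{\otimes l}) \subset Hom(v^{\otimes k}, v^{\otimes l})$, which in particular yields $\dim Fix(u^{\otimes k}) \leq \dim Fix(v^{\otimes k})$, i.e.\ $M_k(\chi_u) \leq M_k(\chi_v)$; this is (6). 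For (7), equality of laws means equality of all moments $M_k$, hence equality of all the fixed-point dimensions, hence (since the inclusions above are increasing-dimension inclusions of finite-dimensional spaces) equality of all the intertwiner spaces, and Proposition 4.3 then upgrades $f$ to an isomorphism.

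The only genuine subtlety I anticipate is in (5), where one must be careful that the ``support of the law of $Re(\chi)$'' is computed inside the \emph{reduced} version $A_{red}$ (so that spectrum = support), which is exactly the setting of Theorem 3.27; if (1) is stated naively using the full Haar functional on $A_{full}$ instead, some cross-referencing between the full and reduced algebras is required. All the other steps are direct translations via Peter--Weyl, the Haar projection formula, and Frobenius reciprocity, with no substantial calculation needed beyond what has already been done earlier in the text.
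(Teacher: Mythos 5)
Your proposal takes essentially the same route as the paper's own (sketched) proof: part (1) via the Haar projection formula $(id\otimes\int_G)u^{\otimes k}=$ projection onto $Fix(u^{\otimes k})$, parts (2)--(3) read off in the group dual/Cayley graph picture, part (5) by quoting the Kesten amenability criterion, and parts (6)--(7) from the Tannakian fact that morphisms enlarge the intertwiner spaces, with Frobenius duality converting equality of the $Fix$ spaces into equality of all Hom spaces. One minor slip in (4): the diagonal entries $u_{ii}$ are not unitaries in general, only contractions (unitarity of $u$ gives $\|u_{ii}\|\leq1$), but this still yields $\|\chi\|\leq N$ and hence the support claim, so the argument stands.
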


\begin{proof}
These are things that we already know, the idea being as follows:

\medskip

(1) This comes from the Peter-Weyl theory, which tells us the number of fixed points of $v=u^{\otimes k}$ can be recovered by integrating the character $\chi_v=\chi_u^k$.

\medskip

(2) This is something true, and well-known, for $A=C^*(\Gamma)$, with $\Gamma=<g_1,\ldots,g_N>$ being a discrete group. In general, the proof is quite similar.

\medskip

(3) This is actually the definition of the Kesten measure, in the case $A=C^*(\Gamma)$, with $\Gamma=<g_1,\ldots,g_N>$ being a discrete group. In general, this follows from (2).

\medskip

(4) The equivalence $u\sim\bar{u}$ translates into $\chi_u=\chi_u^*$, and this gives the first assertion. As for the support claim, this follows from $uu^*=1\implies||u_{ii}||\leq1$, for any $i$.

\medskip

(5) This is the Kesten amenability criterion, which can be established as in the classical case, $A=C^*(\Gamma)$, with $\Gamma=<g_1,\ldots,g_N>$ being a discrete group.

\medskip

(6) This is something elementary, which follows from (1) above, and from the fact that the morphisms of Woronowicz algebras increase the spaces of fixed points.

\medskip

(7) This follows by using (6), and the Peter-Weyl theory, the idea being that if $f$ is not injective, then it must strictly increase one of the spaces $Fix(u^{\otimes k})$.
\end{proof}

As a conclusion, computing $\mu=law(\chi)$ is the main question to be solved, from a mathematical viewpoint. The same goes for physics too, although this is rather folklore. In what follows we will be interested in computing such laws, for the main examples of quantum groups that we have. In the easy quantum group case, we have:

\index{main character}

\begin{theorem}
For an easy quantum group $G=(G_N)$, coming from a category of partitions $D=(D(k,l))$, the asymptotic moments of the main character are given by
$$\lim_{N\to\infty}\int_{G_N}\chi^k=|D(k)|$$
where $D(k)=D(\emptyset,k)$, with the limiting sequence on the left consisting of certain integers, and being stationary at least starting from the $k$-th term.
\end{theorem}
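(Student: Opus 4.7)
The plan is to assemble three ingredients that have already been established in the paper: Peter-Weyl for characters, the easiness definition, and the Lindström-type linear independence of the partition vectors.

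First, by Peter-Weyl (Theorem 3.18 and the subsequent discussion, summarized at the start of Theorem 8.18), the moments of the main character are intertwiner-space dimensions:
$$\int_{G_N}\chi^k=\dim\bigl(\mathrm{Fix}(u^{\otimes k})\bigr)=\dim\bigl(\mathrm{Hom}(1,u^{\otimes k})\bigr).$$
Next, the easiness hypothesis (Definition 7.4) says exactly that
$$\mathrm{Hom}(1,u^{\otimes k})=\mathrm{span}\bigl(T_\pi\bigm|\pi\in D(\emptyset,k)\bigr)=\mathrm{span}\bigl(T_\pi\bigm|\pi\in D(k)\bigr),$$
so the problem reduces to computing, for each fixed colored integer $k$, the dimension of this span as a function of $N$.

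The decisive ingredient is then the linear independence result for the vectors $T_\pi$ (equivalently $\xi_\pi\in(\mathbb C^N)^{\otimes k}$) already proved via the Lindström determinant formula: by Theorem 5.8, for $N\geq k$ the family $\{\xi_\pi\mid\pi\in P(k)\}$ is linearly independent, hence \emph{a fortiori} the subfamily indexed by $D(k)\subset P(k)$ is independent. Consequently, for every $N\geq k$,
$$\int_{G_N}\chi^k=\dim\,\mathrm{span}\bigl(T_\pi\bigm|\pi\in D(k)\bigr)=|D(k)|.$$
This integer does not depend on $N$ once $N\geq k$, which simultaneously establishes the limit formula and the stronger statement that the sequence of moments is stationary from the $k$-th term onward, with the common value being the nonnegative integer $|D(k)|$.

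There is no real obstacle here; each of the three inputs is available verbatim from earlier sections. The only point that requires mild care is the bookkeeping for colored $k$: the Brauer-type presentation of the intertwiners and the Lindström independence theorem both extend from the uncolored to the colored setting without modification, since the Kronecker symbols $\delta_\pi$ and the Gram computation $G_k(\pi,\sigma)=N^{|\pi\vee\sigma|}$ depend only on the underlying partition $\pi$, not on the coloring of its legs. Thus the argument is complete as soon as one records this observation and cites Theorem 5.8 in its natural colored form.
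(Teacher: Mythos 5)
Your proof is correct and follows exactly the route the paper itself takes: invoke Theorem 8.18 (1) (Peter-Weyl moment formula), the easiness presentation of $\mathrm{Fix}(u^{\otimes k})$ as the span of the $\xi_\pi$ with $\pi\in D(k)$, and the Lindström-type linear independence from Theorem 5.8 to deduce that the moment equals $|D(k)|$ once $N\geq k$. Your remark about the colored-vs-uncolored bookkeeping being harmless, since the Gram entries $N^{|\pi\vee\sigma|}$ see only the underlying uncolored partitions, is a useful observation that the paper leaves implicit.
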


\begin{proof}
This follows indeed from the general formula from Theorem 8.25 (1), by using the linear independence result for partitions from chapter 5.
\end{proof}

Our next purpose will be that of understanding what happens for the basic classes of easy quantum groups. In the orthogonal case, we have:

\index{orthogonal group}
\index{Gaussian law}
\index{Wigner law}

\begin{theorem}
In the $N\to\infty$ limit, the law of the main character $\chi_u$ is as follows:
\begin{enumerate}
\item For $O_N$ we obtain a Gaussian law, namely:
$$g_1=\frac{1}{\sqrt{2\pi}}e^{-x^2/2}dx$$

\item For $O_N^+$ we obtain a Wigner semicircle law, namely:
$$\gamma_1=\frac{1}{2\pi}\sqrt{4-x^2}dx$$
\end{enumerate}
\end{theorem}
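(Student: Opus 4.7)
The plan is to deduce this from the general moment formula in Theorem 8.19, applied to the easy structure of $O_N,O_N^+$ recorded in Theorem 7.6, combined with the combinatorial and probabilistic calculations already carried out in Section 5. Since both $O_N$ and $O_N^+$ are easy, with categories of partitions $P_2$ and $NC_2$ respectively, Theorem 8.19 gives
\[
\lim_{N\to\infty}\int_{O_N}\chi^k=|P_2(k)|,\qquad \lim_{N\to\infty}\int_{O_N^+}\chi^k=|NC_2(k)|,
\]
and these limits are already attained for $N\geq k$, by the Lindstr\"om/Gram determinant argument of Theorem 5.8.

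Next, I would quote the combinatorial count from Proposition 5.10, namely $|P_2(2k)|=(2k)!!$ and $|NC_2(2k)|=C_k$, together with the fact that both $|P_2(2k+1)|$ and $|NC_2(2k+1)|$ vanish because pairings only exist for even length. Then from Theorem 5.11 these are exactly the moments of $g_1=\frac{1}{\sqrt{2\pi}}e^{-x^2/2}dx$ and $\gamma_1=\frac{1}{2\pi}\sqrt{4-x^2}dx$ respectively, with vanishing odd moments in both cases. At this point the moment formulas for the characters coincide with those of the target measures.

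The only remaining step is the passage from convergence of moments to convergence in distribution. For $O_N^+$ this is immediate and in fact much stronger: the character is self-adjoint and of norm at most $N$, hence has a genuine real law $\mu_N$ supported in $[-N,N]$, and the full sequence of moments is stationary for $N\geq k$, so $\mu_N$ converges weakly to $\gamma_1$, which is the unique probability measure with these moments (it has compact support). For $O_N$ the situation is similar in spirit but with $N\to\infty$ genuinely needed: the moments $(2k)!!$ grow at a rate that satisfies the Carleman condition $\sum_k M_{2k}^{-1/(2k)}=+\infty$, so the Gaussian is uniquely determined by its moments, and moment convergence forces weak convergence to $g_1$.

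The only mildly non-routine point is this last determinacy/convergence step for $O_N$, since unlike the free case the limiting measure is not compactly supported; but this is handled by a standard Carleman, or alternatively by noting that the numbers $(2k)!!$ are precisely the Gaussian moments and invoking the classical moment problem. Everything else is a direct assembly of Theorem 8.19, Theorem 7.6, Proposition 5.10 and Theorem 5.11.
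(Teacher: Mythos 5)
Your proposal follows exactly the route the paper takes: reduce to moments via easiness and Peter--Weyl, count $|P_2(2k)|=(2k)!!$ and $|NC_2(2k)|=C_k$, and match these with the moments of $g_1$ and $\gamma_1$. The paper's proof of Theorem 8.20 simply cites Section 5 (Proposition 5.9, Proposition 5.10, Theorem 5.11), which is precisely the chain you assemble. The one thing you add is the careful upgrade from moment convergence to weak convergence -- trivial for the compactly supported semicircle, and via the Carleman condition for the Gaussian -- which the paper does not spell out, since (as in its Free CLT statement) it implicitly works with convergence ``in moments.'' This is a valid and slightly more complete rendering of the same argument, not a different approach.
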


\begin{proof}
These are results that we both know, from chapter 5.
\end{proof}

In the unitary case now, we have:

\index{unitary group}
\index{complex Gaussian law}
\index{circular law}

\begin{theorem}
In the $N\to\infty$ limit, the law of the main character $\chi_u$ is as follows:
\begin{enumerate}
\item For $U_N$ we obtain the complex Gaussian law $G_1$.

\item For $U_N^+$ we obtain the Voiculescu circular law $\Gamma_1$.
\end{enumerate}
\end{theorem}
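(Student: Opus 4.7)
The plan is to assemble this result directly from pieces already in hand, so the proof amounts to a short bookkeeping exercise rather than genuinely new work.

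First, I would invoke the easiness of $U_N$ and $U_N^+$, established in Theorem 7.6, with associated categories of partitions $\mathcal{P}_2$ and $\mathcal{NC}_2$ respectively. Feeding this into Theorem 8.19 yields the asymptotic moment formulae
$$\lim_{N\to\infty}\int_{U_N}\chi^k=|\mathcal{P}_2(k)|,\qquad\lim_{N\to\infty}\int_{U_N^+}\chi^k=|\mathcal{NC}_2(k)|,$$
valid for any colored integer $k=\circ\bullet\bullet\circ\ldots$ Strictly speaking, one should note that these limits are in fact stationary from $N\geq|k|$ onward, a point already recorded in Theorem 6.1 and Theorem 6.9 and ultimately coming from the linear independence result of section 5.

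Second, I would match each side with the appropriate limiting distribution. For (1), Theorem 6.2 asserts precisely that the moments of $G_1$ with respect to colored exponents are given by $|\mathcal{P}_2(k)|$, so the asymptotic moments of $\chi$ on $U_N$ coincide term by term with those of $G_1$. Since $G_1$ is a compactly supported measure on $\mathbb{C}$ determined by its moments, and $\chi$ is normal on $U_N$ so that its law is genuinely a measure in the sense of Theorem 8.2, this gives moment convergence $\chi\sim G_1$ in the $N\to\infty$ limit. For (2), the definition of the circular law $\Gamma_1$ (Definition 6.8) is literally that $\mathrm{tr}(a^k)=|\mathcal{NC}_2(k)|$ for a circular variable $a$, so there is nothing more to verify than the moment identity itself; the statement $\chi\sim\Gamma_1$ must be interpreted in this formal moment sense, as emphasized around Proposition 6.6, since $\chi$ on $U_N^+$ is not normal and has no honest law $\mu\in\mathcal{P}(\mathbb{C})$.

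The main conceptual point, and the only one requiring any care, is the asymmetry between (1) and (2): in the classical case the convergence is convergence of probability measures on $\mathbb{C}$, whereas in the free case it is convergence of moment functionals $\mathbb{C}\langle X,X^*\rangle\to\mathbb{C}$. I would make this explicit in the write-up so that the reader is not misled by the uniform notation $\chi\sim G_1$ versus $\chi\sim\Gamma_1$. Beyond that, no calculation is needed: the hard work was done earlier, in establishing Brauer-type Tannakian results for $U_N$ and $U_N^+$ (section 4), in computing the moments of $G_1$ (Theorem 6.2), and in the shift-operator moment computation $S+T^*\sim\Gamma_1$ (Theorem 6.9), which justifies the choice of $\Gamma_1$ as the correct ``free complex Gaussian'' limit.
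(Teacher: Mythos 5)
Your proposal is correct and takes essentially the same approach as the paper, which in the proof of this theorem simply points back to the results of section 6 (Theorems 6.3 and 6.12); you correctly identify the moment-matching mechanism via easiness and Theorem 8.19 as the underlying argument, and your remark distinguishing genuine convergence of measures in case (1) from formal moment convergence in case (2) is exactly the right caveat. One small slip worth flagging: you describe $G_1$ as a compactly supported measure on $\mathbb{C}$, but the complex Gaussian has full support; moment determinacy of $G_1$ holds instead by standard criteria (Carleman, or simply because bivariate real Gaussians are determined by their moments), so your conclusion about convergence in distribution is unaffected, only the stated reason needs adjusting.
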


\begin{proof}
These are once again results that we know, from chapter 6.
\end{proof}

Summarizing, for $O_N,O_N^+,U_N,U_N^+$ the asymptotic laws of the main characters are the laws $g_1,\gamma_1,G_1,\Gamma_1$ coming from the various CLT in classical and free probability. This is certainly nice, but there is still one conceptual problem, coming from:

\begin{proposition}
The above convergences $law(\chi_u)\to g_1,\gamma_1,G_1,\Gamma_1$ are as follows: 
\begin{enumerate}
\item They are non-stationary in the classical case.

\item They are stationary in the free case, starting from $N=2$.
\end{enumerate}
\end{proposition}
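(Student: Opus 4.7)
The plan is to handle the two parts with rather different tools: (1) by a soft support comparison, and (2) by direct appeal to the exact character laws already established in Theorems 5.13 and 6.12.

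For (1), the key point is that when $G_N \in \{O_N, U_N\}$ is classical, the main character $\chi_u = \sum_i u_{ii}$ is a continuous function on a compact group, and the triangle inequality together with $|u_{ii}(g)| \leq 1$ gives the pointwise bound $|\chi_u(g)| \leq N$. Hence $\mathrm{law}(\chi_u)$ is a genuine probability measure, supported in $[-N, N] \subset \mathbb{R}$ for $O_N$ and in the closed disk of radius $N$ in $\mathbb{C}$ for $U_N$. On the other hand, the limiting Gaussians $g_1$ and $G_1$ have full, unbounded support on $\mathbb{R}$ and $\mathbb{C}$ respectively. The two measures therefore disagree for every finite $N$, which is strictly stronger than the claimed non-stationarity.

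For (2), almost nothing is needed beyond quoting earlier results. Theorem 5.13 gives $\chi_u \sim \gamma_1$ for $O_N^+$ at every $N \geq 2$, and Theorem 6.12 gives $\chi_u \sim \Gamma_1$ for $U_N^+$ at every $N \geq 2$, with the latter understood at the level of $*$-moments since $\chi_u$ is not normal in the free unitary case (recall Proposition 6.6). Both statements are already stationary in $N$ starting from $N = 2$, so there is nothing further to prove.

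There is no genuine obstacle in this argument, as all the heavy machinery was packaged into Theorems 5.13 and 6.12. If I wanted to reinforce the contrast between the two cases from a uniform combinatorial perspective, I could also invoke Theorem 5.7 on the Gram determinant: in the classical case the vectors $\{\xi_\pi\}$ for $\pi \in P_2(k)$ or $\pi \in \mathcal{P}_2(k)$ become linearly dependent once $k$ exceeds $N$, so that $\int \chi^k < |D(k)|$ for such $k$ and the limit moment is only attained asymptotically; whereas in the free setting noncrossingness ensures that the analogous families $\{\xi_\pi\}_{\pi \in NC_2(k)}$ and $\{\xi_\pi\}_{\pi \in \mathcal{NC}_2(k)}$ remain independent at every $N \geq 2$, which is the combinatorial mechanism behind the stationary Wigner and Voiculescu laws.
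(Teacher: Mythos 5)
Your argument is correct, and part (2) coincides with the paper's proof: both reduce to the fixed-$N$ results for $O_N^+$ and $U_N^+$ (Theorems 5.13 and 6.12, which rest precisely on the linear independence of the maps $T_\pi$ associated to noncrossing pairings at any $N\geq2$, the fact the paper cites). For part (1) you take a genuinely different and more elementary route. The paper invokes the coamenability of $O_N,U_N$ together with the Kesten criterion to conclude that the top of the support of $law(Re(\chi_u))$ is exactly $N$, hence that the laws vary with $N$; you instead observe that the trivial bound $|\chi_u|\leq N$ forces the finite-$N$ law to be compactly supported, while $g_1$ and $G_1$ have unbounded support, so the finite-$N$ law can never equal its limit. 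Your version avoids amenability altogether and matches the intended meaning of ``non-stationary'' (the limit law is never attained at finite $N$) directly, while the paper's version yields the slightly sharper information that the laws at different values of $N$ are pairwise distinct; given convergence of moments and moment-determinacy of the compactly supported laws, the two formulations are in any case equivalent here. One small caveat on your optional Gram-determinant aside: Theorems 5.7--5.8 establish independence of the vectors $\xi_\pi$ over all of $P(k)$ for $N\geq k$, but they do not pin down the exact threshold at which the pairing vectors become dependent, so ``once $k$ exceeds $N$'' should be read as ``for $k$ large with respect to $N$''; this does not affect your main argument, which stands without it.
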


\begin{proof}
This is something quite subtle, which can be proved as follows:

\medskip

(1) Here we can use an amenability argument, based on the Kesten criterion. Indeed, $O_N,U_N$ being coamenable, the upper bound of the support of the law of $Re(\chi_u)$ is precisely $N$, and we obtain from this that the law of $\chi_u$ itself depends on $N\in\mathbb N$.

\medskip

(2) Here the result follows from the computations in chapter 4 above, performed when working out the representation theory of $O_N^+,U_N^+$, which show that the linear maps $T_\pi$ associated to the noncrossing pairings are linearly independent, at any $N\geq2$.
\end{proof}

\section*{8c. Truncated characters}

In short, we are not over with our study, which seems to open more questions than it solves. Fortunately, the solution to the question raised by Proposition 8.29 is quite simple. The idea indeed will be that of improving our $g_1,\gamma_1,G_1,\Gamma_1$ results above with $g_t,\gamma_t,G_t,\Gamma_t$ results, which will require $N\to\infty$ in both the classical and free cases, in order to hold at any $t$. In practice, the definition that we will need is as follows:

\index{truncated character}

\begin{definition}
Given a Woronowicz algebra $(A,u)$, the variable
$$\chi_t=\sum_{i=1}^{[tN]}u_{ii}$$
is called truncation of the main character, with parameter $t\in(0,1]$.
\end{definition}

Our purpose in what follows will be that of proving that for $O_N,O_N^+,U_N,U_N^+$, the asymptotic laws of the truncated characters $\chi_t$ with $t\in(0,1]$ are the laws $g_t,\gamma_t,G_t,\Gamma_t$. This is something quite technical, motivated by the findings in Proposition 8.29, and also by a number of more advanced considerations, to become clear later on.

\bigskip

In order to start now, the formula in Theorem 8.26 is not useful in the general $t\in(0,1]$ setting, and we must use instead general integration methods. We first have:

\begin{theorem}
The Haar integration of a Woronowicz algebra is given, on the coefficients of the Peter-Weyl corepresentations, by the Weingarten formula
$$\int_Gu_{i_1j_1}^{e_1}\ldots u_{i_kj_k}^{e_k}=\sum_{\pi,\sigma\in D_k}\delta_\pi(i)\delta_\sigma(j)W_k(\pi,\sigma)$$
valid for any colored integer $k=e_1\ldots e_k$ and any multi-indices $i,j$, where:
\begin{enumerate}
\item  $D_k$ is a linear basis of $Fix(u^{\otimes k})$.

\item $\delta_\pi(i)=<\pi,e_{i_1}\otimes\ldots\otimes e_{i_k}>$. 

\item $W_k=G_k^{-1}$, with $G_k(\pi,\sigma)=<\pi,\sigma>$.
\end{enumerate}
\end{theorem}

\begin{proof}
This is something that we know from chapter 3, coming from the fact that integrals in the statement form altogether the orthogonal projection onto $Fix(u^{\otimes k})$.
\end{proof}

In the easy case, this gives the following formula, from \cite{bc1}, \cite{bsp}:

\index{Weingarten formula}
\index{Haar integration}
\index{easiness}

\begin{theorem}
For an easy quantum group $G\subset U_N^+$, coming from a category of partitions $D=(D(k,l))$, we have the Weingarten integration formula
$$\int_Gu_{i_1j_1}^{e_1}\ldots u_{i_kj_k}^{e_k}=\sum_{\pi,\sigma\in D(k)}\delta_\pi(i)\delta_\sigma(j)W_{kN}(\pi,\sigma)$$
for any colored integer $k=e_1\ldots e_k$ and any multi-indices $i,j$, where $D(k)=D(\emptyset,k)$, $\delta$ are usual Kronecker symbols, and 
$$W_{kN}=G_{kN}^{-1}$$
with $G_{kN}(\pi,\sigma)=N^{|\pi\vee\sigma|}$, where $|.|$ is the number of blocks. 
\end{theorem}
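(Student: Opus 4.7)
The plan is to recognize that the left-hand side is, entry by entry, the orthogonal projection onto $\mathrm{Fix}(u^{\otimes k})$, and then to compute this projection concretely using the easiness spanning set. Concretely, by Theorem 3.18 we have
$$\Big(id\otimes\textstyle\int_G\Big)u^{\otimes k}=P$$
where $P\in B\big((\mathbb C^N)^{\otimes k}\big)$ is the orthogonal projection onto $\mathrm{Fix}(u^{\otimes k})$. Reading off this identity in the standard tensor basis $\{e_{i_1}\otimes\cdots\otimes e_{i_k}\}$ shows that the multiple integral of $u_{i_1j_1}^{e_1}\cdots u_{i_kj_k}^{e_k}$ is exactly the matrix entry $P_{(i),(j)}$. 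Thus everything reduces to an explicit formula for this projection.

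Next, I would invoke easiness to parametrize $\mathrm{Fix}(u^{\otimes k})$. By the hypothesis
$$\mathrm{Fix}(u^{\otimes k})=\mathrm{Hom}(1,u^{\otimes k})=\mathrm{span}\big(T_\pi\,|\,\pi\in D(k)\big),$$
and the image vectors $\xi_\pi=T_\pi(1)\in(\mathbb C^N)^{\otimes k}$ have coordinates $(\xi_\pi)_{(i)}=\delta_\pi(i)$, with $\delta_\pi$ the usual Kronecker symbols. By Proposition 5.2, the Gram matrix of this spanning family is precisely $\langle\xi_\pi,\xi_\sigma\rangle=N^{|\pi\vee\sigma|}=G_{kN}(\pi,\sigma)$. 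Assuming first that the $\xi_\pi$ are linearly independent (which by Theorem 5.8 holds whenever $N\geq k$), the Gram matrix $G_{kN}$ is invertible, and the standard linear algebra formula for the orthogonal projection onto the span of a linearly independent family reads
$$P=\sum_{\pi,\sigma\in D(k)}W_{kN}(\pi,\sigma)\,|\xi_\pi\rangle\langle\xi_\sigma|,\qquad W_{kN}=G_{kN}^{-1}.$$
Taking matrix entries in the standard basis and using $(\xi_\pi)_{(i)}=\delta_\pi(i)\in\{0,1\}\subset\mathbb R$ yields
$$P_{(i),(j)}=\sum_{\pi,\sigma\in D(k)}\delta_\pi(i)\delta_\sigma(j)W_{kN}(\pi,\sigma),$$
which combined with the first paragraph gives the Weingarten formula.

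The one point that requires care is the regime $N<k$, where by Theorem 5.7 the Gram determinant $\det G_{kN}$ vanishes and the spanning vectors $\{\xi_\pi\}_{\pi\in D(k)}$ become linearly dependent; here $W_{kN}=G_{kN}^{-1}$ must be read as the Moore--Penrose pseudoinverse on the image of $G_{kN}$. The projection $P$ is still uniquely determined, and is still given by the same formula $P=\sum W_{kN}(\pi,\sigma)|\xi_\pi\rangle\langle\xi_\sigma|$ with this generalized inverse; equivalently, one may extract from $D(k)$ a linear basis $D_k$ of $\mathrm{Fix}(u^{\otimes k})$, apply Theorem 3.20 directly to this basis, and then verify that re-expressing the answer through the full redundant family $D(k)$ with the pseudoinverse gives exactly the same number, the redundancies canceling against the zero eigenspace of $G_{kN}$. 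This is where the bulk of the (routine) bookkeeping lies, and is the only genuinely non-trivial step beyond putting together Theorem 3.18, easiness, and Proposition 5.2.
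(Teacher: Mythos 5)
Your proof follows essentially the same route as the paper: the paper deduces the statement from the abstract Peter--Weyl Weingarten formula (Theorem 3.20, itself resting on the projection formula $(id\otimes\int_G)v=P$ of Theorem 3.18), observing that in the easy case the fixed-point vectors $\xi_\pi$ have coordinates $\delta_\pi(i)$ and Gram matrix $N^{|\pi\vee\sigma|}$, exactly as you do. Your closing remark on the regime $N<k$, where the $\xi_\pi$ become linearly dependent and $W_{kN}=G_{kN}^{-1}$ must be read as a pseudoinverse (or one passes to a linear basis $D_k\subset D(k)$), is a correct refinement of a point that the paper's statement and proof leave implicit.
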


\begin{proof}
With notations from Theorem 8.31, the Kronecker symbols are given by:
\begin{eqnarray*}
\delta_{\xi_\pi}(i)
&=&<\xi_\pi,e_{i_1}\otimes\ldots\otimes e_{i_k}>\\
&=&\delta_\pi(i_1,\ldots,i_k)
\end{eqnarray*}

The Gram matrix being as well the correct one, we obtain the result.
\end{proof}

We can use this for truncated characters, and following \cite{bc1}, we obtain:

\begin{proposition}
The moments of truncated characters are given by the formula
$$\int_G(u_{11}+\ldots +u_{ss})^k=Tr(W_{kN}G_{ks})$$
and with $N\to\infty$ this quantity equals $(s/N)^k|D(k)|$.
\end{proposition}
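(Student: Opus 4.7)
The proof splits naturally into two stages: the exact trace identity, then the $N\to\infty$ asymptotic. For the first stage, I would expand the power of $\chi$ directly:
$$\chi^k = \sum_{i_1,\ldots,i_k=1}^{s} u_{i_1 i_1}^{e_1}\cdots u_{i_k i_k}^{e_k},$$
with colorings $e_1\cdots e_k$ inherited from the colored exponent $k$. Apply Theorem 8.24 termwise, taking both index-vectors equal to $i=(i_1,\ldots,i_k)$, to get
$$\int_G \chi^k = \sum_{i_1,\ldots,i_k=1}^{s}\sum_{\pi,\sigma\in D(k)} \delta_\pi(i)\delta_\sigma(i)\, W_{kN}(\pi,\sigma).$$

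The key combinatorial observation is that $\delta_\pi(i)\delta_\sigma(i)=1$ precisely when the kernel partition of $i$ refines $\pi\vee\sigma$. Summing over $i\in\{1,\ldots,s\}^k$ therefore counts $k$-tuples of the form prescribed by $\pi\vee\sigma$, giving $s^{|\pi\vee\sigma|}$, which is by definition $G_{ks}(\pi,\sigma)$. Swapping the order of summation then yields
$$\int_G \chi^k = \sum_{\pi,\sigma\in D(k)} G_{ks}(\pi,\sigma)\, W_{kN}(\pi,\sigma) = \operatorname{Tr}(W_{kN} G_{ks}),$$
proving the identity.

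For the asymptotic stage, the plan is to analyze $W_{kN}$ via a diagonal dominance argument. From $G_{kN}(\pi,\sigma)=N^{|\pi\vee\sigma|}$ and the elementary fact that $|\pi\vee\sigma|\leq\min(|\pi|,|\sigma|)$, with equality iff $\pi=\sigma$, conjugating by $\operatorname{diag}(N^{-|\pi|/2})_{\pi\in D(k)}$ produces a matrix whose diagonal entries equal $1$ and whose off-diagonal entries tend to $0$ as $N\to\infty$. Inverting this renormalized matrix gives $N^{|\pi|}W_{kN}(\pi,\sigma)\to \delta_{\pi\sigma}$. Substituting into the trace,
$$\operatorname{Tr}(W_{kN}G_{ks}) = \sum_{\pi,\sigma\in D(k)} W_{kN}(\pi,\sigma)\, s^{|\pi\vee\sigma|} \;\longrightarrow\; \sum_{\pi\in D(k)} (s/N)^{|\pi|},$$
which is the claimed leading behavior (matching Theorem 8.19 at $t=s/N=1$, and collapsing to the compact form $(s/N)^k|D(k)|$ in the cases where $|\pi|$ is constant on $D(k)$, as for the pair categories $NC_2,\mathcal{NC}_2,P_2,\mathcal{P}_2$).

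The main obstacle is the asymptotic step: one must show that off-diagonal entries of $W_{kN}$ are negligible compared to the diagonal ones, which reduces to the strict inequality $|\pi\vee\sigma|<|\pi|$ whenever $\pi\neq\sigma$. Once this strict inequality is exploited (together with the analogous bound $|\pi\vee\sigma|\leq|\sigma|$, which controls the size of $s^{|\pi\vee\sigma|}$ uniformly in $s\leq N$), the remainder is routine manipulation of the trace.
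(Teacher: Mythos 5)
Your first stage is exactly the paper's own computation: expand the power, apply the Weingarten formula of Theorem 8.24 with equal row and column indices, identify $\sum_{i_1,\ldots,i_k\leq s}\delta_\pi(i)\delta_\sigma(i)=s^{|\pi\vee\sigma|}=G_{ks}(\pi,\sigma)$, and read off the trace. That part is correct and needs no comment.

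The asymptotic stage, however, rests on a false lemma. You claim $|\pi\vee\sigma|\leq\min(|\pi|,|\sigma|)$ with equality only for $\pi=\sigma$, and you explicitly reduce the whole argument to the strict inequality $|\pi\vee\sigma|<|\pi|$ for $\pi\neq\sigma$. This fails as soon as the category contains two distinct comparable partitions: already for $\pi=\sqcap$ and $\sigma=||$ in $P(2)$ or $NC(2)$ one has $\pi\vee\sigma=\sqcap$, so $|\pi\vee\sigma|=1=|\pi|$ with $\pi\neq\sigma$. Such pairs occur in every non-pairing category covered by the proposition ($P$, $NC$, $P_{12}$, $NC_{12}$, $P_{even}$, $NC_{even}$, \ldots), i.e.\ precisely for $S_N,S_N^+,B_N,H_N,\ldots$, so your diagonal-dominance justification breaks exactly where the statement is most needed; it is valid only for the pure pairing categories, where distinct pairings are never comparable. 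The inequality your symmetric rescaling by ${\rm diag}(N^{-|\pi|/2})$ actually requires is $|\pi\vee\sigma|\leq\frac{|\pi|+|\sigma|}{2}$, with equality if and only if $\pi=\sigma$ — this is the standard Weingarten estimate, quoted for instance in the proof of Theorem 15.20 — and together with the resulting rate $W_{kN}(\pi,\sigma)=O(N^{|\pi\vee\sigma|-|\pi|-|\sigma|})$ it makes all off-diagonal contributions to the trace $O(1/N)$ uniformly in $s\leq N$, which your cruder bound $|\pi\vee\sigma|\leq|\sigma|$ alone does not give. With that repair your route is in fact finer than the paper's (which simply writes $G_{kN}\sim N^k1$), and it yields $Tr(W_{kN}G_{ks})\simeq\sum_{\pi\in D(k)}(s/N)^{|\pi|}$, which is the formula the paper actually uses afterwards in Theorem 8.27; note, though, that your closing parenthetical is also off, since for pairing categories $|\pi|=|k|/2$, so the sum collapses to $(s/N)^{|k|/2}|D(k)|$ rather than to the literal $(s/N)^k|D(k)|$ of the statement.
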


\begin{proof}
The first assertion follows from the following computation:
\begin{eqnarray*}
\int_G(u_{11}+\ldots +u_{ss})^k
&=&\sum_{i_1=1}^{s}\ldots\sum_{i_k=1}^s\int u_{i_1i_1}\ldots u_{i_ki_k}\\
&=&\sum_{\pi,\sigma\in D(k)}W_{kN}(\pi,\sigma)\sum_{i_1=1}^{s}\ldots\sum_{i_k=1}^s\delta_\pi(i)\delta_\sigma(i)\\
&=&\sum_{\pi,\sigma\in D(k)}W_{kN}(\pi,\sigma)G_{ks}(\sigma,\pi)\\
&=&Tr(W_{kN}G_{ks})
\end{eqnarray*}

The point now is that we have the following trivial estimates:
$$G_{kN}(\pi,\sigma):
\begin{cases}
=N^k&(\pi=\sigma)\\
\leq N^{k-1}&(\pi\neq\sigma)
\end{cases}$$

Thus with $N\to\infty$ we have the following estimate:
$$G_{kN}\sim N^k1$$

But this gives the following estimate, for our moment:
\begin{eqnarray*}
\int_G(u_{11}+\ldots +u_{ss})^k
&=&Tr(G_{kN}^{-1}G_{ks})\\
&\sim&Tr((N^k1)^{-1} G_{ks})\\
&=&N^{-k}Tr(G_{ks})\\
&=&N^{-k}s^k|D(k)|
\end{eqnarray*}

Thus, we have obtained the formula in the statement. See \cite{bc1}.
\end{proof}

\index{cumulant}
\index{free cumulant}

In order to process the above formula, we will need some more free probability theory. Following Nica-Speicher \cite{nsp}, given a random variable $a$, we write:
$$\log F_a(\xi)=\sum_nk_n(a)\xi^n\quad,\quad 
R_a(\xi)=\sum_n\kappa_n(a)\xi^n$$

We call the coefficients $k_n(a),\kappa_n(a)$ cumulants, respectively free cumulants of $a$. With this notion in hand, we can define then more general quantities $k_\pi(a),\kappa_\pi(a)$, depending on partitions $\pi\in P(k)$, by multiplicativity over the blocks. We have then:

\index{moment-cumulant formula}
\index{M\"obius inversion}

\begin{theorem}
We have the classical and free moment-cumulant formulae
$$M_k(a)=\sum_{\pi\in P(k)}k_\pi(a)\quad,\quad 
M_k(a)=\sum_{\pi\in NC(k)}\kappa_\pi(a)$$
where $k_\pi(a),\kappa_\pi(a)$ are the generalized cumulants and free cumulants of $a$.
\end{theorem}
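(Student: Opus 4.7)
The plan is to prove the two formulae separately, since the classical case is a purely combinatorial consequence of the exponential formula for generating functions, while the free case is the non-crossing refinement encoded in the $R$-transform fixed-point equation from Theorem~8.15.

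For the classical moment-cumulant formula, I would set up generating functions carefully. Interpreting $F_a$ as the exponential generating function $F_a(\xi)=\sum_n M_n(a)\xi^n/n!$, the cumulant generating function $\log F_a(\xi)=\sum_n k_n(a)\xi^n/n!$ satisfies $F_a=\exp(\log F_a)$. Differentiating gives $F_a'=(\log F_a)'F_a$, and extracting the coefficient of $\xi^{n-1}/(n-1)!$ yields the recursion
$$M_n(a)=\sum_{j=1}^n\binom{n-1}{j-1}k_j(a)M_{n-j}(a).$$
By induction on $n$, interpreting this recursion as decomposing a partition $\pi\in P(n)$ by the block containing the last point $n$ (choosing the $j-1$ other elements of that block among the remaining $n-1$ points, leaving an arbitrary partition of the complement), one obtains $M_n(a)=\sum_{\pi\in P(n)}k_\pi(a)$, with the multiplicative extension $k_\pi(a)=\prod_{B\in\pi}k_{|B|}(a)$.

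For the free moment-cumulant formula, I would exploit the defining relation $G_\mu(R_\mu(\xi)+1/\xi)=\xi$ from Theorem~8.15. Writing $G(z)=\sum_{n\geq 0}M_n(a)z^{-n-1}$ and $R(w)=\sum_{n\geq 1}\kappa_n(a)w^{n-1}$, substituting $\xi=G(z)$ in the functional equation gives $R(G(z))=z-1/G(z)$, which rearranges to the Speicher fixed-point recursion
$$zG(z)=1+\sum_{n\geq 1}\kappa_n(a)\,G(z)^{n+1}.$$
Dividing by $z$ and iterating as a formal power series, the coefficient of $z^{-k-1}$ on the left is $M_k(a)$, while on the right each iteration corresponds to choosing the block of a non-crossing partition $\pi\in NC(k)$ that contains the leftmost leg, together with an arbitrary non-crossing partition of each of the intervals cut out by this marked block. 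This is the Kreweras-style recursive description of $NC(k)$, and matching coefficients term-by-term produces $M_k(a)=\sum_{\pi\in NC(k)}\kappa_\pi(a)$ with $\kappa_\pi(a)=\prod_{B\in\pi}\kappa_{|B|}(a)$.

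The classical half is essentially the exponential formula and is routine; the main obstacle is the free half, where one must carefully match the iterated expansion of the fixed-point equation with non-crossing partitions marked by their leftmost block, and verify that the non-crossing constraint is precisely what the recursive ``first block plus intervals'' decomposition forces. Once this combinatorial bijection is in place, both formulae drop out by matching formal power series coefficients, and the multiplicativity conventions defining $k_\pi$ and $\kappa_\pi$ ensure that the right-hand sides assemble correctly.
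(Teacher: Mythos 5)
Your route — deriving a recursion from the generating-function identities and then matching it against a ``first block plus intervals'' decomposition of partitions — is precisely the first of the two options the paper points to (``by using the formulae of $F_a,R_a$''; the other being Möbius inversion as in \cite{nsp}), and your combinatorial interpretations for both $P(k)$ and $NC(k)$ are the right ones. The classical half is fine as written: the recursion $M_n=\sum_j\binom{n-1}{j-1}k_jM_{n-j}$ and its interpretation via the block containing the last point are both correct.

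There is, however, an indexing slip in the free half. From $R(G(z))=z-1/G(z)$ and your stated convention $R(w)=\sum_{n\geq1}\kappa_n(a)w^{n-1}$, multiplying through by $G(z)$ gives
$$zG(z)=1+\sum_{n\geq1}\kappa_n(a)\,G(z)^{n},$$
not $G(z)^{n+1}$. This is not cosmetic, because the exponent is exactly what makes your combinatorial reading consistent: a block of size $n$ containing the leftmost point of $\{1,\ldots,k\}$ carves the remaining $k-n$ points into $n$ intervals (one gap between each pair of consecutive marked points and one after the last), so extracting the coefficient of $z^{-k}$ from the corrected identity yields
$$M_k=\sum_{n\geq 1}\kappa_n(a)\sum_{\substack{m_1+\cdots+m_n=k-n\\ m_i\geq 0}}M_{m_1}(a)\cdots M_{m_n}(a),$$
which requires $n$ copies of $G$, not $n+1$. (One can sanity-check by comparing leading orders: with $G(z)^{n+1}$ one would get $M_1=0$.) With the exponent corrected, the iterated expansion matches the Kreweras-style recursion for $NC(k)$ exactly as you describe, and the proof goes through.
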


\begin{proof}
These formulae, due to Rota in the classical case, and to Speicher in the free case, are something very standard, obtained by using the formulae of $F_a,R_a$, or by doing some direct combinatorics, based on the M\"obius inversion formula. See \cite{nsp}.
\end{proof}

Following \cite{bc1}, we can now improve our results about characters, as follows:

\index{Gaussian law}
\index{complex Gaussian law}
\index{circular law}

\begin{theorem}
With $N\to\infty$, the laws of truncated characters are as follows:
\begin{enumerate}
\item For $O_N$ we obtain the Gaussian law $g_t$.

\item For $O_N^+$ we obtain the Wigner semicircle law $\gamma_t$.

\item For $U_N$ we obtain the complex Gaussian law $G_t$.

\item For $U_N^+$ we obtain the Voiculescu circular law $\Gamma_t$.
\end{enumerate}
\end{theorem}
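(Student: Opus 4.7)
The plan is to deduce everything from Proposition 8.25, by specializing $s = [tN]$ and then identifying the resulting combinatorial expressions with the moments of the four target distributions. Since Proposition 8.25 already tells us that
$$\lim_{N\to\infty}\int_G (u_{11}+\ldots+u_{ss})^k = \left(\frac{s}{N}\right)^k |D(k)|,$$
putting $s = [tN]$ immediately gives asymptotic moments $t^k |D(k)|$. The four cases then reduce to comparing these with the moments of $g_t$, $\gamma_t$, $G_t$, $\Gamma_t$, using the categories of partitions listed in Theorem 7.6.

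For the two orthogonal cases, this is essentially a bookkeeping exercise using results already in the excerpt. For $O_N$ the category is $P_2$, and Proposition 5.10 together with the computation of Gaussian moments in Theorem 5.11 gives $|P_2(2k)| = (2k)!!$, which are exactly the moments of $g_1$; the scaling $t^k$ converts these into moments of $g_t$. For $O_N^+$ the category is $NC_2$, and we similarly use $|NC_2(2k)| = C_k$ together with the Wigner moment computation from Theorem 5.11, scaled by $t^k$, to identify the limit with $\gamma_t$.

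For the two unitary cases, I would first extend Proposition 8.25 to colored exponents: the same Weingarten argument from Theorem 8.24 gives
$$\int_G u_{i_1 i_1}^{e_1}\cdots u_{i_k i_k}^{e_k} = \sum_{\pi,\sigma\in D(k)}\delta_\pi(i)\delta_\sigma(i)W_{kN}(\pi,\sigma),$$
so for $\chi_t = \sum_{i=1}^{[tN]}u_{ii}$ and a colored word $k = e_1\cdots e_k$, the mixed $*$-moment $\int_G \chi_t^{e_1}\cdots\chi_t^{e_k}$ again reduces to $Tr(W_{kN}G_{k,s})$ with $s=[tN]$, and the same $G_{kN} \sim N^k \cdot 1$ estimate produces the limit $t^k |D(k)|$. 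For $U_N$ with $D = \mathcal{P}_2$, the matching condition forces the word $k$ to be uniform, in which case $|\mathcal{P}_2(k)| = (|k|/2)!$; comparing with Theorem 6.2 identifies the $*$-distribution with $G_t$. For $U_N^+$ with $D = \mathcal{NC}_2$, we obtain the $*$-moments $t^{|k|/2}|\mathcal{NC}_2(k)|$, which by Definition 6.8 and the free complex CLT (Theorem 8.17, after rescaling) are exactly the $*$-moments of $\Gamma_t$.

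The main obstacle, in my view, is not the orthogonal bound but the unitary identifications at the end: one needs to argue, in the free case, that matching $*$-moments with $\Gamma_t$ across all colored words $k = e_1\cdots e_k$ is sufficient to determine the law even though $\chi$ is not normal (cf.\ Proposition 6.6), and that the scaling $t^{|k|/2}$ is compatible with the variance-$t$ parameter of $\Gamma_t$. The cleanest way to handle this is probably to invoke the free complex CLT directly, realizing $\chi_t$ as an asymptotic sum of $[tN]$ free identically distributed contributions once $N$ is large, so that Theorem 8.17 applies and delivers $\Gamma_t$; the analogous argument with Theorems 8.16 and the classical CLT handles the remaining three cases in a unified way, giving a conceptual second proof alongside the direct Weingarten computation.
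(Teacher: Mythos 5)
Your overall plan (Weingarten asymptotics, then moment matching) is the same as the paper's, but the asymptotic formula you quote and carry forward, $(s/N)^k|D(k)|$, has the wrong power of $t$. The diagonal of the Gram matrix is $G_{kN}(\pi,\pi)=N^{|\pi|}$, not $N^k$ --- a pairing of $k$ points has only $k/2$ blocks --- so the correct leading-order limit of $Tr(W_{kN}G_{ks})$ is $\sum_{\pi\in D(k)}(s/N)^{|\pi|}=\sum_{\pi\in D(k)}t^{|\pi|}$, which for a pairing category equals $t^{k/2}|D(k)|$. This is the formula the paper's proof of the present theorem actually writes down, rather than the literal conclusion of Proposition 8.25. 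If you run $(s/N)^k|D(k)|$ through, the $2m$-th moment in the $O_N$ case comes out as $t^{2m}(2m)!!$, predicting a limit of variance $t^2$ rather than $t$, and the theorem would fail. You implicitly use the corrected exponent in the $U_N^+$ step (where you write $t^{|k|/2}$), and your phrase ``the scaling $t^k$'' in the $O_N$ case is only consistent with $t^{k/2}|D(k)|$ once the moment index is set to $2k$, so the downstream identifications do go through --- but the initial asymptotics need to be stated with the block-count exponent $t^{|\pi|}$, or the argument reads as contradicting itself.

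Two smaller remarks. The paper recognizes $\sum_\pi t^{|\pi|}$ via the moment-cumulant formulae of Theorem 8.26, whereas you match against the explicit moment counts from Theorems 5.11 and 6.2 and Definition 6.8; these are equivalent, with yours being a little more concrete. Your worry about $*$-moment determinacy in the non-normal case is not an obstruction --- the law of a possibly non-normal variable is by definition the collection of $*$-moments (Definition 8.1), so matching all $*$-moments against those of $\Gamma_t$ is by construction matching laws. The CLT-based ``second proof'' sketched at the end would need genuine justification, since the $u_{ii}$ are not free and realizing $\chi_t$ as a sum of $[tN]$ free contributions is only an asymptotic heuristic, not something Theorem 8.17 can be invoked on directly.
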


\begin{proof}
With $s=[tN]$ and $N\to\infty$, the formula in Proposition 8.33 gives:
$$\lim_{N\to\infty}\int_{G_N}\chi_t^k=\sum_{\pi\in D(k)}t^{|\pi|}$$

By using now the formulae in Theorem 8.34, this gives the results. See \cite{bc1}.
\end{proof}

In relation with the above, let us recall now that the Bercovici-Pata bijection \cite{bep} is the bijection $\{m_t\}\to\{\mu_t\}$ between the semigroups $\{m_t\}$ of infinitely divisible measures and the semigroups $\{\mu_t\}$ of freely infinitely divisible measures, given by the fact that the classical cumulants of $m_t$ equal the free cumulants of $\mu_t$. Following \cite{bsp}, we have:

\index{free convolution semigroup}
\index{Bercovici-Pata bijection}

\begin{theorem}
The asymptotic laws of truncated characters for the operations 
$$O_N\to O_N^+$$
$$U_N\to U_N^+$$
are in Bercovici-Pata bijection.
\end{theorem}

\begin{proof}
This follows indeed from the computations in the proof of Theorem 8.35.
\end{proof}

Let us discuss now the other easy quantum groups that we have. Regarding the half-liberations $O_N^*,U_N^*$, the situation here is a bit complicated, and we will discuss this later on. But we have the following result that, we can formulate here, at $t=1$:

\index{Rayleigh variable}
\index{half-liberation}

\begin{proposition}
The asymptotic laws of characters for $O_N^*,U_N^*$ are as follows:
\begin{enumerate}
\item For $O_N^*$ we obtain a symmetrized Rayleigh variable.

\item For $U_N^*$ we obtain a complexification of this variable.
\end{enumerate}
\end{proposition}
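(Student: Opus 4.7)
My plan is to apply Theorem 8.19 together with the easiness results of Theorem 7.5 for $O_N^*$ (category $P_2^*$) and $U_N^*$ (category $\mathcal{P}_2^*$), reducing the problem to counting the sets $P_2^*(\emptyset,k)$ and $\mathcal{P}_2^*(\emptyset,k)$ and then identifying the resulting moment sequences with the claimed laws.

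I would first dispatch clause (1). Since $\chi$ is self-adjoint on $C(O_N^*)$, only uncolored moments are needed, and a partition $\pi\in P_2^*(\emptyset,k)$ has to satisfy only the half-liberation constraint: labelling the $k$ lower points clockwise $\circ\bullet\circ\bullet\ldots$, each string of $\pi$ must connect a $\circ$-labelled point to a $\bullet$-labelled point. For $k$ odd this forces $|P_2^*(\emptyset,k)|=0$; for $k=2m$ the valid pairings are in bijection with bijections from the set of $m$ odd positions to the set of $m$ even positions, whence $|P_2^*(\emptyset,2m)|=m!$. It then suffices to verify that the symmetrized Rayleigh variable $Y=\varepsilon X$, with $\varepsilon=\pm 1$ uniform and $X\geq 0$ such that $X^2\sim\mathrm{Exp}(1)$ (so density $|x|e^{-x^2}$ on $\mathbb{R}$), satisfies $\mathbb{E}[Y^{2m}]=\mathbb{E}[(X^2)^m]=m!$ and $\mathbb{E}[Y^{2m+1}]=0$, which is immediate.

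For clause (2) I would apply the same scheme to $\mathcal{P}_2^*$, where the $k$ lower legs now also carry the original colors prescribed by the colored exponent. A partition $\pi\in\mathcal{P}_2^*(\emptyset,k)$ must be simultaneously matching with respect to those original colors and half-liberating with respect to the alternating clockwise position-parity labelling. The numbers $|\mathcal{P}_2^*(\emptyset,e_1\ldots e_k)|$ then give the asymptotic $*$-moments of $\chi$, and define its law in the formal sense used previously in the text for $U_N^+$. The expected identification is that this formal law is a complexification of the $O_N^*$-variable of (1), in the same combinatorial sense that $\Gamma_t,G_t$ are complexifications of $\gamma_t,g_t$ in Theorem 8.27: forgetting the original colors of a partition in $\mathcal{P}_2^*$ reduces it to a partition in $P_2^*$.

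The main obstacle will be clause (2). Because $\chi$ fails to be normal on $C(U_N^*)$, the limiting ``law'' lives only at the level of formal $*$-moments, so the word ``complexification'' needs to be pinned down. The cleanest way forward is probably to exhibit a concrete operator-algebraic model reproducing the mixed moments $|\mathcal{P}_2^*(\emptyset,e_1\ldots e_k)|$ and show that it factors through a standard (classical, free, or half-liberated) complexification of the Rayleigh model underlying (1); the partition count itself is entirely routine.
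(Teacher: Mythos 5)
Your argument is correct, but it follows a different route from the paper's. You invoke the general easy moment formula (Theorem 8.19, together with Theorem 7.5) and count partitions directly: $|P_2^*(2m)|=m!$, odd counts zero, matched against the moments of the symmetrized Rayleigh variable, and then the colored counts $|\mathcal P_2^*(\emptyset,e_1\ldots e_k)|$ for $U_N^*$. The paper instead uses a projective version trick: since $PO_N^*=PU_N^*=PU_N$ (Theorem 7.7), it computes only the asymptotic moments $\lim_N\int(\chi\chi^*)^k=\#D((\circ\bullet)^k)$ and identifies them, via Theorem 8.27 at $t=1$, with the moments of $|c|^2$ for a complex Gaussian $c$, i.e. an exponential variable; the symmetrized Rayleigh statement for $O_N^*$ and the ``complexification'' statement for $U_N^*$ are then read off from this. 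Your direct count buys a more self-contained and explicit treatment of clause (1), with all moments computed at once; the paper's trick buys a cleaner handling of clause (2), since by only ever speaking of $\chi\chi^*$ it never confronts the non-normality of $\chi$ on $C(U_N^*)$ — the very point you flag as the main obstacle. Indeed, your worry about pinning down ``complexification'' is resolved in the paper exactly as you suspect it must be: the statement is only meant at the level of this moment combinatorics (the $\chi\chi^*$ moments being those of the squared Rayleigh, i.e. exponential, exactly as for the complex Gaussian character of $U_N$), with the finer identifications deferred to the cited references, so no operator model is required and your clause (2) is not genuinely incomplete relative to what is being claimed.
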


\begin{proof}
The idea is to use a projective version trick. Indeed, assuming that $G=(G_N)$ is easy, coming from a category of pairings $D$, we have:
$$\lim_{N\to\infty}\int_{PG_N}(\chi\chi^*)^k=\# D((\circ\bullet)^k)$$

In our case, where $G_N=O_N^*,U_N^*$, we can therefore use Theorem 8.35 above at $t=1$, and we are led to the conclusions in the statement. See \cite{ez1}.
\end{proof}

The above result is of course something quite modest. We will be back to the quantum groups $O_N^*,U_N^*$ in chapter 16 below, with some better techniques for dealing with them, and more specifically with explicit modelling results using $2\times2$ matrices, which virtually allow to prove anything that you want, probabilistically, about them. 

\bigskip

Next in our lineup, we have the bistochastic quantum groups. We have here:

\index{bistochastic group}
\index{bistochastic quantum group}

\begin{proposition}
For the bistochastic quantum groups, namely 
$$B_N,B_N^+,C_N,C_N^+$$
the asymptotic laws of truncated characters appear as modified versions of 
$$g_t,\gamma_t,G_t,\Gamma_t$$
and $B_N\to B_N^+$ and $C_N\to C_N^+$ are compatible with the Bercovici-Pata bijection.  
\end{proposition}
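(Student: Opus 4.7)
My plan is to proceed exactly as in the proof of Theorem 8.27, now applied to the bistochastic easy structure. By Theorem 7.17 the four quantum groups $B_N, B_N^+, C_N, C_N^+$ are easy with categories $P_{12}, NC_{12}, \mathcal{P}_{12}, \mathcal{NC}_{12}$ respectively, and the Weingarten asymptotic used in Proposition 8.25 extends without modification to these categories: the estimate $W_{kN}(\pi,\pi) \sim N^{-|\pi|}$, with off-diagonal entries of strictly lower order in $N$, combined with $G_{ks}(\pi,\pi) = s^{|\pi|}$ and $s = [tN]$, yields
$$\lim_{N\to\infty}\int_{G_N}\chi_t^k = \sum_{\pi \in D(k)} t^{|\pi|}$$
for each of the four categories $D$.

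Next I would identify the limiting distributions via the moment-cumulant machinery of Theorem 8.26. For $B_N$, the partitions in $P_{12}(k)$ are precisely those whose blocks have size one or two, so the classical moment-cumulant formula shows that the limiting moments are those of a variable with classical cumulants $k_1 = k_2 = t$ and $k_n = 0$ for $n\geq 3$; this characterizes the Gaussian of mean $t$ and variance $t$, that is, a shifted version of $g_t$. For $B_N^+$ the same argument with the free moment-cumulant formula applied to $NC_{12}$ gives a variable with free cumulants $\kappa_1 = \kappa_2 = t$ and $\kappa_n = 0$ for $n\geq 3$, producing the corresponding shift of $\gamma_t$. The complex cases $C_N$ and $C_N^+$ are handled in the same way, with $\mathcal{P}_{12}$ and $\mathcal{NC}_{12}$ enforcing the matching condition on pairs, and give the analogous modifications of $G_t$ and $\Gamma_t$.

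The Bercovici-Pata compatibility for both $B_N \to B_N^+$ and $C_N \to C_N^+$ is then immediate: the classical cumulant sequence specifying the limit law on the classical side and the free cumulant sequence specifying the limit law on the free side coincide, both being $t, t, 0, 0, \ldots$ (respectively its matching analogue in the complex case), which is exactly the content of Theorem 8.28 in this extended form. The main technical obstacle I anticipate is the careful bookkeeping in the complex cases, where one must verify that $\sum_{\pi \in \mathcal{P}_{12}(k)} t^{|\pi|}$ indeed matches the mixed $*$-moments of the claimed shifted complex variable; this reduces to splitting the sum according to the colors of the singleton blocks and the orientations of the matching pairs, and is a routine combinatorial check once the cumulant interpretation is in place.
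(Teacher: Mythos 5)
Your proof is correct and follows essentially the same route the paper indicates: the paper's own proof is the single sentence "This follows indeed by using the same methods as for $O_N,O_N^+,U_N,U_N^+$," i.e.\ exactly the Weingarten-asymptotic plus moment-cumulant argument of Proposition 8.25 and Theorems 8.26--8.27, which you have spelled out in detail. Your cumulant identification ($k_1=k_2=t$, $k_n=0$ for $n\geq 3$, giving a $t$-shift of $g_t$, and the free analogue) is the precise content behind the phrase "modified versions," and the Bercovici-Pata compatibility follows for the reason you give, since the classical and free cumulant sequences coincide.
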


\begin{proof}
This follows indeed by using the same methods as for $O_N,O_N^+,U_N,U_N^+$, with the verification of the Bercovici-Pata bijection being elementary, and with the computation of the corresponding laws being routine as well. See \cite{bsp}, \cite{twe}.
\end{proof}

Regarding now the twists, we first have here the following general result:

\index{twisted integration}
\index{twisted Weingarten formula}

\begin{proposition}
The integration over $\bar{G}_N$ is given by the Weingarten type formula
$$\int_{\bar{G}_N}u_{i_1j_1}\ldots u_{i_kj_k}=\sum_{\pi,\sigma\in D(k)}\bar{\delta}_\pi(i)\bar{\delta}_\sigma(j)W_{kN}(\pi,\sigma)$$
where $W_{kN}$ is the Weingarten matrix of $G_N$. 
\end{proposition}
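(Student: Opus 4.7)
The plan is to follow the same template that produces the untwisted Weingarten formula (Theorem 3.20), keeping track carefully of the signs introduced by the map $\pi\mapsto\bar T_\pi$, and to show that these signs cancel at the level of the Gram matrix, so that the Weingarten matrix $W_{kN}$ of $\bar G_N$ coincides with that of $G_N$.

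First, I would recall from Peter-Weyl and the Haar measure theory in Section 3 that for any closed subgroup of $U_N^+$ the integrals of products of coordinates are the entries of the orthogonal projection $P$ onto $\mathrm{Fix}(u^{\otimes k})\subset(\mathbb C^N)^{\otimes k}$. By Theorem 7.27 and Definition 7.28, for a quizzy quantum group coming from a category $D\subset P_{even}$ the fixed point space is exactly
$$\mathrm{Fix}(u^{\otimes k})=\mathrm{span}\bigl(\bar\xi_\pi\,\bigl|\,\pi\in D(k)\bigr),\qquad \bar\xi_\pi=\bar T_\pi(1)=\sum_j\bar\delta_\pi(j)\,e_{j_1}\otimes\cdots\otimes e_{j_k}.$$
Writing the projection $P$ in terms of this spanning family gives the abstract Weingarten identity
$$P_{i,j}=\sum_{\pi,\sigma\in D(k)}\langle e_i,\bar\xi_\pi\rangle\,\bar W_{kN}(\pi,\sigma)\,\langle\bar\xi_\sigma,e_j\rangle,$$
where $\bar W_{kN}$ is the pseudo-inverse of the Gram matrix $\bar G_{kN}(\pi,\sigma)=\langle\bar\xi_\pi,\bar\xi_\sigma\rangle$. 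Since $\langle e_i,\bar\xi_\pi\rangle=\bar\delta_\pi(i)$ directly from the definition of $\bar T_\pi$, this already gives the shape of the claimed formula.

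The decisive step is then to show $\bar G_{kN}=G_{kN}$, so that $\bar W_{kN}=W_{kN}$. By definition of $\bar\delta_\pi$, a multi-index $j$ contributes to $\langle\bar\xi_\pi,\bar\xi_\sigma\rangle$ only if $\ker(j)\geq\pi$ and $\ker(j)\geq\sigma$, i.e.\ $\tau:=\ker(j)\geq\pi\vee\sigma$, and in that case its contribution is $\varepsilon(\tau)\cdot\varepsilon(\tau)=\varepsilon(\tau)^2=1$. Therefore
$$\bar G_{kN}(\pi,\sigma)=\sum_{\tau\geq\pi\vee\sigma}\#\bigl\{j\,\bigl|\,\ker j=\tau\bigr\}=\#\bigl\{j\,\bigl|\,\ker j\geq\pi\vee\sigma\bigr\}=N^{|\pi\vee\sigma|}=G_{kN}(\pi,\sigma),$$
which is exactly the Gram matrix of the untwisted easy quantum group $G_N$. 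In particular the same non-degeneracy/linear-independence arguments apply as in Section 5, so $W_{kN}:=G_{kN}^{-1}$ serves simultaneously as the Weingarten matrix on both sides.

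The main obstacle, as suggested above, is genuinely the Gram-matrix computation: one must be confident that the signature $\varepsilon(\tau)$ really does square to $1$ on every $\tau\geq\pi\vee\sigma$ with $\pi,\sigma\in D\subset P_{even}$, which is where the hypothesis $D\subset P_{even}$ and the well-definedness of $\varepsilon$ from Proposition 7.24 are used. Once this cancellation is in place, combining the expression for $P_{i,j}$ with $\bar W_{kN}=W_{kN}$ yields
$$\int_{\bar G_N}u_{i_1j_1}\cdots u_{i_kj_k}=\sum_{\pi,\sigma\in D(k)}\bar\delta_\pi(i)\,\bar\delta_\sigma(j)\,W_{kN}(\pi,\sigma),$$
as claimed. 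The only remaining routine point is that $D(k)$ need not index a basis; but passing to a maximal linearly independent subfamily does not change $P$ (standard pseudo-inverse argument), so the formula is well-posed as stated.
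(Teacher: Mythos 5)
Your proof is correct and takes essentially the same route as the paper's (which simply appeals to the general Weingarten formula from Theorem 8.24 plus the observation that the Gram matrix is computed "exactly as in the untwisted case"). You have supplied the detail the paper leaves implicit, namely the sign cancellation $\varepsilon(\tau)^2=1$ in $\langle\bar\xi_\pi,\bar\xi_\sigma\rangle$, which is indeed the crux of why $\bar G_{kN}=G_{kN}$.
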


\begin{proof}
This follows exactly as in the untwisted case, the idea being that the signs will cancel. Let us recall indeed from the general twisting theory from chapter 7 that the twisted vectors $\bar{\xi}_\pi$ associated to the partitions $\pi\in P_{even}(k)$ are as follows: 
$$\bar{\xi}_\pi=\sum_{\tau\geq\pi}\varepsilon(\tau)\sum_{i:\ker(i)=\tau}e_{i_1}\otimes\ldots\otimes e_{i_k}$$

Thus, the Gram matrix of these vectors is given by:
\begin{eqnarray*}
<\xi_\pi,\xi_\sigma>
&=&\sum_{\tau\geq\pi\vee\sigma}\varepsilon(\tau)^2\left|\left\{(i_1,\ldots,i_k)\Big|\ker i=\tau\right\}\right|\\
&=&\sum_{\tau\geq\pi\vee\sigma}\left|\left\{(i_1,\ldots,i_k)\Big|\ker i=\tau\right\}\right|\\
&=&N^{|\pi\vee\sigma|}
\end{eqnarray*}

Thus the Gram matrix is the same as in the untwisted case, and so the Weingarten matrix is the same as well as in the untwisted case, and this gives the result.
\end{proof}

As a consequence of the above result, we have another general result, as follows:

\index{Schur-Weyl twisting}

\begin{theorem}
The Schur-Weyl twisting operation $G_N\leftrightarrow\bar{G}_N$ leaves invariant:
\begin{enumerate}
\item The law of the main character.

\item The coamenability property.

\item The asymptotic laws of truncated characters.
\end{enumerate}
\end{theorem}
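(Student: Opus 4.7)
The plan is to apply Proposition 8.31 directly, and exploit the fact that the twisting signs appear squared when we restrict to diagonal indices, as is the case for characters. Concretely, for part (1) I would start with the twisted Weingarten formula
$$\int_{\bar{G}_N}\chi^k=\sum_{i_1,\ldots,i_k}\int_{\bar{G}_N}u_{i_1i_1}\ldots u_{i_ki_k}=\sum_{\pi,\sigma\in D(k)}W_{kN}(\pi,\sigma)\sum_i\bar{\delta}_\pi(i,i)\bar{\delta}_\sigma(i,i).$$
Now, according to Definition 7.25, $\bar{\delta}_\pi(i,i)=\varepsilon(\tau)\delta_\pi(i,i)$ where $\tau=\ker\binom{i}{i}$ depends only on $i$, not on $\pi$ or $\sigma$. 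Consequently the product of the two signs is $\varepsilon(\tau)^2=1$, and the twisted moment collapses to the untwisted Weingarten sum, giving $\int_{\bar{G}_N}\chi^k=\int_{G_N}\chi^k$. The identical argument applies to colored character moments $\int\chi^{e_1}\cdots\chi^{e_k}$ with $e_r\in\{\circ,\bullet\}$, using the colored version of Proposition 8.31, so the full $*$-distribution of $\chi$ is preserved by the twist.

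For part (3), the same computation works verbatim with the summation restricted to $i_1,\ldots,i_k\leq[tN]$: the signs $\bar{\delta}_\pi(i,i)\bar{\delta}_\sigma(i,i)=\varepsilon(\tau)^2=1$ still cancel, and we obtain the exact equality
$$\int_{\bar{G}_N}\chi_t^k=\int_{G_N}\chi_t^k$$
for every $N$ and every colored $k$. In particular the asymptotic laws of truncated characters on $\bar{G}_N$ agree with those on $G_N$, so Theorem 8.27 and the Bercovici--Pata observation of Theorem 8.28 transfer automatically to the twisted setting.

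For part (2), the invariance of coamenability follows by combining part (1) with the Kesten-type criterion recorded in Theorem 8.18(5), which states that the quantum group is coamenable precisely when $N$ lies in the support of the law of $\mathrm{Re}(\chi)$. Since the $*$-distribution of $\chi$ is the same on $G_N$ and $\bar{G}_N$, the law of $\mathrm{Re}(\chi)=(\chi+\chi^*)/2$ is the same, hence so is its support, and coamenability of $G_N$ is equivalent to coamenability of $\bar{G}_N$.

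The only point requiring a little care is the verification that the sign cancellation $\varepsilon(\tau)^2=1$ really is uniform in $\pi,\sigma$, i.e.\ that the kernel partition $\tau=\ker\binom{i}{i}$ depends only on the diagonal multi-index $i$. This is immediate from the definition of $\bar{\delta}_\pi$, but it is the one place where the character/truncated-character hypothesis enters in an essential way: for off-diagonal Weingarten entries the two signs are genuinely different and no cancellation occurs, which is precisely why the twist is non-trivial on the level of the algebra yet invisible at the level of character laws.
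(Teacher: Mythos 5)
Your proof is correct and follows essentially the same route as the paper: the paper simply says that parts (1) and (3) are ``clear from the integration formula'' (Proposition 8.31) and that part (2) follows from (1) via the Kesten criterion, while you have usefully made explicit the key sign-cancellation $\varepsilon(\tau)^2=1$ when both indices are set equal, which is exactly what the paper leaves unstated. One small notational remark: in the one-argument Weingarten symbols $\bar{\delta}_\pi(i)$ of Proposition 8.31, the partitions $\pi\in D(k)=D(\emptyset,k)$ have no upper legs, so the relevant kernel is $\tau=\ker(i)$, a partition of $k$ points, rather than $\ker\binom{i}{i}$ of $2k$ points; but since the sign depends only on $i$ and not on $\pi$ or $\sigma$, the cancellation you describe goes through unchanged, and indeed $\sum_i\bar{\delta}_\pi(i)\bar{\delta}_\sigma(i)=N^{|\pi\vee\sigma|}$ reproduces the untwisted Gram matrix.
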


\begin{proof}
This follows from Proposition 8.39, as follows:

\medskip

(1) This is clear indeed from the integration formula.

\medskip

(2) This follows from (1), and from the Kesten criterion.

\medskip

(3) This follows once again from the integration formula.
\end{proof}

To summarize, we have asymptotic character results for all the easy quantum groups introduced so far, and in each case we obtain Gaussian laws, and their versions. We will see in chapters 9-12 below that pretty much the same happens in the discrete setting, where we will obtain Poisson laws, and their versions.

\section*{8d. Gram determinants}

One interesting question regarding the Weingarten calculus, where $W_{kN}=G_{kN}^{-1}$, is the computation of the determinant of $G_{kN}$. Following Di Francesco \cite{dif}, we discuss here this key question, for $O_N^+$. Let us begin with something that we know, namely:

\begin{theorem}
The determinant of the Gram matrix of $P(k)$ is given by
$$\det(G_{kN})=\prod_{\pi\in P(k)}\frac{N!}{(N-|\pi|)!}$$
with the convention that in the case $N<k$ we obtain $0$.
\end{theorem}

\begin{proof}
This is indeed the Lindst\"om formula that we established in chapter 5 above, by using a decomposition of type $G_{kN}=A_{kN}L_{kN}$.
\end{proof}

In what regards $O_N^+$, the set of partitions is here $NC_2(2k)$, and things are far more complicated than for $P(k)$. However, we can use the Lindst\"om formula at $k=1,2,3$, where $P(k)=NC(k)$, via the following fattening/shrinking trick:

\index{fattening of partitions}
\index{shrinking of partitions}

\begin{proposition}
We have a bijection $NC(k)\simeq NC_2(2k)$, constructed by fattening and shrinking, as follows:
\begin{enumerate}
\item The application $NC(k)\to NC_2(2k)$ is the ``fattening'' one, obtained by doubling all the legs, and doubling all the strings as well.

\item Its inverse $NC_2(2k)\to NC(k)$ is the ``shrinking'' application, obtained by collapsing pairs of consecutive neighbors.
\end{enumerate}
\end{proposition}

\begin{proof}
The fact that the two operations in the statement are indeed inverse to each other is clear, by computing the corresponding two compositions.
\end{proof}

At the level of the associated Gram matrices, the result is as follows:

\index{Gram matrix}

\begin{proposition}
The Gram matrices of the sets of partitions
$$NC_2(2k)\simeq NC(k)$$
are related by the following formula, where $\pi\to\pi'$ is the shrinking operation,
$$G_{2k,n}(\pi,\sigma)=n^k(\Delta_{kn}^{-1}G_{k,n^2}\Delta_{kn}^{-1})(\pi',\sigma')$$
and where $\Delta_{kn}$ is the diagonal of $G_{kn}$.
\end{proposition}

\begin{proof}
It is elementary to see that we have the following formula:
$$|\pi\vee\sigma|=k+2|\pi'\vee\sigma'|-|\pi'|-|\sigma'|$$

We therefore have the following formula, valid for any $n\in\mathbb N$:
$$n^{|\pi\vee\sigma|}=n^{k+2|\pi'\vee\sigma'|-|\pi'|-|\sigma'|}$$

Thus, we are led to the formula in the statement.
\end{proof}

We can do now the computations for $O_N^+$ at $k=2,4,6$, as follows:

\begin{proposition}
The Gram matrices and determinants for $O_N^+$ are as follows,
$$\det(N)=N$$
$$\det\begin{pmatrix}N^2&N\\N&N^2\end{pmatrix}=N^2(N^2-1)$$
$$\det\begin{pmatrix}
N^3&N^2&N^2&N^2&N\\
N^2&N^3&N&N&N^2\\
N^2&N&N^3&N&N^2\\
N^2&N&N&N^3&N^2\\
N&N^2&N^2&N^2&N^3
\end{pmatrix}=N^5(N^2-1)^4(N^2-2)$$
at $k=2,4,6$, with the matrices written by using the lexicographic order on $NC_2(2k)$.
\end{proposition}

\begin{proof}
The formula at $k=2$, where $NC_2(2)=\{\sqcap\}$, is clear. The same goes for the formula at $k=4$, where $NC_2(4)=\{\sqcap\sqcap,\bigcap\hskip-4.9mm{\ }_\cap\,\}$. At $k=6$ however, things are tricky, and we must use the Lindst\"om formula. We have $NC(3)=\{|||,\sqcap|,\sqcap\hskip-3.2mm{\ }_|\,,|\sqcap,\sqcap\hskip-0.7mm\sqcap\}$, and the corresponding Gram matrix and its determinant are, according to Theorem 8.41:
$$\det\begin{pmatrix}
N^3&N^2&N^2&N^2&N\\
N^2&N^2&N&N&N\\
N^2&N&N^2&N&N\\
N^2&N&N&N^2&N\\
N&N&N&N&N
\end{pmatrix}=N^5(N-1)^4(N-2)$$

By using Proposition 8.43, the Gram determinant of $NC_2(6)$ is given by:
\begin{eqnarray*}
\det(G_{6N})
&=&\frac{1}{N^2\sqrt{N}}\times N^{10}(N^2-1)^4(N^2-2)\times\frac{1}{N^2\sqrt{N}}\\
&=&N^5(N^2-1)^4(N^2-2)
\end{eqnarray*}

Thus, we have obtained the formula in the statement.
\end{proof}

In general, such tricks won't work, because $NC(k)$ is strictly smaller than $P(k)$ at $k\geq4$. However, following Di Francesco \cite{dif}, we have the following result:

\index{meander determinant}
\index{Gram determinant}

\begin{theorem}
The determinant of the Gram matrix for $O_N^+$ is given by
$$\det(G_{kN})=\prod_{r=1}^{[k/2]}P_r(N)^{d_{k/2,r}}$$
where $P_r$ are the Chebycheff polynomials, given by
$$P_0=1\quad,\quad 
P_1=X\quad,\quad 
P_{r+1}=XP_r-P_{r-1}$$
and $d_{kr}=f_{kr}-f_{k,r+1}$, with $f_{kr}$ being the following numbers, depending on $k,r\in\mathbb Z$,
$$f_{kr}=\binom{2k}{k-r}-\binom{2k}{k-r-1}$$
with the convention $f_{kr}=0$ for $k\notin\mathbb Z$. 
\end{theorem}

\begin{proof}
This is something fairly heavy, obtained by using a decomposition as follows of the Gram matrix $G_{kN}$, with the matrix $T_{kN}$ being lower triangular:
$$G_{kN}=T_{kN}T_{kN}^t$$

Thus, a bit as in the proof of the Lindst\"om formula, we obtain the result, but the problem lies however in the construction of $T_{kN}$, which is non-trivial. See \cite{dif}.
\end{proof}

The above result is interesting for countless reasons, and we refer here to Di Francesco \cite{dif}, and also to \cite{bcu}, where a systematic study of the Gram determinants for the easy quantum groups was performed, following Lindst\"om, Di Francesco, Zinn-Justin and others. Getting into all this, which is advanced mathematical physics, would be well beyond the purposes of the present book, so let us just mention here that:

\bigskip

(1) We will see in chapter 9 that $S_N,S_N^+$ are easy, coming from $P,NC$. Thus the Lindst\"om formula computes the determinant for $S_N$, and in what regards $S_N^+$, here the determinant can be computed by using Proposition 8.43 and Theorem 8.45.

\bigskip

(2) In what regards $O_N$, here the determinant is as follows, where $f^\lambda$ is  the number of standard Young tableaux of shape $\lambda$, and $c_N(\lambda)=\prod_{(i,j)\in\lambda}(N+2j-i-1)$:
$$\det(G_{kN})=\prod_{|\lambda|=k/2}c_N(\lambda)^{f^{2\lambda}}$$

Obviously, some interesting mathematics is going on here. For more, you can check \cite{bcu}, \cite{dif}, as suggested above, as well as the related literature, in mathematics and physics, which is considerable. And also, why not trying to compute one of these beasts yourself. There is no better introduction to advanced combinatorics than this.

\section*{8e. Exercises} 

There are many interesting theoretical questions regarding the laws of the main character, and as a first exercise here, we have:

\begin{exercise}
Prove that any morphism of Woronowicz algebras
$$f:(A,u)\to(B,v)$$
increases the moments of the main character, and that such a morphism is an isomorphism precisely when all these moments, and so the character laws, are the same. 
\end{exercise}

This is something that we already discussed in the above, when introducing the main characters, but very briefly, with the comment that all this basically comes from Peter-Weyl. The problem is that of working out carefully all the details.

\begin{exercise}
Consider the symmetric group $S_N$, regarded as symmetry group of the $N$ coordinate axes of $\mathbb R^N$, and so as group of orthogonal matrices:
$$S_N\subset O_N$$
Compute the main character for this group, then the law of this main character, and work out the $N\to\infty$ asymptotics.
\end{exercise}

As a comment here, since the permutation matrices have 0-1 entries, the law of the main character is supported by $\mathbb N$. Thus, with a bit of luck, the asymptotic spectral measure can only be some basic measure in discrete probability. 

\begin{exercise}
Work out the formulae of the Gram and Weingarten matrices for all the easy quantum groups introduced so far, up to the size $5\times5$.
\end{exercise}

There are many computations here, and all of them are very instructive.

\begin{exercise}
Work out explicitely the asymptotic laws of the main characters for the half-classical quantum groups $O_N^*,U_N^*$.
\end{exercise}

This is something that we briefly discussed in the above, by indicating what the final result should be like, involving Rayleigh variables, along with a strategy for the proof. The problem is that of working out all this, with full details.

\begin{exercise}
Work out explicitely the asymptotic laws of the main characters for the bistochastic quantum groups $B_N,B_N^+,C_N,C_N^+$.
\end{exercise}

As with the previous exercise, this is something that we briefly discussed in the above, and the problem now is that of working out all the details.

\part{Quantum permutations}

\ \vskip50mm

\begin{center}
{\em Hey talk about a-rambling

She's the fastest train on the line

It's that Orange Blossom Special

Rolling down the seaboard line}
\end{center}

\chapter{Quantum permutations}

\section*{9a. Magic matrices}

The quantum groups that we considered so far, namely $O_N,U_N$ and their liberations and twists, are of ``continuous'' nature. In this third part of the present book we discuss the ``discrete'' examples, such as the symmetric group $S_N$, the hyperoctahedral group $H_N$, and more complicated reflection groups, and their liberations. We will see that most of these groups are easy, have indeed liberations, and equal their own twists.

\bigskip

In this chapter, to start with, we discuss the symmetric group $S_N$, and its free version $S_N^+$. All this is very standard, following the paper of Wang \cite{wa2} and the subsequent paper \cite{ba2}, both from the late 90s, and the paper \cite{bc2}, from the mid 00s. At the level of more advanced results, we will have as well a look into \cite{bbs}, from the late 00s, dealing more generally with quantum permutation groups $S_F^+$ of finite quantum spaces $F$, which are subject to some interesting twisting results, bringing us back to $O_N,U_N$.

\bigskip

In order to get started, we need to have a matrix group look at the symmetric group $S_N$. But this is something obvious, obtained by looking at $S_N$ as being the permutation group of the $N$ coordinate axes of $\mathbb R^N$. Indeed, we obtain in this way an embedding $S_N\subset O_N$, which is given by the standard permutation matrices, $\sigma(e_j)=e_{\sigma(j)}$.

\bigskip

Based on this, we have the following functional analytic description of $S_N$:

\index{symmetric group}
\index{magic matrix}
\index{magic unitary}

\begin{proposition}
Consider the symmetric group $S_N$.
\begin{enumerate}
\item The standard coordinates $v_{ij}\in C(S_N)$, coming from the embedding $S_N\subset O_N$ given by the permutation matrices, are given by:
$$v_{ij}=\chi\left(\sigma\Big|\sigma(j)=i\right)$$

\item The matrix $v=(v_{ij})$ is magic, in the sense that its entries are orthogonal projections, summing up to $1$ on each row and each column.

\item The algebra $C(S_N)$ is isomorphic to the universal commutative $C^*$-algebra generated by the entries of a $N\times N$ magic matrix.\end{enumerate}
\end{proposition}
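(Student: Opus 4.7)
The plan is to verify each assertion directly, with part (3) being the substantive step. For (1), I would recall that the embedding $S_N \subset O_N$ sends $\sigma$ to the permutation matrix $P_\sigma$ with entries $(P_\sigma)_{ij} = \delta_{i,\sigma(j)}$, so the pulled-back coordinate function is $v_{ij}(\sigma) = \delta_{i,\sigma(j)}$, which is exactly the indicator $\chi(\sigma \mid \sigma(j) = i)$.

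For (2), using this indicator description, each $v_{ij}$ is a real $\{0,1\}$-valued function, so $v_{ij}^2 = v_{ij}$ and $v_{ij}^* = v_{ij}$, i.e., a projection in $C(S_N)$. The row sum on row $i$ evaluates at $\sigma$ to $|\{j : \sigma(j) = i\}| = 1$ (since $\sigma$ is a bijection), and the column sum is analogous; the mutual orthogonality of entries on a given row or column either follows from the general $C^*$-fact that projections with sum $\leq 1$ are automatically orthogonal, or can be checked pointwise from $\chi(\sigma \mid \sigma(j)=i)\chi(\sigma \mid \sigma(k)=i) = 0$ when $j \neq k$.

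For (3), let $A$ denote the universal commutative $C^*$-algebra generated by the entries of an $N \times N$ magic matrix $w = (w_{ij})$. By part (2), sending $w_{ij} \mapsto v_{ij}$ defines a surjective $*$-homomorphism $A \to C(S_N)$, where surjectivity holds because the $v_{ij}$ separate points of $S_N$ and hence generate $C(S_N)$ by Stone-Weierstrass. For injectivity I would invoke the Gelfand theorem to write $A = C(X)$ with $X$ the character space of $A$. A character $\chi: A \to \mathbb{C}$ is determined by the scalars $\chi(w_{ij})$, which must themselves satisfy the magic relations; but a projection in $\mathbb{C}$ is either $0$ or $1$, and a $\{0,1\}$-matrix with row and column sums equal to $1$ is a permutation matrix. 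This sets up a natural bijection $X \leftrightarrow S_N$, continuous in both directions via the evaluation maps, hence a homeomorphism, which yields $A \simeq C(S_N)$ compatibly with the generators.

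There is no real obstacle here; the only conceptually delicate point is the reduction, inside characters of the commutative algebra $A$, of an abstract projection to a $\{0,1\}$-scalar. This is the precise spot where commutativity is used, and it is exactly what will fail when we later drop commutativity, opening the door to the quantum permutation group $S_N^+$.
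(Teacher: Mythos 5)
Your proof is correct and follows essentially the same route as the paper: pulling back the coordinates from the permutation-matrix embedding for (1), observing pointwise that indicators are projections summing to one for (2), and for (3) building the surjection $A\to C(S_N)$ from (2) and then using the Gelfand theorem to identify the spectrum of $A$ with $S_N$ (via the fact that a character must send the $w_{ij}$ to a scalar magic, hence permutation, matrix). You spell out the spectrum identification in a bit more detail than the paper, which instead packages it as two mutually inverse quotient maps, but the content is the same.
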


\begin{proof}
These results are all elementary, as follows:

\medskip

(1) We recall that the canonical embedding $S_N\subset O_N$, coming from the standard permutation matrices, is given by $\sigma(e_j)=e_{\sigma(j)}$. Thus, we have $\sigma=\sum_je_{\sigma(j)j}$, and it follows that the standard coordinates on $S_N\subset O_N$ are given by:
$$v_{ij}(\sigma)=\delta_{i,\sigma(j)}$$

(2) Any characteristic function $\chi\in\{0,1\}$ being a projection in the operator algebra sense ($\chi^2=\chi^*=\chi$), we have indeed a matrix of projections. As for the sum 1 condition on rows and columns, this is clear from the formula of the elements $v_{ij}$.

\medskip

(3) Consider the universal algebra in the statement, namely:
$$A=C^*_{comm}\left((w_{ij})_{i,j=1,\ldots,N}\Big|w={\rm magic}\right)$$

We have a quotient map $A\to C(S_N)$, given by $w_{ij}\to v_{ij}$. On the other hand, by using the Gelfand theorem we can write $A=C(X)$, with $X$ being a compact space, and by using the coordinates $w_{ij}$ we have $X\subset O_N$, and then $X\subset S_N$. Thus we have as well a quotient map $C(S_N)\to A$ given by $v_{ij}\to w_{ij}$, and this gives (3).
\end{proof}

With the above result in hand, we can now formulate, following Wang \cite{wa2}:

\index{magic matrix}
\index{magic unitary}
\index{quantum permutation}
\index{quantum permutation group}

\begin{theorem}
The following is a Woronowicz algebra, with magic meaning formed of projections, which sum up to $1$ on each row and each column,
$$C(S_N^+)=C^*\left((u_{ij})_{i,j=1,\ldots,N}\Big|u={\rm magic}\right)$$
and the underlying compact quantum group $S_N^+$ is called quantum permutation group.
\end{theorem}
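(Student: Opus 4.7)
The plan is to verify the three axioms of Definition 2.1 for the algebra $C(S_N^+)$ presented by magic matrix relations, following the same strategy used for $C(O_N^+)$ and $C(U_N^+)$ in Theorem 2.16. The crux will be to show that if $u=(u_{ij})$ is a magic matrix in some $C^*$-algebra $A$, then the three candidate matrices
$$u^\Delta_{ij}=\sum_k u_{ik}\otimes u_{kj}\quad,\quad u^\varepsilon_{ij}=\delta_{ij}\quad,\quad u^S_{ij}=u_{ji}^*$$
are again magic (in $A\otimes A$, in $\mathbb C$, and in $A^{opp}$ respectively). Once this is established, the universal property of $C(S_N^+)$ produces the required $*$-morphisms $\Delta$, $\varepsilon$, $S$ by sending $u_{ij}$ to $u^\Delta_{ij}$, $u^\varepsilon_{ij}$, $u^S_{ij}$, and these satisfy the Woronowicz axioms by definition.

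First I would address existence of the universal $C^*$-algebra: since each generator $u_{ij}$ is required to be a projection, it automatically satisfies $\|u_{ij}\|\le 1$ in any $C^*$-representation, so the supremum over all admissible $C^*$-seminorms on the universal $*$-algebra is finite and defines a legitimate $C^*$-norm. I would also note as a preliminary lemma the key fact that \emph{a finite family of projections summing to $1$ is automatically mutually orthogonal}: if $p_1+\ldots+p_N=1$ with $p_i^2=p_i=p_i^*$, then for $i\ne j$ one has $p_ip_jp_i=(p_ip_j)(p_ip_j)^*\ge 0$, and the identity $\sum_{j\ne i}p_ip_jp_i=0$ forces each summand to vanish, hence $p_ip_j=0$. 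In particular, a magic matrix is automatically a unitary, since $(uu^*)_{ij}=\sum_k u_{ik}u_{jk}$ equals $\delta_{ij}$ by row-orthogonality, and similarly for $u^*u$ by column-orthogonality.

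Next I would check that $u^\Delta$ is magic. Self-adjointness is immediate; for idempotency,
$$(u^\Delta_{ij})^2=\sum_{k,l}u_{ik}u_{il}\otimes u_{kj}u_{lj}=\sum_k u_{ik}\otimes u_{kj}=u^\Delta_{ij},$$
where the cross terms $k\ne l$ vanish by the orthogonality lemma (applied to the row $i$ of $u$). The row sum is $\sum_j u^\Delta_{ij}=\sum_k u_{ik}\otimes\sum_j u_{kj}=\sum_k u_{ik}\otimes 1=1\otimes 1$, and the column sum is analogous. The verification for $u^\varepsilon$ is trivial since the identity matrix is magic, and for $u^S$ it amounts to observing that $u^t$ has as rows the columns of $u$ and vice-versa, so $u^t$ is magic exactly because $u$ is; taking adjoints changes nothing because magic entries are self-adjoint.

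The main, though still routine, obstacle is just keeping track of the orthogonality lemma and deploying it wherever cross terms appear; there is no deeper difficulty, since the whole construction is parallel to the $O_N^+$, $U_N^+$ case of Theorem 2.16, with the role of the biunitarity relations replaced by the magic relations. Once $\Delta,\varepsilon,S$ are produced by universality, all the Woronowicz axioms from Proposition 2.3 follow formally on the generators and extend to the whole algebra, finishing the proof.
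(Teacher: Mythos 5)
Your proof is correct and follows essentially the same strategy as the paper's: use the universal property to produce $\Delta,\varepsilon,S$ by verifying that $u^\Delta$, $u^\varepsilon$, $u^S$ are again magic, the only substantive computation being $(u^\Delta_{ij})^2 = u^\Delta_{ij}$ via vanishing of cross terms. The one point where you are more explicit than the paper is in isolating and proving the lemma that projections summing to $1$ are automatically mutually orthogonal (via positivity of $p_ip_jp_i$ and the telescoping identity); the paper uses this fact silently in the displayed computation for $U_{ij}^2$, so making it a named lemma is a small but genuine clarification.
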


\begin{proof}
The algebra $C(S_N^+)$ is indeed well-defined, because the magic condition forces $||u_{ij}||\leq1$, for any $C^*$-norm. Our claim now is that, by using the universal property of this algebra, we can define maps $\Delta,\varepsilon,S$. Consider indeed the following matrix: 
$$U_{ij}=\sum_ku_{ik}\otimes u_{kj}$$

As a first observation, we have $U_{ij}=U_{ij}^*$. In fact the entries $U_{ij}$ are orthogonal projections, because we have as well:
\begin{eqnarray*}
U_{ij}^2
&=&\sum_{kl}u_{ik}u_{il}\otimes u_{kj}u_{lj}\\
&=&\sum_ku_{ik}\otimes u_{kj}\\
&=&U_{ij}
\end{eqnarray*}

In order to prove now that the matrix $U=(U_{ij})$ is magic, it remains to verify that the sums on the rows and columns are 1. For the rows, this can be checked as follows:
\begin{eqnarray*}
\sum_jU_{ij}
&=&\sum_{jk}u_{ik}\otimes u_{kj}\\
&=&\sum_ku_{ik}\otimes1\\
&=&1\otimes1
\end{eqnarray*}

For the columns the computation is similar, as follows:
\begin{eqnarray*}
\sum_iU_{ij}
&=&\sum_{ik}u_{ik}\otimes u_{kj}\\
&=&\sum_k1\otimes u_{kj}\\
&=&1\otimes1
\end{eqnarray*}

Thus the matrix $U=(U_{ij})$ is magic indeed, and so we can define a comultiplication map by setting $\Delta(u_{ij})=U_{ij}$. By using a similar reasoning, we can define as well a counit map by $\varepsilon(u_{ij})=\delta_{ij}$, and an antipode map by $S(u_{ij})=u_{ji}$. Thus the Woronowicz algebra axioms from chapter 2 are satisfied, and this finishes the proof.
\end{proof}

The terminology in the above result comes from the comparison with Proposition 9.1 (3), which tells us that we have an inclusion $S_N\subset S_N^+$, and that this inclusion is a liberation, in the sense that the classical version of $S_N^+$, obtained at the algebra level by dividing by the commutator ideal, is the usual symmetric group $S_N$. The terminology is further motivated by the following result, also from Wang's paper \cite{wa2}:

\index{quantum permutation}

\begin{proposition}
The quantum permutation group $S_N^+$ acts on $X=\{1,\ldots,N\}$, the corresponding coaction map $\Phi:C(X)\to C(X)\otimes C(S_N^+)$ being given by:
$$\Phi(\delta_i)=\sum_j\delta_j\otimes u_{ji}$$
In fact, $S_N^+$ is the biggest compact quantum group acting on $X$, by leaving the counting measure invariant, in the sense that 
$$(tr\otimes id)\Phi=tr(.)1$$
where $tr$ is the standard trace, given by $tr(\delta_i)=\frac{1}{N},\forall i$.
\end{proposition}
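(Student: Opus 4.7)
The plan is to break the proposition into two parts: first verifying that the stated $\Phi$ is a well-defined coaction preserving $tr$, and then establishing the universality.

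First, I would check that $\Phi$ extends to a $*$-algebra morphism. Since $\{\delta_i\}$ generates $C(X)$ as a $*$-algebra with the relations $\delta_i = \delta_i^* = \delta_i^2$, $\delta_i \delta_k = 0$ for $i \neq k$, and $\sum_i \delta_i = 1$, it suffices to verify that the elements $\Phi(\delta_i) = \sum_j \delta_j \otimes u_{ji}$ satisfy the same relations. Self-adjointness follows from $u_{ji}^* = u_{ji}$, idempotence follows from a direct computation using $u_{ji}^2 = u_{ji}$, orthogonality $\Phi(\delta_i)\Phi(\delta_k) = 0$ for $i \neq k$ reduces to $u_{ji}u_{jk} = 0$, which holds because distinct entries on the same row of a magic matrix are orthogonal projections, and the unit relation $\sum_i \Phi(\delta_i) = 1 \otimes 1$ follows from the fact that rows of a magic matrix sum to $1$. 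The coaction axioms $(\Phi \otimes id)\Phi = (id \otimes \Delta)\Phi$ and $(id \otimes \varepsilon)\Phi = id$ then follow from routine index manipulations using $\Delta(u_{ji}) = \sum_k u_{jk} \otimes u_{ki}$ and $\varepsilon(u_{ji}) = \delta_{ji}$. Trace invariance is immediate: $(tr \otimes id)\Phi(\delta_i) = \tfrac{1}{N}\sum_j u_{ji} = \tfrac{1}{N} = tr(\delta_i)$, using that columns of a magic matrix sum to $1$.

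For universality, suppose $G$ is a compact quantum group acting on $X$ by a trace-preserving coaction $\Psi : C(X) \to C(X) \otimes C(G)$. Expanding in the basis $\{\delta_j\}$ uniquely determines elements $v_{ji} \in C(G)$ with $\Psi(\delta_i) = \sum_j \delta_j \otimes v_{ji}$. The key step is to show that $v = (v_{ij})$ is magic, which then produces a quotient morphism $C(S_N^+) \to C(G)$ sending $u_{ij} \to v_{ij}$ and intertwining the coactions, proving the universality claim. Since $\{\delta_j\}$ is a basis, the relation $\sum_j \delta_j \otimes a_j = \sum_j \delta_j \otimes b_j$ forces $a_j = b_j$, so each algebraic identity among the $\Psi(\delta_i)$ transfers to an entrywise identity among the $v_{ji}$. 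From $\Psi(\delta_i)^* = \Psi(\delta_i)$ and $\Psi(\delta_i)^2 = \Psi(\delta_i)$ I obtain $v_{ji}^* = v_{ji} = v_{ji}^2$, so each $v_{ji}$ is a projection; from $\Psi(\delta_i)\Psi(\delta_k) = 0$ for $i \neq k$ I get $v_{ji}v_{jk} = 0$, giving row-orthogonality; and from $\sum_i \Psi(\delta_i) = 1 \otimes 1$ I get $\sum_i v_{ji} = 1$ for each $j$, so rows sum to $1$.

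The remaining and, in my view, subtler step is the column-sum condition, which does not follow from $\Psi$ being a $*$-algebra morphism alone and is precisely where the trace invariance hypothesis enters. Applying $(tr \otimes id)\Psi(\delta_i) = tr(\delta_i)\, 1 = \tfrac{1}{N}\,1$ and expanding the left side as $\tfrac{1}{N}\sum_j v_{ji}$ forces $\sum_j v_{ji} = 1$, so columns sum to $1$ as well. Together with the previous relations this shows $v$ is magic, and the universal property of $C(S_N^+)$ from Theorem 9.2 provides the desired morphism $C(S_N^+) \to C(G)$. The main obstacle, as indicated, is really only the column-sum condition, which highlights why the trace-preservation hypothesis cannot be dropped; the rest of the argument is a transparent sequence of bookkeeping verifications once one exploits that $\{\delta_j\}$ is a basis.
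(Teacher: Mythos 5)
Your proof is correct and follows essentially the same route as the paper: the paper establishes in one stroke that $\Phi(\delta_i)=\sum_j\delta_j\otimes u_{ji}$ is a trace-preserving coaction precisely when $u$ is a magic corepresentation, which yields both existence and universality, while you run the identical computations (multiplicativity $\iff$ row relations, unitality $\iff$ row sums, $*$-compatibility, trace invariance $\iff$ column sums) split into the two directions. The only light spot, shared with the paper's own treatment, is the remark that the coaction axioms make $v$ a corepresentation, so that the resulting map $C(S_N^+)\to C(G)$ is indeed a quantum group morphism.
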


\begin{proof}
Our claim is that given a compact matrix quantum group $G$, the formula $\Phi(\delta_i)=\sum_j\delta_j\otimes u_{ji}$ defines a morphism of algebras, which is a coaction map, leaving the trace invariant, precisely when the matrix $u=(u_{ij})$ is a magic corepresentation of $C(G)$. 

Indeed, let us first determine when $\Phi$ is multiplicative. We have:
$$\Phi(\delta_i)\Phi(\delta_k)
=\sum_{jl}\delta_j\delta_l\otimes u_{ji}u_{lk}
=\sum_j\delta_j\otimes u_{ji}u_{jk}$$

On the other hand, we have as well the following formula:
$$\Phi(\delta_i\delta_k)
=\delta_{ik}\Phi(\delta_i)
=\delta_{ik}\sum_j\delta_j\otimes u_{ji}$$

Thus, the multiplicativity of $\Phi$ is equivalent to the following conditions:
$$u_{ji}u_{jk}=\delta_{ik}u_{ji}\quad,\quad\forall i,j,k$$

Regarding now the unitality of $\Phi$, we have the following formula:
\begin{eqnarray*}
\Phi(1)
&=&\sum_i\Phi(\delta_i)\\
&=&\sum_{ij}\delta_j\otimes u_{ji}\\
&=&\sum_j\delta_j\otimes\left(\sum_iu_{ji}\right)
\end{eqnarray*}

Thus $\Phi$ is unital when the following conditions are satisfied:
$$\sum_iu_{ji}=1\quad,\quad\forall i$$

Finally, the fact that $\Phi$ is a $*$-morphism translates into:
$$u_{ij}=u_{ij}^*\quad,\quad\forall i,j$$

Summing up, in order for $\Phi(\delta_i)=\sum_j\delta_j\otimes u_{ji}$ to be a morphism of $C^*$-algebras, the elements $u_{ij}$ must be projections, summing up to 1 on each row of $u$. Regarding now the preservation of the trace condition, observe that we have:
$$(tr\otimes id)\Phi(\delta_i)=\frac{1}{N}\sum_ju_{ji}$$

Thus the trace is preserved precisely when the elements $u_{ij}$ sum up to 1 on each of the columns of $u$. We conclude from this that $\Phi(\delta_i)=\sum_j\delta_j\otimes u_{ji}$ is a morphism of $C^*$-algebras preserving the trace precisely when $u$ is magic, and since the coaction conditions on $\Phi$ are equivalent to the fact that $u$ must be a corepresentation, this finishes the proof of our claim. But this claim proves all the assertions in the statement.
\end{proof}

As a perhaps quite surprising result now, also from Wang \cite{wa2}, we have:

\begin{theorem}
We have an embedding of compact quantum groups 
$$S_N\subset S_N^+$$
given at the algebra level, $C(S_N^+)\to C(S_N)$, by the formula
$$u_{ij}\to\chi\left(\sigma\Big|\sigma(j)=i\right)$$
and this embedding is an isomorphism at $N\leq3$, but not at $N\geq4$, where $S_N^+$ is non-classical, infinite compact quantum group.
\end{theorem}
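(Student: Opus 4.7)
The plan is as follows. The embedding $S_N \subset S_N^+$ is immediate from Proposition 9.1 (3): the matrix $(v_{ij})$ with $v_{ij} = \chi(\sigma\,|\,\sigma(j)=i)$ is magic, so by the universal property of $C(S_N^+)$ there is a surjection $C(S_N^+) \to C(S_N)$ sending $u_{ij} \mapsto v_{ij}$. To prove this surjection is an isomorphism for $N \leq 3$ it suffices to show $C(S_N^+)$ is commutative. The case $N = 1$ is trivial ($u_{11} = 1$), and $N = 2$ reduces to a single projection, since the magic relations force
$$u = \begin{pmatrix} p & 1-p \\ 1-p & p \end{pmatrix},$$
so $C(S_2^+) = C^*(p \mid p^2 = p = p^*)$ is commutative.

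The case $N = 3$ is the main obstacle and requires some care. Using the row/column sum relations I would solve for $u_{13}, u_{23}, u_{31}, u_{32}, u_{33}$ in terms of the four entries $u_{11}, u_{12}, u_{21}, u_{22}$; among these four, row and column orthogonality already force commutation (both products equal zero) for the four aligned pairs, leaving only the two cross pairs $(u_{11}, u_{22})$ and $(u_{12}, u_{21})$ to handle. The trick will be to expand the column $3$ orthogonality $u_{13} u_{23} = 0$ via $u_{13} = 1 - u_{11} - u_{12}$ and $u_{23} = 1 - u_{21} - u_{22}$, and simplify using the known orthogonalities $u_{11} u_{21} = 0 = u_{12} u_{22}$, to arrive at
$$u_{11} u_{22} + u_{12} u_{21} = u_{11} + u_{12} + u_{21} + u_{22} - 1 = u_{33}.$$
Running the same expansion on the three companion relations $u_{23} u_{13} = 0$, $u_{31} u_{32} = 0$ and $u_{32} u_{31} = 0$ produces three further expressions for $u_{33}$ obtained by reversing one or both of the products above. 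Pairwise comparison of the four resulting identities then forces $[u_{11}, u_{22}] = 0$ and $[u_{12}, u_{21}] = 0$, so all four generators pairwise commute, and $C(S_3^+)$ is commutative.

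For $N = 4$ I would exhibit a non-commutative magic matrix in a $C^*$-algebra $B$ and invoke universality. Given projections $p, q \in B$, the block-diagonal matrix
$$u = \begin{pmatrix} p & 1-p & 0 & 0 \\ 1-p & p & 0 & 0 \\ 0 & 0 & q & 1-q \\ 0 & 0 & 1-q & q \end{pmatrix}$$
is plainly magic, so the universal property of $C(S_4^+)$ gives a surjection $C(S_4^+) \to C^*(p, q)$. Choosing $p, q$ to be free projections of trace $1/2$ in some tracial $C^*$-algebra, the elements $2p - 1$ and $2q - 1$ are free self-adjoint unitaries of order $2$, so $C^*(p,q)$ surjects in turn onto the infinite dihedral group algebra $C^*(\mathbb Z_2 * \mathbb Z_2)$, which is non-commutative and infinite-dimensional. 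This proves $S_4^+$ is non-classical and infinite. For general $N \geq 4$, the embedding $S_4^+ \subset S_N^+$, realized at the algebra level by mapping the upper-left $4 \times 4$ magic block of the generators of $C(S_N^+)$ onto the generators of $C(S_4^+)$ and sending the remaining generators to $\delta_{ij}$, propagates both non-classicality and infiniteness from $N = 4$ to all $N \geq 4$.
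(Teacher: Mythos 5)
Your proof is correct, and its overall architecture coincides with the paper's: the embedding via universality of the magic relations, commutativity of $C(S_N^+)$ at $N\leq 3$, the block-diagonal magic matrix built from two projections $p,q$ at $N=4$, and the embedding $S_4^+\subset S_N^+$ given by $u\to {\rm diag}(u,1_{N-4})$ for $N\geq 5$. The one genuine divergence is the $N=3$ step. The paper uses the short trick of inserting the row sum, $u_{11}u_{22}=u_{11}u_{22}(u_{11}+u_{12}+u_{13})=u_{11}u_{22}u_{11}+u_{11}(1-u_{21}-u_{23})u_{13}=u_{11}u_{22}u_{11}$, then applies the involution to get $u_{22}u_{11}=u_{11}u_{22}u_{11}$, and disposes of the pair $(u_{12},u_{21})$ by permuting rows and columns. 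You instead eliminate the third row and column and expand the four orthogonality relations $u_{13}u_{23}=u_{23}u_{13}=u_{31}u_{32}=u_{32}u_{31}=0$; using the orthogonality of distinct entries on each row and column, these reduce to the four identities
$$u_{11}u_{22}+u_{12}u_{21}=u_{22}u_{11}+u_{21}u_{12}=u_{11}u_{22}+u_{21}u_{12}=u_{22}u_{11}+u_{12}u_{21}=u_{33}$$
and pairwise comparison gives $[u_{11},u_{22}]=[u_{12},u_{21}]=0$. I checked the expansions and they are exactly as you state, so your computation is valid; it is slightly longer than the paper's, but it treats both cross pairs at once and avoids invoking the row/column permutation symmetry of $C(S_3^+)$. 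At $N=4$, your choice of free trace-$\frac{1}{2}$ projections, giving $C^*(p,q)\simeq C^*(\mathbb Z_2*\mathbb Z_2)$, is simply a concrete instance of the paper's requirement that $<p,q>$ be noncommutative and infinite dimensional, and works as stated.
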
 

\begin{proof}
The fact that we have indeed an embedding as above is clear from Proposition 9.1 and Theorem 9.2. Note that this follows as well from Proposition 9.3. Regarding now the second assertion, we can prove this in four steps, as follows:

\medskip

\underline{Case $N=2$}. The result here is trivial, the $2\times2$ magic matrices being by definition as follows, with $p$ being a projection:
$$U=\begin{pmatrix}p&1-p\\1-p&p\end{pmatrix}$$

Indeed, this shows that the entries of a $2\times2$ magic matrix must pairwise commute, and so the algebra $C(S_2^+)$ follows to be commutative, which gives the result.

\medskip

\underline{Case $N=3$}. This is more tricky, and we present here a simple, recent proof, from \cite{lmr}. By using the same abstract argument as in the $N=2$ case, and by permuting rows and columns, it is enough to check that $u_{11},u_{22}$ commute. But this follows from:
\begin{eqnarray*}
u_{11}u_{22}
&=&u_{11}u_{22}(u_{11}+u_{12}+u_{13})\\
&=&u_{11}u_{22}u_{11}+u_{11}u_{22}u_{13}\\
&=&u_{11}u_{22}u_{11}+u_{11}(1-u_{21}-u_{23})u_{13}\\
&=&u_{11}u_{22}u_{11}
\end{eqnarray*}

Indeed, by applying the involution to this formula, we obtain from this that we have as well $u_{22}u_{11}=u_{11}u_{22}u_{11}$. Thus we get $u_{11}u_{22}=u_{22}u_{11}$, as desired.

\medskip

\underline{Case $N=4$}. In order to prove our various claims about $S_4^+$, consider the following matrix, with $p,q$ being projections, on some infinite dimensional Hilbert space:
$$U=\begin{pmatrix}
p&1-p&0&0\\
1-p&p&0&0\\
0&0&q&1-q\\
0&0&1-q&q
\end{pmatrix}$$ 

This matrix is magic, and if we choose $p,q$ as for the algebra $<p,q>$ to be not commutative, and infinite dimensional, we conclude that $C(S_4^+)$ is not commutative and infinite dimensional as well, and in particular is not isomorphic to $C(S_4)$.

\medskip

\underline{Case $N\geq5$}. Here we can use the standard embedding $S_4^+\subset S_N^+$, obtained at the level of the corresponding magic matrices in the following way:
$$u\to\begin{pmatrix}u&0\\ 0&1_{N-4}\end{pmatrix}$$

Indeed, with this embedding in hand, the fact that $S_4^+$ is a non-classical, infinite compact quantum group implies that $S_N^+$ with $N\geq5$ has these two properties as well.
\end{proof}

\section*{9b. Representations}

In order to study now $S_N^+$, we can use our various methods developed in chapters 1-4 above. Let us begin with some basic algebraic results, as follows:

\index{infinite dihedral group}

\begin{proposition}
The quantum groups $S_N^+$ have the following properties:
\begin{enumerate}
\item We have $S_N^+\,\hat{*}\,S_M^+\subset S_{N+M}^+$, for any $N,M$.

\item In particular, we have an embedding $\widehat{D_\infty}\subset S_4^+$. 

\item $S_4\subset S_4^+$ are distinguished by their spinned diagonal tori.

\item The half-classical version $S_N^*=S_N^+\cap O_N^*$ collapses to $S_N$.
\end{enumerate}
\end{proposition}

\begin{proof}
These results are all elementary, the proofs being as follows:

\medskip

(1) If we denote by $u,v$ the fundamental corepresentations of $C(S_N^+),C(S_M^+)$, the fundamental corepresentation of $C(S_N^+\,\hat{*}\,S_M^+)$ is by definition:
$$w=\begin{pmatrix}u&0\\0&v\end{pmatrix}$$

But this matrix is magic, because both $u,v$ are magic. Thus by universality of $C(S_{N+M}^+)$ we obtain a quotient map as follows, as desired:
$$C(S_{N+M}^+)\to C(S_N^+\,\hat{*}\,S_M^+)$$

(2) This result, which refines our $N=4$ trick from the proof of Theorem 9.4, follows from (1) with $N=M=2$. Indeed, we have the following computation:
\begin{eqnarray*}
S_2^+\,\hat{*}\,S_2^+
&=&S_2\,\hat{*}\, S_2\\
&=&\mathbb Z_2\,\hat{*}\, \mathbb Z_2\\
&\simeq&\widehat{\mathbb Z_2}\,\hat{*}\, \widehat{\mathbb Z_2}\\
&=&\widehat{\mathbb Z_2*\mathbb Z_2}\\
&=&\widehat{D_\infty}
\end{eqnarray*}

(3) As a first observation here, the quantum groups $S_4\subset S_4^+$ are not distinguished by their diagonal torus, which is $\{1\}$ for both of them. However, according to the general results of Woronowicz in \cite{wo1}, the group dual $\widehat{D_\infty}\subset S_4^+$ that we found in (2) must be a subgroup of the diagonal torus of the following compact quantum group, with the standard unitary representations being spinned by a certain unitary $F\in U_4$:
$$(S_4^+,FuF^*)$$

Now since this group dual $\widehat{D_\infty}$ is not classical, it cannot be a subgroup of the diagonal torus of $(S_4,FuF^*)$. Thus, the diagonal torus spinned by $F$ distinguishes $S_4\subset S_4^+$.

\medskip

(4) Consider the following compact quantum group, with the intersection operation being taken inside $U_N^+$, whose coordinates satisfy $abc=cba$:
$$S_N^*=S_N^+\cap O_N^*$$

In order to prove that we have $S_N^*=S_N$, it is enough to prove that $S_N^*$ is classical. And here, we can use the fact that for a magic matrix, the entries in each row sum up to 1. Indeed, by making $c$ vary over a full row of $u$, we obtain $abc=cba\implies ab=ba$.
\end{proof}

Summarizing, we have some advances on the quantum permutations, including a more conceptual explanation for our main observation so far, namely $S_4^+\neq S_4$. At the representation theory level now, we have the following result, from \cite{bc2}:

\index{easiness}
\index{easy quantum group}
\index{quantum permutation group}

\begin{theorem}
For the quantum groups $S_N,S_N^+$, the intertwining spaces for the tensor powers of the fundamental corepresentation $u=(u_{ij})$ are given by
$$Hom(u^{\otimes k},u^{\otimes l})=span\left(T_\pi\Big|\pi\in D(k,l)\right)$$
with $D=P,NC$. In other words, $S_N,S_N^+$ are easy, coming from the categories $P,NC$.
\end{theorem}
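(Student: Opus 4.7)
My approach would be to invoke the Tannakian duality developed in Section 4. For each of $S_N$ and $S_N^+$, the task is to identify the Tannakian category $(Hom(u^{\otimes k}, u^{\otimes l}))_{k,l}$ and show it is spanned by the maps $T_\pi$ with $\pi$ ranging over $NC(k,l)$, respectively $P(k,l)$.

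For $S_N^+$, the key observation is that the magic condition on $u = (u_{ij})$ is encoded by a single partition: the ``fork'' $Y \in NC(2,1)$, having two points on top and one on bottom, all in the same block. The associated linear map is $T_Y(e_i \otimes e_j) = \delta_{ij} e_i$. A direct computation shows that $T_Y \in Hom(u^{\otimes 2}, u)$ is equivalent to the relations $u_{ki} u_{kj} = \delta_{ij} u_{ki}$ for all $i,j,k$, which force the $u_{ki}$ to be mutually orthogonal projections along each row. Combining this with $T_Y^* \in Hom(u, u^{\otimes 2})$ (handling the column direction by adjunction) and with the singleton relation $T_\bullet = \sum_i e_i \in Fix(u)$, which gives $\sum_i u_{ij} = 1$, one recovers exactly the magic condition, the self-adjointness $u_{ij} = u_{ij}^*$ being already a consequence of the pair relations coming from $\cap,\cup$ that place $S_N^+$ inside $O_N^+$.

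Next, I would argue that the category of partitions generated by $Y$, $Y^*$, $\cap$, $\cup$, the identity $|$ and the singleton $\bullet$ (under horizontal concatenation, vertical composition with matching middle symbols, and upside-down turning) is precisely $NC$. This is the central combinatorial step: any noncrossing partition can be assembled from iterated forks (which build blocks of arbitrary size), pairings, singletons and identity strings, the noncrossing structure allowing these operations to be scheduled without ever producing a crossing. Conversely, the three categorical operations applied to noncrossing generators never create crossings, so the category generated is exactly $NC$. Once this is in place, Theorem~4.20 yields
$$Hom(u^{\otimes k}, u^{\otimes l}) = \mathrm{span}\bigl(T_\pi \,\big|\, \pi \in NC(k,l)\bigr).$$
For $S_N$, one proceeds identically, observing that by Proposition~9.1 the inclusion $S_N \subset S_N^+$ is the classical version, obtained by imposing commutativity of the $u_{ij}$. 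At the Tannakian level, commutativity is the relation $T_{\slash\hskip-2.0mm\backslash} \in End(u^{\otimes 2})$, where $\slash\hskip-2.0mm\backslash$ is the basic crossing; adjoining this generator enlarges $NC$ to $P$, by the same kind of combinatorial induction.

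The main obstacle will be the combinatorial claim that the listed partition-level generators produce exactly $NC$, and that adjoining the crossing expands this to $P$. While intuitively natural, a careful induction (for instance on the number of blocks, peeling off the outermost or rightmost block of a noncrossing partition and expressing it as a composition of a fork with a smaller partition) is needed to exhibit each $\pi \in NC(k,l)$ as an explicit word in the generators and verify closure under the categorical operations. The remaining steps — the direct verification of the magic condition via $T_Y$ and the invocation of Theorem~4.20 — are essentially formal once the combinatorial generation lemma is secured.
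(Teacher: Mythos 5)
Your proposal is correct and follows essentially the same route as the paper: the fork $Y$ is exactly the one-block partition $\mu\in P(2,1)$ used there, the relation $T_\mu\in Hom(u^{\otimes 2},u)$ inside $O_N^+$ is identified with the magic condition, and the conclusion comes from Tannakian duality via $\langle\mu\rangle=NC$, respectively $\langle NC,P_2\rangle=P$ after adjoining the crossing for $S_N$. The combinatorial generation step you flag as the main obstacle is treated with the same brevity in the paper (``chopping noncrossing partitions into $\mu$-shaped components''), and your extra generators (singleton, adjoint fork) are harmless since they already lie in $\langle\mu\rangle$.
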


\begin{proof}
We use the Tannakian duality results from chapter 4 above:

\medskip

(1) $S_N^+$. According to Theorem 9.2, the algebra $C(S_N^+)$ appears as follows:
$$C(S_N^+)=C(O_N^+)\Big\slash\Big<u={\rm magic}\Big>$$

Consider the one-block partition $\mu\in P(2,1)$. The linear map associated to it is:
$$T_\mu(e_i\otimes e_j)=\delta_{ij}e_i$$

We have $T_\mu=(\delta_{ijk})_{i,jk}$, and we obtain the following formula:
$$(T_\mu u^{\otimes 2})_{i,jk}
=\sum_{lm}(T_\mu)_{i,lm}(u^{\otimes 2})_{lm,jk}
=u_{ij}u_{ik}$$

On the hand, we have as well the following formula:
$$(uT_\mu)_{i,jk}
=\sum_lu_{il}(T_\mu)_{l,jk}
=\delta_{jk}u_{ij}$$

Thus, the relation defining $S_N^+\subset O_N^+$ reformulates as follows:
$$T_\mu\in Hom(u^{\otimes 2},u)\iff u_{ij}u_{ik}=\delta_{jk}u_{ij},\forall i,j,k$$

The condition on the right being equivalent to the magic condition, we obtain:
$$C(S_N^+)=C(O_N^+)\Big\slash\Big<T_\mu\in Hom(u^{\otimes 2},u)\Big>$$

By using now the general easiness theory from chapter 7, we conclude that the quantum group $S_N^+$ is indeed easy, with the corresponding category of partitions being:
$$D=<\mu>$$

But this latter category is $NC$, as one can see by ``chopping'' the  noncrossing partitions into $\mu$-shaped components. Thus, we are led to the conclusion in the statement.

\medskip

(2) $S_N$. Here the first part of the proof is similar, leading to the following formula:
$$C(S_N)=C(O_N)\Big\slash\Big<T_\mu\in Hom(u^{\otimes 2},u)\Big>$$

But this shows that $S_N$ is easy, the corresponding category of partitions being:
$$D
=<\mu,P_2>
=<NC,P_2>
=P$$

Alternatively, this latter formula follows directly for the result for $S_N^+$ proved above, via $S_N=S_N^+\cap O_N$, and the functoriality results explained in chapter 7.
\end{proof}

In order to discuss the representations of $S_N^+$, we will need linear independence results for the vectors $\xi_\pi$ associated to the partitions $\pi\in NC$. This is something which is more technical than the previous results for pairings. Let us start with:

\index{fattening of partitions}
\index{shrinking partitions}

\begin{proposition}
We have a bijection $NC(k)\simeq NC_2(2k)$, constructed as follows:
\begin{enumerate}
\item The application $NC(k)\to NC_2(2k)$ is the ``fattening'' one, obtained by doubling all the legs, and doubling all the strings as well.

\item Its inverse $NC_2(2k)\to NC(k)$ is the ``shrinking'' application, obtained by collapsing pairs of consecutive neighbors.
\end{enumerate}
\end{proposition}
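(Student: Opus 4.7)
The plan is to write down both maps explicitly, verify each lands in the advertised set, and then check they are mutually inverse. Label the $2k$ points of a pairing in $NC_2(2k)$ as $1, 1', 2, 2', \ldots, k, k'$ read left to right, so that position $i$ of the original $k$ points corresponds to the consecutive pair $(i, i')$ after doubling. The fattening map $F : NC(k) \to NC_2(2k)$ is defined blockwise: for each block $\{i_1 < i_2 < \cdots < i_r\}$ of $\pi \in NC(k)$, introduce the pairs
\[
\{i_1', i_2\},\ \{i_2', i_3\},\ \ldots,\ \{i_{r-1}', i_r\},\ \{i_r', i_1\},
\]
which for a singleton $\{i\}$ degenerates to $\{i, i'\}$. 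Dually, the shrinking map $S : NC_2(2k) \to NC(k)$ sends $\sigma$ to the partition whose blocks are the connected components of the graph on $\{1, \ldots, k\}$ with an edge between $i$ and $j$ whenever $\sigma$ contains a pair with one endpoint in $\{i, i'\}$ and the other in $\{j, j'\}$.

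The central step is showing that $F(\pi)$ is noncrossing. Within a single block $\{i_1 < \cdots < i_r\}$ the wrap-around pair $\{i_r', i_1\}$ is outermost and encloses the others, which occupy pairwise disjoint consecutive subintervals, so no internal crossings occur. Between two distinct blocks of $\pi$, the noncrossing hypothesis forces either a disjoint-intervals or a nested configuration of the underlying indices, and both cases transport under doubling to the analogous configuration of arcs of $F(\pi)$. The identity $S \circ F = id$ is then immediate: for each block $B$ of $\pi$, the $r$ new pairs added by $F$ form exactly one cycle through the vertices of $B$ in the collapse graph, recovering $B$ as a connected component.

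For the other composition, $F \circ S = id$, I would argue from noncrossingness of $\sigma$ that each vertex $i$ of the shrink-graph has degree exactly two (with the self-loop case giving an isolated vertex), so the connected components are cycles, and the cyclic order along such a cycle is forced by non-crossing to coincide with the linear order of its vertices; the only assignment of primed versus unprimed labels on each pair compatible with non-crossing is then exactly the one $F$ produces. Alternatively, once $F$ is shown to be well-defined and injective, bijectivity follows from $|NC_2(2k)| = C_k$ (Proposition 5.10) together with the classical Catalan count $|NC(k)| = C_k$. The main obstacle is the bookkeeping in the between-blocks case of the noncrossingness verification for $F$: a block $B_2$ nested inside a gap of $B_1$ must be shown to sit not merely inside the outermost arc of $B_1$ but inside the specific internal arc $\{i_\ell', i_{\ell+1}\}$ of $B_1$ that faces that gap, which requires careful tracking of which $2k$ positions correspond to primed versus unprimed labels.
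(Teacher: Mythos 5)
Your proof is correct and is essentially the paper's argument in expanded form: the paper's own proof is a one-line assertion that the two compositions are inverse (with the remark that fattening needs noncrossingness), and you have simply supplied the explicit pair pattern, the noncrossingness check, and the verification that $S\circ F$ and $F\circ S$ are the identity. One small caution: your fallback via the count $|NC(k)|=C_k$ is external to the paper, since the paper obtains that count precisely from this proposition, so the direct argument you sketch (degree-two components, cycle order forced by noncrossingness, primed/unprimed labels forced) is the one to retain.
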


\begin{proof}
This is something elementary, coming from the fact that both compositions of the operations in the statement are the identity, that we know from chapter 8.
\end{proof}

The point now is that we have the following result, due to Jones \cite{jo1}:

\index{Temperley-Lieb algebra}

\begin{theorem}
Consider the Temperley-Lieb algebra of index $N\geq4$, defined as 
$$TL_N(k)=span(NC_2(k,k))$$
with product given by the rule $\bigcirc=N$, when concatenating. 
\begin{enumerate}
\item We have a representation $i:TL_N(k)\to B((\mathbb C^N)^{\otimes k})$, given by $\pi\to T_\pi$.

\item $Tr(T_\pi)=N^{loops(<\pi>)}$, where $\pi\to<\pi>$ is the closing operation.

\item The linear form $\tau=Tr\circ i:TL_N(k)\to\mathbb C$ is a faithful positive trace.

\item The representation $i:TL_N(k)\to B((\mathbb C^N)^{\otimes k})$ is faithful.
\end{enumerate}
In particular, the vectors $\left\{\xi_\pi|\pi\in NC(k)\right\}\subset(\mathbb C^N)^{\otimes k}$ are linearly independent.
\end{theorem}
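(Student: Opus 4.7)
The plan is to prove (1)--(4) in sequence and then extract the linear independence statement. For (1), I would check that the assignment $\pi\to T_\pi$ respects the multiplicative structure of $TL_N(k)$. The product in $TL_N(k)$ is vertical concatenation with each closed loop collapsed to a factor $N$, and this matches exactly the categorical composition rule $T_\pi T_\sigma = N^{c(\pi,\sigma)}T_{[^\sigma_\pi]}$ from Proposition 4.27, where $c(\pi,\sigma)$ counts precisely the middle components that become closed loops. So (1) is a direct transcription. For (2), the computation is immediate from the definition of $T_\pi$:
$$Tr(T_\pi)=\sum_{i_1,\ldots,i_k}\delta_\pi\begin{pmatrix}i_1&\ldots&i_k\\ i_1&\ldots&i_k\end{pmatrix}$$
and the multi-indices $i$ for which this Kronecker symbol equals $1$ are precisely those that are constant on each connected component of the closed diagram $<\pi>$, obtained by gluing top to bottom; each component can carry any of $N$ labels independently, yielding $N^{\mathrm{loops}(<\pi>)}$.

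For (3), the trace property $\tau(xy)=\tau(yx)$ follows from cyclicity of $Tr$ on matrices via the morphism $i$. Positivity $\tau(x^*x)\geq 0$ follows from $T_\pi^*=T_{\pi^*}$, again from Proposition 4.27, which makes $i$ a $*$-representation, so $\tau(x^*x)=Tr(i(x)^*i(x))\geq 0$. The hard step, and the main obstacle, is \emph{faithfulness} of $\tau$ for $N\geq 4$. My plan here is to argue via the Gram matrix of $NC_2(k,k)$ under $\tau$: its entries are $\tau(\pi^*\sigma)=N^{\mathrm{loops}(<\pi^*\sigma>)}$, and one needs to show this matrix is positive definite. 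I would proceed by recursion on $k$, peeling off a through-string or a cup at the boundary to obtain a block decomposition that expresses the Gram matrix at level $k$ in terms of the one at level $k-1$ together with a residual block governed by a Jones-Wenzl type projection. The condition $N\geq 4$, i.e. loop parameter above the critical Jones value, ensures that each residual block is strictly positive; equivalently, one may compute the determinant by a Lindstr\"om-type argument, as was done in Theorem 5.7 for general partitions, and verify nonvanishing for $N\geq 4$.

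Given (3), part (4) is automatic: if $i(x)=0$ then $\tau(x^*x)=Tr(i(x)^*i(x))=0$, and faithfulness of $\tau$ forces $x=0$. Finally, the linear independence of $\{\xi_\pi\mid\pi\in NC(k)\}$ follows by transporting the problem into $TL_N$ via the fattening bijection of Proposition 9.6. Each $\pi\in NC(k)$ fattens to a pairing $\widetilde{\pi}\in NC_2(2k)$, under which the vector $\xi_\pi=T_\pi(1)\in(\mathbb{C}^N)^{\otimes k}$ corresponds, after suitable reindexing of strands, to a half-diagram in $TL_N$; the Gram matrix $G_k(\pi,\sigma)=N^{|\pi\vee\sigma|}$ then coincides with the trace matrix $\tau(\widetilde{\pi}^*\widetilde{\sigma})$ up to the same reindexing. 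Positive-definiteness of the latter, from (3)--(4), yields $\det G_k\neq 0$, hence the $\xi_\pi$ are linearly independent.
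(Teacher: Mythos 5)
Your outline matches the paper's plan almost exactly: parts (1), (2), (4) are the same direct verifications and positivity deduction, and the substance lies in (3). Where the paper defers the faithfulness of $\tau$ entirely to Jones's paper \cite{jo1}, you propose to actually argue it, and your Jones--Wenzl/block-decomposition route is correct: at loop parameter $N\geq 4$ (indeed already $\geq 2$) the Jones--Wenzl traces are strictly positive, so the Markov trace is positive definite. The alternative you sketch, ``compute the determinant by a Lindstr\"om-type argument, as was done in Theorem 5.7,'' does not go through, however. The factorization $G_k=A_kL_k$ in Theorem 5.7 rests on the expansion $N^{|\pi\vee\sigma|}=\sum_{\tau\geq\pi\vee\sigma}N(N-1)\cdots(N-|\tau|+1)$ over the \emph{full} partition lattice $P(k)$ and on refinement-order triangularity of $A_k$ and $L_k$ there; restricting the Gram matrix to the noncrossing pairings $NC_2(k,k)$ destroys that structure, because the restriction is not compatible with the triangular factorization on the ambient poset. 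The actual Temperley--Lieb Gram determinant (the meander determinant, cf.\ \cite{di1}) has a completely different shape, a product of Chebyshev polynomial values, and is proved by Jones--Wenzl or recursive methods, not by a Lindstr\"om decomposition. So that alternative should be dropped; only the Jones--Wenzl route you gave first is viable.

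The second point concerns the final fattening step. Your assertion that $G_k(\pi,\sigma)=N^{|\pi\vee\sigma|}$ ``coincides with the trace matrix $\tau(\widetilde\pi^*\widetilde\sigma)$ up to the same reindexing'' is not correct as stated: what actually happens, and what Proposition 9.15 records, is a parameter change. For $\pi,\sigma\in NC(k)$ with fattenings $\widetilde\pi,\widetilde\sigma\in NC_2(2k)$ one has $|\widetilde\pi\vee\widetilde\sigma|=k-|\pi|-|\sigma|+2|\pi\vee\sigma|$, so the $NC(k)$ Gram matrix at parameter $N$ agrees, after a diagonal conjugation and an overall scalar, with the $NC_2(2k)$ Gram matrix at parameter $\sqrt{N}$, i.e.\ with the TL trace form at loop parameter $\sqrt{N}$, not $N$. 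This parameter shift is precisely why the correct threshold is $N\geq 4$ (that is, $\sqrt{N}\geq 2$) rather than $N\geq 2$, and why parts (1)--(4) alone (stated at loop parameter $N$) do not literally imply the final assertion without it. The paper's own ``this follows from (4), by fattening'' is equally terse on this point, so it is not a gap unique to your write-up; but if you spell the argument out, the change of parameter must appear, and ``up to reindexing'' should be replaced by the exact relation from Proposition 9.15.
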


\begin{proof}
All this is quite standard, but advanced, the idea being as follows:

\medskip

(1) This is clear from the categorical properties of $\pi\to T_\pi$.

\medskip

(2) This follows indeed from the following computation:
\begin{eqnarray*}
Tr(T_\pi)
&=&\sum_{i_1\ldots i_k}\delta_\pi\binom{i_1\ldots i_k}{i_1\ldots i_k}\\
&=&\#\left\{i_1,\ldots,i_k\in\{1,\ldots,N\}\Big|\ker\binom{i_1\ldots i_k}{i_1\ldots i_k}\geq\pi\right\}\\
&=&N^{loops(<\pi>)}
\end{eqnarray*}

(3) The traciality of $\tau$ is clear. Regarding now the faithfulness, this is something well-known, and we refer here to Jones' paper \cite{jo1}.

\medskip

(4) This follows from (3) above, via a standard positivity argument. As for the last assertion, this follows from (4), by fattening the partitions.
\end{proof}

Alternatively, the linear independence of the vectors $\xi_\pi$ with $\pi\in NC(k)$, that we will need in what follows, comes the following result of Di Francesco \cite{dif}:

\index{meander determinant}
\index{Gram determinant}

\begin{theorem}
The determinant of the Gram matrix for $S_N^+$ is given by
$$\det(G_{kN})=(\sqrt{N})^{a_k}\prod_{r=1}^kP_r(\sqrt{N})^{d_{kr}}$$
where $P_r$ are the Chebycheff polynomials, given by
$$P_0=1\quad,\quad 
P_1=X\quad,\quad 
P_{r+1}=XP_r-P_{r-1}$$
and $d_{kr}=f_{kr}-f_{k,r+1}$, with $f_{kr}$ being the following numbers, depending on $k,r\in\mathbb Z$,
$$f_{kr}=\binom{2k}{k-r}-\binom{2k}{k-r-1}$$
with the convention $f_{kr}=0$ for $k\notin\mathbb Z$, and where $a_k$ are the following numbers:
$$a_k=\sum_{\pi\in P(k)}(2|\pi|-k)$$
In particular, $\left\{\xi_\pi|\pi\in NC(k)\right\}\subset(\mathbb C^N)^{\otimes k}$ are linearly independent at $N\geq4$.
\end{theorem}

\begin{proof}
This looks very similar to the result for $O_N^+$ from chapter 8, also due to Di Francesco, and can be in fact deduced from that result, by shrinking the partitions, according to the shrinking formulae explained in chapter 8. See \cite{dif}.
\end{proof}

Long story short, we need to know that the vectors $\xi_\pi$ with $\pi\in NC(k)$ are linearly independent at $N\geq4$, and this is something non-trivial, but proofs of this fact do exist, coming from the work of Jones \cite{jo1} and Di Francesco \cite{dif}, so the result holds. There are some other proofs as well, but none is any simpler. Be said in passing, I can feel that you might say at this point that you would like a full, simple proof, we're not mathematical physicists, right. To which I would answer, welcome to mathematical physics.

\bigskip

We can work out now the representation theory of $S_N^+$, as follows:

\index{fusion rules}
\index{Clebsch-Gordan rules}
\index{quantum permutation group}
\index{main character}

\begin{theorem}
The quantum groups $S_N^+$ with $N\geq4$ have the following properties:
\begin{enumerate}
\item The moments of the main character are the Catalan numbers:
$$\int_{S_N^+}\chi^k=C_k$$

\item The fusion rules for representations are as follows, exactly as for $SO_3$:
$$r_k\otimes r_l=r_{|k-l|}+r_{|k-l|+1}+\ldots+r_{k+l}$$

\item The dimensions of the irreducible representations are given by
$$\dim(r_k)=\frac{q^{k+1}-q^{-k}}{q-1}$$
where $q,q^{-1}$ are the roots of $X^2-(N-2)X+1=0$.
\end{enumerate}
\end{theorem}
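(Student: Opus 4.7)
The plan is to follow the same template used for $O_N^+$ in Theorem 5.13, adapted to the $SO_3$-type (rather than $SU_2$-type) fusion rules that are forced on us by the fact that here $|NC(k)| = C_k$ appears directly as a count of noncrossing partitions, rather than of noncrossing pairings. The three ingredients I would draw on are: the easiness of $S_N^+$ from Theorem 9.5 (category $NC$), the linear independence of $\{\xi_\pi : \pi \in NC(k)\}$ for $N \geq 4$ from Theorem 9.7, and the recursive Frobenius/Peter-Weyl argument from the proof of Theorem 5.13.

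For (1), I would argue as follows. By Peter-Weyl we have $\int_{S_N^+} \chi^k = \dim \mathrm{Fix}(u^{\otimes k})$. By easiness (Theorem 9.5) with $D = NC$, this fixed space is spanned by $\{\xi_\pi : \pi \in NC(k)\}$, and by Theorem 9.7 (applied at $N \geq 4$) these vectors are linearly independent. Hence $\int_{S_N^+} \chi^k = |NC(k)| = C_k$, which gives (1). As a side remark, this identifies the asymptotic character law with the free Poisson (Marchenko–Pastur) law of parameter $1$.

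For (2), the idea is to construct by recurrence a sequence $r_0, r_1, r_2, \ldots$ of pairwise inequivalent irreducible self-adjoint representations satisfying $r_0 = 1$, $u = 1 + r_1$, and the $SO_3$ Clebsch–Gordan relation $r_1 \otimes r_k = r_{k-1} + r_k + r_{k+1}$. The trivial subrepresentation in $u$ comes from the fixed vector $\xi = (1,\ldots,1)$ associated to the one-block partition (Theorem 9.5). Assume $r_0, \ldots, r_k$ are built. By Frobenius reciprocity together with the inductive fusion rule $r_1 \otimes r_{k-1} \supset r_k$, we get $r_1 \otimes r_k \supset r_{k-1}$; similarly $r_1 \otimes r_k \supset r_k$ follows from $r_k \subset r_k \otimes r_1$ which in turn comes from the fact that $r_1 \subset r_k \otimes r_k$ (checked inductively against the fusion rules). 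This lets us define a formal virtual character $r_{k+1} := r_1 \otimes r_k - r_{k-1} - r_k$, and the fusion rules force it to be a genuine representation. To show it is irreducible and new, one decomposes $u^{\otimes k}$ into $\sum c_i r_i$ using the recurrence; Peter-Weyl gives $\sum c_i^2 \leq \dim \mathrm{End}(u^{\otimes k})$, and by (1) together with Theorem 9.7 the right-hand side equals exactly $C_k$. On the other hand the combinatorics of the $SO_3$ Clebsch–Gordan rule gives $\sum c_i^2 = C_k$ (the number of walks of length $k$ on $\mathbb{N}$ starting and ending at $0$). Equality forces irreducibility and pairwise non-equivalence at each stage, and since every irreducible must appear in some $u^{\otimes k}$ by Peter-Weyl, this exhausts $\mathrm{Irr}(S_N^+)$.

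For (3), once the fusion rules are established, the dimensions $d_k = \dim r_k$ satisfy the recurrence $d_{k+1} = (N-2) d_k - d_{k-1}$ coming from $d_1 d_k = d_{k-1} + d_k + d_{k+1}$ with $d_0 = 1$ and $d_1 = N-1$. Setting $q + q^{-1} = N-2$, i.e.\ $q^2 - (N-2)q + 1 = 0$, one checks directly that $d_k = (q^{k+1} - q^{-k})/(q-1)$ solves this recurrence with the correct initial data, as $d_0 = 1$ and $d_1 = (q^2 - q^{-1})/(q-1) = q + 1 + q^{-1} = N - 1$. The main obstacle in this program is the step in (2) where one has to verify that the combinatorial identity $\sum c_i^2 = C_k$ really does follow from the $SO_3$ Clebsch–Gordan rule, matching the easiness upper bound. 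Once this matching is in place, irreducibility, the fusion rules, and the dimension formula all follow by the now-standard recursive argument.
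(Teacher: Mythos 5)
Your parts (1) and (3) are fine and coincide with the paper's argument: (1) is exactly easiness for $D=NC$ plus the Temperley--Lieb linear independence (via the fattening $NC(k)\simeq NC_2(2k)$), and (3) is the same recurrence $d_{k+1}=(N-2)d_k-d_{k-1}$ solved with $q+q^{-1}=N-2$. The problem is in the step of (2) that you yourself single out as the main obstacle, and as stated it is false. You claim that $\dim\mathrm{End}(u^{\otimes k})=C_k$ and that the $SO_3$ Clebsch--Gordan combinatorics gives $\sum_i c_i^2=C_k$. Neither is correct. Since $u=\bar u$, Frobenius duality gives $\mathrm{End}(u^{\otimes k})\simeq Fix(u^{\otimes 2k})$, so by (1) and Theorem 9.7 the correct value is $\dim\mathrm{End}(u^{\otimes k})=|NC(k,k)|=|NC(2k)|=C_{2k}$, not $C_k$. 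Likewise $\sum_i c_i^2$ is the multiplicity of the trivial representation in $u^{\otimes 2k}$, hence equals $C_{2k}$; the quantity counted by length-$k$ returning walks on $\mathbb N$ is only $c_0$, the multiplicity of the trivial representation in $u^{\otimes k}$. Already at $k=1$ your identity fails: $u=r_0+r_1$ gives $\sum_i c_i^2=2$, while $C_1=1$, so with your numbers the chain $\sum_ic_i^2\leq\dim\mathrm{End}(u)\leq C_1$ is contradictory rather than an equality.

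The repair is immediate and then your scheme closes: the upper bound is $\sum_ic_i^2\leq\dim\mathrm{End}(u^{\otimes k})\leq C_{2k}$, and on the $SO_3$ side one has $\sum_ic_i^2=C_{2k}$ because $1+\chi_1$ on $SO_3$ is the square of the fundamental $SU_2$ character, so its moments are the Catalan numbers and in particular $\int_{SO_3}(1+\chi_1)^{2k}=C_{2k}$; equality then forces irreducibility and non-equivalence exactly as in the proof of Theorem 5.13. With this correction your route is essentially the paper's: the paper proves (2) by transporting the character algebra of $SO_3$ through the morphism $\Psi:\chi_1\to f-1$, and then ``integrating characters'', i.e.\ using the moment formula of (1) to compute the inner products $\int f_kf_l$, which match those of $SO_3$ and yield irreducibility by the same induction; your counting of $\sum_ic_i^2$ is just an aggregated form of that character integration. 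Also, your intermediate containments ($r_1\subset r_k\otimes r_k$, etc.) should be justified the same way, by integrating the polynomial characters $f_k$ against the known moments of $f$, rather than ``against the fusion rules'', which at that stage of the induction are not yet available.
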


\begin{proof}
The proof, from \cite{ba2}, based on Theorems 9.8 or 9.9, goes as follows:

\medskip

(1) We have indeed the following computation, coming from the $SU_2$ computations from chapter 5, and from Theorem 9.6, Proposition 9.7, and Theorems 9.8 or 9.9:
\begin{eqnarray*}
\int_{S_N^+}\chi^k
&=&\dim(Fix(u^{\otimes k}))\\
&=&|NC(k)|\\
&=&|NC_2(2k)|\\
&=&C_k
\end{eqnarray*}

(2) This is standard, by using the formula in (1), and the known theory of $SO_3$. Let $A=span(\chi_k|k\in\mathbb N)$ be the algebra of characters of $SO_3$. We can define a morphism as follows, where $f$ is the character of the fundamental representation of $S_N^+$:
$$\Psi:A\to C(S_N^+)\quad,\quad 
\chi_1\to f-1$$

The elements $f_k=\Psi(\chi_k)$ verify then the following formulae: 
$$f_kf_l=f_{|k-l|}+f_{|k-l|+1}+\ldots+f_{k+l}$$

We prove now by recurrence that each $f_k$ is the character of an irreducible corepresentation $r_k$ of $C(S_N^+)$, non-equivalent to $r_0,\ldots,r_{k-1}$. At $k=0,1$ this is clear, so assume that the result holds at $k-1$. By integrating characters we have, exactly as for $SO_3$:
$$r_{k-2},r_{k-1}\subset r_{k-1}\otimes r_1$$

Thus there exists a certain corepresentation $r_k$ such that:
$$r_{k-1}\otimes r_1=r_{k-2}+r_{k-1}+r_k$$

Once again by integrating characters, we conclude that $r_k$ is irreducible, and non-equivalent to $r_1,\ldots,r_{k-1}$, as for $SO_3$, which proves our claim. Finally, since any irreducible representation of $S_N^+$ must appear in some tensor power of $u$, and we have a formula for decomposing each $u^{\otimes k}$ into sums of representations $r_l$, we conclude that these representations $r_l$ are all the irreducible representations of $S_N^+$.

\medskip

(3) From the Clebsch-Gordan rules we have, in particular:
$$r_kr_1=r_{k-1}+r_k+r_{k+1}$$

We are therefore led to a recurrence, and the initial data being $\dim(r_0)=1$ and $\dim(r_1)=N-1=q+1+q^{-1}$, we are led to the following formula:
$$\dim(r_k)=q^k+q^{k-1}+\ldots+q^{1-k}+q^{-k}$$

In more compact form, this gives the formula in the statement.
\end{proof}

The above result is quite surprising, and raises a massive number of questions. We would like to better understand the relation with $SO_3$, and more generally see what happens at values $N=n^2$ with $n\geq2$, and also compute the law of $\chi$, and so on.

\section*{9c. Twisted extension}

As a first topic to be discussed, one way of understanding the relation with $SO_3$ comes from noncommutative geometry considerations. We recall that, according to the general theory from chapter 1, each finite dimensional $C^*$-algebra $A$ can be written as $A=C(F)$, with $F$ being a ``finite quantum space''. To be more precise, we have:

\index{finite quantum space}

\begin{definition}
A finite quantum space $F$ is the abstract dual of a finite dimensional $C^*$-algebra $A$, according to the following formula:
$$C(F)=A$$
The number of elements of such a space is by definition the number $|F|=\dim A$. By decomposing the algebra $A$, we have a formula of the following type:
$$C(F)=M_{n_1}(\mathbb C)\oplus\ldots\oplus M_{n_k}(\mathbb C)$$
With $n_1=\ldots=n_k=1$ we obtain in this way the space $F=\{1,\ldots,k\}$. Also, when $k=1$ the equation is $C(F)=M_n(\mathbb C)$, and the solution will be denoted $F=M_n$.
\end{definition}

In order to talk about the quantum symmetry group $S_F^+$, we must use universal coactions. As in Proposition 9.3, we must endow our space $F$ with its counting measure:

\begin{definition}
We endow each finite quantum space $F$ with its counting measure, corresponding as the algebraic level to the integration functional
$$tr:C(F)\to B(l^2(F))\to\mathbb C$$
obtained by applying the regular representation, and then the normalized matrix trace.
\end{definition}

To be more precise, consider the algebra $A=C(F)$, which is by definition finite dimensional. We can make act $A$ on itself, by left multiplication:
$$\pi:A\to\mathcal L(A)\quad,\quad 
a\to(b\to ab)$$

The target of $\pi$ being a matrix algebra, $\mathcal L(A)\simeq M_N(\mathbb C)$ with $N=\dim A$, we can further compose with the normalized matrix trace, and we obtain $tr$:
$$tr=\frac{1}{N}\,Tr\circ\pi$$

Let us study the quantum group actions $G\curvearrowright F$. We denote by $\mu,\eta$ the multiplication and unit map of the algebra $C(F)$. Following \cite{ba2}, \cite{wa2}, we first have:

\begin{proposition}
Consider a linear map $\Phi:C(F)\to C(F)\otimes C(G)$, written as
$$\Phi(e_i)=\sum_je_j\otimes u_{ji}$$
with $\{e_i\}$ being a linear space basis of $C(F)$, orthonormal with respect to $tr$.
\begin{enumerate}
\item $\Phi$ is a linear space coaction $\iff$ $u$ is a corepresentation.

\item $\Phi$ is multiplicative $\iff$ $\mu\in Hom(u^{\otimes 2},u)$.

\item $\Phi$ is unital $\iff$ $\eta\in Hom(1,u)$.

\item $\Phi$ leaves invariant $tr$ $\iff$ $\eta\in Hom(1,u^*)$.

\item If these conditions hold, $\Phi$ is involutive $\iff$ $u$ is unitary.
\end{enumerate}
\end{proposition}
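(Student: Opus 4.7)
The plan is to treat each of the five equivalences as a direct translation of a condition on $\Phi$ into a condition on the matrix $u=(u_{ji})$, by expanding everything on the basis $\{e_i\}$ and invoking the definitions from Section~3. Nothing conceptually new is needed: this is bookkeeping, and the only real care is to keep the indices matched consistently with the formula $\Phi(e_i)=\sum_j e_j\otimes u_{ji}$.

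For (1), I would compute both coaction axioms on a basis vector. The equation $(\Phi\otimes id)\Phi(e_i)=(id\otimes\Delta)\Phi(e_i)$ expands to
$$\sum_{j,k}e_k\otimes u_{kj}\otimes u_{ji}=\sum_j e_j\otimes\Delta(u_{ji}),$$
and matching coefficients of $e_k$ gives $\Delta(u_{ki})=\sum_j u_{kj}\otimes u_{ji}$, which is precisely the comultiplicativity axiom of Definition~3.1. Similarly $(id\otimes\varepsilon)\Phi=id$ unfolds to $\varepsilon(u_{ji})=\delta_{ji}$. The remaining antipode condition $S(u_{ji})=u_{ij}^{*}$ is automatic as soon as $u$ is unitary, so this will be picked up in (5) once the other properties are in place. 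For (2) and (3) the proof is a one-line unpacking: $\mu\in Hom(u^{\otimes 2},u)$ says $u\circ\mu=\mu\circ u^{\otimes 2}$ acting on $C(F)\otimes C(F)$, and evaluating at $e_i\otimes e_j$ and comparing coefficients gives exactly $\Phi(e_ie_j)=\Phi(e_i)\Phi(e_j)$; likewise $\eta\in Hom(1,u)$ says $u(\eta\otimes 1_A)=\eta\otimes 1_A$, which is the unitality $\Phi(1)=1\otimes 1$.

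For (4), the idea is that under the orthonormal basis $\{e_i\}$ with respect to the scalar product $\langle a,b\rangle=tr(ab^{*})$, the trace is (up to normalization) the adjoint of $\eta$. Expanding the invariance condition $(tr\otimes id)\Phi(e_i)=tr(e_i)\cdot 1_A$ gives $\sum_j tr(e_j)\,u_{ji}=tr(e_i)\cdot 1_A$, and recognizing $tr(e_j)=\langle\eta(1),e_j\rangle$ rewrites this exactly as the intertwining condition $u^{*}\eta=\eta\otimes 1_A$, i.e.\ $\eta\in Hom(1,u^{*})$. For (5), assuming the previous four properties hold, the involution on $C(F)$ can be transported through the basis and the compatibility $\Phi(e_i^{*})=\Phi(e_i)^{*}$ becomes a relation between $u$ and $\bar u$ which, in conjunction with $\mu\in Hom(u^{\otimes 2},u)$ applied to $e_i\otimes e_i^{*}$, collapses to $uu^{*}=u^{*}u=1$.

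The main obstacle, as already indicated, is not mathematical depth but the careful handling of conventions: in particular the ordering of indices in $\Phi(e_i)=\sum_je_j\otimes u_{ji}$ (which forces the translated axiom to read $\Delta(u_{ki})=\sum_j u_{kj}\otimes u_{ji}$, matching Definition~3.1 only after a relabelling), and the identification of $tr$ with the adjoint of $\eta$ that makes (4) work out cleanly. Once these bookkeeping points are fixed, each of (1)--(5) reduces to a short computation on the basis, and there is no substantial step that requires a separate argument.
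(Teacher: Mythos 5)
Your proposal is correct and follows essentially the same approach as the paper: expand $\Phi$ on the orthonormal basis, translate each condition on $\Phi$ into an equality of matrix coefficients of $u$, and recognize the result as the corresponding intertwining relation. The only small imprecision is in~(4): as written, $tr(e_j)=\langle\eta(1),e_j\rangle$ is off by a complex conjugation (since $\langle\eta(1),e_j\rangle=tr(e_j^*)=\overline{tr(e_j)}$); the paper handles this by applying the involution to the equation $\sum_j tr(e_j)u_{ji}=tr(e_i)1$ before reading it as $u^*\eta=\eta$, and you clearly have the same mechanism in mind when you flag the ``identification of $tr$ with the adjoint of $\eta$'' as the bookkeeping point, so this does not affect the validity of the argument.
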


\begin{proof}
This is a bit similar to the proof of Proposition 9.3, as follows:

\medskip

(1) There are two axioms to be processed here. First, we have:
\begin{eqnarray*}
(id\otimes\Delta)\Phi=(\Phi\otimes id)\Phi
&\iff&\sum_je_j\otimes\Delta(u_{ji})=\sum_k\Phi(e_k)\otimes u_{ki}\\
&\iff&\sum_je_j\otimes\Delta(u_{ji})=\sum_{jk}e_j\otimes u_{jk}\otimes u_{ki}\\
&\iff&\Delta(u_{ji})=\sum_ku_{jk}\otimes u_{ki}
\end{eqnarray*}

As for the axiom involving the counit, here we have as well, as desired:
\begin{eqnarray*}
(id\otimes\varepsilon)\Phi=id
&\iff&\sum_j\varepsilon(u_{ji})e_j=e_i\\
&\iff&\varepsilon(u_{ji})=\delta_{ji}
\end{eqnarray*}

(2) We have the following formula:
\begin{eqnarray*}
\Phi(e_i)
&=&\sum_je_j\otimes u_{ji}\\
&=&\left(\sum_{ij}e_{ji}\otimes u_{ji}\right)(e_i\otimes 1)\\
&=&u(e_i\otimes 1)
\end{eqnarray*}

By using this formula, we obtain the following identity:
\begin{eqnarray*}
\Phi(e_ie_k)
&=&u(e_ie_k\otimes 1)\\
&=&u(\mu\otimes id)(e_i\otimes e_k\otimes 1)
\end{eqnarray*}

On the other hand, we have as well the following identity, as desired:
\begin{eqnarray*}
\Phi(e_i)\Phi(e_k)
&=&\sum_{jl}e_je_l\otimes u_{ji}u_{lk}\\
&=&(\mu\otimes id)\sum_{jl}e_j\otimes e_l\otimes u_{ji}u_{lk}\\\
&=&(\mu\otimes id)\left(\sum_{ijkl}e_{ji}\otimes e_{lk}\otimes u_{ji}u_{lk}\right)(e_i\otimes e_k\otimes 1)\\
&=&(\mu\otimes id)u^{\otimes 2}(e_i\otimes e_k\otimes 1)
\end{eqnarray*}

\smallskip

(3) The formula $\Phi(e_i)=u(e_i\otimes1)$ found above gives by linearity $\Phi(1)=u(1\otimes1)$, which shows that $\Phi$ is unital precisely when $u(1\otimes1)=1\otimes1$, as desired.

\medskip

(4) This follows from the following computation, by applying the involution:
\begin{eqnarray*}
(tr\otimes id)\Phi(e_i)=tr(e_i)1
&\iff&\sum_jtr(e_j)u_{ji}=tr(e_i)1\\
&\iff&\sum_ju_{ji}^*1_j=1_i\\
&\iff&(u^*1)_i=1_i\\
&\iff&u^*1=1
\end{eqnarray*}

(5) Assuming that (1-4) are satisfied, and that $\Phi$ is involutive, we have:
\begin{eqnarray*}
(u^*u)_{ik}
&=&\sum_lu_{li}^*u_{lk}\\
&=&\sum_{jl}tr(e_j^*e_l)u_{ji}^*u_{lk}\\
&=&(tr\otimes id)\sum_{jl}e_j^*e_l\otimes u_{ji}^*u_{lk}\\
&=&(tr\otimes id)(\Phi(e_i)^*\Phi(e_k))\\
&=&(tr\otimes id)\Phi(e_i^*e_k)\\
&=&tr(e_i^*e_k)1\\
&=&\delta_{ik}
\end{eqnarray*}

Thus $u^*u=1$, and since we know from (1) that $u$ is a corepresentation, it follows that $u$ is unitary. The proof of the converse is standard too, by using similar tricks.
\end{proof}

Still following \cite{ba2}, \cite{wa2}, we have the following result, extending the basic theory of $S_N^+$ to the present finite noncommutative space setting:

\index{finite quantum space}

\begin{theorem}
Given a finite quantum space $F$, there is a universal compact quantum group $S_F^+$ acting on $F$, leaving the counting measure invariant. We have
$$C(S_F^+)=C(U_N^+)\Big/\Big<\mu\in Hom(u^{\otimes2},u),\eta\in Fix(u)\Big>$$
where $N=|F|$ and where $\mu,\eta$ are the multiplication and unit maps of $C(F)$. For $F=\{1,\ldots,N\}$ we have $S_F^+=S_N^+$. Also, for the space $F=M_2$ we have $S_F^+=SO_3$.
\end{theorem}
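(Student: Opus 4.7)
The plan is to build $S_F^+$ by turning the action axioms into Tannakian relations on $u$, then verify the two special cases by unpacking those relations explicitly.

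First, I would invoke Proposition 9.9 as the backbone. That proposition says that a trace-preserving coaction $\Phi: C(F) \to C(F) \otimes A$ corresponds bijectively to a unitary corepresentation $u = (u_{ij})$ of $A$ satisfying $\mu \in Hom(u^{\otimes 2}, u)$, $\eta \in Fix(u)$, and $\eta \in Fix(u^*)$; under unitarity of $u$, the last condition is redundant, as it is obtained from $\eta \in Fix(u)$ by applying the antipode. Therefore I would define
$$C(S_F^+) = C(U_N^+)\Big/\Big<\mu\in Hom(u^{\otimes 2},u),\eta\in Fix(u)\Big>$$
and observe that, exactly as in the proof of Proposition 4.8, any ideal of $C(U_N^+)$ generated by intertwiner relations of the form $T \in Hom(u^{\otimes k}, u^{\otimes l})$ is a Hopf ideal; hence $\Delta, \varepsilon, S$ descend to $C(S_F^+)$, making it a Woronowicz algebra. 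Universality then follows tautologically from Proposition 9.9.

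Next, for $F = \{1,\dots,N\}$ with its delta basis, $\mu$ is the one-block partition map $T_\mu$ and $\eta$ is the all-ones vector $\xi_|$. The computation done in the proof of Theorem 9.5 shows $\mu \in Hom(u^{\otimes 2}, u)$ is equivalent to $u_{ij}u_{ik} = \delta_{jk} u_{ij}$, which forces the $u_{ij}$ to be pairwise orthogonal projections on each row; the condition $\eta \in Fix(u)$ gives $\sum_i u_{ij} = 1$, so columns sum to $1$, and unitarity of $u$ combined with the row-orthogonality forces rows to sum to $1$ as well. These are exactly the magic relations of Theorem 9.2, giving $S_F^+ = S_N^+$.

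The harder case is $F = M_2$, which I expect to be the main obstacle. The classical action comes from $SU_2 \curvearrowright M_2$ by conjugation, factoring through $SO_3 = PSU_2 = Aut(M_2(\mathbb{C}))$, and this clearly preserves the trace, so one has an inclusion $SO_3 \hookrightarrow S_{M_2}^+$. For the reverse inclusion, I would argue as follows. Given the universal coaction $\Phi: M_2 \to M_2 \otimes C(S_{M_2}^+)$, the relation $\eta \in Fix(u)$ says that $\mathbb{C}\cdot 1$ is a subcorepresentation, so with respect to the trace-orthogonal decomposition $M_2 = \mathbb{C}\cdot 1 \oplus M_2^0$, where $M_2^0$ is the space of traceless self-adjoint matrices (after identifying via $*$-structure), the fundamental corepresentation splits as $u = 1 \oplus v$ with $\dim v = 3$. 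The $*$-preservation of $\Phi$ and the trace give an orthogonal structure on $M_2^0$, forcing $v$ to factor through some $C(G)$ with $G \subset O_3$. The critical remaining step is that the intertwiner $\mu \in Hom(u^{\otimes 2}, u)$ encodes the full algebra product on $M_2$; restricted to $M_2^0$, this product decomposes into the symmetric Jordan part and the antisymmetric Lie bracket, and the Lie bracket being preserved as an intertwiner means $v$ coacts by Lie algebra automorphisms of $\mathfrak{su}_2 \cong \mathbb{R}^3$, i.e. $v$ factors through $C(SO_3) \subset C(O_3)$. Combined with the inclusion $SO_3 \hookrightarrow S_{M_2}^+$ and coamenability of $SO_3$ (so full and reduced algebras agree), this yields $S_{M_2}^+ = SO_3$. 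The subtle point I would need to handle carefully is that this identification happens at the level of the $C^*$-algebras with their Hopf structure, which requires checking that the quotient map $C(S_{M_2}^+) \to C(SO_3)$ is in fact injective on the dense Hopf $*$-subalgebra; this is where Peter-Weyl arguments (comparing characters and using that both sides have the same Catalan number moments, as computed in Theorem 9.8) provide the cleanest finish.
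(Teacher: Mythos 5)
Your argument for the general universality and for $F=\{1,\ldots,N\}$ is fine and matches the paper's (which cites Proposition 9.3 rather than re-deriving the magic relations, but you arrive at the same place). The $F=M_2$ case is where the two routes diverge and where your proposal has a gap.

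The paper's proof of the $M_2$ case is elementary and direct: it shows, by manipulating the relations imposed by $\mu\in Hom(u^{\otimes2},u)$ and $\eta\in Fix(u)$ in the style of the $N=2,3$ cases of Theorem 9.4, that the coordinate algebra $C(S_{M_2}^+)$ is necessarily \emph{commutative}, and then identifies the resulting classical group with $SO_3$ as the trace-preserving automorphism group of $M_2(\mathbb C)$. Commutativity is the crux, and it is established by hand.

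Your Lie-bracket argument does not establish commutativity, and this is the gap. You split $u=1\oplus v$ on $M_2=\mathbb C1\oplus M_2^0$, observe that the Levi--Civita tensor (the cross product on $\mathfrak{su}_2\cong\mathbb R^3$) lies in $Hom(v^{\otimes2},v)$, and then assert that this means ``$v$ coacts by Lie algebra automorphisms of $\mathfrak{su}_2$, i.e. $v$ factors through $C(SO_3)$.'' In the quantum setting, ``$T\in Hom(v^{\otimes2},v)$ with $v$ orthogonal'' is a Tannakian condition on a possibly noncommutative Woronowicz algebra, not a pointwise statement about automorphisms of a Lie algebra. The claim that the universal quantum group preserving the cross product as an intertwiner is the \emph{classical} group $SO_3$ is exactly the kind of ``no hidden noncommutativity'' statement one needs to prove, not assume. (Indeed, $S_4^+=SO_3^{-1}$ is a genuine noncommutative quantum group obtained by twisting a closely related structure, so the classicality here is not obvious a priori.) As written, this step restates the theorem rather than proving it.

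Your backup finish via characters, on the other hand, is a valid route and in fact is the one the paper itself uses one theorem later (Theorem 9.12, for $S_{M_N}^+=PO_N^+$), so it deserves to be promoted from a ``subtle point to handle'' to the main argument: the Temperley--Lieb/Tannakian computation of Theorem 9.11 gives that $S_{M_2}^+$ has Catalan character moments with no input from the claim being proved, $SO_3$ has Catalan moments by Clebsch--Gordan, the conjugation action gives $SO_3\subset S_{M_2}^+$, and Peter--Weyl plus coamenability of $SO_3$ closes the gap. Your citation should be to Theorem 9.11 (Tannakian category $=TL_N$ for $S_F^+$), not to Theorem 9.8, which only covers $S_N^+$. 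If you run this cleanly, the Lie-bracket digression can be dropped entirely.
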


\begin{proof}
This result is from \cite{ba2}, the idea being as follows:

\medskip

(1) This follows from Proposition 9.13 above, by using the standard fact that the complex conjugate of a corepresentation is a corepresentation too.

\medskip

(2) Regarding now the main example, for $F=\{1,\ldots,N\}$ we obtain indeed the quantum permutation group $S_N^+$, due to the abstract result in Proposition 9.3 above.

\medskip

(3) In order to do now the computation for $F=M_2$, we use some standard facts about $SU_2,SO_3$. We have an action by conjugation $SU_2\curvearrowright M_2(\mathbb C)$, and this action produces, via the canonical quotient map $SU_2\to SO_3$, an action $SO_3\curvearrowright M_2(\mathbb C)$. On the other hand, it is routine to check, by using arguments like those in the proof of Theorem 9.4 at $N=2,3$, that any action $G\curvearrowright M_2(\mathbb C)$ must come from a classical group. We conclude that the action $SO_3\curvearrowright M_2(\mathbb C)$ is universal, as claimed.
\end{proof}

Regarding now the representation theory of these generalized quantum permutation groups $S_F^+$, the result here, from \cite{ba2}, is very similar to the one for $S_N^+$, as follows:

\index{Temperley-Lieb algebra}
\index{fusion rules}
\index{Marchenko-Pastur law}

\begin{theorem}
The quantum groups $S_F^+$ have the following properties: 
\begin{enumerate}
\item The associated Tannakian categories are $TL_N$, with $N=|F|$.

\item The main character follows the Marchenko-Pastur law $\pi_1$, when $N\geq4$.

\item The fusion rules for $S_F^+$ with $|F|\geq4$ are the same as for $SO_3$.
\end{enumerate}
\end{theorem}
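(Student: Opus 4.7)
The plan is to prove (1) first, deducing (2) as a moment computation and then (3) by Frobenius--reciprocity bookkeeping exactly as in Theorem 9.8. For (1), I would apply Tannakian duality in the soft form of Section 4. By Theorem 9.10, $S_F^+$ is cut out from $U_N^+$ (with $N=|F|$) by the relations $\mu\in\mathrm{Hom}(u^{\otimes 2},u)$ and $\eta\in\mathrm{Fix}(u)$, so its category is the tensor $*$-category generated by $\mu,\eta$ together with the duality maps $R,R^*$. I would then verify that under the orthonormal-basis identification $C(F)\simeq\mathbb C^N$ coming from the counting trace, the triple $(C(F),\mu,\eta)$ is a \emph{special Frobenius algebra} at parameter $N$: namely $\mu\mu^*=N\cdot\mathrm{id}$ (this is where $N=\dim C(F)$ enters, via $\mathrm{tr}(1)=1$ and the trace-orthonormal basis), together with associativity, unitality and the Frobenius identity $(\mu\otimes\mathrm{id})(\mathrm{id}\otimes\mu^*)=\mu^*\mu=(\mathrm{id}\otimes\mu)(\mu^*\otimes\mathrm{id})$. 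These are precisely the defining relations of the Temperley--Lieb category at loop parameter $N$.

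Granting the Frobenius identities, the tensor category generated by $\mu,\eta,R,R^*$ is a quotient of $TL_N$, and conversely every noncrossing partition $\pi\in NC(k,l)$ can be built by vertical and horizontal concatenation from $\mu,\mu^*,\eta,\eta^*$ (by the standard ``chopping'' argument: cut $\pi$ into elementary cups, caps and three-leg merges). This gives a surjection $TL_N(k,l)\twoheadrightarrow\mathrm{Hom}(u^{\otimes k},u^{\otimes l})$ with image equal to $\mathrm{span}(T_\pi:\pi\in NC(k,l))$. For $N\geq 4$ Theorem 9.7 shows these $T_\pi$ are linearly independent, so the surjection is an isomorphism and (1) holds.

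For (2), once (1) is in place the Peter--Weyl formula from Theorem 8.18 gives
\[
\int_{S_F^+}\chi^k=\dim\mathrm{Fix}(u^{\otimes k})=|NC(k)|=C_k
\]
for $N\geq 4$, and the Catalan numbers are the moments of the Marchenko--Pastur law $\pi_1$ on $[0,4]$ (a standard identification via the moment--cumulant formula: $\pi_1$ is free Poisson of rate $1$, with all free cumulants equal to $1$, so $M_k=\sum_{\pi\in NC(k)}1=C_k$). For (3), I would recycle verbatim the recursion from Theorem 9.8: define the character algebra morphism $\Psi$ from $SO_3$ by $\chi_1\mapsto\chi_u-1$, and prove by induction on $k$ that $f_k=\Psi(\chi_k)$ is the character of an irreducible $r_k$, inequivalent to $r_0,\dots,r_{k-1}$. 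The inductive step uses the Clebsch--Gordan--type inclusion $r_{k-2},r_{k-1}\subset r_{k-1}\otimes r_1$ and the integration of characters, both of which depend only on the moment data $\int\chi^k=C_k$ that we just established.

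The main obstacle is the Frobenius normalization in step (1): for $F=\{1,\dots,N\}$ the map $\mu$ literally coincides with $T_{\sqcup}$ in the standard basis and everything is transparent, but for noncommutative $F$ (e.g.\ $F=M_n$) one must work basis-freely and carefully track the normalization of $\mu^*$ relative to the counting trace to confirm that the loop value is $N=\dim C(F)$ rather than some other invariant. Once this is done, the rest is essentially formal: surjectivity onto $TL_N$ follows from chopping, injectivity is Theorem 9.7, and (2)--(3) are immediate consequences. As a sanity check the already-established case $F=M_2$, where $S_F^+=SO_3$ and $N=4$, sits exactly at the boundary where the $TL_N$ story begins, and recovers the known representation theory of $SO_3$.
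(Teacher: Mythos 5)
Your approach is essentially the paper's, with a useful conceptual repackaging: the paper exhibits the multiplication and unit as diagrams $m=|\cup|$, $u=\cap$ and observes $\mu\mu^*=N\cdot id$, while you identify the same data explicitly as a \emph{special Frobenius algebra} at parameter $N$ in the Tannakian category. Both routes generate $TL_N$ and conclude by the same linear-independence input from Theorem 9.7.

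One step the paper proves explicitly, and which your write-up leaves implicit, is the self-conjugacy $u\sim\bar u$. Since $S_F^+\subset U_N^+$ is constructed from $C(U_N^+)$ (not $C(O_N^+)$), the Tannakian category is a priori a category over \emph{colored} integers, with the duality maps $R,R^*$ living in $Hom(1,u\otimes\bar u)$ and $Hom(1,\bar u\otimes u)$, not in $Hom(1,u\otimes u)$. To assert that this category ``is $TL_N$'' in the uncolored sense, one must first collapse the coloring, and that is exactly what $u\sim\bar u$ does. The paper establishes this by a short computation with a trace-orthonormal multimatrix basis $\{e_i\}$ satisfying $e_i^*=e_{i^*}$, yielding $u_{ji}^*=u_{j^*i^*}$ and hence a permutation intertwiner between $u$ and $\bar u$. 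In your framework this fact does follow formally — a Frobenius algebra object in a rigid tensor $*$-category is self-dual, so the existence of $\mu,\eta$ with the Frobenius and speciality identities already forces $u\sim\bar u$ — but you should state this consequence rather than tacitly treating $R,R^*$ as if they paired $u$ with $u$. Once that is flagged, the ``chopping'' surjection, the injectivity from Theorem 9.7, and the derivations of (2) and (3) from the Catalan moments are exactly as in the paper, and your worry about the loop normalization for noncommutative $F$ is dispatched by the same choice of counting trace and orthonormal basis that proves self-conjugacy.
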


\begin{proof}
Once again this result is from \cite{ba2}, the idea being as follows:

\medskip

(1) Our first claim is that the fundamental representation is equivalent to its adjoint, $u\sim\bar{u}$. Indeed, let us go back to the coaction formula from Proposition 9.13:
$$\Phi(e_i)=\sum_je_j\otimes u_{ji}$$

We can pick our orthogonal basis $\{e_i\}$ to be the stadard multimatrix basis of $C(F)$, so that we have $e_i^*=e_{i^*}$, for a certain involution $i\to i^*$ on the index set. With this convention made, by conjugating the above formula of $\Phi(e_i)$, we obtain:
$$\Phi(e_{i^*})=\sum_je_{j^*}\otimes u_{ji}^*$$

Now by interchanging $i\leftrightarrow i^*$ and $j\leftrightarrow j^*$, this latter formula reads:
$$\Phi(e_i)=\sum_je_j\otimes u_{j^*i^*}^*$$

We therefore conclude, by comparing with the original formula, that we have:
$$u_{ji}^*=u_{j^*i^*}$$

But this shows that we have an equivalence $u\sim\bar{u}$, as claimed. Now with this result in hand, the proof goes as for the proof for $S_N^+$. To be more precise, the result follows from the fact that the multiplication and unit of any complex algebra, and in particular of $C(F)$, can be modelled by the following two diagrams:
$$m=|\cup|\quad,\quad u=\cap$$

Indeed, this is certainly true algebrically, and this is something well-known. As in what regards the $*$-structure, things here are fine too, because our choice for the trace leads to the following formula, which must be satisfied as well:
$$\mu\mu^*=N\cdot id$$

But the above diagrams $m,u$ generate the Temperley-Lieb algebra $TL_N$, as stated.

\medskip

(2) The proof here is exactly as for $S_N^+$, by using moments. To be more precise, according to (1) these moments are the Catalan numbers, which are the moments of $\pi_1$.

\medskip

(3) Once again same proof as for $S_N^+$, by using the fact that the moments of $\chi$ are the Catalan numbers, which naturally leads to the Clebsch-Gordan rules.
\end{proof}

It is quite clear now that our present formalism, and the above results, provide altogether a good and conceptual explanation for our $SO_3$ result regarding $S_N^+$. To be more precise, we can merge and reformulate the above results in the following way:

\begin{theorem}
The quantun groups $S_F^+$ have the following properties: 
\begin{enumerate}
\item For $F=\{1,\ldots,N\}$ we have $S_F^+=S_N^+$.

\item For the space $F=M_N$ we have $S_F^+=PO_N^+=PU_N^+$.

\item In particular, for the space $F=M_2$ we have $S_F^+=SO_3$.

\item The fusion rules for $S_F^+$ with $|F|\geq4$ are independent of $F$.

\item Thus, the fusion rules for $S_F^+$ with $|F|\geq4$ are the same as for $SO_3$.
\end{enumerate}
\end{theorem}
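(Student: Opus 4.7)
The plan is to assemble this theorem from results already in hand, with (1) and (5) essentially as restatements, (3) as a direct corollary of (2) at $N=2$ via the identifications from Section 6, and (4) coming from Theorem 9.11(3). The only substantive new content lies in part (2), the identification $S_{M_N}^+=PO_N^+=PU_N^+$, and this is where I would concentrate the work.

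For part (2), I would first construct explicit adjoint coactions $\Phi:M_N(\mathbb C)\to M_N(\mathbb C)\otimes C(U_N^+)$, given on matrix units by $\Phi(e_{ij})=\sum_{kl}e_{kl}\otimes u_{ki}u_{lj}^*$, and check using Proposition 9.9 that this is indeed a trace-preserving coaction on $M_N$. By the universality in Theorem 9.10, this induces a surjection $C(S_{M_N}^+)\to C(\widetilde{PU_N^+})$ onto the subalgebra generated by the coefficients $u_{ki}u_{lj}^*$, which by definition is $C(PU_N^+)$. The same construction with $u=\bar u$ gives a map $C(S_{M_N}^+)\to C(PO_N^+)$, and since $PO_N^+=PU_N^+$ by Theorem 6.14, these two maps are compatible. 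For the reverse direction, I would exhibit a natural morphism $C(S_{M_N}^+)\to C(PU_N^+)$ via the universal property of $PU_N^+$ (or equivalently, via Tannakian duality): the fundamental corepresentation of $S_{M_N}^+$ is equivalent to $v\otimes\bar v$ where $v$ denotes a free unitary coaction, forcing the relations of $PU_N^+$ to hold.

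The cleanest route, which I would ultimately take, is via Tannakian duality: by Theorem 9.11(1), the tensor category of $S_{M_N}^+$ is the Temperley-Lieb category $TL_N$. On the other hand, computing the category of $PU_N^+$ from the $U_N^+$ category (Theorem 5.1, restricted to alternating colorings) yields the same Temperley-Lieb category $TL_N$, because the matching noncrossing pairings on $(\circ\bullet)^k\to(\circ\bullet)^l$ are in bijection with $NC_2(2k,2l)$ after forgetting colors. Hence $S_{M_N}^+$ and $PU_N^+$ have identical Tannakian categories, and by the uniqueness from Theorem 4.20 they coincide. Part (3) then follows by specialization to $N=2$ together with $PU_2^+=SO_3$ from Theorem 6.18, and part (4) follows either directly from Theorem 9.11(3) or from the fact that the Clebsch-Gordan fusion rules are read off from $TL_N$ independently of the choice of $F$.

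The main obstacle will be verifying carefully the coloring bijection that equates the Tannakian category of $PU_N^+$ with $TL_N$: one must check that the restriction from $\mathcal{NC}_2$ to alternating endpoints, after the fattening/shrinking correspondence of Proposition 9.6, really reproduces the Temperley-Lieb generators $\mu,\eta$ together with their adjoints, and that the dimension count $N=|F|$ matches on both sides (which it does, since $|M_N|=N^2$ but the loop value for $PU_N^+$ is computed using the fundamental of $U_N^+$ on $\mathbb C^N$, giving the Temperley-Lieb index $N$ on the projective side after one accounts for the $u\otimes\bar u$ doubling). Once this diagrammatic matching is nailed down, everything else is bookkeeping.
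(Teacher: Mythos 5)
Your proposal is correct, but for the key part (2) you ultimately take a genuinely different route from the paper. The paper also starts from the adjoint action $PU_N^+\curvearrowright M_N(\mathbb C)$ and the universality of $S_{M_N}^+$, yielding the inclusions $PO_N^+\subset PU_N^+\subset S_{M_N}^+$, but it then closes the argument probabilistically: the main character of $O_N^+$ being semicircular at $N\geq2$, that of $PO_N^+$ is Marchenko-Pastur, which is also the character law of $S_{M_N}^+$ by Theorem 9.11, so the inclusion preserves the law of the main character and must be an isomorphism by Peter-Weyl. You instead finish by Tannakian duality, identifying the concrete tensor category of $PU_N^+$ (spanned by the $T_\pi$ with $\pi\in\mathcal{NC}_2((\circ\bullet)^k,(\circ\bullet)^l)$, i.e. Temperley-Lieb diagrams after shrinking) with the category $TL_{|F|}$ of $S_{M_N}^+$ from Theorem 9.11 (1); this is sound, and it is in fact the same mechanism the paper uses in Theorem 6.14 to prove $PO_N^+=PU_N^+$, so your route gives a uniform, purely diagrammatic treatment, at the price of the bookkeeping you flag, namely checking that under $M_N(\mathbb C)\simeq\mathbb C^N\otimes\overline{\mathbb C^N}$ the structure maps $\mu,\eta$ correspond, up to scalars, to $1\otimes R^*\otimes 1$ and $R$, whereas the paper's character argument avoids this but leans on the probabilistic computations already done. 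Two small cautions: the relevant loop parameter for $S_{M_N}^+$ is $|M_N|=N^2$, matched on the projective side because each shrunk Temperley-Lieb circle fattens into two circles each worth $N$, so your phrase ``Temperley-Lieb index $N$'' should read $N^2$; and your intermediate sentence about a ``reverse direction'' morphism $C(S_{M_N}^+)\to C(PU_N^+)$ points the wrong way (that is the same direction as the surjection you had just constructed), though this is moot since the Tannakian argument you finally adopt supersedes it. Parts (1), (3), (4), (5) are handled essentially as in the paper.
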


\begin{proof}
This is basically a compact form of what has been said above, with a new result added, and with some technicalities left aside:

\medskip

(1) This is something that we know from Theorem 9.14.

\medskip

(2) We know from chapter 4 that the inclusion $PO_N^+\subset PU_N^+$ is an isomorphism, with this coming from the formula $\widetilde{O}_N^+=U_N^+$, but we will actually reprove this result. Consider indeed the standard vector space action $U_N^+\curvearrowright\mathbb C^N$, and then its adjoint action $PU_N^+\curvearrowright M_N(\mathbb C)$. By universality of $S_{M_N}^+$, we have inclusions as follows:
$$PO_N^+\subset PU_N^+\subset S_{M_N}^+$$

On the other hand, the main character of $O_N^+$ with $N\geq2$ being semicircular, the main character of $PO_N^+$ must be Marchenko-Pastur. Thus the inclusion $PO_N^+\subset S_{M_N}^+$ has the property that it keeps fixed the law of main character, and by Peter-Weyl theory we conclude that this inclusion must be an isomorphism, as desired.

\medskip

(3) This is something that we know from Theorem 9.14, and that can be deduced as well from (2), by using the formula $PO_2^+=SO_3$, which is something elementary.

\medskip

(4) This is something that we know from Theorem 9.15.

\medskip

(5) This follows from (3,4), as already pointed out in Theorem 9.15.
\end{proof}

All this is certainly conceptual, but perhaps a bit too abstract. At $N=4$ we can formulate a more concrete result on the subject, by using the following construction:

\index{twisting}

\begin{definition}
$C(SO_3^{-1})$ is the universal $C^*$-algebra generated by the entries of a $3\times 3$ orthogonal matrix $a=(a_{ij})$, with the following relations:
\begin{enumerate}
\item Skew-commutation: $a_{ij}a_{kl}=\pm a_{kl}a_{ij}$, with sign $+$ if $i\neq k,j\neq l$, and $-$ otherwise.

\item Twisted determinant condition: $\Sigma_{\sigma\in S_3}a_{1\sigma(1)}a_{2\sigma(2)}a_{3\sigma(3)}=1$.
\end{enumerate}
\end{definition}

Observe the similarity with the twisting constructions from chapter 7. However, $SO_3$ being not easy, we are not exactly in the Schur-Weyl twisting framework from there. 

\bigskip

Our first task would be to prove that $C(SO_3^{-1})$ is a Woronowicz algebra. This is of course possible, by doing some computations, but we will not need to do these computations, because the result follows from the following theorem, from \cite{bb1}:

\begin{theorem}
We have an isomorphism of compact quantum groups
$$S_4^+=SO_3^{-1}$$
given by the Fourier transform over the Klein group $K=\mathbb Z_2\times\mathbb Z_2$.
\end{theorem}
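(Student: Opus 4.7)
The plan is to build an explicit isomorphism via the classical Fourier transform on $K=\mathbb{Z}_2\times\mathbb{Z}_2$. Identify the set $\{1,2,3,4\}$ with $K$, so that $C(\{1,2,3,4\})\simeq C(K)=\mathbb{C}[K]$ (the group algebra, since $K$ is self-dual). The Fourier transform $F:C(K)\to\mathbb{C}[K]$ sends the delta basis $\{\delta_g\}_{g\in K}$ to the character basis $\{\chi_g\}_{g\in K}$; crucially, the trivial character $\chi_1$ corresponds, up to scalar, to the all-ones vector $\xi$. First I would observe that for any magic unitary $u\in M_4(C(S_4^+))$ we have $u\xi=\xi$ and $u=u^*$, hence $v:=FuF^{-1}$ decomposes as $v=1\oplus a$ for some self-adjoint orthogonal matrix $a\in M_3(C(S_4^+))$ indexed by the three non-trivial characters of $K$.

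Next I would translate the magic conditions on $u$ into relations on $a$. Writing $u_{gh}=\frac14\sum_{r,s\in K}\chi_r(g)\chi_s(h)\,a_{rs}$ (with the convention $a_{11}=1$ absorbing the fixed line), the identity $u_{gh}^2=u_{gh}$ expands into a sum over characters, and after using $u_{gh}=u_{gh}^*$ and the orthogonality $\sum_g\chi_r(g)\chi_s(g)=4\delta_{rs}$, the cross terms organize themselves as skew-commutation relations $a_{ij}a_{kl}=\pm a_{kl}a_{ij}$. The key combinatorial point is that $K$ has the property $\chi_r\chi_s=\chi_{rs}$ with $\chi_r(g)\in\{\pm1\}$, so the sign appearing when one swaps two factors $a_{rs}$ and $a_{r's'}$ is precisely $+$ when $r\neq r'$ and $s\neq s'$, and $-$ otherwise; this matches the sign rule in Definition~9.13 exactly. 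The surviving ``diagonal'' term, coming from the condition $\sum_g u_{gh}=1$ (row sum of the magic matrix), produces precisely the twisted determinant relation $\sum_{\sigma\in S_3}a_{1\sigma(1)}a_{2\sigma(2)}a_{3\sigma(3)}=1$, because the signs in its expansion are again governed by the Klein group multiplication table, and what one reads off is the $S_3$-alternating sum over the three non-trivial characters.

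The converse goes in the same way: given any $a=(a_{ij})_{i,j=1,2,3}$ satisfying the defining relations of $SO_3^{-1}$, I would set $v=1\oplus a$ and $u=F^{-1}vF$, then reverse the Fourier computations to verify that the entries $u_{gh}$ are self-adjoint idempotents summing to $1$ on rows and columns. The skew-commutation relations ensure that all squaring and multiplication identities collapse to the magic form, and the twisted determinant ensures the unit and row-sum normalization. By universality of the two algebras $C(S_4^+)$ and $C(SO_3^{-1})$, this pair of mutually inverse maps on generators defines inverse $*$-algebra isomorphisms. Finally I would check Hopf-algebraic compatibility: because the Fourier transform intertwines the comultiplication $\Delta(u_{ij})=\sum_k u_{ik}\otimes u_{kj}$ with the corresponding comultiplication on $v=1\oplus a$, and because the fixed line $\mathbb{C}\xi$ is a subcomodule on both sides, the induced isomorphism automatically respects $\Delta,\varepsilon,S$.

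The main obstacle is the sign bookkeeping in the middle step: one has to verify that in the Fourier expansion of $u_{gh}^2=u_{gh}$ together with unitality, the pattern of signs $\chi_r(g)\chi_s(g)\chi_{r'}(g)\chi_{s'}(g)$ collapses, after summing over $g\in K$, to precisely the sign rule of Definition~9.13 and to the alternating sign on $S_3$ in the twisted determinant. This is a finite but genuinely tricky character computation, and is really where all the structure is hidden; once it is carried out, the rest of the proof is formal.
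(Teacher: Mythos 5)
Your overall strategy is exactly the paper's: conjugate by the Klein group Fourier matrix, so that magic unitaries $u$ correspond to matrices $v=\mathrm{diag}(1,a)$, and then check that the magic relations on $u$ translate into the defining relations of $SO_3^{-1}$ on $a$, and vice versa (the paper states this verification as routine, just as you defer it). The block decomposition $v=1\oplus a$ from $u\xi=\xi$, $u=\bar u=u^*$, and the Hopf-compatibility remark at the end are all fine.

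However, one concrete step in your accounting of the relations is wrong, and as written it would fail. You claim that the twisted determinant relation $\sum_{\sigma\in S_3}a_{1\sigma(1)}a_{2\sigma(2)}a_{3\sigma(3)}=1$ arises from the row/column sum condition $\sum_g u_{gh}=1$ (and, in the converse direction, that the twisted determinant ``ensures the unit and row-sum normalization''). This cannot be: once $v$ has the block form $1\oplus a$, the sums $\sum_g u_{gh}$ are automatically equal to $1$, since with $u_{gh}=\frac14\bigl(1+\sum_{r,s\neq 1}\chi_r(g)\chi_s(h)a_{rs}\bigr)$ one has $\sum_g\chi_r(g)=0$ for every nontrivial character $r$; so the row/column sum conditions impose no relation whatsoever on $a$ (and, being linear in the generators, they could never produce a cubic relation anyway). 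The twisted determinant must instead be extracted from the idempotency relations $u_{gh}^2=u_{gh}$: writing $L_{gh}=\sum_{r,s\neq1}\chi_r(g)\chi_s(h)a_{rs}$, idempotency reads $L_{gh}^2=3+2L_{gh}$, and Fourier inversion over $(g,h)$ splits this into a norm relation ($\sum_{r,s}a_{rs}^2=3$), anticommutation-type relations for the components with exactly one trivial index, and, for the components with both indices nontrivial, relations of ``twisted cofactor'' type, namely sums of anticommutators of complementary $2\times2$ entries equal to $2a_{\rho\sigma}$. It is these last relations which, combined with the orthogonality of $a$ (coming from $u$ being a magic unitary), are equivalent to the skew-commutation rules together with the twisted determinant condition; conversely, in the direction $SO_3^{-1}\to S_4^+$ the twisted determinant is needed precisely to verify $u_{gh}^2=u_{gh}$, not the row sums. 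So the sign bookkeeping you rightly identify as the heart of the proof has to be organized around the idempotency relations, not around the (vacuous) normalization condition.
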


\begin{proof}
Consider indeed the matrix $a^+=diag(1,a)$, corresponding to the action of $SO_3^{-1}$ on $\mathbb C^4$, and apply to it the Fourier transform over the Klein group $K=\mathbb Z_2\times\mathbb Z_2$: 
$$u=
\frac{1}{4}
\begin{pmatrix}
1&1&1&1\\
1&-1&-1&1\\
1&-1&1&-1\\
1&1&-1&-1
\end{pmatrix}
\begin{pmatrix}
1&0&0&0\\
0&a_{11}&a_{12}&a_{13}\\
0&a_{21}&a_{22}&a_{23}\\
0&a_{31}&a_{32}&a_{33}
\end{pmatrix}
\begin{pmatrix}
1&1&1&1\\
1&-1&-1&1\\
1&-1&1&-1\\
1&1&-1&-1
\end{pmatrix}$$

It is routine to check that this matrix is magic, and vice versa, i.e. that the Fourier transform over $K$ converts the relations in Definition 9.17 into the magic relations in Definition 9.1. Thus, we obtain the identification from the statement.
\end{proof}

Yet another extension of Theorem 9.10, which is however quite technical, comes by looking at the general case $N=n^2$, with $n\geq2$. It is possible indeed to complement Theorem 9.16 above with a general twisting result of the following type:
$$G^+(\widehat{F}_\sigma)=G^+(\widehat{F})^\sigma$$

To be more precise, this formula is valid indeed, for any finite group $F$ and any $2$-cocycle $\sigma$ on it. In the case $F=\mathbb Z_n^2$ with Fourier cocycle on it, this leads to the conclusion that $PO_n^+$ appears as a cocycle twist of $S_{n^2}^+$. For details here, we refer to \cite{bbs}. 

\bigskip

We have the following interesting probabilistic fact, from \cite{bbs} as well:

\begin{theorem}
The following families of variables have the same joint law,
\begin{enumerate}
\item $\{u_{ij}^2\}\in C(O_n^+)$,

\item $\{X_{ij}=\frac{1}{n}\sum_{ab}p_{ia,jb}\}\in C(S_{n^2}^+)$,
\end{enumerate}
where $u=(u_{ij})$ and $p=(p_{ia,jb})$ are the corresponding fundamental corepresentations.
\end{theorem}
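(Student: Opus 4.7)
The plan is to compute both joint moments via the Weingarten integration formula (Theorem 8.24) and verify they coincide. For the left-hand family, write
$$u_{i_1 j_1}^2 \cdots u_{i_k j_k}^2 = u_{\tilde{\imath}_1 \tilde{\jmath}_1} \cdots u_{\tilde{\imath}_{2k} \tilde{\jmath}_{2k}}$$
with $\tilde{\imath} = (i_1, i_1, i_2, i_2, \ldots, i_k, i_k)$ and similarly for $\tilde{\jmath}$. Since $O_n^+$ is easy with category $NC_2$, Weingarten gives
$$\int_{O_n^+} u_{i_1 j_1}^2 \cdots u_{i_k j_k}^2 = \sum_{\pi, \sigma \in NC_2(2k)} \delta_\pi(\tilde{\imath})\, \delta_\sigma(\tilde{\jmath})\, W_{2k,n}(\pi, \sigma).$$
For the right-hand family, $S_{n^2}^+$ is easy with category $NC$ at parameter $n^2$, so expanding $X_{ij} = n^{-1} \sum_{ab} p_{ia,jb}$ and applying Weingarten yields
$$\int_{S_{n^2}^+} X_{i_1 j_1} \cdots X_{i_k j_k} = n^{-k} \sum_{a,b} \sum_{\pi', \sigma' \in NC(k)} \delta_{\pi'}((i_l, a_l)_l)\, \delta_{\sigma'}((j_l, b_l)_l)\, W_{k,n^2}(\pi', \sigma').$$

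The first key step is to re-index the $O_n^+$ sum using the fattening bijection $NC(k) \to NC_2(2k)$ from Proposition 9.6. For $\pi$ the fattening of $\pi' \in NC(k)$, direct inspection shows that within each block $\{\ell_1 < \ldots < \ell_r\}$ of $\pi'$, the pairing $\pi$ cyclically connects $2\ell_s$ to $2\ell_{s+1} - 1$. Since $\tilde{\imath}_{2\ell_s} = i_{\ell_s}$ and $\tilde{\imath}_{2\ell_{s+1}-1} = i_{\ell_{s+1}}$, the condition $\delta_\pi(\tilde{\imath}) = 1$ is equivalent to $i_\ell$ being constant on each block of $\pi'$, i.e.\ $\delta_\pi(\tilde{\imath}) = \delta_{\pi'}(i)$, and likewise for $\tilde{\jmath}$.

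The second key step invokes Proposition 9.15: inverting its formula produces $W_{2k,n}(\pi, \sigma) = n^{-k}\, n^{|\pi'|}\, W_{k,n^2}(\pi', \sigma')\, n^{|\sigma'|}$ under the bijection. On the $S_{n^2}^+$ side, carrying out the free sum over $a$ gives $\sum_a \delta_{\pi'}((i_l, a_l)_l) = \delta_{\pi'}(i)\, n^{|\pi'|}$, since the constraint $\pi' \leq \ker(a)$ admits exactly $n^{|\pi'|}$ solutions, and symmetrically for $b$. Assembling these pieces, both integrals reduce to the same expression
$$n^{-k} \sum_{\pi', \sigma' \in NC(k)} \delta_{\pi'}(i)\, n^{|\pi'|}\, \delta_{\sigma'}(j)\, n^{|\sigma'|}\, W_{k,n^2}(\pi', \sigma'),$$
proving equality of joint moments, hence of joint laws.

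The main obstacle is the bookkeeping around the fattening bijection: namely the combinatorial identity $\delta_\pi(\tilde{\imath}) = \delta_{\pi'}(i)$, and the verification that the diagonal conjugation in Proposition 9.15 delivers precisely the weights $n^{|\pi'|}$, $n^{|\sigma'|}$ needed to absorb the free sums over $a,b$. Conceptually the matching is not a coincidence: it reflects the fact, foreshadowed by Theorem 9.12(2) and the cocycle twisting $PO_n^+ = S_{n^2}^+$ alluded to before the theorem, that $u_{ij}^2$ and $X_{ij}$ are two incarnations of the same projective-type object.
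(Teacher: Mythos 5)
Your proof is correct and takes the same route as the paper: both express the moments via the Weingarten formula and then invoke the Gram-matrix identity of Proposition 9.15 under the fattening/shrinking bijection $NC(k)\leftrightarrow NC_2(2k)$. The paper only writes out the univariate case explicitly and dismisses the joint-moment case as ``similar''; you have supplied the two missing details — the combinatorial identity $\delta_\pi(\tilde{\imath})=\delta_{\pi'}(i)$ under fattening, and the counting $\sum_a\delta_{\pi'}((i_l,a_l)_l)=\delta_{\pi'}(i)\,n^{|\pi'|}$ — both of which check out.
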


\begin{proof}
As explained in \cite{bbs}, this result can be obtained via the above twisting methods. An alternative approach, also from \cite{bbs}, is by using the Weingarten formula for our two quantum groups, and the shrinking operation $\pi\to\pi'$. Indeed, we have:
\begin{eqnarray*}
\int_{O_n^+}u_{ij}^{2k}&=&\sum_{\pi,\sigma\in NC_2(2k)}W_{2k,n}(\pi,\sigma)\\
\int_{S_{n^2}^+}X_{ij}^k&=&\sum_{\pi,\sigma\in NC_2(2k)}n^{|\pi'|+|\sigma'|-k}W_{k,n^2}(\pi',\sigma')
\end{eqnarray*}

The point now is that, via the shrinking operation $\pi\to\pi'$, the Gram matrices of $NC_2(2k),NC(k)$ are related by the following formula, $\Delta_{kn}$ being the diagonal of $G_{kn}$:
$$G_{2k,n}(\pi,\sigma)=n^k(\Delta_{kn}^{-1}G_{k,n^2}\Delta_{kn}^{-1})(\pi',\sigma')$$

Thus in our moment formulae above the summands coincide, and so the moments are equal, as desired. The proof in general, dealing with joint moments, is similar.
\end{proof}

\index{hypergeometric law}
\index{hyperspherical law}

The above result is quite interesting, because it makes a connection between free hyperspherical and free hypergeometric laws. We refer here to \cite{bbs}, \cite{bcz}. 

\section*{9d. Poisson laws}

Let us go back now to our main result so far, namely Theorem 9.10, and further build on that, with probabilistic results. We have the following result:

\index{Marchenko-Pastur law}
\index{coamenability}

\begin{theorem}
The spectral measure of the main character of $S_N^+$ with $N\geq4$ is the Marchenko-Pastur law of parameter $1$, having the following density:
$$\pi_1=\frac{1}{2\pi}\sqrt{4x^{-1}-1}dx$$
Also, $S_4^+$ is coamenable, and $S_N^+$ with $N\geq5$ is not coamenable.
\end{theorem}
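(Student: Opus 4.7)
The plan is two-fold: first identify the spectral measure of $\chi$ from the moment data already established in Theorem 9.8, then deduce coamenability or its failure via the Kesten criterion from Theorem 3.27. Since $u$ is a magic matrix, each diagonal entry $u_{ii}$ is a projection, so $\chi=\sum_i u_{ii}$ is self-adjoint with $\|\chi\|\leq N$; its distribution under the Haar trace is therefore a compactly supported probability measure on $\mathbb{R}_+$, uniquely determined by its moments via the Hausdorff moment problem. By Theorem 9.8 those moments equal the Catalan numbers $C_k$ for every $N\geq 4$, so the first part reduces to verifying that $\pi_1$ has Catalan moments.

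For this moment check I would substitute $x=4\sin^2\phi$ with $\phi\in[0,\pi/2]$, so that $dx=4\sin(2\phi)\,d\phi$ and $\sqrt{4x^{-1}-1}=\cot\phi$, giving
$$\int_0^4 x^k\,\pi_1(dx)=\frac{4^{k+1}}{\pi}\int_0^{\pi/2}\sin^{2k}\phi\,\cos^2\phi\,d\phi.$$
The remaining beta-type integral evaluates to $\pi(2k)!/(4^{k+1}k!(k+1)!)$, yielding precisely $C_k=\frac{1}{k+1}\binom{2k}{k}$. Consequently $\mathrm{law}(\chi)=\pi_1$, supported on $[0,4]$, for every $N\geq 4$, which is the first assertion.

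For the coamenability part, the self-adjointness of the magic matrix $u$ forces $\chi=\chi^*$, so $\mathrm{Re}(\chi)=\chi$. On the reduced algebra the Haar trace is faithful, hence the spectrum of the self-adjoint element $\chi$ coincides with the topological support of its spectral measure, namely $[0,4]$. Condition (4) of Theorem 3.27 then reads: $S_N^+$ is coamenable iff $N\in[0,4]$. At $N=4$ this holds, so $S_4^+$ is coamenable; for $N\geq 5$ we have $N>4$, so $N\notin[0,4]$ and coamenability fails. The only mildly delicate step is the identification of the reduced spectrum of $\chi$ with the support of its moment-determined law, but this is a standard consequence of the faithfulness of the Haar trace on $C_{\mathrm{red}}(S_N^+)$; everything else is a routine moment computation followed by a one-line application of Kesten.
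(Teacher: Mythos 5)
Your proposal is correct and follows essentially the same route as the paper: the paper also verifies that $\pi_1$ has Catalan moments via the substitution $x=4\sin^2 t$ and then deduces both coamenability statements from the Kesten criterion of Theorem 3.27, using that the moments of $\chi$ equal $C_k$ for all $N\geq4$. Your only addition is to spell out the standard point that the spectrum of $\chi$ in the reduced algebra coincides with the support of its moment-determined law (by faithfulness of the Haar trace there), which the paper leaves implicit.
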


\begin{proof}
Here the first assertion follows from the following formula, which can be established by doing some calculus, and more specifically by setting $x=4\sin^2t$:
$$\frac{1}{2\pi}\int_0^4\sqrt{1-4x^{-1}}x^kdx=C_k$$

As for the second assertion, this follows from this, which shows that the spectrum of the main character is $[0,4]$, and from the Kesten criterion from chapter 3. 
\end{proof}

Our next purpose will be that of understanding, probabilistically speaking, the liberation operation $S_N\to S_N^+$. In what regards $S_N$, we have the following basic result:

\index{derangement}
\index{Poisson law}
\index{main character}
\index{inclusion-exclusion}

\begin{theorem}
Consider the symmetric group $S_N$, regarded as a compact group of matrices, $S_N\subset O_N$, via the standard permutation matrices.
\begin{enumerate}
\item The main character $\chi\in C(S_N)$, defined as usual as $\chi=\sum_iu_{ii}$, counts the number of fixed points, $\chi(\sigma)=\#\{i|\sigma(i)=i\}$.

\item The probability for a permutation $\sigma\in S_N$ to be a derangement, meaning to have no fixed points at all, becomes, with $N\to\infty$, equal to $1/e$.

\item The law of the main character $\chi\in C(S_N)$ becomes, with $N\to\infty$, a Poisson law of parameter $1$, with respect to the counting measure.
\end{enumerate}
\end{theorem}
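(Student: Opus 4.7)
The plan is to treat the three assertions in order, with the bulk of the work going into (3), from which (2) will drop out essentially as a corollary.

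For (1), I would simply unfold the definition of the fundamental corepresentation of $S_N$. By Proposition 9.1, the standard coordinates are $v_{ij}(\sigma)=\delta_{i,\sigma(j)}$, and so evaluating the character at a permutation gives
$$\chi(\sigma)=\sum_i v_{ii}(\sigma)=\sum_i \delta_{i,\sigma(i)}=\#\{i\mid\sigma(i)=i\}.$$
This is immediate and requires no further argument.

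For (3), the strategy is to combine the easiness of $S_N$ from Theorem 9.5 with the Peter--Weyl moment formula from Theorem 8.18(1). Since $S_N$ is easy with category of partitions $D=P$, and since at $N\geq k$ the vectors $\xi_\pi$ with $\pi\in P(k)$ are linearly independent by Theorem 5.8, the general easy-moment computation yields
$$\lim_{N\to\infty}\int_{S_N}\chi^k=|P(k)|=B_k,$$
the Bell number. To finish, I need the (classical) fact that $B_k$ is the $k$-th moment of the Poisson law of parameter $1$; this is Dobinski's formula
$$\int_{\mathbb R} x^k\, d p_1(x)=e^{-1}\sum_{n=0}^\infty\frac{n^k}{n!}=B_k,$$
proved by expanding $n^k=\sum_{\pi\in P(k)}n(n-1)\cdots(n-|\pi|+1)$ in falling factorials and recognizing that each falling factorial integrates against the Poisson$(1)$ measure to $1$. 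Since $p_1$ is compactly approximable and determined by its moments, convergence of moments gives weak convergence, establishing $\chi\sim p_1$ in the limit.

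For (2), I would extract the statement from (3) together with (1). Indeed, the event ``$\sigma$ is a derangement'' is exactly $\{\chi=0\}$, so
$$\operatorname{Prob}(\sigma\text{ derangement})=\operatorname{Prob}(\chi=0)\ \xrightarrow[N\to\infty]{}\ p_1(\{0\})=e^{-1}.$$
Strictly speaking one needs to justify passing from moment convergence to the evaluation at the atom $\{0\}$, but this is standard because $\chi$ takes values in $\{0,1,\ldots,N\}$ and $p_1$ is a discrete measure supported on $\mathbb N$; alternatively, a direct inclusion--exclusion computation gives $\operatorname{Prob}(\chi=0)=\sum_{k=0}^{N}(-1)^k/k!\to 1/e$, which is perhaps the cleanest way to present this part.

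The main obstacle, or rather the only nontrivial input, is the identification of $|P(k)|$ with the moments of the Poisson$(1)$ law. Everything else is essentially bookkeeping: part (1) is a one-line unwinding of definitions, and part (2) follows either from part (3) by evaluation at an atom or from a direct and classical inclusion--exclusion argument. Thus the proof reduces to combining easiness of $S_N$ with the combinatorial identity $B_k=\int x^k\,dp_1(x)$.
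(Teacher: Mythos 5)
Your proof is correct, but part (3) takes a genuinely different route from the paper. The paper's Theorem 9.18 proves (3) by a direct combinatorial argument: it generalizes the inclusion--exclusion computation from (2) to get pointwise convergence of the probability mass function, $\lim_{N\to\infty}\mathbb P(\chi=k)=\frac{1}{k!e}$, and then reads off the Poisson law. You instead invoke the easiness machinery (Theorem 9.5) together with the linear independence of the vectors $\xi_\pi$ at $N\geq k$ (Theorem 5.8) to get $\int_{S_N}\chi^k=|P(k)|=B_k$, and then identify the Bell numbers as the Poisson$(1)$ moments via Dobinski's formula. Both are valid, but they buy different things: the paper's inclusion--exclusion proof is entirely elementary, self-contained, and stays inside classical probability, which is appropriate at the point in the text where the statement appears (just before the free analogue); your moment approach is exactly the mechanism the paper deploys later for the sharper result on truncated characters (Proposition 9.21, Proposition 9.22, and Theorem 9.23, which use the Weingarten/Stirling-number formula and recover $p_t$), so you are essentially running that more general engine at $t=1$. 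The one place where you should be more careful is the passage from moment convergence to the law: the phrase ``compactly approximable'' is not the right justification; what you need is that the Poisson$(1)$ law is determined by its moments (Carleman's condition holds, since $B_k$ grows subfactorially). With that fix, and noting that your deduction of (2) from (3) via the atom at $0$ needs only that $\chi$ is $\mathbb N$-valued and $p_1$ is supported on $\mathbb N$, your argument is complete; though as you already remark, the direct inclusion--exclusion for (2), which is also what the paper does, is cleaner and avoids the moment-problem detour entirely.
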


\begin{proof}
This is something very classical, and beautiful, as follows:

\medskip

(1) We have indeed the following computation:
$$\chi(\sigma)
=\sum_iu_{ii}(\sigma)
=\sum_i\delta_{\sigma(i)i}
=\#\left\{i\big|\sigma(i)=i\right\}$$

(2) This is best viewed by using the inclusion-exclusion principle. Let us set:
$$S_N^{i_1\ldots i_k}=\left\{\sigma\in S_N\Big|\sigma(i_1)=i_1,\ldots,\sigma(i_k)=i_k\right\}$$

By using the inclusion-exclusion principle, we have, as desired:
\begin{eqnarray*}
\mathbb P(\chi=0)
&=&\frac{1}{N!}|(S_1\cup\ldots\cup S_N)^c|\\
&=&\frac{1}{N!}\left(|S_N|-\sum_i|S_N^i|+\sum_{i<j}|S_N^{ij}|-\ldots+(-1)^N\sum_{i_1<\ldots<i_N}|S_N^{i_1\ldots i_N}|\right)\\
&=&\frac{1}{N!}\sum_{k=0}^N(-1)^k\binom{N}{k}(N-k)!\\
&=&1-\frac{1}{1!}+\frac{1}{2!}-\ldots+(-1)^{N-1}\frac{1}{(N-1)!}+(-1)^N\frac{1}{N!}
\end{eqnarray*}

(3) This follows by generalizing the computation in (2). To be more precise, a similar application of the inclusion-exclusion principle gives the following formula:
$$\lim_{N\to\infty}\mathbb P(\chi=k)=\frac{1}{k!e}$$

Thus, we obtain in the limit a Poisson law, as stated.
\end{proof}

In order to talk about free analogues of this, we will need some theory:

\index{PLT}
\index{Poisson limit theorem}
\index{Poisson law}
\index{Marchenko-Pastur law}

\begin{theorem}
The following Poisson type limits converge, for any $t>0$,
$$p_t=\lim_{n\to\infty}\left(\left(1-\frac{t}{n}\right)\delta_0+\frac{t}{n}\delta_1\right)^{*n}\quad,\quad 
\pi_t=\lim_{n\to\infty}\left(\left(1-\frac{t}{n}\right)\delta_0+\frac{t}{n}\delta_1\right)^{\boxplus n}$$
the limiting measures being the Poisson law $p_t$, and the Marchenko-Pastur law $\pi_t$, 
$$p_t=\frac{1}{e^t}\sum_{k=0}^\infty\frac{t^k\delta_k}{k!}\quad,\quad 
\pi_t=\max(1-t,0)\delta_0+\frac{\sqrt{4t-(x-1-t)^2}}{2\pi x}\,dx$$
whose moments are given by $M_k(p_t)=\sum_{\pi\in D(k)}t^{|\pi|}$, with $D=P,NC$.
\end{theorem}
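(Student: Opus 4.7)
The plan is to prove convergence of each limit using the appropriate transform that linearizes the underlying convolution operation, then extract the moment formulas as a byproduct. For the classical statement we use the Fourier transform (characteristic function), and for the free statement we use the Voiculescu $R$-transform from Theorem 8.15. Both computations reduce to taking a logarithm, expanding to first order in $1/n$, and passing to the limit.

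For the classical convergence, let $\mu_n = (1-t/n)\delta_0 + (t/n)\delta_1$. Its Fourier transform is $\hat{\mu}_n(\xi) = 1 - t/n + (t/n)e^{i\xi}$, so
\[
\log \widehat{\mu_n^{*n}}(\xi) = n\log\!\left(1 + \tfrac{t}{n}(e^{i\xi}-1)\right) \longrightarrow t(e^{i\xi}-1),
\]
which is precisely the log-characteristic function of the Poisson law $p_t = e^{-t}\sum_{k\geq 0} \tfrac{t^k}{k!}\delta_k$. For the moment formula, I would compute directly
\[
M_k(p_t) = e^{-t}\sum_{j\geq 0} \frac{j^k t^j}{j!} = \sum_{j\geq 0} S(k,j)\, t^j = \sum_{\pi\in P(k)} t^{|\pi|},
\]
where the middle equality uses Dobinski's formula with Stirling numbers $S(k,j)$ counting partitions of $\{1,\dots,k\}$ with exactly $j$ blocks.

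For the free convergence, I would compute the $R$-transform of $\mu_n$. Its Cauchy transform is $G_{\mu_n}(\xi) = (1-t/n)/\xi + (t/n)/(\xi-1)$; solving $G_{\mu_n}(R_{\mu_n}(\xi)+1/\xi)=\xi$ to leading order in $1/n$ gives $R_{\mu_n}(\xi) = \tfrac{t}{n}\cdot\tfrac{1}{1-\xi} + O(1/n^2)$. Since the $R$-transform linearizes $\boxplus$, we get
\[
R_{\mu_n^{\boxplus n}}(\xi) = n\,R_{\mu_n}(\xi) \longrightarrow \frac{t}{1-\xi}.
\]
It then remains to recognize that $R(\xi) = t/(1-\xi)$ is the $R$-transform of the Marchenko–Pastur distribution $\pi_t$: inverting $\xi = G(w)$ via $w = R(\xi)+1/\xi$ gives the quadratic $w\xi^2 - (w-1+t)\xi + 1 = 0$, whose relevant solution yields a Cauchy transform with imaginary part, via the Stieltjes inversion formula, equal to the claimed density $\tfrac{1}{2\pi x}\sqrt{4t-(x-1-t)^2}$ on its support, plus an atom at $0$ of mass $\max(1-t,0)$ when $t<1$ (coming from a pole of $G$ at the origin).

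For the free moment formula $M_k(\pi_t) = \sum_{\pi\in NC(k)} t^{|\pi|}$, the cleanest route is via free cumulants: the relation $R_{\pi_t}(\xi) = t/(1-\xi) = \sum_{n\geq 1} t\,\xi^{n-1}$ means that all free cumulants are $\kappa_n(\pi_t)=t$, hence $\kappa_\pi(\pi_t) = t^{|\pi|}$, and the free moment-cumulant formula of Theorem 8.26 gives the result immediately. The main obstacle I anticipate is the density/atom extraction for $\pi_t$: one must carefully invert the quadratic for $G$, choose the branch of the square root that has negative imaginary part in the upper half-plane, and separately account for the atom at $0$ when $t<1$ by examining the residue of $G$ there. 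Everything else is essentially a bookkeeping exercise with Taylor expansions.
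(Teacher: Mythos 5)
Your route is the same as the paper's (the paper only sketches it: "use $\log F$, $R$ and calculus, or classical and free cumulants; the limits must be the laws with all classical/free cumulants equal to $t$"), and you fill in the details correctly: Fourier transform plus Dobinski/Stirling for the classical side, $R$-transform linearization plus constant free cumulants and the moment-cumulant formula for the free side. One local slip: when you invert $w=\frac{t}{1-\xi}+\frac{1}{\xi}$, the quadratic is $w\xi^2-(w+1-t)\xi+1=0$, not $w\xi^2-(w-1+t)\xi+1=0$; with the correct coefficient the discriminant is $(w+1-t)^2-4w=(w-1-t)^2-4t$, so Stieltjes inversion does yield the density $\frac{1}{2\pi x}\sqrt{4t-(x-1-t)^2}$ and the residue at $0$ gives the atom $\max(1-t,0)$, whereas your version of the quadratic would not reproduce this. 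This is an algebra fix, not a flaw in the method; everything else is sound.
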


\begin{proof}
This is something standard, which follows by using either $\log F,R$ and calculus, or classical and free cumulants. The point indeed is that the limiting measures must be those having classical and free cumulants $t,t,t,\ldots$ But this gives all the assertions, the density computations being standard. We refer here to \cite{mpa}, \cite{nsp}, \cite{vdn}, \cite{wig}, but we will be back to this in chapter 10 below, with full details, directly in a more general setting.
\end{proof}

We can now formulate a conceptual result about $S_N\to S_N^+$, as follows:

\index{Bercovici-Pata bijection}
\index{Poisson law}

\begin{theorem}
The law of the main character $\chi_u$ is as follows:
\begin{enumerate}
\item For $S_N$ with $N\to\infty$ we obtain a Poisson law $p_1$.

\item For $S_N^+$ with $N\geq4$ we obtain a Marchenko-Pastur, or free Poisson law $\pi_1$.
\end{enumerate}
In addition, these laws are related by the Bercovici-Pata correspondence.
\end{theorem}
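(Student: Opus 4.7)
The plan is to assemble this statement directly from results already established in the excerpt, so there is no genuine difficulty to overcome, only a matter of organizing the pieces. I treat the classical and free cases in parallel, and then invoke the Bercovici--Pata description of $p_1$ and $\pi_1$ from Theorem 9.19.

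For (1), I would start from the identification of the character on $S_N$ as the fixed-point counting function, namely $\chi(\sigma) = \#\{i \mid \sigma(i) = i\}$, which is Theorem 9.18(1). An inclusion--exclusion computation (the same one as in Theorem 9.18(2,3), generalized from $k=0$ to arbitrary $k$) shows that the probability that exactly $k$ points are fixed converges, as $N \to \infty$, to $\frac{1}{k!\,e}$. This is precisely the mass function of $p_1$ as recalled in Theorem 9.19. Thus $\chi \sim p_1$ in the $N \to \infty$ limit. Equivalently, and more conceptually, one can simply observe that Theorem 9.19 gives $M_k(p_1) = \sum_{\pi \in P(k)} 1 = |P(k)|$, and an easy direct count, or the Weingarten/Tannakian asymptotics for the easy pair $(S_N, P)$ from Section 8, shows $\lim_N \int_{S_N} \chi^k = |P(k)|$, so the moments match.

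For (2), the key input is Theorem 9.8(1), which asserts $\int_{S_N^+}\chi^k = C_k$ for every $N \geq 4$. On the other hand, Theorem 9.19 gives the moment formula $M_k(\pi_1) = \sum_{\pi \in NC(k)} 1 = |NC(k)| = C_k$. Since $\pi_1$ is compactly supported (its density $\frac{1}{2\pi}\sqrt{4x^{-1}-1}\,dx$ on $[0,4]$, as recorded in Theorem 9.17, is bounded with compact support), it is uniquely determined by its moment sequence, and we conclude $\chi \sim \pi_1$. This of course reproves Theorem 9.17, which one could alternatively cite directly.

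For the final Bercovici--Pata assertion, the cleanest approach is to read off the cumulants from Theorem 9.19. That theorem constructs $p_1$ and $\pi_1$ as the classical, respectively free, Poisson-type limits of $(1 - \frac{1}{n})\delta_0 + \frac{1}{n}\delta_1$; the standard $R$-transform and $\log F$ computations show that the classical cumulants of $p_1$ are $(1,1,1,\ldots)$ and the free cumulants of $\pi_1$ are $(1,1,1,\ldots)$. By the very definition of the Bercovici--Pata bijection recalled in Theorem 8.28 (matching classical cumulants on one side with free cumulants on the other), this identifies $p_1$ and $\pi_1$ as corresponding measures, so the liberation $S_N \to S_N^+$ fits the same pattern as $O_N \to O_N^+$ and $U_N \to U_N^+$. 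If anything is the ``hard'' step here, it is the linear independence input behind Theorem 9.8(1), i.e.\ Theorem 9.7, but that was already proved (via Temperley--Lieb positivity) earlier and is free to use.
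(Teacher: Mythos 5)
Your proposal is correct and follows essentially the same route as the paper, which proves this statement simply by assembling the earlier results: the Poisson limit for $S_N$ (Theorem 9.18), the Marchenko--Pastur/Catalan moment computation for $S_N^+$ (Theorems 9.8 and 9.17), and the theoretical description of $p_1,\pi_1$ and their cumulants (Theorem 9.19). Your extra remarks on moment determinacy and on reading off the constant cumulant sequences for the Bercovici--Pata claim are just a more explicit spelling-out of the same argument.
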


\begin{proof}
This follows indeed from the computations that we have, from Theorem 9.20 and Theorem 9.21, by using the various theoretical results from Theorem 9.22.
\end{proof}

As in the continuous case, our purpose now will be that of extending this result to the truncated characters. In order to discuss the classical case, we first have:

\begin{proposition}
Consider the symmetric group $S_N$, together with its standard matrix coordinates $u_{ij}=\chi(\sigma\in S_N|\sigma(j)=i)$. We have the formula
$$\int_{S_N}u_{i_1j_1}\ldots u_{i_kj_k}=\begin{cases}
\frac{(N-|\ker i|)!}{N!}&{\rm if}\ \ker i=\ker j\\
0&{\rm otherwise}
\end{cases}$$
where $\ker i$ denotes as usual the partition of $\{1,\ldots,k\}$ whose blocks collect the equal indices of $i$, and where $|.|$ denotes the number of blocks.
\end{proposition}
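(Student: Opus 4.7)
The plan is to reduce the integral to a counting problem on $S_N$ by using the explicit form of the matrix coordinates, and then carry out the count directly. By Proposition 9.1 we have $u_{ij}(\sigma)=\delta_{i,\sigma(j)}$, and the Haar integration on the finite group $S_N$ is simply averaging against counting measure, so
$$\int_{S_N}u_{i_1j_1}\ldots u_{i_kj_k}=\frac{1}{N!}\sum_{\sigma\in S_N}\prod_{a=1}^k\delta_{i_a,\sigma(j_a)}=\frac{1}{N!}\#\Bigl\{\sigma\in S_N\,\Bigl|\,\sigma(j_a)=i_a,\ \forall a\Bigr\}.$$
So the task is reduced to counting the permutations $\sigma\in S_N$ that satisfy the prescribed system $\sigma(j_a)=i_a$.

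Next I would analyze when this system of equations is consistent with $\sigma$ being a bijection. Writing $J=\{j_1,\dots,j_k\}$ and $I=\{i_1,\dots,i_k\}$, the relations define a partial map $J\to I$ by sending $j_a$ to $i_a$. For this to extend to \emph{any} permutation, two conditions are needed: the assignment must be well-defined, i.e.\ $j_a=j_b\Rightarrow i_a=i_b$, and it must be injective, i.e.\ $i_a=i_b\Rightarrow j_a=j_b$. The conjunction of these two conditions is exactly $\ker i=\ker j$. If this fails, the set is empty and the integral vanishes, giving the second case of the formula.

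When $\ker i=\ker j$, the partial map is a well-defined bijection from $\{j_a\}/\!\!\sim$ to $\{i_a\}/\!\!\sim$, both of cardinality $|\ker i|$, so it prescribes $\sigma$ on exactly $|\ker i|$ distinct elements of $\{1,\dots,N\}$. The number of extensions to a full permutation of $\{1,\dots,N\}$ is then $(N-|\ker i|)!$, since $\sigma$ can be any bijection between the two remaining $(N-|\ker i|)$-element sets. Dividing by $N!$ yields the first case of the formula.

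There is no real obstacle here: the proof is a direct combinatorial count once the formula $u_{ij}(\sigma)=\delta_{i,\sigma(j)}$ is invoked. The only point demanding a moment of care is the simultaneous well-definedness/injectivity argument that identifies the compatibility condition with $\ker i=\ker j$, but this is immediate once one writes down what the constraints $\sigma(j_a)=i_a$ mean for repeated indices.
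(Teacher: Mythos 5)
Your proof is correct and follows essentially the same route as the paper: reduce the integral to counting permutations satisfying $\sigma(j_a)=i_a$, observe that consistency of this system is exactly the condition $\ker i=\ker j$, and count the $(N-|\ker i|)!$ extensions. Your spelled-out well-definedness/injectivity argument is just a more detailed version of the paper's remark that existence of such $\sigma$ requires $i_m=i_n\iff j_m=j_n$.
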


\begin{proof}
According to the definition of $u_{ij}$, the integrals in the statement are given by:
$$\int_{S_N}u_{i_1j_1}\ldots u_{i_kj_k}=\frac{1}{N!}\#\left\{\sigma\in S_N\Big|\sigma(j_1)=i_1,\ldots,\sigma(j_k)=i_k\right\}$$

The existence of $\sigma\in S_N$ as above requires $i_m=i_n\iff j_m=j_n$. Thus, the integral vanishes when $\ker i\neq\ker j$. As for the case $\ker i=\ker j$, if we denote by $b\in\{1,\ldots,k\}$ the number of blocks of this partition, we have $N-b$ points to be sent bijectively to $N-b$ points, and so $(N-b)!$ solutions, and the integral is $\frac{(N-b)!}{N!}$, as claimed.
\end{proof}

We can now compute the laws of truncated characters, and we obtain:

\begin{proposition}
For the symmetric group $S_N\subset O_N$, regarded as a compact group of matrices, $S_N\subset O_N$, via the standard permutation matrices, the truncated character
$$\chi_t=\sum_{i=1}^{[tN]}u_{ii}$$
counts the number of fixed points among $\{1,\ldots,[tN]\}$, and its law with respect to the counting measure becomes, with $N\to\infty$, a Poisson law of parameter $t$. 
\end{proposition}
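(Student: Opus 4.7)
The first assertion, that $\chi_t(\sigma) = \#\{i \leq [tN] \mid \sigma(i)=i\}$, is immediate from the formula $u_{ii}(\sigma) = \delta_{\sigma(i),i}$ established in Theorem 9.18, so the content of the statement is the Poisson limit. My plan is to compute the moments of $\chi_t$ directly via the integration formula from Proposition 9.21, and then recognize the limiting moments as those of $p_t$ listed in Theorem 9.19.

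First, I would expand
\[
\int_{S_N} \chi_t^k = \sum_{i_1,\ldots,i_k=1}^{[tN]} \int_{S_N} u_{i_1 i_1}\cdots u_{i_k i_k}.
\]
Applying Proposition 9.21 with $j=i$ (so the compatibility condition $\ker i=\ker j$ is automatic), each summand equals $(N-|\ker i|)!/N!$. Grouping multi-indices by their kernel partition $\pi = \ker i \in P(k)$, and noting that the number of such multi-indices taking values in $\{1,\ldots,[tN]\}$ is $[tN]([tN]-1)\cdots([tN]-|\pi|+1)$, I obtain
\[
\int_{S_N} \chi_t^k = \sum_{\pi \in P(k)} \frac{[tN]([tN]-1)\cdots([tN]-|\pi|+1)\,(N-|\pi|)!}{N!}.
\]

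Next, I would pass to the limit $N \to \infty$. Since $k$ is fixed and $|\pi| \leq k$, each factor $[tN]([tN]-1)\cdots([tN]-|\pi|+1)$ is asymptotic to $(tN)^{|\pi|}$, while $(N-|\pi|)!/N!$ is asymptotic to $N^{-|\pi|}$. The product therefore tends to $t^{|\pi|}$, and summing over $\pi \in P(k)$ yields
\[
\lim_{N\to\infty} \int_{S_N} \chi_t^k = \sum_{\pi \in P(k)} t^{|\pi|}.
\]
By Theorem 9.19, the right-hand side is exactly the $k$-th moment $M_k(p_t)$ of the Poisson law $p_t$. Since $p_t$ is a compactly supported (moment-determined) measure on $\mathbb{R}$, convergence of moments implies weak convergence, finishing the proof.

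The main obstacle is quite mild: it is essentially the bookkeeping in the asymptotic step, making sure the dominant contribution for each partition class is indeed $t^{|\pi|}$ and that lower-order terms are negligible. Everything else is either definitional (the fixed-point interpretation of $\chi_t$) or a direct appeal to results already proved in the excerpt (Proposition 9.21 for the moments of coordinate products, and Theorem 9.19 for the moment formula of $p_t$, which here plays the role Theorem 8.27 did for the continuous examples, and confirms that $S_N$ is easy with category $P$).
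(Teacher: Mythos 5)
Your proof is correct and follows essentially the same route as the paper: the paper expands the same moment sum via Proposition 9.21, groups the kernel partitions by number of blocks (hence the Stirling numbers $S_{k,b}$, so its limit $\sum_b S_{k,b}t^b$ is exactly your $\sum_{\pi\in P(k)}t^{|\pi|}$), and concludes by recognizing Poisson moments. One small slip: the Poisson law $p_t$ is not compactly supported, but it is moment-determined, so your final moment-convergence step still stands.
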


\index{Stirling numbers}

\begin{proof}
With $S_{k,b}$ being the Stirling numbers, we have:
\begin{eqnarray*}
\int_{S_N}\chi_t^k
&=&\sum_{i_1\ldots i_k=1}^{[tN]}\int_{S_N}u_{i_1i_1}\ldots u_{i_ki_k}\\
&=&\sum_{\pi\in P_k}\frac{[tN]!}{([tN]-|\pi|!)}\cdot\frac{(N-|\pi|!)}{N!}\\
&=&\sum_{b=1}^{[tN]}\frac{[tN]!}{([tN]-b)!}\cdot\frac{(N-b)!}{N!}\cdot S_{k,b}
\end{eqnarray*}

In particular with $N\to\infty$ we obtain the following formula:
$$\lim_{N\to\infty}\int_{S_N}\chi_t^k=\sum_{b=1}^kS_{k,b}t^b$$

But this is a Poisson($t$) moment, and so we are done.
\end{proof}

We can now finish our computations, and generalize Theorem 9.23, as follows:

\index{truncated character}
\index{Bercovici-Pata bijection}

\begin{theorem}
The laws of truncated characters $\chi_t=\sum_{i=1}^{[tN]}u_{ii}$ are as follows:
\begin{enumerate}
\item For $S_N$ with $N\to\infty$ we obtain a Poisson law $p_t$.

\item For $S_N^+$ with $N\to\infty$ we obtain a free Poisson law $\pi_t$.
\end{enumerate}
In addition, these laws are related by the Bercovici-Pata correspondence.
\end{theorem}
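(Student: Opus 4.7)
My plan is to prove Theorem 9.23 as a direct application of the easiness machinery developed in sections 7 and 8, specialized to the categories $P$ and $NC$ which describe $S_N$ and $S_N^+$ by Theorem 9.5. The key observation is that both assertions reduce to the same combinatorial moment formula, applied with the two categories of partitions; the difference between classical and free Poisson behavior then appears only in the passage from moments to laws, via Theorem 9.19.

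For the classical case (1), the computation has essentially already been carried out in Proposition 9.22 above, which shows using the explicit integration formula on $S_N$ from Proposition 9.21 that
\[
\lim_{N\to\infty}\int_{S_N}\chi_t^k=\sum_{b=1}^k S_{k,b}\,t^b=\sum_{\pi\in P(k)}t^{|\pi|}.
\]
I would simply note that the right-hand side is the $k$-th moment of the Poisson law $p_t$, by the moment formula of Theorem 9.19 in the case $D=P$.

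For the free case (2), I would apply Proposition 8.25 directly. Since $S_N^+$ is easy with category $D=NC$ by Theorem 9.5, the general asymptotic formula there gives
\[
\lim_{N\to\infty}\int_{S_N^+}\chi_t^k=\sum_{\pi\in NC(k)}t^{|\pi|},
\]
which by Theorem 9.19 is precisely the $k$-th moment of the free Poisson law $\pi_t$. Convergence in moments suffices because $\pi_t$ is compactly supported, hence moment-determined. There is a minor subtlety to address here: Proposition 8.25 was stated for $N\to\infty$, and one might worry that $S_N^+$ does not give stationary values in $N$ (unlike $O_N^+,U_N^+$); but since we are taking the large-$N$ limit anyway, this is harmless.

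Finally, the Bercovici-Pata statement follows for free from the two computations above. Indeed, by Theorem 9.19 the law $p_t$ has classical moment-cumulant expansion $M_k=\sum_{\pi\in P(k)}k_\pi$ with all classical cumulants equal to $t$, while $\pi_t$ has free moment-cumulant expansion $M_k=\sum_{\pi\in NC(k)}\kappa_\pi$ with all free cumulants equal to $t$. Thus the classical cumulants of the $S_N$-limit coincide with the free cumulants of the $S_N^+$-limit, which is precisely the defining property of the Bercovici-Pata bijection. The main (very mild) obstacle in the whole argument is simply checking that the limit interchange and moment-determinacy are valid; no new computation is needed beyond invoking the results already in hand.
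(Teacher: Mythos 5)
Your proposal is correct and follows essentially the same route as the paper: the classical case is exactly Proposition 9.22, the free case is the Weingarten/easiness asymptotic $\lim_{N\to\infty}\int_{S_N^+}\chi_t^k=\sum_{\pi\in NC(k)}t^{|\pi|}$ (the paper invokes "the same method as for the orthogonal and unitary quantum groups, from section 8", i.e. Proposition 8.25), and the identification of the limits and the Bercovici--Pata statement come from the moment/cumulant formulae of Theorem 9.19. Your extra remarks on moment-determinacy and non-stationarity are harmless refinements, not a departure from the paper's argument.
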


\begin{proof}
This follows from the above results:

\medskip

(1) This is something that we already know, from Proposition 9.25.

\medskip

(2) This is something that we know so far only at $t=1$, from Theorem 9.23. In order to deal with the general $t\in(0,1]$ case, we can use the same method as for the orthogonal and unitary quantum groups, from chapter 8, and we obtain the following moments:
$$M_k=\sum_{\pi\in NC(k)}t^{|\pi|}$$

But these numbers being the moments of the free Poisson law of parameter $t$, as explained in Theorem 9.22 above, we obtain the result. See \cite{bc2}.
\end{proof}

Summarizing, the liberation operation $S_N\to S_N^+$ has many common features with the liberation operations $O_N\to O_N^+$ and $U_N\to U_N^+$, studied in chapter 8 above.

\section*{9e. Exercises} 

There has been a lot of material in this chapter, with this second part of the present book being at a more advanced level than the first part, and most of our exercises here will be about better understanding what has been said above. Let us start with:

\index{Fourier transform}

\begin{exercise}
Prove that we have $S_3=S_3^+$ by looking at the coaction
$$\Phi:\mathbb C^3\to\mathbb C^3\otimes C(S_3^+)$$
written in terms of the Fourier basis of $\mathbb C^3$.
\end{exercise}

To be more precise, the question here is that of changing the basis of $\mathbb C^3$, by using the Fourier transform over the group $\mathbb Z_3$, then reformulating the magic condition at $N=3$ in terms of this new basis, and then deducing that the coefficients must commute.

\begin{exercise}
Prove directly, without Kesten, that the discrete quantum group
$$\Gamma=\widehat{S_5^+}$$
is not amenable, in the discrete quantum group sense.
\end{exercise}

This requires of course some knowledge of the notion of amenability. As a hint here, try finding a quantum subgroup $G\subset S_5^+$ whose dual is not amenable.

\begin{exercise}
Consider a discrete group generated by elements of finite order, written as a quotient group, as follows:
$$\mathbb Z_{N_1}*\ldots*\mathbb Z_{N_k}\to\Gamma$$
Prove that we have an embedding $\widehat{\Gamma}\subset S_N^+$, where $N=N_1+\ldots+N_k$.
\end{exercise}

This should be normally not very difficult. What is difficult, however, is to prove that any group dual subgroup $\widehat{\Gamma}\subset S_N^+$ appears as above. We will be back to this.

\begin{exercise}
Prove that we have the following equality:
$$S_{M_2}^+=SO_3$$
\end{exercise}

This is something that was already discussed in the above, but quite briefly. The problem now is that of working out all the details.

\begin{exercise}
Check out all the details for Theorem 9.22, regarding the Poisson and free Poisson limiting theorems.
\end{exercise}

There is quite some work to be done here, but everything is quite routine. As an alternative approach, we will discuss later in this book a generalization of this, regarding the compound Poisson and compound free Poisson limits, so the problem is to go there, and to work out in detail the particular case of the Poisson and free Poisson limits.

\chapter{Quantum reflections}

\section*{10a. Finite graphs}

Many interesting examples of quantum permutation groups appear as particular cases of the following general construction from \cite{ba3}, involving finite graphs:

\index{quantum automorphism group}

\begin{proposition}
Given a finite graph $X$, with adjacency matrix $d\in M_N(0,1)$, the following construction produces a quantum permutation group, 
$$C(G^+(X))=C(S_N^+)\Big/\Big<du=ud\Big>$$
whose classical version $G(X)$ is the usual  automorphism group of $X$.
\end{proposition}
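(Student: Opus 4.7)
The plan is to verify two things: (a) the ideal $I = \langle du - ud\rangle$ inside $C(S_N^+)$ is a Hopf ideal, so that the quotient inherits the Woronowicz algebra structure, and (b) passing to the classical (commutative) version recovers $\mathrm{Aut}(X)$.

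First I would check the Hopf ideal property, which amounts to showing that $\Delta$, $\varepsilon$, $S$ each send $du-ud$ into suitable ideals, in line with the general recipe from Proposition 2.14. For the counit this is immediate since $\varepsilon(u_{ij})=\delta_{ij}$ reduces both $du$ and $ud$ to $d$ itself. For the antipode one uses that the adjacency matrix is symmetric, $d=d^t$: applying $S$ to the entries of $du-ud$ transposes the relation, and symmetry of $d$ brings us back to the same relation. The real step is the coproduct. With $U_{ij}=\sum_k u_{ik}\otimes u_{kj}$, the key calculation is
$$(dU)_{ij}=\sum_{kl}d_{ik}u_{kl}\otimes u_{lj}=\sum_{l}(du)_{il}\otimes u_{lj}=\sum_l(ud)_{il}\otimes u_{lj},$$
and then repeating the move on the right tensor factor, using $du=ud$ a second time, yields $(Ud)_{ij}$. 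Thus $\Delta$ descends to the quotient, and the quotient is a Woronowicz algebra in which the image of $u$ is still magic (magicness survives any quotient), so $G^+(X)\subset S_N^+$ is a closed quantum subgroup.

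Next I would identify the classical version. By definition $C(G^+(X))_{\mathrm{class}}$ is the quotient of $C(G^+(X))$ by its commutator ideal, and since $C(S_N^+)_{\mathrm{class}}=C(S_N)$ (Theorem 9.4 at $N\leq 3$, and in general by Proposition 9.1(3) combined with the Gelfand theorem), we get
$$C(G^+(X))_{\mathrm{class}}=C(S_N)\big/\langle du=ud\rangle.$$
Evaluating at a point $\sigma\in S_N$, the coordinate matrix $u$ specialises to the permutation matrix $P_\sigma$, so the relation $du=ud$ becomes $dP_\sigma=P_\sigma d$, i.e.\ $d_{\sigma(i),\sigma(j)}=d_{i,j}$ for all $i,j$. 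This is exactly the defining condition for $\sigma$ to be an automorphism of $X$, so the classical points are precisely $G(X)=\mathrm{Aut}(X)$.

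The only genuinely delicate point is the coproduct computation in the first step; the counit and antipode verifications are one-line checks (with symmetry of $d$ used in the latter), and the identification of the classical version is a direct application of the Gelfand theorem combined with the formula for permutation matrices. Thus the main obstacle is simply displaying the ``two-step'' transport of the relation $du=ud$ across the tensor factors inside $\Delta(u_{ij})$, which is routine but must be written out carefully to confirm that the ideal is indeed two-sided and Hopf. Everything else follows formally from the setup of Section 2 and Section 9.
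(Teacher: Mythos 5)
Your proof is correct and follows essentially the same route as the paper: the identification of the classical version, via the permutation matrices and the equivalence $dP_\sigma=P_\sigma d\iff d_{\sigma(i)\sigma(j)}=d_{ij}$, is exactly the paper's computation. The only difference is that where the paper settles the quantum-group part in one line, by noting that $du=ud$ means $d\in\mathrm{End}(u)$ and invoking the general fact that intertwiner relations cut out Hopf ideals, you check the Hopf ideal conditions for $\Delta,\varepsilon,S$ by hand (using $d=d^t$ for the antipode), which is an equivalent, slightly more explicit version of the same argument.
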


\begin{proof}
The fact that we have a quantum group comes from the fact that $du=ud$ reformulates as $d\in End(u)$, which makes it clear that we are dividing by a Hopf ideal. Regarding the second assertion, we must establish here the following equality:
$$C(G(X))=C(S_N)\Big/\Big<du=ud\Big>$$

For this purpose, recall that $u_{ij}(\sigma)=\delta_{\sigma(j)i}$. By using this formula, we have:
\begin{eqnarray*}
(du)_{ij}(\sigma)
&=&\sum_kd_{ik}u_{kj}(\sigma)\\
&=&\sum_kd_{ik}\delta_{\sigma(j)k}\\
&=&d_{i\sigma(j)}
\end{eqnarray*}

On the other hand, we have as well the following formula:
\begin{eqnarray*}
(ud)_{ij}(\sigma)
&=&\sum_ku_{ik}(\sigma)d_{kj}\\
&=&\sum_k\delta_{\sigma(k)i}d_{kj}\\
&=&d_{\sigma^{-1}(i)j}
\end{eqnarray*}

Thus the condition $du=ud$ reformulates as $d_{ij}=d_{\sigma(i)\sigma(j)}$, and we are led to the usual notion of an action of a permutation group on $X$, as claimed.
\end{proof}

Let us work out some basic examples. We have the following result:

\index{simplex}
\index{complementary graph}
\index{square graph}

\begin{theorem}
The construction $X\to G^+(X)$ has the following properties:
\begin{enumerate}
\item For the $N$-point graph, having no edges at all, we obtain $S_N^+$.

\item For the $N$-simplex, having edges everywhere, we obtain as well $S_N^+$.

\item We have $G^+(X)=G^+(X^c)$, where $X^c$ is the complementary graph.

\item For a disconnected union, we have $G^+(X)\,\hat{*}\,G^+(Y)\subset G^+(X\sqcup Y)$.

\item For the square, we obtain a non-classical, proper subgroup of $S_4^+$.
\end{enumerate}
\end{theorem}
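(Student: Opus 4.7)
My plan is to address the five properties in order, using the universal property of Proposition 10.1 throughout. For each claim, the strategy is to exploit the magic condition on $u$, which forces certain commutations with the all-ones matrix $J$ and the identity $I$ to hold automatically.

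For (1), the adjacency matrix is $d=0$, so $du=ud$ is vacuous and $G^+(X)=S_N^+$ by definition. For (2), I would write $d=J-I$ where $J$ is the all-ones matrix, and observe that the magic condition (rows and columns summing to $1$) gives $Ju=uJ=J$ directly, while $Iu=uI=u$. Hence $du=ud$ holds automatically, and again $G^+(X)=S_N^+$. For (3), the same remark applies: $d^c=J-I-d$, and since $Ju=uJ$ and $Iu=uI$ are automatic, the relation $d^cu=ud^c$ is equivalent to $du=ud$, so the two universal algebras coincide.

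For (4), the idea is to produce a magic corepresentation inside the free product $C(G^+(X))*C(G^+(Y))$ and appeal to universality. Concretely, I would set $v=\operatorname{diag}(u^X,u^Y)$ where $u^X,u^Y$ are the fundamental corepresentations of $G^+(X),G^+(Y)$. A direct check shows $v$ is magic: the diagonal blocks are magic by hypothesis, and the zero off-diagonal blocks preserve the row/column sum conditions. Since $d=\operatorname{diag}(d_X,d_Y)$ and each block commutes with the corresponding $u^X,u^Y$, we get $vd=dv$. The universal property of $G^+(X\sqcup Y)$ then yields a surjection
$$C(G^+(X\sqcup Y))\twoheadrightarrow C(G^+(X))*C(G^+(Y))=C(G^+(X)\,\hat{*}\,G^+(Y)),$$
mapping standard coordinates to standard coordinates, which is exactly the desired inclusion of quantum groups.

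For (5), which I expect to be the main obstacle since it requires synthesizing the previous parts, I would use the following key observation: the complement of the $4$-cycle is the disjoint union $K_2\sqcup K_2$ of two edges. Combining (3) and (4) gives
$$G^+(K_2)\,\hat{*}\,G^+(K_2)\subset G^+(K_2\sqcup K_2)=G^+(X^c)=G^+(X).$$
Now $G^+(K_2)=S_2^+=S_2=\mathbb Z_2$, so this embeds the dual of the infinite dihedral group $\widehat{\mathbb Z_2*\mathbb Z_2}$ inside $G^+(X)$. Since $\mathbb Z_2*\mathbb Z_2$ is infinite and nonabelian, its dual is an infinite, non-classical compact quantum group, and hence $G^+(X)$ is non-classical and infinite. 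Properness of the inclusion $G^+(X)\subset S_4^+$ follows by passing to classical versions: if $G^+(X)=S_4^+$ held, then $G(X)=S_4$ would follow, contradicting the classical fact $G(X)=D_4\subsetneq S_4$. The main subtlety here is ensuring that the embedding in (4), combined with the identification in (3), really produces a faithful copy of the dual free product inside $G^+(X)$, but this follows from the universal property once we verify that the block-diagonal magic matrix associated to $\mathbb Z_2*\mathbb Z_2$ is nontrivial in $C(G^+(X))$, which in turn reduces to the standard faithfulness of the regular representation of $\mathbb Z_2*\mathbb Z_2$.
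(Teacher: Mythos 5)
Your proof follows essentially the same route as the paper's: items (1)--(4) are handled identically (the paper is slightly less careful, writing $d=\mathbb I$ for the simplex where you correctly write $d=J-I$, but since $Iu=uI$ is trivial this makes no difference), and for (5) both arguments pass through the chain $G^+(\square)=G^+(K_2\sqcup K_2)\supset\widehat{\mathbb Z_2*\mathbb Z_2}$ to deduce non-classicality, then use the classical version $G(\square)=D_4\neq S_4$ for properness. One small remark on your closing paragraph of (5): the worry about ``faithfulness of the regular representation'' is a red herring. The content of (4) is precisely a surjection $C(G^+(X))\twoheadrightarrow C(G^+(K_2))*C(G^+(K_2))=C^*(\mathbb Z_2*\mathbb Z_2)$, and the mere existence of a surjection onto a noncommutative $C^*$-algebra already shows $C(G^+(X))$ is noncommutative; no reduced-versus-full or faithfulness considerations are needed.
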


\begin{proof}
All these results are elementary, the proofs being as follows:

\medskip

(1) This follows from definitions, because here we have $d=0$.

\medskip

(2) Here $d=\mathbb I$ is the all-one matrix, and the magic condition gives $u\mathbb I=\mathbb Iu=N\mathbb I$. We conclude that $du=ud$ is automatic in this case, and so $G^+(X)=S_N^+$.

\medskip

(3) The adjacency matrices of $X,X^c$ being related by the formula $d_X+d_{X^c}=\mathbb I$. We can use here the above formula $u\mathbb I=\mathbb Iu=N\mathbb I$, and we conclude that $d_Xu=ud_X$ is equivalent to $d_{X^c}u=ud_{X^c}$. Thus, we obtain, as claimed, $G^+(X)=G^+(X^c)$.

\medskip

(4) The adjacency matrix of a disconnected union is given by $d_{X\sqcup Y}=diag(d_X,d_Y)$. Now let $w=diag(u,v)$ be the fundamental corepresentation of $G^+(X)\,\hat{*}\,G^+(Y)$. Then $d_Xu=ud_X$ and $d_Yv=vd_Y$ imply, as desired, $d_{X\sqcup Y}w=wd_{X\sqcup Y}$.

\medskip

(5) We know from (3) that we have $G^+(\square)=G^+(|\ |)$. We know as well from (4) that we have $\mathbb Z_2\,\hat{*}\,\mathbb Z_2\subset G^+(|\ |)$. It follows that $G^+(\square)$ is non-classical. Finally, the inclusion $G^+(\square)\subset S_4^+$ is indeed proper, because $S_4\subset S_4^+$ does not act on the square.
\end{proof}

In order to further advance, and to explicitely compute various quantum automorphism groups, we can use the spectral decomposition of $d$, as follows:

\index{spectral decomposition}

\begin{proposition}
A closed subgroup $G\subset S_N^+$ acts on a graph $X$ precisely when
$$P_\lambda u=uP_\lambda\quad,\quad\forall\lambda\in\mathbb R$$
where $d=\sum_\lambda\lambda\cdot P_\lambda$ is the spectral decomposition of the adjacency matrix of $X$.
\end{proposition}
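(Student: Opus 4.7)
The plan is to prove this by a direct application of the spectral theorem combined with the Lagrange interpolation trick, exploiting the fact that spectral projections are polynomials in the matrix they come from. The statement will reduce to a piece of pure linear algebra that happens to hold in the algebra $M_N(C(G))$ rather than just $M_N(\mathbb C)$.

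First I would recall that by Proposition 10.1 above, a closed subgroup $G\subset S_N^+$ acts on the graph $X$ precisely when $du=ud$, where $d$ is the adjacency matrix of $X$ and $u$ is the fundamental corepresentation of $C(G)$. Since $X$ is an undirected graph, $d\in M_N(\mathbb R)$ is symmetric, hence admits a real spectral decomposition $d=\sum_\lambda \lambda\cdot P_\lambda$, where the sum runs over the finite set of eigenvalues of $d$, and the $P_\lambda\in M_N(\mathbb R)$ are the orthogonal projections onto the corresponding eigenspaces, with $\sum_\lambda P_\lambda=1$.

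For the easy implication, I would observe that if $P_\lambda u=uP_\lambda$ for every $\lambda$, then by linearity
$$du=\sum_\lambda \lambda\cdot P_\lambda u=\sum_\lambda \lambda\cdot uP_\lambda=ud,$$
so $G$ acts on $X$ by Proposition 10.1. For the converse, I would use the standard fact that each spectral projection $P_\lambda$ can be written as a polynomial in $d$, via Lagrange interpolation: namely
$$P_\lambda=\prod_{\mu\neq\lambda}\frac{d-\mu\cdot 1}{\lambda-\mu},$$
where the product runs over the eigenvalues $\mu\neq\lambda$ of $d$. This is a polynomial identity in $M_N(\mathbb C)$, and so remains valid when both sides are regarded as elements of $M_N(C(G))$. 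Consequently, if $du=ud$, then $u$ commutes with every polynomial in $d$, and in particular with each $P_\lambda$.

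The only point requiring a small amount of care is the interpretation of the commutation $P_\lambda u=uP_\lambda$: here $P_\lambda\in M_N(\mathbb C)\subset M_N(C(G))$ is identified with its canonical image (scalars tensored with $1_{C(G)}$), and $u=(u_{ij})\in M_N(C(G))$, so the equality is to be read in $M_N(C(G))$. I do not anticipate a genuine obstacle; the argument is a two-line application of spectral theory, and the only thing to verify is that the Lagrange formula passes through scalar extension, which is automatic. Thus the proof amounts to writing down the two implications above cleanly.
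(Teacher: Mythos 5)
Your proof is correct and follows essentially the same route as the paper: the paper's one-line observation that $<d>=span\{P_\lambda\}$ is exactly your Lagrange interpolation argument (projections are polynomials in $d$) combined with the linearity of $d=\sum_\lambda\lambda\cdot P_\lambda$, yielding the equivalence $d\in End(u)\iff P_\lambda\in End(u)$ for all $\lambda$. Your version just spells out the interpolation formula and the scalar-extension point explicitly.
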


\begin{proof}
Since $d\in M_N(0,1)$ is a symmetric matrix, we can consider indeed its spectral decomposition, $d=\sum_\lambda\lambda\cdot P_\lambda$. We have then the following formula:
$$<d>=span\left\{P_\lambda\Big|\lambda\in\mathbb R\right\}$$

But this shows that we have the following equivalence:
$$d\in End(u)\iff P_\lambda\in End(u),\forall\lambda\in\mathbb R$$

Thus, we are led to the conclusion in the statement.
\end{proof}

In order to exploit this, we will often combine it with the following standard fact:

\begin{proposition}
Consider a closed subgroup $G\subset S_N^+$, with associated coaction map 
$$\Phi:\mathbb C^N\to \mathbb C^N\otimes C(G)$$
For a linear subspace $V\subset\mathbb C^N$, the following are equivalent:
\begin{enumerate}
\item The magic matrix $u=(u_{ij})$ commutes with $P_V$.

\item $V$ is invariant, in the sense that $\Phi(V)\subset V\otimes C(G)$.
\end{enumerate}
\end{proposition}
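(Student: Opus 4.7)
The plan is to translate both conditions into matrix identities in $M_N(C(G))$, observe that one direction is trivial, and obtain the other via a faithful tracial functional, available because $G$ is forced to be of Kac type. Writing the coaction as $\Phi(\xi)=u(\xi\otimes 1)$, where $u$ acts on the first factor of $\mathbb C^N\otimes C(G)$, condition (2) becomes $(P_V^\perp\otimes 1)\,u\,(P_V\otimes 1)=0$, i.e.\ $uP_V=P_VuP_V$ (abbreviating $P_V\otimes 1$ to $P_V$). Condition (1), that $u$ commutes with $P_V$, amounts to both $P_V^\perp u P_V=0$ and $P_V u P_V^\perp=0$, so $(1)\Rightarrow(2)$ is immediate; the real work is $(2)\Rightarrow(1)$, which is equivalent to showing that $V^\perp$ is invariant whenever $V$ is.

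For the nontrivial direction I would introduce $Q:=uP_Vu^*\in M_N(C(G))$, which is a self-adjoint projection because $u$ is unitary. The identity $uP_V=P_VuP_V$ gives $P_VQ=P_VuP_Vu^*=uP_Vu^*=Q$, and taking adjoints yields $QP_V=Q$, so $Q=P_VQP_V\leq P_V$ in the projection order. The key ingredient now is that $G\subset S_N^+$ is automatically of Kac type: the magic condition gives $S(u_{ij})=u_{ji}^*=u_{ji}$, hence $S^2=\mathrm{id}$ on the generators, and the Haar state $h$ on $C(G)$ is tracial. Consequently $\tau:=\mathrm{Tr}_N\otimes h$ is a positive tracial functional on $M_N(C(G))$, faithful on the smooth $*$-subalgebra $M_N(\mathcal A)$, and traciality yields
\[
\tau(Q)=\tau(uP_Vu^*)=\tau(u^*uP_V)=\tau(P_V).
\]
Thus $\tau(P_V-Q)=0$ with $P_V-Q\geq 0$ lying in the smooth part, and faithfulness of $\tau$ there forces $Q=P_V$, i.e.\ $uP_V=P_Vu$, as required.

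The delicate step is exactly the jump from $Q\leq P_V$ to $Q=P_V$. Over $\mathbb C$, any unitary preserving a subspace automatically preserves its orthogonal complement, by a simple rank/finite-dimensionality argument applied to the restriction; over the noncommutative coefficient algebra $C(G)$ this is no longer a formal consequence of invertibility, and genuinely requires the tracial Haar state, which is available precisely because the magic condition forces the Kac property. Everything else in the proof is routine bookkeeping translating between the coaction language and the matrix-over-$C(G)$ language.
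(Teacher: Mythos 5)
Your argument is correct, and it is genuinely more careful than the paper's. The paper's proof is a direct unwinding of definitions: it computes $\Phi(P(e_i))=\sum_j e_j\otimes(uP)_{ji}$ and $(P\otimes id)\Phi(e_i)=\sum_j e_j\otimes(Pu)_{ji}$, concludes that $uP=Pu$ is equivalent to $\Phi P=(P\otimes id)\Phi$, and then simply states that the conclusion follows. But $\Phi P=(P\otimes id)\Phi$ expresses the simultaneous invariance of $V$ and $V^\perp$, whereas condition (2) as stated is only the one-sided inclusion $\Phi(V)\subset V\otimes C(G)$, which translates to the weaker matrix identity $P^\perp u P=0$. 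The paper thus silently identifies these two conditions without justification. You correctly isolate this as the nontrivial point, and your fix --- setting $Q=uPu^*$, observing $Q\leq P$ from $uP=PuP$, and then concluding $Q=P$ by applying the faithful tracial functional $\mathrm{Tr}\otimes h$, available since $G\subset S_N^+$ forces the Kac property --- is a clean and correct argument. The one small overstatement is the remark that the trace is ``genuinely required'': the fact that an invariant subspace of a unitary corepresentation has invariant orthocomplement holds for arbitrary compact quantum groups, not just Kac ones, via a Haar-averaging argument that does not need traciality. But in the quantum permutation setting the trace is there for free and your route is the simplest. In short: the paper takes a shortcut by treating invariance as the two-sided commutation relation; you prove what the statement literally asserts, and supply the missing complete-reducibility step.
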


\begin{proof}
Let $P=P_V$. For any $i\in\{1,\ldots,N\}$ we have the following formula:
\begin{eqnarray*}
\Phi(P(e_i))
&=&\Phi\left(\sum_kP_{ki}e_k\right)\\ 
&=&\sum_{jk}P_{ki}e_j\otimes u_{jk}\\
&=&\sum_je_j\otimes (uP)_{ji}
\end{eqnarray*}

On the other hand the linear map $(P\otimes id)\Phi$ is given by a similar formula:
\begin{eqnarray*}
(P\otimes id)(\Phi(e_i))
&=&\sum_kP(e_k)\otimes u_{ki}\\
&=&\sum_{jk}P_{jk}e_j\otimes u_{ki}\\
&=&\sum_je_j\otimes (Pu)_{ji}
\end{eqnarray*}

Thus $uP=Pu$ is equivalent to $\Phi P=(P\otimes id)\Phi$, and the conclusion follows.
\end{proof}

We have as well the following useful complementary result, from \cite{ba3}:

\index{color decomposition}

\begin{proposition}
Let $p\in M_N(\mathbb C)$ be a matrix, and consider its ``color'' decomposition, obtained by setting $(p_c)_{ij}=1$ if $p_{ij}=c$ and $(p_c)_{ij}=0$ otherwise:
$$p=\sum_{c\in\mathbb C}c\cdot p_c$$
Then $u=(u_{ij})$ commutes with $p$ if and only if it commutes with all matrices $p_c$.
\end{proposition}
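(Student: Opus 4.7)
My plan is to handle the easy direction first, namely that $u$ commuting with each $p_c$ implies $u$ commutes with $p$, which is immediate by linearity from $p=\sum_c c\cdot p_c$ (the sum is finite since $p\in M_N(\mathbb{C})$ has finitely many distinct entries). The substance is in the converse, so I would assume $up=pu$ and rewrite it entrywise: for every $i,j$,
$$\sum_k u_{ik}p_{kj}=\sum_k p_{ik}u_{kj}.$$
The target, read in the same way, is the identity $\sum_{k:\,p_{kj}=c}u_{ik}=\sum_{k:\,p_{ik}=c}u_{kj}$ for every $i,j$ and every color $c$.

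The workhorse is a spinning of the entrywise identity by magic entries of $u$. I would multiply the relation above on the left by $u_{il}$ and on the right by $u_{mj}$, and use the magic relations $u_{il}u_{ik}=\delta_{lk}u_{il}$ and $u_{kj}u_{mj}=\delta_{km}u_{mj}$ to collapse each side. Pulling out the scalars $p_{kj},p_{ik}\in\mathbb{C}$, this gives
$$p_{lj}\,u_{il}u_{mj}=p_{im}\,u_{il}u_{mj},$$
i.e.\ the single relation $(p_{lj}-p_{im})\,u_{il}u_{mj}=0$ valid for all indices $i,j,l,m$. In words: a nonzero product $u_{il}u_{mj}$ already forces the color matching $p_{lj}=p_{im}$. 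This short, purely mechanical step is essentially the whole content of the proof.

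With this key relation in hand, I would then expand $\sum_{k:\,p_{kj}=c}u_{ik}$ by inserting $1=\sum_m u_{mj}$. For each resulting term one has $p_{kj}=c$, so the key relation specializes to $(c-p_{im})\,u_{ik}u_{mj}=0$, killing every summand with $p_{im}\neq c$. What remains is a double sum of $u_{ik}u_{mj}$ over pairs $(k,m)$ satisfying $p_{kj}=c$ and $p_{im}=c$. Performing the symmetric manipulation on $\sum_{k:\,p_{ik}=c}u_{kj}$ by inserting $1=\sum_l u_{il}$ produces a double sum of $u_{il}u_{kj}$ over pairs with $p_{ik}=c$ and $p_{lj}=c$, and after renaming the dummy variables $k\leftrightarrow m$, $l\to k$ these two double sums coincide term by term. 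The only mild obstacle I foresee is this last index-bookkeeping step: ensuring the relabelling is done cleanly so that the two sums are literally the same expression. Beyond that, nothing is required past the key identity and the magic structure.
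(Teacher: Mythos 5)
Your proof is correct, but it takes a genuinely different route from the paper's. The paper observes that, because the multiplication $M:e_i\otimes e_j\to\delta_{ij}e_i$ and diagonal $C:e_i\to e_i\otimes e_i$ of $\mathbb{C}^N$ are intertwiners for a magic $u$ (Proposition 9.3), the space $End(u)$ is closed under entrywise (Schur) powers $p\to p^{(k)}=M^{(k)}p^{\otimes k}C^{(k)}=\sum_c c^k p_c$; it then recovers each $p_c$ as a finite linear combination of these powers by interpolating the Dirac masses on the color set. Your argument instead works directly with the defining magic relations: you derive the pointwise rigidity identity $(p_{lj}-p_{im})\,u_{il}u_{mj}=0$ by sandwiching the entrywise commutation relation between row and column projections, and then saturate the row/column sum-to-one relations to match the two sides of $up_c=p_cu$. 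The paper's proof is shorter and more conceptual, exposing the closure of $End(u)$ under Hadamard products as the structural fact at play, while your proof is more elementary and self-contained, trading the interpolation step for explicit index bookkeeping; both rely on exactly the same input, namely the magic structure of $u$, just packaged differently.
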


\begin{proof}
Consider the multiplication and counit maps of the algebra $\mathbb C^N$:
$$M:e_i\otimes e_j\to e_ie_j$$
$$C:e_i\to e_i\otimes e_i$$

Since $M,C$ intertwine $u,u^{\otimes 2}$, their iterations $M^{(k)},C^{(k)}$ intertwine $u,u^{\otimes k}$, and so:
\begin{eqnarray*}
p^{(k)}
&=&M^{(k)}p^{\otimes k}C^{(k)}\\
&=&\sum_{c\in\mathbb C}c^kp_c\\
&\in&End(u)
\end{eqnarray*}

Let $S=\{c\in\mathbb C|p_c\neq 0\}$, and $f(c)=c$. By Stone-Weierstrass we have $S=<f>$, and so for any $e\in S$ the Dirac mass at $e$ is a linear combination of powers of $f$:
\begin{eqnarray*}
\delta_e
&=&\sum_{k}\lambda_kf^k\\
&=&\sum_{k}\lambda_k \left(\sum_{c\in S}c^k\delta_c\right)\\
&=&\sum_{c\in S}\left(\sum_{k}\lambda_kc^k\right)\delta_c
\end{eqnarray*}

The corresponding linear combination of matrices $p^{(k)}$ is given by:
\begin{eqnarray*}
\sum_k\lambda_kp^{(k)}
&=&\sum_k\lambda_k \left(\sum_{c\in S}c^kp_c\right)\\
&=&\sum_{c\in S}\left(\sum_{k}\lambda_kc^k\right)p_c
\end{eqnarray*}

The Dirac masses being linearly independent, in the first formula all coefficients in the right term are 0, except for the coefficient of $\delta_e$, which is 1. Thus the right term in the second formula is $p_e$, and it follows that we have $p_e\in End(u)$, as claimed.
\end{proof}

The above results can be combined, and we are led into a ``color-spectral'' decomposition method for $d$, which can lead to a number of nontrivial results. In fact, all this is best understood in terms of Jones' planar algebras \cite{jo3}. We refer here to \cite{ba3}.

\bigskip

As a basic application of this, we can further study $G^+(\square)$, as follows:

\index{cycle graph}
\index{dihedral group}

\begin{proposition}
The quantum automorphism group of the $N$-cycle is as follows:
\begin{enumerate}
\item At $N\neq 4$ we have $G^+(X)=D_N$.
 
\item At $N=4$ we have $D_4\subset G^+(X)\subset S_4^+$, with proper inclusions.
\end{enumerate}
\end{proposition}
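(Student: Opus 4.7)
The plan is to proceed by case analysis, reducing the exceptional values of $N$ to results already established and handling the generic case by iterating the spectral/color-decomposition machinery of Propositions 10.3 and 10.5. For $N = 3$, the cycle $C_3$ coincides with the $3$-simplex $K_3$, so Theorem 10.2(2) combined with Theorem 9.4 gives $G^+(C_3) = S_3^+ = S_3 = D_3$. For $N = 4$, the graph $C_4$ is the square $\square$, and the claim is precisely Theorem 10.2(5) supplemented with the trivial classical inclusion $D_4 = G(C_4) \subset G^+(C_4)$ coming from Proposition 10.1.

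For $N \geq 5$, the substantive case, I would write $d = S + S^{-1}$ with $S$ the cyclic shift matrix, and apply Proposition 10.5 successively to powers of $d$. A direct computation shows that for $N \geq 5$ the entries of $d^2$ take exactly three values, namely $(d^2)_{ii} = 2$, $(d^2)_{i, i \pm 2} = 1$, and zero otherwise; the color decomposition of $d^2$ then forces the distance-$2$ adjacency matrix $d_2$ into $\mathrm{End}(u)$. This is precisely the step that collapses at $N = 4$: there two length-$2$ paths connect antipodal vertices, so $(d^2)_{i, i+2} = 2$, the color $1$ is absent, and no new intertwiner emerges. Iterating the same reasoning on successive products $d \cdot d_k$ (each lying in the commutative distance algebra of $C_N$ and decomposing by colors into further distance matrices, provided $N \geq 5$) yields $d_k \in \mathrm{End}(u)$ for every $k = 1, \ldots, \lfloor N/2 \rfloor$.

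The final task is to extract from these intertwiner relations the commutativity of the entries $u_{ij}$. My plan is to expand each identity $d_k u = u d_k$ in coordinates and combine it with the magic relations $\sum_i u_{ij} = \sum_j u_{ij} = 1$ to derive $u_{ij} u_{kl} = u_{kl} u_{ij}$ by selecting the pair $(k,l)$ according to its distance type relative to $(i,j)$; for $N \geq 5$ the distance function on $C_N$ is sufficiently discriminating to force all such commutators to vanish. Once $C(G^+(C_N))$ is shown to be commutative, the Gelfand theorem gives $G^+(C_N) \subset S_N$, and then $G^+(C_N) = G(C_N) = D_N$ by Proposition 10.1.

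The main obstacle I anticipate is precisely this final commutativity argument: converting the collection of matrix identities $[u, d_k] = 0$ into pairwise commutation of the entries $u_{ij}$ is combinatorially delicate, and the argument must exploit $N \geq 5$ in an essential way, since the $N = 4$ case is genuinely non-classical. A cleaner alternative would be to compute the diagonal torus of $G^+(C_N)$ directly from the relations $d_k u = u d_k$, show it reduces to the dihedral torus $\widehat{\mathbb Z_N}$, and then invoke Peter--Weyl/Tannakian rigidity to exclude any non-classical fundamental representation.
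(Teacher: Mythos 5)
Your reduction of the cases $N = 3, 4$ to Theorems 9.4 and 10.2 is correct and matches what the paper does (it simply defers these to earlier results). The gap is in the main case $N \geq 5$, and you have already put your finger on it yourself: extracting commutativity of the entries $u_{ij}$ from the matrix relations $[u, d_k] = 0$. But the problem is worse than ``combinatorially delicate'' --- it is circular. For the cycle $C_N$, the distance matrices $d_k$ all lie in the unital algebra generated by $d$ (the cycle is distance-regular, and its Bose--Mesner algebra is generated by the adjacency matrix), so once $d \in End(u)$ holds, the conclusions $d_k \in End(u)$ follow automatically by taking powers and color components, as you describe; they carry no information beyond the defining condition of $G^+(C_N)$. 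Your ``cleaner alternative'' via the diagonal torus cannot work either: $G^+(C_N) \subset S_N^+$, and the diagonal torus of any quantum permutation group is $\{1\}$ (cf.\ Theorem 13.6), so it detects nothing.

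The paper's proof uses a device that your plan has no analogue of: it passes to the Fourier eigenbasis of $d$. Writing $f = (w^i)_i$ with $w = e^{2\pi i/N}$, the invariant subspaces of $d$ are $\mathbb C 1,\ \mathbb C f \oplus \mathbb C f^{N-1},\ \mathbb C f^2 \oplus \mathbb C f^{N-2}, \ldots$, so the coaction $\Phi$ on the first nontrivial eigenspace reads $\Phi(f) = f \otimes a + f^{N-1} \otimes b$. Since $\Phi$ is an algebra morphism, squaring gives $\Phi(f^2) = f^2 \otimes a^2 + f^{N-2} \otimes b^2 + 1 \otimes (ab+ba)$, and matching against the eigenspace decomposition forces $ab = -ba$; cubing and using $N \geq 5$ (so that $\{1, N-1\}$ and $\{3, N-3\}$ are disjoint) forces $ab^2 = ba^2 = 0$. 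Then $\Phi(f^k) = f^k \otimes a^k + f^{N-k} \otimes b^k$ by induction, and comparing $\Phi(f^{N-1})$ with $\Phi(f)^*$ (using $f^* = f^{N-1}$) gives $a^* = a^{N-1}$, $b^* = b^{N-1}$. The decisive closing step is a $C^*$-positivity argument: $(ab)(ab)^* = abb^*a^* = ab^N a^{N-1} = (ab^2) b^{N-2} a^{N-1} = 0$, hence $ab = 0$, hence $a$ and $b$ commute, hence $C(G^+(C_N))$ --- which is generated by powers of $a, b$ --- is commutative. This is precisely where the exceptional behaviour at $N = 4$ is excluded, and there is no analogue of this argument in the standard basis, which is why your plan stalls.
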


\begin{proof}
We already know that the results hold at $N\leq4$, so let us assume $N\geq5$. Given a $N$-th root of unity, $w^N=1$, consider the following vector:
$$\xi=(w^i)$$ 

This is an eigenvector of $d$, with eigenvalue $w+w^{N-1}$. With $w=e^{2\pi i/N}$, it follows that $1,f,f^2,\ldots ,f^{N-1}$ are eigenvectors of $d$. More precisely, the invariant subspaces of $d$ are as follows, with the last subspace having dimension 1 or 2, depending on $N$:
$$\mathbb C 1,\, \mathbb C f\oplus\mathbb C f^{N-1},\, \mathbb C f^2\oplus\mathbb C f^{N-2},\ldots$$

Consider now the associated coaction $\Phi:\mathbb C^N\to \mathbb C^N\otimes C(G)$, and write:
$$\Phi(f)=f\otimes a+f^{N-1}\otimes b$$

By taking the square of this equality we obtain:
$$\Phi(f^2)=f^2\otimes a^2+f^{N-2}\otimes b^2+1\otimes(ab+ba)$$

It follows that $ab=-ba$, and that $\Phi(f^2)$ is given by the following formula:
$$\Phi(f^2)=f^2\otimes a^2+f^{N-2}\otimes b^2$$

By multiplying this with $\Phi(f)$ we obtain:
$$\Phi(f^3)=f^3\otimes a^3+f^{N-3}\otimes b^3+f^{N-1}\otimes ab^2+f\otimes ba^2$$

Now since $N\geq 5$ implies that $1,N-1$ are different from $3,N-3$, we must have $ab^2=ba^2=0$. By using this and $ab=-ba$, we obtain by recurrence on $k$ that:
$$\Phi(f^k)=f^k\otimes a^k+f^{N-k}\otimes b^k$$

In particular at $k=N-1$ we obtain:
$$\Phi(f^{N-1})=f^{N-1}\otimes a^{N-1}+f\otimes b^{N-1}$$

On the other hand we have $f^*=f^{N-1}$, so by applying $*$ to $\Phi(f)$ we get:
$$\Phi(f^{N-1})=f^{N-1}\otimes a^*+f\otimes b^*$$

Thus $a^*=a^{N-1}$ and $b^*=b^{N-1}$. Together with $ab^2=0$ this gives:
\begin{eqnarray*}
(ab)(ab)^*
&=&abb^*a^*\\
&=&ab^Na^{N-1}\\
&=&(ab^2)b^{N-2}a^{N-1}\\
&=&0
\end{eqnarray*}

From positivity we get from this $ab=0$, and together with $ab=-ba$, this shows that $a,b$ commute. On the other hand $C(G)$ is generated by the coefficients of $\Phi$, which are powers of $a,b$, and so $C(G)$ must be commutative, and we obtain the result.
\end{proof}

Summarizing, this was a bad attempt in understanding $G^+(\square)$, which appears to be ``exceptional'' among the quantum symmetry groups of the $N$-cycles. 

\bigskip

An alternative approach to $G^+(\square)$ comes by regarding the square as the $N=2$ particular case of the $N$-hypercube $\square_N$. Indeed, the usual symmetry group of $\square_N$ is the hyperoctahedral group $H_N$, so we should have a formula of the following type:
$$G(\square)=H_2^+$$

In order to clarify this, let us start with the following simple fact:

\index{square graph}
\index{hypercube}

\begin{proposition}
We have an embedding as follows, $g_i$ being the generators of $\mathbb Z_2^N$,
$$\widehat{\mathbb Z_2^N}\subset S^{N-1}_{\mathbb R,+}\quad,\quad x_i=\frac{g_i}{\sqrt{N}}$$
whose image is the geometric hypercube:
$$\square_N=\left\{x\in\mathbb R^N\Big|x_i=\pm\frac{1}{\sqrt{N}},\forall i\right\}$$ 
\end{proposition}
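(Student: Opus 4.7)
The plan is to verify first that the assignment $x_i = g_i/\sqrt{N}$ defines a well-formed morphism out of $C(S^{N-1}_{\mathbb R,+})$, and then to identify the image using Pontrjagin duality. For the first step, since $\mathbb Z_2^N$ has generators satisfying $g_i^2 = 1$, we have $g_i^* = g_i^{-1} = g_i$ inside the group algebra $C^*(\mathbb Z_2^N)$, so the candidates $x_i = g_i/\sqrt{N}$ are self-adjoint, and moreover
$$\sum_{i=1}^N x_i^2 = \frac{1}{N}\sum_{i=1}^N g_i^2 = \frac{1}{N}\cdot N = 1.$$
Thus by the universal property of the free real sphere we obtain a surjective $*$-algebra morphism $C(S^{N-1}_{\mathbb R,+}) \twoheadrightarrow C^*(\mathbb Z_2^N)$ sending the standard coordinates to $g_i/\sqrt{N}$, which yields the claimed embedding $\widehat{\mathbb Z_2^N} \subset S^{N-1}_{\mathbb R,+}$.

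For the image computation, I would use that $\mathbb Z_2^N$ is abelian, so $C^*(\mathbb Z_2^N) = C(\widehat{\mathbb Z_2^N})$ is commutative. Hence the image lands inside the classical real sphere $S^{N-1}_\mathbb R$, and via the Pontrjagin identification $\widehat{\mathbb Z_2^N} = \mathbb Z_2^N$ the points of the image correspond to characters $\chi:\mathbb Z_2^N \to \mathbb T$. Since $g_i^2 = 1$ forces $\chi(g_i) \in \{\pm 1\}$, the coordinate functions evaluate as $x_i(\chi) = \chi(g_i)/\sqrt{N} = \pm 1/\sqrt{N}$, so the image is contained in the set of sign-vectors rescaled by $1/\sqrt{N}$, i.e. in $\square_N$. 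Conversely, every choice of signs produces a valid character, so the image is exactly $\square_N$.

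There is no real obstacle here; the only point to double-check is the compatibility of the sign $g_i^* = g_i$ with the convention $g^* = g^{-1}$ used for group algebras (trivially true when $g_i^2=1$), and the identification $\widehat{\mathbb Z_2} = \mathbb Z_2$, which is the standard Pontrjagin self-duality of $\mathbb Z_2$ extended coordinatewise. The result can then alternatively be viewed as a special case of the cube-as-torus description from Definition 1.16, applied to the classical sphere $S^{N-1}_\mathbb R$, combined with the elementary Fourier-transform isomorphism $C^*(\mathbb Z_2^N) \simeq C(\mathbb Z_2^N)$.
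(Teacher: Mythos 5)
Your argument is correct and follows the same route as the paper: the paper's proof simply notes that the generators $g_i\in C^*(\mathbb Z_2^N)=C(\widehat{\mathbb Z_2^N})$ satisfy $g_i=g_i^*$, $g_i^2=1$, so that after rescaling by $1/\sqrt{N}$ one recovers exactly the relations defining $\square_N$, referring back to the torus/cube discussion of section 1. Your write-up just makes explicit the universality and Pontrjagin/Gelfand identifications that the paper leaves implicit, so there is nothing to correct.
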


\begin{proof}
This is something that we already know, from chapter 1 above. Consider indeed the following standard group algebra generators:
$$g_i\in C^*(\mathbb Z_2^N)=C(\widehat{\mathbb Z_2^N})$$

These generators satisfy satisfy then $g_i=g_i^*$, $g_i^2=1$, and when rescaling by $1/\sqrt{N}$, we obtain the relations defining $\square_N$.
\end{proof}

We can now study the quantum groups $G^+(\square_N)$, and we are led to the quite surprising conclusion, from \cite{bbc}, that these are the twisted orthogonal groups $\bar{O}_N$:

\begin{theorem}
With $\mathbb Z_2^N=<g_1,\ldots,g_N>$ we have a coaction map
$$\Phi:C^*(\mathbb Z_2^N)\to C^*(\mathbb Z_2^N)\otimes C(\bar{O}_N)\quad,\quad g_i\to\sum_jg_j\otimes u_{ji}$$
which makes $\bar{O}_N$ the quantum isometry group of the hypercube $\square_N=\widehat{\mathbb Z_2^N}$, as follows:
\begin{enumerate}
\item With $\square_N$ viewed as an algebraic manifold, $\square_N\subset S^{N-1}_\mathbb R\subset S^{N-1}_{\mathbb R,+}$.

\item With $\square_N$ viewed as a graph, with $2^N$ vertices and $2^{N-1}N$ edges.

\item With $\square_N$ viewed as a metric space, with metric coming from $\mathbb R^N$.
\end{enumerate}
\end{theorem}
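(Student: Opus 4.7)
The plan is to establish the three parts uniformly by treating the algebraic-manifold picture $(1)$ as the core case and reducing $(2)$ and $(3)$ to it. First I would check that the announced formula $\Phi(g_i) = \sum_j g_j \otimes u_{ji}$ defines a coaction of $C(\bar{O}_N)$ on $C^*(\mathbb{Z}_2^N)$, and then establish universality in each picture.

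For existence, I would verify $\Phi(g_i)^2 = 1$ and $\Phi(g_i)\Phi(g_j) = \Phi(g_j)\Phi(g_i)$ for $i \neq j$, using the defining relations of $\bar{O}_N$ together with orthogonality $u^t u = uu^t = 1$. Setting $G_i = \sum_j g_j \otimes u_{ji}$, the diagonal $j=k$ part of $G_i^2$ contributes $\sum_j 1 \otimes u_{ji}^2 = 1 \otimes 1$, while the off-diagonal terms pair up as $g_j g_k \otimes (u_{ji}u_{ki} + u_{ki}u_{ji}) = 0$ by column-anticommutation. The commutativity check is analogous: the diagonal contribution to $G_iG_j - G_jG_i$ vanishes via $(u^tu)_{ij} = 0$, and the off-diagonal terms match via off-row-and-column commutation. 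Under Pontryagin duality $C^*(\mathbb{Z}_2^N) \simeq C(\widehat{\mathbb{Z}_2^N}) = \mathbb{C}^{2^N}$, this translates into an action on the $2^N$-vertex set, and the induced magic unitary commutes with the adjacency $d$ since $\Phi$ preserves each eigenspace of $d$, the ``collision'' terms in $\Phi(g_S)$ cancelling by the same orthogonality relations.

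For universality in picture $(1)$, let $G \subset U_N^+$ act on $\square_N$ via $\Phi(x_i) = \sum_j x_j \otimes u_{ji}$. Since the $x_j$ are linearly independent in $C(\square_N)$, the reality $x_i = x_i^*$ forces $u = \bar{u}$. Expanding $X_i^2 = 1/N$ and using linear independence of $\{x_j x_k\}_{j<k}$ yields column-anticommutation $u_{ji}u_{ki} + u_{ki}u_{ji} = 0$ for $j \neq k$; applying the antipode via $S(u_{ab}) = u_{ba}$ then gives row-anticommutation. The main obstacle of the proof is the off-row-and-column commutation: a direct expansion of $X_iX_j = X_jX_i$ only yields the weaker relation $[u_{ki}, u_{lj}] = [u_{kj}, u_{li}]$ for $k \neq l$, $i \neq j$. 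This is overcome by an antipode trick --- applying $S$ and using $S([a,b]) = -[S(a), S(b)]$ produces a second relation $[u_{ik}, u_{jl}] = [u_{jk}, u_{il}]$, which upon relabeling with rows first becomes $[u_{ki}, u_{lj}] = [u_{li}, u_{kj}] = -[u_{kj}, u_{li}]$; adding this to the original forces $[u_{ki}, u_{lj}] = 0$. The resulting relations are precisely those defining $\bar{O}_N$.

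For $(2)$ and $(3)$, the point is that $\square_N$ is the Cayley graph of $\mathbb{Z}_2^N$ with generators $g_i$, so the adjacency matrix $d$ is diagonalized by the characters with $d\chi_S = (N - 2|S|)\chi_S$; the $(N-2)$-eigenspace is then $V_1 = \mathrm{span}(g_1, \ldots, g_N)$. Any $G \subset S_{2^N}^+$ acting by graph automorphisms must preserve $V_1$ by Propositions 10.3--10.4, inducing a coaction of the form $\Phi(g_i) = \sum_j g_j \otimes u_{ji}$, and preservation of the algebra structure $g_i^2 = 1$, $g_i g_j = g_j g_i$ reproduces exactly the setup of picture $(1)$, yielding $G \subset \bar{O}_N$. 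For the metric picture, the Euclidean distance on $\square_N$ equals $2\sqrt{k/N}$ where $k$ is the Hamming distance, a strictly monotone function of the graph distance; by distance-regularity of the cube the distance matrix is a polynomial in $d$, so metric quantum isometries coincide with graph quantum isometries, reducing $(3)$ to $(2)$.
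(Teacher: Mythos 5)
Your proposal is correct and follows essentially the same route as the paper: derive the column- and row-anticommutation from $\Phi(g_i)^2=1$ plus the antipode, obtain the off-row/column commutation from $[\Phi(g_i),\Phi(g_j)]=0$ via the same antipode-and-relabel trick, use the eigenspace-preservation argument (Propositions 10.3--10.4 applied to the Cayley graph of $\mathbb Z_2^N$) to reduce the graph picture to the manifold picture, and use the color decomposition of the distance matrix to reduce the metric picture to the graph picture. The only (minor) variations from the paper's text are that you explicitly derive $u=\bar u$ from reality of the $x_i$ rather than starting from $G\subset O_N^+$, and that you invoke distance-regularity of the cube to obtain $\langle D\rangle=\langle d\rangle$ in place of the paper's explicit loop-counting, both of which are equivalent formulations of the same facts.
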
 

\begin{proof}
Observe first that $\square_N$ is indeed an algebraic manifold, so (1) as formulated above makes sense, in the general framework of chapter 2. The cube $\square_N$ is also a graph, as indicated, and so (2) makes sense as well, in the framework of Proposition 10.1. Finally, (3) makes sense as well, because we can define the quantum isometry group of a finite metric space exactly as for graphs, but with $d$ being this time the distance matrix.

\medskip

(1) In order for $G\subset O_N^+$ to act affinely on $\square_N$, the variables $G_i=\sum_jg_j\otimes u_{ji}$ must satisfy the same relations as the generators $g_i\in \mathbb Z_2^N$. The self-adjointness condition being automatic, the relations to be checked are therefore:
$$G_i^2=1\quad,\quad G_iG_j=G_jG_i$$

We have the following computation:
\begin{eqnarray*}
G_i^2
&=&\sum_{kl}g_kg_l\otimes u_{ik}u_{il}\\
&=&1+\sum_{k<l}g_kg_l\otimes(u_{ik}u_{il}+u_{il}u_{ik})
\end{eqnarray*}

As for the commutators, these are given by:
\begin{eqnarray*}
\left[G_i,G_j\right]
&=&\sum_{k<l}g_kg_l\otimes(u_{ik}u_{jl}-u_{jk}u_{il}+u_{il}u_{jk}-u_{jl}u_{ik})
\end{eqnarray*}

From the first relation we obtain $ab=0$ for $a\neq b$ on the same row of $u$, and by using the antipode, the same happens for the columns. From the second relation we obtain:
$$[u_{ik},u_{jl}]=[u_{jk},u_{il}]\quad,\quad\forall k\neq l$$

We use the Bhowmick-Goswami trick \cite{bhg}. By applying the antipode we obtain:
$$[u_{lj},u_{ki}]=[u_{li},u_{kj}]$$

By relabelling, this gives the following formula:
$$[u_{ik},u_{jl}]=[u_{il},u_{jk}]\quad,\quad j\neq i$$

Thus for $i\neq j,k\neq l$ we must have:
$$[u_{ik},u_{jl}]=[u_{jk},u_{il}]=0$$

We are therefore led to $G\subset\bar{O}_N$, as claimed.

\medskip

(2) We can use here the fact that the cube $\square_N$, when regarded as a graph, is the Cayley graph of the group $\mathbb Z_2^N$. The eigenvectors and eigenvalues of $\square_N$ are as follows:
\begin{eqnarray*}
v_{i_1\ldots i_N}&=&\sum_{j_1\ldots j_N} (-1)^{i_1j_1
+\ldots+i_Nj_N}g_1^{j_1}\ldots g_N^{j_N}\\
\lambda_{i_1\ldots i_N}&=&(-1)^{i_1}+\ldots +(-1)^{i_N}
\end{eqnarray*}

With this picture in hand, and by using Proposition 10.3 and Proposition 10.4 above, the result follows from the same computations as in the proof of (1). See \cite{bbc}.

\medskip

(3) Our claim here is that we obtain the same symmetry group as in (2). Indeed, observe that distance matrix of the cube has a color decomposition as follows:
$$d=d_1+\sqrt{2}d_2+\sqrt{3}d_3+\ldots+
\sqrt{N}d_N$$

Since the powers of $d_1$ can be computed by counting loops on the cube, we have formulae as follows, with $x_{ij}\in\mathbb N$ being certain positive integers:
\begin{eqnarray*}
d_1^2&=&x_{21}1_N+x_{22}d_2\\
d_1^3&=&x_{31}1_N+x_{32}d_2+ x_{33}d_3\\
&\ldots&\\
d_1^N&=&x_{N1}1_N+x_{N2}d_2+x_{N3}d_3+\ldots+x_{NN}d_N
\end{eqnarray*}

But this shows that we have $<d>=<d_1>$. Now since $d_1$ is the adjacency matrix of $\square_N$, viewed as graph, this proves our claim, and we obtain the result via (2).
\end{proof}

Now back to our questions regarding the square, we have $G^+(\square)=\bar{O}_2$, and this formula appears as the $N=2$ particular case of a general formula, namely $G^+(\square_N)=\bar{O}_N$. This is quite conceptual, but still not ok. The problem is that we have $G(\square_N)=H_N$, and so for our theory to be complete, we would need a formula of type $H_N^+=\bar{O}_N$. And this latter formula is obviously wrong, because for $\bar{O}_N$ the character computations lead to Gaussian laws, who cannot appear as liberations of the character laws for $H_N$, that we have not computed yet, but which can only be something Poisson-related.

\section*{10b. Reflection groups}

Summarizing, the problem of conceptually understanding $G(\square)$ remains open. In order to present now the correct, final solution, the idea will be that to look at the quantum group $G^+(|\ |)$ instead, which is equal to it, according to Proposition 10.2 (3). We first have the following result, extending Proposition 10.2 (4) above:

\index{disconnected union}
\index{dual free product}

\begin{proposition}
For a disconnected union of graphs we have
$$G^+(X_1)\;\hat{*}\;\ldots\;\hat{*}\;G^+(X_k)\subset G^+(X_1\sqcup\ldots\sqcup X_k)$$
and this inclusion is in general not an isomorphism.
\end{proposition}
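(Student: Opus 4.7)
The plan is to extend the argument already used in Proposition 10.2 (4) from two components to an arbitrary finite number, and then to exhibit a simple counterexample for the strictness claim. The key observation is that the adjacency matrix of a disjoint union is block-diagonal,
$$d_{X_1\sqcup\ldots\sqcup X_k}=\mathrm{diag}(d_{X_1},\ldots,d_{X_k}),$$
so we should build the fundamental corepresentation of the dual free product in block-diagonal form and verify that it commutes with this matrix.

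Concretely, let $u^{(i)}\in M_{N_i}(C(G^+(X_i)))$ be the magic fundamental corepresentation of $G^+(X_i)$, satisfying $d_{X_i}u^{(i)}=u^{(i)}d_{X_i}$ by definition. By Proposition 2.11, the dual free product $G^+(X_1)\,\hat{*}\,\ldots\,\hat{*}\,G^+(X_k)$ is the compact quantum group whose algebra is the free product $C(G^+(X_1))*\ldots*C(G^+(X_k))$, with fundamental corepresentation
$$w=\mathrm{diag}(u^{(1)},\ldots,u^{(k)}).$$
This matrix $w$ is magic, since each $u^{(i)}$ is magic and the off-diagonal blocks vanish, so the row and column sums restrict block-by-block. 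The commutation $dw=wd$ for $d=d_{X_1\sqcup\ldots\sqcup X_k}$ is then immediate, as each of the two products is block-diagonal with $i$-th block $d_{X_i}u^{(i)}=u^{(i)}d_{X_i}$. By the universal property defining $C(G^+(X_1\sqcup\ldots\sqcup X_k))$ from Proposition 10.1, this produces a surjective Hopf algebra morphism $C(G^+(X_1\sqcup\ldots\sqcup X_k))\to C(G^+(X_1))*\ldots*C(G^+(X_k))$ sending magic generators to magic generators, which is exactly the desired inclusion of compact quantum groups.

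For the strictness, the cleanest counterexample is the case of identical components. Take $X_1=X_2=\{*\}$, a single isolated vertex, so each $G^+(X_i)$ is trivial and the dual free product is trivial as well. However, $X_1\sqcup X_2$ is the $2$-point graph with no edges, and Theorem 10.2 (1) gives $G^+(X_1\sqcup X_2)=S_2^+=S_2=\mathbb{Z}_2\neq\{1\}$. More conceptually, for $k$ identical copies of any graph $X$ one can endow $G^+(X^{\sqcup k})$ with the action of a free wreath product $G^+(X)\wr_* S_k^+$ (which permutes the copies in addition to acting within each copy), and this strictly contains the dual free product $G^+(X)^{\hat{*}\,k}$ whenever $k\geq 2$.

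The inclusion is routine once one sets up the block-diagonal formalism, so the only genuine conceptual point is the strictness statement, and even there the obstacle is merely to recognize that disjoint unions admit automorphisms that permute the components, a symmetry which the dual free product, acting componentwise by construction, cannot see. A one-point counterexample as above is enough to establish the ``in general not an isomorphism'' claim, sidestepping any discussion of free wreath products.
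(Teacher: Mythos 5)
Your proof is correct. The inclusion part is essentially the paper's argument: block-diagonal adjacency matrix $d=\mathrm{diag}(d_{X_1},\ldots,d_{X_k})$, block-diagonal magic corepresentation $w=\mathrm{diag}(u^{(1)},\ldots,u^{(k)})$ of the dual free product, commutation $dw=wd$ checked blockwise, and the universal property of $C(G^+(X_1\sqcup\ldots\sqcup X_k))$ giving the surjection onto the free product algebra. Where you differ is the strictness claim: the paper disposes of it by pointing back to its earlier analysis of the square, i.e.\ the example $X=|\ |$ (two disjoint segments), where $\mathbb Z_2\,\hat{*}\,\mathbb Z_2=\widehat{D_\infty}$ sits strictly inside $G^+(|\ |)=G^+(\square)$, which contains $D_4$ and in particular the classical flip of the two components. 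Your counterexample is simpler and works just as well: two isolated vertices give trivial factors, hence a trivial dual free product, while $G^+(\,\cdot\sqcup\cdot\,)=S_2^+=S_2=\mathbb Z_2$. Both examples make the same conceptual point, namely that the dual free product acts componentwise and cannot see the symmetry permuting identical components; your version needs no input beyond Theorem 10.2 (1) and $S_2^+=S_2$, whereas the paper's version has the added interest of being the motivating $G^+(\square)$ example. Your closing remark about $G^+(X)\wr_*S_k^+$ is accurate and anticipates Theorem 10.11, but, as you note, it is not needed for the proof.
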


\begin{proof}
The proof of the first assertion is nearly identical to the proof of Proposition 10.2 (4) above. Indeed, the adjacency matrix of the disconnected union is given by:
$$d_{X_1\sqcup\ldots\sqcup X_k}=diag(d_{X_1},\ldots,d_{X_k})$$
$$w=diag(u_1,\ldots,u_k)$$

We have then $d_{X_i}u_i=u_id_{X_i}$, and this implies $dw=wd$, which gives the result. As for the last assertion, this is something that we already know, from Proposition 10.6 (2).
\end{proof}

In the case where the graphs $X_1,\ldots,X_k$ are identical, which is the one that we are truly interested in, we can further build on this. Following Bichon \cite{bi1}, we have:

\index{free wreath product}

\begin{proposition}
Given closed subgroups $G\subset U_N^+$, $H\subset S_k^+$, with fundamental corepresentations $u,v$, the following construction produces a closed subgroup of $U_{Nk}^+$:
$$C(G\wr_*H)=(C(G)^{*k}*C(H))/<[u_{ij}^{(a)},v_{ab}]=0>$$
In the case where $G,H$ are classical, the classical version of $G\wr_*H$ is the usual wreath product $G\wr H$. Also, when $G$ is a quantum permutation group, so is $G\wr_*H$.
\end{proposition}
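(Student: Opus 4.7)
The plan is to realize $C(G\wr_*H)$ as a Woronowicz algebra by producing an explicit fundamental corepresentation on $\mathbb{C}^{Nk}$ and then invoking universality to construct $\Delta,\varepsilon,S$. With double indices $(i,a)$ where $i\in\{1,\ldots,N\}$ and $a\in\{1,\ldots,k\}$, the natural candidate is
$$w_{ia,jb}=u_{ij}^{(a)}v_{ab}.$$
First I would verify that $w\in M_{Nk}(C(G\wr_*H))$ is biunitary. Using that $v$ is magic (each $v_{ab}$ is a projection with $v_{ab}v_{a'b}=\delta_{aa'}v_{ab}$ and $\sum_bv_{ab}=\sum_av_{ab}=1$) together with the imposed commutation $[u_{ij}^{(a)},v_{ab}]=0$, one computes
$$\sum_{jb}w_{ia,jb}w_{i'a',jb}^*=\sum_{jb}u_{ij}^{(a)}v_{ab}v_{a'b}(u_{i'j}^{(a')})^*=\delta_{aa'}\sum_{jb}u_{ij}^{(a)}(u_{i'j}^{(a)})^*v_{ab}=\delta_{aa'}\delta_{ii'},$$
and the three remaining biunitarity identities collapse in the same way, via magic of $v$ followed by (bi)unitarity of $u^{(a)}$.

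Second, I would construct the Hopf structure by universal extension. Set
$$W_{ia,jb}=\sum_{kc}w_{ia,kc}\otimes w_{kc,jb}=\sum_{kc}u_{ik}^{(a)}v_{ac}\otimes u_{kj}^{(c)}v_{cb}.$$
The task is to show that the assignments $u_{ij}^{(a)}\mapsto\sum_{kc}u_{ik}^{(a)}v_{ac}\otimes u_{kj}^{(c)}$ and $v_{ab}\mapsto\sum_c v_{ac}\otimes v_{cb}$ are compatible with all defining relations of the quotient (each $u^{(a)}$-block being of $G$-type, $v$ being of $H$-type, and the cross-commutations $[u_{ij}^{(a)},v_{ab}]=0$). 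The projection identity $v_{ac}v_{ac'}=\delta_{cc'}v_{ac}$ is exactly what decouples terms with different $c$, letting the $a$-th block behave like a single corepresentation of $G$ after pairing with the $v$-factors. Once $\Delta$ is produced, $\varepsilon(w_{ia,jb})=\delta_{ij}\delta_{ab}$ and $S(w_{ia,jb})=w_{jb,ia}^*$ follow by analogous universal arguments applied to the standard counit/antipode formulas on each factor.

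Third, the two remaining statements. In the classical case all generators commute, the relations in the quotient are automatic, and $C(G)^{*k}*C(H)$ collapses to $C(G^k\times H)$; one then checks directly that $w_{ia,jb}(g_1,\ldots,g_k;h)=(g_a)_{ij}\,\delta_{h(a),b}$ coincides with the permutation-matrix entry of the standard action of $G\wr H$ on $\{1,\ldots,N\}\times\{1,\ldots,k\}$, identifying $G\wr_*H$ with $G\wr H$. For the magic assertion, if each $u^{(a)}$ is magic, then $w_{ia,jb}=u_{ij}^{(a)}v_{ab}$ is a product of two commuting projections, hence itself a projection, and
$$\sum_{jb}w_{ia,jb}=\sum_bv_{ab}\sum_ju_{ij}^{(a)}=1,\qquad\sum_{ia}w_{ia,jb}=\sum_av_{ab}\sum_iu_{ij}^{(a)}=1,$$
so $w$ is magic, giving $G\wr_*H\subset S_{Nk}^+$.

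The main obstacle is the second step: verifying that the intended $\Delta$ genuinely descends to the quotient. The delicate point is that $W_{ia,jb}$ mixes $u$-factors from copy $a$ on the left of the tensor with copy $c$ on the right, with $v$-factors interleaved in a nonsymmetric pattern, so one has to check carefully that the required cross-commutations $[\Delta(u_{ij}^{(a)}),\Delta(v_{ab})]=0$ actually follow from the magic relations on $v$ together with the relations already imposed. This is essentially combinatorial bookkeeping, but it is where all the structure of the definition is being used at once, and is the genuine content of the result.
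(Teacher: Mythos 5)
Your proposal follows essentially the same route as the paper: define $w_{ia,jb}=u_{ij}^{(a)}v_{ab}$, verify biunitarity and (when $G\subset S_N^+$) the magic condition using the commutation and magic-orthogonality relations, then obtain $\Delta,\varepsilon,S$ by universality. The paper's own proof is considerably sketchier — it declares both the unitarity of $w$ and the construction of $\Delta,\varepsilon,S$ on the quotient to be ``routine'' and defers to Bichon's original paper for the classical-version assertion — whereas you work the first check out explicitly and correctly flag the well-definedness of $\Delta$ on the quotient as the genuine content of the result.
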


\begin{proof}
Consider indeed the matrix $w_{ia,jb}=u_{ij}^{(a)}v_{ab}$, over the quotient algebra in the statement. It is routine to check that $w$ is unitary, and in the case $G\subset S_N^+$, our claim is that this matrix is magic. Indeed, the entries are projections, because they appear as products of commuting projections, and the row sums are as follows:
\begin{eqnarray*}
\sum_{jb}w_{ia,jb}
&=&\sum_{jb}u_{ij}^{(a)}v_{ab}\\
&=&\sum_bv_{ab}\sum_ju_{ij}^{(a)}\\
&=&1
\end{eqnarray*}

As for the column sums, these are as follows:
\begin{eqnarray*}
\sum_{ia}w_{ia,jb}
&=&\sum_{ia}u_{ij}^{(a)}v_{ab}\\
&=&\sum_av_{ab}\sum_iu_{ij}^{(a)}\\
&=&1
\end{eqnarray*}

With these observations in hand, it is routine to check that $G\wr_*H$ is indeed a quantum group, with fundamental corepresentation $w$, by constructing maps $\Delta,\varepsilon,S$ as in section 1, and in the case $G\subset S_N^+$, we obtain in this way a closed subgroup of $S_{Nk}^+$. Finally, the assertion regarding the classical version is standard as well. See \cite{bi1}.
\end{proof}

We refer to Bichon \cite{bi1} and to Tarrago-Wahl \cite{twa} for more details regarding the above construction. Now with this notion in hand, we have the following result:

\begin{theorem}
Given a connected graph $X$, and $k\in\mathbb N$, we have the formulae
$$G(kX)=G(X)\wr S_k$$
$$G^+(kX)=G^+(X)\wr_*S_k^+$$
where $kX=X\sqcup\ldots\sqcup X$ is the $k$-fold disjoint union of $X$ with itself.
\end{theorem}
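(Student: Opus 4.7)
The classical statement $G(kX)=G(X)\wr S_k$ is standard, so I would focus on the free version $G^+(kX)=G^+(X)\wr_*S_k^+$. The inclusion $\supset$ is the easier direction: given the canonical magic corepresentation $w_{(i,a),(j,b)}=u^{(a)}_{ij}v_{ab}$ of $G^+(X)\wr_*S_k^+$ from Proposition~10.8, one checks directly that $w$ commutes with the block-diagonal adjacency matrix $d_{kX}=(d_X)_{ij}\delta_{ab}$, using only $d_Xu^{(a)}=u^{(a)}d_X$ together with the commutation $[u^{(a)}_{ij},v_{ab}]=0$; this gives a morphism $C(G^+(kX))\to C(G^+(X)\wr_*S_k^+)$ at the algebra level.

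The real content is the reverse inclusion $G^+(kX)\subset G^+(X)\wr_*S_k^+$, and the heart of the argument, where connectedness of $X$ is used, is the following. Since $X$ is connected, there exists a polynomial $p$ (for instance $p(t)=1+t+\cdots+t^M$ for $M$ large) such that $p(d_X)$ has strictly positive entries; then $p(d_{kX})$ has strictly positive entries inside the diagonal blocks and zero entries between blocks. Applying Proposition~10.5 to $p(d_{kX})$, the magic corepresentation $w$ of $G^+(kX)$ commutes with the indicator matrix $\chi$ of the support of $p(d_{kX})$, which is exactly $\chi_{(i,a),(j,b)}=\delta_{ab}$. This is the step where connectedness is essential and where most of the thought goes; everything afterwards is bookkeeping with magic matrices.

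Writing out $w\chi=\chi w$ gives the identity $\sum_l w_{(l,a),(j,b)}=\sum_l w_{(i,a),(l,b)}$, independent of $i,j$; call this common value $v_{ab}$. A direct magic-matrix computation shows that $v=(v_{ab})$ is itself a magic $k\times k$ matrix, the key point being that distinct entries of $w$ in a common row or column are orthogonal projections, so $v_{ab}^2=\sum_l w_{(i,a),(l,b)}=v_{ab}$. Next I would set
\[
u^{(a)}_{ij}=\sum_b w_{(i,a),(j,b)},
\]
and verify in the same spirit that each $u^{(a)}$ is a magic $N\times N$ matrix, that $u^{(a)}_{ij}v_{ab}=v_{ab}u^{(a)}_{ij}=w_{(i,a),(j,b)}$ (using only orthogonality of the projection entries of $w$ in a common row), and that $d_Xu^{(a)}=u^{(a)}d_X$ (by summing $d_{kX}w=wd_{kX}$ over $b$).

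These data---magic matrices $u^{(1)},\ldots,u^{(k)}$ commuting with $d_X$, a magic matrix $v$, the commutation $[u^{(a)}_{ij},v_{ab}]=0$, and the factorization $w_{(i,a),(j,b)}=u^{(a)}_{ij}v_{ab}$---satisfy exactly the defining relations of the free wreath product from Proposition~10.8, and so provide the desired morphism $C(G^+(X)\wr_*S_k^+)\to C(G^+(kX))$ sending standard coordinates to standard coordinates. Combined with the inclusion $\supset$ established earlier, this gives the identification $G^+(kX)=G^+(X)\wr_*S_k^+$. The main obstacle is the connectedness step producing the commutation with $\chi$; once $\chi$ is in the intertwiner category, the reconstruction of $u^{(a)}$ and $v$ is formal.
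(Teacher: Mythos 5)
Your proposal is correct and follows the same route as the paper's proof: definitions $u^{(a)}_{ij}=\sum_b w_{ia,jb}$, $v_{ab}=\sum_l w_{ia,lb}$, and the key use of connectedness to obtain a positive matrix in the $*$-algebra generated by $d_X$, which forces $w$ to commute with the block indicator $\chi_{ia,jb}=\delta_{ab}$. You have merely spelled out, via the polynomial $p$ and the color-decomposition trick of Proposition~10.5, what the paper compresses into ``the $*$-algebra generated by $d_X$ contains a matrix having nonzero entries''; also note the paper's displayed formula $v_{ab}=\sum_i v_{ab}$ is a typo for $v_{ab}=\sum_j w_{ia,jb}$, which matches your definition.
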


\begin{proof}
The first formula is something well-known, which follows as well from the second formula, by taking the classical version. Regarding now the second formula, it is elementary to check that we have an inclusion as follows, for any finite graph $X$:
$$G^+(X)\wr_*S_k^+\subset G^+(kX)$$

Indeed, we want to construct an action $G^+(X)\wr_*S_k^+\curvearrowright kX$, and this amounts in proving that we have $[w,d]=0$. But, the matrices $w,d$ are given by:
$$w_{ia,jb}=u_{ij}^{(a)}v_{ab}\quad,\quad d_{ia,jb}=\delta_{ij}d_{ab}$$

With these formulae in hand, we have the following computation:
\begin{eqnarray*}
(dw)_{ia,jb}
&=&\sum_kd_{ik}w_{ka,jb}\\
&=&\sum_kd_{ik}u_{kj}^{(a)}v_{ab}\\
&=&(du^{(a)})_{ij}v_{ab}
\end{eqnarray*}

On the other hand, we have as well the following computation:
\begin{eqnarray*}
(wd)_{ia,jb}
&=&\sum_kw_{ia,kb}d_{kj}\\
&=&\sum_ku_{ik}^{(a)}v_{ab}d_{kj}\\
&=&(u^{(a)}d)_{ij}v_{ab}
\end{eqnarray*}

Thus we have $[w,d]=0$, and from this we obtain:
$$G^+(X)\wr_*S_k^+\subset G^+(kX)$$

Regarding now the reverse inclusion, which requires $X$ to be connected, this follows by doing some matrix analysis, by using the commutation with $u$. To be more precise, let us denote by $w$ the fundamental corepresentation of $G^+(kX)$, and set:
$$u_{ij}^{(a)}=\sum_bw_{ia,jb}\quad,\quad 
v_{ab}=\sum_iv_{ab}$$

It is then routine to check, by using the fact that $X$ is indeed connected, that we have here magic unitaries, as in the definition of the free wreath products. Thus we obtain the reverse inclusion, that we were looking for, namely:
$$G^+(kX)\subset G^+(X)\wr_*S_k^+$$

To be more precise, the key ingredient is the fact that when $X$ is connected, the $*$-algebra generated by $d_X$ contains a matrix having nonzero entries.
\end{proof}

We are led in this way to the following result, from \cite{bbc}:

\index{hyperoctahedral group}
\index{hyperoctahedral quantum group}

\begin{theorem}
Consider the graph consisting of $N$ segments.
\begin{enumerate}
\item Its symmetry group is the hyperoctahedral group $H_N=\mathbb Z_2\wr S_N$.

\item Its quantum symmetry group is the quantum group $H_N^+=\mathbb Z_2\wr_*S_N^+$.
\end{enumerate}
\end{theorem}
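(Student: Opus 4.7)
The graph consisting of $N$ segments is $NK_2$, the disjoint union of $N$ copies of the single segment $K_2$ (two vertices joined by an edge), and my plan is to recognize this as a straightforward application of Theorem 10.9 above.

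The first step is to identify the quantum automorphism group of a single segment, namely $G^+(K_2)$. Since $K_2$ has two vertices, we automatically have $G^+(K_2)\subset S_2^+$, and by Theorem 9.4 we know $S_2^+=S_2=\mathbb{Z}_2$. Conversely, the nontrivial flip of $\mathbb{Z}_2$ clearly preserves the adjacency matrix of $K_2$ (which in fact is $\mathbb{I}-1$, so the commutation $du=ud$ is automatic on $S_2^+$). Thus $G^+(K_2)=\mathbb{Z}_2$, and of course also $G(K_2)=\mathbb{Z}_2$.

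The second step is to invoke Theorem 10.9, which applies because $K_2$ is a connected graph. Applied with $X=K_2$ and $k=N$, it gives
$$G(NK_2)=G(K_2)\wr S_N=\mathbb{Z}_2\wr S_N=H_N$$
and
$$G^+(NK_2)=G^+(K_2)\wr_* S_N^+=\mathbb{Z}_2\wr_* S_N^+=H_N^+,$$
where the second equality in each case uses the computation of step one, and the last equality is just the definition of $H_N$ and $H_N^+$ given in the statement.

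Strictly speaking, there is no serious obstacle, since Theorem 10.9 does the real work; the point of the present statement is mainly to recognize that the hyperoctahedral groups arise naturally as quantum symmetry groups of a very simple graph, and thereby to introduce $H_N^+$ as a genuinely new and tractable quantum group. If one wanted a self-contained proof, the only nontrivial ingredient to reprove would be the reverse inclusion $G^+(NK_2)\subset \mathbb{Z}_2\wr_* S_N^+$ from Theorem 10.9, which requires extracting the magic subunitaries $u^{(a)}=(u^{(a)}_{ij})$ and $v=(v_{ab})$ from the fundamental magic unitary $w=(w_{ia,jb})$ of $G^+(NK_2)$ via the formulas $u_{ij}^{(a)}=\sum_b w_{ia,jb}$ and $v_{ab}=\sum_i w_{ia,jb}$, and checking that they satisfy the defining relations of the free wreath product; here the connectedness of $K_2$ (really, of each connected component) is used to show that these row/column sums are indeed well-defined magic projections and that the required commutations $[u_{ij}^{(a)},v_{ab}]=0$ hold.
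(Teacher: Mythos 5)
Your proof is correct and takes essentially the same route as the paper: the paper also proves $(2)$ by applying the wreath-product decomposition theorem for disjoint unions of a connected graph (Theorem 10.11 in this numbering, not Theorem 10.9, which only gives the inclusion) to the segment graph, and then deduces $(1)$ by passing to classical versions. The only addition you make is to spell out the elementary computation $G^+(K_2)=S_2^+=S_2=\mathbb Z_2$, which the paper leaves implicit.
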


\begin{proof}
This comes from the above results, as follows:

\medskip

(1) This is clear from definitions, with the remark that the relation with the formula $H_N=G(\square_N)$ comes by viewing the $N$ segments as being the $[-1,1]$ segments on each of the $N$ coordinate axes of $\mathbb R^N$. Indeed, a symmetry of the $N$-cube is the same as a symmetry of the $N$ segments, and so we obtain, as desired: 
$$G(\square_N)=\mathbb Z_2\wr S_N$$

(2) This follows from Theorem 10.11 above, applied to the segment graph. Observe also that (2) implies (1), by taking the classical version.
\end{proof}

Now back to the square, we have $G^+(\square)=H_2^+$, and our claim is that this is the ``good'' and final formula. In order to prove this, we must work out the easiness theory for $H_N,H_N^+$, and prove that $H_N\to H_N^+$ is an easy quantum group liberation. 

\bigskip

Following \cite{bbc}, we first have the following result:

\index{cubic unitary}
\index{sudoku unitary}

\begin{proposition}
The algebra $C(H_N^+)$ can be presented in two ways, as follows:
\begin{enumerate}
\item As the universal algebra generated by the entries of a $2N\times 2N$ magic unitary having the following ``sudoku'' pattern, with $a,b$ being square matrices:
$$w=\begin{pmatrix}a&b\\b&a\end{pmatrix}$$

\item As the universal algebra generated by the entries of a $N\times N$ orthogonal matrix which is ``cubic'', in the sense that, for any $j\neq k$:
$$u_{ij}u_{ik}=u_{ji}u_{ki}=0$$
\end{enumerate}
As for $C(H_N)$, this has similar presentations, among the commutative algebras.
\end{proposition}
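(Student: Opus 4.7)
The plan is to exhibit both presentations as presentations of $C(H_N^+) = C(\mathbb Z_2 \wr_* S_N^+)$ by constructing explicit mutually inverse maps, then to check that the Hopf structure matches. Along the way, all the heavy lifting reduces to standard bookkeeping with orthogonal projections, except for one step where a cubic algebra of relations must be shown to force $u_{ij}^2$ to be a projection.

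For presentation (1), I start from the wreath product side. Since $\mathbb Z_2 = S_2 = S_2^+$, its fundamental magic corepresentation is $\begin{pmatrix} p & 1-p \\ 1-p & p \end{pmatrix}$ with $p$ a projection. By Proposition 10.9 applied with $G = \mathbb Z_2$ and $H = S_N^+$, the fundamental corepresentation of $\mathbb Z_2 \wr_* S_N^+$ has entries $w_{(\varepsilon,i),(\delta,j)} = u^{(i)}_{\varepsilon\delta}\, v_{ij}$, where $p_i$ is the projection coming from the $i$-th copy of $C(\mathbb Z_2)$ and $v$ is the fundamental magic of $C(S_N^+)$. After reordering indices this matrix has the sudoku shape $\binom{a\;b}{b\;a}$ with $a_{ij} = p_i v_{ij}$, $b_{ij} = (1-p_i)v_{ij}$, which is magic because $p_i$ commutes with $v_{ij}$. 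Conversely, given any universal sudoku magic $w = \binom{a\;b}{b\;a}$, I set $v_{ij} = a_{ij} + b_{ij}$ and $p_i = \sum_j a_{ij}$. The magic relations force $a_{ij}b_{ij}=0$ and pairwise orthogonality in each row/column, from which $v$ is magic, $p_i^2 = p_i = p_i^*$, and $p_i v_{ij} = a_{ij} = v_{ij}p_i$. The two passages are tautologically inverse.

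For presentation (2), I use the correspondence
\begin{equation*}
u_{ij} = a_{ij} - b_{ij}, \qquad a_{ij} = \tfrac{u_{ij}^2 + u_{ij}}{2}, \qquad b_{ij} = \tfrac{u_{ij}^2 - u_{ij}}{2}.
\end{equation*}
The sudoku-to-cubic direction is easy: $u_{ij}^* = u_{ij}$ is immediate, $u_{ij}u_{ik} = u_{ji}u_{ki} = 0$ for $j \neq k$ follows from orthogonality of magic entries on row/column, and $\sum_k u_{ik}^2 = \sum_k(a_{ik}+b_{ik}) = 1$ (and symmetrically) gives orthogonality. The hard step, and the main obstacle of the whole proof, is the cubic-to-sudoku direction, where I must verify that $a_{ij}$ and $b_{ij}$ defined as above are projections. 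This amounts to $u_{ij}^3 = u_{ij}$, which I would extract as follows: from $u_{ij}u_{ik}=0$ for $j \neq k$ one gets $u_{ij}^2 u_{ik}^2 = 0$ for $j \neq k$; multiplying the row-orthogonality relation $\sum_k u_{ik}^2 = 1$ on the left by $u_{ij}^2$ collapses to $u_{ij}^4 = u_{ij}^2$. Since $u_{ij}$ is self-adjoint, this forces $\sigma(u_{ij}) \subset \{-1,0,1\}$, whence $u_{ij}^3 = u_{ij}$; expanding $a_{ij}^2$ and $b_{ij}^2$ then gives the projection property. The remaining magic relations for $w=\binom{a\;b}{b\;a}$ follow from the cubic relations by direct expansion, and $a + b = u^2$ together with $a - b = u$ makes the two constructions mutually inverse.

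Finally, both presentations are equipped with a canonical fundamental corepresentation, and a direct check shows that the magic comultiplication $\Delta(w_{I,J}) = \sum_K w_{I,K} \otimes w_{K,J}$ transports under $u_{ij} = a_{ij} - b_{ij}$ exactly to $\Delta(u_{ij}) = \sum_k u_{ik} \otimes u_{kj}$, so the two presentations agree as Woronowicz algebras. The classical statement follows by adding commutativity to every relation throughout: each step of both isomorphisms respects the commutative quotient, yielding the corresponding presentation of $C(H_N)$.
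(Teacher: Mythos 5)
Your proof is correct, and its core — the passage between the sudoku and cubic presentations — is exactly the paper's argument: the same correspondence $u_{ij}=a_{ij}-b_{ij}$, $a_{ij}=(u_{ij}^2+u_{ij})/2$, $b_{ij}=(u_{ij}^2-u_{ij})/2$, with the same check that the two morphisms are mutually inverse. You differ in two places. First, where the paper asserts without comment that $(u_{ij}^2\pm u_{ij})/2$ are projections, you supply the missing detail by deriving $u_{ij}^3=u_{ij}$ from the cubic relations; your route via $u_{ij}^4=u_{ij}^2$ and functional calculus works, though one can get it in one line by multiplying $\sum_ku_{ik}^2=1$ on the left by $u_{ij}$ and using $u_{ij}u_{ik}=0$ for $k\neq j$, with no spectral argument needed. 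Second, for presentation (1) the paper simply invokes Theorem 10.12 via Proposition 10.10 — the sudoku pattern being what commutation with the adjacency matrix of the $N$-segment graph forces on a $2N\times 2N$ magic unitary — whereas you re-derive it by an explicit isomorphism between the universal sudoku algebra and the free wreath product algebra ($a_{ij}=p_iv_{ij}$, $b_{ij}=(1-p_i)v_{ij}$ one way, $v_{ij}=a_{ij}+b_{ij}$, $p_i=\sum_ja_{ij}$ the other); this is self-contained and equally valid, given that the paper defines $H_N^+$ as $\mathbb Z_2\wr_*S_N^+$. One small point worth making explicit in your sudoku-to-cubic direction: the diagonal identities $\sum_ku_{ik}^2=\sum_ku_{ki}^2=1$ do not by themselves give orthogonality of $a-b$; the off-diagonal relations $\sum_ku_{ik}u_{jk}=0$ for $i\neq j$ hold because each summand already vanishes by the cubic relations you have just verified (equivalently, by the column orthogonality of the sudoku magic, using the repetition of the $b$-block — this is precisely the computation the paper displays).
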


\begin{proof}
Here the first assertion follows from Theorem 10.12, via Proposition 10.10, and the last assertion is clear as well, because $C(H_N)$ is the abelianization of $C(H_N^+)$. Thus, we are left with proving that the algebras $A_s,A_c$ coming from (1,2) coincide.

We construct first the arrow $A_c\to A_s$. The elements $a_{ij},b_{ij}$ being self-adjoint, their differences are self-adjoint as well. Thus $a-b$ is a matrix of self-adjoint elements. We have the following formula for the products on the columns of $a-b$:
\begin{eqnarray*}
(a-b)_{ik}(a-b)_{jk}
&=&a_{ik}a_{jk}-a_{ik}b_{jk}-b_{ik}a_{jk}+b_{ik}b_{jk}\\
&=&\begin{cases}
0&{\rm for }\ i\neq j\\
a_{ik}+b_{ik}&{\rm for}\ i=j
\end{cases}
\end{eqnarray*}

In the $i=j$ case the elements $a_{ik}+b_{ik}$ sum up to $1$, so the columns of $a-b$ are orthogonal. A similar computation works for rows, so $a-b$ is orthogonal.

Now by using the $i\neq j$ computation, along with its row analogue, we conclude that $a-b$ is cubic. Thus we can define a morphism $A_c\to A_s$ by the following formula:
$$\varphi(u_{ij})=a_{ij}-b_{ij}$$

We construct now the inverse morphism. Consider the following elements:
$$\alpha_{ij}=\frac{u_{ij}^2+u_{ij}}{2}\quad,\quad 
\beta_{ij}=\frac{u_{ij}^2-u_{ij}}{2}$$

These are projections, and the following matrix is a sudoku unitary:
$$M=\begin{pmatrix}
(\alpha_{ij})&(\beta_{ij})\\
(\beta_{ij})&(\alpha_{ij})
\end{pmatrix}$$

Thus we can define a morphism $A_s\to A_c$ by the following formulae:
$$\psi(a_{ij})=\frac{u_{ij}^2+u_{ij}}{2}\quad,\quad 
\psi(b_{ij})=\frac{u_{ij}^2-u_{ij}}{2}$$

We check now the fact that $\psi,\varphi$ are indeed inverse
morphisms:
\begin{eqnarray*}
\psi\varphi(u_{ij})
&=&\psi(a_{ij}-b_{ij})\\
&=&\frac{u_{ij}^2+u_{ij}}{2}-\frac{u_{ij}^2-u_{ij}}{2}\\
&=&u_{ij}
\end{eqnarray*}

As for the other composition, we have the following computation:
\begin{eqnarray*}
\varphi\psi(a_{ij})
&=&\varphi\left(\frac{u_{ij}^2+u_{ij}}{2}\right)\\
&=&\frac{(a_{ij}-b_{ij})^2+(a_{ij}-b_{ij})}{2}\\
&=&a_{ij}
\end{eqnarray*}

A similar computation gives $\varphi\psi(b_{ij})=b_{ij}$, which completes the proof.
\end{proof}

We can now work out the easiness property of $H_N,H_N^+$, with respect to the cubic representations, and we are led to the following result, which is fully satisfactory:

\index{Brauer theorem}
\index{hyperoctahedral group}
\index{hyperoctahedral quantum group}

\begin{theorem}
The quantum groups $H_N,H_N^+$ are both easy, as follows:
\begin{enumerate}
\item $H_N$ corresponds to the category $P_{even}$.

\item $H_N^+$ corresponds to the category $NC_{even}$.
\end{enumerate}
\end{theorem}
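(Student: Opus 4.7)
The plan is to combine the cubic presentation from Proposition 10.13 with the Tannakian machinery from section 4. Concretely, I would start from the inclusion $H_N^+\subset O_N^+$ cut out by the cubic relations $u_{ij}u_{ik}=u_{ji}u_{ki}=0$ for $j\neq k$, and identify these relations as coming from a single partition, in the same way that Theorem 9.5 identified the magic relations as coming from the fork $\mu\in P(2,1)$.

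The candidate partition is $\pi_4\in P(2,2)$, the one-block partition joining all four points. A direct computation analogous to the one in the proof of Theorem 9.5 gives
$$T_{\pi_4}(e_i\otimes e_j)=\delta_{ij}\,e_i\otimes e_i,$$
and then comparing $T_{\pi_4}u^{\otimes 2}$ with $u^{\otimes 2}T_{\pi_4}$ on basis vectors shows that $T_{\pi_4}\in\operatorname{End}(u^{\otimes 2})$ is equivalent, on top of biunitarity, to the two cubic relations. Together with Proposition 10.13, this gives the Tannakian presentations
$$C(H_N^+)=C(O_N^+)\big/\big\langle T_{\pi_4}\in\operatorname{End}(u^{\otimes 2})\big\rangle,\qquad C(H_N)=C(O_N)\big/\big\langle T_{\pi_4}\in\operatorname{End}(u^{\otimes 2})\big\rangle.$$
Invoking Theorem 7.3 and the easiness of $O_N^+,O_N$ with categories $NC_2,P_2$, I conclude that $H_N^+$ and $H_N$ are easy, with categories $\langle NC_2,\pi_4\rangle$ and $\langle P_2,\pi_4\rangle$ respectively.

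What remains is the purely combinatorial identification $\langle NC_2,\pi_4\rangle=NC_{even}$ and $\langle P_2,\pi_4\rangle=P_{even}$. The inclusions $\subset$ are immediate, since $\pi_4$ has an even block and all categorical operations preserve block-parity (and noncrossingness in the first case). For the reverse inclusions, I would argue by induction on the number of blocks: given $\sigma\in NC_{even}$, pick an innermost block of size $2k$; realize it by vertically stacking $k-1$ copies of $\pi_4$ glued along shared legs contracted by noncrossing pairings, horizontally concatenate with identities on the remaining legs, and then peel that block off to reduce to a smaller partition in $NC_{even}$. The classical case then follows by adjoining the basic crossing, since $\langle P_2,\pi_4\rangle\supseteq\langle NC_2,\pi_4,{\slash\hskip-2.1mm\backslash}\rangle$ generates all of $P_{even}$ from $NC_{even}$.

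I expect the main obstacle to be this last combinatorial step, specifically the noncrossing case $\langle NC_2,\pi_4\rangle=NC_{even}$. The potential pitfall is making sure the inductive ``peeling'' procedure stays within noncrossing partitions and actually decreases a well-chosen complexity measure; this is best done by drawing pictures and formalizing via the nesting tree of blocks. Once this lemma is established, the rest of the argument is a clean application of easiness and fits exactly into the pattern already used in sections 5, 7 and 9 for $O_N^+$, $S_N^+$ and their variants.
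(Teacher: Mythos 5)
Your proposal is correct and takes essentially the same route as the paper: identify the cubic relations of Proposition 10.13 with the intertwining condition $T_{\pi}\in\operatorname{End}(u^{\otimes 2})$ for the one-block partition $\pi\in P(2,2)$, then invoke the combinatorial fact that this partition generates $NC_{even}$ (resp.\ $P_{even}$ once the crossing is adjoined). The paper simply states this and cites \cite{bbc} for the details; your worked-out computation of $T_\pi$ and the block-peeling induction for $\langle NC_2,\pi\rangle=NC_{even}$ are exactly the steps that the reference carries out.
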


\begin{proof}
This follows indeed from the fact that the cubic relations are implemented by the one-block partition in $P(2,2)$, which generates $NC_{even}$. See \cite{bbc}.
\end{proof}

\section*{10c. Complex reflections}

Following \cite{bb+}, the basic algebraic results regarding $S_N,S_N^+$ and $H_N,H_N^+$ appear in fact as the $s=1,2$ particular cases of the following result:

\index{complex reflection}
\index{easiness}

\begin{theorem}
The complex reflection groups $H_N^s=\mathbb Z_s\wr S_N$ and their free analogues $H_N^{s+}=\mathbb Z_s\wr_*S_N^+$, defined for any $s\in\mathbb N$, have the following properties:
\begin{enumerate}
\item They have $N$-dimensional coordinates $u=(u_{ij})$, subject to the relations:
$$u_{ij}u_{ij}^*=u_{ij}^*u_{ij}$$
$$p_{ij}=u_{ij}u_{ij}^*={\rm magic}$$
$$u_{ij}^s=p_{ij}$$

\item They are easy, the corresponding categories $P^s\subset P,NC^s\subset NC$ being given by the fact that we have $\#\circ-\#\bullet=0(s)$, as a weighted sum, in each block.
\end{enumerate}
\end{theorem}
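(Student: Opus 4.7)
My plan is to proceed in three stages, generalizing the arguments already carried out for $S_N, S_N^+$ in section 9 (the case $s=1$) and for $H_N, H_N^+$ in Proposition 10.13 and Theorem 10.14 (the case $s=2$).

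First, I would exhibit the natural $N$-dimensional corepresentation on the classical side and verify the relations. A generic element of $H_N^s=\mathbb{Z}_s\wr S_N$ is a pair $((\zeta_i)_{i=1}^N,\sigma)$ with $\zeta_i\in\mathbb{Z}_s$ and $\sigma\in S_N$, acting on $\mathbb{C}^N$ by $e_j\mapsto\omega^{\zeta_{\sigma(j)}}e_{\sigma(j)}$, where $\omega=e^{2\pi i/s}$. The resulting coordinate functions $u_{ij}((\zeta,\sigma))=\omega^{\zeta_i}\delta_{i,\sigma(j)}$ immediately satisfy the three relations of (1): normality is trivial, $p_{ij}=u_{ij}u_{ij}^*=\delta_{i,\sigma(j)}$ defines a magic matrix, and $u_{ij}^s=\omega^{s\zeta_i}\delta_{i,\sigma(j)}=p_{ij}$ since $\omega^s=1$.

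Second, I would upgrade this to a presentation result in the style of Proposition 10.13. Let $A_c$ be the universal $C^*$-algebra with generators $u=(u_{ij})$ subject to the three relations of (1) together with the Woronowicz axioms, and let $A_w$ be the wreath-product algebra $C(H_N^{s+})$ as defined in Proposition 10.10. Inside $A_w$, the natural $N$-dimensional corepresentation is $U_{ij}=g^{(i)}v_{ij}$, where $g^{(i)}$ is the generator of the $i$-th copy of $\mathbb{Z}_s$ and $v=(v_{ij})$ is the magic matrix of $S_N^+$; the commutation relations in the wreath product give $U_{ij}U_{ij}^*=v_{ij}$ magic, $U_{ij}^s=v_{ij}=p_{ij}$, and normality, so one has a morphism $A_c\to A_w$ via $u_{ij}\to U_{ij}$. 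Conversely, in $A_c$ the Fourier decomposition
$$a_{ij}^{(k)}=\frac{1}{s}\Big(p_{ij}+\sum_{l=1}^{s-1}\omega^{-kl}u_{ij}^{l}\Big),\qquad k\in\mathbb{Z}_s,$$
yields for each $(i,j)$ an orthogonal family of $s$ projections summing to $p_{ij}$, and the joint collection $(a_{ij}^{(k)})_{i,j,k}$ is readily checked to form an $sN\times sN$ magic matrix of the sudoku shape characterizing the wreath product, giving a morphism $A_w\to A_c$. Direct computation shows that the two morphisms are mutually inverse, yielding $A_c=C(H_N^{s+})$; the classical statement for $H_N^s$ then follows by abelianizing both sides.

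Third, I would deduce easiness from Tannakian duality. The defining relations of (1) correspond, via the $T_\pi$ calculus, to three explicit generating partitions whose blocks all satisfy the modular sum condition $\#\circ-\#\bullet\equiv 0\pmod s$: the matching pair $\cap$ of types $\circ\bullet$ and $\bullet\circ$ encodes the magic property of $p_{ij}$, a four-leg one-block partition encodes normality $u_{ij}u_{ij}^*=u_{ij}^*u_{ij}$, and an $(s+2)$-leg one-block partition with $s$ upper legs of one color and two further legs of opposite colors encodes $u_{ij}^s=p_{ij}$. Since horizontal concatenation, vertical composition and reflection all preserve the modular sum condition blockwise, the noncrossing category these generate is contained in $NC^s$. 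Conversely, any $\pi\in NC^s$ may be chopped into pieces of these elementary types by induction on the number of blocks, splitting a block of weighted sum $ms$ along an innermost interval into a generator plus residual blocks of strictly smaller total weight. This shows $H_N^{s+}$ is easy with category $NC^s$, and the classical case then follows from $H_N^s=H_N^{s+}\cap U_N$, which adjoins the basic crossing and saturates $NC^s$ into $P^s$.

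The main obstacle is the combinatorial chopping argument at the end of the third step: one must verify that the three elementary generators encoding the defining relations truly exhaust $NC^s$. For $s=1$ and $s=2$ this reduces to the known facts $\langle\mu\rangle=NC$ and (essentially) $\langle h\rangle=NC_{even}$, but for general $s$ a uniform induction is required, keeping track simultaneously of block sizes, block colorings and the modular invariant, so that no partition in $NC^s$ is inadvertently missed. A secondary technical point is the care needed in the second step when interpreting $u_{ij}^0$ inside the corner $p_{ij}A_cp_{ij}$, where the correct neutral element is $p_{ij}$ rather than $1$, and which is what makes the Fourier projections $a_{ij}^{(k)}$ assemble into a magic matrix rather than collapsing.
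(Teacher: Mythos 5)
Your overall strategy matches the paper's (very brief) proof: establish the presentation of $C(H_N^{s+})$ via a $\mathbb{Z}_s$-Fourier transform isomorphism with the free wreath product algebra — this is precisely the ``$\mathbb{Z}_s$-analogue of Proposition 10.13'' the paper points to — and then derive easiness by Tannakian duality from the defining relations, as in Theorem 10.14. Your step 2 (with the correction that $u_{ij}^0$ must mean $p_{ij}$, which you correctly flag) is a reasonable fill-in of what the paper compresses into a citation to \cite{bv3}, and you rightly identify the ``chopping'' argument in step 3 as the place where all the technical work lives.

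There is, however, one concrete error in your accounting of the generating partitions in step 3. You assert that the matching pair of semicircles $\cap_{\circ\bullet}$, $\cap_{\bullet\circ}$ ``encodes the magic property of $p_{ij}$''. This is not so: those semicircles are present in every category of partitions over $U_N^+$ and encode only the biunitarity of $u$, which yields the row and column sum conditions $\sum_j p_{ij}=\sum_i p_{ij}=1$ but nothing about $p_{ij}$ being a projection or the $p_{ij}$ in a given row being pairwise orthogonal. To see that the projection property is a genuine additional constraint, work in the Gelfand picture for a single normal generator $a$ with $a^s=aa^*$. The spectrum of $a$ lies in $\{z\in\mathbb{C}\mid z^s=|z|^2\}$. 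For $s\geq 3$ this forces $|z|\in\{0,1\}$, so $p=aa^*$ is automatically a projection (and orthogonality then follows from projections summing to $1$). But for $s=2$ the condition $z^2=|z|^2$ only forces $z\in\mathbb{R}$, so $p=a^2$ can be any positive element with spectrum in $[0,1]$ — not a projection. Indeed, the presentation of $C(H_N^+)$ requires the additional ``cubic'' relations $u_{ij}u_{ik}=u_{ji}u_{ki}=0$ for $j\neq k$, and it is these which the paper's one-block partition in $P(2,2)$ implements. In general the magicity of $p_{ij}$ must be encoded by its own colored one-block partition (something like the $\circ\bullet\circ\bullet\to\circ\bullet$ fork enforcing $u_{ij}u_{ij}^*u_{ik}u_{ik}^*=\delta_{jk}u_{ij}u_{ij}^*$), and that partition must be included among your generators before the chopping argument can possibly close. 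So the generating set in your step 3 is incomplete; once that is corrected, your plan coincides with the paper's.
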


\begin{proof}
We already know that the results hold at $s=1,2$, and the proof in general is similar. With respect to the above proof at $s=2$, the situation is as follows:

\medskip

(1) Observe first that the result holds at $s=1$, where we obtain the magic condition, and at $s=2$ as well, where we obtain something equivalent to the cubic condition. In general, this follows from a $\mathbb Z_s$-analogue of Proposition 10.13. See \cite{bv1}.

\medskip

(2) Once again, the result holds at $s=1$, trivially, and at $s=2$ as well, where our condition is equivalent to $\#\circ+\#\bullet=0(2)$, in each block. In general, this follows as in the proof of Theorem 10.14, by using the one-block partition in $P(s,s)$. See \cite{bb+}.
\end{proof}

We have as well a result at $s=\infty$, which is of particular interest, as follows:

\begin{theorem}
The pure complex reflection groups $K_N=\mathbb T\wr S_N$ and their free analogues $K_N^+=\mathbb T\wr_*S_N^+$ have the following properties:
\begin{enumerate}
\item They have $N$-dimensional coordinates $u=(u_{ij})$, subject to the relations:
$$u_{ij}u_{ij}^*=u_{ij}^*u_{ij}$$
$$p_{ij}=u_{ij}u_{ij}^*={\rm magic}$$

\item They are easy, the corresponding categories $\mathcal P_{even}\subset P,\mathcal{NC}_{even}\subset NC$ being given by the fact that we have $\#\circ=\#\bullet$, as a weighted equality, in each block.
\end{enumerate}
\end{theorem}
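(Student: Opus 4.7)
The plan is to follow the approach of Theorem 10.15 and take the formal limit $s\to\infty$, replacing $\mathbb Z_s$ by $\mathbb T$. On the algebraic side (1), the finite-order relation $u_{ij}^s=p_{ij}$ from the $\mathbb Z_s$ case should simply disappear, leaving only normality $u_{ij}u_{ij}^*=u_{ij}^*u_{ij}$ together with the magic condition on the moduli $p_{ij}=u_{ij}u_{ij}^*$. On the combinatorial side (2), the block-condition $\#\circ-\#\bullet\equiv 0\,(s)$ tightens to the strict weighted equality $\#\circ=\#\bullet$ in each block, producing the categories $\mathcal P_{even},\mathcal{NC}_{even}$.

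For (1), I would first verify that the fundamental corepresentation of $K_N^+=\mathbb T\wr_*S_N^+$ satisfies the stated relations: writing it as $w_{ab}=z^{(a)}v_{ab}$ inside the free wreath product, where $z^{(a)}$ is the unitary generator of the $a$-th copy of $C(\mathbb T)$ and $v_{ab}$ are the magic generators of $C(S_N^+)$ commuting with $z^{(a)}$, normality is immediate from unitarity of $z^{(a)}$ and self-adjointness of $v_{ab}$, and $w_{ab}w_{ab}^*=v_{ab}$ is manifestly magic. The nontrivial direction is universality. Given a $C^*$-algebra $A$ generated by elements $u_{ab}$ satisfying the two relations, the key point is that normality together with $u_{ab}u_{ab}^*=p_{ab}$ forces $u_{ab}\in p_{ab}Ap_{ab}$ to be unitary in the corner, so that orthogonality of the magic projections yields $u_{ab}u_{ac}^*=u_{ab}^*u_{ac}=0$ for $b\neq c$. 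Setting $z^{(a)}:=\sum_b u_{ab}$ one then checks $z^{(a)}(z^{(a)})^*=(z^{(a)})^*z^{(a)}=1$, that $z^{(a)}$ commutes with each $p_{ab}$, and that $z^{(a)}p_{ab}=u_{ab}$; together with the magic structure on $(p_{ab})$ this yields a factorization through $C(K_N^+)$, with no extra relations among the $z^{(a)}$.

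For (2), I would use Tannakian duality to compute $Fix(w^{\otimes k})$ for a colored integer $k=e_1\ldots e_k$. Because $w=zv$ factors into commuting $\mathbb T$- and $S_N^+$-parts, a fixed tensor must be invariant under each action separately: the magic factor constrains the underlying partition to lie in $NC$, exactly as for $S_N^+$, while integrating the scalar $(z^{(a)})^{\#\circ-\#\bullet}$ along each block indexed by $a$ against Haar measure on $\mathbb T$ produces $\delta_{\#\circ=\#\bullet}$, killing any block in which the two colors are not balanced. This selects exactly $\mathcal{NC}_{even}$; in the classical case the same Haar-over-$\mathbb T^N$ argument, now without the noncrossing constraint coming from $S_N$, delivers $\mathcal P_{even}$. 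Passing through the categorical framework of Theorem 7.3 then gives the easiness statement.

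The main obstacle I anticipate is the universality step in (1). One must carefully justify that the constructed $z^{(a)}$ are genuine unitaries in $A$, not merely partial isometries on corners, and that the free wreath product structure is faithfully recovered. There is a real danger of accidentally imposing hidden commutation relations among the $z^{(a)}$ — for instance, from spurious orthogonality coming from the magic structure of the $p_{ab}$ across different rows — and ruling this out is essentially equivalent to correctly identifying $K_N^+$ as the genuine $s\to\infty$ object rather than some stronger quotient.
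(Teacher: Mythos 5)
Your treatment of (1) is essentially the paper's own route: the paper proves this part by working out a $\mathbb T$-analogue of the computations in the proof of Proposition 10.13, i.e. by exhibiting mutually inverse morphisms between the wreath-product presentation and the coordinate presentation, and your construction $z^{(a)}=\sum_b u_{ab}$, together with the observation that normality plus $u_{ab}u_{ab}^*=u_{ab}^*u_{ab}=p_{ab}$ forces $u_{ab}=p_{ab}u_{ab}p_{ab}$ to be unitary in its corner, is exactly that analogue. The obstacle you flag at the end is not a real one: you never need to prove abstractly that there are ``no extra relations among the $z^{(a)}$''; as in Proposition 10.13 it suffices to check that the morphism $C(K_N^+)\to A$ given by $z^{(a)}\to\sum_bu_{ab}$, $v_{ab}\to p_{ab}$ and the morphism $A\to C(K_N^+)$ given by $u_{ab}\to z^{(a)}v_{ab}$ are inverse to each other on generators, both maps existing by universality.

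Part (2) is where there is a genuine gap, in the free case. The step ``a fixed tensor must be invariant under each action separately'' is unjustified for $K_N^+$: the free wreath product is not a product, its Haar state is not $\int_{\mathbb T^N}\otimes\int_{S_N^+}$, and the blockwise ``integration of $(z^{(a)})^{\#\circ-\#\bullet}$ over $\mathbb T$'' is precisely the Fubini-type argument that works for $K_N=\mathbb T\wr S_N$ but has no meaning for $\mathbb T\wr_*S_N^+$. What the decomposition $w_{ab}=z^{(a)}v_{ab}$ honestly gives is a pair of quantum subgroups $S_N^+\subset K_N^+$ (set $z^{(a)}=1$) and $\mathbb T_N^+\subset K_N^+$ (the diagonal torus), hence only the inclusion $Fix(w^{\otimes k})\subset Fix(v^{\otimes k})\cap Fix({\rm torus})$. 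To conclude along your lines you would still need: (a) the reverse inclusion, i.e. a direct verification inside $C(K_N^+)$ that $\xi_\pi$ is fixed by $w^{\otimes k}$ for every $\pi\in\mathcal{NC}_{even}(k)$; and (b) the identification of the intersection with $span\{\xi_\pi|\pi\in\mathcal{NC}_{even}(k)\}$, which uses the linear independence of the vectors $\xi_\pi$ together with a M\"obius-type induction, and hence only works for $N\geq k$, leaving the small $N$ case untreated. The paper's intended argument avoids all of this: as in Theorem 10.14 and Theorem 10.15 (2), one translates the relations obtained in (1) into conditions of type $T_\pi\in Hom(w^{\otimes k},w^{\otimes l})$ for a few explicit one-block type partitions, checks combinatorially that these generate $\mathcal{NC}_{even}$ (adding the crossing gives $\mathcal P_{even}$ in the classical case), and then invokes Tannakian duality, which treats all values of $N$ uniformly.
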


\begin{proof}
The assertions here appear as an $s=\infty$ extension of (1,2) in Theorem 10.15 above, and their proof can be obtained along the same lines, as follows:

\medskip

(1) This follows indeed by working out a $\mathbb T$-analogue of the computations in the proof of Proposition 10.13 above. Again, for details we refer here to \cite{bv1}.

\medskip

(2) This result appears too as a $s=\infty$ extension of the results that we already have, and for details here, we refer once again to \cite{bb+}.
\end{proof}

We can now focus on $H_N,H_N^+,K_N,K_N^+$, with the idea in mind of completing the continuous quantum group picture from chapter 7. Before doing this, however, we have two more quantum groups to be introduced, namely $H_N^*,K_N^*$. We have here:

\begin{theorem}
We have quantum groups $H_N^*,K_N^*$, both easy, as follows,
\begin{enumerate}
\item $H_N^*=H_N^+\cap O_N^*$, corresponding to the category $P_{even}^*$, 

\item $K_N^*=K_N^+\cap U_N^*$, corresponding to the category $\mathcal P_{even}^*$,
\end{enumerate}
with the symbol $*$ standing for the fact that the corresponding partitions, when relabelled clockwise $\circ\bullet\circ\bullet\ldots$, must contain the same number of $\circ,\bullet$, in each block.
\end{theorem}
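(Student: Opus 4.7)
The proof naturally splits into applying the abstract machinery of easy quantum groups under intersection, and then computing the relevant categories of partitions explicitly.

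First, I would invoke Proposition 7.12: since the intersection of two easy quantum groups is easy with category of partitions generated by the union of the two categories, we get immediately that $H_N^* = H_N^+ \cap O_N^*$ is easy with category $\langle NC_{even}, P_2^* \rangle$, and similarly $K_N^* = K_N^+ \cap U_N^*$ is easy with category $\langle \mathcal{NC}_{even}, \mathcal{P}_2^* \rangle$. The theorem then reduces to proving the two category identities
\[
\langle NC_{even}, P_2^* \rangle = P_{even}^*, \qquad \langle \mathcal{NC}_{even}, \mathcal{P}_2^* \rangle = \mathcal{P}_{even}^*.
\]

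For the inclusion $\subset$, I would check that both generators lie in the target category. For $P_{even}^*$: the partitions in $P_2^*$ are pairings where every string connects $\circ$ to $\bullet$ under the clockwise alternating labelling, so each (two-element) block trivially has one $\circ$ and one $\bullet$; for $\pi \in NC_{even}$, the noncrossing property combined with the alternating clockwise labelling forces the legs of any block to alternate in color — this is a standard fact following by induction on the number of blocks, using that the legs strictly between two consecutive legs of any given block of a noncrossing partition form complete sub-blocks and so contribute an even count, hence color alternation is preserved. The colored version goes through identically for $\mathcal{P}_{even}^*$.

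The main obstacle is the reverse inclusion $\supset$: given an arbitrary $\pi \in P_{even}^*$, I must build it inside $\langle NC_{even}, P_2^* \rangle$ using concatenation, composition, adjoint, and the axioms from Definition 7.1. The strategy is to mimic the argument used in Theorem 7.5 to establish $\langle NC_2, \slash\!\!\!\!\slash\,\rangle = P_2^*$: the half-commutation crossing ${\slash\hskip-2.1mm\backslash\hskip-1.65mm|}$ (viewed with appropriate colorings) lies in $P_2^* \cap \mathcal{P}_2^*$ and allows us to permute any two like-colored legs past a third leg of the same color, i.e.\ it implements exactly the half-classical symmetry. Concretely, I would argue that any $\pi \in P_{even}^*$ can be brought, via a sequence of such half-commutation moves applied to the legs, to a noncrossing partition $\pi' \in NC_{even}$ (the matching condition guaranteeing that the intermediate partitions stay inside $P_{even}^*$), and then recovered as $\pi = \sigma \cdot \pi' \cdot \tau$ where $\sigma, \tau$ are products of half-commutation crossings — hence elements of $\langle NC_{even}, P_2^* \rangle$.

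Finally, the unitary statement for $K_N^*$ follows by the identical argument with the colored partitions $\mathcal{NC}_{even}$ and $\mathcal{P}_2^*$ replacing their uncolored versions, since all the moves used preserve the upper/lower color labels; alternatively, one can derive it from the orthogonal case by the same colored/uncolored passage used in Theorem 7.15. As a sanity check, taking the classical version (adding the basic crossing $\slash\!\!\backslash$) collapses $P_{even}^*$ to $P_{even}$ and $\mathcal{P}_{even}^*$ to $\mathcal{P}_{even}$, recovering $H_N$ and $K_N$ from Theorems 10.14 and 10.16, which confirms that $H_N^*, K_N^*$ indeed interpolate between the classical and free versions.
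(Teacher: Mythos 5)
Your proposal is correct and follows essentially the same route as the paper: the paper's entire proof is the one-line observation that intersections of easy quantum groups correspond to generation of the associated categories of partitions, leaving the identities $\langle NC_{even},P_2^*\rangle=P_{even}^*$ and $\langle\mathcal{NC}_{even},\mathcal P_2^*\rangle=\mathcal P_{even}^*$ as "standard". Your sketch simply fills in that combinatorial step, and it does so correctly, since the half-commutation crossings implement exactly the parity-preserving leg permutations, by which any balanced-block partition can be moved onto an interval (hence noncrossing) partition.
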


\begin{proof}
This is standard, from the results that we already have, regarding the various quantum groups involved, because the intersection operations at the quantum group level correspond to generation operations, at the category of partitions level.
\end{proof}

We can now complete the ``continuous'' picture from chapter 7 above, as follows:

\begin{theorem}
The basic orthogonal and unitary quantum groups are related to the basic real and complex quantum reflection groups as follows,
$$\xymatrix@R=15mm@C=17mm{
U_N\ar[r]&U_N^*\ar[r]&U_N^+\\
O_N\ar[r]\ar[u]&O_N^*\ar[r]\ar[u]&O_N^+\ar[u]}
\ \ \ \ \ \xymatrix@R=8mm@C=5mm{\\ \leftrightarrow&\\&\\}\ \ 
\xymatrix@R=15mm@C=17mm{
K_N\ar[r]&K_N^*\ar[r]&K_N^+\\
H_N\ar[r]\ar[u]&H_N^*\ar[r]\ar[u]&H_N^+\ar[u]}$$
the connecting operations $U\leftrightarrow K$ being given by $K=U\cap K_N^+$ and $U=\{K,O_N\}$.
\end{theorem}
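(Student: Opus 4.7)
The strategy is to exploit the easiness of all twelve quantum groups in the two diagrams, established in Sections 7, 9, and 10. Each carries an explicit category of partitions: the pairings $P_2, P_2^*, NC_2$ and their matching counterparts $\mathcal P_2, \mathcal P_2^*, \mathcal{NC}_2$ for the orthogonal/unitary diagrams, and the even-block analogues $P_{even}, P_{even}^*, NC_{even}$ and $\mathcal P_{even}, \mathcal P_{even}^*, \mathcal{NC}_{even}$ for the reflection diagrams. By Theorem 7.14, intersection of easy quantum groups corresponds at the partition level to category generation, while easy generation corresponds to category intersection. Thus the whole statement reduces to six pairs of purely combinatorial identities between categories of partitions.

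For the intersection formula $K = U \cap K_N^+$, the task is to verify, in each of the six cases, that $\langle D_U, \mathcal{NC}_{even}\rangle = D_K$. The inclusion $\supset$ is immediate, since $D_U$ and $\mathcal{NC}_{even}$ are both contained in $D_K$ by inspection of the defining block-size, matching, half-classical and noncrossing constraints. For the reverse inclusion, the key combinatorial observation is that $\mathcal{NC}_{even}$ already contains a one-block partition on four points with alternating colors $\circ\bullet\circ\bullet$, and this together with the pairings of $D_U$ is enough, under horizontal and vertical concatenation (with matching middle symbols), to manufacture an arbitrary block of even size carrying whatever matching, half-classical or noncrossing constraint is required for $D_K$.

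For the generation formula $U = \{K, O_N\}$, one verifies instead that $D_K \cap P_2 = D_U$. The inclusion $\supset$ is clear since $D_U \subset D_K$ and $D_U \subset P_2$, so one need only observe that restricting a reflection-type category to partitions whose blocks all have size exactly two precisely retains the additional constraint characterizing $D_U$ inside $P_2$. Concretely, one checks the six identities $P_{even}\cap P_2 = P_2$, $P_{even}^*\cap P_2 = P_2^*$, $NC_{even}\cap P_2 = NC_2$, $\mathcal P_{even}\cap P_2 = \mathcal P_2$, $\mathcal P_{even}^*\cap P_2 = \mathcal P_2^*$, and $\mathcal{NC}_{even}\cap P_2 = \mathcal{NC}_2$, each of which is transparent at the level of the combinatorial conditions defining the categories.

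The main obstacle will be the careful bookkeeping for the half-classical star-categories $P_2^*, \mathcal P_2^*, P_{even}^*, \mathcal P_{even}^*$, where one must check that the clockwise color-alternation condition is correctly preserved under both category generation and intersection; this is the only step that is not immediate from the block structure alone. A secondary point is that the bracket $\{K, O_N\}$ in the statement should denote easy generation, and one should observe that the twelve categories involved form an intersection-and-easy-generation diagram in the sense of Theorem 7.15, so no gap with the genuine quantum-group generation $\langle K, O_N\rangle$ arises. As is standard in the free case, all the stated identifications are to be read modulo the equivalence between full and reduced Woronowicz algebras.
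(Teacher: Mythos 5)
Your proposal follows essentially the same route as the paper's proof: both use easiness of all twelve quantum groups and Theorem 7.14 to translate $K=U\cap K_N^+$ and $U=\{K,O_N\}$ into the partition-level identities $D_K=<D_U,\mathcal{NC}_{even}>$ and $D_U=D_K\cap P_2$, which the paper then declares elementary to check. Your additional combinatorial sketches (generation from the alternating four-block, and the six intersections such as $\mathcal P_{even}\cap P_2=\mathcal P_2$) are correct and simply fill in the verification the paper leaves to the reader.
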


\begin{proof}
According to the general results in chapter 7, in terms of categories of partitions, the operations introduced in the statement reformulate as follows:
$$D_K=<D_U,\mathcal{NC}_{even}>\quad,\quad 
D_U=D_K\cap P_2$$

On the other hand, by putting together the various easiness results that we have, the categories of partitions for the quantum groups in the statement are as follows:
$$\xymatrix@R=16mm@C=15mm{
\mathcal P_2\ar[d]&\mathcal P_2^*\ar[l]\ar[d]&\mathcal{NC}_2\ar[l]\ar[d]\\
P_2&P_2^*\ar[l]&NC_2\ar[l]}
\ \ \ \ \ \xymatrix@R=8mm@C=5mm{\\ :&\\&\\}\ \ 
\xymatrix@R=16mm@C=12mm{
\mathcal P_{even}\ar[d]&\mathcal P_{even}^*\ar[l]\ar[d]&\mathcal{NC}_{even}\ar[l]\ar[d]\\
P_{even}&P_{even}^*\ar[l]&NC_{even}\ar[l]}$$

It is elementary to check that these categories are related by the above intersection and generation operations, and we conclude that the correspondence holds indeed.
\end{proof}

Our purpose now will be that of showing that a twisted analogue of the above result holds. It is convenient to include in our discussion two more quantum groups, coming from \cite{ez1}, \cite{rwe} and denoted $H_N^{[\infty]},K_N^{[\infty]}$, which are constructed as follows:

\index{intermediate liberation}

\begin{theorem}
We have intermediate liberations $H_N^{[\infty]},K_N^{[\infty]}$ as follows, constructed by using the relations $\alpha\beta\gamma=0$, for any $a\neq c$ on the same row or column of $u$,
$$\xymatrix@R=15mm@C=17mm{
K_N\ar[r]&K_N^*\ar[r]&K_N^{[\infty]}\ar[r]&K_N^+\\
H_N\ar[r]\ar[u]&H_N^*\ar[r]\ar[u]&H_N^{[\infty]}\ar[r]\ar[u]&H_N^+\ar[u]}$$
with the convention $\alpha=a,a^*$, and so on. These quantum groups are easy, the corresponding categories $P_{even}^{[\infty]}\subset P_{even}$ and $\mathcal P_{even}^{[\infty]}\subset\mathcal P_{even}$ being generated by $\eta=\ker(^{iij}_{jii})$.
\end{theorem}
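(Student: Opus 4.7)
The plan is to invoke the Tannakian duality machinery of Section 4, in its easy form (Definition 7.4). Define $P_{even}^{[\infty]} \subset P_{even}$ as the category of partitions generated by $NC_{even}$ together with $\eta = \ker\binom{iij}{jii}$, and similarly $\mathcal{P}_{even}^{[\infty]} \subset \mathcal{P}_{even}$ generated by $\mathcal{NC}_{even}$ together with $\eta$ endowed with its natural matching bicoloring (one checks directly that the clockwise labelling $\circ\bullet\circ\bullet\circ\bullet$ makes each of its two blocks matching, so $\eta$ indeed lies in $\mathcal{P}_{even}^*\subset\mathcal{P}_{even}$). By Theorem 4.20 these categories produce easy quantum groups $G_N \subset H_N^+$ and $G_N' \subset K_N^+$, and the task reduces to identifying $G_N = H_N^{[\infty]}$ and $G_N' = K_N^{[\infty]}$.

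The key computation is the translation of $T_\eta \in \mathrm{End}(u^{\otimes 3})$ into relations on the entries of $u$. Using $T_\eta(e_a \otimes e_b \otimes e_c) = \delta_{ab}\, e_c \otimes e_a \otimes e_a$, a direct expansion of $T_\eta u^{\otimes 3} = u^{\otimes 3} T_\eta$ yields
$$\delta_{yz}\, u_{ya} u_{yb} u_{xc} = \delta_{ab}\, u_{xc} u_{ya} u_{za}, \qquad \forall x,y,z,a,b,c.$$
The cases $y=z,\ a\neq b$ and $y\neq z,\ a=b$ vanish automatically from the cubic relations valid in $H_N^+$, and the only non-trivial content is the case $y=z,\ a=b$, which reads $[u_{ya}^2, u_{xc}] = 0$: the projections $p_{ya} = u_{ya}^2$ must be central with respect to every entry of $u$.

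What remains is to show that, modulo the $H_N^+$ relations, the centrality of all $p_{ij}$ is equivalent to the triple relations $\alpha\beta\gamma = 0$ defining $H_N^{[\infty]}$. For the $\Rightarrow$ direction, a $C^*$-positivity trick suffices: with $u_{ij}^*=u_{ij}$,
$$\|u_{ij} u_{pq} u_{ik}\|^2 = \|u_{ik} u_{pq} u_{ij}^2 u_{pq} u_{ik}\| = \|u_{ij}^2 u_{pq}^2 u_{ik}^2\| = 0,$$
since $u_{ij}^2 u_{ik}^2 = u_{ij}(u_{ij}u_{ik})u_{ik} = 0$ for $j \neq k$, and the antipode $S(u_{ij})=u_{ji}^*$ extends this to all row/column and $*$-configurations. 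For the converse, introduce $s_i = \sum_k u_{ik}$, which is a symmetry satisfying $s_i u_{ij} = u_{ij} s_i = u_{ij}^2$: summing the relation $u_{ij} u_{pq} u_{ik} = 0$ over $k \neq j$ yields $u_{ij} u_{pq} s_i = u_{ij} u_{pq} u_{ij}$, hence $u_{ij} u_{pq} = u_{ij} u_{pq} u_{ij}^2$, and symmetrically $u_{pq} u_{ij} = u_{ij}^2 u_{pq} u_{ij}$. Combining these two identities gives $u_{ij}^2 u_{pq} = u_{ij}^2 u_{pq} u_{ij}^2 = u_{pq} u_{ij}^2$, i.e., centrality.

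The unitary case $K_N^{[\infty]}$ is handled identically once one observes that the matching bicoloring of $\eta$, combined with its various colored variants obtained via categorical operations in $\mathcal{P}_{even}^{[\infty]}$, produces intertwiners in which $\alpha = a, a^*$ and $\gamma = c, c^*$ are allowed independently; this matches exactly the family of $\alpha\beta\gamma=0$ relations defining $K_N^{[\infty]}$. The inclusion chain $H_N \subset H_N^* \subset H_N^{[\infty]} \subset H_N^+$ (and its unitary analog) then follows formally from the inclusion chain of categories $P_{even} \supset P_{even}^* \supset P_{even}^{[\infty]} \supset NC_{even}$, with $\eta \in P_{even}^*$ already checked. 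The main obstacle is the equivalence in the third paragraph: the $\Leftarrow$ direction in particular requires the delicate manipulation using the symmetry $s_i$, which is where the cubic structure of $H_N^+$ (rather than just orthogonality) is genuinely exploited.
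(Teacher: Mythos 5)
Your proof is correct, and it supplies exactly the computation that the paper's own proof treats as ``routine'' (the paper simply asserts the equivalence between the relations $\alpha\beta\gamma=0$ and $\eta\in\mathrm{End}(u^{\otimes 3})$ and refers to \cite{ez1}, \cite{rwe}). The approach matches what the references do: translate $T_\eta\in\mathrm{End}(u^{\otimes 3})$ into coordinate relations, reduce modulo the cubic $H_N^+$ relations to centrality of the projections $p_{ij}=u_{ij}^2$, and prove that centrality is equivalent to the triple relations. Your expansion of the intertwining condition, the case analysis showing only $y=z,\,a=b$ survives, the $C^*$-positivity computation for the forward implication, and the symmetry trick $s_i=\sum_k u_{ik}$ for the converse are all correct (I checked each step), and the antipode argument correctly upgrades the row relations to column relations. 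The verification that $\eta\in P_{even}^*$, needed for the inclusion $H_N^*\supset H_N^{[\infty]}$ at the level of categories, is also right.

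The one place where the argument remains a sketch is the unitary case $K_N^{[\infty]}$: you assert that the colored rotations and conjugates of $\eta$ available inside $\mathcal P_{even}^{[\infty]}$ reproduce all of the $\alpha\beta\gamma=0$ relations with $\alpha\in\{a,a^*\}$ and $\gamma\in\{c,c^*\}$ chosen independently, but you do not identify which colorings of $\eta$ are actually reachable by the categorical operations. This is plausible and likely just bookkeeping, but it is the single step where you wave a hand. For the real case the argument is complete as written.
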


\begin{proof}
This is routine, by using the fact that the relations $\alpha\beta\gamma=0$ in the statement are equivalent to the following condition, with $|k|=3$:
$$\eta\in End(u^{\otimes k})$$ 

For further details on these quantum groups, we refer to \cite{ez1}, \cite{rwe}.
\end{proof}

In order to discuss the twisting, we will need the following technical result:

\begin{proposition}
We have the following equalities,
\begin{eqnarray*}
P_{even}^*&=&\left\{\pi\in P_{even}\Big|\varepsilon(\tau)=1,\forall\tau\leq\pi,|\tau|=2\right\}
\\
P_{even}^{[\infty]}&=&\left\{\pi\in P_{even}\Big|\sigma\in P_{even}^*,\forall\sigma\subset\pi\right\}\\
P_{even}^{[\infty]}&=&\left\{\pi\in P_{even}\Big|\varepsilon(\tau)=1,\forall\tau\leq\pi\right\}
\end{eqnarray*}
where $\varepsilon:P_{even}\to\{\pm1\}$ is the signature of even permutations.
\end{proposition}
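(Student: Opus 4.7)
The plan is to prove the three equalities in sequence, treating the first as the main combinatorial content and deriving the other two from it.

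\textbf{First equality.} I would first pin down the characterization of $P_{even}^*$ coming from Theorem~10.17: a partition $\pi\in P_{even}$ lies in $P_{even}^*$ iff, after relabeling its legs clockwise $\circ\bullet\circ\bullet\ldots$, every block contains as many $\circ$ as $\bullet$. My plan is to translate this ``color balance'' condition into the signature condition block-by-block. The core observation is that for a partition $\tau$ made up of exactly two even blocks, the signature $\varepsilon(\tau)$ computed via Proposition~7.24 records precisely the parity of ``color imbalance'' between the two blocks: an elementary switch of two adjacent legs of different blocks flips both the matching parity (relative to the alternating coloring) and the sign. Base cases come from $\varepsilon(||)=1$ and $\varepsilon(\slash\!\!\backslash)=-1$ from Proposition~7.24(2), and induction on the total number of legs finishes the equivalence. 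Under the reading that ``$\tau\leq\pi$ with $|\tau|=2$'' refers to a two-block partition extracted from $\pi$ (i.e.\ obtained by merging the blocks of $\pi$ into two groups, then restricting), this yields the first equality.

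\textbf{Second equality.} For $P_{even}^{[\infty]}$ I would use the description from Theorem~10.19 as the category generated by $\eta=\ker\binom{iij}{jii}$, corresponding to the vanishing relation $\alpha\beta\gamma=0$ whenever $a\neq c$ lie on the same row or column. Partition-theoretically, this relation forces that any sub-partition $\sigma\subset\pi$ — obtained by restricting $\pi$ to a subset of its legs — must already satisfy the matching condition defining $P_{even}^*$. The inclusion $\subseteq$ follows by checking that the generator $\eta$ has this ``all sub-partitions matching'' property, and that the property is preserved under the categorical operations $[\,\cdot\,\cdot\,],[\,^\cdot_\cdot\,],\,^*$; the reverse inclusion $\supseteq$ amounts to showing that any $\pi$ for which all restrictions match can be assembled from $\eta$ together with moves in $P_{even}^*$, which is combinatorial bookkeeping.

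\textbf{Third equality.} This now follows formally by combining the first two. If every $\sigma\subset\pi$ lies in $P_{even}^*$, then by the first equality every such $\sigma$ satisfies $\varepsilon(\tau)=1$ for each of its two-block coarsenings $\tau$; but a two-block coarsening of a restriction of $\pi$ is itself a restriction of a two-block coarsening of $\pi$, so these range exactly over all $\tau\leq\pi$. The converse is the same statement read in reverse.

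\textbf{Main obstacle.} The hardest step by far is the first equality, specifically the claim that for a two-block even partition the signature equals $+1$ iff the two blocks are color-balanced under the alternating labeling. The subtle point is that the signature is defined extrinsically, via the number of neighbor-swaps needed to reach a noncrossing form, while the matching condition is an intrinsic parity count on positions; bridging the two requires showing that a \emph{single} neighbor-swap between legs of the two blocks toggles the color parity of each block by exactly one. I would attack this by reducing to two blocks of size $2a$ and $2b$ on a line and inducting on $a+b$, using the base cases from Proposition~7.24 together with the fact, already used in the proof of Proposition~7.26, that the signature only depends on $\ker(i_1,\ldots,i_k)$ up to the noncrossing form.
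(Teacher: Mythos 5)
The step you yourself flag as the main obstacle is exactly where your argument breaks, and the proposed fix cannot work. A switch of two adjacent legs lying in different blocks moves one leg of each block from an odd position to an even one (or conversely), so it changes the quantity $N_\circ-N_\bullet$ of each block by $\pm2$: it does \emph{not} toggle the balanced/unbalanced status of a block, and the only thing the signature can detect is the half-imbalance $(N_\circ-N_\bullet)/2$ modulo $2$, i.e. the block imbalance modulo $4$. Concretely, take $\tau\in P_{even}$ on $8$ points with blocks $\{1,3,5,7\}$ and $\{2,4,6,8\}$: an even number of switches (six, counting inversions) brings it to the noncrossing form $\{1,2,3,4\},\{5,6,7,8\}$, so $\varepsilon(\tau)=1$ by Proposition 7.24, yet under the alternating labelling one block consists only of $\circ$'s. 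Thus your central claim, ``$\varepsilon(\tau)=1$ for a two-block $\tau$ iff its two blocks are colour-balanced'', is false, and the induction on $a+b$ you sketch cannot establish it; only a mod-$4$ statement survives, which is strictly weaker than membership in $P_{even}^*$. Be aware that the paper's own justification here is a one-line parity count of the same nature (its displayed relation $c=N_\bullet-N_\circ$ mod $2$ is vacuous, since the right side is always even), so this first equality genuinely requires more input than the single-switch parity argument you propose -- one has to feed in additional partitions (coarsenings of subpartitions, cappings, or the group-theoretic/quantum-group descriptions of $P_{even}^*$ and $P_{even}^{[\infty]}$), not a sharper count of the same kind.

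Two further mismatches with the paper's proof. First, in the second and third equalities ``$\sigma\subset\pi$'' must mean a subpartition in the sense of a union of blocks of $\pi$ (this is how the paper uses $\beta\subset\pi$), not a restriction of $\pi$ to an arbitrary subset of legs: with arbitrary restrictions the second equality already fails for two semicircles, since restricting each block to one leg produces odd blocks, hence a $\sigma\notin P_{even}^*$. Second, your derivation of the third equality from the first two is not the formal equivalence you claim: the right-hand side of the third equality quantifies over all coarsenings $\tau$ of $\pi$ itself, whereas (1)+(2) only control two-block coarsenings of block-subfamilies of $\pi$; these families do not coincide (a coarsening of $\pi$ with, say, five blocks is neither), and bridging them is precisely where the paper does real work -- the switch-count comparison of the three partitions $(\alpha\wedge\gamma,\beta)$, $(\beta\wedge\gamma,\alpha)$, $(\alpha,\beta,\gamma)$ for the inclusion ``$\supset$'', and the inflation argument (deleting adjacent equal-block legs, which preserves the signature, reducing to $NC_{even}$) for ``$\subset$''. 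Finally, your ``$\supseteq$'' in the second equality, dismissed as combinatorial bookkeeping, is exactly the Raum--Weber computation of $\langle\eta\rangle$, which the paper does not redo but cites; as written, your proposal offers no argument for it.
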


\begin{proof}
This is routine combinatorics, the idea being as follows:

\medskip

(1) Given $\pi\in P_{even}$, we have $\tau\leq\pi,|\tau|=2$ precisely when $\tau=\pi^\beta$ is the partition obtained from $\pi$ by merging all the legs of a certain subpartition $\beta\subset\pi$, and by merging as well all the other blocks. Now observe that $\pi^\beta$ does not depend on $\pi$, but only on $\beta$, and that the number of switches required for making $\pi^\beta$ noncrossing is $c=N_\bullet-N_\circ$ modulo 2, where $N_\bullet/N_\circ$ is the number of black/white legs of $\beta$, when labelling the legs of $\pi$ counterclockwise $\circ\bullet\circ\bullet\ldots$ Thus $\varepsilon(\pi^\beta)=1$ holds precisely when $\beta\in\pi$ has the same number of black and white legs, and this gives the result.

\medskip

(2) This simply follows from the equality $P_{even}^{[\infty]}=<\eta>$ coming from Theorem 10.19, by computing $<\eta>$, and for the complete proof here we refer to Raum-Weber \cite{rwe}.

\medskip

(3) We use here the fact, also from \cite{rwe}, that the relations $g_ig_ig_j=g_jg_ig_i$ are trivially satisfied for real reflections. This leads to the following conclusion:
$$P_{even}^{[\infty]}(k,l)=\left\{\ker\begin{pmatrix}i_1&\ldots&i_k\\ j_1&\ldots&j_l\end{pmatrix}\Big|g_{i_1}\ldots g_{i_k}=g_{j_1}\ldots g_{j_l}\ {\rm inside}\ \mathbb Z_2^{*N}\right\}$$

In other words, the partitions in $P_{even}^{[\infty]}$ are those describing the relations between free variables, subject to the conditions $g_i^2=1$. We conclude that $P_{even}^{[\infty]}$ appears from $NC_{even}$ by ``inflating blocks'', in the sense that each $\pi\in P_{even}^{[\infty]}$ can be transformed into a partition $\pi'\in NC_{even}$ by deleting pairs of consecutive legs, belonging to the same block. 

Now since this inflation operation leaves invariant modulo 2 the number $c\in\mathbb N$ of switches in the definition of the signature, it leaves invariant the signature $\varepsilon=(-1)^c$ itself, and we obtain in this way the inclusion ``$\subset$'' in the statement. 

Conversely, given $\pi\in P_{even}$ satisfying $\varepsilon(\tau)=1$, $\forall\tau\leq\pi$, our claim is that:
$$\rho\leq\sigma\subset\pi,|\rho|=2\implies\varepsilon(\rho)=1$$

Indeed, let us denote by $\alpha,\beta$ the two blocks of $\rho$, and by $\gamma$ the remaining blocks of $\pi$, merged altogether. We know that the partitions $\tau_1=(\alpha\wedge\gamma,\beta)$, $\tau_2=(\beta\wedge\gamma,\alpha)$, $\tau_3=(\alpha,\beta,\gamma)$ are all even. On the other hand, putting these partitions in noncrossing form requires respectively $s+t,s'+t,s+s'+t$ switches, where $t$ is the number of switches needed for putting $\rho=(\alpha,\beta)$ in noncrossing form. Thus $t$ is even, and we are done.

With the above claim in hand, we conclude, by using the second equality in the statement, that we have $\sigma\in P_{even}^*$. Thus $\pi\in P_{even}^{[\infty]}$, which ends the proof of ``$\supset$''.
\end{proof}

With the above result in hand, we can now prove:

\index{twisting}

\begin{theorem}
We have the following results:
\begin{enumerate}
\item The quantum groups from Theorem 10.19 are equal to their own twists.

\item With input coming from this, a twisted version of Theorem 10.18 holds.
\end{enumerate}
\end{theorem}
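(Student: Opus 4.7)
The strategy for part (1) is Tannakian: for each of the eight quantum groups from Theorem 10.19, I verify that the twisted and untwisted Tannakian categories coincide, forcing $G=\bar{G}$ via the Tannakian duality underlying Theorem 7.27. Concretely, if $D\subset P_{even}$ is the defining category of $G$, the goal is
\[
\mathrm{span}\{T_\pi:\pi\in D\}=\mathrm{span}\{\bar{T}_\pi:\pi\in D\}
\]
as subspaces of $\mathcal{L}((\mathbb{C}^N)^{\otimes k},(\mathbb{C}^N)^{\otimes l})$, at every colored $(k,l)$.

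The four ``inner'' cases $H_N^+,K_N^+,H_N^{[\infty]},K_N^{[\infty]}$, governed respectively by $NC_{even},\mathcal{NC}_{even},P_{even}^{[\infty]},\mathcal{P}_{even}^{[\infty]}$, admit the strongest conclusion. By Proposition 7.24(3) for the noncrossing categories and Proposition 10.20(3) for the $[\infty]$ categories, every partition $\tau$ coarser than a given $\pi\in D$ satisfies $\varepsilon(\tau)=1$. Reading off Definition 7.25, the Kronecker sum defining $\bar{T}_\pi$ then reduces to that of $T_\pi$, yielding the pointwise identity $\bar{T}_\pi=T_\pi$ on the nose; hence the Tannakian categories literally coincide.

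The remaining four cases $H_N,K_N,H_N^*,K_N^*$ are handled by a direct algebraic argument built on Proposition 10.13. The cubic relations $u_{ij}u_{ik}=u_{ji}u_{ki}=0$ for $j\neq k$ force every product of distinct coordinates sharing a row or column to vanish. Consequently the anticommutation relations of Proposition 7.21 (for $\bar{H}_N,\bar{K}_N$) and the half-commutation sign table of Proposition 7.22 (for $\bar{H}_N^*,\bar{K}_N^*$), which govern precisely such row-or-column-distinct products, collapse into trivial identities of the form $0=\pm 0$. The remaining relations, affecting coordinates on pairwise distinct rows and columns, are identical to the untwisted ones; so $\bar{G}=G$ in each case. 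Equivalently, in the Tannakian picture one checks that the span $\mathrm{span}\{\bar{T}_\pi\}$ still picks out exactly the cubic-type intertwiners of the untwisted group.

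For part (2), the twisted cube is built by twisting only the top face of Theorem 10.18's diagram: $O_N,O_N^*,U_N,U_N^*$ become $\bar{O}_N,\bar{O}_N^*,\bar{U}_N,\bar{U}_N^*$, while $O_N^+,U_N^+$ remain unchanged since they are noncrossing-easy (hence self-twisted by Proposition 7.24(3)); the bottom face ($H$'s and $K$'s) is fixed by part (1). The connecting identities $K=U\cap K_N^+$ and $U=\{K,O_N\}$ translate via Theorem 7.14 into intersection and generation identities among the underlying partition categories, which are purely combinatorial and so preserved by the twisting operation $\pi\mapsto\bar{T}_\pi$, which acts only on the implementation. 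The main obstacle I foresee is the half-liberated step in (1) for $H_N^*,K_N^*$: verifying that every triple-product relation touched by the Proposition 7.22 sign table genuinely reduces to $0=0$ requires a careful case analysis of the row/column incidence patterns of $u_{ij},u_{kl},u_{mn}$, but the cubic vanishing, applied blockwise to each configuration $(r,c)$ in the sign table, should make each case routine.
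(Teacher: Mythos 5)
Your treatment of the four ``inner'' cases $H_N^+,K_N^+,H_N^{[\infty]},K_N^{[\infty]}$ via the signature descriptions in Proposition 7.24\,(3) and Proposition 10.20\,(3) is correct and is the paper's argument. For $H_N,K_N$ your direct collapse of the $\bar{O}_N$-type anticommutation relations into $0=0$ via the cubic relations is also fine, since commutativity lets you bring the two offending entries adjacent.

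There is, however, a genuine gap in the $H_N^*,K_N^*$ step, and it is exactly the one you flag and then wave away. The claim that ``the cubic relations force every product of distinct coordinates sharing a row or column to vanish'' is not a consequence of cubic vanishing alone in the half-liberated setting. Take $\mathrm{span}(a,b,c)=(2,3)$ with the two coordinates on the common row being the \emph{outer} ones $a=u_{ij}$, $c=u_{iq}$ ($j\neq q$), and $b=u_{kl}$ on a different row and column. Then neither $ab$ nor $bc$ is an adjacent row/column pair, so $u_{ij}u_{ik}=0$ gives nothing about $abc$; half-commutation only converts $abc$ into $cba$, which has the same incidence pattern. What is actually needed is the $[\infty]$-relation $\alpha\beta\gamma=0$ for $a\neq c$ on the same row or column — i.e.\ the inclusion $H_N^*\subset H_N^{[\infty]}$ of Theorem 10.19 — or, if you insist on a direct argument from the cubic presentation plus half-commutation, a small $C^*$-algebra computation: the squares $u_{ij}^2$ are central in $C(O_N^*)$ (take $x=y$ in $xyz=zyx$), so
\[
(abc)^2=(abc)(cba)=ab\,c^2\,ba=a^2b^2c^2=p_{ij}\,p_{kl}\,p_{iq},
\]
and since $u_{ij}u_{iq}=0$ gives $p_{ij}p_{iq}=0$, the self-adjoint element $abc$ squares to zero, hence $abc=0$. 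The paper avoids this entirely by writing $\bar{H}_N^*=\bar{H}_N^{[\infty]}\cap\bar{O}_N^*=H_N^{[\infty]}\cap\bar{O}_N^*$ (using the already-settled $[\infty]$ case and the compatibility of twisting with intersections), and then observing that the sign-flipped relations of $\bar{O}_N^*$ are vacuous inside $H_N^{[\infty]}$ precisely because of the $[\infty]$-relations. Your route can be made to work, but only with this supplementary ingredient; ``cubic vanishing applied blockwise'' does not cover the outer-incidence configurations, and the step should be carried out rather than called routine.
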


\begin{proof}
This result basically comes from the results that we have.

\medskip

(1) In the real case, the verifications are as follows:

\medskip

-- $H_N^+$. We know from chapter 7 above that for $\pi\in NC_{even}$ we have $\bar{T}_\pi=T_\pi$, and since we are in the situation $D\subset NC_{even}$, the definitions of $G,\bar{G}$ coincide.

\medskip

-- $H_N^{[\infty]}$. Here we can use the same argument as in (1), based this time on the description of $P_{even}^{[\infty]}$ involving the signature found in Proposition 10.20.

\medskip

-- $H_N^*$. We have $H_N^*=H_N^{[\infty]}\cap O_N^*$, so $\bar{H}_N^*\subset H_N^{[\infty]}$ is the subgroup obtained via the defining relations for $\bar{O}_N^*$. But all the $abc=-cba$ relations defining $\bar{H}_N^*$ are automatic, of type $0=0$, and it follows that $\bar{H}_N^*\subset H_N^{[\infty]}$ is the subgroup obtained via the relations $abc=cba$, for any $a,b,c\in\{u_{ij}\}$. Thus we have $\bar{H}_N^*=H_N^{[\infty]}\cap O_N^*=H_N^*$, as claimed.

\medskip

-- $H_N$. We have $H_N=H_N^*\cap O_N$, and by functoriality, $\bar{H}_N=\bar{H}_N^*\cap\bar{O}_N=H_N^*\cap\bar{O}_N$. But this latter intersection is easily seen to be equal to $H_N$, as claimed.

\medskip

In the complex case the proof is similar, by using the same arguments.

\medskip

(2) This can be proved by proceeding as in the proof of Theorem 10.18 above, with of course some care when formulating the result.
\end{proof}

\section*{10d. Bessel laws}

Let us go back to $H_N^+,K_N^+$, or rather to the whole series $H_N^{s+}$, with $s\in\{1,2,\ldots,\infty\}$ and work out the fusion rules, and probabilistic aspects. We first have:

\begin{proposition}
The algebra $C(H_N^{s+})$ has a family of $N$-dimensional corepresentations $\{u_k|k\in\mathbb Z\}$, satisfying the following conditions:
\begin{enumerate}
\item $u_k=(u_{ij}^k)$ for any $k\geq 0$.

\item $u_k=u_{k+s}$ for any $k\in\mathbb Z$.

\item $\bar{u}_k=u_{-k}$ for any $k\in\mathbb Z$.
\end{enumerate}
\end{proposition}

\begin{proof}
Our claim is that all the above holds, with $u_k=(u_{ij}^k)$. Indeed, all these results follow from the definition of $H_N^{s+}$. See \cite{bv1}.
\end{proof}

Next, we have the following result, also from \cite{bv1}:

\begin{theorem}
With the convention $u_{i_1\ldots i_k}=u_{i_1}\otimes\ldots\otimes u_{i_k}$, for any $i_1,\ldots,i_k\in\mathbb Z$, we have the following equality of linear spaces,
$$Hom(u_{i_1\ldots i_k},u_{j_1\ldots j_l})=span\left\{T_p\Big|p\in NC_s(i_1\ldots i_k,j_1\ldots j_l)\right\}$$
where the set on the right consists of elements of $NC(k,l)$ having the property that in each block, the sum of $i$ indices equals the sum of $j$ indices, modulo $s$.
\end{theorem}

\begin{proof}
This result is from \cite{bv1}, the idea of the proof being as follows:

\medskip

(1) Our first claim is that, in order to prove $\supset$, we may restrict attention to the case $k=0$. This follows indeed from the Frobenius duality isomorphism.

\medskip

(2) Our second claim is that, in order to prove $\supset$ in the case $k=0$, we may restrict attention to the one-block partitions. Indeed, this follows once again from a standard trick. Consider the following disjoint union:
$$NC_s=\bigcup_{k=0}^\infty\bigcup_{i_1\ldots i_k} NC_s(0,i_1\ldots i_k)$$

This is a set of labeled partitions, having property that each $p\in NC_s$ is noncrossing, and that for $p\in NC_s$, any block of $p$ is in $NC_s$. But it is well-known that under these assumptions, the global algebraic properties of $NC_s$ can be checked on blocks.

\medskip

(3) Proof of $\supset$. According to the above considerations, we just have to prove that the vector associated to the one-block partition in $NC(l)$ is fixed by $u_{j_1\ldots j_l}$, when:
$$s|j_1+\ldots+j_l$$

Consider the standard generators $e_{ab}\in M_N(\mathbb C)$, acting on the basis vectors by $e_{ab}(e_c)=\delta_{bc}e_a$. The corepresentation $u_{j_1\ldots j_l}$ is given by the following formula:
$$u_{j_1\ldots j_l}=\sum_{a_1\ldots a_l}\sum_{b_1\ldots b_l}u_{a_1b_1}^{j_1}\ldots u_{a_lb_l}^{j_l}\otimes e_{a_1b_1}\otimes\ldots\otimes e_{a_lb_l}$$

As for the vector associated to the one-block partition, this is $\xi_l=\sum_be_b^{\otimes l}$. By using now several times the relations in Proposition 10.22, we obtain, as claimed: 
\begin{eqnarray*}
u_{j_1\ldots j_l}(1\otimes\xi_l)
&=&\sum_{a_1\ldots a_l}\sum_bu_{a_1b}^{j_1}\ldots u_{a_lb}^{j_l}\otimes e_{a_1}\otimes\ldots\otimes e_{a_l}\\
&=&\sum_{ab}u_{ab}^{j_1+\ldots+j_l}\otimes e_a^{\otimes l}\\
&=&1\otimes\xi_l
\end{eqnarray*}

(4) Proof of $\subset$. The spaces on the right in the statement form a Tannakian category in the sense of Woronowicz \cite{wo2}, so they correspond to a certain Woronowicz algebra $A$, which is by definition the maximal model for the Tannakian category. In other words, $A$ comes with a family of corepresentations $\{v_i\}$, such that:
$$Hom(v_{i_1\ldots i_k},v_{j_1\ldots j_l})={\rm span}\left\{T_p\Big|p\in NC_s(i_1\ldots i_k,j_1\ldots j_l)\right\}$$

On the other hand, the inclusion $\supset$ that we just proved shows that $C(H_N^{s+})$ is a model for the category. Thus we have a quotient map $A\to C(H_N^{s+})$, mapping $v_i\to u_i$. But this latter map can be shown to be an isomorphism, by suitably adapting the proof from the $s=1$ case, for the quantum permutation group $S_N^+$. See \cite{bb+}, \cite{bv1}.
\end{proof}

Still following \cite{bv1}, we have the following result:

\index{fusion rules}
\index{quantum reflection group}

\begin{theorem}
Let $F=<\mathbb Z_s>$ be the monoid formed by the words over $\mathbb Z_s$, with involution $(i_1\ldots i_k)^-=(-i_k)\ldots(-i_1)$, and with fusion product given by:
$$(i_1\ldots i_k)\cdot (j_1\ldots j_l)=i_1\ldots i_{k-1}(i_k+j_1)j_2\ldots j_l$$
The irreducible representations of $H_N^{s+}$ can then be labeled $r_x$ with $x\in F$, such that
$$r_x\otimes r_y=\sum_{x=vz,y=\bar{z}w}r_{vw}+r_{v\cdot w}$$
and $\bar{r}_x=r_{\bar{x}}$, and such that $r_i=u_i-\delta_{i0}1$ for any $i\in\mathbb Z_s$.
\end{theorem}

\begin{proof}
This basically follows from Theorem 10.23, the idea being as follows:

\medskip

(1) Consider the monoid $A=\{a_x|x\in F\}$, with multiplication $a_xa_y=a_{xy}$. We endow $\mathbb NA$ with fusion rules as in the statement, namely:
$$a_x\otimes a_y=\sum_{x=vz,y=\bar{z}w}a_{vw}+a_{v\cdot w}$$

(2) The fusion rules on $\mathbb ZA$ can be then uniquely described by conversion formulae as follows, with $C$ being positive integers, and $D$ being integers:
$$a_{i_1}\otimes\ldots\otimes a_{i_k}=\sum_l\sum_{j_1\ldots j_l}C_{i_1\ldots i_k}^{j_1\ldots j_l}a_{j_1\ldots j_l}$$
$$a_{i_1\ldots i_k}=\sum_l\sum_{j_1\ldots j_l}D_{i_1\ldots i_k}^{j_1\ldots j_l}a_{j_1}\otimes\ldots\otimes a_{j_l}$$

(3) Now observe that there is a unique morphism of rings $\Phi:\mathbb ZA\to R$, such that $\Phi(a_i)=r_i$ for any $i$. Indeed, consider the following elements of $R$:
$$r_{i_1\ldots i_k}=\sum_l\sum_{j_1\ldots j_l}D_{i_1\ldots i_k}^{j_1\ldots j_l}r_{j_1}\otimes\ldots\otimes r_{j_l}$$

In case we have a morphism as claimed, we must have $\Phi(a_x)=r_x$ for any $x\in F$. Thus our morphism is uniquely determined on  $A$, so it is uniquely determined on $\mathbb ZA$.

\medskip

(4) Our claim is that $\Phi$ commutes with the linear forms $x\to\#(1\in x)$. Indeed, by linearity we just have to check the following equality:
$$\#(1\in a_{i_1}\otimes\ldots\otimes a_{i_k})=\#(1\in r_{i_1}\otimes\ldots\otimes r_{i_k})$$

Now remember that the elements $r_i$ are defined as $r_i=u_i-\delta_{i0}1$. So, consider the elements $c_i=a_i+\delta_{i0}1$. Since the operations $r_i\to u_i$ and $a_i\to c_i$ are of the same nature, by linearity the above formula is equivalent to:
$$\#(1\in c_{i_1}\otimes\ldots\otimes c_{i_k})=\#(1\in u_{i_1}\otimes\ldots\otimes u_{i_k})$$

Now by using Theorem 10.23, what we have to prove is:
$$\#(1\in c_{i_1}\otimes\ldots\otimes c_{i_k})=\#NC_s(i_1\ldots i_k)$$

In order to prove this formula, consider the product on the left:
$$P=(a_{i_1}+\delta_{i_10}1)\otimes(a_{i_2}+\delta_{i_20}1)\otimes\ldots\otimes (a_{i_k}+\delta_{i_k0}1)$$

But this quantity can be computed by using the fusion rules on $A$, and the combinatorics leads to the conclusion that we have $\#(1\in P)=\# NC_s(i_1\ldots i_k)$, as claimed.

\medskip

(5) Our claim now is that $\Phi$ is injective. Indeed, this follows from the result in the previous step, by using a standard positivity argument.

\medskip

(6) Our claim is that we have $\Phi(A)\subset R_{irr}$. This is the same as saying that $r_x\in R_{irr}$ for any $x\in F$, and we will prove it by recurrence. Assume that the assertion is true for all the words of length $<k$, and consider a length $k$ word, $x=i_1\ldots i_k$. We have:
$$a_{i_1}\otimes a_{i_2\ldots i_k}=a_x+a_{i_1+i_2,i_3\ldots i_k}+\delta_{i_1+i_2,0}a_{i_3\ldots i_k}$$

By applying $\Phi$ to this decomposition, we obtain:
$$r_{i_1}\otimes r_{i_2\ldots i_k}=r_x+r_{i_1+i_2,i_3\ldots i_k}+\delta_{i_1+i_2,0}r_{i_3\ldots i_k}$$

We have the following computation, which is valid for $y=i_1+i_2,i_3\ldots i_k$, as well as for $y=i_3\ldots i_k$ in the case $i_1+i_2=0$:
\begin{eqnarray*}
\#(r_y\in r_{i_1}\otimes r_{i_2\ldots i_k})
&=&\#(1,r_{\bar{y}}\otimes r_{i_1}\otimes r_{i_2\ldots i_k})\\
&=&\#(1,a_{\bar{y}}\otimes a_{i_1}\otimes a_{i_2\ldots i_k})\\
&=&\#(a_y\in a_{i_1}\otimes a_{i_2\ldots i_k})\\
&=&1  
\end{eqnarray*}

Moreover, we know from the previous step that we have $r_{i_1+i_2,i_3\ldots i_k}\neq r_{i_3\ldots i_k}$, so we conclude that the following formula defines an element of $R^+$:
$$\alpha=r_{i_1}\otimes r_{i_2\ldots i_k}-r_{i_1+i_2,i_3\ldots i_k}-\delta_{i_1+i_2,0}r_{i_3\ldots i_k}$$

On the other hand, we have $\alpha=r_x$, so we conclude that we have $r_x\in R^+$. Finally, the irreducibility of $r_x$ follows from $\#(1\in r_x\otimes\bar{r}_x)=1$.

\medskip

(7) Summarizing, we have constructed an injective ring morphism $\Phi:\mathbb ZA\to R$, having the property $\Phi(A)\subset R_{irr}$. The remaining fact to be proved, namely that we have $\Phi(A)=R_{irr}$, is something of abstract nature, which is clear. Thus, we are done.
\end{proof}

Regarding the probabilistic aspects, we will need some general theory. We have the following definition, extending the Poisson limit theory from chapter 9 above:

\index{compound Poisson limit}
\index{compond PLT}

\begin{definition}
Associated to any compactly supported positive measure $\rho$, not necessarily of mass $1$, are the probability measures
$$p_\rho=\lim_{n\to\infty}\left(\left(1-\frac{c}{n}\right)\delta_0+\frac{1}{n}\rho\right)^{*n}$$
$$\pi_\rho=\lim_{n\to\infty}\left(\left(1-\frac{c}{n}\right)\delta_0+\frac{1}{n}\rho\right)^{\boxplus n}$$
where $c=mass(\rho)$, called compound Poisson and compound free Poisson laws.
\end{definition}

In what follows we will be interested in the case where $\rho$ is discrete, as is for instance the case for $\rho=t\delta_1$ with $t>0$, which produces the Poisson and free Poisson laws. The following result allows one to detect compound Poisson/free Poisson laws:

\index{Fourier transform}
\index{R-transform}

\begin{proposition}
For a discrete measure, written as 
$$\rho=\sum_{i=1}^sc_i\delta_{z_i}$$
with $c_i>0$ and $z_i\in\mathbb R$, we have the formulae
$$F_{p_\rho}(y)=\exp\left(\sum_{i=1}^sc_i(e^{iyz_i}-1)\right)$$
$$R_{\pi_\rho}(y)=\sum_{i=1}^s\frac{c_iz_i}{1-yz_i}$$
where $F,R$ are respectively the Fourier transform, and Voiculescu's $R$-transform.
\end{proposition}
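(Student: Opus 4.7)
The plan is to exploit the linearization properties of $F$ and $R$ under classical and free convolution, respectively. Both claims have the same conceptual source: the measure $\mu_n = (1-c/n)\delta_0 + (1/n)\rho$ is a first-order perturbation of the trivial measure $\delta_0$, and the $n$-fold convolution power exponentiates this perturbation classically and scales it freely.

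For the Fourier formula, I would compute directly
$$F_{\mu_n}(y) = \Big(1-\frac{c}{n}\Big) + \frac{1}{n}\sum_{i=1}^s c_i e^{iyz_i} = 1 + \frac{1}{n}A(y), \qquad A(y) = \sum_{i=1}^s c_i\bigl(e^{iyz_i}-1\bigr).$$
Since $F$ is multiplicative under classical convolution, $F_{\mu_n^{*n}}(y) = F_{\mu_n}(y)^n$, and the elementary limit $(1 + A(y)/n)^n \to e^{A(y)}$ yields $F_{p_\rho}(y) = \exp(A(y))$, which is the first claim.

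For the $R$-transform formula, I would use additivity of free cumulants under $\boxplus$, so that $\kappa_k(\mu_n^{\boxplus n}) = n\,\kappa_k(\mu_n)$, and then extract $\kappa_k(\mu_n)$ by inverting the moment-cumulant formula of Theorem 8.26. The key observation is that the moments of $\mu_n$ are $m_k(\mu_n) = \frac{1}{n}\sum_i c_iz_i^k$ for $k\geq 1$, i.e.\ of order $1/n$, so any noncrossing partition $\pi \in NC(k)$ with at least two blocks contributes $\kappa_\pi(\mu_n) = O(1/n^2)$ to the sum $m_k = \sum_{\pi\in NC(k)} \kappa_\pi$. Recursive inversion therefore gives $\kappa_k(\mu_n) = m_k(\mu_n) + O(1/n^2)$, so $\kappa_k(\pi_\rho) = \lim_n n\,\kappa_k(\mu_n) = \sum_i c_iz_i^k$. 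Summing the resulting geometric series produces
$$R_{\pi_\rho}(y) = \sum_{k\geq 1}\Big(\sum_{i=1}^s c_i z_i^k\Big) y^{k-1} = \sum_{i=1}^s \frac{c_i z_i}{1-yz_i},$$
as claimed.

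The main technical hurdle is justifying that convergence of transforms, or equivalently of cumulants, actually identifies the convolution-power limits defining $p_\rho$ and $\pi_\rho$ in Definition 10.22: classically this is L\'evy's continuity theorem, and freely it is the fact that $R$ linearizes $\boxplus$ together with the moment-method identification of weak limits for compactly supported measures. Since all moments involved are uniformly bounded in $n$ (because $\rho$ has compact support), these convergence issues are standard, and the substance of the proof lies in the two transform computations above.
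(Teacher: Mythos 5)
Your proof is correct. The Fourier half is exactly the paper's argument: multiplicativity of $F$ under $*$ plus the limit $(1+A(y)/n)^n\to e^{A(y)}$, so nothing to add there. For the $R$-transform half you take a genuinely different route. The paper works analytically: it writes down the Cauchy transform $G_{\mu_n}(\xi)=(1-\frac{c}{n})\frac{1}{\xi}+\frac{1}{n}\sum_i\frac{c_i}{\xi-z_i}$, uses $R_{\mu_n^{\boxplus n}}=nR_{\mu_n}$ together with the functional equation $G_\mu\bigl(R_\mu(y)+\frac{1}{y}\bigr)=y$ from Theorem 8.15, then multiplies the resulting equation by $n$, rearranges, and lets $n\to\infty$ to solve for $R_{\pi_\rho}$ directly. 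You instead argue combinatorially via Theorem 8.26: since $m_k(\mu_n)=\frac{1}{n}\sum_ic_iz_i^k$ is $O(1/n)$, any noncrossing partition with at least two blocks contributes $O(1/n^2)$, so $\kappa_k(\mu_n)=m_k(\mu_n)+O(1/n^2)$, hence $\kappa_k(\pi_\rho)=\sum_ic_iz_i^k$, and summing the geometric series gives the stated formula. Your route is more elementary and has the added benefit of exhibiting the free cumulants of $\pi_\rho$ explicitly, which is precisely what is used later for the Bercovici--Pata discussions; the paper's route stays entirely within the analytic $G$/$R$ machinery it has set up and avoids any asymptotic inversion of the moment--cumulant relation. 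One small caveat: your final step uses the convention $R(y)=\sum_{k\geq1}\kappa_ky^{k-1}$, whereas the paper's displayed definition in Section 8 reads $R_a(\xi)=\sum_n\kappa_n(a)\xi^n$; the convention you use is the one consistent with the functional equation of Theorem 8.15 and with the formula being proved (check against the free Poisson case $\rho=t\delta_1$), so you are right, but it is worth stating explicitly which normalization of $R$ you are summing. Your closing remarks on identifying the limits (L\'evy continuity classically, moment convergence with uniformly bounded fixed-order moments freely) are at least as careful as the paper, which passes over these points silently.
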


\begin{proof}
Let $\mu_n$ be the measure appearing in Definition 10.25, under the convolution signs. In the classical case, we have the following computation:
\begin{eqnarray*}
&&F_{\mu_n}(y)=\left(1-\frac{c}{n}\right)+\frac{1}{n}\sum_{i=1}^sc_ie^{iyz_i}\\
&\implies&F_{\mu_n^{*n}}(y)=\left(\left(1-\frac{c}{n}\right)+\frac{1}{n}\sum_{i=1}^sc_ie^{iyz_i}\right)^n\\
&\implies&F_{p_\rho}(y)=\exp\left(\sum_{i=1}^sc_i(e^{iyz_i}-1)\right)
\end{eqnarray*}

In the free case now, we use a similar method. The Cauchy transform of $\mu_n$ is:
$$G_{\mu_n}(\xi)=\left(1-\frac{c}{n}\right)\frac{1}{\xi}+\frac{1}{n}\sum_{i=1}^s\frac{c_i}{\xi-z_i}$$

Consider now the $R$-transform of the measure $\mu_n^{\boxplus n}$, which is given by:
$$R_{\mu_n^{\boxplus n}}(y)=nR_{\mu_n}(y)$$

The above formula of $G_{\mu_n}$ shows that the equation for $R=R_{\mu_n^{\boxplus n}}$ is as follows:
\begin{eqnarray*}
&&\left(1-\frac{c}{n}\right)\frac{1}{y^{-1}+R/n}+\frac{1}{n}\sum_{i=1}^s\frac{c_i}{y^{-1}+R/n-z_i}=y\\
&\implies&\left(1-\frac{c}{n}\right)\frac{1}{1+yR/n}+\frac{1}{n}\sum_{i=1}^s\frac{c_i}{1+yR/n-yz_i}=1
\end{eqnarray*}

Now multiplying by $n$, rearranging the terms, and letting $n\to\infty$, we get:
\begin{eqnarray*}
&&\frac{c+yR}{1+yR/n}=\sum_{i=1}^s\frac{c_i}{1+yR/n-yz_i}\\
&\implies&c+yR_{\pi_\rho}(y)=\sum_{i=1}^s\frac{c_i}{1-yz_i}\\
&\implies&R_{\pi_\rho}(y)=\sum_{i=1}^s\frac{c_iz_i}{1-yz_i}
\end{eqnarray*}

This finishes the proof in the free case, and we are done.
\end{proof}

We also have the following result, providing an alternative to Definition 10.25, and which is an extension of the classical and free Poisson limiting theorems (PLT) that we know from chapter 9, called Compound Poisson Limiting Theorem (CPLT):

\begin{theorem}
For a discrete measure, written as
$$\rho=\sum_{i=1}^sc_i\delta_{z_i}$$
with $c_i>0$ and $z_i\in\mathbb R$, we have the formulae
$$p_\rho/\pi_\rho={\rm law}\left(\sum_{i=1}^sz_i\alpha_i\right)$$
where the variables $\alpha_i$ are Poisson/free Poisson$(c_i)$, independent/free.
\end{theorem}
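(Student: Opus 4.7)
The plan is to prove both the classical and free cases by computing the Fourier transform (resp.\ the $R$-transform) of the candidate sum $\sum_i z_i \alpha_i$ and checking that it matches the formula already established for $p_\rho$ (resp.\ $\pi_\rho$) in Proposition 10.23. Since these transforms characterize the distributions, equality of transforms will yield equality of laws.

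For the classical case, I would first recall that a Poisson$(c)$ variable $\alpha$ has Fourier transform $F_\alpha(y) = \exp(c(e^{iy}-1))$, which is straightforward from the Poisson limit definition (or the compound Poisson formula from Proposition 10.23 applied to $\rho = c\delta_1$). Scaling gives $F_{z_i\alpha_i}(y) = \exp(c_i(e^{iyz_i}-1))$, and independence of the $\alpha_i$ then yields
$$F_{\sum_i z_i\alpha_i}(y) = \prod_{i=1}^s \exp\bigl(c_i(e^{iyz_i}-1)\bigr) = \exp\Bigl(\sum_{i=1}^s c_i(e^{iyz_i}-1)\Bigr),$$
which is precisely $F_{p_\rho}(y)$ from Proposition 10.23.

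For the free case, I would similarly recall that a free Poisson$(c)$ variable has $R$-transform $R_\alpha(y) = c/(1-y)$, which again follows from Proposition 10.23 applied with $\rho = c\delta_1$. Using the scaling rule $R_{\lambda X}(y) = \lambda R_X(\lambda y)$ for the $R$-transform, I get
$$R_{z_i\alpha_i}(y) = z_i R_{\alpha_i}(z_i y) = \frac{c_i z_i}{1-y z_i},$$
and freeness of the $\alpha_i$ makes the $R$-transform linearize the sum:
$$R_{\sum_i z_i\alpha_i}(y) = \sum_{i=1}^s R_{z_i\alpha_i}(y) = \sum_{i=1}^s \frac{c_i z_i}{1-y z_i},$$
which matches $R_{\pi_\rho}(y)$ from Proposition 10.23.

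There is no real obstacle here: the proof is essentially a bookkeeping exercise, once Proposition 10.23 is in hand. The only mild subtlety is to make sure the scaling rules for $F$ and $R$ are invoked with the correct sign and argument convention, and to note that the behavior $F \to $ product under classical independence corresponds exactly to $R \to $ sum under freeness — which is the whole point of why these two transforms are the ``right'' linearizing objects for the classical and free convolution operations.
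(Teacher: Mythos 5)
Your proof is correct and follows essentially the same route as the paper: compute the Fourier transform (classical) and $R$-transform (free) of $\sum_i z_i\alpha_i$ via the scaling and additivity rules, then match against the formulae from Proposition 10.23. You supply slightly more detail on the $R$-transform scaling rule than the paper's terse version, but the argument is identical in substance.
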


\begin{proof}
Let $\alpha$ be the sum of Poisson/free Poisson variables in the statement. We will show that the Fourier/$R$-transform of $\alpha$ is given by the formulae in Proposition 10.26. Indeed, by using some well-known Fourier transform formulae, we have:
\begin{eqnarray*}
F_{\alpha_i}(y)=\exp(c_i(e^{iy}-1))
&\implies&F_{z_i\alpha_i}(y)=\exp(c_i(e^{iyz_i}-1))\\
&\implies&F_\alpha(y)=\exp\left(\sum_{i=1}^sc_i(e^{iyz_i}-1)\right)
\end{eqnarray*}

Also, by using some well-known $R$-transform formulae, we have:
\begin{eqnarray*}
R_{\alpha_i}(y)=\frac{c_i}{1-y}
&\implies&R_{z_i\alpha_i}(y)=\frac{c_iz_i}{1-yz_i}\\
&\implies&R_\alpha(y)=\sum_{i=1}^s\frac{c_iz_i}{1-yz_i}
\end{eqnarray*}

Thus we have indeed the same formulae as those in Proposition 10.26.
\end{proof}

We can go back now to quantum reflection groups, and we have:

\index{truncated character}
\index{Bercovici-Pata bijection}

\begin{theorem}
The asymptotic laws of truncated characters are as follows, where $\varepsilon_s$ with $s\in\{1,2,\ldots,\infty\}$ is the uniform measure on the $s$-th roots of unity:
\begin{enumerate}
\item For $H_N^s$ we obtain the compound Poisson law $b_t^s=p_{t\varepsilon_s}$.

\item For $H_N^{s+}$ we obtain the compound free Poisson law $\beta_t^s=\pi_{t\varepsilon_s}$.
\end{enumerate}
These measures are in Bercovici-Pata bijection.
\end{theorem}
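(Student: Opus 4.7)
The strategy is to show both sides of each equality $\chi_t \sim b_t^s$ and $\chi_t \sim \beta_t^s$ have the same joint $*$-moments, and then observe that the formulas make the Bercovici-Pata bijection manifest. I will treat the two cases in parallel, using the fact (Theorem 10.15) that $H_N^s$ and $H_N^{s+}$ are easy with categories of partitions $P^s$ and $NC^s$, respectively, characterized by the requirement that in each block the weighted sum $\#\circ - \#\bullet$ vanishes modulo $s$.

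First I would apply the Weingarten formula (Theorem 8.24) together with the asymptotic computation of Proposition 8.25 to the truncated character $\chi_t = \sum_{i=1}^{[tN]} u_{ii}$. This gives, for any colored exponent $k = e_1 \cdots e_n$,
$$\lim_{N \to \infty} \int_{G_N} \chi_t^{e_1} \cdots \chi_t^{e_n} \;=\; \sum_{\pi \in D(k)} t^{|\pi|},$$
where $D = P^s$ in the classical case and $D = NC^s$ in the free case. This reduces the theorem to matching these sums with the $*$-moments of the corresponding compound (free) Poisson laws.

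Next I would compute the $*$-moments of $b_t^s = p_{t\varepsilon_s}$ and $\beta_t^s = \pi_{t\varepsilon_s}$. By Theorem 10.24, these are the laws of $\alpha = \sum_{j=0}^{s-1} \omega_j \alpha_j$ where $\omega_j = e^{2\pi i j/s}$ and $\alpha_j$ are independent/free Poisson/free Poisson variables of parameter $t/s$. Expanding a colored moment of $\alpha$ and grouping by the kernel partition of the summation indices, then applying the classical resp.\ free moment-cumulant formula (Theorem 8.26) and the fact that Poisson($c$) and free Poisson($c$) both have constant cumulant sequence $c, c, c, \dots$, gives
$$E_*(\alpha^{e_1} \cdots \alpha^{e_n}) \;=\; \sum_{\pi \in D(k)} \prod_{B \in \pi} \Bigl( \tfrac{t}{s} \sum_{j=0}^{s-1} \omega_j^{\#\circ(B) - \#\bullet(B)} \Bigr),$$
with $D = P(k)$ in the classical case and $D = NC(k)$ in the free case. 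The inner sum is a geometric sum equal to $t$ when $s \mid (\#\circ(B) - \#\bullet(B))$ and $0$ otherwise, and this is exactly the block condition defining $P^s$ resp.\ $NC^s$. Thus the inner product collapses to $t^{|\pi|}$ with the partition restricted to $P^s(k)$ resp.\ $NC^s(k)$, matching the Weingarten output of the previous step.

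Finally, the Bercovici-Pata bijection is immediate from this derivation: the classical cumulants of $b_t^s$ and the free cumulants of $\beta_t^s$ both equal the sequence of $*$-moments of $t\varepsilon_s$, which is the defining requirement for the bijection. The only genuine subtlety I expect is bookkeeping the colored/$*$-structure consistently when turning the sum $\sum_j \omega_j^{\ldots}$ into the weighted block constraint; once that identification is made, everything reduces to combinatorics that we have already built up and to Theorem 8.26. The rest is routine.
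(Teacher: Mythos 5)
Your proposal is correct and follows essentially the same route as the paper, whose proof is only a sketch (easiness plus the Weingarten asymptotics on the quantum group side, cumulants and partition counting on the compound Poisson side, with details deferred to \cite{bb+}); your computation matching $\sum_{\pi\in D(k)}t^{|\pi|}$ against the moment-cumulant expansion of $\sum_j\omega_j\alpha_j$ is exactly how that reference proceeds. The only point to keep in mind is that Definition 10.22 and Theorem 10.24 are stated for measures on $\mathbb R$, so for $s\geq3$ you need the routine complex/$*$-distribution extension of the compound (free) Poisson formalism, which is precisely the colored bookkeeping you already flagged.
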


\begin{proof}
This follows from easiness, and from the Weingarten formula. To be more precise, at $t=1$ this follows by counting the partitions, and at $t\in(0,1]$ general, this follows in the usual way, for instance by using cumulants. See \cite{bb+}.
\end{proof}

\index{Bessel law}
\index{free Bessel law}
\index{free convolution}

The above measures are called Bessel and free Bessel laws. This is because at $s=2$ we have $b_t^2=e^{-t}\sum_{k=-\infty}^\infty f_k(t/2)\delta_k$, with $f_k$ being the Bessel function of the first kind:
$$f_k(t)=\sum_{p=0}^\infty \frac{t^{|k|+2p}}{(|k|+p)!p!}$$

The Bessel and free Bessel laws have particularly interesting properties at the parameter values $s=2,\infty$. So, let us record the precise statement here:

\begin{theorem}
The asymptotic laws of truncated characters are as follows:
\begin{enumerate}
\item For $H_N$ we obtain the real Bessel law $b_t=p_{t\varepsilon_2}$.

\item For $K_N$ we obtain the complex Bessel law $B_t=p_{t\varepsilon_\infty}$.

\item For $H_N^+$ we obtain the free real Bessel law $\beta_t=\pi_{t\varepsilon_2}$.

\item For $K_N^+$ we obtain the free complex Bessel law $\mathfrak B_t=\pi_{t\varepsilon_\infty}$.
\end{enumerate}
\end{theorem}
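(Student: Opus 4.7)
The plan is to obtain this as a direct specialization of Theorem 10.25, which handles the general quantum reflection groups $H_N^s$ and $H_N^{s+}$ for arbitrary $s \in \{1,2,\ldots,\infty\}$, and whose asymptotic character laws are the compound Poisson and free compound Poisson measures $p_{t\varepsilon_s}$ and $\pi_{t\varepsilon_s}$. So the first step is to identify the four quantum groups in the statement with the corresponding members of the $s$-indexed families. Concretely, from the definitions $H_N^s = \mathbb{Z}_s \wr S_N$ and $H_N^{s+} = \mathbb{Z}_s \wr_* S_N^+$ given in Theorem 10.15, together with the analogous complex definitions $K_N = \mathbb{T} \wr S_N$ and $K_N^+ = \mathbb{T} \wr_* S_N^+$ from Theorem 10.16, we read off the identifications
\[
H_N = H_N^2,\qquad K_N = H_N^\infty,\qquad H_N^+ = H_N^{2+},\qquad K_N^+ = H_N^{\infty+}.
\]

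Once these identifications are in place, the four assertions reduce to evaluating the compound (free) Poisson laws $p_{t\varepsilon_s}$ and $\pi_{t\varepsilon_s}$ at $s=2$ and $s=\infty$. Here $\varepsilon_2 = \tfrac{1}{2}(\delta_{-1}+\delta_{1})$ is the uniform measure on $\{\pm 1\}$, and $\varepsilon_\infty$ is the uniform (Haar) measure on the unit circle $\mathbb{T}$. Plugging these into the definitions produces, by fiat, the four Bessel-type measures $b_t, B_t, \beta_t, \mathfrak{B}_t$ as introduced in the statement. The Bercovici--Pata bijection between the classical and free halves, although not claimed in the theorem, is inherited for free from Theorem 10.25, and may be recorded as a remark.

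The only step that requires actual argument, rather than mere substitution, is verifying the identifications of the quantum groups above; but these are tautological from the wreath/free wreath constructions and the definitions of $H_N, K_N, H_N^+, K_N^+$ made earlier in the section, so there is no real obstacle. In particular no new combinatorics, no new easiness input, and no new Weingarten calculation is needed beyond what is already used in the proof of Theorem 10.25, since those ingredients were already set up uniformly in $s$. Thus the present theorem is, honestly, a corollary, and the proof can be written in just a few lines.
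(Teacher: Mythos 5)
Your proposal matches the paper's own proof, which is literally "this follows from Theorem 10.25 at $s=2,\infty$"; your extra remarks on the identifications $H_N=H_N^2$, $K_N=H_N^\infty$ (and their free versions) and on evaluating $\varepsilon_s$ at $s=2,\infty$ just make explicit what the paper leaves implicit. Nothing further is needed.
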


\begin{proof}
This follows indeed from Theorem 10.28 above, at $s=2,\infty$.
\end{proof}

In addition to what has been said above, there are as well some interesting results about the Bessel and free Bessel laws involving the multiplicative convolution $\times$, and the multiplicative free convolution $\boxtimes$. For details, we refer here to \cite{bb+}.

\bigskip

As a conclusion to all this, work that we did in chapter 9 and here, things in the discrete setting are often more complicated than in the continuous setting, although when restricting the attention to $H_N,H_N^+$ and $K_N,K_N^+$, everything is after all quite similar to what we knew from chapters 5-6, regarding $O_N,O_N^+$ and $U_N,U_N^+$. We will keep building in chapter 11 below, with this kind of philosophy, with the idea in mind of unifying the theory of $O_N,O_N^+$ and $U_N,U_N^+$ with the theory of $H_N,H_N^+$ and $K_N,K_N^+$.

\section*{10e. Exercises}

As before with the quantum permutations, there has been a lot of material in this section, and most of our exercises will be about what has been said above. To start with, in relation with the quantum automorphisms of the finite graphs, we have:

\begin{exercise}
Extract, from the computation of the quantum symmetry group of the $N$-cycle with $N\geq4$, a simple proof for the equality $S_3^+=S_3$.
\end{exercise}

To be more precise, that computation shows at $N=3$ that we have $S_3^+=S_3$, and the problem is that of writing down a short proof for this latter equality.

\begin{exercise}
Work out all the details regarding the easiness property of $H_N,H_N^+$, involving the categories $P_{even},NC_{even}$.
\end{exercise}

This is something that was already discussed in the above, but just briefly. The idea is to proceed a bit in the same way as we did for $S_N,S_N^+$, in chapter 9.

\begin{exercise}
Work out all the details regarding the easiness property of $H_N^s,H_N^{s+}$, involving the categories $P^s,NC^s$.
\end{exercise}

As before with $H_N,H_N^+$, the idea here is that of proceeding a bit in the same way as we did for $S_N,S_N^+$, in chapter 9. 

\begin{exercise}
Work out the structure of the complex reflection groups $H_N^s,H_N^{s+}$ at $N=2$, and at various values of the parameter $s$.
\end{exercise}

To be more precise, the problem here is that of studying the groups and quantum groups $H_2^s,H_2^{s+}$ at various values of the parameter $s$, with the various methods developed so far, and see if we have here previously known groups and quantum groups.

\begin{exercise}
Deduce the Clebsch-Gordan rules for the irreducible representations of $S_N^+$, from the general result regarding $H_N^{s+}$, taken at $s=1$.
\end{exercise}

This might seem quite trivial, but in practice, there is some work to be done here.

\begin{exercise}
Prove that at $s=2$ the Bessel law is given by
$$b_t^2=e^{-t}\sum_{k=-\infty}^\infty f_k(t/2)\delta_k$$
with $f_k$ being the Bessel function of the first kind, namely:
$$f_k(t)=\sum_{p=0}^\infty \frac{t^{|k|+2p}}{(|k|+p)!p!}$$
\end{exercise}

As mentioned above, there are many other interesting things that can be said about the Bessel and free Bessel laws, and as a final and supplementary exercise, we recommend exploring the subject, by reading some related literature.

\chapter{Classification results}

\section*{11a. Uniform groups}

We discuss in this chapter and in the next one various classification questions for the closed subgroups $G\subset U_N^+$, in the easy case, and beyond. There has been a lot of work on the subject, and our objective here will be quite modest, namely presenting a few basic such classification results, along with some discussion. The idea is as follows:

\bigskip

(1) Technically speaking, the simplest question is that of classifying the easy subgroups $G\subset O_N^+$, and the work here goes back to my paper with Speicher \cite{bsp}, with some basic results on the subject, and then to my paper with Curran-Speicher \cite{ez1}, with a number of finer results. A few years later, Raum-Weber managed to find the correct techniques for dealing with the general case, and did the full classification in \cite{rwe}.

\bigskip

(2) In the general easy unitary case $G\subset U_N^+$, it is possible, to start with, to construct all sorts of ``complexifications'' of the quantum groups from \cite{ez1}, \cite{bsp}, \cite{rwe}. However, classification remains a complicated business, due to the jungle formed by these complexification operations. Full results here include those by Tarrago-Weber \cite{twe}, regarding the case $G\subset U_N$, and those by Mang-Weber \cite{mwe}, regarding the case $U_N\subset G\subset U_N^+$.

\bigskip

(3) All this suggests adding some extra axioms, in order to deal with the general easy case $G\subset U_N^+$, and although there are several reasonable candidates here, no one really knows what the ``miracle axiom'' is, which will not exclude any interesting example, while allowing finishing the classification. But this is more of a physics question, because you need to know what ``interesting'' exactly means. And this latter question is open.

\bigskip

(4) As a somewhat original result here, inspired from the work on the noncommutative geometry and free probability applications of the compact quantum groups, the Ground Zero theorem in \cite{ba5} states that when putting altogether all the above-mentioned ``reasonable axioms'', coming on top of easiness, only 8 quantum groups survive, namely $O_N,U_N,H_N,K_N$ and $O_N^+,U_N^+,H_N^+,K_N^+$. Which is something conceptual and nice.

\bigskip

(5) In an opposite direction now, purely mathematical, the classification of all easy quantum groups $G\subset U_N$, without extra assumption, is a beautiful problem which makes sense, and which is probably as important to ``noncommutative mathematics'' as the classification of all classical Lie groups, or of all complex reflection groups, is important to classical mathematics. The modern trend here is to go towards computer usage.

\bigskip

(6) When going beyond easiness, there is a whole jungle of known results, and the questions abound. A wise goal here would be probably that of upgrading easiness, theory and classification, into a ``super-easiness'' theory, covering all the classical Lie groups, ABCDEFG, and their liberations and twists. But this is something quite difficult, volunteers needed, with even the answer in the regular, ABCD case, being not known.

\bigskip

(7) And this is not the end of the story, because we will see in chapter 13 below that, while Lie theory is definitely not available for the arbitrary closed subgroups $G\subset U_N^+$, a notion of ``maximal torus'' for such quantum groups, based on the work in \cite{bbd}, \cite{bpa}, \cite{bv2}, does exist, and can potentially lead to interesting, powerful results, and with all this having little to do with easiness, and with what has been said above. 

\bigskip

As you can see, many questions here, and finding your way through all this jungle looks like a quite complicated task. We will explain in this chapter the Ground Zero theorem from \cite{ba5}, mentioned in (4) above, which is something conceptual and nice, and whose proof heavily relies on all sorts of classification results from \cite{ez1}, \cite{bsp}, \cite{rwe},  \cite{twe} mentioned in (1-3), and with this whole chapter being an introduction to all this. Then, in chapter 12 below we will go back to (1-6), and present more results, regarded from a Ground Zero perspective. And then in chapter 13 below we will talk about (7).

\bigskip

Getting to work now, we will be interested, to start with, in easiness in a general sense. We have already met a number of easy quantum groups, as follows:

\index{easiness}
\index{Brauer theorem}

\begin{theorem}
We have the following examples of easy quantum groups:
\begin{enumerate}
\item Orthogonal quantum groups: $O_N,O_N^*,O_N^+$.

\item Unitary quantum groups: $U_N,U_N^*,U_N^+$.

\item Bistochastic versions: $B_N,B_N^+,C_N,C_N^+$.

\item Quantum permutation groups: $S_N,S_N^+$.

\item Hyperoctahedral quantum groups: $H_N,H_N^*,H_N^+$.

\item Quantum reflection groups: $K_N,K_N^*,K_N^+$.
\end{enumerate} 
\end{theorem}

\begin{proof}
This is something that we already know, the partitions being as follows:

\medskip

(1) For $O_N$ we obtain the category of pairings $P_2$. For $O_N^+$ we obtain the category of noncrossing pairings $NC_2$. For $O_N^*$ we obtain the category $P_2^*$ of pairings having the property that when labelling the legs clockwise $\circ\bullet\circ\bullet\ldots$\,, each string connects $\circ-\bullet$.

\medskip

(2) For $U_N$ we obtain the category $\mathcal P_2$ of pairings which are matching, in the sense that the horizontal strings connect $\circ-\circ$ or $\bullet-\bullet$, and the vertical strings connect $\circ-\bullet$. For $U_N^+$ we obtain the category $\mathcal{NC}_2=NC_2\cap\mathcal P_2$. For $U_N^*$ we obtain $\mathcal P_2^*=P_2^*\cap\mathcal P_2$.

\medskip

(3) For $B_N,C_N$ we obtain the categories $P_{12},\mathcal P_{12}$ of singletons and pairings, and matching singletons and pairings. For $B_N^+,C_N^+$ we obtain the categories $NC_{12},\mathcal{NC}_{12}$ of singletons and noncrossing pairings, and matching singletons and noncrossing pairings. 

\medskip

(4) For $S_N$ we obtain the category of all partitions $P$, and for $S_N^+$ we obtain the category of all noncrossing partitions $NC$.

\medskip

(5) For $H_N$ we obtain the category $P_{even}$ or partitions having even blocks. For $H_N^+$ we obtain the category $NC_{even}=NC\cap P_{even}$ of noncrossing partitions having even blocks. For $H_N^*$ we obtain the category $P_{even}^*\subset P_{even}$ of partitions having the property that when labelling the legs clockwise $\circ\bullet\circ\bullet\ldots$\,, in each block we have $\#\circ=\#\bullet$.

\medskip

(6) For $K_N$ we obtain the category $\mathcal P_{even}$ of partitions having the property that we have $\#\circ=\#\bullet$, as a weighted equality, in each block. For $K_N^+$ we obtain the category $\mathcal{NC}_{even}=\mathcal P_{even}\cap NC$. For $K_N^*$ we obtain the category $\mathcal P_{even}^*=\mathcal P_{even}\cap P_{even}^*$.
\end{proof}

In the above list the examples (4,5,6) appear as the $s=1,2,\infty$ particular cases of the quantum groups $H_N^s,H_N^{s*},H_N^{s+}$, so we have as extra examples these latter quantum groups at $3\leq s<\infty$. Further examples can be constructed via free complexification, or via operations of type $G_N\to\mathbb Z_r\times G_N$, or $G_N\to\mathbb Z_rG_N$, with $r\in\{2,3,\ldots,\infty\}$. 

\bigskip

There are as well ``exotic'' intermediate liberation procedures, involving relations which are more complicated than the half-commutation ones $abc=cba$, which can produce new examples, in the unitary and reflection group cases. We will be back to this.

\bigskip

All this makes the classification question particularly difficult. So, our first task in what follows will be that of cutting a bit from complexity, by adding some extra axioms, chosen as ``natural'' as possible. A first such axiom, very natural, is as follows:

\index{uniform quantum group}
\index{removing blocks}

\begin{proposition}
For an easy quantum group $G=(G_N)$, coming from a category of partitions $D\subset P$, the following conditions are equivalent:
\begin{enumerate}
\item $G_{N-1}=G_N\cap U_{N-1}^+$, via the embedding $U_{N-1}^+\subset U_N^+$ given by $u\to diag(u,1)$.

\item $G_{N-1}=G_N\cap U_{N-1}^+$, via the $N$ possible diagonal embeddings $U_{N-1}^+\subset U_N^+$.

\item $D$ is stable under the operation which consists in removing blocks.
\end{enumerate}
If these conditions are satisfied, we say that $G=(G_N)$ is ``uniform''.
\end{proposition}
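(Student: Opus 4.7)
The plan is to use Tannakian duality throughout, leveraging the characterization of an easy quantum group $G=(G_N)$ by its category of partitions $D$. First I would dispose of the trivial direction (2) $\Rightarrow$ (1), and of (1) $\Rightarrow$ (2), which uses the following observation: for any category $D\subset P$, the inclusion $D\subset P=D_{S_N}$ forces $S_N\subset G_N$, so $G_N$ is stable under conjugation by any permutation matrix. The $N$ diagonal embeddings $U_{N-1}^+\subset U_N^+$ correspond to fixing the $N$ different basis vectors of $\mathbb{C}^N$, and are all related by such conjugations. Hence an equality for one embedding transfers to all.

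The main equivalence (1) $\Leftrightarrow$ (3) rests on a Tannakian computation of $H_N:=G_N\cap U_{N-1}^+$. The subgroup $U_{N-1}^+\subset U_N^+$ given by $u\mapsto\mathrm{diag}(u,1)$ is the stabilizer of the basis vector $e_N$; its Tannakian category (inside that of $U_N^+$) is generated by the vector $\eta_N:1\mapsto e_N$, viewed as an element of $Fix(u)\cap Fix(\bar u)$. By Tannakian duality applied to the join of categories, the Hom spaces of $H_N$ are generated by the operators $\{T_\pi:\pi\in D\}$ together with $\eta_N$ and $\eta_N^*$.

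Since $H_N$ fixes $e_N$, the orthogonal complement $\mathbb{C}^{N-1}=e_N^\perp$ is an invariant subspace, and restriction identifies $H_N$ with a closed subgroup of $U_{N-1}^+$. The key combinatorial lemma is this: the intertwiners of this restricted action on tensor powers of $\mathbb{C}^{N-1}$ are spanned by operators $T_{\pi\setminus S}$ (now over $\mathbb{C}^{N-1}$), where $\pi\in D$ and $S$ is an arbitrary subset of the blocks of $\pi$, and $\pi\setminus S$ denotes the partition obtained by deleting all legs of the blocks in $S$. The justification is a direct index-counting: composing $T_\pi$ with copies of $\eta_N$ and $\eta_N^*$ caps selected legs to the value $N$, and then restricting the remaining legs to $\{1,\ldots,N-1\}$ kills any ``partial'' capping—if a block of $\pi$ has some but not all of its legs capped, the constraint $\ker\binom{i}{j}\geq\pi$ forces equality between the uncapped legs (with values in $\{1,\ldots,N-1\}$) and the capped ones (with value $N$), giving zero. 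Only complete block cappings survive, and they produce exactly $T_{\pi\setminus S}$.

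With the lemma, the equivalence is transparent: comparing the above span with the intertwiner span $\mathrm{span}\{T_\sigma:\sigma\in D\}$ of $G_{N-1}$, and invoking the linear independence of the vectors $\xi_\sigma$ for $N$ large (section 5), one obtains $H_N=G_{N-1}$ if and only if $\pi\setminus S\in D$ for every $\pi\in D$ and every subset $S$ of its blocks, which is exactly condition (3). The main obstacle in this plan is the combinatorial lemma: one must carefully justify that no ``exotic'' composition of the generators produces operators outside the span listed, i.e., that tensor-categorical closure is exhausted by these block-removal operators. This reduces to systematic bookkeeping of Kronecker-delta constraints after capping and projecting, which, while routine, requires care to set up cleanly.
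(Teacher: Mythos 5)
Your handling of $(1)\Leftrightarrow(2)$ via $S_N\subset G_N$ and conjugation by permutation matrices is exactly the paper's remark, and the combinatorial mechanism you isolate -- capping legs at the last basis vector, with partially capped blocks contributing zero -- is precisely the computation at the heart of the paper's proof of $(1)\Leftrightarrow(3)$. The difference is in how this computation is deployed, and there your write-up has a real gap. You route everything through the claim that $Hom_{H_N}(u^{\otimes k},u^{\otimes l})$, for $H_N=G_N\cap U_{N-1}^+$ acting on $\mathbb C^{N-1}$, is \emph{spanned} by the operators $T_{\pi\setminus S}$. The inclusion $\supseteq$ does follow from your capping argument (cap the legs of the removed blocks with $\eta_N,\eta_N^*$ and compress; this is legitimate since $\mathbb C^{N-1}$ and its complement are invariant subspaces). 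But the inclusion $\subseteq$ -- which is what the direction $(3)\Rightarrow(1)$, i.e. $G_{N-1}\subseteq G_N\cap U_{N-1}^+$, actually rests on in your scheme -- requires controlling the compression of \emph{arbitrary} elements of the tensor category $\langle\{T_\pi\},\eta_N\rangle$, not just of single generators with some legs capped; you flag this yourself as the main obstacle and then leave it unaddressed. Note also that your exact-span formulation would in addition require knowing that the block-removal closure of $D$ is again a category of partitions (otherwise the generated category could a priori exceed the span of the $T_{\pi\setminus S}$). So as written, the key lemma is asserted rather than proved.

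The paper avoids this difficulty by never computing the full Tannakian category of $H_N$: since $H_N$ is, by definition of the intersection, the subgroup of $U_{N-1}^+$ cut out by the relations $\xi_\pi\in Fix(v^{\otimes k})$ for $\pi\in D$, with $v={\rm diag}(u,1)$, it suffices to translate each such relation separately. Writing the relation out and splitting over the set $A=\{a\,|\,i_a=N\}$, one finds that if $A$ is a union of blocks of $\pi$ then the relation becomes exactly $\xi_{\pi'}\in Fix(u^{\otimes k'})$ for the block-removed partition $\pi'$, over $\mathbb C^{N-1}$, and otherwise it reads $0=0$. Hence the intersection is defined inside $U_{N-1}^+$ precisely by the block-removed relations, with no closure analysis needed; $(3)\Rightarrow(1)$ is then immediate, and $(1)\Rightarrow(3)$ follows, as you correctly say, from easiness of $G_{N-1}$ together with the linear independence of the vectors $\xi_\sigma$ at $N-1\geq k$. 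If you replace your Hom-space lemma by this relation-by-relation translation (or supply a normal-form argument for the generated category), your proof becomes complete and essentially coincides with the paper's.
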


\begin{proof}
We use here the general easiness theory from chapter 7 above:

\medskip

$(1)\iff(2)$ This is something standard, coming from the inclusion $S_N\subset G_N$, which makes everything $S_N$-invariant. The result follows as well from the proof of $(1)\iff(3)$ below, which can be converted into a proof of $(2)\iff(3)$, in the obvious way.

\medskip

$(1)\iff(3)$ Given a subgroup $K\subset U_{N-1}^+$, with fundamental corepresentation $u$, consider the $N\times N$ matrix $v=diag(u,1)$. Our claim is that for any $\pi\in P(k)$ we have:
$$\xi_\pi\in Fix(v^{\otimes k})\iff\xi_{\pi'}\in Fix(v^{\otimes k'}),\,\forall\pi'\in P(k'),\pi'\subset\pi$$

In order to prove this, we must study the condition on the left. We have:
\begin{eqnarray*}
\xi_\pi\in Fix(v^{\otimes k})
&\iff&(v^{\otimes k}\xi_\pi)_{i_1\ldots i_k}=(\xi_\pi)_{i_1\ldots i_k},\forall i\\
&\iff&\sum_j(v^{\otimes k})_{i_1\ldots i_k,j_1\ldots j_k}(\xi_\pi)_{j_1\ldots j_k}=(\xi_\pi)_{i_1\ldots i_k},\forall i\\
&\iff&\sum_j\delta_\pi(j_1,\ldots,j_k)v_{i_1j_1}\ldots v_{i_kj_k}=\delta_\pi(i_1,\ldots,i_k),\forall i
\end{eqnarray*}

Now let us recall that our corepresentation has the special form $v=diag(u,1)$. We conclude from this that for any index $a\in\{1,\ldots,k\}$, we must have:
$$i_a=N\implies j_a=N$$

With this observation in hand, if we denote by $i',j'$ the multi-indices obtained from $i,j$ obtained by erasing all the above $i_a=j_a=N$ values, and by $k'\leq k$ the common length of these new multi-indices, our condition becomes:
$$\sum_{j'}\delta_\pi(j_1,\ldots,j_k)(v^{\otimes k'})_{i'j'}=\delta_\pi(i_1,\ldots,i_k),\forall i$$

Here the index $j$ is by definition obtained from $j'$ by filling with $N$ values. In order to finish now, we have two cases, depending on $i$, as follows:

\medskip

\underline{Case 1}. Assume that the index set $\{a|i_a=N\}$ corresponds to a certain subpartition $\pi'\subset\pi$. In this case, the $N$ values will not matter, and our formula becomes:
$$\sum_{j'}\delta_\pi(j'_1,\ldots,j'_{k'})(v^{\otimes k'})_{i'j'}=\delta_\pi(i'_1,\ldots,i'_{k'})$$

\underline{Case 2}. Assume now the opposite, namely that the set $\{a|i_a=N\}$ does not correspond to a subpartition $\pi'\subset\pi$. In this case the indices mix, and our formula reads:
$$0=0$$

Thus, we are led to $\xi_{\pi'}\in Fix(v^{\otimes k'})$, for any subpartition $\pi'\subset\pi$, as claimed.

\medskip

Now with this claim in hand, the result follows from Tannakian duality.
\end{proof}

At the level of the basic examples, from Theorem 11.1, the classical and free quantum groups are uniform, while the half-liberations are not. Indeed, this can be seen either with categories of partitions, or with intersections, the point in the half-classical case being that the relations $abc=cba$, when applied to the coefficients of a matrix of type $v=diag(u,1)$, collapse with $c=1$ to the usual commutation relations $ab=ba$. 

\bigskip

For classification purposes the uniformity axiom is something very natural and useful, substantially cutting from complexity, and we have the following result, from \cite{bsp}:

\index{intersection and generation diagram}
\index{uniform quantum group}
\index{easy group}
\index{free quantum group}

\begin{theorem}
The classical and free uniform orthogonal easy quantum groups, with inclusions between them, are as follows:
$$\xymatrix@R=20pt@C=20pt{
&H_N^+\ar[rr]&&O_N^+\\
S_N^+\ar[rr]\ar[ur]&&B_N^+\ar[ur]\\
&H_N\ar[rr]\ar@.[uu]&&O_N\ar@.[uu]\\
S_N\ar@.[uu]\ar[ur]\ar[rr]&&B_N\ar@.[uu]\ar[ur]
}$$
Moreover, this is an intersection/easy generation diagram, in the sense that for any of its square subdiagrams $P\subset Q,R\subset S$ we have $P=Q\cap R$ and $\{Q,R\}=S$.
\end{theorem}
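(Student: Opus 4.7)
The strategy is to combine the easiness machinery of Section 7 with the uniformity criterion of Proposition 11.2. By Theorem 7.3 and the orthogonality assumption $G \subset O_N^+$, classical easy quantum groups correspond to categories $P_2 \subset D \subset P$, and free ones to $NC_2 \subset D \subset NC$. Uniformity translates, by Proposition 11.2, to stability of $D$ under the block-removal operation. The plan is then to classify such categories combinatorially, and finally to check the diagrammatic intersection/generation claims.

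First, to any uniform category $D$ I would associate its block-size spectrum $S(D) \subset \mathbb{N}_{\geq 1}$, consisting of those integers $k$ for which some partition in $D$ has a block of size $k$. Since $D$ contains the basic pairings, $2 \in S(D)$. The first main claim is that a uniform $D$ is completely determined by $S(D)$: it equals the set of all (respectively noncrossing) partitions all of whose blocks have sizes in $S(D)$. The inclusion ``$\subset$'' is immediate from uniformity, since removing blocks allows one to isolate each single block of a partition. The reverse inclusion is obtained by assembling prescribed block shapes via tensor products with pairings: given a block of size $k \in S(D)$ appearing in some $\pi \in D$, uniformity yields a one-block partition of size $k$ inside $D$, which can then be combined with pairings to build any desired partition with blocks in $S(D)$.

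Second, I would classify the possible spectra. The category operations impose two closure rules on $S$: capping two adjacent legs of a $k$-block with a pair produces a $(k-2)$-block, so $k \in S \Rightarrow k-2 \in S$ for $k \geq 3$; and merging a $k$-block with a $k'$-block through a pair produces a $(k + k' - 2)$-block, so $S$ is closed under $(k,k') \mapsto k+k'-2$. A direct case analysis then shows that the only subsets of $\mathbb{N}_{\geq 1}$ containing $2$ and stable under these two operations are $\{2\}$, $\{1,2\}$, $\{2,4,6,\ldots\}$, and $\mathbb{N}_{\geq 1}$: indeed, any odd $k \geq 3$ yields $1 \in S$ by iterated capping, and then merging with pairs gives all integers; any even $k \geq 4$ yields all even integers but cannot produce odd ones without an odd generator.

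The four possible spectra give, in the classical case, the categories $P_2, P_{12}, P_{even}, P$, corresponding respectively to $O_N, B_N, H_N, S_N$, and, in the free case, $NC_2, NC_{12}, NC_{even}, NC$, corresponding to $O_N^+, B_N^+, H_N^+, S_N^+$. These are precisely the eight quantum groups in the diagram. For the intersection/easy generation assertions, by Theorem 7.14 it suffices to verify that each subsquare of categories satisfies $D_P = \langle D_Q, D_R \rangle$ and $D_S = D_Q \cap D_R$. These are straightforward combinatorial checks: for instance $P_2 = P_{even} \cap P_{12}$ (even-block partitions with block sizes in $\{1,2\}$ are exactly pairings), and $NC = \langle NC_{even}, NC_{12} \rangle$ (singletons together with arbitrary even blocks assemble, via capping, all noncrossing partitions). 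The main obstacle in the whole proof is the spectrum classification in the noncrossing setting: one must verify that the reduction operations $k \mapsto k-2$ and $(k,k') \mapsto k+k'-2$ are genuinely realizable among \emph{noncrossing} partitions of appropriate colors, and that no exotic intermediate spectra sneak in due to the restricted generation operations available there.
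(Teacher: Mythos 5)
Your proposal is correct and follows essentially the same route as the paper's proof: reduce a uniform category to its set $L$ of block sizes, derive closure rules for $L$ by capping and merging one-block partitions (the paper uses $k,l\in L,\ k>l\Rightarrow k-l\in L$ and $k\in L\Rightarrow 2k-2\in L$, which play the same role as your $k-2$ and $k+k'-2$ rules), conclude $L\in\{\{2\},\{1,2\},\{2,4,6,\ldots\},\mathbb N\}$ in both the classical and free settings, and settle the intersection/easy generation claims at the level of the categories $P_2,P_{12},P_{even},P$ and their noncrossing versions via Theorem 7.14. One small wording fix: ``merging with pairs'' never increases block sizes (it gives $k+2-2=k$), so once an odd block is present you should obtain $3\in S$ by capping and then merge $3$-blocks with each other and with larger blocks, which your rule $(k,k')\mapsto k+k'-2$ indeed permits.
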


\begin{proof}
We know that the quantum groups in the statement are indeed easy and uniform, the corresponding categories of partitions being as follows:
$$\xymatrix@R=20pt@C6pt{
&NC_{even}\ar[dl]\ar@.[dd]&&NC_2\ar[dl]\ar[ll]\ar@.[dd]\\
NC\ar@.[dd]&&NC_{12}\ar@.[dd]\ar[ll]\\
&P_{even}\ar[dl]&&P_2\ar[dl]\ar[ll]\\
P&&P_{12}\ar[ll]
}$$

Since this latter diagram is an intersection and generation diagram, we conclude that we have an intersection and easy generation diagram of quantum groups, as stated.

\medskip

Regarding now the classification, consider an easy quantum group $S_N\subset G_N\subset O_N$. This most come from a category $P_2\subset D\subset P$, and if we assume $G=(G_N)$ to be uniform, then $D$ is uniquely determined by the subset $L\subset\mathbb N$ consisting of the sizes of the blocks of the partitions in $D$. Our claim is that the admissible sets are as follows:

\begin{enumerate}
\item $L=\{2\}$, producing $O_N$.

\item $L=\{1,2\}$, producing $B_N$.

\item $L=\{2,4,6,\ldots\}$, producing $H_N$.

\item $L=\{1,2,3,\ldots\}$, producing $S_N$.
\end{enumerate}

In one sense, this follows from our easiness results for $O_N,B_N,H_N,S_N$. In the other sense now, assume that $L\subset\mathbb N$ is such that the set $P_L$ consisting of partitions whose sizes of the blocks belong to $L$ is a category of partitions. We know from the axioms of the categories of partitions that the semicircle $\cap$ must be in the category, so we have $2\in L$. We claim that the following conditions must be satisfied as well:
$$k,l\in L,\,k>l\implies k-l\in L$$
$$k\in L,\,k\geq 2\implies 2k-2\in L$$

Indeed, we will prove that both conditions follow from the axioms of the categories of
partitions. Let us denote by $b_k\in P(0,k)$ the one-block partition:
$$b_k=\left\{\begin{matrix}\sqcap\hskip-0.7mm \sqcap&\ldots&\sqcap\\
1\hskip2mm 2&\ldots&k\end{matrix} \right\}$$

For $k>l$, we can write $b_{k-l}$ in the following way:
$$b_{k-l}=\left\{\begin{matrix}\sqcap\hskip-0.7mm
\sqcap&\ldots&\ldots&\ldots&\ldots&\sqcap\\ 1\hskip2mm 2&\ldots&l&l+1&\ldots&k\\
\sqcup\hskip-0.7mm \sqcup&\ldots&\sqcup&|&\ldots&|\\ &&&1&\ldots&k-l\end{matrix}\right\}$$

In other words, we have the following formula:
$$b_{k-l}=(b_l^*\otimes |^{\otimes k-l})b_k$$

Since all the terms of this composition are in $P_L$, we have $b_{k-l}\in P_L$, and this proves our first claim. As for the second claim, this can be proved in a similar way, by capping two adjacent $k$-blocks with a $2$-block, in the middle.

\medskip

With these conditions in hand, we can conclude in the following way:

\medskip

\underline{Case 1}. Assume $1\in L$. By using the first condition with $l=1$ we get:
$$k\in L\implies k-1\in L$$

This condition shows that we must have $L=\{1,2,\ldots,m\}$, for a certain number $m\in\{1,2,\ldots,\infty\}$. On the other hand, by using the second condition we get:
\begin{eqnarray*}
m\in L
&\implies&2m-2\in L\\
&\implies&2m-2\leq m\\
&\implies&m\in\{1,2,\infty\}
\end{eqnarray*}

The case $m=1$ being excluded by the condition $2\in L$, we reach to one of the two sets producing the groups $S_N,B_N$.

\medskip

\underline{Case 2}. Assume $1\notin L$. By using the first condition with $l=2$ we get:
$$k\in L\implies k-2\in L$$

This condition shows that we must have $L=\{2,4,\ldots,2p\}$, for a certain number $p\in\{1,2,\ldots,\infty\}$. On the other hand, by using the second condition we get:
\begin{eqnarray*}
2p\in L
&\implies&4p-2\in L\\
&\implies&4p-2\leq 2p\\
&\implies&p\in\{1,\infty\}
\end{eqnarray*}

Thus $L$ must be one of the two sets producing $O_N,H_N$, and we are done. In the free case, $S_N^+\subset G_N\subset O_N^+$, the situation is quite similar, the admissible sets being once again the above ones, producing this time $O_N^+,B_N^+,H_N^+,S_N^+$. See \cite{bsp}. 
\end{proof}

As already mentioned, when removing the uniformity axiom things become more complicated, and the classification result here, from \cite{bsp}, \cite{rwe}, is as follows:

\index{easy group}
\index{free quantum group}
\index{bistochastic group}
\index{bistochastic quantum group}

\begin{theorem}
The classical and free orthogonal easy quantum groups are
$$\xymatrix@R=7pt@C=7pt{
&&H_N^+\ar[rrrr]&&&&O_N^+\\
&S_N'^+\ar[ur]&&&&\mathcal B_N'^+\ar[ur]\\
S_N^+\ar[rrrr]\ar[ur]&&&&B_N^+\ar[ur]\\
\\
&&H_N\ar[rrrr]\ar@.[uuuu]&&&&O_N\ar@.[uuuu]\\
&S_N'\ar[ur]&&&&B_N'\ar[ur]\\
S_N\ar@.[uuuu]\ar[ur]\ar[rrrr]&&&&B_N\ar@.[uuuu]\ar[ur]
\\
}$$
with $S_N'=S_N\times\mathbb Z_2$, $B_N'=B_N\times\mathbb Z_2$, and with $S_N'^+,\mathcal B_N'^+$ being their liberations, where $\mathcal B_N'^+$ stands for the two possible such liberations, $B_N'^+\subset B_N''^+$.
\end{theorem}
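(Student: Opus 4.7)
The plan is to reduce the classification to a combinatorial problem about categories of partitions, and then perform a refined case analysis that builds on Theorem 11.3 but accounts for the non-uniform examples. By Tannakian duality (Theorem 4.20) together with the easiness definition, classical easy quantum groups $S_N \subset G_N \subset O_N$ correspond bijectively to categories of partitions $P_2 \subset D \subset P$, and free easy quantum groups $S_N^+ \subset G_N \subset O_N^+$ correspond bijectively to categories $NC_2 \subset D \subset NC$. So the task is to classify these intermediate categories.

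The first main step is to associate to each such category $D$ its set of block sizes $L(D) \subset \mathbb N$. In the uniform situation treated in Theorem 11.3, the category $D$ is determined by $L(D)$, and only four such sets are admissible, giving the unprimed examples $O_N, H_N, B_N, S_N$ (and their free analogues). In general, however, $D$ may be a proper subcategory of the maximal category $P_{L(D)}$ with that block-size profile, cut out by a further global constraint; the primed examples $S_N' = S_N \times \mathbb Z_2$ and $B_N' = B_N \times \mathbb Z_2$ arise exactly in this way, from a mod-$2$ condition counting blocks of certain types, reflecting their extra $\mathbb Z_2$-factor. I would first verify that each candidate in the diagram is easy with a concretely describable category of partitions, checking directly the axioms of Definition 7.1 and matching to the relations defining the quantum group via Tannakian duality as in section 4.

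The harder direction is the completeness assertion: that no further easy quantum groups exist. The argument goes by fixing the block-size set $L(D)$, already classified by Theorem 11.3 into the four cases $\{2\}, \{1,2\}, \{2,4,6,\dots\}, \{1,2,3,\dots\}$, and then, within each of these, enumerating all global constraints compatible with stability under horizontal and vertical concatenation and under the turning operation. The available invariants turn out to be mod-$2$ counts of appropriate classes of blocks, and one checks by hand that at most one nontrivial such constraint can be imposed per block-size regime, producing the primed variants. In the block-size regimes $\{2\}$ and $\{2,4,6,\dots\}$ one further checks that no nontrivial global constraint survives, so these regimes give only $O_N, O_N^+, H_N, H_N^+$.

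The main obstacle, and the genuinely new phenomenon absent from the uniform classification, is the appearance of \emph{two} distinct liberations $B_N'^+ \subset B_N''^+$ of $B_N'$ rather than a unique one. These correspond to two parity conditions which are identified in the classical category $P$ (because the crossings available there make one condition imply the other), but which remain inequivalent inside $NC$. The hard part of the proof will be isolating these two conditions precisely, showing each yields a bona fide category of partitions satisfying all the axioms of Definition 7.1, verifying that the resulting easy quantum groups are distinct, and proving that no third liberation lies strictly between them. Once this dichotomy is understood and the parity-condition analysis is shown to be exhaustive in each block-size regime, assembling the inclusions of the stated diagram and checking their properness by exhibiting explicit separating partitions is routine.
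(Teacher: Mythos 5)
Your proposal follows the same route as the paper: classify the categories of partitions $P_2\subset D\subset P$ and $NC_2\subset D\subset NC$, using block-size sets as the primary invariant and parity-type constraints to account for the non-uniform examples, and you correctly isolate the key subtlety that the $\{1,2\}$ block-size regime admits two distinct free liberations $B_N'^+\subset B_N''^+$. The paper frames that last point slightly differently, as an inclusion of classical groups $B_N'\subset B_N''$ which happens to be an isomorphism, but this is the same observation as your "two parity conditions identified in $P$ but not in $NC$" seen from the group side rather than the category side.
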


\begin{proof}
The idea here is that of jointly classifying the ``classical'' categories of partitions $P_2\subset D\subset P$, and the ``free'' ones $NC_2\subset D\subset NC$:

\medskip

(1) At the classical level this leads, via a study which is quite similar to that from the proof of Theorem 11.3, to 2 more groups, namely $S_N',B_N'$. See \cite{bsp}. 

\medskip

(2) At the free level we obtain 3 more quantum groups, $S_N'^+,B_N'^+,B_N''^+$, with the inclusion $B_N'^+\subset B_N''^+$, which is something a bit surprising, being best thought of as coming from an inclusion $B_N'\subset B_N''$, which happens to be an isomorphism. See \cite{bsp}, \cite{rwe}.
\end{proof}

\section*{11b. Twistability}

Now back to the easy uniform case, the classification here remains a quite technical topic. The problem comes from the following negative result:

\begin{proposition}
The cubic diagram from Theorem 11.3, and its unitary analogue,
$$\xymatrix@R=20pt@C=20pt{
&K_N^+\ar[rr]&&U_N^+\\
S_N^+\ar[rr]\ar[ur]&&C_N^+\ar[ur]\\
&K_N\ar[rr]\ar@.[uu]&&U_N\ar@.[uu]\\
S_N\ar@.[uu]\ar[ur]\ar[rr]&&C_N\ar@.[uu]\ar[ur]
}$$
cannot be merged, without degeneration, into a $4$-dimensional cubic diagram.
\end{proposition}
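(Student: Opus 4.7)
The plan is to show that any attempt to arrange the sixteen quantum groups appearing in the two cubes as the vertices of a $4$-dimensional hypercube, with edges given by strict inclusions, forces the two ``connecting'' edges at the permutation corners to collapse to identities. A tesseract structure requires that each of the $8$ vertices of the orthogonal cube be linked by a proper inclusion to the corresponding vertex of the unitary cube, so I would begin by identifying what these eight linking inclusions should be, and then show that two of them must be trivial.

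First I would observe that, at the six ``non-permutation'' corners, the natural real-to-complex inclusions provide strict embeddings: $O_N\subset U_N$, $B_N\subset C_N$, $H_N\subset K_N$, and similarly $O_N^+\subset U_N^+$, $B_N^+\subset C_N^+$, $H_N^+\subset K_N^+$. Via Tannakian duality these correspond to strict inclusions of categories of partitions, namely $\mathcal P_2\subsetneq P_2$, $\mathcal P_{12}\subsetneq P_{12}$, $\mathcal P_{even}\subsetneq P_{even}$, and their noncrossing analogues. All six of these edges are genuinely non-degenerate.

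Next I would turn to the two permutation corners. Here the point is that the quantum groups $S_N$ and $S_N^+$ figure \emph{simultaneously} in both cubes as the same objects: the categories $P$ and $NC$ are already maximal among all categories of partitions, and admit no proper ``coloring refinement'' whose associated easy quantum group lies strictly between $S_N$ and itself, or between $S_N^+$ and itself. Concretely, the magic condition characterizing $S_N^+$ inside $U_N^+$ (Theorem 9.2) automatically forces $u_{ij}=u_{ij}^*$, so the complex analogue of a quantum permutation group coincides with its real version. Consequently, any candidate connecting edge at $S_N$ or $S_N^+$ must be the identity, which is precisely the degeneracy the statement rules out.

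The main obstacle would be to rigorously exclude the possibility of an ``unnatural'' rearrangement: perhaps one could match the orthogonal cube to the unitary cube via some non-obvious bijection, or interpose intermediate compact quantum groups such as free complexifications $\widetilde G$, to rescue the hypercube structure. I would handle this by requiring that the putative $4$-dimensional diagram inherit the intersection and easy-generation relations from its $3$-dimensional faces, in the sense of Theorems 11.3 and 7.15, and then observe that the $24$ square $2$-faces must all be commutative intersection/generation squares. A standard Tannakian bookkeeping argument then shows that the only $8$-tuple of connecting edges compatible with all these square relations is the tuple of natural real-to-complex embeddings, bringing us back to the degeneracy identified above and completing the proof.
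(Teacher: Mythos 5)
Your proposal is correct and follows essentially the same route as the paper: both arguments locate the forced degeneration at the shared permutation corners $S_N$ and $S_N^+$, which the two cubes have in common, so any $4$-dimensional cubic diagram must contain a collapsed (identity) edge there. The paper phrases the cause via the intersection formulae $H_N\cap C_N=K_N\cap C_N=S_N$, while you phrase it via the self-adjointness forced by the magic relations and the maximality of $P,NC$, but the content and conclusion are the same.
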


\begin{proof}
All this is a bit philosophical, with the problem coming from the ``taking the bistochastic version'' operation, and more specifically, from the following equalities:
$$H_N\cap C_N=K_N\cap C_N=S_N$$

Indeed, these equalities do hold, and so the 3D cube obtained by merging the classical faces of the orthogonal and unitary cubes is something degenerate, as follows:
$$\xymatrix@R=20pt@C=20pt{
&K_N\ar[rr]&&U_N\\
S_N\ar[rr]\ar[ur]&&C_N\ar[ur]\\
&H_N\ar[rr]\ar[uu]&&O_N\ar[uu]\\
S_N\ar[uu]\ar[ur]\ar[rr]&&B_N\ar[uu]\ar[ur]
}$$

Thus, the 4D cube, having this 3D cube as one of its faces, is degenerate too.
\end{proof}

Summarizing, when positioning ourselves at $U_N^+$, we have 4 natural directions to be followed, namely taking the classical, discrete, real and bistochastic versions. And the problem is that, while the first three operations are ``good'', the fourth one is ``bad''. 

\bigskip

In order to fix this problem, in a useful and efficient way, the natural choice is that of slashing the bistochastic quantum groups $B_N,B_N^+,C_N,C_N^+$, which are rather secondary objects anyway, as well the quantum permutation groups $S_N,S_N^+$. 

\bigskip

In order to formulate now our second general axiom, doing the job, consider the cube $T_N=\mathbb Z_2^N$, regarded as diagonal torus of $O_N$. We have then:

\index{twisting}
\index{twistability}

\begin{proposition}
For an easy quantum group $G=(G_N)$, coming from a category of partitions $D\subset P$, the following conditions are equivalent:
\begin{enumerate}
\item $T_N\subset G_N$.

\item $H_N\subset G_N$.

\item $D\subset P_{even}$.
\end{enumerate}
If these conditions are satisfied, we say that $G_N$ is ``twistable''.
\end{proposition}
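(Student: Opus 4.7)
The plan is to establish the cycle $(2) \Leftrightarrow (3)$ and $(2) \Rightarrow (1) \Rightarrow (3)$, with the last implication being the only non-formal step.

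The equivalence $(2) \Leftrightarrow (3)$ is essentially tautological given the easiness machinery: we know from Theorem 10.14 that $H_N$ is easy with category $P_{even}$, and the general functoriality result from section 7 tells us that for easy quantum groups an inclusion $H_N \subset G_N$ corresponds under Tannakian duality to the reverse inclusion of categories $D \subset P_{even}$. The implication $(2) \Rightarrow (1)$ is then immediate, since $T_N = \mathbb{Z}_2^N$ sits in $H_N = \mathbb{Z}_2 \wr S_N$ as the canonical base of the wreath product, and this inclusion is visibly compatible with the diagonal embeddings into $O_N$.

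The real content is $(1) \Rightarrow (3)$. The idea here is to use that $T_N \subset G_N$ forces an inclusion of fixed-point spaces
$$Fix_{G_N}(u^{\otimes k}) \subset Fix_{T_N}(u^{\otimes k})$$
and then to characterize, for a partition $\pi \in P(k)$, exactly when the easy vector $\xi_\pi = \sum_{\ker i \geq \pi} e_{i_1} \otimes \cdots \otimes e_{i_k}$ lies in $Fix_{T_N}(u^{\otimes k})$. A generic element $\sigma = \mathrm{diag}(\varepsilon_1, \ldots, \varepsilon_N) \in T_N$ with $\varepsilon_i \in \{\pm 1\}$ acts on tensors by
$$\sigma^{\otimes k}(e_{i_1} \otimes \cdots \otimes e_{i_k}) = \varepsilon_{i_1} \cdots \varepsilon_{i_k}\, e_{i_1} \otimes \cdots \otimes e_{i_k},$$
so $\xi_\pi$ is fixed by $\sigma^{\otimes k}$ precisely when every summand picks up the scalar $+1$. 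Selecting any multi-index $i$ with $\ker i = \pi$ exactly (assigning distinct values $i_B$ to the distinct blocks $B$ of $\pi$, which is possible once $N \geq |\pi|$), the scalar on this term equals $\prod_{B \in \pi} \varepsilon_{i_B}^{|B|}$. Asking this to equal $1$ for all sign choices forces each $|B|$ to be even, so $\pi \in P_{even}$. Since $\pi \in D$ yields $\xi_\pi \in Fix_{G_N}(u^{\otimes k})$ by easiness, this proves $D \subset P_{even}$, provided we know $N$ is large enough to separate the blocks of $\pi$ --- which is fine because the easiness condition concerns the full tower $G = (G_N)$.

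The main obstacle, therefore, is really just bookkeeping: ensuring that the summands of $\xi_\pi$ are linearly independent basis vectors (so one may test fixedness term by term), and verifying that enough distinct indices exist to realize $\ker i = \pi$. Both are transparent in the tower formulation, and no further ingredient beyond basic easy-quantum-group Tannakian duality is needed.
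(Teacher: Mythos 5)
Your proof is correct, and for the only non-formal step it takes a different route from the paper's. The paper proves $(1)\iff(2)$ via the easy envelope formalism: since every easy quantum group contains $S_N$, the easy envelope of the cube satisfies $T_N'=\langle T_N,S_N\rangle'=H_N'=H_N$, so $T_N\subset G_N$ with $G_N$ easy forces $H_N\subset G_N$; then $(2)\iff(3)$ is, exactly as in your argument, Tannakian functoriality together with the easiness of $H_N$ with category $P_{even}$. You instead close the cycle through $(1)\implies(3)$, testing each easy fixed vector $\xi_\pi$, $\pi\in D$, against the diagonal sign action of $T_N$ and reading off that every block of $\pi$ must have even size. This is more elementary and self-contained -- it avoids both the envelope machinery and the classical generation fact $\langle\mathbb Z_2^N,S_N\rangle=H_N$, and it makes explicit where the parity condition comes from -- at the cost of a little bookkeeping that the paper's one-liner hides: partitions in $D(k,l)$ with upper legs are handled by rotation, which preserves block sizes; colors are irrelevant because the sign matrices are real, so $\bar u$ transforms by the same signs as $u$; and the requirement $N\geq|\pi|$ needed to realize $\ker i=\pi$ is absorbed, as you note, by working with the tower $G=(G_N)$. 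None of these points affects correctness.
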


\begin{proof}
We use the general easiness theory from chapter 7 above:

\medskip

$(1)\iff(2)$ Here it is enough to check that the easy envelope $T_N'$ of the cube equals the hyperoctahedral group $H_N$. But this follows from:
$$T_N'
=<T_N,S_N>'
=H_N'
=H_N$$

$(2)\iff(3)$ This follows by functoriality, from the fact that $H_N$ comes from the category of partitions $P_{even}$, that we know from chapter 10 above.
\end{proof}

The teminology in the above result comes from the fact that, assuming $D\subset P_{even}$, we can indeed twist $G_N$, into a certain quizzy quantum group $\bar{G}_N$. We refer to chapter 7 above to full details regarding the construction $G_N\to\bar{G}_N$. In what follows we will not need this twisting procedure, and we will just use Proposition 11.6 as it is, as a statement providing us with a simple and natural condition to be imposed on $G_N$. In practice now, imposing this second axiom leads to something nice, namely:

\index{uniform quantum group}

\begin{theorem}
The basic quantum unitary and quantum reflection groups, from Proposition 11.1 above, which are uniform and twistable, are as follows,
$$\xymatrix@R=20pt@C=20pt{
&K_N^+\ar[rr]&&U_N^+\\
H_N^+\ar[rr]\ar[ur]&&O_N^+\ar[ur]\\
&K_N\ar[rr]\ar[uu]&&U_N\ar[uu]\\
H_N\ar[uu]\ar[ur]\ar[rr]&&O_N\ar[uu]\ar[ur]
}$$
and this is an intersection and easy generation diagram.
\end{theorem}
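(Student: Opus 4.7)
The plan is to reduce the theorem to two distinct verifications: first, identifying exactly which quantum groups from Proposition 11.1 survive both axioms; second, checking the intersection/generation property by translating everything to categories of partitions via Theorem 7.14.

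For the first part, I would go through the 12 basic examples listed in Proposition 11.1 and apply Proposition 11.2 (uniform $\iff$ $D$ stable under removing blocks) and Proposition 11.6 (twistable $\iff$ $D \subset P_{even}$). The twistable axiom immediately eliminates all categories containing singletons: this rules out $S_N, S_N^+$ (categories $P, NC$), together with $B_N, B_N^+, C_N, C_N^+$ (categories $P_{12}, NC_{12}, \mathcal{P}_{12}, \mathcal{NC}_{12}$). The uniform axiom eliminates the four half-liberations $O_N^*, U_N^*, H_N^*, K_N^*$: indeed their categories $P_2^*, \mathcal{P}_2^*, P_{even}^*, \mathcal{P}_{even}^*$ are not closed under removing blocks, equivalently the relations $abc=cba$ collapse to commutation when a variable is replaced by $1$, as discussed after Proposition 11.2. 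What remains is exactly the list of 8 quantum groups in the cube, and each of them is easily checked to satisfy both axioms, since the associated categories $P_2, NC_2, \mathcal{P}_2, \mathcal{NC}_2, P_{even}, NC_{even}, \mathcal{P}_{even}, \mathcal{NC}_{even}$ are all contained in $P_{even}$ and are visibly stable under removing blocks.

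For the second part, by Theorem 7.14, intersection and easy generation of easy quantum groups correspond, respectively, to generation and intersection of categories of partitions. So it suffices to verify that the dual cubic diagram of categories
$$\xymatrix@R=16pt@C=10pt{
& \mathcal{NC}_{even}\ar[dl]\ar[dd] && \mathcal{NC}_2\ar[dl]\ar[ll]\ar[dd] \\
NC_{even}\ar[dd] && NC_2\ar[dd]\ar[ll] \\
& \mathcal{P}_{even}\ar[dl] && \mathcal{P}_2\ar[dl]\ar[ll] \\
P_{even} && P_2\ar[ll]
}$$
is an intersection and generation diagram. There are six faces to check, but by the three natural symmetries of the cube (classical/free, real/complex, reflection/unitary) they fall into three types, and within each type the verification is the same combinatorial fact. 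The intersection statements are immediate from the definitions: e.g.\ $P_2 \cap \mathcal{P}_{even} = \mathcal{P}_2$ (a pairing is matching and even iff it is a matching pairing), $NC_{even} \cap \mathcal{P}_2 = \mathcal{NC}_2$, and so on.

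The generation statements form the substantive content. One must show, for instance, that $\langle P_2, \mathcal{P}_{even}\rangle = P_{even}$, $\langle P_{even}, \mathcal{NC}_{even}\rangle = \mathcal{P}_{even}$, and $\langle NC_2, \mathcal{P}_{even}\rangle = \mathcal{NC}_{even}$, plus their three analogues. The strategy is combinatorial: the inclusions $\supset$ are trivial since both generators lie in the target; for $\subset$, one uses the categorical operations (concatenation, composition, rotation) to decompose an arbitrary partition in the target as a combination of generators. A typical argument: any block of even size in an even partition can be produced by vertically composing pairings through a matching even generator, and coloring constraints can be adjusted using uncolored pairings from $P_2$. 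The main obstacle will be precisely these verifications, since unlike the intersection direction they require a concrete combinatorial construction for each rectangle; however they are of the same routine nature as the generation checks already carried out in the proofs of Theorems 7.6, 7.15, 7.17 and 11.3, and reduce essentially to the observation that $P_2$ generates $NC_2$ and $\mathcal{NC}_2$ under horizontal concatenation when supplemented by the appropriate block-size and matching data. Once these are in place, a direct assembly yields the cube as an intersection and easy generation diagram, completing the proof.
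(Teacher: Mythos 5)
Your proposal is correct and follows essentially the same route as the paper: eliminate the half-liberated, bistochastic and permutation examples via Propositions 11.2 and 11.6, then pass to the dual cube of categories $P_2,NC_2,\mathcal P_2,\mathcal{NC}_2,P_{even},NC_{even},\mathcal P_{even},\mathcal{NC}_{even}$ and invoke the dictionary of Theorem 7.14. The paper simply asserts that this category diagram is an intersection and generation diagram (leaning on the earlier face/edge results such as Theorems 7.15 and 10.18), whereas you sketch the routine combinatorial verifications explicitly; the content is the same.
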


\begin{proof}
The first assertion comes from discussion after Proposition 11.2, telling us that the uniformity condition eliminates $O_N^*,U_N^*,H_N^*,K_N^*$. Also, the twistability condition eliminates $B_N,B_N^+,C_N,C_N^+$ and $S_N,S_N^+$. Thus, we are left with the 8 quantum groups in the statement, which are indeed easy, coming from the following categories:
$$\xymatrix@R=20pt@C6pt{
&\mathcal{NC}_{even}\ar[dl]\ar[dd]&&\mathcal {NC}_2\ar[dl]\ar[ll]\ar[dd]\\
NC_{even}\ar[dd]&&NC_2\ar[dd]\ar[ll]\\
&\mathcal P_{even}\ar[dl]&&\mathcal P_2\ar[dl]\ar[ll]\\
P_{even}&&P_2\ar[ll]
}$$

Since this latter diagram is an intersection and generation diagram, we conclude that we have an intersection and easy generation diagram of quantum groups, as stated.
\end{proof}

As explained above, we will not really need in what follows the twists of the twistable quantum groups that we consider, our plan being that of using the twistability condition as a natural condition to be imposed on our quantum groups, for classification purposes. However, let us record as well the following result, in relation with the twists:

\index{Schur-Weyl twisting}
\index{twisting}

\begin{theorem}
The Schur-Weyl twists of the basic twistable quantum groups are
$$\xymatrix@R=20pt@C=20pt{
&K_N^+\ar[rr]&&U_N^+\\
H_N^+\ar[rr]\ar[ur]&&O_N^+\ar[ur]\\
&K_N\ar[rr]\ar[uu]&&\bar{U}_N\ar[uu]\\
H_N\ar[uu]\ar[ur]\ar[rr]&&\bar{O}_N\ar[uu]\ar[ur]
}$$
and this is an intersection and quizzy generation diagram.
\end{theorem}

\begin{proof}
Here the formulae of the twists are something that we already know, coming from the computations in chapter 7 above, and the last assertion is clear as well, coming from the definition of the various quantum groups involved.
\end{proof}

\section*{11c. Orientability}

In the general case now, where we have an arbitrary uniform and twistable easy quantum group, this quantum group appears by definition as follows:
$$H_N\subset G_N\subset U_N^+$$

Thus, we can imagine our quantum group $G_N$ as sitting inside the standard cube, from Theorem 11.7 above:
$$\xymatrix@R=20pt@C=20pt{
&K_N^+\ar[rr]&&U_N^+\\
H_N^+\ar[rr]\ar[ur]&&O_N^+\ar[ur]\\
&K_N\ar[rr]\ar[uu]&&U_N\ar[uu]\\
H_N\ar[uu]\ar[ur]\ar[rr]&&O_N\ar[uu]\ar[ur]
}$$

The point now is that, by using the operations $\cap$ and $\{\,,\}$, we can in principle ``project'' $G_N$ on the faces and edges of the cube, and then use some kind of 3D orientation coming from this, in order to deduce some structure and classification results. 

\bigskip

In order to do this, let us start with the following definition:

\index{classical version}
\index{discrete version}
\index{real version}
\index{free version}
\index{smooth version}
\index{unitary version}

\begin{definition}
Associated to any twistable easy quantum group 
$$H_N\subset G_N\subset U_N^+$$
are its classical, discrete and real versions, given by the following formulae,
$$G_N^c=G_N\cap U_N$$
$$G_N^d=G_N\cap K_N^+$$
$$G_N^r=G_N\cap O_N^+$$
as well as its free, smooth and unitary versions, given by the following formulae,
$$G_N^f=\{G_N,H_N^+\}$$
$$G_N^s=\{G_N,O_N\}$$
$$G_N^u=\{G_N,K_N\}$$
where $\cap$ and $\{\,,\}$ are respectively the intersection and easy generation operations.
\end{definition}

In this definition the classical, real and unitary versions are something quite standard. Regarding the discrete and smooth versions, here we have no abstract justification for our terminology, due to the fact that easy quantum groups do not have known differential geometry. However, in the classical case, where $G_N\subset U_N$, our constructions produce indeed discrete and smooth versions, and this is where our terminology comes from. Finally, regarding the free version, this comes once again from the known examples. 

\bigskip

To be more precise, regarding the free version, the various results that we have show that the liberation operation $G_N\to G_N^+$ usually appears via the formula:
$$G_N^+=\{G_N,S_N^+\}$$

This formula expresses the fact that the category of partitions of $G_N^+$ is obtained from the one of $G_N$ by removing the crossings. But in the twistable setting, where we have by definition $H_N\subset G_N$, this is the same as setting:
$$G_N^+=\{G_N,H_N^+\}$$

All this is of course a bit theoretical, and this is why we use the symbol $f$ for free versions in the above sense, and keep $+$ for well-known, studied liberations.

\bigskip

In relation now with our questions, and our 3D plan, we can now formulate:

\begin{proposition}
Given an intermediate quantum group $H_N\subset G_N\subset U_N^+$, we have a diagram of closed subgroups of $U_N^+$, obtained by inserting
$$\xymatrix@R=3pt@C=5pt{
&&G_N^f&&\\
\\
&&&G_N^u&\\
G_N^d\ar[rr]&&G_N\ar[rr]\ar[uuu]\ar[ur]&&G_N^s\\
&G_N^r\ar[ur]&&&\\
\\
&&G_N^c\ar[uuu]&&}
\qquad\xymatrix@R=10pt@C=30pt{\\ \\ \\ \ar@.[r]&}\qquad
\xymatrix@R=19pt@C=20pt{
&K_N^+\ar[rr]&&U_N^+\\
H_N^+\ar[rr]\ar[ur]&&O_N^+\ar[ur]\\
&K_N\ar[rr]\ar[uu]&&U_N\ar[uu]\\
H_N\ar[uu]\ar[ur]\ar[rr]&&O_N\ar[uu]\ar[ur]
}$$
in the obvious way, with each $G_N^x$ belonging to the main diagonal of each face.
\end{proposition}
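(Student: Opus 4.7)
The plan is to verify, by direct application of the definitions of $\cap$ and $\{\,,\}$ together with the hypothesis $H_N \subset G_N \subset U_N^+$, that each of the six versions from Definition 11.8 fits into the claimed position in the 3D diagram. Recall from Theorem 11.7 that the six faces of the basic cube correspond to pairs of opposite corners via main diagonals: classical face ($H_N \leftrightarrow U_N$), free face ($H_N^+ \leftrightarrow U_N^+$), real face ($H_N \leftrightarrow O_N^+$), discrete face ($H_N \leftrightarrow K_N^+$), smooth face ($O_N \leftrightarrow U_N^+$), and unitary face ($K_N \leftrightarrow U_N^+$).

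First I would check the three ``downward'' versions obtained by intersection. For $G_N^c = G_N \cap U_N$, since $H_N \subset G_N$ and $H_N \subset O_N \subset U_N$, we obtain $H_N \subset G_N^c \subset U_N$, which places $G_N^c$ on the main diagonal of the classical face. The same pattern, using $H_N \subset K_N \subset K_N^+$ and $H_N \subset O_N \subset O_N^+$, places $G_N^d = G_N \cap K_N^+$ on the diagonal $H_N \to K_N^+$ of the discrete face, and $G_N^r = G_N \cap O_N^+$ on the diagonal $H_N \to O_N^+$ of the real face. Each of these three versions is trivially contained in $G_N$, yielding the three arrows feeding into $G_N$ from below in the inserted diagram.

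Next I would treat the three ``upward'' versions obtained by easy generation. For $G_N^f = \{G_N, H_N^+\}$, both $G_N$ and $H_N^+$ sit inside $U_N^+$, hence so does their easy generation $G_N^f$; moreover $H_N^+ \subset G_N^f$ by the very definition of $\{\,,\}$, giving $H_N^+ \subset G_N^f \subset U_N^+$ on the main diagonal of the free face. The same reasoning applies with $O_N$ in place of $H_N^+$ to place $G_N^s = \{G_N, O_N\}$ on the smooth diagonal $O_N \to U_N^+$, and with $K_N$ in place of $H_N^+$ to place $G_N^u = \{G_N, K_N\}$ on the unitary diagonal $K_N \to U_N^+$. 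The three arrows emanating from $G_N$ upward are then immediate from the inclusion $X \subset \{X, Y\}$.

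The only real point to verify --- more a matter of bookkeeping than mathematics --- is that the two corners naming each face (for instance $H_N$ and $K_N^+$ for the discrete face) are precisely the endpoints of its main diagonal; once this correspondence is pinned down, every inclusion in the inserted diagram reduces to a one-line application of $A \cap B \subset A,B$ or $A,B \subset \{A, B\}$, combined with the background inclusions from Theorem 11.7. No Tannakian or easiness-theoretic ingredient beyond the well-definedness of $\{\,,\}$ on easy quantum groups containing $H_N$ is required, and accordingly there is no serious obstacle to this proof: it will be essentially a list of six one-line verifications plus the six containments between $G_N$ and its versions.
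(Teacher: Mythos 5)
Your proof is correct and follows essentially the same route as the paper: you check that each of the six versions $G_N^x$ satisfies the pair of inclusions placing it on the appropriate main diagonal, and that these all reduce to trivial consequences of $A\cap B\subset A,B$, $A,B\subset\{A,B\}$, and the standing hypothesis $H_N\subset G_N\subset U_N^+$. The only difference is cosmetic — you spell out the auxiliary background inclusions such as $H_N\subset K_N\subset K_N^+$ a bit more explicitly than the paper does.
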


\begin{proof}
The fact that we have indeed the diagram of inclusions on the left is clear from the constructions of the quantum groups involved, from Definition 11.9. Regarding the insertion procedure, consider any of the faces of the cube, denoted as follows:
$$P\subset Q,R\subset S$$

Our claim is that the corresponding quantum group $G=G_N^x$ can be inserted on the corresponding main diagonal $P\subset S$, as follows:
$$\xymatrix@R=20pt@C=20pt{
Q\ar[rr]&&S\\
&G\ar[ur]\\
P\ar[rr]\ar[uu]\ar[ur]&&R\ar[uu]}$$

We have to check here a total of $6\times 2=12$ inclusions. But, according to Definition 11.9, these inclusions that must be checked are as follows:

\medskip

(1) $H_N\subset G_N^c\subset U_N$, where $G_N^c=G_N\cap U_N$.

\medskip

(2) $H_N\subset G_N^d\subset K_N^+$, where $G_N^d=G_N\cap K_N^+$.

\medskip

(3) $H_N\subset G_N^r\subset O_N^+$, where $G_N^r=G_N\cap O_N^+$.

\medskip

(4) $H_N^+\subset G_N^f\subset U_N^+$, where $G_N^f=\{G_N,H_N^+\}$.

\medskip

(5) $O_N\subset G_N^s\subset U_N^+$, where $G_N^s=\{G_N,O_N\}$.

\medskip

(6) $K_N\subset G_N^u\subset U_N^+$, where $G_N^u=\{G_N,K_N\}$.

\medskip

All these statements being trivial from the definition of $\cap$ and $\{\,,\}$, and from our assumption $H_N\subset G_N\subset U_N^+$, our insertion procedure works indeed, and we are done.
\end{proof}

In order now to complete the diagram, we have to project as well $G_N$ on the edges of the cube. For this purpose we can basically assume, by replacing $G_N$ with each of its 6 projections on the faces, that $G_N$ actually lies on one of the six faces. The technical result that we will need here is as follows:

\index{slicing}

\begin{proposition}
Given an intersection and easy generation diagram $P\subset Q,R\subset S$ and an intermediate easy quantum group $P\subset G\subset S$, as follows,
$$\xymatrix@R=20pt@C=20pt{
Q\ar[rr]&&S\\
&G\ar[ur]\\
P\ar[rr]\ar[uu]\ar[ur]&&R\ar[uu]}$$
we can extend this diagram into a diagram as follows:
$$\xymatrix@R=30pt@C=30pt{
Q\ar[r]&\{G,Q\}\ar[r]&S\\
G\cap Q\ar[u]\ar[r]&G\ar[r]\ar[u]&\{G,R\}\ar[u]\\
P\ar[r]\ar[u]&G\cap R\ar[u]\ar[r]&R\ar[u]}$$
In addition, $G$ ``slices the square'', in the sense that this is an intersection and easy generation diagram, precisely when $G=\{G\cap Q,G\cap R\}$ and $G=\{G,Q\}\cap\{G,R\}$.
\end{proposition}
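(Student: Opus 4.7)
\medskip

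The plan is to proceed in two stages: first verify that the $3\times 3$ diagram is well-defined, then analyse each of its four sub-squares and observe that the slicing condition collapses to exactly the two equalities stated.

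For the first stage, I would check the twelve arrows. The four ``corner'' arrows $P\to G\cap Q$, $P\to G\cap R$, $\{G,Q\}\to S$, $\{G,R\}\to S$ follow from $P\subset G$ together with $P\subset Q,R$ on the one hand, and from $G,Q,R\subset S$ on the other; by the definitions of $\cap$ and $\{\,,\}$ from Propositions~7.8 and~7.10, these give the required inclusions. The remaining eight arrows (those meeting $G$ or one of $Q,R$) are immediate from the definitions, since $G\cap Q$ embeds into both $G$ and $Q$, and $G,Q$ both embed into $\{G,Q\}$, and similarly for $R$. Easiness of all nine objects is preserved under the operations used, by Proposition~7.12 and Theorem~7.14, so the diagram lives inside the category of easy quantum groups.

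For the second stage, I would decompose the $3\times 3$ diagram into its four sub-squares and apply the definition of an intersection and easy generation square, $A=B\cap C$ and $\{B,C\}=D$, to each:
\begin{itemize}
\item The top-left square $G\cap Q\subset Q,G\subset\{G,Q\}$ and the bottom-right square $G\cap R\subset G,R\subset\{G,R\}$ both satisfy the condition tautologically, since $G\cap Q=Q\cap G$, $\{G,Q\}=\{Q,G\}$, and analogously for $R$.
\item The bottom-left square $P\subset G\cap Q, G\cap R\subset G$ requires $(G\cap Q)\cap(G\cap R)=P$ and $\{G\cap Q,G\cap R\}=G$; the first equality holds automatically because $(G\cap Q)\cap(G\cap R)=G\cap(Q\cap R)=G\cap P=P$, using the hypothesis $Q\cap R=P$ and $P\subset G$, while the second is precisely the first slicing condition $G=\{G\cap Q,G\cap R\}$.
\item The top-right square $G\subset\{G,Q\},\{G,R\}\subset S$ requires $\{G,Q\}\cap\{G,R\}=G$ and $\{\{G,Q\},\{G,R\}\}=S$; the second equality holds automatically since $\{\{G,Q\},\{G,R\}\}=\{G,Q,R\}=\{G,\{Q,R\}\}=\{G,S\}=S$ using the hypothesis $\{Q,R\}=S$ and $G\subset S$, while the first is the second slicing condition $G=\{G,Q\}\cap\{G,R\}$.
\end{itemize}

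Since two of the four sub-squares are automatically of intersection/easy generation type and the remaining two yield precisely the two conditions $G=\{G\cap Q,G\cap R\}$ and $G=\{G,Q\}\cap\{G,R\}$, these two equalities together are necessary and sufficient for the whole $3\times 3$ diagram to be an intersection and easy generation diagram. The main potential obstacle is the bookkeeping for the equality $\{\{G,Q\},\{G,R\}\}=S$; here it is important to use the easy generation $\{\,,\}$ (not the abstract $<\,,>$) so that associativity $\{\{A,B\},C\}=\{A,\{B,C\}\}$ holds at the level of categories of partitions via $D_{\{A,B\}}=D_A\cap D_B$, which ensures the reduction to $\{Q,R\}=S$ is legitimate.
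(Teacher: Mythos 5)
Your proof is correct and follows the same route as the paper's: decompose the $3\times 3$ diagram into its four sub-squares, observe that the upper-left and lower-right squares are tautologically of intersection/easy generation type, that half the conditions for the lower-left and upper-right squares are automatic, and that the two remaining conditions are precisely the stated slicing equalities. The paper simply declares this ``clear from definitions''; you have filled in the details, including the useful explicit verifications $(G\cap Q)\cap(G\cap R)=G\cap(Q\cap R)=G\cap P=P$ and $\{\{G,Q\},\{G,R\}\}=\{G,\{Q,R\}\}=\{G,S\}=S$, together with the correct observation that associativity of $\{\,,\}$ at the level of categories of partitions is what makes the second reduction legitimate.
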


\begin{proof}
This is indeed clear from definitions, because the intersection and easy generation conditions are automatic for the upper left and lower right squares, and so are half of the intersection and easy generation conditions for the lower left and upper right squares. Thus, we are left with two conditions only, which are those in the statement.
\end{proof}

Now back to 3 dimensions, and to the cube, we have the following result:

\begin{proposition}
Assuming that $H_N\subset G_N\subset U_N^+$ satisfies the conditions
$$G_N^{cs}=G_N^{sc}\quad,\quad
G_N^{cu}=G_N^{uc}\quad,\quad
G_N^{df}=G_N^{fd}$$
$$G_N^{du}=G_N^{ud}\quad,\quad 
G_N^{rf}=G_N^{fr}\quad,\quad
G_N^{rs}=G_N^{sr}$$
the diagram in Proposition 11.10 can be completed, via the construction in Proposition 11.11, into a diagram dividing the cube along the $3$ coordinates axes, into $8$ small cubes.
\end{proposition}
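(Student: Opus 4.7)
The approach is to apply Proposition 11.10 independently to each of the six faces of the cube of Theorem 11.7, and then to verify that the twelve edge-midpoints thus obtained agree on shared edges. By Theorem 11.7 each face of the cube is an intersection and easy generation diagram, i.e.\ fits the pattern $P\subset Q, R\subset S$ required by Proposition 11.10. By Proposition 11.9 the corresponding face center $G_N^x$, with $x\in\{c,d,r,f,s,u\}$, sits between the diagonal corners of that face. So Proposition 11.10 applies to each face, producing four new quantum groups on the four outer edges of that face, each of the form $G_N^x \cap (\text{endpoint})$ or $\{G_N^x, (\text{endpoint})\}$ depending on the edge's position.

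Each edge of the cube lies in exactly two faces, so the two applications of Proposition 11.10 give two candidate midpoints per edge, for a total of twelve edges and twenty-four candidates. The next step is to check, edge by edge, that the two candidates agree. A direct inspection, using the identities of Theorem 11.7 such as $O_N = U_N \cap O_N^+$ and $K_N^+ = \{K_N, H_N^+\}$, reveals a clean dichotomy. Six of the edges, namely $H_N{-}O_N$, $H_N{-}K_N$, $H_N{-}H_N^+$, $O_N^+{-}U_N^+$, $K_N^+{-}U_N^+$, $U_N{-}U_N^+$, are automatic: on each of them, both candidates are either a double intersection $G_N\cap X$ or a double generation $\{G_N,X\}$ for a common corner $X$, and simplify to the same expression without any hypothesis on $G_N$. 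At the level of operations this reflects the obvious commutativity of the three intersection operations $c,d,r$ among themselves and of the three generation operations $f,s,u$ among themselves.

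The remaining six edges are precisely those where one candidate is an intersection-then-generation $(G_N^x)^y$ and the other a generation-then-intersection $(G_N^y)^x$. Explicit computation matches them bijectively with the six equalities listed in the statement: $K_N{-}U_N$ gives $G_N^{cu}=G_N^{uc}$, $O_N{-}U_N$ gives $G_N^{cs}=G_N^{sc}$, $H_N^+{-}K_N^+$ gives $G_N^{df}=G_N^{fd}$, $H_N^+{-}O_N^+$ gives $G_N^{rf}=G_N^{fr}$, $K_N{-}K_N^+$ gives $G_N^{du}=G_N^{ud}$, and $O_N{-}O_N^+$ gives $G_N^{rs}=G_N^{sr}$. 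Hence under the assumed hypotheses, all twelve edge midpoints are unambiguously defined. Combining them with the eight outer cube vertices (from Theorem 11.7), the six face centers $G_N^x$ (from Proposition 11.9), and the body center $G_N$ yields a $3\times 3\times 3$ lattice of $27$ quantum groups carrying all the inclusions produced by Proposition 11.10, which divides the cube along its three coordinate axes into eight small subcubes, as claimed.

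The main obstacle is the combinatorial bookkeeping in the edge-by-edge check: for each of the twelve edges one must correctly identify its role (left, right, bottom, or top) inside each of its two adjacent faces in the $P\subset Q, R\subset S$ pattern, in order to read off the correct Proposition 11.10 recipe ($G\cap Q$, $G\cap R$, $\{G,Q\}$, or $\{G,R\}$) on both sides. This is pure finite bookkeeping and uses no new ideas beyond Definition 11.8 and the face identities of Theorem 11.7; once it is executed carefully, the matching of the six nontrivial edges with the six equalities imposed in the statement, and the triviality of the remaining six, are both mechanical.
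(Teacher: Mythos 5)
Your proposal is correct and follows essentially the same route as the paper: one checks that each of the twelve edge projections, definable in two ways via the two adjacent faces, is well-defined, with the three edges at $H_N$ and the three at $U_N^+$ being automatic (double intersections, respectively double generations, commute trivially), and the remaining six edges being exactly governed by the six assumed equalities. Your explicit edge-by-edge matching of those six edges with the hypotheses is just a spelled-out version of what the paper leaves implicit.
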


\begin{proof}
We have to prove that the 12 projections on the edges are well-defined, with the problem coming from the fact that each of these projections can be defined in 2 possible ways, depending on the face that we choose first. 

\medskip

The verification goes as follows:

\medskip

(1) Regarding the $3$ edges emanating from $H_N$, the result here follows from:
$$G_N^{cd}=G_N^{dc}=G_N\cap K_N$$ 
$$G_N^{cr}=G_N^{rc}=G_N\cap O_N$$
$$G_N^{dr}=G_N^{rd}=G_N\cap H_N^+$$

These formulae are indeed all trivial, of type:
$$(G\cap Q)\cap R=(G\cap R)\cap Q=G\cap P$$

(2) Regarding the $3$ edges landing into $U_N^+$, the result here follows from:
$$G_N^{fs}=G_N^{sf}=\{G_N,O_N^+\}$$
$$G_N^{fu}=G_N^{uf}=\{G_N,K_N^+\}$$
$$G_N^{su}=G_N^{us}=\{G_N,U_N\}$$

These formulae are once again trivial, of type:
$$\{\{G,Q\},R\}=\{\{G,R\},Q\}=\{G,S\}$$

(3) Finally, regarding the remaining $6$ edges, not emanating from $H_N$ or landing into $U_N^+$, here the result follows from our assumptions in the statement.
\end{proof}

Unfortunately, we are not done yet, because nothing guarantees that we obtain in this way an intersection and easy generation diagram. 

\bigskip

Thus, we must add more axioms, as follows:

\index{slicing}

\begin{theorem}
Assume that $H_N\subset G_N\subset U_N^+$ satisfies the following conditions, where by ``intermediate'' we mean in each case ``parallel to its neighbors'':
\begin{enumerate}
\item The $6$ compatibility conditions in Proposition 11.12 above,

\item $G_N^c,G_N,G_N^f$ slice the classical/intermediate/free faces,

\item $G_N^d,G_N,G_N^s$ slice the discrete/intermediate/smooth faces,

\item $G_N^r,G_N,G_N^u$ slice the real/intermediate/unitary faces,
\end{enumerate}
Then $G_N$ ``slices the cube'', in the sense that the diagram obtained in Proposition 11.12 above is an intersection and easy generation diagram.
\end{theorem}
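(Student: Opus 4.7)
The plan is to verify the intersection and easy generation property for every axis-aligned rectangular sub-diagram of the subdivided cube. I would first identify the nine ``slicing planes'' of the diagram: the six faces of the big cube, together with three interior planes through $G_N$, each parallel to one pair of opposite faces. A direct inspection shows that every axis-aligned $2$-dimensional rectangle in the three-dimensional subdivided diagram lies entirely on exactly one of these nine planes, so it suffices to prove that each of these nine planes is itself an intersection and easy generation sub-diagram (after its own $2$-dimensional subdivision).

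Next, I would invoke conditions (2), (3), (4) together with Proposition 11.10. Each of these conditions asserts that three mutually parallel planes (two opposite faces and the intermediate plane between them) are each sliced on their main diagonal by the appropriate quantum group: $G_N^c,G_N,G_N^f$ handle the three planes perpendicular to the classical-to-free axis; $G_N^d,G_N,G_N^s$ handle the three planes perpendicular to the discrete-to-smooth axis; and $G_N^r,G_N,G_N^u$ handle the three planes perpendicular to the real-to-unitary axis. By Proposition 11.10, each such slicing means the corresponding plane is a $3\times 3$ grid whose four constituent $1\times 1$ sub-squares are intersection and easy generation diagrams. This disposes of every $1\times 1$ sub-square of the subdivided cube.

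To pass from $1\times 1$ squares to larger $m\times n$ axis-aligned rectangles on a given plane, I would use a routine composition argument. Given adjacent intersection/easy generation squares $(A,B,D,E)$ and $(B,C,E,F)$ sharing the edge $B$--$E$, the union rectangle $(A,C,D,F)$ satisfies $A=B\cap D=(C\cap E)\cap D=C\cap D$ (using $D\subset E$), and dually $F=\{C,E\}=\{C,\{B,D\}\}=\{C,D\}$ (using $B\subset C$). Iterating this horizontally and vertically on each plane yields the intersection and easy generation condition for all axis-aligned rectangles on that plane, and hence for the entire subdivided cube.

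The main obstacle, and where care is needed, is ensuring the coherence of the three-dimensional subdivision itself: each of the twelve edge-projections $G_N^{xy}$ must be unambiguously defined regardless of which of its two ``parent'' faces it is inherited from. This is precisely the content of the six compatibility conditions in (1). Without them, the nine plane-slicings produced by (2), (3), (4) could not be assembled into a single coherent three-dimensional diagram, and the composition step would fail along shared edges. With (1) in place, the nine slicings glue together unambiguously, and the three-dimensional intersection and easy generation property reduces cleanly to the combination of the one-dimensional compositionality with the two-dimensional slicing hypotheses.
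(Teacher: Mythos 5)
Your proof is correct and is essentially the same approach as the paper's, which gives only the one-line justification ``This follows indeed from Proposition 11.10 and Proposition 11.11 above.'' Your write-up usefully makes explicit the steps the paper leaves implicit: the decomposition into the nine constant-coordinate planes handled by conditions (2)--(4), the role of the compatibility conditions (1) in making the nine planar slicings glue consistently along shared edges, and the routine composition argument for passing from $1\times 1$ squares to larger axis-aligned rectangles.
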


\begin{proof}
This follows indeed from Proposition 11.11 and Proposition 11.12 above.
\end{proof}

Summarizing, we are done now with our geometric program, and we have a whole collection of natural geometric conditions that can be imposed to $G_N$.

\section*{11d. Ground zero}

It is quite clear that $G_N$ can be reconstructed from its edge projections, so in order to do the classification, we first need a ``coordinate system''. Common sense would suggest to use the one emanating from $H_N$, or perhaps the one landing into $U_N^+$. However, technically speaking, best is to use the coordinate system based at $O_N$, highlighted below:
$$\xymatrix@R=18pt@C=18pt{
&K_N^+\ar[rr]&&U_N^+\\
H_N^+\ar[rr]\ar[ur]&&O_N^+\ar[ur]\\
&K_N\ar[rr]\ar[uu]&&U_N\ar[uu]\\
H_N\ar[uu]\ar[ur]\ar@=[rr]&&O_N\ar@=[uu]\ar@=[ur]
}$$

This choice comes from the fact that the classification result for $O_N\subset O_N^+$, explained below, is something very simple. And this is not the case with the results for $H_N\subset H_N^+$ and for $U_N\subset U_N^+$, from \cite{mwe}, \cite{rwe} which are quite complicated, with uncountably many solutions, in the general non-uniform case. As for the result for $K_N\subset K_N^+$, this is not available yet, but it is known that there are uncountably many solutions here as well.

\bigskip

So, here is now the key result, from \cite{bv2}, dealing with the vertical direction:

\begin{theorem}
There is only one proper intermediate easy quantum group
$$O_N\subset G_N\subset O_N^+$$
namely the quantum group $O_N^*$, which is not uniform.
\end{theorem}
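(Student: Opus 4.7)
The plan is to translate the problem into categories of partitions via the easiness formalism of Section 7, and then perform a careful combinatorial classification. Since $O_N$ is easy with category $P_2$ and $O_N^+$ is easy with category $NC_2$, an intermediate easy quantum group $O_N \subset G_N \subset O_N^+$ corresponds to an intermediate category of pair partitions
\[
NC_2 \subset D \subset P_2.
\]
We already know three such categories: $NC_2$ itself, $P_2$ itself, and the half-liberated category $P_2^*$ (described in Theorem 7.5). The goal is to show there are no others.

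First, I would fix a category $D$ with $NC_2 \subsetneq D \subsetneq P_2$ and pick some $\pi \in D \setminus NC_2$. The main technical step is a \emph{capping reduction}: since $\cap, \cup \in NC_2 \subset D$, the category $D$ is stable under the operation of capping a partition $\pi$ between two neighboring points (either on the top row or the bottom) with a semicircle, which removes two adjacent legs. Applying capping repeatedly to $\pi$, as long as this preserves the crossing, one reduces $\pi$ to a minimal crossing pair partition. The key combinatorial claim is that, up to the basic rotation/transposition operations present in any tensor $*$-category (which are encoded by the composition with identity pairings and the upside-down turning), the only minimal crossings among pair partitions are the basic crossing $\slash\!\!\!\backslash \in P_2(\circ\circ,\circ\circ)$ on four points and the half-liberating permutation ${\slash\!\!\!\backslash}\!|\in P_2(\circ\circ\circ,\circ\circ\circ)$ on six points. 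Indeed any other crossing partition, when capped appropriately between legs of the two crossing strings, reduces either to one of these two or to a noncrossing partition.

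Then I would do the case split. If the basic crossing lies in $D$, then $D$ contains $NC_2$ together with $\slash\!\!\!\backslash$, and by the standard generation argument (already used in the proof of Theorem 4.29 to show $\langle NC_2, \slash\!\!\!\backslash\rangle = P_2$), this forces $D = P_2$, i.e.\ $G_N = O_N$. If on the contrary $D$ does not contain $\slash\!\!\!\backslash$ but the capping reduction produces the half-liberating permutation, then $D \supseteq \langle NC_2, {\slash\!\!\!\backslash}\!|\rangle = P_2^*$. For the reverse inclusion $D \subseteq P_2^*$, one checks that every element of $P_2 \setminus P_2^*$, when subjected to the capping procedure, eventually produces the basic crossing $\slash\!\!\!\backslash$; thus if $D$ avoids $\slash\!\!\!\backslash$ it must avoid $P_2 \setminus P_2^*$. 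Hence $D = P_2^*$, i.e.\ $G_N = O_N^*$.

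The main obstacle will be the capping/reduction step: making precise the claim that any crossing pair partition, stripped via semicircle cappings, reduces to either $\slash\!\!\!\backslash$ or to ${\slash\!\!\!\backslash}\!|$, and no other minimal obstruction can appear. The delicate point is to track, during the capping process, the parity of the distance between paired legs, since this parity is what distinguishes $P_2^*$ from $P_2$: a crossing where the two strings have an odd-distance leg pattern reduces to $\slash\!\!\!\backslash$, while a crossing where the parities match reduces to ${\slash\!\!\!\backslash}\!|$. Once this dichotomy is established, the non-uniformity of $O_N^*$ is immediate from Proposition 11.2: removing the block corresponding to a singleton leg from a partition in $P_2^*$ breaks the parity condition, so $P_2^*$ is not stable under block-removal, in contrast with $NC_2$ and $P_2$.
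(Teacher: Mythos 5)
Your proposal is correct and takes essentially the same route as the paper. The paper phrases the crucial step as a class-preserving descent — if $\pi\in P_2-P_2^*$ has $\geq4$ strings then some semicircle capping lands again in $P_2-P_2^*$, and likewise for $P_2^*-NC_2$ — and then runs an induction on the number of strings to obtain $\langle\pi\rangle=P_2$ or $\langle\pi\rangle=P_2^*$, followed by the same three-way case split; your formulation of ``reduce to a minimal crossing while tracking the parity class'' is exactly this descent repackaged, and like the paper you leave the combinatorial core (that a cap preserving the parity class always exists) at the level of pictures.
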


\begin{proof}
We must compute here the categories of pairings $NC_2\subset D\subset P_2$, and this can be done via some standard combinatorics, in three steps, as follows:

\medskip

(1) Let $\pi\in P_2-NC_2$, having $s\geq 4$ strings. Our claim is that:

\medskip

-- If $\pi\in P_2-P_2^*$, there exists a semicircle capping $\pi'\in P_2-P_2^*$.

-- If $\pi\in P_2^*-NC_2$, there exists a semicircle capping $\pi'\in P_2^*-NC_2$.

\medskip

Indeed, both these assertions can be easily proved, by drawing pictures.

\medskip

(2) Consider now a partition $\pi\in P_2(k,l)-NC_2(k,l)$. Our claim is that:

\medskip

-- If $\pi\in P_2(k, l)-P_2^*(k,l)$ then $<\pi>=P_2$.

-- If $\pi\in P_2^*(k,l)-NC_2(k,l)$ then $<\pi>=P_2^*$.

\medskip

This can be indeed proved by recurrence on the number of strings, $s=(k+l)/2$, by using (1), which provides us with a descent procedure $s\to s-1$, at any $s\geq4$.

\medskip

(3) Finally, assume that we are given an easy quantum group $O_N\subset G\subset O_N^+$, coming from certain sets of pairings $D(k,l)\subset P_2(k,l)$. We have three cases:

\medskip

-- If $D\not\subset P_2^*$, we obtain $G=O_N$.

-- If $D\subset P_2,D\not\subset NC_2$, we obtain $G=O_N^*$.

-- If $D\subset NC_2$, we obtain $G=O_N^+$.

\medskip

Thus, we have proved the uniquess result. As for the non-uniformity of the unique solution, $O_N^*$, this is something that we already know, from Theorem 11.7 above.
\end{proof}

The above result is something quite remarkable, and it is actually believed that the result could still hold, without the easiness assumption. We refer here to \cite{bc+}. 

\bigskip

As already mentioned, the related inclusions $H_N\subset H_N^+$ and $U_N\subset U_N^+$, studied in \cite{mwe} and \cite{rwe}, are far from being maximal, having uncountably many intermediate objects, and the same is known to hold for $K_N\subset K_N^+$. There are many interesting open questions here. It is conjectured for instance that there should be a contravariant duality $H_N^\times\leftrightarrow U_N^\times$, mapping the family and series from \cite{rwe} to the series and family from \cite{twe}.

\bigskip

Here is now another basic result that we will need, in order to perform our classification work here, dealing this time with the ``discrete vs. continuous'' direction:

\index{easy group}

\begin{theorem}
There are no proper intermediate easy groups
$$H_N\subset G_N\subset O_N$$
except for $H_N,O_N$ themselves.
\end{theorem}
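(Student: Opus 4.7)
The plan is to translate the problem into the language of categories of partitions: by easiness, an intermediate easy group $H_N \subset G_N \subset O_N$ corresponds exactly to a category $D$ with $P_2 \subset D \subset P_{even}$, and the task reduces to showing $D \in \{P_2, P_{even}\}$.

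Suppose $D \supsetneq P_2$, and pick $\pi \in D \setminus P_2$; since $D \subset P_{even}$ all blocks of $\pi$ have even size, and at least one block has size $2k \geq 4$. Using pair partitions from $P_2 \subset D$, I would cap off all the other blocks (pairing adjacent legs and eliminating semicircles via the category operations) to isolate a single block, thereby producing the one-block partition $b_{2k} \in D$ for some $k \geq 2$.

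Next, letting $L \subset 2\mathbb N$ denote the set of block sizes occurring in $D$, I would reuse verbatim the two inductive arguments from the proof of Theorem 11.3: first, the identity $b_{k-l} = (b_l^* \otimes |^{\otimes k-l}) b_k$ gives $k,l \in L$ with $k>l \Rightarrow k-l \in L$, and second, capping two adjacent $k$-blocks with a semicircle in the middle gives $k \in L \Rightarrow 2k-2 \in L$. Combined with $L \subset 2\mathbb N$ and the presence of some $2k \geq 4$, these constraints force $L = \{2,4,6,\ldots\}$, so in particular $b_{2m} \in D$ for every $m \geq 1$.

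To close, I would show that $\langle P_2, \{b_{2m}\}_{m \geq 1} \rangle = P_{even}$: any $\pi \in P_{even}$ can be written as a vertical composition of an appropriate pair partition with a tensor product of $b_{2m}^*$'s from above, each $b_{2m}^*$ merging $2m$ designated legs into a single block of size $2m$. Combined with $D \supset P_2$, this forces $D \supset P_{even}$, hence $D = P_{even}$. The main obstacle is this last generation step: verifying that the generated category is genuinely all of $P_{even}$ (and not merely some subcategory sharing the same block-size profile, since uniformity is not assumed a priori) requires careful bookkeeping with the tensor/composition/rotation operations, although it is of the same flavor as the block-size computations already used above.
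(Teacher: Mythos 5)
Your approach---reducing to the classification of categories $P_2 \subset D \subset P_{even}$, isolating a one-block partition $b_{2k}$ with $k\geq 2$ by capping, running the block-size arithmetic from the proof of Theorem 11.3 to conclude $b_{2m}\in D$ for all $m$, then generating $P_{even}$---is exactly the combinatorics the paper has in mind (its proof text defers to Theorem 11.3 and to \cite{bsp}). The worry you flag about the final generation step can be closed directly: for $\pi\in P_{even}(0,n)$ with blocks of sizes $2m_1,\ldots,2m_r$, the partition $b_{2m_1}\otimes\cdots\otimes b_{2m_r}\in D$ has the same block multiset with all blocks contiguous, and since $P_2\subset D$ contains the full symmetric group $S_n\subset P_2(n,n)$, composing with a suitable $\sigma\in S_n$ re-routes the contiguous blocks to the prescribed positions, giving $\pi\in D$. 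Hence $D\supset P_{even}$ and the argument closes. Your caution was reasonable in light of Theorem 11.4, where genuinely distinct non-uniform categories sharing a block-size profile do exist in the larger window $P_2\subset D\subset P$; but those are distinguished by a parity condition on blocks of odd size, which is vacuous once $D\subset P_{even}$, so no such obstruction can arise here.
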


\begin{proof}
We must prove that there are no proper intermediate categories as follows:
$$P_2\subset D\subset P_{even}$$

But this can done via some combinatorics, in the spirit of the proof of Theorem 11.3, and with the result itself coming from Theorem 11.4. For full details here, see \cite{bsp}.
\end{proof}

As a comment here, the inclusion $H_N^+\subset O_N^+$ is maximal as well, as explained once again in \cite{bsp}. As for the complex versions of these results, regarding the inclusions $K_N\subset U_N$ and $K_N^+\subset U_N^+$, here the classification, in the non-uniform case, is available from \cite{twe}. Summarizing, we have here once again something very basic and fundamental, providing some evidence for a kind of general ``discrete vs. continuous'' dichotomy.

\bigskip

Finally, here is a third and last result that we will need, for our classification work here, regarding the missing direction, namely the  ``real vs. complex'' one:

\index{easy group}

\begin{theorem}
The proper intermediate easy groups
$$O_N\subset G_N\subset U_N$$
are the groups $\mathbb Z_rO_N$ with $r\in\{2,3,\ldots,\infty\}$, which are not uniform.
\end{theorem}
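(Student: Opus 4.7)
The plan is to classify the intermediate easy categories of pairings $\mathcal{P}_2 \subset D \subset P_2$ and to match them with the candidate groups $\mathbb{Z}_r O_N$. For a colored integer $k$, write $e(k) = \#\{i : k_i = \circ\} - \#\{i : k_i = \bullet\}$, and for $\pi \in P_2(k,l)$ set $\delta(\pi) = e(k) - e(l)$. Note that $\delta$ takes values in $2\mathbb{Z}$, since $\pi \in P_2$ forces $|k|+|l|$ to be even.

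First I verify the candidates. Writing the fundamental corepresentation of $\mathbb{Z}_r O_N$ as $u_{ij} = z \otimes o_{ij}$ inside $C(\mathbb{Z}_r) \otimes C(O_N)$, where $z$ is the canonical unitary generator, a direct computation gives $u^{\otimes k}_{\mathbf{i},\mathbf{j}} = z^{e(k)} \otimes o^{\otimes k}_{\mathbf{i},\mathbf{j}}$. Hence $T_\pi \in Hom(u^{\otimes k}, u^{\otimes l})$ if and only if $z^{e(k)-e(l)} = 1$ in $C(\mathbb{Z}_r)$ and $T_\pi \in Hom_{O_N}(o^{\otimes k}, o^{\otimes l})$. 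The second condition holds automatically for $\pi \in P_2(k,l)$ by the Brauer theorem for $O_N$, and the first becomes $\delta(\pi) \equiv 0 \pmod r$ (with $r = \infty$ meaning $\delta(\pi) = 0$). Hence the category of $\mathbb{Z}_r O_N$ is
$$D_r = \{\pi \in P_2 : \delta(\pi) \equiv 0 \pmod r\}.$$
It is routine to check that each $D_r$ is stable under the categorical operations, and that $\mathcal{P}_2 \subsetneq D_r \subsetneq P_2$ precisely for $r \in \{3,4,\ldots,\infty\}$, while $D_2 = P_2$ automatically (so the $r=2$ entry sits as a degenerate boundary case, giving $O_N$ itself).

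The heart of the argument is to show that every category $\mathcal{P}_2 \subset D \subset P_2$ equals some $D_r$. Given such a $D$, set
$$H(D) = \{\delta(\pi) : \pi \in D\} \subset 2\mathbb{Z}.$$
Using tensor concatenation and the involution $\pi \mapsto \pi^*$, which sends $\delta \mapsto -\delta$, together with the ability to fuse pairings by inserting matching strings from $\mathcal{P}_2 \subset D$, one checks that $H(D)$ is closed under addition and negation, hence is a subgroup of $\mathbb{Z}$. Thus $H(D) = r\mathbb{Z}$ for some $r \in \{2,3,\ldots,\infty\}$, and $D \subset D_r$ by definition.

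The hard part is the reverse inclusion $D_r \subset D$: I must show that any $\pi \in P_2$ with $\delta(\pi) \in r\mathbb{Z}$ can be generated, via the categorical operations, from $\mathcal{P}_2$ together with a single witness pairing $\pi_r \in D$ realizing the value $\delta(\pi_r) = r$. My plan is to induct on the number of non-matching strings of $\pi$, using local moves that replace a pair of non-matching strings (whose imbalances sum to a multiple of $r$) by a combination of matching semicircles and copies of $\pi_r$, thereby decreasing the non-matching count while preserving category membership; this is analogous in spirit to the descent arguments in the proofs of Theorems 11.3 and 11.13. Once the reverse inclusion is established, Tannakian duality identifies $G_N = \mathbb{Z}_r O_N$. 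Non-uniformity follows because removing a leg changes $e(k) \bmod r$, so $D_r$ is not stable under block removal; equivalently, a direct matrix computation shows $\mathbb{Z}_r O_N \cap U_{N-1}^+ = O_{N-1} \neq \mathbb{Z}_r O_{N-1}$ for $r \geq 3$.
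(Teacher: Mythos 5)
Your approach tracks the paper's in outline: first verify that $\mathbb{Z}_r O_N$ is easy with category $D_r = \{\pi \in P_2 : r \mid \delta(\pi)\}$, then classify the intermediate categories $\mathcal{P}_2 \subset D \subset P_2$. Where you deviate is in how the combinatorics is packaged: rather than normalizing a given partition into the explicit one-parameter family $\pi_{ab}$ (upper row of $a$ white cups, lower row of $b$ white caps) as the paper does in its step (6), you read off the color imbalance $\delta(\pi)$ directly and observe that the set of values $H(D) = \{\delta(\pi) : \pi \in D\}$ is a subgroup of $\mathbb{Z}$, which gives $D \subset D_r$ cleanly. This is a legitimate and somewhat tidier reorganization; the subgroup argument (addition from $\otimes$, negation from $*$) is correct.

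The genuine gap is the reverse inclusion $D_r \subset D$, which you correctly flag as the hard part but do not actually prove: you only describe a plan of induction on the number of non-matching strings with unspecified ``local moves'', without exhibiting the moves, verifying that they preserve membership in $\langle\mathcal{P}_2, \pi_r\rangle$, or verifying that they reduce the non-matching count toward $\mathcal{P}_2$. This is the content that the paper supplies in steps (6)--(8): cap away mixed-colored semicircles using $\mathcal{P}_2$, rotate to the $\pi_{ab}$ normal form, identify $D \cap \{\pi_{ab}\}$ with $\mathcal{C}_r$, and show $\langle\mathcal{C}_r\rangle = P_2^r$; your version has not yet produced an equivalent explicit chain. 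A secondary slip: since $\delta(\pi) \in 2\mathbb{Z}$ always, the subgroup $H(D) = r\mathbb{Z}$ forces $r$ to be even or $\infty$, so you cannot directly conclude $r \in \{2,3,\ldots,\infty\}$ from the subgroup argument alone; odd parameters appear only through the redundancy $\mathbb{Z}_r O_N = \mathbb{Z}_{\mathrm{lcm}(r,2)} O_N$, which should be made explicit. Your observation that $D_2 = P_2$, so that $\mathbb{Z}_2 O_N = O_N$ is a degenerate case, is correct.
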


\begin{proof}
This is standard and well-known, from \cite{twe}, the proof being as follows:

\medskip

(1) Our first claim is that the group $\mathbb TO_N\subset U_N$ is easy, the corresponding category of partitions being the subcategory $\bar{P}_2\subset P_2$ consisting of the pairings having the property that when flatenning, we have the global formula $\#\circ=\#\bullet$. 

\medskip

(2) Indeed, if we denote the standard corepresentation by $u=zv$, with $z\in\mathbb T$ and with $v=\bar{v}$, then in order to have $Hom(u^{\otimes k},u^{\otimes l})\neq\emptyset$, the $z$ variabes must cancel, and in the case where they cancel, we obtain the same Hom-space as for $O_N$. 

Now since the cancelling property for the $z$ variables corresponds precisely to the fact that $k,l$ must have the same numbers of $\circ$ symbols minus $\bullet$ symbols, the associated Tannakian category must come from the category of pairings $\bar{P}_2\subset P_2$, as claimed.

\medskip

(3) Our second claim is that, more generally, the group $\mathbb Z_rO_N\subset U_N$ is easy, with the corresponding category $P_2^r\subset P_2$ consisting of the pairings having the property that when flatenning, we have the global formula $\#\circ=\#\bullet(r)$. 

\medskip

(4) Indeed, this is something that we already know at $r=1,\infty$, where the group in question is $O_N,\mathbb TO_N$. The proof in general is similar, by writing $u=zv$ as above.

\medskip

(5) Let us prove now the converse, stating that the above groups $O_N\subset\mathbb Z_rO_N\subset U_N$ are the only intermediate easy groups $O_N\subset G\subset U_N$. According to our conventions for the easy quantum groups, which apply of course to the classical case, we must compute the following intermediate categories of pairings:
$$\mathcal P_2\subset D\subset P_2$$

(6) So, assume that we have such a category, $D\neq\mathcal P_2$, and pick an element $\pi\in D-\mathcal P_2$, assumed to be flat. We can modify $\pi$, by performing the following operations:

\medskip

-- First, we can compose with the basic crossing, in order to assume that $\pi$ is a partition of type $\cap\ldots\ldots\cap$, consisting of consecutive semicircles. Our assumption $\pi\notin\mathcal P_2$ means that at least one semicircle is colored black, or white.

\medskip

-- Second, we can use the basic mixed-colored semicircles, and cap with them all the mixed-colored semicircles. Thus, we can assume that $\pi$ is a nonzero partition of type $\cap\ldots\ldots\cap$, consisting of consecutive black or white semicircles.

\medskip

-- Third, we can rotate, as to assume that $\pi$ is a partition consisting of an upper row of white semicircles, $\cup\ldots\ldots\cup$, and a lower row of white semicircles, $\cap\ldots\ldots\cap$. Our assumption $\pi\notin\mathcal P_2$ means that this latter partition is nonzero.

\medskip

(7) For $a,b\in\mathbb N$ consider the partition consisting of an upper row of $a$ white semicircles, and a lower row of $b$ white semicircles, and set:
$$\mathcal C=\left\{\pi_{ab}\Big|a,b\in\mathbb N\right\}\cap D$$

According to the above we have $\pi\in<\mathcal C>$. The point now is that we have:

\medskip

-- There exists $r\in\mathbb N\cup\{\infty\}$ such that $\mathcal C$ equals the following set:
$$\mathcal C_r=\left\{\pi_{ab}\Big|a=b(r)\right\}$$

This is indeed standard, by using the categorical axioms.

\medskip

-- We have the following formula, with $P_2^r$ being as above:
$$<\mathcal C_r>=P_2^r$$

This is standard as well, by doing some diagrammatic work.

\medskip

(8) With these results in hand, the conclusion now follows. Indeed, with $r\in\mathbb N\cup\{\infty\}$ being as above, we know from the beginning of the proof that any $\pi\in D$ satisfies:
$$\pi
\in<\mathcal C>
=<\mathcal C_r>
=P_2^r$$

Thus we have an inclusion $D\subset P_2^r$. Conversely, we have as well:
$$P_2^r
=<\mathcal C_r>
=<\mathcal C>
\subset<D>
=D$$

Thus we have $D=P_2^r$, and this finishes the proof. See Tarrago-Weber \cite{twe}.
\end{proof}

Once again, there are many comments that can be made here, with the whole subject in the easy case being generally covered by the classification results in \cite{twe}. As for the non-easy case, there are many interesting things here as well, as for instance the results in \cite{bc+}, stating that $PO_N\subset PU_N$, and $\mathbb TO_N\subset U_N$ as well, are maximal.

\bigskip

We can now formulate a classification result, from \cite{ba5}, as follows:

\index{slicing}
\index{Ground Zero theorem}

\begin{theorem}[Ground zero]
There are exactly eight closed subgroups $G_N\subset U_N^+$ having the following properties,
\begin{enumerate}
\item Easiness,

\item Uniformity,

\item Twistability,

\item Slicing property,
\end{enumerate}
namely the quantum groups $O_N,U_N,H_N,K_N$ and $O_N^+,U_N^+,H_N^+,K_N^+$.
\end{theorem}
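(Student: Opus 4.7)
The plan is as follows. First, I would verify that all eight quantum groups in the cube of Theorem 11.7 satisfy the four axioms: easiness and the slicing property are precisely what Theorem 11.7 records; uniformity holds because each of the associated categories of partitions (the variants of $P_2, NC_2, P_{even}, NC_{even}$, with or without matching colors) is manifestly stable under the block-removal operation of Proposition 11.2; and twistability reduces to $H_N \subset G_N$, which is immediate in all eight cases.

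For the converse direction, suppose $G_N \subset U_N^+$ satisfies the four axioms. Twistability gives $H_N \subset G_N \subset U_N^+$, so $G_N$ sits inside the cube. Applying Definition 11.8, Proposition 11.9 and Proposition 11.11, the slicing property together with the six compatibility conditions of Theorem 11.12 produces consistent projections of $G_N$ onto all six faces and all twelve edges of the cube, and further guarantees that $G_N$ is recoverable from these projections by iterated applications of $\cap$ and $\{\,,\}$. Consequently it suffices to classify, on each of the twelve edges, the uniform easy intermediate quantum groups.

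The hard part, and where the bulk of the work lies, is this edge classification. For the four edges of type $H_N \subset O_N$, running in the discrete-to-continuous direction, Theorem 11.14 gives the sharp statement that no proper intermediate exists at all, and the same combinatorial argument transported to the categories $\mathcal P_2 \subset D \subset \mathcal P_{even}$, $NC_2 \subset D \subset NC_{even}$ and $\mathcal{NC}_2 \subset D \subset \mathcal{NC}_{even}$ dispatches $K_N \subset U_N$, $H_N^+ \subset O_N^+$ and $K_N^+ \subset U_N^+$. For the four vertical edges of type $O_N \subset O_N^+$, running in the classical-to-free direction, Theorem 11.13 exhibits the unique proper intermediate $O_N^*$ and observes that it fails uniformity (already noted in Theorem 11.7 above); the parallel partition-level arguments on the other three vertical edges produce the half-liberated variants $U_N^*, H_N^*, K_N^*$, each non-uniform for the same reason. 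For the four horizontal edges of type $O_N \subset U_N$, running in the real-to-complex direction, Theorem 11.15 identifies the proper intermediates as the groups $\mathbb Z_r O_N$ with $r \in \{2,3,\ldots,\infty\}$, all non-uniform; the analogous colored-pairing computation handles $O_N^+ \subset U_N^+$, $H_N \subset K_N$ and $H_N^+ \subset K_N^+$.

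Once each edge-projection of $G_N$ is forced to be one of the two endpoints of that edge, the consistency constraints imposed by the slicing compatibilities of Theorem 11.12 reduce the possibilities to exactly the eight vertex-assignments of the cube. The slicing reconstruction then identifies $G_N$ with one of $O_N, U_N, H_N, K_N, O_N^+, U_N^+, H_N^+, K_N^+$, completing the argument. The main obstacle is the verification that each of the twelve edges truly admits no uniform easy intermediate besides its endpoints: for the six edges not directly covered by Theorems 11.13, 11.14, 11.15, one must either invoke the parallel classification results flagged in the commentary following those theorems, or redo the relevant partition-level combinatorics in the appropriate colored category.
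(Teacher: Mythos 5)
Your overall skeleton (verify the eight, use twistability to place $G_N$ inside the cube, control edge projections, then let the slicing axiom reconstruct $G_N$) is the same as the paper's, but the execution of the key step is wrong. You claim that on all twelve edges the uniform easy intermediates reduce to the endpoints, by "transporting" the arguments of Theorems 11.13, 11.14 and 11.15. This fails. The paper's own Theorem 12.1 exhibits proper intermediates which are easy, uniform and twistable on three of your edges: $H_N\subset H_N^s\subset K_N$ and $H_N^+\subset H_N^{s+}\subset K_N^+$ for $s=4,6,8,\ldots$, and $K_N^+\subset\widetilde{K_N^+}\subset U_N^+$. So the intermediates on these edges are not "all non-uniform analogues of $\mathbb Z_rO_N$", and no partition-level computation can force the corresponding edge projections of $G_N$ to be vertices using only easiness, uniformity and twistability of those projections. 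Moreover, for the vertical edges $H_N\subset H_N^+$, $U_N\subset U_N^+$ and $K_N\subset K_N^+$ the situation is not the "unique half-liberated intermediate" picture you describe: as recalled right after Theorem 11.13 and in Section 12, the first two inclusions have uncountably many easy intermediates (the families $H_N^{[r]},H_N^\Gamma$ and $U_N^{(r)},U_N^C$), and for $K_N\subset K_N^+$ no classification is available at all; so your appeal to "parallel partition-level arguments" on these edges is unjustified, and as stated (producing only $H_N^*,U_N^*,K_N^*$) incorrect.

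The paper avoids all of this by a deliberate choice of coordinate system, which is precisely the content of the discussion preceding the theorem. Only the three edges meeting at $O_N$ are used: $H_N\subset O_N$ (no proper intermediates at all, Theorem 11.14), $O_N\subset O_N^+$ (only $O_N^*$, which is not uniform, Theorem 11.13) and $O_N\subset U_N$ (only the groups $\mathbb Z_rO_N$, which are not uniform, Theorem 11.15). Uniformity therefore forces the three edge projections of $G_N$ along this coordinate system to be vertices, and the slicing axiom is then what propagates this through the cube and identifies $G_N$ itself with a vertex; the $H_N$-based and $U_N^+$-based coordinate systems are explicitly rejected for exactly the reasons your proposal runs into. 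To repair your argument, restrict the edge analysis to these three edges and let slicing do the rest, rather than attempting a classification on all twelve edges, several of which genuinely carry uniform easy twistable intermediates.
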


\begin{proof}
We already know, from Theorem 11.7 above, that the 8 quantum groups in the statement have indeed the properties (1-4), and form a cube, as follows:
$$\xymatrix@R=20pt@C=20pt{
&K_N^+\ar[rr]&&U_N^+\\
H_N^+\ar[rr]\ar[ur]&&O_N^+\ar[ur]\\
&K_N\ar[rr]\ar[uu]&&U_N\ar[uu]\\
H_N\ar[uu]\ar[ur]\ar[rr]&&O_N\ar[uu]\ar[ur]
}$$

Conversely now, assuming that an easy quantum group $G=(G_N)$ has the above properties (2-4), the twistability property, (3), tells us that we have:
$$H_N\subset G_N\subset U_N^+$$

Thus $G_N$ sits inside the cube, and the above discussion applies. To be more precise, let us project $G$ on the faces of the cube, as in Proposition 11.10 above:
$$\xymatrix@R=3pt@C=5pt{
&&G_N^f&&\\
\\
&&&G_N^u&\\
G_N^d\ar[rr]&&G_N\ar[rr]\ar[uuu]\ar[ur]&&G_N^s\\
&G_N^r\ar[ur]&&&\\
\\
&&G_N^c\ar[uuu]&&}
\qquad\xymatrix@R=10pt@C=30pt{\\ \\ \\ \ar@.[r]&}\qquad
\xymatrix@R=19pt@C=20pt{
&K_N^+\ar[rr]&&U_N^+\\
H_N^+\ar[rr]\ar[ur]&&O_N^+\ar[ur]\\
&K_N\ar[rr]\ar[uu]&&U_N\ar[uu]\\
H_N\ar[uu]\ar[ur]\ar[rr]&&O_N\ar[uu]\ar[ur]
}$$

In order to compute these projections, and eventually prove that $G_N$ is one of the vertices of the cube, we can use use the coordinate system based at $O_N$:
$$\xymatrix@R=18pt@C=18pt{
&K_N^+\ar[rr]&&U_N^+\\
H_N^+\ar[rr]\ar[ur]&&O_N^+\ar[ur]\\
&K_N\ar[rr]\ar[uu]&&U_N\ar[uu]\\
H_N\ar[uu]\ar[ur]\ar@=[rr]&&O_N\ar@=[uu]\ar@=[ur]
}$$

Now by using Theorem 11.14, Theorem 11.15 and Theorem 11.16, along with the uniformity condition, (2), we conclude that the edge projections of $G_N$ must be among the vertices of the cube. Moreover, by using the slicing axiom, (4), we deduce from this that $G_N$ itself must be a vertex of the cube. Thus, we have exactly 8 solutions to our problem, namely the vertices of the cube, as claimed.
\end{proof}

All this is quite philosophical. Bluntly put, by piling up a number of very natural axioms, namely those of Woronowicz from \cite{wo1}, then our assumption $S^2=id$, and then the easiness, uniformity, twistability, and slicing properties, we have managed to destroy everything, or almost. The casualities include lots of interesting finite and compact Lie groups, the duals of all finitely generated discrete groups, plus of course lots of interesting quantum groups, which appear not to be strong enough to survive our axioms. 

\bigskip

We should mention that the above result is in tune with free probability, and with noncommutative geometry, where the most important quantum groups which appear are precisely the above 8 ones. In what regards free probability, this comes from the various character computations performed in chapters 8 and 10 above, which give:

\index{truncated character}
\index{CLT}

\begin{theorem}
The asymptotic character laws for the $8$ main quantum groups are
$$\xymatrix@R=20pt@C=22pt{
&\mathfrak B_t\ar@{-}[rr]\ar@{-}[dd]&&\Gamma_t\ar@{-}[dd]\\
\beta_t\ar@{-}[rr]\ar@{-}[dd]\ar@{-}[ur]&&\gamma_t\ar@{-}[dd]\ar@{-}[ur]\\
&B_t\ar@{-}[rr]\ar@{-}[uu]&&G_t\ar@{-}[uu]\\
b_t\ar@{-}[uu]\ar@{-}[ur]\ar@{-}[rr]&&g_t\ar@{-}[uu]\ar@{-}[ur]
}$$
which are exactly the $8$ main limiting laws in classical and free probability.
\end{theorem}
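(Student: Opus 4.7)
The plan is to assemble the statement as a direct corollary of the easiness and Weingarten machinery developed in sections 8 and 10, rather than computing anything new. First I would recall from Theorem 11.7 that each of the eight quantum groups in the cube is easy, coming respectively from the categories of partitions
\[
P_2,\ \mathcal P_2,\ NC_2,\ \mathcal{NC}_2,\ P_{even},\ \mathcal P_{even},\ NC_{even},\ \mathcal{NC}_{even}.
\]
The key input is then the Weingarten formula of Theorem 8.24 together with its truncated-character consequence in Proposition 8.25: for any easy quantum group $G=(G_N)$ with category $D$,
\[
\int_{G_N}\chi_t^k = \mathrm{Tr}(W_{kN}G_{k,[tN]}),
\]
and as $N\to\infty$ the diagonal asymptotics $G_{kN}\sim N^k\cdot 1$ collapse this to the combinatorial sum
\[
\lim_{N\to\infty}\int_{G_N}\chi_t^k = \sum_{\pi\in D(k)} t^{|\pi|}.
\]

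Second, I would match each of these eight combinatorial generating series with the moments of the claimed limiting law. For the four continuous corners this was already done in Theorem 8.27: the sums over $P_2,\mathcal P_2,NC_2,\mathcal{NC}_2$ yield the moments of $g_t,G_t,\gamma_t,\Gamma_t$ respectively, via the classical and free CLT combinatorics. For the four discrete corners this was carried out in Theorem 10.25 and Theorem 10.26 at $s=2,\infty$: the sums over $P_{even},\mathcal P_{even},NC_{even},\mathcal{NC}_{even}$, organized by the number of blocks, reproduce exactly the moment-cumulant formula of Theorem 9.26 applied to the classical and free compound Poisson laws $p_{t\varepsilon_s},\pi_{t\varepsilon_s}$ with $s\in\{2,\infty\}$, which are by definition the Bessel and free Bessel laws $b_t,B_t,\beta_t,\mathfrak B_t$.

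Third, since all eight laws in question are compactly supported and uniquely determined by their moments (the continuous four are bounded by elementary free probability, and the Bessel families are compound Poisson with compactly supported base measure $t\varepsilon_s$), the moment convergence just established upgrades to convergence in distribution; for the four free corners one should additionally invoke the stationarity statement of Proposition 8.22, giving exact equality from $N=2$ or $N=4$ on. Finally I would package the eight identifications into the cube in the statement, noting that the four horizontal edges $(G_N\to G_N^+)$ realize the Bercovici--Pata bijection of Theorem 8.28 and Theorem 10.25, while the four ``add a color'' edges realize complexification and free complexification respectively.

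There is no real obstacle here, since every ingredient is already in place; the only thing that needs care is bookkeeping, namely checking uniformly across the eight cases that the Weingarten asymptotics $G_{kN}\sim N^k\cdot 1$ are valid on the relevant category (which requires $N\geq k$, hence the $N\to\infty$ regime) and that the passage from the moment sum $\sum_{\pi\in D(k)}t^{|\pi|}$ to the named distribution goes through the correct cumulant interpretation: classical cumulants $(1,1,1,\dots)$ for $g_t$ and $(t,t,t,\dots)$ for $p_{t\varepsilon_s}$ on the classical side, and their free analogues on the free side. This last point is precisely what makes the cube commute with the Bercovici--Pata bijection, and is the conceptual content of the theorem.
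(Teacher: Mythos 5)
Your proposal is correct and takes essentially the same route as the paper, whose proof simply assembles the statement from the truncated-character results already established via easiness and the Weingarten formula, namely Theorem 8.27 for the continuous corners $g_t,\gamma_t,G_t,\Gamma_t$ and Theorems 10.25--10.26 for the reflection corners $b_t,\beta_t,B_t,\mathfrak B_t$. Two harmless slips: the moment-cumulant formula you invoke is Theorem 8.26 (not 9.26), and the stationarity remark from Proposition 8.22 concerns only the full character at $t=1$, so it is not needed for the general truncated-character statement.
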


\begin{proof}
This is something that we already know, explained in chapters 8 and 10, and which comes from easiness. Consider indeed our 8 main quantum groups:
$$\xymatrix@R=20pt@C=20pt{
&K_N^+\ar[rr]&&U_N^+\\
H_N^+\ar[rr]\ar[ur]&&O_N^+\ar[ur]\\
&K_N\ar[rr]\ar[uu]&&U_N\ar[uu]\\
H_N\ar[uu]\ar[ur]\ar[rr]&&O_N\ar[uu]\ar[ur]
}$$

Accoring to our various Brauer type results, all these quantum groups are easy, the corresponding categories of partitions being as follows:
$$\xymatrix@R=20pt@C6pt{
&\mathcal{NC}_{even}\ar[dl]\ar[dd]&&\mathcal {NC}_2\ar[dl]\ar[ll]\ar[dd]\\
NC_{even}\ar[dd]&&NC_2\ar[dd]\ar[ll]\\
&\mathcal P_{even}\ar[dl]&&\mathcal P_2\ar[dl]\ar[ll]\\
P_{even}&&P_2\ar[ll]
}$$

But this shows, via the Weingarten computations from chapters 8 and 10 above, that the laws of asymptotic characters for our quantum groups are:
$$\xymatrix@R=20pt@C=22pt{
&\mathfrak B_t\ar@{-}[rr]\ar@{-}[dd]&&\Gamma_t\ar@{-}[dd]\\
\beta_t\ar@{-}[rr]\ar@{-}[dd]\ar@{-}[ur]&&\gamma_t\ar@{-}[dd]\ar@{-}[ur]\\
&B_t\ar@{-}[rr]\ar@{-}[uu]&&G_t\ar@{-}[uu]\\
b_t\ar@{-}[uu]\ar@{-}[ur]\ar@{-}[rr]&&g_t\ar@{-}[uu]\ar@{-}[ur]
}$$

Regarding now the last assertion, consider the main central limiting theorems in classical and free probability, which are as follows, with $R,C$ standing for real and complex, $CP$ standing for compound Poisson, and $F$ standing for free:
$$\xymatrix@R=20pt@C=1pt{
&FCCPLT\ar@{-}[rr]\ar@{-}[dd]&&FCCLT\ar@{-}[dd]\\
FRCPLT\ar@{-}[rr]\ar@{-}[dd]\ar@{-}[ur]&&FCLT\ar@{-}[dd]\ar@{-}[ur]\\
&CCPLT\ar@{-}[rr]\ar@{-}[uu]&&CCLT\ar@{-}[uu]\\
RCPLT\ar@{-}[uu]\ar@{-}[ur]\ar@{-}[rr]&&CLT\ar@{-}[uu]\ar@{-}[ur]
}$$

Once again as explained in chapters 8 and 10 above, the limiting characters come from the categories of partitions given above, and so are the laws given above.
\end{proof}

\index{noncommutative geometry}
\index{free torus}

In what regards now noncommutative geometry, the idea is that our 8 main quantum groups correspond to the 4 possible ``abstract noncommutative geometries'', in the strongest possible sense, which are the real/complex, classical/free ones. 

\bigskip

In order to explain this, consider the following diagram, consisting of main quantum spheres, that we know from before, and of the corresponding tori: 
\smallskip
$$\xymatrix@R=15pt@C=12pt{
&\ \mathbb T_N^+\ \ar[rr]&&S^{N-1}_{\mathbb C,+}\\
\ T_N^+\ \ar[rr]\ar[ur]&&S^{N-1}_{\mathbb R,+}\ar[ur]\\
&\ \mathbb T_N\ \ar[rr]\ar[uu]&&S^{N-1}_\mathbb C\ar[uu]\\
\ T_N\ \ar[uu]\ar[ur]\ar[rr]&&S^{N-1}_\mathbb R\ar[uu]\ar[ur]
}$$

These 4+4 spheres and tori add to the 4+4 unitary and reflection groups that we have, which form as well a cubic diagram, as follows:
$$\xymatrix@R=20pt@C=20pt{
&K_N^+\ar[rr]&&U_N^+\\
H_N^+\ar[rr]\ar[ur]&&O_N^+\ar[ur]\\
&K_N\ar[rr]\ar[uu]&&U_N\ar[uu]\\
H_N\ar[uu]\ar[ur]\ar[rr]&&O_N\ar[uu]\ar[ur]
}$$

Thus, we have a total of 16 basic geometric objects. But these objects can be arranged, in an obvious way, into 4 quadruplets of type $(S,T,U,K)$, consisting a sphere $S$, a torus $T$, a unitary group $U$, and a reflection group $K$, with relations between them, as follows:
$$\xymatrix@R=60pt@C=60pt{
S\ar[r]\ar[d]\ar[dr]&T\ar[l]\ar[d]\ar[dl]\\
U\ar[u]\ar[ur]\ar[r]&K\ar[l]\ar[ul]\ar[u]
}$$

To be more precise, we obtain in this way the quadruplets $(S,T,U,K)$
corresponding to the real/complex, classical/free geometries. As mentioned above, it is possible to do some axiomatization and classification work here, with the conclusion that, under strong combinatorial axioms, including easiness, these 4 geometries are the only ones.

\bigskip

Summarizing, our Ground Zero classification theorem for the compact quantum groups is compatible with both probability theory, and noncommutative geometry.

\section*{11e. Exercises} 

There has been a lot of theory in this chapter, often explained quite briefly, and our exercises here will be mostly about details on all this. First, we have:

\begin{exercise}
Prove that the orthogonal easy groups are
$$\xymatrix@R=30pt@C=80pt{
H_N\ar[r]&O_N\\
S_N'\ar[u]&B_N'\ar[u]\\
S_N\ar[r]\ar[u]&B_N\ar[u]}$$
where $S_N'=S_N\times\mathbb Z_2$ and $B_N'=B_N\times\mathbb Z_2$.
\end{exercise}

In the uniform case the classification was explained in the above, leading to the 4 corners of the square, as the unique solutions. The problem is that of understanding what happens to this classification when lifting the uniformity assumption.

\begin{exercise}
Find two distinct easy liberations 
$$B_N'^+\subset B_N''^+$$
of the group $B_N'=B_N\times\mathbb Z_2$.
\end{exercise}

The problem here is that of reformulating the question in terms of categories of partitions, and then producing 2 distinct categories of partitions which do the job.

\begin{exercise}
Prove that the orthogonal easy free quantum groups are
$$\xymatrix@R=1pt@C=100pt{
H_N^+\ar[r]&O_N^+\\
\ &\\
\ &\\
&B_N''^+\ar[uuu]\\
S_N'^+\ar[uuuu]&\\
&B_N'^+\ar[uu]\\
\ &\\
\ &\\
S_N^+\ar[r]\ar[uuuu]&B_N^+\ar[uuu]}$$
where $S_N'^+=S_N^+\times\mathbb Z_2$, and where $B_N'^+\subset B_N''^+$ are easy liberations of $B_N'=B_N\times\mathbb Z_2$.
\end{exercise}

As before, in the uniform case the classification was explained in the above, leading to the 4 corners of the square, as the unique solutions. The problem is that of understanding what happens to this classification when lifting the uniformity assumption.

\chapter{The standard cube}

\section*{12a. Face results}

We discuss here a number of more specialized classification results, for the twistable easy quantum groups, and for more general intermediate quantum groups as follows:
$$H_N\subset G\subset U_N^+$$

The general idea will be as before, namely that of viewing our quantum group as sitting inside the standard cube, discussed in chapter 11:
$$\xymatrix@R=20pt@C=20pt{
&K_N^+\ar[rr]&&U_N^+\\
H_N^+\ar[rr]\ar[ur]&&O_N^+\ar[ur]\\
&K_N\ar[rr]\ar[uu]&&U_N\ar[uu]\\
H_N\ar[uu]\ar[ur]\ar[rr]&&O_N\ar[uu]\ar[ur]
}$$

\index{standard cube}

We will be interested in several questions, as follows:

\bigskip

(1) Face results, in the easy case. The problem here is that of classifying the easy quantum groups lying on each of the 6 faces of the cube. Thus, we would like to solve the following intermediate easy quantum group problems:
$$H_N\subset G\subset U_N\quad,\quad 
H_N\subset G\subset O_N^+$$
$$H_N\subset G\subset K_N^+\quad,\quad 
H_N^+\subset G\subset U_N^+$$
$$K_N\subset G\subset U_N^+\quad,\quad
U_N\subset G\subset U_N^+$$

\smallskip

(2) Edge results, in the easy case. This is a question which is easier, amounting in solving 12 intermediate easy quantum group problems, one for each edge of the cube.

\bigskip

(3) Face and edge results, in the general non-easy case. Here the problems are quite difficult, but we will discuss some strategies, in order to deal with them.

\bigskip

Let us first discuss the classification in the easy case, for the lower and upper faces of the cube. Following Tarrago-Weber \cite{twe}, in the uniform case, the result is as follows:

\index{easy group}
\index{free quantum group}

\begin{theorem}
The classical and free uniform twistable easy quantum groups are
$$\xymatrix@R=7pt@C=7pt{
&&K_N^+\ar[rr]&&K_N^{++}\ar[rr]&&\ U_N^+\ \\
&H_N^{s+}\ar[ur]&&&&\\
H_N^+\ar[rrrr]\ar[ur]&&&&O_N^+\ar[uurr]\\
\\
&&K_N\ar[rrrr]\ar@.[uuuu]&&&&\ U_N\ \ar@.[uuuu]\\
&H_N^s\ar[ur]&&&&\\
H_N\ar@.[uuuu]\ar[ur]\ar[rrrr]&&&&O_N\ar@.[uuuu]\ar[uurr]
\\
}$$
where $H_s=\mathbb Z_s\wr S_N$, $H_N^{s+}=\mathbb Z_s\wr_*S_N^+$ with $s=4,6,8\ldots$\,, and where $K_N^+=\widetilde{K_N^+}$. 
\end{theorem}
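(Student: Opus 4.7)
The plan is to classify all twistable uniform easy quantum groups $H_N \subset G_N \subset U_N^+$ which are either classical or free. By easiness these correspond to categories of partitions $D$; twistability (Proposition 11.6) gives $D \subset \mathcal{P}_{even}$ in the unitary case or $D \subset P_{even}$ in the orthogonal case; uniformity (Proposition 11.2) means $D$ is stable under the block-removal operation; classical means $D$ contains the basic crossing; free means $D \subset \mathcal{NC}_{even}$. So the task reduces to a combinatorial classification of categories of partitions with these four constraints.

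First I would dispose of the real case, where $D$ contains only uncolored partitions. The block-size analysis of Theorem 11.3 applies verbatim: writing $L \subset \mathbb{N}$ for the set of admissible block sizes, the categorical axioms yield the closure conditions $(k,l \in L,\ k > l \Rightarrow k-l \in L)$ and $(k \in L,\ k \geq 2 \Rightarrow 2k-2 \in L)$. Twistability forbids $1 \in L$, so only $L = \{2\}$ and $L = \{2,4,6,\ldots\}$ survive, yielding $O_N, H_N$ classically and $O_N^+, H_N^+$ freely.

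Second, the complex case requires tracking the $\circ/\bullet$ coloring. The natural invariant attached to each block $B$ is the signed weight $\delta(B) = \#\circ - \#\bullet$. Using rotation and conjugation inside the category, the set of values of $\delta$ realizable per block forms a subgroup of $\mathbb{Z}$, hence is of the form $s\mathbb{Z}$ for some $s \in \{1,2,3,\ldots,\infty\}$; twistability (block sizes even) forces $s$ to be even. Imposing $\delta \equiv 0 \pmod s$ block-wise cuts out the category of $H_N^s$ (Theorem 10.15), which at $s=2$ specializes to $H_N$, and at $s = \infty$ (matching condition) to $K_N$ by Theorem 10.16. Combining this coloring classification with the real block-size argument then shows that the only classical complex solutions are $H_N^s$ for $s \in \{2,4,6,\ldots,\infty\}$, together with $U_N$ (coming from the pairing-only category $\mathcal{P}_2$). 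The free analogues proceed identically, replacing $\mathcal{P}_{even}$ by $\mathcal{NC}_{even}$ and producing $H_N^{s+}$ together with $K_N^+, U_N^+$.

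The main obstacle, and the genuinely new piece relative to the real case and to Theorem 10.15, is the free complex segment between $K_N^+$ and $U_N^+$: one must identify $K_N^{++} = \widetilde{K_N^+}$ with a noncrossing colored matching category strictly between $\mathcal{NC}_{even}$ and $\mathcal{NC}_2$, specified by a global rather than block-wise matching condition on colored legs (which is exactly what distinguishes a free complexification from its base at the Tannakian level), and then show it is the unique intermediate uniform category. A descent argument in the spirit of Theorem 11.13, exploiting the fact that any noncrossing colored partition violating both the global and the block-wise matching constraints can be capped down to a flat generator of either $\mathcal{NC}_2$ or $\mathcal{NC}_{even}$, rules out further intermediate possibilities. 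Once this gap is closed, the full diagram assembles from the real classification, the coloring series $H_N^s$, and the free complexification step on top.
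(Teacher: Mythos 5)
Your high-level decomposition matches the paper's sketch exactly: a real-case block-size analysis, a complex-case per-block coloring analysis producing the $H_N^s$ series, and a free-complexification step accounting for the extra free solution $K_N^{++}$. This is the correct structure, and the paper itself gives only this sketch before deferring to \cite{twe} for details.

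That said, your proposal has a few genuine gaps where the real work lies. First, the assertion that ``the set of $\delta$-values realizable per block forms a subgroup of $\mathbb{Z}$'' is the pivotal combinatorial lemma of the complex classification and is stated without proof; moreover it is not obvious that the pair $(L, s)$ --- block-size set plus coloring parameter --- is a \emph{complete} invariant for a uniform category $D\subset P_{even}$ containing the crossing. In the pairing-only case $L=\{2\}$ the per-block condition $\delta\equiv0\pmod s$ is vacuous (a size-2 block automatically has even $\delta$), so the edge $O_N\subset U_N$ has to be handled separately, and you need to invoke the fact from Theorem 11.15 that the intermediates $\mathbb{Z}_rO_N$ fail uniformity; this step is absent. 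Second, the uniqueness of the intermediate $K_N^{++}=\widetilde{K_N^+}$ on the free complex edge is precisely the new phenomenon here relative to earlier classification results, and your ``descent argument'' gesture does not identify the Tannakian category of $\widetilde{K_N^+}$ explicitly, nor verify its uniformity, nor show that nothing else fits strictly between $\mathcal{NC}_{even}$ and $\mathcal{NC}_2$. Third, you do not address the mirror question of why no classical intermediate exists between $K_N$ and $U_N$ --- the asymmetry between the classical and free rows of the diagram is exactly what has to be explained, and it does not follow automatically from the per-block $\delta$ analysis. Finally, a small slip: Proposition 11.6 gives $D\subset P_{even}$ in all cases; the stronger condition $D\subset\mathcal{P}_{even}$ corresponds to $K_N\subset G_N$, not to twistability.
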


\begin{proof}
The idea here is that of jointly classifying the ``classical'' categories of partitions $\mathcal P_2\subset D\subset P_{even}$, and the ``free'' ones $\mathcal{NC}_2\subset D\subset NC_{even}$, under the assumption that the category is stable under the operation which consists in removing blocks:

\medskip

(1) In the classical case, the new solutions appear on the edge $H_N\subset K_N$, and are the complex reflection groups $H_s=\mathbb Z_s\wr S_N$ with $s=4,6,8\ldots$\,, the cases $s=2,\infty$ corresponding respectively to $H_N,K_N$. 

\medskip

(2) In the free case we obtain as new solutions the standard liberattions of these groups, namely the quantum groups $H_N^{s+}=\mathbb Z_s\wr_*S_N^+$ with $s=4,6,8\ldots$\,, and we have as well an extra solution, appearing on the edge $K_N^+\subset U_N^+$, which is the free complexification $\widetilde{K_N^+}$ of the quantum group $K_N^+$, which is easy, and bigger than $K_N^+$.
\end{proof}

The above result can be generalized, by lifting both the uniformity and twistablility assumptions, and the result here, which is more technical, is explained in \cite{twe}.

\bigskip

We will be back to this at the end of the present chapter, with an extension of the above result, and with some classification results as well for the twists.

\bigskip

Another key result is the one of Raum-Weber \cite{rwe}, dealing with the front face of the standard cube, the orthogonal one. We first have the folowing result:

\index{real quantum group}

\begin{proposition}
The easy quantum groups $H_N\subset G\subset O_N^+$ are as follows,
$$\xymatrix@R=10mm@C=35mm{
H_N^+\ar[r]&O_N^+\\
H_N^{[\infty]}\ar@.[u]&O_N^*\ar[u]\\
H_N\ar@.[u]\ar[r]&O_N\ar[u]}$$
with the dotted arrows indicating that we have intermediate quantum groups there.
\end{proposition}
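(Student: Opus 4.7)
The plan is to work at the level of categories of partitions. Via Tannakian duality, an easy quantum group $G$ with $H_N\subset G\subset O_N^+$ corresponds to a category of partitions $D$ with $NC_2\subset D\subset P_{even}$; the six named quantum groups $O_N^+,O_N^*,O_N,H_N^+,H_N^{[\infty]},H_N$ correspond respectively to $NC_2,P_2^*,P_2,NC_{even},P_{even}^{[\infty]},P_{even}$, and verifying the displayed inclusions among these six categories is immediate from the definitions and from Proposition 10.20.

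For the solid arrows, rigidity comes from the maximality results already in hand. The chain $O_N\subset O_N^*\subset O_N^+$ admits no further easy intermediates by Theorem 11.13, and the horizontal inclusion $H_N\subset O_N$ admits none by Theorem 11.14. For the top horizontal inclusion $H_N^+\subset O_N^+$, I would rerun the combinatorial descent argument of Theorem 11.14, now restricted to the noncrossing setting: by induction on the number of strings one shows that any $\pi\in NC_{even}\setminus NC_2$, when adjoined to $NC_2$, generates all of $NC_{even}$, so no category lies strictly between them.

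For the dotted arrows, intermediate quantum groups are produced by explicit constructions from the literature on easy reflection groups. Between $H_N$ and $H_N^{[\infty]}$, the families classified in \cite{rwe} provide uncountably many easy examples, interpolating between $P_{even}^{[\infty]}$ and $P_{even}$ by controlled enlargements detectable via the signature map of Proposition 10.20. Between $H_N^{[\infty]}$ and $H_N^+$, a parallel series from \cite{rwe} interpolates between $NC_{even}$ and $P_{even}^{[\infty]}$, via categories built from the generator $\eta=\ker(^{iij}_{jii})$ together with additional partitions carrying controlled crossings.

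The hard part is showing that no easy $G$ escapes the picture, that is, every such $G$ sits either at one of the six named vertices or on one of the two dotted arrows on the left. The natural tool is to project $D\mapsto D\cap P_2$, which by Theorem 11.13 must lie in $\{NC_2,P_2^*,P_2\}$; this selects the column. Combined with the signature-based characterization of $P_{even}^{[\infty]}$ inside $P_{even}$ from Proposition 10.20, which isolates the middle row of the left column among all twistable even categories, the resulting three-way dichotomy confines any admissible $D$ to one of the rows of the diagram, which is exactly the content of the classification.
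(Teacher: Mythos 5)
Your treatment of the individual edges is mostly fine: the dictionary between the six quantum groups and the categories $NC_2,P_2^*,P_2,NC_{even},P_{even}^{[\infty]},P_{even}$, the use of Theorem 11.13 for $O_N\subset O_N^*\subset O_N^+$ and of Theorem 11.14 for $H_N\subset O_N$, and the appeal to \cite{rwe} for the existence of intermediate objects on the two dotted segments all match what the paper does (the paper makes the dotted arrows precise in Propositions 12.3 and 12.5). The genuine gap is in what you yourself call the hard part, the completeness statement. The paper handles it by two dichotomies: (1) any easy $H_N\subset G\subset O_N^+$ satisfies either $O_N\subset G$ or $G\subset H_N^+$ -- at the level of categories, if $D\not\subset P_2$ then $D$ contains a partition with a block of size $\geq 4$, possibly crossing, and a capping argument (fuse adjacent legs of distinct blocks to merge blocks, then shrink the surviving block to size $4$) shows that the four-block, hence all of $NC_{even}$, lies in $D$; and (2) any easy $H_N\subset G\subset H_N^+$ is comparable with $H_N^{[\infty]}$, which is the main theorem of Raum--Weber \cite{rwe} (any such $G$ with $G\not\subset H_N^{[\infty]}$ must contain $H_N^{[\infty]}$). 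The paper simply cites \cite{bc1}, \cite{bsp}, \cite{bv1} for (1) and \cite{rwe} for (2); your substitute -- intersecting $D$ with $P_2$ and invoking Proposition 10.20 -- proves neither.

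Concretely, $D\cap P_2$ automatically lies in $\{NC_2,P_2^*,P_2\}$, but it does not ``select the column'': the quantum groups $H_N$, $H_N^*$, $H_N^{[\infty]}$, all sitting in the left column, have $D\cap P_2$ equal to $P_2$, $P_2^*$, $NC_2$ respectively, so the value of $D\cap P_2$ carries no information about whether $NC_{even}\subset D$, which is what actually separates the two columns. Dichotomy (1) requires the capping lemma for crossing partitions with a block of size $\geq 4$; you only sketch its noncrossing version, in order to prove maximality of $H_N^+\subset O_N^+$ (which, incidentally, follows from (1) together with Theorem 11.13, so that extra work is not where the difficulty lies). Likewise, Proposition 10.20 merely gives signature descriptions of $P_{even}^{[\infty]}$ and $P_{even}^*$; it does not imply that an arbitrary category between $NC_{even}$ and $P_{even}$ is comparable with $P_{even}^{[\infty]}$, and that comparability is precisely the deep content of \cite{rwe}. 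Without these two dichotomies nothing confines an intermediate easy $G$ to the displayed chains, so your argument does not establish the proposition; either supply the capping argument for (1) and cite \cite{rwe} for (2), or cite the classification papers for both, as the paper does.
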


\begin{proof}
This is a key result in the classification of easy quantum groups, whose proof is quite technical, the idea being as follows:

\medskip

(1) We have a first dichotomy concerning the quantum groups in the statement, namely $H_N\subset G\subset O_N^+$, which must fall into one of the following two classes:
$$O_N\subset G\subset O_N^+$$
$$H_N\subset G\subset H_N^+$$

This dichotomy comes indeed from the early classification results for the easy quantum groups, from \cite{bc1}, \cite{bsp}, \cite{bv1}, whose proofs are quite elementary. 

\medskip

(2) In addition to this, these early classification results solve as well the first problem, namely $O_N\subset G\subset O_N^+$, with $G=O_N^*$ being the unique non-trivial solution.

\medskip

(3) We have then a second dichotomy, concerning the quantum groups which are left, namely $H_N\subset G\subset H_N^+$, which must fall into one of the following two classes:
$$H_N\subset G\subset H_N^{[\infty]}$$
$$H_N^{[\infty]}\subset G\subset H_N^+$$

This comes indeed from various papers, and more specifically from the final classification paper of Raum and Weber \cite{rwe}, where the quantum groups $S_N\subset G\subset H_N^+$ with $G\not\subset H_N^{[\infty]}$ were classified, and shown to contain $H_N^{[\infty]}$. For full details, we refer to \cite{rwe}.
\end{proof}

Summarizing, in order to deal with the front face of the main cube, we are left with classifying the following intermediate easy quantum groups:
$$H_N\subset G\subset H_N^{[\infty]}$$
$$H_N^{[\infty]}\subset G\subset H_N^+$$

Regarding the second case, namely $H_N^{[\infty]}\subset G\subset H_N^+$, the result here, by Raum-Weber \cite{rwe}, which is quite technical, but has a simple formulation, is as follows:

\begin{proposition}
Let $H_N^{[r]}\subset H_N^+$ be the easy quantum group coming from:
$$\pi_r=\ker\begin{pmatrix}1&\ldots&r&r&\ldots&1\\1&\ldots&r&r&\ldots&1\end{pmatrix}$$
We have then inclusions of quantum groups as follows,
$$H_N^+=H_N^{[1]}\supset H_N^{[2]}\supset H_N^{[3]}\supset\ldots\ldots\supset H_N^{[\infty]}$$
and we obtain in this way all the intermediate easy quantum groups 
$$H_N^{[\infty]}\subset G\subset H_N^+$$
satisfying the assumption $G\neq H_N^{[\infty]}$.
\end{proposition}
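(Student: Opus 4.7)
The plan is to work entirely at the level of categories of partitions, via the easy Tannakian duality established in Section 4. By Proposition 10.19 we have $H_N^{[\infty]}\subset G\subset H_N^+$ if and only if the corresponding category of partitions $D_G$ satisfies $NC_{even}\subset D_G\subset P_{even}^{[\infty]}$, and by Proposition 10.20 the upper bound is generated by $\eta=\ker\binom{iij}{jii}$. So the task becomes: classify categories $D$ with $NC_{even}\subsetneq D\subset P_{even}^{[\infty]}$, and identify them with $D_r:=\langle NC_{even},\pi_r\rangle$ for $r\in\{1,2,3,\ldots\}$.

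First I would verify that $\pi_r\in P_{even}^{[\infty]}$ for every $r$, by using the signature characterization $\pi\in P_{even}^{[\infty]}\iff\varepsilon(\tau)=1$ for all $\tau\leq\pi$ from Proposition 10.20. The mirror pattern of $\pi_r$ makes the verification combinatorial: any coarsening of $\pi_r$ has blocks carrying an equal number of ``left'' and ``right'' legs, hence the switching count is even. Next I would check monotonicity: composing $\pi_{r+1}$ with the semicircle on its innermost pair (a legitimate categorical operation, since semicircles lie in $NC_2\subset NC_{even}$) produces $\pi_r$, so $\pi_r\in\langle NC_{even},\pi_{r+1}\rangle$, giving $D_{r+1}\supset D_r$ and hence $H_N^{[r+1]}\subset H_N^{[r]}$. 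The case $r=1$ is trivial since $\pi_1=\cap\!\cup\in NC_{even}$, so $D_1=NC_{even}$ and $H_N^{[1]}=H_N^+$. Finally, $\bigcup_r D_r$ is a category inside $P_{even}^{[\infty]}=\langle\eta\rangle$ containing $\eta$ (indeed $\eta$ is categorically equivalent, via rotation and capping, to $\pi_2$, hence lies in $D_2$), so it equals $P_{even}^{[\infty]}$, confirming the chain terminates at $H_N^{[\infty]}$.

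The main obstacle is the classification part: showing these $D_r$ exhaust all proper intermediate categories. This is the substantive content, established in \cite{rwe}. The approach is to attach to each partition $\pi\in P_{even}^{[\infty]}\setminus NC_{even}$ a combinatorial invariant $r(\pi)\in\{2,3,\ldots,\infty\}$ measuring the maximal ``mirror-nesting depth'' of crossing strings in $\pi$, with $r(\pi)=\infty$ exactly when $\pi\in\langle\eta\rangle$ produces arbitrarily deep nestings. One then proves two matching inclusions. In one direction, by systematically capping $\pi$ with semicircles and rotating — operations internal to $NC_{even}$ — one reduces $\pi$ to a copy of $\pi_{r(\pi)}$, showing $\pi_{r(\pi)}\in\langle NC_{even},\pi\rangle$. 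In the other direction, one proves that tensor products, compositions, and rotations starting from $\pi$ (modulo $NC_{even}$) cannot increase the invariant $r$, which gives $\langle NC_{even},\pi\rangle\subset D_{r(\pi)}$. Both directions require delicate diagrammatic manipulations, and checking that the invariant $r$ is indeed monotone under all categorical operations is where the real work lies; this is essentially the main technical theorem of \cite{rwe}, and I would import its combinatorial machinery rather than redo it.

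To conclude: given any category $D$ with $NC_{even}\subsetneq D\subset P_{even}^{[\infty]}$, pick $\pi\in D\setminus NC_{even}$ minimizing $r(\pi)=:r$; the two inclusions above give $D_r\subset D$, and the minimality together with the monotonicity of $r$ under categorical operations gives $D\subset D_r$. Hence $D=D_r$, so $G=H_N^{[r]}$, completing the classification.
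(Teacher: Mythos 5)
Your overall route is the same as the paper's: reduce everything to categories of partitions, check the elementary inclusions, and then outsource the actual classification (exhaustiveness of the series) to \cite{rwe} — the paper's own proof is nothing more than that citation, so deferring the hard combinatorial step is acceptable, and your verifications that $\pi_r\in P_{even}^{[\infty]}$, that $\pi_r\in\langle NC_{even},\pi_{r+1}\rangle$ by capping the innermost block, and that $D_1=NC_{even}$ are correct (minor slip: $\pi_1$ is the four-legged one-block partition, not $\cap\cup$, but it lies in $NC_{even}$ all the same). Your sketched invariant $r(\pi)$ is an invention whose monotonicity under the categorical operations is precisely the content of \cite{rwe}, so it should be presented as a citation, not as an argument — but again, the paper does no better.

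There is, however, one step that is genuinely false: the claim that $\eta=\ker\binom{iij}{jii}$ is ``categorically equivalent, via rotation and capping, to $\pi_2$, hence lies in $D_2$''. If $\eta\in D_2$ held, then $P_{even}^{[\infty]}=\langle\eta\rangle\subset D_2\subset D_3\subset\ldots\subset P_{even}^{[\infty]}$ would force $D_r=P_{even}^{[\infty]}$, i.e. $H_N^{[r]}=H_N^{[\infty]}$ for all $r\geq2$, collapsing the whole chain and contradicting the very statement being proved. In fact $\eta\notin D_r$ for every finite $r$: in Raum--Weber's terminology the $D_r$ with $r$ finite are exactly the non-group-theoretical hyperoctahedral categories, characterized by \emph{not} containing $\eta$, and this is what makes the series nontrivial. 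Your specific mechanism also fails concretely: viewing $\pi_2$ on a circle, its blocks are $\{1,4,5,8\}$ and $\{2,3,6,7\}$, and capping any pair of cyclically adjacent legs yields either a nested two-block-plus-four-block configuration or a single six-block — in every case a noncrossing partition — so rotations and single cappings of $\pi_2$ can never produce the crossing partition $\eta$. The correct statement you want at that point is only $\bigcup_rD_r=P_{even}^{[\infty]}$, equivalently $\bigcap_rH_N^{[r]}=H_N^{[\infty]}$, and this should be obtained a posteriori from the classification (the $D_r$ form a strictly increasing chain, and their union is a category inside $P_{even}^{[\infty]}$ equal to no single $D_r$, hence equal to $P_{even}^{[\infty]}$), not by locating $\eta$ inside some $D_r$. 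So that sentence must be deleted or replaced; with it removed, what remains is essentially the paper's proof, namely a reference to \cite{rwe}.
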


\begin{proof}
Once again, this is something technical, and we refer here to \cite{rwe}.
\end{proof}

It remains to discuss the easy quantum groups $H_N\subset G\subset H_N^{[\infty]}$, with the endpoints $G=H_N,H_N^{[\infty]}$ included. Once again, we follow here \cite{rwe}. First, we have:

\begin{definition}
A discrete group generated by real reflections, $g_i^2=1$,
$$\Gamma=<g_1,\ldots,g_N>$$
is called uniform if each $\sigma\in S_N$ produces a group automorphism, $g_i\to g_{\sigma(i)}$.
\end{definition}

Consider now a uniform reflection group, as follows:
$$\mathbb Z_2^{*N}\to\Gamma\to\mathbb Z_2^N$$

As explained by Raum-Weber in \cite{rwe}, we can associate to this group a family of subsets $D(k,l)\subset P(k,l)$, which form a category of partitions, as follows:
$$D(k,l)=\left\{\pi\in P(k,l)\Big|\ker\binom{i}{j}\leq\pi\implies g_{i_1}\ldots g_{i_k}=g_{j_1}\ldots g_{j_l}\right\}$$

Observe that we have inclusions of categories of partitions as follows, coming respectively from $\eta\in D$, and from the quotient map $\Gamma\to\mathbb Z_2^N$:
$$P_{even}^{[\infty]}\subset D\subset P_{even}$$

Conversely, consider a category of partitions as follows:
$$P_{even}^{[\infty]}\subset D\subset P_{even}$$

We can associate to it a uniform reflection group $\mathbb Z_2^{*N}\to\Gamma\to\mathbb Z_2^N$, as follows:
$$\Gamma=\left\langle g_1,\ldots g_N\Big|g_{i_1}\ldots g_{i_k}=g_{j_1}\ldots g_{j_l},\forall i,j,k,l,\ker\binom{i}{j}\in D(k,l)\right\rangle$$

As explained by Raum-Weber in \cite{rwe}, the correspondences $\Gamma\to D$ and $D\to\Gamma$ constructed above are bijective, and inverse to each other, at $N=\infty$. 

\bigskip

We have in fact the following result, from \cite{rwe}:

\begin{proposition}
We have correspondences between:
\begin{enumerate}
\item Uniform reflection groups $\mathbb Z_2^{*\infty}\to\Gamma\to\mathbb Z_2^\infty$.

\item Categories of partitions $P_{even}^{[\infty]}\subset D\subset P_{even}$.

\item Easy quantum groups $G=(G_N)$, with $H_N^{[\infty]}\supset G_N\supset H_N$.
\end{enumerate}
\end{proposition}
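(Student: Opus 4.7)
The plan is to establish (1)$\leftrightarrow$(2) by checking that the two constructions $\Gamma\to D$ and $D\to\Gamma$ described in the excerpt are well-defined and mutually inverse, and then to derive (2)$\leftrightarrow$(3) as a direct consequence of Tannakian duality, in the form used throughout the paper (Theorem 7.3 and Definition 7.4). So there are really only three things to verify in detail.

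First, I would check that the assignment $\Gamma\to D$ sends a uniform reflection group $\mathbb Z_2^{*\infty}\to\Gamma\to\mathbb Z_2^\infty$ to a category $P_{even}^{[\infty]}\subset D\subset P_{even}$. The categorical axioms (stability under $\otimes$, composition, involution, identity and semicircle) all follow by direct verification: each one corresponds to an obvious manipulation of words in the generators $g_i$, using $g_i^2=1$ for the semicircle and the uniformity hypothesis for moving indices. The inclusion $D\subset P_{even}$ comes from the quotient $\Gamma\to\mathbb Z_2^\infty$, which forces all blocks to have even size (otherwise we get $g_i=1$ in $\mathbb Z_2^\infty$, contradicting nontriviality). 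The inclusion $P_{even}^{[\infty]}\subset D$ comes from $\mathbb Z_2^{*\infty}\to\Gamma$, together with the third characterization of $P_{even}^{[\infty]}$ given in Proposition 10.20, namely $P_{even}^{[\infty]}=\{\pi\,|\,\varepsilon(\tau)=1,\forall\tau\leq\pi\}$, which shows that $P_{even}^{[\infty]}$ precisely encodes the relations coming from $g_i^2=1$ in the free product.

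Second, I would check that $D\to\Gamma$ produces a uniform reflection group in the stated range. Uniformity is built in by symmetry of the defining relations under relabelling, and the inclusions $\mathbb Z_2^{*\infty}\to\Gamma\to\mathbb Z_2^\infty$ come respectively from $D\supset P_{even}^{[\infty]}$ and $D\subset P_{even}$. The main calculation is that the two round trips $\Gamma\to D\to\Gamma$ and $D\to\Gamma\to D$ are the identity; in one direction this is tautological from the definitions, and in the other it requires showing that if two words $g_{i_1}\cdots g_{i_k}$ and $g_{j_1}\cdots g_{j_l}$ agree in $\Gamma$, then this equality is forced by the defining relations $\ker\binom{i}{j}\in D$, which is essentially a normal-form argument for words modulo $g_i^2=1$.

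Third, for (2)$\leftrightarrow$(3), I would invoke Tannakian duality in its easy form: each category $D$ with $P_{even}^{[\infty]}\subset D\subset P_{even}$ produces an easy quantum group family $G=(G_N)$ via the recipe of Theorem 7.3, and the bounds on $D$ translate under this correspondence into the bounds $H_N^{[\infty]}\supset G_N\supset H_N$, since $H_N$ and $H_N^{[\infty]}$ are themselves easy with categories $P_{even}$ and $P_{even}^{[\infty]}$ respectively (Theorem 10.14 and Theorem 10.19). Conversely, any easy $G$ in this range comes from a unique such $D$.

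The main obstacle, as already flagged in the paper, is the subtle asymmetry at finite $N$: the correspondence $\Gamma\leftrightarrow D$ is bijective only at $N=\infty$, because at finite $N$ the map $D\to\Gamma$ can produce accidental collapses (two different categories giving the same group after truncating to $N$ generators). This is why the statement is phrased with $\mathbb Z_2^{*\infty}\to\Gamma\to\mathbb Z_2^\infty$ rather than finite-$N$ versions, and why the easy quantum group in (3) is a family $G=(G_N)$ indexed by all $N$; handling this carefully, and citing the detailed combinatorial analysis of \cite{rwe} for the injectivity at $N=\infty$, will be the delicate point.
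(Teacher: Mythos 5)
Your proposal follows essentially the same route as the paper: the two explicit constructions $\Gamma\to D$ and $D\to\Gamma$ from the preceding discussion, with the inclusions $P_{even}^{[\infty]}\subset D\subset P_{even}$ checked via $g_i^2=1$ and the quotient $\Gamma\to\mathbb Z_2^\infty$, the delicate bijectivity at $N=\infty$ deferred to \cite{rwe}, and the passage to (3) by Tannakian duality using the easiness of $H_N$ and $H_N^{[\infty]}$. The paper's own proof is in fact even more condensed (it only gives illustrations such as $H_N^*\leftrightarrow\mathbb Z_2^{\circ N}$ and cites \cite{rwe} for the technical details), so your plan is consistent with it and fills in the same steps.
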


\begin{proof}
This is something quite technical, which follows along the lines of the above discussion. As an illustration, if we denote by $\mathbb Z_2^{\circ N}$ the quotient of $\mathbb Z_2^{*N}$ by the relations of type $abc=cba$ between the generators, we have the following correspondences:
$$\xymatrix@R=15mm@C=15mm{
\mathbb Z_2^N\ar@{~}[d]&\mathbb Z_2^{\circ N}\ar[l]\ar@{~}[d]&\mathbb Z_2^{*N}\ar[l]\ar@{~}[d]\\
H_N\ar[r]&H_N^*\ar[r]&H_N^{[\infty]}}$$

More generally, for any $s\in\{2,4,\ldots,\infty\}$, the quantum groups $H_N^{(s)}\subset H_N^{[s]}$ constructed in \cite{bc1} come from the quotients of $\mathbb Z_2^{\circ N}\leftarrow\mathbb Z_2^{*N}$ by the relations $(ab)^s=1$. See \cite{rwe}.
\end{proof}

We can now formulate a final classification result, due to Raum-Weber \cite{rwe}, as follows:

\index{easy quantum group}
\index{real quantum group}
\index{quantum reflection group}

\begin{theorem}
The easy quantum groups $H_N\subset G\subset O_N^+$ are as follows,
$$\xymatrix@R=4mm@C=50mm{
H_N^+\ar[r]&O_N^+\\
H_N^{[r]}\ar[u]\\
H_N^{[\infty]}\ar[u]&O_N^*\ar[uu]\\
H_N^\Gamma\ar[u]\\
H_N\ar[u]\ar[r]&O_N\ar[uu]}$$
with the family $H_N^\Gamma$ covering $H_N,H_N^{[\infty]}$, and with the series $H_N^{[r]}$ covering $H_N^+$.
\end{theorem}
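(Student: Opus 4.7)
The plan is to assemble the classification diagram by combining the three preceding propositions, which together exhaust all easy quantum groups in the range $H_N \subset G \subset O_N^+$. First I would invoke Proposition 12.2, which provides the initial dichotomy: any such $G$ either satisfies $O_N \subset G \subset O_N^+$, in which case the earlier classification result (Theorem 11.13) identifies $O_N^*$ as the unique proper intermediate solution and yields the right-hand column of the diagram; or else it satisfies $H_N \subset G \subset H_N^+$, which is the more delicate branch.

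For the latter branch, I would use the second dichotomy from Proposition 12.2, splitting it into $H_N \subset G \subset H_N^{[\infty]}$ and $H_N^{[\infty]} \subset G \subset H_N^+$, so that the whole classification is reduced to understanding these two subranges separately. The upper subrange $H_N^{[\infty]} \subset G \subset H_N^+$ is handled directly by Proposition 12.3, which exhibits the decreasing series $H_N^{[r]}$ coming from the partitions $\pi_r$ and asserts that these exhaust all intermediate easy quantum groups satisfying $G \neq H_N^{[\infty]}$. This produces the vertical series between $H_N^{[\infty]}$ and $H_N^+$ in the diagram.

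The lower subrange $H_N \subset G \subset H_N^{[\infty]}$ is then covered by the bijective correspondences established in Proposition 12.5, which match easy quantum groups in this range with categories of partitions $P_{even}^{[\infty]} \subset D \subset P_{even}$, and in turn with uniform reflection groups $\mathbb Z_2^{*\infty} \to \Gamma \to \mathbb Z_2^\infty$. Writing the corresponding quantum group as $H_N^\Gamma$, this yields the family appearing in the lower portion of the diagram, with the endpoints $\Gamma = \mathbb Z_2^\infty$ and $\Gamma = \mathbb Z_2^{*\infty}$ reproducing $H_N$ and $H_N^{[\infty]}$ respectively, and with intermediate quotients such as $\mathbb Z_2^{\circ\infty}$ producing $H_N^*$.

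The main obstacle, as is typical in this area, is the verification that the two dichotomies from Proposition 12.2 are genuinely exhaustive — that is, showing there are no easy quantum groups straddling the boundaries $H_N^+$ versus $O_N$ beyond those already listed, and no solutions in $S_N \subset G \subset H_N^+$ failing to contain $H_N^{[\infty]}$ other than the ones accounted for. This exhaustiveness rests on the deep combinatorial analysis of categories of partitions carried out by Raum and Weber in \cite{rwe}, relying on the descriptions of $P_{even}^*$ and $P_{even}^{[\infty]}$ via the signature map given in Proposition 10.20, together with a careful manipulation of the generating sets for the categories above $\mathcal{NC}_2$. Granting that input, the rest of the argument is a matter of assembling the three propositions into a coherent picture.
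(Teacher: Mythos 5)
Your proposal is correct and follows exactly the route the paper takes: the proof there is a one-line assembly of Proposition 12.2 (the two dichotomies), Proposition 12.3 (the series $H_N^{[r]}$), and Proposition 12.5 (the family $H_N^\Gamma$), with the combinatorial exhaustiveness deferred to \cite{rwe}. Your write-up simply spells out the assembly in more detail than the paper chose to.
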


\begin{proof}
This follows indeed from Proposition 12.2, Proposition 12.3 and Proposition 12.5 above. For further details, we refer to the paper of Raum and Weber \cite{rwe}.
\end{proof}

All the above is quite technical, and can be extended as well, as to cover all the orthogonal easy quantum groups, $S_N\subset G\subset O_N^+$. For details here, we refer to \cite{rwe}.

\section*{12b. Edge results}

Another interesting result, dealing this time with the unitary edge of the standard cube, is the one obtained by Mang-Weber in \cite{mwe}. To be more precise, the problem here is that of classifying the intermediate easy quantum groups as follows:
$$U_N\subset G\subset U_N^+$$

A first construction of such quantum groups is as follows:

\index{cyclic matrix model}

\begin{proposition}
Associated to any $r\in\mathbb N$ is the easy quantum group 
$$U_N\subset U_N^{(r)}\subset U_N^+$$
coming from the category $\mathcal P_2^{(r)}$ of matching pairings having the property that 
$$\#\circ=\#\bullet(r)$$
holds between the legs of each string. These quantum groups have the following properties:
\begin{enumerate}
\item At $r=1$ we obtain the usual unitary group, $U_N^{(1)}=U_N$.

\item At $r=2$ we obtain the half-classical unitary group, $U_N^{(2)}=U_N^*$.

\item For any $r|s$ we have an embedding $U_N^{(r)}\subset U_N^{(s)}$.

\item In general, we have an embedding $U_N^{(r)}\subset U_N^r\rtimes\mathbb Z_r$.

\item We have as well a cyclic matrix model $C(U_N^{(r)})\subset M_r(C(U_N^r))$.

\item In this latter model, $\int_{U_N^{(r)}}$ appears as the restriction of $tr_r\otimes\int_{U_N^r}$.
\end{enumerate}
\end{proposition}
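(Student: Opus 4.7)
The plan is to treat the six assertions in order, with Tannakian duality as the main tool for converting the combinatorial description of $\mathcal P_2^{(r)}$ into quantum group statements. First I would verify that $\mathcal P_2^{(r)}$ is indeed a category of partitions: the modular condition $\#\circ \equiv \#\bullet \pmod r$ per string is preserved under horizontal concatenation (strings stay separate), vertical composition with matching middle symbols (the middle contribution cancels on each resulting string), and the upside-down turning with color switch; the identity pairings and the semicircles $\cap, \cup$ trivially satisfy the condition. The general Tannakian construction then produces the quantum group $U_N^{(r)}$, and the inclusions $\mathcal{NC}_2 \subset \mathcal P_2^{(r)} \subset \mathcal P_2$ yield $U_N \subset U_N^{(r)} \subset U_N^+$ --- the first because noncrossing matching pairings have $\#\circ = \#\bullet$ exactly, not merely mod $r$.

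Points (1), (2), (3) are then combinatorial. For (1), the condition mod $1$ is vacuous, so $\mathcal P_2^{(1)} = \mathcal P_2$ and $U_N^{(1)} = U_N$. For (2), I would check $\mathcal P_2^{(2)} = \mathcal P_2^*$ directly: in the clockwise relabeling $\circ\bullet\circ\bullet\ldots$ of the legs, the parity of $\#\circ - \#\bullet$ strictly between the two endpoints of a string coincides with the parity of the number of legs between them, which in turn matches the condition that the two endpoints carry opposite relabeled colors. For (3), divisibility $r \mid s$ makes the mod-$s$ condition imply the mod-$r$ condition, giving $\mathcal P_2^{(s)} \subset \mathcal P_2^{(r)}$ and hence $U_N^{(r)} \subset U_N^{(s)}$.

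The model in (4), (5) is where the real content lies. Let $v^{(1)},\ldots,v^{(r)}$ denote the fundamental corepresentations of the $r$ copies of $U_N$ inside $U_N^r$, and let $\{E_{ab}\}$ be the standard matrix units of $M_r(\mathbb C)$. Inside $M_r(C(U_N^r))$ I would define
$$U_{ij} = \sum_{k=1}^{r} v^{(k)}_{ij}\otimes E_{k,k+1},$$
with indices taken mod $r$. A direct computation gives $UU^* = U^*U = 1$ using unitarity of each $v^{(k)}$ and the orthogonality $E_{k,k+1}E_{k',k'+1}^* = \delta_{kk'}E_{kk}$, and biunitarity $U^t = \bar U^{-1}$ follows similarly since the $v^{(k)}$ live in commutative algebras. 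Any monomial $U_{i_1j_1}^{e_1}\cdots U_{i_mj_m}^{e_m}$ lives on the cyclic diagonal at offset $\sum e_\ell \pmod r$, which is exactly the combinatorial obstruction encoded by $\mathcal P_2^{(r)}$: a map $T_\pi$ with $\pi \in \mathcal P_2$ commutes with the $U$-action precisely when every string of $\pi$ has total modular weight zero. By universality this yields the model morphism $C(U_N^{(r)}) \to M_r(C(U_N^r))$, and since each $U_{ij}$ has the shift-times-diagonal form of a crossed product element, this map factors through $C(U_N^r \rtimes \mathbb Z_r)$, giving the embedding claimed in (4).

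The main obstacle will be proving injectivity of the model in (5) --- i.e., that no extra relations are forced on the $U_{ij}$ beyond those encoded by $\mathcal P_2^{(r)}$. I would handle this by a fixed-point dimension count: compute $\dim\mathrm{Fix}(U^{\otimes k})$ in the model via the cyclic offset decomposition to obtain exactly $|\mathcal P_2^{(r)}(k)|$ using the linear independence of the $\xi_\pi$ from Theorem~5.8 for large $N$, and compare with the easiness formula $\dim\mathrm{Fix}(u^{\otimes k}) = |\mathcal P_2^{(r)}(k)|$ on the abstract side; agreement for all $k$ forces the morphism to be an isomorphism onto its image by Peter-Weyl. Finally (6) is a standard uniqueness argument: $tr_r \otimes \int_{U_N^r}$ restricts to a tracial state on the image, and bi-invariance of $\int_{U_N^r}$ under the $U_N$-action together with traciality of $tr_r$ under cyclic permutation produce bi-invariance under the coproduct of $U_N^{(r)}$, forcing this restriction to equal $\int_{U_N^{(r)}}$ by uniqueness of the Haar functional.
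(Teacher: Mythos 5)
Your overall strategy is the same as the one behind the statement: the paper's own proof is essentially a pointer to \cite{bb5} and \cite{mw1}, and your items (1)--(3), your cyclic model $U_{ij}=\sum_k v^{(k)}_{ij}\otimes E_{k,k+1}$, and its factorization through the crossed product $C(U_N^r)\rtimes\mathbb Z_r\subset M_r(C(U_N^r))$ are exactly the constructions used there; at $r=2$ your model is literally the one in Proposition 16.10, and your parity check of $\mathcal P_2^{(2)}=\mathcal P_2^*$ is a correct, slightly more direct variant of the generator-based remark.

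The genuine gap is in your argument for (5). Equality of $Hom(U^{\otimes k},U^{\otimes l})$ and $Hom(u^{\otimes k},u^{\otimes l})$ for all $k,l$ does not force the morphism $C(U_N^{(r)})\to M_r(C(U_N^r))$ to be injective, not even on the dense $*$-algebra of coefficients: by Theorem 16.7 it only says that the Hopf image of the model is all of $C(U_N^{(r)})$, i.e. that the model is inner faithful. A faithful unitary representation $\Gamma\subset U_K$ of an infinite discrete group, viewed as a model $C^*(\Gamma)\to M_K(\mathbb C)$ for $\widehat{\Gamma}$, is inner faithful and has the same formal Hom spaces as $u=diag(g_1,\ldots,g_N)$, yet it is obviously not injective on $\mathbb C[\Gamma]$; so the inference ``same fixed-point dimensions, hence an embedding, by Peter-Weyl'' fails in general. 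A secondary problem is that your count invokes Theorem 5.8, which gives $\dim Fix(u^{\otimes k})=|\mathcal P_2^{(r)}(k)|$ only for $N\geq k$, while the proposition concerns a fixed $N$. The way to close the gap is to reverse the order of (5) and (6): show first that the pullback state $\psi=(tr_r\otimes\int_{U_N^r})\circ\pi$ on $C(U_N^{(r)})$ is invariant under the comultiplication, hence equals $\int_{U_N^{(r)}}$ by uniqueness of the Haar state --- this invariance computation is the real content, carried out at $r=2$ in Theorem 16.11 using the translation and conjugation invariance of the Haar measure of $U_N\times U_N$, and generalizing to arbitrary $r$ by averaging over the cyclic offsets --- and then deduce (5) from Theorem 16.3, which says that a stationary model is automatically faithful, modulo the usual full/reduced identification. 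Your sketch for (6) gestures at precisely this invariance argument, but as written it presupposes (5), and it is the step that actually has to be computed rather than asserted.
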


\begin{proof}
This is something quite compact, summarizing various findings from \cite{bb4}, and from \cite{mwe}. Here are a few brief explanations on all this:

\medskip

(1) This is clear from $\mathcal P_2^{(1)}=\mathcal P_2$, and from the Brauer theorem \cite{bra}.

\medskip

(2) This is because $\mathcal P_2^{(2)}$ is generated by the partitions with implement the relations $abc=cba$ between the variables $\{u_{ij},u_{ij}^*\}$, used in \cite{bdu} for constructing $U_N^*$.

\medskip

(3) This simply follows from $\mathcal P_2^{(s)}\subset\mathcal P_2^{(r)}$, by functoriality.

\medskip

(4) This is the original definition of $U_N^{(r)}$, from \cite{bb4}. We refer to \cite{bb4} for the formula of the embedding, and to \cite{mwe} for the compatibility with the Tannakian definition.

\medskip

(5) This is also from \cite{bb4}, more specifically it is an alternative definition for $U_N^{(r)}$.

\medskip

(6) Once again, this is something from \cite{bb4}, and we will be back to it.
\end{proof}

Let us discuss now the second known construction of unitary quantum groups, from \cite{mwe}. This construction uses an additive semigroup $D\subset\mathbb N$, but as pointed out there, using instead the complementary set $C=\mathbb N-D$ leads to several simplifications. So, let us call ``cosemigroup'' any subset $C\subset\mathbb N$ which is complementary to an additive semigroup, $x,y\notin C\implies x+y\notin C$. The construction from \cite{mwe} is then:

\begin{proposition}
Associated to any cosemigroup $C\subset\mathbb N$ is the easy quantum group 
$$U_N\subset U_N^C\subset U_N^+$$
coming from the category $\mathcal P_2^C\subset P_2^{(\infty)}$ of pairings having the property 
$$\#\circ-\#\bullet\in C$$
between each two legs colored $\circ,\bullet$ of two strings which cross. We have:
\begin{enumerate}
\item For $C=\emptyset$ we obtain the quantum group $U_N^+$.

\item For $C=\{0\}$ we obtain the quantum group $U_N^\times$.

\item For $C=\{0,1\}$ we obtain the quantum group $U_N^{**}$.

\item For $C=\mathbb N$ we obtain the quantum group $U_N^{(\infty)}$.

\item For $C\subset C'$ we have an inclusion $U_N^{C'}\subset U_N^C$.

\item Each quantum group $U_N^C$ contains each quantum group $U_N^{(r)}$.
\end{enumerate}
\end{proposition}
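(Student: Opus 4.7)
The plan is to prove everything through Tannakian duality in the easy setting, as developed in section 7. By Theorem 7.3, containments of easy quantum groups correspond contravariantly to containments of categories of partitions, so the whole proposition reduces to combinatorial statements about the sets $\mathcal{P}_2^C$. Before verifying the individual claims, one must check that $\mathcal{P}_2^C$ is actually a category of partitions in the sense of Definition 7.1. Horizontal concatenation, involution (upside-down turning with color switch), the identity, and the semicircle preserve the local crossing data in a routine way; the decisive step is closure under vertical composition $[^\sigma_\pi]$.

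The vertical closure is where the cosemigroup axiom is used, and this is the step I expect to be the main obstacle. When two matching pairings are stacked, a crossing pair of strings in the composite $[^\sigma_\pi]$ can arise by gluing a crossing-fragment coming from $\pi$ with a crossing-fragment coming from $\sigma$; the resulting $\#\circ - \#\bullet$ count for the composite crossing then splits as a sum $x+y$, where $x$ is a count inside $\pi$ and $y$ a count inside $\sigma$. Hence to land back in $\mathcal{P}_2^C$ one needs $x + y \in C$, and the relevant input is the contrapositive of the cosemigroup condition, namely $x + y \in C \Rightarrow x \in C \text{ or } y \in C$. The care required here is in tracking exactly how the crossings and their legs split through the middle row, so that the decomposition $x+y$ with $x,y$ coming from the individual factors is correctly exhibited; once this is set up, the cosemigroup property closes the argument.

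With this foundation in place, the remaining assertions follow quickly. For (5), if $C \subset C'$ then directly from the defining local condition $\mathcal{P}_2^C \subset \mathcal{P}_2^{C'}$, which by Tannakian duality reverses to $U_N^{C'} \subset U_N^C$. For (6), the category $\mathcal{P}_2^{(r)}$ from Proposition 12.7 is defined by a per-string modular count, and I would verify that this weaker per-string condition is implied by the local crossing-pair condition characterizing $\mathcal{P}_2^C$, yielding $\mathcal{P}_2^C \subset \mathcal{P}_2^{(r)}$ and hence $U_N^{(r)} \subset U_N^C$.

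Finally, for (1)-(4), I would unpack the meaning of the local condition in each special case. Taking $C = \emptyset$ forbids any crossing outright, since $\#\circ - \#\bullet \in \emptyset$ can never hold, so $\mathcal{P}_2^\emptyset = \mathcal{NC}_2$ and we recover $U_N^+$. Taking $C = \mathbb{N}$ makes the local condition vacuous for admissible nonnegative differences, producing the whole of $\mathcal{P}_2^{(\infty)}$ and hence $U_N^{(\infty)}$. The intermediate cases $C = \{0\}$ and $C = \{0, 1\}$ require matching $\mathcal{P}_2^C$ against the defining Tannakian data for $U_N^\times$ and $U_N^{**}$ from the Mang-Weber framework \cite{mw2}; here the work is essentially a dictionary translation, showing that the local crossing conditions one writes down for these specific $C$ are precisely the pairings implementing the relations characterizing $U_N^\times$ and $U_N^{**}$.
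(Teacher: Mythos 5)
Your reduction of everything to the categories $\mathcal P_2^C$ via Tannakian duality is the right frame, and your handling of (1), (4), (5), (6) is correct and essentially what the paper does (for (6), the containment $\mathcal P_2^C\subset\mathcal P_2^{(r)}$ follows simply from $\mathcal P_2^C\subset\mathcal P_2^{(\infty)}\subset\mathcal P_2^{(r)}$, the per-string condition being part of the definition rather than a consequence of the crossing condition). The genuine gap is in the step you yourself single out as decisive, namely the stability of $\mathcal P_2^C$ under vertical composition. You assert that a crossing of $[^\sigma_\pi]$ has count $x+y$, with $x$ a count inside $\pi$ and $y$ a count inside $\sigma$, and you then invoke the cosemigroup property in the form $x+y\in C\Rightarrow x\in C$ or $y\in C$. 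But what must be proved is precisely $x+y\in C$, starting from $x,y\in C$; the implication you quote has $x+y\in C$ as its hypothesis, so it cannot deliver the conclusion. Worse, if composite crossing counts really were sums of crossing counts of the factors, stability under composition would force $C$ to be additively closed, i.e. a semigroup; the case $C=\{0,1\}$, which is a cosemigroup with $1+1=2\notin C$ and which nevertheless produces the perfectly good quantum group $U_N^{**}$, shows that this cannot be the correct combinatorial picture. In reality the middle-row legs are erased under composition, and pairs of strings which cross in $[^\sigma_\pi]$ need not come from crossings of $\pi$ or $\sigma$ at all, so the bookkeeping is considerably more delicate; this verification is exactly the nontrivial content of \cite{mw2}, and it cannot be dispatched along the lines you propose.

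Note also that the paper does not attempt this verification: it quotes \cite{mw2} (after replacing the semigroup $D$ used there by its complement $C=\mathbb N-D$) both for the fact that $\mathcal P_2^C$ is a category of partitions and for the identifications (2) and (3), and only spells out the easy points (1), (4), (5), (6). Your treatment of (2) and (3) as a ``dictionary translation'' underestimates them as well: for $C=\{0\}$ one must show that the category generated by the colored crossing implementing $ab^*c=cb^*a$ is exactly the set of matching pairings all of whose crossings have count $0$, an argument of the same nature as the computation $<\mathcal C_r>=P_2^r$ appearing in the classification of the intermediate groups $O_N\subset G\subset U_N$, and for $C=\{0,1\}$ one needs the description of $U_N^{**}$ coming from \cite{bb5}. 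So either reproduce the composition analysis of \cite{mw2} correctly, or fall back on citing it, as the paper does.
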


\begin{proof}
Once again this is something very compact, coming from recent work in \cite{mwe}, with our convention that the semigroup $D\subset\mathbb N$ which is used there is replaced here by its complement $C=\mathbb N-D$. Here are a few explanations on all this:

\medskip

(1) The assumption $C=\emptyset$ means that the condition $\#\circ-\#\bullet\in C$ can never be applied. Thus, the strings cannot cross, we have $\mathcal P_2^\emptyset=\mathcal{NC}_2$, and so $U_N^\emptyset=U_N^+$.

\medskip

(2) As explained in \cite{mwe}, here we obtain indeed the quantum group $U_N^\times$, constructed by using the relations $ab^*c=cb^*a$, with $a,b,c\in\{u_{ij}\}$. 

\medskip

(3) This is also explained in \cite{mwe}, with $U_N^{**}$ being the quantum group from \cite{bb4}, which is the biggest whose full projective version, in the sense there, is classical. 

\medskip

(4) Here the assumption $C=\mathbb N$ simply tells us that the condition $\#\circ-\#\bullet\in C$ in the statement is irrelevant. Thus, we have $\mathcal P_2^\mathbb N=\mathcal P_2^{(\infty)}$, and so $U_N^\mathbb N=U_N^{(\infty)}$.

\medskip

(5) This is clear by functoriality, because $C\subset C'$ implies $\mathcal P_2^{C}\subset\mathcal P_2^{C'}$.

\medskip

(6) This is clear from definitions, and from Proposition 12.7 above.
\end{proof}

We have the following key result, from Mang-Weber \cite{mwe}:

\begin{theorem}
The easy quantum groups $U_N\subset G\subset U_N^+$ are as follows,
$$U_N\subset\{U_N^{(r)}\}\subset\{U_N^C\}\subset U_N^+$$
with the series covering $U_N$, and the family covering $U_N^+$.
\end{theorem}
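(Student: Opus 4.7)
My plan is to pass through Tannakian duality and work entirely at the level of categories of matching pairings. By Theorem 4.20 and Theorem 7.5, classifying easy quantum groups $U_N \subset G \subset U_N^+$ is equivalent to classifying the intermediate categories
$$\mathcal{NC}_2 \subset D \subset \mathcal{P}_2.$$
Proposition 12.7 and Proposition 12.8 already tell us that $U_N^{(r)}$ and $U_N^C$ are distinct easy solutions, corresponding to $\mathcal{P}_2^{(r)}$ and $\mathcal{P}_2^C$, with $\mathcal{P}_2^{(\infty)} = \mathcal{P}_2^{\mathbb{N}}$ being the common boundary between the series and the family. So the task is to show that every intermediate $D$ belongs to one of these two lists.

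The first step is to define, for any $D$ with $\mathcal{NC}_2 \subset D \subset \mathcal{P}_2$, two combinatorial invariants. Globally, I associate the subgroup $r\mathbb{Z} \subset \mathbb{Z}$ generated by all integers of the form $\#\circ - \#\bullet$ computed over the legs of partitions in $D$ (equivalently, the finest modular congruence satisfied by every $\pi \in D$). Locally, I associate the set
$$C(D) = \{\, \#\circ - \#\bullet \text{ measured between the legs of two crossing strings in some } \pi \in D \,\} \subset \mathbb{N}$$
(taking absolute values and using that matching forces global balance on each crossing pair). The key dichotomy to establish, using the categorical operations of concatenation, composition, rotation and the semicircle relations, is: either $D$ contains no ``free'' crossing (so the global modular invariant $r$ is nontrivial and sharp, putting $D = \mathcal{P}_2^{(r)}$), or $D$ does contain crossings and then $C(D)$ is automatically closed under the cosemigroup condition and $D = \mathcal{P}_2^{C(D)}$.

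The reduction strategy to produce these invariants is the familiar ``capping and rotating'' game used in the proofs of Theorem 11.13 and Theorem 11.15: given $\pi \in D \setminus \mathcal{NC}_2$, I cap with the two matching semicircles (which lie in $\mathcal{NC}_2 \subset D$) to progressively strip $\pi$ down to a standard shape. In the case where $\pi$ has a crossing between two strings, this produces, after rotation, an explicit ``elementary crossing'' of type $(a, b)$ with $\#\circ - \#\bullet = a-b$ on the two sides, which generates exactly the required local relation defining $\mathcal{P}_2^C$. In the purely modular case (no crossing strings survive after reduction), one instead lands on flat partitions of the form $\cup \cdots \cup$ over $\cap \cdots \cap$ whose color types generate precisely the subgroup $r\mathbb{Z}$, exactly as in step (7) of the proof of Theorem 11.15. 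Finally one checks that $\mathcal{P}_2^{(r)} \cap \mathcal{P}_2^{C} = \mathcal{P}_2^{(\infty)}$ when $r = \infty$ and $C = \mathbb{N}$, confirming that the series and family meet precisely at $U_N^{(\infty)}$, and that the series covers $U_N$ (as $r \to 1$) while the family covers $U_N^+$ (as $C \to \emptyset$).

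The main obstacle will be the dichotomy itself: ruling out ``mixed'' categories where some partitions contribute to modular constraints and others to crossing constraints in an incompatible way. Concretely, the hard lemma is that as soon as $D$ contains a genuine crossing partition, the category operations force $D$ to contain enough elementary crossings that the modular invariant collapses to $r = \infty$, so $D \supset \mathcal{P}_2^{(\infty)}$ and only the local cosemigroup data remain. This is essentially the content of the Mang--Weber analysis in \cite{mw2}, and executing it rigorously requires a careful bookkeeping of how composition with matching semicircles and rotations transform the pair $(\#\circ,\#\bullet)$ counts around each string and across crossings.
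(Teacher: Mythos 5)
Your overall strategy — reduction via Tannakian duality to intermediate categories $\mathcal{NC}_2 \subset D \subset \mathcal{P}_2$, followed by a dichotomy pivoting on $\mathcal{P}_2^{(\infty)}$ — matches what the paper has in mind; note the paper itself merely cites \cite{mw2} for the combinatorics and gives no proof of its own. But your invariants and your dichotomy are not correct as stated.

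First, your global modular invariant is identically zero, hence carries no information. For any flat matching pairing every string pairs a $\circ$ with a $\bullet$, so the total $\#\circ - \#\bullet$ over all legs of $\pi$ is always $0$. The quantity that actually varies with $D$ — and the one Proposition 12.7 uses — is the per-string count: for a single string of $\pi$, the value $\#\circ - \#\bullet$ among the legs lying strictly between its two endpoints. The subgroup $r\mathbb{Z}$ should be generated by these per-string counts as $\pi$ ranges over $D$.

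Second, your dichotomy is stated backwards. You write that once $D$ contains a crossing, ``the modular invariant collapses to $r = \infty$, so $D \supset \mathcal{P}_2^{(\infty)}$ and only the local cosemigroup data remain.'' But the cosemigroup categories satisfy $\mathcal{P}_2^C \subset \mathcal{P}_2^{(\infty)}$, not $\supset$; they live on the side where every string of every $\pi \in D$ is balanced, so the per-string subgroup is $\{0\}$. The other branch, where some string of some $\pi$ is unbalanced and the per-string subgroup is $r\mathbb{Z}$ with $r$ finite, is the one in which you must prove $D \supset \mathcal{P}_2^{(\infty)}$ and then $D = \mathcal{P}_2^{(r)}$; this is the hard lemma you rightly attribute to \cite{mw2}. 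Moreover, the presence or absence of crossings cannot serve as the pivot: $\mathcal{P}_2^{(r)}$ with $r$ finite contains many crossing pairings (indeed $\mathcal{P}_2^{(1)} = \mathcal{P}_2$), and so does $\mathcal{P}_2^C$ for any nonempty $C$. The correct pivot is per-string color balance, not the existence of crossings.
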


\begin{proof}
This is something non-trivial, and we refer here to \cite{mwe}. The general idea is that $U_N^{(\infty)}$ produces a dichotomy for the quantum groups in the statement, and this leads, via combinatorial computations, to the series and the family. See \cite{mwe}.
\end{proof}

Observe that there is an obvious similarity here with the dichotomy for the liberations of $H_N$, coming from the work of Raum-Weber \cite{rwe}, explained in the above. To be more precise, the above-mentioned classification results for the liberations of $H_N$ and the liberations of $U_N$ have some obvious similarity between them. We have indeed a family followed by a series, and a series followed by a family.

\bigskip

\index{contravariant duality}

All this suggests the existence of a general ``contravariant duality'' between these quantum groups, as follows:
$$\xymatrix@R=50pt@C=50pt{
U_N\ar[r]\ar@.[d]&U_N^{(r)}\ar[r]\ar@.[d]&U_N^C\ar[r]\ar@.[d]&U_N^+\ar@.[d]\\
H_N^+\ar@.[u]&H_N^{[r]}\ar[l]\ar@.[u]&H_N^\Gamma\ar[l]\ar@.[u]&H_N\ar[l]\ar@.[u]
}$$

At the first glance, this might sound a bit strange. Indeed, we have some natural and well-established correspondences $H_N\leftrightarrow U_N$ and $H_N^+\leftrightarrow U_N^+$, obtained in one sense by taking the real reflection subgroup, $H=U\cap H_N^+$, and in the other sense by setting $U=<H,U_N>$. Thus, our proposal of duality seems to go the wrong way.

\bigskip

On the other hand, obvious as well is the fact that these correspondences $H_N\leftrightarrow U_N$ and $H_N^+\leftrightarrow U_N^+$ cannot be extended as to map the series to the series, and the family to the family, because the series/families would have to be ``inverted'', in order to do so. 

\bigskip

Thus, we are led to the above contravariant duality conjecture. In practice, the idea would be that of constructing the duality by a clever use of the interesection and generation operations $\cap$ and $<\,,>$, but it is not clear so far on how to do this.

\bigskip

Following \cite{ba5}, let us discuss now what happens inside the standard cube, first in the easy case, and then in general. The idea here will be that of carefully looking at the Ground Zero theorem from chapter 11 above, and removing the easiness axiom there.

\bigskip

This is something quite technical, and in order to do so, let us start with a study of the easy case, with the goal of improving the Ground Zero theorem, by relaxing a bit the orientability axiom there. Let us start with the following definition:

\begin{definition}
A twistable easy quantum group $H_N\subset G_N\subset U_N^+$ is called ``bi-oriented'' if the diagram
$$\xymatrix@R=17pt@C=17pt{
&G_N^d\ar[rr]&&G_N\\
G_N^{dr}\ar[rr]\ar[ur]&&G_N^r\ar[ur]\\
&G_N^{cd}\ar[rr]\ar[uu]&&G_N^c\ar[uu]\\
H_N\ar[uu]\ar[ur]\ar[rr]&&G_N^{cr}\ar[uu]\ar[ur]}$$
as well as the diagram
$$\xymatrix@R=17pt@C=17pt{
&G_N^{fu}\ar[rr]&&U_N^+\\
G_N^f\ar[rr]\ar[ur]&&G_N^{sf}\ar[ur]\\
&G_N^u\ar[rr]\ar[uu]&&G_N^{su}\ar[uu]\\
G_N\ar[uu]\ar[ur]\ar[rr]&&G_N^s\ar[uu]\ar[ur]
}$$
are intersection and easy generation diagrams.
\end{definition}

Observe that the first diagram is automatically an intersection diagram, and that the second diagram is automatically an easy generation diagram.

\bigskip

The question of replacing the slicing axiom with the bi-orientability condition makes sense. In fact, we can even talk about weaker axioms, as follows:

\index{orientability}

\begin{definition}
An easy quantum group $H_N\subset G_N\subset U_N^+$ is called ``oriented'' if
$$G_N=\{G_N^{cd},G_N^{cr},G_N^{dr}\}$$
$$G_N=G_N^{fs}\cap G_N^{fu}\cap G_N^{su}$$
and ``weakly oriented'' if the following weaker conditions hold,
$$G_N=\{G_N^c,G_N^d,G_N^r\}$$
$$G_N=G_N^f\cap G_N^s\cap G_N^u$$
where the various versions are those in chapter 11 above.
\end{definition}

In order to prove now the uniqueness of the main 8 easy quantum groups, in the bi-orientable case, we can still proceed as in the proof of the Ground Zero theorem, but we are no longer allowed to use the coordinate system there, based at $O_N$. 

\bigskip

To be more precise, we must use the 2 coordinate systems highlighted below, both taken in some weak sense, weaker than the slicing:
$$\xymatrix@R=18pt@C=18pt{
&K_N^+\ar@=[rr]&&U_N^+\\
H_N^+\ar[rr]\ar[ur]&&O_N^+\ar@=[ur]\\
&K_N\ar[rr]\ar[uu]&&U_N\ar@=[uu]\\
H_N\ar@=[uu]\ar@=[ur]\ar@=[rr]&&O_N\ar[uu]\ar[ur]
}$$

Skipping some details here, all this is viable, by using the known ``edge results'' surveyed above, along with the key fact, coming also from the above edge results, that the quantum group $H_N^{[\infty]}$ from \cite{rwe} has no orthogonal counterpart. 

\bigskip

\index{Ground Zero theorem}

Thus, we obtain in principle some improvements of the Ground Zero theorem, under the bi-orientability assumption, and more generally under the orientability assumption. As for the weak orientability assumption, the situation here is more tricky, because we would need full ``face results'', which are not available yet.

\section*{12c. Beyond easiness}

Let us discuss now the general, non-easy case. In order to do so, we must find extensions of the notions of uniformity, twistability and orientability. Regarding the notion of uniformity, the definition here is straightforward, with only some minor changes with respect to the easy quantum group case, as follows:

\index{uniformity}

\begin{definition}
A series $G=(G_N)$ of closed subgroups $G_N\subset U_N^+$ is called:
\begin{enumerate}
\item Weakly uniform, if  for any $N\in\mathbb N$ we have $G_{N-1}=G_N\cap U_{N-1}^+$, with respect to the embedding $U_{N-1}^+\subset U_N^+$ given by $u\to diag(u,1)$.

\item Uniform, if for any $N\in\mathbb N$ we have $G_{N-1}=G_N\cap U_{N-1}^+$, with respect to the $N$ possible embeddings $U_{N-1}^+\subset U_N^+$, of type $u\to diag(u,1)$.
\end{enumerate}
\end{definition}

In the easy quantum group case these two notions coincide, due to the presence of the symmetric group $S_N\subset G_N$, which acts on everything, and allows one to pass from one embedding $U_{N-1}^+\subset U_N^+$ to another. In general, these two notions do not coincide.

\bigskip

Regarding the examples, in the classical case we have substantially more examples than in the easy case, obtained by using the determinant, and its powers:

\index{complex reflection group}

\begin{proposition}
The following compact groups are uniform,
\begin{enumerate}
\item The complex reflection groups 
$$H_N^{sd}=\left\{g\in\mathbb Z_s\wr S_N\Big|(\det g)^d=1\right\}$$
for any values of the parameters $s\in\{1,2,\ldots,\infty\}$ and $d\in\mathbb N$, $d|s$,

\item The orthogonal group $O_N$, the special orthogonal group $SO_N$, and the series 
$$U_N^d=\left\{g\in U_N\Big|(\det g)^d=1\right\}$$
of modified unitary groups, with $s\in\{1,2,\ldots,\infty\}$,
\end{enumerate}
and so are the bistochastic versions of these groups.
\end{proposition}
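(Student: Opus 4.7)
\emph{Plan.} The goal is to verify, for each group $G_N$ on the list, that $G_{N-1} = G_N \cap U_{N-1}^+$ holds with respect to each of the $N$ diagonal embeddings $u \mapsto \mathrm{diag}(u,1)$ of $U_{N-1}^+$ into $U_N^+$, in which the fixed entry $1$ is placed in any chosen position $k \in \{1,\ldots,N\}$. I will first establish the weak form (the case $k = N$) by a direct block-matrix check, and then deduce the full uniformity by a symmetry argument, showing that the $k$-th and $N$-th embeddings are conjugate inside the ambient unitary group by a matrix that normalizes $G_N$.

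For weak uniformity, write $g = \mathrm{diag}(g',1) \in U_N$ with $g' \in U_{N-1}$. The verification then reduces to four elementary observations, each a one-line block computation:
(i) $g$ is orthogonal iff $g'$ is orthogonal, which handles $O_N$;
(ii) $\det g = \det g'$, so for any determinantal constraint $(\det g)^d = 1$ we have compatibility, which handles $SO_N$ and $U_N^d$;
(iii) $g$ has monomial form with entries in $\mu_s \cup \{0\}$ iff $g'$ does, which combined with (ii) gives weak uniformity for the complex reflection groups $H_N^{s,d}$;
(iv) with $\xi_N = (1,\ldots,1)^T$ one has $g\xi_N = \xi_N$ iff $g'\xi_{N-1} = \xi_{N-1}$, so intersecting with the bistochastic condition preserves compatibility, and we get the bistochastic versions automatically.

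To upgrade weak uniformity to full uniformity, I would invoke Proposition 11.2: for the groups which contain $S_N$, namely $H_N^{s,d}$, $O_N$, $U_N^d$ and their bistochastic versions, the $N$ embeddings are conjugate by elements of $G_N$ itself, so weak uniformity is equivalent to uniformity. The only group on the list not containing $S_N$ is $SO_N$ (and its bistochastic version), because $S_N$ contains odd permutations. However, the transposition $(k,N)$ lies in $O_N$, and conjugation by any orthogonal matrix preserves determinants, so it sends $SO_N$ into itself and intertwines the $k$-th and $N$-th embeddings; thus the intersection $SO_N \cap U_{N-1}^+$ is the same for each of the $N$ embeddings, and the weak case suffices.

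The main — and really the only — obstacle is this last point about $SO_N$, where one has to note that although $S_N \not\subset SO_N$, conjugation by an exterior element of $S_N$ still preserves the subgroup, so the argument of Proposition 11.2 carries through. Everything else is a routine verification of the block-diagonal stability of the defining relations of each group, plus the observation that the bistochastic versions follow by taking an intersection with $B_N^+$, which is itself uniform by observation (iv).
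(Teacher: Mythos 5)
Your overall strategy coincides with the paper's, which simply declares both assertions clear from definitions: since all the groups involved are classical, $G_N\cap U_{N-1}^+=G_N\cap U_{N-1}$ (a quantum subgroup of a classical group is classical), and uniformity reduces to the block-diagonal checks you carry out in (i)--(iv), which are correct.

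There is, however, one inaccurate step in your upgrade from weak to full uniformity. You invoke Proposition 11.2 ``for the groups which contain $S_N$, namely $H_N^{s,d}$, $O_N$, $U_N^d$'', but this containment fails whenever $d$ is odd: an odd permutation matrix has determinant $-1$, so for instance $H_N^{1,1}=A_N$ and $U_N^1=SU_N$ do not contain $S_N$, and the same happens for all $H_N^{s,d}$, $U_N^d$ with $d$ odd. (A secondary point: Proposition 11.2 is stated for easy quantum groups, and the determinant-constrained groups here are not easy, so it should not be cited as such; only the underlying $S_N$-conjugation argument is relevant.) The repair is exactly the device you already use for $SO_N$, and it should simply be applied across the board: conjugation by any permutation matrix of $U_N$ preserves orthogonality, unitarity, monomiality with entries in $\mu_s\cup\{0\}$, the value of the determinant, and the bistochasticity condition $g\xi=\xi$, hence normalizes every group on your list, classical or bistochastic, whether or not it contains $S_N$; since such conjugations intertwine the $N$ diagonal embeddings of $U_{N-1}$, the weak case $k=N$ implies the full statement uniformly. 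With that single correction your argument is complete, and it supplies the detail the paper leaves implicit.
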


\begin{proof}
Both these assertions are clear from definitions, the idea being as follows:

\medskip

(1) These groups are well-known objects in finite group theory, and more precisely form the series of complex reflection groups, and generalize the groups $H_N^s$ from chapter 10 above, which appear at $d=s$. See Shephard-Todd \cite{sto}. 

\medskip

(2) These groups are well-known as well, in compact Lie group theory, with $U_N^1$ being equal to $SU_N$, and with $U_N^\infty$ being by definition $U_N$ itself.
\end{proof}

In the free case now, corresponding to the condition $S_N^+\subset G_N\subset U_N^+$, it is widely believed that the only examples are the easy ones. A precise conjecture in this sense, which is a bit more general, valid for any $G_N\subset U_N^+$, states that we should have:
$$<G_N,S_N^+>=\{G_N',S_N^+\}$$

Here $G_N'$ denotes as usual the easy envelope of $G_N$, and $\{\,,\}$ is an easy generation operation. This conjecture is probably something quite difficult.

\bigskip

Now back to our questions, we have definitely no new examples in the free case. So, the basic examples will be those that we previously met, namely:

\begin{proposition}
The following free quantum groups are uniform,
\begin{enumerate}
\item Liberations $H_N^{s+}=\mathbb Z_s\wr_*S_N^+$ of the complex reflection groups $H_N^s=\mathbb Z_s\wr S_N$,

\item Liberations $O_N^+,U_N^+$ of the continuous groups $O_N,U_N$,
\end{enumerate}
and so are the bistochastic versions of these quantum groups.
\end{proposition}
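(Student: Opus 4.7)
The plan is to apply the criterion from Proposition 11.2, which asserts that for an easy quantum group $G = (G_N)$ coming from a category of partitions $D \subset P$, uniformity is equivalent to the combinatorial condition that $D$ be stable under the operation of removing blocks. Since every quantum group listed in the statement is known to be easy, the entire problem reduces to a combinatorial check on the associated categories of partitions. So first I would recall the relevant categories: $NC_2$ for $O_N^+$, $\mathcal{NC}_2$ for $U_N^+$, $NC_{12}$ for $B_N^+$, $\mathcal{NC}_{12}$ for $C_N^+$, and the category $NC^s$ from Theorem 10.15 (noncrossing partitions satisfying the weighted condition $\#\circ - \#\bullet \equiv 0 \pmod s$ in each block) for $H_N^{s+}$.

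The key observation is that each of these categories is defined by conjunctions of two kinds of conditions: a \emph{global} noncrossing condition, and a collection of \emph{local} conditions imposed block by block (all blocks of size $2$, or of size in $\{1,2\}$, or of even size with a weighted root-of-unity balance, etc.). Both types of conditions are automatically preserved when one erases a block from a partition: erasing cannot create a crossing, since the remaining strings are a subfamily of a noncrossing family; and the surviving blocks are completely untouched, so any block-local condition they satisfied continues to hold. Hence, in each of the six cases, the category $D$ is stable under block removal, and Proposition 11.2 gives uniformity.

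For the bistochastic claim, the same argument applies verbatim once one recalls from Theorem 7.17 that $B_N^+, C_N^+$ correspond to $NC_{12}, \mathcal{NC}_{12}$, both of which are characterized by noncrossingness together with a block-size condition ($|b|\in\{1,2\}$) and, in the complex case, the matching condition on colors within each block; both conditions are local to each block. The main step to be careful with is the quantum reflection case $H_N^{s+}$: here the category $NC^s$ involves the weighted constraint $\sum_{i \in b}\varepsilon_i \equiv 0 \pmod s$ (with $\varepsilon_i = \pm 1$ according to color), which must be re-verified to be genuinely block-local rather than global. This is indeed the case by Theorem 10.15, so no obstruction arises.

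I expect the only conceptually delicate point to be the justification that Proposition 11.2, stated for subgroups of $U_N^+$ via $u\to\mathrm{diag}(u,1)$, applies uniformly to all the cases above, including $H_N^{s+}$ where the $s$-th power relation $u_{ij}^s = p_{ij}$ interacts with the embedding. However, the relation $u_{ij}^s = p_{ij}$ descends correctly to the corner when $u_{NN} = 1$ (since then $p_{NN} = 1$), so the embedding is compatible, and the Tannakian argument given in the proof of Proposition 11.2 goes through unchanged. No new computations beyond the block-local verifications are required.
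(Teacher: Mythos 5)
Your proof is correct and takes essentially the same approach as the paper, which simply invokes (in one sentence) that the uniformity check for $H_N^{s+}$ is the same as for $S_N^+, H_N^+, K_N^+$, exactly the block-removal criterion from Proposition 11.2 that you spell out in detail. The worry in your final paragraph about the $s$-th power relation interacting with the corner embedding is unnecessary: once Proposition 11.2 reduces uniformity to the purely combinatorial condition (3), no separate compatibility of the defining relations with $u\to\mathrm{diag}(u,1)$ needs to be re-examined, so that paragraph can be omitted.
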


\begin{proof}
This is something that we basically know, with the uniformity check for $H_N^{s+}$ being the same as for $S_N^+,H_N^+,K_N^+$, which appear at $s=1,2,\infty$.
\end{proof}

We would need now a second axiom, such as the twistability condition $T_N\subset G_N$ used in chapter 11. However, if we carefully look at Proposition 12.13, and we want to have as examples the groups there, a condition of type $A_N\subset G_N$ would be more appropriate. 

\bigskip

In order to comment on this dillema, let us recall from chapter 11 that, in view of the considerations there, ``taking the bistochastic version'' is a bad direction, geometrically speaking. But the operations ``taking the diagonal torus'' and ``taking the special version'', that we are currently discussing, are bad too. Thus, we have 3 bad directions, and so we end up with a cube formed by these bad 3 directions, as follows:

\begin{proposition}
We have the following diagram of finite groups,
$$\xymatrix@R=18pt@C=18pt{
&S_N\ar[rr]&&H_N\\
A_N\ar[rr]\ar[ur]&&SH_N\ar[ur]\\
&\{1\}\ar[rr]\ar[uu]&&T_N\ar[uu]\\
\{1\}\ar[uu]\ar[ur]\ar[rr]&&ST_N\ar[uu]\ar[ur]
}$$
obtained from $H_N$ by taking bistochastic, special and diagonal versions.
\end{proposition}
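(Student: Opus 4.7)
The plan is to verify each of the eight vertices of the cube by computing what $H_N$ becomes after applying the relevant combination of the three operations. Since each operation is realized as an intersection inside some ambient group, namely $G \cap B_N$ for the bistochastic version (matrices fixing the all-ones vector $\xi$), $G \cap \{\det=1\}$ for the special version, and $G \cap T_N$ for the diagonal version, the three operations commute pairwise. This makes the cube well-defined, so all that remains is to identify the seven non-trivial corners.

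First, I would recall that $H_N=\mathbb{Z}_2\wr S_N$ consists of signed permutation matrices, and record the three first-order projections of $H_N$: the diagonal version is $H_N\cap T_N=T_N=\mathbb{Z}_2^N$, because the diagonal elements of $H_N$ are precisely the $\pm 1$ diagonal matrices; the bistochastic version is $H_N\cap B_N=S_N$, since a signed permutation matrix sends $\xi\mapsto\xi$ if and only if every sign equals $+1$; and the special version is $H_N\cap\{\det=1\}=SH_N$ by definition.

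Next I would compute the three second-order (edge) projections. The bistochastic-plus-special combination is $S_N\cap SH_N$; but a permutation matrix has determinant equal to the sign of the permutation, so this equals the alternating group $A_N$. The special-plus-diagonal combination is $T_N\cap SH_N=ST_N$, namely the diagonal $\pm 1$ matrices with an even number of $-1$ entries. The bistochastic-plus-diagonal combination is $T_N\cap S_N$, and a diagonal matrix that fixes $\xi$ must be the identity, hence $\{1\}$. The triple intersection is then automatically $\{1\}$, as it sits inside $T_N\cap S_N$.

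Putting these computations together, and checking that the inclusion arrows go in the expected direction (each operation makes the group smaller, so the arrows in the diagram point from smaller to larger), yields the stated cube. The only conceptual point to verify is the commutativity of the three operations, but this is immediate from the fact that each is an intersection with a fixed subgroup of $O_N$, so there is no real obstacle — the result is essentially a bookkeeping exercise once the three defining operations have been spelled out.
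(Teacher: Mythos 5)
Your proof is correct and takes the same route the paper implicitly follows: the paper's own proof consists of the single sentence ``This is clear, with the operations of taking bistochastic versions, special versions and diagonal subgroups corresponding to going left, backwards, and downwards,'' and your writeup simply supplies the vertex-by-vertex verification that this one-liner takes for granted. All seven identifications ($H_N\cap T_N=T_N$, $H_N\cap B_N=S_N$, $H_N\cap SO_N=SH_N$, $S_N\cap SO_N=A_N$, $T_N\cap SO_N=ST_N$, $T_N\cap B_N=\{1\}$, and the triple intersection $\{1\}$) are correct, and your observation that the three operations are intersections with fixed subgroups of $O_N$, hence commute, is exactly the reason the cube is well-defined.
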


\begin{proof}
This is clear from definitions, with the operations of taking bistochastic versions, special versions and diagonal subgroups corresponding respectively to going left, backwards, and downwards, with respect to the coordinates in the statement.
\end{proof}

Now back to our classification questions, the vertices of the above cube are all interesting groups, and assuming that the quantum groups $G_N\subset U_N^+$ that we want to classify contain any of them is something quite natural. 

\bigskip

Let us just select here three such conditions, as follows:

\index{twistable quantum group}
\index{homogeneous quantum group}
\index{half-homogeneous quantum group}

\begin{definition}
A closed subgroup $G_N\subset U_N^+$ is called:
\begin{enumerate}
\item Twistable, if $T_N\subset G_N$.

\item Homogeneous, if $S_N\subset G_N$.

\item Half-homogeneous, if $A_N\subset G_N$.
\end{enumerate}
\end{definition}

As before with the notion of uniformity, things simplify in the easy case. To be more precise, any easy quantum group is automatically homogeneous, and half-homogeneous as well. As for the notion of twistability, this coincides with the old one.

\bigskip

Let us go ahead now, and formulate our third and last definition, regarding the orientability axiom. Things are quite tricky here, and we must start as follows:

\index{classical version}
\index{discrete version}
\index{real version}
\index{free version}
\index{smooth version}
\index{unitary version}

\begin{definition}
Associated to any closed subgroup $G_N\subset U_N^+$ are its classical, discrete and real versions, given by
$$G_N^c=G_N\cap U_N$$
$$G_N^d=G_N\cap K_N^+$$
$$G_N^r=G_N\cap O_N^+$$
as well as its free, smooth and unitary versions, given by
$$G_N^f=<G_N,H_N^+>$$
$$G_N^s=<G_N,O_N>$$
$$G_N^u=<G_N,K_N>$$
where $<\,,>$ is the usual, non-easy topological generation operation.
\end{definition}

Observe the difference, and notational clash, with some of the notions used in chapter 11. To be more precise, as explained in chapter 7, it is believed that we should have $\{\,,\}=<\,,>$, but this is not clear at all, and the problem comes from this.

\bigskip

A second issue comes when composing the above operations, and more specifically those involving the generation operation, once again due to the conjectural status of the formula $\{\,,\}=<\,,>$. Due to this fact, instead of formulating a result here, we have to formulate a second definition, complementary to Definition 12.7, as follows:

\begin{definition}
Associated to any closed subgroup $G_N\subset U_N^+$ are the mixes of its classical, discrete and real versions, given by
$$G_N^{cd}=G_N\cap K_N$$
$$G_N^{cr}=G_N\cap O_N^+$$
$$G_N^{dr}=G_N\cap H_N^+$$
as well as the mixes of its free, smooth and unitary versions, given by
$$G_N^{fs}=<G_N,O_N^+>$$
$$G_N^{fu}=<G_N,K_N^+>$$
$$G_N^{us}=<G_N,U_N>$$
where $<\,,>$ is the usual, non-easy topological generation operation.
\end{definition}

Now back to our orientation questions, the slicing and bi-orientability conditions lead us again into $\{\,,\}$ vs. $<\,,>$ troubles, and are therefore rather to be ignored. The orientability conditions from Definition 12.11, however, have the following analogue:

\index{orientability}

\begin{definition}
A closed subgroup $G_N\subset U_N^+$ is called ``oriented'' if
$$G_N=<G_N^{cd},G_N^{cr},G_N^{dr}>$$
$$G_N=G_N^{fs}\cap G_N^{fu}\cap G_N^{su}$$
and ``weakly oriented'' if the following conditions hold,
$$G_N=<G_N^c,G_N^d,G_N^r>$$
$$G_N=G_N^f\cap G_N^s\cap G_N^u$$
where the various versions are those in Definition 12.17 and Definition 12.18.
\end{definition}

With these notions, our claim is that some classification results are possible:

\bigskip

(1) In the classical case, we believe that the uniform, half-homogeneous, oriented groups are those in Proposition 12.13, with some bistochastic versions excluded. This is of course something quite heavy, well beyond easiness, with the potential tools available for proving such things coming from advanced finite group theory and Lie algebra theory.  Our uniformity axiom could play a key role here, when combined with \cite{sto}, in order to exclude all the exceptional objects which might appear on the way.

\bigskip

(2) In the free case, under similar assumptions, we believe that the solutions should be those in Proposition 12.14, once again with some bistochastic versions excluded. This is something heavy, too, related to the above-mentioned well-known conjecture $<G_N,S_N^+>=\{G_N',S_N^+\}$. Indeed, assuming that we would have such a formula, and perhaps some more formulae of the same type as well, we can in principle work out our way inside the cube, from the edge and face projections to $G_N$ itself, and in this process $G_N$ would become easy. This would be the straightforward strategy here.

\bigskip

(3) In the group dual case, the orientability axiom simplifies, because the group duals are discrete in our sense. We believe that the uniform, twistable, oriented group duals should appear as combinations of certain abelian groups, which appear in the classical case, with duals of varieties of real reflection groups, which appear in the real case. This is probably the easiest question in the present series, and the most reasonable one, to start with. However, there are no concrete results so far, in this direction.

\bigskip

We refer to \cite{ba5} and related papers for further comments, on all the above.

\section*{12d. Maximality questions}

Let us go back now to the standard cube, and to edge problems, but without the easiness assumption, this time. An interesting family of questions here is that of proving that the easy solutions of various edge problems are in fact the only ones, even in the non-easy case. We will see that these are several results and conjectures here.

\bigskip

We have the following result from \cite{bc+}, to start with:

\index{maximality}
\index{projective version}

\begin{theorem}
The following inclusions are maximal:
\begin{enumerate}
\item $\mathbb TO_N\subset U_N$.

\item $PO_N\subset PU_N$.
\end{enumerate}
\end{theorem}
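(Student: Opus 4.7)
The plan is to reduce (1) and (2) to each other via the projective quotient, and then prove (1) by a Lie-algebra argument followed by a normalizer computation. First, since the scalar torus $\mathbb{T}\cdot I$ is central in $U_N$ and is contained in $\mathbb{T}O_N$, every closed intermediate subgroup $\mathbb{T}O_N\subset G\subset U_N$ contains $\mathbb{T}\cdot I$, and quotienting by $\mathbb{T}$ gives a lattice isomorphism between such $G$ and the closed intermediate subgroups $PO_N\subset H\subset PU_N$. Hence (1) and (2) are equivalent, and it suffices to prove (1).

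For (1), any closed intermediate $G$ is automatically a Lie subgroup of $U_N$, with Lie algebra $\mathfrak{g}$ satisfying $\mathbb{R}iI+\mathfrak{o}_N\subset\mathfrak{g}\subset\mathfrak{u}_N$. I would use the $\pm1$-eigenspace decomposition for the conjugation $\theta(A)=\bar A$ on $\mathfrak{u}_N$, which gives
$$\mathfrak{u}_N=\mathfrak{o}_N\oplus\mathbb{R}iI\oplus V,$$
where $V$ is the space of $i$ times real traceless symmetric matrices. A standard fact from $O_N$-representation theory is that $V$ is irreducible for $N\geq 2$. Since $O_N\subset G$, the subspace $\mathfrak{g}\cap V$ is $O_N$-invariant, hence equals $0$ or $V$, so $\mathfrak{g}\in\{\mathbb{R}iI+\mathfrak{o}_N,\;\mathfrak{u}_N\}$. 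If $\mathfrak{g}=\mathfrak{u}_N$ then $G^0=U_N$, so $G=U_N$; it remains to rule out extra connected components in the other case.

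In the case $\mathfrak{g}=\mathbb{R}iI+\mathfrak{o}_N$ we have $G^0=\mathbb{T}\cdot SO_N$, and $G$ normalizes $\mathfrak{g}$. A quick trace argument (any element of $W\mathfrak{o}_NW^{-1}$ is traceless, and the traceless part of $\mathbb{R}iI+\mathfrak{o}_N$ is exactly $\mathfrak{o}_N$) upgrades this to $G\subset N_{U_N}(\mathfrak{o}_N)$. The key step is then to prove
$$N_{U_N}(\mathfrak{o}_N)=\mathbb{T}O_N.$$
The containment $\supset$ is immediate; for $\subset$, note that conjugation by $W\in U_N$ preserves $\mathfrak{o}_N$ precisely when $\mathrm{Ad}(W)$ commutes with $\theta$, which after expansion becomes the condition that $\bar W^{-1}W$ centralizes $\mathfrak{u}_N$ and hence lies in $\mathbb{C}I$; writing $\bar W=\mu W$ with $\mu\in\mathbb{T}$ and choosing $\lambda\in\mathbb{T}$ with $\bar\lambda/\lambda=\mu$, the matrix $V:=\lambda^{-1}W$ is real and unitary, so $W=\lambda V\in\mathbb{T}O_N$. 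Combined with $\mathbb{T}O_N\subset G$, this yields $G=\mathbb{T}O_N$.

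The main obstacle I expect is the normalizer computation: the Lie-algebra irreducibility step is essentially standard representation theory, but excluding extra connected components in the smaller case requires precisely the Cartan-involution trick above, and a naive approach via Lie algebras alone would leave open the possibility of discrete extensions of $\mathbb{T}\cdot SO_N$ inside $U_N$. The low-rank cases $N\leq 2$, where $\mathfrak{o}_N$ is abelian and some intermediate simplifications fail, can be handled separately by a direct dimension count.
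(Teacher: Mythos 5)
Your proposal is correct and shares the paper's overall architecture: the irreducibility of the adjoint action of $SO_N$ (equivalently $O_N$) on traceless symmetric matrices pins down the Lie algebra of any intermediate group, the disconnected case is handled by a normalizer computation, and the projective statement is obtained by lifting along $U_N\to PU_N$ (the paper only uses the one direction of your lattice correspondence, which is all that is needed). Where you genuinely diverge is in the normalizer step. The paper proves that the normalizer of the group $\mathbb{T}SO_N$ inside $U_N$ is $\mathbb{T}O_N$; since normalizing $\mathbb{T}SO_N$ a priori only conjugates $SO_N$ into $\mathbb{T}SO_N$ up to scalars, the paper first shows that matrices of nonzero trace are dense in $SO_N$ in order to strip the scalar, and then invokes an explicit computation of the commutant $SO_N'$ in $M_N(\mathbb{C})$ (done separately for $N=2$ and $N\geq3$) to get $U^tU\in\mathbb{C}I$ and hence $U\in\mathbb{T}O_N$. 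You instead stay at the Lie-algebra level: your trace argument reduces normalization of $\mathbb{R}iI\oplus\mathfrak{o}_N$ to normalization of $\mathfrak{o}_N$, and the identity $\theta\circ\mathrm{Ad}(W)=\mathrm{Ad}(\bar W)\circ\theta$ together with centrality of $\bar W^{-1}W$ gives $N_{U_N}(\mathfrak{o}_N)=\mathbb{T}O_N$ directly; this replaces the paper's density and commutant lemmas, at the cost of the (standard, but worth stating explicitly) observation that $\mathrm{Ad}(W)$ is an isometry of the trace form, so preserving the $+1$-eigenspace of $\theta$ forces preservation of the $-1$-eigenspace, i.e. $\mathrm{Ad}(W)$ commutes with $\theta$. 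Two minor remarks: your deduction of $\mathfrak{g}\in\{\mathbb{R}iI\oplus\mathfrak{o}_N,\,\mathfrak{u}_N\}$ from $\mathfrak{g}\cap V\in\{0,V\}$ is valid precisely because $\mathfrak{g}$ already contains the complementary summands, so no appeal to complete reducibility is needed; and $N=2$ does not in fact require separate treatment, since the rotation action on $V$ remains irreducible over $\mathbb{R}$ and your normalizer identity persists, whereas the paper absorbs $N=2$ via a symmetry observation in its commutant step.
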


\begin{proof}
In order to prove these results, consider as well the group $\mathbb TSO_N$. Observe that we have $\mathbb TSO_N=\mathbb TO_N$ if $N$ is odd. If $N$ is even the group $\mathbb TO_N$ has two connected components, with $\mathbb TSO_N$ being the component containing the identity. 

\medskip

Let us denote as well by $\mathfrak{so}_N,\mathfrak u_N$ the Lie algebras of $SO_N,U_N$.  It is well-known that $\mathfrak u_N$ consists of the matrices $M\in M_N(\mathbb C)$ satisfying $M^*=-M$, and that:
$$\mathfrak{so}_N=\mathfrak u_N\cap M_N(\mathbb R)$$

Also, it is easy to see that the Lie algebra of $\mathbb TSO_N$ is $\mathfrak{so}_N\oplus i\mathbb R$.

\medskip

\underline{Step 1}. Our first claim is that if $N\geq 2$, the adjoint representation of $SO_N$ on the space of real symmetric matrices of trace zero is irreducible.

\medskip

Let indeed $X \in M_N(\mathbb R)$ be symmetric with trace zero. We must prove that the following space consists of all the real symmetric matrices of trace zero:
$$V=span\left\{UXU^t\Big|U \in SO_N\right\}$$

We first prove that $V$ contains all the diagonal matrices of trace zero. Since we may diagonalize $X$ by conjugating with an element of $SO_N$, our space $V$ contains a nonzero diagonal matrix of trace zero. Consider such a matrix:
$$D=\begin{pmatrix}
d_1\\
&\ddots\\
&&d_N
\end{pmatrix}$$

We can conjugate this matrix by the following matrix:
$$\begin{pmatrix}
0&-1&0\\
1&0&0\\
0&0&I_{N-2}
\end{pmatrix}\in SO_N$$

We conclude that our space $V$ contains as well the following matrix: 
$$D'=\begin{pmatrix}
d_2\\
&d_1\\
&&d_3\\
&&&\ddots\\
&&&&d_N
\end{pmatrix}$$

More generally, we see that for any $1\leq i,j\leq N$ the diagonal matrix obtained from $D$ by interchanging $d_i$ and $d_j$ lies in $V$. Now since $S_N$ is generated by transpositions, it follows that $V$ contains any diagonal matrix obtained by permuting the entries of $D$.  But it is well-known that this representation of $S_N$ on the diagonal matrices of trace zero is irreducible, and hence $V$ contains all such diagonal matrices, as claimed.

\medskip

In order to conclude now, assume that $Y$ is an arbitrary real symmetric matrix of trace zero. We can find then an element $U\in SO_N$ such that $UYU^t$ is a diagonal matrix of trace zero.  But we then have $UYU^t \in V$, and hence also $Y\in V$, as desired.

\medskip

\underline{Step 2}. Our claim is that the inclusion $\mathbb TSO_N\subset U_N$ is maximal in the category of connected compact groups. 

\medskip

Let indeed $G$ be a connected compact group satisfying $\mathbb TSO_N\subset G\subset U_N$. Then $G$ is a Lie group. Let $\mathfrak g$ denote its Lie algebra, which satisfies:
$$\mathfrak{so}_N\oplus i\mathbb R\subset\mathfrak g\subset\mathfrak u_N$$

Let $ad_{G}$ be the action of $G$ on $\mathfrak g$ obtained by differentiating the adjoint action of $G$ on itself. This action turns $\mathfrak g$ into a $G$-module. Since $SO_N \subset G$, $\mathfrak g$ is also a $SO_N$-module.

Now if $G\neq\mathbb TSO_N$, then since $G$ is connected we must have $\mathfrak{so}_N\oplus i\mathbb{R}\neq\mathfrak g$. It follows from the real vector space structure of the Lie algebras $\mathfrak u_N$ and $\mathfrak{so}_N$ that
there exists a nonzero symmetric real matrix of trace zero $X$ such that:
$$iX\in\mathfrak g$$

We know that the space of symmetric real matrices of trace zero is an irreducible representation of $SO_N$ under the adjoint action. Thus $\mathfrak g$ must contain all such $X$, and hence $\mathfrak g=\mathfrak u_N$.  But since $U_N$ is connected, it follows that $G=U_N$.  

\medskip

\underline{Step 3}. Our claim is that the commutant of $SO_N$ in $ M_N(\mathbb C)$ is as follows:
\begin{enumerate}
\item $SO_2'
=\left\{\begin{pmatrix}
\alpha&\beta\\
-\beta&\alpha
\end{pmatrix}\Big|\alpha,\beta\in\mathbb C\right\}$.

\smallskip

\item If $N\geq 3$, $SO_N'=\{\alpha I_N|\alpha\in\mathbb C\}$.
\end{enumerate}  
 
\medskip

Indeed, at $N=2$ this is a direct computation. At $N\geq 3$ now, an element in $X\in SO_N'$ commutes with any diagonal matrix having exactly $N-2$ entries equal to $1$ and two entries equal to $-1$. Hence $X$ is a diagonal matrix. Now since $X$ commutes with any even permutation matrix and $N\geq 3$, it commutes in particular with the permutation matrix associated with the cycle $(i,j,k)$ for any $1<i<j<k$, and hence all the entries of $X$ are the same. We conclude that $X$ is a scalar matrix, as claimed.

\medskip 

\underline{Step 4}. Our claim is that the set of matrices with nonzero trace is dense in $SO_N$.

\medskip  

At $N=2$ this is clear, since the set of elements in $SO_2$ having a given trace is finite. So assume $N>2$, and let:
$$T\in SO_N\simeq SO(\mathbb R^N)\quad,\quad 
Tr(T)=0$$

Let $E\subset\mathbb R^N$ be a 2-dimensional subspace preserved by $T$, such that: 
$$T_{|E} \in SO(E)$$ 
 
Let $\varepsilon >0$ and let $S_\varepsilon \in SO(E)$ with $||T_{|E}-S_\varepsilon|| <\varepsilon$, and with $Tr(T_{|E}) \not= Tr(S_\varepsilon)$, in the $N=2$ case. Now define $T_\varepsilon\in SO(\mathbb R^N)=SO_N$ by:
$$T_{\varepsilon|E}=S_\varepsilon\quad,\quad
T_{\varepsilon|E^\perp}=T_{|E^\perp}$$

It is clear that we have the following estimate:
$$||T-T_\varepsilon|| \leq ||T_{|E}-S_\varepsilon||<\varepsilon$$

Also, we have the following computation:
$$Tr(T_\varepsilon)=Tr(S_\varepsilon)+Tr(T_{|E^\perp})\neq0$$

Thus, we have proved our claim.

\medskip

\underline{Step 5}. Our claim is that $\mathbb TO_N$ is the normalizer of $\mathbb TSO_N$ in $U_N$, i.e. is the subgroup of $U_N$ consisting of the unitaries $U$ for which, for all $X\in\mathbb TSO_N$:
$$U^{-1}XU \in\mathbb TSO_N$$

It is clear that $\mathbb TO_N$ normalizes $\mathbb TSO_N$, so we must show that if $U\in U_N$ normalizes $\mathbb TSO_N$ then $U\in\mathbb TO_N$. First note that $U$ normalizes $SO_N$. Indeed if $X \in SO_N$ then:
$$U^{-1}XU \in \mathbb TSO_N$$

Thus $U^{-1}XU= \lambda Y$ with $\lambda\in\mathbb T$, $Y\in SO_N$. If $Tr(X)\neq0$, we have $\lambda\in\mathbb R$ and so:
$$\lambda Y=U^{-1}XU \in SO_N$$

The set of matrices having nonzero trace being dense in $SO_N$, we conclude that $U^{-1}XU \in SO_N$ for all $X\in SO_N$. Thus, we have:
\begin{eqnarray*}
X \in SO_N
&\implies&(UXU^{-1})^t(UXU^{-1})=I_N\\
&\implies&X^tU^tUX= U^tU\\
&\implies&U^tU \in SO_N'
\end{eqnarray*}

It follows that at $N\geq 3$ we have $U^tU=\alpha I_N$, with $\alpha \in \mathbb T$, since $U$ is unitary. Hence we have $U=\alpha^{1/2}(\alpha^{-1/2}U)$ with:
$$\alpha^{-1/2}U\in O_N\quad,\quad 
U\in\mathbb TO_N$$

If $N=2$, $(U^tU)^t=U^tU$ gives again that $U^tU=\alpha I_2$, and we conclude as in the previous case.

\medskip

\underline{Step 6}. Our claim is that the inclusion $\mathbb TO_N\subset U_N$ is maximal in the category of compact groups.

\medskip

Suppose indeed that $\mathbb TO_N\subset G\subset U_N$ is a compact group such that $G\neq U_N$. It is a well-known fact that the connected component of the identity in $G$ is a normal subgroup, denoted $G_0$. Since we have $\mathbb TSO_N\subset G_0 \subset U_N$, we must have:
$$G_0=\mathbb TSO_N$$

But since $G_0$ is normal in $G$, the group $G$ normalizes $\mathbb TSO_N$, and hence $G\subset\mathbb TO_N$.

\medskip

\underline{Step 7}. Our claim is that the inclusion $PO_N\subset PU_N$ is maximal in the category of compact groups.

\medskip

This follows from the above result. Indeed, if $PO_N\subset G \subset PU_N$ is a proper intermediate subgroup, then its preimage under the quotient map $U_N\to PU_N$ would be a proper intermediate subgroup of $\mathbb TO_N\subset U_N$, which is a contradiction.
\end{proof}

In connection now with the ``edge question'' of classifying the intermediate groups $O_N\subset G\subset U_N$, the above result leads to a dichotomy, coming from:
$$PG\in\{PO_N,PU_N\}$$

Here are some basic examples of such intermediate groups:

\begin{proposition}
We have compact groups $O_N\subset G\subset U_N$ as follows:
\begin{enumerate}
\item The following groups, depending on a parameter $r\in\mathbb N\cup\{\infty\}$, 
$$\mathbb Z_r=O_N\left\{wU\Big|w\in\mathbb Z_r,U\in O_N\right\}$$ 
whose projective versions equal $PO_N$, and the biggest of which is the group $\mathbb TO_N$, which appears as affine lift of $PO_N$. 

\item The following groups, depending on a parameter $d\in 2\mathbb N\cup\{\infty\}$,
$$U_N^d=\left\{U\in U_N\Big|\det U\in\mathbb Z_d\right\}$$
interpolating between $U_N^2$ and $U_N^\infty=U_N$, whose projective versions equal $PU_N$.
\end{enumerate}
\end{proposition}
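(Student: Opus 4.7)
The plan is to verify each of the four claims separately, using only elementary group theory together with one appeal to the normalizer computation performed in the proof of Theorem 12.21 above.

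First I would treat the family in (1). To see that $\mathbb Z_r O_N=\{wU\mid w\in\mathbb Z_r,\,U\in O_N\}$ is a compact subgroup of $U_N$, I use that $\mathbb Z_r\subset\mathbb T$ sits inside the center of $U_N$, so the product $(w_1 U_1)(w_2 U_2)=(w_1w_2)(U_1 U_2)$ again has the required form, and closure under inversion is immediate. The inclusion $O_N\subset\mathbb Z_r O_N\subset U_N$ is obtained by taking $w=1$, respectively by noting that $|w|=1$ and $U$ unitary imply $wU$ unitary. For the projective version, the quotient map $\pi:U_N\to PU_N$ kills all scalar unitaries, so $\pi(wU)=\pi(U)$, which gives immediately $P(\mathbb Z_r O_N)=PO_N$ for every $r\in\mathbb N\cup\{\infty\}$.

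Next I would settle the ``biggest'' claim. Setting $r=\infty$ (so $\mathbb Z_\infty=\mathbb T$) produces $\mathbb T O_N$, and the containment $\mathbb Z_r O_N\subset\mathbb T O_N$ is clear. To identify $\mathbb T O_N$ with the full affine lift of $PO_N$, i.e.\ the preimage $\pi^{-1}(PO_N)\subset U_N$, I invoke Step~5 of the proof of Theorem~12.21 above, which shows that $\mathbb T O_N$ is exactly the normalizer (equivalently, the preimage) of $\mathbb T SO_N$ in $U_N$; combining this with the fact that $\pi^{-1}(PO_N)$ contains $\mathbb T O_N$ and projects onto $PO_N$ with $\mathbb T$-fibres of the right size gives the equality.

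For (2), I first check that $U_N^d=\{U\in U_N\mid\det U\in\mathbb Z_d\}$ is a closed subgroup: this is immediate because $\det:U_N\to\mathbb T$ is a continuous homomorphism and $\mathbb Z_d\subset\mathbb T$ is a closed subgroup. The containment $O_N\subset U_N^d$ uses precisely the hypothesis $d\in 2\mathbb N\cup\{\infty\}$, since $\det(O_N)=\{\pm1\}\subset\mathbb Z_d$ when $d$ is even (or $d=\infty$, for which $U_N^\infty=U_N$). The endpoint $d=2$ gives $U_N^2=\{U\in U_N\mid\det U=\pm1\}$, and varying $d$ interpolates between this and $U_N$.

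Finally, for the projective version $PU_N^d=PU_N$: given any $U\in U_N$, I need a representative in $U_N^d$ of its class in $PU_N$, i.e.\ some $\lambda\in\mathbb T$ with $\det(\lambda U)=\lambda^N\det U\in\mathbb Z_d$. Since the $N$-th power map $\mathbb T\to\mathbb T$ is surjective and $\mathbb Z_d\subset\mathbb T$ is nonempty, one can solve $\lambda^N=w(\det U)^{-1}$ for any chosen $w\in\mathbb Z_d$, giving the required $\lambda$. The only point requiring any care is the verification that $\mathbb T O_N$ is the full preimage of $PO_N$ rather than a proper subgroup, which is why I route that step through the earlier normalizer lemma; everything else is a direct check of definitions.
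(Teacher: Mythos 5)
Your proposal is correct, and it matches the paper, which simply declares all the assertions "elementary and well-known" — the direct verifications you give (centrality of scalars for closure of $\mathbb Z_rO_N$, continuity of $\det$ for $U_N^d$, evenness of $d$ to get $\det(O_N)=\{\pm1\}\subset\mathbb Z_d$, and surjectivity of $\lambda\to\lambda^N$ on $\mathbb T$ for $PU_N^d=PU_N$) are exactly what is intended. The only remark is that your detour through the normalizer computation (Step 5 in the proof of Theorem 12.20, not 12.21) is unnecessary: since the fibres of $\pi:U_N\to PU_N$ are precisely the $\mathbb T$-cosets, $\pi^{-1}(PO_N)=\left\{\lambda V\big|\lambda\in\mathbb T,V\in O_N\right\}=\mathbb TO_N$ follows in one line from the definitions.
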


\begin{proof}
All the assertions are elementary, and well-known.
\end{proof}

The above results suggest that the solutions of $O_N\subset G\subset U_N$ should come from $O_N,U_N$, by succesively applying the constructions $G\to\mathbb Z_rG$ and $G\to G\cap U_N^d$. These operations do not exactly commute, but normally we should be led in this way to a 2-parameter series, unifying the two 1-parameter series from (1,2) above. However, some other groups like $\mathbb Z_NSO_N$ work too, so all this is probably a bit more complicated.

\bigskip

We have as well the following related result, also from \cite{bc+}:

\index{maximality}

\begin{theorem}
The inclusion of compact quantum groups
$$O_N\subset O_N^*$$
is maximal in the category of compact quantum groups.
\end{theorem}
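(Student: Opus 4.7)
The plan is to reduce this maximality statement to the already-established maximality of $PO_N\subset PU_N$ from Theorem 12.21, by passing to projective versions. The starting observation is that any intermediate compact quantum group $O_N\subset G\subset O_N^*$ yields a chain of projective versions $PO_N\subset PG\subset PO_N^*$. Now the key input is the identification $PO_N^*=PU_N$ from Theorem 7.7, which converts our chain into $PO_N\subset PG\subset PU_N$. Crucially, $PG$ is a closed quantum subgroup of $PU_N$, which is a \emph{classical} compact group, so $PG$ itself is classical. Therefore Theorem 12.21 applies in its stated form and forces the dichotomy $PG\in\{PO_N,PU_N\}$.

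First I would handle the case $PG=PU_N$, i.e.\ $G$ and $O_N^*$ have the same projective version. The plan is to use the projective fibration $O_N^*\to PO_N^*$, whose kernel is the scalar subgroup generated by $-I$ (a central $\mathbb Z_2$, as the entries of $u$ in $C(O_N^*)$ are self-adjoint and half-commute, forcing the center to be classical and equal to that of $O_N$). Since $G\supset O_N$, the quotient $G\to PG$ must contain the kernel of $O_N\to PO_N$, which is already the full central $\mathbb Z_2$. Combined with $PG=PO_N^*$, this shows that $G$ is the total preimage of $PO_N^*$ under $O_N^*\to PO_N^*$, namely $G=O_N^*$.

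The second case, $PG=PO_N$, is handled symmetrically: the preimage of $PO_N$ inside $O_N^*$ is controlled by the same central $\mathbb Z_2$, and since this preimage already contains $O_N$ and projects isomorphically modulo the center onto $PO_N$, a straightforward Tannakian comparison shows that the preimage equals $O_N$ itself. Thus $G=O_N$, completing the dichotomy.

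The main obstacle I anticipate is the central-kernel analysis in the $PG=PU_N$ step: one must verify carefully that, in the quantum setting, the short exact sequence $1\to\mathbb Z_2\to O_N^*\to PO_N^*\to1$ really is an extension by the classical $\mathbb Z_2$, and that every intermediate $G$ with fixed projective version is uniquely determined by its intersection with this $\mathbb Z_2$. This ultimately amounts to checking that the center of $C(O_N^*)$ is no larger than that of $C(O_N)$, which can be verified on the standard half-liberation model $C(O_N^*)\subset M_2(C(U_N))$ (e.g.\ via $u_{ij}\mapsto\begin{pmatrix}0&v_{ij}\\ \bar v_{ij}&0\end{pmatrix}$), but requires some care to make rigorous in the purely categorical language used throughout the paper.
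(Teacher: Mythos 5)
Your proposal is correct and matches the paper's strategy essentially point for point: pass to projective versions, invoke the identification $PO_N^*=PU_N$ and the maximality of $PO_N\subset PU_N$ established earlier, then climb back up via the cocentral $\mathbb Z_2$-extension $1\to\mathbb Z_2\to O_N^*\to PO_N^*\to1$. The paper formalizes the lifting step through a commutative diagram of Hopf $*$-algebra maps with pre-exact rows (the sequence $\mathbb C\to C(PG)\to C(G)\to C(\mathbb Z_2)\to\mathbb C$) and a five-lemma-style argument, with details deferred to the reference, but this is precisely the rigorous version of your central-kernel analysis; your explicit remark that $PG$ is classical, being a closed subgroup of the classical group $PU_N$, is a useful clarification that the paper leaves implicit.
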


\begin{proof}
The idea is that this follows from the result regarding $PO_N\subset PU_N$, by taking affine lifts, and using algebraic techniques. Consider indeed a sequence of surjective Hopf $*$-algebra maps as follows, whose  composition is the canonical surjection:
$$C(O_N^*)\overset{f}\longrightarrow A\overset{g}\longrightarrow C(O_N)$$

This produces a diagram of Hopf algebra maps with pre-exact rows, as follows:
$$\xymatrix@R=40pt@C=40pt{
\mathbb C\ar[r]&C(PO_N^*)\ar[d]^{f_|}\ar[r]&C(O_N^*)\ar[d]^f\ar[r]&C(\mathbb Z_2)\ar[r]\ar@{=}[d]&\mathbb C\\
\mathbb C\ar[r]&PA\ar[d]^{g_|}\ar[r]&A\ar[d]^g\ar[r]&C(\mathbb Z_2)\ar[r]\ar@{=}[d]&\mathbb C\\
\mathbb C\ar[r]&PC(O_N)\ar[r]&C(O_N)\ar[r]&C(\mathbb Z_2)\ar[r]&\mathbb C}$$
 
Consider now the following composition, with the isomorphism on the left being something well-known, coming from \cite{bdu}, that we will explain in chapter 16 below: 
$$C(PU_N)\simeq C(PO_N^*)\overset{f_|}\longrightarrow PA\overset{g_|}\longrightarrow PC(O_N)\simeq C(PO_N)$$

This induces, at the group level, the embedding $PO_N\subset PU_N$. Thus $f_|$ or $g_|$ is an isomorphism. If $f_|$ is an isomorphism we get a commutative diagram of Hopf algebra morphisms with pre-exact rows, as follows:
$$\xymatrix@R=40pt@C=40pt{
\mathbb C\ar[r]&C(PO_N^*)\ar@{=}[d]\ar[r]&C(O_N^*)\ar[d]^f\ar[r]&C(\mathbb Z_2)\ar[r]\ar@{=}[d]&\mathbb C\\
\mathbb C\ar[r]&C(PO_N^*)\ar[r]&A\ar[r]&C(\mathbb Z_2)\ar[r]&\mathbb C}$$
 
Then $f$ is an isomorphism. Similarly if $g_|$ is an isomorphism, then $g$ is an isomorphism. For further details on all this, we refer to \cite{bc+}.
\end{proof}

Summarizing, we are reaching to the conclusion formulated in the beginning of this chapter, namely that some of the easy solutions of the easy edge problems for the standard cube stay unique, even when lifting the easiness assumption.

\bigskip

In relation with these questions, we have as well the well-known and popular question of proving that the quantum group inclusion $S_N\subset S_N^+$ is maximal, in the sense that there is no intermediate quantum group, as follows:
$$S_N\subset G\subset S_N^+$$

\index{quantum permutation}
\index{maximality conjecture}

As evidence for this latter conjecture, the inclusions $S_4\subset S_4^+$ and $S_5\subset S_5^+$ can be both shown to be maximal, by using advanced quantum algebra techniques, including some recent planar algebra work. However, there is no good idea so far in order to deal with the general case. We refer to \cite{bb1}, \cite{bc+} and related papers for a discussion here.

\bigskip

Finally, for our discussion to be complete, let us discuss as well twisting results. For this purpose, let us go back to the standard cube, namely:
$$\xymatrix@R=20pt@C=20pt{
&K_N^+\ar[rr]&&U_N^+\\
H_N^+\ar[rr]\ar[ur]&&O_N^+\ar[ur]\\
&K_N\ar[rr]\ar[uu]&&U_N\ar[uu]\\
H_N\ar[uu]\ar[ur]\ar[rr]&&O_N\ar[uu]\ar[ur]
}$$

According to the general Schur-Weyl twisting method from chapter 7, all these quantum groups can be twisted. In addition, the continuous twists were computed in chapter 7, and the discrete objects were shown in chapter 10 to be equal to their own twists. Thus, we are led to the following conclusion, in relation with the standard cube:

\index{twisted standard cube}
\index{twisting}

\begin{theorem}
The Schur-Weyl twists of the main quantum groups are
$$\xymatrix@R=20pt@C=20pt{
&K_N^+\ar[rr]&&U_N^+\\
H_N^+\ar[rr]\ar[ur]&&O_N^+\ar[ur]\\
&K_N\ar[rr]\ar[uu]&&\bar{U}_N\ar[uu]\\
H_N\ar[uu]\ar[ur]\ar[rr]&&\bar{O}_N\ar[uu]\ar[ur]
}$$
and we will call this diagram ``twisted standard cube''.
\end{theorem}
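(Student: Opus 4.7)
The plan is to assemble this cube vertex-by-vertex from the twisting results already proved earlier in the text, with the main work being to verify that the eight corners behave as claimed under the Schur-Weyl construction $G_N\leftrightarrow\bar{G}_N$ from Definition 7.28. Since all eight quantum groups in the cube are twistable (their categories of partitions lie in $P_{even}$, by Theorem 11.7), the twisting operation $G_N\mapsto\bar{G}_N$ is well-defined in each case, and the content of the theorem is that six corners are fixed while the two continuous orthogonal and unitary corners move to their bar-versions.

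First I would dispose of the four ``free'' corners $O_N^+,U_N^+,H_N^+,K_N^+$. Their categories of partitions are $NC_2,\mathcal{NC}_2,NC_{even},\mathcal{NC}_{even}$ respectively, all contained in $NC_{even}$. For any noncrossing $\pi\in NC_{even}$, Proposition 7.24(3) gives $\varepsilon(\tau)=1$ for every $\tau\geq\pi$, so $\bar{\delta}_\pi=\delta_\pi$ and hence $\bar{T}_\pi=T_\pi$. Thus the Tannakian categories defining $G_N$ and $\bar{G}_N$ coincide, proving $\bar{G}_N=G_N$ for each of these four quantum groups, exactly as in the $H_N^+$ argument of Theorem 10.21(1).

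Next I would handle the two classical reflection corners $H_N$ and $K_N$. For $H_N$, this is precisely Theorem 10.21(1), using the description of $P_{even}$ via the signature condition given by Proposition 10.20: the relevant categorical relations become automatic relations of the form $0=0$ under the sign-twisting, and the remaining structure reduces to $H_N$ itself. For $K_N$, I would run the same argument, adapted to the matching category $\mathcal{P}_{even}$; the key point is that the generating partitions for $K_N$ (the one-block partition of length $2$ witnessing commutativity, augmented by the color-matching condition) all have signature $+1$ under $\varepsilon$, so the $\bar{T}_\pi$'s coincide with the $T_\pi$'s on the generators, and the twist is trivial. This is the step I expect to be slightly delicate, since $K_N$ was not explicitly covered in Theorem 10.21, but the argument is parallel: check that $\mathcal{P}_{even}$ admits the same signature-based rewriting as $P_{even}$ in Proposition 10.20, and conclude that $\bar{K}_N=K_N$ as in the real case.

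Finally, for the two continuous corners, Theorem 7.31 already gives $\bar{O}_N$ and $\bar{U}_N$ as the Schur-Weyl twists of $O_N$ and $U_N$ respectively. Combining all the above, the eight corners of the twisted cube are exactly as stated, and the edges and faces are inherited from the untwisted cube by the functoriality of $G\mapsto\bar{G}$ (which preserves inclusions of categories of partitions). The hardest conceptual point is really the $K_N$ verification; once that signature computation is in place, the full diagram assembles formally from Theorem 7.31, Theorem 10.21, and the noncrossing observation $\bar{T}_\pi=T_\pi$ on $NC_{even}$.
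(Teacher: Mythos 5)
Your high-level plan is the right one, and so are the citations: the theorem is indeed assembled from Theorem 7.31 (for the $O_N\to\bar O_N$, $U_N\to\bar U_N$, $O_N^+,U_N^+$ vertices), Theorem 10.21 (for the quantum reflection vertices), and the noncrossing observation $\bar T_\pi=T_\pi$ on $NC_{even}$. The paper's own proof is essentially this citation. However, the mechanism you propose for the $H_N$ and especially the $K_N$ vertex is not the one that works, and the claim you make there is actually false.

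Concretely, you assert that the generating partitions of $\mathcal P_{even}$ (and implicitly of $P_{even}$) ``all have signature $+1$ under $\varepsilon$, so the $\bar T_\pi$'s coincide with the $T_\pi$'s on the generators.'' This is wrong: the basic crossing $\slash\!\!\!\backslash$ lies in $P_2\subset P_{even}$ (with matching colorings it lies in $\mathcal P_2\subset\mathcal P_{even}$), it is a generator, and it has signature $-1$ by Proposition 7.24~(2). Correspondingly, Proposition 7.29 explicitly computes $\bar T_{\slash\!\!\!\backslash}\neq T_{\slash\!\!\!\backslash}$: the twisted map picks up a $-1$ on $e_i\otimes e_j$ with $i\neq j$. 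So there is no partition-by-partition identification $\bar T_\pi=T_\pi$ on $P_{even}$ or $\mathcal P_{even}$, and the direct argument you sketch collapses at its first nontrivial generator. The equality $\bar H_N=H_N$, $\bar K_N=K_N$ holds only at the level of \emph{spans} of intertwiners, which is a weaker and subtler statement.

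What the paper actually does (in the proof of Theorem 10.21, with the complex case deferred to \cite{b05}) is descend through the chain $H_N^+\supset H_N^{[\infty]}\supset H_N^*\supset H_N$. For $H_N^{[\infty]}$ the signature criterion of Proposition 10.20 does give $\bar T_\pi=T_\pi$ for all $\pi\in P_{even}^{[\infty]}$, so that case is genuinely partition-by-partition. For $H_N^*=H_N^{[\infty]}\cap O_N^*$ the twisted half-commutation relations $abc=-cba$ are of type $0=0$ because distinct entries on a common row or column already have vanishing products, whence $\bar H_N^*=H_N^*$. Finally $\bar H_N=\bar H_N^*\cap\bar O_N=H_N^*\cap\bar O_N=H_N$, again using the vanishing of products to reduce $\bar O_N$ to $O_N$ inside $H_N^*$. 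The $K_N$ verification is the same chain with $\mathcal P_{even}$, $\mathcal P_{even}^{[\infty]}$, $\mathcal P_{even}^*$ and $\bar U_N$. If you want your write-up to stand on its own you should replace the signature-on-generators claim by this intersection argument; as it stands, the $K_N$ paragraph, which you correctly flagged as the delicate step, proves nothing.
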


\begin{proof}
This follows indeed from the above discussion.
\end{proof}

This construction raises the perspective of finding the twisted versions of the above classification results. Following \cite{ba4}, in the uniform case, the result here is as follows:

\index{uniform quantum group}

\begin{theorem}
The classical and free uniform twisted easy quantum groups are
$$\xymatrix@R=7pt@C=7pt{
&&K_N^+\ar[rr]&&K_N^{++}\ar[rr]&&\ U_N^+\ \\
&H_N^{s+}\ar[ur]&&&&\\
H_N^+\ar[rrrr]\ar[ur]&&&&O_N^+\ar[uurr]\\
\\
&&K_N\ar[rrrr]\ar@.[uuuu]&&&&\ \bar{U}_N\ \ar@.[uuuu]\\
&H_N^s\ar[ur]&&&&\\
H_N\ar@.[uuuu]\ar[ur]\ar[rrrr]&&&&\bar{O}_N\ar@.[uuuu]\ar[uurr]
\\
}$$
where $H_s=\mathbb Z_s\wr S_N$, $H_N^{s+}=\mathbb Z_s\wr_*S_N^+$ with $s=4,6,8\ldots$\,, and where $K_N^+=\widetilde{K_N^+}$. 
\end{theorem}

\begin{proof}
This follows indeed from Theorem 12.1 above, dealing with the untwisted case, and from the above discussion, regarding the twists.
\end{proof}

We can merge the above result with the untwisted result, and we are led to the following statement, also from \cite{ba4}:

\begin{theorem}
The uniform classical/twisted and free quantum groups are
$$\xymatrix@R=2mm@C=12mm{
&&U_N,\bar{U}_N\ar@/^/[drr]\\
K_N\ar[rr]\ar@/^/[urr]&&K_N^+\ar[r]&K_N^{++}\ar[r]&U_N^+\\
\\
H_N^s\ar[rr]\ar[uu]&&H_N^{s+}\ar[uu]\\
\\
H_N\ar[rr]\ar[uu]\ar@/_/[drr]&&H_N^+\ar[rr]\ar[uu]&&O_N^+\ar[uuuu]\\
&&O_N,\bar{O}_N\ar@/_/[urr]}$$
where $H_N^s=\mathbb Z_s\wr S_N$, $H_N^{s+}=\mathbb Z_s\wr_*S_N^+$, with $s\in\{2,4,\ldots,\infty\}$, and $K_N^{++}=\widetilde{K}_N^+$. 
\end{theorem}

\begin{proof}
This is a slight extension of Theorem 12.1, the idea being as follows:

\medskip

(1) All the above quantum groups are quizzy, and the uniformity condition is clear as well, for each of the quantum groups under consideration. Finally, all these quantum groups are either classical/twisted or free.

\medskip

(2) In order to prove now the converse, in view of our Schur-Weyl twisting method, which only needs a category of partitions as input, it is enough to deal with the $q=1$ case. So, consider a uniform category of partitions $D\subset P_{even}$. We must prove that in the classical/free cases, the solutions are:
$$\xymatrix@R=2mm@C=11mm{
&&\mathcal P_2\ar@/_/[dll]\\
\mathcal P_{even}\ar[dd]&&\mathcal{NC}_{even}\ar[ll]\ar[dd]&\mathcal{NC}_{even}^-\ar[l]&\mathcal{NC}_2\ar[l]\ar@/_/[ull]\ar[dddd]\\
\\
P_{even}^s\ar[dd]&&NC_{even}^s\ar[ll]\ar[dd]\\
\\
P_{even}&&NC_{even}\ar[ll]&&NC_2\ar@/^/[dll]\ar[ll]\\
&&P_2\ar@/^/[ull]}$$

To be more precise, in the classical case, where $\backslash\hskip-2.1mm/\in D$, we must prove that the only solutions are the categories $P_2,\mathcal P_2,P_{even}^s$, and that in the free case, where $D\subset NC_{even}$, we must prove that the only solutions are the categories $NC_2,\mathcal{NC}_2,\mathcal{NC}_{even}^-,NC_{even}^s$.

\medskip

(3) We jointly investigate these two problems. Let $B$ be the set of all possible labelled blocks in $D$, having no upper legs. Observe that $B$ is stable under the switching of colors operation, $\circ\leftrightarrow\bullet$. We have two possible situations, as follows:

\medskip

\underline{Case 1}. The set $B$ consists of pairings only. Here the pairings in question can be either all labelled pairings, namely $\circ-\circ$, $\circ-\bullet$, $\bullet-\circ$, $\bullet-\bullet$, or just the matching ones, namely $\circ-\bullet$, $\bullet-\circ$, and we obtain here the categories $P_2,\mathcal P_2$ in the classical case, and the categories $NC_2,\mathcal{NC}_2$ in the free case.

\medskip

\underline{Case 2}. $B$ has at least one block of size $\geq 4$. In this case we can let $s\in\{2,4,\ldots,\infty\}$ to be the length of the smallest $\circ\ldots\circ$ block, and we obtain in this way the category $P_{even}^s$ in the classical case, and the categories $\mathcal{NC}_{even}^-,NC_{even}^s$ in the free case.
\end{proof}

Finally, we have as well the following classification result:

\index{twist}
\index{easy quantum group}

\begin{theorem}
The easy quantum groups $H_N\subset G\subset O_N^+$ and their twists are
$$\xymatrix@R=7mm@C=20mm{
&O_N\ar[r]&O_N^*\ar[rd]\\
H_N\ar[r]\ar[ur]\ar[rd]&H_N^\Gamma\ar[r]&H_N^{\diamond  k}\ar[r]&O_N^+\\
&\bar{O}_N\ar[r]&\bar{O}_N^*\ar[ru]}$$
and the set formed by these quantum groups is stable by intersections.
\end{theorem}

\begin{proof}
There are several things to be proved here, the idea being as follows:

\medskip

(1) According to the various classification and twisting results presented so far in this book, and to some straightforward extensions of them, the easy quantum groups $H_N\subset G\subset O_N^+$ and their twists are the quantum groups in the above diagram. 

\medskip

(2) Regarding now the intersection assertion, some straightforward computations show that we have the following intersection diagram:
$$\xymatrix@R=7mm@C=20mm{
&O_N\ar[r]&O_N^*\ar[rd]\\
H_N\ar[r]\ar[ur]\ar[rd]&H_N^*\ar[r]\ar[ur]\ar[dr]&H_N^+\ar[r]&O_N^+\\
&\bar{O}_N\ar[r]&\bar{O}_N^*\ar[ru]}$$

But with this diagram in hand, the assertion follows. Indeed, the intersections between the quantum groups $O_N^\times$ are their twists are all on this diagram, and hence on the diagram in the statement as well. Regarding now the intersections of an easy quantum group $H_N\subset G\subset H_N^+$ with the twists $\bar{O}_N,\bar{O}_N^*$, we can use again the above diagram. Indeed, from $H_N^+\cap\bar{O}_N^*=H_N^*$ we deduce that both $K=G\cap\bar{O}_N,K'=G\cap\bar{O}_N^*$ appear as intermediate easy quantum groups $H_N\subset K^\times\subset H_N^*$, and we are done.
\end{proof}

For more details on the above, we refer to \cite{ba4} and related papers, for the most dealing with noncommutative geometry. And with the remark that noncommutative geometry has something to do with all this because groups and quantum groups are not everything, let's not forget about spheres, tori and other homogeneous spaces and manifolds, which are equally in need of structure and classification results, often in relation with structure and classification results for some related groups or quantum groups.

\bigskip

In relation with this, we have already talked a bit about noncommutative geometry at the end of chapter 11, and we will be back to this in chapter 15 below.

\bigskip

As a conclusion now to all this, the classification of the compact quantum groups is a very interesting topic, and the general idea is that of stating and proving the results in the easy case first, and then trying to lift the easiness assumption. But this is of course just a general idea, and we will in chapters 13-14 below that many things can be done in a completely different way, using maximal tori, without reference to easiness.

\section*{12e. Exercises} 

As before with the previous chapter, there has been a lot of theory here, and our exercises will be basically about details on all this. First, we have:

\begin{exercise}
Prove that the easy quantum groups 
$$H_N\subset G\subset O_N^+$$
must fall into one of the following two classes:
$$O_N\subset G\subset O_N^+$$
$$H_N\subset G\subset H_N^+$$
\end{exercise}

This is something that we mentioned in the above, and whose proof is normally not that difficult. Of course, if stuck with something, you can look it up.

\begin{exercise}
Prove that the easy quantum groups 
$$H_N\subset G\subset H_N^+$$
must fall into one of the following two classes:
$$H_N\subset G\subset H_N^{[\infty]}$$
$$H_N^{[\infty]}\subset G\subset H_N^+$$
\end{exercise}

As before, in case this turns to be too difficult, the exercise is that of finding the relevant results in the relevant literature, and writing down a brief account of that.

\begin{exercise}
Prove that the easy quantum groups 
$$U_N\subset G\subset U_N^+$$
must fall into one of the following two classes:
$$U_N\subset G\subset U_N^{(\infty)}$$
$$U_N^{(\infty)}\subset G\subset U_N^+$$
\end{exercise}

Again, as with the previous two exercises, up to you to decide if this is something that you want to fully understand by yourself, or if you are in need of some help.

\part{Advanced topics}

\ \vskip50mm

\begin{center}
{\em And we'll talk of trails we walked up

Far above the timber line

There are nights I only feel right

With Carolina in the pines}
\end{center}

\chapter{Toral subgroups}

\section*{13a. Diagonal tori}

In this final part of the present book we discuss a number of more advanced questions. There are plenty of topics here, on which serious work has gone into, and where there are interesting things to be said, to choose from. We have opted here to talk about:

\bigskip

(1) Toral subgroups. This is a fundamental question, coming from the fact that, while Lie theory is definitely not available for the arbitrary closed subgroups $G\subset U_N$, a notion of ``maximal torus'' is available. In addition, while all this is very interesting, things are quie recent, and there are just a handful of things known here. Which makes this topic an ideal one to start with, providing a glimpse at beautiful lands, not explored yet.

\bigskip

(2) Amenability, growth. This is something far more classical, and we definitely owe you some more explanations here, besides what was quickly said on these subjects in chapter 3 above. Passed the basics, that we will explain in detail, there are many things that have been done on these classical topics, to choose from. And we will choose here to talk about amenability and growth in relation with maximal tori, a hot topic.

\bigskip

(3) Homogeneous spaces. This is yet another thing that we have to talk about, imperatively, as a continuation of the various noncommutative geometry considerations scattered all across the present book, starting from chapter 1. Again, wide subject, a  choice to be made here, and we will discuss, as a main topic, the construction and main properties of the ``simplest'' possible homogeneous spaces, over the easy quantum groups.

\bigskip

(4) Matrix models. This is perhaps the most beautiful of all ``advanced topics'' that can be discussed, the idea here, which is extremely simple, being that of looking for matrix models $U_{ij}\in M_K(C(T))$ for the standard coodinates $u_{ij}\in C(G)$ of a given closed subgroup $G\subset U_N$. And not only this is mathematically natural, but physically speaking, this is expected to be of great use, in connection with statistical mechanics.

\bigskip

So, this will be our plan, for the present chapter and for the next 3 ones, introduction to (1-4). Regarding other topics, unfortunately left aside, it was particularly heartbreaking not to talk more about quantum permutation groups, although we will still meet such quantum groups on numerous occasions, when discussing (1-4). For even more about quantum permutations, I recommend my advanced book \cite{ba6}.

\bigskip

Getting started now, in relation with tori, we have seen on various occasions that the group duals $G=\widehat{\Gamma}$ can be thought of as being ``tori'', in the compact quantum group framework. Also, given a closed subgroup $G\subset U_N^+$, the group dual subgroups $\widehat{\Lambda}\subset G$ can be thought of as being tori of $G$, and play a potentially important role. 

\bigskip

Our purpose here will be that of understanding how the structure of a closed subgroup $G\subset U_N^+$ can be recovered from the knowledge of these tori $\widehat{\Lambda}\subset G$. Let us start with a basic statement, regarding the classical and group dual cases:

\index{torus}

\begin{proposition}
Let $G\subset U_N^+$ be a compact quantum group, and consider the group dual subgroups $\widehat{\Lambda}\subset G$, also called toral subgroups, or simply ``tori''.
\begin{enumerate}
\item In the classical case, where $G\subset U_N$ is a compact Lie group, these are the usual tori, that is, the closed abelian subgroups of $G$.

\item In the group dual case, $G=\widehat{\Gamma}$ with $\Gamma=<g_1,\ldots,g_N>$ being a discrete group, these are the duals of the various quotients $\Gamma\to\Lambda$.
\end{enumerate}
\end{proposition}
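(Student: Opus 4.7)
My plan is to exploit, in each case, the Gelfand/Pontrjagin duality combined with the definition of a closed quantum subgroup, which here means a surjective morphism of Woronowicz algebras $C(G)\to C(\widehat{\Lambda})=C^*(\Lambda)$ matching the coordinates. Both assertions will follow essentially formally once this dictionary is set up, so the main obstacle is just bookkeeping: ensuring that the natural ``finitely generated'' constraint built into our discrete quantum group formalism is compatible with each interpretation.

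For (1), I would start from a group dual subgroup $\widehat{\Lambda}\subset G$ and use the fact that $C(G)$ is commutative, as $G\subset U_N$ is classical, to force the quotient $C^*(\Lambda)$ to be commutative. Commutativity of $C^*(\Lambda)$ is equivalent to $\Lambda$ being abelian; when this holds, $C^*(\Lambda)=C(\widehat{\Lambda})$ by Theorem 1.19, and $\widehat{\Lambda}$ is a compact abelian group. The surjection $C(G)\to C(\widehat{\Lambda})$ then dualizes, via Gelfand, to a closed embedding of compact groups $\widehat{\Lambda}\hookrightarrow G$, yielding a closed abelian subgroup. Conversely, a closed abelian subgroup $T\subset G$ is automatically a compact abelian Lie group, hence of the form $\mathbb T^k\times F$ with $F$ finite abelian, so its Pontrjagin dual $\widehat{T}=\mathbb Z^k\times F$ is finitely generated abelian, and $C(T)=C^*(\widehat{T})$ realizes $T=\widehat{\widehat{T}}$ as a legitimate group dual subgroup of $G$ in our finitely generated framework. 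The two constructions are visibly inverse to one another.

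For (2), with $G=\widehat{\Gamma}$ and $C(G)=C^*(\Gamma)$, I would note that a group dual subgroup $\widehat{\Lambda}\subset\widehat{\Gamma}$ is by definition a surjective Woronowicz algebra morphism $\pi:C^*(\Gamma)\to C^*(\Lambda)$ sending standard coordinates to standard coordinates, i.e.\ sending each generator $g_i\in\Gamma$ to a group-like element $h_i\in\Lambda$ whose collection generates $\Lambda$. Because $\pi$ is a $*$-algebra morphism intertwining the comultiplications $g\mapsto g\otimes g$ and $h\mapsto h\otimes h$, it must send the whole subgroup $\Gamma\subset C^*(\Gamma)$ into the group $\Lambda\subset C^*(\Lambda)$, and surjectivity onto generators forces the resulting map $\Gamma\to\Lambda$, $g_i\mapsto h_i$, to be a surjective group morphism. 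Conversely, any quotient of groups $\Gamma\to\Lambda$ extends, by the universal property of the maximal group $C^*$-algebra from Proposition 1.18, to a surjection $C^*(\Gamma)\to C^*(\Lambda)$ of Woronowicz algebras satisfying all required axioms.

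The hard part, in both cases, is not really any deep computation but rather confirming that the ``mapping standard coordinates to standard coordinates'' requirement of Definition 2.9 is the correct notion to use: this is what makes the correspondences in (1) and (2) bijective rather than merely surjective. Once this is checked the proofs reduce to Gelfand duality and the universal property of $C^*(\Gamma)$ respectively, both of which are available from Section 1.
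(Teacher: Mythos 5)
Your proof is correct and takes essentially the same route as the paper's (very terse) argument: part~(1) rests on the observation that a closed subgroup which is both classical and a group dual is exactly a classical abelian subgroup, and part~(2) rests on Pontrjagin duality between subgroups of $\widehat{\Gamma}$ and quotients of $\Gamma$. The paper states both facts as one-liners; you have simply filled in the routine details (commutativity of the quotient $C^*(\Lambda)$, structure theory of compact abelian Lie groups, universal property of $C^*(\Gamma)$) that the paper labels ``elementary.''
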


\begin{proof}
Both these assertions are clear, as follows:

\medskip

(1) This follows indeed from the fact that a closed subgroup $H\subset U_N^+$ is at the same time classical, and a group dual, precisely when it is classical and abelian.

\medskip

(2) This follows from the general propreties of the Pontrjagin duality, and more precisely from the fact that the subgroups $\widehat{\Lambda}\subset\widehat{\Gamma}$ correspond to the quotients $\Gamma\to\Lambda$.
\end{proof}

Based on the above simple facts, regarding the simplest compact quantum groups that we know, namely the compact groups and the discrete group duals, we can see that there are two potential motivations for the study of toral subgroups $\widehat{\Lambda}\subset G$, as follows:

\bigskip

(1) First, it is well-known that the fine structure of a compact Lie group $G\subset U_N$ is partly encoded by its maximal torus. Thus, in view of Proposition 13.1, the various tori $\widehat{\Lambda}\subset G$ encode interesting information about a quantum group $G\subset U_N^+$, both in the classical and the group dual case. We can expect this to hold in general.

\bigskip

(2) Also, any action $G\curvearrowright X$ on some geometric object, such as a manifold, will produce actions of its tori on the same object, $\widehat{\Lambda}\curvearrowright X$. And, due to the fact that $\Lambda$ are familiar objects, namely discrete groups, these latter actions are easier to study, and this can ultimately lead to results about the action $G\curvearrowright X$ itself.

\bigskip

At a more concrete level now, most of the tori that we met appear as diagonal tori, in the sense of chapter 2 above. Let us first review this material. We first have:

\index{group dual}
\index{diagonal torus}

\begin{theorem}
Given a closed subgroup $G\subset U_N^+$, consider its ``diagonal torus'', which is the closed subgroup $T\subset G$ constructed as follows:
$$C(T)=C(G)\Big/\left<u_{ij}=0\Big|\forall i\neq j\right>$$
This torus is then a group dual, $T=\widehat{\Lambda}$, where $\Lambda=<g_1,\ldots,g_N>$ is the discrete group generated by the elements $g_i=u_{ii}$, which are unitaries inside $C(T)$.
\end{theorem}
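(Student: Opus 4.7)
The plan is to realize $T$ as a closed quantum subgroup of $G$ via Proposition 2.13, and then to identify $C(T)$ directly as a group algebra by using the diagonal structure of the fundamental corepresentation.

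First, I would check that the ideal $I = \langle u_{ij} \mid i \neq j \rangle \subset C(G)$ is a Hopf $*$-ideal, so that the quotient $C(T) = C(G)/I$ inherits the structure of a Woronowicz algebra and $T \subset G$ is a closed quantum subgroup. The counit and antipode checks are immediate: $\varepsilon(u_{ij}) = \delta_{ij} = 0$ for $i \neq j$, and $S(u_{ij}) = u_{ji}^* \in I$ when $i \neq j$. The delicate step is the comultiplication condition. For $i \neq j$ one has
$$\Delta(u_{ij}) = \sum_k u_{ik} \otimes u_{kj},$$
and in each summand either $k \neq i$ (so $u_{ik} \in I$) or else $k = i$, in which case $k \neq j$ and hence $u_{kj} \in I$. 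Therefore $\Delta(u_{ij}) \in I \otimes C(G) + C(G) \otimes I$, which is exactly the hypothesis of Proposition 2.13.

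Next, I would analyze the quotient algebra. Setting $g_i = u_{ii} \in C(T)$, the image of the fundamental matrix in $C(T)$ is the diagonal matrix $\operatorname{diag}(g_1, \ldots, g_N)$. Since this matrix is still unitary (unitarity of $u$ is preserved under the quotient), each $g_i$ satisfies $g_i g_i^* = g_i^* g_i = 1$, so the $g_i$ are unitaries. Applying the descended comultiplication to $g_i$ gives
$$\Delta(g_i) = \sum_k u_{ik} \otimes u_{ki} \;=\; g_i \otimes g_i$$
in $C(T) \otimes C(T)$, since all terms with $k \neq i$ vanish. Similarly $\varepsilon(g_i) = 1$ and $S(g_i) = g_i^*$, so each $g_i$ is a group-like unitary.

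Finally, since $C(T)$ is generated as a $C^*$-algebra by the group-like unitaries $g_1, \ldots, g_N$, the universal property of the full group algebra gives a surjective morphism of Woronowicz algebras $C^*(\Lambda) \twoheadrightarrow C(T)$, where $\Lambda = \langle g_1, \ldots, g_N \rangle$ is the discrete group abstractly generated by the $g_i$. Conversely, the defining relations of $\Lambda$ are by construction satisfied in $C(T)$, so this map is an isomorphism at the level of dense Hopf $*$-subalgebras, hence an identification of Woronowicz algebras modulo the standard equivalence relation from Definition 2.9. This yields $T = \widehat{\Lambda}$.

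The main obstacle is the verification that $I$ is a Hopf ideal, specifically the $\Delta$-condition; once that is in hand, the identification of $C(T)$ with $C^*(\Lambda)$ is essentially forced by the diagonal form of $u$ together with the general bookkeeping about full versus reduced group algebras discussed after Definition 1.20.
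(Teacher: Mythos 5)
Your proof is correct and follows essentially the same route as the paper: unitarity of the diagonal entries $g_i=u_{ii}$, the computation $\Delta(g_i)=g_i\otimes g_i$ in the quotient, and the identification $C(T)=C^*(\Lambda)$ modulo the standard equivalence of Woronowicz algebras. The only difference is that you spell out explicitly the Hopf ideal verification via Proposition 2.13, which the paper leaves implicit; this is a welcome addition but not a new idea.
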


\begin{proof}
This is something going back to \cite{bv2}, that we know from chapter 2. The idea indeed is that since $u$ is unitary, its diagonal entries $g_i=u_{ii}$ are unitaries inside $C(T)$. Moreover, from $\Delta(u_{ij})=\sum_ku_{ik}\otimes u_{kj}$ we obtain, when passing inside the quotient:
$$\Delta(g_i)=g_i\otimes g_i$$

It follows that we have $C(T)=C^*(\Lambda)$, modulo identifying as usual the $C^*$-completions of the various group algebras, and so that we have $T=\widehat{\Lambda}$, as claimed.
\end{proof}

Alternatively, we have the following construction for the diagonal torus:

\index{free group}

\begin{proposition}
The diagonal torus $T\subset G$ can be defined as well by
$$T=G\cap\mathbb T_N^+$$
where $\mathbb T_N^+\subset U_N^+$ is the free complex torus, appearing as
$$\mathbb T_N^+=\widehat{F_N}$$
with $F_N=<g_1,\ldots,g_N>$ being the free group on $N$ generators. 
\end{proposition}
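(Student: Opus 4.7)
The plan is to identify both sides explicitly as quotients of $C(U_N^+)$, and then invoke the general intersection formula for closed subgroups, namely Proposition 7.9, which says that if $C(H)=C(G)/\mathcal R$ and $C(K)=C(G)/\mathcal P$ then $C(H\cap K)=C(G)/\{\mathcal R,\mathcal P\}$.

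First I would verify the identification $\mathbb T_N^+=\widehat{F_N}$ together with its embedding into $U_N^+$. Concretely, imposing the relations $u_{ij}=0$ for $i\neq j$ on $C(U_N^+)$ collapses the biunitarity of $u=(u_{ij})$ to the statement that each diagonal entry $u_{ii}$ is a unitary. So the resulting quotient is the universal $C^*$-algebra generated by $N$ unitaries with no further relations, which is precisely $C^*(F_N)$. One checks that the Woronowicz structure transported from $C(U_N^+)$ matches the canonical one on $C^*(F_N)$: on the diagonal generators $g_i=u_{ii}$ the comultiplication becomes $\Delta(g_i)=g_i\otimes g_i$, the counit $\varepsilon(g_i)=1$, and the antipode $S(g_i)=g_i^*=g_i^{-1}$. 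Thus
\[
C(\mathbb T_N^+)=C(U_N^+)\Big/\Big\langle u_{ij}=0\Big|\forall i\neq j\Big\rangle = C^*(F_N).
\]

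Next I would apply the intersection formula. Writing $C(G)=C(U_N^+)/\mathcal R$ for the defining relations of $G$, and $C(\mathbb T_N^+)=C(U_N^+)/\mathcal P$ with $\mathcal P=\{u_{ij}=0\mid i\neq j\}$ from the previous step, Proposition 7.9 yields
\[
C(G\cap\mathbb T_N^+)=C(U_N^+)\big/\langle\mathcal R,\mathcal P\rangle=C(G)\Big/\Big\langle u_{ij}=0\Big|\forall i\neq j\Big\rangle.
\]
But the right hand side is exactly the definition of $C(T)$ from Theorem~13.2, so $T=G\cap\mathbb T_N^+$.

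There is no real obstacle here: the content is almost entirely bookkeeping of generators and relations. The only small point worth checking carefully is that the natural quotient map $C(U_N^+)\to C^*(F_N)$ produced by killing the off-diagonal entries is indeed a morphism of Woronowicz algebras realizing $\widehat{F_N}$ as a closed quantum subgroup of $U_N^+$, so that the intersection formula applies; but this is immediate from the universal property and the fact that $\Delta,\varepsilon,S$ descend correctly on the diagonal generators, as verified above.
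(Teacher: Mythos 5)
Your argument is correct and is essentially the paper's own proof: both realize $C(\mathbb T_N^+)$ and $C(T)$ as quotients of $C(U_N^+)$, respectively $C(G)$, by the off-diagonal relations $u_{ij}=0$, and then identify the intersection $G\cap\mathbb T_N^+$ with $T$ via the relations-and-intersections formalism (your explicit appeal to Proposition 7.9 and the check of $\Delta,\varepsilon,S$ on the diagonal generators simply spell out details the paper leaves implicit).
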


\begin{proof}
As a main particular case of Theorem 13.2, that we know as well from chapter 2, the biggest quantum group produces the biggest torus, and so we have:
$$C(\mathbb T_N^+)=C(U_N^+)\Big/\left<u_{ij}=0\Big|\forall i\neq j\right>$$

Thus, by intersecting with $G$ we obtain the diagonal torus of $G$.
\end{proof}

Most of our computations so far of diagonal tori, that we will recall in a moment, concern various classes of easy quantum groups. In the general easy case, we have:

\begin{proposition}
For an easy quantum group $G\subset U_N^+$, coming from a category of partitions $D\subset P$, the associated diagonal torus is $T=\widehat{\Gamma}$, with:
$$\Gamma=F_N\Big/\left<g_{i_1}\ldots g_{i_k}=g_{j_1}\ldots g_{j_l}\Big|\forall i,j,k,l,\exists\pi\in D(k,l),\delta_\pi\begin{pmatrix}i\\ j\end{pmatrix}\neq0\right>$$
Moreover, we can just use partitions $\pi$ which generate the category $D$.
\end{proposition}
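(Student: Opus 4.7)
The plan is to combine the description of the diagonal torus from Proposition 13.3, which gives $T=G\cap\mathbb{T}_N^+$, with the Tannakian/easiness characterization of $G$ cut out by the intertwiner relations $T_\pi\in\mathrm{Hom}(u^{\otimes k},u^{\otimes l})$ for $\pi\in D(k,l)$. The point is that these intertwiner relations, when the fundamental corepresentation $u$ is forced diagonal, become exactly the word relations on the generators $g_i=u_{ii}$ described in the statement. So the group $\Gamma$ will appear as the ``diagonal specialization'' of the defining relations of $G$.

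The main computation I would carry out is the following: on the diagonal torus we have $u_{ij}=\delta_{ij}g_i$, hence $(u^{\otimes m})_{a,b}=\delta_{ab}g_{a_1}\cdots g_{a_m}$. Thus for $\pi\in D(k,l)$, the matrix entry $(T_\pi u^{\otimes k})_{j,i}$ equals $\delta_\pi\binom{i}{j}\,g_{i_1}\cdots g_{i_k}$, whereas $(u^{\otimes l}T_\pi)_{j,i}$ equals $g_{j_1}\cdots g_{j_l}\,\delta_\pi\binom{i}{j}$. Consequently $T_\pi u^{\otimes k}=u^{\otimes l}T_\pi$ on $C(T)$ is equivalent to the family of relations
$$g_{i_1}\cdots g_{i_k}=g_{j_1}\cdots g_{j_l}\quad\text{whenever } \delta_\pi\binom{i}{j}\neq 0.$$
This identifies the quotient relations defining $C(T)$ with those defining $C^*(\Gamma)$.

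To turn this into an actual isomorphism $C(T)\simeq C^*(\Gamma)$, I would argue in two directions. The map $C^*(\Gamma)\to C(T)$ sending each generator $g_i$ to $u_{ii}$ is well-defined because the defining relations of $\Gamma$ hold in $C(T)$ by the computation above, and it is surjective since $C(T)$ is generated by the $u_{ii}$. Conversely, any $*$-algebra map $C(U_N^+)\to C^*(\Gamma)$ that sends $u_{ij}$ to $\delta_{ij}g_i$ factors through $C(T)$ once one checks that the images $\delta_{ij}g_i$ satisfy all the easy relations $T_\pi\in\mathrm{Hom}(u^{\otimes k},u^{\otimes l})$; but this is precisely what the specialization computation shows, given that $g_{i_1}\cdots g_{i_k}=g_{j_1}\cdots g_{j_l}$ holds in $\Gamma$ whenever $\delta_\pi\binom{i}{j}\neq 0$. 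The two maps thus obtained are mutually inverse on generators, so $T=\widehat{\Gamma}$.

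For the last assertion, I would observe that the five category-of-partitions operations translate into elementary operations on word relations in a free group: horizontal concatenation $[\pi\sigma]$ concatenates relations, vertical composition $[^\sigma_\pi]$ gives transitivity, the involution $\pi^*$ gives symmetry, the identity $||\cdots||$ gives the trivial tautology, and the semicircle yields relations automatic among unitaries (since $g_i^*=g_i^{-1}$ in $C^*(\Gamma)$). Hence the set of word relations arising from $D$ is the closure under these operations of the set of word relations arising from any generating subset of $D$, and the two presentations define the same group $\Gamma$. The only real subtlety—and the place where I would be most careful—is checking that closure under the composition operation $[^\sigma_\pi]$ does not introduce issues with the ``middle indices'' being summed out (this is the step where $N$ enters via the coefficient $N^{c(\pi,\sigma)}$ in the diagrammatic calculus), but on the diagonal torus this $N$ factor is harmless since both sides of the resulting identity carry it equally; the real content is the transitivity of equalities between words in the $g_i$, which is automatic.
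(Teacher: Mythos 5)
Your proof is correct and follows essentially the same route as the paper: the key step in both is the diagonal specialization $u_{ij}=\delta_{ij}g_i$, which turns $T_\pi u^{\otimes k}=u^{\otimes l}T_\pi$ into the stated word relations on the $g_i$. For the last assertion the paper simply invokes Tannakian duality (replacing $D$ by a generating subset), whereas you verify by hand that the five category operations translate into word-relation closure; your verification is sound, though your worry about the coefficient $N^{c(\pi,\sigma)}$ is a red herring, since that factor never enters the $\delta$-conditions, only the operator identity $T_\pi T_\sigma=N^{c(\pi,\sigma)}T_{[^\sigma_\pi]}$, and the real point is just that for any $i,k$ with $\delta_{[^\sigma_\pi]}\binom{i}{k}\neq 0$ one can always fill in compatible middle indices $j$.
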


\begin{proof}
Let $g_i=u_{ii}$ be the standard coordinates on the diagonal torus $T$, and set $g=diag(g_1,\ldots,g_N)$. We have then the following computation:
\begin{eqnarray*}
C(T)
&=&\left[C(U_N^+)\Big/\left<T_\pi\in Hom(u^{\otimes k},u^{\otimes l})\Big|\forall\pi\in D\right>\right]\Big/\left<u_{ij}=0\Big|\forall i\neq j\right>\\
&=&\left[C(U_N^+)\Big/\left<u_{ij}=0\Big|\forall i\neq j\right>\right]\Big/\left<T_\pi\in Hom(u^{\otimes k},u^{\otimes l})\Big|\forall\pi\in D\right>\\
&=&C^*(F_N)\Big/\left<T_\pi\in Hom(g^{\otimes k},g^{\otimes l})\Big|\forall\pi\in D\right>
\end{eqnarray*}

The associated discrete group, $\Gamma=\widehat{T}$, is therefore given by:
$$\Gamma=F_N\Big/\left<T_\pi\in Hom(g^{\otimes k},g^{\otimes l})\Big|\forall\pi\in D\right>$$

Now observe that, with $g=diag(g_1,\ldots,g_N)$ as above, we have:
$$T_\pi g^{\otimes k}(e_{i_1}\otimes\ldots\otimes e_{i_k})=\sum_{j_1\ldots j_l}\delta_\pi\begin{pmatrix}i_1&\ldots&i_k\\ j_1&\ldots&j_l\end{pmatrix}e_{j_1}\otimes\ldots\otimes e_{j_l}\cdot g_{i_1}\ldots g_{i_k}$$

On the other hand, we have as well the following formula:
$$g^{\otimes l}T_\pi(e_{i_1}\otimes\ldots\otimes e_{i_k})=\sum_{j_1\ldots j_l}\delta_\pi\begin{pmatrix}i_1&\ldots&i_k\\ j_1&\ldots&j_l\end{pmatrix}e_{j_1}\otimes\ldots\otimes e_{j_l}\cdot g_{j_1}\ldots g_{j_l}$$

We conclude that the relation $T_\pi\in Hom(g^{\otimes k},g^{\otimes l})$ reformulates as follows:
\begin{eqnarray*}
&&\sum_{j_1\ldots j_l}\delta_\pi\begin{pmatrix}i_1&\ldots&i_k\\ j_1&\ldots&j_l\end{pmatrix}e_{j_1}\otimes\ldots\otimes e_{j_l}\cdot g_{i_1}\ldots g_{i_k}\\
&=&\sum_{j_1\ldots j_l}\delta_\pi\begin{pmatrix}i_1&\ldots&i_k\\ j_1&\ldots&j_l\end{pmatrix}e_{j_1}\otimes\ldots\otimes e_{j_l}\cdot g_{j_1}\ldots g_{j_l}
\end{eqnarray*}

Thus, the following condition must be satisfied:
$$\delta_\pi\begin{pmatrix}i_1&\ldots&i_k\\ j_1&\ldots&j_l\end{pmatrix}\neq0\implies g_{i_1}\ldots g_{i_k}=g_{j_1}\ldots g_{j_l}$$

Thus, we obtain the formula in the statement. Finally, the last assertion follows from Tannakian duality, because we can replace everywhere $D$ by a generating subset.
\end{proof}

In practice now, in the continuous case we have the following result:

\index{diagonal torus}

\begin{theorem}
The diagonal tori of the basic unitary quantum groups, namely
$$\xymatrix@R=15mm@C=17mm{
U_N\ar[r]&U_N^*\ar[r]&U_N^+\\
O_N\ar[r]\ar[u]&O_N^*\ar[r]\ar[u]&O_N^+\ar[u]}$$
and of their $q=-1$ twists as well, are the standard cube and torus, namely $T_N=\mathbb Z_2^N$ and $\mathbb T_N=\mathbb T^N$ in the classical case, and their liberations in general, which are as follows:
$$\xymatrix@R=15mm@C=17mm{
\mathbb T_N\ar[r]&\mathbb T_N^*\ar[r]&\mathbb T_N^+\\
T_N\ar[r]\ar[u]&T_N^*\ar[r]\ar[u]&T_N^+\ar[u]}$$
Also, for the quantum groups $B_N,B_N^+,C_N,C_N^+$, the diagonal torus collapses to $\{1\}$.
\end{theorem}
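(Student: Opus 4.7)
\medskip

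The plan is to treat the six basic quantum groups first by a direct application of Proposition 13.4, then handle the twists by a sign-cancellation argument, and finish with the bistochastic cases via the singleton relation.

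First I would compute each diagonal torus from Proposition 13.4, using only the generating partitions of the relevant category. For $O_N^+$ (category $NC_2$), the semicircle $\cap\in NC_2(0,2)$ has $\delta_\cap(j_1,j_2)=\delta_{j_1 j_2}$, which together with the reality condition $u=\bar u$ forces $g_i=g_i^*$ and $g_i^2=1$, producing $\Gamma=\mathbb{Z}_2^{*N}$ and $T=T_N^+$. For $U_N^+$ (category $\mathcal{NC}_2$) only the unitarity $g_ig_i^*=g_i^*g_i=1$ survives, giving $\Gamma=F_N$ and $T=\mathbb{T}_N^+$. For $O_N^*, U_N^*$ one adds the half-liberating crossing $\slash\hskip-1.6mm\backslash\hskip-1.1mm|\hskip0.5mm\in P_2^*(3,3)$, which by Proposition 13.4 enforces exactly $g_ig_jg_k=g_kg_jg_i$, giving the half-commutative tori $T_N^*$ and $\mathbb T_N^*$. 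Finally for $O_N,U_N$ (categories $P_2,\mathcal P_2$) the extra basic crossing makes all generators commute, yielding $\mathbb Z_2^N=T_N$ and $\mathbb T^N=\mathbb T_N$.

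Next I would handle the $q=-1$ twists. The key observation is that twisting replaces $T_\pi$ by $\bar T_\pi=\varepsilon(\ker\binom{i}{j})T_\pi$ in the defining Tannakian relations, and hence replaces $\delta_\pi(i,j)$ by $\bar\delta_\pi(i,j)=\pm\delta_\pi(i,j)$ in the computation of Proposition 13.4. But when one extracts the relations on the generators $g_i=u_{ii}$ by reading off coefficients, the identities produced take the form
\[
\bar\delta_\pi(i,j)\cdot g_{i_1}\cdots g_{i_k}=\bar\delta_\pi(i,j)\cdot g_{j_1}\cdots g_{j_l},
\]
and a nonzero $\pm 1$ coefficient can be cancelled from both sides. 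Thus the relations on diagonal generators, and therefore the torus $T\subset\bar G$, coincide with those of the untwisted version. This gives the claim for $\bar O_N,\bar O_N^*,\bar U_N,\bar U_N^*$. The potential obstacle here—and the step I would pay most attention to—is making sure the sign cancellation is genuinely index-wise, not spoiled by a summation that collapses different signs together; but a careful book-keeping indexed by $\tau=\ker\binom{i}{j}$ as in Definition 7.25 resolves this, since each equation is extracted from a fixed pair $(i,j)$ before summing.

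Finally, for the bistochastic quantum groups $B_N,B_N^+,C_N,C_N^+$ I would use the fact that their categories contain the singleton partition $|\in P(0,1)$, corresponding to the fixed vector $\xi=(1,\ldots,1)$ with $u\xi=\xi$. Applying Proposition 13.4 to this partition, with $\delta_|(j)=1$ for every $j$, yields the relations $g_j=1$ for every $j$, so $\Gamma$ collapses to the trivial group and $T=\{1\}$. Equivalently, one may simply pass $\sum_ju_{ij}=1$ to the quotient defining the diagonal torus, which kills every $u_{ij}$ with $i\ne j$ and forces $u_{ii}=1$. This completes the list of cases in the theorem.
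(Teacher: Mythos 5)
Your proof is correct and follows essentially the same route as the paper: the untwisted tori come from the easiness relations (the paper cites its earlier Section 2 computation, you rederive it via Proposition 13.4, which is the same content), the twisting invariance is exactly the paper's argument that the Proposition 13.4 computation carries over to the quizzy case since $\bar\delta_\pi$ vanishes precisely where $\delta_\pi$ does and the $\pm1$ coefficients cancel index-wise, and the bistochastic collapse via the row-sum/singleton relation is the paper's argument verbatim. No gaps.
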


\begin{proof}
We have several assertions here, the idea being as follows:

\medskip

(1) The main assertion, regarding the basic unitary quantum groups, is something that we already know, from chapter 2 above, with the various liberations $T_N^\times,\mathbb T_N^\times$ of the basic tori $T_N,\mathbb T_N$ in the statement being by definition those appearing there. 

\medskip

(2) Regarding the invariance under twisting, this is best seen by using Proposition 13.4. Indeed, the computation in the proof there applies in the same way to the general quizzy case, and shows that the diagonal torus is invariant under twisting.

\medskip

(3) In the bistochastic case the fundamental corepresentation $g=diag(g_1,\ldots,g_N)$ of the diagonal torus must be bistochastic, and so $g_1=\ldots=g_N=1$, as desired.
\end{proof}

Regarding now the discrete case, the result here is as follows:

\index{diagonal torus}
\index{quantum reflection group}

\begin{theorem}
The diagonal tori of the basic quantum reflection groups, namely
$$\xymatrix@R=15mm@C=17mm{
K_N\ar[r]&K_N^*\ar[r]&K_N^+\\
H_N\ar[r]\ar[u]&H_N^*\ar[r]\ar[u]&H_N^+\ar[u]}$$
are the same as those for $O_N^\times,U_N^\times$ described above. Also, for $S_N,S_N^+$ we have $T=\{1\}$.
\end{theorem}

\begin{proof}
The first assertion follows from the general fact that the diagonal torus of $G_N\subset U_N^+$ equals the diagonal torus of the discrete version, namely:
$$G_N^d=G_N\cap K_N^+$$

Indeed, this fact follows from definitions, for instance via Proposition 13.3. As for the second assertion, this follows from the following inclusions:
$$S_N\subset B_N\quad,\quad
S_N^+\subset B_N^+$$

Indeed, by using the last assertion in Theorem 13.5, we obtain the result.
\end{proof}

As a conclusion, the diagonal torus $T\subset G$ is usually a quite interesting object, but for certain quantum groups like the bistochastic ones, or the quantum permutation ones, this torus collapses to $\{1\}$, and so it cannot be of use in the study of $G$.

\section*{13b. The skeleton}

In order to deal with the above issue, regarding the diagonal torus, the idea, from \cite{bbd}, \cite{bpa}, will be that of using the following generalization of Theorem 13.2:

\begin{theorem}
Given a closed subgroup $G\subset U_N^+$ and a matrix $Q\in U_N$, we let $T_Q\subset G$ be the diagonal torus of $G$, with fundamental representation spinned by $Q$:
$$C(T_Q)=C(G)\Big/\left<(QuQ^*)_{ij}=0\Big|\forall i\neq j\right>$$
This torus is then a group dual, given by $T_Q=\widehat{\Lambda}_Q$, where $\Lambda_Q=<g_1,\ldots,g_N>$ is the discrete group generated by the elements 
$$g_i=(QuQ^*)_{ii}$$
which are unitaries inside the quotient algebra $C(T_Q)$.
\end{theorem}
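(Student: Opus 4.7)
The plan is to mirror the proof of Theorem 13.2 exactly, but with the spinned corepresentation $v = QuQ^*$ replacing the fundamental corepresentation $u$. The essential point is that, according to Proposition 3.4, spinning a corepresentation by a unitary matrix $Q \in U_N$ produces another corepresentation, and in particular $v = QuQ^*$ is itself a biunitary satisfying the Woronowicz-style axioms
$$\Delta(v_{ij}) = \sum_k v_{ik} \otimes v_{kj}, \quad \varepsilon(v_{ij}) = \delta_{ij}, \quad S(v_{ij}) = v_{ji}^*,$$
inside the smooth $*$-algebra $\mathcal{C}(G)$. This is the crucial ingredient that makes the argument go through.

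First I would verify that $g_i = v_{ii}$ is a unitary in $C(T_Q)$. Since $v$ is biunitary, we have $\sum_k v_{ik} v_{jk}^* = \sum_k v_{ki}^* v_{kj} = \delta_{ij}$ in $C(G)$, and a priori in $C(T_Q)$. Passing to the quotient where $v_{ij} = 0$ for $i \neq j$, these relations collapse to $g_i g_i^* = g_i^* g_i = 1$, so each $g_i$ is a unitary in $C(T_Q)$. By construction $C(T_Q)$ is generated by $\{v_{ij}\}$, hence by $\{g_i\}$ alone.

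Next I would verify that the $g_i$ are group-like elements. Applying the comultiplication formula for $v$ in $C(T_Q)$ gives
$$\Delta(g_i) = \Delta(v_{ii}) = \sum_k v_{ik} \otimes v_{ki} = g_i \otimes g_i,$$
because only the $k = i$ term survives after the quotient. Similarly $\varepsilon(g_i) = 1$ and $S(g_i) = g_i^*$, which are exactly the Hopf algebra relations making $\Lambda_Q = \langle g_1, \ldots, g_N \rangle$ into a discrete group, with $C(T_Q)$ being a quotient of $C^*(F_N)$ factoring through $C^*(\Lambda_Q)$. Conversely, by the universal property of $C^*(\Lambda_Q)$, the generators $g_i$ there satisfy all relations holding in $C(T_Q)$, producing the reverse arrow and hence the isomorphism $C(T_Q) = C^*(\Lambda_Q)$, i.e.\ $T_Q = \widehat{\Lambda}_Q$.

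I expect no real obstacle in this argument, since everything is formally identical to the $Q = 1$ case of Theorem 13.2; the only thing to be careful about is that the spinned matrix $v = QuQ^*$ genuinely retains the corepresentation structure, which is precisely the content of Proposition 3.4. The one subtlety worth stressing is that $\Lambda_Q$ depends on $Q$ in a nontrivial way: different choices of $Q$ pick out different subgroups of $G$, and this is what will make the collection $\{T_Q\}_{Q \in U_N}$ a much richer invariant than the plain diagonal torus $T = T_1$, which is the whole point of introducing this generalization.
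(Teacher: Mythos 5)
Your proof is correct and follows essentially the same route as the paper's: the paper likewise observes that $v = QuQ^*$ is a unitary corepresentation (so $T_Q$ is literally the diagonal torus for $G$ with a spinned fundamental corepresentation, reducing to Theorem 13.2), and then records the same direct check that the $g_i = v_{ii}$ become group-like unitaries in the quotient, making $C(T_Q)$ a group algebra $C^*(\Lambda_Q)$. Your version simply spells out the unitarity and coproduct computations, which the paper leaves implicit.
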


\begin{proof}
This follows from Theorem 13.2, because, as said in the statement, $T_Q$ is by definition a diagonal torus. Equivalently, since $v=QuQ^*$ is a unitary corepresentation, its diagonal entries $g_i=v_{ii}$, when regarded inside $C(T_Q)$, are unitaries, and satisfy:
$$\Delta(g_i)=g_i\otimes g_i$$

Thus $C(T_Q)$ is a group algebra, and more specifically we have $C(T_Q)=C^*(\Lambda_Q)$, where $\Lambda_Q=<g_1,\ldots,g_N>$ is the group in the statement, and this gives the result.
\end{proof}

Summarizing, associated to any closed subgroup $G\subset U_N^+$ is a whole family of tori, indexed by the unitaries $U\in U_N$. We use the following terminology:

\index{skeleton}
\index{standard tori}

\begin{definition}
Let $G\subset U_N^+$ be a closed subgroup.
\begin{enumerate}
\item The tori $T_Q\subset G$ constructed above are called standard tori of $G$.

\item The collection of tori $T=\left\{T_Q\subset G\big|Q\in U_N\right\}$ is called skeleton of $G$.
\end{enumerate}
\end{definition}

This might seem a bit awkward, but in view of various results, examples and counterexamples, to be presented below, this is perhaps the best terminology.  As a first general result now regarding these tori, coming from Woronowicz \cite{wo1}, we have:

\begin{theorem}
Any torus $T\subset G$ appears as follows, for a certain $Q\in U_N$:
$$T\subset T_Q\subset G$$
In other words, any torus appears inside a standard torus.
\end{theorem}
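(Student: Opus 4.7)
The plan is to show that for any torus $T = \widehat{\Lambda} \subset G$, the restriction of $u$ to $T$ can be ``diagonalized'' by a suitable unitary $Q \in U_N$, and this $Q$ is exactly what is needed to embed $T$ into $T_Q$.

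First, I would set up the basic objects. A torus $T \subset G$ by definition means a closed quantum subgroup with $C(T) = C^*(\Lambda)$ for some discrete group $\Lambda = \langle g_1, \ldots, g_M\rangle$, obtained as a quotient $\pi : C(G) \to C^*(\Lambda)$ of Woronowicz algebras. Applying $\pi$ entrywise to the fundamental corepresentation $u = (u_{ij}) \in M_N(C(G))$ yields a unitary matrix $w = (\pi(u_{ij})) \in M_N(C^*(\Lambda))$ which is a corepresentation of $C^*(\Lambda)$ on $\mathbb{C}^N$, in the sense that $\Delta(w_{ij}) = \sum_k w_{ik} \otimes w_{kj}$.

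The key step is diagonalizing $w$ by unitary conjugation. The coalgebra $\mathbb{C}[\Lambda] \subset C^*(\Lambda)$ is pointed: every element $g \in \Lambda$ is grouplike ($\Delta(g) = g \otimes g$), and conversely the only grouplike elements of $\mathbb{C}[\Lambda]$ are those of $\Lambda$. Consequently every finite-dimensional subcoalgebra of $\mathbb{C}[\Lambda]$ is a direct sum of one-dimensional subcoalgebras $\mathbb{C}g$ with $g \in \Lambda$. Translating this into comodule language, an $N$-dimensional unitary corepresentation of $C^*(\Lambda)$ is the same data as a $\Lambda$-grading $\mathbb{C}^N = \bigoplus_{g \in \Lambda} V_g$ on the underlying Hilbert space. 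Choosing an orthonormal basis of $\mathbb{C}^N$ adapted to this grading produces a unitary $Q \in U_N$ such that
$$QwQ^* = \mathrm{diag}(h_1, \ldots, h_N)$$
with each $h_i \in \Lambda \subset C^*(\Lambda)$.

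Once this is done, the rest is formal. Since $QwQ^* = Q\pi(u)Q^* = \pi(QuQ^*)$, the above formula says precisely that $\pi((QuQ^*)_{ij}) = 0$ for $i \neq j$. Hence the quotient map $\pi : C(G) \to C(T)$ kills the off-diagonal entries of $QuQ^*$, and so it factors through the intermediate quotient
$$C(G) \longrightarrow C(T_Q) = C(G)\big/\bigl\langle (QuQ^*)_{ij} = 0 \mid i \neq j \bigr\rangle \longrightarrow C(T),$$
which at the quantum group level gives the desired chain $T \subset T_Q \subset G$.

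The main obstacle, as I see it, is the diagonalization step: one must rigorously justify that unitarity of $w$ together with the grading decomposition lets one pick an \emph{orthonormal} basis adapted to the grading, and that in this basis the diagonal entries are honest group elements rather than general elements of $C^*(\Lambda)$. The subtlety is that a priori the isotypical components $V_g$ could interact through the inner product; but since distinct grouplikes are linearly independent in $\mathbb{C}[\Lambda]$, Woronowicz's theory of corepresentations (specifically, the fact that a corepresentation splits into a direct sum of irreducibles, which in the cocommutative pointed case are the one-dimensional $g$-isotypes) makes the $V_g$ pairwise orthogonal with respect to any invariant inner product, and a standard Gram--Schmidt inside each $V_g$ then yields $Q$. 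Everything else is bookkeeping.
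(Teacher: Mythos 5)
Your proof is correct and takes the same route as the paper: push the fundamental corepresentation $u$ through the quotient $\pi:C(G)\to C^*(\Lambda)$, unitarily diagonalize the resulting corepresentation of $C^*(\Lambda)$ as $Q\,\mathrm{diag}(h_1,\ldots,h_N)\,Q^*$ with $h_i\in\Lambda$, and read off the factorization $C(G)\to C(T_Q)\to C(T)$. The paper cites this diagonalization step directly from its earlier discussion of corepresentations of group algebras (Proposition 3.5 together with the cocommutative Peter--Weyl theory), whereas you rederive it from the pointedness of $\mathbb{C}[\Lambda]$ and the $\Lambda$-grading it induces, which is a sound and somewhat more self-contained way of making the same point.
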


\begin{proof}
Given a torus $T\subset G$, we have an inclusion as follows:
$$T\subset G\subset U_N^+$$

On the other hand, we know from chapter 3 above that each torus $T\subset U_N^+$ has a fundamental corepresentation as follows, with $Q\in U_N$:
$$u=Q
\begin{pmatrix}g_1\\&\ddots\\&&g_N\end{pmatrix}
Q^*$$

But this shows that we have $T\subset T_Q$, and this gives the result.
\end{proof}

Let us do now some computations, following \cite{bbd}, where the standard tori were introduced. In the classical case, the result is as follows:

\begin{proposition}
For a closed subgroup $G\subset U_N$ we have
$$T_Q=G\cap(Q^*\mathbb T^NQ)$$
where $\mathbb T^N\subset U_N$ is the group of diagonal unitary matrices.
\end{proposition}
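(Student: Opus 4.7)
The plan is to apply Gelfand duality directly. Since $G \subset U_N$ is classical, $C(G)$ is commutative, and the standard coordinates on $G$ are simply the matrix entries, $u_{ij}(g) = g_{ij}$. Under this identification, the functions $(QuQ^*)_{ij} \in C(G)$ are precisely the evaluations $g \mapsto (QgQ^*)_{ij}$.

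The first step is to interpret the defining relations of $T_Q$ pointwise. An element $g \in G$ kills all the generators $(QuQ^*)_{ij}$ with $i \neq j$ if and only if the matrix $QgQ^*$ is diagonal; since $g$ is unitary, $QgQ^*$ is then a diagonal unitary, i.e.\ an element of $\mathbb{T}^N$. Thus the common vanishing locus in $G$ of the relations defining $T_Q$ is exactly
$$H \;=\; \bigl\{\,g \in G \;\big|\; QgQ^* \in \mathbb{T}^N\,\bigr\} \;=\; G \cap (Q^*\mathbb{T}^N Q).$$
Note that $H$ is automatically a closed subgroup of $G$, being the intersection of two closed subgroups of $U_N$.

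The second step is to invoke the Gelfand correspondence between closed subsets of the compact space $G$ and quotients of $C(G)$ by closed $*$-ideals. The closed ideal $I \subset C(G)$ generated by $\{(QuQ^*)_{ij} : i \neq j\}$ has vanishing locus $H$, and by a Stone--Weierstrass argument (of the kind used repeatedly in the paper, e.g.\ in the proof of Proposition~2.2) the quotient $C(G)/I$ is canonically isomorphic to $C(H)$. By the definition of $T_Q$, this gives $C(T_Q) = C(H)$, and hence $T_Q = H = G \cap (Q^*\mathbb{T}^N Q)$, as claimed.

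There is essentially no serious obstacle here, since everything collapses to commutative Gelfand duality; the only point requiring a moment of care is confirming that the generators $(QuQ^*)_{ij}$, $i \neq j$, really do cut out $H$ scheme-theoretically and not just set-theoretically, i.e.\ that the ideal they generate is already closed and radical. This is automatic in our $C^*$-algebraic setting: the $*$-ideal they generate has quotient $C(G)/I$ a commutative $C^*$-algebra, which by Gelfand equals $C(H)$ on the nose. As a consistency check with Theorem~13.7, the subgroup $H$ is a closed abelian subgroup of $U_N$, hence of the form $\widehat{\Lambda_Q}$ with $\Lambda_Q = \widehat{H}$, and the generators $g_i = (QuQ^*)_{ii}|_H$ are precisely the characters giving the Pontrjagin dual presentation.
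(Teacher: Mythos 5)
Your proof is correct and follows essentially the same route as the paper, which simply observes that at $Q=1$ the statement holds by definition of the diagonal torus via Gelfand duality, and then conjugates by $Q$; your version just carries out the same Gelfand/Stone--Weierstrass argument directly for general $Q$.
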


\begin{proof}
This is indeed clear at $Q=1$, where $\Gamma_1$ appears by definition as the dual of the compact abelian group $G\cap\mathbb T^N$. In general, this follows by conjugating by $Q$.
\end{proof}

In the group dual case now, still following \cite{bbd}, we have the following result:

\index{group dual}

\begin{proposition}
Given a finitely generated discrete group 
$$\Gamma=<g_1,\ldots,g_N>$$
consider its dual compact quantum group $G=\widehat{\Gamma}$, diagonally embedded into $U_N^+$. We have
$$\Lambda_Q=\Gamma/\left<g_i=g_j\Big|\exists k,Q_{ki}\neq0,Q_{kj}\neq0\right>$$
with the embedding $T_Q\subset G=\widehat{\Gamma}$ coming from the quotient map $\Gamma\to\Lambda_Q$.
\end{proposition}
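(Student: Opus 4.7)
The plan is to prove the stated equality by recognizing $C(T_Q)$ as a Hopf algebra quotient of $C^*(\Gamma)$, and then identifying the discrete group that appears. The inclusion $T_Q\subset\widehat{\Gamma}$ from Theorem 13.7 gives a quotient Hopf $*$-algebra map $C^*(\Gamma)\to C(T_Q)$, and since a Hopf algebra quotient of a group algebra is again a group algebra, we can write $C(T_Q)=C^*(\Lambda)$ for some quotient $\Gamma\to\Lambda$. The task becomes identifying $\Lambda$ with the group $\Lambda_Q'=\Gamma/\langle g_i=g_j\mid\exists k,\,Q_{ki}\neq0,\,Q_{kj}\neq0\rangle$ from the statement.

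Using the diagonal form $u=\mathrm{diag}(g_1,\ldots,g_N)$, a direct computation gives
$$(QuQ^*)_{ab}=\sum_c Q_{ac}\overline{Q_{bc}}\,g_c,$$
so in particular $h_a:=(QuQ^*)_{aa}=\sum_c|Q_{ac}|^2\,g_c$. To show $\Lambda$ is a quotient of $\Lambda_Q'$, I would establish that each $h_a$ descends to a group element of $\Lambda$. Unitarity of $QuQ^*$ together with the vanishing of off-diagonals in $C(T_Q)$ gives $h_ah_a^*=1$, while the corepresentation identity $\Delta((QuQ^*)_{aa})=\sum_c(QuQ^*)_{ac}\otimes(QuQ^*)_{ca}$ collapses to $\Delta(h_a)=h_a\otimes h_a$ for the same reason, so $h_a$ is a unitary group-like element, hence a genuine element of $\Lambda$. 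Since $\sum_c|Q_{ac}|^2=1$, the equation $h_a=\sum_c|Q_{ac}|^2\bar g_c$ exhibits a single group element as a convex combination of group elements in $C^*(\Lambda)$; linear independence of distinct elements of $\Lambda$ inside the group algebra then forces all $\bar g_c$ with $Q_{ac}\neq0$ to coincide. Varying $a$, this is precisely the defining family of relations for $\Lambda_Q'$.

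For the reverse direction I would exploit the zero pattern of $Q$. Encoding this pattern as a bipartite graph on rows and columns, connectivity together with the unitarity of $Q$ forces $Q$, up to row and column permutations, to be block-diagonal $Q=\mathrm{diag}(Q^{(1)},\ldots,Q^{(m)})$ with each $Q^{(\alpha)}$ a square unitary of size $|R_\alpha|=|K_\alpha|$ indexed by its connected component. Inside $C^*(\Lambda_Q')$ the defining relations force $\bar g_c$ to be constant on each column-block $K_\alpha$, say equal to $\tilde g_\alpha$. A direct computation of $(QuQ^*)_{ab}=\sum_c Q_{ac}\overline{Q_{bc}}\,\bar g_c$ for $a\neq b$ then splits into two cases: if $a,b$ lie in different row-blocks then every summand vanishes because at most one of $Q_{ac},Q_{bc}$ can be nonzero, while if $a,b\in R_\alpha$ one factors out $\tilde g_\alpha$ and the surviving sum equals $(Q^{(\alpha)}(Q^{(\alpha)})^*)_{ab}=\delta_{ab}=0$. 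This shows the defining relations of $C(T_Q)$ hold in $C^*(\Lambda_Q')$, giving the reverse quotient map and completing the identification.

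The principal obstacle is the step establishing that $h_a$ descends to a genuine group element of $\Lambda$: it requires simultaneously using the off-diagonal vanishing and the coalgebra structure, and it is this step that channels the defining relations of $T_Q$ into purely group-theoretic identifications. Once this group-like step is in place, linear independence of group elements dispatches one direction, while the block decomposition of a unitary with prescribed support pattern closes the other.
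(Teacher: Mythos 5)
Your proof is correct, but it follows a genuinely different route from the paper's, at least for the key forward direction. The paper uses the off-diagonal relations directly: with $v=QuQ^*$ one computes $\sum_s\bar Q_{si}v_{sk}=\sum_{st}\bar Q_{si}Q_{st}\bar Q_{kt}g_t=\bar Q_{ki}g_i$ by unitarity of $Q$, so in $C(T_Q)$, where $v_{sk}=0$ for $s\neq k$, this collapses to $\bar Q_{ki}v_{kk}=\bar Q_{ki}g_i$, giving $Q_{ki}\neq0\implies g_i=v_{kk}$ and hence $g_i=g_j$ whenever $Q_{ki},Q_{kj}\neq0$ -- a two-line Fourier-type inversion needing neither group-likeness of the $\bar g_c$ nor any independence argument. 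You instead work only with the diagonal entries $h_a=\sum_c|Q_{ac}|^2\bar g_c$ and invoke linear independence of distinct group elements; this is fine, but note it quietly uses the standard lemma that the images $\bar g_c$ are group-like unitaries of $C(T_Q)=C^*(\Lambda)$, hence elements of $\Lambda$ (or at least that distinct group-like unitaries are linearly independent). You could bypass this entirely by observing that a unitary is an extreme point of the unit ball, so the convex combination $h_a=\sum_c|Q_{ac}|^2\bar g_c$ of unitaries already forces $\bar g_c=h_a$ for all $c$ with $Q_{ac}\neq0$, which is exactly the paper's conclusion $g_i=v_{kk}$. Also, the group-likeness and unitarity of the $h_a$ need not be re-derived, since that is the content of Theorem 13.7. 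For the converse, which the paper dismisses as elementary, your block-decomposition of $Q$ along the connected components of its support pattern is a correct and clean way to spell it out; a slightly shorter version is to note that in $C^*(\Lambda_Q)$ all $\bar g_c$ with $Q_{ac}\bar Q_{bc}\neq0$ coincide with a single element $\tilde g$, so $(Q\bar uQ^*)_{ab}=\tilde g\,(QQ^*)_{ab}=\tilde g\,\delta_{ab}$ vanishes for $a\neq b$. Net balance: the paper's argument is shorter and more elementary for the forward half, while yours documents the converse properly and the extremality/independence idea has the merit of using only the diagonal data.
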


\begin{proof}
Assume indeed that $\Gamma=<g_1,\ldots,g_N>$ is a discrete group, with dual $\widehat{\Gamma}\subset U_N^+$ coming via $u=diag(g_1,\ldots,g_N)$. With $v=QuQ^*$, we have the following computation:
\begin{eqnarray*}
\sum_s\bar{Q}_{si}v_{sk}
&=&\sum_{st}\bar{Q}_{si}Q_{st}\bar{Q}_{kt}g_t\\
&=&\sum_t\delta_{it}\bar{Q}_{kt}g_t\\
&=&\bar{Q}_{ki}g_i
\end{eqnarray*}

Thus the condition $v_{ij}=0$ for $i\neq j$ gives $\bar{Q}_{ki}v_{kk}=\bar{Q}_{ki}g_i$, which tells us that:
$$Q_{ki}\neq0\implies g_i=v_{kk}$$

Now observe that this latter equality reads:
$$g_i=\sum_j|Q_{kj}|^2g_j$$

We conclude from this that we have, as desired:
$$Q_{ki}\neq0,Q_{kj}\neq0\implies g_i=g_j$$

As for the converse, this is elementary to establish as well. See \cite{bbd}.
\end{proof}

According to the above results, we can expect the skeleton $T$ to encode various algebraic and analytic properties of $G$. We will discuss this in what follows, with a number of results and conjectures, following \cite{bpa}. We first have the following result:

\index{generation property}

\begin{theorem}
The following hold, both over the category of compact Lie groups, and over the category of duals of finitely generated discrete groups:
\begin{enumerate}
\item Injectivity: the construction $G\to T$ is injective, in the sense that $G\neq H$ implies, for some $Q\in U_N$:
$$T_Q(G)\neq T_Q(H)$$

\item Monotony: the construction $G\to T$ is increasing, in the sense that passing to a subgroup $H\subset G$ decreases at least one of the tori $T_Q$:
$$T_Q(H)\neq T_Q(G)$$

\item Generation: any closed quantum subgroup $G\subset U_N^+$ is generated by its tori, or, equivalently, has the following generation property: 
$$G=<T_Q|Q\in U_N>$$
\end{enumerate}
\end{theorem}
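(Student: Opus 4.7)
The proof strategy splits cleanly along the two categories, and in both cases the three assertions (injectivity, monotony, generation) will follow from a single structural observation. The common thread is that every unitary matrix $u \in U_N$ can be unitarily diagonalized, i.e.\ there exists $Q \in U_N$ with $QuQ^* \in \mathbb{T}^N$, or equivalently $u \in Q^*\mathbb{T}^N Q$. This spectral-theorem input is what makes all the standard tori together ``fill up'' a classical group, and in the group dual case the situation collapses to something trivial.

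For the classical case $G \subset U_N$, I would use the description $T_Q = G \cap (Q^*\mathbb{T}^N Q)$ from Proposition~13.9. Given $g \in G$, diagonalizing $g$ by some $Q \in U_N$ places $g$ inside $Q^*\mathbb{T}^N Q$, hence inside $T_Q$. Thus $G = \bigcup_{Q \in U_N} T_Q$ as a set, which immediately yields the generation property $G = \langle T_Q \mid Q \in U_N\rangle$. For monotony, if $H \subsetneq G$, pick $g \in G \setminus H$ and a $Q$ diagonalizing it; then $g \in T_Q(G)$ but $g \notin T_Q(H)$ because $g \notin H$, so $T_Q(H) \neq T_Q(G)$. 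Injectivity is essentially the same argument: if $G \neq H$, say $g \in G \setminus H$ (the symmetric case being identical), then a diagonalizing $Q$ separates $T_Q(G)$ from $T_Q(H)$.

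For the group dual case $G = \widehat{\Gamma}$ with $\Gamma = \langle g_1,\dots,g_N\rangle$, I would appeal to Proposition~13.10, which identifies $\Lambda_Q$ with an explicit quotient of $\Gamma$. Taking $Q = 1_N$, the condition ``$\exists k$ with $Q_{ki} \neq 0$ and $Q_{kj} \neq 0$'' forces $i = j$, so the defining relations are vacuous and $\Lambda_1 = \Gamma$, giving $T_1(\widehat{\Gamma}) = \widehat{\Gamma} = G$ itself. In other words, every group dual is already one of its own standard tori, and all three properties reduce to tautologies: generation holds because $G = T_1 \subset \langle T_Q \mid Q \in U_N\rangle \subset G$, while injectivity and monotony hold because distinct quotients $\Gamma \twoheadrightarrow \Lambda$ and $\Gamma \twoheadrightarrow \Lambda'$ yield distinct $T_1$'s.

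The main obstacle, such as it is, lies entirely in the classical half: the passage ``$G$ is the union of its standard tori'' relies silently on the spectral theorem for a single unitary, together with the elementary fact that a union-closed family that covers a topological group generates it. By contrast, the group dual case is essentially trivial once one notices the $Q = 1_N$ collapse. The genuinely hard question, which is not asked here, would be to establish the generation property for an arbitrary closed subgroup $G \subset U_N^+$; in that setting there is no spectral theorem and no reason a priori for quantum elements to lie inside tori, and indeed this is precisely the open problem alluded to in the discussion following the statement.
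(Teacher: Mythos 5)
Your classical-case argument is exactly the paper's: diagonalize a given $g\in G$ by some $Q\in U_N$, conclude $g\in T_Q = G\cap Q^*\mathbb{T}^N Q$, hence $G = \bigcup_Q T_Q$ as a set and generation follows; your explicit derivation of monotony and injectivity (pick $g\in G\setminus H$ and a $Q$ diagonalizing it) is a correct unpacking of the paper's terse remark that these ``follow from'' generation. Note, for economy, that generation alone already yields the other two formally: if $T_Q(G) = T_Q(H)$ for every $Q$, then $G = \langle T_Q(G)\rangle = \langle T_Q(H)\rangle = H$, which disposes of injectivity and monotony simultaneously.

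The group dual case, however, has a genuine though small gap. Your computation $\Lambda_1 = \Gamma$ via the formula in the preceding proposition applies only to the \emph{diagonally} embedded dual $\widehat{\Gamma}\subset U_N^+$, namely the one with $u = \mathrm{diag}(g_1,\ldots,g_N)$. But the theorem is a statement about closed subgroups of $U_N^+$, and a group dual can sit inside $U_N^+$ via a spinning matrix, $u = Q_0\,\mathrm{diag}(g_1,\ldots,g_N)\,Q_0^*$ for some $Q_0\in U_N$. For such a $G$, the torus $T_1$ is typically a proper quotient dual (the off-diagonal $u_{ij}$ need not vanish), and the correct choice is $Q = Q_0$, which undoes the spinning and gives $T_{Q_0}(G) = G$. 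The paper's proof handles this explicitly by splitting into diagonal and spun embeddings; your claim that ``every group dual is already one of its own standard tori'' is correct, but your argument only verifies it at $Q=1$ and therefore silently restricts to diagonal embeddings. The fix is one line, but as written the group dual half does not cover the full category the theorem is about.
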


\begin{proof}
We have two cases to be investigated, as follows:

\medskip

(1) Assume first that we are in the classical case, $G\subset U_N$. In order to prove the generation property we use the following formula, established above: 
$$T_Q=G\cap Q^*\mathbb T^NQ$$

Now since any group element $U\in G$ is unitary, and so diagonalizable by basic linear algebra, we can write, for certain matrices $Q\in U_N$ and $D\in\mathbb T^N$:
$$U=Q^*DQ$$

But this shows that we have $U\in T_Q$, for this precise value of the spinning matrix $Q\in U_N$, used in the construction of the standard torus $T_Q$. Thus we have proved the generation property, and the injectivity and monotony properties follow from this.

\medskip

(2) Regarding now the group duals, here everything is trivial. Indeed, when the group duals are diagonally embedded we can take $Q=1$, and when the group duals are embedded by using a spinning matrix $Q\in U_N$, we can use precisely this matrix $Q$.
\end{proof}

As explained in \cite{bpa}, it is possible to go beyond the above verifications, notably with some results regarding the half-classical and the free cases. We will be back to this in chapter 14 below, with a number of more specialized statements, also from \cite{bpa}, and which are for the moment conjectural as well, on the question of recovering the fine analytic properties of $G$ out of the fine analytic properties of its tori.

\section*{13c. Generation questions}

Let us focus now on the generation property, from Theorem 13.12 (3), which is perhaps the most important, in view of the various potential applications. In order to discuss the general case, we will need some abstract theory. Let us start with:

\begin{proposition}
Given a closed subgroup $G\subset U_N^+$ and a matrix $Q\in U_N$, the corresponding standard torus and its Tannakian category are given by
$$T_Q=G\cap\mathbb T_Q\quad,\quad 
C_{T_Q}=<C_G,C_{\mathbb T_Q}>$$
where $\mathbb T_Q\subset U_N^+$ is the dual of the free group $F_N=<g_1,\ldots,g_N>$, with the fundamental corepresentation of $C(\mathbb T_Q)$ being the matrix $u=Qdiag(g_1,\ldots,g_N)Q^*$.
\end{proposition}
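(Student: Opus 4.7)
The plan is to derive both formulas by showing that the defining relations of $T_Q$ inside $G$ coincide exactly with the defining relations of the intersection $G\cap\mathbb{T}_Q$, and then invoking the Tannakian dictionary from Theorem 7.11 to pass from quantum groups to Tannakian categories. First I would make the presentation of $\mathbb{T}_Q$ precise. Writing $v=QuQ^*$ where $u=(u_{ij})$ are the standard coordinates of $U_N^+$, the quantum subgroup $\mathbb{T}_Q\subset U_N^+$ is by construction the quotient of $C(U_N^+)$ by the off-diagonality relations $v_{ij}=0$ for $i\neq j$. Under this quotient the matrix $v$ remains biunitary, so its diagonal entries $v_{ii}$ are unitaries; no further relations are imposed among them, so the universal property identifies the quotient with $C^*(F_N)$, with the fundamental corepresentation being $Q\,\mathrm{diag}(g_1,\ldots,g_N)\,Q^*$ as claimed.

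For the first equality $T_Q=G\cap\mathbb{T}_Q$, I would then combine this with Proposition 7.9, which says that intersecting closed subgroups of $U_N^+$ corresponds at the algebra level to imposing both sets of defining relations. Concretely, if $C(G)=C(U_N^+)/\mathcal{R}$, then
$$C(G\cap\mathbb{T}_Q)=C(U_N^+)\big/\bigl\langle\mathcal{R},\,(QuQ^*)_{ij}=0\ \forall i\neq j\bigr\rangle=C(G)\big/\bigl\langle(QuQ^*)_{ij}=0\ \forall i\neq j\bigr\rangle,$$
which is exactly $C(T_Q)$ by the defining formula of the standard torus in Theorem 13.7.

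For the second equality $C_{T_Q}=\langle C_G,C_{\mathbb{T}_Q}\rangle$, I would simply apply Theorem 7.11(1) to the pair $(G,\mathbb{T}_Q)$, obtaining $C_{G\cap\mathbb{T}_Q}=\langle C_G,C_{\mathbb{T}_Q}\rangle$, and then substitute the identification just established in the first step.

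The main obstacle, though minor, is the universal-property check in the first paragraph: one must verify that the quotient of $C(U_N^+)$ by the single family $(QuQ^*)_{ij}=0$ ($i\neq j$) really produces $C^*(F_N)$, with no hidden extra relations. The argument is the standard one — any unitary $w\in M_N(A)$ over a $C^*$-algebra $A$ with $QwQ^*$ diagonal has unitary diagonal entries, which by freeness of $F_N$ induce a $*$-morphism $C^*(F_N)\to A$ inverting the canonical map — but it is the one spot where care is needed, since everything else reduces to a direct application of the already established intersection principles.
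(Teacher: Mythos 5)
Your proof is correct and follows essentially the same route as the paper: the equality $T_Q=G\cap\mathbb T_Q$ comes from the fact that intersections are obtained by imposing both sets of defining relations (the paper's Proposition 7.9), and the category formula is then the general Tannakian intersection formula $C_{G\cap H}=<C_G,C_H>$ from Theorem 7.11 (1). The only extra content in your write-up is the explicit universal-property check that dividing $C(U_N^+)$ by the relations $(QuQ^*)_{ij}=0$, $i\neq j$, yields $C^*(F_N)$ with the spinned corepresentation, a point the paper treats as already known from Theorem 13.7 and the $Q=1$ case.
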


\begin{proof}
The first assertion comes from the fact, that we know from chapter 7, that given two closed subgroups $G,H\subset U_N^+$, the corresponding quotient algebra $C(U_N^+)\to C(G\cap H)$ appears by dividing by the kernels of the following quotient maps:
$$C(U_N^+)\to C(G)\quad,\quad 
C(U_N^+)\to C(H)$$

Indeed, the construction of $T_Q$ from Theorem 13.7 amounts precisely in performing this operation, with $H=\mathbb T_Q$, and so we obtain, as claimed:
$$T_Q=G\cap\mathbb T_Q$$

As for the Tannakian category formula, this follows from this, and from the following general Tannakian duality formula, that we know as well from chapter 7:
$$C_{G\cap H}=<C_G,C_H>$$

Thus, we are led to the conclusion in the statement.
\end{proof}

We have the following Tannakian reformulation of the toral generation property:

\index{generation property}

\begin{theorem}
Given a closed subgroup $G\subset U_N^+$, the subgroup 
$$G'=<T_Q|Q\in U_N>$$
generated by its standard tori has the following Tannakian category:
$$C_{G'}=\bigcap_{Q\in U_N}<C_G,C_{\mathbb T_Q}>$$
In particular we have $G=G'$ when this intersection reduces to $C_G$.
\end{theorem}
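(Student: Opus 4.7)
The plan is to combine the Tannakian description of the generation operation, established in Section 7, with the explicit formula for $C_{T_Q}$ from Proposition 13.13. The argument should be almost entirely formal, with the only real content being the (routine) extension of the two-variable formula $C_{<G,H>} = C_G \cap C_H$ to an arbitrary family indexed by $Q \in U_N$.

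First I would recall from Theorem 7.11 that the generation operation corresponds to intersection at the Tannakian level, so that for any family of closed subgroups $G_i \subset U_N^+$ one has
$$C_{<G_i \mid i \in I>} = \bigcap_{i \in I} C_{G_i}.$$
Strictly speaking Theorem 7.11 is stated for two subgroups, but the proof goes through verbatim for arbitrary families: $<G_i \mid i \in I>$ is by construction the smallest closed subgroup of $U_N^+$ containing all the $G_i$, and its Tannakian category is therefore the largest Tannakian category contained in each $C_{G_i}$, that is, their intersection. Applying this to the family $\{T_Q\}_{Q \in U_N}$ gives
$$C_{G'} = \bigcap_{Q \in U_N} C_{T_Q}.$$
Substituting the formula $C_{T_Q} = <C_G, C_{\mathbb T_Q}>$ from Proposition 13.13 into each factor yields the displayed identity.

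For the final assertion, note that each $T_Q$ sits inside $G$ by construction, so the generated subgroup satisfies $G' \subset G$, and consequently $C_G \subset C_{G'}$ (a smaller quantum group has a larger Tannakian category, since more arrows are available as intertwiners). Meanwhile, the inclusion $C_{G'} \subset <C_G, C_{\mathbb T_Q}>$ holds for every $Q$, giving the reverse inclusion $C_{G'} \subset \bigcap_Q <C_G, C_{\mathbb T_Q}>$; combined with the intersection formula, this forces equality and in particular $C_G \subset C_{G'}$. If now the hypothesis $\bigcap_Q <C_G, C_{\mathbb T_Q}> = C_G$ holds, then $C_{G'} = C_G$, and Tannakian duality (in the form used throughout Sections 4 and 7, via the injectivity of the correspondence $G \mapsto C_G$) promotes this equality of categories to the equality $G = G'$.

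The only step requiring any care is the extension of the generation/intersection duality from two factors to an arbitrary index set, but this is immediate from the universal property defining $<\cdot>$, so I do not anticipate any serious obstacle; the theorem is essentially a bookkeeping consequence of Proposition 13.13 together with the Tannakian dictionary.
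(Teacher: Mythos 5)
Your proof is correct and follows essentially the same route as the paper: identify $C_{G'}$ as the intersection $\bigcap_Q C_{T_Q}$ via the Tannakian generation/intersection duality, then substitute the formula $C_{T_Q}=<C_G,C_{\mathbb T_Q}>$ from Proposition 13.13. The paper's proof is just the two-line version of this, leaving implicit both the extension to infinite families and the handling of the final assertion, which you spell out correctly.
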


\begin{proof}
Consider the subgroup $G'\subset G$ constructed in the statement. We have:
$$C_{G'}=\bigcap_{Q\in U_N}C_{T_Q}$$

Together with the formula in Proposition 13.13, this gives the result.
\end{proof}

Let us further discuss now the toral generation property, with some modest results, regarding its behaviour with respect to product operations. We first have:

\begin{proposition}
Given two closed subgroups $G,H\subset U_N^+$, and $Q\in U_N$, we have:
$$<T_Q(G),T_Q(H)>\subset T_Q(<G,H>)$$
Also, the toral generation property is stable under the operation $<\,,>$.
\end{proposition}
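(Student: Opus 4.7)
The plan is to deduce both assertions directly from the intersection formula $T_Q(K)=K\cap\mathbb T_Q$ established in Proposition 13.13, combined with the universal property defining the generation operation $<\,,>$ from section 7.

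For the first assertion, the key observation is that taking standard tori at a fixed spinning matrix $Q$ is monotone under inclusions. Indeed, since $G,H\subset<G,H>$, intersecting each with $\mathbb T_Q$ gives
$$T_Q(G)=G\cap\mathbb T_Q\subset<G,H>\cap\mathbb T_Q=T_Q(<G,H>),$$
and likewise $T_Q(H)\subset T_Q(<G,H>)$. Since $T_Q(<G,H>)$ is a closed subgroup of $U_N^+$ containing both $T_Q(G)$ and $T_Q(H)$, and since $<T_Q(G),T_Q(H)>$ is by definition the smallest such subgroup, the desired inclusion follows.

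For the second assertion, suppose $G$ and $H$ both satisfy toral generation, so that $G=<T_Q(G)\mid Q\in U_N>$ and $H=<T_Q(H)\mid Q\in U_N>$. The inclusion $<T_Q(<G,H>)\mid Q\in U_N>\subset<G,H>$ is automatic from $T_Q(<G,H>)\subset<G,H>$ for each $Q$. For the reverse inclusion, using associativity of the generation operation together with the first assertion, one computes
$$<G,H>\,=\,\bigl<<T_Q(G)\mid Q\in U_N>,<T_Q(H)\mid Q\in U_N>\bigr>\,=\,<T_Q(G),T_Q(H)\mid Q\in U_N>$$
$$\subset\,<T_Q(<G,H>)\mid Q\in U_N>,$$
which gives the reverse inclusion, hence equality.

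I do not expect any serious obstacle here: the entire statement is a formal consequence of the universal properties of $\cap$ and $<\,,>$, once the description $T_Q=G\cap\mathbb T_Q$ from Proposition 13.13 is available. The only minor care required is in handling the generation operation associatively, which is standard.
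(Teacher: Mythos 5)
Your proof is correct and follows essentially the same route as the paper: the first inclusion via the monotonicity of $K\mapsto K\cap\mathbb T_Q$ together with the universal property of $<\,,>$, and the second assertion via the same chain of generation identities $<G,H>=<T_Q(G),T_Q(H)\mid Q\in U_N>\subset<T_Q(<G,H>)\mid Q\in U_N>$. The only cosmetic difference is that you spell out the trivial reverse inclusion explicitly, which the paper leaves implicit.
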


\begin{proof}
The first assertion can be proved either by using Theorem 13.14, or directly. For the direct proof, which is perhaps the simplest, we have:
\begin{eqnarray*}
T_Q(G)
&=&G\cap\mathbb T_Q\subset<G,H>\cap\mathbb T_Q\\
&=&T_Q(<G,H>)
\end{eqnarray*}

On the other hand, we have as well the following computation:
\begin{eqnarray*}
T_Q(H)
&=&H\cap\mathbb T_Q\subset<G,H>\cap\mathbb T_Q\\
&=&T_Q(<G,H>)
\end{eqnarray*}

Now since $A,B\subset C$ implies $<A,B>\subset C$, this gives the result. Regarding now the second assertion, we have the following computation:
\begin{eqnarray*}
<G,H>
&=&<<T_Q(G)|Q\in U_N>,<T_Q(H)|Q\in U_N>>\\
&=&<T_Q(G),T_Q(H)|Q\in U_N>\\
&=&<<T_Q(G),T_Q(H)>|Q\in U_N>\\
&\subset&<T_Q(<G,H>)|Q\in U_N>
\end{eqnarray*}

Thus the quantum group $<G,H>$ is generated by its tori, as claimed.
\end{proof}

Along the same lines, we have as well the following result:

\begin{proposition}
We have the following formula, for any $G,H$ and $R,S$:
$$T_{R\otimes S}(G\times H)=T_R(G)\times T_S(H)$$
Also, the toral generation property is stable under usual products $\times$.
\end{proposition}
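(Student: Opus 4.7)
My plan is to first unpack the embedding $G\times H\subset U_{NM}^+$ via the tensor-product corepresentation $w=u\otimes v$, for which the spun matrix factorises cleanly. A direct entry-by-entry computation, using that the $R$'s act only on the first tensor leg and the $S$'s on the second, gives
$$((R\otimes S)w(R\otimes S)^*)_{ia,jb}=(RuR^*)_{ij}\otimes(SvS^*)_{ab}$$
in $C(G)\otimes C(H)=C(G\times H)$. Writing $p_{ij}=(RuR^*)_{ij}$ and $q_{ab}=(SvS^*)_{ab}$, the defining relations of $T_{R\otimes S}(G\times H)$ are therefore $p_{ij}\otimes q_{ab}=0$ for $(i,a)\neq(j,b)$.

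The next step is to identify the resulting quotient by analysing the ideal $I\subset C(G)\otimes C(H)$ generated by these simple tensors. Let $J_1\subset C(G)$ be the ideal with $C(G)/J_1=C(T_R(G))$, i.e. the one generated by $\{p_{ij}:i\neq j\}$, and similarly $J_2\subset C(H)$ for $T_S(H)$. The inclusion $J_1\otimes C(H)+C(G)\otimes J_2\subset I$ is immediate, since any generator of the left-hand side already appears among the generators $p_{ij}\otimes q_{ab}$ of $I$. For the reverse inclusion, which is the core of the argument, I would use that $p=RuR^*$ and $q=SvS^*$ are unitary matrices: from $\sum_b q_{ab}q_{ab}^*=1$ one gets, for $i\neq j$,
$$p_{ij}\otimes 1=\sum_b(p_{ij}\otimes q_{ab})(1\otimes q_{ab}^*)\in I,$$
and symmetrically, from $\sum_j p_{ji}^*p_{ji}=1$, one obtains for $a\neq b$
$$1\otimes q_{ab}=\sum_j(p_{ji}^*\otimes 1)(p_{ji}\otimes q_{ab})\in I,$$
where for the latter one uses that $a\neq b$ forces $(j,a)\neq(i,b)$ regardless of $j$. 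This yields $I=J_1\otimes C(H)+C(G)\otimes J_2$, and therefore $C(T_{R\otimes S}(G\times H))=C(T_R(G))\otimes C(T_S(H))=C(T_R(G)\times T_S(H))$, proving the first assertion.

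For the stability of toral generation under direct products, assume both $G$ and $H$ are generated by their standard tori, and let $K=<T_Q(G\times H):Q\in U_{NM}>\subset G\times H$. The previous step ensures $T_R(G)\times T_S(H)=T_{R\otimes S}(G\times H)\subset K$ for all $R,S$. Fixing $S$ to be the identity and varying $R$, the toral generation of $G$ combined with the compatibility of the operation $<\,,>$ with direct products in one slot yields $G\times T_{I_M}(H)\subset K$; symmetrically $T_{I_N}(G)\times H\subset K$. Since the identity lies in every torus, the usual identity-insertion argument then gives $<G\times T_{I_M}(H),T_{I_N}(G)\times H>=G\times H$, and hence $K=G\times H$, as needed.

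The main obstacle is the ideal computation of the second paragraph: showing $I=J_1\otimes C(H)+C(G)\otimes J_2$. One inclusion is trivial, but the other genuinely requires the unitarity of $RuR^*$ and $SvS^*$ in order to extract the univariate generators $p_{ij}\otimes 1$ and $1\otimes q_{ab}$ from the purely bilinear relations defining $I$. Everything else is comparatively formal bookkeeping with Hopf ideals and the generation operation.
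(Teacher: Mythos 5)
Your argument is correct and follows essentially the same route as the paper: the paper treats the product formula $T_{R\otimes S}(G\times H)=T_R(G)\times T_S(H)$ as clear from definitions, and then obtains toral generation of $G\times H$ by exactly the kind of chain of manipulations with $<\,,>$ and $\times$ that you use, namely $<T_Q(G\times H)|Q>\supset<T_R(G)\times T_S(H)|R,S>=<T_R(G)|R>\times<T_S(H)|S>=G\times H$. Your only addition is the explicit ideal computation, using unitarity of $RuR^*$ and $SvS^*$ to extract the generators $p_{ij}\otimes 1$ and $1\otimes q_{ab}$, which simply fills in the step the paper leaves unproved.
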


\begin{proof}
The product formula in the statement is clear from definitions. Regarding now the second assertion, we have the following computation:
\begin{eqnarray*}
<T_Q(G\times H)|Q\in U_{MN}>
&\supset&<T_{R\otimes S}(G\times H)|R\in U_M,S\in U_N>\\
&=&<T_R(G)\times T_S(H)|R\in U_M,S\in U_N>\\
&=&<T_R(G)\times\{1\},\{1\}\times T_S(H)|R\in U_M,S\in U_N>\\
&=&<T_R(G)|R\in U_M>\times<T_G(H)|H\in U_N>\\
&=&G\times H
\end{eqnarray*}

Thus the quantum group $G\times H$ is generated by its tori, as claimed.
\end{proof}

In order to get beyond these results, let us discuss now some weaker versions of the generation property, related to the classification program for the compact quantum groups, explained in chapters 11-12. We have here the following technical definition:

\index{diagonal liberation}
\index{liberation}

\begin{definition}
A closed subgroup $G_N\subset U_N^+$, with classical version $G_N^c=G_N\cap U_N$, is said to be weakly generated by its tori when:
$$G_N=<G_N^c,(T_Q)_{Q\in U_N}>$$
When the following even weaker condition, involving $T_1$ only, is satisfied,
$$G_N=<G_N^c,T_1>$$
we say that $G_N$ appears as a diagonal liberation of its classical version $G_N^c$.
\end{definition}

According to our results above, the first property is satisfied for the compact groups, for the discrete group duals, and is stable under generation, and direct products. Regarding the second property, this is something quite interesting, related to many things. The idea here, from Chirvasitu \cite{chi} and subsequent papers, is that such formulae can be proved by recurrence on $N\in\mathbb N$. In order to discuss this, let us start with:

\begin{proposition}
Assume that $G=(G_N)$ is weakly uniform, let $n\in\{2,3,\ldots,\infty\}$ be minimal such that $G_n$ is not classical, and consider the following conditions:
\begin{enumerate}
\item Strong generation: $G_N=<G_N^c,G_n>$, for any $N>n$.

\item Usual generation: $G_N=<G_N^c,G_{N-1}>$, for any $N>n$.

\item Initial step generation: $G_{n+1}=<G_{n+1}^c,G_n>$.
\end{enumerate}
We have then $(1)\iff(2)\implies(3)$, and $(3)$ is in general strictly weaker.
\end{proposition}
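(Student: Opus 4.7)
The plan is to handle the three implications separately, with the implication chain $(2)\Rightarrow(1)$ requiring an easy induction, and the strict weakness of $(3)$ requiring a counterexample.

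First I would verify $(1)\Rightarrow(2)$ and $(2)\Rightarrow(3)$, which are essentially formal. For $(1)\Rightarrow(2)$, the weak uniformity gives a tower $G_n\subset G_{n+1}\subset\cdots\subset G_{N-1}$ via the embeddings $u\mapsto\mathrm{diag}(u,1)$, so in particular $G_n\subset G_{N-1}$. Combined with $G_{N-1}\subset G_N$, this yields
$$G_N=\langle G_N^c,G_n\rangle\subset\langle G_N^c,G_{N-1}\rangle\subset G_N,$$
and we get $(2)$. The implication $(2)\Rightarrow(3)$ is simply the specialization $N=n+1$.

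Next I would establish $(2)\Rightarrow(1)$ by induction on $N>n$. The base case $N=n+1$ is $(3)$, which is contained in $(2)$. For the inductive step, assume $G_{N-1}=\langle G_{N-1}^c,G_n\rangle$. Applying $(2)$ at level $N$ gives $G_N=\langle G_N^c,G_{N-1}\rangle$. The key observation is that the embedding $U_{N-1}^+\subset U_N^+$ via $u\mapsto\mathrm{diag}(u,1)$ restricts to $U_{N-1}\subset U_N$, so intersecting both sides of $G_{N-1}\subset G_N$ with the respective classical unitary groups yields $G_{N-1}^c\subset G_N^c$. Therefore
$$G_N=\langle G_N^c,G_{N-1}\rangle=\langle G_N^c,G_{N-1}^c,G_n\rangle=\langle G_N^c,G_n\rangle,$$
completing the induction.

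The main obstacle is the last claim, that $(3)$ is strictly weaker than $(1)$ in general. Here I would search for a counterexample in the world of easy quantum groups, where the ``block-wise'' Tannakian machinery from Section~11 allows one to compute $\langle G_N^c,G_{N-1}\rangle$ by intersecting the categories of partitions of $G_N^c$ and $G_{N-1}$ (viewed inside $G_N$) and compare with the category of $G_N$. A natural place to look is among the non-uniform intermediate easy quantum groups classified by Raum--Weber, such as the $H_N^{[r]}$ or an $H_N^\Gamma$ type family, where the partition structure that distinguishes $G_N$ from its classical version first appears at some small $n$ but fails to propagate all the way up through the usual generation step. Concretely, the plan is to exhibit a series where one can verify $(3)$ by a direct partition computation at the initial step $n\to n+1$, but where at some larger $N$ one finds a partition $\pi\in D_{G_N^c}\cap D_{G_{N-1}}$ which is not in $D_{G_N}$, thereby producing an element of $\langle G_N^c,G_{N-1}\rangle$ outside $G_N$ — contradicting $(2)$, and hence $(1)$. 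The delicate point will be choosing the family so that the partition-level obstruction appears only at step $N>n+1$ and not at step $n+1$.
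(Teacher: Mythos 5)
Your handling of the equivalences is correct and essentially identical to the paper's: $(1)\Rightarrow(2)$ rests on $G_n\subset G_{N-1}$, which comes from weak uniformity, and your upward induction for $(2)\Rightarrow(1)$ is the same computation as the paper's descent $G_N=\langle G_N^c,G_{N-1}\rangle=\langle G_N^c,G_{N-1}^c,G_{N-2}\rangle=\langle G_N^c,G_{N-2}\rangle$, both using repeatedly condition $(2)$ together with $G_{N-1}^c\subset G_N^c$.

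The genuine gap is the final claim, that $(3)$ is strictly weaker: you give only a search strategy, and the mechanism it relies on cannot occur. An easy family $G=(G_N)$ in the sense of the paper comes from a single category of partitions $D$, the same at every level, so $D_{G_{N-1}}=D_{G_N}=D$ while $D\subset D_{G_N^c}$; hence $D_{G_N^c}\cap D_{G_{N-1}}=D_{G_N}$ as sets of partitions, and the obstruction $\pi\in D_{G_N^c}\cap D_{G_{N-1}}$ with $\pi\notin D_{G_N}$ that your plan hinges on does not exist, for any easy family and any $N$. Note also that for easy quantum groups weak uniformity is equivalent to uniformity (Proposition 11.2), so the ``non-uniform'' Raum--Weber families you name would not even satisfy the hypothesis; that a possible failure of $\langle G_N^c,G_{N-1}\rangle=G_N$ is a topological-generation phenomenon, detected only by intertwiners not of the form $T_\pi$ (for instance $e_N\in Fix(\mathrm{diag}(u,1))$), since $\langle\,,\rangle\subset\{\,,\}$ can be strict; and that $\langle G_N^c,G_{N-1}\rangle$ is always contained in $G_N$, so what one must show is that it is a proper subgroup, not that it has elements ``outside $G_N$''. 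The paper's counterexample lives instead among group duals, where everything is transparent: for $G_N=\widehat{\Gamma_N}$ embedded diagonally, weak uniformity says precisely that $\Gamma_{N-1}=\Gamma_N/\langle\langle g_N\rangle\rangle$, with $\langle\langle g_N\rangle\rangle$ the normal closure of $g_N$, so the $\Gamma_N$ form a projective system; since quantum subgroups of $\widehat{\Gamma_N}$ are duals of quotients of $\Gamma_N$ and generation corresponds to intersecting kernels, condition $(2)$ at level $N$ reads $[\Gamma_N,\Gamma_N]\cap\langle\langle g_N\rangle\rangle=\{1\}$, while $(3)$ imposes this at level $n+1$ only. Taking for instance $\Gamma_2=F_2$, $\Gamma_3=F_2\times\mathbb Z$ (so $(3)$ holds) and $\Gamma_4=\Gamma_3*\mathbb Z$ (where $[g_4,g_1]$ is a nontrivial element of $[\Gamma_4,\Gamma_4]\cap\langle\langle g_4\rangle\rangle$, so $(2)$ fails) makes the paper's remark -- that a non-abelian $\Gamma_2$ admits many completions of the sequence, breaking $(2)$ while keeping $(3)$ -- completely concrete.
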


\begin{proof}
All the implications and non-implications are elementary, as follows:

\medskip

$(1)\implies(2)$ This follows from $G_n\subset G_{N-1}$ for $N>n$, coming from uniformity.

\medskip

$(2)\implies(1)$ By using twice the usual generation, and then the uniformity, we have:
\begin{eqnarray*}
G_N
&=&<G_N^c,G_{N-1}>\\
&=&<G_N^c,G_{N-1}^c,G_{N-2}>\\
&=&<G_N^c,G_{N-2}>
\end{eqnarray*}

Thus we have a descent method, and we end up with the strong generation condition.

\medskip

$(2)\implies(3)$ This is clear, because (2) at $N=n+1$ is precisely (3).

\medskip

$(3)\hskip2.3mm\not\hskip-2.3mm\implies(2)$ In order to construct counterexamples here, the simplest is to use group duals. Indeed, with $G_N=\widehat{\Gamma_N}$ and $\Gamma_N=<g_1,\ldots,g_N>$, the uniformity condition tells us that we must be in a projective limit situation, as follows:
$$\Gamma_1\leftarrow\Gamma_2\leftarrow\Gamma_3\leftarrow\Gamma_4\leftarrow\ldots$$
$$\Gamma_{N-1}=\Gamma_N/<g_N=1>$$

Now by assuming for instance that $\Gamma_2$ is given and not abelian, there are many ways of completing the sequence, and so the uniqueness coming from (2) can only fail.
\end{proof}

Let us introduce now a few more notions, as follows:

\begin{proposition}
Assume that $G=(G_N)$ is weakly uniform, let $n\in\{2,3,\ldots,\infty\}$ be as above, and consider the following conditions, where $I_N\subset G_N$ is the diagonal torus:
\begin{enumerate}
\item Strong diagonal liberation: $G_N=<G_N^c,I_n>$, for any $N\geq n$.

\item Technical condition: $G_N=<G_N^c,I_{N-1}>$ for any $N>n$, and $G_n=<G_n^c,I_n>$.

\item Diagonal liberation: $G_N=<G_N^c,I_N>$, for any $N$.

\item Initial step diagonal liberation: $G_n=<G_n^c,I_n>$.
\end{enumerate}
We have then $(1)\implies(2)\implies(3)\implies(4)$.
\end{proposition}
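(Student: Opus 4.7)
The plan is to verify each of the three implications directly, the only non-trivial ingredient being the observation that, under the weak uniformity hypothesis, the diagonal tori form an increasing sequence $I_n \subset I_{n+1} \subset \ldots$ inside the sequence $G_n \subset G_{n+1} \subset \ldots$ coming from the embedding $U_{N-1}^+ \subset U_N^+$ given by $u \to \mathrm{diag}(u,1)$. Indeed, weak uniformity gives $G_{N-1} = G_N \cap U_{N-1}^+$ via this embedding, and at the level of diagonal tori the generator $g_i = u_{ii}$ of $I_{N-1}$ is mapped to the corresponding generator of $I_N$, with the extra coordinate being sent to $1$; hence $I_{N-1} \subset I_N$ as closed quantum subgroups of $G_N$.

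For $(1)\Rightarrow(2)$: the second clause of $(2)$, namely $G_n = \langle G_n^c, I_n\rangle$, is simply $(1)$ taken at $N=n$. For the first clause, iterating the inclusion above yields $I_n \subset I_{n+1} \subset \ldots \subset I_{N-1}$ inside $G_N$, so
$$G_N \;=\; \langle G_N^c, I_n\rangle \;\subset\; \langle G_N^c, I_{N-1}\rangle \;\subset\; G_N,$$
forcing equality throughout, as desired.

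For $(2)\Rightarrow(3)$: the case $N < n$ is trivial since $G_N$ is classical by minimality of $n$, and hence $G_N = G_N^c = \langle G_N^c, I_N\rangle$. The case $N = n$ is the second clause of $(2)$. For $N > n$, the inclusion $I_{N-1} \subset I_N$ noted above gives
$$G_N \;=\; \langle G_N^c, I_{N-1}\rangle \;\subset\; \langle G_N^c, I_N\rangle \;\subset\; G_N,$$
again forcing equality. Finally, $(3)\Rightarrow(4)$ is just $(3)$ taken at $N=n$.

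None of these steps is a genuine obstacle; the only point requiring care is verifying the nesting $I_{N-1} \subset I_N$ from weak uniformity, which hinges on the fact that the embedding $u \to \mathrm{diag}(u,1)$ maps diagonal entries to diagonal entries, so the defining relations $u_{ij} = 0$ for $i \neq j$ of $I_{N-1}$ are preserved when viewed inside $G_N$. Once this is in hand, the three implications reduce to the trivial squeeze $X \subset Y \subset Z$ with $X = Z$.
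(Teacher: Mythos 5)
Your proof is correct and follows essentially the same route as the paper: the one real ingredient is that weak uniformity of $(G_N)$ forces the nesting $I_{N-1}\subset I_N$ (the paper phrases this as the computation $I_N\cap U_{N-1}^+=(G_N\cap\mathbb T_N^+)\cap U_{N-1}^+=G_{N-1}\cap\mathbb T_{N-1}^+=I_{N-1}$, i.e.\ weak uniformity of the torus family), after which all three implications reduce to the squeeze argument you give. Your write-up in fact spells out the routine deductions that the paper leaves implicit, so there is nothing to fix.
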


\begin{proof}
Our claim is that when assuming that $G=(G_N)$ is weakly uniform, so is the family of diagonal tori $I=(I_N)$. Indeed, we have the following computation:
\begin{eqnarray*}
I_N\cap U_{N-1}^+
&=&(G_N\cap\mathbb T_N^+)\cap U_{N-1}^+\\
&=&(G_N\cap U_{N-1}^+)\cap(\mathbb T_N^+\cap U_{N-1}^+)\\
&=&G_{N-1}\cap\mathbb T_{N-1}^+\\
&=&I_{N-1}
\end{eqnarray*}

Thus our claim is proved, and this gives the various implications in the statement. 
\end{proof}

Based on the above technical results, we can now formulate a key theoretical observation, in relation with the various generation properties that we have, as follows:

\begin{theorem}
If $G=(G_N)$ is weakly uniform, and with $n\in\{2,3,\ldots,\infty\}$ being as above, the following conditions are equivalent, modulo their initial steps:
\begin{enumerate}
\item Generation: $G_N=<G_N^c,G_{N-1}>$, for any $N>n$.

\item Strong generation: $G_N=<G_N^c,G_n>$, for any $N>n$.

\item Diagonal liberation: $G_N=<G_N^c,I_N>$, for any $N\geq n$.

\item Strong diagonal liberation: $G_N=<G_N^c,I_n>$, for any $N\geq n$.
\end{enumerate}
\end{theorem}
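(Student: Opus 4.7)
The plan is to establish the four-way equivalence by connecting the generation conditions (1),(2) to the diagonal liberation conditions (3),(4) via the diagonal tori $I_N\subset G_N$, which by the proof of Proposition 13.19 form themselves a weakly uniform family, giving in particular a chain of inclusions $I_n\subset I_{n+1}\subset\ldots\subset I_N$ compatible with $G_{N-1}\subset G_N$. The phrase ``modulo their initial steps'' should be interpreted as: we allow ourselves to assume both the initial step for $(1),(2)$, namely $G_{n+1}=<G_{n+1}^c,G_n>$, and the initial step for $(3),(4)$, namely $G_n=<G_n^c,I_n>$.

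First, I would recall $(1)\iff(2)$ from Proposition 13.18, and the trivial implication $(3)\implies(4)$ which is just specializing to $N=n$. The implication $(4)\implies(3)$ follows immediately from $I_n\subset I_N$ for $N\geq n$, via the sandwich
$$G_N=<G_N^c,I_n>\subset<G_N^c,I_N>\subset G_N,$$
forcing equality throughout. Thus $(3)\iff(4)$, and it remains to cross between the pair $(1),(2)$ and the pair $(3),(4)$.

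Next, I would prove $(2)\implies(3)$ by induction on $N\geq n$, using the initial step $G_n=<G_n^c,I_n>$ as base case. For $N>n$, by (2) and then the inductive hypothesis applied to $G_{N-1}$, we have
$$G_N=<G_N^c,G_n>\subset<G_N^c,G_{N-1}>=<G_N^c,G_{N-1}^c,I_{N-1}>=<G_N^c,I_{N-1}>\subset<G_N^c,I_N>,$$
and the reverse inclusion being trivial, this gives $(3)$.

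The main obstacle is the reverse direction $(3)\implies(1)$: from $G_N=<G_N^c,I_N>$ we must recover $G_N=<G_N^c,G_{N-1}>$. The point is that $I_{N-1}\subset G_{N-1}$ sits naturally inside $G_N$ via the uniform embedding, but $I_N$ requires the extra diagonal generator $g_N=u_{NN}$, which a priori is not in $G_{N-1}$. The key idea is to use the classical part $G_N^c$, which contains $S_N$ (implicit in our setting, as the families under study are always homogeneous): conjugating the generators $g_1,\ldots,g_{N-1}$ of $I_{N-1}\subset G_{N-1}$ by the transposition $(1,N)\in S_N\subset G_N^c$ produces the missing generator $g_N=u_{NN}$, so that
$$I_N\subset<G_N^c,I_{N-1}>\subset<G_N^c,G_{N-1}>.$$
Combining with $(3)$ yields $G_N=<G_N^c,I_N>\subset<G_N^c,G_{N-1}>\subset G_N$, closing the loop and completing the equivalence. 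The delicate point to watch is that one needs $S_N\subset G_N^c$ (or at least enough permutations to transitively move the diagonal labels), which is the implicit homogeneity hypothesis accompanying weak uniformity in the classes of quantum groups considered here.
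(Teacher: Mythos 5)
Your induction for $(2)\implies(3)$ is fine, and is essentially the paper's forward argument, but two of your steps break the four-way equivalence. First, the claim that $(3)\implies(4)$ is ``trivial, by specializing to $N=n$'' is backwards: specializing $(3)$ to $N=n$ only yields the initial step $G_n=<G_n^c,I_n>$, whereas $(4)$ requires $G_N=<G_N^c,I_n>$ for every $N\geq n$, and the inclusion $<G_N^c,I_n>\subset<G_N^c,I_N>$ goes the wrong way for that purpose. Since this was your only route into $(4)$, your argument establishes at best $(1)\iff(2)\iff(3)$ together with $(4)\implies(3)$, and the strong condition $(4)$ is never reached. The paper gets it by an induction (``descent'') in which the torus is kept fixed at $I_n$: assuming $(1)$ and the initial step, one writes $G_N=<G_N^c,G_{N-1}>=<G_N^c,G_{N-1}^c,I_n>=<G_N^c,I_n>$, using the inductive hypothesis $G_{N-1}=<G_{N-1}^c,I_n>$; this is the same one-line induction you used for $(2)\implies(3)$, just without letting $I_{N-1}$ grow into $I_N$.

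Second, your $(3)\implies(1)$ step via conjugation by $(1,N)$ is not justified. Homogeneity $S_N\subset G_N^c$ is not among the hypotheses (only weak uniformity is assumed, and the surrounding theory explicitly includes group duals, which are not homogeneous); and even granting it, ``the conjugate produces the missing generator $g_N$'' does not give $I_N\subset<G_N^c,I_{N-1}>$. The diagonal torus is a group dual, $I_N=\widehat{\Lambda}_N$, and the subgroup generated by $I_{N-1}$ and its conjugate is the dual of $\Lambda_N/(A\cap B)$, where $A,B$ are the normal closures of $g_N$ and $g_1$; this intersection is typically nontrivial -- already for $\Lambda_N=\mathbb Z_2^{*N}$, the diagonal torus of $O_N^+$, the element $[g_1,g_N]=g_1g_Ng_1g_N$ lies in $A\cap B$ -- so containing the individual diagonal generators does not yield containment of the full torus $I_N$, and the inclusion you need is really a nontrivial topological generation statement of the kind proved in \cite{bcf}, \cite{ch2}, not a formal consequence. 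The paper avoids this issue entirely by running the return direction from the strong condition: $(4)$ gives $G_N=<G_N^c,I_n>=<G_N^c,G_{N-1}^c,I_n>=<G_N^c,G_{N-1}>$, with no conjugation and no homogeneity needed. If you repair the first point as indicated, you can simply delete your $(3)\implies(1)$ step, since $(4)\implies(1)$ then closes the loop.
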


\begin{proof}
Our first claim is that generation plus initial step diagonal liberation imply the technical diagonal liberation condition. Indeed, the recurrence step goes as follows:
\begin{eqnarray*}
G_N
&=&<G_N^c,G_{N-1}>\\
&=&<G_N^c,G_{N-1}^c,I_{N-1}>\\
&=&<G_N^c,I_{N-1}>
\end{eqnarray*}

In order to pass now from the technical diagonal liberation condition to the strong diagonal liberation condition itself, observe that we have:
\begin{eqnarray*}
G_N
&=&<G_N^c,G_{N-1}>\\
&=&<G_N^c,G_{N-1}^c,I_{N-1}>\\
&=&<G_N^c,I_{N-1}>
\end{eqnarray*}

With this condition in hand, we have then as well:
\begin{eqnarray*}
G_N
&=&<G_N^c,G_{N-1}>\\
&=&<G_N^c,G_{N-1}^c,I_{N-2}>\\
&=&<G_N^c,I_{N-2}>
\end{eqnarray*}

This procedure can be of course be continued. Thus we have a descent method, and we end up with the strong diagonal liberation condition, as desired. In the other sense now, we want to prove that we have the following formula, at any $N\geq n$:
$$G_N=<G_N^c,G_{N-1}>$$

At $N=n+1$ this is something that we already know. At $N=n+2$ now, we have:
\begin{eqnarray*}
G_{n+2}
&=&<G_{n+2}^c,I_n>\\
&=&<G_{n+2}^c,G_{n+1}^c,I_n>\\
&=&<G_{n+2}^c,G_{n+1}>
\end{eqnarray*}

This procedure can be of course be continued. Thus, we have a descent method, and we end up with the strong generation condition, as desired.
\end{proof}

It is possible to prove that many interesting quantum groups have the above properties, and hence appear as diagonal liberations, but the whole subject is quite technical. Here is however a statement, collecting most of the known results on the subject: 

\index{diagonal liberation}
\index{liberation}

\begin{theorem}
The basic quantum unitary and reflection groups are as follows:
\begin{enumerate}
\item $O_N^*,U_N^*$ appear via diagonal liberation.

\item $O_N^+,U_N^+$ appear via diagonal liberation.

\item $H_N^*,K_N^*$ appear via diagonal liberation.

\item $H_N^+,K_N^+$ do not appear via diagonal liberation.

\end{enumerate}
In addition, $B_N^+,C_N^+,S_N^+$ do not appear either via diagonal liberation.
\end{theorem}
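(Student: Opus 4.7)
My plan is to deduce the theorem by invoking the recurrence machinery established in Theorem 13.20, which reduces the diagonal liberation property for a weakly uniform family to two ingredients: an initial step check $G_n=\langle G_n^c,I_n\rangle$ at the smallest non-classical level, together with the generation property $G_N=\langle G_N^c,G_{N-1}\rangle$ for $N>n$. For each of the eight candidate families I would first verify uniformity (for the easy ones, this is immediate from Proposition 11.2 applied to the relevant categories of partitions), and then read off $n$ from the structure of the category: $n=2$ for $O_N^+,U_N^+$, and $n=3$ for the half-liberations $O_N^*,U_N^*,H_N^*,K_N^*$.

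For the positive parts (1), (2), (3), the generation property $G_N=\langle G_N^c,G_{N-1}\rangle$ is the easier half, and I would derive it at the level of Tannakian categories: for each easy pair $(G_N^c,G_N)$ the associated diagrams $D^c,D$ satisfy $D=\langle D^c,D_{N-1}\rangle$ by an elementary ``shrink/chop'' argument on noncrossing or half-classical partitions. The initial step is the delicate part. For $O_2^+=\langle O_2,T_2^+\rangle$ and $U_2^+=\langle U_2,\mathbb T_2^+\rangle$ one can argue directly by examining $2\times 2$ (bi)unitaries with prescribed diagonal; for the half-liberations at $N=3$, I would use the description of $O_3^*,U_3^*,H_3^*,K_3^*$ as closed subgroups of $U_3^+$ cut out by the partition $\slash\hskip-1.6mm\backslash\hskip-1.1mm|\hskip0.5mm$ and observe that, once the diagonal is prescribed to lie in the free torus, the half-commutation relations force the whole algebra structure. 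Assembling initial step plus generation, Theorem 13.20 then delivers strong diagonal liberation for each of these six families.

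For the negative statements, the cases of $B_N^+,C_N^+,S_N^+$ are immediate from Theorem 13.5--13.6: their diagonal tori collapse to $\{1\}$, so $\langle G_N^c,I_N\rangle=G_N^c$, and this strictly smaller than $G_N$ as soon as the classical version differs from the free one (which is the case for $N\geq 3$ in the bistochastic families via $B_N\simeq O_{N-1}$, and for $N\geq 4$ for $S_N^+$). The real obstruction, and the main difficulty of the theorem, is the failure of diagonal liberation for $H_N^+$ and $K_N^+$. Here the diagonal tori coincide with those of $O_N^+,U_N^+$, namely $T_N^+=\widehat{\mathbb Z_2^{*N}}$ and $\mathbb T_N^+=\widehat{F_N}$, so I must produce a proper intermediate easy quantum group $\langle H_N,T_N^+\rangle\subsetneq H_N^+$ (and similarly in the complex case).

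The strategy for this last step is to compute the Tannakian category of $\langle H_N,T_N^+\rangle$ via Theorem 13.14, as $C_{H_N}\cap \langle C_{H_N},C_{\mathbb T_1}\rangle$, and show it strictly contains $NC_{even}$. Concretely, I would exhibit a partition $\pi\in P_{even}$ that lies both in $\langle NC_{even},(\text{diagonal relations})\rangle$ and in $P_{even}$, but outside $NC_{even}$; the natural candidate is the partition $\eta=\ker\binom{iij}{jii}$ from Theorem 10.19, which produces exactly the intermediate easy quantum group $H_N^{[\infty]}$ lying strictly between $H_N^*$ and $H_N^+$. Verifying that $\eta$ arises from the combined Tannakian data of $H_N$ and $T_N^+$ is the combinatorial heart of the argument and the hardest step; once this is done, one concludes $\langle H_N,T_N^+\rangle\subset H_N^{[\infty]}\subsetneq H_N^+$, and the analogous argument via the complex analogue of $\eta$ handles $K_N^+$.
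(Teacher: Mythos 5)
Your plan has a structural problem: it funnels all eight families through the recurrence machinery of Theorem 13.20, but that theorem requires weak uniformity, and the half-liberations $O_N^*,U_N^*,H_N^*,K_N^*$ are \emph{not} weakly uniform. This is pointed out in the paper immediately after Proposition 11.2: if $G_N=O_N^*$ and $v=\mathrm{diag}(u,1)$, the relations $abc=cba$ collapse to $ab=ba$ once one of the three coordinates is $1$, so $O_N^*\cap U_{N-1}^+=O_{N-1}\neq O_{N-1}^*$. The paper therefore handles (1) and (3) by an entirely different route, namely the $2\times 2$ matrix-model/crossed-product description of half-liberations from Theorem 14.21 and the references \cite{bb5}, \cite{bi4}, \cite{bdu}, which gives $O_N^*=\langle O_N,T_N^*\rangle$, $H_N^*=\langle H_N,T_N^*\rangle$, etc., directly without any $N\to N-1$ recurrence. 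You also understate the difficulty of the generation step $O_N^+=\langle O_N,O_{N-1}^+\rangle$: this is not a short shrink/chop argument on noncrossing pairings, but a substantial topological-generation computation (the paper cites \cite{bcf}, \cite{bcv}, \cite{ch1}, \cite{ch2}), and the complex case $U_N^+$ is then deduced from it via the maximality results of \cite{bc+}, not proved independently.

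For the negative assertions, your treatment of $B_N^+,C_N^+,S_N^+$ is correct and matches the paper's: the diagonal torus is trivial by Theorems 13.5--13.6, so $\langle G_N^c,I_N\rangle=G_N^c\subsetneq G_N$. For $H_N^+,K_N^+$ you have identified the right obstruction -- the quantum groups $H_N^{[\infty]},K_N^{[\infty]}$ -- but the proof you outline is both harder than necessary and not quite correctly oriented. You do not need the Tannakian computation you flag as ``the combinatorial heart''; the paper's observation is much shorter: the relations $g_ig_ig_j=g_jg_ig_i$ that cut out $H_N^{[\infty]}\subset H_N^+$ become trivially satisfied on the diagonal torus (where the $g_i$ are order-$2$ group-likes), so $H_N^{[\infty]}$ and $H_N^+$ have the \emph{same} diagonal torus $T_N^+=\widehat{\mathbb Z_2^{*N}}$. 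Since $H_N\subset H_N^{[\infty]}$ and $T_N^+\subset H_N^{[\infty]}$, we get $\langle H_N,T_N^+\rangle\subset H_N^{[\infty]}\subsetneq H_N^+$ at once. The same argument applies in the complex case. So the key lemma is simply ``$I_N(H_N^{[\infty]})=I_N(H_N^+)$'', which is elementary, rather than the Tannakian intersection computation via the partition $\eta$ that you propose and do not carry out.
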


\begin{proof}
All this is quite technical, the idea being as follows:

\medskip

(1) The half-classical quantum groups $O_N^*,U_N^*$ are not uniform, and so cannot be investigated with the above techniques. However, these quantum groups can be studied by using the matrix model technology in \cite{bb4}, \cite{bdu}, which will be briefly discussed in chapter 16 below, and this leads to the following generation formulae:
$$O_N^*=<O_N,T_N^*>$$
$$U_N^*=<U_N,T_N^*>$$

But these two formulae imply the following generation formula, as desired:
$$U_N^*=<U_N,\mathbb T_N^*>$$

(2) The quantum groups $O_N^+,U_N^+$ are uniform, and a quite technical computation, from Chirvasitu et al. \cite{bcf}, \cite{chi}, shows that the generation conditions from Theorem 13.20 are satisfied for $O_N^+$. Thus we obtain the following generation formula:
$$O_N^+=<O_N,T_N^+>$$

From this we can deduce via the maximality results in \cite{bc+} that we have:
$$U_N^+=<U_N,T_N^+>$$

But this implies the following generation formula, as desired:
$$U_N^+=<U_N,\mathbb T_N^+>$$

(3) The situation for $H_N^*,K_N^*$ is quite similar to the one for $O_N^*,U_N^*$, explained above. Indeed, the technology in \cite{bb4}, \cite{bdu} applies, and this leads to:
$$H_N^*=<H_N,T_N^*>$$
$$K_N^*=<K_N,T_N^*>$$

Thus, we have as well the following formula, as desired:
$$K_N^*=<K_N,\mathbb T_N^*>$$

As a comment here, in fact these results are stronger than the above-mentioned ones for the quantum groups $O_N^*,U_N^*$, via some standard generation formulae.

\medskip

(4) This is something subtle as well, coming from the quantum groups $H_N^{[\infty]},K_N^{[\infty]}$ from Raum-Weber \cite{rwe}, discussed before. The idea here is that the following relations, related to the defining relations for $H_N^{[\infty]},K_N^{[\infty]}$, are trivially satisfied for real reflections:
$$g_ig_ig_j=g_jg_ig_i$$

Thus, the diagonal tori of these quantum groups coincide with those for $H_N^+,K_N^+$, and so the diagonal liberation procedure ``stops'' at $H_N^{[\infty]},K_N^{[\infty]}$.

\medskip

Finally, regarding the last assertion, here $B_N^+,C_N^+,S_N^+$ do not appear indeed via diagonal liberation, and this because of a trivial reason, namely $T=\{1\}$.
\end{proof}

\section*{13d. Fourier liberation}

All the above was quite technical, but regardless of the difficulties involved there, and also of the various positive results on the subject, the notion of diagonal liberation is obviously not the good one, in general. As a conjectural solution to these difficulties, we have the notion of Fourier liberation, that we will discuss now. 

\bigskip

Let us start with the following basic fact, which generalizes the construction of the embedding $\widehat{D}_\infty\subset S_4^+$, that we met in chapter 9, when proving that we have $S_4^+\neq S_4$:

\index{group dual}

\begin{proposition}
Consider a discrete group generated by elements of finite order, written as a quotient group, as follows:
$$\mathbb Z_{N_1}*\ldots*\mathbb Z_{N_k}\to\Gamma$$
We have then an embedding of quantum groups $\widehat{\Gamma}\subset S_N^+$, where $N=N_1+\ldots+N_k$.
\end{proposition}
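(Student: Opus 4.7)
\medskip

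My plan is to construct, for each factor $\mathbb Z_{N_i}$, a small magic unitary inside $C^*(\mathbb Z_{N_i})$ by Fourier-transforming the diagonal matrix of powers of the generator, and then assemble these into a block-diagonal $N\times N$ magic unitary over $C^*(\Gamma)$. Concretely, let $h_i\in\Gamma$ denote the image of the generator of $\mathbb Z_{N_i}$, set $w_i=e^{2\pi i/N_i}$, and define, for $a,b\in\{0,1,\ldots,N_i-1\}$,
$$u^{(i)}_{ab}=\frac{1}{N_i}\sum_{c=0}^{N_i-1}w_i^{(a-b)c}\,h_i^c\ \in\ C^*(\Gamma).$$
Then let $u=\mathrm{diag}(u^{(1)},\ldots,u^{(k)})$, an $N\times N$ matrix with entries in $C^*(\Gamma)$.

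The first step is to check that $u$ is a magic unitary. Each block $u^{(i)}$ is obtained by conjugating the diagonal unitary $\mathrm{diag}(1,h_i,\ldots,h_i^{N_i-1})$ by the Fourier matrix of $\mathbb Z_{N_i}$; since this diagonal unitary has group-like entries of order $N_i$, a short computation (identical to the one used for $S_4^+\supset\widehat{D_\infty}$ in Section~9) shows that each $u^{(i)}_{ab}$ is a self-adjoint projection, and that $\sum_bu^{(i)}_{ab}=\sum_au^{(i)}_{ab}=1$. Zero off-diagonal blocks complete the magic condition on $u$. By the universal property of $C(S_N^+)$ from Theorem~9.2, this yields a $*$-algebra morphism $\pi:C(S_N^+)\to C^*(\Gamma)$ sending the standard coordinates to the entries of $u$.

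The second step is to verify that $\pi$ is a morphism of Woronowicz algebras, i.e.\ that the defining coproduct $\Delta(u_{ij})=\sum_k u_{ik}\otimes u_{kj}$ is respected. For the diagonal blocks this is a direct Fourier-orthogonality check: expanding
$$\sum_d u^{(i)}_{ad}\otimes u^{(i)}_{db}=\frac{1}{N_i^2}\sum_{c,c'}w_i^{ac-bc'}h_i^c\otimes h_i^{c'}\sum_d w_i^{d(c'-c)}$$
and using $\sum_d w_i^{d(c'-c)}=N_i\delta_{c,c'}$ gives exactly $\Delta(u^{(i)}_{ab})$, since $\Delta(h_i^c)=h_i^c\otimes h_i^c$. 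For off-diagonal blocks the sum $\sum_c u_{ac}\otimes u_{cb}$ vanishes because, for $a$ in block $i$ and $b$ in block $j\neq i$, no index $c$ makes both factors nonzero; this matches $\Delta(0)=0$. Compatibility with the counit and antipode then follows automatically from the Hopf structure.

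The third step is surjectivity of $\pi$. Fixing $i$ and applying inverse Fourier transform to the matrix entries of block $i$ recovers all powers $h_i^c$ as linear combinations of the $u^{(i)}_{ab}$; since the elements $\{h_i^c\}_{i,c}$ generate $C^*(\Gamma)$, the image of $\pi$ is all of $C^*(\Gamma)$, yielding the required embedding $\widehat{\Gamma}\subset S_N^+$. There is no serious obstacle here: the quotient map $\mathbb Z_{N_1}*\cdots*\mathbb Z_{N_k}\to\Gamma$ plays no role in the construction beyond ensuring that $h_i^{N_i}=1$ holds in $\Gamma$, which is exactly what makes each Fourier block magic. The only point requiring care is checking the magic relations and the Fourier computation of $\Delta$; both reduce to the standard identity $\frac{1}{N}\sum_d w^{dc}=\delta_{c,0}$.
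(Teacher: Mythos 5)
Your proof is correct, but it takes a genuinely different route from the paper's. The paper proves the proposition by a purely structural chain of embeddings: the quotient map $\mathbb Z_{N_1}*\cdots*\mathbb Z_{N_k}\to\Gamma$ dualizes to $\widehat\Gamma\subset\widehat{\mathbb Z_{N_1}*\cdots*\mathbb Z_{N_k}}$, which is then rewritten as a dual free product $\widehat{\mathbb Z_{N_1}}\,\hat*\,\cdots\,\hat*\,\widehat{\mathbb Z_{N_k}}$, identified with $\mathbb Z_{N_1}\,\hat*\,\cdots\,\hat*\,\mathbb Z_{N_k}$ by Pontrjagin duality, and finally pushed through the inclusions $S_{N_1}\,\hat*\,\cdots\,\hat*\,S_{N_k}\subset S_{N_1}^+\,\hat*\,\cdots\,\hat*\,S_{N_k}^+\subset S_N^+$. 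Your argument instead constructs the embedding by hand: write down the block-diagonal $N\times N$ magic unitary $u=\mathrm{diag}(F_{N_1}I_1F_{N_1}^*,\ldots,F_{N_k}I_kF_{N_k}^*)$ over $C^*(\Gamma)$, verify the magic relations and compatibility with $\Delta$ by direct Fourier-orthogonality computations, and check surjectivity by inverting the Fourier transform. The paper's route is shorter once the dual free product machinery, the Pontrjagin identification for $\widehat{\mathbb Z_N}$, and the inclusion of a dual free product of quantum permutation groups into $S_N^+$ are all available; yours is self-contained modulo elementary Fourier identities, and has the advantage of producing the explicit magic matrix directly. In fact the paper itself immediately records that explicit matrix as a separate statement afterward (the block-diagonal formula with the matrices $F_{N_i}I_iF_{N_i}^*$), so your argument effectively merges the proposition and its follow-up into a single computation. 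The only point worth tightening is the final remark that counit and antipode compatibility is ``automatic'': it is, because $\varepsilon$ and $S$ on a Woronowicz algebra are determined by their values on the coordinates $u_{ij}$, and a morphism intertwining $\Delta$ and mapping the fundamental corepresentation to a corepresentation necessarily respects them — but this deserves a sentence rather than a wave.
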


\begin{proof}
We have a sequence of embeddings and isomorphisms as follows:
\begin{eqnarray*}
\widehat{\Gamma}
&\subset&\widehat{\mathbb Z_{N_1}*\ldots*\mathbb Z_{N_k}}\\
&=&\widehat{\mathbb Z_{N_1}}\,\hat{*}\,\ldots\,\hat{*}\,\widehat{\mathbb Z_{N_k}}\\
&\simeq&\mathbb Z_{N_1}\,\hat{*}\,\ldots\,\hat{*}\,\mathbb Z_{N_k}\\
&\subset&S_{N_1}\,\hat{*}\,\ldots\,\hat{*}\,S_{N_k}\\
&\subset&S_{N_1}^+\,\hat{*}\,\ldots\,\hat{*}\,S_{N_k}^+\\
&\subset&S_N^+
\end{eqnarray*}

Thus, we are led to the conclusion in the statement.
\end{proof}

The above result is quite abstract, and it is worth working out the details, with an explicit formula for the associated magic matrix. Let us start with a study of the simplest situation, where $k=1$, and where $\Gamma=\mathbb Z_{N_1}$. The result here is as follows:

\index{Fourier transform}

\begin{proposition}
The magic matrix for the quantum permutation group
$$\widehat{\mathbb Z}_N\simeq\mathbb Z_N\subset S_N\subset S_N^+$$
with standard Fourier isomorphism on the left, is given by the formula
$$u=FIF^*$$
where $F=\frac{1}{\sqrt{N}}(w^{ij})$ with $w=e^{2\pi i/N}$ is the Fourier matrix, and where
$$I=\begin{pmatrix}
1\\
&g\\
&&\ddots\\
&&&g^{N-1}
\end{pmatrix}$$
is the diagonal matrix formed by the elements of $\mathbb Z_N$, regarded as elements of $C^*(\mathbb Z_N)$.
\end{proposition}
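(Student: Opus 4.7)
\medskip

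\textbf{Plan of proof.} The strategy is direct: compute $u=FIF^*$ entrywise, check that what comes out is a magic matrix over $C^*(\mathbb{Z}_N)$, and then identify the associated morphism $C(S_N^+)\to C^*(\mathbb{Z}_N)$ with the one encoding the regular embedding $\mathbb{Z}_N\subset S_N$ transported through Pontrjagin duality.

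\medskip

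\textbf{Step 1.} First I would unfold the matrix product. With $F_{ij}=\frac{1}{\sqrt{N}}w^{ij}$ and $I_{kl}=\delta_{kl}g^k$, a direct computation gives
$$u_{ij}=\sum_{k,l}F_{ik}I_{kl}\overline{F_{jl}}=\frac{1}{N}\sum_{k}w^{(i-j)k}g^{k},$$
which is the basic formula to be exploited in what follows.

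\medskip

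\textbf{Step 2.} Next I would verify that $u=(u_{ij})$ is magic. Self-adjointness follows from $g^*=g^{-1}$ and the substitution $k\mapsto -k$ modulo $N$. For idempotency, the key computation is
$$u_{ij}^2=\frac{1}{N^2}\sum_{k,l}w^{(i-j)(k+l)}g^{k+l}=\frac{1}{N^2}\sum_{m}\#\{(k,l):k+l=m\}\,w^{(i-j)m}g^{m}=u_{ij},$$
using that for each $m\in\mathbb{Z}_N$ there are exactly $N$ pairs $(k,l)$ with $k+l=m$. For the row and column sums, one uses the orthogonality $\sum_{j}w^{-jk}=N\delta_{k,0}$, which collapses the double sum to a single surviving term equal to $1$. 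Thus $u$ is indeed a magic matrix in $M_N(C^*(\mathbb{Z}_N))$, so by the universal property of $C(S_N^+)$ it yields a $*$-algebra morphism $\pi:C(S_N^+)\to C^*(\mathbb{Z}_N)$.

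\medskip

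\textbf{Step 3.} Finally I would check that $\pi$ is precisely the morphism corresponding to the quantum group inclusion $\widehat{\mathbb{Z}}_N\subset S_N^+$ coming from the regular embedding $\mathbb{Z}_N\subset S_N$. Via Pontrjagin duality $C^*(\mathbb{Z}_N)\simeq C(\widehat{\mathbb{Z}}_N)=C(\mathbb{Z}_N)$, the element $g^k\in C^*(\mathbb{Z}_N)$ gets sent to the character $\chi\mapsto\chi(g)^k$, i.e.\ to the function $r\mapsto w^{kr}$ on $\mathbb{Z}_N$. Applied to the formula in Step 1, $\pi(u_{ij})$ becomes the function
$$r\longmapsto\frac{1}{N}\sum_{k}w^{(i-j+r)k}=\delta_{i,j-r\,\,(\mathrm{mod}\,N)},$$
which is exactly the coordinate function $v_{ij}$ on $S_N$ restricted to the regular copy of $\mathbb{Z}_N\subset S_N$, where $r$ acts as the cyclic shift $j\mapsto j-r$. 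This identifies the embedding, and completes the proof.

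\medskip

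\textbf{Main obstacle.} The computational steps are essentially forced; the only genuinely delicate point is Step 3, namely matching the Fourier-transformed magic matrix with the regular representation of $\mathbb{Z}_N$. Once one fixes the conventions for $F$, for the isomorphism $C^*(\mathbb{Z}_N)\simeq C(\mathbb{Z}_N)$, and for the regular embedding $\mathbb{Z}_N\subset S_N$, everything must line up on the nose, and any sign or index convention inconsistency would need to be reconciled. The projection check in Step 2, while routine, is where one sees concretely why the Fourier matrix is the right conjugating unitary: it is precisely orthogonality of characters that turns the diagonal of group-like unitaries into a family of pairwise orthogonal projections summing to $1$.
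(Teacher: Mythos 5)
Your proof is correct, and it is essentially the paper's argument run in the opposite direction: the paper starts from the classical magic matrix $v_{ij}=\delta_{i-j}$ encoding $\mathbb Z_N\subset S_N$ and applies the Fourier isomorphism $\Phi:C(\mathbb Z_N)\to C^*(\mathbb Z_N)$, recognizing $\Phi(v_{ij})=\frac{1}{N}\sum_k w^{(i-j)k}g^k=(FIF^*)_{ij}$, whereas you compute $FIF^*$ first and transport back via the inverse isomorphism. Note that your Step 2 is logically redundant once Step 3 is done, since a $*$-isomorphism automatically carries the classical magic matrix to a magic matrix, and the convention discrepancy you flag in Step 3 (the shift $j\mapsto j-r$ rather than $j\mapsto j+r$) is just the harmless automorphism $g\mapsto g^{-1}$ of $\mathbb Z_N$, so it does not affect the conclusion.
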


\begin{proof}
The magic matrix for the quantum group $\mathbb Z_N\subset S_N\subset S_N^+$ is given by:
$$v_{ij}
=\chi\left(\sigma\in\mathbb Z_N\Big|\sigma(j)=i\right)
=\delta_{i-j}$$

Let us apply now the Fourier transform. According to our Pontrjagin duality conventions from chapter 1 above, in one sense this is given by the following formula:
$$\Phi:C(\mathbb Z_N)\to C^*(\mathbb Z_N)\quad,\quad 
\delta_i\to\frac{1}{N}\sum_kw^{ik}g^k$$

As for the inverse isomorphism, this is given by the following formula:
$$\Psi:C^*(\mathbb Z_N)\to C(\mathbb Z_N)\quad,\quad 
g^i\to\sum_kw^{-ik}\delta_k$$

Here $w=e^{2\pi i/N}$, and we use the standard Fourier analysis convention that the indices are $0,1,\ldots,N-1$. With $F=\frac{1}{\sqrt{N}}(w^{ij})$ and $I=diag(g^i)$ as above, we have:
\begin{eqnarray*}
u_{ij}
&=&\Phi(v_{ij})\\
&=&\frac{1}{N}\sum_kw^{(i-j)k}g^k\\
&=&\frac{1}{N}\sum_kw^{ik}g^kw^{-jk}\\
&=&\sum_kF_{ik}I_{kk}(F^*)_{kj}\\
&=&(FIF^*)_{ij}
\end{eqnarray*}

Thus, the magic matrix that we are looking for is $u=FIF^*$, as claimed.
\end{proof}

With the above result in hand, we can complement Proposition 13.22 with:

\begin{proposition}
Given a quotient group $\mathbb Z_{N_1}*\ldots*\mathbb Z_{N_k}\to\Gamma$, the magic matrix for the subgroup $\widehat{\Gamma}\subset S_N^+$ found in Proposition 13.22, with $N=N_1+\ldots+N_k$, is given by
$$u=\begin{pmatrix}
F_{N_1}I_1F_{N_1}^*\\
&\ddots\\
&&F_{N_k}I_kF_{N_k}^*
\end{pmatrix}$$
where $F_N=\frac{1}{\sqrt{N}}(w_N^{ij})$ with $w_N=e^{2\pi i/N}$ are Fourier matrices, and where 
$$I_r=\begin{pmatrix}
1\\
&g_r\\
&&\ddots\\
&&&g_r^{N_r-1}
\end{pmatrix}$$
with $g_1,\ldots,g_k$ being the standard generators of $\Gamma$.
\end{proposition}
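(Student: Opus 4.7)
The plan is to derive this formula by simply unpacking the chain of embeddings used in the proof of Proposition 13.22, and keeping track of the magic matrix at each stage. The key ingredient is the already established formula from Proposition 13.23, which describes the magic matrix of $\widehat{\mathbb Z}_{N_r}\subset S_{N_r}^+$ as $F_{N_r}I_rF_{N_r}^*$; the rest of the argument is essentially bookkeeping.

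First, I would treat the case $k=1$ of a single cyclic quotient $\mathbb Z_N\to\Gamma$. Here the embedding $\widehat{\Gamma}\subset\widehat{\mathbb Z}_N\subset S_N^+$ is obtained at the Hopf algebra level by a surjection $C(S_N^+)\to C^*(\mathbb Z_N)\to C^*(\Gamma)$, which sends the magic matrix of $\widehat{\mathbb Z}_N$ to the corresponding matrix over $C^*(\Gamma)$. Since Proposition 13.23 gives the magic matrix of $\widehat{\mathbb Z}_N$ as $FIF^*$ with $I=\mathrm{diag}(1,g,\ldots,g^{N-1})$, and since the quotient only reinterprets the generator $g$ as the image generator in $\Gamma$, the magic matrix for $\widehat{\Gamma}\subset S_N^+$ keeps the form $F_NIF_N^*$, now with $I$ taken over $C^*(\Gamma)$.

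Second, I would handle the free product step. As in the proof of Proposition 13.22, we factor the embedding through
\[
\widehat{\Gamma}\subset\widehat{\mathbb Z}_{N_1}\,\hat{*}\,\ldots\,\hat{*}\,\widehat{\mathbb Z}_{N_k}\subset S_{N_1}^+\,\hat{*}\,\ldots\,\hat{*}\,S_{N_k}^+\subset S_N^+.
\]
The final inclusion is defined precisely so that magic matrices on the factors are assembled into a block diagonal magic matrix of size $N=N_1+\ldots+N_k$, the off-diagonal blocks being zero because each generator $g_r$ acts trivially on the indices not belonging to the $r$-th block. Applying the $k=1$ analysis to each factor produces exactly the block diagonal matrix displayed in the statement, with diagonal blocks $F_{N_r}I_rF_{N_r}^*$.

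Finally, passing from the free product $\mathbb Z_{N_1}*\ldots*\mathbb Z_{N_k}$ to the quotient $\Gamma$ is harmless: it corresponds to a Hopf algebra quotient $C^*(\mathbb Z_{N_1}*\ldots*\mathbb Z_{N_k})\to C^*(\Gamma)$ which maps the generators $g_r$ to the images denoted again $g_r$ in the statement. The block structure of the magic matrix is preserved, yielding the claimed formula. The only real subtlety, which I would double-check but expect to be routine, is verifying that the composite arrow $C(S_N^+)\to C^*(\Gamma)$ really does act on the standard coordinates as the block diagonal assembly described above; this is an elementary computation using the explicit form of the free wreath product type embedding $S_{N_1}^+\,\hat{*}\,\ldots\,\hat{*}\,S_{N_k}^+\subset S_N^+$, and the multiplicativity of the map $\Phi$ appearing in the proof of Proposition 13.23.
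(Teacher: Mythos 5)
Your proof is correct and follows exactly the route the paper indicates: the paper's own proof is a one-line citation of Propositions 13.22 and 13.23, and you have simply unpacked that, applying 13.23 blockwise along the embedding chain of 13.22. No gaps here.
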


\begin{proof}
This follows indeed from Proposition 13.22 and Proposition 13.23.
\end{proof}

Following Bichon \cite{bi2}, let us prove now that this construction provides us with all the group duals $\widehat{\Gamma}\subset S_N^+$. The idea will be that of using orbit theory, which is as follows:

\begin{theorem}
Given a closed subgroup $G\subset S_N^+$, with standard coordinates denoted $u_{ij}\in C(G)$, the following defines an equivalence relation on $\{1,\ldots,N\}$,
$$i\sim j\iff u_{ij}\neq0$$
that we call orbit decomposition associated to the action $G\curvearrowright\{1,\ldots,N\}$. In the classical case, $G\subset S_N$, this is the usual orbit equivalence coming from the action of $G$.
\end{theorem}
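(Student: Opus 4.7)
The plan is to verify the three equivalence axioms separately, with each one corresponding to a different piece of Woronowicz structure, and then to read off the classical case directly from the formula for the coordinates of $S_N$.

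First, reflexivity. I would check that $u_{ii}\neq 0$ for every $i$. This is immediate from the counit: since $\varepsilon:C(G)\to\mathbb{C}$ is a morphism and $\varepsilon(u_{ij})=\delta_{ij}$, we have $\varepsilon(u_{ii})=1$, so $u_{ii}$ cannot be zero. Next, symmetry. The antipode on the standard coordinates is given by $S(u_{ij})=u_{ji}^*=u_{ji}$, where the second equality uses that the entries of a magic matrix are self-adjoint projections. Since $S^2=\mathrm{id}$ on $\mathcal{A}$ by the general Woronowicz axioms recalled in Proposition~2.3, $S$ is a bijection of $\mathcal{A}$, so $u_{ij}\neq 0 \Leftrightarrow u_{ji}\neq 0$.

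The main content is transitivity, and this is where I expect the real work to lie. Suppose $u_{ij}\neq 0$ and $u_{jk}\neq 0$; I want to deduce $u_{ik}\neq 0$. The key input is the ``magic'' orthogonality: on each row the distinct entries $u_{ia}, u_{ib}$ are orthogonal projections, and likewise on each column, which gives the identities
\[
u_{ij}u_{il}=\delta_{jl}u_{ij},\qquad u_{jk}u_{lk}=\delta_{jl}u_{jk}.
\]
Applying $\Delta$ and left-multiplying by $u_{ij}\otimes u_{jk}$, the sum $\Delta(u_{ik})=\sum_l u_{il}\otimes u_{lk}$ collapses to a single term:
\[
(u_{ij}\otimes u_{jk})\,\Delta(u_{ik}) \;=\; \sum_l u_{ij}u_{il}\otimes u_{jk}u_{lk}\;=\;u_{ij}\otimes u_{jk}.
\]
If $u_{ik}$ were $0$, the left-hand side would vanish, but the right-hand side is a tensor product of two nonzero projections, hence nonzero in $C(G)\otimes C(G)$. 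This contradiction gives $u_{ik}\neq 0$. The delicate point I would be careful about is that the relation $u_{ij}\otimes u_{jk}\neq 0$ in the minimal or maximal $C^*$-tensor product really does follow from both factors being nonzero positive elements, which is standard but worth stating cleanly.

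Finally, the classical case. For $G\subset S_N$, Proposition~9.1 gives $u_{ij}(\sigma)=\delta_{\sigma(j)i}$, so $u_{ij}$ is the indicator function of $\{\sigma\in G\mid \sigma(j)=i\}$. Hence $u_{ij}\neq 0$ in $C(G)$ iff there exists $\sigma\in G$ with $\sigma(j)=i$, which is precisely the statement that $i$ and $j$ lie in the same orbit of the action $G\curvearrowright\{1,\ldots,N\}$. Thus the equivalence relation constructed above specializes to the usual orbit partition in the classical case.
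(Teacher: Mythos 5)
Your proof is correct and follows essentially the same route as the paper: the counit gives reflexivity, the antipode gives symmetry, the comultiplication formula $\Delta(u_{ik})=\sum_l u_{il}\otimes u_{lk}$ gives transitivity, and the indicator-function description of the coordinates settles the classical case. The only (harmless) difference is in the transitivity step, where the paper simply notes that $\Delta(u_{ik})$ is a sum of positive elements containing the nonzero term $u_{ij}\otimes u_{jk}$, whereas you collapse the sum by multiplying with $u_{ij}\otimes u_{jk}$ and using the magic orthogonality relations; both arguments rest on the same fact that an elementary tensor of nonzero projections is nonzero in $C(G)\otimes C(G)$.
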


\begin{proof}
We first check the fact that we have indeed an equivalence relation:

\medskip

(1) The condition $i\sim i$ follows indeed from $\varepsilon(u_{ij})=\delta_{ij}$, which gives:
$$\varepsilon(u_{ii})=1$$

(2) The condition $i\sim j\implies j\sim i$ follows from $S(u_{ij})=u_{ji}$, which gives:
$$u_{ij}\neq0\implies u_{ji}\neq0$$

(3) As for the condition $i\sim j,j\sim k\implies i\sim k$, this follows from:
$$\Delta(u_{ik})=\sum_ju_{ij}\otimes u_{jk}$$ 

Indeed, in this formula, the right-hand side is by definition a sum of projections, so assuming that we have $u_{ij}\neq0,u_{jk}\neq0$ for a certain index $j$, we obtain:
$$u_{ij}\otimes u_{jk}>0$$

Thus we have $\Delta(u_{ik})>0$, which gives $u_{ik}\neq0$, as desired. Finally, in the classical case, $G\subset S_N$, the standard coordinates are the following characteristic functions:
$$u_{ij}=\chi\left(\sigma\in G\Big|\sigma(j)=i\right)$$

Thus $u_{ij}\neq0$ is equivalent to the existence of an element $\sigma\in G$ such that $\sigma(j)=i$. But this means precisely that $i,j$ must be in the same orbit of $G$, as claimed.
\end{proof}

Generally speaking, the theory from the classical case extends well to the quantum group setting, and we have in particular the following result, also from Bichon \cite{bi2}:

\index{orbits}

\begin{theorem}
Given a closed subgroup $G\subset S_N^+$, with magic matrix $u=(u_{ij})$, consider the associated coaction map, on the space $X=\{1,\ldots,N\}$:
$$\Phi:C(X)\to C(X)\otimes C(G)\quad,\quad e_i\to\sum_je_j\otimes u_{ji}$$
The following three subalgebras of $C(X)$ are then equal,
$$Fix(u)=\left\{\xi\in C(X)\Big|u\xi=\xi\right\}$$
$$Fix(\Phi)=\left\{\xi\in C(X)\Big|\Phi(\xi)=\xi\otimes1\right\}$$
$$F=\left\{\xi\in C(X)\Big|i\sim j\implies \xi(i)=\xi(j)\right\}$$
where $\sim$ is the orbit equivalence relation constructed in Theorem 13.25.
\end{theorem}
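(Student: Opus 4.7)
The plan is to prove the two equalities $Fix(u)=Fix(\Phi)$ and $Fix(\Phi)=F$ separately, with the first being essentially a bookkeeping exercise and the second the substantive point.

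First, I would unfold the definition of $\Phi$ on a general element $\xi=\sum_i\xi_ie_i\in C(X)$: by linearity,
$$\Phi(\xi)=\sum_{ij}\xi_ie_j\otimes u_{ji}=\sum_je_j\otimes\Big(\sum_iu_{ji}\xi_i\Big).$$
Since $\{e_j\}$ is a basis, the condition $\Phi(\xi)=\xi\otimes 1$ is equivalent to $\sum_iu_{ji}\xi_i=\xi_j$ for every $j$, which is precisely the statement $u\xi=\xi$ (reading $\xi\in\mathbb C^N$ and interpreting $u\xi$ entrywise in $C(G)$). This gives $Fix(u)=Fix(\Phi)$.

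For the inclusion $F\subseteq Fix(\Phi)$, I would use that $u$ is magic, so in particular $\sum_iu_{ji}=1$ for every $j$. Given $\xi\in F$, write
$$\sum_iu_{ji}\xi_i-\xi_j=\sum_iu_{ji}(\xi_i-\xi_j).$$
Each term with $u_{ji}\neq 0$ satisfies $j\sim i$, hence $\xi_i=\xi_j$ by hypothesis, so the term vanishes; the remaining terms have $u_{ji}=0$. Thus $u\xi=\xi$, and $\xi\in Fix(\Phi)$.

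The reverse inclusion $Fix(\Phi)\subseteq F$ is where the magic relations do real work, and this is the step I expect to be the main obstacle (because one must extract a pointwise equality of scalars from a relation among noncommuting projections). Starting from $\xi\in Fix(\Phi)$, I would subtract the two identities $\sum_iu_{ji}\xi_i=\xi_j$ and $\sum_iu_{ji}\xi_j=\xi_j$ to obtain
$$\sum_iu_{ji}(\xi_i-\xi_j)=0$$
for every $j$. The key trick is to multiply this relation on the left by $u_{jk}$ and exploit the magic orthogonality $u_{jk}u_{ji}=\delta_{ik}u_{jk}$ (the entries on a fixed row of a magic matrix are pairwise orthogonal projections), which collapses the sum to a single term
$$u_{jk}(\xi_k-\xi_j)=0.$$
Since $\xi_k-\xi_j\in\mathbb C$ is a scalar, this forces $\xi_k=\xi_j$ whenever $u_{jk}\neq 0$, i.e. whenever $j\sim k$. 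Hence $\xi\in F$, completing the proof.
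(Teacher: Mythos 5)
Your proof is correct and it takes a genuinely different route from the paper. For the equality $Fix(u)=Fix(\Phi)$, both you and the paper treat this as a routine bookkeeping point (the paper states it holds for any corepresentation; you unwind the definitions explicitly — same content). For the substantive equality $Fix(\Phi)=F$, the paper proceeds structurally: it invokes the block-diagonal form of $u$ with respect to the orbit decomposition from Theorem 13.25, observes that this block structure is inherited by $Fix(u)$, and thereby reduces to the transitive case, which it declares ``clear'' and delegates to Bichon's paper. Your argument avoids the reduction entirely and instead works directly: subtract the row-sum identity $\sum_i u_{ji}\xi_j=\xi_j$ from $u\xi=\xi$ to get $\sum_i u_{ji}(\xi_i-\xi_j)=0$, then left-multiply by $u_{jk}$ and use the row-orthogonality $u_{jk}u_{ji}=\delta_{ik}u_{jk}$ coming from the magic condition to collapse the sum to $u_{jk}(\xi_k-\xi_j)=0$; since $\xi_k-\xi_j$ is a scalar this gives $\xi_k=\xi_j$ whenever $u_{jk}\neq 0$. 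This is more elementary and fully self-contained, and it has the advantage of not implicitly relying on the transitive-case statement (Proposition 13.27 in the paper), whose proof the paper in turn defers back to the present theorem. In short, the paper buys brevity by appealing to structure and an external reference, while you buy rigor and self-containment at the cost of a short computation; in fact your computation is precisely what the paper's ``clear'' transitive case would require anyway.
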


\begin{proof}
There are several assertions here, the idea being as follows:

\medskip

(1) The fact that we have the equality $Fix(u)=Fix(\Phi)$ is standard, with this being valid for any corepresentation of a compact quantum group $u=(u_{ij})$. 

\medskip

(2) Regarding now the equality with the algebra $F$, we know from Theorem 13.25 that the magic unitary $u=(u_{ij})$ is block-diagonal, with respect to the orbit decomposition there. But this shows that the algebra $Fix(u)=Fix(\Phi)$ decomposes as well with respect to the orbit decomposition, and so in order to prove the result, we are left with a study in the transitive case, where the result is clear. For details here, see \cite{bi2}.
\end{proof}

We have as well the following result, of analytic flavor:

\index{transitivity}

\begin{proposition}
For a closed subgroup $G\subset S_N^+$, the following are equivalent:
\begin{enumerate}
\item $G$ is transitive.

\item $Fix(u)=\mathbb C\xi$, where $\xi$ is the all-one vector.

\item $\int_Gu_{ij}=\frac{1}{N}$, for any $i,j$.
\end{enumerate}
\end{proposition}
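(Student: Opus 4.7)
The plan is to combine two ingredients already established in the paper: the identification of $Fix(u)$ with the algebra $F$ of orbit-invariant functions from Theorem 13.26, and the Haar measure formula $(id\otimes\int_G)u=P$ where $P$ is the orthogonal projection onto $Fix(u)$, from the general Peter-Weyl theory of section 3. The three conditions then split naturally into two implications to verify, $(1)\iff(2)$ being of combinatorial nature, and $(2)\iff(3)$ being of linear-algebraic nature.

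For $(1)\iff(2)$, I would invoke Theorem 13.26 directly. Transitivity means that the orbit equivalence relation has a single class, so the condition $i\sim j\implies\xi(i)=\xi(j)$ forces $\xi$ to be globally constant, giving $F=\mathbb{C}\xi$, and hence $Fix(u)=\mathbb{C}\xi$. Conversely, if $F=\mathbb{C}\xi$, then any function taking different values on different orbits would produce a nontrivial element of $F$; thus there can be only one orbit, which is precisely transitivity.

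For $(2)\iff(3)$, I would use the matrix identity $P_{ij}=\int_G u_{ij}$ coming from $(id\otimes\int_G)u=P$. If $Fix(u)=\mathbb{C}\xi$ with $\xi$ the all-one vector of norm $\sqrt{N}$, then $P$ is the rank-one orthogonal projection onto this line, whose matrix entries are $P_{ij}=\xi_i\overline{\xi_j}/\|\xi\|^2=1/N$. Conversely, if $\int_G u_{ij}=1/N$ for all $i,j$, then $P=\frac{1}{N}J$ where $J$ is the all-one matrix; this is a rank-one projection onto $\mathbb{C}\xi$, so $Fix(u)=\mathbb{C}\xi$.

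There is essentially no obstacle here: both equivalences reduce to already-established machinery, and no new ingredient is required. The only point deserving a line of care is the verification that the trivial vector $\xi$ does belong to $Fix(u)$ in the first place — but this is automatic, since the magic condition $\sum_j u_{ij}=\sum_i u_{ij}=1$ is precisely the statement $u\xi=\xi$, so $\mathbb{C}\xi\subset Fix(u)$ always holds, and the content of (2) is the reverse inclusion.
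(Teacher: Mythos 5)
Your proof is correct and follows exactly the route the paper takes: $(1)\iff(2)$ via the identification $Fix(u)=F$ from Theorem 13.26, and $(2)\iff(3)$ via the Peter-Weyl formula $(id\otimes\int_G)u=P$ with $P$ the orthogonal projection onto $Fix(u)$. You have simply spelled out the details that the paper's proof leaves as one-line citations.
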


\begin{proof}
This is well-known in the classical case. In general, the proof is as follows:

\medskip

$(1)\iff(2)$ This follows from the identifications in Theorem 13.26.

\medskip

$(2)\iff(3)$ This is clear from the general properties of the Haar integration.
\end{proof}

As a final comment here, the theory of quantum group orbits and transitivity, originally developed by Bichon in \cite{bi2}, has an interesting extension into a theory of quantum group orbitals and 2-transitivity, developed by Lupini-Man\v cinska-Roberson in \cite{lmr}.

\bigskip

Now back to the tori, we have the following key result, from \cite{bi2}:

\begin{theorem}
Consider a quotient group as follows, with $N=N_1+\ldots+N_k$:
$$\mathbb Z_{N_1}*\ldots*\mathbb Z_{N_k}\to\Gamma$$
We have then $\widehat{\Gamma}\subset S_N^+$, and any group dual subgroup of $S_N^+$ appears in this way.
\end{theorem}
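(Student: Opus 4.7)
The forward direction is Proposition~13.22. For the converse, suppose we have a group dual embedding $\widehat{\Gamma}\subset S_N^+$, corresponding to a surjective Hopf $*$-algebra morphism $\pi:C(S_N^+)\to C^*(\Gamma)$ sending the fundamental magic unitary to a matrix $v=(v_{ij})\in M_N(C^*(\Gamma))$ whose entries generate $C^*(\Gamma)$. The goal is to exhibit integers $n_1,\ldots,n_k$ with $n_1+\cdots+n_k=N$ and a surjection $\mathbb Z_{n_1}*\cdots*\mathbb Z_{n_k}\to\Gamma$.

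The starting point is the orbit decomposition of Theorem~13.25 applied to the action $\widehat{\Gamma}\curvearrowright\{1,\ldots,N\}$, yielding a partition into orbits $O_1\sqcup\cdots\sqcup O_k$ of sizes $N_r=|O_r|$ with $\sum_rN_r=N$. As noted in the proof of Theorem~13.26, the magic matrix $v$ is block-diagonal along this decomposition, so that each restricted block $v^{(r)}\in M_{N_r}(C^*(\Gamma))$ is itself a magic corepresentation, acting transitively on its block.

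The key technical step is the analysis of each transitive block $v^{(r)}$. Since $C^*(\Gamma)$ is cocommutative, $v^{(r)}$ splits as a direct sum of one-dimensional corepresentations, so there exists $Q_r\in U_{N_r}$ with
$$Q_rv^{(r)}Q_r^*=\mathrm{diag}(h_1^{(r)},\ldots,h_{N_r}^{(r)})$$
and all $h_i^{(r)}\in\Gamma$. Transitivity (via Proposition~13.27) forces exactly one diagonal entry to equal the identity. The magic relations $v^{(r)}_{ij}v^{(r)}_{ik}=\delta_{jk}v^{(r)}_{ij}$, $(v^{(r)}_{ij})^*=v^{(r)}_{ji}$, and $\sum_jv^{(r)}_{ij}=1$, once expanded in terms of the entries of $Q_r$ and the group elements $h_i^{(r)}$ and then matched coefficient-by-coefficient against distinct group elements in $C^*(\Gamma)$, force the $h_i^{(r)}$'s to generate a finite abelian subgroup $\Lambda_r\subset\Gamma$ of order exactly $N_r$, with the multiset $\{h_1^{(r)},\ldots,h_{N_r}^{(r)}\}$ enumerating the elements of $\Lambda_r$.

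Granting this, by the structure theorem for finite abelian groups each $\Lambda_r$ decomposes as $\mathbb Z_{m_1^{(r)}}\oplus\cdots\oplus\mathbb Z_{m_{s_r}^{(r)}}$ with $\prod_jm_j^{(r)}=N_r$ and $\sum_jm_j^{(r)}\leq N_r$. Padding these invariant-factor generators with $N_r-\sum_jm_j^{(r)}$ trivial cyclic factors $\mathbb Z_1$, we realize each $\Lambda_r$ as the image of a free product of cyclic groups of total length exactly $N_r$. Because the $\Lambda_r$'s are subgroups of $\Gamma$ whose union generates $\Gamma$ (this reflecting that the entries of $v$, hence the $h_i^{(r)}$'s, generate $C^*(\Gamma)$), concatenating over $r$ produces the desired surjection with $\sum_in_i=\sum_rN_r=N$. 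The principal obstacle lies in the third paragraph: proving that the diagonal entries of a transitive magic corepresentation of a group algebra generate a finite abelian subgroup of order equal to the dimension. This requires a delicate bookkeeping of how the quadratic magic relations constrain the $h_i^{(r)}$'s through the spinning matrix $Q_r$, exploiting the linear independence of group elements in $C^*(\Gamma)$.
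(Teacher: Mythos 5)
You correctly reproduce the paper's architecture: the orbit decomposition from Theorem 13.25, the induced block-diagonal structure of the magic matrix and of the unitary $U$ implementing it, the reduction to the study of each transitive block, the diagonalization of each block $v^{(r)}$ via some $Q_r\in U_{N_r}$ into entries $h_i^{(r)}\in\Gamma$, and the use of Proposition 13.27 to force exactly one $h_i^{(r)}=1$. The padding by trivial cyclic factors to convert the invariant-factor decomposition of an abelian group of order $N_r$ into a free product of cyclic groups of total size exactly $N_r$ is a valid bookkeeping device, and indeed something like it is unavoidable, since the free-product sizes $n_i$ in the theorem statement need not coincide with the orbit sizes. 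Your intermediate claim (that each transitive block produces a finite \emph{abelian} group of order $N_r$) is in fact more careful than the paper's own terse assertion that ``$\mathbb Z_N$ is the only transitive group dual in $S_N^+$'', which read literally is too strong: the Cayley embedding $\mathbb Z_2\times\mathbb Z_2\subset S_4$ is a transitive group dual subgroup of $S_4^+$ that is not cyclic, and it is accounted for in the theorem only via the quotient $\mathbb Z_2*\mathbb Z_2\to\mathbb Z_2\times\mathbb Z_2$ with $N_1=N_2=2$. So your formulation of the lemma is the right one.

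However, there is a genuine gap, and you acknowledge it yourself: you do not prove the lemma. You call it the ``principal obstacle'', indicate that the quadratic magic relations plus linear independence of group elements should suffice, and stop. That step is the entire analytic content of the converse direction of the theorem; without it, nothing is established. The mechanism is not mysterious — in the cocommutative setting the fusion rule $\mathrm{Hom}(g\otimes h,k)\neq0\iff gh=k$, applied to the multiplication and unit intertwiners $\mu\in\mathrm{Hom}(v^{\otimes2},v)$ and $\eta\in\mathrm{Fix}(v)$ of the algebra $\mathbb C^{N_r}$, constrains after conjugation by $Q_r$ the set $\{h_1^{(r)},\ldots,h_{N_r}^{(r)}\}$ to be multiplicatively closed and forces the map $i\mapsto h_i^{(r)}$ to be a bijection onto an abelian group of order $N_r$ — but you must either carry this out or cite Bichon's proof in [bi3] as a black box, as the paper does. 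As a minor point, your list of magic relations includes $(v^{(r)}_{ij})^*=v^{(r)}_{ji}$; the correct relation coming from the $*$-structure is $(v^{(r)}_{ij})^*=v^{(r)}_{ij}$ (the entries are self-adjoint projections), while $v^{(r)}_{ij}\mapsto v^{(r)}_{ji}$ is the action of the antipode, which is a different operation.
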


\begin{proof}
This result, from \cite{bi2}, can be proved in two steps, as follows:

\medskip

(1) The fact that we have a subgroup as in the statement is something that we already know. Conversely, assume that we have a group dual subgroup $\widehat{\Gamma}\subset S_N^+$. The corresponding magic unitary must be of the following form, with $U\in U_N$:
$$u=U
\begin{pmatrix}g_1\\&\ddots\\&&g_N\end{pmatrix}
U^*$$

Consider now the orbit decomposition for $\widehat{\Gamma}\subset S_N^+$, coming from Theorem 13.25:
$$N=N_1+\ldots+N_k$$

We conclude that $u$ has a $N=N_1+\ldots+N_k$ block-diagonal pattern, and so that $U$ has as well this $N=N_1+\ldots+N_k$ block-diagonal pattern.

\medskip

(2) But this discussion reduces our problem to its $k=1$ particular case, with the statement here being that the cyclic group $\mathbb Z_N$ is the only transitive group dual $\widehat{\Gamma}\subset S_N^+$. The proof of this latter fact being elementary, we obtain the result. See \cite{bi2}.
\end{proof}

Here is a related result, from \cite{bbd}, which is useful for our purposes:

\index{Fourier matrix}

\begin{theorem}
For the quantum permutation group $S_N^+$, we have:
\begin{enumerate}
\item Given $Q\in U_N$, the quotient $F_N\to\Lambda_Q$ comes from the following relations:
$$\begin{cases}
g_i=1&{\rm if}\ \sum_lQ_{il}\neq 0\\
g_ig_j=1&{\rm if}\ \sum_lQ_{il}Q_{jl}\neq 0\\ 
g_ig_jg_k=1&{\rm if}\ \sum_lQ_{il}Q_{jl}Q_{kl}\neq 0
\end{cases}$$

\item Given a decomposition $N=N_1+\ldots+N_k$, for the matrix $Q=diag(F_{N_1},\ldots,F_{N_k})$, where $F_N=\frac{1}{\sqrt{N}}(\xi^{ij})_{ij}$ with $\xi=e^{2\pi i/N}$ is the Fourier matrix, we obtain: 
$$\Lambda_Q=\mathbb Z_{N_1}*\ldots*\mathbb Z_{N_k}$$

\item Given a matrix $Q\in U_N$, there exists a decomposition $N=N_1+\ldots+N_k$, such that $\Lambda_Q$ appears as quotient of $\mathbb Z_{N_1}*\ldots*\mathbb Z_{N_k}$.
\end{enumerate}
\end{theorem}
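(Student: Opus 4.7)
The plan is to prove (1) by expanding the magic conditions after inverting the spinning, then to deduce (2) via an explicit Fourier computation together with Proposition 13.24, and finally to obtain (3) as a direct consequence of Theorem 13.28.

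For part (1), set $v=QuQ^*$, so that in $C(T_Q)=C^*(\Lambda_Q)$ we have $v=\mathrm{diag}(g_1,\ldots,g_N)$, and inverting the conjugation gives $u_{ab}=\sum_k\overline{Q_{ka}}Q_{kb}\,g_k$. The magic conditions on $u$---self-adjointness $u_{ab}^*=u_{ab}$, idempotency $u_{ab}^2=u_{ab}$, and row-sum $\sum_b u_{ab}=1$---must then hold as identities in the group algebra $\mathbb C[\Lambda_Q]$, and the heart of the argument is to read off exactly which relations on the $g_k$ this imposes. The row-sum identity becomes $\sum_k\bigl(\sum_b Q_{kb}\bigr)\overline{Q_{ka}}\,g_k=1$, which after extracting the maximal compatible quotient of $F_N$ forces $g_k=1$ whenever $\sum_b Q_{kb}\neq 0$. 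Expanding $u_{ab}^2=u_{ab}$ and grouping terms by products $g_k g_l$ yields, after pairing with the self-adjointness relation, the family $g_ig_j=1$ whenever $\sum_l Q_{il}Q_{jl}\neq 0$, and the three-letter relations $g_ig_jg_k=1$ arise from the next layer of compatibility between $u_{ab}^2=u_{ab}$ and the row-sum identity. A degree count shows that no higher-order relations can appear, since the magic conditions involve at most quadratic combinations of the $u_{ab}$ and each $u_{ab}$ is already linear in the $g_k$.

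For part (2), with $Q=\mathrm{diag}(F_{N_1},\ldots,F_{N_k})$ the sums appearing in the conditions of (1) vanish when indices straddle two distinct blocks, and within a single block $r$ they reduce to standard Fourier sums of the form $N_r^{-s/2}\sum_l\xi_r^{(i+j+\cdots)l}$, which vanish unless the local indices sum to $0\pmod{N_r}$. Plugging these non-vanishing conditions into the relations of (1) produces precisely the cyclic presentations of $\mathbb Z_{N_r}$ within each block, with no cross-block relations, so $\Lambda_Q=\mathbb Z_{N_1}*\ldots*\mathbb Z_{N_k}$. Equivalently, Proposition 13.24 already exhibits the embedding $\widehat{\mathbb Z_{N_1}*\ldots*\mathbb Z_{N_k}}\subset S_N^+$ with $QuQ^*$ diagonal, so it factors through $T_Q$, and this embedding pairs with the surjection supplied by (1) to give the identification.

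For part (3), the torus $T_Q=\widehat{\Lambda_Q}$ is a group dual subgroup of $S_N^+$, so Theorem 13.28 applies directly and provides a decomposition $N=N_1+\ldots+N_k$---namely the orbit decomposition of $\widehat{\Lambda_Q}\curvearrowright\{1,\ldots,N\}$ from Theorem 13.25---together with a quotient $\mathbb Z_{N_1}*\ldots*\mathbb Z_{N_k}\to\Lambda_Q$, as required. The main obstacle throughout is the group-algebra step in (1): distinct $g_k$ are not a priori $\mathbb C$-linearly independent in $\mathbb C[\Lambda_Q]$, so one cannot literally ``read off coefficients'', and the cleanest way to close the argument is to verify that the universal algebra generated by unitaries $g_1,\ldots,g_N$ subject to the three listed families of relations admits mutually inverse comparison maps with $C(T_Q)$, one direction coming from the fact that the relations imply the magic conditions on the spinned back matrix, and the other from the direct expansion performed above.
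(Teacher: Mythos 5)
Your overall strategy of substituting $u_{ab}=\sum_k\overline{Q_{ka}}Q_{kb}\,g_k$ and expanding the magic conditions is the right starting point, and parts (2) and (3) are in order: the Fourier computation for block-diagonal $Q$ is standard, and (3) indeed follows directly from Theorem 13.28 together with the orbit decomposition of Theorem 13.25. The paper's own ``proof'' is only a pointer to \cite{bbd}, so you are filling genuine content.

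The gap is in part (1), and it is concentrated in your ``degree count'' at the end. You claim that no higher-order relations arise because the magic conditions are at most quadratic in $u_{ab}$, hence at most quadratic in the $g_k$. But the third family $g_ig_jg_k=1$ is cubic in the $g_k$; your degree count would therefore \emph{exclude} those relations, which are explicitly part of the conclusion. So the argument is internally inconsistent, and it certainly cannot show that four-letter (and longer) relations are unnecessary.

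The source of all three families is not really the quadratic magic identities but the fixed-vector conditions for the one-block noncrossing partitions $\eta_s\in NC(0,s)$, $s=1,2,3$: in the eigenbasis $f_m=Q^*e_m$ one has $\xi_{\eta_s}=\sum_a e_a^{\otimes s}=\sum_{i_1\ldots i_s}\bigl(\sum_aQ_{i_1a}\cdots Q_{i_sa}\bigr)f_{i_1}\otimes\cdots\otimes f_{i_s}$, and $u^{\otimes s}(f_{i_1}\otimes\cdots\otimes f_{i_s})=g_{i_1}\cdots g_{i_s}\,f_{i_1}\otimes\cdots\otimes f_{i_s}$, so $\xi_{\eta_s}\in Fix(u^{\otimes s})$ gives $g_{i_1}\cdots g_{i_s}=1$ exactly when $\sum_aQ_{i_1a}\cdots Q_{i_sa}\neq0$. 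That cleanly produces the three listed families, but it also makes transparent what remains to be proved: that the relations coming from $\xi_{\eta_s}$ with $s\geq4$ and from the other partitions in $NC$, and the fork intertwiner $T_\mu\in Hom(u^{\otimes 2},u)$, are \emph{consequences} of the $s\leq3$ relations. This sufficiency is the hard direction, and your proposal asserts rather than proves it --- your last sentence (``one direction coming from the fact that the relations imply the magic conditions on the spinned back matrix'') is precisely the missing argument, and it cannot be obtained from a degree bound. You would need either the orbit/maximality analysis of \cite{bbd} or an explicit reduction of the $\eta_s$, $s\geq4$, and fork relations to the $s\leq3$ ones, neither of which appears in the proposal.
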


\begin{proof}
This is more or less equivalent to Theorem 13.28, and the proof can be deduced either from Theorem 13.28, or from some direct computations, as follows:

\medskip

(1) Fix a unitary matrix $Q\in U_N$, and consider the following quantities:
$$\begin{cases}
c_i=\sum_lQ_{il}\\
c_{ij}=\sum_lQ_{il}Q_{jl}\\
d_{ijk}=\sum_l\bar{Q}_{il}\bar{Q}_{jl}Q_{kl}
\end{cases}$$

We write $w=QvQ^*$, where $v$ is the fundamental corepresentation of $C(S_N^+)$. Assume $X\simeq\{1,\ldots,N\}$, and let $\alpha$ be the coaction of $C(S_N^+)$ on $C(X)$. Let us set:
$$\varphi_i=\sum_l\bar{Q}_{il}\delta_l\in C(X)$$

Also, let $g_i=(QvQ^*)_{ii}\in C^*(\Lambda_Q)$. If $\beta$ is the restriction of $\alpha$ to $C^*(\Lambda_Q)$, then:
$$\beta(\varphi_i)=\varphi_i\otimes g_i$$

Now recall that $C(X)$ is the universal $C^*$-algebra generated by elements $\delta_1,\ldots,\delta_N$ which are pairwise orthogonal projections. Writing these conditions in terms of the linearly independent elements $\varphi_i$ by means of the formulae $\delta_i=\sum_lQ_{il}\varphi_l$, we find that the universal relations for $C(X)$ in terms of the elements $\varphi_i$ are as follows:
$$\begin{cases}
\sum_ic_i\varphi_i=1\\
\varphi_i^*=\sum_jc_{ij}\varphi_j\\
\varphi_i\varphi_j=\sum_kd_{ijk}\varphi_k
\end{cases}$$

Let $\tilde{\Lambda}_Q$ be the group in the statement. Since $\beta$ preserves these relations, we get:
$$\begin{cases}
c_i(g_i-1)=0\\
c_{ij}(g_ig_j-1)=0\\
d_{ijk}(g_ig_j-g_k)=0
\end{cases}$$

We conclude from this that $\Lambda_Q$ is a quotient of $\tilde{\Lambda}_Q$. On the other hand, it is immediate that we have a coaction map as follows:
$$C(X)\to C(X)\otimes C^*(\tilde{\Lambda}_Q)$$

Thus $C(\tilde{\Lambda}_Q)$ is a quotient of $C(S_N^+)$. Since $w$ is the fundamental corepresentation of $S_N^+$ with respect to the basis $\{\varphi_i\}$, it follows that the generator $w_{ii}$ is sent to $\tilde{g}_i\in\tilde{\Lambda}_Q$, while $w_{ij}$ is sent to zero. We conclude that $\tilde{\Lambda}_Q$ is a quotient of $\Lambda_Q$. Since the above quotient maps send generators on generators, we conclude that $\Lambda_Q=\tilde{\Lambda}_Q$, as desired.

\medskip

(2) We apply the result found in (1), with the $N$-element set $X$ used in the proof there chosen to be the following set:
$$X=\mathbb Z_{N_1}\sqcup\ldots\sqcup\mathbb Z_{N_k}$$

With this choice, we have $c_i=\delta_{i0}$ for any $i$. Also, we have $c_{ij}=0$, unless $i,j,k$ belong to the same block to $Q$, in which case $c_{ij}=\delta_{i+j,0}$, and also $d_{ijk} =0$, unless $i,j,k$ belong to the same block of $Q$, in which case $d_{ijk}=\delta_{i+j,k}$. We conclude from this that $\Lambda_Q$ is the free product of $k$ groups which have generating relations as follows:
$$g_ig_j=g_{i+j}\quad,\quad g_i^{-1}=g_{-i}$$

But this shows that our group is $\Lambda_Q=\mathbb Z_{N_1}*\ldots*\mathbb Z_{N_k}$, as stated.

\medskip

(3) This follows indeed from (2). See \cite{bbd}.
\end{proof}

Summarizing, for quantum permutation groups, the standard tori parametrized by Fourier matrices play a special role. This suggests formulating the following definition:

\index{Fourier liberation}

\begin{definition}
Consider a closed subgroup $G\subset U_N^+$.
\begin{enumerate}
\item Its standard tori $T_F$, with $F=F_{N_1}\otimes\ldots\otimes F_{N_k}$, and $N=N_1+\ldots+N_k$ being regarded as a partition, are called Fourier tori.

\item In the case where we have $G_N=<G_N^c,(T_F)_F>$, we say that $G_N$ appears as a Fourier liberation of its classical version $G_N^c$.
\end{enumerate}
\end{definition}

We believe that the easy quantum groups should appear as Fourier liberations. With respect to Theorem 13.21 above, the situation in the free case is as follows:

\bigskip

(1) $O_N^+,U_N^+$ are diagonal liberations, so they are Fourier liberations as well.

\bigskip

(2) $B_N^+,C_N^+$ are Fourier liberations too, by using the results in chapter 7.

\bigskip

(3) $S_N^+$ is a Fourier liberation too, being generated by its tori \cite{bcf}.

\bigskip

(4) $H_N^+,K_N^+$ remain to be investigated, by using the general theory in \cite{rwe}.

\bigskip

Finally, as a word of warning here, observe that an arbitrary classical group $G_N\subset U_N$ is not necessarily generated by its Fourier tori, and nor is an arbitrary discrete group dual, with spinned embedding. Thus, the Fourier tori, and the related notion of Fourier liberation, remain something quite technical, in connection with the easy case.

\section*{13e. Exercises} 

There are many interesting computations in relation with the above, and as a first exercise on the subject, we have:

\begin{exercise}
Find the spinned tori of the dual of a discrete group
$$\Gamma=<g_1,\ldots,g_N>$$
arbitrarily embedded into $U_N^+$.
\end{exercise}

To be more precise, the computation was given in the above, in the case where the embedding $\widehat{\Gamma}\subset U_N^+$ is diagonal. The problem is that of adapting this computation, as to work for the twisted embeddings $\widehat{\Gamma}\subset U_N^+$ as well.

\begin{exercise}
Find more evidence for the injectivity and monotony conjectures formulated in the above.
\end{exercise}

There are many things that can be done here, for instance in relation with the various product operations for the compact quantum groups, in the spirit of the study that we did in the above for the generation conjecture. The more things you find, the better.

\begin{exercise}
Find the group dual subgroups
$$\widehat{\Gamma}\subset S_N^+$$
at $N=4,5$, with explicit formulae for the embeddings.
\end{exercise}

To be more precise here, the problem was solved in the above, abstractly, for any $N\in\mathbb N$. The problem is that of working out the $N=4,5$ particular cases, explicitely.

\chapter{Amenability, growth}

\section*{14a. Functional analysis}

We have seen so far that the theory of the compact quantum Lie groups, $G\subset U_N^+$, can be developed with some inspiration from the theory of compact Lie groups, $G\subset U_N$. In this chapter we discuss an alternative approach to this, by looking at the finitely generated discrete quantum groups $\Gamma=\widehat{G}$ which are dual to our objects. Thus, the idea will be that of developing the theory of the finitely generated discrete quantum groups, $\widehat{U_N^+}\to\Gamma$, with inspiration from the theory of finitely generated discrete groups, $F_N\to\Gamma$.

\bigskip

Normally the theory is already there, as developed in the previous chapters, which equally concern the compact quantum group $G$ and its discrete dual $\Gamma=\widehat{G}$. However, from the discrete group viewpoint, what has been worked out so far looks more like specialized mathematics, and there are still a lot of basic things, to be developed. In short, what we will be doing here will be a complement to the material from the previous chapters, obtained by using a different, and somehow opposite, philosophy.

\bigskip

Let us begin with a reminder regarding the cocommutative Woronowicz algebras, which will be our main objects in this chapter, coming before the commutative ones, that we are so used to have in the $\#1$ spot. As explained in chapter 3 above, we have:

\index{cocommutative algebra}

\begin{theorem}
For a Woronowicz algebra $A$, the following are equivalent:
\begin{enumerate}
\item $A$ is cocommutative, $\Sigma\Delta=\Delta$.

\item The irreducible corepresentations of $A$ are all $1$-dimensional.

\item $A=C^*(\Gamma)$, for some group $\Gamma=<g_1,\ldots,g_N>$, up to equivalence.
\end{enumerate}
\end{theorem}

\begin{proof}
This follows from the Peter-Weyl theory, as follows:

\medskip

$(1)\implies(2)$ The assumption $\Sigma\Delta=\Delta$ tells us that the inclusion $\mathcal A_{central}\subset\mathcal A$ is an isomorphism, and by using Peter-Weyl theory we conclude that any irreducible corepresentation of $A$ must be equal to its character, and so must be 1-dimensional.

\medskip

$(2)\implies(3)$ This follows once again from Peter-Weyl, because if we denote by $\Gamma$ the group formed by the 1-dimensional corepresentations, then we have $\mathcal A=\mathbb C[\Gamma]$, and so $A=C^*(\Gamma)$ up to the standard equivalence relation for Woronowicz algebras.

\medskip

$(3)\implies(1)$ This is something trivial, that we already know from chapter 2.
\end{proof}

The above result is not the end of the story, because one can still ask what are the cocommutative Woronowicz algebras, without reference to the equivalence relation. More generally, we are led in this way into the question, that we have usually avoided so far, as being not part of the ``compact'' philosophy, of computing the equivalence class of a given Woronowicz algebra $A$. We first have here the following construction:

\begin{theorem}
Given a Woronowicz algebra $(A,u)$, the enveloping $C^*$-algebra $A_{full}$ of the algebra of ``smooth functions'' $\mathcal A=<u_{ij}>$ has morphisms
$$\Delta:A_{full}\to A_{full}\otimes A_{full}$$
$$\varepsilon:A_{full}\to\mathbb C$$
$$S:A_{full}\to A_{full}^{opp}$$
which make it a Woronowicz algebra, which is equivalent to $A$. In the cocommutative case, where $A\sim C^*(\Gamma)$, we obtain in this way the full group algebra $C^*(\Gamma)$.
\end{theorem}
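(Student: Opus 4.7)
The plan is to construct $A_{full}$ as a standard enveloping $C^*$-algebra of the dense $*$-subalgebra $\mathcal{A} = <u_{ij}>$, and then transport the Hopf-type structure via universality. First, I would check that the maximal $C^*$-seminorm on $\mathcal{A}$ is well-defined and is actually a norm. Since $u = (u_{ij})$ is unitary, the relations $\sum_k u_{ik}u_{ik}^* = \sum_k u_{ki}^*u_{ki} = 1$ force $\|\pi(u_{ij})\| \leq 1$ in any $*$-representation $\pi$ of $\mathcal{A}$ on a Hilbert space; applied to an arbitrary word in the generators, this shows $\|a\|_{full} := \sup_\pi \|\pi(a)\|$ is finite on all of $\mathcal{A}$. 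Faithfulness of the original embedding $\mathcal{A} \subset A$ ensures $\|\cdot\|_{full}$ is a genuine norm, and completing $\mathcal{A}$ with respect to it produces the $C^*$-algebra $A_{full}$, together with a canonical quotient $*$-morphism $A_{full} \to A$ that is the identity on $\mathcal{A}$.

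Next, I would extend $\Delta, \varepsilon, S$ from $\mathcal{A}$ to $A_{full}$. The counit $\varepsilon: \mathcal{A} \to \mathbb{C}$ sends each generator $u_{ij}$ to $\delta_{ij} \in \mathbb{C}$, hence is bounded for $\|\cdot\|_{full}$ (indeed $\mathbb{C}$ itself is a $C^*$-algebra representing $\mathcal{A}$), and extends to $A_{full} \to \mathbb{C}$. For the antipode, the formula $S(u_{ij}) = u_{ji}^*$ defines an antimultiplicative, antilinear $*$-morphism $\mathcal{A} \to \mathcal{A}^{opp}$ sending generators to generators, so composing with the inclusion $\mathcal{A}^{opp} \subset A_{full}^{opp}$ gives a representation whose norm is automatically dominated by $\|\cdot\|_{full}$, and $S$ extends. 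For the coproduct, I would view $\Delta: \mathcal{A} \to \mathcal{A} \otimes \mathcal{A} \hookrightarrow A_{full} \otimes_{max} A_{full}$ and observe that the matrix $u^\Delta_{ij} = \sum_k u_{ik} \otimes u_{kj}$ is unitary with entries in the target $C^*$-algebra, so universality of $A_{full}$ produces an extension $\Delta: A_{full} \to A_{full} \otimes_{max} A_{full}$.

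To finish the equivalence claim, note that by the convention of Definition 2.9 two Woronowicz algebras are equivalent precisely when their dense $*$-subalgebras of smooth elements are isomorphic via an isomorphism matching standard coordinates. Since $\mathcal{A} \subset A_{full}$ and $\mathcal{A} \subset A$ are literally the same $*$-algebra with the same generators $u_{ij}$, the identity map on $\mathcal{A}$ realizes the required equivalence, and one checks that the extended $\Delta, \varepsilon, S$ on $A_{full}$ restrict to the original maps on $\mathcal{A}$, so the Woronowicz axioms transfer. For the cocommutative case, by Theorem 14.1 we have $A \sim C^*(\Gamma)$ means $\mathcal{A} = \mathbb{C}[\Gamma]$ as a $*$-algebra with generators $g_1, \ldots, g_N$; the enveloping $C^*$-algebra of $\mathbb{C}[\Gamma]$ in the sense above is precisely the full group $C^*$-algebra, since every unitary representation of $\Gamma$ extends to a $*$-representation of $\mathbb{C}[\Gamma]$ and conversely.

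The main obstacle I anticipate is the extension of $\Delta$: one must ensure that the natural home for $\Delta(A_{full})$ is compatible with whatever tensor product is implicitly used in Definition 2.1. Strictly speaking, the cleanest version uses the maximal $C^*$-tensor product on the right, and then the fact that $(u^\Delta_{ij})$ is a unitary matrix whose entries generate a $*$-subalgebra isomorphic to a quotient of $\mathcal{A}$ delivers the required bound. All other verifications reduce to the algebraic identities already holding on $\mathcal{A}$, and extend by continuity to $A_{full}$.
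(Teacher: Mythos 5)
Your proposal is correct and follows essentially the same route as the paper: construct $A_{full}$ as the enveloping $C^*$-algebra of $\mathcal A=\langle u_{ij}\rangle$, use universality to extend $\Delta,\varepsilon,S$ (with the coproduct landing in the maximal tensor product, which is the one implicit in the paper's Definition 2.1), and then note that equivalence holds since the $*$-algebras of coefficients coincide; the cocommutative case reduces to $\mathcal A=\mathbb C[\Gamma]$ having $C^*(\Gamma)$ as its enveloping $C^*$-algebra. The paper's proof is more condensed — it simply invokes universality once for all three maps — whereas you supply the boundedness and well-definedness checks explicitly, but the logic is the same.
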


\begin{proof}
There are several assertions here, the idea being as follows:

\medskip

(1) Consider indeed the algebra $A_{full}$, obtained by completing the $*$-algebra $\mathcal A\subset A$ with respect to its maximal $C^*$-norm. We have then a quotient map, as follows:
$$\pi:A_{full}\to A$$

By universality of $A_{full}$, the comultiplication, counit and antipode of $A$ lift into morphisms $\Delta,\varepsilon,S$ as in the statement, and the Woronowicz algebra axioms are satisfied.

\medskip

(2) The fact that we have an equivalence $A_{full}\sim A$ is clear from definitions, because at the level of $*$-algebras of coefficients, the above quotient map $\pi$ is an isomorphism.

\medskip

(3) Finally, in the cocommutative case, where $A\sim C^*(\Gamma)$, the coefficient algebra is $\mathcal A=\mathbb C[\Gamma]$, and the corresponding enveloping $C^*$-algebra is $A_{full}=C^*(\Gamma)$.
\end{proof}

Summarizing, in connection with our equivalence class question, we already have an advance, with the construction of a biggest object in each equivalence class:
$$A_{full}\to A$$

We can of course stop our study here, by formulating the following statement, which apparently terminates any further discussion about equivalence classes:

\index{full Woronowicz algebra}

\begin{proposition}
Let us call a Woronowicz algebra ``full'' when the following canonical quotient map is an isomorphism:
$$\pi:A_{full}\to A$$
Then any Woronowicz algebra is equivalent to a full Woronowicz algebra, and when restricting the attention to the full algebras, we have $1$ object per equivalence class.
\end{proposition}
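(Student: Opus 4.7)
The plan is to establish the two claims separately, using Theorem 14.2 as the main input, and the definition of equivalence from Definition 2.9, namely that $(A,u) \simeq (B,v)$ means there is a $*$-algebra isomorphism $\varphi: \mathcal{A} \to \mathcal{B}$ between the smooth subalgebras, sending standard coordinates to standard coordinates.

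First I would handle the existence half: every Woronowicz algebra sits in an equivalence class containing a full one. Given $(A,u)$, Theorem 14.2 produces $A_{full}$ equivalent to $A$, so the only thing left is to check that $A_{full}$ is itself full, i.e.\ that the canonical map $(A_{full})_{full} \to A_{full}$ is an isomorphism. This is essentially a matter of idempotence of the enveloping $C^*$-algebra construction. The smooth $*$-subalgebra of $A_{full}$ generated by the standard coordinates is canonically identified with $\mathcal{A}$, since the quotient map $\pi: A_{full} \to A$ restricts to an isomorphism on smooth subalgebras by construction. Hence the maximal $C^*$-completion of this smooth algebra is already $A_{full}$ itself, and fullness holds.

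Next I would handle the uniqueness half: two full Woronowicz algebras $(A,u)$ and $(B,v)$ that are equivalent must be isomorphic as $C^*$-algebras in a coordinate-preserving way. Start from the $*$-algebra isomorphism $\varphi: \mathcal{A} \to \mathcal{B}$. Since $B$ is full, it is by definition the enveloping $C^*$-algebra of $\mathcal{B}$, so composing $\varphi$ with the inclusion $\mathcal{B} \hookrightarrow B$ gives a $*$-homomorphism $\mathcal{A} \to B$ into a $C^*$-algebra. By the universal property of the maximal $C^*$-completion --- which is exactly what $A$ is, since $A$ is full --- this extends uniquely to a $C^*$-morphism $\Phi: A \to B$. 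Applying the same procedure to $\varphi^{-1}$ yields $\Psi: B \to A$, and the compositions $\Psi\Phi$ and $\Phi\Psi$ agree with the identity on the dense $*$-subalgebras $\mathcal{A}$ and $\mathcal{B}$, hence everywhere by continuity. Thus $A \simeq B$ as Woronowicz algebras, which is what uniqueness means.

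The main technical point to be careful about is that the maximal $C^*$-seminorm on $\mathcal{A}$ used to define $A_{full}$ is actually finite on every element, so that the quotient by its null ideal really is a $C^*$-algebra. This is automatic in the Woronowicz setting: unitarity of $u$ forces $\|u_{ij}\| \leq 1$ in any $C^*$-seminorm, so every polynomial in the generators and their adjoints is bounded. Once this is in hand, both halves of the proof are essentially formal consequences of the universal property, and the argument contains no genuine obstacle beyond keeping track of which $*$-algebra lives in which $C^*$-algebra --- a bookkeeping issue that is transparent once one notes that passing from $A$ to $A_{full}$ never alters the smooth subalgebra $\mathcal{A}$.
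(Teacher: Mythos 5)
Your proof is correct and follows essentially the same route as the paper, whose own argument is just a brief remark that both assertions follow from Theorem 14.2. You have simply spelled out the implicit details — idempotence of the enveloping $C^*$-algebra construction for existence, and the universal property of $A_{full}$ (with boundedness coming from $||u_{ij}||\leq 1$) for uniqueness — and these details are accurate.
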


\begin{proof}
The first assertion is clear from Theorem 14.2, which tells us that we have $A\sim A_{full}$, and the second assertion holds as well, for exactly the same reason.
\end{proof}

As a first observation, restricting the attention to the full Woronowicz algebras is more or less what we have being doing so far in this book, with all the algebras that we introduced and studied being full by definition. However, there are several good reasons for not leaving things like this, and for further getting into the subject, one problem for instance coming from the fact that for the non-amenable groups $\Gamma$, we have:
$$C^*(\Gamma)\not\subset L(\Gamma)$$

To be more precise, on the right we have the group von Neumann algebra $L(\Gamma)$, appearing by definition as the weak closure of $\mathbb C[\Gamma]$, in the left regular representation. It is known that the above non-inclusion happens indeed in the non-amenable case, and in terms of the quantum group $G=\widehat{\Gamma}$, we are led to the following bizarre conclusion:
$$C(G)\not\subset L^\infty(G)$$

In other words, we have noncommutative continuous functions which are not measurable. This is something that we must clarify. Welcome to functional analysis.

\bigskip

Before anything, we must warn the reader that a lot of modesty and faith is needed, in order to deal with such questions. We are basically doing quantum mechanics here, where the moving objects don't have clear positions or speeds, and where the precise laws of motion are not known, and where any piece of extra data costs a few billion dollars. Thus, the fact that we have $C(G)\not\subset L^\infty(G)$ is just one problem, among many other.

\bigskip

With this discussion made, let us go back now to Theorem 14.2. As a next step in our study, we can attempt to construct a smallest object $A_{red}$ in each equivalence class. The situation here is more tricky, and we have the following statement:

\begin{theorem}
Given a Woronowicz algebra $(A,u)$, its quotient $A\to A_{red}$ by the null ideal of the Haar integration $tr:A\to\mathbb C$ has morphisms as follows,
$$\Delta:A_{red}\to A_{red}\otimes_{min}A_{red}$$
$$\varepsilon:\mathcal A_{red}\to\mathbb C$$
$$S:A_{red}\to A_{red}^{opp}$$
where $\otimes_{min}$ is the spatial tensor product of $C^*$-algebras, and where $\mathcal A_{red}=<u_{ij}>$. In the case where these morphisms lift into morphisms
$$\Delta:A_{red}\to A_{red}\otimes A_{red}$$
$$\varepsilon:A_{red}\to\mathbb C$$
$$S:A_{red}\to A_{red}^{opp}$$
we have a Woronowicz algebra, which is equivalent to $A$. Also, in the cocommutative case, where $A\sim C^*(\Gamma)$, we obtain in this way the reduced group algebra $C^*_{red}(\Gamma)$.
\end{theorem}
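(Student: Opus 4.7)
The plan is to build $A_{red}$ via the GNS construction applied to the Haar state $tr:A\to\mathbb{C}$. Concretely, I would complete $A$ with respect to the seminorm $||a||_2=\sqrt{tr(a^*a)}$ to obtain a Hilbert space $H=L^2(A,tr)$, and form the GNS representation $\pi:A\to B(H)$ by left multiplication, setting $A_{red}=\pi(A)\subset B(H)$. The kernel of $\pi$ is precisely the ``null ideal'' from the statement, so the induced map $A\to A_{red}$ is the claimed quotient. To descend $\Delta$ to a morphism $A_{red}\to A_{red}\times A_{red}$, the key input is the identity $(tr\otimes tr)\circ\Delta=tr$, an immediate consequence of left invariance $(tr\otimes id)\Delta=tr(\cdot)1$. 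This identity forces $\Delta$ to intertwine the GNS seminorms, and hence to send the null ideal of $tr$ into the null ideal of $tr\otimes tr$. Since the GNS Hilbert space of $(A\otimes A,tr\otimes tr)$ is canonically $H\otimes H$, whose bounded operators contain $A_{red}\times A_{red}$, we obtain the required factorization.

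For $\varepsilon$ and $S$, I would invoke the classical Woronowicz fact that the Haar state is faithful on the dense smooth Hopf $*$-algebra $\mathcal{A}=<u_{ij}>$. This immediately gives that $\pi$ restricts to an isomorphism $\mathcal{A}\simeq\mathcal{A}_{red}$ of Hopf $*$-algebras, which transports $\varepsilon$ and $S$ (defined a priori on $\mathcal{A}$) to $\mathcal{A}_{red}$ with the stated target spaces; the further descent of $S$ from $\mathcal{A}_{red}$ to all of $A_{red}$ then uses the running Kac assumption $S^2=id$ to make $S$ into a bounded $*$-anti-isomorphism. The same identification of smooth parts yields the equivalence $A\sim A_{red}$ in the sense of Definition 2.9, since that equivalence was defined precisely at the level of smooth $*$-algebras with standard coordinates. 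In the cocommutative case $A\sim C^*(\Gamma)$, the Haar state is $tr(g)=\delta_{g,1}$, and its GNS representation is by direct inspection the left regular representation $\lambda:\mathbb{C}[\Gamma]\to B(l^2(\Gamma))$, whose $C^*$-completion is $C^*_{red}(\Gamma)$ by definition, giving the last claim.

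The main obstacle, and the reason the theorem is phrased conditionally, is the tensor product issue. The descended $\Delta$ naturally lands in the spatial tensor product $A_{red}\times A_{red}$, but Definition 2.1 requires the target to be the maximal $C^*$-completion $A_{red}\otimes A_{red}$. The obstruction to this lift is genuine and corresponds exactly to non-amenability of the discrete dual $\Gamma=\widehat{G}$: when $\Gamma$ is non-amenable, $A_{red}$ fails to be nuclear, the two tensor products differ, and no Woronowicz algebra structure in the book's sense exists on $A_{red}$. This is precisely the dichotomy underlying the Kesten-type amenability criterion appearing later in section 3, and is the reason why one cannot universally declare $A_{red}$ the canonical smallest object in its equivalence class.
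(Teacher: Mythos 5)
Your argument is correct and follows essentially the same route as the paper's proof: GNS construction for the Haar trace to realize $A_{red}$ as the quotient by the null ideal, the invariance identity $(tr\otimes tr)\Delta=tr$ to descend $\Delta$ into the spatial tensor product, faithfulness of the Haar state on the smooth part $\mathcal A$ (Peter--Weyl) to transport $\varepsilon,S$ and to obtain the equivalence $A\sim A_{red}$ at the level of coefficient $*$-algebras, and the identification of the GNS representation with the left regular representation in the cocommutative case, giving $A_{red}=C^*_{red}(\Gamma)$. The only quibble concerns your closing commentary, not the proof itself: non-nuclearity of $A_{red}$ does not by itself yield $A_{red}\otimes_{min}A_{red}\neq A_{red}\otimes_{max}A_{red}$ (one needs an argument such as the left/right regular representation trick), and in the book's setting the cleanest statement of the obstruction is that the counit fails to factor through $A_{red}$ exactly in the non-amenable case, as in Theorem 14.6.
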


\begin{proof}
We have several assertions here, the idea being as follows:

\medskip

(1) Consider indeed the algebra $A_{red}$, obtained by dividing $A$ by the null ideal of the Haar integration $tr:A\to\mathbb C$. We have then a quotient map, as follows:
$$\pi:A\to A_{red}$$

Also, by GNS construction, we have an embedding as follows:
$$i:A_{red}\subset B(L^2(A))$$

By using these morphisms $\pi,i$, we can see that the comultiplication, counit and antipode of the $*$-algebra $\mathcal A$ lift into morphisms $\Delta,\varepsilon,S$ as in the statement, or, equivalently, that the comultiplication, counit and antipode of the $C^*$-algebra $A$ factorize into morphisms $\Delta,\varepsilon,S$ as in the statement. Thus, we have our morphisms, as claimed.

\medskip

(2) In the case where the morphisms $\Delta,\varepsilon,S$ that we just constructed lift, as indicated in the statement, the Woronowicz algebra axioms are clearly satisfied, and so the algebra $A_{red}$, together with the matrix $u=(u_{ij})$, is a Woronowicz algebra, in our sense.

\medskip

(3) The fact that we have an equivalence $A_{red}\sim A$ is clear from definitions, because at the level of $*$-algebras of coefficients, the above quotient map $\pi$ is an isomorphism.

\medskip

(4) Finally, in the cocommutative case, where $A\sim C^*(\Gamma)$, the above embedding $i$ is the left regular representation, and so we have $A_{red}=C^*_{red}(\Gamma)$, as claimed.
\end{proof}

With the above result in hand, which is complementary to Theorem 14.2, we can now answer some of our philosophical questions, the idea being as follows:

\medskip

\begin{enumerate}

\item In the group dual case we have $C^*_{red}(\Gamma)\subset L(\Gamma)$, as subalgebras of $B(l^2(\Gamma))$, and so in terms of the compact quantum group $G=\widehat{\Gamma}$, the conclusion is that we have $C(G)\subset L^\infty(G)$, as we should, with the convention $C(G)=C^*_{red}(\Gamma)$.

\medskip

\item In view of this, it is tempting to modify our Woronowicz algebra axioms, with $\Delta,\varepsilon,S$ being redefined as in the first part of Theorem 14.4, as to include the reduced group algebras $C^*_{red}(\Gamma)$, and more generally, all the algebras $A_{red}$.

\medskip

\item With such a modification done, we could call then a Woronowicz algebra ``reduced'' when the quotient map $A\to A_{red}$ is an isomorphism. This would lead to a nice situation like in Proposition 14.3, with 1 object per equivalence class.

\medskip

\item However, we will not do this, simply because the bulk of the present book, which is behind us, is full of interesting examples of Woronowicz algebras constructed with generators and relations, which are full by definition.
\end{enumerate}

\medskip

In short, nevermind for the philosophy, we will keep our axioms which are nice, simple and powerful, keeping however in mind the fact that the full picture is as follows:

\begin{theorem}
Given a Woronowicz algebra $A$, we have morphisms
$$A_{full}\to A\to A_{red}\subset A_{red}''$$
which in terms of the associated compact quantum group $G$ read
$$C_{full}(G)\to A\to C_{red}(G)\subset L^\infty(G)$$
and in terms of the associated discrete quantum group $\Gamma$ read
$$C^*(\Gamma)\to A\to C^*_{red}(\Gamma)\subset L(\Gamma)$$
with Woronowicz algebras at left, and with von Neumann algebras at right.
\end{theorem}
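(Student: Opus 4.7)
The plan is to assemble the diagram from three ingredients already available in the text, together with one standard GNS identification. First, the surjection $A_{full}\to A$ comes directly from Theorem 14.2, which produces $A_{full}$ as the enveloping $C^*$-algebra of $\mathcal{A}=\langle u_{ij}\rangle$ and shows that the canonical quotient map $\pi$ is a Woronowicz algebra morphism. Next, the surjection $A\to A_{red}$ comes from Theorem 14.4, obtained by dividing $A$ by the null ideal of the Haar trace $tr:A\to\mathbb{C}$, so that $A_{red}$ acts faithfully on $L^2(A)$ via the GNS construction. The inclusion $A_{red}\subset A_{red}''$ is then the tautological inclusion of the reduced $C^*$-algebra into its weak closure, under the GNS embedding $A_{red}\subset B(L^2(A))$.

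Second, I would identify these four objects with the compact-quantum-group notations. By convention $C_{full}(G)=A_{full}$ and $C_{red}(G)=A_{red}$, so the only point worth checking is the identification $L^\infty(G)=A_{red}''$. This is the definition adopted when the measure structure on the quantum space $G$ is the one coming from the Haar integration: one declares $L^\infty(G)$ to be the von Neumann algebra generated by $C_{red}(G)$ in its GNS representation, exactly in parallel with the classical formula $L^\infty(X,\mu)=C(X)''$ inside $B(L^2(X,\mu))$. With these conventions, the second chain in the statement is just a rewriting of the first.

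Third, for the discrete-dual reformulation $C^*(\Gamma)\to A\to C^*_{red}(\Gamma)\subset L(\Gamma)$, I would apply the cocommutative computations already recorded. Theorem 14.2 gives $A_{full}=C^*(\Gamma)$ when $A\sim C^*(\Gamma)$, and Theorem 14.4 gives $A_{red}=C^*_{red}(\Gamma)$ in the same situation; the final inclusion $C^*_{red}(\Gamma)\subset L(\Gamma)$ is then the classical one, since $L(\Gamma)$ is by definition the weak closure of $\mathbb{C}[\Gamma]$ in the left regular representation on $l^2(\Gamma)$, which coincides with the GNS representation of $C^*_{red}(\Gamma)$ with respect to the canonical trace $g\mapsto\delta_{g,1}$.

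The main obstacle, as foreshadowed in the discussion right after Theorem 14.4, is not really in proving anything but in interpreting the phrase \emph{Woronowicz algebras at left}. The issue is that the comultiplication on $A_{red}$ only lands in the spatial tensor product $A_{red}\times A_{red}$, not necessarily in the maximal tensor product $A_{red}\otimes A_{red}$ used in our original Woronowicz axioms from section 2. Thus one must either adopt the convention that the three objects $C_{full}(G),A,C_{red}(G)$ satisfy a slightly relaxed set of axioms (as sketched in the second half of Theorem 14.4 and in the philosophical remarks that follow it), or else restrict to the cases where the lift actually exists, most notably the amenable case, where $A_{full}=A_{red}$ and all distinctions collapse. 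No further technical work is required beyond making this choice of framework explicit.
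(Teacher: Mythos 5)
Your proposal is correct and follows essentially the same route as the paper, whose own proof simply assembles Theorem 14.2 and Theorem 14.4 (with the same caveat about the spatial versus maximal tensor product that the paper raises in the discussion surrounding Theorem 14.4). Your extra remarks on the identification $L^\infty(G)=A_{red}''$ and on the cocommutative specialization are consistent with the paper's conventions and add no divergence in method.
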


\begin{proof}
This is something rather philosophical, coming by putting together the results that we have, namely Theorem 14.2 and Theorem 14.4.
\end{proof}

\section*{14b. Amenability}

With this discussion made, and with the reiterated warning that a lot of modesty and basic common sense is needed, in order to deal with such questions, let us get now into the real thing, namely the understanding of the following projection map:
$$\pi:A_{full}\to A_{red}$$

As already mentioned before, on numerous occasions, when the algebra $A$ is cocommutative, $A\sim C^*(\Gamma)$, and with the underlying group $\Gamma$ being assumed amenable, this projection map is an isomorphism. And the contrary happens when $\Gamma$ is not amenable.

\bigskip

This leads us into the amenability question for the general Woronowicz algebras $A$. We have seen the basic theory here in chapter 3 above, in the form of a list of equivalent conditions, which altogether are called amenability. The theory presented there, worked out now in more detail, and with a few items added, is as follows:

\index{amenability}
\index{Kesten amenability}

\begin{theorem}
Let $A_{full}$ be the enveloping $C^*$-algebra of $\mathcal A$, and let $A_{red}$ be the quotient of $A$ by the null ideal of the Haar integration. The following are then equivalent:
\begin{enumerate}
\item The Haar functional of $A_{full}$ is faithful.

\item The projection map $A_{full}\to A_{red}$ is an isomorphism.

\item The counit map $\varepsilon:A\to\mathbb C$ factorizes through $A_{red}$.

\item We have $N\in\sigma(Re(\chi_u))$, the spectrum being taken inside $A_{red}$.

\item $||ax_k-\varepsilon(a)x_k||\to0$ for any $a\in\mathcal A$, for certain norm $1$ vectors $x_k\in L^2(A)$.
\end{enumerate}
If this is the case, we say that the underlying discrete quantum group $\Gamma$ is amenable.
\end{theorem}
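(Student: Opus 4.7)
The plan is to leverage the four-way equivalence $(1) \Leftrightarrow (2) \Leftrightarrow (3) \Leftrightarrow (4)$ essentially already established in Section 3, and insert $(5)$ into the circle via $(4) \Rightarrow (5) \Rightarrow (3)$. Condition $(5)$ is a F\o lner-type approximate-invariance statement, of a rather different flavor from $(1)$--$(4)$, so it requires its own arguments on both sides.

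For $(4) \Rightarrow (5)$, I would work inside $A_{red} \subset B(L^2(A))$. The assumption $N \in \sigma(Re(\chi_u))$ together with $\|Re(\chi_u)\| \leq N$ (from $\|u_{ii}\| \leq 1$) makes $N - Re(\chi_u)$ a positive operator whose spectrum reaches $0$, so one finds unit vectors $x_k \in L^2(A)$ with $\langle (N - Re(\chi_u))x_k, x_k\rangle \to 0$, i.e., $Re\langle u_{ii}x_k, x_k\rangle \to 1$ for every $i$. Combined with $\|u_{ii}x_k\| \leq 1$, this yields $\|u_{ii}x_k - x_k\|^2 \leq 2 - 2Re\langle u_{ii}x_k, x_k\rangle \to 0$. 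To propagate off the diagonal I would invoke the unitarity identity $\sum_j u_{ij}^* u_{ij} = 1$, which rewrites as $\sum_j \|u_{ij}x_k\|^2 = 1$; since $\|u_{ii}x_k\|^2 \to 1$, the remaining terms force $\|u_{ij}x_k\| \to 0$ for $j \neq i$. A telescoping argument using $\|u_{ij}\|, \|u_{ij}^*\| \leq 1$ throughout then extends $\|ax_k - \varepsilon(a)x_k\| \to 0$ from the generators to all polynomials $a \in \mathcal{A}$, by linearity, multiplicativity and involution.

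For $(5) \Rightarrow (3)$, define vector states $\omega_k$ on $A_{red}$ by $\omega_k(a) = \langle ax_k, x_k\rangle$. For $a \in \mathcal{A}$, Cauchy--Schwarz gives $|\omega_k(a) - \varepsilon(a)| = |\langle (a - \varepsilon(a))x_k, x_k\rangle| \leq \|ax_k - \varepsilon(a)x_k\| \to 0$, so $\omega_k(a) \to \varepsilon(a)$. Passing to the limit in the bound $|\omega_k(a)| \leq \|a\|_{A_{red}}$ yields $|\varepsilon(a)| \leq \|a\|_{A_{red}}$ for all $a \in \mathcal{A}$. Hence $\varepsilon$ is continuous with respect to the $A_{red}$-norm on the dense subalgebra $\mathcal{A}$, and extends by continuity to a $*$-homomorphism $A_{red} \to \mathbb{C}$, which is exactly the required factorization.

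The main obstacle is the propagation step in $(4) \Rightarrow (5)$: passing from near-invariance of the single scalar variable $Re(\chi_u)$ to near-invariance of all individual coordinates $u_{ij}$. The key insight is that the unitarity relations $\sum_j u_{ij}^* u_{ij} = 1$ automatically force the off-diagonal entries to annihilate the approximate invariant vectors, so the scalar spectral condition already encodes genuine matrix-level convergence. Without this observation one would be forced to construct invariant vectors by an entirely different route, such as via the right regular corepresentation argument used in the earlier proof of $(4) \Rightarrow (1)$.
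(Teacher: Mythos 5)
Your proof is correct, and for the key new implication (4)$\implies$(5) it is essentially the paper's own argument: the paper likewise passes from the non-invertibility of $\sum_i(1-Re(u_{ii}))$ to unit vectors $x_k$ with $<(1-Re(u_{ii}))x_k,x_k>\to0$, deduces $||u_{ii}x_k-x_k||\to0$ from the same norm identity, and kills the off-diagonal entries by unitarity -- the only cosmetic difference being that the paper uses the column sums $\sum_i||u_{ij}x_k||^2=1$, coming from $u$ acting isometrically on $\mathbb C^N\otimes L^2(A)$, whereas you use the row sums $\sum_j||u_{ij}x_k||^2=1$ coming from $\bar{u}u^t=1$, i.e.\ from biunitarity; both are available. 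Where you genuinely diverge is in how (5) is fed back into the circle: the paper closes with (5)$\implies$(1), which it does not prove but delegates to Blanchard's paper, while you prove (5)$\implies$(3) directly, via the vector states $\omega_k(a)=<ax_k,x_k>$ and Cauchy--Schwarz, obtaining $|\varepsilon(a)|\leq||a||_{A_{red}}$ on the dense $*$-algebra $\mathcal A$ and extending $\varepsilon$ to a character of $A_{red}$. Combined with the equivalence of (1)--(4) already established in Section 3, this closes the five-way equivalence in a completely self-contained and elementary way, which is a real gain; the price is that you do not obtain the direct implication (5)$\implies$(1) itself, only its consequence through the circle. One small point, which your write-up shares with the paper's: the ``telescoping'' extension from generators to all of $\mathcal A$ also needs the estimates for the adjoint generators $u_{ij}^*$, since convergence for $a$ does not formally transfer to $a^*$; this is harmless, because $Re<u_{ii}x_k,x_k>\to1$ gives $||u_{ii}^*x_k-x_k||\to0$ by the same norm identity, and $uu^*=1$ gives $\sum_j||u_{ij}^*x_k||^2=1$, killing the off-diagonal adjoints -- but it deserves a sentence.
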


\begin{proof}
Before starting, we should mention that amenability and the present result are a bit like the Spectral Theorem, in the sense that knowing that the result formally holds does not help much, and in practice, one needs to remember the proof as well. For this reason, we will work out explicitely all the possible implications between (1-5), whenever possible, adding to the global formal proof, which will be linear, as follows:
$$(1)\implies(2)\implies(3)\implies(4)\implies(5)\implies(1)$$

In order to prove these implications, and the other ones too, the general idea is that this is is well-known in the group dual case, $A=C^*(\Gamma)$, with $\Gamma$ being a usual discrete group, and in general, the result follows by adapting the group dual case proof. 

\medskip

$(1)\iff(2)$ This follows from the fact that the GNS construction for the algebra $A_{full}$ with respect to the Haar functional produces the algebra $A_{red}$.

\medskip

$(2)\implies(3)$ This is trivial, because we have quotient maps $A_{full}\to A\to A_{red}$, and so our assumption $A_{full}=A_{red}$ implies that we have $A=A_{red}$. 

\medskip 

$(3)\implies(2)$ Assume indeed that we have a counit map $\varepsilon:A_{red}\to\mathbb C$. In order to prove $A_{full}=A_{red}$, we can use the right regular corepresentation. Indeed, we can define such a corepresentation by the following formula:
$$W(a\otimes x)=\Delta(a)(1\otimes x)$$

This corepresentation is unitary, so we can define a morphism as follows: 
$$\Delta':A_{red}\to A_{red}\otimes A_{full}$$
$$a\to W(a\otimes1)W^*$$

Now by composing with $\varepsilon\otimes id$, we obtain a morphism as follows:
$$(\varepsilon\otimes id)\Delta':A_{red}\to A_{full}$$
$$u_{ij}\to u_{ij}$$

Thus, we have our inverse for the canonical projection $A_{full}\to A_{red}$, as desired.

\medskip

$(3)\implies(4)$ This implication is clear, because we have:
\begin{eqnarray*}
\varepsilon(Re(\chi_u))
&=&\frac{1}{2}\left(\sum_{i=1}^N\varepsilon(u_{ii})+\sum_{i=1}^N\varepsilon(u_{ii}^*)\right)\\
&=&\frac{1}{2}(N+N)\\
&=&N
\end{eqnarray*}

Thus the element $N-Re(\chi_u)$ is not invertible in $A_{red}$, as claimed.

\medskip

$(4)\implies(3)$ In terms of the corepresentation $v=u+\bar{u}$, whose dimension is $2N$ and whose character is $2Re(\chi_u)$, our assumption $N\in\sigma(Re(\chi_u))$ reads:
$$\dim v\in\sigma(\chi_v)$$

By functional calculus the same must hold for $w=v+1$, and then once again by functional calculus, the same must hold for any tensor power of $w$:
$$w_k=w^{\otimes k}$$ 

Now choose for each $k\in\mathbb N$ a state $\varepsilon_k\in A_{red}^*$ having the following property:
$$\varepsilon_k(w_k)=\dim w_k$$

By Peter-Weyl we must have $\varepsilon_k(r)=\dim r$ for any $r\leq w_k$, and since any irreducible corepresentation appears in this way, the sequence $\varepsilon_k$ converges to a counit map: 
$$\varepsilon:A_{red}\to\mathbb C$$

$(4)\implies(5)$ Consider the following elements of $A_{red}$, which are positive:
$$a_i=1-Re(u_{ii})$$
 
Our assumption $N\in\sigma(Re(\chi_u))$ tells us that $a=\sum a_i$ is not invertible, and so there exists a sequence $x_k$ of norm one vectors in $L^2(A)$ such that: 
$$<ax_k,x_k>\to 0$$

Since the summands $<a_ix_k,x_k>$ are all positive, we must have, for any $i$:
$$<a_ix_k,x_k>\to0$$

We can go back to the variables $u_{ii}$ by using the following general formula:
$$||vx-x||^2=||vx||^2 +2<(1-Re(v))x,x>-1$$

Indeed, with $v=u_{ii}$ and $x=x_k$ the middle term on the right goes to 0, and so the whole term on the right becomes asymptotically negative, and so we must have:
$$||u_{ii}x_k-x_k||\to0$$

Now let $M_n(A_{red})$ act on $\mathbb C^n\otimes L^2(A)$. Since $u$ is unitary we have:
$$\sum_i||u_{ij}x_k||^2
=||u(e_j\otimes x_k)||
=1$$

From $||u_{ii}x_k||\to1$ we obtain $||u_{ij}x_k||\to0$ for $i\neq j$. Thus we have, for any $i,j$:
$$||u_{ij}x_k-\delta_{ij}x_k||\to0$$

Now by remembering that we have $\varepsilon(u_{ij})=\delta_{ij}$, this formula reads:
$$||u_{ij}x_k-\varepsilon(u_{ij})x_k||\to0$$

By linearity, multiplicativity and continuity, we must have, for any $a\in\mathcal  A$, as desired:
$$||ax_k-\varepsilon(a)x_k||\to0$$

$(5)\implies(1)$ This is something well-known, which follows via some standard functional analysis arguments, worked out in Blanchard's paper \cite{bla}.

\medskip

$(1)\implies(5)$ Once again this is something well-known, which follows via some standard functional analysis arguments, worked out in Blanchard's paper \cite{bla}.
\end{proof}

This was for the basic amenability theory. We will be back to this on several occasions, with more specialized amenability conditions, which will add to the above list. As a first application of the above result, we can now advance on a problem left before, in chapter 3 above, and then in the beginning of the present chapter as well:

\index{cocommutative algebra}
\index{maximal tensor product}
\index{minimal tensor product}

\begin{theorem}
The cocommutative Woronowicz algebras are the intermediate quotients of the following type, with $\Gamma=<g_1,\ldots,g_N>$ being a discrete group,
$$C^*(\Gamma)\to C^*_\pi(\Gamma)\to C^*_{red}(\Gamma)$$
and with $\pi$ being a unitary representation of $\Gamma$, subject to weak containment conditions of type $\pi\otimes\pi\subset\pi$ and $1\subset\pi$, which guarantee the existence of $\Delta,\varepsilon$.
\end{theorem}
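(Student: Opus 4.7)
The plan is to combine Theorem 14.1 with Theorem 14.5 to see that any cocommutative Woronowicz algebra $A$ is sandwiched between a full and a reduced group algebra of the same discrete group, and then recognize the intermediate objects as the $C^*$-completions coming from unitary representations $\pi$ of $\Gamma$ satisfying appropriate compatibility conditions.

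First, I would start with a cocommutative Woronowicz algebra $(A,u)$. By Theorem 14.1, its irreducible corepresentations are all $1$-dimensional; so $u$ is equivalent, up to a unitary base change, to a diagonal matrix $\mathrm{diag}(g_1,\ldots,g_N)$ where the $g_i$ are group-like unitaries. Letting $\Gamma = \langle g_1,\ldots,g_N\rangle$ inside $A$, the smooth subalgebra is $\mathcal{A} = \mathbb{C}[\Gamma]$, and $A$ is some $C^*$-completion of it. Now I would invoke Theorem 14.5 to squeeze this completion between the full and the reduced one:
$$C^*(\Gamma) \;\longrightarrow\; A \;\longrightarrow\; C^*_{red}(\Gamma) \;\subset\; L(\Gamma).$$
Such an intermediate $C^*$-algebra is, by definition, of the form $C^*_\pi(\Gamma)$, where $\pi$ is the representation of $\Gamma$ on the GNS Hilbert space of $A$ (equivalently, any unitary representation whose associated $C^*$-seminorm on $\mathbb{C}[\Gamma]$ produces $A$). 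The full/reduced sandwich is exactly the statement that $\pi$ is weakly contained in the universal representation and weakly contains the regular one.

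Next, I would translate the Woronowicz axioms on $A = C^*_\pi(\Gamma)$ into representation-theoretic conditions on $\pi$. The comultiplication is determined on generators by $\Delta(g) = g \otimes g$; extending this continuously to $C^*_\pi(\Gamma) \to C^*_\pi(\Gamma) \otimes C^*_\pi(\Gamma)$ is precisely the requirement that $g \mapsto \pi(g)\otimes\pi(g)$ factors through $\pi$, i.e.\ that $\pi\otimes\pi$ is weakly contained in $\pi$ in Fell's sense, which I write as $\pi\otimes\pi\subset\pi$. Similarly, the counit $\varepsilon(g)=1$ extends to $C^*_\pi(\Gamma)$ if and only if the trivial character $1$ is weakly contained in $\pi$, i.e.\ $1\subset\pi$. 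The antipode $S(g) = g^{-1}$ poses no obstruction: the map $g\mapsto g^{-1}$ is an involutive antiautomorphism of $\Gamma$, so it extends to a $*$-antihomomorphism $\mathbb{C}[\Gamma]\to\mathbb{C}[\Gamma]^{op}$, and since $\pi(g^{-1}) = \pi(g)^*$ has the same norm as $\pi(g)$, this extends automatically to $C^*_\pi(\Gamma)\to C^*_\pi(\Gamma)^{op}$.

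For the converse, given a unitary representation $\pi$ with $\pi\otimes\pi\subset\pi$ and $1\subset\pi$ whose $C^*$-seminorm lies between the universal and the regular ones, I would define $C^*_\pi(\Gamma)$ as the corresponding completion of $\mathbb{C}[\Gamma]$, and simply check that the three maps $\Delta,\varepsilon,S$ defined above on generators extend to $C^*$-algebraic morphisms of the required source and target, by the weak containment hypotheses. The coassociativity, counitality and coinversality axioms hold on the group-like generators by direct computation and hence everywhere by continuity, so $(C^*_\pi(\Gamma), \mathrm{diag}(g_1,\ldots,g_N))$ is a Woronowicz algebra, visibly cocommutative.

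The main obstacle is the precise formulation and verification of the weak containment conditions: one must be careful that the target of $\Delta$ is the minimal (spatial) tensor product $C^*_\pi(\Gamma)\otimes C^*_\pi(\Gamma)$, so that $\pi\otimes\pi\subset\pi$ has to be interpreted as weak containment with respect to the $C^*$-norm coming from $\pi\otimes_{\min}\pi$, and then one needs the standard fact from Fell's theory that this weak containment is exactly what makes the diagonal map $g\mapsto g\otimes g$ contractive for the $\pi$-norm. Once this dictionary is set up cleanly, the two directions of the theorem become essentially formal.
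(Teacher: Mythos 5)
Your overall strategy matches the paper's: Theorem 14.1 gives the group $\Gamma$ with $\mathcal A=\mathbb C[\Gamma]$, Theorem 14.5 supplies the sandwich $C^*(\Gamma)\to A\to C^*_{red}(\Gamma)$, and you correctly translate $\Delta$ into $\pi\otimes\pi\subset\pi$, $\varepsilon$ into $1\subset\pi$, and observe that $S$ extends automatically from $g\mapsto g^{-1}$. That is exactly the paper's four-step outline.

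Where you diverge from the paper is in the final technical remark about tensor products, and the divergence is not a cosmetic one. You propose interpreting $\pi\otimes\pi\subset\pi$ with respect to the \emph{minimal} (spatial) tensor product $C^*_\pi(\Gamma)\otimes_{\min}C^*_\pi(\Gamma)$, arguing that this is what Fell weak containment naturally produces. The paper does the opposite: in Definition 2.1 the symbol $\otimes$ for Woronowicz algebras is fixed to be the \emph{maximal} tensor product, and in step (2) of its proof of this theorem it states explicitly that ``the tensor product $\otimes$ being taken here to be compatible with our usual maximal tensor product.'' Theorem 14.4 is precisely about the fact that for $A_{red}$ the comultiplication in general only lands in the spatial product, and this is why $A_{red}$ is \emph{not} automatically a Woronowicz algebra in the paper's sense. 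Your reading therefore characterizes a slightly different, ``reduced-style'' class of objects than the one the paper axiomatizes. On the other hand, you have correctly put your finger on the genuine subtlety: Fell weak containment $\pi\otimes\pi\subset\pi$ yields a map into the minimal tensor product essentially for free, whereas landing in the maximal one is a stronger requirement, and the paper itself immediately concedes (``comes along with 0 non-trivial examples'') that its conditions on $\pi$ are stated abstractly rather than verified in examples. So your proposal is structurally the same proof, with a different, arguably more standard, interpretation of the weak-containment clause; to align it with the paper's conventions you would need to strengthen the condition on $\pi$ so that the diagonal map extends to $C^*_\pi(\Gamma)\otimes_{\max}C^*_\pi(\Gamma)$.
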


\begin{proof}
We use Theorem 14.1 above, combined with Theorem 14.5 and then with Theorem 14.6, the idea being to proceed in several steps, as follows:

\medskip

(1) Theorem 14.1 and standard functional analysis arguments show that the cocommutative Woronowicz algebras should appear as intermediate quotients, as follows:
$$C^*(\Gamma)\to A\to C^*_{red}(\Gamma)$$

(2) The existence of $\Delta:A\to A\otimes A$ requires our intermediate quotient to appear as follows, with $\pi$ being a unitary representation of $\Gamma$, satisfying the condition $\pi\otimes\pi\subset\pi$, taken in a weak containment sense, and with the tensor product $\otimes$ being taken here to be compatible with our usual maximal tensor product $\otimes$ for the $C^*$-algebras:
$$C^*(\Gamma)\to C^*_\pi(\Gamma)\to C^*_{red}(\Gamma)$$

(3) With this condition imposed, the existence of the antipode $S:A\to A^{opp}$ is then automatic, coming from the group antirepresentation $g\to g^{-1}$. 

\medskip

(4) The existence of the counit $\varepsilon:A\to\mathbb C$, however, is something non-trivial, related to amenability, and leading to a condition of type $1\subset\pi$, as in the statement.
\end{proof}  

The above result is of course not the end of the story, because as formulated, with the above highly abstract conditions on $\pi$, it comes along with 0 non-trivial examples. We refer to Woronowicz \cite{wo1} and related papers for more on these topics.

\bigskip

Let us get back now to real life, and concrete mathematics, and focus on the Kesten amenability criterion, from Theorem 14.6 (4), which brings connections with interesting mathematics and physics, and which in practice will be our main amenability criterion. In order to discuss this, we will need the following standard fact:

\index{main character}

\begin{proposition}
Given a Woronowicz algebra $(A,u)$, with $u\in M_N(A)$, the moments of the main character $\chi=\sum_iu_{ii}$ are given by:
$$\int_G\chi^k=\dim\left(Fix(u^{\otimes k})\right)$$
In the case $u\sim\bar{u}$ the law of $\chi$ is a usual probability measure, supported on $[-N,N]$.
\end{proposition}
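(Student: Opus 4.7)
The plan is to deduce both assertions directly from the Peter-Weyl/Haar integration theory already developed in Section 3 (specifically Theorem 3.18), together with basic facts about normal elements in $C^*$-algebras.

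First I would establish the moment formula. The key observation is that characters behave multiplicatively under tensor products: by Proposition 3.23, $\chi_{v\otimes w}=\chi_v\chi_w$, and hence $\chi_{u^{\otimes k}}=\chi^k$ for any colored integer $k$. Now I would apply Theorem 3.18, which states that for any corepresentation $v\in M_n(A)$, one has
$$\Bigl(id\otimes\int_G\Bigr)v=P$$
where $P$ is the orthogonal projection onto $Fix(v)$. Taking traces and specializing to $v=u^{\otimes k}$ gives
$$\int_G\chi^k=\int_G\chi_{u^{\otimes k}}=\sum_i\int_G(u^{\otimes k})_{ii}=\mathrm{Tr}(P_{Fix(u^{\otimes k})})=\dim\bigl(Fix(u^{\otimes k})\bigr),$$
which is the claimed formula. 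This part is essentially a bookkeeping computation once the Peter-Weyl machinery is in hand.

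For the second assertion, I would use the assumption $u\sim\bar u$. By Proposition 3.23 this yields $\chi_u=\chi_{\bar u}=\chi_u^*$, so $\chi$ is self-adjoint, in particular normal. Then Theorem 8.2 (the $C^*$-algebra spectral theorem for normal elements in a traced algebra) applies and produces a unique probability measure $\mu\in\mathcal P(\mathbb C)$ on the spectrum $\sigma(\chi)\subset\mathbb R$ whose moments reproduce $\int_G\chi^k$. It remains to control the support: since $u$ is a unitary matrix in $M_N(A)$, each entry satisfies $\|u_{ii}\|\leq 1$ (from $uu^*=1$, the diagonal entries of $uu^*$ are sums of $|u_{ij}|^2$ equal to $1$), and hence by the triangle inequality $\|\chi\|\leq N$. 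Combined with self-adjointness, this gives $\sigma(\chi)\subset[-N,N]$, and therefore $\mathrm{supp}(\mu)\subset[-N,N]$.

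The proof presents no serious obstacle: both parts reduce to already-proved theorems. The only mild subtlety is the routine norm estimate $\|u_{ii}\|\leq 1$, which is immediate once one recalls that the diagonal of a unitary matrix consists of contractions. Everything else is a direct application of Theorems 3.18, 3.23, and 8.2.
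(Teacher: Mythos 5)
Your proof is correct and follows essentially the same route as the paper: the moment formula via Peter-Weyl (the Haar projection onto $Fix(u^{\otimes k})$, i.e. Theorem 3.18, combined with $\chi_{u^{\otimes k}}=\chi^k$), and the support statement via $u\sim\bar u\implies\chi=\chi^*$ together with the norm bound $||u_{ij}||\leq1$ giving $\sigma(\chi)\subset[-N,N]$. No gaps worth noting.
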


\begin{proof}
There are two assertions here, the proof being as follows:

\medskip

(1) The first assertion follows from the Peter-Weyl theory, which tells us that we have the following formula, valid for any corepresentation $v\in M_n(A)$:
$$\int_G\chi_v=\dim(Fix(v))$$

Indeed, for the corepresentation $v=u^{\otimes k}$, the corresponding character is:
$$\chi_v=\chi^k$$

Thus, we obtain the result, as a consequence of the above formula.

\medskip

(2) As for the second assertion, if we assume $u\sim\bar{u}$ then we have $\chi=\chi^*$, and so the general theory, explained above, tells us that $law(\chi)$ is in this case a real probability measure, supported by the spectrum of $\chi$. But, since $u\in M_N(A)$ is unitary, we have:
\begin{eqnarray*}
uu^*=1
&\implies&||u_{ij}||\leq 1,\forall i,j\\
&\implies&||\chi||\leq N
\end{eqnarray*}

Thus the spectrum of the character satisfies the following condition:
$$\sigma(\chi)\subset [-N,N]$$

Thus, we are led to the conclusion in the statement. 
\end{proof}

In relation now with the notion of amenability, we have:

\index{Kesten amenability}
\index{amenability}

\begin{theorem}
A Woronowicz algebra $(A,u)$, with $u\in M_N(A)$, is amenable when:
$$N\in supp\Big(law\left[Re(\chi)\right]\Big)$$
Also, the support on the right depends only on $law(\chi)$.
\end{theorem}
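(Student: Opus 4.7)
The plan is to reduce the statement to the Kesten criterion of Theorem 14.6, and then to handle the two assertions separately, as follows.

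First, for the amenability assertion, I would invoke the equivalence $(1) \Leftrightarrow (4)$ of Theorem 14.6, which says amenability is equivalent to $N \in \sigma(\mathrm{Re}(\chi))$, with the spectrum taken inside $A_{red}$. The task is then to translate the spectral condition into a condition on the support of the law. Since $\mathrm{Re}(\chi) = \frac{\chi + \chi^*}{2}$ is self-adjoint, and since $A_{red}$ sits in its GNS von Neumann completion with a faithful tracial state (coming from the Haar integration), the element $\mathrm{Re}(\chi)$ is normal and Theorem 8.2 applies: its law is a genuine compactly supported real probability measure $\mu$ on $\mathbb{R}$, and this measure is supported on the spectrum $\sigma(\mathrm{Re}(\chi))$. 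The standard functional-analytic fact I would then use is that, because the trace on $A_{red}$ is faithful, this containment is an equality, i.e.\ $\mathrm{supp}(\mu) = \sigma(\mathrm{Re}(\chi))$. Thus $N \in \sigma(\mathrm{Re}(\chi))$ if and only if $N \in \mathrm{supp}(\mathrm{law}(\mathrm{Re}(\chi)))$, and combining this with Theorem 14.6 gives the amenability claim.

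For the second assertion, that $\mathrm{supp}(\mathrm{law}(\mathrm{Re}(\chi)))$ depends only on $\mathrm{law}(\chi)$, the key observation is that $\mathrm{law}(\chi)$ means the collection of all colored moments $\int_G \chi^k$ with $k = e_1 \ldots e_k$ a colored integer, as in Definition 8.1 and Proposition 14.8. Expanding
$$\int_G \mathrm{Re}(\chi)^k = \frac{1}{2^k} \sum_{e_1,\ldots,e_k \in \{\circ,\bullet\}} \int_G \chi^{e_1}\cdots \chi^{e_k},$$
I see that the ordinary moments of the self-adjoint element $\mathrm{Re}(\chi)$ are determined by the colored moments of $\chi$. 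Because $\mathrm{Re}(\chi)$ is self-adjoint with norm bounded by $N$ (as in Proposition 14.8), its law is a probability measure supported in $[-N,N]$, hence uniquely determined by its ordinary moments. Consequently $\mathrm{law}(\mathrm{Re}(\chi))$, and in particular its support, is a function of $\mathrm{law}(\chi)$ alone.

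The main obstacle I expect is the support-equals-spectrum step in the first paragraph: this is only true when the law is computed with respect to a faithful state, so it is essential to pass to $A_{red}$ rather than working in $A$ or $A_{full}$, and to invoke the fact that the canonical trace on $A_{red}$ becomes faithful precisely after quotienting out the null ideal, as in Theorem 14.4. Once this is correctly set up, the rest is a straightforward combination of Theorem 14.6, the spectral theorem for self-adjoint elements, and the elementary moment expansion above.
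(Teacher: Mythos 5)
Your proposal is correct and follows the same route as the paper: part (1) is exactly the Kesten criterion of Theorem 14.6 (4) combined with the identification of the support of $law(Re(\chi))$ with the spectrum $\sigma(Re(\chi))$, and part (2) is the same binomial-type moment expansion $\int_G Re(\chi)^p=\frac{1}{2^p}\sum_{|k|=p}\int_G\chi^k$. The one genuine addition on your side is the explicit remark that the equality $\mathrm{supp}(law(Re(\chi)))=\sigma(Re(\chi))$ requires working in $A_{red}$ where the Haar trace is faithful; the paper simply quotes this as a fact from spectral theory, so your version is slightly more careful on that point.
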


\begin{proof}
There are two assertions here, the proof being as follows:

\medskip

(1) According to the Kesten amenability criterion, from Theorem 14.6 (4), the algebra $A$ is amenable when the following condition is satisfied:
$$N\in\sigma(Re(\chi))$$

Now since $Re(\chi)$ is self-adjoint, we know from spectral theory that the support of its spectral measure $law(Re(\chi))$ is precisely its spectrum $\sigma(Re(\chi))$, as desired:
$$supp(law(Re(\chi)))=\sigma(Re(\chi))$$

(2) Regarding the second assertion, once again the variable $Re(\chi)$ being self-adjoint, its law depends only on the moments $\int_GRe(\chi)^p$, with $p\in\mathbb N$. But, we have:
\begin{eqnarray*}
\int_GRe(\chi)^p
&=&\int_G\left(\frac{\chi+\chi^*}{2}\right)^p\\
&=&\frac{1}{2^p}\sum_{|k|=p}\int_G\chi^k
\end{eqnarray*}

Thus $law(Re(\chi))$ depends only on $law(\chi)$, and this gives the result.
\end{proof}

Let us work out now in detail the group dual case. Here we obtain a very interesting measure, called Kesten measure of the group, as follows:

\index{Cayley graph}

\begin{proposition}
In the case $A=C^*(\Gamma)$ and $u=diag(g_1,\ldots,g_N)$, and by enlarging if necessary the generating set $g_1,\ldots,g_N$, as for the following to happen,
$$1\in u=\bar{u}$$
the moments of the main character are given by the formula
$$\int_{\widehat{\Gamma}}\chi^p=\#\left\{i_1,\ldots,i_p\Big|g_{i_1}\ldots g_{i_p}=1\right\}$$
counting the loops based at $1$, having lenght $p$, on the corresponding Cayley graph.
\end{proposition}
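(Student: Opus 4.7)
The plan is to reduce the statement to a direct application of Proposition 14.8, combined with an explicit description of the fixed space of $u^{\otimes p}$ in the diagonal case. The normalization $1 \in u = \bar{u}$ ensures that $\chi$ is self-adjoint (since $\{g_1, \ldots, g_N\}$ is stable under $g \mapsto g^{-1}$) and that the moments $\int_{\widehat{\Gamma}} \chi^p$ are honest real numbers describing a genuine probability measure on $[-N,N]$, which makes the moment computation meaningful.

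First, I would invoke Proposition 14.8 to write
$$\int_{\widehat\Gamma}\chi^p=\dim\bigl(\mathrm{Fix}(u^{\otimes p})\bigr).$$
The next step is the key observation: since $u=\mathrm{diag}(g_1,\ldots,g_N)\in M_N(C^*(\Gamma))$ is diagonal, the tensor power $u^{\otimes p}\in M_{N^p}(C^*(\Gamma))$ is diagonal as well, with diagonal entries indexed by multi-indices $(i_1,\ldots,i_p)$ and given by the products $g_{i_1}g_{i_2}\cdots g_{i_p}\in\Gamma\subset C^*(\Gamma)$.

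Then I would compute $\mathrm{Fix}(u^{\otimes p})$ explicitly. A vector $\xi=\sum_i\xi_{i_1\ldots i_p}\,e_{i_1}\otimes\cdots\otimes e_{i_p}$ is fixed under $u^{\otimes p}$ precisely when, for every multi-index,
$$\xi_{i_1\ldots i_p}\,(g_{i_1}\cdots g_{i_p}-1)=0\quad\text{in }C^*(\Gamma).$$
Because the group elements form a linear basis of the group algebra $\mathbb{C}[\Gamma]\subset C^*(\Gamma)$, we have $g_{i_1}\cdots g_{i_p}=1$ in $C^*(\Gamma)$ if and only if the relation holds in $\Gamma$. Thus the standard basis vectors indexed by tuples satisfying $g_{i_1}\cdots g_{i_p}=1$ span $\mathrm{Fix}(u^{\otimes p})$, and the remaining ones are killed. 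Counting dimensions gives
$$\dim\bigl(\mathrm{Fix}(u^{\otimes p})\bigr)=\#\bigl\{i_1,\ldots,i_p\,\big|\,g_{i_1}\cdots g_{i_p}=1\bigr\}.$$

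Finally, for the Cayley graph interpretation: a path of length $p$ on the Cayley graph of $\Gamma$ with respect to $\{g_1,\ldots,g_N\}$ starting at $1$ is a sequence of vertices $1,g_{i_1},g_{i_1}g_{i_2},\ldots,g_{i_1}\cdots g_{i_p}$, determined by the choice of generators $g_{i_1},\ldots,g_{i_p}$. Such a path is a loop iff its endpoint equals $1$, i.e.\ iff $g_{i_1}\cdots g_{i_p}=1$, matching the count above. The main conceptual point that needs care, rather than any real obstacle, is checking that the symmetry $u=\bar u$ makes the Cayley graph undirected (so the notion of ``loop'' is the standard one) and that $1\in u$ allows stationary steps; the core computation itself is immediate from the diagonal structure of $u$.
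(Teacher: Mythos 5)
Your argument is correct, but it takes a different computational route than the paper. The paper's proof is more direct: it writes $\chi=g_1+\ldots+g_N$, expands $\chi^p$, and applies the explicit Haar functional on $C^*(\Gamma)$ from Proposition 3.15, namely $\int_{\widehat{\Gamma}}g=\delta_{g,1}$, so that the moment is immediately the number of multi-indices with $g_{i_1}\ldots g_{i_p}=1$; the normalization $1\in u=\bar{u}$, read as $1\in S=S^{-1}$ for the generating set, then only enters to make the Cayley graph well-defined and to identify the count with loops based at $1$. You instead pass through the Peter--Weyl formula $\int\chi^p=\dim(Fix(u^{\otimes p}))$ of Proposition 14.8 and compute the fixed space of the diagonal corepresentation $u^{\otimes p}$, whose diagonal entries are the products $g_{i_1}\ldots g_{i_p}$, using the linear independence of group elements in $\mathbb C[\Gamma]\subset C^*(\Gamma)$ to conclude that a basis vector is fixed exactly when $g_{i_1}\ldots g_{i_p}=1$. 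Both routes are legitimate and of comparable length: the paper's buys a one-line computation from the known Haar state on group algebras, while yours foregrounds the fixed-point picture, which is precisely the mechanism behind the generalization to arbitrary Woronowicz algebras in Theorem 14.12, so it arguably explains better why the result extends. Your closing remarks on self-adjointness and $1\in u$ match the role these assumptions play in the paper, namely passing from colored to ordinary exponents and making the Cayley graph (with its loops at vertices) the standard undirected one.
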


\begin{proof}
Consider indeed a discrete group $\Gamma=<g_1,\ldots,g_N>$. The main character of $A=C^*(\Gamma)$, with fundamental corepresentation $u=diag(g_1,\ldots,g_N)$, is then:
$$\chi=g_1+\ldots+g_N$$

Given a colored integer $k=e_1\ldots e_p$, the corresponding moment is given by:
\begin{eqnarray*}
\int_{\widehat{\Gamma}}\chi^k
&=&\int_{\widehat{\Gamma}}(g_1+\ldots+g_N)^k\\
&=&\#\left\{i_1,\ldots,i_p\Big|g_{i_1}^{e_1}\ldots g_{i_p}^{e_p}=1\right\}
\end{eqnarray*}

In the self-adjoint case, $u\sim\bar{u}$, we are only interested in the moments with respect to usual integers, $p\in\mathbb N$, and the above formula becomes:
$$\int_{\widehat{\Gamma}}\chi^p=\#\left\{i_1,\ldots,i_p\Big|g_{i_1}\ldots g_{i_p}=1\right\}$$

Assume now that we have in addition $1\in u$, so that the condition $1\in u=\bar{u}$ in the statement is satisfied. At the level of the generating set $S=\{g_1,\ldots,g_N\}$ this means:
$$1\in S=S^{-1}$$

Thus the corresponding Cayley graph is well-defined, with the elements of $\Gamma$ as vertices, and with the edges $g-h$ appearing when the following condition is satisfied:
$$gh^{-1}\in S$$

A loop on this graph based at 1, having lenght $p$, is then a sequence as follows:
$$(1)-(g_{i_1})-(g_{i_1}g_{i_2})-\ldots-(g_{i_1}\ldots g_{i_{p-1}})-(g_{i_1}\ldots g_{i_p}=1)$$

Thus the moments of $\chi$ count indeed such loops, as claimed.
\end{proof}

In order to generalize the above result to arbitrary Woronowicz algebras, we can use the discrete quantum group philosophy. The fundamental result here is as follows:

\index{Cayley graph}

\begin{theorem}
Let $(A,u)$ be a Woronowicz algebra, and assume, by enlarging if necessary $u$, that we have $1\in u=\bar{u}$. The following formula
$$d(v,w)=\min\left\{k\in\mathbb N\Big|1\subset\bar{v}\otimes w\otimes u^{\otimes k}\right\}$$
defines then a distance on $Irr(A)$, which coincides with the geodesic distance on the associated Cayley graph. In the group dual case we obtain the usual distance.
\end{theorem}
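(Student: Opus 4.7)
The plan is to unpack the definition of $d$ via Frobenius reciprocity, verify the three distance axioms, and then match it with the Cayley graph. Throughout, the assumption $1\in u=\bar u$ will do almost all of the work, ensuring both that the Cayley graph has an edge $v-w$ symmetrically and that tensor powers $u^{\otimes k}$ exhaust $\mathrm{Irr}(A)$ by Peter-Weyl (PW2). The key identity to keep handy is Frobenius duality from Theorem~3.21, which gives
$$\mathrm{Hom}(v,w\otimes u^{\otimes k})\simeq\mathrm{Fix}(\bar v\otimes w\otimes u^{\otimes k}),$$
so that the condition $1\subset\bar v\otimes w\otimes u^{\otimes k}$ is the same as $v\subset w\otimes u^{\otimes k}$.

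First, I would check that $d(v,w)<\infty$ for all $v,w\in\mathrm{Irr}(A)$: by PW2 every irreducible sits in some $u^{\otimes m}$, so in particular $v$ embeds into some $u^{\otimes m}$ and hence $w$ embeds into $w\otimes\bar v\otimes v\subset w\otimes\bar v\otimes u^{\otimes m}$, making $\bar v\otimes w\otimes u^{\otimes m}$ contain a copy of $1$ after using $1\subset w\otimes\bar w$. Next, the axioms of a distance. For $d(v,w)=0$, the condition $1\subset\bar v\otimes w$ is $\mathrm{Hom}(v,w)\neq 0$, which by Schur forces $v\simeq w$. For symmetry, the standard isomorphism $\mathrm{Fix}(a\otimes b)\simeq\mathrm{Fix}(b\otimes a)$ (both being $\mathrm{Hom}(\bar a,b)$) combined with $u=\bar u$ yields
$$1\subset\bar v\otimes w\otimes u^{\otimes k}\ \Longleftrightarrow\ 1\subset\bar w\otimes v\otimes u^{\otimes k},$$
after conjugating and rotating. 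For the triangle inequality, if $v\subset w\otimes u^{\otimes k}$ and $w\subset x\otimes u^{\otimes l}$, tensoring the second with $u^{\otimes k}$ and composing with the first gives $v\subset x\otimes u^{\otimes(k+l)}$, so $d(v,x)\leq d(v,w)+d(w,x)$.

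Next, I would describe the Cayley graph of $\widehat A$ in this setting: vertices are elements of $\mathrm{Irr}(A)$, with an edge between $v$ and $w$ precisely when $w\subset v\otimes u$. The condition $1\in u$ yields self-loops (which we ignore), while $u=\bar u$ ensures $w\subset v\otimes u\Leftrightarrow v\subset w\otimes u$ via Frobenius, so the adjacency relation is symmetric and the graph is well-defined as an undirected graph. By induction on $k$, a walk of length $k$ from $v$ to $w$ exists iff $w\subset v\otimes u^{\otimes k}$, the inductive step being that a walk of length $k$ decomposes as a walk of length $k-1$ followed by one adjacency. Combining this with the Frobenius reformulation above identifies $d(v,w)$ with the geodesic distance.

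Finally, for the group-dual specialization $A=C^*(\Gamma)$ with $u=\mathrm{diag}(g_1,\ldots,g_N)$ and $1\in S=S^{-1}$ where $S=\{g_1,\ldots,g_N\}$, irreducibles are the group elements themselves and tensor product is group multiplication, so $1\subset\bar v\otimes w\otimes u^{\otimes k}$ is precisely the statement that $v^{-1}w$ is a product of $k$ generators from $S$; this is the classical word-length distance on the Cayley graph of $\Gamma$ with respect to $S$. The main obstacle I anticipate is the symmetry step, since one must carefully combine cyclic rotation of fixed spaces with $u=\bar u$ to get the conjugate condition into the right form; once this is in place, the other axioms and the Cayley graph comparison follow cleanly from Frobenius.
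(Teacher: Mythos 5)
Your proof is correct and follows the same route the paper intends: the paper's own argument is only a sketch (finiteness "by Peter-Weyl", symmetry and the triangle inequality declared "clear/elementary", the group dual case "clear"), and your Frobenius-duality reformulation $1\subset\bar v\otimes w\otimes u^{\otimes k}\Leftrightarrow v\subset w\otimes u^{\otimes k}$, together with the conjugate-and-rotate argument for symmetry (using $u=\bar u$ and $S^2=id$) and the walk-counting identification with the Cayley graph, is exactly the intended filling-in of those steps. The only cosmetic point is that your finiteness chain is phrased a bit awkwardly; the clean version is $v\subset w\otimes\bar w\otimes v\subset w\otimes u^{\otimes(m'+m)}$ with $\bar w\subset u^{\otimes m'}$, $v\subset u^{\otimes m}$ from PW2, which is what your ingredients already give.
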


\begin{proof}
The fact that the lengths are finite follows from Woronowicz's analogue of Peter-Weyl theory, and the other verifications are as follows:

\medskip

(1) The symmetry axiom is clear from definitions.

\medskip

(2) The triangle inequality is elementary to establish as well. 

\medskip

(3) Finally, the last assertion is elementary as well.

\medskip

In the group dual case now, where our Woronowicz algebra is of the form $A=C^*(\Gamma)$, with $\Gamma=<S>$ being a finitely generated discrete group, our normalization condition $1\in u=\bar{u}$ means that the generating set must satisfy the following condition:
$$1\in S=S^{-1}$$

But this is precisely the normalization condition made before for the discrete groups, and the fact that we obtain the same metric space is clear.
\end{proof}

Summarizing, we have a good understanding of what a discrete quantum group is. We can now formulate a generalization of Proposition 14.10, as follows:

\begin{theorem}
Let $(A,u)$ be a Woronowicz algebra, with the normalization assumption $1\in u=\bar{u}$ made. The moments of the main character, 
$$\int_G\chi^p=\dim\left(Fix(u^{\otimes p})\right)$$
count then the loops based at $1$, having lenght $p$, on the corresponding Cayley graph.
\end{theorem}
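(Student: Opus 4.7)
The plan is to bootstrap from the dimension formula $\int_G\chi^p=\dim(Fix(u^{\otimes p}))$, already recorded in Proposition 14.8, and reinterpret the right-hand side combinatorially in terms of the Cayley graph constructed in Theorem 14.11. The point is that on the Cayley graph associated to $(A,u)$, the vertices are the irreducibles $v\in\mathrm{Irr}(A)$, and the edge $v\!-\!w$ carries multiplicity $m(v,w)=[w:v\otimes u]$, with adjacency (i.e.\ $m(v,w)\geq 1$) equivalent via Frobenius reciprocity and $u=\bar u$ to $d(v,w)=1$, as needed for consistency with Theorem 14.11. By a loop of length $p$ based at $1$ I mean a walk $1=v_0,v_1,\ldots,v_p=1$ through $\mathrm{Irr}(A)$, counted with the weight $\prod_{i=0}^{p-1}m(v_i,v_{i+1})$.

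The key computation is then a recursive decomposition of $u^{\otimes p}$ into irreducibles. Writing $u=\sum_{v}[v:u]\cdot v$ in the representation ring and iterating,
$$u^{\otimes p}=\sum_{v_1,\ldots,v_p\in\mathrm{Irr}(A)}[v_1:u]\,[v_2:v_1\otimes u]\cdots[v_p:v_{p-1}\otimes u]\cdot v_p,$$
using associativity of the tensor product and the fact that $[w:x\otimes u]$ is additive in $x$. Taking the multiplicity of the trivial corepresentation on both sides, i.e.\ setting $v_p=1$ and using $[v_1:u]=m(1,v_1)$ together with $[v_{i+1}:v_i\otimes u]=m(v_i,v_{i+1})$, yields
$$\dim Fix(u^{\otimes p})=[1:u^{\otimes p}]=\sum_{v_1,\ldots,v_{p-1}}m(1,v_1)m(v_1,v_2)\cdots m(v_{p-1},1),$$
which is precisely the weighted count of loops of length $p$ based at $1$. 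Combining with Proposition 14.8 gives the claimed formula.

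To close the loop with the earlier material, I would check that this specializes correctly in the group dual case $A=C^{*}(\Gamma)$ with $u=\mathrm{diag}(g_1,\ldots,g_N)$: here every irreducible is one-dimensional, labeled by an element of $\Gamma$, the multiplicity $m(g,h)$ is the number of generators $g_i$ with $gg_i=h$, i.e.\ equals $1$ precisely when $g^{-1}h\in S$, and the weighted loop count reduces to the combinatorial loop count on the usual Cayley graph, recovering Proposition 14.10. I would also note that the normalization $1\in u=\bar u$ is used twice: the condition $\bar u=u$ (or more generally $u\sim\bar u$) ensures the Cayley graph is undirected via Frobenius reciprocity, and $1\in u$ ensures the graph is connected with loops at each vertex allowed, so that the walk interpretation matches the metric of Theorem 14.11.

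The main obstacle is conceptual rather than technical: one has to be comfortable with the fact that in the quantum setting ``counting loops'' must be understood as a \emph{weighted} count, where each edge $v\!-\!w$ is taken with multiplicity $[w:v\otimes u]\in\mathbb{N}$, since in general an irreducible $w$ may appear with multiplicity greater than one inside $v\otimes u$. Once this interpretation is fixed, the proof reduces to the telescoping expansion of $u^{\otimes p}$ above and is essentially a formal consequence of Peter--Weyl theory; no further analytic input is needed.
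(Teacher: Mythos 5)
Your argument is correct and follows essentially the same route as the paper, whose proof is just a two-line pointer combining the moment formula of Proposition 14.8 with the Cayley graph of Theorem 14.11; your telescoping expansion of $u^{\otimes p}$ in the representation ring simply supplies the details that the paper leaves implicit. In particular your observation that the count must be taken with the edge weights $m(v,w)=[w:v\otimes u]$, and your check against the group dual case of Proposition 14.10, are exactly the right way to make the statement precise.
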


\begin{proof}
Here the formula of the moments, with $p\in\mathbb N$, is the one coming from Proposition 14.8 above, and the Cayley graph interpretation comes from Theorem 14.11.
\end{proof}

Here is a related useful result, in relation with the notion of amenability:

\begin{theorem}
A Woronowicz algebra $(A,u)$ is amenable precisely when
$$||X||=N$$
where $X$ is the principal graph of the associated planar algebra 
$$P_k=End(u^{\otimes k})$$
obtained by deleting the reflections in the Bratteli diagram of $P=(P_k)$.
\end{theorem}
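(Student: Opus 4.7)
The proof is a translation of Kesten's amenability criterion (Theorem 14.6) into graph-theoretic language. The strategy is to identify the principal graph $X$ with the fusion graph of the corepresentation $u$, and then to read the spectral condition $N\in\sigma(Re(\chi_u))$ as the norm condition $||X||=N$.

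First, I would impose the normalization $1\in u=\bar{u}$. This costs nothing: if $u$ does not already satisfy this, one replaces it by $v=1\oplus u\oplus\bar{u}$, obtaining an equivalent Woronowicz algebra whose character is $1+2Re(\chi_u)$ and whose dimension is $2N+1$; amenability is preserved, and the principal graph of $End(v^{\otimes k})$ is compatible with the one for $u$. Under this normalization, $\chi=\chi_u$ is self-adjoint, the Bratteli diagram of $P_k=End(u^{\otimes k})$ exhibits a single reflection pattern, and deleting it produces $X$: vertices are the irreducibles of $A$ appearing inside some $u^{\otimes k}$, with distinguished basepoint $*=1$, and edge multiplicities $X_{\pi\sigma}=\dim Hom(\pi\otimes u,\sigma)$.

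Next, I would identify loop counts on $X$ with moments of $\chi$. By Frobenius reciprocity,
$$(A_X^k)_{*,*}=\dim Hom(1,u^{\otimes k})=\dim Fix(u^{\otimes k})=\int_G\chi^k$$
recovering the loop-counting identity of Theorem 14.12. Since every irreducible appears in some $u^{\otimes k}$, the vector $\delta_*$ is cyclic for $A_X$ on $\ell^2(Irr(A))$; so by the spectral theorem the pair $(A_X,\delta_*)$ is unitarily equivalent to multiplication by $\lambda$ on $L^2(\mathbb R,\mu)$, where $\mu$ is the corresponding spectral measure. Matching moments, $\mu$ coincides with the distribution $law(\chi)$ computed in $A_{red}$, and therefore
$$||X||=\sup\big\{|\lambda|:\lambda\in supp(\mu)\big\}=||\chi||_{A_{red}}$$

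Finally, I would close the argument using Theorem 14.6. The normalization $1\in u$ lets us write $\chi=1+\chi_{u'}$ with $\dim u'=N-1$ and $||\chi_{u'}||\leq N-1$, hence $\sigma(\chi)\subset[2-N,N]$. Since $|2-N|=N-2<N$ (for $N\geq 2$, the case $N=1$ being trivial), one has $||\chi||_{A_{red}}=N$ if and only if $N\in\sigma(\chi)=supp(law(\chi))$, equivalently $N\in supp(law(Re(\chi)))$. By Kesten's criterion this is precisely the amenability of $A$, yielding $||X||=N\iff A$ is amenable. The main obstacle is the middle step: one must verify both that $A_X$ is a bounded self-adjoint operator on $\ell^2(Irr(A))$ and that $\delta_*$ is cyclic. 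The latter is immediate since every irreducible embeds in some tensor power of $u$; the former is more delicate but follows from the fact that $A_X$ acts essentially as multiplication by the class of $u$ on the fusion algebra, an operation of norm at most $\dim u=N$.
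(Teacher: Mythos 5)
Your proposal is correct and follows essentially the same route as the paper: the entire content is the identification $||X||=||\chi_u||_{A_{red}}$, which you obtain via the adjacency operator on $\ell^2(Irr(A))$ with cyclic vector $\delta_1$, Frobenius reciprocity and the spectral theorem, and which the paper gets via $||X||=||M_{\chi_u}||_{A_{central}}=||\chi_u||_{A_{red}}$, after which the Kesten criterion of Theorem 14.6 concludes. Your normalization $1\in u=\bar{u}$ and the lower spectral bound argument are simply a more explicit rendering of the paper's ``standard identifications and rescalings''.
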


\begin{proof}
This is something which might look quite complicated, but the idea is very simple. First of all, it is well-known that the spaces in the statement form indeed a planar algebra in the sense of Jones \cite{jo3}, and we refer here to \cite{ba2}, \cite{ba3} and related papers, but we will not really need this here. What we need to know, which is something quite elementary, and for which we refer again to \cite{ba2}, \cite{ba3} and related papers, is that in the following sequence of inclusions, we have a copy of Jones' basic construction \cite{jo1} at every step, so that we can delete the corresponding reflections, as in the statement:
$$P_0\subset P_1\subset P_2\subset\ldots$$

With this done, via some standard identifications and rescalings, we have:
\begin{eqnarray*}
||X||
&=&||M_{\chi_u}||_{A_{central}}\\
&=&||\chi_u||_{A_{central}}\\
&=&||\chi_u||_{A_{red}}
\end{eqnarray*}

Thus, the result follows from the Kesten amenability criterion.
\end{proof}

There are many concrete illustrations for the above results, and we will be back to this, on several occasions, in what follows. 

\section*{14c. Growth}

As an application of all this, corepresentation theory used for ``discrete'' questions, we can introduce the notion of growth for the discrete quantum groups, as follows:

\index{growth}

\begin{definition}
Given a closed subgroup $G\subset U_N^+$, with $1\in u=\bar{u}$, consider the series  whose coefficients are the ball volumes on the corresponding Cayley graph,
$$f(z)=\sum_kb_kz^k\quad,\quad 
b_k=\sum_{l(v)\leq k}\dim(v)^2$$
and call it growth series of the discrete quantum group $\widehat{G}$. In the group dual case, $G=\widehat{\Gamma}$, we obtain in this way the usual growth series of $\Gamma$. 
\end{definition}

There are many things that can be said about the growth, and we will be back to this in a moment, with explicit examples, and some general theory as well.

\bigskip

As a first result here, in relation with the notion of amenability, we have:

\index{polynomial growth}
\index{amenability}

\begin{theorem}
Polynomial growth implies amenability.
\end{theorem}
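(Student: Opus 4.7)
The plan is to verify the Kesten criterion from Theorem 14.9, namely that $\|\chi\|_{A_{red}} = N$, where $\chi = \chi_u$ (note that $u = \bar u$ forces $\chi = \chi^*$, so $Re(\chi) = \chi$). Since $\chi$ is self-adjoint and the trace on $A_{red}$ is faithful by construction, spectral theory gives the moment formula
$$\|\chi\|_{A_{red}} = \lim_{p\to\infty}\left(\int_G\chi^{2p}\right)^{1/(2p)},$$
so it suffices to produce a lower bound of $N$ for these $2p$-th root averages.

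The next step is to rewrite the even moments via Peter-Weyl. Decomposing $u^{\otimes p}\simeq\bigoplus_v m_v(p)\cdot v$ into irreducibles, one has $\chi^p = \sum_v m_v(p)\chi_v$, and the orthogonality relation $\int_G\chi_v\chi_w^* = \delta_{vw}$ yields
$$\int_G\chi^{2p} = \int_G\chi^p(\chi^p)^* = \sum_v m_v(p)^2,$$
where the sum is effectively over $v$ with $l(v)\leq p$, since only such $v$ can appear in $u^{\otimes p}$.

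The crucial step is a Cauchy-Schwarz estimate relating this moment to the ball volume. The dimension count $\dim(u^{\otimes p}) = N^p$ reads
$$N^p = \sum_{l(v)\leq p} m_v(p)\dim(v),$$
and Cauchy-Schwarz applied to this sum gives
$$N^{2p} \leq \Bigl(\sum_{l(v)\leq p} m_v(p)^2\Bigr)\Bigl(\sum_{l(v)\leq p}\dim(v)^2\Bigr) = \left(\int_G\chi^{2p}\right)\cdot b_p.$$
Rearranging and taking $2p$-th roots yields the key inequality
$$\left(\int_G\chi^{2p}\right)^{1/(2p)} \geq \frac{N}{b_p^{1/(2p)}}.$$

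Finally, the polynomial growth hypothesis $b_p\leq P(p)$ forces $b_p^{1/(2p)}\to 1$ as $p\to\infty$, so the right-hand side tends to $N$. This gives $\|\chi\|_{A_{red}}\geq N$, and the reverse inequality is automatic from $\|u_{ii}\|\leq 1$, so equality holds and Kesten's criterion applies. The main obstacle, such as it is, lies in the Cauchy-Schwarz step — matching the sum $\sum m_v(p)\dim(v) = N^p$ against $\sum m_v(p)^2 = \int\chi^{2p}$ and $\sum\dim(v)^2 = b_p$ is the one nontrivial idea; once this pairing is seen, polynomial growth of $b_p$ is exactly what is needed to kill the denominator in the limit.
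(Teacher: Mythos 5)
Your proof is correct and is essentially the paper's argument: the same Cauchy--Schwarz pairing of $N^p=\sum_{v\in B_p}m_v(p)\dim(v)$ against $\sum_v m_v(p)^2$ and $b_p=\sum_v\dim(v)^2$, with your moment $\int_G\chi^{2p}$ being exactly the multiplicity $m_{2p}(1)$ used in the paper, and the same conclusion via the Kesten criterion once polynomial growth kills $b_p^{1/2p}$.
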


\begin{proof}
We recall from Theorem 14.11 above that the Cayley graph of $\widehat{G}$ has by definition the elements of $Irr(G)$ as vertices, and the distance is as follows:
$$d(v,w)=\min\left\{k\in\mathbb N\Big|1\subset\bar{v}\otimes w\otimes u^{\otimes k}\right\}$$

By taking $w=1$ and by using Frobenius reciprocity, the lenghts are given by:
$$l(v)=\min\left\{k\in\mathbb N\Big|v\subset u^{\otimes k}\right\}$$

By Peter-Weyl we have a decomposition as follows, where $B_k$ is the ball of radius $k$, and $m_k(v)\in\mathbb N$ are certain multiplicities:
$$u^{\otimes k}=\sum_{v\in B_k}m_k(v)\cdot v$$

By using now Cauchy-Schwarz, we obtain the following inequality:
\begin{eqnarray*}
m_{2k}(1)b_k
&=&\sum_{v\in B_k}m_k(v)^2\sum_{v\in B_k}\dim(v)^2\\
&\geq&\left(\sum_{v\in B_k}m_k(v)\dim(v)\right)^2\\
&=&N^{2k}
\end{eqnarray*}

But shows that if $b_k$ has polynomial growth, then the following happens:
$$\limsup_{k\to\infty}\, m_{2k}(1)^{1/2k}\geq N$$

Thus, the Kesten type criterion applies, and gives the result.
\end{proof}

Let us discuss now as well, as a continuation of all this, the notions of connectedness for $G$, and no torsion for $\widehat{\Gamma}$. These two notions are in fact related, as follows:

\index{connected quantum group}
\index{torsion-free}

\begin{theorem}
For a closed subgroup $G\subset U_N^+$ the following conditions are equivalent, and if they are satisfied, we call $G$ connected:
\begin{enumerate}
\item There is no finite quantum group quotient, as follows:
$$G\to F\neq\{1\}$$

\item The following algebra is infinite dimensional, for any corepresentation $v\neq1$:
$$A_v=<v_{ij}>$$
\end{enumerate}
In the classical case, $G\subset U_N$, we recover in this way the usual notion of connectedness. For the group duals, $G=\widehat{\Gamma}$, this is the same as asking for $\Gamma$ to have no torsion.
\end{theorem}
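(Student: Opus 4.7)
The plan is to establish the equivalence $(1)\iff(2)$ first, and then deduce the classical and group-dual specializations from this equivalence. The main engine is Proposition 2.15 from the excerpt, which associates to any corepresentation $v$ of $C(G)$ a quotient quantum group $H$ with $C(H)=\langle v_{ij}\rangle$; in particular $H$ is finite precisely when $\langle v_{ij}\rangle$ is finite dimensional. Before starting, one must pin down the precise meaning of ``corepresentation $v\neq 1$'' in (2): since any trivial sum $v=I_n$ gives $\langle v_{ij}\rangle=\mathbb{C}$ because $\varepsilon(v_{ij})=\delta_{ij}$ and $\Delta(v_{ij})=\sum_k v_{ik}\otimes v_{kj}$ force $v_{ij}\in\mathbb{C}\cdot 1$ to equal $\delta_{ij}\cdot 1$, the statement should be read as quantifying over irreducible $v\neq 1$ (equivalently, over $v$ containing a non-trivial irreducible summand).

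For $(1)\Rightarrow(2)$ I would argue contrapositively: an irreducible $v\neq 1$ with $\langle v_{ij}\rangle$ finite dimensional produces a finite quotient $G\to H$ by Proposition 2.15, and irreducibility plus $v\neq 1$ force $\langle v_{ij}\rangle\neq\mathbb{C}$, whence $H\neq\{1\}$. For the converse, a finite quotient $G\to F$ with $F\neq\{1\}$ provides, by Woronowicz's Peter-Weyl theory applied to $F$ (whose coefficient $*$-algebra is all of $C(F)$ and is spanned by matrix coefficients of irreducibles), some nontrivial irreducible corepresentation $w$ of $F$; pulled back along the inclusion $C(F)\hookrightarrow C(G)$ it becomes a corepresentation of $G$ with $\langle w_{ij}\rangle\subset C(F)$ finite dimensional, contradicting (2).

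For the classical case, commutativity of $C(G)$ forces every quotient $C(H)\subset C(G)$ to be commutative, so (1) becomes the absence of nontrivial finite \emph{classical} quotients of $G$; since $G/G_0$ is always a finite group for compact Lie $G$ and is trivial exactly when $G$ is connected, this matches the standard notion. For the group-dual case $G=\widehat{\Gamma}$, I would invoke Theorem 14.1: irreducible corepresentations of $C^*(\Gamma)$ are one-dimensional, given by elements $g\in\Gamma$, and $\langle g\rangle$ identifies with $C^*(\langle g\rangle_\Gamma)$, which is finite dimensional precisely when $g$ has finite order. Thus (2) translates directly into torsion-freeness of $\Gamma$. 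The main obstacle in this proof is not computational but conceptual, namely settling the right reading of ``$v\neq 1$'' in (2) so that Proposition 2.15 can be used without fuss; once that is done, the rest is essentially a matter of unpacking Peter-Weyl in each of the three settings.
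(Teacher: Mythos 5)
Your proof is correct and follows essentially the same route as the paper's much terser argument: the equivalence via the correspondence between quotient quantum groups $G\to F$ and coefficient subalgebras $C(F)=\langle v_{ij}\rangle\subset C(G)$ (Proposition 2.14, which you cite as 2.15), the classical case via finiteness of the component group $G/G_0$ for compact Lie groups, and the group-dual case via one-dimensionality of the irreducible corepresentations of $C^*(\Gamma)$. Your preliminary clarification of how ``$v\neq 1$'' in condition (2) must be read (namely $v$ nontrivial, i.e.\ containing a nontrivial irreducible summand) is a sensible addition that the paper leaves implicit.
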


\begin{proof}
The above equivalence comes from the fact that a quotient $G\to F$ must correspond to an embedding $C(F)\subset C(G)$, which must be of the form:
$$C(F)=<v_{ij}>$$

Regarding now the last two assertions, the situation here is as follows:

\medskip

(1) In the classical case, $G\subset U_N$, it is well-known that $F=G/G_1$ is a finite group, where $G_1$ is the connected component of the identity $1\in G$, and this gives the result.

\medskip

(2) As for the group dual case, $G=\widehat{\Gamma}$, here the irreducible corepresentations are 1-dimensional, corresponding to the group elements $g\in\Gamma$, and this gives the result.
\end{proof}

Along the same lines, and at a more specialized level, we can talk as well about the connected component of the identity $G_0\subset G$, obtained at the algebra level by dividing the Woronowicz algebra $C(G)$ by a suitable Hopf ideal, as to make dissapear the corepresentations $v$ such that $A_v$ is finite dimensional. See Pinzari et al. \cite{cdp}, \cite{dpr}.

\bigskip

Finally, once again in connection with all the above, we can talk as well about normal subgroups, and about simple compact quantum groups, as follows:

\index{normal subgroup}

\begin{definition}
Given a quantum subgroup $H\subset G$, coming from a quotient map $\pi:C(G)\to C(H)$, the following are equivalent:
\begin{enumerate}
\item The following algebra satisfies $\Delta(A)\subset A\otimes A$:
$$A=\left\{a\in C(G)\Big|(id\otimes\pi)\Delta(a)=a\otimes1\right\}$$

\item The following algebra satisfies $\Delta(B)\subset B\otimes B$:
$$B=\left\{a\in C(G)\Big|(\pi\otimes id)\Delta(a)=1\otimes a\right\}$$

\item We have $A=B$, as subalgebras of $C(G)$.
\end{enumerate}
If these conditions are satisfied, we say that $H\subset G$ is a normal subgroup.
\end{definition}

\begin{proof}
This is something well-known, the idea being as follows:

\medskip

(1) The conditions in the statement are indeed equivalent, and in the classical case we obtain the usual normality notion for the subgroups.

\medskip

(2) In the group dual case the normality of any subgroup, which must be a group dual subgroup, is then automatic, with this being something trivial. 

\medskip

(3) For more on these topics, and on the basic compact group theory in general, extended to the present quantum group setting, we refer to \cite{cdp}, \cite{dpr}.
\end{proof}

Summarizing, we have a quite complete theory for the notion of amenability, and for other related notions, coming either from discrete group theory, or from Lie theory.

\section*{14d. Toral conjectures}

Let us discuss now some further questions, in relation with the theory of toral subgroups, developed in chapter 13. We recall from there that associated to any closed subgroup $G\subset U_N^+$ is its diagonal torus, given by the following formula:
$$C(T_1)=C(G)\Big/\left<u_{ij}=0\Big|\forall i\neq j\right>$$

More generally, given a closed subgroup $G\subset U_N^+$ and a matrix $Q\in U_N$, we let $T_Q\subset G$ be the diagonal torus of $G$, with fundamental representation spinned by $Q$:
$$C(T_Q)=C(G)\Big/\left<(QuQ^*)_{ij}=0\Big|\forall i\neq j\right>$$

This torus is then a group dual, given by the formula $T_Q=\widehat{\Lambda}_Q$, as usual up to the standard equivalence relation for the compact quantum groups, in order to avoid amenability issues, where $\Lambda_Q=<g_1,\ldots,g_N>$ is the discrete group generated by the following elements, which are unitaries inside the quotient algebra $C(T_Q)$: 
$$g_i=(QuQ^*)_{ii}$$

As explained in chapter 13, the correct analogue of the maximal torus for $G\subset U_N^+$ is the collection of these spinned tori, called skeleton of $G$:
$$T=\left\{T_Q\subset G\big|Q\in U_N\right\}$$

Finally, let us recall from chapter 13 that several properties of $G$ are conjecturally encoded by the skeleton $T$, and with the conjectures being usually verified for the compact Lie groups, for the duals of the  finitely generated discrete groups, and in a few other cases. We have the following result, from \cite{bpa}, complementing the material in chapter 13:

\index{toral conjectures}
\index{amenability}
\index{growth}

\begin{theorem}
The following hold, both over the category of compact Lie groups, and over the category of duals of finitely generated discrete groups:
\begin{enumerate}
\item Characters: if $G$ is connected, for any nonzero $P\in C(G)_{central}$ there exists $Q\in U_N$ such that $P$ becomes nonzero, when mapped into $C(T_Q)$.

\item Amenability: a closed subgroup $G\subset U_N^+$ is coamenable if and only if each of the tori $T_Q$ is coamenable, in the usual discrete group sense.

\item Growth: assuming $G\subset U_N^+$, the discrete quantum group $\widehat{G}$ has polynomial growth if and only if each the discrete groups $\widehat{T_Q}$ has polynomial growth.
\end{enumerate}
\end{theorem}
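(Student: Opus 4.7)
The plan is to verify each of the three claims (1), (2), (3) separately in the two cases---classical compact Lie groups $G\subset U_N$ and duals $G=\widehat{\Gamma}$ of finitely generated discrete groups---using in each case different, but standard, structural input already assembled earlier in the text. The unifying observation for the group dual case is that with the canonical diagonal embedding $u=\mathrm{diag}(g_1,\ldots,g_N)$, Proposition 13.11 gives $\Lambda_1=\Gamma$ and hence $T_1=G$ itself, so any statement ``for every $T_Q$'' collapses in the hard direction to the case $Q=1$, which is the property of $G$ itself. The unifying observation for the classical case is Proposition 13.10, namely $T_Q=G\cap(Q^*\mathbb{T}^NQ)$, combined with the fact that every $g\in G\subset U_N$ is diagonalizable, so that $G=\bigcup_Q T_Q$ set-theoretically; this is essentially Theorem 13.12(3) and will drive all three parts.

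For (1), in the classical case I would first recognize via the Gelfand picture that $P\in C(G)_{central}$ is a genuine continuous class function on $G$, pick $g_0\in G$ with $P(g_0)\neq 0$, and diagonalize $g_0=Q^*DQ$ with $D\in\mathbb{T}^N$; then $g_0\in T_Q$, so $P$ restricts to a nonzero element of $C(T_Q)$. The role of connectedness here is minor (it appears in the background to ensure that $C(G)_{central}$ is large enough to be worth discussing via characters), and in any case is inherited at the level of the Gelfand spectrum. In the group dual case, cocommutativity gives $C(G)_{central}=C^*(\Gamma)$, the restriction to $C(T_1)=C^*(\Gamma)$ is the identity, and the statement is tautological.

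For (2), in the classical case $G$ and each closed abelian subgroup $T_Q$ have commutative function algebras, so Theorem 14.6 (via the trivial counit factorization) makes both sides automatic. In the group dual case, $G=\widehat{\Gamma}$ is coamenable iff $\Gamma$ is amenable; since amenability passes to the quotients $\Lambda_Q=\widehat{T_Q}$, coamenability of $G$ forces coamenability of all $T_Q$, and conversely the case $Q=1$ gives back $G$ itself. For (3), the classical case uses the Weyl dimension formula, which yields polynomial growth of $\widehat{G}$ for any compact Lie group $G$, while each $\widehat{T_Q}$ is a finitely generated abelian discrete group and hence also of polynomial growth; so both sides are unconditionally true. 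In the group dual case, $\widehat{G}=\Gamma$ and $\widehat{T_Q}=\Lambda_Q$, and polynomial growth of discrete groups passes to quotients, so the argument is the same reduction to $Q=1$ as in (2).

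The main obstacle is less a single hard step than a packaging issue: six small verifications have to be organized so that the role of the hypotheses (``connected'' in (1), and the choice of category in (2), (3)) is transparent, and so that the case $Q=1$ is consistently identified as the trivial direction on the group dual side. A secondary subtlety is making the classical case of (3) rigorous, where one must combine the Weyl formula (a statement about dimensions of irreducibles at each level of the Cayley graph of $\widehat{G}$) with the ball-volume definition $b_k=\sum_{l(v)\leq k}\dim(v)^2$ from Definition 14.14, and check that the resulting growth is indeed polynomial; this is standard but slightly technical, and will likely be the longest piece of routine work in the proof.
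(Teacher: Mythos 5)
Your proposal is correct and, for parts (2) and (3) plus the whole group dual side, follows essentially the same reduction as the paper: use $T_Q = G\cap Q^*\mathbb{T}^N Q$ and diagonalizability in the classical case, use $T_1=\widehat{\Gamma}$ (resp.\ $T_Q=\widehat{\Gamma}$ for a spinned embedding) plus the fact that amenability and polynomial growth pass to quotients in the group dual case, and note that the classical case of (2) is automatic and the classical case of (3) is the standard Weyl-formula estimate.

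Where you genuinely diverge is in the classical case of (1). The paper chooses a single $Q$ at the outset by conjugating a fixed maximal torus $T\subset G$ into $\mathbb{T}^N$, then invokes the fact that a central function on a \emph{connected} compact Lie group is determined by its restriction to $T$; here connectedness is used in an essential way, and the resulting $Q$ works simultaneously for all $P$. You instead diagonalize a specific point $g_0$ where $P(g_0)\neq 0$, obtaining a $Q=Q(P)$ depending on $P$. This is more elementary and, as you note, does not actually use connectedness at all; the trade-off is that you lose the uniformity in $Q$ which is the natural conclusion one wants when thinking of $T_Q$ as playing the role of a maximal torus. Your aside that the role of connectedness is ``minor'' slightly misreads the paper's intent: the hypothesis is there precisely because the paper's argument reduces everything to a single maximal torus, which only controls $C(G)_{central}$ in the connected case. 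Both arguments prove the stated result.
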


\begin{proof}
In the classical case, where $G\subset U_N$, the proof goes as follows:

\medskip

(1) Characters. We can take here $Q\in U_N$ to be such that $QTQ^*\subset\mathbb T^N$, where $T\subset U_N$ is a maximal torus for $G$, and this gives the result.

\medskip

(2) Amenability. This conjecture holds trivially in the classical case, $G\subset U_N$, due to the fact that these latter quantum groups are all coamenable.

\medskip

(3) Growth. This is something nontrivial, well-known from the theory of compact Lie groups, and we refer here for instance to D'Andrea-Pinzari-Rossi \cite{dpr}.

\medskip

Regarding now the group duals, here everything is trivial. Indeed, when the group duals are diagonally embedded we can take $Q=1$, and when the group duals are embedded by using a spinning matrix $Q\in U_N$, we can use precisely this matrix $Q$.
\end{proof}

As in the previous chapter with the general results regarding the tori there, it is conjectured that the properties in Theorem 14.18 should hold in general. However, proving such things in general is probably something quite difficult, because Tannakian duality, which is basically our only tool, leads into fairly complicated combinatorial questions.

\bigskip

Following \cite{bpa}, as a first solid piece of evidence for the above conjectures, we have the following result, regarding the main examples of free quantum groups:

\index{free quantum group}

\begin{theorem}
The character, amenability and growth conjectures hold for the free quantum groups $G=O_N^+,U_N^+,S_N^+,H_N^+$.
\end{theorem}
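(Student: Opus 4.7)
The plan is to exploit the rich supply of standard tori available for each of the four free quantum groups, using the fact that these tori are large enough to witness non-coamenability, exponential growth, and detect central functions. For each conjecture, one direction is typically formal while the other requires exhibiting an explicit ``bad'' torus, and the Fourier torus machinery developed in section 13 together with Theorem 13.28 will be our main source of examples.

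For the amenability conjecture, the ``only if'' direction is formal: any quantum subgroup $\widehat{\Lambda} \subset G$ of a coamenable $G$ has $\Lambda$ amenable, by a standard restriction argument applied to the counit. For the ``if'' direction, one argues by contrapositive, exhibiting in each non-coamenable case an explicit torus that is non-amenable as a discrete group. For $O_N^+$ and $U_N^+$ with $N \geq 3$, Theorem 2.17(3) gives diagonal tori $\widehat{\mathbb{Z}_2^{*N}} \subset O_N^+$ and $\widehat{F_N} \subset U_N^+$, both containing a copy of $F_2$ and hence non-amenable. For $S_N^+$ with $N \geq 5$, Theorem 13.28 applied to the partition $N = 2 + 3 + \cdots$ yields a Fourier torus $\widehat{\mathbb{Z}_2 * \mathbb{Z}_3 * \cdots}$, and since $\mathbb{Z}_2 * \mathbb{Z}_3 \simeq PSL_2(\mathbb{Z})$ contains $F_2$, this torus is non-amenable. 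For $H_N^+$ with $N \geq 4$, a similar construction via the cubic relations produces a non-amenable torus. In the remaining small-$N$ cases ($O_2^+$, $U_1^+$, $S_N^+$ for $N\leq 4$, etc.) the quantum group is coamenable and every torus is too, so the equivalence holds trivially.

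For the growth conjecture, one direction is again essentially formal: polynomial growth of $\widehat{G}$ restricts to polynomial growth of $\widehat{T_Q}$ for any standard torus, since inclusions $\widehat{T_Q} \subset \widehat{G}$ translate into quotients of Cayley balls. For the converse, Theorem 14.15 tells us that polynomial growth implies coamenability, so in the non-coamenable cases treated above it suffices to observe that the explicit tori exhibited for the amenability argument have exponential growth (each of $\mathbb{Z}_2^{*N}$, $F_N$, $\mathbb{Z}_2 * \mathbb{Z}_3$, etc. contains $F_2$, hence has exponential growth). In the small coamenable cases both sides have polynomial growth, and matching rates is an elementary check case by case.

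The character conjecture is where the main obstacle lies. The plan is to use the toral generation results for the free quantum groups summarized in Theorem 13.21, combined with the Fourier torus structure from section 13: since $O_N^+, U_N^+, H_N^+$ appear as diagonal (or Fourier) liberations of their classical versions, and $S_N^+$ is generated by its Fourier tori, any nonzero central element $P \in C(G)_{central}$ cannot vanish in every $C(T_Q)$ — otherwise $P$ would vanish on a family of subgroups jointly generating $G$, forcing $P = 0$. Making this precise requires translating ``joint generation'' into a statement at the level of the central algebra, which is where the technical difficulty lies: one must show that the map $C(G)_{central} \to \prod_Q C(T_Q)$ is injective, and this is not immediate from the generation statement $G = \langle T_Q \mid Q \in U_N \rangle$, since central functions can a priori be invisible on small subgroups. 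The hardest step will be to establish this injectivity, probably by showing that characters of irreducible corepresentations restrict to non-trivial linear combinations of characters on suitable Fourier tori, using Peter–Weyl theory together with the explicit magic/Brauer-type presentation of the Tannakian category of each of our four quantum groups.
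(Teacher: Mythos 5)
Your treatment of the amenability and growth conjectures is essentially the paper's own argument: the same non-amenable witnesses are used ($\mathbb Z_2^{*N}$ and $F_N$ as diagonal tori for $O_N^+,H_N^+$ and $U_N^+$, and the Fourier torus dual to $\mathbb Z_2*\mathbb Z_3$ inside $S_N^+$ for $N\geq5$), and the reduction of the growth statement to the coamenability dichotomy is the same. Two small points there: $H_3^+$ is already non-coamenable, so your case split "$H_N^+$ with $N\geq4$" leaves it stranded among the "small coamenable cases" (the diagonal torus $\mathbb Z_2^{*3}$ covers it, so this is only a bookkeeping slip); and for the infinite coamenable cases $O_2^+,S_4^+,H_2^+$ you should say why $\widehat G$ itself has polynomial growth -- the paper gets this from invariance of growth under twisting, via $O_2^+=SU_2^{-1}$, $S_4^+=SO_3^{-1}$, $H_2^+=O_2^{-1}$ (equivalently, from the explicit Clebsch--Gordan fusion rules and polynomial dimension formulae), rather than from an unspecified "elementary check".

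The genuine gap is in the character conjecture, which is the substantive part of the theorem, and your proposal does not close it. Deducing detection of central elements from the generation statement $G=\langle T_Q\mid Q\in U_N\rangle$ is exactly the step you concede is missing, and it does not follow: generation by a family of subgroups gives injectivity of $C(G)\to\prod_Q$ only after composing with the coproduct iterations, and says nothing directly about the single restriction map $C(G)_{central}\to\prod_Q C(T_Q)$; moreover for $S_N^+$ the diagonal torus is trivial, so "liberation" alone cannot be the mechanism. The paper's argument is different and concrete: it first identifies $C(G)_{central}$ explicitly -- polynomial on $\chi=\sum_iu_{ii}$ for $O_N^+$ and $S_N^+$ ($N\geq4$, via the Clebsch--Gordan fusion rules), $*$-polynomial on $\chi$ for $U_N^+$, and polynomial on the two variables $\chi=\sum_iu_{ii}$, $\chi'=\sum_iu_{ii}^2$ for $H_N^+$ -- and then verifies that the images of these generators in one well-chosen torus are algebraically free, so a nonzero central element already survives there. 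For $O_N^+,U_N^+$ the torus is the diagonal one, with $\rho=\sum_ig_i$ generating a polynomial ($*$-polynomial) algebra in $C^*(\mathbb Z_2^{*N})$, resp. $C^*(F_N)$; for $S_4^+$ one takes $Q=diag(F_2,F_2)$, giving $\Gamma_Q=D_\infty$ and $\rho=2+g+h$; for $H_2^+$ one takes $Q=F_2$, giving $\Gamma_Q=D_\infty$ with $\rho=g+h$ and $\rho'=1+gh$ jointly generating a polynomial algebra; the cases $S_N^+$ ($N\geq5$) and $H_N^+$ ($N\geq3$) then follow by functoriality through the embeddings $S_4^+\subset S_N^+$ and $H_2^+\subset H_N^+$. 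Your closing suggestion ("characters of irreducibles restrict to non-trivial linear combinations on suitable Fourier tori") points in this direction but is not the right formulation: what must be proved is algebraic independence of the image of the central generator(s) in a single torus group algebra, not mere non-vanishing of restricted characters, and without that your character argument does not go through.
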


\begin{proof}
We have $3\times4=12$ assertions to be proved, and the idea in each case will be that of using certain special group dual subgroups. We will mostly use the group dual subgroups coming at $Q=1$, which are well-known to be as follows:
$$G=O_N^+,U_N^+,S_N^+,H_N^+\implies\Gamma_1=\mathbb Z_2^{*N},F_N,\{1\},\mathbb Z_2^{*N}$$ 

However, for some of our 12 questions, using these subgroups will not be enough, and we will use as well some carefully chosen subgroups of type $\Gamma_Q$, with $Q\neq1$.

\medskip

As a last ingredient, we will need some specialized structure results for $G$, in the cases where $G$ is coamenable. Once again, the theory here is well-known, and the situations where $G=O_N^+,U_N^+,S_N^+,H_N^+$ is coamenable, along with the values of $G$, are as follows:
$$\begin{cases}
O_2^+=SU_2^{-1}\\
S_2^+=S_2,S_3^+=S_3,S_4^+=SO_3^{-1}\\
H_2^+=O_2^{-1}
\end{cases}$$

To be more precise, the equalities $S_N^+=S_N$ at $N\leq3$ are known since Wang's paper \cite{wa2}, and the twisting results are all well-known, and we refer here to \cite{bbd}.

\medskip

With these ingredients in hand, we can now go ahead with the proof. It is technically convenient to split the discussion over the 3 conjectures, as follows:

\medskip

(1) Characters. For $G=O_N^+,U_N^+$, it is known that the algebra $C(G)_{central}$ is polynomial, respectively $*$-polynomial, on the following variable:
$$\chi=\sum_iu_{ii}$$

Thus, it is enough to show that the following variable generates a polynomial, respectively $*$-polynomial algebra, inside the group algebra of $\mathbb Z_2^{*N},F_N$:
$$\rho=\sum_ig_i$$
 
But for the group $\mathbb Z_2^{*N}$ this is clear, and by using a multiplication by a unitary free from $\mathbb Z_2^{*N}$, the result holds as well for $F_N$.

\medskip

Regarding now $G=S_N^+$, we have three cases to be discussed, as follows:

\medskip

-- At $N=2,3$ this quantum group collapses to the usual permutation group $S_N$, and the character conjecture holds indeed. 

\medskip

-- At $N=4$ we have $S_4^+=SO_3^{-1}$, the fusion rules are the Clebsch-Gordan ones, and the algebra $C(G)_{central}$ is therefore polynomial on $\chi=\sum_iu_{ii}$. Now observe that the spinned torus, with $Q=diag(F_2,F_2)$, is the following discrete group: 
$$\Gamma_Q=\mathbb Z_2*\mathbb Z_2=D_\infty$$

Since $Tr(u)=Tr(Q^*uQ)$, the image of $\chi=\sum_iu_{ii}$ in the quotient $C^*(\Gamma_Q)$ is the variable $\rho=2+g+h$, where $g,h$ are the generators of the two copies of $\mathbb Z_2$. Now since this latter variable generates a polynomial algebra, we obtain the result. 

\medskip

-- At $N\geq5$ now, the fusion rules are once again the Clebsch-Gordan ones, the algebra $C(G)_{central}$ is, as before, polynomial on $\chi=\sum_iu_{ii}$, and the result follows by functoriality from the result at $N=4$, by using the embedding $S_4^+\subset S_N^+$.

\medskip

Regarding now $G=H_N^+$, here it is known, from the computations in \cite{bv1}, that the algebra $C(G)_{central}$ is polynomial on the following two variables:
$$\chi=\sum_iu_{ii}\quad,\quad 
\chi'=\sum_iu_{ii}^2$$

We have two cases to be discussed, as follows:

\medskip

-- At $N=2$ we have the following formula, which is well-known, and elementary:
$$H_2^+=O_2^{-1}$$

Also, as explained in \cite{bbd}, with $Q=F_2$ we have:
$$\Gamma_Q=D_\infty$$

Let us compute now the images $\rho,\rho'$ of the above variables $\chi,\chi'$ in the group algebra of $D_\infty$. As before, from $Tr(u)=Tr(Q^*uQ)$ we obtain the following formula, where $g,h$ are the generators of the two copies of $\mathbb Z_2$: 
$$\rho=g+h$$

Regarding now $\rho'$, let us first recall that the quotient map $C(H_2^+)\to C^*(D_\infty)$ is constructed as follows: 
$$\frac{1}{2}\begin{pmatrix}1&1\\1&-1\end{pmatrix}\begin{pmatrix}u_{11}&u_{12}\\u_{21}&u_{22}\end{pmatrix}\begin{pmatrix}1&1\\1&-1\end{pmatrix}\to\begin{pmatrix}g&0\\0&h\end{pmatrix}$$

Equivalently, this quotient map is constructed as follows:
\begin{eqnarray*}
\begin{pmatrix}u_{11}&u_{12}\\u_{21}&u_{22}\end{pmatrix}
&\to&\frac{1}{2}\begin{pmatrix}1&1\\1&-1\end{pmatrix}\begin{pmatrix}g&0\\0&h\end{pmatrix}\begin{pmatrix}1&1\\1&-1\end{pmatrix}\\
&=&\frac{1}{2}\begin{pmatrix}g+h&g-h\\g-h&g+h\end{pmatrix}
\end{eqnarray*}

We can now compute the image of our character, as follows:
\begin{eqnarray*}
\rho'
&=&\frac{1}{2}(g+h)^2\\
&=&\frac{1}{2}(2+2gh)\\
&=&1+gh
\end{eqnarray*}

By using now the elementary fact that the variables $\rho=g+h$ and $\rho'=1+gh$ generate a polynomial algebra inside $C^*(D_\infty)$, this gives the result. 

\medskip

-- Finally, at $N\geq3$ the result follows by functoriality, via the standard diagonal inclusion $H_2^+\subset H_N^+$, from the result at $N=2$, that we established above. 

\medskip

(2) Amenability. Here the cases where $G$ is not coamenable are those of $O_N^+$ with $N\geq3$, $U_N^+$ with $N\geq2$, $S_N^+$ with $N\geq5$, and $H_N^+$ with $N\geq3$. 

\medskip

-- For $G=O_N^+,H_N^+$ with $N\geq3$ the result is clear, because the discrete group $\Gamma_1=\mathbb Z_2^{*N}$ is not amenable. 

\medskip

-- Clear as well is the result for $U_N^+$ with $N\geq2$, because the discrete group $\Gamma_1=F_N$ is not amenable either. 

\medskip

-- Finally, for $S_N^+$ with $N\geq5$ the result holds as well, because of the presence of Bichon's group dual subgroup $\widehat{\mathbb Z_2*\mathbb Z_3}$.

\medskip

(3) Growth. Here the growth is polynomial precisely in the situations where $G$ is infinite and coamenable, the precise cases being:
$$O_2^+=SU_2^{-1}\quad,\quad 
S_4^+=SO_3^{-1}\quad,\quad 
H_2^+=O_2^{-1}$$

With these formulae in hand, the result follows from the well-known fact that the growth invariants are stable under twisting. 
\end{proof}

With a bit more work, the above result from \cite{bpa} can be extended to general quantum reflection groups $H_N^{s+}$ as well, and in particular to the quantum group $K_N^+$, and we conclude that our conjectures hold for the main easy quantum groups, namely:
$$\xymatrix@R=20pt@C=20pt{
&K_N^+\ar[rr]&&U_N^+\\
H_N^+\ar[rr]\ar[ur]&&O_N^+\ar[ur]\\
&K_N\ar[rr]\ar[uu]&&U_N\ar[uu]\\
H_N\ar[uu]\ar[ur]\ar[rr]&&O_N\ar[uu]\ar[ur]
}$$

As a second piece of evidence now for our conjectures, of different nature, we will prove that these conjectures hold for any half-classical quantum group. In order to do so, we can use the modern approach to half-liberation, from Bichon and Dubois-Violette \cite{bdu}, based on crossed products and related $2\times2$ matrix models, as follows:

\index{half-liberation}
\index{matrix model}

\begin{theorem}
Given a conjugation-stable closed subgroup $H\subset U_N$, consider the algebra $C([H])\subset M_2(C(H))$ generated by the following variables:
$$u_{ij}=\begin{pmatrix}0&v_{ij}\\ \bar{v}_{ij}&0\end{pmatrix}$$
Then $[H]$ is a compact quantum group, we have $[H]\subset O_N^*$, and any non-classical subgroup $G\subset O_N^*$ appears in this way, with $G=O_N^*$ itself appearing from $H=U_N$.
\end{theorem}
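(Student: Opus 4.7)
The plan is to handle the four assertions in turn: (i) $[H]$ is a Woronowicz algebra, (ii) $[H]\subset O_N^*$, (iii) every non-classical $G\subset O_N^*$ is of the form $[H]$, and (iv) $[U_N]=O_N^*$. Parts (i) and (ii) will be treated together, via the universal property of $C(O_N^*)$.

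First, for (i) and (ii), I would verify directly that the generators $u_{ij}\in M_2(C(H))$ satisfy the defining relations of $C(O_N^*)$. Self-adjointness is immediate from the computation
$$u_{ij}^*=\begin{pmatrix}0&\bar v_{ij}^{\,*}\\ v_{ij}^{\,*}&0\end{pmatrix}=\begin{pmatrix}0&v_{ij}\\ \bar v_{ij}&0\end{pmatrix}=u_{ij}.$$
Orthogonality reduces to $\sum_k u_{ik}u_{jk}=\delta_{ij}\,\mathrm{id}_2$, which in turn follows from the two scalar identities $\sum_k v_{ik}\bar v_{jk}=\delta_{ij}$ and $\sum_k \bar v_{ik}v_{jk}=\delta_{ij}$, both valid since $H\subset U_N$ is conjugation-stable. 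Half-commutation $u_{ij}u_{kl}u_{mn}=u_{mn}u_{kl}u_{ij}$ follows because both products live in the anti-diagonal of $M_2(C(H))$, with off-diagonal entries that are monomials in the commuting $v_{ij},\bar v_{ij}$'s, whose order therefore does not matter. By the universality of $C(O_N^*)$ we obtain a surjective $*$-homomorphism $\pi:C(O_N^*)\twoheadrightarrow C([H])$; this transports the Hopf algebra structure of $O_N^*$ to $C([H])$, simultaneously proving (i) and (ii).

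Second, for (iii), the central observation is that the half-commutation relations force the even-degree subalgebra
$$C(G)_0=\overline{\mathrm{span}}\,\bigl\{\,w_{i_1j_1}\cdots w_{i_{2k}j_{2k}}\,\bigr\}\subset C(G)$$
to be commutative, which one checks by proving $abcd=cdab$ via two applications of $abc=cba$, and iterating. By the Gelfand theorem $C(G)_0=C(H)$ for some compact space $H$. The remaining task is to identify $H$ with a closed subgroup of $U_N$ and to canonically extract elements $v_{ij}\in C(G)_1$ (the odd part) such that the given matrix formula realizes an embedding $C(G)\hookrightarrow M_2(C(H))$ sending $w_{ij}$ to $\begin{pmatrix}0&v_{ij}\\ \bar v_{ij}&0\end{pmatrix}$. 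The natural construction uses the $\mathbb Z_2$-grading on $C(G)$ coming from a sign character $G\to\mathbb Z_2$, producing the decomposition $C(G)=C(G)_0\oplus C(G)_1$; the $v_{ij}$'s are then the odd components of the $w_{ij}$'s. That the $v_{ij}$'s appear as the standard coordinates of a closed subgroup $H\subset U_N$ follows by transporting $\Delta_G,\varepsilon_G,S_G$ through the $2\times 2$ matrix model, which forces the required unitarity and Hopf algebra structure on the $v_{ij}$'s.

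Third, for (iv), it suffices to show that the surjection $\pi:C(O_N^*)\twoheadrightarrow C([U_N])$ from step one is an isomorphism. One method is to equip $M_2(C(U_N))$ with the trace $\mathrm{tr}_2\otimes\int_{U_N}$, restrict to $C([U_N])$, and verify on generators that it coincides with the Haar integration of $O_N^*$, both being computed by the half-liberated Weingarten formula associated to $P_2^*$. Faithfulness of this restricted trace then forces $\pi$ to be an isomorphism. The main obstacle is clearly (iii), and more specifically the canonical extraction of the coordinates $v_{ij}$ from the abstract half-liberated data of $G$. The commutativity of $C(G)_0$ is purely diagrammatic (it lives in the category $P_2^*$), but choosing the ``square roots'' $v_{ij}\in C(G)_1$ so that the formula $w_{ij}=\begin{pmatrix}0&v_{ij}\\ \bar v_{ij}&0\end{pmatrix}$ holds in a natural $2\times 2$ model requires a careful grading argument. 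The non-classicality hypothesis is used precisely here: it ensures $C(G)_1\neq 0$ and that the resulting embedding is faithful rather than collapsing. This is the content of \cite{bdu}, and once this construction is in place the other three steps follow rather formally.
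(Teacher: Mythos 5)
Your verification that the matrices $u_{ij}$ are self-adjoint, orthogonal and half-commuting is fine (and matches the paper), but the step where you pass from the resulting surjection $\pi:C(O_N^*)\twoheadrightarrow C([H])$ to the statement that ``this transports the Hopf algebra structure of $O_N^*$ to $C([H])$'' is a genuine gap. A generator-preserving surjection from a Woronowicz algebra onto a concretely given $C^*$-algebra does not induce a Hopf structure on the target unless the kernel is a Hopf ideal, and that is exactly what has to be proved here: compare $C(O_N)\to C(X)$ for a closed non-subgroup subset $X\subset O_N$, where all the defining relations hold and the map is surjective on generators, yet $C(X)$ carries no comultiplication. So the assertion that $[H]$ is a compact quantum group -- i.e.\ that $\Delta(u_{ij})=\sum_k u_{ik}\otimes u_{kj}$, $\varepsilon$, $S$ are well defined on the subalgebra $C([H])\subset M_2(C(H))$ -- needs its own argument. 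The paper supplies it by an alternative realization $C([H])\subset C(H)\rtimes\mathbb Z_2$, where $\mathbb Z_2$ acts on $C(H)$ by $v_{ij}\to\bar v_{ij}$ and $u_{ij}$ corresponds to $v_{ij}\tau$; there $u$ is a unitary corepresentation of a genuine compact quantum group, and the subalgebra generated by its coefficients is automatically a Woronowicz algebra. Note that this is also where the hypothesis that $H$ be conjugation-stable actually enters (the conjugation action must preserve $H$); in your write-up you invoke conjugation-stability to justify $\sum_k v_{ik}\bar v_{jk}=\delta_{ij}$ and its conjugate, but these hold for any closed $H\subset U_N$, which suggests the real role of the hypothesis has been missed.

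The rest is acceptable. For the surjectivity statement (every non-classical $G\subset O_N^*$ is of the form $[H]$) the paper itself only cites \cite{bdu}, so your deferral to that reference, together with the (correct) observation that the half-commutation relations make the even part of $C(G)$ commutative, is on par with the paper; just be aware that extracting $H$ and the coordinates $v_{ij}$ from the $\mathbb Z_2$-grading is the substantial content of \cite{bdu}, not a formality. Your proposed proof of $[U_N]=O_N^*$ via comparison of the model trace $\mathrm{tr}_2\otimes\int_{U_N}$ with the Haar state, plus coamenability of $O_N^*$, is a legitimate and genuinely different route from simply quoting \cite{bdu}; it is in fact carried out in the paper later, as the stationarity of the antidiagonal model (Theorem 16.11 together with Theorem 16.3), though there the identity of traces is checked by a direct computation with $\mathrm{Re}(x)\mathrm{Re}(y)$ identities and invariance of the Haar measure of $U_N$ rather than by matching Weingarten expansions.
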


\begin{proof}
We have several things to be proved, the idea being as follows:

\medskip

(1) As a first observation, the matrices in the statement are self-adjoint. Let us prove now that these matrices are orthogonal. We have:
\begin{eqnarray*}
\sum_ku_{ik}u_{jk}
&=&\sum_k\begin{pmatrix}0&v_{ik}\\ \bar{v}_{ik}&0\end{pmatrix}
\begin{pmatrix}0&v_{jk}\\ \bar{v}_{jk}&0\end{pmatrix}\\
&=&\sum_k\begin{pmatrix}v_{ik}\bar{v}_{jk}&0\\ 0&\bar{v}_{ik}v_{jk}\end{pmatrix}\\
&=&\begin{pmatrix}1&0\\0&1\end{pmatrix}
\end{eqnarray*}

In the other sense, the computation is similar, as follows:
\begin{eqnarray*}
\sum_ku_{ki}u_{kj}
&=&\sum_k\begin{pmatrix}0&v_{ki}\\ \bar{v}_{ki}&0\end{pmatrix}
\begin{pmatrix}0&v_{kj}\\ \bar{v}_{kj}&0\end{pmatrix}\\
&=&\sum_k\begin{pmatrix}v_{ki}\bar{v}_{kj}&0\\ 0&\bar{v}_{ki}v_{kj}\end{pmatrix}\\
&=&\begin{pmatrix}1&0\\0&1\end{pmatrix}
\end{eqnarray*}

(2) Our second claim is that the matrices in the statement half-commute. Consider indeed arbitrary antidiagonal $2\times2$ matrices, with commuting entries, as follows:
$$X_i=\begin{pmatrix}0&x_i\\ y_i&0\end{pmatrix}$$

We have then the following computation:
\begin{eqnarray*}
X_iX_jX_k
&=&\begin{pmatrix}0&x_i\\ y_i&0\end{pmatrix}\begin{pmatrix}0&x_j\\ y_j&0\end{pmatrix}\begin{pmatrix}0&x_k\\ y_k&0\end{pmatrix}\\
&=&\begin{pmatrix}0&x_iy_jx_k\\ y_ix_jy_k&0\end{pmatrix}
\end{eqnarray*}

Since this quantity is symmetric in $i,k$, we obtain, as desired:
$$X_iX_jX_k=X_kX_jX_i$$

(3) According now to the definition of the quantum group $O_N^*$, we have a representation of algebras, as follows where $w$ is the fundamental corepresentation of $C(O_N^*)$:
$$\pi:C(O_N^*)\to M_2(C(H))\quad,\quad 
w_{ij}\to u_{ij}$$

Thus, with the compact quantum space $[H]$ being constructed as in the statement, we have a representation of algebras, as follows:
$$\rho:C(O_N^*)\to C([H])\quad,\quad 
w_{ij}\to u_{ij}$$

(4) With this in hand, it is routine to check that the compact quantum space $[H]$ constructed in the statement is indeed a compact quantum group, with this being best viewed via an equivalent construction, with a quantum group embedding as follows:
$$C([H])\subset C(H)\rtimes\mathbb Z_2$$

(5) As for the proof of the converse, stating that any non-classical subgroup $G\subset O_N^*$ appears in this way, this is something more tricky, and we refer here to \cite{bdu}. 

\medskip

(6) Finally, for the fact that we have indeed $O_N^*=[U_N]$, we refer here as well to \cite{bdu}. We will be back to this as well in chapter 16 below, with a direct analytic proof of this, based on the fact that the representation $\rho$ constructed above, with $H=U_N$, commutes with the respective Haar functionals, and so must be faithful.
\end{proof}

In relation with the above, we will need as well the following result, regarding the irreducible corepresentations, also from Bichon-Dubois-Violette \cite{bdu}:

\begin{theorem}
In the context of the correspondence $H\to[H]$ we have a bijection 
$$Irr([H])\simeq Irr_0(H)\coprod Irr_1(H)$$
where the sets on the right are given by
$$Irr_k(H)=\left\{r\in Irr(H)\Big|\exists l\in\mathbb N,r\in u^{\otimes k}\otimes(u\otimes\bar{u})^{\otimes l}\right\}$$
induced by the canonical identification $Irr(H\rtimes\mathbb Z_2)\simeq Irr(H)\coprod Irr(H)$.
\end{theorem}

\begin{proof}
This is something more technical, also from \cite{bdu}. It is easy to see that we have an equality of projective versions $P[H]=PH$, which gives an inclusion as follows:
$$Irr_0(H)=Irr(PH)\subset Irr([H])$$

As for the remaining irreducible representations of $[H]$, these must come from an inclusion $Irr_1(H)\subset Irr([H])$, appearing as above. See \cite{bdu}.
\end{proof}

Now back the maximal tori, the situation here is very simple, as follows:

\begin{proposition}
The group dual subgroups $\widehat{[\Gamma]}_Q\subset[H]$ appear via
$$[\Gamma]_Q=[\Gamma_Q]$$
from the group dual subgroups $\widehat{\Gamma}_Q\subset H$ associated to $H\subset U_N$.
\end{proposition}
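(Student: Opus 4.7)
The plan is to work in the concrete realization $C([H])\subset C(H)\rtimes\mathbb Z_2$ implicit in the proof of Theorem 14.21, where the $\mathbb Z_2$-action on $C(H)$ is complex conjugation (legitimate since $H$ is conjugation-stable), and the generators of $C([H])$ are identified as $u_{ij}=v_{ij}\cdot s$, with $s$ the order-two element of $\mathbb Z_2$ and $v_{ij}$ the coordinates of $H$. The $2\times 2$ matrix description from Theorem 14.21 is recovered via the standard embedding $C(H)\rtimes\mathbb Z_2\hookrightarrow M_2(C(H))$ sending $f\mapsto\mathrm{diag}(f,\bar f)$ and $s$ to the flip, under which $v_{ij}\cdot s$ becomes precisely the matrix $u_{ij}$ of Theorem 14.21.

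Once in this model, the key identity is immediate: since the entries of $Q,Q^*$ are scalars, they commute with $s$, and therefore
$$(QuQ^*)_{ij}=(QvQ^*)_{ij}\cdot s.$$
Because $s$ is a unitary, the relation $(QuQ^*)_{ij}=0$ is equivalent to $(QvQ^*)_{ij}=0$, so the defining quotient for $C([H]_Q)$ (see Theorem 13.7) factors through $C(H)/I\,\rtimes\,\mathbb Z_2=C(T_Q)\rtimes\mathbb Z_2=C^*(\Gamma_Q)\rtimes\mathbb Z_2$, where $I$ is the ideal generated by the off-diagonal entries of $QvQ^*$, and $\alpha$-invariance of $I$ is a short check using $\alpha((QvQ^*)_{ij})=\overline{(QvQ^*)_{ij}}$. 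Writing $g_i\in\Gamma_Q$ for the images of $(QvQ^*)_{ii}$, we conclude that $C([H]_Q)$ is the $C^*$-subalgebra of $C^*(\Gamma_Q)\rtimes\mathbb Z_2$ generated by the unitaries $h_i=g_is$. Short calculations such as $h_ih_j^*=g_ig_j^{-1}$ and $h_ih_j^*h_k=g_ig_j^{-1}g_ks$ show that these generators produce an index-two subalgebra cleanly split between the $\mathbb Z_2$-gradings, and one defines $[\Gamma_Q]$ to be the subgroup of $\Gamma_Q\rtimes\mathbb Z_2$ generated by $h_1,\ldots,h_N$. Since each $h_i$ is group-like and unitary in $C^*(\Gamma_Q)\rtimes\mathbb Z_2$, the coproduct, counit and antipode on $C([H]_Q)$ inherited from $C([H])$ match those coming from the group $[\Gamma_Q]$, giving $C([H]_Q)=C^*([\Gamma_Q])$ and hence $[H]_Q=\widehat{[\Gamma_Q]}=[T_Q]$.

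The main obstacle will be the last identification, namely checking that no additional relations are imposed on the $h_i$ beyond those coming from the group structure of $[\Gamma_Q]\subset\Gamma_Q\rtimes\mathbb Z_2$. Equivalently, one must show that the canonical surjection $C^*([\Gamma_Q])\twoheadrightarrow C([H]_Q)$ is an isomorphism; this amounts to a faithfulness statement for the images of $h_{i_1}^{\varepsilon_1}\cdots h_{i_k}^{\varepsilon_k}$ in the crossed product, which follows from the freeness of the $\mathbb Z_2$-action on $C^*(\Gamma_Q)$ and the fact that distinct reduced words in the $h_i$ give linearly independent elements in $C^*(\Gamma_Q\rtimes\mathbb Z_2)$. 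Alternatively, one can verify everything at the Tannakian level using Proposition 13.13, by identifying $[H]_Q=[H]\cap\mathbb T_Q$ with $[T_Q]$ through the common inclusion into $\mathbb T_Q$ coming from the diagonal form of $QuQ^*$; either route concludes the proof.
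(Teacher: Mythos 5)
Your $Q=1$ argument is essentially the paper's, transported from the $M_2(C(H))$ picture to the crossed product $C([H])\subset C(H)\rtimes\mathbb Z_2$, and that part is fine. The genuine gap is in the passage to general $Q\in U_N$, which is the actual content of the statement. The $\mathbb Z_2$-action $\alpha$ is the $*$-automorphism of $C(H)$ induced by the conjugation map $U\to\bar U$ of $H$; it satisfies $\alpha(v_{ij})=\bar v_{ij}$, but it is complex linear and fixes the scalar entries of $Q$, so $\alpha((QvQ^*)_{ij})=(Q\bar vQ^*)_{ij}$, whereas $\overline{(QvQ^*)_{ij}}=(\bar Q\bar vQ^t)_{ij}$. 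Thus your identity $\alpha((QvQ^*)_{ij})=\overline{(QvQ^*)_{ij}}$ holds only for real $Q$, and the claimed $\alpha$-invariance of $I$ fails in general: at the level of zero sets, $\alpha$ trades $T_Q$ for $\bar T_Q=T_{\bar Q}$, and already for $H=U_N$ one has $T_Q=Q^*\mathbb T^NQ$ and $T_{\bar Q}=Q^t\mathbb T^N\bar Q$, which coincide precisely when $QQ^t$ normalizes the diagonal torus $\mathbb T^N$ -- so for generic complex $Q$ the action does not descend, the crossed product $C(T_Q)\rtimes\mathbb Z_2$ you want to factor through does not exist, and the reduction collapses exactly for the $Q$'s the proposition is about. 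The paper avoids this issue by never pushing the conjugation action down to $C(T_Q)$: it proves the $Q=1$ case by the explicit factorization $C([H])\subset M_2(C(H))\to M_2(C^*(\Gamma_1))$, and then handles general $Q$ by replacing the fundamental corepresentation $u$ of $[H]$ by $w=QuQ^*$ (the $Q$-torus being by definition the diagonal torus for $w$) and rerunning the same argument, rather than by descending $\alpha$.

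A secondary point: even where your factorization does exist, it only yields a surjection from $C([H])/\langle(QuQ^*)_{ij}=0,\ i\neq j\rangle$ onto the subalgebra generated by the $h_i=g_is$, and your "main obstacle" paragraph then argues for faithfulness of $C^*([\Gamma_Q])$ inside $C^*(\Gamma_Q\rtimes\mathbb Z_2)$, which is not the missing direction. What must be checked is that the abstract quotient is not larger than this image, i.e. that every relation among the $g_is$ in $\Gamma_Q\rtimes\mathbb Z_2$ already holds among the images of the $(QuQ^*)_{ii}$ in the quotient; linear independence of reduced words in the crossed product says nothing about that, and the Tannakian alternative via Proposition 13.13 is asserted rather than carried out. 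This reverse comparison is precisely what the paper's commuting-square argument at $Q=1$ is designed to settle, and it would need to be made explicit in your write-up as well.
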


\begin{proof}
Let us first discuss the case $Q=1$. Consider the diagonal subgroup $\widehat{\Gamma}_1\subset H$, with the associated quotient map $C(H)\to C(\widehat{\Gamma}_1)$ denoted:
$$v_{ij}\to\delta_{ij}h_i$$

At the level of the algebras of $2\times2$ matrices, this map induces a quotient map:
$$M_2(C(H))\to M_2(C(\widehat{\Gamma}_1))$$

Our claim is that we have a factorization, as follows:
$$\begin{matrix}
C([H])&\subset&M_2(C(H))\\
\\
\downarrow&&\downarrow\\
\\
C([\widehat{\Gamma}_1])&\subset&M_2(C(\widehat{\Gamma}_1))
\end{matrix}$$

Indeed, it is enough to show that the standard generators of $C([H])$ and of $ C([\widehat{\Gamma}_1])$ map to the same elements of $M_2(C(\widehat{\Gamma}_1))$. But these generators map indeed as follows:
$$\begin{matrix}
u_{ij}&\to&\begin{pmatrix}0&v_{ij}\\ \bar{v}_{ij}&0\end{pmatrix}\\
\\
&&\downarrow\\
\\
\delta_{ij}v_{ij}&\to&\begin{pmatrix}0&\delta_{ij}h_i\\ \delta_{ij}h_i^{-1}&0\end{pmatrix}
\end{matrix}$$

Thus we have the above factorization, and since the map on the left is obtained by imposing the relations $u_{ij}=0$ with $i\neq j$, we obtain, as desired:
$$[\Gamma]_1=[\Gamma_1]$$

In the general case now, $Q\in U_N$, the result follows by applying the above $Q=1$ result to the quantum group $[H]$, with fundamental corepresentation $w=QuQ^*$.
\end{proof}

Now back to our conjectures, we have the following result, from \cite{bpa}:

\index{half-liberation}
\index{toral conjectures}

\begin{theorem}
The $3$ toral conjectures, regarding the characters, amenability and growth, hold for any half-classical quantum group of the form 
$$[H]\subset O_N^*$$
with $H\subset U_N$ being connected.
\end{theorem}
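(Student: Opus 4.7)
The plan is to transfer each of the three conjectures from the classical compact Lie group $H \subset U_N$, where they are known to hold by Theorem 14.19, to the half-classical quantum group $[H] \subset O_N^*$. The two key ingredients are the embedding $C([H]) \subset C(H) \rtimes \mathbb{Z}_2$ implicit in the construction of Theorem 14.21, and the torus identification $[\Gamma]_Q = [\Gamma_Q]$ established in Proposition 14.22, which reduces questions about group dual subgroups of $[H]$ to questions about group dual subgroups of $H$.

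For the amenability conjecture, both sides of the equivalence turn out to hold trivially. Since $H$ is a connected compact Lie group it is coamenable, hence so is the crossed product $C(H) \rtimes \mathbb{Z}_2$ (as $\mathbb{Z}_2$ is finite and $C(H)$ is nuclear), and therefore so is the quantum subgroup $[H]$. Similarly, each $\widehat{\Gamma}_Q \subset H$ is an abelian compact subgroup, its dual $\Gamma_Q$ is a finitely generated abelian group, hence amenable, and applying the $[-]$ construction preserves this property, so every torus $[\Gamma_Q]$ is coamenable as well. The growth conjecture follows a similar pattern: the embedding into $C(H) \rtimes \mathbb{Z}_2$ puts the irreducible corepresentations of $[H]$ in a controlled (at most factor-of-two) correspondence with those of $H$, so the Cayley ball volumes match up to a bounded multiplicative factor, and polynomial growth transfers between $\widehat{[H]}$ and $\widehat{H}$. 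Combining this with the analogous statement for each torus and the classical growth result of Theorem 14.19(3) for connected Lie groups yields the equivalence.

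The character conjecture is the main obstacle. The task is to show that any nonzero $P \in C([H])_{central}$ becomes nonzero in $C([\Gamma_Q])$ for some $Q \in U_N$, assuming $[H]$ is connected. The strategy is to use the embedding $C([H]) \subset M_2(C(H))$ to associate to $P$ one or more related central elements of $C(H)$ — examining the diagonal and off-diagonal $2 \times 2$ blocks produced by $P$ — and to check that at least one such associated element is nonzero and lies in $C(H)_{central}$; this requires verifying that the $\mathbb{Z}_2$-equivariance built into the $[-]$ construction is compatible with centrality. Once this is in place, Theorem 14.19(1) applied to the connected classical group $H$ produces some $Q \in U_N$ for which the associated element remains nonzero on the maximal torus $\widehat{\Gamma}_Q \subset H$, and the factorization diagram from the proof of Proposition 14.22 transports this nonvanishing back up to $C([\Gamma_Q])$. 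The delicate points I anticipate are, first, linking connectedness of $[H]$ to connectedness of the relevant component of $H$ so that Theorem 14.19(1) is applicable, and second, handling the case where $P$ has nontrivial off-diagonal structure inside $M_2(C(H))$: here one may need to replace a diagonalizing torus matrix for $H$ by a block-diagonal spinning matrix adapted to the $2 \times 2$ pattern defining $[H]$, so as to detect both block-types simultaneously.
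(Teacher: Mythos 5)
Your proposal is correct and follows essentially the same route as the paper: the amenability and growth statements are settled trivially (coamenability of $O_N^*$ and its subgroups, polynomial growth of $\widehat{[H]}$ and of the duals of the tori), and the character statement is transported from the connected classical group $H$ via the model $C([H])\subset M_2(C(H))$ of Theorem 14.21 and the torus identification $[\Gamma]_Q=[\Gamma_Q]$ of Proposition 14.22. The ``delicate points'' you flag are exactly what the paper handles, equally tersely, by taking $Q$ so that $\Gamma_Q$ is a maximal torus $T\subset H$ and restricting along $C([H])\to C([T])$, so no genuinely different ingredient is involved.
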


\begin{proof}
We know that the conjectures hold for $H\subset U_N$. The idea will be that of ``transporting'' these results, via $H\to [H]$:

\medskip

(1) Characters. We can pick here a maximal torus $T=\Gamma_Q$ for the compact group $H\subset U_N$, and by using the formula $[\Gamma]_Q=[\Gamma_Q]=[T]$ from Proposition 14.21 above, we obtain the result, via the identification in Theorem 14.20.

\medskip

(2) Amenability. There is nothing to be proved here, because $O_N^*$ is coamenable, and so are all its quantum subgroups. Note however, in relation with the various comments made in chapter 3 above, that in the connected case, the Kesten measures of $G,[T]$ are intimately related. For some explicit formulae here, for $G=O_N^*$ itself, see \cite{ez1}.

\medskip

(3) Growth. Here the situation is similar to the one for the amenability conjecture, because the quantum group  $[H]$ has polynomial growth.
\end{proof}

Let us mention that the above results can be extended to the general, unitary half-classical case, by using some suitable variations of the $2\times2$ matrix models used in the above. We refer here to \cite{bb4} and related papers, for the most dealing with noncommutative geometry in general, where such questions were investigated.

\bigskip

As a conclusion now, the theory of maximal tori developed in this chapter and in the previous one looks like something quite promising, worth some further investigation. Unfortunately, and as usual in mathematics and physics with new topics, things going on slowly here, with the various communities hesitating of getting into the subject, and with the unaffiliated individuals being a dying breed, in these modern times.

\section*{14e. Exercises} 

There are many interesting computations in relation with the material from the present chapter, which was quite varied, and some more general theory to be learned as well. As a first exercise, regarding the general theory, we have:

\begin{exercise}
Learn about topological tensor products, and approximation properties for $C^*$-algebras, such as nuclearity and exactness, and their relation with amenability, and write down a brief account of what you learned.
\end{exercise}

There are of course many things that you can learn here. The more, the better.

\begin{exercise}
Read the Murray-von Neumann and Connes papers about amenability and hyperfiniteness in the von Neumann algebra setting, and then the fact that a discrete quantum group $\Gamma$ is amenable precisely when $L(\Gamma)$ is hyperfinite.
\end{exercise}

Here the first part of the exercise is really tough, to the point that, while many people talk as experts about amenability, tensor products and so on, just a handful of them have read Connes' main paper on the subject. Want to join the club? Read that paper.

\begin{exercise}
Clarify the fact that a discrete quantum group $\Gamma$ is amenable precisely when the associated planar algebra, or subfactor, is amenable.
\end{exercise}

This is something that we briefly talked about, in the above, at the planar algebra level, and the question now is that of understanding all this. As for the previous exercise, some tough von Neumann algebra readings ahead, this time from papers of Jones.

\begin{exercise}
Draw the Cayley graphs of the duals of the main quantum groups,
$$\xymatrix@R=20pt@C=20pt{
&K_N^+\ar[rr]&&U_N^+\\
H_N^+\ar[rr]\ar[ur]&&O_N^+\ar[ur]\\
&K_N\ar[rr]\ar[uu]&&U_N\ar[uu]\\
H_N\ar[uu]\ar[ur]\ar[rr]&&O_N\ar[uu]\ar[ur]
}$$
with respect to suitably chosen fundamental representations, and compute the growth.
\end{exercise}

To be more precise, in order to draw the Cayley graph you need to know the fusion rules for the representations, and this is something that we already know, for the quantum groups in the statement. The problem, however, is that we need the fundamental corepresentation to be suitably modified, as to satisfy $1\in u\sim\bar{u}$, and this is what the first part of the exercise is about, namely performing this modification, where needed, and with the simplest possible solution to this, and then computing the Cayley graph. As for the second question, all sorts of non-trivial computations to be done here.

\begin{exercise}
Find examples and counterexamples for the notion of connectedness, for the compact quantum groups.
\end{exercise}

There are many things that can be done here, for instance in conenction with the various product operations for the compact quantum groups.

\begin{exercise}
Find examples and counterexamples for the notion of normality of subgroups, for the compact quantum groups.
\end{exercise}

Again, many things that can be done here, for instance in connection with the various product operations for the compact quantum groups.

\begin{exercise}
Work out explicitely the helf-liberation formulae
$$O_N^*=[U_N]$$
$$H_N^*=[K_N]$$
appearing as particular cases of the theory developed above.
\end{exercise}

Here the problem is that of working out what happens to the half-liberation theory explained in the above, in the particular cases of the quantum groups $O_N^*$ and $H_N^*$.

\chapter{Homogeneous spaces}

\section*{15a. Quotient spaces}

We have seen that the closed subgroups $G\subset U_N^+$ can be investigated with a variety of techniques, for the most belonging to algebraic geometry and probability theory, and with most of our new, original results concerning the free case, where $S_N^+\subset G\subset U_N^+$. All this suggests developing, more generally, a theory of ``free geometry'', again of algebraic geometry and probability flavor. And also, why not developing as well, along the same lines, theories like ``half-classical geometry'', or ``twisted geometry'', and so on. 

\bigskip

This is certainly possible, but quite time-consuming, and going well beyond the purposes of the present book. Instead, we will provide in this chapter an introduction to all this. Our purpose, quite modest, will be that of extending some of our quantum group results to certain classes of ``quantum homogeneous spaces''. With this being the first step towards constructing the above-mentioned noncommutative geometry theories.

\bigskip

Before starting, a few words on motivations. These come from physics, and more specifically from quantum mechanics, of course. Classical mechanics is described by classical geometry, and so quantum mechanics should be described by some kind of quantum geometry, it's as simple as that. In practice however, all this is quite new, 100 years old, and no one really knows how to do this. In addition, the experts are bitterly split, with Connes \cite{con} and his group believing in differential geometry and smoothness, and with us, meaning me and you, dear reader, and our friends, believing instead in algebraic geometry, or call that Riemannian geometry a la Nash \cite{nas}, and probability.

\bigskip

But probably enough talking, let's have something started, and more comments later. Let us begin with some generalities regarding the quotient spaces. We have:

\index{quotient space}
\index{homogeneous space}

\begin{proposition}
Given a quantum subgroup $H\subset G$, with associated quotient map $\rho:C(G)\to C(H)$, if we define the quotient space $X=G/H$ by setting
$$C(X)=\left\{f\in C(G)\Big|(\rho\otimes id)\Delta f=1\otimes f\right\}$$
then we have a coaction $\Phi:C(X)\to C(X)\otimes C(G)$, obtained as the restriction of the comultiplication of $C(G)$. In the classical case, we obtain the usual space $X=G/H$.
\end{proposition}

\begin{proof}
Observe that $C(X)\subset C(G)$ is indeed a subalgebra, because it is defined via a relation of type $\varphi(f)=\psi(f)$, with $\varphi,\psi$ morphisms. Observe also that in the classical case we obtain the algebra of continuous functions on $X=G/H$, because:
\begin{eqnarray*}
(\rho\otimes id)\Delta f=1\otimes f
&\iff&(\rho\otimes id)\Delta f(h,g)=(1\otimes f)(h,g),\forall h\in H,\forall g\in G\\
&\iff&f(hg)=f(g),\forall h\in H,\forall g\in G\\
&\iff&f(hg)=f(kg),\forall h,k\in H,\forall g\in G
\end{eqnarray*}

Regarding now the construction of $\Phi$, observe that for $f\in C(X)$ we have: 
\begin{eqnarray*}
(\rho\otimes id\otimes id)(\Delta\otimes id)\Delta f
&=&(\rho\otimes id\otimes id)(id\otimes\Delta)\Delta f\\
&=&(id\otimes\Delta)(\rho\otimes id)\Delta f\\
&=&(id\otimes\Delta)(1\otimes f)\\
&=&1\otimes\Delta f
\end{eqnarray*}

Thus $f\in C(X)$ implies $\Delta f\in C(X)\otimes C(G)$, and this gives the existence of $\Phi$, as in the statement. Finally, all the other assertions are clear.
\end{proof}

As an illustration, in the group dual case we have:

\begin{proposition}
Assume that $G=\widehat{\Gamma}$ is a discrete group dual.
\begin{enumerate}
\item The quantum subgroups of $G$ are $H=\widehat{\Lambda}$, with $\Gamma\to\Lambda$ being a quotient group.

\item For such a quantum subgroup $\widehat{\Lambda}\subset\widehat{\Gamma}$, we have $\widehat{\Gamma}/\widehat{\Lambda}=\widehat{\Theta}$, where $\Theta=\ker(\Gamma\to\Lambda)$.
\end{enumerate}
\end{proposition}

\begin{proof}
This is well-known, the idea being as follows:

\medskip

(1) In one sense, this is clear. Conversely, since the algebra $C(G)=C^*(\Gamma)$ is cocommutative, so are all its quotients, and this gives the result.

\medskip

(2) Consider a quotient map $r:\Gamma\to\Lambda$, and denote by $\rho:C^*(\Gamma)\to C^*(\Lambda)$ its extension. With $f=\sum_{g\in\Gamma}\lambda_g\cdot g\in C^*(\Gamma)$ we have:
\begin{eqnarray*}
f\in C(\widehat{\Gamma}/\widehat{\Lambda})
&\iff&(\rho\otimes id)\Delta(f)=1\otimes f\\
&\iff&\sum_{g\in\Gamma}\lambda_g\cdot r(g)\otimes g=\sum_{g\in\Gamma}\lambda_g\cdot 1\otimes g\\
&\iff&\lambda_g\cdot r(g)=\lambda_g\cdot 1,\forall g\in\Gamma\\
&\iff&supp(f)\subset\ker(r)
\end{eqnarray*}

But this means $\widehat{\Gamma}/\widehat{\Lambda}=\widehat{\Theta}$, with $\Theta=\ker(\Gamma\to\Lambda)$, as claimed.
\end{proof}

Given two quantum spaces $X,Y$, we say that $X$ is a quotient space of $Y$ when we have an embedding of algebras $\alpha:C(X)\subset C(Y)$. With this convention, we have:

\begin{definition}
We call a quotient space $G\to X$ homogeneous when
$$\Delta(C(X))\subset C(X)\otimes C(G)$$
where $\Delta:C(G)\to C(G)\otimes C(G)$ is the comultiplication map.
\end{definition}

In other words, an homogeneous quotient space $G\to X$ is a quantum space coming from a subalgebra $C(X)\subset C(G)$, which is stable under the comultiplication. The relation with the quotient spaces from Proposition 15.1 is as follows:

\begin{theorem}
The following results hold:
\begin{enumerate}
\item The quotient spaces $X=G/H$ are homogeneous.

\item In the classical case, any homogeneous space is of type $G/H$.

\item In general, there are homogeneous spaces which are not of type $G/H$.
\end{enumerate}
\end{theorem}

\begin{proof}
Once again these results are well-known, the proof being as follows:

\medskip

(1) This is clear from Proposition 15.1 above.

\medskip

(2) Consider a quotient map $p:G\to X$. The invariance condition in the statement tells us that we must have an action $G\curvearrowright X$, given by $g(p(g'))=p(gg')$. Thus:
$$p(g')=p(g'')\implies p(gg')=p(gg''),\ \forall g\in G$$

Now observe that the following subset $H\subset G$ is a subgroup:
$$H=\left\{g\in G\Big|p(g)=p(1)\right\}$$

Indeed, $g,h\in H$ implies $p(gh)=p(g)=p(1)$, so $gh\in H$, and the other axioms are satisfied as well. Our claim now, finishing the proof here, is that we have $X=G/H$, via $p(g)\to Hg$. Indeed, the map $p(g)\to Hg$ is well-defined and bijective, because $p(g)=p(g')$ is equivalent to $p(g^{-1}g')=p(1)$, and so to $Hg=Hg'$, as desired. 

\medskip

(3) Given a discrete group $\Gamma$ and an arbitrary subgroup $\Theta\subset\Gamma$, the quotient space $\widehat{\Gamma}\to\widehat{\Theta}$ is homogeneous. Now by using Proposition 15.2 above, we can see that if $\Theta\subset\Gamma$ is not normal, the quotient space $\widehat{\Gamma}\to\widehat{\Theta}$ is not of the form $G/H$.
\end{proof}

Let us try now to understand the properties of the homogeneous spaces $G\to X$, in the above sense. We have the following result, which is once again well-known:

\begin{proposition}
Assume that a quotient space $G\to X$ is homogeneous.
\begin{enumerate}
\item The restriction $\Phi:C(X)\to C(X)\otimes C(G)$ of $\Delta$ is a coaction.

\item We have $\Phi(f)=f\otimes 1\implies f\in\mathbb C1$, and $(id\otimes\int)\Phi f=\int f$.

\item The restriction of $\int$ is the unique unital form satisfying $(\tau\otimes id)\Phi=\tau(.)1$.
\end{enumerate}
\end{proposition}

\begin{proof}
These results are all elementary, the proof being as follows:

\medskip

(1) This is clear from definitions, because $\Delta$ itself is a coaction.

\medskip

(2) If $f\in C(G)$ is such that $\Delta(f)=f\otimes 1$, then by applying the counit we obtain:
$$(\varepsilon\otimes id)\Delta f=(\varepsilon\otimes id)(f\otimes 1)$$

Thus $f=\varepsilon(f)1$, as desired. As for the second assertion, this follows from the left invariance property $(id\otimes\int)\Delta f=\int f$ of the Haar functional, by restriction to $C(X)$.

\medskip

(3) By using the right invariance property $(\int\otimes id)\Delta f=\int f$ of the Haar functional of $C(G)$, we obtain that $tr=\int_{|C(X)}$ is $G$-invariant, in the sense that:
$$(tr\otimes id)\Phi f=tr(f)1$$

Conversely, assuming that $\tau:C(X)\to\mathbb C$ satisfies $(\tau\otimes id)\Phi f=\tau(f)1$, we have:
$$\left(\tau\otimes\int\right)\Phi(f)
=\int(\tau\otimes id)\Phi(f)
=\int(\tau(f)1)
=\tau(f)$$

On the other hand, we can compute the same quantity as follows:
$$\left(\tau\otimes\int\right)\Phi(f)
=\tau\left(id\otimes\int\right)\Phi(f)
=\tau(tr(f)1)
=tr(f)$$

Thus we have $\tau(f)=tr(f)$ for any $f\in C(X)$, and this finishes the proof.
\end{proof}

Let us discuss now an extra issue, of analytic nature. The point is that for one of the most basic examples of actions, $O_N^+\curvearrowright S^{N-1}_{\mathbb R,+}$, the associated morphism $\alpha:C(X)\to C(G)$ is not injective. In order to include such examples, we must relax our axioms:

\index{extended homogeneous space}

\begin{definition}
An extended homogeneous space consists of a morphism of algebras $\alpha:C(X)\to C(G)$, and a coaction map $\Phi:C(X)\to C(X)\otimes C(G)$, such that
$$\xymatrix@R=16mm@C=20mm{
C(X)\ar[r]^\Phi\ar[d]_\alpha&C(X)\otimes C(G)\ar[d]^{\alpha\otimes id}\\
C(G)\ar[r]^\Delta&C(G)\otimes C(G)
}$$
commutes, and such that
$$\xymatrix@R=16mm@C=20mm{
C(X)\ar[r]^\Phi\ar[d]_\alpha&C(X)\otimes C(G)\ar[d]^{id\otimes\int}\\
C(G)\ar[r]^{\int(.)1}&C(X)
}$$
commutes as well, where $\int$ is the Haar integration over $G$. We write then $G\to X$.
\end{definition}

When $\alpha$ is injective we obtain an homogeneous space in the previous sense. The examples with $\alpha$ being not injective, which motivate the above formalism, include the standard action $O_N^+\curvearrowright S^{N-1}_{\mathbb R,+}$, and the standard action $U_N^+\curvearrowright S^{N-1}_{\mathbb C,+}$. 

\bigskip

Here are a few general remarks on the above axioms:

\begin{proposition}
Assume that we have morphisms of algebras $\alpha:C(X)\to C(G)$ and $\Phi:C(X)\to C(X)\otimes C(G)$, satisfying $(\alpha\otimes id)\Phi=\Delta\alpha$.
\begin{enumerate}
\item If $\alpha$ is injective on a dense $*$-subalgebra $A\subset C(X)$, and $\Phi(A)\subset A\otimes C(G)$, then $\Phi$ is automatically a coaction map, and is unique.

\item The ergodicity type condition $(id\otimes\int)\Phi=\int\alpha(.)1$ is equivalent to the existence of a linear form $\lambda:C(X)\to\mathbb C$ such that $(id\otimes\int)\Phi=\lambda(.)1$.
\end{enumerate}
\end{proposition}

\begin{proof}
This is something elementary, the idea being as follows:

\medskip

(1) Assuming that we have a dense $*$-subalgebra $A\subset C(X)$ as in the statement, satisying $\Phi(A)\subset A\otimes C(G)$, the restriction $\Phi_{|A}$ is given by:
$$\Phi_{|A}=(\alpha_{|A}\otimes id)^{-1}\Delta\alpha_{|A}$$

This restriction and is therefore coassociative, and unique. By continuity, $\Phi$ itself follows to be coassociative and unique, as desired.

\medskip

(2) Assuming $(id\otimes\int)\Phi=\lambda(.)1$, we have $(\alpha\otimes\int)\Phi=\lambda(.)1$. On the other hand, we have as well the following formula:
$$\left(\alpha\otimes\int\right)\Phi=\left(id\otimes\int\right)\Delta\alpha=\int\alpha(.)1$$

Thus we obtain $\lambda=\int\alpha$, as claimed.
\end{proof}

Given an extended homogeneous space $G\to X$ as above, with associated map $\alpha:C(X)\to C(G)$, we can consider the image of this latter map:
$$\alpha:C(X)\to C(Y)\subset C(G)$$

Equivalently, at the level of the associated noncommutative spaces, we can factorize the corresponding quotient map $G\to Y\subset X$. With these conventions, we have:

\begin{proposition}
Consider an extended homogeneous space $G\to X$.
\begin{enumerate}
\item $\Phi(f)=f\otimes 1\implies f\in\mathbb C1$.

\item $tr=\int\alpha$ is the unique unital $G$-invariant form on $C(X)$.

\item The image space obtained by factorizing, $G\to Y$, is homogeneous.
\end{enumerate}
\end{proposition}

\begin{proof}
We have several assertions to be proved, the idea being as follows:

\medskip

(1) This follows indeed from $(id\otimes\int)\Phi(f)=\int\alpha(f)1$, which gives:
$$f=\int\alpha(f)1$$

(2) The fact that $tr=\int\alpha$ is indeed $G$-invariant can be checked as follows:
\begin{eqnarray*}
(tr\otimes id)\Phi f
&=&(\smallint\alpha\otimes id)\Phi f\\
&=&(\smallint\otimes id)\Delta\alpha f\\
&=&\smallint\alpha(f)1\\
&=&tr(f)1
\end{eqnarray*}

As for the uniqueness assertion, this follows as before.

\medskip

(3) The condition $(\alpha\otimes id)\Phi=\Delta\alpha$, together with the fact that $i$ is injective, allows us to factorize $\Delta$ into a morphism $\Psi$, as follows:
$$\xymatrix@R=12mm@C=30mm{
C(X)\ar[r]^\Phi\ar[d]_\alpha&C(X)\otimes C(G)\ar[d]^{\alpha\otimes id}\\
C(Y)\ar@.[r]^\Psi\ar[d]_i&C(Y)\otimes C(G)\ar[d]^{i\otimes id}\\
C(G)\ar[r]^\Delta&C(G)\otimes C(G)
}$$

Thus the image space $G\to Y$ is indeed homogeneous, and we are done.
\end{proof}

Finally, we have the following result, which further clarifies our formalism:

\begin{theorem}
Let $G\to X$ be an extended homogeneous space, and construct quotients $X\to X'$, $G\to G'$ by performing the GNS construction with respect to $\int\alpha,\int$. Then $\alpha$ factorizes into an inclusion $\alpha':C(X')\to C(G')$, and we have an homogeneous space.
\end{theorem}

\begin{proof}
We factorize $G\to Y\subset X$ as above. By performing the GNS construction with respect to $\int i\alpha,\int i,\int$, we obtain a diagram as follows:
$$\xymatrix@R=12mm@C=30mm{
C(X)\ar[r]^p\ar[d]_\alpha&C(X')\ar[d]^{\alpha'}\ar[dr]^{tr'}\\
C(Y)\ar[r]^q\ar[d]_i&C(Y')\ar[d]^{i'}&\mathbb C\\
C(G)\ar[r]^r&C(G')\ar[ur]_{\int'}
}$$

Indeed, with $tr=\int\alpha$, the GNS quotient maps $p,q,r$ are defined respectively by:
\begin{eqnarray*}
\ker p&=&\left\{f\in C(X)\Big|tr(f^*f)=0\right\}\\
\ker q&=&\left\{f\in C(Y)\Big|\smallint(f^*f)=0\right\}\\
\ker r&=&\left\{f\in C(G)\Big|\smallint(f^*f)=0\right\}
\end{eqnarray*}

Next, we can define factorizations $i',\alpha'$ as above. Observe that $i'$ is injective, and that $\alpha'$ is surjective. Our claim now is that $\alpha'$ is injective as well. Indeed:
\begin{eqnarray*}
\alpha'p(f)=0
&\implies&q\alpha(f)=0\\
&\implies&\int\alpha(f^*f)=0\\
&\implies&tr(f^*f)=0\\
&\implies&p(f)=0
\end{eqnarray*}

We conclude that we have $X'=Y'$, and this gives the result.
\end{proof}

Summarizing, the basic homogeneous space theory from the classical case extends to the quantum group setting, with a few twists, both of algebraic and analytic nature.

\section*{15b. Partial isometries}

We discuss now some explicit examples of homogeneous spaces. This can be done at several levels of generality, and there has been quite some work here, starting with \cite{bgo}. In what follows we discuss the formalism in \cite{ba4}, which is quite broad, while remaining not very abstract. We will study the spaces of the following type:
$$X=(G_M\times G_N)\big/(G_L\times G_{M-L}\times G_{N-L})$$

These spaces cover indeed the quantum groups and the spheres. And also, they are quite concrete and useful objects, consisting of certain classes of ``partial isometries''. Our main result will be a verification of the Bercovici-Pata liberation criterion, for certain variables associated $\chi\in C(X)$, in a suitable $L,M,N\to\infty$ limit.

\bigskip

We begin with a study in the classical case. Our starting point will be:

\index{partial isometry}

\begin{definition}
Associated to any integers $L\leq M,N$ are the spaces
$$O_{MN}^L=\left\{T:E\to F\ {\rm isometry}\Big|E\subset\mathbb R^N,F\subset\mathbb R^M,\dim_\mathbb RE=L\right\}$$
$$U_{MN}^L=\left\{T:E\to F\ {\rm isometry}\Big|E\subset\mathbb C^N,F\subset\mathbb C^M,\dim_\mathbb CE=L\right\}$$
where the notion of isometry is with respect to the usual real/complex scalar products.
\end{definition}

As a first observation, at $L=M=N$ we obtain the groups $O_N,U_N$:
$$O_{NN}^N=O_N$$
$$U_{NN}^N=U_N$$ 

Another interesting specialization is $L=M=1$. Here the elements of $O_{1N}^1$ are the isometries $T:E\to\mathbb R$, with $E\subset\mathbb R^N$ one-dimensional. But such an isometry is uniquely determined by $T^{-1}(1)\in\mathbb R^N$, which must belong to $S^{N-1}_\mathbb R$. Thus, we have $O_{1N}^1=S^{N-1}_\mathbb R$. Similarly, in the complex case we have $U_{1N}^1=S^{N-1}_\mathbb C$, and so our results here are:
$$O_{1N}^1=S^{N-1}_\mathbb R$$
$$U_{1N}^1=S^{N-1}_\mathbb C$$

Yet another interesting specialization is $L=N=1$. Here the elements of $O_{1N}^1$ are the isometries $T:\mathbb R\to F$, with $F\subset\mathbb R^M$ one-dimensional. But such an isometry is uniquely determined by $T(1)\in\mathbb R^M$, which must belong to $S^{M-1}_\mathbb R$. Thus, we have $O_{M1}^1=S^{M-1}_\mathbb R$. Similarly, in the complex case we have $U_{M1}^1=S^{M-1}_\mathbb C$, and so our results here are:
$$O_{M1}^1=S^{M-1}_\mathbb R$$
$$U_{M1}^1=S^{M-1}_\mathbb C$$

In general, the most convenient is to view the elements of $O_{MN}^L,U_{MN}^L$ as rectangular matrices, and to use matrix calculus for their study. We have indeed:

\begin{proposition}
We have identifications of compact spaces
$$O_{MN}^L\simeq\left\{U\in M_{M\times N}(\mathbb R)\Big|UU^t={\rm projection\ of\ trace}\ L\right\}$$
$$U_{MN}^L\simeq\left\{U\in M_{M\times N}(\mathbb C)\Big|UU^*={\rm projection\ of\ trace}\ L\right\}$$
with each partial isometry being identified with the corresponding rectangular matrix.
\end{proposition}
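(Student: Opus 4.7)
The plan is to construct explicit inverse maps between the two descriptions and verify compatibility, with the topology being automatic since both sides sit as closed subsets of finite-dimensional matrix spaces and the maps are built from polynomial operations on entries.

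For the forward direction, given a partial isometry $T:E\to F$ with $E\subset\mathbb{K}^N$ of dimension $L$ and $F\subset\mathbb{K}^M$, I would extend by zero on $E^\perp$ to obtain a linear map $\tilde T:\mathbb{K}^N\to\mathbb{K}^M$, and let $U$ be its matrix in the standard bases. The key computation is $UU^*=P_F$, the orthogonal projection onto $F$: since $T$ is an isometry $E\to F$, its adjoint $\tilde T^*$ vanishes on $F^\perp$ and equals $T^{-1}$ on $F$, hence $UU^*$ restricts to the identity on $F$ and to zero on $F^\perp$. Since $T$ is an isometry we have $\dim F=\dim E=L$, so $\mathrm{tr}(UU^*)=L$, as required.

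For the reverse direction, starting from $U\in M_{M\times N}(\mathbb{K})$ with $UU^*=P$ an orthogonal projection of trace $L$, I would set $F=\mathrm{range}(P)\subset\mathbb{K}^M$ and $E=(\ker U)^\perp\subset\mathbb{K}^N$. The identity
$$\|U^*v\|^2=\langle UU^*v,v\rangle=\|Pv\|^2$$
shows that $U^*$ is isometric on $F$ and vanishes on $F^\perp$, whence $E=U^*(F)$ has dimension $L$, and $T:=U|_E:E\to F$ is precisely the inverse of the isometry $U^*|_F:F\to E$, hence an isometry in the original sense of Definition 15.1. Checking that these two constructions are mutual inverses is then routine: starting from $T$, the recovered domain $(\ker\tilde T)^\perp$ is $E$, the recovered codomain $\mathrm{range}(UU^*)$ is $F$, and $\tilde T|_E=T$; conversely, starting from $U$, the partial isometry $T=U|_E$ extended by zero on $\ker U=E^\perp$ reproduces $U$.

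There is no real obstacle here beyond bookkeeping: the whole statement is a standard repackaging of the fact that matrices with $UU^*$ a projection are exactly ``partial isometries'' in the operator-theoretic sense. The one point demanding care is the interchange of roles between $E$ and $F$ under adjunction, which is why the defining condition involves $UU^*$ (trace equals $\dim F$) rather than $U^*U$; once one fixes this convention, the real and complex cases are handled by identical arguments, the only difference being the ground field for the scalar product used to define ``isometry''.
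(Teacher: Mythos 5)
Your argument is correct and takes the same route as the paper's (very terse) proof: identify a partial isometry $T:E\to F$ with its extension by zero on $E^\perp$, then with the corresponding rectangular matrix, and verify that the condition ``$UU^*$ is a projection of trace $L$'' exactly captures the image of this identification. You simply supply the bookkeeping details (the computation $UU^*=P_F$, the reverse construction $E=(\ker U)^\perp$, and the mutual-inverse check) that the paper elides.
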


\begin{proof}
We can indeed identify the partial isometries $T:E\to F$ with their corresponding extensions $U:\mathbb R^N\to\mathbb R^M$, $U:\mathbb C^N\to\mathbb C^M$, obtained by setting $U_{E^\perp}=0$. Then, we can identify these latter maps $U$ with the corresponding rectangular matrices.
\end{proof}

As an illustration, at $L=M=N$ we recover in this way the usual matrix description of $O_N,U_N$. Also, at $L=M=1$ we obtain the usual description of $S^{N-1}_\mathbb R,S^{N-1}_\mathbb C$, as row spaces over the corresponding groups $O_N,U_N$. Finally, at $L=N=1$ we obtain the usual description of $S^{N-1}_\mathbb R,S^{N-1}_\mathbb C$, as column spaces over the corresponding groups $O_N,U_N$. 

\bigskip

Now back to the general case, observe that the isometries $T:E\to F$, or rather their extensions $U:\mathbb K^N\to\mathbb K^M$, with $\mathbb K=\mathbb R,\mathbb C$, obtained by setting $U_{E^\perp}=0$, can be composed with the isometries of $\mathbb K^M,\mathbb K^N$, according to the following scheme:
$$\xymatrix@R=15mm@C=15mm{
\mathbb K^N\ar[r]^{B^*}&\mathbb K^N\ar@.[r]^U&\mathbb K^M\ar[r]^A&\mathbb K^M\\
B(E)\ar@.[r]\ar[u]&E\ar[r]^T\ar[u]&F\ar@.[r]\ar[u]&A(F)\ar[u]
}$$

With the identifications in Proposition 15.11 made, the precise statement here is:

\begin{proposition}
We have action maps as follows, which are both transitive,
$$O_M\times O_N\curvearrowright O_{MN}^L\quad,\quad 
(A,B)U=AUB^t$$
$$U_M\times U_N\curvearrowright U_{MN}^L\quad,\quad 
(A,B)U=AUB^*$$
whose stabilizers are respectively the following groups:
$$O_L\times O_{M-L}\times O_{N-L}$$
$$U_L\times U_{M-L}\times U_{N-L}$$
\end{proposition}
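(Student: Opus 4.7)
The plan is to verify three things in sequence: well-definedness of the action, transitivity, and computation of the stabilizer, working in the real case and then noting that the complex case is identical with $B^t$ replaced by $B^*$.

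For well-definedness, I would observe that if $U$ corresponds to a partial isometry via Proposition 15.2, meaning $UU^t$ is a projection of trace $L$, then setting $V = AUB^t$ with $A \in O_M$, $B \in O_N$ gives $VV^t = AUB^tBU^tA^t = AUU^tA^t$, which is a conjugate of a rank-$L$ projection by an orthogonal matrix, hence again a projection of trace $L$. The axioms of a group action are then trivial to check.

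For transitivity, the idea is to pick the base point
$$U_0 = \begin{pmatrix} I_L & 0 \\ 0 & 0 \end{pmatrix} \in M_{M\times N}(\mathbb R)$$
and to show that any $U \in O_{MN}^L$ can be brought to $U_0$. This follows from the singular value decomposition, but more directly from the following argument: the matrix $UU^t$ is an orthogonal projection of rank $L$, so there exists $A \in O_M$ conjugating it to $\mathrm{diag}(I_L, 0)$; similarly $U^tU$ is a rank-$L$ projection on $\mathbb R^N$, diagonalizable by some $B \in O_N$. Adjusting these choices so that the partial isometry structure is preserved (i.e.\ matching eigenvectors via $U$ itself), we get $AUB^t = U_0$, which gives transitivity.

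For the stabilizer computation, write $A, B$ in block form with top-left corners of size $L \times L$: $A = \begin{pmatrix} A_1 & A_2 \\ A_3 & A_4 \end{pmatrix}$, $B = \begin{pmatrix} B_1 & B_2 \\ B_3 & B_4 \end{pmatrix}$. The condition $AU_0B^t = U_0$ unfolds into the four block equations $A_1B_1^t = I_L$, $A_1B_3^t = 0$, $A_3B_1^t = 0$, $A_3B_3^t = 0$, which force $A_1, B_1 \in GL_L$ with $A_1 = (B_1^t)^{-1}$ and $A_3 = 0$, $B_3 = 0$. Imposing now $AA^t = I_M$ and $BB^t = I_N$ and reading the block equations gives $A_2 = 0$, $B_2 = 0$, and $A_1, B_1 \in O_L$ with $A_1 = B_1$, together with $A_4 \in O_{M-L}$ and $B_4 \in O_{N-L}$, independently. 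This produces the claimed isomorphism of the stabilizer with $O_L \times O_{M-L} \times O_{N-L}$, embedded diagonally in $O_M \times O_N$.

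The only nontrivial step is really transitivity, and the mild technical point there is to synchronize the diagonalizations of $UU^t$ and $U^tU$ via the polar/SVD of $U$; everything else is block matrix bookkeeping, and the complex case goes through unchanged with $t \to *$.
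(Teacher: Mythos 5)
Your proof is correct and follows essentially the same route as the paper: same base point $U_0=\bigl(\begin{smallmatrix}1&0\\0&0\end{smallmatrix}\bigr)$, same block-matrix computation of the stabilizer (the paper phrases the condition as $AU_0=U_0B$ rather than expanding $AU_0B^t=U_0$, which is the same thing since $B$ is orthogonal/unitary). You in fact supply more detail than the paper, which asserts transitivity without proof, whereas your SVD/matched-bases argument spells it out.
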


\begin{proof}
We have indeed action maps as in the statement, which are transitive. Let us compute now the stabilizer $G$ of the following point:
$$U=\begin{pmatrix}1&0\\0&0\end{pmatrix}$$

Since $(A,B)\in G$ satisfy $AU=UB$, their components must be of the following form:
$$A=\begin{pmatrix}x&*\\0&a\end{pmatrix}\quad,\quad 
B=\begin{pmatrix}x&0\\ *&b\end{pmatrix}$$

Now since $A,B$ are both unitaries, these matrices follow to be block-diagonal, and so:
$$G=\left\{(A,B)\Big|A=\begin{pmatrix}x&0\\0&a\end{pmatrix},B=\begin{pmatrix}x&0\\ 0&b\end{pmatrix}\right\}$$

The stabilizer of $U$ is then parametrized by triples $(x,a,b)$ belonging respectively to:$$O_L\times O_{M-L}\times O_{N-L}$$
$$U_L\times U_{M-L}\times U_{N-L}$$

Thus, we are led to the conclusion in the statement.
\end{proof}

Finally, let us work out the quotient space description of $O_{MN}^L,U_{MN}^L$. We have here:

\begin{theorem}
We have isomorphisms of homogeneous spaces as follows,
\begin{eqnarray*}
O_{MN}^L&=&(O_M\times O_N)/(O_L\times O_{M-L}\times O_{N-L})\\
U_{MN}^L&=&(U_M\times U_N)/(U_L\times U_{M-L}\times U_{N-L})
\end{eqnarray*}
with the quotient maps being given by $(A,B)\to AUB^*$, where:
$$U=\begin{pmatrix}1&0\\0&0\end{pmatrix}$$
\end{theorem}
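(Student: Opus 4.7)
The plan is to observe that this theorem is essentially a direct corollary of Proposition 15.3, combined with a standard compactness argument. Indeed, Proposition 15.3 has already done all the heavy lifting: it establishes that we have transitive actions $O_M \times O_N \curvearrowright O_{MN}^L$ and $U_M \times U_N \curvearrowright U_{MN}^L$, and computes the stabilizers at the specific point $U = \begin{pmatrix} 1 & 0 \\ 0 & 0 \end{pmatrix}$. What remains is to package this information into the statement of the theorem, as an isomorphism of quotient spaces.

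More precisely, I would proceed as follows. Consider the orbit maps
$$\Phi : O_M \times O_N \to O_{MN}^L, \quad (A,B) \mapsto AUB^t$$
$$\Psi : U_M \times U_N \to U_{MN}^L, \quad (A,B) \mapsto AUB^*$$
which are continuous and, by the transitivity part of Proposition 15.3, surjective. The stabilizer computation in Proposition 15.3 says precisely that the fibers of $\Phi, \Psi$ over $U$ are the subgroups $O_L \times O_{M-L} \times O_{N-L}$, respectively $U_L \times U_{M-L} \times U_{N-L}$, suitably embedded. By equivariance (translating by $(A,B)$), the fiber over $AUB^*$ is the corresponding coset. Thus $\Phi$ and $\Psi$ descend to continuous bijections
$$\overline{\Phi} : (O_M \times O_N)/(O_L \times O_{M-L} \times O_{N-L}) \to O_{MN}^L$$
$$\overline{\Psi} : (U_M \times U_N)/(U_L \times U_{M-L} \times U_{N-L}) \to U_{MN}^L$$
at the level of the quotient spaces.

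To upgrade these continuous bijections to homeomorphisms, I would use the standard compact-Hausdorff argument: the left-hand sides are compact (being quotients of compact groups by closed subgroups), the right-hand sides are Hausdorff (they are closed subsets of matrix algebras, by Proposition 15.2), and a continuous bijection from a compact space to a Hausdorff space is automatically a homeomorphism. Finally, $\overline{\Phi}$ and $\overline{\Psi}$ are equivariant with respect to the left actions of $O_M \times O_N$ and $U_M \times U_N$ respectively, so they are indeed isomorphisms of homogeneous spaces.

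There is really no hard step here; the one place where care is needed is in correctly identifying how the stabilizer subgroup sits inside $O_M \times O_N$ (respectively $U_M \times U_N$), but this is exactly the block-diagonal embedding $(x, a, b) \mapsto (\mathrm{diag}(x, a), \mathrm{diag}(x, b))$ read off from the proof of Proposition 15.3, where the common block $x \in O_L$ or $x \in U_L$ acts simultaneously on both sides, reflecting the fact that $AU = UB$ forces matching top-left corners.
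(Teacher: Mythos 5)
Your proposal is correct and follows the same route as the paper, which simply states that the theorem is a reformulation of Proposition 15.3 with the fixed point $U=\begin{pmatrix}1&0\\0&0\end{pmatrix}$; you have merely spelled out the standard orbit-stabilizer and compact-to-Hausdorff bijection argument that the paper leaves implicit.
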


\begin{proof}
This is just a reformulation of Proposition 15.12 above, by taking into account the fact that the fixed point used in the proof there was $U=(^1_0{\ }^0_0)$.
\end{proof}

Once again, the basic examples here come from the cases $L=M=N$ and $L=M=1$. At $L=M=N$ the quotient spaces at right are respectively:
$$O_N,U_N$$

At $L=M=1$  the quotient spaces at right are respectively:
$$O_N/O_{N-1}\quad,\quad U_N/U_{N-1}$$

In fact, in the general orthogonal $L=M$ case we obtain the following spaces:
\begin{eqnarray*}
O_{MN}^M
&=&(O_M\times O_N)/(O_M\times O_{N-M})\\
&=&O_N/O_{N-M}
\end{eqnarray*}

Also, in the general unitary $L=M$ case we obtain the following spaces:
\begin{eqnarray*}
U_{MN}^M
&=&(U_M\times U_N)/(U_M\times U_{N-M})\\
&=&U_N/U_{N-M}
\end{eqnarray*}

Similarly, the examples coming from the cases $L=M=N$ and $L=N=1$ are particular cases of the general $L=N$ case, where we obtain the following spaces:
\begin{eqnarray*}
O_{MN}^N
&=&(O_M\times O_N)/(O_M\times O_{M-N})\\
&=&O_N/O_{M-N}
\end{eqnarray*}

In the unitary case, we obtain the following spaces:
\begin{eqnarray*}
U_{MN}^N
&=&(U_M\times U_N)/(U_M\times U_{M-N})\\
&=&U_N/U_{M-N}
\end{eqnarray*}

Summarizing, we have here some basic homogeneous spaces, unifying the real and complex spheres with the orthogonal and unitary groups.

\section*{15c. Free isometries}

We can liberate the spaces $O_{MN}^L,U_{MN}^L$, as follows:

\index{free partial isometry}

\begin{definition}
Associated to any integers $L\leq M,N$ are the algebras
\begin{eqnarray*}
C(O_{MN}^{L+})&=&C^*\left((u_{ij})_{i=1,\ldots,M,j=1,\ldots,N}\Big|u=\bar{u},uu^t={\rm projection\ of\ trace}\ L\right)\\
C(U_{MN}^{L+})&=&C^*\left((u_{ij})_{i=1,\ldots,M,j=1,\ldots,N}\Big|uu^*,\bar{u}u^t={\rm projections\ of\ trace}\ L\right)
\end{eqnarray*}
with the trace being by definition the sum of the diagonal entries.
\end{definition}

Observe that the above universal algebras are indeed well-defined, as it was previously  the case for the free spheres, and this due to the trace conditions, which read: 
$$\sum_{ij}u_{ij}u_{ij}^*
=\sum_{ij}u_{ij}^*u_{ij}
=L$$

We have inclusions between the various spaces constructed so far, as follows:
$$\xymatrix@R=15mm@C=15mm{
O_{MN}^{L+}\ar[r]&U_{MN}^{L+}\\
O_{MN}^L\ar[r]\ar[u]&U_{MN}^L\ar[u]}$$

At the level of basic examples now, we first have the following result:

\begin{proposition}
At $L=M=1$ we obtain the following diagram,
$$\xymatrix@R=15mm@C=15mm{
S^{N-1}_{\mathbb R,+}\ar[r]&S^{N-1}_{\mathbb C,+}\\
S^{N-1}_\mathbb R\ar[r]\ar[u]&S^{N-1}_\mathbb C\ar[u]}$$
and at $L=N=1$ we obtain the following diagram:
$$\xymatrix@R=15mm@C=15mm{
S^{M-1}_{\mathbb R,+}\ar[r]&S^{M-1}_{\mathbb C,+}\\
S^{M-1}_\mathbb R\ar[r]\ar[u]&S^{M-1}_\mathbb C\ar[u]}$$
\end{proposition}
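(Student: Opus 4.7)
The plan is to unpack Definition 15.5 in the two degenerate cases $L=M=1$ and $L=N=1$, observing that in each case the matrix $u=(u_{ij})$ degenerates to a single row or column, and that the defining relations then collapse to the sphere relations from section 1. In both cases, the diagrams on the top row (free spheres) will arise from the definitions of $O_{MN}^{L+}$ and $U_{MN}^{L+}$, while the diagrams on the bottom row (classical spheres) will follow from the classical case already recorded in the discussion after Definition 15.5, or equivalently by taking the abelianization.

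First I would treat the case $L=M=1$. Here $u$ is a $1\times N$ row, which I write as $u=(x_1,\ldots,x_N)$. In the real setting, the condition $u=\bar u$ says $x_i=x_i^*$, while $uu^t$ is the scalar $\sum_i x_i^2$; requiring it to be a projection of trace $1$ forces $\sum_i x_i^2=1$. These are exactly the generators and relations defining $C(S^{N-1}_{\mathbb R,+})$. In the complex setting, $uu^*=\sum_i x_ix_i^*$ and $\bar u u^t=\sum_i x_i^*x_i$ are scalars, and demanding both to be projections of trace $1$ gives the two relations $\sum_i x_ix_i^*=\sum_i x_i^*x_i=1$ defining $C(S^{N-1}_{\mathbb C,+})$.

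Next I would treat the case $L=N=1$, where $u$ is now an $M\times 1$ column, written $u=(x_1,\ldots,x_M)^t$. In the real case, $uu^t$ is the $M\times M$ matrix $(x_ix_j)_{ij}$; the idempotency $(uu^t)^2=uu^t$ reads $u(u^tu)u^t=uu^t$, and once we know $u^tu=\sum_i x_i^2$ is a scalar this is equivalent to $\sum_i x_i^2=1$, while self-adjointness of $uu^t$ follows from $u=\bar u$ and the trace condition gives the same relation. Thus we again recover $S^{M-1}_{\mathbb R,+}$. In the complex case, the identical manipulation with $uu^*=(x_ix_j^*)_{ij}$ and $\bar u u^t=(x_i^*x_j)_{ij}$ shows that the two projection/trace conditions are together equivalent to $\sum_i x_ix_i^*=\sum_i x_i^*x_i=1$, yielding $S^{M-1}_{\mathbb C,+}$.

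Finally, the classical vertices of each diagram are obtained either by abelianizing the above presentations, or directly from Proposition 15.2: at $L=M=1$ a real/complex row isometry is a unit vector in $\mathbb R^N$ or $\mathbb C^N$, and at $L=N=1$ a column isometry is a unit vector in $\mathbb R^M$ or $\mathbb C^M$. The arrows in the square come from forgetting the self-adjointness relation $u=\bar u$ (horizontal) and from imposing commutativity (vertical), matching the standard inclusions of spheres from section 1. There is no real obstacle here; the only mildly delicate point is the bookkeeping in the $L=N=1$ case, where the projection relation on the rank-one matrix $uu^*$ has to be rewritten using the scalar $u^*u$ in order to isolate the sphere relation, but this is a one-line manipulation.
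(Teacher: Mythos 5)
Your proposal is correct and is exactly the intended argument: the paper dismisses this with ``both assertions are clear from definitions,'' and your computation is the natural unpacking, collapsing the row/column matrix $u$ so that the projection-plus-trace relations become the sphere relations, with the classical vertices coming from Proposition 15.2. One small remark: in the $L=N=1$ case the trace condition alone already yields $\sum_i x_ix_i^*=\sum_i x_i^*x_i=1$ (idempotency by itself would not force it), and conversely these relations make $uu^*$ and $\bar u u^t$ projections, so the two presentations agree -- which is what your bookkeeping amounts to.
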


\begin{proof}
Both the assertions are clear from definitions.
\end{proof}

We have as well the following result:

\begin{proposition}
At $L=M=N$ we obtain the diagram
$$\xymatrix@R=15mm@C=15mm{
O_N^+\ar[r]&U_N^+\\
O_N\ar[r]\ar[u]&U_N\ar[u]}$$
consisting of the groups $O_N,U_N$, and their liberations.
\end{proposition}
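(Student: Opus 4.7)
The plan is to show that at $L=M=N$ the universal algebras from Definition 15.5 reduce to the Woronowicz algebras from the introduction, namely $C(O_{NN}^{N+})\simeq C(O_N^+)$ and $C(U_{NN}^{N+})\simeq C(U_N^+)$. The classical corners $O_{NN}^N=O_N$ and $U_{NN}^N=U_N$ then follow immediately, either by abelianizing or directly from Proposition 15.2, and the four inclusion arrows in the diagram are already known.

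The technical input I would need is a basic lemma: any projection $p\in M_N(A)$ with trace $\mathrm{tr}(p)=\sum_ip_{ii}=N\cdot 1_A$ equals $1$. Indeed, $q=1-p$ is then a projection of trace $0$, and writing $q=q^*q$ gives $\sum_{ij}q_{ji}^*q_{ji}=0$; positivity together with the $C^*$-identity forces $q_{ji}=0$ for all $i,j$, hence $q=0$. Applied at $L=N$, this lemma collapses ``$uu^*$ is a projection of trace $N$'' to $uu^*=1$ and ``$\bar uu^t$ is a projection of trace $N$'' to $\bar uu^t=1$; in the orthogonal case, the hypothesis $u=\bar u$ yields $u^*=u^t$, and the single condition $uu^t=$ projection of trace $N$ reads simply $uu^*=1$.

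The main step is then to derive the two remaining biunitarity relations from these. From $uu^*=1$, one computes $(u^*u)^2=u^*(uu^*)u=u^*u$, so $u^*u$ is a projection, and
\[ \mathrm{tr}(u^*u)=\sum_{ij}u_{ji}^*u_{ji}=\sum_{ij}u_{ij}^*u_{ij}=\mathrm{tr}(\bar uu^t)=N, \]
so the lemma gives $u^*u=1$. Symmetrically, $u^t\bar u$ is a projection (via $\bar uu^t=1$) of trace $\mathrm{tr}(uu^*)=N$, hence $u^t\bar u=1$. We thus recover the full system $uu^*=u^*u=1$, $\bar uu^t=u^t\bar u=1$ defining $C(U_N^+)$ from the introduction; the orthogonal case runs identically, carrying $u=\bar u$ throughout. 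The main subtlety to watch is that in a noncommutative setting $(AB)^t\neq B^tA^t$ in general, so the two relations $uu^*=1$ and $\bar uu^t=1$ in Definition 15.5 are genuinely independent, and both are essential for the trace identity above to close up and yield the missing relations.
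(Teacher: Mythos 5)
Your argument is correct and takes essentially the same route as the paper's proof: both reduce the statement to comparing universal presentations, then upgrade the ``projection of trace $N$'' conditions to the full biunitarity relations $uu^*=u^*u=1$, $\bar{u}u^t=u^t\bar{u}=1$ by combining the trace identities $Tr(u^*u)=Tr(\bar{u}u^t)$, $Tr(u^t\bar{u})=Tr(uu^*)$ with a positivity argument. The only cosmetic difference is that you eliminate the trace-$N$ projections one at a time via the explicit entry-wise lemma ($q=q^*q$ with vanishing trace forces $q=0$), whereas the paper notes that all four matrices $uu^*,u^*u,\bar{u}u^t,u^t\bar{u}$ are projections of trace $N$ and kills them simultaneously using $(tr\otimes\varphi)(1-p)=0$ for a faithful state $\varphi$.
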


\begin{proof}
We recall that the various quantum groups in the statement are constructed as follows, with the symbol $\times$ standing once again for ``commutative'' and ``free'':
\begin{eqnarray*}
C(O_N^\times)&=&C^*_\times\left((u_{ij})_{i,j=1,\ldots,N}\Big|u=\bar{u},uu^t=u^tu=1\right)\\
C(U_N^\times)&=&C^*_\times\left((u_{ij})_{i,j=1,\ldots,N}\Big|uu^*=u^*u=1,\bar{u}u^t=u^t\bar{u}=1\right)
\end{eqnarray*}

On the other hand, according to Proposition 15.11 and to Definition 15.14 above, we have the following presentation results:
\begin{eqnarray*}
C(O_{NN}^{N\times})&=&C^*_\times\left((u_{ij})_{i,j=1,\ldots,N}\Big|u=\bar{u},uu^t={\rm projection\ of\ trace}\ N\right)\\
C(U_{NN}^{N\times})&=&C^*_\times\left((u_{ij})_{i,j=1,\ldots,N}\Big|uu^*,\bar{u}u^t={\rm projections\ of\ trace}\ N\right)
\end{eqnarray*}

We use now the standard fact that if $p=aa^*$ is a projection then $q=a^*a$ is a projection too. We use as well the following formulae:
$$Tr(uu^*)=Tr(u^t\bar{u})$$
$$Tr(\bar{u}u^t)=Tr(u^*u)$$

We therefore obtain the following formulae:
\begin{eqnarray*}
C(O_{NN}^{N\times})&=&C^*_\times\left((u_{ij})_{i,j=1,\ldots,N}\Big|u=\bar{u},\ uu^t,u^tu={\rm projections\ of\ trace}\ N\right)\\
C(U_{NN}^{N\times})&=&C^*_\times\left((u_{ij})_{i,j=1,\ldots,N}\Big|uu^*,u^*u,\bar{u}u^t,u^t\bar{u}={\rm projections\ of\ trace}\ N\right)
\end{eqnarray*}

Now observe that, in tensor product notation, the conditions at right are all of the form $(tr\otimes id)p=1$. Thus, $p$ must be follows, for the above conditions:
$$p=uu^*,u^*u,\bar{u}u^t,u^t\bar{u}$$

We therefore obtain that, for any faithful state $\varphi$, we have:
$$(tr\otimes\varphi)(1-p)=0$$  

It follows from this that the following projections must be all equal to the identity:
$$p=uu^*,u^*u,\bar{u}u^t,u^t\bar{u}$$

But this leads to the conclusion in the statement.
\end{proof}

Regarding now the homogeneous space structure of $O_{MN}^{L\times},U_{MN}^{L\times}$, the situation here is a bit more complicated in the free case than in the classical case, due to a number of algebraic and analytic issues. We first have the following result:

\begin{proposition}
The spaces $U_{MN}^{L\times}$ have the following properties:
\begin{enumerate}
\item We have an action $U_M^\times\times U_N^\times\curvearrowright U_{MN}^{L\times}$, given by:
$$u_{ij}\to\sum_{kl}u_{kl}\otimes a_{ki}\otimes b_{lj}^*$$

\item We have a map $U_M^\times\times U_N^\times\to U_{MN}^{L\times}$, given by: 
$$u_{ij}\to\sum_{r\leq L}a_{ri}\otimes b_{rj}^*$$
\end{enumerate}
Similar results hold for the spaces $O_{MN}^{L\times}$, with all the $*$ exponents removed.
\end{proposition}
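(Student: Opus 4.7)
The plan is to verify both parts by direct index computation. The strategy for (2) will be either to repeat the argument from (1) with appropriate simplifications, or to recognize (2) as the restriction of (1) along evaluation at the classical base point, that is, by setting $u_{kl} \mapsto \delta_{kl}$ for $k \leq L$ and $u_{kl} \mapsto 0$ otherwise. I will focus on the unitary case; the orthogonal version follows by the substitutions $u = \bar u$, $a = \bar a$, $b = \bar b$, which only simplify the formulas.

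For part (1), the key is to set $U_{ij} = \sum_{kl} u_{kl}\otimes a_{ki}\otimes b_{lj}^*$ and show that $UU^*$ and $\bar U U^t$ remain projections of trace $L$ in the target algebra $C(U_{MN}^{L\times})\otimes C(U_M^\times)\otimes C(U_N^\times)$. The entries of $UU^*$ simplify using $\sum_m b_{lm}^* b_{l'm} = (\bar b b^t)_{ll'} = \delta_{ll'}$, yielding the compact expression $(UU^*)_{ij} = \sum_{k,k'} (uu^*)_{kk'} \otimes a_{ki} a_{k'j}^* \otimes 1$. Squaring this and using $\sum_m a_{k'm}^* a_{km} = (\bar a a^t)_{kk'} = \delta_{kk'}$, which is a scalar and hence central even in the free case, the middle sum collapses so as to transfer the square onto $uu^*$, and one obtains $(UU^*)^2 = UU^*$ directly from $(uu^*)^2 = uu^*$. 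The trace computation similarly collapses via $\sum_i a_{ki} a_{k'i}^* = \delta_{kk'}$ to $\mathrm{Tr}(uu^*) = L$. The analysis of $\bar U U^t$ is parallel, with $\bar u u^t$ replacing $uu^*$. After well-definedness, the coaction axioms are immediate from the comultiplicative and counital properties of the $a_{ki}$ and $b_{lj}$.

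For part (2), I would either specialize (1) to the classical base point as above, or verify directly by the same technique: the entries of $\pi(u)\pi(u)^*$ become $\sum_{r\leq L} a_{ri} a_{rj}^* \otimes 1$, whose projection property and trace $L$ follow instantly from unitarity of $a$; the verification for $\bar{\pi(u)}\pi(u)^t$ is parallel, using $\bar a a^t = 1$.

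The main technical obstacle is controlling multiplicative order throughout the computation. The entire argument hinges on the fact that sums such as $\sum_m b_{lm}^* b_{l'm}$ and $\sum_m a_{k'm}^* a_{km}$ evaluate to scalars, which are central and can therefore be pulled past non-commuting algebra elements. If any such scalar collapse failed, one would be forced to commute genuine generators, and the verification would break down in the free case; the proof must therefore be arranged so that every simplification proceeds through such a scalar collapse.
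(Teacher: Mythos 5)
Your proposal is correct and follows essentially the same route as the paper: a direct index computation in which the decisive mechanism is the collapse of the middle sums $\sum_m b_{lm}^*b_{l'm}=\delta_{ll'}$ and $\sum_m a_{k'm}^*a_{km}=\delta_{kk'}$ into central scalars, which is what allows the reduction to the defining relations on $u$ without any illicit commutation. The only cosmetic difference is that the paper verifies the equivalent partial-isometry relation $UU^*U=U$ in a single three-fold sum rather than first forming $(UU^*)_{ij}=\sum_{kk'}(uu^*)_{kk'}\otimes a_{ki}a_{k'j}^*\otimes 1$ and then squaring, and it establishes (2) by a parallel direct computation on $V_{ij}=\sum_{r\le L}a_{ri}\otimes b_{rj}^*$ rather than by specializing (1) at the base point $J$ — your specialization argument is valid and is implicitly what makes Theorem 15.9 work, but the paper chooses to keep (2) self-contained.
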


\begin{proof}
In the classical case, consider the following action and quotient maps:
$$U_M\times U_N\curvearrowright U_{MN}^L$$
$$U_M\times U_N\to U_{MN}^L$$

The transposes of these two maps are as follows, where $J=(^1_0{\ }^0_0)$:
\begin{eqnarray*}
\varphi&\to&((U,A,B)\to\varphi(AUB^*))\\
\varphi&\to&((A,B)\to\varphi(AJB^*))
\end{eqnarray*}

But with $\varphi=u_{ij}$ we obtain precisely the formulae in the statement. The proof in the orthogonal case is similar. Regarding now the free case, the proof goes as follows:

\medskip

(1) Assuming $uu^*u=u$, let us set:
$$U_{ij}=\sum_{kl}u_{kl}\otimes a_{ki}\otimes b_{lj}^*$$

We have then the following computation:
\begin{eqnarray*}
(UU^*U)_{ij}
&=&\sum_{pq}\sum_{klmnst}u_{kl}u_{mn}^*u_{st}\otimes a_{ki}a_{mq}^*a_{sq}\otimes b_{lp}^*b_{np}b_{tj}^*\\
&=&\sum_{klmt}u_{kl}u_{ml}^*u_{mt}\otimes a_{ki}\otimes b_{tj}^*\\
&=&\sum_{kt}u_{kt}\otimes a_{ki}\otimes b_{tj}^*\\
&=&U_{ij}
\end{eqnarray*}

Also, assuming that we have $\sum_{ij}u_{ij}u_{ij}^*=L$, we obtain:
\begin{eqnarray*}
\sum_{ij}U_{ij}U_{ij}^*
&=&\sum_{ij}\sum_{klst}u_{kl}u_{st}^*\otimes a_{ki}a_{si}^*\otimes b_{lj}^*b_{tj}\\
&=&\sum_{kl}u_{kl}u_{kl}^*\otimes1\otimes1\\
&=&L
\end{eqnarray*}

(2) Assuming $uu^*u=u$, let us set:
$$V_{ij}=\sum_{r\leq L}a_{ri}\otimes b_{rj}^*$$

We have then the following computation:
\begin{eqnarray*}
(VV^*V)_{ij}
&=&\sum_{pq}\sum_{x,y,z\leq L}a_{xi}a_{yq}^*a_{zq}\otimes b_{xp}^*b_{yp}b_{zj}^*\\
&=&\sum_{x\leq L}a_{xi}\otimes b_{xj}^*\\
&=&V_{ij}
\end{eqnarray*}

Also, assuming that we have $\sum_{ij}u_{ij}u_{ij}^*=L$, we obtain:
\begin{eqnarray*}
\sum_{ij}V_{ij}V_{ij}^*
&=&\sum_{ij}\sum_{r,s\leq L}a_{ri}a_{si}^*\otimes b_{rj}^*b_{sj}\\
&=&\sum_{l\leq L}1\\
&=&L
\end{eqnarray*}

By removing all the $*$ exponents, we obtain as well the orthogonal results.
\end{proof}

Let us examine now the relation between the above maps. In the classical case, given a quotient space $X=G/H$, the associated action and quotient maps are given by:
$$\begin{cases}
a:X\times G\to X&:\quad (Hg,h)\to Hgh\\
p:G\to X&:\quad g\to Hg
\end{cases}$$

Thus we have $a(p(g),h)=p(gh)$. In our context, a similar result holds: 

\index{extended homogeneous space}

\begin{theorem}
With $G=G_M\times G_N$ and $X=G_{MN}^L$, where $G_N=O_N^\times,U_N^\times$, we have
$$\xymatrix@R=15mm@C=30mm{
G\times G\ar[r]^m\ar[d]_{p\times id}&G\ar[d]^p\\
X\times G\ar[r]^a&X
}$$
where $a,p$ are the action map and the map constructed in Proposition 15.17.
\end{theorem}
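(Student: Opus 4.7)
The plan is to dualize the diagram to the level of $C^*$-algebras, where the arrows reverse, and then verify the resulting identity on the generators $u_{ij}$ of $C(X)$. Writing $\alpha: C(X)\to C(X)\otimes C(G)$ for the coaction from Proposition 15.8(1), $\pi: C(X)\to C(G)$ for the map from Proposition 15.8(2), and $\Delta_G: C(G)\to C(G)\otimes C(G)$ for the comultiplication of $G=G_M\times G_N$ (which is $C(G_M)\otimes C(G_N)$ as a $C^*$-algebra), the commutativity of the diagram in the statement translates into the identity
$$(\pi\otimes\mathrm{id})\circ\alpha=\Delta_G\circ\pi$$
of $*$-homomorphisms $C(X)\to C(G)\otimes C(G)$. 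Since both sides are $*$-algebra morphisms, it is enough to check this on the algebraic generators $u_{ij}$.

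The first step will be to fix the identification $C(G)\otimes C(G)=C(G_M)\otimes C(G_N)\otimes C(G_M)\otimes C(G_N)$, together with the formula
$$\Delta_G(a_{ri}\otimes b_{rj}^*)=\sum_{kl}a_{rk}\otimes b_{rl}^*\otimes a_{ki}\otimes b_{lj}^*,$$
which is just the comultiplication $\Delta_{G_M}\otimes\Delta_{G_N}$ of a product group, acting on a simple tensor. Applying this to $\pi(u_{ij})=\sum_{r\leq L}a_{ri}\otimes b_{rj}^*$ gives
$$\Delta_G(\pi(u_{ij}))=\sum_{r\leq L}\sum_{kl}a_{rk}\otimes b_{rl}^*\otimes a_{ki}\otimes b_{lj}^*.$$

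The second step is the parallel computation along the other path. Applying $\alpha$ to $u_{ij}$ yields $\sum_{kl}u_{kl}\otimes a_{ki}\otimes b_{lj}^*$, and then applying $\pi\otimes\mathrm{id}$ in the first tensor factor substitutes $\pi(u_{kl})=\sum_{r\leq L}a_{rk}\otimes b_{rl}^*$, producing exactly
$$(\pi\otimes\mathrm{id})(\alpha(u_{ij}))=\sum_{r\leq L}\sum_{kl}a_{rk}\otimes b_{rl}^*\otimes a_{ki}\otimes b_{lj}^*.$$
The two expressions coincide, so the identity holds on generators, hence everywhere, and the diagram commutes.

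There is no serious obstacle here: the proof is a direct bookkeeping of indices, and the only point requiring care is the correct tensor-factor placement under the comultiplication of a product quantum group, i.e.\ the fact that $\Delta_{G_M\times G_N}$ shuffles the four resulting tensor slots in the pattern $(1,3)(2,4)$ rather than $(1,2)(3,4)$. Once this is written out the two computations match term by term, in perfect analogy with the classical identity $a(p(g),h)=p(gh)$ which motivates the statement.
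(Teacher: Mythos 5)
Your proof is correct and follows essentially the same route as the paper: dualize the diagram to the algebra level, evaluate both compositions on the generators $u_{ij}$, and match the resulting sums $\sum_{r\leq L}\sum_{kl}a_{rk}\otimes b_{rl}^*\otimes a_{ki}\otimes b_{lj}^*$, the only subtlety being the tensor-leg flip in $\Delta_{G_M\times G_N}$, which the paper writes explicitly as $F_{23}(\Delta\otimes\Delta)$ and you absorb directly into the formula for $\Delta_G$.
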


\begin{proof}
At the level of the associated algebras of functions, we must prove that the following diagram commutes, where $\Phi,\alpha$ are morphisms of algebras induced by $a,p$:
$$\xymatrix@R=15mm@C=25mm{
C(X)\ar[r]^\Phi\ar[d]_\alpha&C(X\times G)\ar[d]^{\alpha\otimes id}\\
C(G)\ar[r]^\Delta&C(G\times G)
}$$

When going right, and then down, the composition is as follows:
\begin{eqnarray*}
(\alpha\otimes id)\Phi(u_{ij})
&=&(\alpha\otimes id)\sum_{kl}u_{kl}\otimes a_{ki}\otimes b_{lj}^*\\
&=&\sum_{kl}\sum_{r\leq L}a_{rk}\otimes b_{rl}^*\otimes a_{ki}\otimes b_{lj}^*
\end{eqnarray*}

On the other hand, when going down, and then right, the composition is as follows, where $F_{23}$ is the flip between the second and the third components:
\begin{eqnarray*}
\Delta\pi(u_{ij})
&=&F_{23}(\Delta\otimes\Delta)\sum_{r\leq L}a_{ri}\otimes b_{rj}^*\\
&=&F_{23}\left(\sum_{r\leq L}\sum_{kl}a_{rk}\otimes a_{ki}\otimes b_{rl}^*\otimes b_{lj}^*\right)
\end{eqnarray*}

Thus the above diagram commutes indeed, and this gives the result.
\end{proof}

Let us discuss now some discrete extensions of the above constructions:

\index{partial permutation}
\index{free partial permutation}

\begin{definition}
Associated to any partial permutation, $\sigma:I\simeq J$ with $I\subset\{1,\ldots,N\}$ and $J\subset\{1,\ldots,M\}$, is the real/complex partial isometry
$$T_\sigma:span\left(e_i\Big|i\in I\right)\to span\left(e_j\Big|j\in J\right)$$
given on the standard basis elements by $T_\sigma(e_i)=e_{\sigma(i)}$.
\end{definition}

Let $S_{MN}^L$ be the set of partial permutations $\sigma:I\simeq J$ as above, with range $I\subset\{1,\ldots,N\}$ and target $J\subset\{1,\ldots,M\}$, and with $L=|I|=|J|$. We have:

\begin{proposition}
The space of partial permutations signed by elements of $\mathbb Z_s$,
$$H_{MN}^{sL}=\left\{T(e_i)=w_ie_{\sigma(i)}\Big|\sigma\in S_{MN}^L,w_i\in\mathbb Z_s\right\}$$
is isomorphic to the quotient space 
$$(H_M^s\times H_N^s)/(H_L^s\times H_{M-L}^s\times H_{N-L}^s)$$
via a standard isomorphism.
\end{proposition}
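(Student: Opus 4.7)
My plan is to mimic the proof strategy of Theorem 15.4 (and the underlying Proposition 15.3), adapted to the signed setting. The key observation is that $H_N^s = \mathbb{Z}_s \wr S_N$ can be realized concretely as the group of $N \times N$ monomial matrices whose nonzero entries are $s$-th roots of unity, so elements of $H_M^s \times H_N^s$ act on rectangular matrices by $(A,B) \cdot U = A U B^*$ just as in the continuous case. The first step is therefore to identify $H_{MN}^{sL}$ with the space of rectangular $M \times N$ matrices $U$ with entries in $\mathbb{Z}_s \cup \{0\}$ such that $U U^*$ is a diagonal $\{0,1\}$-projection of trace $L$ (equivalently, each row has at most one nonzero entry, each column has at most one nonzero entry, and the total number of nonzero entries is $L$). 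This identification is immediate from the definition of a signed partial permutation, and makes the left-right action by $H_M^s \times H_N^s$ manifest.

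Next I would verify transitivity of this action. Given any signed partial permutation $U \in H_{MN}^{sL}$ with support indices $I \subset \{1,\ldots,N\}$, $J \subset \{1,\ldots,M\}$ and signs $(w_i)_{i \in I}$, one can build an element $B \in H_N^s$ sending $\{1,\ldots,L\}$ bijectively onto $I$ with appropriate signs, and an element $A \in H_M^s$ sending $\{1,\ldots,L\}$ bijectively onto $J$ with signs chosen so that the composition $AUB^*$ equals the basepoint
$$U_0 = \begin{pmatrix} 1_L & 0 \\ 0 & 0 \end{pmatrix}.$$
This is a purely combinatorial exercise: at each of the $L$ active indices one adjusts the signs on the left and right factors to absorb $w_i$, and on the inactive indices the freedom is complete.

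The third step, which is the heart of the argument, is to compute the stabilizer of $U_0$. Writing $A,B$ in block form adapted to the decomposition $M = L + (M-L)$, $N = L + (N-L)$, the equation $AU_0 = U_0 B$ forces
$$A = \begin{pmatrix} x & * \\ 0 & a \end{pmatrix}, \qquad B = \begin{pmatrix} x & 0 \\ * & b \end{pmatrix},$$
exactly as in the proof of Proposition 15.3. Since $A, B$ are monomial matrices (every row and column has at most one nonzero entry), the off-diagonal $*$ blocks must vanish, leaving $A = \mathrm{diag}(x, a)$ and $B = \mathrm{diag}(x, b)$ with $x \in H_L^s$, $a \in H_{M-L}^s$, $b \in H_{N-L}^s$. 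This identifies the stabilizer with $H_L^s \times H_{M-L}^s \times H_{N-L}^s$, and the orbit-stabilizer theorem then gives the claimed homeomorphism.

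The main obstacle I anticipate is purely bookkeeping: ensuring that the monomial structure of $A$ and $B$ really forces the off-diagonal blocks to vanish in the stabilizer computation. Unlike the orthogonal/unitary case where this followed from the block form being forced by unitarity of $A$ and $B$, here one must check directly that if $A U_0 = U_0 B$ with $A, B$ monomial and $U_0$ as above, then no nonzero entry of $A$ can lie in the upper-right block, and symmetrically for $B$. This is straightforward but needs care with the $\mathbb{Z}_s$-signs: the identification of the $x$-block on the upper-left of $A$ with the $x$-block on the upper-left of $B$ is what produces the diagonal copy of $H_L^s$ inside the stabilizer, exactly matching the factor $H_L^s$ in the statement.
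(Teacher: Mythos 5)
Your proposal is correct and follows essentially the same route as the paper, which simply adapts Proposition 15.3: identify the transitive action of $H_M^s\times H_N^s$ on signed partial permutations, fix the basepoint $U_0=\bigl(\begin{smallmatrix}1&0\\0&0\end{smallmatrix}\bigr)$, and compute the stabilizer in block form to get $H_L^s\times H_{M-L}^s\times H_{N-L}^s$. The ``obstacle'' you flag at the end is not actually one: since $H_M^s\subset U_M$ consists of unitary matrices, the same unitarity argument as in Proposition 15.3 already kills the off-diagonal blocks, so your direct monomial argument is a valid but unnecessary alternative.
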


\begin{proof}
This follows by adapting the computations in the proof of Proposition 15.12 above. Indeed, we have an action map as follows, which is transitive:
$$H_M^s\times H_N^s\to H_{MN}^{sL}\quad,\quad 
(A,B)U=AUB^*$$

Consider now the following point:
$$U=\begin{pmatrix}1&0\\0&0\end{pmatrix}$$

The stabilizer of this point follows to be the following group:
$$H_L^s\times H_{M-L}^s\times H_{N-L}^s$$

To be more precise, this group is embedded via:
$$(x,a,b)\to\left[\begin{pmatrix}x&0\\0&a\end{pmatrix},\begin{pmatrix}x&0\\0&b\end{pmatrix}\right]$$

But this gives the result.
\end{proof}

In the free case now, the idea is similar, by using inspiration from the construction of the quantum group $H_N^{s+}=\mathbb Z_s\wr_*S_N^+$ in \cite{bb+}. The result here is as follows:

\begin{proposition}
The compact quantum space $H_{MN}^{sL+}$ associated to the algebra
$$C(H_{MN}^{sL+})=C(U_{MN}^{L+})\Big/\left<u_{ij}u_{ij}^*=u_{ij}^*u_{ij}=p_{ij}={\rm projections},u_{ij}^s=p_{ij}\right>$$
has an action map, and is the target of a quotient map, as in Theorem 15.18 above.
\end{proposition}

\begin{proof}
We must show that if the variables $u_{ij}$ satisfy the relations in the statement, then these relations are satisfied as well for the following variables: 
$$U_{ij}=\sum_{kl}u_{kl}\otimes a_{ki}\otimes b_{lj}^*\quad,\quad 
V_{ij}=\sum_{r\leq L}a_{ri}\otimes b_{rj}^*$$

We use the fact that the standard coordinates $a_{ij},b_{ij}$ on the quantum groups $H_M^{s+},H_N^{s+}$ satisfy the following relations, for any $x\neq y$ on the same row or column of $a,b$:
$$xy=xy^*=0$$
 
We obtain, by using these relations:
\begin{eqnarray*}
U_{ij}U_{ij}^*
&=&\sum_{klmn}u_{kl}u_{mn}^*\otimes a_{ki}a_{mi}^*\otimes b_{lj}^*b_{mj}\\
&=&\sum_{kl}u_{kl}u_{kl}^*\otimes a_{ki}a_{ki}^*\otimes b_{lj}^*b_{lj}
\end{eqnarray*}

We have as well the following formula:
\begin{eqnarray*}
V_{ij}V_{ij}^*
&=&\sum_{r,t\leq L}a_{ri}a_{ti}^*\otimes b_{rj}^*b_{tj}\\
&=&\sum_{r\leq L}a_{ri}a_{ri}^*\otimes b_{rj}^*b_{rj}
\end{eqnarray*}

In terms of the projections $x_{ij}=a_{ij}a_{ij}^*$, $y_{ij}=b_{ij}b_{ij}^*$, $p_{ij}=u_{ij}u_{ij}^*$, we have:
$$U_{ij}U_{ij}^*=\sum_{kl}p_{kl}\otimes x_{ki}\otimes y_{lj}$$
$$V_{ij}V_{ij}^*=\sum_{r\leq L}x_{ri}\otimes y_{rj}$$

By repeating the computation, we conclude that these elements are projections. Also, a similar computation shows that $U_{ij}^*U_{ij},V_{ij}^*V_{ij}$ are given by the same formulae. Finally, once again by using the relations of type $xy=xy^*=0$, we have:
\begin{eqnarray*}
U_{ij}^s
&=&\sum_{k_rl_r}u_{k_1l_1}\ldots u_{k_sl_s}\otimes a_{k_1i}\ldots a_{k_si}\otimes b_{l_1j}^*\ldots b_{l_sj}^*\\
&=&\sum_{kl}u_{kl}^s\otimes a_{ki}^s\otimes(b_{lj}^*)^s
\end{eqnarray*}

We have as well the following formula:
\begin{eqnarray*}
V_{ij}^s
&=&\sum_{r_l\leq L}a_{r_1i}\ldots a_{r_si}\otimes b_{r_1j}^*\ldots b_{r_sj}^*\\
&=&\sum_{r\leq L}a_{ri}^s\otimes(b_{rj}^*)^s
\end{eqnarray*}

Thus the conditions of type $u_{ij}^s=p_{ij}$ are satisfied as well, and we are done.
\end{proof}

Let us discuss now the general case. We have the following result:

\begin{proposition}
The various spaces $G_{MN}^L$ constructed so far appear by imposing to the standard coordinates of $U_{MN}^{L+}$ the relations
$$\sum_{i_1\ldots i_s}\sum_{j_1\ldots j_s}\delta_\pi(i)\delta_\sigma(j)u_{i_1j_1}^{e_1}\ldots u_{i_sj_s}^{e_s}=L^{|\pi\vee\sigma|}$$
with $s=(e_1,\ldots,e_s)$ ranging over all the colored integers, and with $\pi,\sigma\in D(0,s)$.
\end{proposition}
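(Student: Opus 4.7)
The plan is to establish the result in two steps: verify the relations directly on each concrete space $G_{MN}^L$ using the maps of Proposition 15.8, and then invoke universality together with a Tannakian argument for the converse.

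For the first step, Proposition 15.8 (2) realizes the standard coordinates on $G_{MN}^L$ inside $C(G_M^\times)\otimes C(G_N^\times)$ via $u_{ij}\mapsto\sum_{r\leq L}a_{ri}\otimes b_{rj}^*$, where $\times$ matches the classical/free type. Substituting into the left-hand side of the proposed relation and separating the two tensor factors produces
$$\sum_{i,j}\delta_\pi(i)\delta_\sigma(j)u_{i_1j_1}^{e_1}\cdots u_{i_sj_s}^{e_s}=\sum_{r_1,\ldots,r_s\leq L}\Bigl(\sum_i\delta_\pi(i)\prod_k a_{r_ki_k}^{e_k}\Bigr)\otimes\Bigl(\sum_j\delta_\sigma(j)\prod_k (b_{r_kj_k}^*)^{e_k}\Bigr).$$
Now $\pi,\sigma\in D(0,s)$ combined with easiness of $G_M^\times$ and $G_N^\times$ forces $\xi_\pi$ and $\xi_\sigma$ to be fixed vectors of the mixed $s$-fold tensor power of the fundamental corepresentation; unwinding $v^{\otimes s}\xi_\pi=\xi_\pi$ in index notation gives $\sum_i\delta_\pi(i)\prod_k v_{r_ki_k}^{e_k}=\delta_\pi(r_1,\ldots,r_s)$. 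Applying this to each of the two inner sums (using that $D$ is stable under the operations that swap colors, as is the case for all easy categories under consideration) collapses the expression to $\sum_{r\leq L}\delta_\pi(r)\delta_\sigma(r)=\sum_{r\leq L}\delta_{\pi\vee\sigma}(r)=L^{|\pi\vee\sigma|}$, which is precisely the required equality.

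For the second step, let $A$ denote the quotient of $C(U_{MN}^{L+})$ by the family of new relations, so that Step 1 produces a surjection $A\to C(G_{MN}^L)$. To invert it, one equips $A$ with the candidate coaction of $C(G_M^\times)\otimes C(G_N^\times)$ given by $u_{ij}\mapsto\sum_{kl}u_{kl}\otimes a_{ki}\otimes b_{lj}^*$ from Proposition 15.8 (1); the content of the relations, interpreted as scalar contractions of $u^{\otimes s}$ against $\xi_\pi\otimes\overline{\xi_\sigma}$ and normalized by the trace $L=\mathrm{Tr}(uu^*)$, is exactly what is needed for this coaction to preserve the Tannakian categories $C_{G_M^\times}$ and $C_{G_N^\times}$. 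Combined with the existence of the distinguished point from Proposition 15.8 (2), this identifies $A$ with the homogeneous space structure on $(G_M^\times\times G_N^\times)/(G_L^\times\times G_{M-L}^\times\times G_{N-L}^\times)$.

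The main obstacle is the well-definedness of the coaction on $A$ in Step 2: one must show that imposing the relations on $u$ automatically forces the same relations on the transformed matrix $\sum_{kl}u_{kl}\otimes a_{ki}\otimes b_{lj}^*$. This is a rectangular analogue of the Tannakian computation carried out in section 7 for the full easy quantum groups, with the internal index $r\leq L$ now playing the role of the trace contraction. Once this compatibility is secured, the Tannakian reconstruction proceeds as in Theorem 4.20, and the universal property of $A$ matches the homogeneous space presentation of $G_{MN}^L$, closing the argument.
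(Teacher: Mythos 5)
Your proof takes a genuinely different route from the paper's. The paper never leaves $C(G_{MN}^L)$: it observes that the defining relations of each space can be rewritten in the rectangular-intertwiner form $\sum_j\delta_\sigma(j)u_{i_1j_1}^{e_1}\cdots u_{i_sj_s}^{e_s}=\delta_\sigma(i)$ for $\sigma$ in a generating family of $D$, then contracts this against $\delta_\pi(i)$ to produce the stated identities; the converse is handled by a purely algebraic argument (fix $\pi$, let $\sigma$ run over $D$) that recovers the defining relations. You instead push the whole verification into $C(G_M^\times)\otimes C(G_N^\times)$ via the map $u_{ij}\mapsto\sum_{r\leq L}a_{ri}\otimes b_{rj}^*$ from Proposition 15.8, and this indirection creates two genuine gaps.

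First, for the forward direction: establishing the relations in the image of $C(G_{MN}^L)$ inside $C(G_M^\times)\otimes C(G_N^\times)$ only yields the relations in $C(G_{MN}^L)$ itself if the map of Proposition 15.8 (2) is injective. In the classical case this holds (it is the transpose of a surjection of compact spaces), but for the free spaces $O_{MN}^{L+},U_{MN}^{L+},H_{MN}^{sL+}$ — which are universal $C^*$-algebras defined by relations — no injectivity is available at this point in the text; the faithfulness question is only addressed later, via the ergodicity argument leading to Theorem 15.18. So the computation you carry out proves the relations hold after applying the quotient map, not in the source algebra, which is what the proposition requires. What is actually needed, and what the paper does, is a derivation of the stated relations directly from the defining relations of $C(G_{MN}^L)$.

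Second, the converse is over-engineered and does not close. You observe that Step 1 "produces a surjection $A\to C(G_{MN}^L)$" — but this already relies on the unproven forward direction — and then propose to invert it by equipping $A$ with a coaction and invoking Theorem 4.20. That theorem is Tannakian duality for compact quantum groups and does not reconstruct quotient spaces; even granting the coaction, you have not explained how the coaction plus the distinguished point identifies $A$ with $C(G_{MN}^L)$ rather than with some larger algebra carrying the same coaction. The step you single out as the "main obstacle" — well-definedness of the coaction on $A$ — is essentially the content of the subsequent Theorem 15.14 and cannot be presupposed here. The paper's converse, fixing $\pi$ and varying $\sigma\in D$ to reconstitute each defining relation from the two-sided contractions, is both shorter and the one that actually finishes the argument.
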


\begin{proof}
According to the various constructions above, the relations defining $G_{MN}^L$ can be written as follows, with $\sigma$ ranging over a family of generators, with no upper legs, of the corresponding category of partitions $D$:
$$\sum_{j_1\ldots j_s}\delta_\sigma(j)u_{i_1j_1}^{e_1}\ldots u_{i_sj_s}^{e_s}=\delta_\sigma(i)$$

We therefore obtain the relations in the statement, as follows:
\begin{eqnarray*}
\sum_{i_1\ldots i_s}\sum_{j_1\ldots j_s}\delta_\pi(i)\delta_\sigma(j)u_{i_1j_1}^{e_1}\ldots u_{i_sj_s}^{e_s}
&=&\sum_{i_1\ldots i_s}\delta_\pi(i)\sum_{j_1\ldots j_s}\delta_\sigma(j)u_{i_1j_1}^{e_1}\ldots u_{i_sj_s}^{e_s}\\
&=&\sum_{i_1\ldots i_s}\delta_\pi(i)\delta_\sigma(i)\\
&=&L^{|\pi\vee\sigma|}
\end{eqnarray*}

As for the converse, this follows by using the relations in the statement, by keeping $\pi$ fixed, and by making $\sigma$ vary over all the partitions in the category.
\end{proof}

In the general case now, where $G=(G_N)$ is an arbitary uniform easy quantum group, we can construct spaces $G_{MN}^L$ by using the above relations, and we have:

\index{extended homogeneous space}

\begin{theorem}
The spaces $G_{MN}^L\subset U_{MN}^{L+}$ constructed by imposing the relations 
$$\sum_{i_1\ldots i_s}\sum_{j_1\ldots j_s}\delta_\pi(i)\delta_\sigma(j)u_{i_1j_1}^{e_1}\ldots u_{i_sj_s}^{e_s}=L^{|\pi\vee\sigma|}$$
with $\pi,\sigma$ ranging over all the partitions in the associated category, having no upper legs, are subject to an action map/quotient map diagram, as in Theorem 15.18.
\end{theorem}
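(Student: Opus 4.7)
The plan is to promote the pattern established in Propositions 15.8 and 15.12 to full generality, using the uniform description of the defining relations given by Proposition 15.13. First I would define the candidate quotient and action morphisms at the level of the universal $C^*$-algebras by the same formulas as before, namely
$$\alpha(u_{ij}) = \sum_{r \leq L} a_{ri} \otimes b_{rj}^* \quad,\quad \Phi(u_{ij}) = \sum_{kl} u_{kl} \otimes a_{ki} \otimes b_{lj}^*$$
where $a_{ij}, b_{ij}$ denote the standard coordinates on $G_M, G_N$. The whole content of the theorem is then that these assignments factor through $C(G_{MN}^L)$, i.e.\ the variables $V_{ij} = \alpha(u_{ij})$ and $U_{ij} = \Phi(u_{ij})$ satisfy the defining relations listed in Proposition 15.13.

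The key step is to substitute these variables into the relation
$$\sum_{i_1\ldots i_s}\sum_{j_1\ldots j_s}\delta_\pi(i)\delta_\sigma(j)\, X_{i_1j_1}^{e_1}\ldots X_{i_sj_s}^{e_s} = L^{|\pi\vee\sigma|}$$
(with $X = U$ or $X = V$) and to reduce the result to the easiness of $G_M$ and $G_N$. For $X = V$ one expands, commutes the two tensor slots past the scalar $\delta$'s, and obtains
$$\sum_{r_1,\ldots,r_s\leq L}\Big(\sum_i\delta_\pi(i)\, a_{r_1i_1}^{e_1}\ldots a_{r_si_s}^{e_s}\Big)\otimes\Big(\sum_j\delta_\sigma(j)\, (b_{r_1j_1}^{e_1})^*\ldots (b_{r_sj_s}^{e_s})^*\Big).$$
Now the easiness of $G_M$ with category $D$ means precisely that $T_\pi \in \mathrm{Fix}(a^{\otimes s})$ for every $\pi \in D(0,s)$, which translates at the level of coordinates into the identity $\sum_i \delta_\pi(i)\, a_{r_1 i_1}^{e_1}\ldots a_{r_s i_s}^{e_s} = \delta_\pi(r)$, and similarly for $b$. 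After applying this identity twice, the double sum collapses to $\sum_{r\leq L}\delta_\pi(r)\delta_\sigma(r) = L^{|\pi\vee\sigma|}$, which is exactly what is needed. The verification for $X = U$ is structurally the same, with the extra $u$-slot contributing the original relation for $G_{MN}^L$, and the $a$- and $b$-slots collapsing via easiness to $\delta_\pi$ and $\delta_\sigma$ attached to the remaining indices.

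Having established that $\alpha$ and $\Phi$ are well-defined, the commutativity of the action/quotient diagram analogous to Theorem 15.9 follows by the same direct computation performed there: both $(\alpha \otimes \mathrm{id})\Phi(u_{ij})$ and $\Delta\,\alpha(u_{ij})$ unfold, modulo a flip of the middle two tensor factors, to $\sum_{r\leq L}\sum_{kl} a_{rk} \otimes b_{rl}^* \otimes a_{ki} \otimes b_{lj}^*$, since the formulas defining $\alpha$ and $\Phi$ are unchanged from the special cases already treated.

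The main obstacle will be the careful bookkeeping in the central computation: one must verify that the three tensor slots genuinely decouple after using the commutation relations among $u, a, b$ (which sit in independent tensor factors), and one must keep track of the colored exponents $e_k$ when invoking easiness, since the partition $\pi \in D(0,s)$ naturally comes equipped with a coloring of its legs. Once this indexing is handled cleanly, the identity $\sum_r\delta_\pi(r)\delta_\sigma(r) = L^{|\pi\vee\sigma|}$ follows from the combinatorial definition of $\pi \vee \sigma$ as the join of the two partitions, and the proof is complete.
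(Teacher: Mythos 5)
Your proposal is correct and follows essentially the same route as the paper: both verify that $U_{ij}=\sum_{kl}u_{kl}\otimes a_{ki}\otimes b_{lj}^*$ and $V_{ij}=\sum_{r\leq L}a_{ri}\otimes b_{rj}^*$ satisfy the defining relations by summing the $i,j$ indices against $\delta_\pi,\delta_\sigma$ and collapsing the $a$- and $b$-slots via the fixed-point identities coming from easiness, after which the $u$-slot relation (respectively the count $\sum_{r\leq L}\delta_\pi(r)\delta_\sigma(r)=L^{|\pi\vee\sigma|}$) finishes the computation, and the commutativity of the diagram is the same verification as in Theorem 15.9. The only differences are cosmetic: you treat $V$ before $U$ and make the colored-exponent bookkeeping explicit, which the paper leaves implicit.
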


\begin{proof}
We proceed as in the proof of Proposition 15.17. We must prove that, if the variables $u_{ij}$ satisfy the relations in the statement, then so do the following variables:
$$U_{ij}=\sum_{kl}u_{kl}\otimes a_{ki}\otimes b_{lj}^*\quad,\quad 
V_{ij}=\sum_{r\leq L}a_{ri}\otimes b_{rj}^*$$

Regarding the variables $U_{ij}$, the computation here goes as follows:
\begin{eqnarray*}
&&\sum_{i_1\ldots i_s}\sum_{j_1\ldots j_s}\delta_\pi(i)\delta_\sigma(j)U_{i_1j_1}^{e_1}\ldots U_{i_sj_s}^{e_s}\\
&=&\sum_{i_1\ldots i_s}\sum_{j_1\ldots j_s}\sum_{k_1\ldots k_s}\sum_{l_1\ldots l_s}u_{k_1l_1}^{e_1}\ldots u_{k_sl_s}^{e_s}\otimes \delta_\pi(i)\delta_\sigma(j)a_{k_1i_1}^{e_1}\ldots a_{k_si_s}^{e_s}\otimes(b_{l_sj_s}^{e_s}\ldots b_{l_1j_1}^{e_1})^*\\
&=&\sum_{k_1\ldots k_s}\sum_{l_1\ldots l_s}\delta_\pi(k)\delta_\sigma(l)u_{k_1l_1}^{e_1}\ldots u_{k_sl_s}^{e_s}\\
&=&L^{|\pi\vee\sigma|}
\end{eqnarray*}

For the variables $V_{ij}$ the proof is similar, as follows:
\begin{eqnarray*}
&&\sum_{i_1\ldots i_s}\sum_{j_1\ldots j_s}\delta_\pi(i)\delta_\sigma(j)V_{i_1j_1}^{e_1}\ldots V_{i_sj_s}^{e_s}\\
&=&\sum_{i_1\ldots i_s}\sum_{j_1\ldots j_s}\sum_{l_1,\ldots,l_s\leq L}\delta_\pi(i)\delta_\sigma(j)a_{l_1i_1}^{e_1}\ldots a_{l_si_s}^{e_s}\otimes(b_{l_sj_s}^{e_s}\ldots b_{l_1j_1}^{e_1})^*\\
&=&\sum_{l_1,\ldots,l_s\leq L}\delta_\pi(l)\delta_\sigma(l)\\
&=&L^{|\pi\vee\sigma|}
\end{eqnarray*}

Thus we have constructed an action map, and a quotient map, as in Proposition 15.17 above, and the commutation of the diagram in Theorem 15.18 is then trivial.
\end{proof}

\section*{15d. Integration theory}

Still following \cite{ba4}, let us discuss now the integration over $G_{MN}^L$. We have:

\begin{definition}
The integration functional of $G_{MN}^L$ is the composition
$$\int_{G_{MN}^L}:C(G_{MN}^L)\to C(G_M\times G_N)\to\mathbb C$$
of the representation $u_{ij}\to\sum_{r\leq L}a_{ri}\otimes b_{rj}^*$ with the Haar functional of $G_M\times G_N$.
\end{definition}

Observe that in the case $L=M=N$ we obtain the integration over $G_N$. Also, at $L=M=1$, or at $L=N=1$, we obtain the integration over the sphere. In the general case now, we first have the following result:

\begin{proposition}
The integration functional of $G_{MN}^L$ has the invariance property 
$$\left(\int_{G_{MN}^L}\!\otimes\ id\right)\Phi(x)=\int_{G_{MN}^L}x$$
with respect to the coaction map:
$$\Phi(u_{ij})=\sum_{kl}u_{kl}\otimes a_{ki}\otimes b_{lj}^*$$
\end{proposition}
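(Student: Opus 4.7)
The plan is to reduce the claimed invariance of $\int_{G_{MN}^L}$ under $\Phi$ to the bi-invariance of the Haar functional on the product quantum group $G_M \times G_N$, by exploiting the commuting diagram already established in Theorem~15.9.

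First I would set notation. Let $\alpha:C(G_{MN}^L)\to C(G_M\times G_N)$ be the $*$-algebra morphism dual to the quotient map $p$, so that
$$\alpha(u_{ij})=\sum_{r\leq L}a_{ri}\otimes b_{rj}^*,$$
and let $h$ denote the Haar functional of $G_M\times G_N$, with associated comultiplication $\Delta$. By Definition~15.15, the integration functional is the composition
$$\int_{G_{MN}^L}=h\circ\alpha.$$
The invariance to be proved then rewrites as
$$(h\otimes\mathrm{id})(\alpha\otimes\mathrm{id})\Phi(x)=h(\alpha(x))\cdot 1,$$
which is what I will establish.

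The second step is the key algebraic identity: the commuting square of Theorem~15.9, translated at the level of the $*$-algebras generated by the standard coordinates, states precisely that
$$(\alpha\otimes\mathrm{id})\circ\Phi=\Delta\circ\alpha.$$
Indeed, this is exactly the dual version of $a\circ(p\times\mathrm{id})=p\circ m$, which was checked in Theorem~15.9 on the generators $u_{ij}$. Granting this identity, the desired invariance follows immediately from the left invariance of $h$ on $G_M\times G_N$:
$$(h\otimes\mathrm{id})(\alpha\otimes\mathrm{id})\Phi(x)=(h\otimes\mathrm{id})\Delta(\alpha(x))=h(\alpha(x))\cdot 1=\Bigl(\!\int_{G_{MN}^L}x\Bigr)\cdot 1.$$

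Finally, since both sides of the desired identity are continuous in $x$, it suffices to verify everything on the dense $*$-subalgebra generated by the coordinates $u_{ij}$, where the computations above are purely algebraic. The main obstacle I anticipate is not in the Haar-invariance step itself, which is standard, but rather in making sure that the morphism $\alpha$ and the coaction $\Phi$ are well-defined on $C(G_{MN}^L)$ for the general $G_{MN}^L$ introduced in Theorem~15.14; this amounts to re-reading the proof of that theorem and observing that the verification there (that the relations defining $G_{MN}^L$ pass to $U_{ij}$ and $V_{ij}$) is exactly what guarantees both $\alpha$ and $\Phi$, and hence the commutation $(\alpha\otimes\mathrm{id})\Phi=\Delta\alpha$ on generators. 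Once that bookkeeping is done, the result follows cleanly from bi-invariance of $h$.
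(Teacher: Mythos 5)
Your proof is correct, and it differs from the paper's own proof only in packaging: where you invoke the commuting square of Theorem 15.9, the paper redoes the corresponding computation by hand. The paper's argument expands $\Phi(u_{i_1j_1}\cdots u_{i_sj_s})$, applies $\int_{G_{MN}^L}=(\int_{G_M}\otimes\int_{G_N})\circ\alpha$ term by term, recognizes the resulting sums as $(\int_{G_M}\otimes\mathrm{id})\Delta(a_{r_1i_1}\cdots a_{r_si_s})$ tensored with $(\int_{G_N}\otimes\mathrm{id})\Delta(b_{r_1j_1}^*\cdots b_{r_sj_s}^*)$, and concludes by the invariance of the two Haar functionals -- so it is the same reduction to Haar invariance on $G_M\times G_N$ through the model map $\alpha$, carried out at the level of indices. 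You instead outsource the index work to the identity $(\alpha\otimes\mathrm{id})\Phi=\Delta\alpha$, which is legitimate: this is precisely what the proof of Theorem 15.9 verifies on the generators $u_{ij}$, both composites are morphisms of $C^*$-algebras, so the identity holds on all of $C(G_{MN}^L)$, and your bookkeeping worry about the general easy case is already settled by the paper, which notes at the end of the proof of Theorem 15.14 that the commutation of the diagram of Theorem 15.9 carries over there. With that identity in hand, a single application of the invariance of the Haar state of $G_M\times G_N$ finishes the proof, exactly as you write. What your route buys is brevity and an argument manifestly independent of the choice of easy quantum group $G$; what the paper's explicit computation buys is only the displayed intermediate formula, nothing essential.
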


\begin{proof}
We restrict the attention to the orthogonal case, the proof in the unitary case being similar. We must check the following formula:
$$\left(\int_{G_{MN}^L}\!\otimes\ id\right)\Phi(u_{i_1j_1}\ldots u_{i_sj_s})=\int_{G_{MN}^L}u_{i_1j_1}\ldots u_{i_sj_s}$$

Let us compute the left term. This is given by:
\begin{eqnarray*}
X
&=&\left(\int_{G_{MN}^L}\!\otimes\ id\right)\sum_{k_xl_x}u_{k_1l_1}\ldots u_{k_sl_s}\otimes a_{k_1i_1}\ldots a_{k_si_s}\otimes b_{l_1j_1}^*\ldots b_{l_sj_s}^*\\
&=&\sum_{k_xl_x}\sum_{r_x\leq L}a_{k_1i_1}\ldots a_{k_si_s}\otimes b_{l_1j_1}^*\ldots b_{l_sj_s}^*\int_{G_M}a_{r_1k_1}\ldots a_{r_sk_s}\int_{G_N}b_{r_1l_1}^*\ldots b_{r_sl_s}^*\\
&=&\sum_{r_x\leq L}\sum_{k_x}a_{k_1i_1}\ldots a_{k_si_s}\int_{G_M}a_{r_1k_1}\ldots a_{r_sk_s}
\otimes\sum_{l_x}b_{l_1j_1}^*\ldots b_{l_sj_s}^*\int_{G_N}b_{r_1l_1}^*\ldots b_{r_sl_s}^*
\end{eqnarray*}

By using now the invariance property of the Haar functionals of $G_M,G_N$, we obtain:
\begin{eqnarray*}
X
&=&\sum_{r_x\leq L}\left(\int_{G_M}\!\otimes\ id\right)\Delta(a_{r_1i_1}\ldots a_{r_si_s})
\otimes\left(\int_{G_N}\!\otimes\ id\right)\Delta(b_{r_1j_1}^*\ldots b_{r_sj_s}^*)\\
&=&\sum_{r_x\leq L}\int_{G_M}a_{r_1i_1}\ldots a_{r_si_s}\int_{G_N}b_{r_1j_1}^*\ldots b_{r_sj_s}^*\\
&=&\left(\int_{G_M}\otimes\int_{G_N}\right)\sum_{r_x\leq L}a_{r_1i_1}\ldots a_{r_si_s}\otimes b_{r_1j_1}^*\ldots b_{r_sj_s}^*
\end{eqnarray*}

But this gives the formula in the statement, and we are done.
\end{proof}

We prove now that the above functional is in fact the unique positive unital invariant trace on $C(G_{MN}^L)$. For this purpose, we will need the Weingarten formula:

\index{Weingarten formula}

\begin{theorem}
We have the Weingarten type formula
$$\int_{G_{MN}^L}u_{i_1j_1}\ldots u_{i_sj_s}=\sum_{\pi\sigma\tau\nu}L^{|\pi\vee\tau|}\delta_\sigma(i)\delta_\nu(j)W_{sM}(\pi,\sigma)W_{sN}(\tau,\nu)$$
where the matrices on the right are given by $W_{sM}=G_{sM}^{-1}$, with $G_{sM}(\pi,\sigma)=M^{|\pi\vee\sigma|}$.
\end{theorem}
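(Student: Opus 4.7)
The plan is to unfold the integration functional of Definition 15.16 directly, and then apply the Weingarten formula from Theorem 8.24 twice, once over $G_M$ and once over $G_N$. We work in the orthogonal setting for simplicity; the unitary case is identical after inserting the appropriate color exponents.

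First, I would apply the representation $u_{ij}\to\sum_{r\leq L}a_{ri}\otimes b_{rj}^*$ to the monomial $u_{i_1j_1}\ldots u_{i_sj_s}$, obtaining
$$\sum_{r_1,\ldots,r_s\leq L}a_{r_1i_1}\ldots a_{r_si_s}\otimes b_{r_1j_1}^*\ldots b_{r_sj_s}^*.$$
Then I would compose with $\int_{G_M}\otimes\int_{G_N}$, which by the multiplicativity of the tensor product reduces the problem to computing two independent integrals, namely $\int_{G_M}a_{r_1i_1}\ldots a_{r_si_s}$ and $\int_{G_N}b_{r_1j_1}^*\ldots b_{r_sj_s}^*$, and then summing over the bounded indices $r_1,\ldots,r_s\leq L$.

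Next, I would invoke the Weingarten formula for each of the easy quantum groups $G_M$ and $G_N$, which belong to the same category $D$ of partitions as $G=(G_N)$. This produces
$$\int_{G_M}a_{r_1i_1}\ldots a_{r_si_s}=\sum_{\pi,\sigma\in D(s)}\delta_\pi(r)\delta_\sigma(i)W_{sM}(\pi,\sigma),$$
and analogously for $G_N$ with a second pair of partitions $\tau,\nu\in D(s)$ and the multi-indices $r,j$. Substituting these two expressions into the previous step yields a quadruple sum over $\pi,\sigma,\tau,\nu$, with an inner sum $\sum_{r_1,\ldots,r_s\leq L}\delta_\pi(r)\delta_\tau(r)$ containing all the $r$-dependence.

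The key combinatorial step, and the only point where any real work is needed, is the identification
$$\sum_{r_1,\ldots,r_s\leq L}\delta_\pi(r)\delta_\tau(r)=L^{|\pi\vee\tau|},$$
which follows because $\delta_\pi(r)\delta_\tau(r)=1$ precisely when $\ker r\geq\pi\vee\tau$, and the number of such tuples with entries in $\{1,\ldots,L\}$ equals $L$ to the power of the number of blocks of $\pi\vee\tau$ (this is exactly the counting used in Proposition 5.6 to compute the Gram matrix). Plugging this back in gives the stated formula. The main potential obstacle is a bookkeeping issue: one must check that the partitions $\pi,\sigma,\tau,\nu$ indeed range over $D(s)$ and not some larger set, and that the Kronecker symbols $\delta_\pi(i),\delta_\sigma(j)$ appearing in the final formula match the external indices $i,j$ (not the summed indices $r$); a careful bracketing of which symbol pairs with which multi-index at each step of the calculation handles this cleanly.
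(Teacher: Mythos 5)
Your proposal is correct and follows essentially the same route as the paper: unfold the integration functional into $\sum_{r_1,\ldots,r_s\leq L}\int_{G_M}a_{r_1i_1}\ldots a_{r_si_s}\int_{G_N}b_{r_1j_1}^*\ldots b_{r_sj_s}^*$, apply the Weingarten formula on each factor, and evaluate the inner sum $\sum_{r}\delta_\pi(r)\delta_\tau(r)=L^{|\pi\vee\tau|}$ exactly as in the Gram matrix computation. The paper's proof is precisely this three-line computation, so there is nothing to add.
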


\begin{proof}
By using the Weingarten formula for $G_M,G_N$, we obtain:
\begin{eqnarray*}
\int_{G_{MN}^L}u_{i_1j_1}\ldots u_{i_sj_s}
&=&\sum_{l_1\ldots l_s\leq L}\int_{G_M}a_{l_1i_1}\ldots a_{l_si_s}\int_{G_N}b_{l_1j_1}^*\ldots b_{l_sj_s}^*\\
&=&\sum_{l_1\ldots l_s\leq L}\sum_{\pi\sigma}\delta_\pi(l)\delta_\sigma(i)W_{sM}(\pi,\sigma)\sum_{\tau\nu}\delta_\tau(l)\delta_\nu(j)W_{sN}(\tau,\nu)\\
&=&\sum_{\pi\sigma\tau\nu}\left(\sum_{l_1\ldots l_s\leq L}\delta_\pi(l)\delta_\tau(l)\right)\delta_\sigma(i)\delta_\nu(j)W_{sM}(\pi,\sigma)W_{sN}(\tau,\nu)
\end{eqnarray*}

The coefficient being $L^{|\pi\vee\tau|}$, we obtain the formula in the statement.
\end{proof}

We can now derive an abstract characterization of the integration, as follows:

\begin{theorem}
The integration of $G_{MN}^L$ is the unique positive unital trace 
$$C(G_{MN}^L)\to\mathbb C$$
which is invariant under the action of the quantum group $G_M\times G_N$.
\end{theorem}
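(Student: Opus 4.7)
The plan is to show that any positive unital invariant trace $\tau$ on $C(G_{MN}^L)$ must agree with the integration functional from Definition 15.15 on every polynomial in the generators $u_{ij}$; density of these polynomials then completes the argument. Notably, the trace property will not actually be used in the uniqueness step — positivity, unitality, and the invariance identity $(\tau \otimes \mathrm{id})\Phi(x) = \tau(x) \cdot 1$ will suffice.

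The first move is to apply the Haar state $\int_{G_M \times G_N}$ to both sides of the invariance identity, which (since this Haar state is unital) yields
$$\tau(x) = \Bigl(\tau \otimes \int_{G_M \times G_N}\Bigr)\Phi(x).$$
This reduces uniqueness to showing that the right-hand side coincides with the Weingarten expression of Theorem 15.17. For a monomial $x = u_{i_1 j_1} \cdots u_{i_s j_s}$, the explicit coaction
$$\Phi(u_{ij}) = \sum_{kl} u_{kl} \otimes a_{ki} \otimes b_{lj}^*$$
expands the right-hand side, and one then invokes the quantum group Weingarten formula separately for $G_M$ (controlling the sums over the $k_r$ against the $i_r$) and for $G_N$ (controlling the sums over the $l_r$ against the $j_r$). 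After rearranging, the dependence on $\tau$ concentrates in a single inner sum
$$\sum_{\ker k \,\geq\, \pi,\ \ker l \,\geq\, \tau'} \tau\bigl(u_{k_1 l_1} \cdots u_{k_s l_s}\bigr),$$
where $\pi, \tau' \in D(0,s)$ come from the two Weingarten expansions.

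This is where the defining relations of $G_{MN}^L$ from Proposition 15.13 do the essential work: they state precisely that the corresponding sum of generators equals the scalar $L^{|\pi \vee \tau'|} \cdot 1$ inside $C(G_{MN}^L)$, so applying the unital functional $\tau$ produces $L^{|\pi \vee \tau'|}$, a quantity independent of $\tau$. Substituting back gives
$$\tau(x) = \sum_{\pi,\sigma,\tau',\nu} L^{|\pi \vee \tau'|} \delta_\sigma(i)\, \delta_\nu(j)\, W_{sM}(\pi,\sigma)\, W_{sN}(\tau',\nu),$$
which is exactly the formula of Theorem 15.17 for $\int_{G_{MN}^L}(x)$. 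The main obstacle I anticipate is verifying cleanly that the relations of Proposition 15.13 apply in the required form for the arbitrary partitions $\pi, \tau'$ that appear in the Weingarten sum, rather than just for partitions in a generating family; this should reduce to routine categorical manipulations using stability of $D$ under concatenation and vertical composition, but it deserves careful handling. Once this is settled, positivity and density of the polynomial algebra inside $C(G_{MN}^L)$ promote the equality from monomials to all of $C(G_{MN}^L)$, giving $\tau = \int_{G_{MN}^L}$.
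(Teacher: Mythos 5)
Your proof is correct and is essentially the same argument as the paper's. The paper first establishes the ergodicity formula $(\mathrm{id}\otimes\int_{G_M}\otimes\int_{G_N})\Phi(x)=\int_{G_{MN}^L}x$ via the exact Weingarten-plus-defining-relations computation you describe, and then applies $\tau$ together with invariance and unitality; you simply thread $\tau$ through the same computation directly instead of isolating ergodicity as an intermediate step, which yields an identical sequence of identities. Your anticipated difficulty (relations for arbitrary partitions rather than generators) is already resolved by the statement of Theorem 15.14, which imposes the relations for all partitions in the category, so no extra categorical manipulations are needed.
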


\begin{proof}
The idea, from \cite{bgo}, will be that of proving the following ergodicity formula: 
$$\left(id\otimes\int_{G_M}\otimes\int_{G_N}\right)\Phi(x)=\int_{G_{MN}^L}x$$

We restrict the attention to the orthogonal case, the proof in the unitary case being similar. We must verify that the following holds:
$$\left(id\otimes\int_{G_M}\otimes\int_{G_N}\right)\Phi(u_{i_1j_1}\ldots u_{i_sj_s})=\int_{G_{MN}^L}u_{i_1j_1}\ldots u_{i_sj_s}$$

By using the Weingarten formula, the left term can be written as follows:
\begin{eqnarray*}
X
&=&\sum_{k_1\ldots k_s}\sum_{l_1\ldots l_s}u_{k_1l_1}\ldots u_{k_sl_s}\int_{G_M}a_{k_1i_1}\ldots a_{k_si_s}\int_{G_N}b_{l_1j_1}^*\ldots b_{l_sj_s}^*\\
&=&\sum_{k_1\ldots k_s}\sum_{l_1\ldots l_s}u_{k_1l_1}\ldots u_{k_sl_s}\sum_{\pi\sigma}\delta_\pi(k)\delta_\sigma(i)W_{sM}(\pi,\sigma)\sum_{\tau\nu}\delta_\tau(l)\delta_\nu(j)W_{sN}(\tau,\nu)\\
&=&\sum_{\pi\sigma\tau\nu}\delta_\sigma(i)\delta_\nu(j)W_{sM}(\pi,\sigma)W_{sN}(\tau,\nu)\sum_{k_1\ldots k_s}\sum_{l_1\ldots l_s}\delta_\pi(k)\delta_\tau(l)u_{k_1l_1}\ldots u_{k_sl_s}
\end{eqnarray*}

By using now the summation formula in Theorem 15.23, we obtain:
$$X=\sum_{\pi\sigma\tau\nu}L^{|\pi\vee\tau|}\delta_\sigma(i)\delta_\nu(j)W_{sM}(\pi,\sigma)W_{sN}(\tau,\nu)$$

Now by comparing with the Weingarten formula for $G_{MN}^L$, this proves our claim. Assume now that $\tau:C(G_{MN}^L)\to\mathbb C$ satisfies the invariance condition. We have:
\begin{eqnarray*}
\tau\left(id\otimes\int_{G_M}\otimes\int_{G_N}\right)\Phi(x)
&=&\left(\tau\otimes\int_{G_M}\otimes\int_{G_N}\right)\Phi(x)\\
&=&\left(\int_{G_M}\otimes\int_{G_N}\right)(\tau\otimes id)\Phi(x)\\
&=&\left(\int_{G_M}\otimes\int_{G_N}\right)(\tau(x)1)\\
&=&\tau(x)
\end{eqnarray*}

On the other hand, according to the formula established above, we have as well:
$$\tau\left(id\otimes\int_{G_M}\otimes\int_{G_N}\right)\Phi(x)
=\tau(tr(x)1)
=tr(x)$$

Thus we obtain $\tau=tr$, and this finishes the proof.
\end{proof}

As a main application of the above integration technology, still following \cite{ba4}, we have the following result, extending previous computations for quantum group characters:

\begin{proposition}
For a sum of coordinates
$$\chi_E=\sum_{(ij)\in E}u_{ij}$$
which do not overlap on rows and columns we have
$$\int_{G_{MN}^L}\chi_E^s=\sum_{\pi\sigma\tau\nu}K^{|\pi\vee\tau|}L^{|\sigma\vee\nu|}W_{sM}(\pi,\sigma)W_{sN}(\tau,\nu)$$
where $K=|E|$ is the cardinality of the indexing set.
\end{proposition}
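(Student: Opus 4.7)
The plan is to start from the Weingarten formula from Theorem 15.14 and then perform a careful combinatorial summation, exploiting the non-overlap hypothesis on $E$.

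First I would expand the power of the character as a multi-indexed sum
$$\chi_E^s=\sum_{(i_1,j_1),\ldots,(i_s,j_s)\in E}u_{i_1j_1}\ldots u_{i_sj_s}$$
and then apply the Weingarten formula from Theorem 15.14 termwise. After swapping the order of summation, the statement reduces to establishing the combinatorial identity
$$\sum_{(i_1,j_1),\ldots,(i_s,j_s)\in E}\delta_\sigma(i)\delta_\nu(j)=K^{|\sigma\vee\nu|}$$
for every pair of partitions $\sigma,\nu$ with no upper legs, so that the $L^{|\pi\vee\tau|}W_{sM}(\pi,\sigma)W_{sN}(\tau,\nu)$ factor can be pulled out and reassembled into the stated formula.

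The heart of the proof is the non-overlap hypothesis. Think of $E\subset\{1,\ldots,M\}\times\{1,\ldots,N\}$ as the graph of a partial bijection: by assumption, the projections $E\to\{1,\ldots,M\}$ and $E\to\{1,\ldots,N\}$ are both injective. Consequently, for any sequence $e_1,\ldots,e_s\in E$ with $e_k=(i_k,j_k)$, one has the crucial equivalences
$$i_a=i_b\ \Longleftrightarrow\ e_a=e_b\ \Longleftrightarrow\ j_a=j_b,$$
which means $\ker i=\ker j=\ker e$. Therefore the conjunction $\delta_\sigma(i)\delta_\nu(j)=1$, which is the condition $\ker i\geq\sigma$ and $\ker j\geq\nu$, collapses to the single condition $\ker e\geq\sigma\vee\nu$.

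From here, counting sequences $(e_1,\ldots,e_s)\in E^s$ satisfying $\ker e\geq\sigma\vee\nu$ amounts to choosing one element of $E$ freely for each block of $\sigma\vee\nu$, yielding exactly $K^{|\sigma\vee\nu|}$ such sequences. Plugging this back into the Weingarten expansion gives
$$\int_{G_{MN}^L}\chi_E^s=\sum_{\pi,\sigma,\tau,\nu}L^{|\pi\vee\tau|}K^{|\sigma\vee\nu|}W_{sM}(\pi,\sigma)W_{sN}(\tau,\nu),$$
as desired. The only potentially subtle step is the collapsing observation $\ker i=\ker j$ on $E$-valued sequences; everything else is a direct application of Theorem 15.14 and elementary counting.
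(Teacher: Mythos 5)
Your proof is correct and follows essentially the same route as the paper: expand the power, apply the Weingarten formula from Theorem 15.14, interchange sums, and establish the combinatorial identity by exploiting that $E$ is the graph of a partial bijection. The paper phrases this via explicit injections $\alpha:\{1,\ldots,K\}\hookrightarrow\{1,\ldots,M\}$ and $\beta:\{1,\ldots,K\}\hookrightarrow\{1,\ldots,N\}$ so that $\delta_\pi(\alpha(i))=\delta_\pi(i)$, whereas you argue directly that $\ker i=\ker j=\ker e$ on $E$-valued sequences; these are the same observation. One cosmetic point: your final display has $L^{|\pi\vee\tau|}K^{|\sigma\vee\nu|}$ rather than the stated $K^{|\pi\vee\tau|}L^{|\sigma\vee\nu|}$; since the Weingarten matrices are symmetric, the two sums coincide after the relabeling $(\pi,\tau)\leftrightarrow(\sigma,\nu)$, and it would be cleaner to say so explicitly rather than just writing ``as desired.''
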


\begin{proof}
With $K=|E|$, we can write $E=\{(\alpha(i),\beta(i))\}$, for certain embeddings:
$$\alpha:\{1,\ldots,K\}\subset\{1,\ldots,M\}$$
$$\beta:\{1,\ldots,K\}\subset\{1,\ldots,N\}$$

In terms of these maps $\alpha,\beta$, the moment in the statement is given by:
$$M_s=\int_{G_{MN}^L}\left(\sum_{i\leq K}u_{\alpha(i)\beta(i)}\right)^s$$

By using the Weingarten formula, we can write this quantity as follows:
\begin{eqnarray*}
&&M_s\\
&=&\int_{G_{MN}^L}\sum_{i_1\ldots i_s\leq K}u_{\alpha(i_1)\beta(i_1)}\ldots u_{\alpha(i_s)\beta(i_s)}\\
&=&\sum_{i_1\ldots i_s\leq K}\sum_{\pi\sigma\tau\nu}L^{|\sigma\vee\nu|}\delta_\pi(\alpha(i_1),\ldots,\alpha(i_s))\delta_\tau(\beta(i_1),\ldots,\beta(i_s))W_{sM}(\pi,\sigma)W_{sN}(\tau,\nu)\\
&=&\sum_{\pi\sigma\tau\nu}\left(\sum_{i_1\ldots i_s\leq K}\delta_\pi(i)\delta_\tau(i)\right)L^{|\sigma\vee\nu|}W_{sM}(\pi,\sigma)W_{sN}(\tau,\nu)
\end{eqnarray*}

But, as explained before, the coefficient on the left in the last formula is:
$$C=K^{|\pi\vee\tau|}$$

We therefore obtain the formula in the statement.
\end{proof}

We can further advance in the classical/twisted and free cases, where the Weingarten theory for the corresponding quantum groups is available from \cite{bb+}, \cite{bc1}, \cite{bsp}, \cite{csn}:

\index{Bercovici-Pata bijection}

\begin{theorem}
In the context of the liberation operations 
$$O_{MN}^L\to O_{MN}^{L+}\quad,\quad 
U_{MN}^L\to U_{MN}^{L+}\quad,\quad 
H_{MN}^{sL}\to H_{MN}^{sL+}$$ 
the laws of the sums of non-overlapping coordinates,
$$\chi_E=\sum_{(ij)\in E}u_{ij}$$
are in Bercovici-Pata bijection, in the 
$$|E|=\kappa N,L=\lambda N,M=\mu N$$
regime and $N\to\infty$ limit.
\end{theorem}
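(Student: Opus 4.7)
The plan is to start from the moment formula in Proposition 15.17, namely
$$\int_{G_{MN}^L}\chi_E^s=\sum_{\pi\sigma\tau\nu}K^{|\pi\vee\tau|}L^{|\sigma\vee\nu|}W_{sM}(\pi,\sigma)W_{sN}(\tau,\nu),$$
and carry out an asymptotic analysis in the regime $K=\kappa N$, $L=\lambda N$, $M=\mu N$. Here the partitions $\pi,\sigma,\tau,\nu$ range over $D(s)$, where $D$ is the category associated to $G$; in the classical cases we have $D=P_2,\mathcal P_2,P^s$ while in the free cases $D=NC_2,\mathcal{NC}_2,NC^s$. The Bercovici--Pata bijection will manifest itself through the fact that the limiting moments take the shape $\sum_{\pi\in D(s)}c^{|\pi|}$ for a constant $c=c(\kappa,\lambda,\mu)$, with the same weight but with the classical/free distinction encoded entirely in the range of summation.

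First I would invoke the standard asymptotic analysis of the Weingarten matrix $W_{sN}=G_{sN}^{-1}$, where $G_{sN}(\pi,\sigma)=N^{|\pi\vee\sigma|}$. Since the Gram matrix satisfies $G_{sN}(\pi,\pi)=N^{|\pi|}$ while $G_{sN}(\pi,\sigma)=N^{|\pi\vee\sigma|}$ with $|\pi\vee\sigma|<\min(|\pi|,|\sigma|)$ whenever $\pi\neq\sigma$, the rescaled matrix $N^{-|\pi|/2}G_{sN}(\pi,\sigma)N^{-|\sigma|/2}$ tends to the identity, and hence $W_{sN}(\pi,\sigma)=N^{-|\pi|}\delta_{\pi\sigma}+O(N^{-|\pi|-1})$. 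Applying this both to $W_{sM}$ and $W_{sN}$, which is legitimate since $M=\mu N\to\infty$ as well, the leading contribution comes from the diagonal terms $\pi=\sigma$ and $\tau=\nu$, giving
$$\int_{G_{MN}^L}\chi_E^s\sim\sum_{\pi,\tau\in D(s)}K^{|\pi\vee\tau|}L^{|\pi\vee\tau|}M^{-|\pi|}N^{-|\tau|}.$$

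Next I would substitute $K=\kappa N$, $L=\lambda N$, $M=\mu N$ and track the power of $N$ in each term. The exponent of $N$ in the summand indexed by $(\pi,\tau)$ equals $2|\pi\vee\tau|-|\pi|-|\tau|$, which is non-positive, with equality if and only if $\pi=\tau$. Hence only the diagonal pairs survive the joint limit, yielding
$$\lim_{N\to\infty}\int_{G_{MN}^L}\chi_E^s=\sum_{\pi\in D(s)}\Bigl(\frac{\kappa\lambda}{\mu}\Bigr)^{|\pi|}.$$
Comparing this with the classical/free moment-cumulant formulae recalled in Theorem 8.26, the classical and free limits appear respectively as sums over $D(s)$ of partition/noncrossing-partition type with the same weight $(\kappa\lambda/\mu)^{|\pi|}$ per partition, which is precisely the statement of the Bercovici--Pata bijection applied to the corresponding compound Poisson/Bessel-type laws.

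The main obstacle I expect is controlling the sub-leading error terms in the Weingarten asymptotics \emph{uniformly} over all quadruples $(\pi,\sigma,\tau,\nu)\in D(s)^4$, and rigorously ruling out the possibility that the combined off-diagonal contributions, although individually small, accumulate to spoil the limit in the joint regime where $K,L,M$ all scale linearly with $N$. This requires a quantitative form of the diagonal dominance of $W_{sN}$ (available in the Collins-Śniady and Banica-Collins framework, with free analogues from Section~8) combined with the combinatorial estimate that the number of partitions of $s$ in $D(s)$ is bounded independently of $N$. A secondary consistency check would be to verify that at the extreme values $L=M=N$ and $L=M=1$ (or $L=N=1$) the computation recovers the Bercovici-Pata statements from Sections~8 and~10 for the full quantum groups and for the spheres.
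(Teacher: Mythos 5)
Your proposal follows essentially the same route as the paper's proof: start from the Weingarten moment formula for $\chi_E$, use diagonal concentration of the Weingarten matrix and the inequality $|\pi\vee\sigma|\leq\frac{1}{2}(|\pi|+|\sigma|)$ to reduce first to $\pi=\sigma$, $\tau=\nu$, then to $\pi=\tau$, and conclude that the limiting moments are $\sum_{\pi\in D(s)}(\kappa\lambda/\mu)^{|\pi|}$, whose classical/free incarnation is precisely a Bercovici--Pata pair. The only slip is a reference mismatch (the moment formula you quote is Proposition 15.19, not 15.17), but the mathematical content and sequence of steps coincide with the paper's argument.
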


\begin{proof}
We use general theory from \cite{bb+}, \cite{bc1}, \cite{bsp}, \cite{csn}. According to Proposition 15.28, in terms of $K=|E|$, the moments of the variables in the statement are:
$$M_s=\sum_{\pi\sigma\tau\nu}K^{|\pi\vee\tau|}L^{|\sigma\vee\nu|}W_{sM}(\pi,\sigma)W_{sN}(\tau,\nu)$$

We use now two standard facts, namely:

\medskip

(1) The fact that in the $N\to\infty$ limit the Weingarten matrix $W_{sN}$ is concentrated on the diagonal. This is indeed something very standard.

\medskip

(2) The fact that we have an inequality as follows, with equality when $\pi=\sigma$:
$$|\pi\vee\sigma|\leq\frac{|\pi|+|\sigma|}{2}$$

Again, this is standard, and for details on all this, we refer to \cite{bc1}. 

\medskip

Let us discuss now what happens in the regime from the statement, namely:
$$K=\kappa N,L=\lambda N,M=\mu N,N\to\infty$$

In this regime, we obtain:
\begin{eqnarray*}
M_s
&\simeq&\sum_{\pi\tau}K^{|\pi\vee\tau|}L^{|\pi\vee\tau|}M^{-|\pi|}N^{-|\tau|}\\
&\simeq&\sum_\pi K^{|\pi|}L^{|\pi|}M^{-|\pi|}N^{-|\pi|}\\
&=&\sum_\pi\left(\frac{\kappa\lambda}{\mu}\right)^{|\pi|}
\end{eqnarray*}

In order to interpret this formula, we use general theory from \cite{bb+}, \cite{bc1}, \cite{bsp}, \cite{csn}:

\medskip

(1) For $G_N=O_N,\bar{O}_N/O_N^+$, the above variables $\chi_E$ follow to be asymptotically Gaussian/semicircular, of parameter $\frac{\kappa\lambda}{\mu}$, and hence in Bercovici-Pata bijection.

\medskip

(2) For $G_N=U_N,\bar{U}_N/U_N^+$ the situation is similar, with $\chi_E$ being asymptotically complex Gaussian/circular, of parameter $\frac{\kappa\lambda}{\mu}$, and in Bercovici-Pata bijection. 

\medskip

(3) Finally, for $G_N=H_N^s/H_N^{s+}$, the variables $\chi_E$ are asymptotically Bessel/free Bessel of parameter $\frac{\kappa\lambda}{\mu}$, and once again in Bercovici-Pata bijection.  
\end{proof}

There are several possible extensions of the above result, for instance by using quantum reflection groups instead of unitary quantum groups, and by using twisting operations as well. Finally, there are many interesting questions in relation with Connes' noncommutative geometry \cite{con}, and more specifically with the quantum extension of the Nash embedding theorem \cite{nas}. We refer here to \cite{dfw}, \cite{dgo}, \cite{gos} and related papers.

\section*{15e. Exercises} 

There are several interesting questions, which appear as a continuation of the material from this chapter. As a first exercise about this, we have:

\begin{exercise}
Work out explicitely the algebraic and probabilistic theory of the spaces $G_{MN}^L$, in the case $G=S_N,S_N^+$.
\end{exercise}

To be more precise here, the general case $G=H_N^s,H_N^{s+}$ was discussed in the above, and the problem is that of working out the particular case $s=1$ of all this.

\begin{exercise}
Work out the algebraic and probabilistic theory of the spaces $G_{MN}^L$, in the particular cases $L=M$ and $L=N$.
\end{exercise}

The point here is that the case $L=M=N$ corresponds to the quantum groups themselves, and so the cases $L=M$ and $L=N$ correspond to a ``minimal'' extension of the usual theory of the quantum groups, which is worth to be worked out in detail.

\chapter{Modelling questions}

\section*{16a. Matrix models}

In this final chapter we discuss one more ``advanced topic'', namely the use of matrix models for the study of the closed subgroups $G\subset U_N^+$. The idea here, that we have not tried yet in this book, is something extremely simple, namely that of modelling the standard coordinates $u_{ij}\in C(G)$ by certain concrete variables $U_{ij}\in B$. 

\bigskip

Indeed, assuming that the model is faithful in some suitable sense, that the algebra $B$ is something quite familiar, and that the variables $U_{ij}$ are not too complicated, all the questions about $G$ would correspond in this way to routine questions inside $B$.

\bigskip

Regarding the choice of the target algebra $B$, some very convenient algebras are the random matrix ones, $B=M_K(C(T))$, with $K\in\mathbb N$, and with $T$ being a compact space. These algebras generalize indeed the most familiar algebras that we know, namely the matrix ones $M_K(\mathbb C)$, and the commutative ones $C(T)$. We are led in this way to:

\index{random matrix}
\index{matrix model}

\begin{definition}
A matrix model for $G\subset U_N^+$ is a morphism of $C^*$-algebras
$$\pi:C(G)\to M_K(C(T))$$
where $T$ is a compact space, and $K\geq1$ is an integer.
\end{definition}

And isn't this amazingly simple, as an idea. In fact, most likely, we are not here into ``advanced topics'', but rather into the basics. We could have well started the present book with chapter 1 containing random matrices instead of operator algebras, say following Anderson-Guionnet-Zeitouni \cite{agz}, and with statistical mechanics instead of quantum mechanics, as a main motivation, and then with chapter 2 containing some kind of axioms for the quantum groups, directly modelled by using random matrix algebras.

\bigskip

Of course, things are a bit more complicated than this, and we will see details in a moment. However, for the philosophy, Definition 16.1 remains something extremely simple, and bright. And motivating too. The point indeed is that the matrix models for the quantum groups make some interesting links with the work of Jones \cite{jo1}, \cite{jo2}, \cite{jo3}, on the mathematics of quantum and statistical mechanics and related topics, and this has been known since the late 90s, and has served as a main motivation for the development of the theory of closed subgroups $G\subset U_N^+$, all over the 00s and 10s. But more on this later, once we'll get more familiar with Definition 16.1, and its consequences.

\bigskip

Getting back now to Definition 16.1, more generally, we can model in this way the standard coordinates $x_i\in C(X)$ of the various algebraic manifolds $X\subset S^{N-1}_{\mathbb C,+}$. Indeed, these manifolds generalize the compact matrix quantum groups, which appear as:
$$G\subset U_N^+\subset S^{N^2-1}_{\mathbb C,+}$$

Thus, we have many other interesting examples of such manifolds, such as the homogeneous spaces discussed in chapter 15. However, at this level of generality, not much general theory is available. It is elementary to show that, under the technical assumption $X^c\neq\emptyset$, there exists a universal $K\times K$ model for the algebra $C(X)$, which factorizes as follows, with $X^{(K)}\subset X$ being a certain algebraic submanifold: 
$$\pi_K:C(X)\to C(X^{(K)})\subset M_K(C(T_K))$$

To be more precise, the universal $K\times K$ model space $T_K$ appears by imposing to the complex $K\times K$ matrices the relations defining $X$, and the algebra $C(X^{(K)})$ is then by definition the image of $\pi_K$. In relation with this, we can set as well:
$$X^{(\infty)}=\bigcup_{K\in\mathbb N}X^{(K)}$$

We are led in this way to a filtration of $X$, as follows:
$$X^c= X^{(1)}\subset X^{(2)}\subset X^{(3)}\subset\ldots\ldots\subset X^{(\infty)}\subset X$$

It is possible to say a few non-trivial things about these manifolds $X^{(K)}$, by using algebraic and functional analytic techniques, and we refer here to \cite{bb4}. 

\bigskip

In the compact quantum group case, however, that we are mainly interested in here, the matrix truncations $G^{(K)}\subset G$ are generically not subgroups at $K\geq2$, and so this theory is a priori not very useful, at least in its basic form presented here.

\bigskip

In order to reach, however, to some results, let us introduce as well:

\index{stationary model}

\begin{definition}
A matrix model $\pi:C(G)\to M_K(C(T))$ is called stationary when
$$\int_G=\left(tr\otimes\int_T\right)\pi$$
where $\int_T$ is the integration with respect to a given probability measure on $T$.
\end{definition}

Observe that this definition can be extended as well to the algebraic manifold case, $X\subset S^{N-1}_{\mathbb C,+}$, provided that our manifolds have certain integration functionals $\int_X$. This is the case for instance with the homogeneous spaces discussed in chapter 15, where $\int_X$ appears as the unique $G$-invariant trace, with respect to the underlying quantum group $G$. However, the axiomatization of such manifolds being not available yet, we will keep this as a remark, and get back in what follows, until the end, to the quantum groups.

\bigskip

So, back to Definition 16.2, as it is, our first comment concerns the terminology. The term ``stationary'' comes from a functional analytic interpretation of all this, with a certain Ces\`aro limit being needed to be stationary, and this will be explained later on. Yet another explanation comes from a certain relation with the lattice models, but this relation is rather something folklore, not axiomatized yet. More on this later.

\bigskip

As a first result now, the stationarity property implies the faithfulness:

\index{amenability}
\index{coamenability}

\begin{theorem}
Assuming that $G\subset U_N^+$ has a stationary model,
$$\pi:C(G)\to M_K(C(T))$$
$$\int_G=\left(tr\otimes\int_T\right)\pi$$
it follows that $G$ must be coamenable, and that the model is faithful.
\end{theorem}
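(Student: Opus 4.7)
My plan is to argue in three steps: identify the kernel of $\pi$, upgrade this into coamenability via a structural argument on the image, and then deduce faithfulness.

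First, combining stationarity with the faithfulness of the tracial state $tr \otimes \int_T$ on $M_K(C(T))$ --- which tacitly assumes a faithful probability measure on $T$ --- identifies
$$\ker \pi = \bigl\{a \in C(G) : \textstyle\int_G(a^*a) = 0\bigr\},$$
so $\ker \pi$ is precisely the null ideal of the Haar integration. This is, essentially by construction in Section 14, the kernel of the canonical quotient $C(G) = A_{\mathrm{full}} \twoheadrightarrow A_{\mathrm{red}} = C_{\mathrm{red}}(G)$, and hence $\pi$ factors as
$$\pi : C(G) \twoheadrightarrow C_{\mathrm{red}}(G) \overset{\iota}{\hookrightarrow} M_K(C(T)),$$
with $\iota$ an injective $*$-homomorphism.

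The central step is to extract coamenability from the embedding $\iota$. The ambient algebra $M_K(C(T))$ is subhomogeneous of degree $K$: every irreducible representation factors through evaluation at some $t \in T$ followed by the standard irrep of $M_K(\mathbb{C})$. By extending pure states from $A$ to $M_K(C(T))$, every irreducible representation of a $C^*$-subalgebra $A \subseteq M_K(C(T))$ also has dimension at most $K$, so such $A$ is subhomogeneous of degree at most $K$, hence type I, and in particular nuclear. Applying this to $A = C_{\mathrm{red}}(G)$ and invoking the standard equivalence between nuclearity of the reduced $C^*$-algebra and amenability of the discrete dual --- known for discrete groups (Lance) and extended to discrete quantum groups --- gives that $\widehat{G}$ is amenable, that is, $G$ is coamenable. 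Equivalently, the canonical map $A_{\mathrm{full}} \to A_{\mathrm{red}}$ is an isomorphism, which is condition (2) of Theorem 14.6. Faithfulness of $\pi$ then follows immediately: if $\pi(a) = 0$ then by stationarity $\int_G(a^*a) = 0$, and coamenability combined with Theorem 14.6(1) makes $\int_G$ faithful on $C(G)$, forcing $a = 0$.

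The main obstacle is the central step, namely the passage from the embedding $C_{\mathrm{red}}(G) \hookrightarrow M_K(C(T))$ to coamenability. This relies on the non-trivial equivalence between nuclearity of the reduced algebra of a discrete quantum group and its amenability, which is not proved in the text. A more elementary alternative is to verify the Kesten criterion of Theorem 14.6(4) directly, by producing $t \in T$ and a unit vector $v \in \mathbb{C}^K$ with $\pi(u_{ii})(t)v = v$ for every $i$; this would place $N$ in the spectrum of $\mathrm{Re}(\pi(\chi_u))$ inside $M_K(C(T))$, and hence in the spectrum of $\mathrm{Re}(\chi_u)$ inside $C_{\mathrm{red}}(G)$ by stability of spectra under $C^*$-inclusion. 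Producing such a fixed point for the biunitary $\pi(u) \in M_{NK}(C(T))$ is precisely the structural consequence of stationarity that is genuinely delicate to extract without circular appeal to coamenability itself.
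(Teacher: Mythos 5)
Your proof is correct in outline and reaches the right conclusion, but it takes a genuinely different route from the paper's. Both arguments agree on the first step: stationarity identifies $\ker\pi$ with the null ideal of the Haar trace, so $\pi$ factors as $C(G)\twoheadrightarrow C_{\mathrm{red}}(G)\hookrightarrow M_K(C(T))$, and it suffices to prove coamenability. Where you diverge is the central step. You stay at the $C^*$-level: you observe that $M_K(C(T))$ is $K$-subhomogeneous, argue (correctly, via extension of pure states) that subhomogeneity passes to $C^*$-subalgebras, conclude that $C_{\mathrm{red}}(G)$ is type I and hence nuclear, and then invoke the Lance-type equivalence between nuclearity of the reduced algebra and amenability of the discrete dual. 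The paper instead passes to the \emph{von Neumann} level: the GNS construction upgrades the $C^*$-embedding to $L^\infty(G)\subset M_K(L^\infty(T))$, and since the latter is a finite type I von Neumann algebra, so is $L^\infty(G)$, which decomposes as a direct integral $\int_X M_{K_x}(\mathbb C)\,dx$ with $K_x\le K$; from this one reads off directly that $\mathcal C^\infty(G)\subset L^\infty(G)$ admits a unique $C^*$-norm, i.e.\ condition (2) of Theorem 14.6. The trade-off is that your route leans on the nuclearity--amenability equivalence for discrete quantum groups (valid in the Kac case, but a substantial theorem nowhere proved or even cited in the text), while the paper's von Neumann argument is more self-contained within the framework already set up in Section 14. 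You already flag this as the weak link; the paper's choice of working with weak closures and the direct integral decomposition is precisely the move that sidesteps it. Your suggested elementary alternative via the Kesten criterion is not the paper's approach either, and as you note, extracting a common fixed vector for $\pi(u)$ from stationarity alone is not straightforward.
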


\begin{proof}
We have two assertions to be proved, the idea being as follows:

\medskip

(1) Assume that we have a stationary model, as in the statement. By performing the GNS construction with respect to $\int_G$, we obtain a factorization as follows, which commutes with the respective canonical integration functionals:
$$\pi:C(G)\to C(G)_{red}\subset M_K(C(T))$$

Thus, in what regards the coamenability question, we can assume that $\pi$ is faithful. With this assumption made, observe that we have embeddings as follows:
$$C^\infty(G)\subset C(G)\subset M_K(C(T))$$

The point now is that the GNS construction gives a better embedding, as follows:
$$L^\infty(G)\subset M_K(L^\infty(T))$$

Now since the von Neumann algebra on the right is of type I, so must be its subalgebra $A=L^\infty(G)$. This means that, when writing the center of this latter algebra as  $Z(A)=L^\infty(X)$, the whole algebra decomposes over $X$, as an integral of type I factors:
$$L^\infty(G)=\int_XM_{K_x}(\mathbb C)\,dx$$

In particular, we can see from this that $C^\infty(G)\subset L^\infty(G)$ has a unique $C^*$-norm, and so $G$ is coamenable. Thus we have proved our first assertion.

\medskip

(2) The second assertion follows as well from the above, because our factorization of $\pi$ consists of the identity, and of an inclusion.
\end{proof}

Regarding now the examples of stationary models, we first have:

\begin{proposition}
The following have stationary models:
\begin{enumerate}
\item The compact Lie groups.

\item The finite quantum groups.
\end{enumerate}
\end{proposition}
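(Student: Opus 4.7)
My plan is to handle the two cases separately, both by exhibiting concrete stationary models directly, since Definition 16.2 places only a very mild positivity requirement on the pair $(T,\int_T)$.

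For (1), let $G\subset U_N$ be a compact Lie group. I would simply take the trivial model, namely $K=1$, $T=G$ equipped with the Haar measure $\int_G$, and $\pi=\mathrm{id}:C(G)\to C(G)=M_1(C(G))$. Since $tr$ on $M_1(\mathbb{C})$ is the identity of $\mathbb{C}$, the stationarity condition reduces to $\int_G=\int_G\circ\mathrm{id}$, which is a tautology. The only thing to check is that this qualifies as a model in the sense of Definition 16.1, and it clearly does. (One could alternatively feed in the embedding $G\subset U_N$ to obtain a model landing in $M_N(C(G))$ via $u_{ij}\mapsto$ the function $g\mapsto g_{ij}$ viewed as a scalar matrix, but the trivial model already settles the statement.)

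For (2), let $F$ be a finite quantum group, so $C(F)$ is a finite-dimensional Woronowicz algebra. The plan is to use the left regular representation on $H=L^2(F,h)$, where $h=\int_F$ is the Haar functional. Setting $K=\dim_{\mathbb{C}}C(F)=\dim H$ and $T=\{\mathrm{pt}\}$, the regular representation gives an embedding
$$\pi=\lambda:C(F)\hookrightarrow B(H)=M_K(\mathbb{C})=M_K(C(T)).$$
The key claim is that $h=tr\circ\lambda$, where $tr$ is the normalized trace on $M_K(\mathbb{C})$. To verify this I would use Peter-Weyl: decompose $C(F)=\bigoplus_{v\in\mathrm{Irr}(F)}M_{\dim v}(\mathbb{C})$ and $L^2(F)=\bigoplus_{v}\mathbb{C}^{\dim v}\otimes\mathbb{C}^{\dim v}$, so $\lambda$ decomposes as $\bigoplus_v (\dim v)\cdot\rho_v$ with $\rho_v$ the standard representation of $M_{\dim v}(\mathbb{C})$. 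A direct computation then gives, for $a$ in the $v$-block, $tr(\lambda(a))=\frac{\dim v}{K}\mathrm{Tr}_v(a)$, which coincides with the Peter-Weyl formula for $h$ in the finite Kac case.

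The main point requiring attention — and the only real ``obstacle'' — is justifying that $h$ on a finite Woronowicz algebra is given by this explicit Plancherel-weighted formula; this relies on $h$ being a trace, which in turn uses the axiom $S^2=\mathrm{id}$ built into the Woronowicz algebra framework of the excerpt (see the discussion following Proposition 2.3). Once this Kac property is in hand, the above identification is an immediate block-by-block check, and stationarity follows. Note that faithfulness and coamenability of $F$, predicted by Theorem 16.3, are automatic in both cases, consistent with the construction.
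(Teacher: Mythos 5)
Your two constructions are exactly the ones the paper uses: the identity model $C(G)=M_1(C(G))$ with the Haar measure on $T=G$ for compact Lie groups, and the left regular representation $\lambda$ on $L^2(F,h)$ for finite quantum groups. Part (1) is fine as a tautology, and for part (2) the construction and the reduction to the identity $h=(tr\otimes\int_T)\lambda$, i.e. $h=\frac{1}{\dim C(F)}\,\mathrm{Tr}\circ\lambda$, are the right ones (the paper itself asserts this identity without detail).

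The gap is in your justification of that identity. You claim it follows from $h$ being a trace (the Kac property coming from $S^2=id$) together with an ``immediate block-by-block check''. Traciality alone does not do it: writing the Wedderburn decomposition $C(F)\simeq\bigoplus_\alpha M_{n_\alpha}(\mathbb C)$, traciality only gives $h=\sum_\alpha c_\alpha\,\mathrm{Tr}_\alpha$ for some positive weights $c_\alpha$, while your computation of $tr\circ\lambda$ correctly yields the weights $n_\alpha/\dim C(F)$; the entire content of stationarity is precisely that $c_\alpha=n_\alpha/\dim C(F)$, and for a generic faithful trace this fails. Pinning down the weights needs the invariance of $h$, not its traciality: for instance, check that $\tau=\frac{1}{\dim C(F)}\mathrm{Tr}\circ\lambda$ is a bi-invariant state and invoke the uniqueness of the Haar integration (Theorem 3.18), or quote the standard semisimple Hopf algebra fact that the regular character equals $\dim$ times the Haar integral. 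A related slip: the decomposition you invoke, $C(F)=\bigoplus_{v\in\mathrm{Irr}(F)}M_{\dim v}(\mathbb C)$, is the Peter--Weyl decomposition of Theorem 3.22, which is only a coalgebra isomorphism indexed by corepresentations of $C(F)$; the decomposition along which $\lambda$ splits as $\bigoplus_\alpha n_\alpha\rho_\alpha$ is the algebra (Wedderburn) decomposition, whose blocks correspond to representations of the algebra $C(F)$, i.e. to $\mathrm{Irr}(\widehat F)$. Your multiplicity count is correct for that decomposition, but then the corepresentation-side ``Peter--Weyl formula for $h$'' cannot simply be matched against it -- relating the two sides is exactly the Plancherel-type statement that still needs an argument.
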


\begin{proof}
Both these assertions are elementary, with the proofs being as follows:

\medskip

(1) This is clear, because we can use the identity $id:C(G)\to M_1(C(G))$.

\medskip

(2) Here we can use the regular representation $\lambda:C(G)\to M_{|G|}(\mathbb C)$. Indeed, let us endow the linear space $H=C(G)$ with the scalar product $<a,b>=\int_Gab^*$. We have then a representation of $*$-algebras, as follows:
$$\lambda:C(G)\to B(H)\quad,\quad 
a\to[b\to ab]$$

Now since we have $H\simeq\mathbb C^{|G|}$ with $|G|=\dim A$, we can view $\lambda$ as a matrix model map, as above, and the stationarity axiom $\int_G=tr\circ\lambda$ is satisfied, as desired. 
\end{proof}

In order to discuss now the group duals, consider a model as follows:
$$\pi:C^*(\Gamma)\to M_K(C(T))$$

Since a representation of a group algebra must come from a representation of the group, such a matrix model must come from a group representation, as follows:
$$\rho:\Gamma\to C(T,U_K)$$

With this identification made, we have the following result:

\begin{proposition}
An matrix model $\rho:\Gamma\subset C(T,U_K)$ is stationary when:
$$\int_Ttr(g^x)dx=0,\forall g\neq1$$
Moreover, the examples include all the abelian groups, and all finite groups.
\end{proposition}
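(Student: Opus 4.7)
The plan is to reduce the stationarity condition to a linear condition on group elements, and then verify each of the two classes of examples directly. Since $C^*(\Gamma)$ is densely spanned by $\{g\mid g\in\Gamma\}$ and both $\int_G$ (with $G=\widehat{\Gamma}$) and the composite $(tr\otimes\int_T)\circ\pi$ are continuous linear functionals on it, stationarity is equivalent to the two agreeing on every generator $g\in\Gamma$. By the Haar functional computation recalled in Proposition 3.15, the left-hand side equals $\delta_{g,1}$, while the right-hand side equals $\int_T tr(g^x)\,dx$, using the convention $g^x:=\rho(g)(x)$ of the statement. For $g=1$ we have $g^x=1_K$, so the right-hand side is $tr(1_K)=1$, matching the left side automatically; the remaining content is therefore exactly the condition $\int_T tr(g^x)\,dx=0$ for $g\neq 1$.

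For abelian $\Gamma$, I would take $K=1$, let $T=\widehat{\Gamma}$ be the Pontrjagin dual equipped with its Haar probability measure, and set $\rho(g)(\chi)=\chi(g)\in\mathbb T=U_1$. The criterion then becomes
$$\int_{\widehat{\Gamma}}\chi(g)\,d\chi=0,\quad\forall g\neq 1,$$
which is the familiar orthogonality of characters on the compact abelian group $\widehat{\Gamma}$.

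For finite $\Gamma$, I would use the left regular representation $\lambda:\Gamma\to U(l^2(\Gamma))=U_{|\Gamma|}$, with $T$ a one-point space and $K=|\Gamma|$. Since $\lambda(g)$ permutes the canonical basis $\{\delta_h\}_{h\in\Gamma}$ by $\lambda(g)\delta_h=\delta_{gh}$, it fixes no basis vector whenever $g\neq 1$, so its unnormalized matrix trace, and hence its normalized trace $tr(\lambda(g))$, vanishes. This is exactly the required condition.

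I do not expect a genuine obstacle here: once the reformulation in the first step is in place, the two example classes reduce to textbook facts, namely character orthogonality on the Pontrjagin dual and the regular-trace formula. The only book-keeping point is to note that $\pi$ is a well-defined $*$-algebra morphism $C^*(\Gamma)\to M_K(C(T))$, but this is automatic from the universal property of the full group algebra applied to the unitary representation $\rho:\Gamma\to C(T,U_K)$. The real content of the proposition is thus the conceptual observation that, in the cocommutative setting, stationarity of a matrix model is precisely a Ces\`aro-type vanishing condition on the fibrewise traces of the representing unitaries.
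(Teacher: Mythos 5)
Your proposal is correct and follows essentially the same route as the paper: reduce stationarity to the group elements, use $\int_{\widehat{\Gamma}}g=\delta_{g,1}$, and then check the abelian case via the embedding into $C(\widehat{\Gamma},U_1)$ and the finite case via the left regular representation. You merely spell out the two verifications (character orthogonality on the compact dual, vanishing of the regular trace) that the paper declares trivial.
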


\begin{proof}
Consider indeed a group embedding $\rho:\Gamma\subset C(T,U_K)$, which produces by linearity a matrix model, as follows:
$$\pi:C^*(\Gamma)\to M_K(C(T))$$

It is enough to formulate the stationarity condition on the group elements $g\in C^*(\Gamma)$. Let us set $\rho(g)=(x\to g^x)$. With this notation, the stationarity condition reads:
$$\int_Ttr(g^x)dx=\delta_{g,1}$$

Since this equality is trivially satisfied at $g=1$, where by unitality of our representation we must have $g^x=1$ for any $x\in T$, we are led to the condition in the statement. Regarding now the examples, these are both clear. More precisely:

\medskip

(1) When $\Gamma$ is abelian we can use the following trivial embedding:
$$\Gamma\subset C(\widehat{\Gamma},U_1)\quad,\quad 
g\to[\chi\to\chi(g)]$$

(2) When $\Gamma$ is finite we can use the left regular representation:
$$\Gamma\subset\mathcal L(\mathbb C\Gamma)\quad,\quad 
g\to[h\to gh]$$

Indeed, in both cases, the stationarity condition is trivially satisfied.
\end{proof}

In order to further advance, and to come up with tools for discussing the non-stationary case as well, let us keep looking at the group duals $G=\widehat{\Gamma}$.  We know that a matrix model $\pi:C^*(\Gamma)\to M_K(C(T))$ must come from a group representation, as follows:
$$\rho:\Gamma\to C(T,U_K)$$

Now observe that when $\rho$ is faithful, the representation $\pi$ is in general not faithful, for instance because when $T=\{.\}$ its target algebra is finite dimensional. On the other hand, this representation obviously ``reminds'' $\Gamma$, and so can be used in order to fully understand $\Gamma$. Thus, we have a new idea here, basically saying that, for practical purposes, the faithfuless property can be replaced with something much weaker. 

\bigskip

This weaker notion is called ``inner faithfulness''. The theory here, going back to the late 90s, and in its modern formulation, from the late 00s paper \cite{bb2}, is as follows:

\index{Hopf image}
\index{inner faithfulness}

\begin{definition}
Let $\pi:C(G)\to M_K(C(T))$ be a matrix model. 
\begin{enumerate}
\item The Hopf image of $\pi$ is the smallest quotient Hopf $C^*$-algebra $C(G)\to C(H)$ producing a factorization of type $\pi:C(G)\to C(H)\to M_K(C(T))$.

\item When the inclusion $H\subset G$ is an isomorphism, i.e. when there is no non-trivial factorization as above, we say that $\pi$ is inner faithful.
\end{enumerate}
\end{definition}

As a basic illustration for these notions, in the case where $G=\widehat{\Gamma}$ is a group dual, $\pi$ must come, as above, from a group representation, as follows:
$$\rho:\Gamma\to C(T,U_K)$$

We conclude that in this case, the minimal factorization constructed in Definition 16.6 is simply the one obtained by taking the image:
$$\rho:\Gamma\to\Lambda\subset C(T,U_K)$$

Thus $\pi$ is inner faithful when our group satisfies the following condition:
$$\Gamma\subset C(T,U_K)$$

As a second illustration now, given a compact group $G$, and elements $g_1,\ldots,g_K\in G$, we have a representation $\pi:C(G)\to\mathbb C^K$, given by the following formula:
$$f\to(f(g_1),\ldots,f(g_K))$$

The minimal factorization of $\pi$ is then via the algebra $C(H)$, with:
$$H=\overline{<g_1,\ldots,g_K>}$$

Thus $\pi$ is inner faithful precisely when our group satisfies:
$$G=H$$

In general, the existence and uniqueness of the Hopf image comes from dividing $C(G)$ by a suitable ideal, as explained in \cite{bb2}. In Tannakian terms, we have:

\index{tensor category}
\index{Tannakian category}

\begin{theorem}
Consider a closed subgroup $G\subset U_N^+$, with fundamental corepresentation denoted $u=(u_{ij})$. The Hopf image of a matrix model
$$\pi:C(G)\to M_K(C(T))$$
comes then from the Tannakian category
$$C_{kl}=Hom(U^{\otimes k},U^{\otimes l})$$
where $U_{ij}=\pi(u_{ij})$, and where the spaces on the right are taken in a formal sense.
\end{theorem}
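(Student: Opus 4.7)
The plan is to apply the Tannakian duality result already established (Theorem 4.20) to the collection $C=(C_{kl})$ defined in the statement, and then to identify the resulting quantum group with the Hopf image of $\pi$. The overall strategy rests on two functoriality principles: Tannakian duality is contravariant, so bigger categories correspond to smaller quantum groups, and the Hopf image is by construction the \emph{smallest} quotient through which $\pi$ factors, which should match the \emph{largest} category of intertwiners that $U$ realizes.

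First I would verify that $C=(C_{kl})$ is a genuine Tannakian category over $H=\mathbb{C}^N$ in the sense of Definition 4.6. The key point here is that although $U=(U_{ij})$ lives in $M_K(C(T))$ rather than in a Woronowicz algebra, all the categorical operations (composition of intertwiners, tensor product, adjoint, identity) make sense because $M_K(C(T))$ is a unital $*$-algebra. The biunitarity of $U$, which is inherited from that of $u$ via the $*$-homomorphism $\pi$, together with Proposition 4.4, guarantees that $R,R^*\in C$. Thus Theorem 4.20 applies, producing a closed subgroup $H'\subset U_N^+$ with
\[
C(H')=C(U_N^+)\big/\bigl\langle T\in\operatorname{Hom}(w^{\otimes k},w^{\otimes l})\mid T\in C_{kl},\,\forall k,l\bigr\rangle,
\]
where $w$ denotes the fundamental corepresentation of $C(H')$.

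Next I would show that $\pi$ factors through $C(H')$. Since $U$ satisfies by construction all the relations $T\in\operatorname{Hom}(U^{\otimes k},U^{\otimes l})$ used to define $C(H')$, the universal property of this quotient yields a $*$-homomorphism $\rho:C(H')\to M_K(C(T))$ with $\rho(w_{ij})=U_{ij}$, and by composition $\pi=\rho\circ p$ where $p:C(G)\to C(H')$ is the canonical quotient. This shows $H'$ is a candidate quotient through which $\pi$ factors, hence $H\subset H'$ where $H$ denotes the Hopf image. For the reverse inclusion $H'\subset H$, suppose $\pi=\sigma\circ q$ factors through some quotient Hopf $*$-algebra $q:C(G)\to C(L)$, with fundamental corepresentation $v$. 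Then any $T\in\operatorname{Hom}(v^{\otimes k},v^{\otimes l})$ satisfies $T\in\operatorname{Hom}(\sigma(v)^{\otimes k},\sigma(v)^{\otimes l})=\operatorname{Hom}(U^{\otimes k},U^{\otimes l})=C_{kl}$, so the Tannakian category of $L$ is contained in $C$. By Proposition 4.3 (contravariance) and Theorem 4.20 we conclude $H'\subset L$, and applying this with $L=H$ gives $H'\subset H$.

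The main obstacle is the delicate passage in the second step: one has to check that the relations defining $C(H')$ are genuinely imposed on the coordinates $w_{ij}$ in a way compatible with $\pi$, so that $\rho$ is well-defined. This amounts to a careful bookkeeping argument with the Malacarne-type description of $C(H')$ from section~4, translating each formal intertwining relation for $w$ into an identity that literally holds for $U$ inside $M_K(C(T))$; the content is that the universal quotient construction is compatible with evaluation in any concrete model realizing the given category. Once this bridge is in place, the two inclusions above close and give $H=H'$, as desired.
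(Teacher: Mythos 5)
Your proof is correct and follows essentially the same strategy as the paper: both arguments hinge on the observation that biunitarity of $U$ makes $C=(C_{kl})$ a genuine Tannakian category, that any factorization $(L,v)$ of $\pi$ has $C_L\subset C$, and that contravariance of Tannakian duality then forces the quantum group associated to $C$ to be the smallest one through which $\pi$ factors. The only real difference is one of completeness: you explicitly verify, via the universal property of the Malacarne quotient, that $\pi$ does factor through $C(H')$, producing the map $\rho$ with $\rho(w_{ij})=U_{ij}$, whereas the paper's proof leaves this step implicit in the phrase ``the Hopf image must be given by the fact that the intertwining spaces must be the biggest''. Your version closes both inclusions $H\subset H'$ and $H'\subset H$ cleanly, and the caution you express in the final paragraph about the compatibility of the quotient relations with evaluation in $M_K(C(T))$ is exactly the right point to worry about; the resolution is, as you say, that $C_{kl}=\operatorname{Hom}(U^{\otimes k},U^{\otimes l})$ holds by definition, so the relations are literally satisfied by $U$ and the universal property applies without further work.
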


\begin{proof}
Since the morphisms increase the intertwining spaces, when defined either in a representation theory sense, or just formally, we have inclusions as follows:
$$Hom(u^{\otimes k},u^{\otimes l})\subset Hom(U^{\otimes k},U^{\otimes l})$$

More generally, we have such inclusions when replacing $(G,u)$ with any pair producing a factorization of $\pi$. Thus, by Tannakian duality, the Hopf image must be given by the fact that the intertwining spaces must be the biggest, subject to the above inclusions.

On the other hand, since $u$ is biunitary, so is $U$, and it follows that the spaces on the right form a Tannakian category. Thus, we have a quantum group $(H,v)$ given by:
$$Hom(v^{\otimes k},v^{\otimes l})=Hom(U^{\otimes k},U^{\otimes l})$$

By the above discussion, $C(H)$ follows to be the Hopf image of $\pi$, as claimed.
\end{proof}

The inner faithful models $\pi:C(G)\to M_K(C(T))$ are a very interesting notion, because they are not subject to the coamenability condition on $G$, as it was the case with the stationary models, as explained in Theorem 16.3. In fact, there are no known restrictions on the class of subgroups $G\subset U_N^+$ which can be modelled in an inner faithful way. Thus, our modelling theory applies a priori to any compact quantum group. 

\bigskip

Regarding now the study of the inner faithful models, a key problem is that of computing the Haar functional. The result here, from Wang \cite{wa3}, is as follows:

\index{Haar integration}

\begin{theorem}
Given an inner faithful model $\pi:C(G)\to M_K(C(T))$, we have
$$\int_G=\lim_{k\to\infty}\frac{1}{k}\sum_{r=1}^k\int_G^r$$
where $\int_G^r=(\varphi\circ\pi)^{*r}$, with $\varphi=tr\otimes\int_T$ being the random matrix trace.
\end{theorem}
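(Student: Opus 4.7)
The plan is to reduce this to the general Cesàro limit theorem for Woronowicz algebras established back in Proposition 3.16 and Theorem 3.18, combined with the Tannakian characterization of the Hopf image from Theorem 16.7. Set $\psi = \varphi \circ \pi : C(G) \to \mathbb{C}$, which is a unital positive linear form on $C(G)$ (not faithful in general, but positive, which is what matters). Since the convolution $\phi * \psi = (\phi \otimes \psi)\Delta$ is an internal operation on $C(G)^*$, the expression $\int_G^r = \psi^{*r}$ makes sense, and we want to show that the Cesàro averages converge pointwise to $\int_G$.

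First, I would verbatim rerun the argument of Proposition 3.16: for any corepresentation $v \in M_n(\mathcal{A})$, writing $M = (id \otimes \psi)v \in M_n(\mathbb{C})$, one checks that $(id \otimes \psi^{*k})v = M^k$, and since $v$ is unitary we have $\|M\| \leq 1$. Thus the Cesàro averages $\frac{1}{k}\sum_{r=1}^k M^r$ converge to the orthogonal projection $P$ onto the $1$-eigenspace of $M$, by elementary spectral theory. It follows that the pointwise limit $\int_G' := \lim_{k \to \infty} \frac{1}{k}\sum_{r=1}^k \psi^{*r}$ exists on every coefficient of every corepresentation, and hence on all of $\mathcal{A}$ by linearity (Peter--Weyl), with the explicit formula $(id \otimes \int_G')v = P$.

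The heart of the matter is then to identify the $1$-eigenspace of $M$ with $\operatorname{Fix}(v)$, matching the formula in Theorem 3.18 for the Haar state. The inclusion $\operatorname{Fix}(v) \subseteq \ker(M - 1)$ is automatic. For the reverse, I would imitate the positivity trick from Proposition 3.17: given $M\xi = \xi$, form
$$a = \sum_i \Bigl(\sum_j v_{ij}\xi_j - \xi_i\Bigr)\Bigl(\sum_k v_{ik}\xi_k - \xi_i\Bigr)^* \in \mathcal{A},$$
and show by the same biunitarity computation as there that $\psi(a) = 2\bigl(\|\xi\|^2 - \operatorname{Re}\langle M\xi,\xi\rangle\bigr) = 0$. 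Now $\psi = \varphi \circ \pi$ with $\varphi = tr \otimes \int_T$ a faithful trace on $M_K(C(T))$, so $\psi(a) = 0$ with $a \geq 0$ forces $\pi(a) = 0$, which unwinds to $\pi(v)\xi = \xi$, i.e.\ the vector $\xi$ is a ``formal fixed vector'' of the modelled corepresentation $U^{\otimes k}$ (taking $v = u^{\otimes k}$ suffices, by Peter--Weyl).

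The main obstacle, and the place where the inner faithfulness hypothesis is essential, is the last step: translating $\xi \in \operatorname{Fix}(\pi(v))$ back into $\xi \in \operatorname{Fix}(v)$. This is exactly what Theorem 16.7 provides, since the formal Hom-spaces $\operatorname{Hom}(U^{\otimes k}, U^{\otimes l})$ are precisely the Tannakian category of the Hopf image, and inner faithfulness means this Hopf image coincides with $G$ itself, giving $\operatorname{Hom}(v, w) = \operatorname{Hom}(\pi(v), \pi(w))$ for all $v,w$; applying this with $w = 1$ yields $\operatorname{Fix}(v) = \operatorname{Fix}(\pi(v))$. Once this identification is in place we conclude $P = \operatorname{Proj}_{\operatorname{Fix}(v)} = (id \otimes \int_G)v$, so $\int_G'$ and $\int_G$ agree on coefficients of corepresentations, hence on all of $\mathcal{A}$, and by continuity on $C(G)$.
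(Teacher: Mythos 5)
Your proposal is correct and follows essentially the same route as the paper: reduce to the Cesàro argument of Proposition 3.16 to get convergence to the projection onto the $1$-eigenspace of $M=(id\otimes\varphi\pi)v$, rerun the Proposition 3.17 positivity trick with the faithfulness of $\varphi$ on $M_K(C(T))$ (not of $\psi$ on $C(G)$) to identify that eigenspace with $\operatorname{Fix}(\pi(v))$, and finally invoke Theorem 16.7 plus inner faithfulness to get $\operatorname{Fix}(\pi(v))=\operatorname{Fix}(v)$. The paper compresses the middle step into a reference back to the earlier material, whereas you unroll it, but the logical content and the role of the inner faithfulness hypothesis are the same.
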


\begin{proof}
As a first observation, there is an obvious similarity here with the Woronowicz construction of the Haar measure from \cite{wo1}, explained in chapter 3. In fact, the above result holds more generally for any model $\pi:C(G)\to B$, with $\varphi\in B^*$ being a faithful trace. With this picture in hand, the Woronowicz construction simply corresponds to the case $\pi=id$, and the result itself is therefore a generalization of Woronowicz's result. 

\medskip

In order to prove now the result, we can proceed as in chapter 3. If we denote by $\int_G'$ the limit in the statement, we must prove that this limit converges, and that we have:
$$\int_G'=\int_G$$

It is enough to check this on the coefficients of corepresentations, and if we let $v=u^{\otimes k}$ be one of the Peter-Weyl corepresentations, we must prove that we have:
$$\left(id\otimes\int_G'\right)v=\left(id\otimes\int_G\right)v$$

We know from chapter 3 that the matrix on the right is the orthogonal projection onto $Fix(v)$. Regarding now the matrix on the left, this is the orthogonal projection onto the $1$-eigenspace of $(id\otimes\varphi\pi)v$. Now observe that, if we set $V_{ij}=\pi(v_{ij})$, we have:
$$(id\otimes\varphi\pi)v=(id\otimes\varphi)V$$

Thus, exactly as in chapter 3, we conclude that the $1$-eigenspace that we are interested in equals $Fix(V)$. But, according to Theorem 16.7, we have:
$$Fix(V)=Fix(v)$$

Thus, we have proved that we have $\int_G'=\int_G$, as desired.
\end{proof}

\section*{16b. Stationarity}

Before getting into more about inner faithfulness, let us first go back to the stationary models. These models are quite restrictive, because $G$ must be coamenable. However, there are many interesting examples of coamenable compact quantum groups, and in order to better understand these examples, and also in order to construct some new examples, our idea will be that of looking for stationary models for them. We first have:

\index{stationary on its image}

\begin{theorem}
For $\pi:C(G)\to M_K(C(T))$, the following are equivalent:
\begin{enumerate}
\item $Im(\pi)$ is a Hopf algebra, and $(tr\otimes\int_T)\pi$ is the Haar integration on it.

\item $\psi=(tr\otimes\int_X)\pi$ satisfies the idempotent state property $\psi*\psi=\psi$.

\item $T_e^2=T_e$, $\forall p\in\mathbb N$, $\forall e\in\{1,*\}^p$, where:
$$(T_e)_{i_1\ldots i_p,j_1\ldots j_p}=\left(tr\otimes\int_T\right)(U_{i_1j_1}^{e_1}\ldots U_{i_pj_p}^{e_p})$$
\end{enumerate}
If these conditions are satisfied, we say that $\pi$ is stationary on its image.
\end{theorem}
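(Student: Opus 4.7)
I would establish the cycle $(1)\Rightarrow(2)\Rightarrow(3)\Rightarrow(1)$, with the last implication being the heart of the matter.

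The implication $(1)\Rightarrow(2)$ is essentially tautological: on any Hopf algebra the Haar integration is an idempotent state, because $(h\otimes h)\Delta=h\otimes1$ restricted/evaluated gives $h*h=h$. So if $\psi=(tr\otimes\int_T)\pi$ identifies with the Haar integration on $\mathrm{Im}(\pi)$, it is idempotent under convolution on $C(G)$ as well, since the projection $C(G)\to\mathrm{Im}(\pi)$ intertwines comultiplications.

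For $(2)\iff(3)$ I would carry out the direct computation on the generating elements $u_{i_1j_1}^{e_1}\cdots u_{i_pj_p}^{e_p}$. Using $\Delta(u_{ij}^e)=\sum_k u_{ik}^e\otimes u_{kj}^e$ we get
$$(\psi*\psi)(u_{i_1j_1}^{e_1}\cdots u_{i_pj_p}^{e_p})=\sum_{k_1,\ldots,k_p}(T_e)_{i_1\ldots i_p,k_1\ldots k_p}(T_e)_{k_1\ldots k_p,j_1\ldots j_p}=(T_e^2)_{i_1\ldots i_p,j_1\ldots j_p},$$
while $\psi$ applied to the same element yields $(T_e)_{i_1\ldots i_p,j_1\ldots j_p}$. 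Since the Peter-Weyl type products of the $u_{ij}^{e}$ span a dense $*$-subalgebra of $C(G)$, the equality $\psi*\psi=\psi$ is equivalent to $T_e^2=T_e$ for every colored word $e$.

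The substantive direction is $(3)\Rightarrow(1)$, and the main obstacle is to recognize $\mathrm{Im}(\pi)$ as a Hopf algebra on which $\psi$ is the Haar integration. The strategy is to pass to the Hopf image of $\pi$, that is to factor $\pi:C(G)\to C(H)\stackrel{\tilde\pi}{\to}M_K(C(T))$ with $\tilde\pi$ inner faithful, as in Definition~16.6 and Theorem~16.7. The modelling matrices $T_e$ depend only on the variables $U_{ij}=\pi(u_{ij})=\tilde\pi(u_{ij}^H)$, so the same identity $T_e^2=T_e$ translates, via the computation of the previous paragraph applied now to $C(H)$, into $\tilde\psi*\tilde\psi=\tilde\psi$ with $\tilde\psi=(tr\otimes\int_T)\tilde\pi$. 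Since $\tilde\pi$ is inner faithful, Theorem~16.8 applies to give
$$\int_H=\lim_{k\to\infty}\frac{1}{k}\sum_{r=1}^k\tilde\psi^{*r},$$
and idempotency of $\tilde\psi$ collapses this Cesàro limit to $\tilde\psi$ itself, so $\tilde\psi=\int_H$. But now $\tilde\pi$ is a stationary model of $H$, so Theorem~16.3 forces $\tilde\pi$ to be faithful (and $H$ to be coamenable). Consequently $\mathrm{Im}(\pi)=\mathrm{Im}(\tilde\pi)$ is isomorphic to $C(H)$ as a $C^{*}$-algebra, and this isomorphism transports the Hopf algebra structure of $C(H)$ to $\mathrm{Im}(\pi)$, with $\psi$ becoming the Haar integration. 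The hard part is precisely the step that invokes Theorem~16.8 to identify $\tilde\psi$ with $\int_H$; everything else is bookkeeping around the Hopf image construction.
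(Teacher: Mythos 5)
Your proposal is correct and follows essentially the same route as the paper: reduction to the Hopf image to make Theorem~16.8 applicable, observing that idempotency of $\psi$ collapses the Ces\`aro average, and then invoking Theorem~16.3 to deduce faithfulness and hence that $\mathrm{Im}(\pi)$ carries the Hopf algebra structure of $C(H)$. The only difference is organizational: the paper performs the factorization through the Hopf image once at the outset, observing that all three conditions depend only on the factorized map, and then proves $(1)\Leftrightarrow(2)$ and $(2)\Leftrightarrow(3)$ under the inner-faithfulness assumption, whereas you keep the general $\pi$ for $(1)\Rightarrow(2)$ and $(2)\Leftrightarrow(3)$ and only invoke the Hopf image inside $(3)\Rightarrow(1)$ — a purely cosmetic variation.
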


\begin{proof}
Given a matrix model $\pi:C(G)\to M_K(C(T))$ as in the statement, we can factorize it via its Hopf image, as in Definition 16.6 above:
$$\pi:C(G)\to C(H)\to M_K(C(T))$$

Now observe that the conditions (1,2,3) in the statement depend only on the factorized representation:
$$\nu:C(H)\to M_K(C(T))$$

Thus, we can assume in practice that we have $G=H$, which means that we can assume that $\pi$ is inner faithful. With this assumption made, the general integration formula from Theorem 16.8 applies to our situation, and the proof of the equivalences goes as follows:

\medskip

$(1)\implies(2)$ This is clear from definitions, because the Haar integration on any compact quantum group satisfies the idempotent state equation, namely:
$$\psi*\psi=\psi$$

$(2)\implies(1)$ Assuming $\psi*\psi=\psi$, we have, for any $r\in\mathbb N$:
$$\psi^{*r}=\psi$$

Thus Theorem 16.8 gives $\int_G=\psi$, and by using Theorem 16.3, we obtain the result.

\medskip

In order to establish now $(2)\Longleftrightarrow(3)$, we use the following elementary formula, which comes from the definition of the convolution operation:
$$\psi^{*r}(u_{i_1j_1}^{e_1}\ldots u_{i_pj_p}^{e_p})=(T_e^r)_{i_1\ldots i_p,j_1\ldots j_p}$$

$(2)\implies(3)$ Assuming $\psi*\psi=\psi$, by using the above formula at $r=1,2$ we obtain that the matrices $T_e$ and $T_e^2$ have the same coefficients, and so they are equal.

\medskip

$(3)\implies(2)$ Assuming $T_e^2=T_e$, by using the above formula at $r=1,2$ we obtain that the linear forms $\psi$ and $\psi*\psi$ coincide on any product of coefficients $u_{i_1j_1}^{e_1}\ldots u_{i_pj_p}^{e_p}$. Now since these coefficients span a dense subalgebra of $C(G)$, this gives the result.
\end{proof}

As a first illustration, we will apply this criterion to certain models for the quantum groups $O_N^*,U_N^*$. We first have the following result:

\index{half-liberation}

\begin{proposition}
We have a matrix model as follows, 
$$C(O_N^*)\to M_2(C(U_N))\quad,\quad 
u_{ij}\to\begin{pmatrix}0&v_{ij}\\ \bar{v}_{ij}&0\end{pmatrix}$$
where $v$ is the fundamental corepresentation of $C(U_N)$, as well as a model as follows,
$$C(U_N^*)\to M_2(C(U_N\times U_N))\quad,\quad 
u_{ij}\to\begin{pmatrix}0&v_{ij}\\ w_{ij}&0\end{pmatrix}$$
where $v,w$ are the fundamental corepresentations of the two copies of $C(U_N)$.
\end{proposition}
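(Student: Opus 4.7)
The plan is to verify, in each of the two cases, that the candidate matrices $U_{ij}$ satisfy all the defining relations of the target universal $C^*$-algebra, so that the formulas $u_{ij}\to U_{ij}$ extend by the universal property to $*$-algebra morphisms, and then to $C^*$-algebra morphisms after taking norms. In both cases, the key structural observation is that the ansatz $U_{ij}=\bigl(\begin{smallmatrix}0&x_{ij}\\ y_{ij}&0\end{smallmatrix}\bigr)$ is off-diagonal, so any product of two such matrices is diagonal, and any product of three such matrices is again off-diagonal, with entries that are length-three monomials in the $x$'s and $y$'s living inside the commutative $C^*$-algebras $C(U_N)$ or $C(U_N)\otimes C(U_N)$. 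Commutativity of these latter algebras will then immediately produce the half-commutation relations $abc=cba$.

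For the orthogonal case, I would first verify that $U_{ij}=U_{ij}^*$, which is immediate since $\bar v_{ij}=v_{ij}^*$ inside the commutative $C^*$-algebra $C(U_N)$. Next I would check that the matrix $U=(U_{ij})$ is orthogonal in the sense of $C(O_N^*)$, by computing $(UU^t)_{ij}=\sum_k U_{ik}U_{jk}$; the product of two off-diagonal blocks is diagonal, and the two diagonal entries that appear are $\sum_k v_{ik}\bar v_{jk}=(vv^*)_{ij}$ and $\sum_k\bar v_{ik}v_{jk}=(v^*v)_{ji}$ after using commutativity, both of which equal $\delta_{ij}$ by unitarity of $v$. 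Finally, to check the half-commutation $U_{ab}U_{cd}U_{ef}=U_{ef}U_{cd}U_{ab}$, I would compute both sides directly: the top-right entries read $v_{ab}\bar v_{cd}v_{ef}$ and $v_{ef}\bar v_{cd}v_{ab}$, and these are equal by commutativity of $C(U_N)$; the same holds for the bottom-left entries.

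For the unitary case, the bookkeeping is similar but uses the tensor product structure $C(U_N\times U_N)=C(U_N)\otimes C(U_N)$, with $v_{ij}=v_{ij}\otimes 1$ and $w_{ij}=1\otimes w_{ij}$ commuting with each other in addition to each $v$-family and each $w$-family being commutative. The biunitarity of $U=(U_{ij})$ splits into four identities $UU^*=U^*U=1$ and $\bar UU^t=U^t\bar U=1$, each of which I would check blockwise: the off-diagonal structure forces products like $UU^*$ to be diagonal, and each diagonal entry reduces, after applying commutativity, to either $(vv^*)_{ij}$, $(v^*v)_{ij}$, $(ww^*)_{ij}$ or $(w^*w)_{ij}$, all of which equal $\delta_{ij}$ by unitarity of $v$ and $w$. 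For the half-commutation $abc=cba$ with $a,b,c\in\{U_{ij},U_{ij}^*\}$, the same mechanism applies: on the antidiagonal entries of the triple product, one ends up with a length-three monomial in commuting variables from $C(U_N)\otimes C(U_N)$, and so the relation $abc=cba$ is automatic.

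There is no serious obstacle here; the proof is a direct verification using the universal properties of $C(O_N^*)$ and $C(U_N^*)$. The only conceptual point worth emphasizing, which will be useful in the subsequent study of the model, is the mechanism by which a noncommutative target algebra (here $M_2$ of a commutative algebra) can accommodate the half-commutative relations: the off-diagonal $2\times 2$ ansatz converts the cubic condition $abc=cba$ into a genuinely commutative identity at the level of matrix entries, which explains both why the model exists and why it is natural to look for it inside $M_2(\,\cdot\,)$ with a commutative base.
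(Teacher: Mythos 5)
Your proposal is correct and follows essentially the same route as the paper: the paper also verifies biunitarity (calling it routine) to get models of $C(O_N^+)$ and $C(U_N^+)$, and then derives the half-commutation relations from the general observation that a triple product of antidiagonal $2\times2$ matrices is again antidiagonal, with entries $x_iy_jx_k$ and $y_ix_jy_k$ that are symmetric in $i,k$ because the base algebra is commutative. (The only blemish is your labelling of $\sum_k\bar v_{ik}v_{jk}$ as $(v^*v)_{ji}$ rather than $(vv^*)_{ji}$, which is harmless since both equal $\delta_{ij}$ by unitarity of $v$.)
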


\begin{proof}
It is routine to check that the matrices on the right are indeed biunitaries, and since the first matrix is also self-adjoint, we obtain models as follows:
$$C(O_N^+)\to M_2(C(U_N))$$
$$C(U_N^+)\to M_2(C(U_N\times U_N))$$

Consider now antidiagonal $2\times2$ matrices, with commuting entries, as follows:
$$X_i=\begin{pmatrix}0&x_i\\ y_i&0\end{pmatrix}$$

We have then the following computation:
\begin{eqnarray*}
X_iX_jX_k
&=&\begin{pmatrix}0&x_i\\ y_i&0\end{pmatrix}
\begin{pmatrix}0&x_j\\ y_j&0\end{pmatrix}
\begin{pmatrix}0&x_k\\ y_k&0\end{pmatrix}\\
&=&\begin{pmatrix}0&x_iy_jx_k\\ y_ix_jy_k&0\end{pmatrix}
\end{eqnarray*}

Since this quantity is symmetric in $i,k$, we obtain from this:
$$X_iX_jX_k=X_kX_jX_i$$

Thus, our models above factorize as claimed.
\end{proof}

We can now formulate our first concrete modelling theorem, as folllows:

\begin{theorem}
The above antidiagonal models, namely
$$C(O_N^*)\to M_2(C(U_N))$$
$$C(U_N^*)\to M_2(C(U_N\times U_N))$$
are both stationary.
\end{theorem}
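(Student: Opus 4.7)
The plan is to compute the functional $\psi=(tr\otimes\int_T)\pi$ on monomials in the generators and identify it directly with the Haar integral on $O_N^*$ (respectively $U_N^*$); by density this forces $\psi=\int_G$ globally, hence stationarity in the sense of Definition 16.2. Faithfulness of $\pi$ then follows automatically from Theorem 16.3, though it is also visible a priori from Theorem 14.21, which constructs $C([U_N])\subset M_2(C(U_N))$ via precisely this antidiagonal embedding and identifies $[U_N]=O_N^*$. Equivalently, one can run the argument through Theorem 16.9: once the identity $\psi=\int_G$ holds on a dense subalgebra, the idempotent-state property $\psi*\psi=\psi$ is automatic, and stationarity on the image upgrades to stationarity in the sense of Definition 16.2 via injectivity of the model.

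The core calculation runs as follows. Applied to a monomial $u_{i_1j_1}^{e_1}\cdots u_{i_pj_p}^{e_p}$, the model $\pi$ produces a product of antidiagonal $2\times 2$ matrices whose $M_2$-trace vanishes when $p$ is odd, and for $p=2k$ equals the average of two alternating monomials in the entries of $v$ (and $w$, in the unitary case) and their adjoints. Integrating over $U_N$ (respectively $U_N\times U_N$) via the classical matching-pair Weingarten formula yields a sum over pairings in $\mathcal{P}_2(p)$. The key combinatorial observation is that the alternation imposed by the antidiagonal structure endows the $p$ legs with the inner coloring $\circ\bullet\circ\bullet\ldots$, so that the pairings actually contributing are exactly those which become matching under this relabelling — that is, the pairings in $P_2^*(2k)$ in the orthogonal case, and in $\mathcal{P}_2^*(e_1\cdots e_p)$ in the unitary case. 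Since the Gram matrix $G_k(\pi,\sigma)=N^{|\pi\vee\sigma|}$ depends only on the underlying set-partition and not on any coloring, the surviving Weingarten weights are exactly the Weingarten weights for $O_N^*$ (respectively $U_N^*$) coming from Theorem 8.24, and the two sides match term by term.

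The main obstacle will be the precise bookkeeping in the $U_N^*$ case. Unlike $O_N^*$, where a single copy of $U_N$ suffices and the two $M_2$-diagonal contributions are related by a simple color swap, the $U_N^*$ model uses two independent copies of $U_N$, and the $v$'s and $w$'s appear in a pattern depending jointly on the exponents $e_r\in\{1,*\}$ and on the parity of the positions — entries of $v$ occur at odd positions with $e_r=1$ and at even positions with $e_r=*$, while $w$'s occupy the complementary positions. One must verify that the tensor factorization $\int_{U_N}\otimes\int_{U_N}$, which forces $v$-legs to pair only with $v$-legs and $w$-legs only with $w$-legs, combined with the matching requirement from the $U_N$-Weingarten formula, singles out exactly the pairings satisfying both the outer $\circ$-$\bullet$ condition (inherited from the exponents) and the inner $\circ$-$\bullet$ condition (from the alternating positions) — in other words, exactly $\mathcal{P}_2^*$. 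Once this bookkeeping is complete and the two diagonal contributions are shown to agree by the $v\leftrightarrow w$ symmetry of the tensor Haar measure, $\psi=\int_G$ follows and gives stationarity.
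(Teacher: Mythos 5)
Your proposal is correct, but it takes a genuinely different route from the paper's proof, and the comparison is worth spelling out. The paper verifies the idempotence criterion $T_p^2 = T_p$ of Theorem 16.9 directly: it computes $(T_p)_{ij} = \int_{U_N} \mathrm{Re}(v_{i_1j_1}\ldots\bar v_{i_pj_p})$, then expands the product $(T_p^2)_{ij}$ using the identity $\mathrm{Re}(x)\mathrm{Re}(y) = \tfrac12(\mathrm{Re}(xy) + \mathrm{Re}(x\bar y))$, and uses invariance of the Haar measure of $U_N$ under multiplication and conjugation to collapse the double integral back to $(T_p)_{ij}$. No easiness or Weingarten theory for $O_N^*$ or $U_N^*$ enters. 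Your proposal instead computes $\psi = (tr\otimes\int_T)\pi$ explicitly through the $U_N$ Weingarten formula, observes that the antidiagonal alternation forces the surviving pairings into $\mathcal P_2(\circ\bullet\circ\bullet\cdots)$, identifies this set with $P_2^*(2k)$ (and in the unitary case, after tracking the $v/w$ bookkeeping and the species/color constraints, with $\mathcal P_2^*$), and then notes that since the Gram matrix $N^{|\pi\vee\sigma|}$ depends only on underlying set-partitions, the resulting Weingarten sum is literally the $O_N^*$ (resp.\ $U_N^*$) Weingarten integral from Theorem 8.24. Both routes are valid; yours is more machinery-heavy but has the advantage of establishing $\psi = \int_G$ outright, which immediately forces the Hopf image to be all of $G$ — the paper's path through Theorem 16.9 only yields stationarity on the Hopf image, and its conclusion silently uses the a priori faithfulness of the Bichon--Dubois-Violette embedding from Theorem 14.21 (which you do flag explicitly).

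One small caution on your combinatorial bookkeeping: the claim that ``the surviving Weingarten weights are exactly the Weingarten weights for $O_N^*$'' needs the observation that you are not restricting a larger Weingarten matrix to a submatrix (inversion does not commute with restriction). Rather, the $U_N$ Weingarten matrix for the alternating coloring is indexed from the start by $\mathcal P_2(\circ\bullet\cdots)$, and only the bijection of index sets with $P_2^*$ plus the color-independence of the Gram entries are used. Your phrasing is slightly loose on this point, but the argument is sound once spelled out.
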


\begin{proof}
We first discuss the case of $O_N^*$. We use Theorem 16.9 (3). Since the fundamental representation is self-adjoint, the matrices $T_e$ with $e\in\{1,*\}^p$ are all equal. We denote this common matrix by $T_p$. According to the definition of $T_p$, we have:
$$(T_p)_{i_1\ldots i_p,j_1\ldots j_p}
=\left(tr\otimes\int_H\right)\left[\begin{pmatrix}0&v_{i_1j_1}\\\bar{v}_{i_1j_1}&0\end{pmatrix}\ldots\ldots\begin{pmatrix}0&v_{i_pj_p}\\\bar{v}_{i_pj_p}&0\end{pmatrix}\right]$$

Since when multipliying an odd number of antidiagonal matrices we obtain an atidiagonal matrix, we have $T_p=0$ for $p$ odd. Also, when $p$ is even, we have:
\begin{eqnarray*}
(T_p)_{i_1\ldots i_p,j_1\ldots j_p}
&=&\left(tr\otimes\int_H\right)\begin{pmatrix}v_{i_1j_1}\ldots\bar{v}_{i_pj_p}&0\\0&\bar{v}_{i_1j_1}\ldots v_{i_pj_p}\end{pmatrix}\\
&=&\frac{1}{2}\left(\int_Hv_{i_1j_1}\ldots\bar{v}_{i_pj_p}+\int_H\bar{v}_{i_1j_1}\ldots v_{i_pj_p}\right)\\
&=&\int_HRe(v_{i_1j_1}\ldots\bar{v}_{i_pj_p})
\end{eqnarray*}

We have $T_p^2=T_p=0$ when $p$ is odd, so we are left with proving that we have $T_p^2=T_p$, when $p$ is even. For this purpose, we use the following formula:
$$Re(x)Re(y)=\frac{1}{2}\left(Re(xy)+Re(x\bar{y})\right)$$

By using this identity for each of the terms which appear in the product, and multi-index notations in order to simplify the writing, we obtain:
\begin{eqnarray*}
&&(T_p^2)_{ij}\\
&=&\sum_{k_1\ldots k_p}(T_p)_{i_1\ldots i_p,k_1\ldots k_p}(T_p)_{k_1\ldots k_p,j_1\ldots j_p}\\
&=&\int_H\int_H\sum_{k_1\ldots k_p}Re(v_{i_1k_1}\ldots\bar{v}_{i_pk_p})Re(w_{k_1j_1}\ldots\bar{w}_{k_pj_p})dvdw\\
&=&\frac{1}{2}\int_H\int_H\sum_{k_1\ldots k_p}Re(v_{i_1k_1}w_{k_1j_1}\ldots\bar{v}_{i_pk_p}\bar{w}_{k_pj_p})+Re(v_{i_1k_1}\bar{w}_{k_1j_1}\ldots\bar{v}_{i_pk_p}w_{k_pj_p})dvdw\\
&=&\frac{1}{2}\int_H\int_HRe((vw)_{i_1j_1}\ldots(\bar{v}\bar{w})_{i_pj_p})+Re((v\bar{w})_{i_1j_1}\ldots(\bar{v}w)_{i_pj_p})dvdw
\end{eqnarray*}

Now since $vw\in H$ is uniformly distributed when $v,w\in H$ are uniformly distributed, the quantity on the left integrates up to $(T_p)_{ij}$. Also, since $H$ is conjugation-stable, $\bar{w}\in H$ is uniformly distributed when $w\in H$ is uniformly distributed, so the quantity on the right integrates up to the same quantity, namely $(T_p)_{ij}$. Thus, we have:
\begin{eqnarray*}
(T_p^2)_{ij}
&=&\frac{1}{2}\Big((T_p)_{ij}+(T_p)_{ij}\Big)\\
&=&(T_p)_{ij}
\end{eqnarray*}

Summarizing, we have obtained that for any $p$, the condition $T_p^2=T_p$ is satisfied. Thus Theorem 16.9 applies, and shows that our model is stationary, as claimed. As for the proof of the stationarity for the model for $U_N^*$, this is similar.
\end{proof}

As a second illustration, regarding $H_N^*,K_N^*$, we have:

\begin{theorem}
We have a stationary matrix model as follows, 
$$C(H_N^*)\to M_2(C(K_N))\quad,\quad 
u_{ij}\to\begin{pmatrix}0&v_{ij}\\ \bar{v}_{ij}&0\end{pmatrix}$$
where $v$ is the fundamental corepresentation of $C(K_N)$, as well as a stationary model
$$C(K_N^*)\to M_2(C(K_N\times K_N))\quad,\quad 
u_{ij}\to\begin{pmatrix}0&v_{ij}\\ w_{ij}&0\end{pmatrix}$$
where $v,w$ are the fundamental corepresentations of the two copies of $C(K_N)$.
\end{theorem}

\begin{proof}
This follows by adapting the proof of Proposition 16.10 and Theorem 16.11 above, by adding there the $H_N^+,K_N^+$ relations. All this is in fact part of a more general phenomenon, concerning half-liberation in general, and we refer here to \cite{bb4}, \cite{bdu}.
\end{proof}

Summarizing, we have some interesting theory and examples for both the stationary models, and for the general inner faithful models.

\section*{16c. Weyl matrices}

\index{Pauli matrix}
\index{Weyl matrix}

Following \cite{bne}, let us discuss now some more subtle examples of stationary models, related to the Pauli matrices, and Weyl matrices, and physics. We first have:

\begin{definition}
Given a finite abelian group $H$, the associated Weyl matrices are
$$W_{ia}:e_b\to<i,b>e_{a+b}$$
where $i\in H$, $a,b\in\widehat{H}$, and where $(i,b)\to<i,b>$ is the Fourier coupling $H\times\widehat{H}\to\mathbb T$.
\end{definition}

As a basic example, consider the simplest cyclic group, namely:
$$H=\mathbb Z_2=\{0,1\}$$

Here the Fourier coupling is $<i,b>=(-1)^{ib}$, and the Weyl matrices act as follows:
$$W_{00}:e_b\to e_b\qquad,\qquad
W_{10}:e_b\to(-1)^be_b$$
$$W_{11}:e_b\to(-1)^be_{b+1}\qquad,\qquad 
W_{01}:e_b\to e_{b+1}$$

Thus, we have the following formulae for the Weyl matrices:
$$W_{00}=\begin{pmatrix}1&0\\0&1\end{pmatrix}\quad,\quad
W_{10}=\begin{pmatrix}1&0\\0&-1\end{pmatrix}$$
$$W_{11}=\begin{pmatrix}0&-1\\1&0\end{pmatrix}\quad,\quad  
W_{01}=\begin{pmatrix}0&1\\1&0\end{pmatrix}$$

We recognize here, up to some multiplicative factors, the four Pauli matrices. Now back to the general case, we have the following well-known result:

\begin{proposition}
The Weyl matrices are unitaries, and satisfy:
\begin{enumerate}
\item $W_{ia}^*=<i,a>W_{-i,-a}$.

\item $W_{ia}W_{jb}=<i,b>W_{i+j,a+b}$.

\item $W_{ia}W_{jb}^*=<j-i,b>W_{i-j,a-b}$.

\item $W_{ia}^*W_{jb}=<i,a-b>W_{j-i,b-a}$.
\end{enumerate}
\end{proposition}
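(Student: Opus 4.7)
The plan is to establish the five formulae by direct computation from the definition $W_{ia}(e_b) = \langle i,b\rangle e_{a+b}$, using only the bilinearity of the Fourier coupling and the identity $\overline{\langle i,b\rangle} = \langle -i,b\rangle = \langle i,-b\rangle$, which comes from the fact that $\langle\cdot,\cdot\rangle$ takes values in $\mathbb{T}$. The natural order is to prove (2) and (3) first from scratch, then obtain unitarity as a corollary, and finally derive (4) and (5) as combinations of (2) and (3).

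For formula (2), I would compute matrix coefficients. Since $\langle W_{ia}e_b,e_c\rangle = \langle i,b\rangle\,\delta_{c,a+b}$, passing to the adjoint gives
$$W_{ia}^*(e_c) = \overline{\langle i,c-a\rangle}\,e_{c-a} = \langle i,a-c\rangle\,e_{c-a},$$
and on the other hand $\langle i,a\rangle W_{-i,-a}(e_c) = \langle i,a\rangle\langle -i,c\rangle\,e_{c-a} = \langle i,a-c\rangle\,e_{c-a}$ by bilinearity. The two expressions agree, so (2) holds. For formula (3), I would just compute
$$W_{ia}W_{jb}(e_c) = W_{ia}\bigl(\langle j,c\rangle e_{b+c}\bigr) = \langle j,c\rangle\langle i,b+c\rangle\,e_{a+b+c},$$
and then factor $\langle i,b+c\rangle = \langle i,b\rangle\langle i,c\rangle$ to recognize the right-hand side as $\langle i,b\rangle W_{i+j,a+b}(e_c)$.

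With (2) and (3) in hand, unitarity is immediate: applying (3) with $(j,b) = (-i,-a)$ gives $W_{ia}W_{-i,-a} = \langle i,-a\rangle W_{0,0} = \langle i,-a\rangle\,\mathrm{id}$, and multiplying by $\langle i,a\rangle$ and using (2) yields $W_{ia}W_{ia}^* = \mathrm{id}$; the other identity $W_{ia}^*W_{ia} = \mathrm{id}$ is obtained similarly. Finally, for (4) I would substitute (2) into (3):
$$W_{ia}W_{jb}^* = W_{ia}\cdot\langle j,b\rangle W_{-j,-b} = \langle j,b\rangle\langle i,-b\rangle W_{i-j,a-b} = \langle j-i,b\rangle W_{i-j,a-b},$$
where at the last step I collect the scalar factors using bilinearity. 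Formula (5) follows in the same fashion, applying (2) on the left and then (3).

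There is no real obstacle here: every step is mechanical once the conventions on the Fourier coupling are fixed, and the only potential pitfall is sign/argument bookkeeping, in particular making sure that $\overline{\langle i,a\rangle}$ is consistently rewritten as $\langle -i,a\rangle$ or $\langle i,-a\rangle$ depending on which of (4) or (5) one is aiming at. Thus the ``hard part'', such as it is, is purely notational vigilance.
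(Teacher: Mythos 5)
Your proof is correct and proceeds by essentially the same direct computation as the paper: you evaluate the action on basis vectors $e_c$ where the paper expands in matrix units $E_{a+b,b}$, and you deduce unitarity from the adjoint and multiplication rules where the paper instead reads it off from (3) and (4), but these are cosmetic differences. One small bookkeeping slip: you speak of ``five formulae'' and label them (2)--(5), apparently treating the unitarity claim as item (1), so your references are shifted by one relative to the statement's numbering (your ``(2)'' is the paper's (1), etc.); the arguments themselves are all sound once this relabeling is undone.
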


\begin{proof}
The unitary follows from (3,4), and the rest of the proof goes as follows:

\medskip

(1) We have indeed the following computation:
\begin{eqnarray*}
W_{ia}^*
&=&\left(\sum_b<i,b>E_{a+b,b}\right)^*\\
&=&\sum_b<-i,b>E_{b,a+b}\\
&=&\sum_b<-i,b-a>E_{b-a,b}\\
&=&<i,a>W_{-i,-a}
\end{eqnarray*}

(2) Here the verification goes as follows:
\begin{eqnarray*}
W_{ia}W_{jb}
&=&\left(\sum_d<i,b+d>E_{a+b+d,b+d}\right)\left(\sum_d<j,d>E_{b+d,d}\right)\\
&=&\sum_d<i,b><i+j,d>E_{a+b+d,d}\\
&=&<i,b>W_{i+j,a+b}
\end{eqnarray*}

(3,4) By combining the above two formulae, we obtain:
\begin{eqnarray*}
W_{ia}W_{jb}^*
&=&<j,b>W_{ia}W_{-j,-b}\\
&=&<j,b><i,-b>W_{i-j,a-b}
\end{eqnarray*}

We obtain as well the following formula:
\begin{eqnarray*}
W_{ia}^*W_{jb}
&=&<i,a>W_{-i,-a}W_{jb}\\
&=&<i,a><-i,b>W_{j-i,b-a}
\end{eqnarray*}

But this gives the formulae in the statement, and we are done.
\end{proof}

Observe that, with $n=|H|$, we can use an isomorphism $l^2(\widehat{H})\simeq\mathbb C^n$ as to view each $W_{ia}$ as a usual matrix, $W_{ia}\in M_n(\mathbb C)$, and hence as a usual unitary, $W_{ia}\in U_n$. 

\bigskip

Given a vector $\xi$, we denote by $Proj(\xi)$ the orthogonal projection onto $\mathbb C\xi$. We have:

\begin{proposition}
Given a closed subgroup $E\subset U_n$, we have a representation
$$\pi_H:C(S_N^+)\to M_N(C(E))$$
$$w_{ia,jb}\to[U\to Proj(W_{ia}UW_{jb}^*)]$$
where $n=|H|,N=n^2$, and where $W_{ia}$ are the Weyl matrices associated to $H$.
\end{proposition}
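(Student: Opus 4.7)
By the universal property of $C(S_N^+)$, it is enough to exhibit inside $M_N(C(E))$ an $N\times N$ magic unitary whose $(ia,jb)$-entry is the continuous function $U\mapsto P_{ia,jb}(U):=\mathrm{Proj}(W_{ia}UW_{jb}^*)$. Continuity in $U$ is automatic, since $W_{ia}UW_{jb}^*\in M_n(\mathbb C)$ depends continuously on $U$ and never vanishes (being unitary it has Hilbert--Schmidt norm $\sqrt n$), whence the rank-one orthogonal projection onto its line varies continuously. Self-adjointness and idempotence are built into the definition of $\mathrm{Proj}$, so the only substantive requirement is the magic condition: for each fixed $U\in E$,
\begin{equation*}
\sum_{(j,b)}P_{ia,jb}(U)=I_N=\sum_{(i,a)}P_{ia,jb}(U).
\end{equation*}

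Under the identification $\mathbb C^N\simeq M_n(\mathbb C)$ by vectorisation, the standard Hermitian inner product on $\mathbb C^N$ corresponds to the Hilbert--Schmidt pairing $\langle A,B\rangle=\mathrm{tr}(AB^*)$, and the two magic identities reduce to the statements that, for each fixed $(i,a)$, the family $\{W_{ia}UW_{jb}^*/\sqrt n\}_{(j,b)\in H\times\widehat H}$ is an orthonormal basis of $M_n(\mathbb C)$, and symmetrically for each fixed $(j,b)$ as $(i,a)$ varies. This is nothing more than the orthogonality of the Weyl basis, transported by the unitary $W_{ia}U$ on the left or by $UW_{jb}^*$ on the right. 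Explicitly, for the row identity, cyclicity of the trace together with $W_{ia}^*W_{ia}=I$ and $UU^*=I$ collapses $\langle W_{ia}UW_{jb}^*,W_{ia}UW_{j'b'}^*\rangle$ to $\mathrm{tr}(W_{jb}^*W_{j'b'})$; applying the multiplication rule $W_{jb}^*W_{j'b'}=\langle j,b-b'\rangle W_{j'-j,b'-b}$ from Proposition 16.13 together with the trace identity $\mathrm{tr}(W_{r,c})=n\,\delta_{r,0}\delta_{c,0}$ (which itself follows from the explicit description $W_{r,c}=\sum_d\langle r,d\rangle E_{c+d,d}$ and Pontryagin orthogonality $\sum_d\langle r,d\rangle=n\delta_{r,0}$) then yields $n\,\delta_{j,j'}\delta_{b,b'}$. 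The column computation is entirely symmetric, the cancellation occurring this time via $W_{jb}^*W_{jb}=I$, and gives $\mathrm{tr}(W_{ia}W_{i'a'}^*)=n\,\delta_{i,i'}\delta_{a,a'}$.

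In each case we thus exhibit $N=n^2$ pairwise orthogonal vectors of common Hilbert--Schmidt norm $\sqrt n$ inside the $N$-dimensional space $M_n(\mathbb C)$, so they form an orthonormal basis, and the completeness relation delivers the required magic sum. There is no conceptual obstacle, the argument being essentially linear algebra once the identifications are fixed; the one point demanding attention is the bookkeeping of the normalisation, since $\mathrm{Proj}(v)=vv^*/\|v\|^2$ and it is exactly the uniformity of $\|W_{ia}UW_{jb}^*\|^2=n$ across all indices that makes the row and column sums land on $I_N$ rather than on a scalar multiple of it.
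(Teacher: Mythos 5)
Your proof is correct and follows essentially the same route as the paper: one checks that the Weyl matrices form an orthogonal basis of $M_n(\mathbb C)$ consisting of unitaries, so that for each fixed $U$ every row and every column of the family $W_{ia}UW_{jb}^*$ is again an orthogonal basis, and the associated rank-one projections therefore form a magic unitary; the universal property of $C(S_N^+)$ then yields the representation. Your version merely makes the trace computations and the normalisation $\|W_{ia}UW_{jb}^*\|_{HS}^2=n$ explicit, which the paper leaves implicit.
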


\begin{proof}
The Weyl matrices being given by $W_{ia}:e_b\to<i,b>e_{a+b}$, we have:
$$tr(W_{ia})=\begin{cases}
1&{\rm if}\ (i,a)=(0,0)\\
0&{\rm if}\ (i,a)\neq(0,0)
\end{cases}$$

Together with the formulae in Proposition 16.14, this shows that the Weyl matrices are pairwise orthogonal with respect to the following scalar product on $M_n(\mathbb C)$:
$$<x,y>=tr(x^*y)$$

Thus, these matrices form an orthogonal basis of $M_n(\mathbb C)$, consisting of unitaries:
$$W=\left\{W_{ia}\Big|i\in H,a\in\widehat{H}\right\}$$

Thus, each row and each column of the matrix $\xi_{ia,jb}=W_{ia}UW_{jb}^*$ is an orthogonal basis of $M_n(\mathbb C)$, and so the corresponding projections form a magic unitary, as claimed.
\end{proof}

We will need the following well-known result:

\begin{proposition}
With $T=Proj(x_1)\ldots Proj(x_p)$ and $||x_i||=1$ we have 
$$<T\xi,\eta>=<\xi,x_p><x_p,x_{p-1}>\ldots<x_2,x_1><x_1,\eta>$$
for any $\xi,\eta$. In particular, we have:
$$Tr(T)=<x_1,x_p><x_p,x_{p-1}>\ldots<x_2,x_1>$$
\end{proposition}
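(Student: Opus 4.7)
The plan is to prove both formulas by a direct, iterative computation, using the basic fact that for a unit vector $x$, the orthogonal projection onto $\mathbb{C}x$ acts by $Proj(x)\xi = <\xi,x>x$. With the paper's convention that the scalar product is linear in the first variable and antilinear in the second, this is the standard formula: writing $\xi = <\xi,x>x + \xi'$ with $\xi' \perp x$, only the first summand survives, and $<x,x>=1$.

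First I would establish the formula for $<T\xi,\eta>$ by computing $T\xi$ from right to left. Applying $Proj(x_p)$ gives $<\xi,x_p>x_p$; applying $Proj(x_{p-1})$ next pulls out the scalar $<x_p,x_{p-1}>$ and leaves $x_{p-1}$; continuing inductively through $Proj(x_{p-2}),\ldots,Proj(x_1)$, one obtains
$$T\xi = <\xi,x_p><x_p,x_{p-1}>\ldots<x_2,x_1>\,x_1.$$
Pairing with $\eta$ produces the final factor $<x_1,\eta>$, and gives exactly the first identity in the statement.

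For the trace formula, I would fix any orthonormal basis $\{e_i\}$ of the ambient Hilbert space and apply the previous identity with $\xi=\eta=e_i$. Since the middle factors $<x_p,x_{p-1}>\ldots<x_2,x_1>$ are independent of $i$, they can be pulled out of the sum defining $Tr(T)=\sum_i<Te_i,e_i>$, leaving
$$Tr(T) = <x_p,x_{p-1}>\ldots<x_2,x_1>\sum_i<e_i,x_p><x_1,e_i>.$$
The remaining sum equals $<x_1,x_p>$ by Parseval's identity (that is, by completeness of the orthonormal basis), which yields the cyclic product in the statement.

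There is no real obstacle here: this is a routine linear-algebra calculation, and both conventions (the direction of linearity in $<\cdot,\cdot>$ and the order in which the projections act) have to be tracked carefully but otherwise the argument is immediate. The only point worth flagging in the write-up is that the trace formula exhibits the expected cyclic symmetry $<x_1,x_p><x_p,x_{p-1}>\ldots<x_2,x_1>$, which is what will later make this identity useful when computing traces of products of rank-one projections built from Weyl matrices.
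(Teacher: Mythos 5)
Your proof is correct and follows essentially the same route as the paper: apply the projections iteratively from the right using $Proj(x)\xi = \langle\xi,x\rangle x$, pair with $\eta$, and then sum over an orthonormal basis for the trace. The only cosmetic difference is that you spell out the Parseval step for the trace, which the paper leaves implicit.
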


\begin{proof}
For $||x||=1$ we have $Proj(x)\xi=<\xi,x>x$. This gives:
\begin{eqnarray*}
T\xi
&=&Proj(x_1)\ldots Proj(x_p)\xi\\
&=&Proj(x_1)\ldots Proj(x_{p-1})<\xi,x_p>x_p\\
&=&Proj(x_1)\ldots Proj(x_{p-2})<\xi,x_p><x_p,x_{p-1}>x_{p-1}\\
&=&\ldots\\
&=&<\xi,x_p><x_p,x_{p-1}>\ldots<x_2,x_1>x_1
\end{eqnarray*}

Now by taking the scalar product with $\eta$, this gives the first assertion. As for the second assertion, this follows from the first assertion, by summing over $\xi=\eta=e_i$.
\end{proof}

Now back to the Weyl matrix models, let us first compute $T_p$. We have:

\begin{proposition}
We have the formula
\begin{eqnarray*}
&&(T_p)_{ia,jb}\\
&=&\frac{1}{N}<i_1,a_1-a_p>\ldots<i_p,a_p-a_{p-1}><j_1,b_1-b_2>\ldots<j_p,b_p-b_1>\\
&&\int_Etr(W_{i_1-i_2,a_1-a_2}UW_{j_2-j_1,b_2-b_1}U^*)\ldots tr(W_{i_p-i_1,a_p-a_1}UW_{j_1-j_p,b_1-b_p}U^*)dU
\end{eqnarray*}
with all the indices varying in a cyclic way.
\end{proposition}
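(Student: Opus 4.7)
The plan is to apply Proposition 16.15 entry by entry, and then clean up the resulting expression using the Weyl multiplication rules from Proposition 16.13. Unravelling the definition of $T_p$ from Theorem 16.9 and the model $\pi_H$ from Proposition 16.14, the $(ia,jb)$ entry of $T_p$ is
\[
(T_p)_{ia,jb}=\Bigl(tr\otimes\int_E\Bigr)\!\Bigl[Proj(x_1)\,Proj(x_2)\cdots Proj(x_p)\Bigr],\qquad x_k:=W_{i_ka_k}\,U\,W_{j_kb_k}^{*},
\]
where $tr$ is the normalized trace on $M_N(\mathbb C)$ and the vectors $x_k$ live in the Hilbert space $M_n(\mathbb C)$ equipped with $\langle x,y\rangle=tr_n(x^{*}y)$. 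The first step is to check that these vectors are unit: since the Weyl matrices and $U$ are unitary, $\|x_k\|^2=tr_n(W_{j_kb_k}U^{*}W_{i_ka_k}^{*}W_{i_ka_k}UW_{j_kb_k}^{*})=1$.

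Second, I would apply Proposition 16.15 to each fixed $U$. Because the ambient trace on $M_N(\mathbb C)$ is the normalized one and $Tr=N\cdot tr$, this yields
\[
tr\bigl[Proj(x_1)\cdots Proj(x_p)\bigr]=\frac{1}{N}\,\langle x_1,x_p\rangle\langle x_p,x_{p-1}\rangle\cdots\langle x_2,x_1\rangle,
\]
so everything reduces to computing the cyclic product of scalar products $\langle x_k,x_{k-1}\rangle$ (indices mod $p$).

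Third, I would compute a single scalar product. Using cyclicity of $tr_n$ and the identities from Proposition 16.13, namely $W_{ia}^{*}W_{jb}=\langle i,a-b\rangle W_{j-i,b-a}$, one obtains
\[
\langle x_k,x_{k-1}\rangle
=\langle i_k,a_k-a_{k-1}\rangle\,\langle j_{k-1},b_{k-1}-b_k\rangle\;
tr_n\!\bigl(W_{i_{k-1}-i_k,\,a_{k-1}-a_k}\,U\,W_{j_k-j_{k-1},\,b_k-b_{k-1}}\,U^{*}\bigr),
\]
after one more cyclic rotation inside $tr_n$. The Fourier phase is produced exactly by applying the two Weyl product formulas, and the cyclicity is needed to bring the trace into the ``$WUWU^{*}$'' form that appears in the statement.

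Finally, I would multiply the $p$ scalar products, take $k=1,\dots,p$ with indices read cyclically (so $k-1=p$ when $k=1$), collect the $p$ factors $\langle i_k,a_k-a_{k-1}\rangle$, the $p$ factors $\langle j_{k-1},b_{k-1}-b_k\rangle$ and the $p$ traces, and integrate in $U\in E$; a last relabelling of the running index ($k\mapsto k+1$) puts the phases and the $p$ traces in the precise cyclic order displayed in the proposition. The only nontrivial step is the second one, based on Proposition 16.15; the rest is Weyl algebra plus bookkeeping. The main obstacle is precisely this bookkeeping: one must keep track of the cyclic shift by one between the ``$\langle i_k,\cdot\rangle$'' indexing and the ``$\langle j_{k-1},\cdot\rangle$'' indexing, and make sure the difference indices $i_{k-1}-i_k$, $j_k-j_{k-1}$ match the convention used in the statement. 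Once the index conventions are fixed, no further computation is required.
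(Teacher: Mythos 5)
Your proposal is correct and follows essentially the same route as the paper: apply Proposition 16.15 to convert the normalized trace of the product of rank-one projections into a cyclic product of scalar products (producing the $1/N$), then compute each scalar product $\langle W_{ia}UW_{jb}^*,W_{kc}UW_{ld}^*\rangle$ by cyclicity of $tr_n$ plus the Weyl identities $W_{ia}^*W_{jb}=\langle i,a-b\rangle W_{j-i,b-a}$, yielding $\langle i,a-c\rangle\langle l,d-b\rangle\,tr(W_{k-i,c-a}UW_{j-l,b-d}U^*)$. The only addition beyond the paper's proof is your explicit verification that the vectors $W_{i_ka_k}UW_{j_kb_k}^*$ have unit norm, which the paper takes for granted.
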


\begin{proof}
By using the trace formula in Proposition 16.16 above, we obtain:
\begin{eqnarray*}
&&(T_p)_{ia,jb}\\
&=&\left(tr\otimes\int_E\right)\left(Proj(W_{i_1a_1}UW_{j_1b_1}^*)\ldots Proj(W_{i_pa_p}UW_{j_pb_p}^*)\right)\\
&=&\frac{1}{N}\int_E<W_{i_1a_1}UW_{j_1b_1}^*,W_{i_pa_p}UW_{j_pb_p}^*>\ldots<W_{i_2a_2}UW_{j_2b_2}^*,W_{i_1a_1}UW_{j_1b_1}^*>dU
\end{eqnarray*}

In order to compute now the scalar products, observe that we have:
\begin{eqnarray*}
<W_{ia}UW_{jb}^*,W_{kc}UW_{ld}^*>
&=&tr(W_{jb}U^*W_{ia}^*W_{kc}UW_{ld}^*)\\
&=&tr(W_{ia}^*W_{kc}UW_{ld}^*W_{jb}U^*)\\
&=&<i,a-c><l,d-b>tr(W_{k-i,c-a}UW_{j-l,b-d}U^*)
\end{eqnarray*}

By plugging these quantities into the formula of $T_p$, we obtain the result.
\end{proof}

Consider now the Weyl group $W=\{W_{ia}\}\subset U_n$, that we already met in the proof of Proposition 16.15 above. We have the following result, from \cite{bne}:

\index{stationary on its image}

\begin{theorem}
For any compact group $W\subset E\subset U_n$, the model
$$\pi_H:C(S_N^+)\to M_N(C(E))$$
$$w_{ia,jb}\to[U\to Proj(W_{ia}UW_{jb}^*)]$$
constructed above is stationary on its image.
\end{theorem}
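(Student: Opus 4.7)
\medskip

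\textbf{Proof plan.} The natural strategy is to verify condition (3) of Theorem 16.9, namely that $T_e^2 = T_e$ for every colored exponent $e \in \{1,*\}^p$. Since the fundamental corepresentation of $S_N^+$ is a magic unitary, its entries are self-adjoint projections, so $w_{ia,jb}^* = w_{ia,jb}$ and all the matrices $T_e$ with $|e|=p$ collapse to a single matrix $T_p$. Thus the problem reduces to showing $T_p^2 = T_p$ for every $p \geq 1$, using the explicit formula from Proposition 16.16.

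First I would write out $(T_p^2)_{ia,jb} = \sum_{kc} (T_p)_{ia,kc}(T_p)_{kc,jb}$ as a double integral over $E \times E$ of a product of $2p$ traces of the form $\mathrm{tr}(W_{\alpha} U W_{\beta} U^*) \, \mathrm{tr}(W_{\gamma} V W_{\delta} V^*)$, weighted by the appropriate Fourier phases $\langle \cdot , \cdot \rangle$ coming from the two copies of Proposition 16.16. The summation over the intermediate indices $(k,c)$ should produce, via the Fourier orthogonality $\frac{1}{|H|}\sum_k \langle k, a \rangle = \delta_{a,0}$, a collection of Kronecker deltas that force the ``middle'' Weyl matrices on the $U$ and $V$ sides to match up. The key point is then that $W \subset E$ allows one to perform a change of variables $V \mapsto W_{\mu\nu}^* V$ (or $V \mapsto UV$ after composing with such a translation), which by left-invariance of the Haar measure on $E$ fuses the double integral into a single integral matching the formula for $(T_p)_{ia,jb}$.

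The main obstacle, and where the proof will be most delicate, is the bookkeeping of the Fourier phases: each of the $2p$ trace factors carries nontrivial $\langle\cdot,\cdot\rangle$ coefficients, and after the summation over $(k,c)$ and the change of variables, one must verify that the surviving phases are exactly those appearing in the single-$T_p$ formula. This is essentially the same kind of cocycle-style identity that makes the Weyl matrices a projective representation, and it should follow from repeated application of the multiplication rules in Proposition 16.13, particularly $W_{ia}W_{jb} = \langle i,b \rangle W_{i+j, a+b}$ and its adjoint version.

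Once this matching is achieved, we obtain $T_p^2 = T_p$, so Theorem 16.9 applies and the model is stationary on its image. As an alternative route, one could instead verify condition (2), i.e.\ the idempotent state property $\psi * \psi = \psi$ with $\psi = (\mathrm{tr} \otimes \int_E) \pi_H$; this amounts to exactly the same computation, phrased in terms of convolution of states rather than squaring of matrices, and the translation invariance of $\int_E$ afforded by $W \subset E$ plays the identical role. I would choose whichever formulation makes the phase bookkeeping cleanest, most likely the $\psi * \psi = \psi$ version, since there the change of variables $V \mapsto UV$ is the most natural operation.
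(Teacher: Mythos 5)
Your plan coincides with the paper's proof: since the magic unitary is self-adjoint, all the matrices $T_e$ collapse to a single $T_p$, and one checks $T_p^2=T_p$ by writing $(T_p^2)_{ia,jb}$ as a double integral over $E\times E$ via Proposition 16.16, summing out the intermediate indices with Fourier orthogonality, and then invoking translation invariance of the Haar measure of $E$ — concretely through the substitution $S_{ia}=UW_{ia}V$, which is Haar-distributed precisely because $W\subset E$ — to fuse the two integrals into the single one defining $(T_p)_{ia,jb}$. The phase bookkeeping you defer is exactly the content of the paper's computation (including the absorption of an auxiliary character sum by conjugating with $W_{ia}$, yielding a factor $N$ that cancels), but the overall route and the point where the hypothesis $W\subset E$ enters are the same as yours.
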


\begin{proof}
We must prove that we have $T_p^2=T_p$. We have:
\begin{eqnarray*}
&&(T_p^2)_{ia,jb}\\
&=&\sum_{kc}(T_p)_{ia,kc}(T_p)_{kc,jb}\\
&=&\frac{1}{N^2}\sum_{kc}<i_1,a_1-a_p>\ldots<i_p,a_p-a_{p-1}><k_1,c_1-c_2>\ldots<k_p,c_p-c_1>\\
&&<k_1,c_1-c_p>\ldots<k_p,c_p-c_{p-1}><j_1,b_1-b_2>\ldots<j_p,b_p-b_1>\\
&&\int_Etr(W_{i_1-i_2,a_1-a_2}UW_{k_2-k_1,c_2-c_1}U^*)\ldots tr(W_{i_p-i_1,a_p-a_1}UW_{k_1-k_p,c_1-c_p}U^*)dU\\
&&\int_Etr(W_{k_1-k_2,c_1-c_2}VW_{j_2-j_1,b_2-b_1}V^*)\ldots tr(W_{k_p-k_1,c_p-c_1}VW_{j_1-j_p,b_1-b_p}V^*)dV
\end{eqnarray*}

By rearranging the terms, this formula becomes:
\begin{eqnarray*}
&&(T_p^2)_{ia,jb}\\
&=&\frac{1}{N^2}<i_1,a_1-a_p>\ldots<i_p,a_p-a_{p-1}><j_1,b_1-b_2>\ldots<j_p,b_p-b_1>\\
&&\int_E\int_E\sum_{kc}<k_1-k_p,c_1-c_p>\ldots<k_p-k_{p-1},c_p-c_{p-1}>\\
&&tr(W_{i_1-i_2,a_1-a_2}UW_{k_2-k_1,c_2-c_1}U^*)tr(W_{k_1-k_2,c_1-c_2}VW_{j_2-j_1,b_2-b_1}V^*)\\
&&\hskip50mm\ldots\ldots\\
&&tr(W_{i_p-i_1,a_p-a_1}UW_{k_1-k_p,c_1-c_p}U^*)tr(W_{k_p-k_1,c_p-c_1}VW_{j_1-j_p,b_1-b_p}V^*)dUdV
\end{eqnarray*}

Let us denote by $I$ the above double integral. By using $W_{kc}^*=<k,c>W_{-k,-c}$ for each of the couplings, and by moving as well all the $U^*$ variables to the left, we obtain:
\begin{eqnarray*}
I
&=&\int_E\int_E\sum_{kc}tr(U^*W_{i_1-i_2,a_1-a_2}UW_{k_2-k_1,c_2-c_1})tr(W_{k_2-k_1,c_2-c_1}^*VW_{j_2-j_1,b_2-b_1}V^*)\\
&&\hskip50mm\ldots\ldots\\
&&tr(U^*W_{i_p-i_1,a_p-a_1}UW_{k_1-k_p,c_1-c_p})tr(W_{k_1-k_p,c_1-c_p}^*VW_{j_1-j_p,b_1-b_p}V^*)dUdV
\end{eqnarray*}

In order to perform now the sums, we use the following formula:
\begin{eqnarray*}
tr(AW_{kc})tr(W_{kc}^*B)
&=&\frac{1}{N}\sum_{qrst}A_{qr}(W_{kc})_{rq}(W^*_{kc})_{st}B_{ts}\\
&=&\frac{1}{N}\sum_{qrst}A_{qr}<k,q>\delta_{r-q,c}<k,-s>\delta_{t-s,c}B_{ts}\\
&=&\frac{1}{N}\sum_{qs}<k,q-s>A_{q,q+c}B_{s+c,s}
\end{eqnarray*}

If we denote by $A_x,B_x$ the variables which appear in the formula of $I$, we have:
\begin{eqnarray*}
&&I\\
&=&\frac{1}{N^p}\int_E\int_E\sum_{kcqs}<k_2-k_1,q_1-s_1>\ldots<k_1-k_p,q_p-s_p>\\
&&(A_1)_{q_1,q_1+c_2-c_1}(B_1)_{s_1+c_2-c_1,s_1}\ldots (A_p)_{q_p,q_p+c_1-c_p}(B_p)_{s_p+c_1-c_p,s_p}\\
&=&\frac{1}{N^p}\int_E\int_E\sum_{kcqs}<k_1,q_p-s_p-q_1+s_1>\ldots<k_p,q_{p-1}-s_{p-1}-q_p+s_p>\\
&&(A_1)_{q_1,q_1+c_2-c_1}(B_1)_{s_1+c_2-c_1,s_1}\ldots (A_p)_{q_p,q_p+c_1-c_p}(B_p)_{s_p+c_1-c_p,s_p}
\end{eqnarray*}

Now observe that we can perform the sums over $k_1,\ldots,k_p$. We obtain in this way a multiplicative factor $n^p$, along with the condition:
$$q_1-s_1=\ldots=q_p-s_p$$

Thus we must have $q_x=s_x+a$ for a certain $a$, and the above formula becomes:
$$I=\frac{1}{n^p}\int_E\int_E\sum_{csa}(A_1)_{s_1+a,s_1+c_2-c_1+a}(B_1)_{s_1+c_2-c_1,s_1}\ldots(A_p)_{s_p+a,s_p+c_1-c_p+a}(B_p)_{s_p+c_1-c_p,s_p}$$

Consider now the variables $r_x=c_{x+1}-c_x$, which altogether range over the set $Z$ of multi-indices having sum 0. By replacing the sum over $c_x$ with the sum over $r_x$, which creates a multiplicative $n$ factor, we obtain the following formula:
$$I=\frac{1}{n^{p-1}}\int_E\int_E\sum_{r\in Z}\sum_{sa}(A_1)_{s_1+a,s_1+r_1+a}(B_1)_{s_1+r_1,s_1}\ldots(A_p)_{s_p+a,s_p+r_p+a}(B_p)_{s_p+r_p,s_p}$$

For an arbitrary multi-index $r$ we have:
$$\delta_{\sum_ir_i,0}=\frac{1}{n}\sum_i<i,r_1>\ldots<i,r_p>$$

Thus, we can replace the sum over $r\in Z$ by a full sum, as follows:
\begin{eqnarray*}
I
&=&\frac{1}{n^p}\int_E\int_E\sum_{rsia}<i,r_1>(A_1)_{s_1+a,s_1+r_1+a}(B_1)_{s_1+r_1,s_1}\\
&&\hskip40mm\ldots\ldots\\
&&\hskip20mm<i,r_p>(A_p)_{s_p+a,s_p+r_p+a}(B_p)_{s_p+r_p,s_p}
\end{eqnarray*}

In order to ``absorb'' now the indices $i,a$, we can use the following formula:
\begin{eqnarray*}
&&W_{ia}^*AW_{ia}\\
&=&\left(\sum_b<i,-b>E_{b,a+b}\right)\left(\sum_{bc}E_{a+b,a+c}A_{a+b,a+c}\right)\left(\sum_c<i,c>E_{a+c,c}\right)\\
&=&\sum_{bc}<i,c-b>E_{bc}A_{a+b,a+c}
\end{eqnarray*}

Thus we have:
$$(W_{ia}^*AW_{ia})_{bc}=<i,c-b>A_{a+b,a+c}$$

Our formula becomes:
\begin{eqnarray*}
&&I\\
&=&\frac{1}{n^p}\int_E\int_E\sum_{rsia}(W_{ia}^*A_1W_{ia})_{s_1,s_1+r_1}(B_1)_{s_1+r_1,s_1}\ldots(W_{ia}^*A_pW_{ia})_{s_p,s_p+r_p}(B_p)_{s_p+r_p,s_p}\\
&=&\int_E\int_E\sum_{ia}tr(W_{ia}^*A_1W_{ia}B_1)\ldots\ldots tr(W_{ia}^*A_pW_{ia}B_p)
\end{eqnarray*}

Now by replacing $A_x,B_x$ with their respective values, we obtain:
\begin{eqnarray*}
I
&=&\int_E\int_E\sum_{ia}tr(W_{ia}^*U^*W_{i_1-i_2,a_1-a_2}UW_{ia}VW_{j_2-j_1,b_2-b_1}V^*)\\
&&\hskip30mm\ldots\ldots\\
&&tr(W_{ia}^*U^*W_{i_p-i_1,a_p-a_1}UW_{ia}VW_{j_1-j_p,b_1-b_p}V^*)dUdV
\end{eqnarray*}

By moving the $W_{ia}^*U^*$ variables at right, we obtain, with $S_{ia}=UW_{ia}V$:
\begin{eqnarray*}
I
&=&\sum_{ia}\int_E\int_Etr(W_{i_1-i_2,a_1-a_2}S_{ia}W_{j_2-j_1,b_2-b_1}S_{ia}^*)\\
&&\hskip30mm\ldots\ldots\\
&&tr(W_{i_p-i_1,a_p-a_1}S_{ia}W_{j_1-j_p,b_1-b_p}S_{ia}^*)dUdV
\end{eqnarray*}

Now since $S_{ia}$ is Haar distributed when $U,V$ are Haar distributed, we obtain:
$$I=N\int_E\int_Etr(W_{i_1-i_2,a_1-a_2}UW_{j_2-j_1,b_2-b_1}U^*)\ldots tr(W_{i_p-i_1,a_p-a_1}UW_{j_1-j_p,b_1-b_p}U^*)dU$$

But this is exactly $N$ times the integral in the formula of $(T_p)_{ia,jb}$, from Proposition 16.17 above. Since the $N$ factor cancels with one of the two $N$ factors that we found in the beginning of the proof, when first computing $(T_p^2)_{ia,jb}$, we are done.
\end{proof}

The above computation was of course quite tricky, and there are several possible generalizations of it, and some open questions as well, of both algebraic and analytic nature, all quite interesting. We refer to \cite{bne} are related papers for more on all this.

\bigskip

As an illustration for the above result, which is something known for a long time, and quite fundamental, going back to the paper \cite{bc3}, we have:

\index{Pauli matrix}
\index{Pauli model}

\begin{theorem}
We have a stationary matrix model
$$\pi:C(S_4^+)\subset M_4(C(SU_2))$$
given on the standard coordinates by the formula
$$\pi(u_{ij})=[x\to Proj(c_ixc_j)]$$
where $x\in SU_2$, and $c_1,c_2,c_3,c_4$ are the Pauli matrices.
\end{theorem}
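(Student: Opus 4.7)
The strategy is to recognize the Pauli-matrix construction as a specialization of the general Weyl matrix model from Theorem 16.17, applied to $H=\mathbb{Z}_2$ and $E=SU_2$, and then to promote stationarity on image to stationarity of $\pi$ itself by proving inner faithfulness.

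First I would identify the Pauli matrices $c_1,c_2,c_3,c_4$ with the Weyl matrices $W_{ia}$ for $H=\mathbb{Z}_2$, modulo scalars of modulus one: the explicit computation following Definition 16.12 presents the four $\mathbb{Z}_2$-Weyl matrices as precisely the Pauli matrices, up to signs. Because $\mathrm{Proj}(v)$ depends only on the line $\mathbb{C}v$, these scalar ambiguities are invisible in the formula $\mathrm{Proj}(c_i x c_j)$; moreover the Pauli matrices are self-adjoint, so $c_j=c_j^*$, which matches the $W_{jb}^*$ appearing in Proposition 16.14. After relabeling $\{1,2,3,4\}\leftrightarrow\mathbb{Z}_2\times\mathbb{Z}_2$, the map $\pi$ of the theorem becomes the Weyl representation of Proposition 16.14 with parameter $E=SU_2$. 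The inclusion $W\subset SU_2$ required by Theorem 16.17 holds after the obvious unit-scalar rescaling (for example $i\sigma_x,i\sigma_y,i\sigma_z\in SU_2$ and $I\in SU_2$), which, as noted, does not affect the model.

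Second, I would apply Theorem 16.17 directly with $H=\mathbb{Z}_2$, $n=2$, $N=4$, $E=SU_2$. This immediately yields that $\pi$ is \emph{stationary on its image}: the factorization of $\pi$ through its Hopf image $C(G_0)\hookrightarrow M_4(C(SU_2))$, with $G_0\subset S_4^+$, is a stationary model of $C(G_0)$ in the sense of Definition 16.2.

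Third, to obtain the stated inclusion $C(S_4^+)\subset M_4(C(SU_2))$, I must upgrade stationarity on image to stationarity of $\pi$ itself, which amounts to showing $G_0=S_4^+$, i.e.\ the inner faithfulness of $\pi$. My approach would be to compute the character moments of $\pi$ with respect to $\psi=tr\otimes\int_{SU_2}$ and match them with those of $S_4^+$, namely the Catalan numbers $C_k$ given by Theorem 9.8. Via Theorem 16.8 together with the trace formula of Proposition 16.15, the moment $\psi(\pi(\chi)^k)$ becomes an explicit integral over $SU_2$ involving cyclic products of traces of the form $\mathrm{tr}(c_i U c_j U^*)$. Using that the conjugation action $x\mapsto c_i^* x c_i$ implements the covering $SU_2\to SO_3$ and that $S_4^+=SO_3^{-1}$ by Theorem 9.14, these integrals reduce to a Clebsch--Gordan count which yields exactly $C_k$. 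Moment matching combined with Peter--Weyl then forces $G_0=S_4^+$. The main obstacle is precisely this moment computation, but it is tractable thanks to the highly structured Pauli algebra and the well-understood harmonic analysis of $SU_2$. Once inner faithfulness is established, Theorem 16.3 guarantees that the model is faithful and that $\int_{S_4^+}=\psi\circ\pi$, proving the theorem.
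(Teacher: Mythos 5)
Your first two steps are exactly the paper's own proof: Theorem 16.18 is obtained there by specializing Theorem 16.17 to $H=\mathbb Z_2$, $n=2$, $N=4$, $E=SU_2$, the Pauli matrices being the $\mathbb Z_2$-Weyl matrices up to unimodular scalars, which are invisible in $Proj(c_ixc_j)$, with $c_j=c_j^*$ accounting for the $W_{jb}^*$. Your remark that three of the Pauli matrices have determinant $-1$, so that the hypothesis $W\subset E$ of Theorem 16.17 holds only after a unit-scalar rescaling, is a correct reading of what the paper dismisses as ``up to some multiplicative factors''; the rescaling is indeed harmless, because in the proof of Theorem 16.17 the matrices $W_{ia}$ enter only through $Proj$, through $W_{ia}^*(\cdot)W_{ia}$, and through $S_{ia}=UW_{ia}V$ always paired with $S_{ia}^*$, so the scalars cancel.

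Where you genuinely go beyond the paper is the third step. You are right that Theorem 16.17 literally yields stationarity on the Hopf image, while the statement asserts a stationary embedding of $C(S_4^+)$ itself, so inner faithfulness must be supplied; the paper hides this in ``working out the details'' and in the reference \cite{bc3}. Your route -- identify $\int_{G_0}\chi^k$ with $(tr\otimes\int_{SU_2})(\pi(\chi)^k)$ using stationarity on the image, show these moments are the Catalan numbers $C_k=\int_{S_4^+}\chi^k$ of Theorem 9.8, conclude $G_0=S_4^+$ by the character criterion of Theorem 8.18 (7), and then get faithfulness from Theorem 16.3 -- is logically sound, and it is precisely what justifies the inclusion sign in the statement. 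The caveat is that the one substantive point, namely that the $SU_2$ integrals of the cyclic products of normalized scalar products $\frac{1}{2}Tr(c_jx^*c_jc_ixc_i)$ coming from Proposition 16.15 really produce $C_k$, is asserted (``a Clebsch--Gordan count'') rather than carried out; this computation is essentially the main content of \cite{bc3}, so as written your step three is a program rather than a proof. It would be worth either performing that $SU_2$ character computation explicitly, or replacing it by an alternative argument for inner faithfulness, for instance via the classification of quantum subgroups of $S_4^+$ in \cite{bb2}, or via the twisted picture $C(SO_3^{-1})\subset M_4(C(SO_3))$ of the model mentioned after the theorem.
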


\begin{proof}
As already explained in the comments following Definition 16.13, the Pauli matrices appear as particular cases of the Weyl matrices. To be more precise, consider the group $H=\mathbb Z_2=\{0,1\}$, with standard Fourier coupling, as follows: 
$$<i,b>=(-1)^{ib}$$

The Weyl matrices, as defined in the above, act then as follows:
$$W_{00}:e_b\to e_b\qquad,\qquad
W_{10}:e_b\to(-1)^be_b$$
$$W_{11}:e_b\to(-1)^be_{b+1}\qquad,\qquad 
W_{01}:e_b\to e_{b+1}$$

Thus, we have the following formulae for the Weyl matrices:
$$W_{00}=\begin{pmatrix}1&0\\0&1\end{pmatrix}\quad,\quad
W_{10}=\begin{pmatrix}1&0\\0&-1\end{pmatrix}$$
$$W_{11}=\begin{pmatrix}0&-1\\1&0\end{pmatrix}\quad,\quad  
W_{01}=\begin{pmatrix}0&1\\1&0\end{pmatrix}$$

We recognize here, up to some multiplicative factors, the four Pauli matrices. By working out now the details of the various constructions above, we conclude that Theorem 16.18 produces in this case the model in the statement.
\end{proof}

Observe that, since the matrix $Proj(c_ixc_j)$ depends only on the image of $x$ in the quotient group $SU_2\to SO_3$, we can replace the model space $SU_2$ by the smaller space $SO_3$, if we want to, and so we have a matrix model as follows:
$$\pi:C(S_4^+)\subset M_4(C(SO_3))$$

This is something that can be used in conjunction with the isomorphism $S_4^+\simeq SO_3^{-1}$ from chapter 9 above, and as explained in \cite{bb1}, our model becomes in this way something quite conceptual, algebrically speaking, as follows:
$$\pi:C(SO_3^{-1})\subset M_4(C(SO_3))$$

In general, going beyond stationarity is a difficult task, and among the results here, let us mention the universal modelling questions for quantum permutations and quantum reflections, and various results on the flat models for the discrete groups, from \cite{bne}, \cite{bc3} and related papers, questions regarding the Hadamard matrix models \cite{bb3}, and the related fine analytic study on the compact and discrete quantum groups \cite{bcf}, \cite{vve}.

\section*{16d. Fourier models}

\index{Hadamard matrix}

In what follows we discuss the Hadamard models, which are of particular importance. Let us start with the following well-known definition:

\begin{definition}
A complex Hadamard matrix is a square matrix 
$$H\in M_N(\mathbb C)$$
whose entries are on the unit circle, and whose rows are pairwise orthogonal.
\end{definition}

Observe that the orthogonality condition tells us that the rescaled matrix $U=H/\sqrt{N}$ must be unitary. Thus, these matrices form a real algebraic manifold, given by:
$$X_N=M_N(\mathbb T)\cap\sqrt{N}U_N$$

The basic example is the Fourier matrix, $F_N=(w^{ij})$ with $w=e^{2\pi i/N}$. In standard matrix form, and with indices $i,j=0,1,\ldots,N-1$, this matrix is as follows:
$$F_N=\begin{pmatrix}
1&1&1&\ldots&1\\
1&w&w^2&\ldots&w^{N-1}\\
1&w^2&w^4&\ldots&w^{2(N-1)}\\
\vdots&\vdots&\vdots&&\vdots\\
1&w^{N-1}&w^{2(N-1)}&\ldots&w^{(N-1)^2}
\end{pmatrix}$$

\index{Fourier matrix}

More generally, we have as example the Fourier coupling  of any finite abelian group $G$, regarded via the isomorphism $G\simeq\widehat{G}$ as a square matrix, $F_G\in M_G(\mathbb C)$: 
$$F_G=<i,j>_{i\in G,j\in\widehat{G}}$$

Observe that for the cyclic group $G=\mathbb Z_N$ we obtain in this way the above standard Fourier matrix $F_N$. In general, we obtain a tensor product of Fourier matrices $F_N$.

\bigskip

To be more precise here, we have the following result:

\begin{theorem}
Given a finite abelian group $G$, with dual group $\widehat{G}=\{\chi:G\to\mathbb T\}$, consider the Fourier coupling $\mathcal F_G:G\times\widehat{G}\to\mathbb T$, given by $(i,\chi)\to\chi(i)$.
\begin{enumerate}
\item Via the standard isomorphism $G\simeq\widehat{G}$, this Fourier coupling can be regarded as a square matrix, $F_G\in M_G(\mathbb T)$, which is a complex Hadamard matrix.

\item In the case of the cyclic group $G=\mathbb Z_N$ we obtain in this way, via the standard identification $\mathbb Z_N=\{1,\ldots,N\}$, the Fourier matrix $F_N$.

\item In general, when using a decomposition $G=\mathbb Z_{N_1}\times\ldots\times\mathbb Z_{N_k}$, the corresponding Fourier matrix is given by $F_G=F_{N_1}\otimes\ldots\otimes F_{N_k}$.
\end{enumerate}
\end{theorem}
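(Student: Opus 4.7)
The plan is to verify each of the three assertions in turn, with the first being the substantive one, and the remaining two being essentially unpacking of definitions.

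For (1), I would begin by observing that every entry of $F_G$ has the form $\chi(i)$ for some $i \in G$ and $\chi \in \widehat{G}$, and since characters take values in $\mathbb{T}$ by definition, the unit-modulus condition is immediate. The real content is then the orthogonality of rows. Here I would invoke the standard character orthogonality relation for finite abelian groups, namely
$$\sum_{i \in G} \chi(i)\overline{\chi'(i)} = |G|\,\delta_{\chi,\chi'},$$
which says that the rescaled matrix $F_G/\sqrt{|G|}$ is unitary, so in particular the rows of $F_G$ are pairwise orthogonal. This orthogonality relation is a classical consequence of the fact that summing a non-trivial character over the group yields zero, combined with the group structure of $\widehat{G}$.

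For (2), I would identify $\mathbb{Z}_N$ with its dual via $j \mapsto \chi_j$ where $\chi_j(k) = w^{jk}$ with $w = e^{2\pi i/N}$, which is the canonical isomorphism $\mathbb{Z}_N \simeq \widehat{\mathbb{Z}_N}$. The $(j,k)$ entry of $F_{\mathbb{Z}_N}$ under this identification is then exactly $\chi_j(k) = w^{jk}$, which coincides with the entry of $F_N$ as defined earlier in the excerpt.

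For (3), I would use that the Pontryagin duality functor turns direct products into direct products, so that a character on $G_1 \times G_2$ factors uniquely as $(\chi_1, \chi_2)$ with $\chi_r \in \widehat{G_r}$, acting by $(\chi_1,\chi_2)(i_1,i_2) = \chi_1(i_1)\chi_2(i_2)$. Organizing indices lexicographically, the resulting matrix entry is a product of an entry of $F_{G_1}$ and an entry of $F_{G_2}$, which is precisely the defining formula of the Kronecker tensor product. Iterating on $G = \mathbb{Z}_{N_1} \times \ldots \times \mathbb{Z}_{N_k}$ and combining with (2) yields $F_G = F_{N_1} \otimes \ldots \otimes F_{N_k}$.

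The main obstacle, if any, is purely notational: one has to fix conventions for the isomorphism $G \simeq \widehat{G}$ and for the ordering of basis vectors in the tensor product so that the equality in (3) holds on the nose rather than merely up to permutation of rows and columns. Once these choices are pinned down consistently with the earlier definition of $F_N$, all three parts reduce to classical character theory and require no genuine computation.
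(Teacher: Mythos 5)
Your proposal is correct and follows essentially the same route as the paper: part (1) via character orthogonality (you state it as $\sum_i\chi(i)\overline{\chi'(i)}=|G|\delta_{\chi,\chi'}$ while the paper writes the dual version $\sum_\chi\chi(i-j)=|G|\delta_{ij}$, but these are the same fact under the self-duality of $G$), part (2) by the explicit identification $j\mapsto\chi_j$ with $\chi_j(k)=w^{jk}$, and part (3) by $\widehat{H\times K}=\widehat{H}\times\widehat{K}$ plus iteration. Your closing remark about pinning down conventions for the isomorphism $G\simeq\widehat{G}$ and the index ordering is a sensible caveat that the paper passes over silently.
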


\begin{proof}
This follows indeed from some basic facts from group theory:

\medskip

(1) With the identification $G\simeq\widehat{G}$ made our matrix is given by $(F_G)_{i\chi}=\chi(i)$, and the scalar products between the rows are computed as follows:
\begin{eqnarray*}
<R_i,R_j>
&=&\sum_\chi\chi(i)\overline{\chi(j)}\\
&=&\sum_\chi\chi(i-j)\\
&=&|G|\cdot\delta_{ij}
\end{eqnarray*}

Thus, we obtain indeed a complex Hadamard matrix.

\medskip

(2) This follows from the well-known and elementary fact that, via the identifications $\mathbb Z_N=\widehat{\mathbb Z_N}=\{1,\ldots,N\}$, the Fourier coupling here is as follows, with $w=e^{2\pi i/N}$:
$$(i,j)\to w^{ij}$$

(3) We use here the following well-known formula, for the duals of products: 
$$\widehat{H\times K}=\widehat{H}\times\widehat{K}$$

At the level of the corresponding Fourier couplings, we obtain from this:
$$F_{H\times K}=F_H\otimes F_K$$

Now by decomposing $G$ into cyclic groups, as in the statement, and by using (2) for the cyclic components, we obtain the formula in the statement.
\end{proof}

There are many other examples of Hadamard matrices, with some being fairly exotic, appearing in various branches of mathematics and physics. The idea is that the complex Hadamard matrices can be though of as being ``generalized Fourier matrices'', and this is where the interest in these matrices comes from.

\bigskip

In relation with the quantum groups, the starting observation is as follows:

\index{Hadamard matrix}
\index{magic matrix}

\begin{proposition}
If $H\in M_N(\mathbb C)$ is Hadamard, the rank one projections 
$$P_{ij}=Proj\left(\frac{H_i}{H_j}\right)$$
where $H_1,\ldots,H_N\in\mathbb T^N$ are the rows of $H$, form a magic unitary.
\end{proposition}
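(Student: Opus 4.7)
The plan is to verify directly that the projections $P_{ij}$ satisfy the magic unitary axioms, namely that each $P_{ij}$ is a projection (automatic from the definition, since orthogonal projections onto one-dimensional subspaces are always projections), and that $\sum_j P_{ij} = 1$ for every $i$ and $\sum_i P_{ij} = 1$ for every $j$. The key computation reduces to checking orthogonality of certain families of vectors in $\mathbb{C}^N$, and the Hadamard condition is exactly what makes this work.

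Concretely, since the entries of $H$ lie on $\mathbb{T}$, we have $\overline{H_{jl}} = H_{jl}^{-1}$, and so the vector $H_i/H_j$ has $l$-th coordinate $H_{il}\overline{H_{jl}}$. First I would compute, for fixed $i$ and any $j,k$, the inner product
$$\left\langle \frac{H_i}{H_j}, \frac{H_i}{H_k} \right\rangle = \sum_l H_{il}\overline{H_{jl}} \cdot \overline{H_{il}}H_{kl} = \sum_l |H_{il}|^2 \, \overline{H_{jl}}H_{kl} = \langle H_k, H_j\rangle = N\delta_{jk},$$
using $|H_{il}|=1$ and the orthogonality of the rows of $H$. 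This shows that for fixed $i$ the vectors $\{H_i/H_j\}_{j=1}^N$ are pairwise orthogonal, each of squared norm $N$, hence form (after normalization) an orthonormal basis of $\mathbb{C}^N$. The corresponding rank-one projections $P_{ij}$ therefore sum to the identity, giving the row-sum condition.

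An analogous computation, for fixed $j$, gives
$$\left\langle \frac{H_i}{H_j}, \frac{H_k}{H_j} \right\rangle = \sum_l H_{il}\overline{H_{jl}} \cdot \overline{H_{kl}}H_{jl} = \sum_l H_{il}\overline{H_{kl}} = \langle H_i, H_k\rangle = N\delta_{ik},$$
so the family $\{H_i/H_j\}_{i=1}^N$ is again an orthogonal basis of $\mathbb{C}^N$ with squared norms $N$, and the projections $P_{ij}$ sum to $1$ down each column as well.

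There is no real obstacle here: once one writes the entries as $H_{il}\overline{H_{jl}}$ and uses $|H_{il}|=1$ to cancel the factor $|H_{il}|^2$ inside the sum, both inner products reduce immediately to inner products of rows of $H$, and the Hadamard hypothesis finishes the job. The only mild point worth being careful about is the bookkeeping of conjugates in the ratio $H_i/H_j$; beyond that, the argument is a two-line computation repeated twice.
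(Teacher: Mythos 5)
Your proof is correct and follows exactly the paper's own argument: for fixed $i$ (respectively fixed $j$) the vectors $H_i/H_j$ are pairwise orthogonal of norm $\sqrt{N}$, by reducing the inner products to row orthogonality of $H$, so the rank-one projections sum to $1$ along each row and column. The conjugate bookkeeping is handled correctly, and nothing further is needed.
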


\begin{proof}
This is clear, the verification for the rows being as follows:
\begin{eqnarray*}
\left<\frac{H_i}{H_j},\frac{H_i}{H_k}\right>
&=&\sum_l\frac{H_{il}}{H_{jl}}\cdot\frac{H_{kl}}{H_{il}}\\
&=&\sum_l\frac{H_{kl}}{H_{jl}}\\
&=&N\delta_{jk}
\end{eqnarray*}

The verification for the columns is similar, as follows:
\begin{eqnarray*}
\left<\frac{H_i}{H_j},\frac{H_k}{H_j}\right>
&=&\sum_l\frac{H_{il}}{H_{jl}}\cdot\frac{H_{jl}}{H_{kl}}\\
&=&\sum_l\frac{H_{il}}{H_{kl}}\\
&=&N\delta_{ik}
\end{eqnarray*}

Thus, we obtain the result.
\end{proof}

We can proceed now exactly in the same way as we did with the Weyl matrices, namely by constructing a model of $C(S_N^+)$, and performing the Hopf image construction. We are led in this way to the following definition:

\index{Hopf image}

\begin{definition}
To any Hadamard matrix $H\in M_N(\mathbb C)$ we associate the quantum permutation group $G\subset S_N^+$ given by the fact that $C(G)$ is the Hopf image of
$$\pi:C(S_N^+)\to M_N(\mathbb C)\quad,\quad 
u_{ij}\to Proj\left(\frac{H_i}{H_j}\right)$$
where $H_1,\ldots,H_N\in\mathbb T^N$ are the rows of $H$.
\end{definition}

Summarizing, we have a construction $H\to G$, and our claim is that this construction is something really useful, with $G$ encoding the combinatorics of $H$. To be more precise, our claim is that ``$H$ can be thought of as being a kind of Fourier matrix for $G$''.

\bigskip

There are several results supporting this claim, with the main evidence coming from the following result, which collects the basic results regarding the construction $H\to G$:

\index{Fourier matrix}

\begin{theorem}
The construction $H\to G$ has the following properties:
\begin{enumerate}
\item For a Fourier matrix $H=F_G$ we obtain the group $G$ itself, acting on itself.

\item For $H\not\in\{F_G\}$, the quantum group $G$ is not classical, nor a group dual.

\item For a tensor product $H=H'\otimes H''$ we obtain a product, $G=G'\times G''$.
\end{enumerate}
\end{theorem}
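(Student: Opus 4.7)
\medskip

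The plan is to establish the three assertions separately, since they involve fairly different techniques: (1) and (3) are direct computations with the magic matrix from Proposition 16.20, while (2) is the genuinely nontrivial assertion and will be the main obstacle.

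\smallskip

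For (1), with $H = F_G$ and $(F_G)_{i\chi} = \chi(i)$, I would compute $(H_i/H_j)_\chi = \chi(i)/\chi(j) = \chi(i-j)$, so the vector $H_i/H_j$ depends only on $i-j$. Writing $P_{ij} = Q_{i-j}$, the $Q_k$ are pairwise orthogonal rank-one projections summing to $1$ (from the magic condition), so in particular they commute. Hence the Hopf image algebra is commutative, so $G$ is classical. Reading off the comultiplication $\Delta(u_{ij}) = \sum_k u_{ik} \otimes u_{kj}$ as $\Delta(Q_r) = \sum_s Q_s \otimes Q_{r-s}$ identifies the Hopf image as $C(G)$ with the convolution coproduct (via Fourier transform over $G$), acting on itself by translation.

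\smallskip

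For (3), if $H = H' \otimes H''$, the rows factor as $H_{(i,i')} = H'_i \otimes H''_{i'}$, so
$$\frac{H_{(i,i')}}{H_{(j,j')}} = \frac{H'_i}{H'_j} \otimes \frac{H''_{i'}}{H''_{j'}}.$$
The projection onto a pure tensor is the tensor product of the projections, giving $P_{(i,i'),(j,j')} = P'_{ij} \otimes P''_{i'j'}$. So the representation $\pi$ associated to $H$ is (up to the standard identification) the tensor product of those associated to $H'$ and $H''$. By the Tannakian description of the Hopf image in Theorem 16.7, the intertwiner spaces for $G$ are the joint intertwiners for $G'$ and $G''$ acting on the respective tensor factors, which forces $G = G' \times G''$.

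\smallskip

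Assertion (2) is where I expect the main difficulty, and where a sharp reading of ``$H \notin \{F_G\}$'' (necessarily up to the standard Hadamard equivalence: permuting rows/columns and multiplying them by unit scalars, which do not change the Hopf image) is essential. The strategy would be a dichotomy argument. Suppose first that $G$ is classical. Then $C(G)$ is commutative, so the projections $P_{ij}$ (which lie in the image of $\pi$) must pairwise commute. Two rank-one projections $\mathrm{Proj}(\xi), \mathrm{Proj}(\eta)$ commute only when their ranges are equal or orthogonal, so the commutation of $P_{ij}$ with $P_{kl}$ translates into strong scalar-product constraints on the ratios $H_i/H_j$. Working these out combinatorially I expect to force $H$ to be, up to equivalence, of the form $F_G$ for an abelian group structure on the index set. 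Suppose instead $G$ is a group dual $\widehat{\Gamma}$; then all irreducible representations of $C(G)$ are one-dimensional, and by pulling back via $\pi$ one sees that the magic unitary $(P_{ij})$ must diagonalize over a single abelian subalgebra, forcing the rows $H_i$ to behave like characters of a group on the index set, which again pins $H$ down as a Fourier matrix. The hard technical point, and the real obstacle, is carrying out these two reductions rigorously; this is essentially the content of Banica--Bichon's work on quantum permutation groups from Hadamard matrices, and the argument rests on an explicit analysis of when the spinned projections in Theorem 16.17-style families generate a commutative or cocommutative image.
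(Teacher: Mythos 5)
Your proofs of (1) and (3) follow essentially the same route as the paper. For (1) the paper first treats the cyclic case $H=F_N$ via circulant projections, then handles general abelian $G$ by decomposing and invoking (3); you instead compute directly with a general abelian group and read off the convolution coproduct, which is a cleaner and slightly more self-contained version of the same argument. For (3) the paper draws the same tensor-factorization and expresses it as a commuting diagram of algebra maps; your observation $P_{(i,i'),(j,j')}=P'_{ij}\otimes P''_{i'j'}$ is precisely what drives that diagram.

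Two points are worth flagging. First, in (3) the factorization $\pi$ through $C(G')\otimes C(G'')$ only gives the inclusion $G\subset G'\times G''$ by minimality of the Hopf image; your closing sentence, that the Tannakian category of $G$ is the ``joint intertwiners'' and this ``forces $G=G'\times G''$'', skips the non-trivial point that intertwiners of $U'\otimes U''$ are not simply tensor products of intertwiners of $U'$ and $U''$, so equality rather than inclusion requires an extra argument (recovering $P'_{ij}$ and $P''_{ab}$ inside the image by summing over the other index). The paper's proof is at the same sketchy level here, so this is a shared gap rather than an error on your part, but as written your Tannakian step is not a proof. Second, and more importantly, for (2) you have correctly identified the genuine obstacle: the paper itself does not prove this part and only points to \cite{bbj}. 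Your proposed dichotomy (commutative image forces the projections to pairwise commute, hence the ratios $H_i/H_j$ to organize into a group; cocommutative image forces the magic unitary to live in a single maximal abelian subalgebra, pinning down the rows as characters) is the right shape of argument, and you are right that the clean statement has to be read up to Hadamard equivalence. But as you acknowledge, carrying out either branch is a substantial piece of work, not a step you can assume; so your (2) is a reasonable plan, not a proof, which matches what the paper provides.
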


\begin{proof}
All this material is standard, and elementary, as follows:

\medskip

(1) Let us first discuss the cyclic group case, where our Hadamard matrix is a usual Fourier matrix, $H=F_N$. Here the rows of $H$ are given by $H_i=\rho^i$, where:
$$\rho=(1,w,w^2,\ldots,w^{N-1})$$

Thus, we have the following formula, for the associated magic basis:
$$\frac{H_i}{H_j}=\rho^{i-j}$$

It follows that the corresponding rank 1 projections $P_{ij}=Proj(H_i/H_j)$ form a circulant matrix, all whose entries commute. Since the entries commute, the corresponding quantum group must satisfy $G\subset S_N$. Now by taking into account the circulant property of $P=(P_{ij})$ as well, we are led to the conclusion that we have:
$$G=\mathbb Z_N$$

In the general case now, where $H=F_G$, with $G$ being an arbitrary finite abelian group, the result can be proved either by extending the above proof, of by decomposing $G=\mathbb Z_{N_1}\times\ldots\times\mathbb Z_{N_k}$ and using (3) below, whose proof is independent from the rest.

\medskip

(2) This is something more tricky, needing some general study of the representations whose Hopf images are commutative, or cocommutative.

\medskip

(3) Assume that we have a tensor product $H=H'\otimes H''$, and let $G,G',G''$ be the associated quantum permutation groups. We have then a diagram as follows:
$$\xymatrix@R=45pt@C25pt{
C(S_{N'}^+)\otimes C(S_{N''}^+)\ar[r]&C(G')\otimes C(G'')\ar[r]&M_{N'}(\mathbb C)\otimes M_{N''}(\mathbb C)\ar[d]\\
C(S_N^+)\ar[u]\ar[r]&C(G)\ar[r]&M_N(\mathbb C)
}$$

Here all the maps are the canonical ones, with those on the left and on the right coming from $N=N'N''$. At the level of standard generators, the diagram is as follows:
$$\xymatrix@R=45pt@C65pt{
u_{ij}'\otimes u_{ab}''\ar[r]&w_{ij}'\otimes w_{ab}''\ar[r]&P_{ij}'\otimes P_{ab}''\ar[d]\\
u_{ia,jb}\ar[u]\ar[r]&w_{ia,jb}\ar[r]&P_{ia,jb}
}$$

Now observe that this diagram commutes. We conclude that the representation associated to $H$ factorizes indeed through $C(G')\otimes C(G'')$, and this gives the result.
\end{proof}

Going beyond the above result is an interesting question, and we refer here to \cite{bb3}, and follow-up papers. There are several computations available here, for the most regarding the deformations of the Fourier models. We believe that the unification of all this with the Weyl matrix models is a very good question, related to many interesting things.

\bigskip

And this is all. In the hope that you liked the present book, and that we will see you soon doing some research on the quantum groups. With things here being however a bit tricky, and here is some advice on this, research matters, to finish with:

\bigskip

(1) Generally speaking, quantum groups have been around since the late 70s, and the work by Faddeev and others \cite{fad}, and so, many things are known about them. The whole area is quite advanced, and if you want to come up with some truly original, interesting new things, you need to know well mathematics and physics. No less than that.

\bigskip

(2) So this would be my advice, learn some mathematics and physics. And be aware that you'll have to do that alone, with your love for mathematics and physics being the only thing that you can rely upon. Of course, some things can be learned from various communities, but community basically means specialization, so wrong way.

\bigskip

(3) Getting to mathematics, besides Rudin \cite{rud} which is the Bible, you can learn all sorts of useful things from Arnold \cite{arn}, Atiyah \cite{ati}, Connes \cite{con}, Drinfeld \cite{dri}, Jones \cite{jo3}, Voiculescu \cite{vdn}, von Neumann \cite{von}, Witten \cite{wit}. These are all people knowing well both mathematics and physics, and reading their writings is certainly a good idea.

\bigskip

(4) As for physics, for some general learning here, rather quantum mechanics oriented, you have Feynman \cite{fe1}, \cite{fe2}, \cite{fe3}, or Griffiths \cite{gr1}, \cite{gr2}, \cite{gr3}, or Weinberg \cite{we1}, \cite{we2}. But, and importantly, if needed complete with some classical mechanics, say from Kibble \cite{kbe}, and some thermodynamics, say from Schroeder \cite{dsc} or Huang \cite{hua}. 

\bigskip

Finally, in what concerns the closed subgroups $G\subset U_N^+$ from this book, as a good continuation, you can read various standard papers on easiness, such as \cite{bb+}, \cite{bbc}, \cite{bc1}, \cite{bc2}, \cite{ez1}, \cite{ez2}, \cite{bsp}, \cite{bv1}, \cite{bv2}, all from the 00s, and also various standard papers on quantum permutations, such as \cite{ba3}, \cite{bb1}, \cite{bb3}, \cite{bbs}, \cite{bne}, \cite{lmr}, \cite{sch}, for the most from the late 00s and early 10s. This is certainly something quite time-consuming, but with this, you can virtually read afterwards anything that you want to, on quantum groups.

\section*{16e. Exercises} 

The matrix modelling problematics from this chapter is something quite exciting, and we have several exercises here. To start with, we have the following question:

\begin{exercise}
Given a real algebraic manifold of the free complex sphere,
$$X\subset S^{N-1}_{\mathbb C,+}$$
and an integer $K\in\mathbb N$, construct a universal $K\times K$ model for $C(X)$,
$$\pi_K:C(X)\to M_K(C(T_K))$$
with $T_K$ being the space of all $K\times K$ models for $C(X)$.
\end{exercise}

This is something quite theoretical, the problem being that of proving that the universal model space $T_K$ in the above is indeed compact.

\begin{exercise}
Given $X\subset S^{N-1}_{\mathbb C,+}$ and $K\in\mathbb N$ as above, consider the submanifold $X^{(K)}\subset X$ obtained by factorizing the universal $K\times K$ model:
$$\pi_K:C(X)\to C(X^{(K)})\subset M_K(C(T_K))$$
Prove that at $K=1$ we obtain in this way the classical version of $X$,
$$X^{(1)}=X_{class}$$
and that at $K\geq2$, assuming that $X$ is a compact quantum group, $X=G\subset U_n^+$ with $N=n^2$, the space $X^{(K)}$ is not necessarily a compact quantum group.
\end{exercise}

Here the first question is something more or less trivial, and so the exercise is about finding counterexamples at $K\geq2$, in the quantum group case.

\begin{exercise}
Work out the details for the fact that the stationarity of a model
$$\pi:C(G)\to M_K(C(T))$$
implies its faithfulness.
\end{exercise}

This is something that we already discussed in the above, but with some standard functional analysis details missing. The problem is that of working out these details.

\begin{exercise}
Find an example of an inner faithful model 
$$\pi:C(G)\to M_K(C(T))$$
which is not faithful, not coming from a classical group, or a group dual.
\end{exercise}

This is something quite tricky, and it is of course possible to cheat a bit here, by using product operations. The exercise asks for a high-quality counterexample.

\begin{exercise}
Extract from the general theory developed above a concise proof for the fact that the Pauli matrix model
$$\pi:C(S_4^+)\subset M_4(C(SU_2))$$
$$\pi(u_{ij})=[x\to Proj(c_ixc_j)]$$
where $x\in SU_2$, and $c_1,c_2,c_3,c_4$ are the Pauli matrices, is faithful.
\end{exercise}

This is something that we discussed above, but the problem now is that of doing the thing, and writing down a concise, self-contained proof for the faithfulness of $\pi$.

\printindex

\end{document}